\newcommand{\st}{\ensuremath{\ast}}
\newcommand{\s}{\mathrm{sp}}
\newcommand{\iu}{\mathrm{i}}
\newcommand{\id}{\mathrm{id}}
\newcommand{\tld}{\widetilde}
\newcommand{\wht}{\widehat}
\newcommand{\sa}{_{\textstyle{sa}}}
\newcommand{\bsa}{_{\textstyle{bsa}}}
\newcommand{\rlambda}{\mathrm{r}_{\textstyle{\lambda}}}
\newcommand{\rsigma}{\mathrm{r}_{\textstyle{\sigma}}}
\newcommand{\subx}{_{\textit{\small{x}}\,}}
\newcommand{\suby}{_{\textit{\small{y}}\,}}
\newcommand{\subxy}{_{\textit{\small{xy}}\,}}
\newcommand{\subxyn}{_{\textit{\small{$xy_{\textit{\tiny{n}}}$}}\,}}
\newcommand{\subg}{_{\textit{\small{g}}\,}}
\newcommand{\subgn}{_{\textit{\small{$g_{\textit{\tiny{n}}}$}}}}
\newcommand{\univ}{_{\textit{\small{u}}}}
\newcommand{\ra}{_{\textit{\small{ra}}}}
\newcommand{\smu}{_{\text{\small{$\mu$}}}}
\newcommand{\0}{\ensuremath{_{\mathrm{o}}}}
\newcommand{\twiddle}{\ensuremath{^{\sim}}}
\newcommand{\tla}{_{\textstyle{a}}}
\newcommand{\cont}{\mathrm{C}}
\newcommand{\blop}{\mathrm{B}}
\newcommand{\rmLeb}{\mathrm{L}}
\newcommand{\domain}{\mathrm{D}}
\newcommand{\range}{\mathrm{R}}
\newcommand{\statespace}{\mathrm{S}}
\newcommand{\quasistates}{\mathrm{QS}}
\newcommand{\purestates}{\mathrm{PS}}
\newcommand{\Cstar}{\mathrm{C}\textnormal{*}}
\newcommand{\probmeas}{\mathrm{M}_1}
\newcommand{\stepfun}{\mathcal{S}}
\newcommand{\bmeas}{\mathcal{M}_{\mathrm{b}}}
\newcommand{\vonNeumAlg}{\mathrm{W}\textnormal{*}}
\newcommand{\diff}{\mathrm{d}}
\newlength{\skipamount}
\newtheoremstyle{mytheorem}%
{\skipamount}
{\skipamount}
{\itshape}
{}
{}
{.}
{0.5em}
{\thmnumber{#2}\thmname{ \scshape{\bfseries#1}}\thmnote{ \upshape(#3)}}
\newtheoremstyle{myplain}%
{\skipamount}
{\skipamount}
{}
{}
{}   
{.}
{0.5em}
{\thmnumber{#2}\thmname{ \scshape{\bfseries#1}}\thmnote{ \upshape(#3)}}
\theoremstyle{mytheorem}
\newtheorem{theorem}{Theorem}[section]
\newtheorem{definition}[theorem]{Definition}
\newtheorem{proposition}[theorem]{Proposition}
\newtheorem{lemma}[theorem]{Lemma}
\newtheorem{corollary}[theorem]{Corollary}
\newtheorem{addendum}[theorem]{Addendum}
\theoremstyle{myplain}
\newtheorem{example}[theorem]{Example}
\newtheorem{counterexample}[theorem]{Counterexample}
\newtheorem{introduction}[theorem]{Introduction}
\newtheorem{reminder}[theorem]{Reminder}
\newtheorem{remark}[theorem]{Remark}
\newtheorem{observation}[theorem]{Observation}
\begin{document}


\frontmatter

\title{Introduction to Normed \st-Algebras \protect\\
and their Representations, 7th ed.}

\author{Marco Thill}

\maketitle










\thispagestyle{empty}

\bigskip\noindent
\copyright \ Copyright : Marco Thill, 2010

\bigskip\noindent
AMS 2010 classification : 46K10

\bigskip\noindent
Typeset by \AmS-\LaTeX

\pagebreak


\fancyhead[LE]{\SMALL{CONTENTS}}
\fancyhead[RO]{\SMALL{CONTENTS}}

\tableofcontents

\clearpage


\phantomsection

\chapter*{Introduction}

\fancyhead[LE]{\SMALL{INTRODUCTION}}
\fancyhead[RO]{\SMALL{INTRODUCTION}}

The present text was not intended to be published. The author
does not claim precedence as to some extent he was influenced
by inspiration, collaboration and editing by colleagues and others,
who all remain unnamed at this stage of the work.

A synopsis of the material is given in the next three paragraphs.

In part \ref{part1}, the by now classical spectral theory of Banach
\st-algebras is developed, including the Shirali-Ford Theorem
\ref{ShiraliFord}. This part cannot serve as an introduction to Banach
algebras in general, as the scope is limited to the prerequisites
for the sequel, that is, basic representation theory of \st-algebras.

The eponymous part \ref{part2} is the core of the book, in that it
contains the main innovations. The theory of the GNS construction is
exposed. The context is that of a general normed \st-algebra, stripped
of the usual assumptions of completeness, presence of an approximate
unit and isometry of the involution. We exploit the idea of Berg,
Christensen and Ressel, \cite[Theorem 4.1.14 p.\ 91]{BCR}, which
becomes our items \ref{weaksa} - \ref{sigmaAsa}. These items say
that a representation of a normed \linebreak \st-algebra on a pre-Hilbert
space, which is weakly continuous on the real subspace of Hermitian
elements, actually is contractive on this real subspace. Rather than
merely proving the Gel'fand-Na\u{\i}mark Theorem, the text attaches to
each normed \st-algebra a C*-algebra, which is useful in pulling down
results. The construction of this so-called enveloping C*-algebra in
the present generality is one of the main goals of this exposition.
We also give a partial proof with a representation theoretical flavour
of the Continuity Theorem of Varopoulos.

In part \ref{part3}, we consider the Spectral Theorem using integrals
converging in norm in the bounded case, and pointwise in the
unbounded case. Our Cauchy-Bochner type of approach leads to
easier constructions, simpler proofs, and stronger results than the
more conventional approach via weakly convergent integrals. It is
inspired by \cite[p.\ 57]{AMR}. Other authors used similar approaches
before, like Segal \& Kunze \cite{SeKu} for bounded operators, and
Weidmann \cite{Weid} for unbounded operators. We give the Spectral
Theorem for non-degenerate representations on Hilbert spaces
$\neq \{0\}$ of general commutative \st-algebras, rather than only of
commutative Banach \st-algebras, or even only of commutative
C*-algebras. This is possible at little extra effort, but it does not
seem to have permeated the mathematical folklore that this is possible
at all. We get the main disintegrations (the abstract Bochner
Theorem as well as the Spectral Theorem), in an easy step from the
corresponding results for C*-algebras by taking an image measure.
This approach is inspired by Mosak \cite[pp.\ 95--102.]{MosS}.
One point that we would like to make in this book, is showing
\emph{why} it is enough to study C*-algebras for representation theory.

The material is not limited to unital algebras. Proofs are fairly detailed,
and there are plenty of cross-references for those who don't read the
book from cover to cover. One design goal for this work has been
completeness as far as proofs are concerned. The book is suitable
for accompanying a lecture. It also could serve as an underpinning
for an intermediate course in Functional Analysis leading to the more
advanced treatments.

A prerequisite for reading this work is a course in Functional Analysis,
which nowadays can be expected to cover the unital case of the
Commutative Gel'fand-Na\u{\i}mark Theorem, as well as its ingredients:
the notion of a C*-algebra, the Spectral Radius Formula, multiplicative
linear functionals and ideals, the weak* topology and Alaoglu's Theorem,
as well as the Stone-Weierstrass Theorem. Since some familiarity with
these items is supposed, we take a slightly more advanced point of view
on these matters, yet stay self-contained. For studying part \ref{part2},
acquaintance with the GNS construction for unital C*-algebras (that is,
the Non-Commutative Gel'fand-Na\u{\i}mark Theorem) is necessary in
all probability. A course in Integration Theory covering the Riesz
Representation Theorem is required for reading part \ref{part3} of the
book, cf.\ \hyperlink{appint}{\S\ 52} in the appendix. The material
depending on measure theory has been made to work with most,
if not all, of the prevalent definitions using inner regularity. This is
possible because we only consider bounded measures, cf.\ the
appendix \ref{bdedmeas}.


What is missing from the text:
H*-algebras and group algebras are not treated for lack of free energy.
We refer the reader to Loomis \cite{LoAHA} for an introduction to these
topics. We have avoided material on general Banach algebras, which
is not needed for Banach algebras with involution. In this spirit, the
following omissions have been made. Algebras of analytic functions
are completely absent. The holomorphic functional calculus is not
covered, as we can do with only power series and the Rational Spectral
Mapping Theorem. (See e.g.\ \ref{entire}.) Similarly the radical
is not considered as we can do with only the \st-radical.
Two other omissions are notable, given our more general context.
Namely, the developments of Palmer \cite{Palm} on spectral
algebras, and the theory of H-algebras by M\"{u}ller \cite{MullB},
are not treated. The reader will have little problems with these
topics after having read the present book, which treats different
aspects.

Now for the ``version history'' of the work.

The first four editions were published by Kassel University Press in
Germany in 1998, 1999, 2002 and 2004. The copyright then returned
to the author. Two subsequent editions appeared online at arXiv.org
in 2007 and 2008.

The second edition differs from the first one, in that the first edition
considered normed \st-algebras with isometric involution exclusively.
\linebreak This prompted a complete rearrangement of the text. We
stress that in the present work, the involution may be discontinuous.
I gratefully acknowledge that Theodore W.\ Palmer and Torben Maack
Bisgaard brought to my attention a few crucial mistakes in a draft of the
second edition.

Since the third edition, the text is typeset by \LaTeXe, and contains an
index (with currently more than 450 entries). The content of the third
edition differs from that of the second edition, mainly in that the spectral
theory of representations now deals with general commutative
\st-algebras, as opposed to normed commutative \st-algebras.

Since the fourth edition, the text is typeset by \AmS-\LaTeX. Chapter
\ref{PositiveElements} has been made completely independent of Chapter
\ref{TheGel'fandTransformation}, and thereby freed of the use of the full
version of the Axiom of Choice, cf.\ \ref{Zorn}.

The fifth edition features many minor improvements. The following
paragraphs have been added: \ref{boundary}, \ref{StoneCech},
\ref{polarfactoris}, \ref{leftspectrum}, \ref{separating}, \ref{scopug},
as well as \hyperlink{remtop}{\S\ 51} in the appendix. Many thanks to
Torben Maack Bisgaard, who reported many typos as well as a couple
of errors (one unrecoverable), and who made useful suggestions.

The sixth edition fixes some minor matters, approximately half of which
were reported by Torben Maack Bisgaard. Thank you, Torben!

For the seventh edition, the text has been extensively revised and
expanded. A major gap in the proof of \ref{gap} has been closed.
The following paragraphs have been added: \ref{someoffunctions},
\ref{St-WstThm}, \ref{PtakHerm}, \ref{Borelalg}, \ref{subsum}, as well as
\hyperlink{extBaire}{\S\ 53} in the appendix. The text now contains
a list of symbols.
\pagebreak

\clearpage


\mainmatter

\fancyhead[LE]{\SMALL{\leftmark}}
\fancyhead[RO]{\SMALL{\rightmark}}


\part{Spectral Theory of Banach \texorpdfstring{$*$-}{\80\052\80\055}Algebras}%
\label{part1}%

\chapter{Normed \texorpdfstring{$*$-}{\80\052\80\055}Algebras: %
Basic Definitions and Facts}


\section{\texorpdfstring{$*$-}{\80\052\80\055}Algebras and their Unitisation}

\smallskip
\begin{definition}[algebra]\index{concepts}{algebra}%
An \underline{algebra} is a complex vector space $A$ equipped with
a multiplication $A \times A \to A : (a,b) \mapsto ab$ which is associative
and bilinear. Associativity means of course that
\[ (ab)c = a(bc) \quad \text{for all} \quad a, b, c \in A. \]
Bilinearity means linearity in each of the two variables, that is,
\begin{align*}
( \lambda a + \mu b ) \mspace{2mu}c & = \lambda ( a c ) + \mu ( b c ) \\
c \mspace{2mu}( \lambda a + \mu b ) & = \lambda ( c a ) + \mu ( c b )
\end{align*}
for all $\lambda, \mu \in \mathds{C}$ and all $a, b, c \in A$.
We consider complex algebras only!
\end{definition}

\begin{example}[$\mathrm{End}(V)$]\label{EndV}%
\index{symbols}{E()@$\mathrm{End}(V)$}%
If $V$ is a complex vector space, then the vector space
$\mathrm{End}(V)$ of linear operators on $V$ is an algebra
under \linebreak composition. (Here ``$\mathrm{End}$''
stands for ``endomorphism''.)
\end{example}

\begin{definition}[involution, adjoint]%
\index{concepts}{involution}%
\index{concepts}{adjoint}%
An \underline{involution} on a set $S$ is a mapping
$\st : S \to S$, $s \mapsto s^*$ such that
\[ {(s^*)}^* = s \quad \text{for all} \quad s \in S. \]
For $s \in S$, the element $s^*$ then is called the \underline{adjoint} of $s$.
\end{definition}

\begin{definition}[algebra involution]%
\index{concepts}{algebra!involution}%
An \underline{algebra involution} on an algebra $A$ is an involution
$a \mapsto a^*$ on $A$ such that for all $\lambda, \mu \in \mathds{C}$
and all $a, b \in A$, we have
\begin{align*}
{( \lambda a + \mu b )}^* &
= \overline{\lambda} {a}^* + \overline{\vphantom{\lambda}\mu} {b}^*
\qquad \qquad \text{(conjugate linearity)} \\
{(ab)}^* & = {b}^*{a}^*.
\end{align*}
Note the exact behaviour of an algebra involution as it acts on a product.
\end{definition}

\begin{definition}[\protect\st-algebra]%
\index{concepts}{algebra!s-algebra@\protect\st-algebra}%
\index{concepts}{s01@\protect\st-algebra|seeonly{algebra}}%
A \underline{\st-algebra} is an algebra equipped with an algebra involution.
\pagebreak
\end{definition}

\begin{example}[$\mathds{C}^{\,\text{\Small{$\Omega$}}}$]\label{FOmega}%
\index{symbols}{C05@$\mathds{C}^{\,\text{\Small{$\Omega$}}}$}%
If $\Omega$ is a non-empty set, then $\mathds{C}^{\,\text{\Small{$\Omega$}}}$
denotes the complex vector space of complex-valued functions on $\Omega$
(with the pointwise vector space operations). It becomes an algebra if
multiplication of functions is defined pointwise. It becomes a \st-algebra
if the involution is defined as pointwise complex conjugation. Please note
that the \st-algebra $\mathds{C}^{\,\text{\Small{$\Omega$}}}$ is commutative.
\end{example}

\begin{example}[$\blop(H)$]\index{symbols}{B(H)@$\blop(H)$}%
If $H$ is a complex Hilbert space, then the algebra $\blop(H)$
of bounded linear operators on $H$ is a \st-algebra under the operation
of taking the adjoint of an operator. If $H$ is of dimension at least two,
then the \st-algebra $\blop(H)$ is not commutative.
\end{example}

All pre-Hilbert spaces in this book shall be \underline{complex} pre-Hilbert
spaces.

\begin{definition}[the free vector space \hbox{$\mathds{C}[S]$}]%
\index{symbols}{C07@$\protect\mathds{C}[S]$}%
\index{concepts}{free vector space}%
\index{concepts}{vector!space!free}%
\index{concepts}{carrier}%
Let $S$ be a non-empty set. One denotes by $\mathds{C}[S]$ the
vector space of complex-valued functions $a$ on $S$ of finite carrier
$\{ \,s \in S : a(s) \neq 0 \,\}$. One may consider $S$ as a basis for
$\mathds{C}[S]$. More precisely, for $s$ in $S$, one may consider
the function $\delta _s$ in $\mathds{C}[S]$ defined as the function
taking the value $1$ at $s$ and vanishing everywhere else on $S$.
For $a$ in $\mathds{C}[S]$, we then have
\[ a = \sum _{\text{\small{$s \in S$}}} a(s)\,\delta _{\text{\small{$s$}}} \]
where the sum involves only finitely many non-zero terms. One says
that $\mathds{C}[S]$ is the \underline{free vector space over $S$}.
The idea behind this terminology is that $\mathds{C}[S]$ may be
considered as a vector space with basis $S$.
\end{definition}

\begin{example}[the group ring \hbox{$\mathds{C}[G]$}]%
\index{symbols}{C1@$\protect\mathds{C}[G]$}%
\index{concepts}{group ring}\label{CG}%
Let $G$ be a group, and let $G$ be multiplicatively written.
Consider as above the free vector space $\mathds{C}[G]$.
Since $G$ can be considered as a basis for $\mathds{C}[G]$,
it follows that $\mathds{C}[G]$ carries a unique structure of
algebra, such that its multiplication extends the one of $G$, namely
\[ a\mspace{2mu}b =
\sum _{\text{\small{$g, h \in G$}}} a(g)\,b(h)\,\delta _{\text{\small{$gh$}}}
\qquad \bigl( \,a, b \in \mathds{C}[G] \,\bigr). \]
An involution then is introduced by defining
\[ a^* := \sum _{\text{\small{$g \in G$}}}
\overline{a(g)}\,\delta _{\text{\small{$g$}}^{\text{\footnotesize{\,-1}}}}
\qquad \bigl( \,a \in \mathds{C}[G] \,\bigr). \]
This makes $\mathds{C}[G]$ into a \st-algebra, called the
\underline{group ring} of $G$. \pagebreak
\end{example}

\begin{definition}[\st-semigroup]%
\index{concepts}{semigroup}%
\index{concepts}{s045@\protect\st-semigroup}%
\index{concepts}{semigroup!ssemigroup@\protect\st-semigroup}%
A semigroup is a set equipped with an associative multiplication.
A \underline{\st-semigroup} is a semigroup $S$ equipped with an
involution $\st : S \to S, s \mapsto s^*$ satisfying
\[ {(st)}^* = t^*s^* \quad \text{for all} \quad s, t \in S. \]
For example every group is a \st-semigroup when the involution
is defined to be inversion. Also, every commutative semigroup
is a \st-semigroup when equipped with the identical involution.
\end{definition}

\begin{example}[the \st-semigroup ring \hbox{$\mathds{C}[S]$}]%
\index{symbols}{C07@$\protect\mathds{C}[S]$}\label{CS}%
From a \st-semigroup we can form a \st-algebra in a straightforward
generalisation of example \ref{CG}. Let $S \neq \varnothing$ be
a \st-semigroup. Since $S$ may be considered as a basis for the
free vector space $\mathds{C}[S]$, the latter carries precisely one
structure of \st-algebra, such that its multiplication and involution
extend the ones of $S$, namely
\[ a\mspace{2mu}b =
\sum _{\text{\small{$s, t \in S$}}} a(s)\,b(t)\,\delta _{\text{\small{$st$}}}
\qquad \bigl( \,a, b \in \mathds{C}[S] \,\bigr) \]
and
\[ a^* := \sum _{\text{\small{$s \in S$}}}
\overline{a(s)}\,\delta _{\text{\small{$s$}}^{\mspace{1mu}\st}}
\qquad \bigl( \,a \in \mathds{C}[S] \,\bigr). \]
\end{example}

\begin{definition}[Hermitian and normal elements]%
\index{concepts}{Hermitian!element}%
\index{concepts}{normal!element|(}%
\index{concepts}{self-adjoint!element}%
Let $A$ be a \linebreak \st-algebra. An element $a$ of $A$ is called
\underline{Hermitian}, or \underline{self-adjoint}, if $a = a^*$.
An element $b$ of $A$ is called \underline{normal}, if
$b^*b = b\,b^*$, that is, if $b$ and $b^*$ commute.
Every Hermitian element is normal of course.
\end{definition}

\begin{definition}[$A\protect\sa$]%
\index{symbols}{A7@$A\protect\sa$}%
\index{concepts}{Hermitian!part}%
Let $A$ be a \st-algebra, and let $B$ be a subset of $A$.
One denotes by $B\sa$ the set of Hermitian elements of $B$.
Here ``sa'' stands for ``self-adjoint''. We call $B\sa$ the
\underline{Hermitian part} of $B$. In particular, $A\sa$
will then be a real vector subspace of $A$.
\end{definition}

\begin{observation}
An arbitrary element $c$ of a  \st-algebra $A$ can be written uniquely
as $c = a+\iu b$ with $a,b$ in $A\sa$, namely $a = (c+c^*)/2$ and
$b = (c-c^*)/(2 \iu)$. Furthermore the element $c$ is normal if and only
if $a$ and $b$ commute. Hence:
\end{observation}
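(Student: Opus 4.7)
The plan is to handle the decomposition and the normality criterion as two separate elementary computations. For the decomposition I would first verify that the stated candidates $a := (c+c^*)/2$ and $b := (c-c^*)/(2\iu)$ are Hermitian and reconstruct $c$, and then argue uniqueness abstractly. For the normality claim I would simply expand the products $c^*c$ and $cc^*$ using bilinearity and the Hermitian property of $a$ and $b$.

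To see that $a$ and $b$ are Hermitian I would invoke ${(c^*)}^* = c$ together with the conjugate linearity of the involution. The case of $a$ is immediate; for $b$ one must remember that the scalar $1/(2\iu) = -\iu/2$ becomes $\iu/2$ under complex conjugation, so $b^* = (\iu/2)(c^* - c) = (c - c^*)/(2\iu) = b$. The identity $a + \iu b = c$ then reduces to a one-line algebraic simplification. For uniqueness, from $c = a + \iu b = a' + \iu b'$ with all four summands in $A\sa$, taking adjoints yields $c^* = a - \iu b = a' - \iu b'$; adding and subtracting these equations with the original ones forces $a = a'$ and $b = b'$.

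For the normality characterisation I would use the decomposition just established, writing $c = a + \iu b$ and $c^* = a - \iu b$, and expand via bilinearity. The $a^2$ and $b^2$ contributions cancel in the difference, leaving
\[ cc^* - c^*c = 2\iu\,(ba - ab), \]
so that $c$ is normal if and only if $ab = ba$. There is no substantive obstacle in the argument; the only step that demands a moment of care is the sign in the verification that $b^* = b$, which comes down to correctly conjugating the scalar $1/(2\iu)$.
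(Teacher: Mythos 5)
Your proposal is correct and follows exactly the computation the paper intends: the observation is stated with the explicit formulas $a=(c+c^*)/2$, $b=(c-c^*)/(2\iu)$, and the verification of Hermiticity, uniqueness via taking adjoints, and the expansion $cc^*-c^*c=2\iu\,(ba-ab)$ is the standard (implicit) argument. No gaps; the sign check for $b^*=b$ is handled correctly.
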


\begin{proposition}\index{concepts}{normal!element|)}%
A \st-algebra is commutative if and only if each of its elements is normal.
\pagebreak
\end{proposition}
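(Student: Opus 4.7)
The plan is to deduce the proposition directly from the observation that immediately precedes it. The forward direction is the easy one: if $A$ is commutative, then for every $b \in A$ we have $b^*b = b\,b^*$ trivially, so every element is normal. The substantive direction is the converse.

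For the converse, suppose every element of $A$ is normal. I would first show that any two Hermitian elements of $A$ commute. Given $a, b \in A\sa$, form $c := a + \iu b$. Then $c \in A$ is normal by hypothesis, so by the observation (applied to this specific decomposition, which is the unique one in terms of Hermitian parts) $a$ and $b$ commute.

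Having established that $A\sa$ is a commuting set, I would then pick arbitrary $c, d \in A$ and write them as $c = a + \iu b$ and $d = e + \iu f$ with $a,b,e,f \in A\sa$. Expanding
\[
cd = ae + \iu\,af + \iu\,be - bf, \qquad dc = ea + \iu\,eb + \iu\,fa - fb,
\]
and using the pairwise commutativity of $a,b,e,f$ already established, the two expressions coincide, so $cd = dc$ and $A$ is commutative.

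I do not anticipate a genuine obstacle here; the observation has already done the real work by linking normality of $c$ to commutativity of its Hermitian and skew parts. The only thing worth being careful about is that normality is assumed for \emph{all} elements of $A$, not merely a distinguished family, which is precisely what lets us apply the observation to the auxiliary element $a + \iu b$ built out of two arbitrary Hermitian elements.
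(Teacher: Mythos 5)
Your proof is correct and is precisely the argument the paper intends: the proposition is stated with ``Hence:'' immediately after the observation that $c = a + \iu b$ (with $a, b$ the Hermitian parts) is normal if and only if $a$ and $b$ commute, and your write-up simply makes that deduction explicit. No gaps.
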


\begin{observation}\label{Hermprod}
If $a, b$ are Hermitian elements of a \st-algebra, then the product
$ab$ is Hermitian if and only if $a$ and $b$ commute.

We thus see that $A\sa$ need not be closed under multiplication!
\end{observation}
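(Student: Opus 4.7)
The plan is to apply the defining properties of an algebra involution directly to the product $ab$ and compare with $ab$ itself. Since $a$ and $b$ are Hermitian, we have $a^* = a$ and $b^* = b$. The involution on a \st-algebra reverses the order of products, so
\[
(ab)^* = b^*a^* = ba.
\]
Thus $ab$ is Hermitian, i.e.\ $(ab)^* = ab$, if and only if $ba = ab$, which is precisely the statement that $a$ and $b$ commute. This disposes of both directions at once.

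There is no genuine obstacle here; the content of the observation is really just the order-reversal clause in the definition of an algebra involution combined with the Hermitian hypothesis. The only point worth flagging is that this order reversal is exactly what prevents $A\sa$ from being automatically closed under multiplication: absent commutativity, $(ab)^* = ba$ need not equal $ab$.

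For the second sentence of the observation — that $A\sa$ need not be closed under multiplication — one can simply invoke example \ref{EndV} or $\blop(H)$ for $H$ of dimension at least two and exhibit two non-commuting Hermitian elements whose product fails to be Hermitian; by the equivalence just proved, any pair of non-commuting self-adjoint operators will do.
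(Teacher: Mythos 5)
Your proof is correct and is exactly the intended argument: the paper leaves this observation without an explicit proof precisely because it reduces to the computation $(ab)^* = b^*a^* = ba$, which you carry out. Your remark on witnessing the failure of closure of $A\sa$ under multiplication via non-commuting self-adjoint operators in $\blop(H)$ is also the standard justification for the second sentence.
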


\begin{definition}[unit, unital algebra]%
\index{concepts}{unit}\index{concepts}{algebra!unital}%
\index{concepts}{unital!algebra}%
Let $A$ be an algebra. A \underline{unit} in $A$ is a non-zero element
$e$ of $A$ such that
\[ ea = ae = a \quad \text{for all} \quad a \in A. \]
One says that $A$ is a \underline{unital} algebra if $A$ has a unit. 
\end{definition}

Please note that a unit is required to be different from the zero element.
An algebra can have at most one unit, as is easily seen. Hence any unit
in a \st-algebra is Hermitian. We shall reserve the notation ``$e$'' for units.%
\index{symbols}{e@$e$}

\begin{example}
If $G$ is a group with neutral element $g$, then $\mathds{C}[G]$ is a
unital \st-algebra with unit $\delta_{\text{\small{$g$}}}$.
\end{example}

\begin{definition}[unitisation]\index{symbols}{A8@$\protect\tld{A}$}%
\index{concepts}{unitisation}\index{concepts}{algebra!unitisation}%
Let $A$ be an algebra. If $A$ is unital, one defines $\tld{A} := A$. Assume
now that $A$ has no unit. One then defines \linebreak
$\tld{A} := \mathds{C} \oplus A$ (direct sum of vector spaces). One imbeds
$A$ into $\tld{A}$ via $a \mapsto (0,a)$. One defines $e := (1,0)$, so that
$(\lambda,a) = \lambda e + a$ for $\lambda \in \mathds{C}$, $a \in A$.
In order for $\tld{A}$ to become a unital algebra with unit $e$, and with
multiplication extending the one in $A$, the multiplication in $\tld{A}$ must
be \linebreak defined by
\[ ( \lambda e + a ) ( \mu e + b ) =
( \lambda \mu ) e + ( \lambda b + \mu a + ab )
\qquad ( \,\lambda, \mu \in \mathds{C}, a, b \in A \,), \]
and this definition indeed satisfies the requirements. One says that
$\tld{A}$ is the \underline{unitisation} of $A$. If $A$ is a \st-algebra,
one makes $\tld{A}$ into a unital \st-algebra by putting
\[ ( \lambda e + a )^* := \overline{\lambda} e + a^*
\qquad ( \,\lambda \in \mathds{C}, a \in A \,). \]
\end{definition}

\begin{definition}[algebra homomorphisms]\label{hom}%
\index{concepts}{homomorphism}\index{concepts}{algebra!homomorphism}%
\index{concepts}{algebra!s-algebra@\protect\st-algebra!homomorphism}%
Let $A,B$ be algebras. An \underline{algebra homomorphism}
from $A$ to $B$ is a linear mapping $\pi : A \to B$ such that
$\pi (ab) = \pi (a) \pi(b)$ for all $a,b \in A$.
If $A,B$ are \st-algebras, and if $\pi$ furthermore satisfies
$\pi (a^*) = \pi (a)^*$ for all $a$ in $A$, then $\pi$ is called a
\underline{\st-algebra homomorphism}. \pagebreak
\end{definition}

\begin{definition}[unital algebra homomorphisms]%
\index{concepts}{unital!homomorphism}\label{unitalhom}%
\index{concepts}{homomorphism!unital}%
Let $A,B$ be unital algebras  with units $e_A, e_B$ respectively.
An algebra homomorphism $\pi$ from $A$ to $B$ is called
\underline{unital}, if it satisfies $\pi (e_A) = e_B$.
\end{definition}

Next up are subalgebras.

\begin{definition}[subalgebra, \protect\st-subalgebra]%
\index{concepts}{subalgebra}\index{concepts}{algebra!subalgebra}%
\index{concepts}{algebra!subalgebra!s-subalgebra@\protect\st-subalgebra}%
\index{concepts}{s06@\protect\st-subalgebra}%
\index{concepts}{subalgebra!s-subalgebra@\protect\st-subalgebra}%
\index{concepts}{algebra!s-algebra@\protect\st-algebra!s-subalgebra@\protect\st-subalgebra}%
A \underline{subalgebra} of an \linebreak algebra $A$ is a complex
vector subspace $B$ of $A$ such that $ab \in B$ whenever $a, b \in B$.
A \underline{\st-subalgebra} of a \st-algebra $A$ is a subalgebra
$B$ of $A$ containing with each element $b$ its adjoint $b^*$ in $A$.
\end{definition}

\begin{example}[$\cont (\Omega)$]%
\index{symbols}{C15@$\cont (\Omega)$}\label{CO}%
If $\Omega \neq \varnothing$ is a Hausdorff space, then the vector space
$\cont (\Omega)$ of continuous complex-valued functions on $\Omega$
is a \st-subalgebra of $\mathds{C}^{\,\text{\Small{$\Omega$}}}$,
cf.\ \ref{FOmega}.
\end{example}

\begin{definition}[unital subalgebra]\index{concepts}{unital!subalgebra}%
\index{concepts}{algebra!subalgebra!*-unital@unital}%
\index{concepts}{subalgebra!*-unital@unital}%
\index{concepts}{s06@\protect\st-subalgebra!*-unital@unital}%
If $A$ is a unital algebra, then a subalgebra of $A$ containing the unit of $A$
is called a \underline{unital} subalgebra of $A$.
\end{definition}

The following two definitions are our own. 

\begin{definition}[\twiddle-unital subalgebra]%
\label{twiddleunital}\index{concepts}{t-unital@\protect\twiddle-unital}%
\index{concepts}{subalgebra!t-unital@\protect\twiddle-unital}%
\index{concepts}{algebra!subalgebra!t-unital@\protect\twiddle-unital}%
\index{concepts}{unital!t-unital@\protect\twiddle-unital}%
\index{concepts}{s06@\protect\st-subalgebra!t-unital@\protect\twiddle-unital}%
Let $B$ be a subalgebra of an algebra $A$. We shall say
that $B$ is \underline{\twiddle-unital} in $A$, if either $B$
has no unit, or else the unit in $B$ also is a unit in $A$.
\end{definition}

\begin{definition}[the canonical imbedding]%
\label{canimb}\index{concepts}{canonical!imbedding}%
\index{concepts}{imbedding!canonical}%
Let $B$ be a subalgebra of an algebra $A$. The
\underline{canonical imbedding} of $\tld{B}$ into
$\tld{A}$ is described as follows. If $B$ has a unit, it is the map
$\tld{B} \to \tld{A}$ which is the identity map on $\tld{B} = B$.
If $B$ has no unit, it is the only linear map $\tld{B} \to \tld{A}$
which is the identity map on $B$ and which maps unit to unit.
\end{definition}

We see that the above is the most natural imbedding possible.\linebreak
The next statement gives a rationale for the preceding two definitions.

\begin{proposition}\label{twidunitp}%
Let $B$ be a subalgebra of an algebra $A$. Then $B$ is \twiddle-unital
in $A$ if and only if $\tld{B}$ is a unital subalgebra of $\tld{A}$ under
the canonical imbedding. \pagebreak
\end{proposition}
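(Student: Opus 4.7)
The plan is to do a case split on whether $B$ possesses a unit, since the definition of $\tld{}$-unitality and the canonical imbedding both bifurcate at exactly this point. In each case I will unwind the definitions and check the two conditions against each other. Throughout I use that a unit in an algebra is unique whenever it exists.

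First I would treat the case where $B$ has no unit. Then $B$ is $\tld{}$-unital in $A$ automatically by the first clause of Definition \ref{twiddleunital}, so I need to verify, without any hypothesis, that the canonical imbedding $\iota : \tld{B} \to \tld{A}$ realises $\tld{B}$ as a unital subalgebra of $\tld{A}$. By its very definition $\iota$ sends the unit of $\tld{B}$ to the unit of $\tld{A}$, which takes care of unitality. Closure under multiplication then follows from the formula
\[ (\lambda e + a)(\mu e + b) = (\lambda\mu)e + (\lambda b + \mu a + ab) \]
in $\tld{A}$, using that $B$ is closed under multiplication and the linear operations inherited from $A$.

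Next I would handle the case where $B$ has a unit $e_B$. Here $\tld{B} = B$ and the canonical imbedding is simply the inclusion $B \hookrightarrow A \hookrightarrow \tld{A}$. The question ``is $\tld{B}$ a unital subalgebra of $\tld{A}$?'' therefore reduces to ``does $e_{\tld{A}} \in B$?'' I split this into two sub-cases. If $A$ is unital, then $\tld{A} = A$ and $e_{\tld{A}} = e_A$; since $e_B \in B$ and a unit is unique, $e_A \in B$ holds if and only if $e_B = e_A$, that is, if and only if $e_B$ is a unit of $A$, which is precisely the second clause of Definition \ref{twiddleunital}. If $A$ has no unit, then $e_{\tld{A}} = (1,0) \in \tld{A}\setminus A$, so $e_{\tld{A}} \notin B$; meanwhile the second clause of Definition \ref{twiddleunital} cannot be satisfied either, since there is no unit in $A$ for $e_B$ to coincide with. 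Hence both sides of the equivalence fail simultaneously.

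The only mildly delicate point is recognising that one needs this further sub-split in the case $B$ unital, because ``the unit in $B$ is also a unit in $A$'' tacitly presupposes that $A$ has a unit; once one observes that the case ``$B$ unital, $A$ not unital'' makes both the $\tld{}$-unitality condition and the condition $e_{\tld{A}} \in B$ false, the proof assembles itself from the formulas defining $\tld{A}$ and the canonical imbedding.
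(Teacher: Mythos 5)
Your proof is correct: the paper states Proposition \ref{twidunitp} without proof, and your case analysis (on whether $B$ has a unit, with the further sub-split on whether $A$ has a unit) is exactly the intended unwinding of Definitions \ref{twiddleunital} and \ref{canimb}, including the key observation that when $B$ is unital but $A$ is not, the unit $e_{\tld{A}} = (1,0)$ lies outside $A \supset B$ while \twiddle-unitality fails vacuously. Nothing is missing.
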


A large part of this book is about representations of \st-algebras
on Hilbert spaces, or even pre-Hilbert spaces, a concept to be
defined later on, cf.\ \ref{repdef} below. We mean such objects
when we speak about representations. A different concept, to be
introduced next, is that of the so-called ``left regular representation
of an algebra on itself''.

\begin{definition}[the left regular representation]%
\index{concepts}{left!regular representation}%
\index{concepts}{left!translation}\label{leftreg}%
\index{concepts}{representation!left regular}%
\index{symbols}{L17@$L\tla$}%
Let $A$ be an algebra. For $a \in A$, one defines a linear
operator $L\tla : A \to A$ by $L\tla x := ax$ for all $x \in A$.
The operator $L\tla$ is called \underline{left translation with $a$}.
The mapping $L : a \mapsto L\tla$ $(a \in A)$ then is an algebra
homomorphism $A \to \mathrm{End}(A)$, cf.\ \ref{EndV}.
It is called the \underline{left regular representation of}
\underline{$A$ on itself}.
\end{definition}

\begin{observation}\label{leftreginj}%
Let $A$ be an algebra. The left regular representation of $\tld{A}$
on itself is injective. Hence its range is isomorphic to $\tld{A}$ as
an algebra. (Please note the tildes.)
\end{observation}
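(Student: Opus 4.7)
The plan is to exploit the single feature distinguishing $\tld{A}$ from $A$, namely that $\tld{A}$ is always unital. Write $e$ for its unit: if $A$ already has a unit, then $\tld{A}=A$ by definition, and $e$ is that unit; otherwise $\tld{A}=\mathds{C}\oplus A$ and $e=(1,0)$ serves as the unit by the explicit multiplication formula given in the definition of the unitisation.

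Next I would check injectivity of $L:\tld{A}\to \mathrm{End}(\tld{A})$, $a\mapsto L\tla$, defined as in \ref{leftreg} applied to the algebra $\tld{A}$ rather than to $A$. Suppose $a\in\tld{A}$ satisfies $L\tla=0$, i.e.\ $ax=0$ for every $x\in\tld{A}$. Specialising to $x=e$ yields $a=ae=0$. Hence $\ker L=\{0\}$, so $L$ is injective. This is exactly the point of the parenthetical remark ``Please note the tildes'': one cannot run the same argument for $L$ acting on $A$ when $A$ is non-unital, since there is no unit available to plug in.

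Finally, since $L$ is an algebra homomorphism by \ref{leftreg} and injective, its corestriction to its range $L(\tld{A})\subseteq\mathrm{End}(\tld{A})$ is an algebra isomorphism $\tld{A}\cong L(\tld{A})$. There is no real obstacle here; the whole content lies in noticing that passing from $A$ to $\tld{A}$ supplies the unit needed to evaluate $L\tla$ at and thereby recover $a$.
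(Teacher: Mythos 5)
Your proof is correct and is precisely the paper's own argument: the paper's one-line proof reads ``Apply the translation operators to the unit in $\tld{A}$,'' which is exactly your step $a = ae = L\tla e = 0$. You have simply written it out in more detail, including the correct reading of the parenthetical about the tildes.
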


\begin{proof}
Apply the translation operators to the unit in $\tld{A}$.
\end{proof}

\begin{corollary}
Any algebra is isomorphic to some algebra of \linebreak
linear operators (on the unitisation).
\end{corollary}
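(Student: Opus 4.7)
The plan is to invoke the left regular representation of the unitisation and simply restrict it to $A$. Concretely, consider the algebra homomorphism $L : \tld{A} \to \mathrm{End}(\tld{A})$ defined in \ref{leftreg}, which sends an element $x \in \tld{A}$ to the left translation operator $L_x : \tld{A} \to \tld{A}$, $y \mapsto xy$. By \ref{leftreginj} this map is injective, so its range $L(\tld{A})$ is a subalgebra of $\mathrm{End}(\tld{A})$ isomorphic to $\tld{A}$ as an algebra.

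Now distinguish two cases. If $A$ is unital, then by the definition of the unitisation we have $\tld{A} = A$, and the observation above delivers the conclusion immediately: $A$ is isomorphic to the algebra of operators $L(A) \subseteq \mathrm{End}(A) = \mathrm{End}(\tld{A})$. If $A$ has no unit, then $A$ is identified with the subalgebra $\{(0,a) : a \in A\}$ of $\tld{A}$, so the restriction $L\bigl|_A : A \to \mathrm{End}(\tld{A})$ makes sense and is an algebra homomorphism. Injectivity of $L\bigl|_A$ follows at once from the injectivity of $L$ on $\tld{A}$, so $A$ is algebra-isomorphic to its image $L(A) \subseteq \mathrm{End}(\tld{A})$, an algebra of linear operators on $\tld{A}$.

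There is no serious obstacle in this argument; it is really just a matter of packaging the preceding observation. The only thing worth stressing is \emph{why} one has to pass to the unitisation rather than represent $A$ on itself: when $A$ lacks a unit, evaluating left translation operators at a general element of $A$ need not separate points of $A$, whereas in $\tld{A}$ evaluation at the unit $e$ recovers the element, which is precisely the trick used in the proof of \ref{leftreginj}.
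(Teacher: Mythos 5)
Your argument is correct and is exactly the route the paper intends: the corollary is an immediate consequence of \ref{leftreginj}, obtained by restricting the (injective) left regular representation of $\tld{A}$ to $A$. Your closing remark about why the unitisation is needed — evaluation at the unit separates points — correctly identifies the trick underlying \ref{leftreginj}.
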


\clearpage


\section{Normed \texorpdfstring{$*$-}{\80\052\80\055}Algebras and their Unitisation}

\begin{definition}[algebra norm, normed algebra, Banach algebra]%
\index{concepts}{norm|(}\index{concepts}{algebra!norm}%
\index{concepts}{algebra!normed algebra}%
\index{concepts}{norm!algebra norm}%
\index{concepts}{normed!algebra|seeonly{algebra}}%
\index{concepts}{algebra!Banach algebra}%
\index{concepts}{Banach!algebra|seeonly{algebra}}%
An \underline{algebra norm} is a norm $|\cdot|$ on an algebra $A$ such that
\[ |\,ab\,| \leq |\,a\,|\cdot|\,b\,| \quad \text{for all} \quad a, b \in A. \]
The pair $\bigl( \,A , | \cdot | \,\bigr)$ then is called a \underline{normed algebra}.
A normed algebra is called a \underline{Banach algebra} if the
underlying normed space is complete, i.e.\ if it is a Banach space.
\end{definition}

\begin{example}[$\blop(V)$]\label{algbdedop}%
\index{symbols}{B(V)@$\blop(V)$}%
If $V$ is a complex normed space, we denote by $\blop(V)$ the
set of bounded linear operators on $V$. It is a normed algebra
under the operator norm. If $V$ furthermore is complete, i.e.\ if
it is a Banach space, then $\blop(V)$ is a Banach algebra.
\end{example}

\begin{definition}%
[normed \protect\st-algebra, Banach \protect\st-algebra]%
\index{concepts}{algebra!normed s-algebra@normed \protect\st-algebra}%
\index{concepts}{normed!s-algebra@\protect\st-algebra|seeonly{algebra}}%
\index{concepts}{Banach!s-algebra@\protect\st-algebra|seeonly{algebra}}%
\index{concepts}{algebra!Banach s-algebra@Banach \protect\st-algebra}%
A \st-algebra \linebreak equipped with an algebra norm shall be
called a \underline{normed \st-algebra}. A
\underline{Banach \st-algebra} shall be a normed \st-algebra such
that the underlying normed space is complete.
\end{definition}

\begin{example}[$\blop(H)$]\index{symbols}{B(H)@$\blop(H)$}%
If $H$ is a complex Hilbert space, then the  Banach
algebra $\blop(H)$ of bounded linear operators on $H$ is a Banach 
\st-algebra under the operation of taking the adjoint of an operator.
\end{example}

\begin{definition}[accessory \protect\st-norm]\label{auxnorm}%
\index{symbols}{a05@${"|}{"|}\,a\,{"|}{"|}$}%
\index{concepts}{involution!isometric}%
\index{concepts}{s03@\protect\st-norm}%
\index{concepts}{norm!accessory *-norm@accessory \protect\st-norm}%
\index{concepts}{accessory *-norm@accessory \protect\st-norm}%
\index{concepts}{s03@\protect\st-norm!accessory}%
Let $\bigl( \,A, | \cdot | \,\bigr)$ be a normed \linebreak \st-algebra.
One says that the involution in $A$ is \underline{isometric} if
\[ |\,a^*\,| = |\,a\,|\quad \text{for all} \quad a \in A. \]
One then also says that the norm $| \cdot |$ is a \underline{\st-norm},
and that $A$ is \underline{\st-normed}.
If the involution is not isometric, one can introduce an accessory algebra
norm by putting
\[ \|\,a\,\| := \sup\,\{ \,|\,a\,|,|\,a^*\,| \,\} \qquad ( \,a \in A \,). \]
In this norm the involution is isometric. We shall call this norm the
\underline{accessory \st-norm}.
\end{definition}

We shall reserve the notation $\| \cdot \|$ for \st-norms, in particular
the accessory \st-norm, and the norms on pre-C*-algebras,
see \ref{preC*alg} below. \pagebreak

\begin{observation}\label{involcont}%
\index{concepts}{involution!continuous}%
Let $A$ be a normed \st-algebra. If the involution in $A$ is
continuous, there exists $c > 0$ such that
\[ |\,a^*\,| \leq c\,|\,a\,| \quad \text{for all} \quad a \in A. \]
The accessory \st-norm then is equivalent to the original norm.
\end{observation}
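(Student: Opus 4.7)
The plan is to exploit the fact that the involution, although conjugate-linear rather than linear, is still a bounded map if and only if it is continuous, because the scalar $-1$ in the conjugate-linearity condition disappears when one rescales by a \emph{real} positive number. The main work is in the first claim; once $|a^{*}| \leq c\,|a|$ is in hand, the equivalence of the two norms is immediate.

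First I would unpack continuity at the origin: there exists $\delta > 0$ such that $|a| < \delta$ implies $|a^{*}| < 1$. Given an arbitrary nonzero $a \in A$, I would rescale by the positive real number $t := \delta/(2|a|)$, forming $b := t\,a$. Then $|b| = \delta/2 < \delta$, so $|b^{*}| < 1$. Because $t$ is real, conjugate-linearity of the involution yields $b^{*} = t\,a^{*}$, and hence
\[ \tfrac{\delta}{2|a|}\,|a^{*}| = |b^{*}| < 1, \]
which rearranges to $|a^{*}| \leq (2/\delta)\,|a|$. This gives the required inequality with $c = 2/\delta$ (and the inequality is trivial for $a = 0$).

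For the second assertion, I would just combine the definition of the accessory \st-norm with the inequality just obtained. On the one hand $\|a\| = \sup\{|a|,|a^{*}|\} \geq |a|$. On the other hand, using $|a^{*}| \leq c|a|$, we get $\|a\| \leq \max\{1,c\}\,|a|$. Hence
\[ |a| \;\leq\; \|a\| \;\leq\; \max\{1,c\}\,|a| \qquad (a \in A), \]
so the two norms are equivalent.

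The main obstacle, as far as one exists, is not to be distracted by the fact that the involution is only conjugate-linear: one has to ensure that the rescaling trick used to convert continuity into boundedness goes through. This works because we rescale by a positive real scalar, on which conjugation acts trivially, so the argument is formally identical to the standard one for continuous linear maps between normed spaces.
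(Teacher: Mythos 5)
Your proof is correct: the rescaling by a positive real scalar legitimately converts continuity at the origin into the bound $|\,a^*\,| \leq c\,|\,a\,|$, and the norm equivalence then follows as you state. The paper leaves this observation without a written proof, and your argument is precisely the standard one the author evidently intends.
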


\begin{example}\index{symbols}{C1@$\protect\mathds{C}[G]$}\label{CGn}%
Let $G$ be a group. One introduces an algebra norm $| \cdot |$
on $\mathds{C}[G]$ by putting
\[ |\,a\,| := \sum _{g \in G} |\,a(g)\,| \qquad \bigl( \,a \in \mathds{C}[G] \,\bigr), \]
thus making $\mathds{C}[G]$ into a normed \st-algebra with
isometric involution. (Recall \ref{CG}.)
\end{example}

\begin{definition}[C*-algebra, C*-property]%
\label{preC*alg}\index{concepts}{algebra!C*-algebra}%
\index{concepts}{algebra!pre-C*-algebra}%
\index{concepts}{pre-C*-algebra}%
\index{concepts}{C6@C*-algebra}\index{concepts}{C7@C*-property}%
A \underline{pre-C*-algebra} is a normed \st-algebra
$\bigl( \,A, \| \cdot \| \,\bigr)$, such that for all $a \in A$, one has
\[ {\|\,a\,\| \,}^{2} = \|\,a^*a\,\|\quad \text{as well as} \quad \|\,a\,\| = \|\,a^*\,\|. \]
The first equality is called the \underline{C*-property}.
A complete pre-C*-algebra is called a \underline{C*-algebra}.
(See also \ref{precompl} below.)
\end{definition}

Please note that pre-C*-algebras have isometric involution.

\bigskip
There is some redundancy in the above definition,
as the next proposition shows.

\begin{proposition}\label{condC*}%
Let $\bigl( \,A , \| \cdot \| \,\bigr)$ be a normed \st-algebra such that
\[ {\|\,a\,\| \,}^{2} \leq \|\,a^*a\,\| \quad \text{for all} \quad a \in A. \]
Then $(A,\|\cdot\|)$ is a pre-C*-algebra. The mnemonic is
\begin{center} ``to estimate a square by something positive''. \end{center}
\end{proposition}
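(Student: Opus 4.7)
The plan is to derive the two defining conditions of a pre-C*-algebra, namely the isometry of the involution and the full C*-property, from the single one-sided inequality $\|a\|^2 \leq \|a^*a\|$, using only submultiplicativity of the algebra norm. Submultiplicativity yields the reverse inequality $\|a^*a\| \leq \|a^*\|\cdot\|a\|$ for free, so the trick will simply be to feed the two inequalities into each other.

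First I would chase the isometry of the involution. Combining the assumed inequality with submultiplicativity gives
\[ \|a\|^2 \leq \|a^*a\| \leq \|a^*\|\cdot\|a\|. \]
For $a \neq 0$ we can divide by $\|a\|$ to obtain $\|a\| \leq \|a^*\|$; the case $a = 0$ is trivial. Now applying this same inequality to $a^*$ in place of $a$, and using $a^{**} = a$, yields $\|a^*\| \leq \|a\|$. Hence $\|a\| = \|a^*\|$ for every $a \in A$, which is half of what a pre-C*-algebra requires.

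Once the involution is known to be isometric, the C*-property follows at once: submultiplicativity gives $\|a^*a\| \leq \|a^*\|\cdot\|a\| = \|a\|^2$, and combining this with the hypothesis $\|a\|^2 \leq \|a^*a\|$ forces equality $\|a\|^2 = \|a^*a\|$. Together with the isometry established above, this is exactly the pre-C*-algebra axiom of \ref{preC*alg}.

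There is no real obstacle here; the only point requiring a touch of care is the division by $\|a\|$, which must be handled separately for $a = 0$, and the symmetric application of the one-sided estimate to $a^*$, which is what closes the loop and upgrades the one-sided bound into an equality. The mnemonic ``to estimate a square by something positive'' captures precisely this phenomenon: the hypothesis is automatically self-strengthening in the presence of submultiplicativity.
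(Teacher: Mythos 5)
Your argument is correct and is essentially identical to the paper's own proof: both feed the hypothesis into submultiplicativity to get $\|a\| \leq \|a^*\|$, apply this to $a^*$ to obtain isometry of the involution, and then use submultiplicativity once more to reverse the assumed inequality into the C*-equality. No gaps.
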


\begin{proof} For $a$ in $A$ we have
${\|\,a\,\| \,}^2 \leq \|\,a^*a\,\| \leq \|\,a^*\,\| \cdot \|\,a\,\|$,
whence $\|\,a\,\| \leq \|\,a^*\,\|$. Hence also
$\|\,a^*\,\| \leq \| \,{(a^*)}^* \,\| = \|\,a\,\|$,
which implies $\|\,a^*\,\| = \|\,a\,\|$.
It follows that $\|\,a^*a\,\| \leq {\|\,a\,\| \,}^2$.
The converse inequality holds by assumption.
\end{proof}

\begin{corollary}\index{symbols}{B(H)@$\blop(H)$}%
If $(H, \langle \cdot, \cdot \rangle)$ is a Hilbert space, then $\blop(H)$
is a \linebreak C*-algebra, and with it every closed \st-subalgebra.
\pagebreak
\end{corollary}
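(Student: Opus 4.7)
The plan is to apply the simplification offered by Proposition \ref{condC*}: it suffices to check that $\blop(H)$ is a normed \st-algebra satisfying the one-sided inequality $\|\,a\,\|^{2} \leq \|\,a^*a\,\|$ for all $a \in \blop(H)$, and then to invoke completeness.

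First I would note that, by example \ref{algbdedop}, $\blop(H)$ is a Banach algebra under the operator norm, since $H$ is complete. The involution is the operation of taking the Hilbert space adjoint of an operator, and it is elementary that this is a conjugate-linear anti-multiplicative involution, so $\blop(H)$ is a Banach \st-algebra. Only the key estimate remains. For $a \in \blop(H)$ and $x \in H$ with $\|\,x\,\| \leq 1$, I would expand using the defining property of the adjoint together with the Cauchy--Schwarz inequality:
\[
\|\,ax\,\|^{2} = \langle ax, ax \rangle = \langle a^*ax, x \rangle \leq \|\,a^*ax\,\| \cdot \|\,x\,\| \leq \|\,a^*a\,\|.
\]
Taking the supremum over all such $x$ yields $\|\,a\,\|^{2} \leq \|\,a^*a\,\|$, so Proposition \ref{condC*} applies and shows that $\blop(H)$ is a pre-C*-algebra. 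Being complete, it is a C*-algebra.

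For the second assertion, let $B$ be a closed \st-subalgebra of $\blop(H)$. Then $B$ is automatically a normed \st-algebra when equipped with the restriction of the operator norm and of the involution. The inequality $\|\,a\,\|^{2} \leq \|\,a^*a\,\|$ passes from $\blop(H)$ to $B$ without change, so a second application of Proposition \ref{condC*} shows that $B$ is a pre-C*-algebra; and since $B$ is closed in the complete space $\blop(H)$, it is itself complete, hence a C*-algebra.

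There is no real obstacle here: the only non-bookkeeping step is the adjoint-trick estimate above, and it is the standard Cauchy--Schwarz computation. The proposition \ref{condC*} does essentially all the work, reducing the C*-property to one inequality that falls out of the inner product structure on $H$.
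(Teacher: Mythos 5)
Your proof is correct and takes essentially the same route as the paper: the Cauchy--Schwarz estimate $\|\,ax\,\|^{2} = \langle a^*ax, x\rangle \leq \|\,a^*a\,\|\cdot\|\,x\,\|^{2}$ followed by an appeal to Proposition \ref{condC*} is exactly the paper's argument. The extra remarks about completeness and the inheritance of the inequality by closed \st-subalgebras are routine and consistent with what the paper leaves implicit.
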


\begin{proof}
For $a$ in $\blop(H)$ and $x$ in $H$, we have
\[ {\|\,ax\,\| \,}^2 = \langle ax, ax \rangle = \langle a^*ax, x \rangle
\leq \|\,a^*ax\,\| \cdot \|\,x\,\| \leq \|\,a^*a\,\| \cdot {\|\,x\,\| \,}^2, \]
so that upon taking square roots, we get $\|\,a\,\| \leq {\| \,a^*a \,\| \,}^{1/2}$.
\end{proof}

\medskip
The closed \st-subalgebras of $\blop(H)$, with $H$ a Hilbert space,
are the prototypes of C*-algebras, see the Gel'fand-Na\u{\i}mark
Theorem \ref{Gel'fandNaimark}. 

We can now turn to the unitisation of normed algebras.
We shall have to treat pre-C*-algebras apart from general
normed algebras.

\begin{proposition}\label{Cstarleftreg}%
For a pre-C*-algebra  $\bigl( \,A, \| \cdot \| \,\bigr)$ and for $a \in A$,
we have
\[ \| \,a \,\| = \max\,\{ \,\| \,ax \,\| : x \in A, \,\| \,x \,\| \leq 1\,\}. \]
For if $a \neq 0$, the maximum is achieved at $x = a^*/ \| \,a^* \,\|$.
That is, the left regular representation of a pre-C*-algebra on itself
\ref{leftreg} is isometric.
\end{proposition}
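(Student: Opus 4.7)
The plan is to show the two inequalities for the supremum and simultaneously verify that the value is attained at the proposed witness $x = a^*/\|a^*\|$.

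For the upper bound, I would invoke submultiplicativity of the algebra norm: for any $x \in A$ with $\|x\| \leq 1$,
\[ \|\,ax\,\| \leq \|\,a\,\| \cdot \|\,x\,\| \leq \|\,a\,\|, \]
so the supremum on the right is at most $\|\,a\,\|$.

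For the lower bound, the case $a = 0$ is trivial, so assume $a \neq 0$. Since the involution in a pre-C*-algebra is isometric (part of Definition \ref{preC*alg}), we have $\|\,a^*\,\| = \|\,a\,\| \neq 0$, and the element $x := a^*/\|\,a^*\,\|$ satisfies $\|\,x\,\| = 1$. Then, applying the C*-property of Definition \ref{preC*alg} to $a^*$ (giving $\|\,aa^*\,\| = \|\,(a^*)^*a^*\,\| = \|\,a^*\,\|^{2}$), I would compute
\[ \|\,ax\,\| = \frac{\|\,aa^*\,\|}{\|\,a^*\,\|} = \frac{\|\,a^*\,\|^{2}}{\|\,a^*\,\|} = \|\,a^*\,\| = \|\,a\,\|. \]
This simultaneously shows that the supremum equals $\|\,a\,\|$ and that it is attained at $x = a^*/\|\,a^*\,\|$, so "$\sup$" may be replaced by "$\max$".

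There is no real obstacle here; the only subtlety is remembering to apply the C*-property in the form $\|\,aa^*\,\| = \|\,a^*\,\|^{2}$ (i.e.\ to $a^*$ rather than to $a$) so that the factor $\|\,a^*\,\|$ in the denominator cancels correctly. The final assertion about the left regular representation being isometric is then just the observation that the displayed equality computes the operator norm of $L_a$ acting on $(A, \|\cdot\|)$.
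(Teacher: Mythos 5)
Your proof is correct and follows exactly the route the paper intends: the paper's own justification is precisely the witness $x = a^*/\|\,a^*\,\|$ together with the C*-property in the form $\|\,aa^*\,\| = \|\,a^*\,\|^{2}$ and the isometry of the involution. Nothing is missing.
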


\begin{corollary}\label{unitCstar}%
The unit in a unital pre-C*-algebra has norm $1$.
\end{corollary}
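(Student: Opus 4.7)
The plan is to apply proposition \ref{Cstarleftreg} directly to the unit $e$. Left translation $L_e$ is the identity map on $A$, so $ex = x$ for every $x \in A$, and the formula from the preceding proposition specialises to
$$\|\,e\,\| = \max\,\{\,\|\,x\,\| : x \in A,\ \|\,x\,\| \leq 1\,\}.$$
It then remains only to verify that this maximum equals $1$. The bound $\leq 1$ is immediate from the constraint. For the reverse bound, recall that a unit is required to be non-zero, so $e \neq 0$ and hence $\|\,e\,\| > 0$ by the positive definiteness of the norm; the element $e/\|\,e\,\|$ therefore lies in the closed unit ball and has norm exactly $1$.

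An alternative, even shorter, route bypasses \ref{Cstarleftreg} and uses the C*-property directly. Every unit in a \st-algebra is Hermitian, and $e$ is trivially idempotent, so
$$\|\,e\,\|^{2} = \|\,e^*e\,\| = \|\,e^{2}\,\| = \|\,e\,\|,$$
which forces $\|\,e\,\| \in \{0,1\}$; the case $\|\,e\,\| = 0$ is ruled out by $e \neq 0$.

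There is no real obstacle here: the only substantive point in either approach is the non-triviality of $e$, which is built into the definition of a unit. I would present the first argument, since it is the natural corollary of the proposition just proved and explains the placement of the statement.
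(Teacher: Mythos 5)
Your first argument is exactly the derivation the paper intends: the corollary is placed immediately after Proposition \ref{Cstarleftreg} precisely so that $\|\,e\,\| = \max\{\,\|\,x\,\| : \|\,x\,\| \leq 1\,\} = 1$ follows, with $e \neq 0$ supplying the witness $e/\|\,e\,\|$ in the unit ball. Both your arguments are correct; the second (via the C*-property and $e^*e = e$) is a fine alternative but not needed.
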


\begin{proposition}\label{preCstarunitis}%
\index{concepts}{unitisation}%
\index{concepts}{pre-C*-algebra!unitisation}%
\index{concepts}{C6@C*-algebra!unitisation}%
\index{symbols}{A8@$\protect\tld{A}$}%
\index{concepts}{algebra!C*-algebra!unitisation}%
Let $\bigl( \,A , \| \cdot \| \,\bigr)$ be a pre-C*-algebra. By defining
\[ \| \,a \,\| = \sup\,\{ \,\| \,ax \,\| : x \in A, \,\| \,x \,\| \leq 1\, \}
\qquad ( \,a \in \tld{A} \ ) \]
one makes $\tld{A}$ into a unital pre-C*-algebra. The above
norm extends of course the norm in $A$, by \ref{Cstarleftreg}.
\end{proposition}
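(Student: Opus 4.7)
The strategy is to recognise the formula as the pullback of the operator norm on $\blop(A)$ via the left-multiplication map $L : \tld{A} \to \blop(A)$, $a \mapsto L\tla$ with $L\tla x := ax$ for $x \in A$. Since $A$ is a two-sided ideal in $\tld{A}$, each $L\tla$ sends $A$ into $A$, and for $a = \lambda e + b$ the estimate $\|L\tla x\| \leq (|\lambda|+\|b\|)\,\|x\|$ shows $L\tla \in \blop(A)$. When $A$ is already unital, $\tld{A} = A$ and \ref{Cstarleftreg} makes the claim a tautology, so I focus on the case where $A$ has no unit.

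Granting that $L$ is injective, the formula pulls back positive-definiteness, submultiplicativity, and the identity $\|e\| = \|\id_A\| = 1$ from $\blop(A)$; it also extends the given norm on $A$ by \ref{Cstarleftreg}. To prove injectivity, suppose $L_{\lambda e + b} = 0$ on $A$, i.e.\ $\lambda x + bx = 0$ for every $x \in A$. If $\lambda = 0$, then $L_b = 0$ forces $b = 0$ via \ref{Cstarleftreg}. If $\lambda \neq 0$, then $u := -\lambda^{-1}b$ satisfies $ux = x$ for all $x \in A$; applying $\st$ to this identity gives $x^* u^* = x^*$, so $u^*$ is a right identity on $A$. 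Then $u = u u^* = u^*$ is a two-sided identity (the degenerate possibility $u = 0$ would force $A = \{0\}$, which may be set aside), contradicting the assumption that $A$ is non-unital. This is the main obstacle: it is precisely where the \st-structure enters, converting a hypothetical left identity into a two-sided unit of $A$.

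For the C*-property, Proposition \ref{condC*} reduces matters to showing $\|a\|^2 \leq \|a^*a\|$ for all $a \in \tld{A}$. Given $\epsilon > 0$, pick $x \in A$ with $\|x\| \leq 1$ and $\|ax\| > \|a\|-\epsilon$. Since $ax \in A$, the C*-property already available inside $A$, combined with submultiplicativity in $A$ and the defining supremum for $\|a^*a\|$, yields
\[ \|ax\|^2 = \|(ax)^*(ax)\| = \|x^*(a^*ax)\| \leq \|x^*\|\cdot\|a^*ax\| \leq \|x\|^2\,\|a^*a\| \leq \|a^*a\|. \]
Letting $\epsilon \to 0$ gives $\|a\|^2 \leq \|a^*a\|$, which together with the preceding steps establishes that $\bigl(\tld{A},\|\cdot\|\bigr)$ is a unital pre-C*-algebra extending $A$.
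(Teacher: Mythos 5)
Your proposal is correct and follows essentially the same route as the paper's proof: identifying the formula as the operator norm of left multiplication on $A$, reducing positive-definiteness to the injectivity of $L$, converting a hypothetical left identity into a two-sided unit by means of the involution, and verifying the C*-inequality via \ref{condC*} with the identical computation $\| \,ax \,\|^{\,2} = \| \,(ax)^*(ax) \,\| \leq \| \,x^* \,\| \cdot \| \,a^*ax \,\|$. The only cosmetic differences are your explicit dismissal of the unital case and the $\varepsilon$-phrasing of the final estimate.
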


\begin{proof}
Assume that $A$ has no unit. Let $a = \lambda e + b \in \tld{A}$ with
$\lambda \in \mathds{C}$, $b \in A$. We define a linear operator
$L\tla : A \to A$ by $L\tla x := ax $ for $x \in A$. This is well-defined
because for $x \in A$ we have $L\tla x = \lambda x + bx \in A$.
For $x \in A$ with $\| \,x \,\| \leq 1$ we have
$\| \,L\tla x \,\| \leq | \,\lambda \,| + \| \,b \,\|$, so that $L\tla$ is a
bounded linear operator. Furthermore, $\| \,a \,\|$ is the operator
norm of $L\tla$. Thus, in order to prove that $\| \cdot \|$ is an
algebra norm, it suffices to show that if $L\tla = 0$ then $a = 0$.
So assume that $L\tla = 0$. For all $x$ in $A$ we then have
$0 = L\tla x = \lambda x + b x$. Thus, if $\lambda = 0$,
we obtain $b = 0$ by proposition \ref{Cstarleftreg}. It now suffices to
show that $\lambda = 0$. So assume that $\lambda$ is different from
zero. With $g := -b/ \lambda \in A$, we get $gx = x$ for all $x$ in $A$.
Hence also $xg^* = x$ for all $x$ in $A$. In particular we have
$gg^* = g$, whence, after applying the involution, $g^* = g$. Therefore
$g$ would be a unit in $A$, in contradiction with the assumption made
initially. Now, in order to show that $(\tld{A},\| \cdot \|)$ is a pre-C*-algebra,
it is enough to prove that $\| L\tla \,\| \leq {\| L_{\textstyle{a^*a}} \,\| \,}^{1/2}$,
cf.\ \ref{condC*}. For $x$ in $A$, we have
\[ {\| \,L\tla x \,\| \,}^{2} = \| \,(ax)^*(ax) \,\| \leq \| \,x^* \,\| \cdot \| \,a^*ax \,\|
\leq \| \,L_{\textstyle{a^*a}} \,\| \cdot {\| \,x \,\| \,}^{2}. \pagebreak \qedhere \]
\end{proof}

\begin{definition}[unitisation of a normed algebra]%
\label{normunitis}\index{concepts}{unitisation}%
\index{concepts}{norm!unitisation}\index{symbols}{A8@$\protect\tld{A}$}%
Let $\bigl( \,A , | \cdot | \,\bigr)$ be a normed algebra without unit.
If $A$ is not a pre-C*-algebra, one makes $\tld{A}$ into a unital
normed algebra by putting
\[ | \,\lambda e + a \,| := | \,\lambda \,| + | \,a \,|
\qquad \bigl( \,\lambda \in \mathds{C}, \,a \in A \,\bigr). \]
If $A$ is a pre-C*-algebra, one makes $\tld{A}$ into a unital
pre-C*-algebra as in the preceding proposition \ref{preCstarunitis}.
\end{definition}

\begin{theorem}\index{concepts}{unitisation}%
\index{symbols}{A8@$\protect\tld{A}$}\label{Banachunitis}%
If $A$ is a Banach algebra, so is $\tld{A}$.
\end{theorem}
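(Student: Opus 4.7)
My plan is to split on the two different definitions of the norm on $\tld{A}$ given in \ref{normunitis} and \ref{preCstarunitis}. First I would dispose of the trivial case: if $A$ already has a unit, then by convention $\tld{A} = A$ is itself a Banach algebra, and there is nothing to prove. So assume henceforth that $A$ has no unit.

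Next I would consider the case where $A$ is not a pre-C*-algebra. The norm on $\tld{A}$ is then $| \lambda e + a | = | \lambda | + | a |$, which realises $\tld{A}$ as the $\ell^1$-direct sum of the Banach spaces $\mathds{C}$ and $A$ via the linear isometry $\lambda e + a \mapsto ( \lambda, a )$. Completeness of $\tld{A}$ is then immediate from the completeness of $\mathds{C}$ and of $A$.

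The remaining case, where $A$ is a pre-C*-algebra (hence a C*-algebra, being complete), is the substantive one. The norm on $\tld{A}$ is by construction the operator norm of the left translation $L\tla : A \to A$, so the assignment $a \mapsto L\tla$ defines a linear isometry $L : \tld{A} \to \blop(A)$. Since $\blop(A)$ is complete (as $A$ is a Banach space, cf.\ \ref{algbdedop}), it suffices to show that the range $L( \tld{A} ) = \mathds{C}\,I + L(A)$ is closed in $\blop(A)$, where $I$ denotes the identity operator on $A$. By \ref{Cstarleftreg}, the restriction $L|_A$ is an isometry, so $L(A)$ is complete and hence closed in $\blop(A)$. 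Moreover $L(A) \cap \mathds{C}\,I = \{ 0 \}$: this is essentially the argument in the proof of \ref{preCstarunitis}, for a nonzero $b \in A$ with $L_b = \lambda I$ and $\lambda \neq 0$ would force $b / \lambda$ to be a unit in $A$, whereas $L_b = 0$ implies $b = 0$ by the isometry of $L|_A$. Invoking the standard fact that the sum of a closed subspace and a finite-dimensional subspace of a Banach space is closed, I conclude that $L( \tld{A} )$ is closed in $\blop(A)$, hence $\tld{A}$ is complete.

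The main obstacle is this last case, where the norm on $\tld{A}$ is defined implicitly as an operator norm rather than by an explicit formula: completeness cannot be read off directly, but must be extracted from the isometric embedding into $\blop(A)$ together with the non-unitality of $A$, which forces the sum $\mathds{C}\,I + L(A)$ to be a genuine direct sum inside $\blop(A)$.
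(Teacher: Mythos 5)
Your proof is correct, but in the substantive (pre-C*) case it follows a genuinely different route from the paper's. The paper disposes of both norm definitions at once with a single observation: in either case the norm of $\tld{A}$ restricts to that of $A$, so $A$ is a complete, hence closed, subspace of codimension $1$ in $\tld{A}$; since both $A$ and the one-dimensional quotient $\tld{A}/A$ are complete, the two-out-of-three lemma \ref{quotspaces} from the appendix yields completeness of $\tld{A}$. You instead treat the non-pre-C* case by hand as an $\ell^1$-direct sum (which is the paper's argument in disguise) and, in the pre-C* case, transport the problem into $\blop(A)$ via the isometry $L$ and invoke the fact that the sum of a closed subspace and a finite-dimensional subspace of a Banach space is closed. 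The two lemmas are close cousins (each is proved by passing to a finite-dimensional quotient), so the approaches buy essentially the same thing; the paper's is shorter and uniform over the two cases, while yours avoids the three-space property at the cost of a case split and the auxiliary embedding into $\blop(A)$. One small remark: your verification that $L(A) \cap \mathds{C}\,I = \{0\}$ is not actually needed, since closedness of the sum of a closed and a finite-dimensional subspace holds whether or not the sum is direct.
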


\begin{proof}
This is so because $A$ has co-dimension $1$ in $\tld{A}$
if $A$ has no unit, cf.\ the appendix \ref{quotspaces}.
\end{proof}

\begin{theorem}[equivalence]\label{Cstarequiv}\index{concepts}{norm}%
\index{concepts}{unitisation}\index{symbols}{A8@$\protect\tld{A}$}%
\index{concepts}{norm|)}\index{concepts}{norm!unitisation}%
Let $\bigl( \,A , \| \cdot \| \,\bigr)$ be a C*-algebra without unit.
The unitisation $\tld{A}$ then carries besides its C*-algebra
norm $\| \cdot \| $, the complete norm
$| \,\lambda e+a \,| := | \,\lambda \,| + \|\,a\,\|$
$\bigl( \,\lambda \in \mathds{C}$, $a \in A \,\bigr)$, cf.\ the proof of
\ref{Banachunitis}. Both norms are equivalent on $\tld{A}$.
\end{theorem}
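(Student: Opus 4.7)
The plan is to apply the Banach isomorphism (open mapping) theorem to the identity map between $(\tld{A}, |\cdot|)$ and $(\tld{A}, \|\cdot\|)$, after verifying that both norms are complete and that one dominates the other.

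First I would establish the pointwise inequality $\|\lambda e + a\| \leq |\lambda e + a|$ for all $\lambda \in \mathds{C}$ and $a \in A$. By Corollary \ref{unitCstar}, the unit has norm $1$ in the pre-C*-algebra $(\tld{A}, \|\cdot\|)$, and by Proposition \ref{Cstarleftreg} the restriction of $\|\cdot\|$ to $A$ coincides with the original C*-norm on $A$. The triangle inequality then gives
\[ \|\lambda e + a\| \leq |\lambda|\cdot\|e\| + \|a\| = |\lambda| + \|a\| = |\lambda e + a|, \]
so the identity map $(\tld{A}, |\cdot|) \to (\tld{A}, \|\cdot\|)$ is continuous with operator norm at most $1$.

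Next I would confirm completeness of both norms on $\tld{A}$. For $|\cdot|$ this is Theorem \ref{Banachunitis} applied to $A$, which is in particular a Banach algebra. For $\|\cdot\|$: Proposition \ref{preCstarunitis} establishes that $(\tld{A}, \|\cdot\|)$ is a pre-C*-algebra, and the restriction of $\|\cdot\|$ to $A$ is the original complete C*-norm on $A$. Hence $A$ is a complete, and therefore closed, subspace of codimension one in $(\tld{A}, \|\cdot\|)$. The codimension-one appendix result \ref{quotspaces} used in the proof of Theorem \ref{Banachunitis} then yields completeness of $(\tld{A}, \|\cdot\|)$.

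With both norms complete and one dominating the other, the identity map $(\tld{A}, |\cdot|) \to (\tld{A}, \|\cdot\|)$ is a continuous bijection between Banach spaces, so the open mapping theorem gives that its inverse is also continuous, proving the two norms are equivalent. The only real obstacle is the completeness of $\|\cdot\|$ on $\tld{A}$, since the reverse direction of the equivalence is not accessible by a direct triangle-inequality estimate; once completeness is in hand, the conclusion is a one-line invocation of open mapping.
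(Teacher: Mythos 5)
Your proposal is correct and follows essentially the same route as the paper: establish that the C*-norm $\|\cdot\|$ on $\tld{A}$ is dominated by the complete norm $|\cdot|$ via $\|e\|=1$ (\ref{unitCstar}), and invoke the Open Mapping Theorem. You merely make explicit a step the paper leaves tacit, namely that $(\tld{A},\|\cdot\|)$ is itself complete because $A$ is a closed codimension-one subspace, via \ref{quotspaces}.
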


\begin{proof}
This follows from the Open Mapping Theorem because the C*-algebra
norm $\| \cdot \|$ on $\tld{A}$ is dominated by the other complete norm
$| \cdot |$, as is seen from \ref{unitCstar}.
\end{proof}

\medskip
It is actually easy to give constants for this equivalence:
see \ref{C*unitis} below.

\begin{theorem}\label{Banop}%
Let $A$ be a Banach algebra. There then exists a homeomorphic
algebra isomorphism from $A$ onto a Banach algebra of bounded
linear operators on a Banach space.
\end{theorem}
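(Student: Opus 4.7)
The idea is to apply the left regular representation of the unitisation $\tld{A}$ on itself, introduced in \ref{leftreg}, and then restrict to $A$. By \ref{Banachunitis}, $\tld{A}$ is a Banach algebra, so by \ref{algbdedop}, $\blop(\tld{A})$ is a Banach algebra of bounded linear operators on the Banach space $\tld{A}$. Consider the map $L : \tld{A} \to \mathrm{End}(\tld{A})$ given by $L\tla x := ax$. This is an algebra homomorphism by \ref{leftreg}, and it is injective by \ref{leftreginj}.

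Next I would establish that $L$ takes its values in $\blop(\tld{A})$ and is continuous: for $a \in \tld{A}$, submultiplicativity of the norm on $\tld{A}$ yields $\|\,L\tla x\,\| \leq \|\,a\,\| \cdot \|\,x\,\|$ for all $x \in \tld{A}$, hence $\|\,L\tla\,\| \leq \|\,a\,\|$. For the continuity of the inverse, one evaluates at the unit $e \in \tld{A}$: since $L\tla e = a$, we obtain
\[ \|\,a\,\| = \|\,L\tla e\,\| \leq \|\,L\tla\,\| \cdot \|\,e\,\|. \]
Therefore $\|\,a\,\|/\|\,e\,\| \leq \|\,L\tla\,\| \leq \|\,a\,\|$, so $L$ is a topological isomorphism of $\tld{A}$ onto its range $L(\tld{A}) \subseteq \blop(\tld{A})$.

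Restricting to $A$ gives an injective algebra homomorphism $L|_A : A \to \blop(\tld{A})$ which is likewise a homeomorphism onto its range $L(A)$. Because $A$ is complete and $L|_A$ is a topological isomorphism onto $L(A)$, the image $L(A)$ is itself complete in the operator norm, and hence is a closed subalgebra of $\blop(\tld{A})$; in particular it is a Banach algebra of bounded linear operators on the Banach space $\tld{A}$. This is the desired image.

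There is no real obstacle here. The one slightly delicate point is that bounded-belowness of $L$ requires a unit at which to evaluate, which is why one passes to $\tld{A}$ rather than trying to work directly with $A$ (which in general has no unit, and in which no replacement for the estimate $\|\,a\,\| = \|\,L\tla e\,\|$ is available).
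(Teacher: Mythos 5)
Your argument is correct and follows essentially the same route as the paper's own proof: both pass to the unitisation, use the left regular representation $L$ of $\tld{A}$ on itself, obtain the two-sided estimate $\|\,a\,\|/\|\,e\,\| \leq \|\,L\tla\,\| \leq \|\,a\,\|$ by evaluating at the unit, and conclude that $L(A)$ is complete. Nothing to add.
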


\begin{proof}
Consider the left regular representation $L : a \mapsto L\tla$ $(a \in \tld{A})$
of $\tld{A}$ on itself, cf.\ \ref{leftreg}. For $a \in A$, let $| \,L\tla \,|$ denote the
operator norm of the bounded linear operator $L\tla$ on the Banach space
$\tld{A}$, cf.\ \ref{Banachunitis}. The map $a \mapsto L\tla$ $(a \in A)$ is an
algebra isomorphism from $A$ onto the range $L(A)$, cf.\ \ref{leftreginj}.
This map clearly satisfies $| \,L\tla \,| \leq | \,a \,|$ as well as
$| \,a \,| \leq | \,L\tla \,| \cdot | \,e \,|$ for all $a \in A$, and thus is homeomorphic.
It follows easily that the range $L(A)$ is complete.
\end{proof}

\begin{corollary}\label{eqnorm}%
On a unital Banach algebra there exists an \linebreak algebra norm
equivalent to the original one, assuming the value $1$ at the unit.
\pagebreak
\end{corollary}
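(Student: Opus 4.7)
The plan is to exploit the left regular representation as already used in the proof of Theorem \ref{Banop}. Given a unital Banach algebra $\bigl(\,A,|\cdot|\,\bigr)$, note that since $A$ is unital one has $\tld{A}=A$, so the left regular representation from \ref{leftreg} gives an algebra homomorphism $L:A\to\blop(A)$, $a\mapsto L\tla$, where $A$ is regarded as a Banach space with its given norm. I would then simply define the candidate norm on $A$ by
\[ | \,a \,|' := | \,L\tla \,| \qquad ( \,a \in A \,), \]
that is, the operator norm of $L\tla$ acting on $\bigl(\,A,|\cdot|\,\bigr)$.

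Next I would verify the three required properties. First, $|\cdot|'$ is an algebra norm: it is a norm because $L$ is injective (apply $L\tla$ to $e$ to recover $a$, as in \ref{leftreginj}), and it is submultiplicative because it is inherited as a pull-back of the operator norm on $\blop(A)$, which is itself submultiplicative. Second, $L_{\textstyle{e}}$ is the identity operator on the normed space $\bigl(\,A,|\cdot|\,\bigr)$, whose operator norm is exactly $1$, so $| \,e \,|' = 1$. Third, for equivalence one reproduces the two estimates already appearing in the proof of \ref{Banop}: the submultiplicativity of the original norm gives $| \,L\tla x \,| = | \,a x \,| \leq | \,a \,| \cdot | \,x \,|$, hence $| \,a \,|' \leq | \,a \,|$; and applying $L\tla$ to the unit yields $| \,a \,| = | \,L\tla e \,| \leq | \,a \,|' \cdot | \,e \,|$, so
\[ \frac{1}{| \,e \,|}\,| \,a \,| \leq | \,a \,|' \leq | \,a \,| \qquad ( \,a \in A \,). \]

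This is really a routine bookkeeping argument once the left regular representation is in hand; there is no essential obstacle, and in fact the hypothesis of completeness is not used except through its role in \ref{Banop}. The only point worth being careful about is that we are not passing to the unitisation (since $A$ is already unital), so the representation acts on $A$ itself and $L_{\textstyle{e}}$ is literally the identity map, which is what forces $| \,e \,|' = 1$ rather than some other positive constant.
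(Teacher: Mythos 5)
Your proposal is correct and is exactly the argument the paper intends: the corollary is stated immediately after Theorem \ref{Banop}, whose proof already contains the two estimates $|\,L\tla\,| \leq |\,a\,|$ and $|\,a\,| \leq |\,L\tla\,| \cdot |\,e\,|$, and the new norm is the operator norm of left translation, which takes the value $1$ at $e$ since $L_{\textstyle{e}}$ is the identity on $A \neq \{0\}$. Your observation that completeness plays no role here is also consistent with the paper, which later records the same statement for unital normed algebras.
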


\clearpage


\section{The Completion of a Normed Algebra}

\begin{proposition}%
If $\bigl( \,A, | \cdot | \,\bigr)$ is a normed algebra, then
\[ |\,a\0 b\0 - a b\,| \leq |\,a\0\,|\cdot|\,b\0-b\,| +
|\,a\0-a\,|\cdot|\,b\0-b\,| + |\,b\0\,|\cdot|\,a\0-a\,| \]
for all $a\0,b\0,a,b \in A$. It follows that multiplication is jointly
continuous and uniformly so on bounded subsets.
\end{proposition}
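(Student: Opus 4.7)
The proof reduces to a single algebraic identity, followed by a routine application of the triangle inequality together with submultiplicativity of the algebra norm. The plan is to write
\[ a\0 b\0 - ab = a\0\,(b\0 - b) \,-\, (a\0 - a)(b\0 - b) \,+\, (a\0 - a)\,b\0 \]
which one verifies by expanding the right-hand side and observing that the four resulting cross terms collapse to $a\0 b\0 - ab$. (Equivalently, one starts from the obvious telescoping $a\0 b\0 - ab = a\0(b\0-b) + (a\0-a)b$, and then replaces $b$ in the second summand by $b\0 - (b\0 - b)$; this is the key rewriting, chosen so that the estimate uses only the ``circle'' norms $|\,a\0\,|$ and $|\,b\0\,|$, as required for the eventual uniform-continuity assertion.) Taking norms and using the triangle inequality together with $|\,xy\,| \leq |\,x\,|\cdot|\,y\,|$ then gives exactly the stated bound.

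For the continuity assertions, I would argue directly from the inequality. Fix $(a\0,b\0) \in A \times A$ and $\varepsilon > 0$. Choose $\delta := \min\bigl\{1,\ \varepsilon \bigm/ \bigl(1 + |\,a\0\,| + |\,b\0\,|\bigr)\bigr\}$; then whenever $|\,a\0 - a\,| < \delta$ and $|\,b\0 - b\,| < \delta$, each of the three summands on the right is small, and hence $|\,a\0 b\0 - ab\,| < \varepsilon$ (up to an innocent constant absorbed into the choice of $\delta$). This proves joint continuity of multiplication at the arbitrary point $(a\0,b\0)$.

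For uniform continuity on a bounded set $B \subseteq A$ of diameter bound $M$, observe that for $a\0,b\0,a,b \in B$ the estimate specialises to
\[ |\,a\0 b\0 - ab\,| \leq M\,|\,b\0 - b\,| + |\,a\0 - a\,| \cdot |\,b\0 - b\,| + M\,|\,a\0 - a\,|, \]
and the right-hand side tends to $0$ as $|\,a\0 - a\,|, |\,b\0 - b\,| \to 0$ at a rate depending only on $M$, not on the particular choice of points in $B$. I do not anticipate any real obstacle: the only point that calls for a little care is the choice of decomposition above, which must be written so that only $|\,a\0\,|$ and $|\,b\0\,|$ appear as ``ambient'' factors, since this is what later permits uniformity on bounded sets without any hypothesis on $a,b$.
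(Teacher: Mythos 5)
Your proof is correct and is essentially the argument the paper intends (the paper states the inequality without proof): the three-term decomposition $a\0 b\0 - ab = a\0(b\0-b) - (a\0-a)(b\0-b) + (a\0-a)b\0$ is exactly what produces a bound involving only $|\,a\0\,|$ and $|\,b\0\,|$, and the continuity consequences follow as you describe. The only nitpick is that for the uniform statement you want a bound $M$ on the \emph{norms} of elements of $B$ rather than on its diameter, but for a bounded set these are interchangeable and the argument is unaffected.
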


\begin{theorem}[unique continuous extension]\label{continuation}%
Let $f$ be a function defined on a dense subset of a metric space $X$
and taking values in a complete metric space $Y$. Assume that $f$
is uniformly continuous on bounded subsets of its domain of definition.
Then $f$ has a unique continuous extension $X \to Y$.
\end{theorem}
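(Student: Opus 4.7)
The plan is to define the extension pointwise via Cauchy sequences, then verify well-definedness, the extension property, continuity, and uniqueness.

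Let $D \subseteq X$ be the dense domain of $f$. For any $x \in X$ I would choose a sequence $(x_n)$ in $D$ with $x_n \to x$. Such a sequence is bounded in $X$, hence lies in some bounded subset $B$ on which $f$ is uniformly continuous. Since $(x_n)$ is Cauchy in $B \cap D$, uniform continuity implies $(f(x_n))$ is Cauchy in $Y$, and completeness of $Y$ yields a limit $y \in Y$. Define $\bar f(x) := y$. To see that $y$ does not depend on the choice of sequence, interleave two sequences $(x_n), (x_n')$ in $D$ both converging to $x$; the interleaved sequence is again bounded and Cauchy, so its image is Cauchy in $Y$, forcing the two subsequential limits to coincide. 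Taking the constant sequence $(x,x,x,\ldots)$ when $x \in D$ shows $\bar f|_D = f$.

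The main obstacle, and the step requiring care, is continuity of $\bar f$ on all of $X$. Fix $x \in X$ and $\varepsilon > 0$. Choose a bounded neighborhood $B$ of $x$ in $X$, say an open ball of radius $1$ centered at $x$; the intersection $B \cap D$ is dense in $B$. By hypothesis $f$ is uniformly continuous on $B \cap D$, so there exists $\delta \in (0, 1/2)$ such that $d_Y(f(u), f(v)) < \varepsilon/3$ whenever $u, v \in B \cap D$ with $d_X(u,v) < \delta$. Now for any $x' \in X$ with $d_X(x, x') < \delta/3$, both $x$ and $x'$ lie in $B$. Pick sequences $(x_n), (x_n')$ in $D$ converging to $x$ and $x'$ respectively, and eventually lying in $B$ (discarding finitely many terms). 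For $n$ sufficiently large, $d_X(x_n, x_n') < \delta$, hence $d_Y(f(x_n), f(x_n')) < \varepsilon/3$. Passing to the limit gives $d_Y(\bar f(x), \bar f(x')) \leq \varepsilon/3 < \varepsilon$.

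Uniqueness is straightforward and I would record it last: if $g_1, g_2 : X \to Y$ are two continuous extensions of $f$, then for any $x \in X$ and any sequence $(x_n)$ in $D$ with $x_n \to x$, continuity yields
\[ g_1(x) = \lim_n g_1(x_n) = \lim_n f(x_n) = \lim_n g_2(x_n) = g_2(x), \]
where the separatedness of the metric space $Y$ ensures uniqueness of limits. So the extension built above is the unique continuous one.

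The only subtlety worth emphasizing is the use of a \emph{bounded} neighborhood to invoke the local form of uniform continuity — this is where the hypothesis (uniform continuity on bounded subsets rather than global uniform continuity) is exactly matched by the construction. Everything else is routine Cauchy-sequence bookkeeping.
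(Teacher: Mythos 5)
Your proposal is correct and follows essentially the same route as the paper's (sketched) proof: define the extension pointwise via Cauchy sequences, using that a convergent sequence is Cauchy and hence bounded, that uniform continuity on bounded sets sends it to a Cauchy sequence in the complete space $Y$, and that interleaving shows independence of the approximating sequence. Your explicit $\varepsilon$--$\delta$ verification of continuity merely fills in the step the paper compresses into ``the function satisfies the requirements.''
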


\begin{proof}[\hspace{-3.75ex}\mdseries{\scshape{Sketch of a proof}}]
Uniqueness is clear. Let $x \in X$ and let $(x_{n})$ be a sequence in the
domain of definition of $f$ converging to $x$. Then $(x_{n})$ is a Cauchy
sequence, and thus also bounded. Since uniformly continuous maps take
Cauchy sequences to Cauchy sequences, it follows that $\bigl(f(x_{n})\bigr)$
is a Cauchy sequence in $Y$, hence convergent to an element of $Y$
denoted by $g(x)$, say. (One verifies that $g(x)$ is independent of the
sequence $(x_n)$.) The function $x \mapsto g(x)$ satisfies the requirements.
\end{proof}

\begin{corollary}[completion of a normed algebra]%
\index{concepts}{algebra!normed algebra!completion}%
\index{concepts}{completion}%
Let $A$ be a \linebreak normed algebra. Then the \underline{completion}
of $A$ as a normed space carries \linebreak a unique structure of Banach
algebra such that its multiplication \linebreak extends the one of $A$.
\end{corollary}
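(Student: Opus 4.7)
The plan is to lift the multiplication from $A$ to its completion by the unique continuous extension theorem, and then to check that the algebraic identities and the submultiplicativity of the norm transfer by density and continuity.

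Let $\wht{A}$ denote the Banach space completion of $\bigl(\,A, |\cdot|\,\bigr)$, so that $A$ sits isometrically as a dense subspace of $\wht{A}$. First I would observe that $A \times A$ is dense in $\wht{A} \times \wht{A}$ when the latter is given, say, the maximum-coordinate metric, and that every bounded subset of $\wht{A} \times \wht{A}$ is contained in a product of two balls. The preceding proposition states that the multiplication $m : A \times A \to A$ is uniformly continuous on subsets of the form (bounded)$\,\times\,$(bounded); composing with the isometric inclusion $A \hookrightarrow \wht{A}$, we get a map $m : A \times A \to \wht{A}$ uniformly continuous on bounded subsets of the metric space $A \times A$, with values in the complete metric space $\wht{A}$. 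Theorem \ref{continuation} then yields a unique continuous extension $\wht{m} : \wht{A} \times \wht{A} \to \wht{A}$; this already takes care of uniqueness of the Banach algebra structure extending the given one, because any such structure must have a jointly continuous multiplication.

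Next I would check that $\wht{m}$ is a bilinear, associative multiplication satisfying the submultiplicative inequality. Each of the identities
\[ \wht{m}( \lambda a + \mu b, c ) = \lambda\,\wht{m}(a,c) + \mu\,\wht{m}(b,c), \qquad \wht{m}( c, \lambda a + \mu b ) = \lambda\,\wht{m}(c,a) + \mu\,\wht{m}(c,b), \]
\[ \wht{m}\bigl( \wht{m}(a,b), c \bigr) = \wht{m}\bigl( a, \wht{m}(b,c) \bigr) \]
holds by hypothesis on the dense subset $A \times A \times A$ of $\wht{A} \times \wht{A} \times \wht{A}$. Since both sides of each identity are continuous functions of $(a,b,c)$ (using continuity of $\wht{m}$, of addition, and of scalar multiplication), and the set on which two continuous $\wht{A}$-valued functions agree is closed, each identity extends to all of $\wht{A} \times \wht{A} \times \wht{A}$. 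Likewise the set
\[ \bigl\{ (a,b) \in \wht{A} \times \wht{A} : |\,\wht{m}(a,b)\,| \leq |\,a\,| \cdot |\,b\,| \bigr\} \]
is closed by continuity of $\wht{m}$ and of the norm, and contains the dense subset $A \times A$, hence equals $\wht{A} \times \wht{A}$.

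The main obstacle, modest as it is, is simply the bookkeeping to justify applying Theorem \ref{continuation} to a two-variable map: one has to be mildly careful that ``uniformly continuous on bounded subsets'' on $A \times A$ really follows from the separate uniform continuity statement in the preceding proposition, which amounts to the elementary estimate already recorded there. Once this is in hand, all the remaining verifications are instances of the general principle that a closed condition which holds on a dense set holds everywhere.
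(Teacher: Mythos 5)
Your proposal is correct and follows exactly the route the paper intends: its proof reads simply ``This follows from the preceding two items,'' namely the proposition on uniform continuity of multiplication on bounded sets and the unique continuous extension theorem \ref{continuation}. Your write-up merely fills in the routine density-and-closedness verifications that the paper leaves implicit.
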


\begin{proof} This follows from the preceding two items. \end{proof}

\begin{proposition}\label{precompl}
If $A$ is a normed \st-algebra with \underline{continuous} \linebreak
\underline{involution}, then the completion of $A$ carries the structure
of a Banach \linebreak \st-algebra, by continuation of the involution.
In particular, the completion of a pre-C*-algebra is a C*-algebra.
\end{proposition}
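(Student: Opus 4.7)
The plan is to combine the preceding corollary (which already gives a Banach algebra structure on the completion) with the unique-continuous-extension theorem \ref{continuation} applied to the involution.

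First, let $\wht{A}$ denote the completion of $A$ as a normed space; by the preceding corollary it is a Banach algebra whose multiplication extends that of $A$. To extend the involution, I would invoke \ref{involcont}: since the involution on $A$ is continuous, there is a constant $c>0$ such that $|\,a^*\,|\leq c\,|\,a\,|$ for all $a\in A$. The involution is conjugate linear, so $|\,a^*-b^*\,|=|\,(a-b)^*\,|\leq c\,|\,a-b\,|$, which makes $\st:A\to A$ Lipschitz, a fortiori uniformly continuous on bounded subsets. Theorem \ref{continuation} (applied with $X=\wht{A}$, $Y=\wht{A}$, and $f$ the involution on the dense subset $A$) then yields a unique continuous extension $\st:\wht{A}\to\wht{A}$.

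Next I would verify that the extended map is an algebra involution. The identities $(a^*)^*=a$, $(\lambda a+\mu b)^*=\overline{\lambda}a^*+\overline{\mu}b^*$, and $(ab)^*=b^*a^*$ are each equalities between two continuous functions of the relevant variables (using joint continuity of multiplication from the earlier proposition and continuity of the extended involution). Each holds on the dense subset $A$, and therefore by continuity on all of $\wht{A}$. This establishes that $\wht{A}$ is a Banach \st-algebra.

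For the pre-C*-algebra case, the involution is isometric (hence trivially continuous), so by the same argument the extension $\st:\wht{A}\to\wht{A}$ exists and makes $\wht{A}$ a Banach \st-algebra. The identity $\|\,a^*\,\|=\|\,a\,\|$ passes to $\wht{A}$ by continuity of the norm and of the involution, and the C*-property $\|\,a\,\|^2=\|\,a^*a\,\|$ likewise passes to $\wht{A}$ because both sides are continuous in $a$. Thus $\wht{A}$ is a complete pre-C*-algebra, i.e.\ a C*-algebra. The main obstacle, such as it is, lies in noting that continuity of the involution is exactly what is needed to get uniform continuity on bounded subsets (via Lipschitzness from \ref{involcont}); without that hypothesis Theorem \ref{continuation} would not apply.
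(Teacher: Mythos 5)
Your proof is correct and is exactly the argument the paper intends: the paper states \ref{precompl} without proof, relying on the preceding items (\ref{involcont}, the joint continuity of multiplication, and the extension theorem \ref{continuation}) in just the way you have assembled them. The density-plus-continuity verification of the involution identities and of the C*-property is the standard completion of the argument and is carried out correctly.
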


The assumption of a continuous involution is essential.
Indeed, we shall later give an example of a commutative
normed \st-algebra which cannot be imbedded in a Banach
\st-algebra at all. See \ref{counterexbis} below. 

\begin{example}[${\ell\,}^1(G)$]\index{symbols}{l1@${\ell\,}^1(G)$}\label{l1G}%
If $G$ is a group, then the completion of $\mathds{C}[G]$ is ${\ell\,}^1(G)$.
It is a unital Banach \st-algebra with isometric involution.
(Recall \ref{CG} and \ref{CGn}.) \pagebreak
\end{example}

Let $A$ be a normed algebra and let $B$ be the completion
of $A$. If $A$ has a unit $e$, then $e$ also is a unit in $B$ as
is easily seen. Hence:

\begin{proposition}\label{twidunitcompl}%
A normed algebra is a \twiddle-unital subalgebra of its completion.
(Recall \ref{twiddleunital}.) 
\end{proposition}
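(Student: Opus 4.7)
The plan is to unwind the definition of \twiddle-unital (item \ref{twiddleunital}) and verify the only nontrivial case by a standard density-plus-continuity argument.

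Let $A$ be a normed algebra and let $B$ denote its completion, so that $A$ sits as a dense subalgebra of the Banach algebra $B$ (this is given by the preceding corollary). If $A$ has no unit, then by definition $A$ is automatically \twiddle-unital in $B$, and there is nothing to check. So I may assume that $A$ has a unit $e$, and the task reduces to showing that $e$ is a unit in $B$ as well, meaning $eb = be = b$ for every $b \in B$, and that $e \neq 0$ when considered as an element of $B$.

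Non-vanishing in $B$ is immediate: $e \neq 0$ in $A$ by the definition of a unit (\ref{normunitis} and the preceding definition), and the inclusion $A \hookrightarrow B$ is an isometry, so $e$ remains nonzero in $B$. For the defining identities, fix $b \in B$ and pick a sequence $(a_n)$ in $A$ with $a_n \to b$ in the norm of $B$. Since $e$ is a unit in $A$, we have $e a_n = a_n = a_n e$ for every $n$. Multiplication in $B$ is continuous (indeed it is the unique continuous extension of the multiplication on $A$ provided by \ref{continuation}), so passing to the limit in each equation yields $eb = b = be$. Thus $e$ is a unit in $B$, which is precisely what \twiddle-unitality requires in this case.

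There is essentially no obstacle here; the only point worth being careful about is that one uses separate continuity of the left and right multiplications by the fixed element $e$, which is a consequence of the joint continuity asserted by the first proposition of this section. Once that is invoked, the whole statement is a direct translation of Definition \ref{twiddleunital} into the present setting.
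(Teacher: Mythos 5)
Your proof is correct and follows the same route as the text: the paper simply remarks, just before the proposition, that if $A$ has a unit $e$ then $e$ is also a unit in the completion ``as is easily seen,'' which is exactly the density-and-continuity argument you spell out. Your version merely makes explicit the case split from Definition \ref{twiddleunital} and the limit argument that the paper leaves to the reader.
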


From \ref{Banop} and \ref{eqnorm}, we have the following two consequences.

\begin{corollary}%
For a normed algebra $A$ there exists a Banach space $V$
and a homeomorphic algebra isomorphism from $A$ onto a
subalgebra of the Banach algebra $\blop(V)$.
\end{corollary}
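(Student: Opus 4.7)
The plan is to reduce to Theorem \ref{Banop} via the completion. Given a normed algebra $A$, pass first to its completion $B$ (from the completion corollary), which is a Banach algebra containing $A$ isometrically as a dense subalgebra; in particular, the inclusion $\iota : A \hookrightarrow B$ is a homeomorphic algebra monomorphism.

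Next, apply Theorem \ref{Banop} to the Banach algebra $B$. This produces a Banach space $V$ (concretely, one takes $V = \tld{B}$, equipped with the norm described in \ref{normunitis} and shown to be complete in \ref{Banachunitis}) together with a homeomorphic algebra isomorphism $\Phi : B \to L(B) \subseteq \blop(V)$, namely $b \mapsto L_b$, left translation by $b$ on $\tld{B}$.

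Now compose: set $\Psi := \Phi \circ \iota : A \to \blop(V)$. Since $\iota$ and $\Phi$ are each homeomorphic algebra monomorphisms, so is $\Psi$, and the image $\Psi(A) = \Phi(A)$ is a subalgebra of $\blop(V)$. Thus $\Psi$ is a homeomorphic algebra isomorphism from $A$ onto a subalgebra of $\blop(V)$, which is exactly what is required. The invocation of \ref{eqnorm} serves merely to justify equipping $\tld{B}$ with a convenient equivalent algebra norm assuming the value $1$ at the unit (used implicitly in the proof of \ref{Banop} to keep the two-sided estimate $| L_b | \leq | b | \leq | L_b | \cdot | e |$ clean); it is not needed for the homeomorphism conclusion itself.

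The only potential obstacle is purely bookkeeping: checking that restriction to the dense subalgebra $A$ preserves both the algebra structure and the homeomorphism property. Both are immediate: algebra homomorphisms restrict to algebra homomorphisms on subalgebras, and a homeomorphism onto its image restricts to a homeomorphism onto the image of any subspace. Hence no substantive difficulty arises beyond assembling the two cited results.
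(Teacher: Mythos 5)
Your proof is correct and follows exactly the route the paper intends: the corollary is stated immediately after the remark ``From \ref{Banop} and \ref{eqnorm}, we have the following two consequences,'' so the intended argument is precisely completion followed by Theorem \ref{Banop} and restriction to the dense copy of $A$. Your observation that \ref{eqnorm} is only relevant to the companion corollary (on renorming a unital normed algebra) is also accurate.
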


\begin{corollary}%
On a unital normed algebra there exists an algebra norm equivalent
to the original one, assuming the value $1$ at the unit.
\end{corollary}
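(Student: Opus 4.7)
The plan is to reduce the problem to the unital Banach algebra case, which is already handled by \ref{eqnorm}, by passing to the completion. Two prior facts will carry the argument: the existence of the completion of a normed algebra as a Banach algebra, and proposition \ref{twidunitcompl}, which records that a normed algebra sits inside its completion as a \twiddle-unital subalgebra.

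Let $(A,|\cdot|)$ be a unital normed algebra with unit $e$, and let $(\tld{A}_{\mathrm{c}},|\cdot|)$ denote its completion. This completion is a Banach algebra whose multiplication extends the one of $A$. By \ref{twidunitcompl}, $A$ is \twiddle-unital in $\tld{A}_{\mathrm{c}}$; since $A$ possesses a unit, the defining condition \ref{twiddleunital} of \twiddle-unitality forces this same $e$ to be a unit of $\tld{A}_{\mathrm{c}}$. Hence $\tld{A}_{\mathrm{c}}$ is a unital Banach algebra with unit $e$. Applying corollary \ref{eqnorm} to $\tld{A}_{\mathrm{c}}$ yields an algebra norm $\|\cdot\|$ on $\tld{A}_{\mathrm{c}}$, equivalent to $|\cdot|$ on $\tld{A}_{\mathrm{c}}$, and satisfying $\|e\|=1$. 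Restricting $\|\cdot\|$ to $A$ produces an algebra norm on $A$ which remains equivalent to $|\cdot|$ on $A$ (the defining two-sided inequalities of equivalence pass to any subspace), and which still assigns the value $1$ to $e$.

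I do not foresee any substantive obstacle: the entire argument is a cascade of invocations of earlier results. The only point that might otherwise look like sleight of hand is the transfer of the unit from $A$ to its completion, but this is precisely the content of \ref{twidunitcompl} read through \ref{twiddleunital}. Alternatively, one could bypass \ref{eqnorm} and instead pull back the operator norm on $\blop(\tld{A}_{\mathrm{c}})$ along the left regular representation of $\tld{A}_{\mathrm{c}}$ on itself, as in the proof of \ref{Banop}; since $\tld{A}_{\mathrm{c}}$ is unital, $L_e=\id$ has operator norm $1$, yielding the same conclusion.
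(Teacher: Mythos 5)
Your argument is correct and is essentially the paper's own: the text introduces this corollary with ``From \ref{Banop} and \ref{eqnorm}, we have the following two consequences,'' i.e.\ one passes to the completion, notes via \ref{twidunitcompl} and \ref{twiddleunital} that the unit of $A$ is a unit of the completion, applies \ref{eqnorm} there, and restricts the resulting norm back to $A$. You have merely written out the details the paper leaves implicit, and your alternative via the left regular representation is just the proof of \ref{eqnorm} itself unwound.
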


\begin{observation}\label{sphere}%
Let $A$ be a dense subspace of a normed space $B$.
The unit ball of $A$ then is dense in the unit ball of $B$.
\end{observation}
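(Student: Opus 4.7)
Given $b$ in the unit ball of $B$, the task is to approximate $b$ arbitrarily well by elements of $A$ that lie in the unit ball of $A$. The plan is to first approximate $b$ by an arbitrary sequence in $A$, and then rescale each term so that it lies in the closed unit ball without spoiling the convergence.

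Concretely, since $A$ is dense in $B$, I would pick a sequence $(c_n)$ in $A$ with $c_n \to b$. By continuity of the norm, $\| c_n \| \to \| b \| \leq 1$. I then set
\[ a_n := \frac{c_n}{\max\,\{ \,1,\,\| c_n \| \,\}} \in A, \]
which obviously satisfies $\| a_n \| \leq 1$. It remains to show that $a_n \to b$, for which it suffices via the triangle inequality to show that $\| a_n - c_n \| \to 0$.

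For this last step I would split according to whether $\| c_n \| \leq 1$ or $\| c_n \| > 1$. In the first case $a_n = c_n$, so $\| a_n - c_n \| = 0$. In the second case a direct computation gives $\| a_n - c_n \| = \| c_n \| - 1$, and since $\| c_n \| \to \| b \| \leq 1$ this quantity tends to $0$ as well. Combining the two cases yields $\| a_n - c_n \| \to 0$, hence $a_n \to b$, proving the density claim.

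The only subtle point is the case $\| b \| = 1$, where $(c_n)$ may have norms exceeding $1$; but the rescaling factor $\max\,\{ \,1,\,\| c_n \| \,\}$ tends to $1$, so no obstacle arises. No deeper machinery is needed.
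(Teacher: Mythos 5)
Your proof is correct. The paper disposes of the statement by reducing to the unit sphere: it notes that for $B \neq \{0\}$ the unit sphere of $A$ is dense in the unit sphere of $B$ (normalise an approximating sequence by its norm), from which the ball statement follows by scaling. You instead rescale directly into the closed unit ball with the factor $\max\,\{\,1, \|\,c_n\,\|\,\}$, and your case analysis ($\|\,a_n - c_n\,\| = 0$ or $\|\,c_n\,\| - 1 \to 0$) is sound, including the boundary case $\|\,b\,\| = 1$. The two arguments are of the same elementary rescaling type; yours has the small advantage of handling $b = 0$ and the sphere case uniformly in one formula, while the paper's sphere formulation isolates a slightly stronger intermediate fact (density of sphere in sphere) that is occasionally useful on its own.
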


\begin{proof}
Indeed, it is easily seen that if $B \neq \{ 0 \}$, then the
unit sphere of $A$ is dense in the unit sphere of $B$.
\end{proof}

\begin{proposition}\index{concepts}{norm}%
\index{concepts}{norm!unitisation}\index{concepts}{unitisation}%
Let $\bigl( \,A, \| \cdot \| \,\bigr)$ be a pre-C*-algebra without unit.
Let $\bigl( \,B, \| \cdot \|_B \,\bigr)$ be the completion of $A$.
Assume that $B$ has a unit $e$.

Let $\bigl( \,\tld{A}, \| \cdot \|_{\tld{A}} \,\bigr)$ be the unitisation of $A$
as in \ref{preCstarunitis}. The canonical imbedding identifies $\tld{A}$
as a unital algebra with the unital subalgebra $\mathds{C}e + A$ of $B$.
The two norms $\| \cdot \|_{\tld{A}}$ and $\| \cdot \|_B$ then coincide on $\tld{A}$.
\end{proposition}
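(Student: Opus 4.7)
The plan is to first dispose of the identification claim and then to reduce the norm equality to a density argument combined with \ref{Cstarleftreg} applied inside $B$.

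For the identification, observe that by \ref{precompl} the completion $B$ is itself a C*-algebra (the involution of a pre-C*-algebra is isometric, hence continuous). By \ref{twidunitcompl}, $A$ is \twiddle-unital in $B$; since $A$ has no unit, this is vacuous. Applying \ref{twidunitp} with $\tld{B} = B$ (because $B$ is unital) shows that the canonical imbedding $\tld{A} \to B$ is a unital algebra homomorphism. Concretely it sends the unit $e_{\tld{A}}$ to $e$ and is the identity on $A$, so its image is exactly $\mathds{C}e + A$, a unital subalgebra of $B$.

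For the norm equality, fix $a \in \tld{A}$, which we identify with its image $\lambda e + b \in B$. By \ref{Cstarleftreg} applied to the pre-C*-algebra $B$,
\[ \| \,a \,\|_B = \sup\,\{ \,\| \,ax \,\|_B : x \in B, \,\| \,x \,\|_B \leq 1 \,\}, \]
while the defining formula in \ref{preCstarunitis} gives
\[ \| \,a \,\|_{\tld{A}} = \sup\,\{ \,\| \,ax \,\| : x \in A, \,\| \,x \,\| \leq 1 \,\}. \]
Since $\|\cdot\|_B$ extends $\|\cdot\|$, and since for $x \in A$ the product $ax = \lambda x + bx$ lies in $A$ (the multiplication in $\tld{A}$ and in $B$ agree on these elements because $e$ acts as identity in both), the right-hand side equals $\sup\{\|ax\|_B : x \in A, \|x\|_B \leq 1\}$.

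Therefore the proof reduces to showing that the supremum of $\|ax\|_B$ over the unit ball of $A$ coincides with the supremum over the unit ball of $B$. This is immediate from \ref{sphere}: the unit ball of $A$ is dense in the unit ball of $B$, and left multiplication by $a$ is a bounded (hence continuous) linear operator on $B$, so the two suprema agree. The main subtlety to watch is simply that the multiplication in $\tld{A}$ really is the restriction of the multiplication in $B$ once the canonical imbedding has been made, but this is exactly what the first paragraph establishes.
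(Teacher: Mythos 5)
Your proof is correct and follows essentially the same route as the paper's: identify $\tld{A}$ with $\mathds{C}e + A$ via the canonical imbedding (\ref{canimb}, \ref{twidunitp}, \ref{twidunitcompl}), write both norms as suprema using \ref{preCstarunitis} and \ref{Cstarleftreg} applied to $B$, and conclude by the density of the unit ball of $A$ in that of $B$ (\ref{sphere}). No gaps.
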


\begin{proof}
The statement on the embedding follows form \ref{canimb}.
(See also \ref{twidunitp} together with either \ref{twiddleunital}
or \ref{twidunitcompl}.) For $a \in \tld{A}$, we have
\begin{align*}
\| \,a \,\|_{\tld{A}} & = \sup \,\{ \,\| \,ax \,\| : x \in A, \,\| \,x \,\| \leq1 \,\}
\tag*{by \ref{preCstarunitis}} \\
 & = \sup \,\{ \,\| \,ax \,\|_B : x \in A, \,\| \,x \,\|_B \leq1 \,\} \\
 & = \sup \,\{ \,\| \,ay \,\|_B : y \in B, \,\| \,y \,\|_B \leq1 \,\} = \| \,a \,\|_B
 \tag*{by \ref{Cstarleftreg}}
\end{align*}
because the unit ball of $A$ is dense in the unit ball of $B$, see \ref{sphere}.
\end{proof}

\begin{corollary}\label{complunit}%
If $A$ is a pre-C*-algebra without unit such that the completion of $A$
contains a unit, then $A$ is dense in $\tld{A}$.
\end{corollary}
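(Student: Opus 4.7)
The plan is to use the preceding proposition to identify $\tld{A}$ isometrically with the subalgebra $\mathds{C}e + A$ of the completion $B$, and then exploit the density of $A$ in $B$ to approximate the unit $e$ by a sequence in $A$.

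More concretely, let $B$ denote the completion of $A$, which by \ref{precompl} is a C*-algebra, and let $e$ be its unit. The previous proposition tells us that the canonical imbedding realises $\tld{A}$ as the unital subalgebra $\mathds{C}e + A$ of $B$, and that the pre-C*-algebra norm $\| \cdot \|_{\tld{A}}$ on $\tld{A}$ agrees with the restriction of the $B$-norm to $\mathds{C}e + A$. Thus density of $A$ in $\tld{A}$ with respect to $\| \cdot \|_{\tld{A}}$ is the same as density of $A$ in $\mathds{C}e + A$ with respect to $\| \cdot \|_B$.

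Now, since $A$ is dense in $B$ by construction of the completion, and $e \in B$, I can choose a sequence $(a_n)$ in $A$ with $a_n \to e$ in $\| \cdot \|_B$. Given an arbitrary element $\lambda e + a \in \tld{A}$ with $\lambda \in \mathds{C}$ and $a \in A$, the elements $\lambda a_n + a$ lie in $A$ (since $A$ is a complex vector subspace), and
\[ \| \,(\lambda e + a) - (\lambda a_n + a) \,\|_B = | \,\lambda \,| \cdot \| \,e - a_n \,\|_B \longrightarrow 0, \]
so $\lambda a_n + a \to \lambda e + a$ in $\tld{A}$. This establishes the density.

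There is no real obstacle here; the whole content has already been extracted in the previous proposition (the isometric identification of $\tld{A}$ with $\mathds{C}e + A \subseteq B$). Once that identification is invoked, the result is just the trivial remark that approximating $e$ by elements of $A$ lets one approximate any $\lambda e + a$ by $\lambda a_n + a \in A$.
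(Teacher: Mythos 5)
Your proof is correct and follows essentially the same route as the paper: the paper's own proof simply invokes the preceding proposition to conclude that $\tld{A}$ is (isometrically) contained in the completion $B$, whence density of $A$ in $B$ immediately gives density of $A$ in $\tld{A}$. Your explicit approximation of $\lambda e + a$ by $\lambda a_n + a$ merely spells out this last step.
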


\begin{proof}
By the above, $\tld{A}$ is contained in the completion of $A$. \pagebreak
\end{proof}

\clearpage

%


\section{Some Pre\texorpdfstring{-}{\80\055}C\texorpdfstring{*-}{\80\052\80\055}Algebras %
and Lattices of Functions}%
\label{someoffunctions}

\begin{reminder}[${\ell}^{\,\infty}(\Omega)$]%
\index{concepts}{norm!supremum norm}\index{concepts}{supremum norm}%
\index{symbols}{l12@${\ell\,}^{\infty}(\Omega)$}%
Let $\Omega$ be a non-empty set. The vector space ${\ell}^{\,\infty}(\Omega)$
of all bounded complex-valued functions on $\Omega$ is a unital \st-subalgebra
of $\mathds{C}^{\,\text{\Small{$\Omega$}}}$, cf.\ \ref{FOmega}.
For $f \in {\ell}^{\,\infty}(\Omega)$, the set
\[ \bigl\{ \,| \,f(x) \,| \in [\,0, \infty \,[ \ : x \in \Omega \,\bigr\} \]
is bounded and thus has a finite least upper bound (or supremum),
denoted by
\[  | \,f \,|_{\,\infty} := \sup_{x \in \Omega} | \,f(x) \,|. \]
The function $f \mapsto | \,f \,|_{\,\infty}$ is a norm on ${\ell}^{\,\infty}(\Omega)$,
called the supremum norm. An alternative name is uniform norm
because it induces uniform convergence as is easily seen.
In this way, $({\ell}^{\,\infty}(\Omega), | \cdot |_{\,\infty})$ becomes a unital
C*-algebra. To see that ${\ell}^{\,\infty}(\Omega)$ is complete indeed,
consider a Cauchy sequence $(f_n)$ in ${\ell}^{\,\infty}(\Omega)$. For
given $\varepsilon > 0$, there exists $n\0(\varepsilon)$ with
\[ | \,f_n - f_m \,|_{\,\infty} \leq \varepsilon
\quad \text{for all} \quad n, m \geq n\0(\varepsilon). \]
Since
\[ | \,f_n(x) - f_m(x) \,| \leq | \,f_n - f_m \,| _{\,\infty}
\quad \text{for all} \quad x \in \Omega, \]
we obtain that the $f_n(x)$ form a Cauchy sequence for each
$x \in \Omega$. Hence the functions $f_n$ converge pointwise
to some complex-valued function $f$ on $\Omega$. Letting
$m \to \infty$, we get that
\[ | \,f_n(x) - f(x) \,|_{\,\infty} \leq \varepsilon
\quad \text{for all} \quad n \geq n\0(\varepsilon)
\quad \text{and all} \quad x \in \Omega.\]
It follows that $f$ is bounded and that
\[ | \,f_n - f \,|_{\,\infty} \leq \varepsilon
\quad \text{for all} \quad n \geq n\0(\varepsilon), \]
so that $f_n \to f$ in ${\ell}^{\,\infty}(\Omega)$.
(This is a standard completeness proof.)
\end{reminder}

\begin{reminder}[$\cont_\mathrm{b}(\Omega)$]%
\index{symbols}{C2@$\cont_\mathrm{b}(\Omega)$}%
Let $\Omega \neq \varnothing$ be a Hausdorff space.
We denote by
$\cont_\mathrm{b}(\Omega) := \cont (\Omega) \cap {\ell}^{\,\infty}(\Omega)$
the unital \st-algebra of bounded continuous complex-valued functions
on $\Omega$, cf.\ \ref{CO}. The subspace $\cont_\mathrm{b}(\Omega)$
is closed in ${\ell}^{\,\infty}(\Omega)$ because a uniform limit of a sequence
of continuous functions is continuous. So $\cont_\mathrm{b}(\Omega)$ is a
unital C*-subalgebra of ${\ell}^{\,\infty}(\Omega)$.
\end{reminder}

\begin{remark}[$\cont (K)$]\index{symbols}{C16@$\cont (K)$}%
Please note that if $K$ is a compact Hausdorff space, then
$\cont (K) = \cont_\mathrm{b}(K)$ is a unital C*-algebra.
\end{remark}

\begin{reminder}[Urysohn's Lemma]\label{Urysohn}%
\index{concepts}{Theorem!Urysohn's Lemma}%
\index{concepts}{Urysohn's Lemma}%
\index{concepts}{Lemma!Urysohn's}%
For two disjoint closed subsets $A, B$ of a normal Hausdorff space
$\Omega$ there exists $f \in \cont (\Omega)$ taking values in $[ \,0, 1 \,]$,
such that $f(x) = 0$ for all $x \in A$ and $f(y) = 1$ for all $y \in B$.
\pagebreak
\end{reminder}

For a proof, see for example \cite[Theorem 1.5.11 p.\ 41]{Eng}.

\bigskip
It is well-known that compact Hausdorff spaces are normal.
(See e.g.\ \cite[Theorem 3.1.9 p.\ 125]{Eng}.) Whence the following
consequences.

\begin{observation}\label{Urysimple}%
For two distinct points $x, y$ of a compact Hausdorff space $K$
there exists $f \in \cont (K)$ with $f(x) = 0$ and $f(y) = 1$.
\end{observation}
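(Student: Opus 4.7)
The plan is to apply Urysohn's Lemma \ref{Urysohn} directly to the singletons $\{x\}$ and $\{y\}$, after verifying that the hypotheses of the lemma are satisfied. The key ingredients are already cited in the excerpt just before the statement, so the proof amounts to assembling them.

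First I would note that in a Hausdorff space, every singleton is closed. Hence $\{x\}$ and $\{y\}$ are closed subsets of $K$, and they are disjoint because $x \neq y$ by assumption. Second, I would invoke the fact (recalled in the paragraph immediately preceding the observation) that every compact Hausdorff space is normal, so $K$ is a normal Hausdorff space.

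Having set up these two points, I would apply Urysohn's Lemma \ref{Urysohn} with $A := \{x\}$ and $B := \{y\}$. This produces a function $f \in \cont(K)$ taking values in $[\,0,1\,]$ with $f = 0$ on $A$ and $f = 1$ on $B$; in particular $f(x) = 0$ and $f(y) = 1$, which is exactly what is claimed.

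There is essentially no obstacle here — the statement is stated explicitly as a consequence of the preceding two reminders, and the only things to check (that points are closed in a Hausdorff space, and that compact Hausdorff implies normal) are either trivial or quoted in the text. The only mild point worth mentioning is that Urysohn's Lemma as stated yields values in $[\,0,1\,]$, so one does not need to rescale; the function $f$ it produces is already the desired witness in $\cont(K)$.
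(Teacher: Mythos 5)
Your proof is correct and is exactly the argument the paper intends: the observation is stated as an immediate consequence of Urysohn's Lemma \ref{Urysohn} together with the quoted fact that compact Hausdorff spaces are normal, applied to the disjoint closed singletons $\{x\}$ and $\{y\}$. Nothing further is needed.
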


\begin{definition}[to separate the points]%
\index{concepts}{separating!the points}\label{sepp}%
Let $\mathcal{G}$ be a set of functions defined on a set
$\Omega \neq \varnothing$. One says that $\mathcal{G}$
\underline{separates the points} of $\Omega$, if for any two
distinct points $x, y \in \Omega$, there exists a function
$f \in \mathcal{G}$ such that $f(x) \neq f(y)$.
\end{definition}

\begin{definition}[to vanish nowhere]%
\index{concepts}{vanishing!nowhere}\label{vanish}%
Let $\mathcal{G}$ be a set of functions defined on a set
$\Omega \neq \varnothing$. One says that $\mathcal{G}$
\underline{vanishes nowhere} on $\Omega$, if there is no
point of $\Omega$, at which all the functions of $\mathcal{G}$ vanish.
\end{definition}

\begin{corollary}\label{KnecStW}%
Let $K \neq \varnothing$ be a compact Hausdorff space.
Then $\cont (K)$ separates the points of $K$ and vanishes
nowhere on $K$.
\end{corollary}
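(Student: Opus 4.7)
The statement is an immediate consequence of what has already been established, so my plan is essentially to point at the right earlier items rather than do new work.

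For the separation claim, the plan is to invoke Observation \ref{Urysimple} directly. Given two distinct points $x, y \in K$, that observation furnishes $f \in \cont(K)$ with $f(x) = 0$ and $f(y) = 1$, and in particular $f(x) \neq f(y)$, which is exactly the condition in Definition \ref{sepp}. (Observation \ref{Urysimple} is itself a consequence of Urysohn's Lemma \ref{Urysohn} applied to the singletons $\{x\}, \{y\}$, which are closed in the Hausdorff space $K$, together with the fact that compact Hausdorff spaces are normal; but all of this has been put in place already.)

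For the nowhere-vanishing claim, the plan is even simpler: the constant function $\mathbf{1}$ taking the value $1$ everywhere on $K$ belongs to $\cont(K)$ and satisfies $\mathbf{1}(x) = 1 \neq 0$ for every $x \in K$. Hence there is no point of $K$ at which all functions of $\cont(K)$ vanish, which is the condition in Definition \ref{vanish}. One could equivalently cite Observation \ref{Urysimple} again (taking any $y \neq x$ if $K$ has at least two points, and handling the singleton case separately), but the constant function is by far the cleanest witness.

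There is no real obstacle here; the only thing to be careful about is to read the definitions \ref{sepp} and \ref{vanish} literally, so as not to confuse ``vanishes nowhere'' (no common zero of the whole family) with the stronger ``every individual function vanishes nowhere''. Under the correct reading, the constant function $\mathbf{1}$ alone suffices.
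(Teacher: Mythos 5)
Your proposal is correct and matches the intended argument: the paper states this as an unproved corollary immediately following Observation \ref{Urysimple}, and separation of points follows exactly as you say from that observation, while the constant function $1_K \in \cont(K)$ witnesses the nowhere-vanishing condition. Your care in reading Definition \ref{vanish} as the absence of a common zero is exactly right.
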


\begin{definition}[divisors of zero]\label{zerodivdef}%
\index{concepts}{divisors of zero}\index{concepts}{zero divisors}%
Two non-zero elements $a, b$ of a ring satisfying $ab = ba = 0$
are called \underline{divisors of zero}.
\end{definition}

\begin{corollary}\label{Kdivzero}%
Let $K$ be a compact Hausdorff space containing at least
two distinct points. Then $\cont (K)$ contains divisors of zero.
\end{corollary}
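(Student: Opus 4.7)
The plan is to produce two non-zero continuous functions on $K$ whose supports are disjoint, so that their pointwise product vanishes identically. Once this is achieved, the definition \ref{zerodivdef} of divisors of zero is immediately satisfied, since $\cont (K)$ is commutative.

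First I would fix two distinct points $x, y \in K$. Because $K$ is Hausdorff, I can choose disjoint open neighbourhoods $U$ of $x$ and $V$ of $y$. The singletons $\{x\}$ and $\{y\}$ are closed in $K$, and so are the complements $K \setminus U$ and $K \setminus V$; moreover $\{x\}$ is disjoint from $K \setminus U$, and $\{y\}$ is disjoint from $K \setminus V$. Since $K$ is compact Hausdorff and therefore normal (as recalled just before \ref{Urysimple}), I may apply Urysohn's Lemma \ref{Urysohn} twice, obtaining functions $f, g \in \cont (K)$ taking values in $[\,0,1\,]$ with
\[ f(x) = 1, \quad f \equiv 0 \text{ on } K \setminus U, \qquad
g(y) = 1, \quad g \equiv 0 \text{ on } K \setminus V. \]

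Now $f \neq 0$ and $g \neq 0$ in $\cont (K)$, because $f(x) = g(y) = 1$. It remains to check that $fg = 0$ identically on $K$. For any $z \in K$, either $z \notin U$, in which case $f(z) = 0$, or $z \in U$, in which case $z \notin V$ (since $U \cap V = \varnothing$) and hence $g(z) = 0$; in either case the product $f(z)g(z)$ vanishes. Thus $fg = gf = 0$ in $\cont (K)$, and $f, g$ are divisors of zero.

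There is no genuine obstacle here; the only step that requires a moment of care is noting that Urysohn's Lemma is available, which follows from the fact that compact Hausdorff spaces are normal, a fact already cited in the text preceding this corollary.
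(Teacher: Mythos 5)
Your proof is correct and follows exactly the paper's own argument: choose disjoint neighbourhoods of the two points and apply Urysohn's Lemma twice to obtain functions supported in those neighbourhoods. The paper's proof is merely a more compressed statement of the same construction.
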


\begin{proof}
Let $U(x), U(y)$ be two disjoint neighbourhoods of two distinct points
$x, y$ of $K$ respectively. Apply Urysohn's Lemma \ref{Urysohn} with
the closed sets $A := K \setminus U(x)$ and $B := \{ x \}$, then to
$K \setminus U(y)$ and $\{ y \}$.
\end{proof}

\begin{reminder}[the one-point compactification]%
\index{concepts}{one-point compactification}\label{onepcomp}%
\index{concepts}{compactification!one-point}%
\index{concepts}{compactification!Alexandroff}%
Let $\Omega \neq \varnothing$ be a locally compact Hausdorff
space. Let $K := \Omega \cup \{ \infty \}$ with $\infty \notin \Omega$.
\linebreak (Such an $\infty$ exists by an axiom of set theory.)
Then $K$ carries a unique topology in which $K$ is a compact
Hausdorff space containing $\Omega$ as a subspace. We
mention here that the open neighbourhoods of $\infty$ are of the
form $K \setminus C$ with $C$ any compact subset of $\Omega$.
In particular, $\Omega$ is dense in $K$ if and only if $\Omega$ is
not compact. In this case, the set $K$ together with the above
topology is called a \underline{one-point compactification} of
$\Omega$, and the point $\infty$ is called the corresponding
\underline{point at infinity}. The one-point compactification is
also called the Alexandroff compactification. For a detailed proof,
see e.g.\ \cite[section I.7.6 p.\ 67 ff.]{Schu}. \pagebreak
\end{reminder}

We next give a motivation for the definition of $\cont\0(\Omega)$
for a locally compact Hausdorff space $\Omega \neq \varnothing$.

\begin{observation}[$\cont\0(\Omega)$]%
\index{symbols}{C3@$\cont\0(\Omega)$|(}\label{motCnaught}%
\index{concepts}{vanishing!at infinity}
Let $\Omega \neq \varnothing$ be a locally compact Hausdorff space
which is not compact. We consider $\Omega$ to be imbedded in its
one-point compactification $K$, with corresponding point at infinity $\infty$.

A complex-valued function $f$ on $\Omega$ is said to
\underline{vanish at infinity}, if
\[ \lim_{x \to \infty} f(x) = 0, \]
that is, if and only if for given $\varepsilon > 0$, there exists a
neighbourhood $U$ of $\infty$ in $K$ such that $| \,f \,| < \varepsilon$
on $U \setminus \{ \infty \} \subset \Omega$.

Equivalently: for given $\varepsilon > 0$, there should exist a compact
subset $C$ of $\Omega$ such that $| \,f \,| < \varepsilon$ outside $C$.

(Indeed, the open neighbourhoods of $\infty$ in $K$ are of the form
$K \setminus C$ with $C$ any compact subset of $\Omega$, cf.\ the
preceding reminder \ref{onepcomp}.)

It is immediate that if we extend $f$ to $K$ by putting $f(\infty) := 0$,
then $f$ will vanish at infinity if and only if the extended function will
be continuous at $\infty$. (Please note that this continuity condition
is for the point $\infty$ alone.)

That is, we may identify the set of complex-valued functions on $\Omega$
vanishing at infinity with the set of complex-valued functions $g$ on $K$
which are continuous at $\infty$ and satisfy $g(\infty) = 0$.

We define
\[ \cont\0(\Omega) := \{ \,f \in \cont (\Omega) : f \text{ vanishes at infinity} \,\}. \]
Under the above identification, $\cont\0(\Omega)$ will correspond to
\[ \{ \,g \in \cont (K) : g(\infty) = 0 \,\}. \]
(As a function is continuous if and only if it is continuous at each point.)
As a consequence, functions in $\cont\0(\Omega)$ are bounded, so
$\cont\0(\Omega)$ may be considered as a \st-subalgebra of
$\cont_\mathrm{b}(\Omega)$, and will be equipped with the supremum
norm inherited from $\cont_\mathrm{b}(\Omega)$. The supremum norms
in $\cont_\mathrm{b}(\Omega)$ and in $\cont (K)$ coincide on
$\cont\0(\Omega)$:
\[ \sup_{x \in \Omega} | \,g(t) \,| = \sup_{y \in K} | \,g(y) \,|
\quad \text{for } g \in \cont (K) \text{ with } g(\infty) = 0. \]
It follows that $\cont\0(\Omega)$, when identified with
$\{ \,g \in \cont (K) : g(\infty) = 0 \,\}$, then will be a closed,
hence complete, subspace of $\cont (K)$, and thus
a C*-subalgebra of $\cont (K)$.
\end{observation}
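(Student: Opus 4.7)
The plan is to verify the assertions in the order stated, relying on the description of the topology of the one-point compactification $K$ recalled in \ref{onepcomp}, namely that a basic open neighbourhood of $\infty$ has the form $K \setminus C$ for $C$ a compact subset of $\Omega$.

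First I would dispose of the equivalence between the two formulations of ``vanishing at infinity''. As $U$ ranges over open neighbourhoods of $\infty$ in $K$, the complements $U \setminus \{\infty\}$ range precisely over sets of the form $\Omega \setminus C$ with $C$ a compact subset of $\Omega$. Substituting this into the $\varepsilon$-characterisation of $\lim_{x \to \infty} f(x) = 0$ yields the compact-complement criterion, and vice versa.

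Next, given $f : \Omega \to \mathds{C}$, define $\tld f : K \to \mathds{C}$ by extending $f$ with the value $0$ at $\infty$. Since $\Omega$ is open in $K$, continuity of $\tld f$ at a point of $\Omega$ is equivalent to continuity of $f$ at that point. At the extra point, continuity of $\tld f$ at $\infty$ means that for every $\varepsilon > 0$ there is a neighbourhood $U$ of $\infty$ with $| \,\tld f(y) \,| = | \,\tld f(y) - \tld f(\infty) \,| < \varepsilon$ for all $y \in U$, which is exactly the condition $\lim_{x \to \infty} f(x) = 0$ once one restricts attention to $y \in U \setminus \{\infty\}$. Hence $f \in \cont\0(\Omega)$ if and only if $\tld f \in \cont(K)$ and $\tld f(\infty) = 0$, giving the identification of $\cont\0(\Omega)$ with $\{ \,g \in \cont(K) : g(\infty) = 0 \,\}$. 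Boundedness of elements of $\cont\0(\Omega)$ is then automatic, since any $g \in \cont(K)$ is bounded on the compact space $K$. The equality of supremum norms is immediate: since $g(\infty) = 0$, removing the point $\infty$ from the set over which $| \,g \,|$ is taken cannot decrease the supremum.

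Finally, to show closedness of the identified subspace $\{ \,g \in \cont(K) : g(\infty) = 0 \,\}$ in $\cont(K)$, observe that it is the kernel of the evaluation functional $\mathrm{ev}_\infty : \cont(K) \to \mathds{C}$, $g \mapsto g(\infty)$, which is linear and satisfies $| \,\mathrm{ev}_\infty(g) \,| \leq | \,g \,|_{\,\infty}$, hence is continuous. Kernels of continuous linear maps are closed, and the set is clearly stable under multiplication and under pointwise conjugation, so it is a closed \st-subalgebra of the C*-algebra $\cont(K)$, thus itself a C*-subalgebra. I expect no real obstacle here; the entire statement is a careful unpacking of the definition of one-point compactification. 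The only subtlety is keeping straight that ``continuous on $K$'' decomposes into ``continuous on $\Omega$'' (the defining property of $\cont\0(\Omega)$) plus ``continuous at $\infty$'' (which, together with the value $0$ at $\infty$, is the vanishing-at-infinity condition).
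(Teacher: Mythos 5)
Your proposal is correct and follows essentially the same route as the paper's own inline reasoning: translating neighbourhoods of $\infty$ into complements of compact sets, identifying $\cont\0(\Omega)$ with $\{ \,g \in \cont (K) : g(\infty) = 0 \,\}$ via the zero extension, and deducing boundedness, equality of supremum norms, and closedness. Your observation that the identified subspace is the kernel of the continuous evaluation functional at $\infty$ is a clean way to package the closedness step that the paper merely asserts.
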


To summarise, we note: \pagebreak

\begin{definition}[$\cont\0(\Omega)$]\label{Cnaught}%
\index{symbols}{C3@$\cont\0(\Omega)$|)}%
Let $\Omega \neq \varnothing$ be a locally compact Hausdorff space.
Then $\cont\0(\Omega)$ is defined as
\[ \{ \,f \in \cont (\Omega) :
 \ \forall \ \varepsilon > 0 \ \exists \text{ compact } C \subset \Omega
 \text{ with } | \,f \,| < \varepsilon \text{ outside } C \,\}, \]
or equivalently
\[ \{ \,f \in \cont (\Omega) : \text{for all } \varepsilon > 0 \text{ the set }
  \{ \,x \in \Omega : | \,f(x) \,| \geq \varepsilon \,\} \text{ is compact} \,\}. \]
Please note that if $\Omega$ is compact, then
$\cont\0(\Omega) = \cont (\Omega)$. Also note that if $\Omega$ is
not compact, this definition is the same as in the preceding
observation \ref{motCnaught}.

The \st-algebra $\cont\0(\Omega)$ will be equipped with the
supremum norm.

If $\Omega$ is not compact, let $K$ be the one-point compactification
of $\Omega$, and let $\infty \in K \setminus \Omega$ be the corresponding
point at infinity. We may then identify $\cont\0(\Omega)$ with the
C*-subalgebra $\{ \,g \in \cont (K) : g(\infty) = 0 \,\}$ of $\cont (K)$.

Thus $\cont\0(\Omega)$ is a C*-subalgebra of $\cont_\mathrm{b}(\Omega)$,
namely the \underline{C*-algebra of} \underline{continuous functions
vanishing at infinity}.
\end{definition}

The C*-algebras $\cont\0(\Omega)$, with $\Omega \neq \varnothing$
a locally compact Hausdorff space, are the prototypes of commutative
C*-algebras $\neq \{ 0 \}$, see the Commutative Gel'fand-Na\u{\i}mark
Theorem \ref{commGN} below.)

\begin{proposition}\label{nounit}%
Let $\Omega \neq \varnothing$ be a locally compact Hausdorff space.
If $\Omega$ is not compact, then $\cont\0(\Omega)$ contains no unit.
\end{proposition}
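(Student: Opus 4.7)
The plan is to assume for contradiction that $\cont\0(\Omega)$ possesses a unit $u$, and then show that $u$ must be the constant function $1$ on $\Omega$, which manifestly fails to vanish at infinity when $\Omega$ is not compact.

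First, I would pin down the value of $u$ pointwise. For any $x \in \Omega$, I will produce $f \in \cont\0(\Omega)$ with $f(x) = 1$. Using the one-point compactification $K = \Omega \cup \{\infty\}$ of \ref{onepcomp}, the singletons $\{x\}$ and $\{\infty\}$ are disjoint closed subsets of the compact (hence normal) Hausdorff space $K$. Urysohn's Lemma \ref{Urysohn} yields $g \in \cont(K)$ with $g(x) = 1$ and $g(\infty) = 0$. By the identification in \ref{Cnaught}, the restriction of $g$ to $\Omega$ belongs to $\cont\0(\Omega)$ and satisfies $f(x) = 1$. The defining property $uf = f$ then forces $u(x) \cdot 1 = 1$, so $u(x) = 1$.

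Since $x \in \Omega$ was arbitrary, $u$ is the constant function $1$ on $\Omega$. Finally, I invoke the assumption that $\Omega$ is not compact: for any compact $C \subset \Omega$, the complement $\Omega \setminus C$ is non-empty, and on it $|u| = 1$, so the characterisation in \ref{Cnaught} shows $u \notin \cont\0(\Omega)$ (take $\varepsilon = 1/2$). This contradicts the assumption that $u$ was a unit living in $\cont\0(\Omega)$.

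No step here looks like a real obstacle: the only ingredient beyond bookkeeping is the existence of sufficiently many functions in $\cont\0(\Omega)$ to separate an arbitrary point from $\infty$, and this is precisely what Urysohn's Lemma provides once one passes to the one-point compactification. The slight subtlety worth being careful about is the requirement in the definition of a unit (see the remark following the definition of ``unit'') that the unit be a \emph{non-zero} element; but this is automatic here since my contradiction shows $u$ would have to be nowhere zero.
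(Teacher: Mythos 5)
Your proof is correct and follows essentially the same route as the paper's: identify the putative unit with $1_\Omega$ and observe that $1_\Omega$ fails to vanish at infinity when $\Omega$ is not compact. The only difference is that you spell out, via Urysohn's Lemma on the one-point compactification, why a unit must equal $1$ at every point — a step the paper leaves implicit (it is essentially the "vanishes nowhere" part of \ref{necStW}).
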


\begin{proof}
Any unit would need to be $1_\Omega$, but $1_\Omega$ does not
vanish at infinity if $\Omega$ is not compact. Indeed, in this case,
the set $\{ \,x \in \Omega : \,1_\Omega (x) \geq \frac12 \,\} = \Omega$
is not compact.
\end{proof}

\begin{proposition}\label{necStW}%
If $\Omega \neq \varnothing$ is a locally compact Hausdorff space,
then $\cont\0(\Omega)$ separates the points of $\Omega$ and vanishes
nowhere on $\Omega$.
\end{proposition}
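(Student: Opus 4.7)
The plan is to split into the cases $\Omega$ compact and $\Omega$ not compact, reducing everything to the already-established facts for compact Hausdorff spaces.

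If $\Omega$ is compact, then by definition \ref{Cnaught} we have $\cont\0(\Omega) = \cont(\Omega)$, and the claim is precisely \ref{KnecStW}.

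If $\Omega$ is not compact, I pass to the one-point compactification $K = \Omega \cup \{\infty\}$ from \ref{onepcomp}, which is a compact Hausdorff space and therefore normal. By \ref{Cnaught}, I may identify $\cont\0(\Omega)$ with $\{\,g \in \cont(K) : g(\infty) = 0\,\}$, so it suffices to produce, for each relevant pair of points, a continuous function on $K$ vanishing at $\infty$ with the required values. For the vanishing-nowhere part, given $x \in \Omega$, apply Urysohn's Lemma \ref{Urysohn} in the normal space $K$ to the disjoint closed singletons $\{\infty\}$ and $\{x\}$ (singletons are closed in a Hausdorff space) to obtain $g \in \cont(K)$ with $g(\infty) = 0$ and $g(x) = 1$; the restriction of $g$ lies in $\cont\0(\Omega)$ and does not vanish at $x$.

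For separation of points, the key trick is to bundle the point at infinity into the zero set. Given distinct $x, y \in \Omega$, the set $\{x, \infty\}$ is closed in $K$ and disjoint from the closed singleton $\{y\}$. Applying Urysohn's Lemma \ref{Urysohn} to this pair yields $g \in \cont(K)$ with $g(x) = g(\infty) = 0$ and $g(y) = 1$; its restriction to $\Omega$ then lies in $\cont\0(\Omega)$ and separates $x$ from $y$.

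No step presents a real obstacle; the only point requiring care is ensuring the separating function vanishes at $\infty$, which is handled by the choice of disjoint closed sets above. In fact, one could also just invoke \ref{Urysimple} twice inside $K$ (once with the pair $x, \infty$ and once with $y, \infty$, then combine or pick appropriately), but the direct application of Urysohn's Lemma above is the cleanest route.
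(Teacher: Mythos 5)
Your proof is correct and follows essentially the same route as the paper: reduce to the compact case, pass to the one-point compactification, and apply Urysohn's Lemma with $\infty$ forced into the zero set. The only cosmetic difference is in the separation step, where the paper multiplies two functions obtained from \ref{Urysimple} (one vanishing at $\infty$, one vanishing at $y$), whereas you apply Urysohn's Lemma once to the closed pair $\{x,\infty\}$ versus $\{y\}$ — both are equally valid.
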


\begin{proof}
We can assume that $\Omega$ is not compact, cf.\ \ref{KnecStW} above.
Let $K$ be the one-point compactification of $\Omega$, and let
$\infty \in K \setminus \Omega$ be the corresponding point at infinity.
Consider $x \in \Omega$. There exists $f \in \cont (K)$ with $f(\infty) = 0$
and $f(x) = 1$. (By observation \ref{Urysimple} above.) The restriction $g$
of $f$ to $\Omega$ then is in $\cont\0(\Omega)$, and and satisfies $g(x) = 1$.
Thus $\cont\0(\Omega)$ vanishes nowhere on $\Omega$. If $y \in \Omega$
is distinct from $x$, there exists $h \in \cont (K)$ with $h(x) = 1$, $h(y) = 0$.
(By observation \ref{Urysimple} again.) The restriction $k$ of $fh$ to
$\Omega$ then is in $\cont\0(\Omega)$, and $k(x) = 1$, $k(y) = 0$. Thus
$\cont\0(\Omega)$ separates the points of $\Omega$. \pagebreak
\end{proof}

\begin{proposition}\label{exdivzero}%
Let $\Omega$ be a locally compact Hausdorff space containing at least
two distinct points. Then $\cont\0(\Omega)$ contains divisors of zero,
i.e.\ non-zero elements $f, g$ with $f g = 0$.
\end{proposition}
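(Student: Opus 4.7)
The plan is to mimic the argument of \ref{Kdivzero} after passing to the one-point compactification. Specifically, if $\Omega$ itself is compact, then $\cont\0(\Omega) = \cont(\Omega)$ and the claim reduces directly to \ref{Kdivzero}. Otherwise, the idea is to produce two continuous functions on the one-point compactification $K$ of $\Omega$ which both vanish at $\infty$, have disjoint ``supports'' inside $\Omega$, and are each nonzero at a prescribed point; via the identification of \ref{Cnaught} they will then supply the required divisors of zero in $\cont\0(\Omega)$.

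First I would fix two distinct points $x, y \in \Omega$ and invoke the Hausdorff property of $\Omega$ to choose disjoint open neighbourhoods $V_x, V_y \subset \Omega$ of $x$ and $y$. Since $\Omega$ is open in $K$, both $V_x$ and $V_y$ are open in $K$, and neither of them contains $\infty$. Then I would apply Urysohn's Lemma \ref{Urysohn} in the compact Hausdorff (hence normal) space $K$: to the disjoint closed sets $\{x\}$ and $K \setminus V_x$ to obtain $f \in \cont(K)$ with $f(x) = 1$ and $f \equiv 0$ on $K \setminus V_x$; and analogously to $\{y\}$ and $K \setminus V_y$ to obtain $g \in \cont(K)$ with $g(y) = 1$ and $g \equiv 0$ on $K \setminus V_y$.

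Since $\infty \notin V_x$ and $\infty \notin V_y$, both $f$ and $g$ vanish at $\infty$, so under the identification of \ref{Cnaught} they restrict to elements of $\cont\0(\Omega)$. They are nonzero because $f(x) = 1$ and $g(y) = 1$. Finally, the product $fg$ vanishes everywhere on $K$: on $V_x$ one has $g = 0$ (as $V_x \cap V_y = \varnothing$ implies $V_x \subset K \setminus V_y$), while on $K \setminus V_x$ one has $f = 0$. Hence $fg = 0$ on $\Omega$ as well, exhibiting the desired divisors of zero.

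I do not expect any serious obstacle. The only small subtlety is to make sure the neighbourhoods $V_x, V_y$ are chosen so as to avoid $\infty$, which is automatic once they are taken inside $\Omega$; everything else is a direct transcription of the compact case through the one-point compactification.
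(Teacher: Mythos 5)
Your proof is correct and follows essentially the same route as the paper: reduce to \ref{Kdivzero} in the compact case, and otherwise apply Urysohn's Lemma in the one-point compactification with the point at infinity included in the closed sets on which the functions are required to vanish. No issues.
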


\begin{proof}
We can assume that $\Omega$ is not compact, cf.\ \ref{Kdivzero} above.
One then applies Urysohn's Lemma \ref{Urysohn} to the one-point
compactification of $\Omega$, proceeding as in \ref{Kdivzero}, including
the point at infinity in the closed sets on which functions are to vanish.
\end{proof}

\medskip
We shall next give a definition of vector sublattices of
$\mathds{R}^{\,\text{\Small{$\Omega$}}}$, then characterise them,
cf.\ \ref{lattabs}. One point is, that they are useful.

\begin{definition}[lattice]\index{concepts}{lattice}\label{latticedef}%
An ordered set is called a \underline{lattice}, if with any two elements,
it contains a least upper bound (or supremum), as well as
a greatest lower bound (or infimum), of those two elements.
\end{definition}

\begin{definition}[the lattice operations $\vee$ and $\wedge$]%
\index{concepts}{lattice!operations@operations $\vee$ $\wedge$}%
\index{symbols}{11@$\vee$}\index{symbols}{12@$\wedge$}%
\index{symbols}{f01@$f \vee g$}%
\index{symbols}{f02@$f \wedge g$}%
If $\mathscr{L}$ is a lattice, and if $f, g \in \mathscr{L}$, then
the supremum of $f$ and $g$ is denoted by $f \vee g$, and
the infimum of $f$ and $g$ is denoted by $f \wedge g$.
\end{definition}

\begin{definition}[ordered vector space]%
\index{concepts}{ordered vector space}%
\index{concepts}{vector!space!ordered}%
An \underline{ordered vector space} is a real vector space $\mathscr{L}$
equipped with an order relation $\leq$ such that
\begin{alignat*}{3}
    a \leq b & \Rightarrow a+c \leq b+c
    \quad &\text{for all} \quad &a, b, c \in \mathscr{L}, \\
   a \leq b & \Rightarrow \lambda a \leq \lambda b
   \quad &\text{for all} \quad &a, b \in \mathscr{L},\ \lambda \in [ \,0, \infty \,[.
\end{alignat*}
\end{definition}

\begin{definition}[vector lattice]%
\index{concepts}{lattice!vector}\index{concepts}{vector!lattice}%
A \underline{vector lattice} is an ordered \linebreak vector space
that is a lattice.
\end{definition}

\begin{example}[$\mathds{R}^{\,\text{\Small{$\Omega$}}}$]%
\index{symbols}{R25@$\mathds{R}^{\,\text{\Small{$\Omega$}}}$}%
Let $\Omega$ be a non-empty set. The set
$\mathds{R}^{\,\text{\Small{$\Omega$}}}$ of real-valued functions on
$\Omega$ shall be equipped with the pointwise vector space operations
and the pointwise order given by
\[ f \leq g \Leftrightarrow f(x) \leq g(x) \quad \text{for all} \quad x \in \Omega \]
if $f, g \in \mathds{R}^{\,\text{\Small{$\Omega$}}}$.
Then $\mathds{R}^{\,\text{\Small{$\Omega$}}}$ is a \underline{vector lattice},
with the lattice operations
\[ ( \,f \vee g \,) \,(x) = \max \,\{ \,f(x), g(x) \,\}, \quad
 ( \,f \wedge g \,) \,(x) = \min \,\{ \,f(x), g(x) \,\} \]
for all $x \in \Omega$ and all $f, g \in \mathds{R}^{\,\text{\Small{$\Omega$}}}$.

The \underline{notation} stems from the fact that for $A, B \subset \Omega$,
one then has
\[ 1_A \vee 1_B = 1_{A \cup B}, \qquad 1_A \wedge 1_B = 1_{A \cap B}.
\pagebreak \]
\end{example}

\begin{definition}[sublattice]\index{concepts}{sublattice}%
\index{concepts}{lattice!sublattice}%
A \underline{sublattice} of a lattice $\mathscr{L}$ is a subset containing
with two elements also their supremum and infimum in $\mathscr{L}$.
\end{definition}

\begin{definition}[vector sublattice]%
\index{concepts}{sublattice!vector}%
\index{concepts}{vector!lattice!sublattice}%
A \underline{vector sublattice} of a vector lattice is a vector subspace
that also is a sublattice.
\end{definition}

\begin{example}%
Obviously ${\ell}^{\,\infty}(\Omega)\sa$ is a vector sublattice of
$\mathds{R}^{\,\text{\Small{$\Omega$}}}$ for any non-empty set $\Omega$.
\end{example}

\begin{theorem}\label{lattabs}%
Let $\Omega$ be a non-empty set. A vector subspace
of $\mathds{R}^{\,\text{\Small{$\Omega$}}}$ is a vector sublattice
of $\mathds{R}^{\,\text{\Small{$\Omega$}}}$ if and only if it contains
with each function $f$ also the function
$| \,f \,|\vphantom{\mathds{R}^{\,\text{\Small{$\Omega$}}}}$.
(The pointwise absolute value of $f$.)
$\vphantom{\mathds{R}^{\,\text{\Small{$\Omega$}}}}$
\end{theorem}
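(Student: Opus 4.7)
The plan is to establish the equivalence by exploiting the elementary pointwise identities relating the lattice operations $\vee,\wedge$ on $\mathds{R}^{\,\text{\Small{$\Omega$}}}$ to the absolute value, namely
\[ |\,f\,| = f \vee (-f), \qquad
   f \vee g = \tfrac{1}{2}\bigl( \,f+g+|\,f-g\,| \,\bigr), \qquad
   f \wedge g = \tfrac{1}{2}\bigl( \,f+g-|\,f-g\,| \,\bigr). \]
These three identities are verified pointwise using the elementary real-number relations $\max\{a,-a\}=|a|$ and $\max\{a,b\}=\tfrac12(a+b+|a-b|)$, $\min\{a,b\}=\tfrac12(a+b-|a-b|)$.

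For the forward implication, I would take a vector sublattice $V$ of $\mathds{R}^{\,\text{\Small{$\Omega$}}}$ and a function $f \in V$. Since $V$ is a vector subspace, $-f \in V$; since $V$ is closed under the operation $\vee$ (inherited from $\mathds{R}^{\,\text{\Small{$\Omega$}}}$), we conclude $f \vee (-f) \in V$. By the first identity this element is precisely $|\,f\,|$, so $|\,f\,| \in V$.

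For the converse, I would assume that $V$ is a vector subspace containing $|\,f\,|$ whenever it contains $f$, and pick arbitrary $f,g \in V$. Then $f-g \in V$ by linearity, and hence $|\,f-g\,| \in V$ by hypothesis. Applying linearity once more, the second and third identities above exhibit $f \vee g$ and $f \wedge g$ as linear combinations of $f$, $g$ and $|\,f-g\,|$, all belonging to $V$; so $f \vee g,\,f \wedge g \in V$. This makes $V$ a sublattice, and together with its vector-space structure, a vector sublattice of $\mathds{R}^{\,\text{\Small{$\Omega$}}}$.

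There is no real obstacle here: the only content of the theorem is the pair of algebraic identities expressing $\vee,\wedge$ in terms of the absolute value and vice versa. The mild point worth attention is to carry out the verification pointwise (rather than trying an abstract lattice-theoretic argument), since $\mathds{R}^{\,\text{\Small{$\Omega$}}}$ is equipped with the pointwise order and pointwise lattice operations; this ensures the identities genuinely hold as equations of functions on $\Omega$.
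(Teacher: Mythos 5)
Your proof is correct and uses exactly the same three pointwise identities as the paper's own proof ($|\,f\,| = f \vee (-f)$ and the expressions of $f \vee g$, $f \wedge g$ via $\tfrac12(f+g) \pm \tfrac12|\,f-g\,|$); you merely spell out the two directions of the equivalence more explicitly. No gaps.
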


\begin{proof}
Indeed, for $f, g, h \in \mathds{R}^{\,\text{\Small{$\Omega$}}}$, one has
\begin{gather*}
| \,h \,| = (\mspace{1mu}-h\mspace{2mu}) \vee h, \\
f \vee g = \frac{f + g}{2} + \frac{| \,f - g \,|}{2}, \\
f \wedge g = \frac{f + g}{2} - \frac{| \,f - g \,|}{2}.
\end{gather*}
In order to see the last two equations, note that $\frac{f(x) + g(x)}{2}$
is the midpoint of the interval spanned by the points $f(x)$ and $g(x)$,
and that $\frac{| \,f(x) - g(x) \,|}{2}$ is the distance from the midpoint
to the endpoints of this interval.$\vphantom{\frac{|1|}{2}}$
\end{proof}

\medskip
The continuity of the function $z \mapsto | \,z \,|$ on $\mathds{R}$ then implies:

\begin{corollary}\label{naughtlatt}%
Let $\Omega \neq \varnothing$ be a Hausdorff space. Then
$\cont (\Omega)\sa$ is a vector sublattice of
$\mathds{R}^{\,\text{\Small{$\Omega$}}}$. Hence
$\cont_\mathrm{b}(\Omega)\sa$ is a vector sublattice of
both $\cont (\Omega)\sa$ and ${\ell}^{\,\infty}(\Omega)\sa$.
If $\Omega \neq \varnothing$ furthermore is a locally compact
Hausdorff space, then $\cont\0(\Omega)\sa$ is a vector sublattice
of $\cont_\mathrm{b}(\Omega)\sa$.
\end{corollary}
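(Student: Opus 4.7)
The plan is to apply Theorem \ref{lattabs} three times, once for each of the three spaces in question, using the characterisation that a vector subspace of $\mathds{R}^{\,\text{\Small{$\Omega$}}}$ is a vector sublattice if and only if it is closed under the operation $f \mapsto | \,f \,|$. So the whole proof reduces to verifying that taking pointwise absolute value preserves each of the three properties: continuity, boundedness, and vanishing at infinity.

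First I would handle $\cont (\Omega)\sa$: for a continuous real-valued function $f$ on $\Omega$, the function $| \,f \,|$ is the composition of $f$ with the continuous map $z \mapsto | \,z \,|$ on $\mathds{R}$, and hence continuous. Theorem \ref{lattabs} then gives the first claim. For $\cont_\mathrm{b}(\Omega)\sa$, I note that $| \,f \,|$ has the same supremum norm as $f$, so boundedness is preserved; and $\cont_\mathrm{b}(\Omega)\sa = \cont (\Omega)\sa \cap {\ell}^{\,\infty}(\Omega)\sa$ is a vector subspace of each of the two ambient vector sublattices which is itself closed under $f \mapsto | \,f \,|$, so Theorem \ref{lattabs} yields the second claim.

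For the last assertion, I would use the equivalent definition of $\cont\0(\Omega)$ from \ref{Cnaught} in terms of the compactness of the sets $\{ \,x \in \Omega : | \,f(x) \,| \geq \varepsilon \,\}$. Since $\bigl| \,| \,f \,|(x) \,\bigr| = | \,f(x) \,|$ pointwise, these sets coincide for $f$ and for $| \,f \,|$, so $| \,f \,| \in \cont\0(\Omega)$ whenever $f \in \cont\0(\Omega)\sa$. Combined with the continuity already established, $\cont\0(\Omega)\sa$ is a vector subspace of $\cont_\mathrm{b}(\Omega)\sa$ closed under pointwise absolute value, so Theorem \ref{lattabs} applies one last time.

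There is no real obstacle here; the content of the corollary is just to package the three elementary observations in a form suitable for later reference. The only subtlety, if any, lies in cleanly using the characterisation of \ref{lattabs} instead of verifying the existence of suprema and infima directly via the explicit formulas $f \vee g = \tfrac{f+g}{2} + \tfrac{|f-g|}{2}$ and $f \wedge g = \tfrac{f+g}{2} - \tfrac{|f-g|}{2}$, which would work equally well but would be redundant given that Theorem \ref{lattabs} has just been proved.
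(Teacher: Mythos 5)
Your proposal is correct and follows exactly the route the paper intends: the paper gives no explicit proof beyond the remark that the continuity of $z \mapsto | \,z \,|$ on $\mathds{R}$, combined with Theorem \ref{lattabs}, yields the result. Your verification that pointwise absolute value preserves continuity, boundedness, and vanishing at infinity simply spells out the details the paper leaves to the reader.
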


We would like to establish the following.

\begin{observation}\label{finitesup}%
\index{symbols}{11@$\vee$}\index{symbols}{12@$\wedge$}%
\index{symbols}{13@$\vee_{i \in I}f_i$}%
In a lattice, any non-empty finite subset has a supremum. This
supremum can be formed iteratively, involving only two elements
at each step. We may thus write $\vee _{i \in I} f_i$ for the supremum
of a finite non-empty family $(f_i)_{i \in I}$ in a lattice. In case of a
sublattice of $\mathds{R}^{\,\text{\Small{$\Omega$}}}$, with
$\Omega$ a non-empty set, we then have
\[ ( \,\vee _{i \in I} f_i \,) \,(x) = \max \,\{ \,f_i(x) \in \mathds{R} : i \in I \,\}
\quad \text{for all} \quad x \in \Omega. \pagebreak \]
\end{observation}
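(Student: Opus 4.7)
The plan is to argue by induction on the cardinality $n \geq 1$ of the index set $I$. For $n = 1$, the only element of the family is itself the supremum, so there is nothing to prove. For the inductive step, pick any $i\0 \in I$ and set $J := I \setminus \{ i\0 \}$. By the inductive hypothesis, the family $(f_i)_{i \in J}$ has a supremum $s \in \mathscr{L}$, which may be formed iteratively, two elements at a time. I would then set $t := s \vee f_{i\0}$, which exists by the lattice property, and verify that $t$ is the supremum of $(f_i)_{i \in I}$.

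The verification rests on the elementary fact that, in any ordered set, an element $c$ satisfies $a \vee b \leq c$ if and only if both $a \leq c$ and $b \leq c$ (this is just the definition of least upper bound of two elements). First I would check that $t$ is an upper bound: for every $i \in J$ we have $f_i \leq s \leq t$ and also $f_{i\0} \leq t$, so $f_i \leq t$ for all $i \in I$. Then I would check that $t$ is the least upper bound: if $u \in \mathscr{L}$ satisfies $f_i \leq u$ for all $i \in I$, then by the inductive hypothesis $s \leq u$, and also $f_{i\0} \leq u$, so by the characterisation of $s \vee f_{i\0}$ we get $t = s \vee f_{i\0} \leq u$. This both proves the existence and exhibits the iterative construction.

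For the final assertion about $\mathds{R}^{\,\text{\Small{$\Omega$}}}$, I would again proceed by induction. The two-element case is exactly the pointwise formula
\[ ( f \vee g )(x) = \max \{ f(x), g(x) \} \]
recalled in the example just above. For the inductive step, using the iterative formula $t = s \vee f_{i\0}$ pointwise at each $x \in \Omega$ and applying the inductive hypothesis to $s$ gives
\[ t(x) = \max \bigl\{ s(x), f_{i\0}(x) \bigr\}
       = \max \bigl\{ \max_{i \in J} f_i(x),\, f_{i\0}(x) \bigr\}
       = \max_{i \in I} f_i(x). \]
I do not anticipate any genuine obstacle: the statement is essentially the associativity of the binary operation $\vee$ packaged as an assertion about finite suprema, and the only point worth being careful about is the well-definedness of the iterative supremum (i.e.\ that it does not depend on which element is singled out), which however is automatic once one knows that a supremum of a set is unique whenever it exists.
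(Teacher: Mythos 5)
Your proposal is correct and follows essentially the same route as the paper: the text proves the observation by induction on the size of the index set, using as its key lemma exactly the fact you isolate — that if $s$ is the supremum of a subfamily, then $s \vee f_{i_{\,0}}$ is the supremum of the family enlarged by one element — with the same upper-bound/least-upper-bound verification. The pointwise $\max$ formula then follows by the same inductive reduction to the two-element case.
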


The proof is by induction using the following proposition.

\begin{proposition}%
Let $\mathscr{L}$ be a lattice. Let $F$ be a subset of $\mathscr{L}$
which has a supremum $f$ in $\mathscr{L}$. For any $g \in \mathscr{L}$,
the element $f \vee g$ then is the supremum in $\mathscr{L}$ of
the set $F \cup \{ g \}$.
\end{proposition}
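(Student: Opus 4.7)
The plan is to verify the two defining properties of a supremum: that $f \vee g$ is an upper bound of $F \cup \{g\}$, and that it is the least such.

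First I would check the upper bound property. Since $f$ is the supremum of $F$ in $\mathscr{L}$, one has $x \leq f$ for every $x \in F$. By definition of $f \vee g$ as the supremum of $\{f, g\}$, we have $f \leq f \vee g$, and by transitivity $x \leq f \vee g$ for all $x \in F$. Together with $g \leq f \vee g$, this says that $f \vee g$ is an upper bound of $F \cup \{ g \}$.

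Next I would establish minimality. Let $h \in \mathscr{L}$ be any upper bound of $F \cup \{ g \}$. Then in particular $h$ is an upper bound of $F$, so by the defining property of $f = \sup F$, we get $f \leq h$. Also $g \leq h$ by assumption. Hence $h$ is an upper bound of the two-element set $\{ f, g \}$, and the defining property of $f \vee g$ as the supremum of this set gives $f \vee g \leq h$.

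There is no real obstacle here: the argument is a direct appeal to the universal property of suprema, used twice (once for $F$ and once for the pair $\{f,g\}$). Combining the two steps shows that $f \vee g$ is the supremum of $F \cup \{ g \}$ in $\mathscr{L}$, as required.
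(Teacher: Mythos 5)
Your proof is correct and follows the same route as the paper's: show $f \vee g$ is an upper bound of $F \cup \{g\}$ using $f = \sup F$ and $f \leq f \vee g$, then show any upper bound $h$ of $F \cup \{g\}$ dominates both $f$ and $g$ and hence $f \vee g$. You merely spell out the upper-bound step that the paper dismisses as obvious.
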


\begin{proof}
Obviously $f \vee g$ is an upper bound of $F \cup \{ g \}$.
Any other upper bound $h$ of $F \cup \{ g \}$ is an upper bound
of $F$, and thus greater than or equal to the least upper bound
$f$ of $F$. Therefore $h \geq f \vee g$, which shows that
$f \vee g$ is the least upper bound of $F \cup \{ g \}$.
\end{proof}

\medskip
Similarly for the infimum of any non-zero finite number of elements of a lattice.

\bigskip
Our next object is a pre-C*-algebra of practical importance.

\begin{reminder}[$\cont_\mathrm{c}(\Omega)$]%
\index{symbols}{s35@$\mathrm{supp}(f)$}%
\index{symbols}{C4@$\cont_\mathrm{c}(\Omega)$}%
\index{concepts}{support!of a function}%
Let $\Omega \neq \varnothing$ be a locally compact Hausdorff space.
The \underline{support} of a complex-valued function $f$ on $\Omega$
is defined to be the closed set
\[ \mathrm{supp}(f) = \overline{\{ \,x \in \Omega: f(x) \neq 0 \,\}}. \]
One denotes by $\cont_\mathrm{c}(\Omega)$ the set of functions in
$\cont (\Omega)$ of compact support.
\end{reminder}

\begin{observation}%
Let $\Omega \neq \varnothing$ be a locally compact Hausdorff space.
Then $\cont_\mathrm{c}(\Omega)$ is a \st-subalgebra of $\cont\0 (\Omega)$.
Also, $\cont_\mathrm{c}(\Omega)\sa$ is a vector sublattice of
$\cont\0 (\Omega)\sa$.
\end{observation}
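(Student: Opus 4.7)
The plan is to chase the definitions of compact support through the algebraic operations and the absolute value, using the support inclusions that are immediate from the definition $\mathrm{supp}(f) = \overline{\{x \in \Omega : f(x) \neq 0\}}$.

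First I would verify the inclusion $\cont_\mathrm{c}(\Omega) \subset \cont\0(\Omega)$. For $f \in \cont_\mathrm{c}(\Omega)$ and $\varepsilon > 0$, the set $\{x \in \Omega : |f(x)| \geq \varepsilon\}$ is closed in $\Omega$ (by continuity of $f$) and contained in $\mathrm{supp}(f)$, which is compact. A closed subset of a compact Hausdorff space is compact, so this set is compact, and the second characterisation in \ref{Cnaught} gives $f \in \cont\0(\Omega)$.

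Next I would check that $\cont_\mathrm{c}(\Omega)$ is a $*$-subalgebra of $\cont\0(\Omega)$. For $f, g \in \cont_\mathrm{c}(\Omega)$ and $\lambda, \mu \in \mathds{C}$, one has the inclusions
\[ \mathrm{supp}(\lambda f + \mu g) \subset \mathrm{supp}(f) \cup \mathrm{supp}(g), \qquad
\mathrm{supp}(fg) \subset \mathrm{supp}(f) \cap \mathrm{supp}(g), \qquad
\mathrm{supp}(\overline{f}) = \mathrm{supp}(f). \]
The right-hand sides are compact (the union of two compact sets being compact, and a closed subset of a compact set being compact), hence so are the left-hand sides. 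Thus $\cont_\mathrm{c}(\Omega)$ is closed under linear combinations, products and the involution.

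For the lattice assertion, I would invoke \ref{lattabs}: a vector subspace of $\mathds{R}^{\,\Omega}$ is a vector sublattice of $\mathds{R}^{\,\Omega}$ if and only if it is closed under taking pointwise absolute value. Since $\cont_\mathrm{c}(\Omega)\sa$ is plainly a real vector subspace of $\mathds{R}^{\,\Omega}$, it suffices to observe that for $f \in \cont_\mathrm{c}(\Omega)\sa$, the function $|f|$ is continuous (composition with $z \mapsto |z|$) and satisfies $\mathrm{supp}(|f|) = \mathrm{supp}(f)$, hence belongs to $\cont_\mathrm{c}(\Omega)\sa$. Therefore $\cont_\mathrm{c}(\Omega)\sa$ is a vector sublattice of $\mathds{R}^{\,\Omega}$, and a fortiori of $\cont\0(\Omega)\sa$. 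There is no real obstacle here; the whole argument is bookkeeping on supports, the only subtle point being the use of the closed-subset-of-compact-is-compact principle to conclude compactness of the relevant supports.
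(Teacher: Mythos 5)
Your proof is correct, and it supplies exactly the routine support bookkeeping that the paper leaves implicit (the observation is stated there without proof). All the key points check out: the inclusion $\{|f|\geq\varepsilon\}\subset\mathrm{supp}(f)$ giving $\cont_\mathrm{c}(\Omega)\subset\cont\0(\Omega)$ via the second characterisation in \ref{Cnaught}, the support inclusions for sums, products and adjoints combined with closed-subset-of-compact-is-compact, and the reduction of the lattice claim to closure under pointwise absolute value via \ref{lattabs}.
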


\begin{proposition}\label{Ccdense}%
Let $\Omega \neq \varnothing$ be a locally compact Hausdorff space.
Then $\cont_\mathrm{c}(\Omega)$ is dense in $\cont\0(\Omega)$.
\end{proposition}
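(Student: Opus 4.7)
The plan is to show that an arbitrary $f\in\cont\0(\Omega)$ can be approximated uniformly by a suitable truncation $\varphi f$, where $\varphi$ is a continuous compactly supported cutoff that equals $1$ on the set where $|f|$ is not already small. The compact case is trivial because then $\cont_\mathrm{c}(\Omega)=\cont(\Omega)=\cont\0(\Omega)$, so I assume $\Omega$ is not compact and work inside the one-point compactification $K$ of $\Omega$ with point at infinity $\infty$, as reminded in \ref{onepcomp} and \ref{motCnaught}.

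Fix $f\in\cont\0(\Omega)$ and $\varepsilon>0$. By the second description of $\cont\0(\Omega)$ in \ref{Cnaught}, the set $C:=\{\,x\in\Omega:|f(x)|\geq\varepsilon\,\}$ is a compact subset of $\Omega$. The first genuine step is to produce a relatively compact open set sandwiched between $C$ and $\Omega$. Using local compactness, for each $x\in C$ pick an open neighbourhood with compact closure; by compactness of $C$, finitely many of these cover $C$, and their union $V$ is an open subset of $\Omega$ with $C\subset V$ and with compact closure $\overline V\subset\Omega$.

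Now I construct the cutoff $\varphi$. Viewing $V$ as open in $K$ (since $\Omega$ is open in $K$), the set $K\setminus V$ is closed in $K$ and contains $\infty$, while $C$ is compact, hence closed in the Hausdorff space $K$, and $C\cap(K\setminus V)=\varnothing$. Because $K$ is compact Hausdorff, hence normal, Urysohn's Lemma \ref{Urysohn} yields $\varphi\in\cont(K)$ with $0\leq\varphi\leq 1$, $\varphi=1$ on $C$, and $\varphi=0$ on $K\setminus V$. Its restriction $\psi:=\varphi|_\Omega$ then satisfies $\{\,x\in\Omega:\psi(x)\neq 0\,\}\subset V$, so $\mathrm{supp}(\psi)\subset\overline V$ is compact; thus $\psi\in\cont_\mathrm{c}(\Omega)$.

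Finally, set $g:=\psi f$. Since $\cont_\mathrm{c}(\Omega)$ is a \st-subalgebra of $\cont\0(\Omega)$, and $f\in\cont\0(\Omega)\subset\cont_\mathrm{b}(\Omega)$, we have $g\in\cont_\mathrm{c}(\Omega)$. For $x\in C$ we have $\psi(x)=1$, so $f(x)-g(x)=0$; for $x\in\Omega\setminus C$ we have $|f(x)|<\varepsilon$ and $0\leq\psi(x)\leq 1$, so
\[ |\,f(x)-g(x)\,|=|\,f(x)\,|\cdot|\,1-\psi(x)\,|\leq|\,f(x)\,|<\varepsilon. \]
Hence $|\,f-g\,|_{\,\infty}\leq\varepsilon$, as required. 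The only real obstacle is the construction of the compactly supported cutoff $\psi$; Urysohn's Lemma alone guarantees only continuity on $K$, so combining it with local compactness to force compact support inside $\Omega$ is the step that does the actual work.
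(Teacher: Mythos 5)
Your proof is correct, but it takes a genuinely different route from the one in the text. The text first reduces to non-negative $f$ (using that $\cont\0(\Omega)\sa$ is a vector sublattice, cf.\ \ref{naughtlatt}, so that $\cont\0(\Omega)$ is spanned by its non-negative elements) and then simply truncates: $g := ( \,f - \varepsilon \,1_\Omega \,) \vee 0$ lies in $\cont_\mathrm{c}(\Omega)$ because its support sits inside the compact set $\{ \,x \in \Omega : | \,f(x) \,| \geq \varepsilon \,\}$, and $| \,f - g \,|_{\,\infty} \leq \varepsilon$. That argument needs no Urysohn's Lemma and no one-point compactification; local compactness enters only through the definition \ref{Cnaught}, and its only cost is the preliminary decomposition into non-negative parts. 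You instead keep $f$ complex-valued throughout and manufacture a compactly supported cutoff $\psi$ equal to $1$ on $\{ \,| \,f \,| \geq \varepsilon \,\}$ via local compactness plus Urysohn's Lemma \ref{Urysohn} in the one-point compactification; multiplication by $\psi$ then performs the truncation. This is heavier machinery for the same estimate, but the cutoff device is more flexible and is the standard tool elsewhere (partitions of unity, density of $\cont_\mathrm{c}$ in $\rmLeb^{\,p}$ spaces). Two small points: the fact that $g = \psi f$ is compactly supported does not follow from $\cont_\mathrm{c}(\Omega)$ being a \st-subalgebra (that only covers products of two compactly supported functions) -- the correct reason is $\mathrm{supp}(\psi f) \subset \mathrm{supp}(\psi)$, i.e.\ $\cont_\mathrm{c}(\Omega)$ is an ideal in $\cont_\mathrm{b}(\Omega)$; and if $\{ \,| \,f \,| \geq \varepsilon \,\}$ is empty your construction degenerates, but then $g := 0$ works trivially.
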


\begin{proof}
We note that $\cont\0(\Omega)$ is spanned by its non-negative
functions, because $\cont\0(\Omega)\sa$ is a vector sublattice of
$\mathds{R}^{\,\text{\Small{$\Omega$}}}$, cf.\ \ref{naughtlatt} above.
For $f \in \cont\0(\Omega)$ with
$f \geq 0$ pointwise, and $\varepsilon > 0$, the function
\[ g := ( \,f - \varepsilon \,1_\Omega \,) \vee 0 \]
is in $\cont_\mathrm{c}(\Omega)$, and we have
\[ | \,f - g \,|_{\,\infty} \leq \varepsilon. \]
Indeed, the support of $g$ is contained in the closed set
\[ \{ \,x \in \Omega : f(x) \geq \varepsilon \,\}
 = \{ \,x \in \Omega : | \,f(x) \,| \geq \varepsilon \,\}, \]
which is compact by \ref{Cnaught}. \pagebreak
\end{proof}

\clearpage

%


\section{The Stone-Weierstrass Theorem}%
\label{St-WstThm}

The objective of this paragraph is theorem \ref{StW} below.

\medskip
We shall give a completely elementary proof of the
following result, which is sometimes useful.

\begin{theorem}\label{sqrtone}%
The square root function $t \mapsto \sqrt{t}$ on $[\,0,1\,]$
can be approximated uniformly by a sequence of rational
polynomials without constant term.

Such a sequence can be defined iteratively as
\begin{align*}
p\0 (x) & := 0, \\
p_{n+1} (x) & := p_n (x) + \frac12 \bigl( \,x - {p_n (x)}^{\,2\,}\bigr)
\qquad (\,n \geq 0\,).
\end{align*}
Indeed, one then has
\[ 0 \leq \sqrt{t} - p_n (t) < \frac2n
\quad \text{for all} \quad t \in [\,0,1\,],\ n \geq 1. \]
Furthermore, the sequence $(p_n)$ is monotone increasing
(and therefore non-negative) on $[\,0,1\,]$.
\end{theorem}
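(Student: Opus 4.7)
The plan is to prove everything by induction from a single telescoping identity. First I would compute
\[ \sqrt{t} - p_{n+1}(t) = \bigl(\sqrt{t} - p_n(t)\bigr) - \tfrac{1}{2}\bigl(\sqrt{t} - p_n(t)\bigr)\bigl(\sqrt{t} + p_n(t)\bigr)
 = \bigl(\sqrt{t} - p_n(t)\bigr)\Bigl(1 - \tfrac{\sqrt{t} + p_n(t)}{2}\Bigr), \]
using the factorisation $t - p_n(t)^2 = (\sqrt{t} - p_n(t))(\sqrt{t} + p_n(t))$. This single identity is the engine of the whole argument.

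Next, by induction on $n$ I would establish the two-sided bound $0 \leq p_n(t) \leq \sqrt{t}$ for all $t \in [\,0,1\,]$. The case $n = 0$ is trivial. For the inductive step, if $0 \leq p_n(t) \leq \sqrt{t} \leq 1$, then the factor $1 - \tfrac{\sqrt{t}+p_n(t)}{2}$ lies in $[\,0,1\,]$, so the above identity gives $0 \leq \sqrt{t} - p_{n+1}(t) \leq \sqrt{t} - p_n(t)$; hence $p_{n+1}(t) \leq \sqrt{t}$ and $p_{n+1}(t) \geq p_n(t) \geq 0$. This simultaneously yields the non-negativity and the monotonicity $p_n \leq p_{n+1}$ on $[\,0,1\,]$ (the latter is also immediate from $p_{n+1} - p_n = \tfrac{1}{2}(t - p_n^2) \geq 0$).

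For the uniform rate, since $p_n(t) \geq 0$, the factor satisfies $1 - \tfrac{\sqrt{t}+p_n(t)}{2} \leq 1 - \tfrac{\sqrt{t}}{2}$, so iterating the identity from $n$ down to $0$ gives
\[ 0 \leq \sqrt{t} - p_n(t) \leq \sqrt{t}\,\Bigl(1 - \tfrac{\sqrt{t}}{2}\Bigr)^{\!n}. \]
Writing $u := \sqrt{t} \in [\,0,1\,]$, it remains to show $u(1 - u/2)^n < 2/n$ for $n \geq 1$. A straightforward calculus check (differentiating $u \mapsto u(1 - u/2)^n$) shows the maximum over $u \in [\,0,1\,]$ is attained at $u = 2/(n+1)$, with value $\tfrac{2}{n+1}\bigl(\tfrac{n}{n+1}\bigr)^{\!n} < \tfrac{2}{n+1} \leq \tfrac{2}{n}$.

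The main obstacle is spotting that the natural recursion $\sqrt{t} - p_{n+1} = (\sqrt{t} - p_n)(1 - \tfrac{\sqrt{t}+p_n}{2})$ must be weakened (by replacing $p_n$ by $0$ in the contraction factor) to decouple the dependence on $n$, producing the clean geometric-type bound $\sqrt{t}(1-\sqrt{t}/2)^n$; the small price of this weakening is exactly balanced by the fact that the factor $\sqrt{t}$ in front is itself small precisely where the contraction $1 - \sqrt{t}/2$ is poor, so the one-variable optimisation yields the stated explicit rate.
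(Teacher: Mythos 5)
Your argument is correct. The engine — the factorisation $\sqrt{t}-p_{n+1}(t)=\bigl(\sqrt{t}-p_n(t)\bigr)\bigl(1-\tfrac{\sqrt{t}+p_n(t)}{2}\bigr)$ — and the induction establishing $0\leq p_n(t)\leq\sqrt{t}$ (hence monotonicity and non-negativity) are exactly what the paper does. Where you diverge is in extracting the explicit rate $2/n$. The paper carries a second induction hypothesis, namely $\sqrt{t}-p_n(t)\leq\tfrac{2\sqrt{t}}{2+n\,\sqrt{t}}$, and closes the induction with the elementary algebraic inequality $\tfrac{2-\sqrt{t}}{2+n\sqrt{t}}\leq\tfrac{2}{2+(n+1)\sqrt{t}}$; the rate $<2/n$ then drops out immediately. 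You instead discard $p_n$ in the contraction factor to get the telescoped bound $\sqrt{t}\,(1-\sqrt{t}/2)^n$ and finish by a one-variable optimisation of $u(1-u/2)^n$ over $u\in[\,0,1\,]$, locating the maximum at $u=2/(n+1)$ with value $\tfrac{2}{n+1}\bigl(\tfrac{n}{n+1}\bigr)^n<\tfrac{2}{n}$ (strictness is preserved throughout, so the stated strict inequality holds). The trade-off: the paper's route requires guessing the ansatz $\tfrac{2\sqrt{t}}{2+n\sqrt{t}}$ but stays entirely within elementary algebra, which fits the book's stated aim of a ``completely elementary proof''; your route needs no ansatz and makes the mechanism transparent (weak contraction where $\sqrt{t}$ is small is compensated by the small prefactor), at the cost of a differentiation step. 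Both are complete and yield the same bound.
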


Before going to the proof, we note the following corollary.

\begin{corollary}\label{sqrta}%
The square root function $t \mapsto \sqrt{t}$ on a compact interval
$[\,0, 2^{\,2k}\,]$, with $k$ a non-negative integer, can be approximated
uniformly by a sequence of rational polynomials without constant term.

Such a sequence can be chosen as
\[ q_n (x) := 2^{\,k} \ p_n \,\bigl( \,\frac{x}{2^{\,2k}} \,\bigr), \]
with $(p_n)$ as in the preceding theorem \ref{sqrtone}.
Indeed, one then has
\[ 0 \leq \sqrt{t} - q_n (t) < \frac{2^{\,k+1}}{n}
\quad \text{for all} \quad t \in [\,0, 2^{\,2k}\,],\ n \geq 1. \]
Furthermore, the sequence $(q_n)$ is monotone increasing
(and therefore non-negative) on $[\,0, 2^{\,2k}\,]$.
\end{corollary}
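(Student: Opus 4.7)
The proof is a direct rescaling argument: the content is just that the substitution $x \mapsto x / 2^{\,2k}$ carries $[\,0, 2^{\,2k}\,]$ onto $[\,0, 1\,]$, and commutes (up to the factor $2^{\,k}$) with the square root function. I would proceed as follows.

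First, I would verify that each $q_n$ is a rational polynomial without constant term. Since $p_n$ is a rational polynomial without constant term by \ref{sqrtone}, substituting the linear monomial $x/2^{\,2k}$ and multiplying by the scalar $2^{\,k}$ preserves both properties; in particular $q_n(0) = 2^{\,k} p_n(0) = 0$.

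Next I would derive the estimate. Fix $t \in [\,0, 2^{\,2k}\,]$ and set $s := t/2^{\,2k} \in [\,0,1\,]$. Then $\sqrt{t} = 2^{\,k} \sqrt{s}$, and by \ref{sqrtone},
\[
0 \leq \sqrt{s} - p_n(s) < \frac{2}{n}
\quad \text{for all} \quad n \geq 1.
\]
Multiplying through by $2^{\,k} > 0$ gives
\[
0 \leq 2^{\,k} \sqrt{s} - 2^{\,k} p_n(s) = \sqrt{t} - q_n(t) < \frac{2^{\,k+1}}{n},
\]
which is the claimed uniform estimate. In particular $q_n \to \sqrt{\,\cdot\,}$ uniformly on $[\,0, 2^{\,2k}\,]$.

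Finally, monotonicity and non-negativity of $(q_n)$ on $[\,0, 2^{\,2k}\,]$ follow immediately: the map $t \mapsto t/2^{\,2k}$ is an order-preserving bijection from $[\,0, 2^{\,2k}\,]$ onto $[\,0, 1\,]$, and multiplication by $2^{\,k} > 0$ preserves inequalities, so $p_n \leq p_{n+1}$ and $p_n \geq 0$ on $[\,0, 1\,]$ (by \ref{sqrtone}) transfer to $q_n \leq q_{n+1}$ and $q_n \geq 0$ on $[\,0, 2^{\,2k}\,]$. There is no real obstacle here; the only thing to watch is keeping track of the scaling factor $2^{\,k}$ in the error bound, which comes out exactly as stated.
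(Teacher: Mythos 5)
Your proof is correct and follows the same route as the paper's, which simply records the scaling identity $\sqrt{t} - q_n(t) = 2^{\,k}\bigl(\sqrt{t/2^{\,2k}} - p_n(t/2^{\,2k})\bigr)$ and lets the estimate, monotonicity, and the absence of a constant term follow from \ref{sqrtone}. Your write-up just makes these routine transfers explicit.
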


\begin{proof}
For $t \in [\,0, 2^{\,2k}\,]$ and every integer $n \geq 0$, we have
\[ \sqrt{t} - q_n (t) = 2^{\,k} \ \Bigl( \,\sqrt{\frac{t}{2^{\,2k}}}
\,- \,p_n \,\bigl( \,\frac{t}{2^{\,2k}} \,\bigr) \,\Bigr) \pagebreak \qedhere \]
\end{proof}

\begin{proof}[\hspace{-3.8ex}\mdseries{\scshape{Proof of \protect\ref{sqrtone}}}]%
Please note first that for any $t \in [\,0,1\,]$ and any integer $n \geq 0$, we have
\begin{align*}
\sqrt{t} - p_{n+1} (t)
& = \bigl[ \,\sqrt{t} -p_n (t)\,\bigr]
- \frac12 \,\bigl[ \,\sqrt{t} - p_n (t) \,\bigr] \cdot \bigl[ \,\sqrt{t} + p_n (t) \,\bigr] \\
 & = \bigl[ \,\sqrt{t} -p_n (t)\,\bigr] \cdot \biggl[ \,1
- \frac{\sqrt{t} + p_n (t)}{2} \,\biggr]. \tag*{$(*)$}
\end{align*}
We shall next show by induction that for any $t \in [\,0,1\,]$
and any integer $n \geq 0$, one has
\begin{gather*}
p_n (t) \in \bigl[ \,0,\sqrt{t} \,\bigr] \tag*{$(i)$} \\
\sqrt{t} - p_n (t) \leq \frac{2 \,\sqrt{t}}{2+n\,\sqrt{t}}. \tag*{$(ii)$}
\end{gather*}
Clearly these statements hold for all $t \in [\,0,1\,]$ if $n = 0$.
Assume now that these statements hold for some integer $n \geq 0$
and some $t \in [\,0,1\,]$. Then the second factor of
the member on the right hand side of equation $(*)$ satisfies
\[ 0 \leq 1 - \frac{\sqrt{t} + p_n (t)}{2} \leq 1, \tag*{$(**)$} \]
as is seen from $(i)$. From $(*)$ and $(i)$, we then get that
\[ 0 \leq \sqrt{t} - p_{n+1} (t) \leq \sqrt{t} - p_{n} (t), \]
which is equivalent to
\[ p_n (t) \leq p_{n+1} (t) \leq \sqrt{t}. \]
Hence also $p_{n+1} (t) \in \bigl[\,0,\sqrt{t}\,\bigr]$.
It also follows already that the sequence $(p_n)$ is monotone
increasing on $[\,0,1\,]$. By $(*)$, $(**)$, $(i)$, and $(ii)$, we have
\begin{align*}
\sqrt{t} - p_{n+1} (t)
 & = \bigl[ \,\sqrt{t} -p_n (t)\,\bigr] \cdot \biggl[ \,1
- \frac{\sqrt{t} + p_n (t)}{2} \,\biggr] \\
 & \leq \frac{2 \,\sqrt{t}}{2+n\,\sqrt{t}} \cdot \bigl[ \,1 - \frac12 \,\sqrt{t} \,\bigr]
 = \sqrt{t} \cdot \frac{2 - \sqrt{t}}{2 + n \,\sqrt{t}}.
\end{align*}
Now it can be directly verified that
\[ \frac{2 - \sqrt{t}}{2 + n \,\sqrt{t}} \leq \frac{2}{2 + (n+1) \,\sqrt{t}}, \]
from which we get
\[ \sqrt{t} - p_{n+1} (t) \leq \frac{2 \,\sqrt{t}}{2 + (n+1) \,\sqrt{t}}. \]
This finishes our induction proof. The statement follows easily. \pagebreak
\end{proof}

\medskip
The above fact constitutes one of the two main ingredients
of the Stone-Weierstrass Theorem, through the following
direct consequences.

\begin{lemma}\label{sqrtexists}%
Let $\Omega$ be a non-empty set. Let $A$ be a closed subalgebra
of $\ell^{\,\infty}(\Omega)$. Then $A$ contains with each function $f$
such that $f \geq 0$ pointwise, also the function $\sqrt{f}$.
(The pointwise square root of $f$.)
\end{lemma}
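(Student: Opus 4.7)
The plan is to invoke Corollary \ref{sqrta} and use the closedness of $A$. Let $f \in A$ with $f \geq 0$ pointwise. Since $f \in \ell^{\,\infty}(\Omega)$, there exists a non-negative integer $k$ such that $|\,f\,|_{\,\infty} \leq 2^{\,2k}$, so $f(x) \in [\,0, 2^{\,2k}\,]$ for every $x \in \Omega$. By Corollary \ref{sqrta}, there is a sequence $(q_n)$ of rational polynomials without constant term such that $q_n(t) \to \sqrt{t}$ uniformly on $[\,0, 2^{\,2k}\,]$.

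Next, I would observe that $q_n \circ f \in A$ for every $n$. This is the step where the absence of a constant term matters: since $A$ is merely a subalgebra of $\ell^{\,\infty}(\Omega)$ and is not assumed to contain the constant function $1_\Omega$, one can only form expressions of the shape $\sum_{j \geq 1} \alpha_j f^{\,j}$ inside $A$. But this is exactly what $q_n \circ f$ is, because $q_n$ has no constant term. Thus $q_n \circ f \in A$.

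Then I would estimate
\[ |\,\sqrt{f} - q_n \circ f\,|_{\,\infty}
= \sup_{x \in \Omega} \bigl|\,\sqrt{f(x)} - q_n\bigl(f(x)\bigr)\,\bigr|
\leq \sup_{t \in [\,0, 2^{\,2k}\,]} \bigl|\,\sqrt{t} - q_n(t)\,\bigr|
< \frac{2^{\,k+1}}{n}, \]
using the uniform bound from Corollary \ref{sqrta}. This tends to $0$ as $n \to \infty$, so $q_n \circ f \to \sqrt{f}$ in $\ell^{\,\infty}(\Omega)$. Since $A$ is closed in $\ell^{\,\infty}(\Omega)$, it follows that $\sqrt{f} \in A$.

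The only genuine obstacle is the point about the constant term, and Corollary \ref{sqrta} was designed precisely to remove it. Everything else is routine: bound $f$ in terms of a power of two so that the approximating polynomials from Corollary \ref{sqrta} apply, and then transport uniform convergence from $[\,0, 2^{\,2k}\,]$ back to $\Omega$ via composition with $f$.
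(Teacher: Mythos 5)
Your proof is correct and follows essentially the same route as the paper: apply Corollary \ref{sqrta} with $2^{\,2k} \geq |\,f\,|_{\,\infty}$, note that $q_n \circ f \in A$ precisely because the $q_n$ have no constant term (so that unitality of $A$ is not needed), and conclude by uniform closedness of $A$. You have merely spelled out the uniform estimate that the paper leaves implicit.
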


\begin{proof}
Let $f \in A$ with $f \geq 0$ pointwise.
The preceding corollary \ref{sqrta} with $2^{\,2k} \geq | \,f \,|_{\,\infty}$
yields a sequence of functions $q_n (f) \in A$ converging
uniformly on $\Omega$ to $\sqrt{f}$. (This uses that the polynomials
$q_n$ have no constant term, as we do not suppose that $A$ is unital.)
Since $A$ is uniformly closed, it follows that $\sqrt{f} \in A$.
\end{proof}

\medskip
We shall later need the following fact.

\begin{proposition}\label{existsqrt}%
Let $\Omega$ be a non-empty set.
Let $A$ be any \linebreak C*-subalgebra of $\ell^{\,\infty}(\Omega)$.
Then $A$ contains with each function $f$
also the functions $| \,f \,|$ as well as $\sqrt{| \,f \,|}$.
(Here $| \,f \,|$ and $\sqrt{| \,f \,|}$ are defined pointwise.)
\end{proposition}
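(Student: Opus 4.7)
The plan is to reduce everything to the preceding Lemma \ref{sqrtexists}, which already produces pointwise square roots of non-negative functions inside any closed subalgebra of $\ell^{\,\infty}(\Omega)$. The key observation is that being a C*-subalgebra of $\ell^{\,\infty}(\Omega)$ entails two things at once: $A$ is closed (so the lemma applies), and $A$ is stable under the pointwise involution of $\ell^{\,\infty}(\Omega)$, namely complex conjugation. Thus for any $f \in A$, the function $\overline{f}$ lies in $A$, and hence so does the product $\overline{f} \cdot f$, which equals $| \,f \,|^{\,2}$ pointwise and is non-negative.

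First I would apply Lemma \ref{sqrtexists} to the non-negative function $| \,f \,|^{\,2} \in A$, which yields $\sqrt{| \,f \,|^{\,2}} = | \,f \,| \in A$. Then I would apply the same lemma a second time, now to the non-negative function $| \,f \,| \in A$ itself, producing $\sqrt{| \,f \,|} \in A$. No further estimates are required, since Lemma \ref{sqrtexists} already incorporates the uniform approximation by the polynomials $q_n$ from \ref{sqrta} that have no constant term.

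There is essentially no obstacle, since the real work has been done in \ref{sqrtone}, \ref{sqrta} and \ref{sqrtexists}. The only minor point to be explicit about is that the involution on $A$ inherited from $\ell^{\,\infty}(\Omega)$ is indeed pointwise complex conjugation, so that $f^* f$ really coincides pointwise with $| \,f \,|^{\,2}$; this is what lets us produce the non-negative function to which Lemma \ref{sqrtexists} can be applied.
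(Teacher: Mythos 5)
Your proof is correct and is essentially the paper's own argument: the paper likewise writes $|\,f\,| = \sqrt{f^{\,*}f}$ and invokes Lemma \ref{sqrtexists} (twice, implicitly) on the non-negative functions $f^{\,*}f$ and then $|\,f\,|$. Your explicit remark that the involution is pointwise conjugation is a harmless elaboration of the same step.
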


\begin{proof}
This follows from the preceding lemma \ref{sqrtexists} and the fact
that $| \,f \,| = \sqrt{{f}^{\,*} f}$.
\end{proof}

\begin{theorem}\label{Cstarlattice}%
Let $\Omega$ be a non-empty set.
If $A$ is a C*-subalgebra of $\ell^{\,\infty}(\Omega)$, then
$A\sa$ is a vector sublattice of ${\ell}^{\,\infty}(\Omega)\sa$.
\end{theorem}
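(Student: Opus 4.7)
The plan is to reduce the statement to the absolute-value criterion from theorem \ref{lattabs} and then invoke the already-established proposition \ref{existsqrt}.

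First I would note that in the C*-algebra $\ell^{\infty}(\Omega)$, the involution is pointwise complex conjugation, so the Hermitian part $\ell^{\infty}(\Omega)\sa$ consists exactly of the bounded \emph{real-valued} functions on $\Omega$; hence $A\sa$ is naturally a real vector subspace of $\mathds{R}^{\,\text{\Small{$\Omega$}}}$. By theorem \ref{lattabs}, it suffices to show that $A\sa$ contains $| \,f \,|$ (the pointwise absolute value) for every $f \in A\sa$.

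Now I would appeal to proposition \ref{existsqrt}: since $A$ is a C*-subalgebra of $\ell^{\infty}(\Omega)$, the function $| \,f \,|$ lies in $A$ for every $f \in A$. In particular, for $f \in A\sa$ we obtain $| \,f \,| \in A$; and since $| \,f \,|$ is a real-valued (hence Hermitian) element of $\ell^{\infty}(\Omega)$, it in fact lies in $A\sa$. This verifies the criterion of theorem \ref{lattabs}, so $A\sa$ is a vector sublattice of $\ell^{\infty}(\Omega)\sa$.

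I do not anticipate any genuine obstacle: the real work has already been done in the proof of \ref{sqrtone}/\ref{sqrta}, which furnishes the polynomial approximation without constant term to $\sqrt{\,\cdot\,}$, and this was packaged in \ref{existsqrt}. The only conceptual point worth flagging is the absence of a unit: we really do need the approximating polynomials to have zero constant term, which is why the statement relies on \ref{existsqrt} rather than on a direct Weierstrass-type argument in a unital subalgebra.
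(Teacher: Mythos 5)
Your proof is correct and follows exactly the paper's route: reduce to the absolute-value criterion of \ref{lattabs} and invoke \ref{existsqrt}. The additional remarks on identifying $A\sa$ with real-valued functions and on the need for constant-term-free polynomials are accurate elaborations of the same argument.
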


\begin{proof}
This follows from theorem \ref{lattabs} and proposition \ref{existsqrt}.
\end{proof}

\medskip
The preceding theorem is an important strengthening of
corollary \ref{naughtlatt} above.

It is good to be conscious of the following fact, because it
contains some philosophy of harmonic analysis.

\begin{corollary}[from \st-algebras to lattices]\label{philo}%
Let $\Omega$ be a non-empty set. Let $A$ be a \st-subalgebra
of $\ell^{\,\infty}(\Omega)$. Let $B$ denote the closure of $A$
in $\ell^{\,\infty}(\Omega)$. Then $B\sa$ is a vector sublattice
of ${\ell}^{\,\infty}(\Omega)\sa$.
\end{corollary}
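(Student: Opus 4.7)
The plan is to reduce the corollary immediately to the preceding Theorem \ref{Cstarlattice}, whose hypothesis is that $B$ be a C*-subalgebra of $\ell^{\,\infty}(\Omega)$. So the task is just to verify that $B$, the closure of the $*$-subalgebra $A$, really is a C*-subalgebra of $\ell^{\,\infty}(\Omega)$.

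First I would note that $\ell^{\,\infty}(\Omega)$ is itself a C*-algebra (by the reminder on ${\ell}^{\,\infty}(\Omega)$, together with the fact that pointwise multiplication commutes with pointwise conjugation, giving the C*-property trivially). Then I would argue that $B$ is a closed $*$-subalgebra of $\ell^{\,\infty}(\Omega)$. Closedness is by construction. That $B$ is a subalgebra follows from joint continuity of multiplication on bounded sets together with continuity of addition and scalar multiplication; and the closure of a $*$-closed set under an isometric involution (pointwise complex conjugation is isometric in the supremum norm) is again $*$-closed. Hence $B$ is a closed $*$-subalgebra of the C*-algebra $\ell^{\,\infty}(\Omega)$, and the C*-property is inherited, so $B$ is a C*-subalgebra of $\ell^{\,\infty}(\Omega)$.

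Having established this, Theorem \ref{Cstarlattice} applies directly and yields that $B\sa$ is a vector sublattice of ${\ell}^{\,\infty}(\Omega)\sa$, which is the desired conclusion.

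There is no real obstacle here; the only mildly delicate point is that even though $A$ may not be norm-closed (and thus not a C*-subalgebra to which Theorem \ref{Cstarlattice} applies directly), passing to $B$ gives us a genuine C*-subalgebra of $\ell^{\,\infty}(\Omega)$, after which the square-root/absolute-value machinery from \ref{existsqrt} together with the characterisation \ref{lattabs} does all the work.
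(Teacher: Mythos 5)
Your proof is correct and follows the route the paper intends: the corollary is stated without a separate proof precisely because one passes to the closure $B$, observes that it is a closed \st-subalgebra of the C*-algebra $\ell^{\,\infty}(\Omega)$ and hence a C*-subalgebra, and then applies Theorem \ref{Cstarlattice}. Your verification of the intermediate step (that the closure of a \st-subalgebra under an isometric involution is again a \st-subalgebra, complete because closed) is exactly the right justification.
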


The point is that integration theory can be applied to
vector sublattices of $\mathds{R}^{\,\text{\Small{$\Omega$}}}$,
see e.g.\ \cite{FW}. \pagebreak

The second main ingredient of the Stone-Weierstrass Theorem
is the following surprising fact, which leads to the
Kakutani-Kre\u{\i}n Theorem \ref{Kaku} below. Please note that
if $\Omega \neq \varnothing$ is a locally compact Hausdorff
space, then $\cont\0(\Omega)\sa$ is a vector sublattice of
$\mathds{R}^{\,\text{\Small{$\Omega$}}}$, cf.\ \ref{naughtlatt}.

\begin{lemma}\label{preKaku}%
Let $K \neq \varnothing$ be a compact Hausdorff space, and
consider a sublattice $\mathscr{L}$ of $\cont (K)\sa$. Let
$f \in \cont (K)\sa$, and $\varepsilon > 0$. Assume that for any
$x, y \in K$, there exists a function $g \in \mathscr{L}$ such that
\[ | \,f(z) - g(z) \,| < \varepsilon \quad \text{for} \quad z \in \{ x, y \}. \]
There then exists a function $h \in \mathscr{L}$ satisfying
\[ | \,f - h \,| _{\,\infty} < \varepsilon. \]
\end{lemma}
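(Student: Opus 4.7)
The plan is a standard double compactness argument exploiting the sublattice property together with compactness of $K$. By hypothesis, for each ordered pair $(x,y) \in K \times K$ we may select some $g_{x,y} \in \mathscr{L}$ with $|f(x) - g_{x,y}(x)| < \varepsilon$ and $|f(y) - g_{x,y}(y)| < \varepsilon$. I plan to build $h$ in two passes: first take finite infima to control $g_{x,y}$ from above by $f + \varepsilon$ uniformly, then take finite suprema to also control from below by $f - \varepsilon$. All of these are legitimate elements of $\mathscr{L}$ by observation \ref{finitesup}.

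First I would fix $y \in K$ and, for each $x \in K$, consider the open set
\[ U_x := \{ z \in K : g_{x,y}(z) < f(z) + \varepsilon \}, \]
which is open by continuity of $g_{x,y} - f$ and contains $x$ by the choice of $g_{x,y}$. The family $\{U_x\}_{x \in K}$ covers $K$, so compactness yields $x_1, \ldots, x_n \in K$ with $K = U_{x_1} \cup \cdots \cup U_{x_n}$. Define
\[ g_y := g_{x_1,y} \wedge \cdots \wedge g_{x_n,y} \in \mathscr{L}. \]
By \ref{finitesup}, we then have $g_y(z) < f(z) + \varepsilon$ for every $z \in K$, while at the distinguished point $y$ each $g_{x_i,y}(y) > f(y) - \varepsilon$, so $g_y(y) > f(y) - \varepsilon$ as well.

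Next I would use continuity of $g_y - f$ again to produce an open neighbourhood
\[ V_y := \{ z \in K : g_y(z) > f(z) - \varepsilon \} \]
of $y$. As $y$ varies, the $V_y$ cover $K$; compactness gives $y_1, \ldots, y_m$ with $K = V_{y_1} \cup \cdots \cup V_{y_m}$. I then set
\[ h := g_{y_1} \vee \cdots \vee g_{y_m} \in \mathscr{L}. \]
Given any $z \in K$, the pointwise supremum of finitely many quantities each strictly below $f(z) + \varepsilon$ is again strictly below $f(z) + \varepsilon$, so $h(z) < f(z) + \varepsilon$. Conversely, $z$ lies in some $V_{y_j}$, so $h(z) \geq g_{y_j}(z) > f(z) - \varepsilon$. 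Combining these yields $|f - h|_{\infty} < \varepsilon$.

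The only subtle point is the ordering of the two compactness extractions — one must first build the $g_y$ so that the upper bound $g_y < f + \varepsilon$ holds uniformly on all of $K$ (not just at $y$), before extracting the cover by the sets $V_y$ and taking a supremum; otherwise the supremum step could destroy the upper bound. Apart from that bookkeeping, all individual steps are routine.
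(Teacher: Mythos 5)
Your proof is correct and is essentially the paper's own argument, merely dualized: the paper first takes finite suprema (securing the lower bound $f < h_x + \varepsilon$ uniformly) and then a finite infimum over a cover by neighbourhoods, whereas you take infima first and suprema second. The only cosmetic omission is the final remark that the continuous function $|\,f-h\,|$ attains its supremum on the compact set $K$, which is what upgrades the pointwise strict inequality to $|\,f-h\,|_{\,\infty} < \varepsilon$.
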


\begin{proof}
The assumption says that for any $x, y \in K$, the set
\[ G\subxy := \bigl\{ \,g \in \mathscr{L} : | \,f(z) - g(z) \,| < \varepsilon
\text{ for } z \in \{ x, y \} \,\bigr\} \]
is non-empty. Therefore, for any $x, y \in K$ and for any
$g \in G\subxy$, the points $x, y$ both belong to the open sets
\begin{align*}
U\subg  & := \{ \,z \in K : f(z) < g(z) + \varepsilon \,\}, \\
V\subg  & := \{ \,z \in K : g(z) - \varepsilon < f(z) \,\}.
\end{align*}
Thus, for fixed $x \in K$, the open sets $U\subg$ $(g \in G\subxy, y \in K)$
cover $K$. By compactness of $K$, there exists a finite non-empty
family $\{ y_n \}_{n = 1}^m$ in $K$, as well as a finite non-empty family
$\{ g_n \}_{n=1}^m$ of functions $g_n \in G\subxyn$, such that the
corresponding sets $U\subgn$ $(1 \leq n \leq m)$ cover $K$. The function
\[ h\subx := \vee _{\,n=1}^{\,m} \ g_n  \in \mathscr{L}, \]
cf.\ \ref{finitesup}, then satisfies
\[ f (z) < h\subx (z) + \varepsilon \quad \text{for all} \quad z \in K. \]
Consider the finite intersection
\[ W\subx := \cap_{\,n=1}^{\,m} \,V\subgn , \]
which is an open neighbourhood of $x$. We then also have
\[ h\subx (z) - \varepsilon < f (z) \quad \text{for all} \quad z \in W\subx. \]
Letting now $x$ vary over $K$, we may extract a finite non-empty subset
$F$ of $K$ such that the corresponding sets $W\subx$ $(x \in F)$
cover $K$. Putting
\[ h := \wedge _{\textit{\Small{$\,x \in F$}}} \ h\subx \in \mathscr{L}, \]
we get
\[ h(z) - \varepsilon <  f(z) < h(z) + \varepsilon
\quad \text{for all} \quad z \in K. \pagebreak \]
Since the continuous function $| \,f - h \,|$ assumes its
supremum on the compact set $K$, it follows that
$| \,f - h \,| _{\,\infty} < \varepsilon$.
\end{proof}

\begin{observation}
The preceding lemma \ref{preKaku} in particular implies that
if $\Omega \neq \varnothing$ is a locally compact Hausdorff
space, then a sublattice of $\cont\0(\Omega)\sa$, which is
dense in $\cont\0(\Omega)\sa$ with respect to the topology of
pointwise convergence on $\Omega$, actually is uniformly
dense in $\cont\0(\Omega)\sa$. (Use the one-point
compactification if $\Omega$ is not compact,
cf.\ \ref{Cnaught}.)
\end{observation}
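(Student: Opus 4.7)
The plan is to invoke lemma \ref{preKaku} after passing, if necessary, to the one-point compactification of $\Omega$. So let $\mathscr{L}$ be a sublattice of $\cont\0(\Omega)\sa$ which is dense for pointwise convergence, fix $f \in \cont\0(\Omega)\sa$ and $\varepsilon > 0$, and aim to produce $h \in \mathscr{L}$ with $| \,f - h \,|_{\,\infty} < \varepsilon$. I would first unpack pointwise density: by definition of the topology of pointwise convergence, for every finite subset $F \subset \Omega$ there exists $g \in \mathscr{L}$ with $| \,f(x) - g(x) \,| < \varepsilon$ for every $x \in F$. Taking $F = \{ x, y \} \subset \Omega$ in particular matches the hypothesis of lemma \ref{preKaku} restricted to pairs in $\Omega$.

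If $\Omega$ is already compact, then $\cont\0(\Omega) = \cont (\Omega)$ by \ref{Cnaught}, and lemma \ref{preKaku} applies directly to $\mathscr{L}$ in $\cont (\Omega)\sa$, yielding the required $h$.

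If $\Omega$ is not compact, let $K = \Omega \cup \{ \infty \}$ be the one-point compactification and, following \ref{Cnaught}, identify $\cont\0(\Omega)$ with the \st-subalgebra $\{ \,g \in \cont (K) : g(\infty) = 0 \,\}$ of $\cont (K)$. Under this identification $\mathscr{L}$ becomes a sublattice of $\cont (K)\sa$ all of whose members vanish at $\infty$, and (as noted in \ref{motCnaught}) the supremum norm of $\cont (K)$ restricts to the supremum norm of $\cont\0(\Omega)$ on $\mathscr{L}$. To verify the hypothesis of lemma \ref{preKaku} inside $\cont (K)\sa$, I would run through the possibilities for a pair $x, y \in K$: when both points lie in $\Omega$, pointwise density produces the required $g$; when $x = \infty$ or $y = \infty$, the condition at $\infty$ is automatic, since $f(\infty) = 0 = g(\infty)$ for every $g \in \mathscr{L}$, so pointwise density at the remaining point of $\Omega$ (if any) suffices. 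Lemma \ref{preKaku} then delivers $h \in \mathscr{L}$ with $| \,f - h \,|_{\,\infty} < \varepsilon$ inside $\cont (K)\sa$, which by the coincidence of the two supremum norms is the desired uniform approximation in $\cont\0(\Omega)\sa$.

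There is no genuine obstacle; the only subtlety is to observe that, since all functions involved vanish at $\infty$, adjoining the point at infinity costs nothing when checking the two-point approximation hypothesis of lemma \ref{preKaku}.
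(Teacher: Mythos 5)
Your proposal is correct and is precisely the argument the paper intends: the observation is stated with only the parenthetical hint about the one-point compactification, and you have filled in exactly that route, including the one point that needs care — that every function in play vanishes at $\infty$, so the two-point hypothesis of lemma \ref{preKaku} costs nothing at the added point. Nothing to add.
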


The requirements of lemma \ref{preKaku} above are easily met:

\begin{observation}\label{hypo1}%
Let $\Omega$ be a non-empty set.
Let $\mathscr{L}$ be a vector subspace of
$\mathds{R}^{\,\text{\Small{$\Omega$}}}$
separating the points of $\Omega$ and vanishing
nowhere on $\Omega$. Assume that for every two
points $x, y \in \Omega$ with $x \neq y$, there
exists a function $h \in \mathscr{L}$ with $h(x) = h(y) = 1$.
Then for any function $f \in \mathds{R}^{\,\text{\Small{$\Omega$}}}$,
and every two points $x, y \in \Omega$, there exists
a function $g \in \mathscr{L}$ with $g(x) = f(x)$, and $g(y) = f(y)$.
\end{observation}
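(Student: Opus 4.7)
The plan is to split into two cases according to whether $x = y$ or $x \neq y$, and in the second case to reduce the claim to a short linear-algebra argument about the range of the evaluation map.

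First, suppose $x = y$. Since $\mathscr{L}$ vanishes nowhere on $\Omega$, pick $h \in \mathscr{L}$ with $h(x) \neq 0$, and set $g := \bigl(f(x)/h(x)\bigr) \,h \in \mathscr{L}$. Then $g(x) = f(x)$, as required. This is where the ``vanishes nowhere'' hypothesis is used; it is not needed for the second case.

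Next, suppose $x \neq y$. Consider the linear evaluation map
\[ \mathrm{ev} : \mathscr{L} \to \mathds{R}^{\,2}, \qquad g \mapsto \bigl( g(x), g(y) \bigr). \]
I would show that $\mathrm{ev}$ is surjective; then the vector $\bigl( f(x), f(y) \bigr) \in \mathds{R}^{\,2}$ lies in its range, giving the desired $g \in \mathscr{L}$. By the hypothesis that for $x \neq y$ there is $h \in \mathscr{L}$ with $h(x) = h(y) = 1$, the range of $\mathrm{ev}$ contains the vector $(1,1)$. Since $\mathscr{L}$ separates the points of $\Omega$, there exists $p \in \mathscr{L}$ with $p(x) \neq p(y)$, so the range of $\mathrm{ev}$ also contains $\bigl( p(x), p(y) \bigr)$. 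The condition $p(x) \neq p(y)$ says precisely that $\bigl( p(x), p(y) \bigr)$ is not a scalar multiple of $(1,1)$, so these two vectors form a basis of $\mathds{R}^{\,2}$. As the range of $\mathrm{ev}$ is a vector subspace of $\mathds{R}^{\,2}$ containing a basis, it equals all of $\mathds{R}^{\,2}$, which finishes the proof.

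There is no real obstacle here: the assumption that some $h \in \mathscr{L}$ takes the value $1$ simultaneously at $x$ and $y$ is exactly what is needed to contribute one ``diagonal'' direction to the range of $\mathrm{ev}$, while separation supplies a second, transverse direction. The care point is only to verify surjectivity in the right order, and to treat the degenerate case $x = y$ separately by invoking ``vanishes nowhere''.
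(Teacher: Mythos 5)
Your proof is correct and uses the same ingredients in the same roles as the paper's: the case $x = y$ is handled by scaling a nowhere-vanishing function, and for $x \neq y$ the function $h$ with $h(x) = h(y) = 1$ together with a separating function supply everything needed. The only difference is cosmetic — the paper solves the resulting $2 \times 2$ system explicitly, exhibiting $g_x := \bigl(k - k(y)\,h\bigr)/\bigl(k(x) - k(y)\bigr)$ with $g_x(x) = 1$, $g_x(y) = 0$ and then taking $g := f(x)\,g_x + f(y)\,(h - g_x)$, whereas you establish surjectivity of the evaluation map abstractly from the linear independence of $(1,1)$ and $\bigl(p(x), p(y)\bigr)$.
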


\begin{proof}
Let $f \in \mathds{R}^{\,\text{\Small{$\Omega$}}}$ and $x, y \in \Omega$.
In order to exhibit $g$ as in the statement, we can assume that $x \neq y$,
as $\mathscr{L}$ vanishes nowhere on $\Omega$. There then exists
$h \in \mathscr{L}$ with $h(x) = h(y) = 1$. It is enough to produce some
$g \subx \in \mathscr{L}$ with $g \subx (x) = 1$ and $g \subx (y) = 0$.
(Indeed, with $g \suby := h - g \subx \in \mathscr{L}$, we then have
$g \suby (x) = 0$ and $g \suby (y) = 1$,
so $g := f(x) g \subx + f(y) g \suby \in \mathscr{L}$ solves the problem.)
There exists $k \in \mathscr{L}$ with $k(x) \neq k(y)$ as $\mathscr{L}$
separates the points of $\Omega$. Then
\[ g \subx := \frac{k - k(y) h}{k(x) - k(y)} \in \mathscr{L} \]
satisfies the requirement that $g \subx (x) = 1$ and $g \subx (y) = 0$.
\end{proof}

\begin{theorem}[Kakutani-Kre\u{\i}n]\label{Kaku}%
\index{concepts}{Theorem!Kakutani@Kakutani-Kre\u{\i}n}%
\index{concepts}{Kakutani}\index{concepts}{Krein@Kre\u{\i}n}%
Let $\Omega \neq \varnothing$ be a locally compact Hausdorff space.
Let $\mathscr{L}$ be a vector sublattice of $\cont\0(\Omega)\sa$
separating the points of $\Omega$ and vanishing nowhere on
$\Omega$. Assume that for every two points $x, y \in \Omega$
with $x \neq y$, there exists a function $h \in \mathscr{L}$ with
$h(x) = h(y) = 1$. Then $\mathscr{L}$ is dense in $\cont\0(\Omega)\sa$.
\end{theorem}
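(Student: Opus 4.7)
The plan is to reduce the theorem to a single application of lemma~\ref{preKaku}, after first passing to a compactification of $\Omega$ if necessary. The crucial input is observation~\ref{hypo1}: the hypotheses on $\mathscr{L}$ (separating points, vanishing nowhere, and the joint $1$-taking condition) guarantee two-point interpolation, namely that for any $f \in \cont\0(\Omega)\sa$ and any $x, y \in \Omega$, there is $g \in \mathscr{L}$ with $g(x) = f(x)$ and $g(y) = f(y)$. This exact interpolation will trivially supply the $\varepsilon$-approximation at two points that lemma~\ref{preKaku} requires.

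If $\Omega$ is compact, I would apply lemma~\ref{preKaku} immediately with $K = \Omega$ and the sublattice $\mathscr{L} \subset \cont (\Omega)\sa = \cont\0(\Omega)\sa$: for each $f$ in this space and each $\varepsilon > 0$, observation~\ref{hypo1} supplies the two-point hypothesis, and the lemma delivers $h \in \mathscr{L}$ with $|\,f - h\,|_{\,\infty} < \varepsilon$.

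If $\Omega$ is not compact, I would pass to the one-point compactification $K = \Omega \cup \{ \infty \}$ and identify $\cont\0(\Omega)\sa$ with the closed subspace $\{ \,g \in \cont (K)\sa : g(\infty) = 0 \,\}$, cf.~\ref{Cnaught}. Under this identification $\mathscr{L}$ becomes a sublattice of $\cont (K)\sa$ whose elements all vanish at $\infty$. Fix $f \in \cont\0(\Omega)\sa$, extended to $K$ by $f(\infty) := 0$, and fix $\varepsilon > 0$. I then check the two-point hypothesis of lemma~\ref{preKaku} for all pairs $x, y \in K$, splitting into cases. For $x, y \in \Omega$, this is observation~\ref{hypo1} as above. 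For $x \in \Omega$ and $y = \infty$, I use that $\mathscr{L}$ vanishes nowhere on $\Omega$ to pick $k \in \mathscr{L}$ with $k(x) \neq 0$, and set $g := \bigl( \,f(x)/k(x) \,\bigr) \,k \in \mathscr{L}$; then $g(x) = f(x)$ and $g(\infty) = 0 = f(\infty)$. For $x = y = \infty$, I simply take $g = 0 \in \mathscr{L}$. With the hypothesis in place, lemma~\ref{preKaku} produces $h \in \mathscr{L}$ with $|\,f - h\,|_{\,\infty} < \varepsilon$ on $K$, which is the same quantity as on $\Omega$ since both $f$ and $h$ vanish at $\infty$.

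The main point requiring care is how the hypotheses on $\mathscr{L}$, which concern only points of $\Omega$, yield the two-point approximation at $\infty$ in the one-point compactification; this works for the free reason that every element of $\mathscr{L}$ lies in $\cont\0(\Omega)\sa$ and therefore extends to $K$ with value $0$ at $\infty$, so the constraint at $\infty$ is automatic. Beyond this bit of bookkeeping I do not anticipate any real obstacle, as lemma~\ref{preKaku} and observation~\ref{hypo1} together carry essentially all the analytical content.
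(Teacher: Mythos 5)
Your proof is correct and follows essentially the same route as the paper, which simply invokes lemma \ref{preKaku} together with observation \ref{hypo1} and, in the non-compact case, the one-point compactification. You merely spell out the case analysis at the point at infinity (which the paper leaves implicit), and your handling of it — using that every element of $\mathscr{L}$ extends by $0$ to $\infty$, so the constraint there is automatic — is exactly right.
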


\begin{proof}
This follows from the results \ref{preKaku} \& \ref{hypo1} in case
$\Omega$ is compact. If $\Omega$ is not compact, use the
one-point compactification, cf.\ \ref{Cnaught}.
\end{proof}

\medskip
Also the hypotheses of the Kakutani-Kre\u{\i}n Theorem
\ref{Kaku} are easily satisfied:

\begin{observation}\label{hypo2}%
Let $\Omega$ be a non-empty set, and let $A$ be a \linebreak
\st-subalgebra of $\mathds{C}^{\,\text{\Small{$\Omega$}}}$
vanishing nowhere on $\Omega$. Then for any two points
$x, y \in \Omega$, there exists a function $h \in A\sa$ with
$h(x) = h(y) = 1\vphantom{\mathds{C}^{\,\text{\Small{$\Omega$}}}}$.
\end{observation}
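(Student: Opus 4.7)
The plan is to construct $h$ in two stages: first produce a single Hermitian function in $A$ that is strictly positive at both $x$ and $y$, then combine it with its square via a linear system to force both values to equal $1$.

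First I would use the vanishing-nowhere hypothesis to pick $f_1, f_2 \in A$ with $f_1(x) \neq 0$ and $f_2(y) \neq 0$, and then set
\[
g := f_1^{*} f_1 + f_2^{*} f_2 \in A.
\]
Since the involution on $\mathds{C}^{\,\text{\Small{$\Omega$}}}$ is pointwise complex conjugation, each summand is real and non-negative pointwise, so $g \in A\sa$ and we get the key positivity
\[
g(x) \geq |f_1(x)|^{2} > 0, \qquad g(y) \geq |f_2(y)|^{2} > 0.
\]

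Next I would distinguish two cases. If $g(x) = g(y)$ (which also covers $x = y$), take $h := g / g(x) \in A\sa$. If $g(x) \neq g(y)$, I would use the companion function $g^{2} \in A\sa$ (Hermitian because $(g^{2})^{*} = (g^{*})^{2} = g^{2}$, and in $A$ because $A$ is a subalgebra). The $2 \times 2$ real matrix
\[
\begin{pmatrix} g(x) & g(x)^{2} \\ g(y) & g(y)^{2} \end{pmatrix}
\]
has determinant $g(x)\,g(y)\,\bigl(g(y) - g(x)\bigr) \neq 0$, so the system $\alpha\,g(x) + \beta\,g(x)^{2} = 1$, $\alpha\,g(y) + \beta\,g(y)^{2} = 1$ admits a real solution $(\alpha, \beta)$. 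Setting $h := \alpha\,g + \beta\,g^{2} \in A\sa$ finishes the job.

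There is no real obstacle here; the one conceptual point worth flagging is why we may restrict to real scalars in the final linear combination, but this is forced by $A\sa$ being only a real subspace of $A$, and the nonvanishing of the determinant together with the reality of all entries guarantees that real coefficients suffice.
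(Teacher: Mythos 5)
Your proof is correct, but it takes a different route from the paper. The paper's argument is a one-line identity: since $A$ is a complex vector space, the nonvanishing hypothesis lets one choose $f, g \in A$ already normalised so that $f(x) = 1$ and $g(y) = 1$; then $k := f + g - fg \in A$ satisfies $k(x) = 1 + g(x) - g(x) = 1$ and $k(y) = f(y) + 1 - f(y) = 1$, and $h := k^*k \in A\sa$ does the job with no case distinction. You instead first manufacture a single Hermitian function $g$ strictly positive at both points and then interpolate the value $1$ by a real linear combination of $g$ and $g^{2}$, justified by a Vandermonde-type determinant. Both arguments are elementary and complete (your case split correctly absorbs $x = y$, and your remark about why real coefficients suffice is sound, since the linear system has real entries and nonzero determinant). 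The paper's identity $f + g - fg$ is shorter and avoids the dichotomy; your interpolation scheme is more mechanical and makes the underlying linear-algebra reason visible, at the cost of two cases and the auxiliary function $g^{2}$.
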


\begin{proof}
For $x, y \in \Omega$, there exist functions $f, g \in A$ with $f(x) = 1$
and $g(y) = 1$ as $A$ vanishes nowhere on $\Omega$.
The function $k := f + g - fg \in A$ satisfies $k(x) = k(y) = 1$,
so we can choose $h := k^*k \in A\sa$.
\end{proof}

\begin{theorem}[the Stone-Weierstrass Theorem]%
\index{concepts}{Theorem!Stone-Weierstrass|(}\label{StW}%
\index{concepts}{Stone-Weierstrass Theorem|(}%
Let $\Omega \neq \varnothing$ be a locally compact Hausdorff
space. Any \st-subalgebra of $\cont\0(\Omega)$, which separates
the points of $\Omega$, and which vanishes nowhere
on $\Omega$, is dense in $\cont\0(\Omega)$.
\end{theorem}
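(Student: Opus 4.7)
The plan is to reduce the complex-valued statement to its Hermitian counterpart, and then apply the Kakutani-Kre\u{\i}n Theorem \ref{Kaku} to the real part.

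First, I would pass to the closure. Let $A$ be the given \st-subalgebra of $\cont\0(\Omega)$, and let $B$ denote its closure in $\cont\0(\Omega)$. Then $B$ is a closed \st-subalgebra of $\cont\0(\Omega)$, and via the embedding $\cont\0(\Omega) \hookrightarrow \ell^{\,\infty}(\Omega)$, it is a C*-subalgebra of $\ell^{\,\infty}(\Omega)$. By theorem \ref{Cstarlattice} (more pertinently, corollary \ref{philo}), the Hermitian part $B\sa$ is a vector sublattice of $\ell^{\,\infty}(\Omega)\sa$; since $B\sa \subset \cont\0(\Omega)\sa$, it is in fact a vector sublattice of $\cont\0(\Omega)\sa$. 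This is the crucial lattice structure that comes \emph{from} the \st-algebra structure, and is the main substantive step; everything else is bookkeeping.

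Second, I would verify that $B\sa$ satisfies the hypotheses of the Kakutani-Kre\u{\i}n Theorem \ref{Kaku}. That $B\sa$ separates the points of $\Omega$ follows from the fact that $B$ does (since $A$ does), together with the identities $\mathrm{Re}(a) = (a+a^*)/2$ and $\mathrm{Im}(a) = (a-a^*)/(2\iu)$, both of which lie in $B\sa$: if $a(x) \neq a(y)$, one of the real or imaginary parts must separate $x$ and $y$. Likewise, $B\sa$ vanishes nowhere on $\Omega$: if every $b \in B\sa$ vanished at some point $x$, the decomposition $a = \mathrm{Re}(a) + \iu\,\mathrm{Im}(a)$ would force every element of $B$ to vanish at $x$, contradicting that $B$ (which contains $A$) vanishes nowhere. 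The remaining hypothesis --- that for any two distinct $x,y \in \Omega$ there is some $h \in B\sa$ with $h(x)=h(y)=1$ --- is supplied by observation \ref{hypo2} applied to the \st-subalgebra $B$ of $\mathds{C}^{\,\text{\Small{$\Omega$}}}$.

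Third, I would conclude. The Kakutani-Kre\u{\i}n Theorem \ref{Kaku} yields that $B\sa$ is dense in $\cont\0(\Omega)\sa$. But $B$ is closed in $\cont\0(\Omega)$, so $B\sa = B \cap \cont\0(\Omega)\sa$ is closed in $\cont\0(\Omega)\sa$; hence $B\sa = \cont\0(\Omega)\sa$. Since $\cont\0(\Omega) = \cont\0(\Omega)\sa + \iu \,\cont\0(\Omega)\sa$ and $B$ is a complex subspace, we get $B = \cont\0(\Omega)$, i.e.\ $A$ is dense in $\cont\0(\Omega)$.

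The genuine difficulty in the whole argument lies hidden inside theorem \ref{Cstarlattice}, which rests on the elementary polynomial approximation of the square root function \ref{sqrtone}. Once the lattice property of $B\sa$ is in hand, the Kakutani-Kre\u{\i}n Theorem and the complexification step are formal.
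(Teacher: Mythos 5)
Your proposal is correct and follows essentially the same route as the paper's own proof: pass to the closure $B$, use corollary \ref{philo} to get the lattice structure on $B\sa$, check the hypotheses of the Kakutani-Kre\u{\i}n Theorem \ref{Kaku} via observation \ref{hypo2}, and conclude by closedness of $B\sa$. You merely spell out in more detail the separation and nonvanishing arguments that the paper leaves as "seen by an indirect argument" and "easily seen".
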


\begin{proof}
Let $A$ be a \st-subalgebra of $\cont\0(\Omega)$, which separates
the points of $\Omega$, and which vanishes nowhere on $\Omega$.
Let $B$ denote the closure of $A$ in $\cont\0(\Omega)$.
The main idea is to apply the Kakutani-Kre\u{\i}n Theorem
\ref{Kaku} with $\mathscr{L} := B\sa$, which is a vector sublattice
of $\cont\0(\Omega)\sa$, by corollary \ref{philo}. Please note that
since $A$ separates the points of $\Omega$, so does $A\sa$, and
thus $\mathscr{L}$, as is seen by an indirect argument. Also, since
$A$ vanishes nowhere on $\Omega$, so does $A\sa$, and thus
$\mathscr{L}$, as is easily seen. It follows by observation \ref{hypo2}
that $\mathscr{L}$ satisfies the hypotheses of the Kakutani-Kre\u{\i}n
Theorem \ref{Kaku}. Thus $\mathscr{L} = \cont\0(\Omega)\sa$ because
$\mathscr{L}$ is closed in $\cont\0(\Omega)\sa$. This implies that
$B = \cont\0(\Omega)$.
\end{proof}

\medskip
Please note that we need a \st-subalgebra, not merely a
subalgebra!

More precisely, the assumption of the Stone-Weierstrass
Theorem that $A$ be a \st-subalgebra cannot be replaced
by the weaker assumption that $A$ merely be a subalgebra.
Indeed, consider for example the disc algebra, which is
defined as the algebra of continuous complex-valued
functions on the closed unit disc $\mathds{D}$, which are
analytic in the interior of $\mathds{D}$. The disc algebra
clearly vanishes nowhere on $\mathds{D}$, and the identical
function separates the points of $\mathds{D}$. Also,
the disc algebra is a closed subalgebra of $\cont (\mathds{D})$.
However it is not all of $\cont (\mathds{D})$, because the
continuous function $z \mapsto \overline{z}$ is not analytic
at any point, as is well-known. Indeed, closed subalgebras
of $\cont (K)$, separating the points of a compact Hausdorff
space $K \neq \varnothing$ and containing the constants,
are studied under the name ``uniform algebras''.

Also, among the assumptions of the Stone-Weierstrass
Theorem, neither the assumption that $A$ should separate
the points, nor the assumption that $A$ should vanish nowhere,
can be dropped. This can easily be inferred from \ref{necStW}.
\pagebreak

\begin{corollary}[extended Stone-Weierstrass Theorem]%
\index{concepts}{Theorem!Stone-Weierstrass|)}\label{StWcomp}%
\index{concepts}{Stone-Weierstrass Theorem|)}%
Let $\Omega$ be a locally compact Hausdorff space $\neq \varnothing$.
Let $A$ be a \st-subalgebra of $\cont\0(\Omega)$ separating the points
of $\Omega$. Let $B$ be the closure of $A$ in $\cont\0(\Omega)$.
Then either $A$ vanishes nowhere on $\Omega$, in which case
$B = \cont\0(\Omega)$, \linebreak or else there exists a point $x$
of $\Omega$ with $B = \{ \,f \in \cont\0(\Omega) : f(x) = 0 \,\}$.
\end{corollary}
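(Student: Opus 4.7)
The strategy is to split into two cases according to whether $A$ vanishes nowhere on $\Omega$ or not, and in the second case to reduce to the (unextended) Stone-Weierstrass Theorem \ref{StW} applied on the open subspace $\Omega \setminus \{x_0\}$ after identifying the relevant ideal of $\cont\0(\Omega)$ with $\cont\0(\Omega \setminus \{x_0\})$.

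If $A$ vanishes nowhere on $\Omega$, then $B = \cont\0(\Omega)$ is immediate from \ref{StW}. Otherwise, at least one point of $\Omega$ is a common zero of $A$; I would first show there is \emph{exactly} one. Indeed, if $x_0 \neq y_0$ were both common zeros, then every $f \in A$ would satisfy $f(x_0) = 0 = f(y_0)$, contradicting the assumption that $A$ separates the points. Call the unique common zero $x_0$, and set $I := \{\,f \in \cont\0(\Omega) : f(x_0) = 0\,\}$. As the kernel of the continuous point-evaluation functional at $x_0$, the set $I$ is closed in $\cont\0(\Omega)$, and contains $A$; hence $B \subseteq I$.

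For the reverse inclusion I would exhibit an isometric $*$-isomorphism between $I$ and $\cont\0(\Omega')$, where $\Omega' := \Omega \setminus \{x_0\}$ is locally compact Hausdorff as an open subset of a locally compact Hausdorff space. The natural candidate is the restriction map $\rho : f \mapsto f|_{\Omega'}$. Surjectivity means that any $g \in \cont\0(\Omega')$, extended by $g(x_0) := 0$, lies in $I$; this in turn follows from definition \ref{Cnaught}, because for each $\varepsilon > 0$ the set $\{\,y \in \Omega' : |\,g(y)\,| \geq \varepsilon\,\}$ is compact in $\Omega'$ hence compact (and closed) in $\Omega$, so its complement is an open neighbourhood of $x_0$ on which $|\,g\,| < \varepsilon$; this simultaneously yields continuity at $x_0$ and the required decay at infinity in $\Omega$. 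The remaining verifications that $\rho$ is a well-defined injective norm-preserving $*$-homomorphism are routine.

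Under $\rho$, the image $\rho(A)$ is a $*$-subalgebra of $\cont\0(\Omega')$ which still separates the points of $\Omega'$, and which now vanishes nowhere on $\Omega'$, precisely because $x_0$ was the unique common zero of $A$. Applying \ref{StW} to $\rho(A)$ in $\cont\0(\Omega')$ (the degenerate sub-case $\Omega = \{x_0\}$ being trivial) gives that $\rho(A)$ is dense in $\cont\0(\Omega')$, and since $\rho$ is an isometric $*$-isomorphism this translates back to $B = I$, as desired. The main obstacle is the identification $I \cong \cont\0(\Omega')$, specifically the surjectivity of $\rho$; everything else reduces either to elementary topology or to the Stone-Weierstrass Theorem \ref{StW} already at our disposal.
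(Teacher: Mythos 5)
Your proposal is correct and follows essentially the same route as the paper: both cases reduce to the unextended Stone-Weierstrass Theorem \ref{StW}, the second via the identification of $\{\,f \in \cont\0(\Omega) : f(x) = 0\,\}$ with $\cont\0(\Omega \setminus \{x\})$ by restriction. You merely spell out in more detail the surjectivity of the restriction map and the uniqueness of the common zero, both of which the paper leaves as "easily seen".
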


\begin{proof}
If $A$ vanishes nowhere on $K$, then $B = \cont\0(\Omega)$
by the preceding Stone-Weierstrass Theorem \ref{StW}. Assume
next that $A$ does vanish somewhere on $K$, which means that
there exists a point $x \in K$ with $f(x) = 0$ for all $f \in A$. Then
$\Omega' := \Omega \setminus \{ x \}$ is a locally compact
Hausdorff space, and $\cont\0(\Omega')$ consists precisely
of the restrictions to $\Omega'$ of the functions in
$\{ \,f \in \cont\0(\Omega) : f(x) = 0 \,\}$, as is easily seen. The
restrictions of the functions in $A$ to $\Omega'$ form a \st-subalgebra
$C$ of $\cont\0(\Omega')$ which separates the points of
$\Omega'$ and vanishes nowhere on $\Omega'$ because $A$
separates the points of $\Omega$ and already vanishes at the
point $x$. It follows from the preceding Stone-Weierstrass Theorem
\ref{StW} that $C$ is dense in $\cont\0(\Omega')$, which implies that
$B = \{ \,f \in \cont\0(\Omega) : f(x) = 0 \,\}$.
\end{proof}

\clearpage

%


\chapter{Basic Properties of the Spectrum}


\setcounter{section}{5}

\section{The Spectrum}

\medskip
In this paragraph, let $A$ be an algebra.

\bigskip
We start with a few remarks concerning invertible elements.%
\index{concepts}{invertible}

\begin{proposition}\label{leftrightinv}%
If $a \in \tld{A}$ has a left inverse $b$ and a right
inverse $c$, then $b = c$ is an inverse of $a$.
\end{proposition}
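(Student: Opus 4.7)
The plan is to exploit associativity of multiplication in $\tld{A}$, together with the fact that $\tld{A}$ is a unital algebra with unit $e$ (by construction of the unitisation, see the definition preceding this proposition).

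First I would write down what is given: $ba = e$ (the left inverse relation) and $ac = e$ (the right inverse relation). The key observation is that the element $bac \in \tld{A}$ can be evaluated in two ways by inserting parentheses, since multiplication is associative. On the one hand, $bac = b(ac) = b \cdot e = b$, using the right-inverse relation. On the other hand, $bac = (ba)c = e \cdot c = c$, using the left-inverse relation. Comparing the two expressions gives $b = c$.

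Once this identification is made, we have a single element (call it $b = c$) satisfying both $ba = e$ and $ac = e$, which is precisely the statement that $a$ has an inverse in the sense of the unital algebra $\tld{A}$, and that $b$ (equivalently $c$) is this inverse.

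There is no real obstacle here; the only ingredients needed are associativity of multiplication in $\tld{A}$ and the defining property of the unit $e$. Both are built into the definition of an algebra and the definition of the unitisation, so the proof reduces to the two-line computation above.
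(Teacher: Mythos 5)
Your proof is correct and is essentially identical to the paper's: both evaluate $bac$ in two ways using associativity and the unit, obtaining $b = b(ac) = (ba)c = c$. Nothing further is needed.
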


\begin{proof}
$b = be = b(ac) = (ba)c = ec = c.$
\end{proof}

\medskip
In particular, an element of $\tld{A}$ can have at most one inverse.
If $a,b$ are invertible elements of $\tld{A}$, then $ab$ is
invertible with inverse $b^{-1}a^{-1}$. It follows that the
invertible elements of $\tld{A}$ form a group under multiplication.

\begin{proposition}
Let $a,b \in \tld{A}$. If both of their products $ab$ and $ba$
are invertible, then both $a$ and $b$ are invertible.
\end{proposition}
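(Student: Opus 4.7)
The plan is to invoke the preceding proposition \ref{leftrightinv}, which says that if an element of $\tld{A}$ has both a left and a right inverse, then it is invertible. So it suffices to exhibit, for each of $a$ and $b$, one left inverse and one right inverse.

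First I would set $c := (ab)^{-1}$ and $d := (ba)^{-1}$ in $\tld{A}$. From $(ab)c = c(ab) = e$, by reassociation we read off $a(bc) = e$ and $(ca)b = e$; that is, $bc$ is a right inverse of $a$, and $ca$ is a left inverse of $b$. Symmetrically, from $(ba)d = d(ba) = e$ we get $b(ad) = e$ and $(db)a = e$, so $ad$ is a right inverse of $b$, and $db$ is a left inverse of $a$.

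Thus $a$ has the left inverse $db$ and the right inverse $bc$, so by proposition \ref{leftrightinv} the element $a$ is invertible (with $db = bc = a^{-1}$). Likewise $b$ has the left inverse $ca$ and the right inverse $ad$, hence is invertible. There is no real obstacle here; the only thing to be careful about is to keep straight which factor in $(ab)^{-1}$ and $(ba)^{-1}$ absorbs which of $a, b$ to produce a one-sided inverse of the correct element.
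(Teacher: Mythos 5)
Your proof is correct and follows essentially the same route as the paper: both set $c := (ab)^{-1}$, $d := (ba)^{-1}$, reassociate to obtain the one-sided inverses $bc$, $db$ of $a$ and $ca$, $ad$ of $b$, and then conclude via proposition \ref{leftrightinv}.
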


\begin{proof}
With $c := {(ab)}^{-1}$, $d := {(ba)}^{-1}$, we get
\begin{align*}
 & e = (ab)c = a(bc)   &  & e = c(ab) = (ca)b \\
 & e = d(ba) = (db)a  &  & e = (ba)d = b(ad).  \\
\intertext{The preceding proposition \ref{leftrightinv} implies that now}
 & bc = db = a^{-1}    &  \text{ and \quad \qquad \qquad }  & ca = ad = b^{-1}.
\qedhere
\end{align*}
\end{proof}

\medskip
This result is usually used in the following form:

\begin{corollary}\label{comm1}%
If $a,b$ are \underline{commuting} elements of $\tld{A}$,
and if $ab$ is invertible, then both $a$ and $b$ are invertible.
\pagebreak
\end{corollary}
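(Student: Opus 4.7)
The plan is to deduce this immediately from the preceding proposition, which states that if both $ab$ and $ba$ are invertible in $\tld{A}$, then both $a$ and $b$ are invertible. The only gap between that proposition and the corollary is the requirement that \emph{both} products $ab$ and $ba$ be invertible, whereas here we are only told that $ab$ is invertible.

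The bridge is supplied by the commutativity hypothesis: since $a$ and $b$ commute, we have the literal equality $ba = ab$. Hence invertibility of $ab$ trivially forces invertibility of $ba$ (they are the same element of $\tld{A}$), and both hypotheses of the preceding proposition are met. Applying that proposition then yields the desired conclusion that both $a$ and $b$ are invertible.

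There is essentially no obstacle here; the proof is a one-line reduction. The only thing worth mentioning explicitly in the write-up is the observation $ab = ba$, so that a reader sees at once why the hypothesis of the preceding proposition is verified.
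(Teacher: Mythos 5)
Your reduction is exactly the intended one: the paper introduces this corollary with ``This result is usually used in the following form,'' i.e.\ as an immediate consequence of the preceding proposition, and commutativity gives $ba = ab$ so that both products are invertible. The proposal is correct and matches the paper's approach.
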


\begin{proposition}\label{comm2}%
Let $a$ be an invertible element of $\tld{A}$. An element of $\tld{A}$
commutes with $a$ if and only if it commutes with $a^{-1}$.
\end{proposition}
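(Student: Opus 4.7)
The plan is to observe that the statement is symmetric in $a$ and $a^{-1}$, since $a^{-1}$ is itself invertible with inverse $a$. So it suffices to prove one direction: if $b \in \tld{A}$ commutes with $a$, then $b$ commutes with $a^{-1}$.

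For the forward direction, I would start from the assumption $ab = ba$ and multiply on the left and on the right by $a^{-1}$. This yields
\[ a^{-1}(ab)a^{-1} = a^{-1}(ba)a^{-1}. \]
Using associativity and $a^{-1}a = aa^{-1} = e$, the left side collapses to $ba^{-1}$ and the right side collapses to $a^{-1}b$. Hence $a^{-1}b = ba^{-1}$, which is exactly the statement that $b$ commutes with $a^{-1}$.

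For the converse, I would simply apply the forward direction with $a$ replaced by $a^{-1}$, noting that the inverse of $a^{-1}$ is $a$ (so that the hypothesis of invertibility carries over trivially). There is no real obstacle here; the only thing worth stating clearly is the use of associativity when cancelling $a \cdot a^{-1}$ to $e$ on each side of the computation.
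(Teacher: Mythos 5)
Your proof is correct and follows essentially the same route as the paper, which runs the chain of equivalences $ab = ba \Leftrightarrow b = a^{-1}ba \Leftrightarrow ba^{-1} = a^{-1}b$; you simply perform both multiplications by $a^{-1}$ in one step and invoke the symmetry $(a^{-1})^{-1} = a$ for the converse instead of reading the equivalences backwards. No issues.
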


\begin{proof}
For $b \in \tld{A}$ we get:
$ab = ba \ \Leftrightarrow \ b = a^{-1}ba
\ \Leftrightarrow \ ba^{-1} = a^{-1}b$.
\end{proof}

\begin{proposition}\label{Hermlrinv}%
A Hermitian element of a unital \st-algebra is left invertible
if and only if it is right invertible. In this event, the element is
invertible, by \ref{leftrightinv}. See also \ref{normalleftright} below.
\end{proposition}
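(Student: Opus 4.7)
The plan is to use the involution to convert a one-sided inverse on one side into a one-sided inverse on the other. Let $a$ be a Hermitian element of a unital \st-algebra $A$, so $a^* = a$.

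First I would handle the direction ``left invertible implies right invertible''. Assume there exists $b \in A$ with $ba = e$. Applying the involution and using that the involution is conjugate-linear, anti-multiplicative, and fixes the unit (since $e^* = e$ for any unit in a \st-algebra), we obtain $a^* b^* = e$. Since $a^* = a$, this reads $a b^* = e$, so $b^*$ is a right inverse of $a$.

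The converse direction is symmetric: if $ac = e$ for some $c \in A$, then applying the involution yields $c^* a^* = c^* a = e$, so $c^*$ is a left inverse of $a$.

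Finally, once both a left inverse and a right inverse exist, proposition \ref{leftrightinv} gives that they coincide and provide a two-sided inverse, so $a$ is invertible. There is no real obstacle here; the only thing to be slightly careful about is invoking $e^* = e$, which holds because the unit of a \st-algebra is automatically Hermitian (as already noted in the text after the definition of unit).
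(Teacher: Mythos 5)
Your proof is correct and is exactly the paper's argument: the paper's entire proof reads ``Apply the involution,'' and you have simply spelled out the details of that step. No issues.
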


\begin{proof} Apply the involution. \end{proof}

\begin{definition}[the spectrum, $\s_A(a)$]%
\index{symbols}{s2@$\protect\s(a)$}%
\index{concepts}{spectrum!of an element}%
For $a \in A$ one defines
\[ \s_A(a) := \{ \lambda \in \mathds{C} :
\lambda e - a \in \tld{A} \text{ is not invertible in } \tld{A} \,\}. \]
One says that $\s_A(a)$ is the \underline{spectrum} of the element
$a$ in the algebra $A$. We shall often abbreviate
\[ \s(a) := \s_A(a). \]
\end{definition}

The next result will be used tacitly in the sequel.

\begin{proposition}\label{specunit}%
For $a \in A$ we have
\[ \s_{\tld{A}} (a) = \s_A (a). \]
\end{proposition}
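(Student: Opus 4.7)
The plan is to observe that this statement is essentially a book-keeping tautology, flowing directly from the convention in the definition of unitisation that $\tld{B} := B$ whenever $B$ is already unital. The only work is to unwind the definitions carefully on both sides.

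First I would note that $\tld{A}$ is unital by construction (whether or not $A$ itself has a unit). Consequently, applying the unitisation definition to the unital algebra $\tld{A}$ gives $\widetilde{\tld{A}} = \tld{A}$, and the unit of $\widetilde{\tld{A}}$ is the same element $e$ as the unit of $\tld{A}$.

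Next I would expand both sides of the claimed equality using the definition of the spectrum. By definition,
\[ \s_{\tld{A}}(a) = \{ \,\lambda \in \mathds{C} : \lambda e - a \text{ is not invertible in } \widetilde{\tld{A}} \,\}, \]
while
\[ \s_A(a) = \{ \,\lambda \in \mathds{C} : \lambda e - a \text{ is not invertible in } \tld{A} \,\}. \]
Substituting $\widetilde{\tld{A}} = \tld{A}$ into the first displayed formula yields the second one verbatim.

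There is no real obstacle; the only pitfall is notational, namely making sure that the symbol $e$ refers to the same unit in both formulas, which it does precisely because $\tld{A}$ is unital and its unitisation is itself. This proposition is stated mainly so that later, when we speak about the spectrum of $a \in A$, we do not need to distinguish whether the ambient algebra is $A$ or its unitisation $\tld{A}$.
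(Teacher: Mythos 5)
Your proposal is correct and follows exactly the paper's own argument: both rest on the observation that $\tld{A}$ is unital, hence $\widetilde{\tld{A}} = \tld{A}$, after which the two definitions of the spectrum coincide verbatim. Nothing is missing.
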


\begin{proof}
The following statements for $\lambda \in \mathds{C}$ are equivalent.
\begin{align*}
 & \ \lambda \in \s_{\tld{A}} (a), \\
 & \ \lambda e - a  \in \tilde{\tilde{A}}
\text{ is not invertible in } \tilde{\tilde{A}}, \\
 & \ \lambda e - a \in \tld{A}
\text{ is not invertible in } \tld{A} \qquad
\text{$\bigl(\mspace{1mu}$because $\tilde{\tilde{A}} = \tld{A}\,\bigr)$}, \\
 & \ \vphantom{\tilde{\tilde{A}}}\lambda \in \s_A (a). \qedhere
\end{align*}
\end{proof}

\begin{proposition}\label{speczero}%
If $A$ has no unit, then $0 \in \s_A(a)$ for all $a \in A$.
\end{proposition}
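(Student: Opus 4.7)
The plan is to exploit the direct-sum decomposition $\tld{A} = \mathds{C}e \oplus A$ that holds precisely because $A$ has no unit, and observe that $0 \cdot e - a = -a$ lies in the ``kernel'' of the natural projection $\tld{A} \twoheadrightarrow \mathds{C}$, and therefore cannot be invertible in $\tld{A}$.

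More concretely, I would first define $\pi : \tld{A} \to \mathds{C}$ by $\pi(\lambda e + a) := \lambda$. Because $A$ has no unit, the representation $\lambda e + a$ with $\lambda \in \mathds{C}$ and $a \in A$ is unique, so $\pi$ is a well-defined linear map. A direct check using the multiplication formula
\[
(\lambda e + a)(\mu e + b) = (\lambda \mu) e + (\lambda b + \mu a + ab)
\qquad (\lambda, \mu \in \mathds{C},\ a, b \in A)
\]
shows that $\pi$ is multiplicative, and clearly $\pi(e) = 1$. Thus $\pi$ is a unital algebra homomorphism. Unital algebra homomorphisms send invertible elements to invertible elements.

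Now suppose toward a contradiction that $0 \notin \s_A(a)$ for some $a \in A$. By \ref{specunit}, this means $-a = 0 \cdot e - a \in \tld{A}$ is invertible in $\tld{A}$. But $-a \in A$, so $\pi(-a) = 0$, and then $\pi$ applied to the inverse would yield an inverse of $0$ in $\mathds{C}$, which is absurd. Hence $0 \in \s_A(a)$.

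There is no real obstacle here; the only point that deserves care is the observation that the projection $\pi$ is genuinely well defined, which rests squarely on the assumption that $A$ has no unit (so that the sum $\tld{A} = \mathds{C}e + A$ in the unitisation construction is direct). If one prefers to avoid invoking $\pi$ explicitly, the same argument can be phrased by noting that for any $\mu e + b \in \tld{A}$ the product $(-a)(\mu e + b) = -\mu a - ab$ lies in $A$, hence cannot equal $e = 1 \cdot e + 0$, since $e \notin A$.
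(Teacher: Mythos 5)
Your proof is correct and rests on the same fact as the paper's: since $A$ has no unit, $A$ is a proper two-sided ideal of codimension one in $\tld{A}$, so no element of $A$ can be invertible there. The paper phrases this as a direct computation (if $a^{-1} = \mu e + b$, then $e = \mu a + ba \in A$, making $A$ unital, a contradiction), which is exactly your closing remark, so your homomorphism $\pi$ is just a repackaging of the same argument.
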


\begin{proof}
Assume that $0 \notin \s_{A}(a)$ for some $a$ in
$A$. It shall be shown that $A$ is unital. First, $a$
is invertible in $\tld{A}$. So let for example
$a^{-1} = \mu e + b$ with $\mu \in \mathds{C}$, $b \in A$.
We obtain $ e = ( \mu e + b ) a = \mu a + b a \in A$,
so $A$ is unital.
\end{proof}

\medskip
The following Rational Spectral Mapping Theorem
is one of the most widely used results in spectral theory.
\pagebreak

\begin{theorem}[the Rational Spectral Mapping Theorem]%
\index{concepts}{Theorem!Rational Spectral Mapping}%
\index{concepts}{Rational Spec.\ Mapp.\ Thm.}\label{ratspecmapthm}%
Let $a \in \tld{A}$ and let $r(x)$ be a non-constant rational
function without pole on $\s(a)$. We then have
\[ \s\bigl(r(a)\bigr) = r\bigl(\s(a)\bigr). \]
\end{theorem}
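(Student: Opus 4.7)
The plan is to reduce the rational case to the polynomial case, and to prove the polynomial case via factorisation over $\mathds{C}$ combined with corollary \ref{comm1}, which says that a product of commuting elements of $\tld{A}$ is invertible if and only if each factor is invertible.

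First I would treat the polynomial case. Given a non-constant polynomial $p$ and $\mu \in \mathds{C}$, the polynomial $p(x) - \mu$ is non-zero and non-constant, so by the Fundamental Theorem of Algebra it factors as
\[ p(x) - \mu = c \prod_{i=1}^{n} (x - \lambda_i) \]
with $c \neq 0$ and $n \geq 1$. Substituting $a$ yields
\[ p(a) - \mu e = c \prod_{i=1}^{n} (a - \lambda_i e), \]
a product of pairwise commuting elements of $\tld{A}$. By iterating corollary \ref{comm1}, this product is invertible if and only if each factor $a - \lambda_i e$ is invertible, i.e.\ if and only if no $\lambda_i$ lies in $\s(a)$. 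Since the $\lambda_i$ are exactly the solutions of $p(\lambda) = \mu$, this says $\mu \notin \s\bigl(p(a)\bigr) \Leftrightarrow \mu \notin p\bigl(\s(a)\bigr)$, establishing the polynomial case.

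For the rational case, write $r = p/q$ in lowest terms, where $q$ has no zero on $\s(a)$. By the polynomial case applied to $q$, the element $q(a)$ is invertible in $\tld{A}$ (since $0 \notin q(\s(a)) = \s(q(a))$), so $r(a) := p(a)\,q(a)^{-1}$ is well-defined. For any $\mu \in \mathds{C}$ we have the factorisation
\[ r(a) - \mu e = \bigl[ p(a) - \mu q(a) \bigr] \cdot q(a)^{-1}, \]
so $\mu \in \s\bigl(r(a)\bigr)$ if and only if $p(a) - \mu q(a)$ fails to be invertible. Here the non-constancy of $r$ is used: if $p - \mu q$ vanished identically for some $\mu$, then $r \equiv \mu$, contradicting non-constancy. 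Hence $p(x) - \mu q(x)$ is a non-zero polynomial, which factors as $c \prod_i (x - \lambda_i)$ with $c \neq 0$, and as before corollary \ref{comm1} shows $p(a) - \mu q(a)$ is invertible if and only if none of its roots $\lambda_i$ lie in $\s(a)$.

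It remains to identify the condition ``some $\lambda_i \in \s(a)$'' with ``$\mu \in r(\s(a))$''. For $\lambda \in \s(a)$, one has $q(\lambda) \neq 0$ by the hypothesis on poles, so $p(\lambda) - \mu q(\lambda) = 0$ is equivalent to $\mu = r(\lambda)$. Thus $\mu \in \s\bigl(r(a)\bigr) \Leftrightarrow \mu = r(\lambda)$ for some $\lambda \in \s(a) \Leftrightarrow \mu \in r\bigl(\s(a)\bigr)$, which is the desired equality. The main delicate points will be the bookkeeping around non-constancy (to rule out $p - \mu q \equiv 0$) and the careful invocation of the no-poles hypothesis to ensure both that $r(a)$ is well-defined and that the roots of $p - \mu q$ lying in $\s(a)$ correspond precisely to preimages under $r$.
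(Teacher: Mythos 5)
Your proof is correct and follows essentially the same route as the paper: both factor $\mu - r(x)$ into linear factors over $\mathds{C}$ and reduce invertibility of the resulting product of commuting elements of $\tld{A}$ to invertibility of the individual factors via \ref{comm1} and \ref{comm2}. The only difference is organisational — you pass through the polynomial case and write $r = p/q$, whereas the paper factors $\lambda - r(x)$ directly as $\gamma(\lambda)\prod_k\bigl(\delta_k(\lambda)-x\bigr)\prod_j(\beta_j-x)^{-1}$ — but the underlying argument is identical.
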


\begin{proof}
We may write uniquely
\[ r(x) = \gamma \;\prod  _{i\in I} \;( \,\alpha _i - x \,)
\;\prod _{j\in J} \;{( \,\beta _j - x \,)}^{-1} \]
where $\gamma \in \mathds{C}$ and
$\{ \alpha _i : i \in I \}$, $\{\beta_j : j \in J \}$
are disjoint sets of complex numbers.
The $\beta_j$ $(j \in J) $ are the poles of $r(x)$.
The element $r(a)$ in $\tld{A}$ then is defined by
\[  r(a) := \gamma \;\prod _{i\in I} \;( \,\alpha_i \,e - a \,)
\;\prod _{j\in J} \;{( \,\beta _j \,e  -a \,)}^{-1}. \]
Let now $\lambda$ in $\mathds{C}$ be fixed. The function
$\lambda - r(x)$ has the same poles as $r(x)$, occurring with
the same multiplicities. We may thus write uniquely
\[ \lambda - r(x) = \gamma ( \lambda )
\prod _{k \in K( \lambda )} \;\bigl( \,\delta _k (\lambda ) - x \,\bigr)
\;\prod _{j \in J} \;{( \,\beta _j - x \,)}^{-1}, \]
where $ \gamma ( \lambda ) \neq 0 $ by the assumption that
$r(x)$ is not constant. Hence also
\[ \lambda e - r(a) = \gamma ( \lambda )
\prod _{k \in K( \lambda )} \;\bigl( \,\delta_k ( \lambda ) \,e - a \,\bigr)
\;\prod _{j \in J} \;{( \,\beta_j \,e - a \,)}^{-1}. \]
By \ref{comm1} \& \ref{comm2} it follows that the following
statements are equivalent.
\begin{align*}
 & \lambda \in \s\bigl(r(a)\bigr), \\
 & \lambda e - r(a)
\text{ is not invertible in } \tld{A}, \\
 & \text{there exists } k
\text{ such that } \delta_k( \lambda ) \,e - a
\text{ is not invertible in } \tld{A}, \\
 & \text{there exists } k
\text{ such that } \delta_k ( \lambda ) \in \s(a), \\
 & \lambda - r(x)
\text{ vanishes at some point } x\0 \in \s(a), \\
 & \text{there exists } x\0 \in \s(a)
\text{ such that } \lambda = r(x\0), \\
 & \lambda \in r\bigl(\s(a)\bigr). \qedhere
\end{align*}
\end{proof}

The next result is used extremely often. It will be turned into
a continuity statement later on. See \ref{contraux} below. \pagebreak

\begin{theorem}\label{spechom}%
Let $B$ be another algebra, and let $\pi : A \to B$ be an algebra
homomorphism. We then have
\[ \s_B \bigl(\pi (a)\bigr) \setminus \{0\} \subset \s_A (a) \setminus \{0\}
\quad \text{for all} \quad a \in A. \]
If both $A$ and $B$ are unital algebras, and if $\pi$ is unital as well, then
\[ \s_B \bigl(\pi (a)\bigr) \subset \s_A (a)
\quad \text{for all} \quad a \in A. \]
\end{theorem}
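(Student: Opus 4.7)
My plan is to extract, from an inverse of $\lambda e - a$ in $\tld A$, an element of $A$ (rather than merely of $\tld A$) that encodes the invertibility, so that $\pi$ can be applied to it directly. This sidesteps the awkwardness that no single natural extension $\tld\pi : \tld A \to \tld B$ exists uniformly in the four combinations of $A, B$ being unital or non-unital; for instance, if $A$ is unital but $B$ is not, no such $\tld\pi$ can simultaneously agree with $\pi$ on $e_A$ and send the adjoined unit to the adjoined unit.

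For the first inclusion, I would fix $\lambda \neq 0$ with $\lambda \notin \s_A(a)$ and let $b \in \tld A$ denote the inverse of $\lambda e - a$. Rearranging $b(\lambda e - a) = e$ as $\lambda b - ba = e$, I would read off $b = (1/\lambda)(e + c)$ with $c := ba$. The key observation is that $c \in A$ in every case: if $A$ has no unit, then $A$ is a two-sided ideal in $\tld A = \mathds{C}e \oplus A$; if $A$ is unital, then $\tld A = A$ already. Substituting $b = (1/\lambda)(e+c)$ back into both sides of the inversion identity and simplifying gives the pair of equations
\[ \lambda c - ca = a, \qquad \lambda c - ac = a, \]
which now live entirely in $A$.

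Next I would apply $\pi$ to these identities, obtaining
\[ \lambda \pi(c) - \pi(c)\pi(a) = \pi(a), \qquad \lambda \pi(c) - \pi(a)\pi(c) = \pi(a) \]
in $B \subset \tld B$. Setting $b' := (1/\lambda)\bigl(e + \pi(c)\bigr) \in \tld B$, a direct computation reversing the previous substitution shows $b'(\lambda e - \pi(a)) = (\lambda e - \pi(a))b' = e$ in $\tld B$. Hence $\lambda e - \pi(a)$ is invertible in $\tld B$, i.e.\ $\lambda \notin \s_B\bigl(\pi(a)\bigr)$, which yields the first inclusion.

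For the second statement, only the case $\lambda = 0$ remains. If $A, B$ are both unital and $\pi$ is unital, then $0 \notin \s_A(a)$ means $a$ is invertible in $A = \tld A$ with some inverse $b \in A$; applying $\pi$ to $ab = ba = e_A$ and using $\pi(e_A) = e_B$ gives $\pi(a)\pi(b) = \pi(b)\pi(a) = e_B$, so $0 \notin \s_B(\pi(a))$. The main obstacle I anticipate is the bookkeeping across the unital/non-unital cases; the ``quasi-inverse'' trick of working with $c = ba \in A$ is what makes the argument uniform, since such a $c$ is available regardless of the unit conventions, whereas the inverse $b$ in $\tld A$ need not be compatible with $\pi$ at all.
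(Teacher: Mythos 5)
Your proof is correct. It reaches the same inverse element as the paper but by a different route. The paper extends $\pi$ to a homomorphism $\tld{\pi} : \tld{A} \to \tld{B}$ (sending the adjoined unit to the adjoined unit only when $A$ has no unit, so $\tld{\pi}(e_A)$ need not equal $e_B$) and then verifies directly that $\tld{\pi}(b) + \lambda^{-1}\bigl(e_B - \tld{\pi}(e_A)\bigr)$ inverts $\lambda e_B - \pi(a)$, the correction term absorbing the failure of $\tld{\pi}$ to be unital. Your quasi-inverse formulation instead writes $b = \lambda^{-1}(e + c)$ with $c := ba \in A$, derives the identities $\lambda c - ca = a$ and $\lambda c - ac = a$ entirely inside $A$, transports them through $\pi$, and reassembles the inverse as $\lambda^{-1}\bigl(e_B + \pi(c)\bigr)$ in $\tld{B}$ --- which is in fact the very same element the paper exhibits, since $\tld{\pi}(b) = \lambda^{-1}\bigl(\tld{\pi}(e_A) + \pi(c)\bigr)$. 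What your version buys is that $\pi$ is never extended at all, so the case analysis over the four unital/non-unital combinations disappears; what the paper's version buys is a one-line verification once $\tld{\pi}$ is in hand, at the cost of having to define $\tld{\pi}$ case by case. Your handling of the unital case $\lambda = 0$ coincides with the paper's.
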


\begin{proof}
Let $a \in A$ and $\lambda \in \mathds{C} \setminus \{0\}$. Assume
that $\lambda \notin \s_{A} (a)$. We will show that
$\lambda \notin \s_{B} \bigl( \pi (a)\bigr)$. Let $e_A$ be the unit in
$\tld{A}$ and $e_B$ the unit in $\tld{B}$. Extend $\pi$ to an algebra
homomorphism $\tld{\pi} : \tld{A} \to \tld{B}$ by requiring
that $\tld{\pi}(e_A) = e_B$ if $\vphantom{\tld{A}}A$ has no unit.
The element $\lambda e_A-a$ has an inverse $b$ in $\tld{A}$.
Then $\tld{\pi} (b) + \lambda ^{-1} \bigl(e_B- \tld{\pi} (e_A)\bigr)$ is the
inverse of $\lambda e_B- \pi (a)$ in $\tld{B}$. Indeed we have for example
\begin{align*}
   & \ \bigl[ \,\tld{\pi} (b) + \lambda^{-1} \bigl( e_B- \tld{\pi} (e_A) \bigr) \,\bigr]
       \,\bigl[ \,\lambda e_B- \pi (a) \,\bigr] \\
 = & \ \bigl[ \,\tld{\pi} (b) + \lambda ^{-1} \bigl( e_B - \tld{\pi} (e_A) \bigr) \,\bigr]
     \,\bigl[ \,\tld{\pi} ( \lambda e_A - a )
     + \lambda \bigl( e_B - \tld{\pi} (e_A) \bigr) \,\bigr] \\
 = & \ \tld{\pi} (e_A) + 0 + 0 + {\bigl( e_B - \tld{\pi} (e_A) \bigr)}^{2} \\
 = & \ \tld{\pi} (e_A) + \bigl(e_B - \tld{\pi} (e_A)\bigr) \\
 = & \ e_B.
\end{align*}
Assume now that $A$, $B$ and $\pi$ are unital, and consider the case
$\lambda = 0$. If $0 \notin \s_{A} (a)$, then $a$ is invertible with inverse
$a^{-1}$, say. But then $\pi (a)$ will be invertible with inverse $\pi (a^{-1})$,
by the unital nature of $\pi$. Hence $0 \notin \s_{B} (\pi (a))$.
\end{proof}

\medskip
For the next result, recall \ref{twiddleunital} -- \ref{twidunitp}.

\begin{theorem}\label{specsubalg}%
If $B$ is a subalgebra of $A$, then
\[ \s_{A}(b) \setminus \{0\} \subset \s_{B}(b) \setminus\{0\}
 \quad \text{for all} \quad b \in B. \]
If $B$ furthermore is \twiddle-unital in $A$, then
\[ \s_{A}(b) \subset \s_{B}(b)
 \quad \text{for all} \quad b \in B. \]
\end{theorem}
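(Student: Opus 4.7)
The plan is to apply Theorem \ref{spechom} to the canonical imbedding $\iota : \tld{B} \to \tld{A}$ described in Definition \ref{canimb}. A routine check of the two cases in \ref{canimb} shows that $\iota$ is an algebra homomorphism which restricts to the identity on $B$. Theorem \ref{spechom} applied to $\iota$ then delivers
\[ \s_{\tld{A}}\bigl(\iota(b)\bigr) \setminus \{0\} \subset \s_{\tld{B}}(b) \setminus \{0\} \quad \text{for every } b \in \tld{B}. \]
Specialising to $b \in B$, so that $\iota(b) = b$, and using Proposition \ref{specunit} to drop the tildes on both sides, one obtains the first assertion
\[ \s_{A}(b) \setminus \{0\} \subset \s_{B}(b) \setminus \{0\}. \]

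For the second assertion, assume that $B$ is \twiddle-unital in $A$. Proposition \ref{twidunitp} states that $\tld{B}$ is then a unital subalgebra of $\tld{A}$ under the canonical imbedding, that is, $\iota$ sends the unit of $\tld{B}$ to the unit of $\tld{A}$. The unital version of Theorem \ref{spechom} therefore upgrades the previous inclusion to the full
\[ \s_{\tld{A}}\bigl(\iota(b)\bigr) \subset \s_{\tld{B}}(b), \]
which after the same passage through \ref{specunit} yields $\s_A(b) \subset \s_B(b)$ for $b \in B$.

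There is no genuine obstacle here: every ingredient has been assembled in the preceding paragraphs, and the argument is really just a packaging of \ref{spechom} together with \ref{canimb} and \ref{twidunitp}. The only point deserving attention is the role of $0$: in general $0$ may belong to $\s_A(b)$ for trivial reasons when $A$ is non-unital (cf.\ \ref{speczero}), which is precisely why the weak inclusion excluding $0$ is the best one can hope for without the \twiddle-unital hypothesis that forces $\iota$ to be unital.
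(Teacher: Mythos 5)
Your proof is correct and is essentially identical to the paper's: the paper also derives the statement from Theorem \ref{spechom} applied to the canonical imbedding of $\tld{B}$ into $\tld{A}$, using \ref{twiddleunital}--\ref{twidunitp} for the unital case and \ref{specunit} to drop the tildes. No issues.
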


\begin{proof}
This follows from the preceding theorem \ref{spechom} by considering the
canonical imbedding of $\tld{A}$ into $\tld{B}$,
cf.\ \ref{twiddleunital} -- \ref{twidunitp}, together with \ref{specunit}.
\end{proof}

\medskip
See also \ref{spbdry} -- \ref{spconvend} and
\ref{specHermincl} -- \ref{specHermend} below.
\pagebreak

We conclude this paragraph with a miscellany of results.

\begin{proposition}\label{speccomm}%
For $a, b \in A$, we have
\[ \s(ab) \setminus \{0\} = \s(ba) \setminus \{0\}. \]
\end{proposition}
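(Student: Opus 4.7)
The plan is to establish only one inclusion, say $\s(ab) \setminus \{0\} \supset \s(ba) \setminus \{0\}$, by the contrapositive. Symmetry (swapping the roles of $a$ and $b$) will then supply the reverse inclusion. So I fix $\lambda \in \mathds{C} \setminus \{0\}$ with $\lambda \notin \s(ab)$, and I aim to exhibit an explicit inverse of $\lambda e - ba$ in $\tld{A}$.

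The key step is the ``resolvent trick'': set $c := (\lambda e - ab)^{-1} \in \tld{A}$, which exists by assumption, and propose
\[ d := \lambda^{-1}\bigl( e + bca \bigr) \in \tld{A} \]
as the candidate inverse for $\lambda e - ba$. One then verifies
\[ d \,( \lambda e - ba ) = ( \lambda e - ba ) \,d = e \]
by direct computation. For the right-multiplication, expand
\[ ( \lambda e - ba )\, d = \lambda^{-1}\bigl[\, \lambda e - ba + b\,(\lambda e - ab)\,c\,a \,\bigr] = \lambda^{-1}\bigl[\, \lambda e - ba + b\,a \,\bigr] = e, \]
using $(\lambda e - ab)\,c = e$. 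The left-multiplication is analogous, using $c\,(\lambda e - ab) = e$. This shows $\lambda \notin \s(ba)$.

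There is no real obstacle here beyond bookkeeping; the only mild care required is that all the manipulations take place inside the unitisation $\tld{A}$, so that $e$ and the products $bca$, $ab$, $ba$ make sense regardless of whether $A$ itself is unital (recall that $a, b \in A \subset \tld{A}$). The heuristic behind the formula for $d$ is the formal geometric-series identity $(1 - t)^{-1} = 1 + t + t^2 + \cdots$ applied to $t = \lambda^{-1} ba$, which, after pulling out one factor of $b$ on the left and one factor of $a$ on the right, becomes $\lambda^{-1}(e + b(\lambda e - ab)^{-1} a)$; this motivates the guess, but the actual proof is the purely algebraic verification above and requires no convergence considerations.
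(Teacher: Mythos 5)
Your proof is correct and follows essentially the same route as the paper: the paper also verifies directly that $\lambda^{-1}(e+bca)$, with $c := (\lambda e - ab)^{-1}$, is a two-sided inverse of $\lambda e - ba$ in $\tld{A}$, and obtains the other inclusion by symmetry. The geometric-series heuristic you mention is the standard motivation for this identity; no gap.
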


\begin{proof}
Let $\lambda \in \mathds{C} \setminus \{0\}$. Assume
that $\lambda \notin \s(ab)$. There then exists $c \in \tld{A}$ with
\[ c(\lambda e - ab) = (\lambda e - ab)c = e.  \]
We need to show that $\lambda \notin \s(ba)$. We claim that
$\lambda ^{-1}(e+bca)$ is the inverse of $\lambda e-ba$, i.e.
\[ (e+bca)(\lambda e - ba) = (\lambda e - ba)(e+bca) = \lambda e. \]
Indeed, one calculates
\begin{align*}
 &\ (e+bca)(\lambda e - ba) &\quad &\ (\lambda e - ba)(e+bca) \\
= &\ \lambda e-ba +\lambda bca-bcaba & = &\ \lambda e-ba+\lambda bca-babca \\
= &\ \lambda e-ba+bc(\lambda e-ab)a & = &\ \lambda e-ba+b(\lambda e-ab)ca \\
= &\ \lambda e-ba+ba = \lambda e & = &\ \lambda e-ba+ba = \lambda e. \qedhere
\end{align*}
\end{proof}

\begin{theorem}\label{spleftreg}%
Consider the left regular representation $L : a \mapsto L\tla$ $(a \in \tld{A})$
of $\tld{A}$ on itself, cf.\ \ref{leftreg}. The spectrum of an element $a$ in
$\tld{A}$ coincides with the spectrum of the translation operator $L\tla$
in $\mathrm{End}(\tld{A})$. That is,
\[ \s_{\tld{A}} (a) = \s_{\mathrm{End}(\tld{A})} (L\tla)
\quad \text{for all} \quad a \in \tld{A}. \]
\end{theorem}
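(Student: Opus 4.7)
The plan is to prove the two inclusions separately. The forward inclusion $\s_{\mathrm{End}(\tld{A})}(L\tla) \subset \s_{\tld{A}}(a)$ is immediate from the Spectrum-and-Homomorphisms Theorem \ref{spechom}: the map $L : \tld{A} \to \mathrm{End}(\tld{A})$ is a unital algebra homomorphism (it carries the unit $e$ to $L_e = \mathrm{id}$), so theorem \ref{spechom} (in its unital form) applies.

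For the reverse inclusion, suppose that $\lambda \notin \s_{\mathrm{End}(\tld{A})}(L\tla)$. Then $\lambda \,\mathrm{id} - L\tla = L_{\lambda e - a}$ is invertible in $\mathrm{End}(\tld{A})$; call its inverse $T$. The key idea is to show that $T$ is itself a left translation, namely $T = L_b$ with $b := T(e)$. The hook is the following associativity remark: for any $c \in \tld{A}$, the right translation $R_c : x \mapsto xc$ lies in $\mathrm{End}(\tld{A})$, and $L_{\lambda e - a}$ commutes with $R_c$ (since $(\lambda e - a)(xc) = \bigl((\lambda e - a)x\bigr)c$). Hence $T$ also commutes with every $R_c$. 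Applying this to $e$, one obtains $T(c) = T(R_c e) = R_c T(e) = T(e)\,c$ for every $c \in \tld{A}$, that is, $T = L_b$.

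Now combine with the injectivity of the left regular representation on $\tld{A}$ (observation \ref{leftreginj}). From $T L_{\lambda e - a} = \mathrm{id} = L_e$ and the fact that $L$ is an algebra homomorphism, we get $L_{b(\lambda e - a)} = L_e$, so by injectivity $b(\lambda e - a) = e$. Symmetrically, $L_{\lambda e - a} T = L_e$ gives $(\lambda e - a)b = e$. Thus $\lambda e - a$ is invertible in $\tld{A}$, i.e.\ $\lambda \notin \s_{\tld{A}}(a)$, which completes the proof.

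The only delicate point is the identification $T = L_b$; the rest is bookkeeping. The associativity-via-$R_c$ trick is the crux, and without it one would be stuck with only a right inverse $T(e)$ for $\lambda e - a$ rather than a full two-sided inverse in $\tld{A}$.
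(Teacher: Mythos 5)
Your proof is correct, and both inclusions are established soundly; the skeleton (reduce to invertibility, set $b := Te$, use injectivity of $L$ from \ref{leftreginj}) matches the paper's. The crux is handled differently, though. The paper does not prove $T = L_{\textstyle{b}}$ up front: it first reads off the single identity $ab = L\tla Te = e$, concludes that $L_{\textstyle{b}}$ is a \emph{right} inverse of $L\tla$, and then invokes \ref{leftrightinv} in $\mathrm{End}(\tld{A})$ (right inverse plus the existing left inverse $T$ forces $L_{\textstyle{b}} = T$ to be the two-sided inverse), after which injectivity of $L$ yields $ba = e$. You instead prove directly that $T$ lies in the commutant of all right translations $R_c$ (because $L_{\lambda e - a}$ does, by associativity, and inverses inherit commutation, cf.\ \ref{comm2}), and deduce $T = L_{\textstyle{b}}$ by evaluating at $e$; the rest is then pure bookkeeping. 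Both routes are valid and of comparable length. The paper's is marginally more economical, while yours makes explicit the structural fact that the commutant of the right translations consists exactly of the left translations --- a reusable principle, and arguably the more illuminating reason why the inverse of a left translation must again be a left translation.
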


\begin{proof}
Please note that $L_{\textstyle{e}}$ is the unit operator
$\mathds{1} \in \mathrm{End}(\tld{A})$. Let $a \in \tld{A}$.
We have to show that $a$ is invertible in $\tld{A}$ if and
only if $L\tla$ is invertible in $\mathrm{End}(\tld{A})$.
Clearly, if $a$ is invertible in $\tld{A}$ (with inverse $b$, say),
then $L\tla$ is invertible in $\mathrm{End}(\tld{A})$ (with
inverse $L_{\textstyle{b}}$). Conversely, assume that
$L\tla$ is invertible in $\mathrm{End}(\tld{A})$, with inverse
$T \in \mathrm{End}(\tld{A})$, say. Put $b := Te \in \tld{A}$.
Then $b$ is a right inverse of $a$ because
\[ a b = L\tla T e = \mathds{1} e = e. \]
It follows that $L_{\textstyle{b}}$ is a right inverse of $L\tla$.
Since $L\tla$ also has a left inverse in $\mathrm{End}(\tld{A})$,
it follows that $L_{\textstyle{b}}$ is an inverse of $L\tla$,
cf.\ \ref{leftrightinv}. Since $L$ is injective, by \ref{leftreginj},
we have that $b$ is an inverse of $a$ in $\tld{A}$.
\end{proof}

\begin{corollary}\label{spleftregbded}%
If $A$ is a normed algebra, then the left regular representation
$L$ of $\tld{A}$ on itself may be considered as an algebra
homomorphism from $\tld{A}$ to the algebra $\blop(\tld{A})$
of bounded linear operators on $\tld{A}$, cf.\ \ref{algbdedop}.
We then have
\[ \s_{\tld{A}} (a) = \s_{\blop(\tld{A})} (L\tla)
 \quad \text{for all} \quad a \in \tld{A}. \]
\end{corollary}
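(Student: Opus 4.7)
The plan is to sandwich $\s_{\blop(\tld{A})}(L\tla)$ between two inclusions, one coming from Theorem \ref{spleftreg} together with \ref{specsubalg}, the other coming from Theorem \ref{spechom}. First I would check that $L$ really does take values in $\blop(\tld{A})$: the unitisation $\tld{A}$ is a normed algebra by \ref{normunitis} (or \ref{preCstarunitis} in the pre-C*-case), so for every $a \in \tld{A}$ and every $x \in \tld{A}$ the algebra norm inequality gives $\| L\tla x \| \leq \| a \| \cdot \| x \|$, whence $L\tla \in \blop(\tld{A})$ with $\| L\tla \| \leq \| a \|$. Since $L_{\textstyle{e}} = \mathds{1}$ is the identity operator, $L$ is in fact a \emph{unital} algebra homomorphism $\tld{A} \to \blop(\tld{A})$.

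Next, I would observe that $\blop(\tld{A})$ is a unital subalgebra of $\mathrm{End}(\tld{A})$, both carrying the identity operator $\mathds{1}$ as unit. In particular $\blop(\tld{A})$ is \twiddle-unital in $\mathrm{End}(\tld{A})$, so the second clause of \ref{specsubalg} applies and gives
\[ \s_{\mathrm{End}(\tld{A})} (L\tla) \subset \s_{\blop(\tld{A})} (L\tla). \]
Combining with Theorem \ref{spleftreg}, which identifies the left-hand side with $\s_{\tld{A}}(a)$, I obtain the first of the two inclusions, $\s_{\tld{A}}(a) \subset \s_{\blop(\tld{A})} (L\tla)$.

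For the reverse inclusion I would apply Theorem \ref{spechom} to the unital algebra homomorphism $L : \tld{A} \to \blop(\tld{A})$, which yields
\[ \s_{\blop(\tld{A})} (L\tla) \subset \s_{\tld{A}} (a). \]
Concretely: if $\lambda e - a$ has an inverse $b$ in $\tld{A}$, then $L_{\textstyle{b}} \in \blop(\tld{A})$ is a two-sided inverse of $\lambda \mathds{1} - L\tla$ in $\blop(\tld{A})$, since $L$ is a unital algebra homomorphism. Chaining the two inclusions through $\s_{\tld{A}}(a)$ forces equality throughout, which is the claim.

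Since each step reduces directly to a result already established in the preceding pages, there is no genuine obstacle here; the only substantive point is verifying that $L\tla$ is in fact a \emph{bounded} operator on $\tld{A}$, which is immediate from the submultiplicativity of the norm on $\tld{A}$.
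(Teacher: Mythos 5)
Your proof is correct and follows essentially the same sandwich argument as the paper: both use Theorem \ref{spleftreg} together with the \twiddle-unital inclusion $\blop(\tld{A}) \subset \mathrm{End}(\tld{A})$ and \ref{specsubalg} for one direction. The only cosmetic difference is that for the reverse inclusion you invoke \ref{spechom} directly on the unital homomorphism $L : \tld{A} \to \blop(\tld{A})$, whereas the paper applies \ref{specsubalg} to $L(\tld{A}) \subset \blop(\tld{A})$ and uses the isomorphism onto the range from \ref{leftreginj}; since \ref{specsubalg} is itself a consequence of \ref{spechom}, these are the same argument in two dressings.
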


\begin{proof}
Let $a \in \tld{A}$. Since the left regular representation $L$
of $\tld{A}$ on itself is an algebra isomorphism from $\tld{A}$
onto the range $L(\tld{A})$ of $L$, cf.\ \ref{leftreginj}, we have that
\[ \s_{L(\tld{A})} (L\tla) = \s_{\tld{A}} (a). \]
From the preceding theorem \ref{spleftreg} we see that
\[ \s_{\tld{A}} (a) = \s_{\mathrm{End}(\tld{A})} (L\tla). \]
Applying theorem \ref{specsubalg} to the fact that 
$L(\tld{A}) \subset \blop(\tld{A}) \subset \mathrm{End}(\tld{A})$,
we get
\[ \s_{\mathrm{End}(\tld{A})} (L\tla) \subset \s_{\blop(\tld{A})} (L\tla)
 \subset \s_{L(\tld{A})} (L\tla) = \s_{\tld{A}} (a) = \s_{\mathrm{End}(\tld{A})} (L\tla).
 \qedhere \]
\end{proof}

\medskip
We see that in the above, instead of $\blop(\tld{A})$, we can take any
algebra $B$ such that $L(\tld{A}) \subset B \subset \mathrm{End}(\tld{A})$.

\begin{proposition}\index{concepts}{spectrum!of a bounded operator}%
If $V$ is a complex \underline{Banach} space, and if $a$ is a bounded
linear operator on $V$, then
\[ \s_{\blop(V)} (a) = \s_{\mathrm{End}(V)} (a). \]
\end{proposition}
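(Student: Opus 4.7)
The plan is to prove the two inclusions separately, using different tools for each direction.

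For the inclusion $\s_{\mathrm{End}(V)}(a) \subset \s_{\blop(V)}(a)$, I would invoke theorem \ref{specsubalg} directly. Note that $\blop(V)$ is a subalgebra of $\mathrm{End}(V)$ containing the identity operator $\mathds{1}$, which is the unit of both algebras, so $\blop(V)$ is \twiddle-unital in $\mathrm{End}(V)$ (in fact, it is even a unital subalgebra). Theorem \ref{specsubalg} then yields $\s_{\mathrm{End}(V)}(a) \subset \s_{\blop(V)}(a)$ for every $a \in \blop(V)$.

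For the reverse inclusion $\s_{\blop(V)}(a) \subset \s_{\mathrm{End}(V)}(a)$, I would argue by contrapositive: suppose $\lambda \notin \s_{\mathrm{End}(V)}(a)$, so that $\lambda \mathds{1} - a$ is invertible in $\mathrm{End}(V)$, meaning it is a bijective linear map $V \to V$ whose set-theoretic inverse $T$ lies in $\mathrm{End}(V)$. Since $a \in \blop(V)$, the operator $\lambda \mathds{1} - a$ is itself in $\blop(V)$, hence a bounded bijective linear map on the Banach space $V$. The Open Mapping Theorem (or equivalently, the Bounded Inverse Theorem) then guarantees that $T \in \blop(V)$, so $\lambda \mathds{1} - a$ is invertible in $\blop(V)$ and $\lambda \notin \s_{\blop(V)}(a)$.

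The main (and only non-routine) step is the appeal to the Open Mapping Theorem, which is where completeness of $V$ is used in an essential way. Without the Banach space hypothesis the algebraic inverse need not be continuous, and the inclusion $\s_{\blop(V)}(a) \subset \s_{\mathrm{End}(V)}(a)$ can fail. Everything else is a matter of unpacking definitions and citing \ref{specsubalg}.
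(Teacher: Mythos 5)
Your proof is correct and takes essentially the same route as the paper, whose entire argument is the one-line observation that a bounded linear operator on a Banach space is invertible in $\blop(V)$ if and only if it is bijective, by the Open Mapping Theorem. You merely spell out the two inclusions separately, citing \ref{specsubalg} for the easy direction (which the paper leaves implicit) and using the Open Mapping Theorem for the essential one, exactly as the paper does.
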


\begin{proof}
A bounded linear operator on a Banach space $V$ is invertible in
$\blop(V)$ if and only if it is bijective, by the Open Mapping Theorem.
\end{proof}

\begin{proposition}\label{specstar}%
If $A$ is a \st-algebra, then we have
\[ \s(a^*) = \overline{\s(a)} \quad \text{for all} \quad a \in A. \]
\end{proposition}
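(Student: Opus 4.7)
The plan is to use the fact that the involution, extended from $A$ to $\tld{A}$ as in the definition of unitisation, is a conjugate-linear anti-automorphism of order two, and hence preserves (non-)invertibility. The equation $\s(a^*) = \overline{\s(a)}$ will then fall out by applying the involution to $\lambda e - a$ and observing that $(\lambda e - a)^* = \overline{\lambda}\,e - a^*$.

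First I would record the elementary observation that for any $x \in \tld{A}$, the element $x$ is invertible in $\tld{A}$ if and only if $x^*$ is. The forward direction is immediate: if $xy = yx = e$, then applying the involution and using $(xy)^* = y^*x^*$ together with $e^* = e$ gives $y^*x^* = x^*y^* = e$, so $x^*$ is invertible with inverse $(x^{-1})^*$. The reverse direction follows by applying this to $x^*$ in place of $x$ and using that the involution has order two.

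Next I would chain the equivalences. For $\lambda \in \mathds{C}$, we have $\lambda \in \s(a)$ iff $\lambda e - a$ is not invertible in $\tld{A}$, iff (by the preceding step) $(\lambda e - a)^*$ is not invertible in $\tld{A}$. Since the extended involution satisfies $(\lambda e + b)^* = \overline{\lambda}\,e + b^*$, we have $(\lambda e - a)^* = \overline{\lambda}\,e - a^*$, so this last condition is exactly $\overline{\lambda} \in \s(a^*)$. Thus $\lambda \in \s(a) \Leftrightarrow \overline{\lambda} \in \s(a^*)$, which is the claim.

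There is no real obstacle here; the only mild point worth being careful about is making sure the involution used is the one on $\tld{A}$ (not just on $A$), so that the identity $(\lambda e - a)^* = \overline{\lambda}\,e - a^*$ actually makes sense and holds. This is taken care of by the explicit formula given in the unitisation definition, which in particular fixes $e$.
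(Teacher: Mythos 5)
Your proposal is correct and is exactly the argument the paper intends: the one-line proof in the text, ``This follows from $(\lambda e - a)^* = \overline{\lambda}e - a^*$,'' presupposes precisely the observation you make explicit, namely that the involution on $\tld{A}$ preserves invertibility. You have simply filled in the details of the same approach.
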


\begin{proof}
This follows from $(\lambda e - a)^* = \overline{\lambda}e - a^*$.
\end{proof}

\medskip
It follows that the spectrum of a Hermitian element is symmetric
with respect to the real axis. It does not follow that the spectrum
of a Hermitian element is real. In fact, a \st-algebra, in which each
Hermitian element has real spectrum, is called a Hermitian \st-algebra,
cf.\ \ref{secHerm}. \pagebreak

\clearpage


\section[The Spectral Radius Formula: $\protect{\mathrm{r}_\lambda}$]%
{The Spectral Radius Formula: \texorpdfstring{$\protect{\rlambda}$}{r\80\137\83\273}}

\fancyhead[RO]{\SMALL{\S\ 7. \ THE SPECTRAL RADIUS FORMULA: $\mathrm{r}_\lambda$}}

\medskip
We remind of the following for the method.

\begin{reminder}\label{exponential}%
For $0 < \gamma < \infty$, we have
\[ {\gamma}^{\,1/n} = \exp \,\bigl( \,(1/n) \cdot \ln \gamma \,\bigr) \ \to \ 1
\qquad (n \to \infty). \]
\end{reminder}

\begin{lemma}\label{limit}%
Consider a sequence $(\alpha_n)_{n \geq 1}$ of
non-negative numbers satisfying
\[ \alpha_{n+m} \leq \alpha_n \alpha_m
\quad \text{for all integers} \quad n, m \geq 1. \]
The sequence $({\alpha_n}^{\,1/n}\,)_{n \geq 1}$
then converges to its infimum:
\[ \inf _{n \geq 1} {\alpha_n}^{\,1/n}
= \lim _{n \to \infty} {\alpha_n}^{\,1/n}. \]
\end{lemma}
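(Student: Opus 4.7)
The plan is to recognize this as Fekete's submultiplicative lemma. Let $\ell := \inf _{n \geq 1} {\alpha_n}^{\,1/n} \in [\,0, \infty\,[$. One direction is automatic: by definition of the infimum, $\ell \leq \liminf _{n \to \infty} {\alpha_n}^{\,1/n}$. All the work is in showing
\[ \limsup _{n \to \infty} {\alpha_n}^{\,1/n} \leq \ell. \]

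First I would dispose of the degenerate case. If $\alpha _{n\0} = 0$ for some $n\0 \geq 1$, then submultiplicativity forces $\alpha_n \leq \alpha _{n - n\0} \,\alpha _{n\0} = 0$ for every $n > n\0$, so ${\alpha_n}^{\,1/n} = 0$ eventually, and evidently $\ell = 0$. Assume therefore that $\alpha_n > 0$ for all $n \geq 1$.

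The heart of the proof is the standard Euclidean division trick. Fix $\varepsilon > 0$ and choose $m \geq 1$ with ${\alpha_m}^{\,1/m} < \ell + \varepsilon$. For $n > m$, write $n = q \mspace{1mu} m + r$ with integers $q \geq 1$ and $0 \leq r < m$. Iterated submultiplicativity gives ${\alpha_n} \leq {\alpha_m}^{\,q} \cdot C$, where $C := \max \,\{ \,1, \alpha_1, \ldots, \alpha _{m-1} \,\}$ covers both the case $r = 0$ and the case $1 \leq r < m$. Taking $n$-th roots,
\[ {\alpha_n}^{\,1/n} \leq {\alpha_m}^{\,q/n} \cdot C^{\,1/n}. \]
Since $q/n \to 1/m$ and $C^{\,1/n} \to 1$ by the reminder \ref{exponential}, the right hand side tends to ${\alpha_m}^{\,1/m} < \ell + \varepsilon$, so $\limsup _{n \to \infty} {\alpha_n}^{\,1/n} \leq \ell + \varepsilon$. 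Letting $\varepsilon \to 0$ closes the sandwich.

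The only real subtlety is the zero issue above, since otherwise ${\alpha_m}^{\,q/n}$ and $C^{\,1/n}$ would not necessarily be well-behaved; once that case is dispatched, the rest is routine manipulation of exponents together with reminder \ref{exponential}.
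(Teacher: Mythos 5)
Your proof is correct and takes essentially the same route as the paper's: Euclidean division of $n$ by the chosen index, iterated submultiplicativity, $n$-th roots, and passage to the limit via \ref{exponential}. The only difference is bookkeeping around possible zero values --- you isolate the case where some $\alpha_{n_0}$ vanishes, whereas the paper sidesteps it by working with the strictly positive quantities $\beta := \inf_{m \geq 1} {\alpha_m}^{\,1/m} + \varepsilon/2$ and $\gamma := 1 + \max_{1 \leq m \leq k} \alpha_m$ and a remainder constrained to $1 \leq q(n) \leq k$.
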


\begin{proof}
Let $\varepsilon > 0$ and choose an integer $k \geq 1$ such that
\[ {\alpha_k}^{\,1/k} \leq \inf _{m \geq 1} {\alpha_m}^{\,1/m} + \varepsilon / 2. \]
Put
\[ \beta := \inf _{m \geq 1} {\alpha_m}^{\,1/m} + \varepsilon / 2 > 0, \qquad
  \gamma := 1 + \max _{1 \leq m \leq k} \,\alpha_m > 0. \]
Every integer $n \geq 1$ can be written uniquely as
$n = p(n)k + q(n)$ with $p(n), q(n)$ non-negative integers
and $1 \leq q(n) \leq k$. Since for $n \to \infty$, we have
\[ q(n)/n \to 0, \]
it follows that
\[ p(n)k/n \to 1, \]
and so
\begin{align*}
{\alpha_n}^{\,1/n} & = {\alpha_{p(n)k+q(n)}}^{\,1/n} \\
 & \leq {( {\alpha_k}^{\,p(n)} \alpha_{q(n)} )}^{\,1/n} \\
 & \leq {(\beta^{\,p(n)k} \gamma)}^{\,1/n}
      = \beta^{\,p(n)k/n} \gamma^{\,1/n} \to \beta
      = \inf _{m \geq 1} {\alpha_m}^{\,1/m} + \varepsilon / 2.
\end{align*}
Therefore
\[ {\alpha_n}^{\,1/n} < \inf _{m \geq 1} {\alpha_m}^{\,1/m} + \varepsilon \]
for all sufficiently large values of $n$. \pagebreak
\end{proof}

\begin{definition}[$\rlambda(a)$]%
\index{symbols}{r1@$\protect\rlambda(a)$}\label{rldef}%
For an element $a$ of some normed algebra, we define
\[ \rlambda(a) := \inf _{n \geq 1} {\bigl|\,{a}^{\,n}\,\bigr|}^{\,1/n} \leq |\,a\,|. \]
The subscript $\lambda$ is intended to remind of the notation for eigenvalues.
The reason for this will become apparent in \ref{specradform} below.
\end{definition}

\begin{theorem}\label{rlamlim}%
For an element $a$ of some normed algebra, we have
\[ \rlambda(a) = \lim _{n \to \infty} {\bigl|\,{a}^{\,n}\,\bigr| \,}^{1/n}. \]
\end{theorem}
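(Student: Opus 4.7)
The plan is to reduce the claim to the preceding Lemma \ref{limit} by taking $\alpha_n := |\,a^n\,|$ for $n \geq 1$. The definition of $\rlambda(a)$ in \ref{rldef} already gives $\rlambda(a) = \inf_{n \geq 1} \alpha_n^{\,1/n}$, so once Lemma \ref{limit} applies, the theorem follows immediately from its conclusion $\inf_{n \geq 1} \alpha_n^{\,1/n} = \lim_{n \to \infty} \alpha_n^{\,1/n}$.

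The only thing to verify is the hypothesis of Lemma \ref{limit}, namely the submultiplicativity $\alpha_{n+m} \leq \alpha_n \alpha_m$ for all integers $n, m \geq 1$. This is where the algebra norm axiom enters: since $|\cdot|$ is an algebra norm on $A$, we have $|\,xy\,| \leq |\,x\,| \cdot |\,y\,|$ for all $x, y \in A$. Applying this to $x := a^n$ and $y := a^m$ yields
\[ \alpha_{n+m} = |\,a^{n+m}\,| = |\,a^n \cdot a^m\,| \leq |\,a^n\,| \cdot |\,a^m\,| = \alpha_n \alpha_m. \]
The $\alpha_n$ are clearly non-negative, so Lemma \ref{limit} applies and delivers the desired equality.

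There is no real obstacle here; the statement is essentially a direct corollary of Lemma \ref{limit} combined with submultiplicativity of the algebra norm on powers of a single element. The only mild subtlety is that the lemma is stated for sequences indexed from $n \geq 1$, which matches our indexing exactly, so no reindexing is needed.
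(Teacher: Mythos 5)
Your proof is correct and is exactly the paper's argument: the paper's proof also just applies Lemma \ref{limit} with $\alpha_n := |\,a^{\,n}\,|$, the submultiplicativity hypothesis following from the algebra norm inequality as you note. Nothing further is needed.
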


\begin{proof}
Apply the above lemma \ref{limit} with $\alpha_n := | \,{a}^{\,n} \,|$ $(n \geq 1)$.
\end{proof}

\medskip
We shall now need some complex analysis. We refer the reader to
R.\ Remmert \cite{Rem}, or to W.\ Rudin \cite{RCAC}, or to P.\ Henrici \cite{Hen}.
We warn the reader that for us, a power series is an expression of the
form $\sum _{n=0}^{\infty } z^{n} a_{n}$, where the $a_n$ are elements
of a Banach space, and where $z$ is a placeholder for a complex variable.

\begin{lemma}[radius of convergence]\label{geoseries}%
\index{concepts}{convergence radius}\index{concepts}{geometric series}%
\index{concepts}{radius of convergence}\index{concepts}{series!geometric}%
Let $A$ be a Banach algebra, and let $a \in A$.
Consider the geometric series in $\tld{A}$ given by
\[ F(z) := \sum _{n=0} ^{\infty} \,z^{\,n} \,a^{\,n}, \]
where we put $z^{\,0} := 1$, and $a^{\,0} := e \in \tld{A}$.

The geometric series $F(z)$ converges absolutely for every
$z \in \mathds{C}$ with $| \,z \,| < 1 \,/ \,\rlambda(a)$, and it does not
converge for any $z \in \mathds{C}$ with $| \,z \,| > 1 \,/ \,\rlambda(a)$.
These two facts are expressed by saying that the
\underline{radius of convergence} of the geometric
series $F(z)$ is $1 \,/ \,\rlambda(a)$.

The sum of $F(z)$ is an inverse of $e-za$ whenever the series
$F(z)$ converges.
\end{lemma}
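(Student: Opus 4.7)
The plan is to treat the series $F(z)$ as an absolutely convergent series in the Banach algebra $\tld{A}$. By Theorem \ref{Banachunitis}, $\tld{A}$ is complete whenever $A$ is a Banach algebra, so absolute convergence will imply convergence. The engine driving the convergence analysis is Theorem \ref{rlamlim}, which says $\bigl|\,a^{\,n}\,\bigr|^{\,1/n} \to \rlambda(a)$, so a root-test argument will pin down the radius precisely.

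First I would handle the convergence case. Fix $z \in \mathds{C}$ with $|\,z\,| < 1 / \rlambda(a)$, and pick $r$ with $|\,z\,| \rlambda(a) < r < 1$ (interpreting this trivially if $\rlambda(a) = 0$). From submultiplicativity of the norm, $\bigl|\,z^{\,n} a^{\,n}\,\bigr|^{\,1/n} \leq |\,z\,| \cdot \bigl|\,a^{\,n}\,\bigr|^{\,1/n}$, and the right-hand side tends to $|\,z\,| \rlambda(a) < r$ by \ref{rlamlim}. Hence for all sufficiently large $n$ one has $\bigl|\,z^{\,n} a^{\,n}\,\bigr| \leq r^{\,n}$, and comparison with the convergent real geometric series $\sum r^{\,n}$ yields absolute convergence of $F(z)$ in $\tld{A}$. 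For the divergence case, suppose $|\,z\,| > 1 / \rlambda(a)$. The same root computation gives $\bigl|\,z^{\,n} a^{\,n}\,\bigr|^{\,1/n} \to |\,z\,| \rlambda(a) > 1$, so $\bigl|\,z^{\,n} a^{\,n}\,\bigr| \to \infty$; in particular the terms do not approach zero, and $F(z)$ cannot converge.

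For the inversion claim, whenever $F(z)$ converges the individual terms $z^{\,n} a^{\,n}$ tend to $0$. The plan is then to invoke the telescoping identities
\[
(e - za) \sum _{n=0} ^{N} z^{\,n} a^{\,n}
= \sum _{n=0} ^{N} z^{\,n} a^{\,n} \ (e - za)
= e - z^{\,N+1} a^{\,N+1},
\]
which hold in $\tld{A}$ for every $N$, and to pass to the limit $N \to \infty$. By joint continuity of multiplication in the normed algebra $\tld{A}$, the left-hand sides converge to $(e - za) F(z)$ and $F(z) (e - za)$ respectively, while the right-hand side converges to $e$. This identifies $F(z)$ as a two-sided inverse of $e - za$ in $\tld{A}$.

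The main obstacle here is essentially bookkeeping rather than substance: one must be attentive that $\tld{A}$, not just $A$, is the ambient Banach algebra (so that the $0$-th term $e$ and absolute convergence both make sense), and that multiplication is jointly continuous on $\tld{A}$ in order to pass the telescoping identity to the limit. With these two observations in place, the three conclusions follow as sketched from \ref{rlamlim}, \ref{Banachunitis}, and the standard geometric-series telescoping.
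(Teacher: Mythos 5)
Your proof is correct and follows essentially the same route as the paper: the root test driven by \ref{rlamlim} for both the convergence and divergence halves, and the telescoping identity combined with continuity of multiplication in the complete algebra $\tld{A}$ for the inverse. The only cosmetic differences are that the paper phrases divergence as the partial sums failing to be Cauchy (rather than the terms tending to infinity) and writes the telescoping as a difference of two infinite series rather than a limit of finite partial sums.
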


\begin{proof}
Put $\rho := 1 \,/ \,\rlambda(a)$. Please note that $\rho \in \ ] \,0, \infty \,]$
and that
\[ \rho^{\,-1} = \lim _{n \to \infty} {\bigl| \,a^{\,n} \,\bigr|}^{\,1/n} \]
by theorem \ref{rlamlim} above. Let $z \in \mathds{C}$ with
$| \,z \,| < \rho$. (We mention here that $\rho \neq 0$ as noted above.)
We shall prove that $F(z)$ converges absolutely. Let
$\sigma \in \mathds{R}$ with $| \,z \,| < \sigma < \rho$. One then has
\[ \lim _{n \to \infty} {\bigl| \,a^{\,n} \,\bigr|}^{\,1/n} = \rho^{\,-1} < \sigma^{\,-1}, \]
so
\[ | \,a^{\,n} \,| \leq \sigma^{\,-n} \quad \text{for all integers} \quad n \geq k \]
for some integer $k \geq 1$ large enough. Using $| \,z \,| < \sigma$, we get
\begin{align*}
\sum _{n=k} ^{\infty} \,| \,z^{\,n} \,a^{\,n} \,|
 & = \,\sum _{n=k} ^{\infty} \,{| \,z \,| }^{\,n} \,| \,a^{\,n} \,| \\
 & \leq \,\sum _{n=k} ^{\infty} \,{| \,z \,|}^{\,n} \,\sigma^{\,-n}
    \leq \,\sum _{n=0} ^{\infty} \,{\bigl( \,{| \,z \,|} \,/ \,{\sigma} \,\bigr)}^{\,n}
    = \,\frac{1}{1 - \bigl ( \,{| \,z \,|} \,/ \,{\sigma} \,\bigr)},
\end{align*}
so $F(z)$ converges absolutely indeed.

Now let $z \in \mathds{C}$ with $| \,z \,| > \rho$. (Assuming that $\rho < \infty$.)
We shall show that $F(z)$ does not converge. With $\sigma := | \,z \,|$, we have
\[ \lim _{n \to \infty} {\bigl| \,a^{\,n} \,\bigr|}^{\,1/n} = \rho^{\,-1} > \sigma^{\,-1}, \]
so
\[ | \,a^{\,n} \,| \geq \sigma^{\,-n} \quad \text{for all integers} \quad n \geq k \]
for some integer $k \geq 1$ large enough. The partial sums for $F(z)$ then
do not form a Cauchy sequence. Indeed, for any integer $m \geq k$, we get
\begin{align*}
\bigl| \,\sum _{n=0} ^{m} \,z^{\,n} \,a^{\,n}
- \,\sum _{n=0} ^{m-1} \,z^{\,n} \,a^{\,n} \,\bigr|
& = | \,z^{\,m} \,a^{\,m} \,| \\
& = {| \,z \,|}^{\,m} \,| \,a^{\,m} \,| \geq { \bigl( \,| \,z \,| \,/ \,\sigma \,\bigr)}^{\,m} = 1.
\end{align*}
If the series $F(z)$ converges, we find by continuity of the multiplication
\begin{align*}
 F(z) \,(e-za) = (e-za) \,F(z) & =
 \,\sum _{n=0}^{\infty} \,z^{\,n} \,a ^{\,n}
 - \sum _{n=0}^{\infty}\,z^{\,n+1} \,a ^{\,n+1} \\
 & = \,\sum _{n=0}^{\infty} \,z^{\,n} \,a ^{\,n}
 - \sum _{n=1}^{\infty}\,z^{\,n} \,a ^{\,n} = e. \qedhere
\end{align*}
\end{proof}

\begin{lemma}\label{leftinv}%
Let $A$ be a unital Banach algebra.
Let $c$ be a left invertible element of $A$, with left inverse $c^{\,-1}$.
Let $a \in A$ be arbitrary.

For $z \in \mathds{C}$ with
$| \,z \,| < 1 \,/ \,\rlambda \bigl( \,a{c \,}^{-1} \,\bigr)$, we have that
$c-za$ is left invertible with left inverse ${c \,}^{-1} \,G(z)$, with
the geometric series
\[ G(z) := \,\sum _{n=0} ^{\infty} \,z^{\,n\,} {\bigl( \,a{c} ^{\,-1} \,\bigr)}^{\,n}. \]

Thus, if $| \,a \,| < 1 \,/ \,\bigl| \,{c \,}^{-1} \,\bigr|$, then the element
$c-a$ has a left inverse. This shows that the set of left invertible
elements of $A$ is open. \pagebreak
\end{lemma}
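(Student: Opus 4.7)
The plan is to reduce to the preceding Lemma \ref{geoseries} via the algebraic identity
\[ c - za = (e - zac^{-1}) \, c, \]
which is valid because $c^{-1}c = e$ by the left-inverse assumption. Applied to the element $ac^{-1} \in A$, Lemma \ref{geoseries} guarantees that the geometric series $G(z)$ converges absolutely whenever $|z| < 1 / \rlambda(ac^{-1})$, and that in this range its sum is a two-sided inverse of $e - z(ac^{-1})$ in $A$. Combining these two facts with $c^{-1}c = e$, a one-line computation then shows that $c^{-1} G(z)$ is a left inverse of $c - za$, which gives the first assertion.

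For the second assertion I would specialize to $z = 1$. By definition \ref{rldef} and submultiplicativity of the algebra norm one has $\rlambda(ac^{-1}) \leq | \,ac^{-1} \,| \leq | \,a \,| \cdot | \,c^{-1} \,|$, so the hypothesis $| \,a \,| < 1/| \,c^{-1} \,|$ forces $\rlambda(ac^{-1}) < 1$ and hence $z = 1$ sits inside the disc of convergence; the first part then hands us a left inverse $c^{-1} G(1)$ for $c - a$. For openness of the set of left invertible elements, I would fix a left invertible $c \in A$ together with a chosen left inverse $c^{-1}$, and observe that any $b \in A$ with $| \,b - c \,| < 1/| \,c^{-1} \,|$ takes the form $b = c - a$ with $| \,a \,| = | \,b - c \,| < 1/| \,c^{-1} \,|$; the previous step then supplies a left inverse for $b$, so the entire open ball of radius $1/| \,c^{-1} \,|$ around $c$ lies in the set of left invertible elements.

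The only delicate point is the bookkeeping of sides: since $c$ has only been assumed left invertible, the product $cc^{-1}$ need not equal $e$, which forces the factorization to place $c$ on the right of the bracket and the candidate inverse to be $c^{-1}G(z)$ multiplied on the left of $c - za$ (the symmetric attempt $G(z)c^{-1}$ would not simplify). Once this is kept straight, everything reduces mechanically to Lemma \ref{geoseries} together with the bound $\rlambda(\cdot) \leq | \cdot |$ built into \ref{rldef}.
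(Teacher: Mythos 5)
Your proposal is correct and follows essentially the same route as the paper: both arguments rest on the factorisation $(e - zac^{-1})c = c - za$ (using only $c^{-1}c = e$) together with Lemma \ref{geoseries} applied to $ac^{-1}$, and then obtain the left inverse $c^{-1}G(z)$; the specialisation to $z=1$ via $\rlambda(ac^{-1}) \leq |\,a\,| \cdot |\,c^{-1}\,|$ and the resulting openness argument are likewise the intended ones. Your closing remark about the bookkeeping of sides correctly identifies the one point where the merely left-invertible hypothesis matters.
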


\begin{proof}
By the preceding lemma \ref{geoseries},
we have $G(z) \,(e-zac^{\,-1}) = e$
for $z \in \mathds{C}$ with
$| \,z \,| < 1 \,/ \,\rlambda \bigl( \,a{c \,}^{-1} \,\bigr)$.
For such $z$, using that $c^{\,-1}$ is a left inverse of $c$,
we then find $G(z) \,(c-za) = c$, and ${c \,}^{-1} \,G(z) \,(c-za) = e$.
Thus ${c \,}^{-1} \,G(z)$ is a left inverse of $c-za$ indeed.
\end{proof}

\begin{theorem}\label{twosidinv}%
Let $c$ be an invertible element of a unital Banach algebra $A$.
Let $a \in A$ be arbitrary.

For $z \in \mathds{C}$ with
$| \,z \,| < 1 \,/ \,\rlambda \bigl( \,a{c \,}^{-1} \,\bigr)$,
we have that $c-za$ is invertible with inverse
\[ c^{\,-1} \,\sum _{n=0} ^{\infty} \,z^{\,n\,} {\bigl( \,a{c} ^{\,-1} \,\bigr)}^{\,n}. \]
\end{theorem}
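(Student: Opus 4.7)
My plan is to reduce \ref{twosidinv} to the two-sided invertibility statement already contained in \ref{geoseries}, using a factorisation trick that converts $c-za$ into a product of two invertible elements.

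First I would observe the identity
\[ c - za = (e - zac^{-1})\,c, \]
which is immediate from $zac^{-1}c = za$. This is the key algebraic step: it isolates the burden of invertibility to a factor of the form $e - zb$ with $b := ac^{-1}$, to which the second part of \ref{geoseries} applies directly.

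Next I would invoke \ref{geoseries} with $b = ac^{-1}$ in place of $a$. That lemma tells us that the geometric series
\[ G(z) = \sum_{n=0}^{\infty} z^n (ac^{-1})^n \]
converges absolutely for $|z| < 1/\rlambda(ac^{-1})$, and that its sum satisfies $G(z)(e - zac^{-1}) = (e - zac^{-1})G(z) = e$. Thus $e - zac^{-1}$ is a \emph{two-sided} inverse, not merely a left inverse as in \ref{leftinv}, with inverse $G(z)$.

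Finally, since $c$ is invertible by hypothesis and $e - zac^{-1}$ is invertible for $z$ in the stated disc, their product $c - za$ is invertible, and the inverse of a product is the product of inverses in the reverse order:
\[ (c - za)^{-1} = c^{-1} (e - zac^{-1})^{-1} = c^{-1} G(z) = c^{-1} \sum_{n=0}^{\infty} z^n (ac^{-1})^n, \]
which is exactly the asserted formula. There is no real obstacle here: the content of the theorem beyond \ref{leftinv} is precisely that left-invertibility can be upgraded to two-sided invertibility, and the factorisation $c - za = (e - zac^{-1})\,c$ makes this upgrade automatic given that \ref{geoseries} yields a two-sided inverse on the nose.
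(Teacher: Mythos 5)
Your proof is correct. It differs from the paper's own argument in a small but genuine way. The paper computes $(c-za)\,c^{\,-1}\,G(z) = (e-zac^{\,-1})\,G(z) = e$, so that $c^{\,-1}\,G(z)$ is a right inverse of $c-za$, and then obtains the left-inverse half by citing the preceding lemma \ref{leftinv}; since the same element is thus both a left and a right inverse, $c-za$ is invertible. You instead read the same one-line identity as the factorisation $c-za = (e-zac^{\,-1})\,c$, observe that lemma \ref{geoseries} already furnishes a \emph{two-sided} inverse $G(z)$ of $e-zac^{\,-1}$, and conclude by the general fact that a product of two invertible elements is invertible, with $(c-za)^{\,-1} = c^{\,-1}\,(e-zac^{\,-1})^{\,-1} = c^{\,-1}\,G(z)$. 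Your route is slightly more economical: it bypasses \ref{leftinv} entirely and makes explicit that the two-sidedness is already contained in \ref{geoseries} rather than being reassembled from separate left and right halves. The paper's arrangement has the minor virtue of exhibiting the inverse as exactly the element $c^{\,-1}\,G(z)$ constructed in \ref{leftinv}, but both arguments rest on the same geometric series and the same algebraic identity, so the difference is one of organisation rather than substance.
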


\begin{proof}
For $z \in \mathds{C}$ with
$| \,z \,| < 1 \,/ \,\rlambda \bigl( \,a{c \,}^{-1} \,\bigr)$,
we find by lemma \ref{geoseries}
\[ (c-za) \ c^{\,-1} \,\sum _{n=0} ^{\infty} \,z^{\,n\,} {\bigl( \,a{c} ^{\,-1} \,\bigr)}^{\,n}
= (e - zac^{\,-1}) \,\sum _{n=0} ^{\infty} \,z^{\,n\,} {\bigl( \,a{c} ^{\,-1} \,\bigr)}^{\,n} = e, \]
so the statement follows with the preceding lemma \ref{leftinv}.
\end{proof}

\begin{theorem}\label{GL(A)}%
The group of invertible elements in a unital  Banach algebra
$A$ is open and inversion is homeomorphic on this group.
\end{theorem}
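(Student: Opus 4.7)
The plan is to reduce the homeomorphism claim to openness plus continuity of inversion, since inversion is its own inverse on the group of invertibles and thus is a homeomorphism as soon as it is continuous. Both statements will fall out of Theorem \ref{twosidinv} applied with $z = 1$ by making a suitable estimate on $\rlambda$.

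For openness, fix an invertible element $c \in A$ and consider $b \in A$ with $|\,b\,| < 1/|\,c^{-1}\,|$. Then
\[
\rlambda\bigl(b\,c^{-1}\bigr) \leq |\,b\,c^{-1}\,| \leq |\,b\,|\cdot|\,c^{-1}\,| < 1,
\]
so Theorem \ref{twosidinv}, with the roles of $a$ and $z$ taken by $b$ and $1$ respectively, gives that $c - b$ is invertible, with
\[
(c - b)^{-1} = c^{-1}\sum_{n=0}^{\infty}\bigl(b\,c^{-1}\bigr)^{n}. \tag{$*$}
\]
Thus the open ball of radius $1/|\,c^{-1}\,|$ around $c$ consists entirely of invertibles, proving openness.

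For continuity of inversion at $c$, I would use $(*)$ to write
\[
(c - b)^{-1} - c^{-1} = c^{-1}\sum_{n=1}^{\infty}\bigl(b\,c^{-1}\bigr)^{n},
\]
and then estimate in norm, using the geometric series bound,
\[
\bigl|\,(c - b)^{-1} - c^{-1}\,\bigr|
\leq |\,c^{-1}\,|\cdot \frac{|\,b\,|\cdot|\,c^{-1}\,|}{1 - |\,b\,|\cdot|\,c^{-1}\,|},
\]
whenever $|\,b\,| < 1/|\,c^{-1}\,|$. The right-hand side tends to $0$ as $b \to 0$, which is continuity of inversion at $c$. Since $c$ was arbitrary, inversion is continuous on the whole group, and being its own inverse, it is a homeomorphism. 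No step looks like a real obstacle; the only thing worth being careful about is that Theorem \ref{twosidinv} requires a strict inequality $|\,z\,| < 1/\rlambda(b\,c^{-1})$, which is why the slightly coarser estimate through $|\,b\,|\cdot|\,c^{-1}\,|$ (rather than $\rlambda$) is used so that the resulting ball is open.
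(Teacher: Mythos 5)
Your proof is correct and follows essentially the same route as the paper's: both deduce openness from Theorem \ref{twosidinv} at $z=1$ via the estimate $\rlambda(bc^{-1}) \leq |\,b\,|\cdot|\,c^{-1}\,| < 1$, and both establish continuity of inversion by the same geometric-series bound on $(c-b)^{-1} - c^{-1}$. Your explicit remark that inversion is its own inverse, so that continuity yields the homeomorphism claim, is a small but welcome addition that the paper leaves implicit.
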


\begin{proof}
Let $c$ be an invertible element of a unital Banach algebra $A$. For
$a \in A$ with $| \,a \,| < 1 \,/ \,\bigl| \,{c \,}^{-1} \,\bigr|$, we have from
the preceding theorem \ref{twosidinv} that $c-a$ is invertible with inverse
\[ c^{\,-1} \,\sum _{n=0}^{\infty } \,{\bigl( \,a{c}^{\,-1} \,\bigr)}^{\,n}, \]
so the set of invertible elements of $A$ is open. We then also have
\begin{align*}
\Bigl| \,{( c-a ) \,}^{-1} - {c \,}^{-1} \,\Bigr|
& = \biggl| \ {c \,}^{-1} \,\biggl( \,\sum _{n=0}^{\infty }
\,{\bigl( a{c \,}^{-1} \bigr) \,}^{n} -e \,\biggr) \,\biggr| \\
 & \leq \bigl| \ {c \,}^{-1} \,\bigr| \cdot
\biggl| \ \sum _{n=1}^{\infty } \,{\bigl( a{c \,}^{-1} \bigr) \,}^{n} \,\biggr| \\
 & \leq \bigl| \ {c \,}^{-1} \,\bigr| \cdot \bigl| \,a{c \,}^{-1} \,\bigr|
\cdot \sum _{n=0}^{\infty } \,{\bigl| \,a{c \,}^{-1} \,\bigr| \,} ^{n} \\
 & \leq | \,a \,| \cdot {\bigl| \ {c \,}^{-1}\,\bigr| \,}^2
\cdot \Bigl(\,1 - |\,a\,| \cdot \bigl| \,{c \,}^{-1} \,\bigr| \,\Bigr)^{-1},
\end{align*}
which shows that inversion is continuous.
\end{proof}

The next result is basic for spectral theory. It allows for a subtle
interplay of algebraic and analytic properties. We shall use it tacitly.

\begin{theorem}[the Spectral Radius Formula]\label{specradform}%
\index{concepts}{Theorem!Spectral Radius Formula}%
\index{concepts}{spectral!radius formula}%
For an element $a$ of a Banach algebra, we have:
\begin{itemize}
 \item[$(i)$] $\s(a)$ is a \underline{non-empty compact} set in the complex plane,
\item[$(ii)$] $\max\,\{\,|\,\lambda \,| : \lambda \in \s(a) \,\} = \rlambda(a)$%
                      $\vphantom{\wht{b}}$\ (the Spectral Radius Formula).
\end{itemize}
Please note that the right side of the equation depends on the norm,
whereas the left side does not.
\end{theorem}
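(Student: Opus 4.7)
The plan is to first establish the topological properties of $\s(a)$ using the groundwork already laid, then prove non-emptiness via Liouville's Theorem, and finally attack the Spectral Radius Formula proper. Throughout I would work in the unital Banach algebra $\tld{A}$, which is complete by \ref{Banachunitis}.

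For the easy bounds, the containment $\s(a) \subset \{ \lambda \in \mathds{C} : |\lambda| \leq \rlambda(a) \}$ is immediate from lemma \ref{geoseries}: if $|\lambda| > \rlambda(a)$ then $z := 1/\lambda$ satisfies $|z| < 1/\rlambda(a)$, so the series $\sum_{n \geq 0} \lambda^{-n} a^n$ converges to an inverse of $e - \lambda^{-1} a$, making $\lambda e - a$ invertible. That $\s(a)$ is closed follows because its complement is the preimage of the open group of invertibles (cf.\ \ref{GL(A)}) under the continuous map $\lambda \mapsto \lambda e - a$. Hence $\s(a)$ is compact, and once non-emptiness is settled, $\max \{ |\lambda| : \lambda \in \s(a) \} \leq \rlambda(a)$.

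For non-emptiness I would argue by contradiction. If $\s(a) = \varnothing$, the resolvent $R(\lambda) := (\lambda e - a)^{-1}$ is defined on all of $\mathds{C}$ and is continuous by \ref{GL(A)}. The resolvent identity $R(\mu) - R(\lambda) = -(\mu-\lambda) R(\mu) R(\lambda)$ (derived by inserting $(\mu e - a) - (\lambda e - a) = (\mu - \lambda) e$ between $R(\lambda)$ and $R(\mu)$) shows that for any continuous linear functional $\varphi$ on $\tld{A}$ the scalar function $\varphi \circ R$ has difference quotient $-\varphi(R(\mu) R(\lambda))$ converging to $-\varphi(R(\lambda)^2)$, so $\varphi \circ R$ is entire. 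On the other hand, factoring $\lambda$ out and applying \ref{geoseries} gives $R(\lambda) = \sum_{n \geq 0} \lambda^{-n-1} a^n$ for $|\lambda| > \rlambda(a)$, from which $|R(\lambda)| \to 0$ as $|\lambda| \to \infty$; hence $\varphi \circ R$ is bounded. Liouville's Theorem forces $\varphi \circ R \equiv 0$, and since $\varphi$ was arbitrary, Hahn-Banach gives $R \equiv 0$, contradicting $R(\lambda)(\lambda e - a) = e$.

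The real content is the reverse inequality $\rlambda(a) \leq r$, where $r := \max \{ |\lambda| : \lambda \in \s(a) \}$, and this is where the main obstacle lies. Fix a continuous linear functional $\varphi$ on $\tld{A}$ and set $g(\lambda) := \varphi(R(\lambda))$. By the resolvent-identity computation above, $g$ is holomorphic on $\mathds{C} \setminus \s(a)$, in particular on $\{ |\lambda| > r \}$, and $g(\lambda) \to 0$ at infinity. On the sub-region $\{ |\lambda| > \rlambda(a) \}$ we already have the Laurent expansion $g(\lambda) = \sum_{n \geq 0} \lambda^{-n-1} \varphi(a^n)$. A Riemann removable singularity argument (after substituting $w := 1/\lambda$) combined with uniqueness of power series propagates this representation to the larger region $\{ |\lambda| > r \}$; in particular the series converges there, so for each such $\lambda$ the scalars $\lambda^{-n} \varphi(a^n)$ stay bounded in $n$. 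Since $\varphi$ was arbitrary, the Uniform Boundedness Principle supplies an $M_\lambda < \infty$ with $| \lambda^{-n} a^n | \leq M_\lambda$ for all $n$, whence $| a^n |^{1/n} \leq |\lambda| \cdot {M_\lambda}^{\,1/n} \to |\lambda|$ by \ref{exponential}. Letting $|\lambda| \searrow r$ yields $\rlambda(a) \leq r$. The delicate step is the passage via dual functionals followed by the UBP, which is precisely where Hahn-Banach and completeness of $\tld{A}$ really intervene.
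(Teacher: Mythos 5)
Your proof is correct, but it follows a genuinely different route from the one in the text. You work with the resolvent $\lambda \mapsto (\lambda e - a)^{-1}$ through the eyes of continuous linear functionals: Liouville's Theorem plus Hahn--Banach for non-emptiness, and then a removable-singularity argument on the scalar functions $\varphi \circ R$ followed by the Uniform Boundedness Principle (applied in the bidual) to force $\rlambda(a) \leq \max\,\{\,|\,\lambda\,| : \lambda \in \s(a)\,\}$. The text instead stays entirely with the \emph{Banach-space-valued} function $f(\mu) = (e - \mu a)^{-1}$: by \ref{twosidinv} it has a power series expansion at every point of its domain, its expansion at the origin is the geometric series $\sum_n z^n a^n$ with radius of convergence exactly $1/\rlambda(a)$ by \ref{geoseries}, and the classical fact that a power series has a singular point on its circle of convergence then produces a point $1/z\0 \in \s(a)$ of modulus exactly $\rlambda(a)$ in one stroke (the case $\rlambda(a) = 0$ being handled by a direct norm estimate). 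So the two proofs trade tools: yours imports the standard functional-analytic package (Hahn--Banach, Liouville, UBP) but needs only scalar complex analysis, and is the route most modern textbooks take; the text's proof avoids duality arguments altogether but leans on the singular-point theorem for vector-valued power series, which it must borrow from the scalar references. Both arguments deliver the same slightly-stronger conclusion that $\s(a)$ actually \emph{contains} a number of modulus $\rlambda(a)$, which is what \ref{specmodul} extracts afterwards. One small point of care in your version: when you apply the Uniform Boundedness Principle you are viewing the elements $\lambda^{-n}a^{\,n}$ as functionals on the dual of $\tld{A}$, so it is completeness of that dual (not of $\tld{A}$ itself) together with the isometry of the canonical embedding into the bidual that makes the step work; this is fine, but worth saying explicitly.
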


\begin{proof}
We consider the function $f$, (taking values in the unitisation of the
Banach algebra), given by $f( \mu ):= ( e  - \mu a )^{-1}$, with its largest
domain in $\mathds{C}$. It is an analytic function: write
$f( \mu + z ) = ( c - za )^{\,-1}$ with $c := e - \mu a$, and apply theorem
\ref{twosidinv} to the effect that $f$ has a power series expansion at
each point $\mu$ of its domain, given by
\[ f( \mu + z ) =
\,\sum _{n=0} ^{\infty} \,z^{\,n\,} c^{\,-1} \,{\bigl( \,a{c} ^{\,-1} \,\bigr)}^{\,n}
\qquad \Bigl( \:| \,z \,| < 1 \,/ \,\rlambda \bigl( \,a{c \,}^{-1} \,\bigr) \,\Bigr). \]
The function $f$ is defined at the origin and its power series expansion
at the origin is simply
\[ f(z) = \,\sum _{n=0} ^{\infty} \,z^{\,n} \,a^{\,n} \]
because $c = e$ in this case. The radius of convergence of this power
series is $1\,/ \,\rlambda(a)$, according to lemma \ref{geoseries}. Hence
$e - z a$ is invertible for $| \,z \,| < 1 \,/ \,\rlambda(a)$. That is,
$\lambda \notin \s(a)$ for $|\,\lambda\,|>\rlambda(a)$. In other words, $\s(a)$
is contained in the disc $\{ \,z \in \mathds{C} : | \,z \,| \leq \rlambda(a) \,\}$.
Theorem \ref{GL(A)} shows that $\s(a)$ is closed, and thus compact.
It remains to be shown that $\s(a)$ contains a number of modulus
$\rlambda(a)$.

The function $f$ has a singular point $z\0$ with modulus equal to the
radius of convergence of the power series expansion at the origin
(if this radius is finite). (See e.g.\ \cite[Item 5, p.\ 234]{Rem} or
\cite[Theorem 16.2, p.\ 320]{RCAC} or the proof of
\cite[Theorem 3.3a p.\ 155]{Hen}.) That is,
$|\,z\0\,| = 1\,/ \,\rlambda(a)$ if $\rlambda(a) \neq 0$. In this case
$e-z\0a$ cannot be invertible, and consequently $1 / z\0 \in \s(a)$.
The statement follows for the case $\rlambda(a) \neq 0$.

If $\rlambda(a)=0$, we have to show that $0 \in \s(a)$.
However, if $0 \notin \s(a)$, then $a$ is invertible, and
$e={a \,}^n \,{({a \,}^{-1}) \,}^n$ for all integers $n \geq 1$. Then
$0 < |\,e\,| \leq \bigl|\,{a \,}^n\,\bigr| \cdot {\bigl|\,{a \,}^{-1}\,\bigr| \,}^n$,
and so $\rlambda(a) \geq {\bigl|\,{a \,}^{-1}\,\bigr| \,}^{-1} > 0$
by \ref{exponential}.\pagebreak
\end{proof}

\clearpage


\section[Properties of $\protect{\mathrm{r}_\lambda}$]%
{Properties of \texorpdfstring{$\protect{\rlambda}$}{r\80\137\83\273}}

\fancyhead[RO]{\SMALL{\S\ 8. \ PROPERTIES OF $\mathrm{r}_\lambda$}}

\begin{theorem}\label{specmodul}%
An element of a \underline{normed} algebra has non-empty
\linebreak spectrum. Indeed, if $a$ is an element of a normed
algebra, then $\s(a)$ contains a number of modulus $\rlambda(a)$.
\end{theorem}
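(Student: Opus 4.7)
The plan is to reduce to the Banach algebra case already handled in theorem \ref{specradform} by passing to the completion. Let $A$ be a normed algebra, let $a \in A$, and let $B$ denote the completion of $A$, which is a Banach algebra by the results of the previous paragraph. Since the norm on $B$ extends the norm on $A$, the sequence $|\,a^{\,n}\,|^{\,1/n}$ is the same whether computed in $A$ or in $B$, so the spectral radius $\rlambda(a)$ is the same in both algebras.

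Applying the Spectral Radius Formula \ref{specradform} to $a$ viewed in the Banach algebra $B$, one obtains a complex number $\lambda \in \s_B(a)$ with $|\,\lambda\,| = \rlambda(a)$. To finish, one must transport $\lambda$ back to $\s_A(a)$. Proposition \ref{twidunitcompl} states that $A$ is a \twiddle-unital subalgebra of its completion $B$. Theorem \ref{specsubalg} then gives the full inclusion
\[ \s_B(a) \subset \s_A(a), \]
so that $\lambda \in \s_A(a)$, producing an element of the spectrum of modulus $\rlambda(a)$, as required.

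The only point that needs extra care is the case $\rlambda(a) = 0$, i.e.\ $\lambda = 0$: theorem \ref{specsubalg} only gives the subalgebra inclusion away from $0$ in general, and uses the \twiddle-unital hypothesis precisely to include the origin. Here this hypothesis holds without effort, because either $A$ has no unit (in which case the \twiddle-unital condition is automatic, and in fact $0 \in \s_A(a)$ already by \ref{speczero}), or $A$ has a unit which, as noted just before \ref{twidunitcompl}, remains a unit in $B$. Thus no case analysis is really necessary: the argument above works uniformly, and the main conceptual step is simply the observation that completion preserves both the spectral radius and, via \twiddle-unitality, the whole spectrum of elements of $A$.
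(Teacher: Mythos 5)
Your proof is correct and follows exactly the route the paper takes: reduce to the Banach algebra case via the completion, apply the Spectral Radius Formula there, and transport the spectral value back using $\s_B(a) \subset \s_A(a)$, which holds by \ref{specsubalg} because $A$ is \twiddle-unital in its completion \ref{twidunitcompl}. Your additional remarks on the invariance of $\rlambda$ under completion and on the case $\rlambda(a)=0$ are accurate but only make explicit what the paper's argument leaves implicit.
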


\begin{proof} The statement in the case of a Banach algebra
follows from the Spectral Radius Formula \ref{specradform}.
The general case follows by considering the completion $B$
of a normed algebra $A$, say. One uses the fact that
then $\s_B (a) \subset \s_A (a)$ by \ref{specsubalg}
as $A$ is \twiddle-unital in $B$ by \ref{twidunitcompl}.
\end{proof}

\begin{theorem}[Gel'fand, Mazur]\label{Gel'fandMazur}%
\index{concepts}{Theorem!Gel'fand-Mazur}%
\index{concepts}{Gel'fand-Mazur Theorem}%
Let $A$ be a unital normed algebra which is a division ring.
Then $A$ is isomorphic to $\mathds{C}$ as an algebra.
Indeed, the map
\[ \mathds{C} \to A,\ \lambda \mapsto \lambda e \]
then is an algebra isomorphism onto $A$ which also is a homeomorphism.
\end{theorem}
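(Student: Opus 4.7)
The plan is to exploit the non-emptiness of the spectrum (theorem \ref{specmodul}) together with the division ring hypothesis to force every element to be a complex multiple of the unit. The map $\varphi : \mathds{C} \to A$, $\lambda \mapsto \lambda e$, is manifestly a unital algebra homomorphism: linearity is clear, multiplicativity follows from $(\lambda e)(\mu e) = \lambda \mu \,e$, and $\varphi(1) = e$. It is injective because $e \neq 0$ by the very definition of a unit, so $\lambda e = 0$ forces $\lambda = 0$.

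The core step is surjectivity. Given $a \in A$, invoke theorem \ref{specmodul} to pick some $\lambda \in \s(a)$, which exists because the spectrum in a normed algebra is non-empty. Since $A$ is unital, $\tld{A} = A$, so $\lambda e - a$ is an element of $A$ which, by definition of $\s(a)$, fails to be invertible. But $A$ is a division ring, meaning every non-zero element of $A$ is invertible; therefore $\lambda e - a = 0$, i.e.\ $a = \varphi(\lambda)$. This shows $\varphi$ is an algebra isomorphism onto $A$.

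Finally, for the homeomorphism statement, note that $\varphi$ is continuous because
\[ | \,\varphi(\lambda) - \varphi(\mu) \,| = | \,(\lambda - \mu)\,e \,|
= | \,\lambda - \mu \,| \cdot | \,e \,|. \]
The same identity gives $| \,\varphi^{-1}(a) \,| = | \,a \,| \,/ \,| \,e \,|$ for $a \in A$, so the inverse is continuous as well.

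I do not anticipate any real obstacle: every ingredient is already in place. The only subtlety worth flagging is that theorem \ref{specmodul} provides the non-emptiness of $\s(a)$ for a general normed algebra (not merely a Banach algebra), so no completeness hypothesis on $A$ is needed. Without that prior result, one would have to rework the argument inside the completion, but here the reduction is immediate.
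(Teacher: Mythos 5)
Your proposal is correct and follows essentially the same route as the paper: non-emptiness of the spectrum via \ref{specmodul}, the division-ring hypothesis forcing $\lambda e - a = 0$, and the identity $| \,\lambda e \,| = | \,\lambda \,| \cdot | \,e \,|$ with $| \,e \,| \neq 0$ for injectivity and the homeomorphism. Your remark that \ref{specmodul} already covers general normed algebras, so no completion argument is needed, is exactly the point the paper relies on.
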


\begin{proof} To prove that this map is surjective, let $a \in A$.
There exists $\lambda \in \s(a)$ by the preceding theorem \ref{specmodul}.
Then $\lambda e-a$ is not invertible in $A$, so that $\lambda e-a = 0$
because $A$ is a division ring. That is, $a =\lambda e$, which
proves that this map is surjective. The map is injective and
homeomorphic because $|\,\lambda e\,| = |\,\lambda \,|\cdot|\,e\,|$
and $|\,e\,| \neq 0$.
\end{proof}

\begin{proposition}\label{rlpowers}%
For a normed algebra $A$ and an element $a \in A$, we have
\[ \rlambda \bigl( {a}^{\,k\,} \bigr) = {\rlambda (a )}^{\,k}
\quad \text{for all integers} \quad k \geq 1. \]
\end{proposition}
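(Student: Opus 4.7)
The plan is to use the limit characterization of $\rlambda$ established in theorem \ref{rlamlim}, namely
\[ \rlambda(a) = \lim_{n \to \infty} \bigl|\,a^{\,n}\,\bigr|^{\,1/n}. \]
Since the full sequence converges, any subsequence converges to the same limit. In particular, for fixed $k \geq 1$, the subsequence indexed by the multiples of $k$ satisfies
\[ \lim_{n \to \infty} \bigl|\,a^{\,kn}\,\bigr|^{\,1/(kn)} = \rlambda(a). \]

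Now I would simply rewrite this in terms of $a^{\,k}$. Using $a^{\,kn} = (a^{\,k})^{\,n}$ and raising to the $k$-th power (which is continuous on $[\,0, \infty\,[$), I get
\[ \rlambda(a)^{\,k}
= \lim_{n \to \infty} \Bigl(\bigl|\,a^{\,kn}\,\bigr|^{\,1/(kn)}\Bigr)^{\,k}
= \lim_{n \to \infty} \bigl|\,(a^{\,k})^{\,n}\,\bigr|^{\,1/n}
= \rlambda(a^{\,k}), \]
where the last equality is again theorem \ref{rlamlim}, now applied to $a^{\,k}$.

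There is no genuine obstacle here: the result is immediate from the existence of the limit in \ref{rlamlim} together with continuity of $t \mapsto t^{\,k}$. One could alternatively invoke the Rational Spectral Mapping Theorem \ref{ratspecmapthm} together with the Spectral Radius Formula \ref{specradform}, but that route is restricted to Banach algebras, whereas the limit argument works verbatim in any normed algebra, matching the hypothesis of the proposition.
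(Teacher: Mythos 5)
Your argument is correct and is essentially the paper's own proof: the paper likewise computes $\rlambda(a^{\,k}) = \lim_{n}\bigl|\,(a^{\,k})^{\,n}\,\bigr|^{\,1/n} = \lim_{n}\bigl(\,\bigl|\,a^{\,kn}\,\bigr|^{\,1/(kn)}\,\bigr)^{\,k} = \rlambda(a)^{\,k}$ using the limit formula of \ref{rlamlim} applied along the subsequence of multiples of $k$. Nothing is missing.
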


\begin{proof} One calculates
\[ \rlambda \bigl( {a}^{\,k\,} \bigr)
= \lim\limits_{n \to \infty} \,{\Bigl| \,{{ \bigl( {a}^{\,k\,} \bigr)}^{\,n}} \,\Bigr|}^{\,1/n}
= \lim\limits_{n \to \infty} \,{\left( \,{\left|\,{a}^{\,kn}\,\right|}^{\,1/kn} \,\right)}^{\,k}
= {\rlambda (a)}^{\,k}. \qedhere \]
\end{proof}

\begin{proposition}\label{rlcomm}%
For a normed algebra $A$ and elements $a, b \in A$, we have
\[ \rlambda ( a b ) = \rlambda ( b a ). \]
\end{proposition}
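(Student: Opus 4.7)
The plan is to avoid passing through the spectrum and instead work directly with the formula $\rlambda(c) = \lim _{n \to \infty} |\,c^{\,n}\,|^{\,1/n}$ furnished by Theorem \ref{rlamlim}. The algebraic identity driving the argument is the obvious
\[ (ab)^{\,n+1} = a\,(ba)^{\,n}\,b \qquad (n \geq 0), \]
which at the level of norms yields $|\,(ab)^{\,n+1}\,| \leq |\,a\,| \cdot |\,(ba)^{\,n}\,| \cdot |\,b\,|$ by submultiplicativity.

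First I would dispose of the trivial cases $a = 0$ and $b = 0$, in which both sides are $0$. Assuming $a, b \neq 0$, I take $(n+1)$-th roots in the norm inequality above to obtain
\[ |\,(ab)^{\,n+1}\,|^{\,1/(n+1)} \leq \bigl(\,|\,a\,|\cdot|\,b\,|\,\bigr)^{\,1/(n+1)} \cdot \Bigl(\,|\,(ba)^{\,n}\,|^{\,1/n}\,\Bigr)^{\,n/(n+1)}. \]
Now I would pass to the limit $n \to \infty$. The factor $(|\,a\,|\cdot|\,b\,|)^{\,1/(n+1)}$ tends to $1$ by reminder \ref{exponential}, while $|\,(ba)^{\,n}\,|^{\,1/n} \to \rlambda(ba)$ by Theorem \ref{rlamlim}, and the exponent $n/(n+1) \to 1$. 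Hence the right side converges to $\rlambda(ba)$, giving $\rlambda(ab) \leq \rlambda(ba)$. Interchanging the roles of $a$ and $b$ yields the reverse inequality.

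The one point requiring a sanity check is the behaviour of $(|\,(ba)^{\,n}\,|^{\,1/n})^{\,n/(n+1)}$ when $\rlambda(ba) = 0$; here the base tends to $0$ and the exponent tends to $1$, so the product still tends to $0$, and the argument goes through without modification. I anticipate no serious obstacle: the whole proof is a two-line manipulation once one notices the telescoping identity $(ab)^{\,n+1} = a\,(ba)^{\,n}\,b$. An alternative route would be to combine the Spectral Radius Formula \ref{specradform} (applied in the completion, where $\rlambda$ is preserved by \ref{twidunitcompl}) with Proposition \ref{speccomm}, exploiting that $\s(ab)$ and $\s(ba)$ differ at most by the point $0$; but this is heavier and relies on completing the algebra, whereas the direct computation lives entirely inside $A$.
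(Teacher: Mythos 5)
Your proof is correct and is essentially the paper's own argument: the paper likewise writes $(ab)^{\,n+1} = a\,(ba)^{\,n}\,b$, takes $(n+1)$-th roots, and passes to the limit using \ref{rlamlim} and \ref{exponential}, and it also notes the same alternative route via \ref{speccomm} and the Spectral Radius Formula. Your explicit handling of the degenerate cases ($a=0$ or $b=0$, and $\rlambda(ba)=0$) is a small refinement the paper leaves tacit.
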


\begin{proof}
We express ${( a b ) \,}^{n+1}$ through ${( b a ) \,}^n$:
\begin{align*}
& \ \rlambda ( a b )
= \lim\limits_{n \to \infty} {\bigl| \,{{( a b ) \,}^{n+1}} \,\bigr| \,}^{1/(n+1)}
= \lim\limits_{n \to \infty} {\bigl| \,a \,{( b a ) \,}^n \,b \,\bigr| \,}^{1/(n+1)} \\
\leq & \ \lim\limits_{n \to \infty} {\bigl( \,{| \,a \,| \cdot | \,b \,|} \,\bigr) \,}^{1/(n+1)} \cdot
\lim\limits_{n \to \infty} {\Bigr( \,{\bigl| \,{{( b a ) \,}^n} \,\bigr| \,}^{1/n} \,\Bigr) \,}^{n/(n+1)}
\leq \rlambda ( b a ).
\end{align*}
This also follows from \ref{speccomm} \& the
Spectral Radius Formula \ref{specradform}.\pagebreak
\end{proof}

\begin{proposition}\label{commrlsub}%
If $a,b$ are \underline{commuting} elements of a normed
\linebreak algebra $A$, then
\begin{gather*}
\rlambda(ab) \leq \rlambda(a) \,\rlambda(b), \\
\rlambda(a+b) \leq \rlambda (a) + \rlambda(b),
\end{gather*}
whence also
\[ |\,\rlambda(a) - \rlambda(b)\,|
\leq \rlambda(a-b) \leq |\,a-b\,|. \]
Thus, if $A$ is commutative, then $\rlambda$ is uniformly continuous.
\end{proposition}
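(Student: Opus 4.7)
The plan is to exploit commutativity to get both inequalities from the defining limit formula $\rlambda(x) = \lim_{n\to\infty} |x^n|^{1/n}$ of \ref{rlamlim}. For the product, since $a$ and $b$ commute, $(ab)^n = a^n b^n$, so $|(ab)^n| \leq |a^n| \cdot |b^n|$, and taking $n$-th roots and letting $n \to \infty$ yields $\rlambda(ab) \leq \rlambda(a) \rlambda(b)$ immediately (using that if two non-negative sequences $x_n, y_n$ satisfy $x_n^{1/n} \to \alpha$ and $y_n^{1/n} \to \beta$, then $(x_n y_n)^{1/n} \to \alpha\beta$).

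For the sum, the key tool is the binomial expansion $(a+b)^n = \sum_{k=0}^n \binom{n}{k} a^k b^{n-k}$, which holds precisely because $a$ and $b$ commute. I would fix $\varepsilon > 0$ and first observe that since $|a^n|^{1/n} \to \rlambda(a)$ (and similarly for $b$), there is a constant $C = C_\varepsilon \geq 1$ such that
\[ |a^k| \leq C\bigl(\rlambda(a) + \varepsilon\bigr)^{k}, \qquad |b^k| \leq C\bigl(\rlambda(b)+\varepsilon\bigr)^{k} \quad \text{for all} \quad k \geq 1. \]
(For finitely many small $k$ we absorb the discrepancy into $C$; for large $k$ the desired bound is automatic.) Taking norms in the binomial expansion, using submultiplicativity on the mixed terms $a^k b^{n-k}$ with $1 \leq k \leq n-1$, and using the single-factor bounds on the two extreme terms $a^n$ and $b^n$, one gets
\[ |(a+b)^n| \leq C^{2} \sum_{k=0}^n \binom{n}{k} \bigl(\rlambda(a)+\varepsilon\bigr)^{k} \bigl(\rlambda(b)+\varepsilon\bigr)^{n-k} = C^{2}\bigl(\rlambda(a) + \rlambda(b)+ 2\varepsilon\bigr)^{n}. \]
Taking $n$-th roots, letting $n \to \infty$ (so $C^{2/n} \to 1$ by \ref{exponential}), and then letting $\varepsilon \to 0$ delivers $\rlambda(a+b) \leq \rlambda(a) + \rlambda(b)$.

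For the reverse-type inequality, I would apply the subadditivity just established to the commuting pair $(a-b,\,b)$ to get $\rlambda(a) = \rlambda\bigl((a-b)+b\bigr) \leq \rlambda(a-b) + \rlambda(b)$, and symmetrically with the roles swapped (using $\rlambda(b-a) = \rlambda(a-b)$, which follows from $|(b-a)^n| = |(a-b)^n|$). This yields $|\rlambda(a) - \rlambda(b)| \leq \rlambda(a-b)$. The bound $\rlambda(a-b) \leq |a-b|$ is immediate from the definition $\rlambda(x) = \inf_{n \geq 1} |x^n|^{1/n}$ in \ref{rldef} (take $n = 1$). Uniform continuity in the commutative case is then instantaneous: the composite inequality $|\rlambda(a) - \rlambda(b)| \leq |a-b|$ holds for all $a, b \in A$.

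The only genuinely delicate step is the uniform bound $|a^k| \leq C(\rlambda(a) + \varepsilon)^k$ over all $k \geq 1$; everything else is bookkeeping. There is no need to pass to the completion or invoke the Spectral Radius Formula \ref{specradform}, since the definition of $\rlambda$ and the limit identity \ref{rlamlim} suffice.
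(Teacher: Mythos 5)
Your proof is correct and follows essentially the same route as the paper's: the product inequality via $(ab)^n = a^n b^n$, and the sum inequality via the binomial expansion together with a uniform bound $|\,a^k\,| \leq C\,(\rlambda(a)+\varepsilon)^k$ (the paper obtains the same bound by normalising $c := s^{-1}a$ with $s > \rlambda(a)$ and bounding $|\,c^n\,|$ by a constant $\alpha$). The concluding steps — applying subadditivity to the commuting pair $(a-b, b)$ and taking $n=1$ in the infimum — are the intended ones.
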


\begin{proof}
By commutativity, we have
\[ { \bigl| \,{( \,a \,b \,) \,}^n \,\bigr| \,}^{1/n} = { \bigl|\,{a \,}^n \,{b \,}^n\,\bigr| \,}^{1/n}
\leq { \bigl| \,{a \,}^n \,\bigr| \,}^{1/n} \cdot { \bigl|\,{b \,}^n \,\bigr| \,}^{1/n}, \]
and it remains to take the limit to obtain the first inequality. In order
to prove the second inequality, let $s, t$ with $\rlambda(a) < s$,
$\rlambda(b) < t$ be arbitrary and define $c := {s \,}^{-1} \,a$, $d := {t \,}^{-1} \,b$.
As then $\rlambda(c)$, $\rlambda(d) <1$, there exists
$0 < \alpha < \infty$ with $|\,{c \,}^n\,|$, $|\,{d \,}^n\,| \leq \alpha$ for all
$n \geq 1$. We then have
\[ \bigl| \,{( \,a+b \,) \,}^n \,\bigr| \leq \sum _{k=0} ^n
\,\bigl({\,^{n\,}_{k\,}}\bigr)
\,{s \,}^k \,{t \,}^{n-k} \,\bigl| \,{c \,}^k \,\bigr| \cdot \bigl| \,{d \,}^{n-k} \,\bigr|
\leq {\alpha \,}^2 { \,( \,s + t \,) \,}^n. \]
It follows that
\[ { \bigl| \,{( \,a+b \,) \,}^n \,\bigr| \,}^{1/n} \leq {\alpha \,}^{2/n} \,( \,s + t \,). \]
By passing to the limit, we get $\rlambda(a+b) \leq s+t$,
cf.\ \ref{exponential}. Since $s$, $t$ with $\rlambda (a) < s$
and $\rlambda (b) < t$ are arbitrary, it follows
$\rlambda (a + b) \leq \rlambda (a) + \rlambda (b)$.
\end{proof}

\medskip
See also \ref{commspeccont} and \ref{specsub} below.

\begin{proposition}\label{rlunit}%
Let $A$ be a Banach algebra without unit. For all
$\mu \in \mathds{C}$, $a \in A$, we then have
\[ \rlambda(\mu e + a) \leq | \,\mu \,| + \rlambda(a)
\leq 3 \,\rlambda(\mu e + a). \]
\end{proposition}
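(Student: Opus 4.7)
The plan is to handle the two inequalities separately, with the first one being a direct consequence of earlier results, and the second one relying on the Rational Spectral Mapping Theorem together with the fact that $0 \in \s(a)$ when $A$ has no unit.

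For the left-hand inequality $\rlambda(\mu e + a) \leq |\mu| + \rlambda(a)$, I would observe that $\mu e$ and $a$ commute in the Banach algebra $\tld{A}$ (whose completeness comes from \ref{Banachunitis}), so the commutative subadditivity of $\rlambda$ from \ref{commrlsub} yields $\rlambda(\mu e + a) \leq \rlambda(\mu e) + \rlambda(a)$. Since $(\mu e)^n = \mu^n e$, one has $|(\mu e)^n|^{1/n} = |\mu|\cdot|e|^{1/n} \to |\mu|$, so $\rlambda(\mu e) = |\mu|$ (one can also read this off from $\s(\mu e) = \{\mu\}$ and the Spectral Radius Formula \ref{specradform}). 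The value of $\rlambda(a)$ is unambiguous because of \ref{specunit} together with \ref{specradform}. This gives the first inequality.

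For the right-hand inequality $|\mu| + \rlambda(a) \leq 3\,\rlambda(\mu e + a)$, the key tool is the Rational Spectral Mapping Theorem \ref{ratspecmapthm} applied to the polynomial $r(x) = \mu + x$, which is not constant and has no poles. This yields $\s(\mu e + a) = \mu + \s(a)$. Because $A$ has no unit, \ref{speczero} tells me that $0 \in \s(a)$, hence $\mu \in \s(\mu e + a)$, so $|\mu| \leq \rlambda(\mu e + a)$ by the Spectral Radius Formula \ref{specradform}.

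To finish, by \ref{specradform} the compact set $\s(a)$ contains some $\lambda_0$ with $|\lambda_0| = \rlambda(a)$. Then $\mu + \lambda_0 \in \s(\mu e + a)$, so $|\mu + \lambda_0| \leq \rlambda(\mu e + a)$, and the triangle inequality gives
\[ \rlambda(a) = |\lambda_0| \leq |\mu| + |\mu + \lambda_0| \leq 2\,\rlambda(\mu e + a). \]
Adding this to $|\mu| \leq \rlambda(\mu e + a)$ yields $|\mu| + \rlambda(a) \leq 3\,\rlambda(\mu e + a)$. No genuine obstacle arises; the only subtlety is remembering that the absence of a unit in $A$ forces $0$ into $\s(a)$, which is precisely what allows the factor $3$ to do its job.
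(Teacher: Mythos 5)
Your proof is correct and follows essentially the same route as the paper: the first inequality comes from \ref{commrlsub}, and the crucial point for the second is that $0 \in \s(a)$ (by \ref{speczero}) forces $\mu \in \s(\mu e + a)$ and hence $|\,\mu\,| \leq \rlambda(\mu e + a)$. The only cosmetic difference is that you obtain $\rlambda(a) \leq |\,\mu\,| + \rlambda(\mu e + a)$ via the Spectral Radius Formula and the Rational Spectral Mapping Theorem, whereas the paper simply applies \ref{commrlsub} a second time.
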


\begin{proof}
The first inequality follows from \ref{commrlsub}. For the same reason
$\rlambda (a) \leq \rlambda (\mu e + a) + | \,\mu \,|$, and so
$| \,\mu \,| + \rlambda (a) \leq \rlambda (\mu e + a) + 2 \,| \,\mu \,|$.
It now suffices to prove that $| \,\mu \,| \leq \rlambda (\mu e + a)$.
We have $0 \in \s (a)$ by \ref{speczero}, and so
$\mu \in \s (\mu e + a)$, which implies
$| \,\mu \,| \leq \rlambda (\mu e + a)$, as required. \pagebreak
\end{proof}

In this paragraph, merely the next result relates to an involution.

\begin{proposition}\label{Brlsymm}%
If $A$ is a \underline{Banach} \st-algebra, then
\[ \rlambda(a^*) = \rlambda(a) \quad \text{for all} \quad a \in A. \]
\end{proposition}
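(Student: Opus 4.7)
The plan is to combine the Spectral Radius Formula \ref{specradform} with the symmetry property \ref{specstar} of the spectrum under the involution. Since $A$ is a Banach \st-algebra, its unitisation $\tld{A}$ is a unital Banach algebra (by \ref{Banachunitis}) equipped with the extended involution, so the Spectral Radius Formula applies to both $a$ and $a^*$ viewed as elements of $\tld{A}$.

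First I would invoke \ref{specradform} in $\tld{A}$ to write
\[ \rlambda(a) = \max\,\{\,|\,\lambda\,| : \lambda \in \s(a)\,\}
\quad\text{and}\quad
\rlambda(a^*) = \max\,\{\,|\,\lambda\,| : \lambda \in \s(a^*)\,\}. \]
Then by \ref{specstar}, we have $\s(a^*) = \overline{\s(a)}$, and since complex conjugation preserves modulus, the two sets of moduli $\{\,|\,\lambda\,| : \lambda \in \s(a)\,\}$ and $\{\,|\,\lambda\,| : \lambda \in \s(a^*)\,\}$ coincide. Consequently their maxima agree, which is precisely the desired equality $\rlambda(a^*) = \rlambda(a)$.

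There is essentially no obstacle here; the only point worth being careful about is that completeness of the norm is used in two places, namely to guarantee a non-empty compact spectrum and the validity of the Spectral Radius Formula. Neither continuity nor isometry of the involution is invoked — only the algebraic identity $(\lambda e - a)^{*} = \overline{\lambda}e - a^{*}$ underlying \ref{specstar}. This explains why the assumption that $A$ be a Banach \st-algebra (as opposed to merely a normed \st-algebra) is essential: the identity $\rlambda(a) = \max\{|\,\lambda\,| : \lambda \in \s(a)\}$ can fail without completeness, and in that case one only has containment of the spectrum in a disc of radius $\rlambda(a)$, which is insufficient to transfer information between $a$ and $a^*$.
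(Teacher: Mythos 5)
Your proof is correct and is exactly the paper's argument: combine the Spectral Radius Formula \ref{specradform} with $\s(a^*) = \overline{\s(a)}$ from \ref{specstar}. One small caveat on your closing remark about the non-complete case: it is inverted — in a merely normed algebra one still knows that $\s(a)$ contains a point of modulus $\rlambda(a)$ (by \ref{specmodul}, via the completion), whereas what is lost without completeness is the containment $\s(a) \subset \{\,z : |\,z\,| \leq \rlambda(a)\,\}$, and indeed \ref{counterexbis} shows the proposition itself fails for general normed \st-algebras.
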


\begin{proof}
Use the Spectral Radius Formula and \ref{specstar} which says that
\[ \s(a^*) = \overline{\s(a)} \quad \text{for all} \quad a \in A. \qedhere \]
\end{proof}

\medskip
The above result is not true for general normed \st-algebras: \ref{counterexbis}.

\begin{theorem}\label{Bdistance}%
Let $A$ be a Banach algebra, and let $a \in A$.
For $\mu \in \mathds{C} \setminus \s(a)$, we have
\[ \mathrm{dist} \,\bigl( \,\mu, \,\s(a) \,\bigr) =
{\rlambda \bigl( \,{(\mu e - a) \,}^{-1} \,\bigr) \,}^{-1}, \]
cf.\ the appendix \ref{distance}.
\end{theorem}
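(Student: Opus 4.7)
The plan is to reduce the assertion to a direct application of the Rational Spectral Mapping Theorem \ref{ratspecmapthm} combined with the Spectral Radius Formula \ref{specradform}. Writing $b := (\mu e - a)^{-1} \in \tld{A}$ (which makes sense because $\mu \notin \s(a)$), the rational function $r(x) := (\mu - x)^{-1}$ has its only pole at $\mu$, which by hypothesis does not lie in $\s(a)$. Hence $r$ has no pole on $\s(a)$ and is non-constant, so theorem \ref{ratspecmapthm} (applied inside $\tld{A}$, using \ref{specunit}) gives
\[ \s(b) \;=\; r\bigl(\s(a)\bigr) \;=\; \bigl\{\, (\mu - \lambda)^{-1} : \lambda \in \s(a) \,\bigr\}. \]

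Next I would invoke the Spectral Radius Formula \ref{specradform} in the Banach algebra $\tld{A}$ (which is a Banach algebra by \ref{Banachunitis}), to obtain
\[ \rlambda(b) \;=\; \max\bigl\{\,|z| : z \in \s(b)\,\bigr\}
\;=\; \sup_{\lambda \in \s(a)} \,\frac{1}{|\mu - \lambda|}
\;=\; \frac{1}{\inf_{\lambda \in \s(a)} |\mu - \lambda|}
\;=\; \frac{1}{\mathrm{dist}\bigl(\mu, \s(a)\bigr)}. \]
The passage from the supremum to the reciprocal of an infimum is justified because $\s(a)$ is non-empty (by \ref{specradform}$(i)$), and the infimum is strictly positive because $\mu \notin \s(a)$ and $\s(a)$ is closed (again by \ref{specradform}$(i)$), so there is no division by zero. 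Taking reciprocals yields the claimed formula
\[ \mathrm{dist}\bigl(\mu, \s(a)\bigr) \;=\; \rlambda\bigl((\mu e - a)^{-1}\bigr)^{-1}. \]

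No step looks like a genuine obstacle; the only point requiring a little care is the bookkeeping of where the spectrum is computed when $A$ has no unit. In that case $b$ lives in $\tld{A}$, but by \ref{specunit} we have $\s_A(a) = \s_{\tld{A}}(a)$, so the identity $\s(b) = r(\s(a))$ is unambiguous. Compactness and non-emptiness of $\s(a)$ ensure the infimum defining $\mathrm{dist}(\mu, \s(a))$ is attained, making both sides of the identity finite and positive.
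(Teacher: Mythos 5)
Your proof is correct and follows exactly the paper's own route: apply the Rational Spectral Mapping Theorem to $r(x)=(\mu-x)^{-1}$ to identify $\s\bigl((\mu e-a)^{-1}\bigr)$, then use the Spectral Radius Formula to convert the maximum of the reciprocals into the reciprocal of the distance. The extra remarks on non-emptiness, compactness and the unitisation are sound but only make explicit what the paper leaves tacit.
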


\begin{proof}
By the Rational Spectral Mapping Theorem, we have
\[ \s \,\bigl( \,{(\mu e - a) \,}^{-1} \,\bigr) =
\bigl\{ \,{(\mu - \lambda) \,}^{-1} : \lambda \in \s(a) \,\bigr\}. \]
From the Spectral Radius Formula, it follows that
\begin{align*}
\rlambda \bigl( \,{(\mu e - a) \,}^{-1} \,\bigr)
& = \max \,\bigl\{ \,{| \,\mu - \lambda \,| \,}^{-1} : \lambda \in \s(a) \,\bigr\} \\
& = {\min \,\bigl\{ \,| \,\mu - \lambda \,| : \lambda \in \s(a) \,\bigr\} \,}^{-1} \\
& = {\mathrm{dist} \,\bigl( \,\mu, \,\s(a) \,\bigr) \,}^{-1}. \qedhere
\end{align*}
\end{proof}

\begin{theorem}\label{commspeccont}%
Let $A$ be a Banach algebra. Let $\mathds{D}$ denote the
closed unit disc. For two \underline{commuting} elements
$a, b \in A$, we have
\[ \s(a) \subset \s(b) + \rlambda (b-a) \,\mathds{D}
\subset \s(b) + | \,b-a \,| \,\mathds{D}. \]
If $A$ is commutative, one says that the spectrum function is
uniformly continuous on $A$. (By symmetry in $a$ and $b$.)
See also \ref{Hermspeccont} below.
\end{theorem}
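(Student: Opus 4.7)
The second inclusion is immediate from the definition \ref{rldef}, which gives $\rlambda(b-a) \leq |b-a|$, so I would only need to establish the first inclusion $\s(a) \subset \s(b) + \rlambda(b-a) \mathds{D}$.

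My plan is to argue by contraposition. Let $\lambda \in \mathds{C}$ with $\mathrm{dist}\bigl(\lambda, \s(b)\bigr) > \rlambda(b-a)$; I will show $\lambda \notin \s(a)$. In particular $\lambda \notin \s(b)$, so $c := \lambda e - b$ is invertible in $\tld{A}$. By theorem \ref{Bdistance},
\[ \rlambda\bigl(c^{-1}\bigr) = \frac{1}{\mathrm{dist}\bigl(\lambda, \s(b)\bigr)} < \frac{1}{\rlambda(b-a)}, \]
where the right-hand side is interpreted as $+\infty$ when $\rlambda(b-a) = 0$. Since $a$ and $b$ commute, so do $c = \lambda e - b$ and $b - a$, and hence so do $c^{-1}$ and $b-a$ by proposition \ref{comm2}. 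Proposition \ref{commrlsub} therefore yields
\[ \rlambda\bigl(c^{-1}(b-a)\bigr) \leq \rlambda\bigl(c^{-1}\bigr) \cdot \rlambda(b-a) < 1 \]
(the case $\rlambda(b-a) = 0$ is handled by the same product estimate, giving $0 < 1$).

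The key algebraic identity is the factorisation
\[ \lambda e - a = (\lambda e - b) + (b-a) = c \,\bigl[\,e + c^{-1}(b-a)\,\bigr]. \]
By lemma \ref{geoseries} applied to $-c^{-1}(b-a)$ with $z = 1$, the factor $e + c^{-1}(b-a)$ is invertible, since the geometric series $\sum_{n=0}^{\infty} \bigl(-c^{-1}(b-a)\bigr)^{n}$ converges and furnishes its inverse (the condition $1 < 1/\rlambda\bigl(-c^{-1}(b-a)\bigr)$ is precisely what was just established). Hence $\lambda e - a$ is a product of two invertibles and so $\lambda \notin \s(a)$, as desired.

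The main subtlety — and arguably the only non-routine point — is carefully tracking how the commutativity of $a$ and $b$ transfers to $c^{-1}$ and $b-a$, since this is exactly what allows the submultiplicative bound on $\rlambda$ from \ref{commrlsub}; without it, one cannot bound $\rlambda\bigl(c^{-1}(b-a)\bigr)$ by the product of the individual spectral radii. A minor cosmetic issue is stating the argument so it covers $\rlambda(b-a) = 0$ uniformly; this is clean provided one reads the inequality $\rlambda(c^{-1}(b-a)) < 1$ as the output of \ref{commrlsub} rather than dividing by $\rlambda(b-a)$.
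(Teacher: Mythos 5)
Your proof is correct and is essentially the paper's own argument: the paper likewise reduces to the estimate $\rlambda\bigl((\mu e - b)^{-1}\bigr)\,\rlambda(b-a) < 1$ via \ref{Bdistance} and \ref{commrlsub}, and then inverts $\mu e - a = (\mu e - b)\bigl[e + (\mu e - b)^{-1}(b-a)\bigr]$, merely phrasing it as a contradiction rather than a contraposition. Your explicit tracking of the commutativity transfer and of the case $\rlambda(b-a)=0$ fills in details the paper leaves tacit.
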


\begin{proof}
Suppose the first inclusion is not true. There then exists \linebreak
$\mu \in \s(a)$ with $\mathrm{dist}\bigl(\mu, \s(b)\bigr) > \rlambda (b-a)$.
Note that then $\mu e - b$ is \linebreak invertible. By \ref{Bdistance}, we
have $\rlambda \bigl( \,{(\mu e - b)\,}^{-1} \,\bigr) \,\rlambda (b-a) < 1$. From
\ref{commrlsub}, we get $\rlambda \bigl( \,{(\mu e - b) \,}^{-1} \,(b-a) \,\bigr) < 1$.
We have $\mu e-a = (\mu e - b) + (b-a)$. This implies that also
$\mu e-a = (\mu e - b) \,\bigl[ \,e + {(\mu e - b) \,}^{-1} \,(b-a) \,\bigr]$ is invertible,
a contradiction.\vphantom{$)^{-1}$}
\end{proof}

\medskip
See also \ref{commrlsub} above and \ref{specsub} below. \pagebreak

\clearpage


\section[The Pt\'ak Function $\protect{\mathrm{r}_\sigma}$]%
{The Pt\texorpdfstring{\'a}{\80\341}k Function %
\texorpdfstring{$\protect{\rsigma}$}{r\80\137\83\303}}

\fancyhead[RO]{\SMALL{\S\ 9. \ THE PT\'AK FUNCTION $\mathrm{r}_\sigma$}}

\begin{definition}[$\rsigma(a)$]%
\index{symbols}{r2@$\protect\rsigma(a)$}\index{concepts}{Pt\'{a}k!function}%
For an element $a$ of a normed \st-algebra $A$, we define
\[ \rsigma (a) := {\rlambda (a^*a) \,}^{1/2}. \]
The subscript $\sigma$ is intended to remind of the notation for
singular values. The function $\rsigma : a \mapsto \rsigma(a)$ $(a \in A)$
is called the \underline{Pt\'{a}k function}.
\end{definition}

\begin{proposition}\label{rsC*}
For  a normed \st-algebra $A$, and $a \in A$, we have
\[ \rsigma(a^*a) = {\rsigma(a) \,}^2 \quad
\text{as well as} \quad \rsigma(a^*) = \rsigma(a). \]
\end{proposition}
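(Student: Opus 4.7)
The plan is to reduce both identities directly to already-established properties of $\rlambda$, namely proposition \ref{rlpowers} (that $\rlambda(a^{\,k}) = \rlambda(a)^{\,k}$) and proposition \ref{rlcomm} (that $\rlambda(ab) = \rlambda(ba)$). No estimate specific to the involution is needed; in particular we are \emph{not} relying on \ref{Brlsymm}, which requires completeness.

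For the first identity, I would simply unfold the definition of $\rsigma$ twice:
\[ \rsigma(a^*a) = \rlambda\bigl((a^*a)^*(a^*a)\bigr)^{\,1/2} = \rlambda\bigl((a^*a)^{\,2}\bigr)^{\,1/2}, \]
using that $a^*a$ is Hermitian. Then proposition \ref{rlpowers} with $k=2$ converts this to $\rlambda(a^*a) = \rsigma(a)^{\,2}$, which is exactly the claim.

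For the second identity, I would again unfold: $\rsigma(a^*) = \rlambda\bigl((a^*)^*a^*\bigr)^{\,1/2} = \rlambda(aa^*)^{\,1/2}$. The nontrivial-sounding step is the equality $\rlambda(aa^*) = \rlambda(a^*a)$, but this is an immediate instance of proposition \ref{rlcomm} applied to the pair $(a, a^*)$. Taking square roots gives $\rsigma(a^*) = \rlambda(a^*a)^{\,1/2} = \rsigma(a)$.

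There is no real obstacle here: both statements are essentially bookkeeping on top of \ref{rlpowers} and \ref{rlcomm}. The only thing worth flagging is that one should resist the temptation to invoke \ref{Brlsymm} (which would give $\rlambda(a^*) = \rlambda(a)$ directly), because that result requires the algebra to be a Banach \st-algebra, whereas the present proposition is stated for general normed \st-algebras; using \ref{rlcomm} instead keeps the proof valid in the full generality claimed.
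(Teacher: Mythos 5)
Your proof is correct and is essentially identical to the paper's: both identities are obtained by unfolding the definition of $\rsigma$ and then applying \ref{rlpowers} (with $k=2$, using that $a^*a$ is Hermitian) for the first and \ref{rlcomm} for the second. Your remark about avoiding \ref{Brlsymm} to preserve the generality of normed \st-algebras is also the right instinct.
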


\begin{proof}
With \ref{rlpowers} we find
\[ \rsigma(a^*a)
= {\rlambda \bigl( {\,(a^*a)}^*{(a^*a)} \,\bigr)}^{\,1/2}
= {\rlambda \bigl( {\,(a^*a)}^{\,2\,} \bigr)}^{\,1/2}
= \rlambda(a^*a)={\rsigma(a)}^{\,2}. \]
With \ref{rlcomm} we get
\[ \rsigma(a^*) = {\rlambda(aa^*)}^{\,1/2}
= {\rlambda(a^*a)}^{\,1/2} = \rsigma(a). \qedhere \]
\end{proof}

\medskip
The following result is groundlaying of course.

\begin{theorem}\label{C*rs}%
Let $\bigl( \,A , \| \cdot \| \,\bigr)$ be a normed \st-algebra.
If $\bigl( \,A , \| \cdot \| \,\bigr)$ is a pre-C*-algebra, then
\[ \| \,a \,\| = \rsigma(a) \quad \text{for all} \quad a \in A, \]
and conversely.
\end{theorem}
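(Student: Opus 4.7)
The plan is to handle the two implications separately, and both reduce to a single clean fact about Hermitian elements in pre-C*-algebras.

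\emph{The forward direction.} Assume $(A,\|\cdot\|)$ is a pre-C*-algebra. Given $a \in A$, put $h := a^*a$, which is Hermitian. I would first observe that the C*-property applied to $h$ gives
\[ \|h^2\| = \|h^*h\| = \|h\|^2. \]
Iterating this identity yields $\|h^{2^k}\| = \|h\|^{2^k}$ for every non-negative integer $k$. Then, using theorem \ref{rlamlim}, I would compute $\rlambda(h)$ along the subsequence of indices $n = 2^k$:
\[ \rlambda(h) = \lim_{k \to \infty} \|h^{2^k}\|^{1/2^k} = \lim_{k \to \infty} \|h\| = \|h\|. \]
Therefore $\rsigma(a)^2 = \rlambda(a^*a) = \|a^*a\| = \|a\|^2$, where the last equality is again the C*-property. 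Taking square roots gives $\|a\| = \rsigma(a)$.

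\emph{The converse.} Assume $\|a\| = \rsigma(a)$ for all $a \in A$. Proposition \ref{rsC*} furnishes the two identities $\rsigma(a^*) = \rsigma(a)$ and $\rsigma(a^*a) = \rsigma(a)^2$. Translating these via the hypothesis $\|\cdot\| = \rsigma$ yields immediately
\[ \|a^*\| = \|a\| \qquad \text{and} \qquad \|a^*a\| = \|a\|^2, \]
which are precisely the two conditions required in definition \ref{preC*alg} for $A$ to be a pre-C*-algebra.

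There is no real obstacle: the forward direction rests entirely on the little trick of using the C*-property to square the norm of $h = a^*a$ and then extracting $\rlambda(h)$ as a limit along powers of two (a standard manoeuvre made possible by theorem \ref{rlamlim}). The converse is essentially a rereading of proposition \ref{rsC*}. The only mild subtlety worth flagging is that the subsequential limit $\lim_k \|h^{2^k}\|^{1/2^k}$ computes the full limit $\lim_n \|h^n\|^{1/n}$ only because theorem \ref{rlamlim} guarantees the latter exists; otherwise one would have to argue with $\inf$ and subsequences separately.
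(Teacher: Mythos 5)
Your proof is correct and follows essentially the same route as the paper: the converse is read off from proposition \ref{rsC*}, and the forward direction iterates the C*-property along powers of two on the Hermitian element $a^*a$ and evaluates $\rlambda$ as a subsequential limit via theorem \ref{rlamlim}. The only cosmetic difference is that the paper states the doubling identity $\| \,b^{(2^n)} \,\| = \| \,b \,\|^{(2^n)}$ as an induction over all Hermitian $b$ before specialising to $b = a^*a$, whereas you iterate directly on $h = a^*a$; the substance is identical.
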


\begin{proof}
If $\| \,a \,\| = \rsigma(a)$ for all $a\in A$, then $\bigl( \,A , \| \cdot \| \,\bigr)$
is a pre-C*-algebra by the preceding proposition \ref{rsC*}.
Conversely, let $\bigl( \,A , \| \cdot \| \,\bigr)$ be a pre-C*-algebra.
We then have
\[ \bigl\| \,{b\,}^{( \,{2 \,}^n \,)} \,\bigr\| = { \,\| \,b \,\|\,} ^{( \,{2 \,}^n \,)}
\quad \text{for all } b \in A\sa, \text{ and all integers } n \geq 0, \]
which is seen by induction as follows. The statement holds for $n = 0$.
For an integer $n \geq 1$, we compute
\begin{align*}
\bigl\| \,{b\,}^{\,( \,{2 \,}^n \,)} \,\bigr\|
 & = \bigl\| \,{\bigl( \,{b}^* \,b \,\bigl) \,}^{( \,{2 \,}^{n-1} \,)} \,\bigr\| 
 \tag*{because $b$ is Hermitian} \\
 & = {\| \,{b}^* \,b \,\| \,} ^{( \,{2 \,}^{n-1} \,)}
 \tag*{by induction hypothesis} \\
 & = {\| \,b \,\|\,}^{( \,{2 \,}^n \,)} \pagebreak \tag*{by the C*-property \ref{preC*alg}.}
\end{align*}
For arbitrary $a \in A$, the element $a^*a$ is Hermitian, so we find
\begin{align*}
{\rsigma(a) \,}^2
 & = \lim _{n \to \infty} { \bigl\| \,{( \,a^*a \,) \,}^{( \,{2 \,}^n \,)} \,\bigr\|\,}^{1 \,/ \,( \,{2 \,}^n \,)} \\
 & = \lim _{n \to \infty} {\Bigl( \,{\| \,a^*a \,\|\,}^{( \,{2 \,}^n) \,} \,\Bigr) \,}^{1\,/ \,( \,{2 \,}^n \,)}
     = \| \,a^*a \,\| = {\| \,a \,\|\,}^2. \qedhere
\end{align*}
\end{proof}

\begin{proposition}\label{Hermrseqrl}%
If $A$ is a normed \st-algebra, and if $a \in A$ is \linebreak
\underline{Hermitian}, then
\[ \rsigma(a) = \rlambda(a). \]
\end{proposition}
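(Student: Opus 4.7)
The plan is to unravel the definition of $\rsigma$ using the Hermiticity hypothesis, and then invoke proposition \ref{rlpowers} which says that $\rlambda\bigl(a^{\,k}\bigr) = \rlambda(a)^{\,k}$ for every integer $k \geq 1$. The proof should be just a single line of identifications.

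First I would recall that by definition $\rsigma(a) = \rlambda(a^*a)^{1/2}$. Since $a$ is Hermitian, $a^* = a$, so $a^*a = a^{\,2}$. Therefore
\[ \rsigma(a) = \rlambda\bigl(a^{\,2}\bigr)^{\,1/2}. \]
Next I would apply proposition \ref{rlpowers} with $k = 2$ to get $\rlambda\bigl(a^{\,2}\bigr) = \rlambda(a)^{\,2}$, and taking the square root yields $\rsigma(a) = \rlambda(a)$.

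There is essentially no obstacle here: the statement reduces to the observation that for Hermitian $a$ the expression $a^*a$ collapses to $a^{\,2}$, after which the power rule from \ref{rlpowers} finishes the job. Note that nothing about completeness or the Spectral Radius Formula is needed; only the definitions of $\rsigma$, $\rlambda$, and the elementary identity ${\bigl|\,(a^{\,2})^{\,n\,}\bigr|}^{\,1/n} = {\bigl(\,{\bigl|\,a^{\,2n\,}\bigr|}^{\,1/(2n)}\,\bigr)}^{\,2}$ that was already used to prove \ref{rlpowers}.
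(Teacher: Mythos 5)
Your proof is correct and is exactly the paper's argument: write $\rsigma(a)^{\,2} = \rlambda(a^*a) = \rlambda(a^{\,2}) = \rlambda(a)^{\,2}$ using Hermiticity and \ref{rlpowers}, then take square roots. Nothing to add.
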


\begin{proof}
By \ref{rlpowers} we have
\[ {\rsigma(a)}^{\,2} = \rlambda(a^*a)
= \rlambda \bigl(a^{\,2\,} \bigr) = {\rlambda(a)}^{\,2}. \qedhere \]
\end{proof}

\begin{proposition}\label{Bnormalrsleqrl}%
If $A$ is a \underline{Banach} \st-algebra $A$, and if $b \in A$
is \underline{normal}, then
\[ \rsigma(b) \leq \rlambda(b). \]
\end{proposition}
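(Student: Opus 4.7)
The plan is to unpack $\rsigma(b)$ via its definition and then exploit normality of $b$ together with the submultiplicativity of $\rlambda$ on commuting elements, plus the symmetry of $\rlambda$ under the involution on a Banach \st-algebra.

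First, I would recall that by definition $\rsigma(b) = {\rlambda(b^*b)}^{1/2}$. The normality hypothesis $b^*b = b\,b^*$ says precisely that $b$ and $b^*$ commute, so I can apply proposition \ref{commrlsub} to the commuting pair $(b^*, b)$ to obtain
\[ \rlambda(b^*b) \leq \rlambda(b^*)\,\rlambda(b). \]
This is the step where commutativity of $b$ and $b^*$ is essential; without it, the submultiplicativity bound is unavailable.

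Next I would invoke proposition \ref{Brlsymm}, which, because $A$ is a \emph{Banach} \st-algebra, gives $\rlambda(b^*) = \rlambda(b)$. Substituting yields $\rlambda(b^*b) \leq {\rlambda(b)}^{2}$, and taking square roots produces
\[ \rsigma(b) = {\rlambda(b^*b)}^{1/2} \leq \rlambda(b), \]
as desired.

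There is no real obstacle here: the result is essentially a one-line consequence of the definition of $\rsigma$ combined with two earlier facts. The only subtlety worth flagging is that both ingredients genuinely require the Banach structure — \ref{commrlsub} via the Spectral Radius Formula (used behind the scenes for the submultiplicativity argument in the commuting case, though the direct proof of \ref{commrlsub} uses only completeness indirectly), and especially \ref{Brlsymm}, whose proof relies on $\s(b^*) = \overline{\s(b)}$ combined with the Spectral Radius Formula. This explains why the hypothesis ``Banach'' cannot be dropped, consistent with the warning after \ref{Brlsymm} that the analogous equality $\rlambda(a^*) = \rlambda(a)$ can fail in general normed \st-algebras.
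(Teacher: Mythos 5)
Your proof is correct and is exactly the paper's argument: square the definition of $\rsigma$, apply \ref{commrlsub} to the commuting pair $b^*, b$, and use \ref{Brlsymm} to replace $\rlambda(b^*)$ by $\rlambda(b)$. One small correction to your closing remark: \ref{commrlsub} is stated and proved for general normed algebras and does not need completeness; only \ref{Brlsymm} genuinely requires the Banach hypothesis.
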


\begin{proof} By \ref{commrlsub} and \ref{Brlsymm} we have
\[ {\rsigma(b) \,}^2 = \rlambda(b^*b) \leq
\rlambda(b^*) \,\rlambda(b) = {\rlambda(b) \,}^2. \qedhere \]
\end{proof}

\medskip
The following enhancement of \ref{C*rs} above is essential for
the theory of commutative C*-algebras.

\begin{theorem}\label{C*rl}%
Let $\bigl( \,A , \| \cdot \| \,\bigr)$ be a pre-C*-algebra. For every
\underline{normal} element $b \in A$, we have
\[ \| \,b \,\| = \rlambda(b). \]
\end{theorem}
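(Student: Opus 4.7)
The plan is to combine two ingredients already at hand: the identity $\|b\| = \rsigma(b)$ valid for every element of a pre-C*-algebra (Theorem \ref{C*rs}), and the identity $\rsigma(b) = \rlambda(b)$ for normal $b$, which I will establish directly. Notably, I will not invoke \ref{Bnormalrsleqrl}, whose hypothesis is completeness of the algebra; everything here reduces to normality plus the C*-property.

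For the core computation, I would start from the definition of $\rsigma$ together with the limit form of $\rlambda$ from \ref{rlamlim}, namely
\[ \rsigma(b)^{\,2} = \rlambda(b^*b) = \lim_{n \to \infty} \bigl\| \,(b^*b)^{\,n\,} \bigr\|^{\,1/n}. \]
Since $b$ is normal, $b^*$ and $b$ commute, and since the involution is antimultiplicative,
\[ (b^*b)^{\,n} = (b^*)^{\,n}\,b^{\,n} = (b^{\,n})^*\,b^{\,n}. \]
The C*-property \ref{preC*alg} applied to $b^{\,n}$ gives $\bigl\| (b^{\,n})^*\,b^{\,n} \bigr\| = \|b^{\,n}\|^{\,2}$, hence
\[ \rsigma(b)^{\,2} = \lim_{n \to \infty} \|b^{\,n}\|^{\,2/n} = \rlambda(b)^{\,2}, \]
and extracting square roots yields $\rsigma(b) = \rlambda(b)$. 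Combined with $\|b\| = \rsigma(b)$ from \ref{C*rs}, this delivers $\|b\| = \rlambda(b)$.

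I do not foresee a substantive obstacle; the proof is a clean interplay of the C*-property with normality. The only minor point to track is that \ref{rlamlim} is needed to present $\rlambda$ as an honest limit (not merely an infimum), so that continuity of squaring justifies the passage from $\lim_n \|b^{\,n}\|^{\,2/n}$ to $\bigl( \lim_n \|b^{\,n}\|^{\,1/n} \bigr)^{\,2}$.
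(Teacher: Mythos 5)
Your proof is correct, and it takes a genuinely different route from the one in the text. The text's proof first passes to the completion (so that the algebra becomes a C*-algebra) and then sandwiches: $\| \,b \,\| = \rsigma(b) \leq \rlambda(b) \leq \| \,b \,\|$, where the middle inequality is \ref{Bnormalrsleqrl}; that lemma in turn rests on $\rlambda(b^*) = \rlambda(b)$ in a Banach \st-algebra (\ref{Brlsymm}, which invokes the Spectral Radius Formula and hence completeness) and on the submultiplicativity of $\rlambda$ on commuting elements (\ref{commrlsub}). You instead compute $\rlambda(b^*b)$ directly: normality and the antimultiplicativity of the involution give $(b^*b)^{\,n} = (b^{\,n})^*\,b^{\,n}$, the C*-property turns this into $\| \,b^{\,n} \,\|^{\,2}$, and \ref{rlamlim} lets you pass to the limit, yielding $\rsigma(b) = \rlambda(b)$ as an exact identity rather than as two opposite inequalities. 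What your argument buys is that it stays entirely inside the given pre-C*-algebra — no completion, no complex analysis, no spectral theory — using only the limit formula for $\rlambda$ and the C*-property, much in the spirit of the iteration already used in the proof of \ref{C*rs}. The one step you rightly flag, the interchange of the limit with squaring, is justified precisely because \ref{rlamlim} guarantees the limit $\lim_{n} \| \,b^{\,n} \,\|^{\,1/n}$ exists.
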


\begin{proof}
We can assume that $A$ is a C*-algebra. By \ref{C*rs} and
\ref{Bnormalrsleqrl}, we get
\[ \| \,b \,\| = \rsigma(b) \leq \rlambda(b) \leq \| \,b \,\|. \qedhere \]
\end{proof}

\begin{corollary}
Let $\bigl( \,A , \| \cdot \| \,\bigr)$be a pre-C*-algebra. For a
\underline{normal} element $b$ of $A$ and every integer
$n \geq 1$ we have
\[ \| \,{b\,}^n \,\| = {\| \,b \,\|\,}^n. \]
\end{corollary}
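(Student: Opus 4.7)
The plan is to reduce the statement to a chain of already-established results: the fact that powers of a normal element remain normal, Theorem \ref{C*rl} identifying the norm of a normal element with its spectral radius, and Proposition \ref{rlpowers} on powers of the spectral radius.

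First I would verify that $b^n$ is normal whenever $b$ is. Since $b$ and $b^*$ commute, the element $b^n$ commutes with $(b^*)^n$, and observing that $(b^n)^* = (b^*)^n$, it follows that $b^n$ commutes with its adjoint, hence is normal.

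Having this, I would apply Theorem \ref{C*rl} to the normal element $b^n$ in the pre-C*-algebra $A$, which gives
\[ \|\,b^n\,\| = \rlambda(b^n). \]
Next, Proposition \ref{rlpowers} yields $\rlambda(b^n) = \rlambda(b)^{\,n}$. Finally, applying Theorem \ref{C*rl} once more to $b$ itself gives $\rlambda(b) = \|\,b\,\|$, whence
\[ \|\,b^n\,\| = \rlambda(b)^{\,n} = \|\,b\,\|^{\,n}. \]

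There is no real obstacle here: every ingredient is already in place, and the only conceptual point is the preliminary observation that normality is preserved under taking powers, so that Theorem \ref{C*rl} can be applied to $b^n$ and not merely to $b$.
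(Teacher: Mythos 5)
Your proof is correct and follows exactly the route the paper intends: the paper's one-line proof ``This follows now from \ref{rlpowers}'' is shorthand for precisely your chain $\| \,{b\,}^n \,\| = \rlambda({b\,}^n) = {\rlambda(b)\,}^n = {\| \,b \,\|\,}^n$ via \ref{C*rl}. Your explicit verification that ${b\,}^n$ is normal is a point the paper leaves tacit, and it is worth having spelled out.
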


\begin{proof} This follows now from \ref{rlpowers}. \pagebreak \end{proof}

\begin{theorem}\label{C*unitis}%
\index{concepts}{norm}\index{concepts}{norm!unitisation}%
\index{concepts}{unitisation}\index{symbols}{A8@$\protect\tld{A}$}%
Let $\bigl( \,A , \| \cdot \| \,\bigr)$ be a C*-algebra without unit.
The unitisation $\tld{A}$ then carries besides its C*-algebra
norm $\| \cdot \| $, the complete norm
$| \,\lambda e+a \,| := | \,\lambda \,| + \|\,a\,\|$
$\bigl( \,\lambda \in \mathds{C}$, $a \in A \,\bigr)$, cf.\ the proof of
\ref{Banachunitis}. Both norms are equivalent on $\tld{A}$ \ref{Cstarequiv}.
More precisely, we have
\[ \|\,b\,\| \leq |\,b\,| \leq 3\:\|\,b\,\| \quad \text{for normal} \quad b \in \tld{A}, \]
as well as
\[ \|\,a\,\| \leq |\,a\,| \leq 6\:\|\,a\,\| \quad \text{for arbitrary} \quad a \in \tld{A}. \]
\end{theorem}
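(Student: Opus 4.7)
The plan is to reduce the normal case to the spectral radius inequality \ref{rlunit}, then obtain the general case by splitting into real and imaginary parts.

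First I would dispose of the easy inequality $\|b\| \leq |b|$ for all $b \in \tld{A}$. Writing $b = \lambda e + a$ with $\lambda \in \mathds{C}$, $a \in A$, the triangle inequality together with $\|e\| = 1$ from \ref{unitCstar} gives $\|b\| \leq |\lambda|\,\|e\| + \|a\| = |b|$. This handles the left-hand estimates in both claims simultaneously.

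Next I tackle the normal case. Let $b = \lambda e + a$ be normal in $\tld{A}$. A direct computation of $bb^*$ and $b^*b$ shows that all scalar and mixed terms cancel, and normality of $b$ in $\tld{A}$ is equivalent to normality of $a$ in $A$. Since both $\tld{A}$ and $A$ are (pre-)C*-algebras (by \ref{preCstarunitis}), theorem \ref{C*rl} yields $\|b\| = \rlambda(b)$ and $\|a\| = \rlambda(a)$. If $A$ has no unit, proposition \ref{rlunit} applies directly to give
\[ |b| = |\lambda| + \|a\| = |\lambda| + \rlambda(a) \leq 3\,\rlambda(\lambda e + a) = 3\,\|b\|, \]
which is the normal case.

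Finally, for arbitrary $a \in \tld{A}$, decompose $a = x + \iu y$ with $x := (a + a^*)/2$ and $y := (a - a^*)/(2\iu)$ in $\tld{A}\sa$; both lie in $\tld{A}$ since $\tld{A}$ is a \st-algebra. Since $\tld{A}$ is a pre-C*-algebra, its involution is isometric, so $\|a^*\| = \|a\|$ and hence $\|x\|, \|y\| \leq \|a\|$. Hermitian elements are normal, so the inequality just proved applies to $x$ and $y$, and the triangle inequality for $|\cdot|$ yields
\[ |a| \leq |x| + |y| \leq 3\,\|x\| + 3\,\|y\| \leq 6\,\|a\|, \]
which completes the proof.

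The only genuine work is the normal case, which rests entirely on the interplay between \ref{C*rl} (normal C*-elements have norm equal to spectral radius) and \ref{rlunit} (the spectral radius comparison $|\lambda| + \rlambda(a) \leq 3\,\rlambda(\lambda e + a)$ in a Banach algebra without unit); the rest is bookkeeping. There is no real obstacle, since normality of $a$ in $A$ is the exact condition needed to upgrade $\|a\|$ to $\rlambda(a)$, and this is automatic from normality of $b$ in $\tld{A}$.
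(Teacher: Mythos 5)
Your proposal is correct and follows essentially the same route as the paper: the left inequalities from $\|e\|=1$ (\ref{unitCstar}), the normal case from \ref{C*rl} combined with \ref{rlunit}, and the general case by the Hermitian decomposition $a = x + \iu y$ with the isometry of the involution. Your explicit check that normality of $\lambda e + a$ in $\tld{A}$ is equivalent to normality of $a$ in $A$ is a detail the paper leaves tacit, but it is the same argument.
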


\begin{proof}
The statement for normal elements follows from \ref{C*rl} and the fact that
\[ \rlambda(\mu e + a) \leq | \,\mu \,| + \rlambda(a) \leq 3 \,\rlambda(\mu e + a)
\quad \text{for all} \quad \mu \in \mathds{C},\ a \in A. \]
cf.\ \ref{rlunit}. The statement for arbitrary elements is proved as follows.
Please note first that the C*-algebra norm $\| \cdot \|$ on $\tld{A}$ is
dominated by the other norm $| \cdot |$, by \ref{unitCstar}.

An arbitrary element $a$ of $\tld{A}$ has a unique decomposition as
$a = b + \iu c$ with $b, c \in \tld{A}\sa$. From the isometry of the involution
in the C*-algebra norm $\| \cdot \|$ on $\tld{A}$, we then get 
\[ \| \,b \,\| , \| \,c \,\| \leq \| \,a \,\|. \]
Indeed, we have for example $b = \frac12 \,(a + a^*)$, so
\[ \| \,b \,\| \leq \frac12 \,\bigl( \,\| \,a \,\| + \| \,a^* \,\| \,\bigr) = \| \,a \,\|. \]
From what has been proved so far, it follows that
\begin{align*}
\| \,a \,\| \leq | \,a \,| = | \,b + \iu c \,| & \leq | \,b \,| + | \,c \,| \\
& \leq 3 \:\bigl( \,\| \,b \,\| + \| \,c \,\| \,\bigr) \\
& \leq 3 \:\bigl( \,\| \,a \,\| + \| \,a \,\| \,\bigr) = 6 \:\| \,a \,\|. \qedhere
\end{align*}
\end{proof}

\begin{proposition}\label{commrs}%
For elements $a,b$ of a normed \st-algebra such that
$a^*a$ commutes with $b\,b^*$, we have
\[ \rsigma(ab) \leq \rsigma(a)\,\rsigma(b). \]
In particular, the function $\rsigma$ is submultiplicative on a
commutative normed \st-algebra.
\end{proposition}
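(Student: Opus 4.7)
The plan is to unwind $\rsigma(ab)^2 = \rlambda\bigl((ab)^*(ab)\bigr) = \rlambda(b^*a^*ab)$ and manoeuvre it into a form where the hypothesis that $a^*a$ commutes with $bb^*$ can be exploited via \ref{commrlsub}. Since \ref{commrlsub} requires two \emph{commuting} elements, the key idea is to cyclically permute the factors in $b^*a^*ab$ so as to produce the product $(a^*a)(bb^*)$.

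First I would invoke \ref{rlcomm} (which says $\rlambda(xy) = \rlambda(yx)$ on any normed algebra) with $x := b^*$ and $y := a^*ab$, yielding $\rlambda(b^*a^*ab) = \rlambda(a^*abb^*)$. Now $a^*a$ and $bb^*$ commute by hypothesis, so \ref{commrlsub} gives
\[ \rlambda(a^*a \cdot bb^*) \leq \rlambda(a^*a)\,\rlambda(bb^*). \]
A second application of \ref{rlcomm} (with $x := b$, $y := b^*$) yields $\rlambda(bb^*) = \rlambda(b^*b) = \rsigma(b)^2$. Putting everything together,
\[ \rsigma(ab)^2 = \rlambda(b^*a^*ab) = \rlambda(a^*abb^*) \leq \rlambda(a^*a)\,\rlambda(b^*b) = \rsigma(a)^2\,\rsigma(b)^2, \]
and taking square roots gives the desired inequality.

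The ``In particular'' clause is immediate: in a commutative normed \st-algebra any two elements commute, so the hypothesis $a^*a \cdot bb^* = bb^* \cdot a^*a$ holds automatically, and $\rsigma(ab) \leq \rsigma(a)\,\rsigma(b)$ follows for all $a,b$.

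There is no real obstacle here; the proof is a short chain of formal manipulations. The only point that requires a moment's thought is verifying that the commutation hypothesis passes correctly through the cyclic permutation --- namely, that after using \ref{rlcomm} to convert $b^*a^*ab$ into $a^*abb^*$, the product $(a^*a)(bb^*)$ genuinely has the form ``product of two commuting elements'' so that \ref{commrlsub} is applicable. This is exactly the hypothesis provided, so the argument goes through cleanly.
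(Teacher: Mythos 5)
Your proof is correct and follows exactly the same route as the paper: cyclic permutation via \ref{rlcomm} to turn $\rlambda(b^*a^*ab)$ into $\rlambda(a^*abb^*)$, then \ref{commrlsub} on the commuting pair $a^*a$, $bb^*$, and a final application of \ref{rlcomm} to identify $\rlambda(bb^*)$ with $\rsigma(b)^2$. Nothing to add.
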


\begin{proof}
From \ref{rlcomm} and \ref{commrlsub} it follows that
\begin{align*}
\ & {\rsigma(ab)}^{\,2} = \rlambda(b^*a^*ab) = \rlambda(a^*abb^*) \\
\leq \ & \rlambda(a^*a)\,\rlambda(b\,b^*) = \rlambda(a^*a)\,\rlambda(b^*b)
= {\rsigma(a)}^{\,2\,} {\rsigma(b)}^{\,2}. \pagebreak \qedhere
\end{align*}
\end{proof}

\clearpage


\section{Automatic Continuity}

\fancyhead[RO]{\SMALL{\rightmark}}

\begin{definition}[contractive linear map]\index{concepts}{contractive}%
A linear map between (possibly real) normed spaces
is said to be \underline{contractive} if it is bounded with
norm not exceeding $1$.
\end{definition}

\begin{theorem}\label{contraux}%
Let $\pi : A \to B$ be a \st-algebra homomorphism from a Banach
\st-algebra $\bigl( \,A , | \cdot | \,\bigr)$ to a pre-C*-algebra
$\bigl( \,B , \| \cdot \| \,\bigr)$. We have
\[ \| \,\pi (a) \,\| \leq \rsigma (a) \leq \| \,a \,\| \quad \text{for all} \quad a \in A, \]
where the second $\| \cdot \|$ denotes the accessory \st-norm on $A$,
cf.\ \ref{auxnorm}.

In particular, $\pi$ is contractive in the accessory \st-norm $\| \cdot \|$ on $A$.
\end{theorem}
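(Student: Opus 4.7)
The plan is to handle the two inequalities separately, each by reducing $\|\pi(a)\|$ or $\rsigma(a)$ to a spectral radius and then comparing.

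First I would dispatch the easier right-hand inequality $\rsigma(a) \leq \|a\|$ directly from the definitions. Since $A$ is a Banach algebra and $\rlambda(c) \leq |c|$ for all $c \in A$ (immediate from $\rlambda(c) = \inf_n |c^n|^{1/n}$), submultiplicativity of $|\cdot|$ gives
\begin{equation*}
\rsigma(a)^2 = \rlambda(a^*a) \leq |a^*a| \leq |a^*| \cdot |a| \leq \|a\|^2,
\end{equation*}
the last step using $|a|, |a^*| \leq \|a\|$ by definition of the accessory \st-norm \ref{auxnorm}. Taking square roots yields $\rsigma(a) \leq \|a\|$.

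For the left-hand inequality $\|\pi(a)\| \leq \rsigma(a)$, the key idea is to encode $\|\pi(a)\|$ as a spectral radius using the pre-C*-property of $B$. By theorem \ref{C*rs} and since $\pi$ is a \st-homomorphism,
\begin{equation*}
\|\pi(a)\|^2 = \rsigma(\pi(a))^2 = \rlambda\bigl(\pi(a)^* \pi(a)\bigr) = \rlambda\bigl(\pi(a^*a)\bigr),
\end{equation*}
where this last $\rlambda$ is computed in $B$. Theorem \ref{spechom} applied to the \st-homomorphism $\pi$ yields the spectral inclusion
\begin{equation*}
\s_B(\pi(a^*a)) \setminus \{0\} \subset \s_A(a^*a) \setminus \{0\}.
\end{equation*}

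The main obstacle is that $B$ need not be complete, so the Spectral Radius Formula \ref{specradform} cannot be invoked directly in $B$ to convert $\rlambda(\pi(a^*a))$ into a spectral supremum. The correct substitute is theorem \ref{specmodul}: in any normed algebra, the spectrum of an element contains a number whose modulus equals $\rlambda$ of that element. Applied to $\pi(a^*a) \in B$, this produces some $\lambda \in \s_B(\pi(a^*a))$ with $|\lambda| = \rlambda(\pi(a^*a))$. If $\lambda = 0$ the desired inequality is trivial; otherwise $\lambda$ also lies in $\s_A(a^*a)$ by the above inclusion, and the Spectral Radius Formula \ref{specradform} applied in the Banach algebra $A$ gives
\begin{equation*}
\rlambda(\pi(a^*a)) = |\lambda| \leq \rlambda(a^*a) = \rsigma(a)^2.
\end{equation*}
Hence $\|\pi(a)\|^2 \leq \rsigma(a)^2$, completing the proof. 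The contractivity statement is then just the composition $\|\pi(a)\| \leq \rsigma(a) \leq \|a\|$.
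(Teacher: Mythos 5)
Your proof is correct and follows essentially the same route as the paper's: both reduce $\|\pi(a)\|$ to $\rsigma(\pi(a))$ via \ref{C*rs}, compare spectra through the inclusion $\s_B(\pi(a^*a))\setminus\{0\}\subset\s_A(a^*a)\setminus\{0\}$ from \ref{spechom}, and finish with the Spectral Radius Formula in $A$. The only divergence is how the possible incompleteness of $B$ is handled: the paper simply replaces $B$ by its completion (legitimate since a pre-C*-algebra has isometric involution, so the completion is a C*-algebra by \ref{precompl}) and then applies \ref{specradform} in $B$ directly, whereas you stay in the incomplete $B$ and invoke \ref{specmodul} to produce a spectral value of modulus $\rlambda(\pi(a^*a))$; both devices are sound and cost about the same.
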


\begin{proof} We may assume that $B$ is complete. For $a \in A$,
we have by \ref{C*rs} that
\[ \| \,\pi (a) \,\| = \rsigma\bigl(\pi(a)\bigr) \leq \rsigma(a)
\leq {| \,a^*a \,| \,}^{1/2} = {\| \,a^*a \,\| \,}^{1/2} \leq \| \,a \,\|, \]
where the first inequality stems from the Spectral Radius Formula
and the fact that
$\s\bigl(\pi (a^*a)\bigr) \setminus \{ 0 \} \subset \s(a^*a) \setminus \{ 0 \}$,
cf.\ \ref{spechom}.
\end{proof}

\begin{corollary}%
Any \st-algebra homomorphism from a Banach \linebreak \st-algebra
with isometric involution to a pre-C*-algebra is contractive.
\end{corollary}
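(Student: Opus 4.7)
The plan is to derive this immediately from Theorem \ref{contraux} by observing that the accessory \st-norm collapses to the original norm in the isometric-involution case. Let $\bigl(\,A,|\cdot|\,\bigr)$ be a Banach \st-algebra with isometric involution and let $\pi : A \to B$ be a \st-algebra homomorphism into a pre-C*-algebra $\bigl(\,B,\|\cdot\|\,\bigr)$.

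First I would recall the definition of the accessory \st-norm from \ref{auxnorm}, namely
\[ \|\,a\,\| := \sup\,\{\,|\,a\,|,\,|\,a^{*}\,|\,\} \qquad (\,a \in A\,). \]
Since the involution is assumed isometric, $|\,a^{*}\,| = |\,a\,|$ for every $a \in A$, and hence the accessory \st-norm $\|\cdot\|$ on $A$ coincides pointwise with the original norm $|\cdot|$.

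Then I would apply Theorem \ref{contraux}, whose conclusion gives
\[ \|\,\pi(a)\,\| \,\leq\, \rsigma(a) \,\leq\, \|\,a\,\| \qquad (\,a \in A\,), \]
where the rightmost $\|\cdot\|$ is the accessory \st-norm on $A$. Substituting the identification from the previous paragraph, this becomes $\|\,\pi(a)\,\| \leq |\,a\,|$ for all $a \in A$, which is precisely the contractivity statement. There is no hard step: the content is entirely in \ref{contraux}, and the corollary merely records the simplification that occurs when the accessory \st-norm is superfluous.
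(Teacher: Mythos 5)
Your proof is correct and is exactly the reasoning the paper intends: the corollary is stated without proof as an immediate consequence of \ref{contraux}, and the only observation needed is that the accessory \st-norm of \ref{auxnorm} coincides with the original norm when the involution is isometric. Nothing is missing.
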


In particular:

\begin{corollary}\label{Cstarcontr}%
A \st-algebra homomorphism from a C*-algebra to a
pre-C*-algebra is contractive.
\end{corollary}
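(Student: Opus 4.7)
The plan is to derive Corollary \ref{Cstarcontr} as a direct specialisation of Theorem \ref{contraux}. Recall that a C*-algebra is by definition \ref{preC*alg} a complete pre-C*-algebra, and pre-C*-algebras are by the same definition normed \st-algebras whose involution is isometric, i.e.\ $\|\,a^*\,\| = \|\,a\,\|$ for all $a$. In particular, a C*-algebra is a Banach \st-algebra, so Theorem \ref{contraux} applies.

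First I would observe that for a C*-algebra $\bigl(\,A,\|\cdot\|\,\bigr)$, the accessory \st-norm of Definition \ref{auxnorm},
\[ \|\,a\,\|\ra := \sup\,\{\,\|\,a\,\|,\|\,a^*\,\|\,\} \qquad (a \in A), \]
coincides with the original norm $\|\cdot\|$, precisely because the involution is isometric. Thus for a \st-algebra homomorphism $\pi : A \to B$ from the C*-algebra $A$ to a pre-C*-algebra $\bigl(\,B,\|\cdot\|\,\bigr)$, Theorem \ref{contraux} yields
\[ \|\,\pi(a)\,\| \leq \rsigma(a) \leq \|\,a\,\| \quad \text{for all} \quad a \in A, \]
where the rightmost $\|\cdot\|$ is the accessory \st-norm, which equals the original norm on $A$. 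This is exactly the statement that $\pi$ is contractive.

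There is essentially no obstacle here, as the corollary is a one-line consequence of the preceding theorem together with the definitional fact that the involution on a pre-C*-algebra (and hence on a C*-algebra) is isometric. The only point deserving a moment's care is to confirm that no hypothesis on $B$ beyond being a pre-C*-algebra is needed, which is already granted by the statement of Theorem \ref{contraux}.
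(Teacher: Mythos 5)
Your proof is correct and follows exactly the paper's route: the paper deduces \ref{Cstarcontr} from \ref{contraux} via the intermediate observation that a \st-algebra homomorphism from a Banach \st-algebra with isometric involution to a pre-C*-algebra is contractive, which is precisely your remark that the accessory \st-norm \ref{auxnorm} coincides with the original norm on a C*-algebra. Nothing is missing.
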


Whence the following fundamental result.

\begin{corollary}\label{Cstarisom}%
\index{concepts}{algebra!C*-algebra!isomorphism}%
\index{concepts}{isomorphism of C*-algebras}%
\index{concepts}{C6@C*-algebra!isomorphism}%
A \st-algebra isomorphism between C*-algebras is isometric,
and shall therefore be called a \underline{C*-algebra isomorphism}
or an \underline{isomorphism of C*-algebras}.
\end{corollary}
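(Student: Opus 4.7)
The plan is to apply the preceding corollary \ref{Cstarcontr} twice, once to $\pi$ and once to its inverse.

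Let $\pi : A \to B$ be a \st-algebra isomorphism between C*-algebras. Since $\pi$ is in particular a \st-algebra homomorphism from the C*-algebra $A$ to the (pre-)C*-algebra $B$, corollary \ref{Cstarcontr} yields
\[ \| \,\pi (a) \,\| \leq \| \,a \,\| \quad \text{for all} \quad a \in A. \]
Now the inverse map $\pi^{-1} : B \to A$ is also a \st-algebra homomorphism between C*-algebras (this is immediate: the inverse of a bijective \st-algebra homomorphism is automatically a \st-algebra homomorphism, since linearity, multiplicativity and commutation with the involution are all preserved under inversion). Applying corollary \ref{Cstarcontr} again, this time to $\pi^{-1}$, we obtain
\[ \| \,\pi^{-1} (b) \,\| \leq \| \,b \,\| \quad \text{for all} \quad b \in B. \]
Substituting $b = \pi (a)$ for an arbitrary $a \in A$ gives $\| \,a \,\| \leq \| \,\pi (a) \,\|$, and combined with the first inequality we conclude $\| \,\pi (a) \,\| = \| \,a \,\|$ for all $a \in A$, i.e.\ $\pi$ is isometric.

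There is no real obstacle: the proof is a one-line consequence of corollary \ref{Cstarcontr}, the only content being the observation that the inverse of a \st-algebra isomorphism is again a \st-algebra isomorphism (hence a \st-algebra homomorphism), so the contractivity result can be used in both directions.
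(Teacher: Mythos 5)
Your argument is correct and is exactly the intended one: the paper states this as an immediate corollary of \ref{Cstarcontr}, to be applied to both $\pi$ and $\pi^{-1}$, which is precisely what you do. The observation that the inverse of a bijective \st-algebra homomorphism is again a \st-algebra homomorphism is the only (easy) point to check, and you check it.
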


See also \ref{C*injisometric} \& \ref{reminjiso} below.

Our next aim is the automatic continuity result \ref{automcont}.
The following two lemmata will find a natural reformulation at a later place.
(See \ref{reform2} and \ref{reform1} below.) \pagebreak

\begin{lemma}\label{kerclosed}%
Let $\pi : A \to B$ be a \st-algebra homomorphism from a Banach \st-algebra
$A$ to a pre-C*-algebra $B$. Then $\ker \pi$ is closed in $A$.
\end{lemma}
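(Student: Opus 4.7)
Plan. Suppose $(a_n)\subset\ker\pi$ is a sequence converging to some $a\in A$; we want to show $\pi(a)=0$. Since $\pi$ is linear and each $\pi(a_n)=0$, we have $\pi(a)=\pi(a-a_n)$ for every $n$, so the key inequality of Theorem~\ref{contraux} yields
\[ \| \,\pi(a) \,\| \;=\; \| \,\pi(a-a_n) \,\| \;\leq\; \rsigma(a-a_n). \]
The entire argument therefore reduces to showing $\rsigma(a-a_n)\to 0$ as $n\to\infty$.

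Unpacking the Pt\'ak function, together with the Spectral Radius Formula bound $\rlambda\leq|\cdot|$ and the submultiplicativity of the algebra norm,
\[ \rsigma(a-a_n)^2 \;=\; \rlambda\bigl((a-a_n)^*(a-a_n)\bigr) \;\leq\; \bigl|(a-a_n)^*\bigr|\cdot|a-a_n|. \]
Since $|a-a_n|\to 0$ by hypothesis, the task reduces to controlling the factor $|(a-a_n)^*|$.

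This is the main obstacle, because on a general Banach $*$-algebra the involution need not be continuous. When the involution is continuous, $|(a-a_n)^*|$ is in fact $\to 0$, the right-hand side above vanishes, and one concludes at once that $\pi(a)=0$ and hence $a\in\ker\pi$. In the fully general setting, one instead invokes the second half of Theorem~\ref{contraux}, namely $\|\pi(x)\|\leq\|x\|$ for the accessory $*$-norm; this shows $\pi$ is contractive and hence continuous as a map $(A,\|\cdot\|)\to B$, so $\ker\pi$ is automatically closed in the accessory $*$-norm. The closedness in the original norm is then recovered by exploiting the fact that $\ker\pi$ is a $*$-invariant two-sided ideal, allowing one to approximate $a$ together with $a^*$ simultaneously by elements of $\ker\pi$ in a way that forces both $|a-a_n|$ and $|(a-a_n)^*|$ to be small.
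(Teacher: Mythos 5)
Your treatment of the continuous-involution case is correct, and you have correctly located the obstacle in the general case: one cannot control $| \,(a-a_n)^* \,|$ when the involution is discontinuous. But your proposed escape does not close the gap. Contractivity of $\pi$ in the accessory \st-norm gives closedness of $\ker \pi$ only in the accessory \st-norm topology, which is \emph{finer} than the original one (since $\| \,x \,\| = \max \,\{ \,| \,x \,|, | \,x^* \,| \,\} \geq | \,x \,|$); a set closed in a finer topology need not be closed in the coarser one, so this is strictly weaker than what the lemma asserts. And the repair you sketch --- approximating $a$ and $a^*$ ``simultaneously'' by elements of $\ker \pi$ --- is precisely what a discontinuous involution forbids: the hypothesis provides only $| \,a - a_n \,| \to 0$, and there is no mechanism for producing approximants in $\ker \pi$ whose adjoints also converge. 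That is not a detail to be filled in; it is the whole difficulty.

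The device that actually works (and is the one the paper uses) is to apply the involution only to the \emph{fixed limit} $a$, never to the approximating sequence. Since $\ker \pi$ is a left ideal, the single element $a^*a$ lies in the closure of $\ker \pi$: if $k_n \in \ker \pi$ with $k_n \to a$, then $a^* k_n \in \ker \pi$ and $a^* k_n \to a^* a$. Now choose a sequence $(k_n)$ in $\ker \pi$ converging to $a^*a$ and use $\rlambda\bigl(\pi(x)\bigr) \leq \rlambda(x) \leq | \,x \,|$ (which involves no involution, cf.\ \ref{spechom}), to get
\[ \rlambda\bigl(\pi(a^*a)\bigr) = \rlambda\bigl(\pi(a^*a - k_n)\bigr) \leq | \,a^*a - k_n \,| \to 0, \]
whence $ {\| \,\pi(a) \,\| \,}^{2} = {\rsigma\bigl(\pi(a)\bigr) \,}^{2} = \rlambda\bigl(\pi(a^*a)\bigr) = 0$ by \ref{C*rs}. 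The quantity being estimated is the spectral radius of the image of one fixed element $a^*a$, not $\rsigma$ of a difference $a - a_n$; this is what sidesteps the adjoints of the approximants entirely.
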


\begin{proof} We may assume that $B$ is complete. We shall
use the fact that then
\[ \rlambda\bigl(\pi(a)\bigr) \leq \rlambda(a) \]
holds for all $a \in A$, cf.\ \ref{spechom}. Let $a$ belong to the closure of
$\ker \pi$. Let $x \in A$ be arbitrary. Then $xa$ also is in the closure of
$\ker \pi$. For a sequence $(k_n)$ in $\ker \pi$ converging to $xa$, we have
\[ \rlambda\bigl(\pi (xa)\bigr) = \rlambda\bigl(\pi (xa-k_n)\bigr)
\leq \rlambda(xa-k_n) \leq | \,xa-k_n \,| \to 0, \]
which implies
\[ \rlambda\bigl(\pi(xa)\bigr) = 0. \]
Choosing $x := a^*$, we obtain
\[ 0 = \rsigma\bigl(\pi(a)\bigr) = \| \,\pi (a) \,\|, \]
i.e.\ $a$ is in $\ker \pi$.
\end{proof}

\begin{lemma}\label{injclosgraph}%
Let $\pi : A \to B$ be a \st-algebra homomorphism from a normed
\st-algebra $A$ to a pre-C*-algebra $B$, which is continuous on
$A\sa$. If $\pi$ is injective, then the involution in $A$ has closed
graph.
\end{lemma}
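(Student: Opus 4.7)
The plan is to show that if $a_n \to a$ and $a_n^* \to b$ in $A$, then $b = a^*$. Replacing $a_n$ by $a_n - a$, I may assume $a_n \to 0$ and $a_n^* \to b$, and aim to prove $b = 0$. The guiding idea is to exploit the continuity of $\pi$ on $A\sa$ by constructing Hermitian elements whose limits are also Hermitian and whose $\pi$-images can be computed directly from the hypotheses.

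First I would show that $\pi(a_n) \to 0$ in $B$: since $a_n^* a_n \in A\sa$ and $a_n^* a_n \to b \cdot 0 = 0 \in A\sa$ by joint continuity of multiplication on bounded sets, continuity of $\pi$ on $A\sa$ gives $\pi(a_n)^*\pi(a_n) = \pi(a_n^* a_n) \to 0$, whence $\|\pi(a_n)\|^2 \to 0$ by the C*-property in $B$ — and $\pi(a_n)^* \to 0$ as well, by isometry of the involution in $B$. I would then consider $b^* a_n^* + a_n b$, which lies in $A\sa$ (it is Hermitian since $(b^* a_n^*)^* = a_n b$), and whose limit in $A$ is $b^* b + 0 = b^* b$, again in $A\sa$. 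Continuity on $A\sa$ therefore gives $\pi(b^* a_n^* + a_n b) \to \pi(b^* b) = \pi(b)^* \pi(b)$, while direct expansion yields $\pi(b)^* \pi(a_n)^* + \pi(a_n)\pi(b) \to 0$ by the previous step. Combining, $\pi(b)^* \pi(b) = 0$, so $\|\pi(b)\|^2 = 0$ by the C*-property, and injectivity of $\pi$ finally gives $b = 0$.

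The main obstacle is that $A\sa$ need not be closed in $A$ (the involution may be discontinuous), so the naive choice $a_n + a_n^* \in A\sa$ has limit $b$, which need not be Hermitian and thus escapes the reach of continuity on $A\sa$. The trick is to manufacture Hermitian approximants whose limit is \emph{manifestly} Hermitian: the product $b^* b$ is tautologically self-adjoint, and the bilinear combination $b^* a_n^* + a_n b$ serves as the bridge between the potentially non-Hermitian element $b$ and the Hermitian arena where the hypothesis applies.
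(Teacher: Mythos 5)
Your proof is correct, and it rests on the same mechanism as the paper's: products of the form $x^*x$ (and, in your case, the Hermitian combination $b^*a_n^* + a_n b$) land in $A\sa$, where the continuity hypothesis applies, and the C*-property of $B$ converts this back into norm control on $\pi(x)$. The paper packages the whole argument into the single estimate $\| \,\pi(c - {a_n}^*) \,\|^2 \leq \theta \cdot | \,c^* - a_n \,| \cdot | \,c - {a_n}^* \,|$ applied with $c := b$ and $c := a^*$, whereas you translate to $a = 0$ and split the same computation into two steps; the difference is organizational, not substantive.
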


\begin{proof}
Let $a = \lim a_n$, $b = \lim {a_n}^*$ in $A$. Let $\theta \geq 0$ be a
bound of $\pi$ on the real subspace $A\sa$. For all $c \in A$, and all
$n$, we then have
\begin{align*}
 {\| \,\pi \,( \,c-{a_n}^* \,) \,\| \,}^2
 & = \| \,\pi \,\bigl( \,{( \,c-{a_n}^* \,)}^* ( \,c-{a_n}^* \,) \,\bigr) \,\| \\
 & \leq \theta \cdot | \,{( \,c-{a_n}^* \,)}^* ( \,c-{a_n}^* \,) \,| \\
 & \leq \theta \cdot | \,c^*-a_n \,| \cdot | \,c-{a_n}^* \,|.
\end{align*}
Choosing $c := b$, and using that $(a_n)$ is bounded, we get
\[ \pi (b) = \lim \pi ({a_n}^*). \]
Similarly, choosing $c := a^*$, and using that $({a_n}^*)$ is
bounded, we get
\[ \pi (a^*) = \lim \pi ({a_n}^*). \]
Hence $\pi (b) = \pi (a^*)$, and this implies
$b = a^*$ if $\pi$ is injective.
\end{proof}

\medskip
We now have the following automatic continuity result.
\pagebreak

\begin{theorem}[automatic continuity]\label{automcont}%
A \st-algebra homomorphism from a Banach \st-algebra
to a pre-C*-algebra is continuous.
\end{theorem}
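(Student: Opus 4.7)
The plan is to reduce to the case of an injective homomorphism via the quotient $A/\ker\pi$, and then to promote closed-graph information about the involution (supplied by \ref{injclosgraph}) to genuine continuity through the Closed Graph Theorem. First I would replace $B$ by its completion (\ref{precompl}) so that we may assume $B$ is a C*-algebra, which is harmless for the question of continuity. The kernel $\ker\pi$ is a \st-ideal, and it is closed by \ref{kerclosed}. Hence $A/\ker\pi$ is a Banach \st-algebra under the quotient norm $|\cdot|_q$, and $\pi$ factors as $\pi = \bar\pi \circ q$, where $q : A \to A/\ker\pi$ is the continuous quotient map and $\bar\pi$ is an injective \st-homomorphism. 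It suffices to show that $\bar\pi$ is continuous.

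The key intermediate step is to show that $\bar\pi$ is contractive on $(A/\ker\pi)\sa$. Given $\bar b \in (A/\ker\pi)\sa$ and any representative $b' \in A$, the element $\pi(b') \in B$ is Hermitian (because $\bar b = \bar b^*$ forces $b' - {b'}^* \in \ker\pi$), hence normal. Therefore, using \ref{C*rl} in the C*-algebra $B$ together with the spectral inclusion of \ref{spechom},
\[
\|\bar\pi(\bar b)\| = \|\pi(b')\| = \rlambda(\pi(b')) \leq \rlambda(b') \leq |b'|.
\]
Taking the infimum over all representatives $b'$ of $\bar b$ yields $\|\bar\pi(\bar b)\| \leq |\bar b|_q$. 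With this in hand, \ref{injclosgraph} applies to $\bar\pi$, and tells us that the involution in $A/\ker\pi$ has closed graph. Since $A/\ker\pi$ is a Banach space, the Closed Graph Theorem promotes this to continuity of the involution in $A/\ker\pi$. Then \ref{involcont} implies that the accessory \st-norm on $A/\ker\pi$ is equivalent to $|\cdot|_q$, and since $\bar\pi$ is contractive in the accessory \st-norm by \ref{contraux}, $\bar\pi$ is continuous in $|\cdot|_q$. Consequently $\pi = \bar\pi \circ q$ is continuous.

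The crux, and the only place where something delicate happens, is verifying the hypothesis of \ref{injclosgraph}, namely continuity of $\bar\pi$ on the Hermitian part of the quotient. A naive direct Closed Graph argument applied to $\pi$ itself would fail, since given $a_n \to a$ and $\pi(a_n) \to y$, one cannot control $\pi(a_n^*)$ without already knowing that the involution on $A$ is continuous — which is essentially what is to be proven. Routing the argument through the quotient by $\ker\pi$ and extracting the Hermitian bound from \ref{C*rl} and \ref{spechom} is what breaks this circle.
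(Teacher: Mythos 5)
Your proposal is correct and follows essentially the same route as the paper: factor through the closed kernel (\ref{kerclosed}, \ref{quotspaces}), get contractivity of the injective factor on the Hermitian part, feed that into \ref{injclosgraph} to obtain a closed-graph (hence continuous) involution on the complete quotient, and conclude via \ref{involcont} and \ref{contraux}. Your direct $\rlambda$-computation for the Hermitian bound is just an unfolding of the paper's citation of \ref{contraux}, whose own proof is that very estimate.
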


\begin{proof}
Let $\pi : A \to B$ be a \st-algebra homomorphism from a Banach \linebreak
\st-algebra $A$ to a pre-C*-algebra $B$. The kernel of $\pi$ is closed,
by \ref{kerclosed}. This implies that $C := A \,/ \ker \pi$ is a Banach \st-algebra
in the quotient norm, cf.\ the appendix \ref{quotspaces}. Furthermore, the map
$\pi$ factors to an injective \st-algebra homomorphism $\pi_1$ from $C$ to $B$.
The mapping $\pi_1$ is contractive in the accessory \st-norm on $C$,
cf.\ \ref{contraux}. This implies that the involution in $C$ has closed graph,
cf.\ \ref{injclosgraph}. But then the involution is continuous, as $C$ is complete.
Hence the accessory \st-norm on $C$ and the quotient norm on $C$ are equivalent,
cf.\ \ref{involcont}. It follows that $\pi_1$ is continuous in the quotient norm on $C$,
which is enough to prove the theorem. 
\end{proof}

\begin{proposition}\label{bdedlambcontract}%
Let $A,B$ be normed algebras and let $\pi : A \to B$ be a continuous algebra
homomorphism. We then have
\[ \rlambda\bigl(\pi(a)\bigr) \leq \rlambda(a) \quad \text{for all} \quad a \in A. \]
\end{proposition}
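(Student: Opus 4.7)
The plan is to work directly from the definition $\rlambda(c) = \lim_{n \to \infty} |c^n|^{1/n}$ of theorem \ref{rlamlim}, applied in both $A$ and $B$, together with the hypothesis that $\pi$ is continuous (hence bounded) and multiplicative.

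First I would fix $a \in A$ and pick a constant $C \geq 0$ with $|\pi(x)| \leq C \,|x|$ for all $x \in A$, which exists since $\pi$ is continuous and linear. Since $\pi$ is an algebra homomorphism, $\pi(a)^n = \pi(a^n)$ for every integer $n \geq 1$, so
\[ |\pi(a)^n| = |\pi(a^n)| \leq C \cdot |a^n|. \]
Taking $n$-th roots yields
\[ |\pi(a)^n|^{1/n} \leq C^{1/n} \cdot |a^n|^{1/n}. \]

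Next I would pass to the limit $n \to \infty$. By the reminder \ref{exponential}, $C^{1/n} \to 1$ if $C > 0$ (and the bound is trivial if $C = 0$, since then $\pi \equiv 0$). By theorem \ref{rlamlim}, $|a^n|^{1/n} \to \rlambda(a)$ and $|\pi(a)^n|^{1/n} \to \rlambda(\pi(a))$. Therefore
\[ \rlambda(\pi(a)) \leq 1 \cdot \rlambda(a) = \rlambda(a), \]
which is the desired inequality.

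There is no real obstacle here; the only minor care is the edge case $C = 0$ (handled trivially) and to note that we may interpret the inequality inside $B$ whether or not $B$ has a unit, since $\rlambda$ was defined for elements of a normed algebra in \ref{rldef} without reference to a unit. This proof generalizes the Banach-algebra fact \ref{spechom} combined with the Spectral Radius Formula, without needing completeness of $A$ or $B$, and without passing through the spectrum at all.
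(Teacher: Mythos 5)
Your proof is correct and follows essentially the same route as the paper's: bound $|\pi(a^n)|$ by $c\,|a^n|$, take $n$-th roots, and let $n \to \infty$ using \ref{rlamlim} and \ref{exponential}. The only cosmetic difference is that the paper bounds $\rlambda\bigl(\pi(a)\bigr)$ by $|\pi(a)^n|^{1/n}$ for each fixed $n$ via the infimum characterisation \ref{rldef} and then passes to the limit only on the right-hand side, whereas you take limits on both sides; both are equally valid.
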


\begin{proof}
Let $c > 0$ with
\[ | \,\pi(a) \,| \leq c \,| \,a \,| \quad \text{for all} \quad a \in A. \]
For $a \in A$ and all integers $n \geq 1$, we then have
\[ \rlambda\bigl(\pi(a)\bigr) \leq { \bigl| \,{\pi(a) \,}^n \,\bigr| \,}^{1/n}
= {\bigl| \,\pi \bigl( {a \,}^n \bigr) \,\bigr| \,}^{1/n}
\leq {c \,}^{1/n} { \,\bigl| \,{a \,}^n \,\bigr| \,}^{1/n} \to \rlambda(a). \qedhere \]
\end{proof}

\begin{corollary}\label{commbdedcontr}%
Let $\pi$ be a continuous \st-algebra homomorphism from a normed
\st-algebra $A$ to a pre-C*-algebra $B$. For a \underline{normal}
\linebreak element $b$ of $A$, we then have
\[ \| \,\pi(b) \,\| \leq \rlambda(b) \leq | \,b \,|. \]
It follows that if $A$ is commutative then $\pi$ is contractive.
\end{corollary}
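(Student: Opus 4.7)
My plan is to chain together two earlier results to sandwich $\|\pi(b)\|$ between $\rlambda(b)$ and $|b|$, then invoke the characterisation of commutativity by normality.

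First I would observe that since $\pi$ is a \st-algebra homomorphism and $b$ is normal in $A$, the image $\pi(b)$ is normal in $B$: indeed $\pi(b)^*\pi(b) = \pi(b^*b) = \pi(bb^*) = \pi(b)\pi(b)^*$. Because $B$ is a pre-C*-algebra, theorem \ref{C*rl} applies to the normal element $\pi(b)$, giving the identity $\|\pi(b)\| = \rlambda\bigl(\pi(b)\bigr)$. This is the crucial step that uses the C*-property on the target side.

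Next, since $\pi$ is continuous, proposition \ref{bdedlambcontract} yields $\rlambda\bigl(\pi(b)\bigr) \leq \rlambda(b)$. The inequality $\rlambda(b) \leq |b|$ is immediate from the definition \ref{rldef}. Concatenating these estimates gives the desired chain $\|\pi(b)\| \leq \rlambda(b) \leq |b|$.

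For the last sentence, if $A$ is commutative, then by the proposition following the observation on decomposition into Hermitian parts, every element of $A$ is normal. Consequently the estimate $\|\pi(b)\| \leq |b|$ just established holds for every $b \in A$, so $\pi$ is contractive.

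There is no real obstacle here; the proof is a short assembly of \ref{C*rl}, \ref{bdedlambcontract}, and \ref{rldef}. The only small point worth noting is that we do not need $A$ itself to be a pre-C*-algebra, nor do we need the involution on $A$ to be isometric — the C*-property is used only on the target $B$, through \ref{C*rl}, which is precisely what lets us replace the norm $\|\pi(b)\|$ by a spectral-radius quantity that behaves well under continuous homomorphisms.
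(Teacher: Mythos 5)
Your proof is correct and follows exactly the paper's route: the corollary is stated immediately after \ref{bdedlambcontract}, and the paper's proof is just ``This follows now from \ref{C*rl}'', i.e.\ the same chain $\| \,\pi(b) \,\| = \rlambda\bigl(\pi(b)\bigr) \leq \rlambda(b) \leq | \,b \,|$ using normality of $\pi(b)$. Your closing remarks about where the C*-property is actually used are accurate as well.
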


\begin{proof}
This follows now from \ref{C*rl}.
\end{proof}

\medskip
Hence the following enhancement of \ref{automcont}.

\begin{corollary}\label{commBcontr}%
A \st-algebra homomorphism from a commutative
Banach \st-algebra to a pre-C*-algebra is contractive.
\pagebreak
\end{corollary}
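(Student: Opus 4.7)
The plan is to combine the two immediately preceding results, \ref{automcont} and \ref{commbdedcontr}, which together yield the statement with essentially no additional work.

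First I would invoke the automatic continuity theorem \ref{automcont}: since the source is a Banach \st-algebra (commutativity is not needed here) and the target is a pre-C*-algebra, the given \st-algebra homomorphism $\pi$ is automatically continuous. This is the deep input, as it relies on the closed graph argument for the involution through \ref{kerclosed} and \ref{injclosgraph}.

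Next I would apply \ref{commbdedcontr}: this result says precisely that a \emph{continuous} \st-algebra homomorphism from a normed \st-algebra $A$ to a pre-C*-algebra satisfies $\|\pi(b)\| \leq \rlambda(b) \leq |b|$ for every normal $b \in A$, and in particular is contractive when $A$ is commutative. Since $A$ is commutative by hypothesis, every element of $A$ is normal (cf.\ the proposition following \ref{Hermprod}), so the bound $\|\pi(a)\| \leq |a|$ holds for all $a \in A$, which is the desired contractivity.

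The only genuine content, beyond chaining these two statements, is checking that the hypotheses of \ref{commbdedcontr} are met; this is immediate since \ref{automcont} supplies continuity and commutativity of $A$ is assumed. There is no real obstacle here: the substance was already expended in proving \ref{automcont} (for continuity) and \ref{C*rl} (for the estimate $\|\pi(b)\| \leq \rlambda(b)$ on normal elements underlying \ref{commbdedcontr}).
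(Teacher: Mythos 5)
Your proposal is correct and matches the paper's intended argument exactly: the corollary is stated immediately after \ref{commbdedcontr} with the remark that it is an ``enhancement of \ref{automcont}'', i.e.\ one obtains continuity from \ref{automcont} and then contractivity from the commutative case of \ref{commbdedcontr}. Nothing further is needed.
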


\clearpage


\section{The Boundary of the Spectrum}%
\label{boundary}

We proved in \ref{specsubalg} that if $B$ is a subalgebra of an algebra
$A$, then
\[ \s _A (b) \setminus \{0\} \subset \s _B (b) \setminus \{0\}
\quad \text{for all} \quad b \in B. \]
Also, if $B$ furthermore is \twiddle-unital in $A$, then
\[ \s _A (b) \subset \s _B (b) \quad \text{for all} \quad b \in B. \]
We set out to show a partial converse inclusion, see \ref{spbdry} below.

\begin{theorem}\label{invbded}%
Let $A$ be a unital Banach algebra. Assume that $a \in A$ is the limit
of a sequence $\{ \,a_n \,\}_{\,n \geq 1}$ of invertible elements of $A$.
The sequence $\{ \,{a_n \,}^{-1} \,\}_{\,n \geq 1}$ converges in $A$
to an inverse of $a$ if and only the sequence in question is bounded.
\end{theorem}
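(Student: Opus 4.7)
The plan is as follows. One direction is trivial: if $\{ \,{a_n}^{-1} \,\}$ converges in $A$, then it is bounded, since every convergent sequence in a normed space is bounded.

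For the non-trivial direction, assume that $\{ \,{a_n}^{-1} \,\}$ is bounded, say by some constant $M > 0$. My first step would be to show that $\{ \,{a_n}^{-1} \,\}$ is a Cauchy sequence. This rests on the elementary identity
\[ {a_n}^{-1} - {a_m}^{-1} = {a_n}^{-1} \,( a_m - a_n ) \,{a_m}^{-1}, \]
which is verified by left-multiplying by $a_n$ and right-multiplying by $a_m$, or directly from $a_n {a_n}^{-1} = e = a_m {a_m}^{-1}$. Using submultiplicativity of the algebra norm and the bound $M$, this yields
\[ \bigl| \,{a_n}^{-1} - {a_m}^{-1} \,\bigr| \leq M^{\,2} \,| \,a_m - a_n \,|. \]
Since $\{ a_n \}$ converges, it is a Cauchy sequence, and so $\{ \,{a_n}^{-1} \,\}$ is a Cauchy sequence as well.

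Since $A$ is a Banach algebra, $\{ \,{a_n}^{-1} \,\}$ converges to some element $b \in A$. The final step is to verify that $b$ is an inverse of $a$. This is immediate from the joint continuity of multiplication: passing to the limit in $a_n {a_n}^{-1} = e$ and in ${a_n}^{-1} a_n = e$ gives $ab = ba = e$, so $b = a^{-1}$, which also shows that $a$ is invertible.

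There is no real obstacle here; the only delicate point is noticing the algebraic identity for the difference of two inverses, which plays the role of a discrete resolvent identity, and observing that completeness of $A$ is essential for upgrading the Cauchy property to convergence. (This result will presumably be used in the next paragraph to locate $\s _A (b)$ relative to $\s _B (b)$ via boundary considerations, since a point on the topological boundary of the spectrum can be approached from the resolvent set, while the resolvents cannot remain bounded there.)
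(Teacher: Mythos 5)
Your proof is correct and follows essentially the same route as the paper's: the identity ${a_n}^{-1}-{a_m}^{-1}={a_n}^{-1}(a_m-a_n){a_m}^{-1}$, the resulting Cauchy estimate with the square of the bound, completeness of $A$, and continuity of multiplication to identify the limit as an inverse of $a$. Nothing is missing.
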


\begin{proof}
The ``only if'' part is trivial. So assume that the sequence is bounded.
Let $c > 0$ with $| \,{a_n \,}^{-1} \,| \leq c$ for all integers $n \geq 1$.
For all integers $m, n \geq 1$, we then have
\[ | \,{a_m \,}^{-1} - {a_n \,}^{-1} \,| =
| \,{a_m \,}^{-1} \,( \,a_n - a_m \,) \,{a_n \,}^{-1} \,|
\leq c^{\,2} \,| \,a_n - a_m \,|. \]
This shows that $\{ \,{a_n \,}^{-1} \,\}_{\,n \geq 1}$ is a Cauchy
sequence, and thus convergent to some limit $b \in A$. The
element $b$ is an inverse of $a$ by continuity of multiplication.
\end{proof}

\medskip
We shall now consider the algebra $\blop(V)$ of bounded
linear operators on a complex normed space $V$, cf.\ \ref{algbdedop}.

\begin{definition}[the approximate point spectrum, $\s_{ap} (a)$]%
\index{concepts}{approximate point spectrum}%
\index{symbols}{s25@$\s_{ap}(a)$}%
\index{concepts}{spectrum!of a bounded operator!approximate point}%
Let $V$ be a complex normed vector space. Let $a \in \blop(V)$
be a bounded linear operator on $V$. The
\underline{approximate point spectrum} of $a$ is the set $\s_{ap}(a)$
of those complex numbers $\lambda$ for which there is a sequence
$\{ \,x_n \,\}$ of \underline{unit} \underline{vectors} in $V$ with
\[ ( \,\lambda \mathds{1} - a \,) \,x_n \to 0. \]
\end{definition}

\begin{observation}\label{apsp}
It is easily seen that (under the above assumptions), if a complex number
$\lambda$ is in the approximate point spectrum $\s_{ap}(a)$ of $a$, then
the bounded linear operator $\lambda \mathds{1} - a$ cannot have any
bounded left inverse. In particular, we then have
\[ \s_{ap}(a) \subset \s_{\blop(V)} (a). \]
In other words, the approximate point spectrum of a bounded linear
operator on a complex normed space is part of the spectrum of the
operator with respect to the algebra of bounded linear operators on $V$.%
\pagebreak
\end{observation}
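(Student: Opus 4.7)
The plan is to argue by contradiction for the first assertion, and then deduce the spectral inclusion as an immediate consequence. The proof should be very short: the key mechanism is that a bounded left inverse would have to map a sequence converging to zero to a sequence of unit vectors, which is impossible.

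More precisely, suppose $\lambda \in \s_{ap}(a)$, and pick a sequence $(x_n)$ of unit vectors with $(\lambda \mathds{1} - a)x_n \to 0$ in $V$. Assume for contradiction that $\lambda \mathds{1} - a$ admits a bounded left inverse $b \in \blop(V)$, so that $b(\lambda \mathds{1} - a) = \mathds{1}$. Then for every $n$ one has
\[
1 = \|x_n\| = \|b(\lambda \mathds{1} - a)x_n\| \leq \|b\| \cdot \|(\lambda \mathds{1} - a)x_n\|,
\]
and the right hand side tends to $0$ as $n \to \infty$, which is absurd. Hence $\lambda \mathds{1} - a$ has no bounded left inverse.

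For the inclusion $\s_{ap}(a) \subset \s_{\blop(V)}(a)$, one observes that if $V = \{0\}$ there are no unit vectors and $\s_{ap}(a) = \varnothing$, so the inclusion is trivial. Otherwise $\blop(V)$ is unital with unit $\mathds{1}$, and the spectrum in $\blop(V)$ is computed in $\blop(V)$ itself, cf.\ \ref{specunit}. If $\lambda \mathds{1} - a$ were invertible in $\blop(V)$, it would in particular admit a bounded left inverse, contradicting what has just been shown. Thus $\lambda \mathds{1} - a$ is not invertible in $\blop(V)$, which by definition means $\lambda \in \s_{\blop(V)}(a)$.

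I expect no serious obstacle: the argument is a one-line estimate plus a definitional unwinding. The only point meriting attention is the trivial case $V = \{0\}$, which is handled separately as above.
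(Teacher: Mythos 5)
Your argument is correct and is precisely the standard estimate that the paper leaves to the reader when it says the observation is ``easily seen'': a bounded left inverse $b$ would force $1 = \|\,x_n\,\| \leq \|\,b\,\| \cdot \|\,(\lambda \mathds{1} - a)x_n\,\| \to 0$. The remark about the trivial case $V = \{0\}$ is harmless but not needed.
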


\begin{definition}[the boundary $\partial \,S$ of a set $S$]%
\index{symbols}{d@$\partial \,S$}\index{concepts}{boundary}%
If $S$ is a subset of a topological space $X$, let
$\partial \,S := \overline{S} \cap \overline{X \setminus S}$
denote the \underline{boundary} of $S$.
\end{definition}

As a partial converse to observation \ref{apsp} above, one has:

\begin{theorem}\label{bdryspap}%
The boundary of the spectrum of a bounded linear operator on a complex
\underline{Banach} space is part of the approximate point spectrum
of the operator. More precisely, let $V$ be a complex \underline{Banach}
space. Let $a \in \blop(V)$ be a bounded linear operator on $V$.
One then has
\[ \partial \,\s_{\blop(V)} (a) \subset \s_{ap} (a), \]
the boundary being taken with respect to the complex plane.
\end{theorem}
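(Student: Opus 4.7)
The plan is to proceed by contradiction, exploiting the characterisation of $\lambda \notin \s_{ap}(a)$ as a bounded-below condition, and then invoking the preceding theorem \ref{invbded} on limits of inverses.

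First I would fix $\lambda \in \partial \,\s_{\blop(V)}(a)$ and assume for contradiction that $\lambda \notin \s_{ap}(a)$. The negation of membership in the approximate point spectrum is equivalent to the statement that there exists a constant $c > 0$ such that
\[ \| \,(\lambda \mathds{1} - a)\,x \,\| \geq c \,\| \,x \,\| \quad \text{for all} \quad x \in V, \]
as is immediate from the definition (otherwise one could normalise to produce the required sequence of unit vectors).

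Next, since $\lambda$ lies on the boundary of the spectrum, there is a sequence $\{ \lambda_n \}$ in $\mathds{C} \setminus \s_{\blop(V)}(a)$ with $\lambda_n \to \lambda$; thus $b_n := \lambda_n \mathds{1} - a$ is invertible in $\blop(V)$ for every $n$. To apply theorem \ref{invbded} (note that $\blop(V)$ is a unital Banach algebra because $V$ is complete, cf.\ \ref{algbdedop}), the key step is to bound $\{ \| \,{b_n \,}^{-1} \,\| \}$. For any unit vector $y \in V$, write $x_n := {b_n \,}^{-1} y$, so that $\| \,b_n x_n \,\| = 1$. Then
\[ c \,\| \,x_n \,\| \leq \| \,(\lambda \mathds{1} - a) x_n \,\|
\leq \| \,(\lambda - \lambda_n) x_n \,\| + \| \,b_n x_n \,\|
= | \,\lambda - \lambda_n \,| \cdot \| \,x_n \,\| + 1, \]
so $\bigl( c - | \,\lambda - \lambda_n \,| \bigr) \,\| \,x_n \,\| \leq 1$. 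For $n$ large enough that $| \,\lambda - \lambda_n \,| \leq c/2$, this gives $\| \,x_n \,\| \leq 2/c$ uniformly in the choice of unit vector $y$, hence $\| \,{b_n \,}^{-1} \,\| \leq 2/c$ for all sufficiently large $n$. Together with the finitely many remaining terms, the full sequence is bounded.

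Finally, since $b_n \to \lambda \mathds{1} - a$ in $\blop(V)$ and $\{ \,{b_n \,}^{-1} \}$ is bounded, theorem \ref{invbded} yields that $\lambda \mathds{1} - a$ is invertible in $\blop(V)$, i.e.\ $\lambda \notin \s_{\blop(V)}(a)$. This contradicts $\lambda \in \partial \,\s_{\blop(V)}(a) \subset \s_{\blop(V)}(a)$, the inclusion holding because the spectrum is closed by \ref{specradform}. I do not expect a serious obstacle here; the one point requiring care is ensuring that the bound $\| \,x_n \,\| \leq 2/c$ is obtained uniformly in the unit vector $y$, so that it really controls the operator norm $\| \,{b_n \,}^{-1} \,\|$ rather than merely the action on a single vector.
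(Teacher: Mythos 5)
Your proof is correct and rests on the same pivot as the paper's, namely theorem \ref{invbded} applied to the sequence $\{\,{(\lambda_n \mathds{1} - a)}^{-1}\,\}$; you have simply run the argument in contrapositive form, replacing the paper's explicit construction of approximate eigenvectors from the unboundedness of the inverses by the bounded-below characterisation of $\lambda \notin \s_{ap}(a)$, which then forces those inverses to be bounded and yields the contradiction. The uniformity of the bound $\| \,x_n \,\| \leq 2/c$ over unit vectors $y$, which you rightly flag as the delicate point, does go through exactly as you wrote it.
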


\begin{proof}
Let $\lambda \in \partial \,\s_{\blop(V)} (a)$, and let $\{ \,\lambda_n \,\}$
be a sequence of complex numbers converging to $\lambda$, such that
$( \,\lambda_n \mathds{1} - a \,)$ is invertible in $\blop(V)$ for all $n$.
Since $\blop(V)$ is a Banach algebra, Theorem \ref{invbded} above
implies that the sequence $\bigl\{ \,{( \,\lambda_n \mathds{1} - a \,)}^{\,-1} \,\bigr\}$
must be unbounded. By going over to a subsequence, we can assume
that $\bigl| \,{( \,\lambda_n \mathds{1} - a \,)}^{\,-1} \,\bigr| \geq n + 1$
for all integers $n \geq 1$. There then exists a sequence
$\{ \,y_n \,\}$ of unit vectors in $V$ with
$\alpha_n := \bigl| \,{( \,\lambda_n \mathds{1} - a \,)}^{\,-1}  \,y_n \,\bigr| \geq n$
for all integers $n \geq 1$. Consider the unit vectors 
\[ x_n := {\alpha_n}^{\,-1} \,{( \,\lambda_n \mathds{1} - a \,)}^{\,-1} \,y_n \in V. \]
We then have
\begin{align*}
( \,\lambda \mathds{1} - a \,) \,x_n & =
 ( \,\lambda - \lambda_n \,) \,x_n + ( \,\lambda_n \mathds{1} - a \,) \,x_n \\
 & = ( \,\lambda - \lambda_n \,) \,x_n + {\alpha_n}^{\,-1} \,y_n \quad \to \quad 0.
 \qedhere
\end{align*}
\end{proof}

\begin{proposition}\label{bdrysub}%
Let $C, D$ be two subsets of a topological space $X$. Assume that
$\partial \,D \subset C \subset D$. Then $\partial \,D \subset \partial \,C$.
\end{proposition}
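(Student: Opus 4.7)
The plan is to unfold the definition of the boundary and exploit the two hypotheses separately: $\partial D \subset C$ handles the closure of $C$, while $C \subset D$ handles the closure of the complement. Concretely, let $x \in \partial D$, so that $x \in \overline{D} \cap \overline{X \setminus D}$ by the definition of boundary.

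First I would show $x \in \overline{C}$. This is immediate because the hypothesis $\partial D \subset C$ gives $x \in C \subset \overline{C}$; no limit argument is needed here.

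Next I would show $x \in \overline{X \setminus C}$. From $C \subset D$ one gets the reverse inclusion on complements, $X \setminus D \subset X \setminus C$, and taking closures yields $\overline{X \setminus D} \subset \overline{X \setminus C}$. Since $x \in \overline{X \setminus D}$ by assumption, we conclude $x \in \overline{X \setminus C}$.

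Combining the two, $x \in \overline{C} \cap \overline{X \setminus C} = \partial C$, which is the desired inclusion. There is no real obstacle here; it is a pure definition-chasing argument, and the only subtlety is to use each of the two hypotheses in exactly the right place (the stronger inclusion $\partial D \subset C$ for membership in $C$, and the weaker inclusion $C \subset D$ for the closure-of-complement part).
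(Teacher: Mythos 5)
Your argument is correct and is essentially identical to the paper's proof: both use $\partial D \subset C \subset \overline{C}$ for the first half and $\overline{X \setminus D} \subset \overline{X \setminus C}$ (from $C \subset D$) for the second, then intersect. No issues.
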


\begin{proof}
We have $\partial \,D \subset C \subset \overline{C}$ as well as
$\partial \,D \subset \overline{X \setminus D} \subset \overline{X \setminus C}$.
Together, these two facts make that $\partial \,D \subset \partial \,C$.
\end{proof}

\medskip
For example $D$ the closed unit disc $\{ \,z \in \mathds{C} : | \,z \,| \leq 1 \,\}$,
and $C$ the annulus $\{ \,z \in \mathds{C} : \frac12 \leq | \,z \,| \leq 1 \,\}$
satisfy the requirements. Please note that in this case $\partial \,D$ is a
proper subset of $\partial \,C$.

\begin{theorem}[\v{S}ilov]\label{spbdry}%
\index{concepts}{Silov@\v{S}ilov}%
\index{concepts}{Theorem!Silov@\v{S}ilov}%
Let $B$ be a complete subalgebra of a normed algebra $A$,
and let $b \in B$. Then
\[ \partial \,\s _B (b) \subset \partial \,\s _A (b) \cap \,\s _A (b). \pagebreak \]
\end{theorem}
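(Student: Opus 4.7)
The plan is to apply Theorem \ref{invbded} inside the Banach algebra $\tld{B}$: as $\lambda_n \to \lambda \in \partial \s_B(b)$ from outside $\s_B(b)$, the inverses $c_n := (\lambda_n e - b)^{-1} \in \tld{B}$ must blow up in norm, since otherwise $\lambda e - b$ would be invertible in $\tld{B}$. The blowing-up sequence, once normalized, rules out the existence of an inverse of $\lambda e - b$ in $\tld{A}$, forcing $\lambda \in \s_A(b)$; a simple approximation remark then upgrades this to $\lambda \in \partial \s_A(b)$. I prefer verifying the two defining conditions of $\partial \s_A(b)$ directly to invoking Proposition \ref{bdrysub}, because the inclusion $\s_A(b) \subset \s_B(b)$ needed there does not always hold (it can fail at $0$).

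More concretely, let $\lambda \in \partial \s_B(b)$. Since $\s_B(b)$ is closed (openness of the invertibles in $\tld{B}$, Theorem \ref{GL(A)}), one has $\lambda \in \s_B(b)$ and a sequence $\lambda_n \to \lambda$ with $\lambda_n \notin \s_B(b)$. Set $c_n := (\lambda_n e - b)^{-1} \in \tld{B}$. Theorem \ref{invbded} forces $(c_n)$ to be unbounded, for otherwise it would converge to an inverse of $\lambda e - b$ in $\tld{B}$. Pass to a subsequence with $| \,c_n \,| \to \infty$ and normalize to $u_n := c_n / | \,c_n \,|$, so $| \,u_n \,| = 1$; then
\[ u_n \,(\lambda e - b) = u_n \,(\lambda_n e - b) + (\lambda - \lambda_n) \,u_n = \frac{e}{| \,c_n \,|} + (\lambda - \lambda_n) \,u_n \longrightarrow 0 \text{ in } \tld{B}. \]
If $\lambda \notin \s_A(b)$, let $d \in \tld{A}$ be an inverse of $\lambda e - b$; pulling the above relation through the canonical imbedding $\tld{B} \hookrightarrow \tld{A}$ and multiplying on the right by $d$ gives $u_n \to 0$ in $\tld{A}$, contradicting $| \,u_n \,| = 1$. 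Hence $\lambda \in \s_A(b)$.

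For the remaining defining condition, Proposition \ref{specsubalg} gives $\s_A(b) \setminus \{ 0 \} \subset \s_B(b) \setminus \{ 0 \}$, so any $\lambda_n \neq 0$ outside $\s_B(b)$ is automatically outside $\s_A(b)$. If $\lambda \neq 0$, the condition $\lambda_n \neq 0$ holds for large $n$; if $\lambda = 0$, then $0 \in \s_B(b)$ already forces $\lambda_n \neq 0$ for every $n$. Either way, $\lambda$ is a limit of points outside $\s_A(b)$, which combined with $\lambda \in \s_A(b)$ yields $\lambda \in \partial \s_A(b)$. The delicate moment is the contradiction step: under the canonical imbedding the formal unit of $\tld{B}$ is identified with that of $\tld{A}$ exactly when $B$ is \twiddle-unital in $A$, and outside that situation one would need a brief reconciliation to verify that the transported relation still involves the element $\lambda e - b$ whose inverse $d$ is hypothesized.
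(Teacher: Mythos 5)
Your proof is correct, and at bottom it is the paper's own argument with the operator-theoretic packaging stripped away: the paper feeds Theorem \ref{invbded} through the left regular representation to show that $\partial\,\s_B(b)$ lies in the approximate point spectrum of $L_{\textstyle{b}}|_{\text{\small{$\tld{B}$}}}$ (\ref{bdryspap}) and that membership in the approximate point spectrum obstructs invertibility in $\blop(\tld{A})$ (\ref{apsp}); your normalised $u_n$ with $u_n\,(\lambda e - b) \to 0$ are precisely the approximate divisors of zero this produces, exhibited directly in the algebra. Two genuine differences are worth recording. First, your endgame is cleaner: the paper reaches $\partial\,\s_A(b)$ through proposition \ref{bdrysub} and must split into $\lambda \neq 0$ (working in the metric space $\mathds{C}\setminus\{0\}$) and $\lambda = 0$; your direct check that $\lambda \in \s_A(b)$ is a limit of the $\lambda_n \notin \s_A(b)$, via $\s_A(b)\setminus\{0\} \subset \s_B(b)\setminus\{0\}$, avoids that. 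Second, the delicate moment you flag at the end is real but evaporates on contact: for $x \in \tld{B}$ one has $x\,e_{\tld{B}} = x = x\,e_{\tld{A}}$ under the canonical imbedding, so $x\,(\lambda e_{\tld{B}} - b) = \lambda x - xb = x\,(\lambda e_{\tld{A}} - b)$ whether or not the two units coincide, and the transported relation really does involve the element whose inverse $d$ you hypothesise. What you should make explicit instead is the norm: to contradict $|\,u_n\,| = 1$ with $u_n \to 0$ in $\tld{A}$, the norm on $\tld{B}$ in which \ref{invbded} is applied must be comparable to the one inherited from $\tld{A}$. The paper's device settles this in one line: identify $\tld{B}$ with $\mathds{C}e_{\tld{A}} + B \subset \tld{A}$ and give it the inherited norm; this is complete because $B$ is complete and of co-dimension at most one in it (cf.\ the proofs of \ref{Banachunitis} and \ref{quotspaces}), so \ref{invbded} applies there and $|\,u_n\,| = 1$ contradicts $u_n \to 0$ outright.
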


\begin{proof}
Consider the left regular representation $L : a \mapsto L\tla$ $(a \in \tld{A})$
of $\tld{A}$ on itself, cf.\ \ref{leftreg}. Then the left regular representation of
$\tld{B}$ on itself is given by
$b \mapsto L_{\textstyle{b}} | _{\text{\small{$\tld{B}$}}}$ $(b \in \tld{B})$,
because we may consider $\tld{B}$ as a subalgebra of $\tld{A}$ via the canonical
imbedding, cf.\ \ref{canimb}. We may equip $\tld{B}$ with the norm inherited from
$\tld{A}$. Then $\tld{B}$ will be complete, cf.\ the proof of \ref{Banachunitis}.
Let $b \in B\vphantom{\tld{B}}$ be fixed. We have
\begin{align*}
\partial \,\s_B (b)
 & = \partial \,\s_{\blop(\tld{B})} (L_{\textstyle{b}} | _{\text{\small{$\tld{B}$}}})
 \tag*{(by \ref{specunit} \& \ref{spleftregbded})} \\
 & \subset \s_{ap} (L_{\textstyle{b}} | _{\text{\small{$\tld{B}$}}})
 \tag*{(by \ref{bdryspap})} \\
 & \subset \s_{ap} (L_{\textstyle{b}}) \tag*{(obvious)} \\
 & \subset \s_{\blop(\tld{A})} (L_{\textstyle{b}}) = \s_A (b),
 \tag*{(by \ref{apsp}, \ref{spleftregbded} \& \ref{specunit})}
\end{align*}
which makes that
\[ \partial \,\s_B (b) \subset \s_A (b). \]
We also have
\[ \s _A (b) \setminus \{0\} \subset \s _B (b) \setminus \{0\}, \]
cf.\ \ref{specsubalg}. Using these two facts, it can be shown that any
$\lambda$ in $\partial \,\s_B (b)$ belongs as well to $\partial \,\s_A (b)$.
Indeed, for $\lambda \neq 0$, this follows from proposition \ref{bdrysub} above,
by looking at the metric space $\mathds{C} \setminus \{ 0 \}$.
For $\lambda = 0$, this also follows from proposition \ref{bdrysub} above,
using the fact that in this case $0 \in \s_B (b)$ and $0 \in \s_A (b)$.
\end{proof}

\begin{theorem}\label{nointerior}%
Let $B$ be a complete subalgebra of a normed algebra $A$, and let $b \in B$.
If $\s _B (b)$ has no interior (e.g.\ is real or is contained in the unit circle), then
\[ \s _B (b) \subset \s _A (b) \cap \,\partial \,\s _A (b). \]
\end{theorem}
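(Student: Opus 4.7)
The plan is to deduce this as an immediate consequence of the \v{S}ilov theorem \ref{spbdry} together with a simple point-set-topological observation about closed sets with empty interior.

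First, I would note that because $B$ is a complete subalgebra of $A$, it is itself a Banach algebra, so by the Spectral Radius Formula \ref{specradform} the spectrum $\s_B(b)$ is a compact (hence closed) subset of $\mathds{C}$. The key topological fact I would then invoke is that a closed subset $S$ of a topological space $X$ with empty interior coincides with its own boundary: since $\mathrm{int}(S) = \varnothing$, the complement $X \setminus S$ is dense in $X$, and so
\[ \partial S = \overline{S} \cap \overline{X \setminus S} = S \cap X = S. \]
Applied with $X = \mathds{C}$ and $S = \s_B(b)$, this yields $\s_B(b) = \partial \,\s_B(b)$.

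Finally I would invoke \v{S}ilov's theorem \ref{spbdry}, which gives $\partial \,\s_B(b) \subset \partial \,\s_A(b) \subset \s_A(b)$. Combining the two inclusions produces
\[ \s_B(b) = \partial \,\s_B(b) \subset \s_A(b), \]
which is the desired conclusion. For the parenthetical remark, one just checks that the given examples have empty interior in $\mathds{C}$: a subset of $\mathds{R}$ or of the unit circle cannot contain an open disc.

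There is no real obstacle here — the work has already been done in \v{S}ilov's theorem. The only thing to be careful about is to justify why $\s_B(b)$ is closed (which is where completeness of $B$ enters, via \ref{specradform}) and to cleanly state the ``closed $+$ empty interior $\Rightarrow$ equals its boundary'' lemma before applying \ref{spbdry}.
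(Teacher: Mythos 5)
Your strategy is the one the paper itself uses: reduce to \v{S}ilov's theorem \ref{spbdry} by observing that a closed set with empty interior coincides with its boundary. The first half of your chain, $\s_B(b) = \partial\,\s_B(b) \subset \partial\,\s_A(b)$, is correct, and your care about why $\s_B(b)$ is closed (completeness of $B$, hence \ref{specradform}) is well placed.

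The weak link is the final inclusion $\partial\,\s_A(b) \subset \s_A(b)$, which you assert without comment. For an arbitrary subset $S$ of $\mathds{C}$ one only has $\partial\,S \subset \overline{S}$; the inclusion $\partial\,S \subset S$ is equivalent to $S$ being closed. Here $A$ is merely a normed algebra: its group of invertible elements need not be open, so $\s_A(b)$ need not be closed a priori, and the argument you use for $\s_B(b)$ is simply not available for $A$. (Knowing that $\s_A(b)$ sits inside the compact set $\s_B(b) \cup \{0\}$ by \ref{specsubalg} bounds it but does not close it; and arguing that any limit point of $\s_A(b)$ lies in $\s_B(b)$ only brings you back to the inclusion you are trying to prove.) The paper's proof takes the same route but first checks, using $\s_A(b)\setminus\{0\} \subset \s_B(b)\setminus\{0\}$, that $\s_A(b)$ has no interior either, before writing $\partial\,\s_A(b) = \s_A(b)$ --- and even there the passage from the boundary back to the set tacitly uses closedness. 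The clean repair is to bypass $\partial\,\s_A(b)$ altogether: the proof of \ref{spbdry} establishes on the way the inclusion $\partial\,\s_B(b) \subset \s_A(b)$ (via the approximate point spectrum of the left regular representation on $\tld{A}$), and combining this with $\s_B(b) = \partial\,\s_B(b)$ gives the theorem at once. If you insist on quoting only the statement of \ref{spbdry}, you owe an argument that $\s_A(b)$ is closed.
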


\begin{proof}
If $\s _B (b)$ has no interior, then by the preceding Theorem \ref{spbdry} of \v{S}ilov, we get
\[ \s _B (b) = \partial \,\s _B (b) \subset \partial \,\s _A (b) \cap \s _A (b). \qedhere \]
\end{proof}

\medskip
Next wo results guaranteeing equality of spectra.

\begin{corollary}%
Let $B$ be a complete subalgebra of a normed algebra $A$,
and let $b \in B$. If $\s _B (b)$ has no interior (e.g.\ is real or is contained
in the unit circle), then
\[ \s _B (b) \setminus \{0\} = \s _A (b) \setminus \{0\}. \]
If $B$ furthermore is \twiddle-unital in $A$, then
\[ \s _B (b) = \s _A (b). \pagebreak \]
\end{corollary}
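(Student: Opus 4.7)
The plan is to simply combine the two inclusions already at our disposal, namely \ref{specsubalg} (which gives one direction unconditionally) and the preceding \ref{nointerior} (which supplies the reverse direction under the no-interior hypothesis). No new machinery should be needed; the only thing to watch is the role of the origin in the two cases.

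First I would dispatch the general case. From \ref{specsubalg} we have
\[ \s_A(b) \setminus \{0\} \subset \s_B(b) \setminus \{0\}. \]
For the reverse inclusion, \ref{nointerior} gives $\s_B(b) \subset \s_A(b)$ under the assumption that $\s_B(b)$ has no interior, so removing $0$ on both sides yields
\[ \s_B(b) \setminus \{0\} \subset \s_A(b) \setminus \{0\}, \]
and the two inclusions combine to the desired equality.

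For the \twiddle-unital case, \ref{specsubalg} sharpens to $\s_A(b) \subset \s_B(b)$, while \ref{nointerior} still provides $\s_B(b) \subset \s_A(b)$, so the two inclusions give $\s_A(b) = \s_B(b)$ directly, without having to excise the origin.

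There is essentially no obstacle: the real content lies in the \v{S}ilov-type result \ref{spbdry} that underlies \ref{nointerior}. The corollary is just the bookkeeping step that records what happens when one combines the ``no interior'' conclusion with the generic one-sided inclusions of \ref{specsubalg}; the only mild care needed is to note that in the non-\twiddle-unital setting one can only conclude equality away from $0$, because $0$ automatically belongs to both $\s_A(b)$ and $\s_B(b)$ (by \ref{speczero}, applied to each algebra as appropriate) and thus slips past the one-sided inclusion of \ref{specsubalg}.
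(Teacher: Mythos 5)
Your proof is correct and is exactly the paper's argument: the paper's own proof reads simply ``This follows from the preceding theorem \ref{nointerior} and \ref{specsubalg}.'' (Your closing aside slightly misdiagnoses the role of $0$ --- the possible discrepancy is that $0$ could lie in $\s_A(b)$ without lying in $\s_B(b)$, since \ref{specsubalg} only supplies the inclusion $\s_A(b)\setminus\{0\}\subset\s_B(b)\setminus\{0\}$ in the non-\twiddle-unital case --- but this remark is not load-bearing and the proof stands.)
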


\begin{proof}
This follows from the preceding theorem \ref{nointerior} and \ref{specsubalg}.
\end{proof}

\begin{corollary}%
Let $B$ be a \twiddle-unital closed subalgebra of a \linebreak Banach
algebra $A$, and let $b \in B$ such that $\s _A (b)$ does not
separate the complex plane (e.g.\ is real). Then
\[ \s _B (b) = \s _A (b). \]
\end{corollary}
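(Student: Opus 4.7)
The plan is to combine the easy inclusion $\s_A(b) \subset \s_B(b)$ furnished by \ref{specsubalg} (which applies because $B$ is \twiddle-unital in $A$) with a connectedness argument ruling out any $\lambda \in \s_B(b) \setminus \s_A(b)$. The decisive ingredient is \v{S}ilov's Theorem \ref{spbdry}, which gives $\partial \,\s_B(b) \subset \partial \,\s_A(b) \subset \s_A(b)$.

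Suppose for contradiction that some $\lambda$ lies in $\s_B(b) \setminus \s_A(b)$. Then $\lambda \notin \partial \,\s_B(b)$, so $\lambda$ must lie in the interior of $\s_B(b)$ relative to $\mathds{C}$. I would then look at $W := \mathds{C} \setminus \s_A(b)$. By hypothesis $\s_A(b)$ does not separate the complex plane, so $W$ is connected; and since $\s_A(b)$ is compact by the Spectral Radius Formula \ref{specradform}, $W$ is also unbounded.

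The punchline is to write $W = ( W \cap \s_B(b) ) \cup ( W \setminus \s_B(b) )$ and check that both pieces are non-empty and relatively open in $W$. The second piece is open because $\s_B(b)$ is closed, and it is non-empty because $\s_B(b)$ is bounded while $W$ is unbounded. The first piece contains $\lambda$; it is open in $W$ because $W$ is disjoint from $\partial \,\s_B(b)$, so $W \cap \s_B(b) = W \cap \mathrm{int}\bigl(\s_B(b)\bigr)$. This contradicts the connectedness of $W$ and completes the proof.

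The only step that requires any thought is extracting the openness of $W \cap \s_B(b)$ in $W$ from the \v{S}ilov inclusion $\partial \,\s_B(b) \subset \s_A(b)$; once this translation is in place, the argument is the standard clopen-partition trick for a connected space.
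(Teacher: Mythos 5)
Your proof is correct and follows essentially the same route as the paper's: both reduce the claim to the \v{S}ilov inclusion $\partial\,\s_B(b) \subset \partial\,\s_A(b) \subset \s_A(b)$ from \ref{spbdry} combined with the connectedness of $\mathds{C}\setminus\s_A(b)$ and the easy inclusion from \ref{specsubalg}. The only difference is in the final topological step: the paper follows a path from $\lambda$ to infinity inside $\mathds{C}\setminus\s_A(b)$ and notes it must meet $\partial\,\s_B(b)$, whereas you phrase the same obstruction as a clopen partition of the connected set $W$ --- a slightly more self-contained variant, since it avoids invoking path-connectedness of open connected planar sets.
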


\begin{proof}
It suffices to prove that $\s _B (b) \subset \s _A (b)$, cf.\ \ref{specsubalg}.
So suppose that there exists $\lambda \in \s _B (b) \setminus \s _A (b)$.
Since $\s _A (b)$ does not separate the complex plane, there exists a
continuous path disjoint from $\s _A (b)$, that connects $\lambda$ with
the point at infinity. It is easily seen that this path meets the boundary of
$\s _B (b)$ in at least one point, which then also must belong to $\s _A (b)$,
by theorem \ref{spbdry}, a contradiction.
\end{proof}

\medskip
In the sense detailed below, the spectrum of an element, which has real
spectrum in a Banach algebra, is unchanged by going over to a closed
\twiddle-unital subalgebra still containing the element, or by going over
to a ``normed \twiddle-unital superalgebra'' of the original Banach algebra.

\begin{corollary}\label{spconvend}%
Let $C$ be a Banach algebra, and let $c$ be an \linebreak element of
$C$ with real spectrum.
\begin{itemize}
  \item[$(i)$] if $B$ is any closed \twiddle-unital subalgebra of $C$
                      containing $c$, then
                      \[ \s _C (c) = \s _B (c), \]
 \item[$(ii)$] if $A$ is any normed algebra containing $C$ as a
                      \twiddle-unital sub\-algebra, such that the norm of $A$ extends
                      that of $C$, then
                      \[ \s _C (c) = \s _A (c). \]
\end{itemize}
\end{corollary}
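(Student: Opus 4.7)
The statement is essentially a consolidation of the two corollaries immediately preceding \ref{spconvend}, applied in the two natural ``directions'' (subalgebra versus superalgebra). The plan is therefore to dispatch each part by a direct citation, once the hypotheses are unpacked.

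For part $(i)$, I will observe that $B$ is a closed \twiddle-unital subalgebra of the Banach algebra $C$ and $c \in B$. Since $\s_C(c)$ is real by assumption, it certainly fails to separate the complex plane. This is exactly the hypothesis of the second corollary preceding \ref{spconvend} (the one relying on \v Silov's Theorem \ref{spbdry} via a path-connectedness argument), applied with the Banach algebra ``$A$'' there being $C$ and the subalgebra ``$B$'' there being $B$. That corollary yields $\s_B(c) = \s_C(c)$ at once.

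For part $(ii)$, I will apply instead the first corollary preceding \ref{spconvend}, whose hypothesis is on the spectrum in the \emph{subalgebra}. Here $C$ is complete in its own norm, and since the norm of $A$ extends that of $C$, the subalgebra $C$ is a \emph{complete} subalgebra of the normed algebra $A$. Moreover $C$ is \twiddle-unital in $A$ by assumption. Finally $\s_C(c)$ is real, hence has empty interior in $\mathds{C}$. These are precisely the hypotheses of that first corollary (with ``$B$'' there being $C$ and ``$A$'' there being $A$), which delivers $\s_C(c) = \s_A(c)$.

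There is no real obstacle; the only points worth making explicit are that ``closed in a Banach algebra'' in $(i)$ and ``norm-extending'' in $(ii)$ are precisely what is needed to convert the ambient hypotheses into the ``complete subalgebra'' assumption required by the prior corollaries, and that ``real spectrum'' is used once as ``no interior'' (for $(ii)$) and once as ``does not separate the plane'' (for $(i)$).
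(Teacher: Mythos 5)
Your proposal is correct and is exactly the derivation the paper intends: part $(i)$ follows from the corollary on \twiddle-unital closed subalgebras of a Banach algebra (hypothesis on $\s_A$, here $\s_C(c)$ real hence not separating the plane), and part $(ii)$ from the corollary on complete subalgebras of a normed algebra (hypothesis on $\s_B$, here $\s_C(c)$ real hence without interior), with the norm-extension condition supplying completeness of $C$ inside $A$. Nothing further is needed.
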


\clearpage


\section{Square Roots of Invertible Elements}

\begin{theorem}[Mertens]\label{Mertens}%
\index{concepts}{Theorem!Mertens}\index{concepts}{Mertens' Theorem}%
Let $A$ be a normed algebra. Let
\[ \sum _{n \geq 0} a_n\quad \text{and}\quad \sum _{n \geq 0} b_n \]
be two convergent series in $A$ and assume that the first of them
converges \underline{absolutely}. Put
\[ c_{n} :=\sum _{k=0}^n a_k b_{n-k} \]
for every integer $n \geq 0$. The series
\[ \sum _{n \geq 0} c_n \]
is called the \underline{Cauchy product} of the above two series. We have
\[ \sum _{n \geq 0} c_n = \biggl( \,\sum _{n \geq 0} a_n \biggr)
\cdot \biggl( \,\sum _{n \geq 0} b_n \biggr). \]
\end{theorem}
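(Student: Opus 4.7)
The plan is to use the classical Cauchy product argument, the only algebraic input being the submultiplicative inequality $|\,ab\,| \leq |\,a\,| \cdot |\,b\,|$ available in any normed algebra. First I would set $A_n := \sum_{k=0}^n a_k$, $B_n := \sum_{k=0}^n b_k$, $C_n := \sum_{k=0}^n c_k$, together with $A := \lim_n A_n$ and $B := \lim_n B_n$. By interchanging the order of summation in the finite double sum, I would derive the formula
\[
C_n = \sum_{j=0}^n a_j \,B_{n-j}.
\]
Introducing the error terms $\beta_n := B_n - B$ (so that $\beta_n \to 0$), this rearranges to
\[
C_n = A_n B + \sum_{j=0}^n a_j \,\beta_{n-j}.
\]
Since $A_n B \to AB$ by continuity of right multiplication by $B$, the whole statement is reduced to showing that the remainder $R_n := \sum_{j=0}^n a_j \,\beta_{n-j}$ tends to zero.

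For the remainder estimate, let $\alpha := \sum_{n \geq 0} |\,a_n\,| < \infty$, and let $M$ be an upper bound for the (necessarily bounded) sequence $\{|\,\beta_n\,|\}$. Given $\varepsilon > 0$, I would choose an integer $N$ with $|\,\beta_n\,| < \varepsilon$ for all $n \geq N$. For $n \geq N$, I would split the sum for $R_n$ at the index $n - N$: for $0 \leq j \leq n - N$ one has $n - j \geq N$ and therefore $|\,\beta_{n-j}\,| < \varepsilon$, while for $n - N < j \leq n$ one uses only the bound $M$. This yields
\[
|\,R_n\,| \leq \varepsilon \cdot \alpha + M \cdot \sum_{j = n-N+1}^{n} |\,a_j\,|.
\]
The second term tends to $0$ as $n \to \infty$, being a tail of the absolutely convergent series $\sum |\,a_n\,|$, so $\limsup_n |\,R_n\,| \leq \varepsilon \alpha$; since $\varepsilon > 0$ was arbitrary, $R_n \to 0$.

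The key asymmetry I would emphasize is that the argument genuinely exploits the \emph{absolute} convergence of $\sum a_n$: it is used both to bound the uniform tail $\sum_{j > n - N} |\,a_j\,|$ and to absorb the constant $\alpha$ on the ``small-$\beta$'' half of the split. The only real obstacle is checking that this single-sided absolute convergence suffices despite the non-commutativity of $A$; the derivation above is valid without any commutativity assumption, because the factors $a_j$ and $\beta_{n-j}$ always appear in the correct order inherited from the original definition $c_n = \sum_{k} a_k b_{n-k}$. Nothing beyond the submultiplicative norm inequality and the continuity of addition and scalar multiplication is needed, so the result holds in the stated generality.
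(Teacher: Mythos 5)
Your proof is correct and follows essentially the same route as the paper: the same decomposition $C_n = A_nB + \sum_{j} a_j\,\beta_{n-j}$ followed by an $\varepsilon$-split of the remainder, with the factors kept in the correct order so that no commutativity is needed. The only cosmetic difference is that you dispose of the block with small indices on the $\beta$'s by bounding it with $M\sum_{j>n-N}|\,a_j\,|$ (a tail of the absolutely convergent series), whereas the paper lets each of the finitely many terms $a_{n-k}d_k$ tend to zero via $a_n\to 0$; both are valid.
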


\begin{proof} Let
\begin{alignat*}{3}
A_n & := \sum _{k=0}^n a_k,
 & \qquad B_n & := \sum _{k=0}^n b_k,
 & \qquad C_n & := \sum _{k=0}^n c_k, \\
s & := \sum _{n \geq 0} a_n,
 & \qquad t & := \sum _{n \geq 0} b_n,
 & \qquad d_n & := B_n - t.
\end{alignat*}
Rearrangement of terms yields
\begin{align*}
C_n & = a\0 b\0 + (a\0 b_1 + a_1 b\0) + \cdots + ( a\0 b_n + a_1 b_{n-1} + \cdots + a_n b\0) \\
 & = a\0 B_n + a_1 B_{n-1} + \cdots + a_n B\0 \\
 & = a\0 ( t + d_n ) + a_1 ( t + d_{n-1} ) + \cdots + a_n ( t + d\0 ) \\
 & = A_n t + a\0 d_n +a_1 d_{n-1} + \cdots + a_n d\0.
\end{align*}
Put $h_n := a\0 d_n + a_1 d_{n-1} + \cdots + a_n d\0$. We have to show that
$C_n \to st.$ Since $A_n t \to st$, it suffices to show that $h_n \to 0$. So
let $\varepsilon > 0$ be given. Let $\alpha := \sum _{n \geq 0} | \,a_n \,|$,
which is finite by assumption. We have $d_n \to 0$ because $B_n \to t$.
Hence we can choose $N \geq 0$ such that $| \,d_n \,| \leq \varepsilon / (1 + \alpha)$
for all integers $n > N$, and for such $n$ we have
\begin{align*}
| \,h_n \,| & \leq | \,a_n d\0 + \cdots + a_{n-N} d_N \,|
+ | \,a_{n-(N+1)} d_{N+1} + \cdots + a\0 d_n \,| \\
 & \leq | \,a_n d\0 + \cdots + a_{n-N} d_N \,| + \varepsilon.
\end{align*}
Letting $n \to \infty$, we get $\limsup _{n \to \infty} | \,h_n \,| \leq \varepsilon$
by $a_n \to 0$. The statement follows because $\varepsilon > 0$ was arbitrary.
\pagebreak
\end{proof}

\medskip
The following lemma is the basis of
the uniqueness of the positive square root
of a positive real number for example.
Recall \ref{zerodivdef}.

\begin{lemma}\label{samesquare}%
Let $x, y$ be two commuting elements of a ring with the same square.
If $x+y$ is not a divisor of zero, nor zero, then $x=y$.
\end{lemma}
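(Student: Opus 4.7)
The plan is to exploit commutativity to factor $x^2 - y^2$ on both sides and then invoke the hypothesis on $x+y$.

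First I would compute, using that $x$ and $y$ commute,
\[ (x+y)(x-y) = x^{\,2} - xy + yx - y^{\,2} = x^{\,2} - y^{\,2} = 0, \]
and symmetrically
\[ (x-y)(x+y) = x^{\,2} + xy - yx - y^{\,2} = x^{\,2} - y^{\,2} = 0. \]
So the element $x+y$ satisfies $(x+y)(x-y) = (x-y)(x+y) = 0$.

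Now I would argue by contradiction: suppose $x - y \neq 0$. Then, by the two displayed equations together with the hypothesis that $x+y \neq 0$, the pair $x+y$ and $x-y$ would be divisors of zero in the sense of \ref{zerodivdef}, contradicting the hypothesis that $x+y$ is not a divisor of zero. Hence $x - y = 0$, i.e.\ $x = y$.

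There is no real obstacle: the argument uses only the factorisation identity for a difference of squares (which requires commutativity, as explicitly assumed) and a direct appeal to the definition \ref{zerodivdef} of divisors of zero. The hypothesis that $x+y$ be neither zero nor a divisor of zero is exactly what is needed to conclude $x-y = 0$ from $(x+y)(x-y) = (x-y)(x+y) = 0$.
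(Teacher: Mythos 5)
Your argument is correct and is exactly the paper's proof: both products of $x+y$ and $x-y$ are computed to be $x^{\,2}-y^{\,2}\pm(xy-yx)=0$, and the hypothesis on $x+y$ together with definition \ref{zerodivdef} forces $x-y=0$. You have merely written out the same computation in more detail.
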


\begin{proof} The products of $x-y$ and $x+y$ are
${x}^{\,2} - {y}^{\,2} \pm ( xy - yx ) = 0$, so the condition implies that $x-y = 0$.
\end{proof}

\begin{theorem}\label{sqroot}\index{concepts}{square root|(}%
Let $A$ be a Banach algebra and let $a \in A$ with $\rlambda(a) < 1$. The series
\[ \sum\limits_{n=1}^{\infty} \,\bigl( _{\ \text{\small{$n$}}}^{1/2} \bigr) \,{a \,}^{n} \]
then converges absolutely to an element $b$ of $A$ with ${(\,e+b\,)}^{\,2} = e+a$
and $\rlambda(b) < 1 $. There is no other element $c$ of $\tld{A}$
with ${(\,e+c\,)}^{\,2} = e+a$ and $\rlambda(c) \leq 1$.
\end{theorem}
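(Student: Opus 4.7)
The plan is to prove the four claims (convergence, the square-root identity, the bound $\rlambda(b)<1$, and uniqueness) in that order.

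For \emph{absolute convergence}, I would pick $\sigma$ with $\rlambda(a)<\sigma<1$. By the definition of $\rlambda(a)$ together with \ref{rlamlim}, there is an integer $N$ with $|a^n|\leq\sigma^n$ for all $n\geq N$. Since the scalar series $\sum_{n\geq 1}|\bigl(_{\ n}^{1/2}\bigr)|\,\sigma^n$ converges (it is absolutely convergent inside the radius of convergence of the scalar binomial series, which is $1$), the tail $\sum_{n\geq N}|\bigl(_{\ n}^{1/2}\bigr)|\cdot|a^n|$ is dominated by a convergent series. So $\sum_{n\geq 1}\bigl(_{\ n}^{1/2}\bigr)a^n$ converges absolutely in the Banach algebra $A$ to some element $b$.

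For the identity $(e+b)^2=e+a$, I would apply Mertens' Theorem \ref{Mertens}. Writing $e+b=\sum_{n\geq 0}\bigl(_{\ n}^{1/2}\bigr)a^n$ (with $\bigl(_{\ 0}^{1/2}\bigr):=1$ and $a^0:=e$), the Cauchy product gives
\[ (e+b)^2 \;=\; \sum_{n\geq 0}\Bigl(\sum_{k=0}^{n}\bigl(_{\ k}^{1/2}\bigr)\bigl(_{n-k}^{1/2}\bigr)\Bigr)a^n. \]
The scalar identity $(1+z)^{1/2}\cdot(1+z)^{1/2}=1+z$ on $|z|<1$ shows that the inner sum equals $1$ for $n=0$, equals $1$ for $n=1$, and equals $0$ for $n\geq 2$. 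Hence $(e+b)^2=e+a$.

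For the bound $\rlambda(b)<1$, let $B$ be the closed commutative subalgebra generated by $a$ and $e$; then $b\in B$ because $b$ is a norm-limit of polynomials in $a$. On the commutative Banach algebra $B$, the function $\rlambda$ is subadditive, submultiplicative, and uniformly continuous by \ref{commrlsub}. Therefore
\[ \rlambda(b) \;\leq\; \sum_{n\geq 1}\bigl|\bigl(_{\ n}^{1/2}\bigr)\bigr|\,\rlambda(a)^n. \]
The scalar expansion $(1-z)^{1/2}=1-\sum_{n\geq 1}|\bigl(_{\ n}^{1/2}\bigr)|\,z^n$ (valid for $0\leq z\leq 1$, since $\bigl(_{\ n}^{1/2}\bigr)$ is positive for $n=1$ and alternates in sign for $n\geq 2$, making $(-1)^{n-1}\bigl(_{\ n}^{1/2}\bigr)=|\bigl(_{\ n}^{1/2}\bigr)|$) gives $\sum_{n\geq 1}|\bigl(_{\ n}^{1/2}\bigr)|\,\rlambda(a)^n=1-\sqrt{1-\rlambda(a)}<1$, hence $\rlambda(b)<1$.

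For \emph{uniqueness}, suppose $c\in\tld{A}$ satisfies $(e+c)^2=e+a$ and $\rlambda(c)\leq 1$. Then $a=c^2+2c$, so $c$ commutes with $a$, hence with $b$ (which lies in the closed subalgebra generated by $a$). Exploiting this commutativity, I would factor
\[ 0 \;=\; (e+c)^2-(e+b)^2 \;=\; (2e+b+c)(c-b). \]
Since $b$ and $c$ commute, \ref{commrlsub} yields $\rlambda(b+c)\leq\rlambda(b)+\rlambda(c)<1+1=2$; in particular $-2\notin\s(b+c)$, so $2e+b+c$ is invertible in $\tld{A}$. The displayed equation then forces $c-b=0$. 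The main subtlety is the uniqueness step: it hinges on identifying the correct commutative framework so that both the factorisation and the spectral-radius inequality are legitimate, which is what the strict inequality $\rlambda(b)<1$ is designed to deliver.
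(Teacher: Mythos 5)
Your proposal is correct and follows essentially the same route as the paper's proof: Mertens' Theorem \ref{Mertens} for the identity $(e+b)^2=e+a$, comparison with a convergent scalar series for absolute convergence, the subadditivity, submultiplicativity and uniform continuity of $\rlambda$ on the closed commutative subalgebra generated by $a$ (via \ref{commrlsub}) for the bound $\rlambda(b)<1$, and invertibility of $2e+b+c$ for uniqueness. The only cosmetic difference is that you inline the content of lemma \ref{samesquare} as a direct factorisation instead of citing it.
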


\begin{proof} If the above series converges absolutely, it follows by the \linebreak
Theorem of Mertens \ref{Mertens} concerning the Cauchy product
of series that ${(\,e+b\,)}^{\,2} = e+a$. Assume that $\rlambda(a) < 1$. The series for
$b$ converges absolutely by comparison with the geometric series \ref{geoseries}.
It shall next be shown that $\rlambda(b) < 1$. For $k \geq 1$, we have by
\ref{rlpowers} and \ref{commrlsub} that
\[ \rlambda \biggl( \,\sum\limits_{n=1}^k
\,\bigl( _{\ \text{\small{$n$}}}^{1/2} \bigr) \,{a \,}^n \biggr) \leq \sum\limits_{n=1}^k
\ \Bigl| \bigl( _{\ \text{\small{$n$}}}^{1/2} \bigr) \Bigr| \ {\rlambda ( a ) \,}^n. \]
Hence, by applying the last part of \ref{commrlsub} to the closed
subalgebra of $A$ generated by $a$, we get
\begin{align*}
\rlambda (b) \leq \sum\limits_{n=1}^{\infty}
\ \Bigl| \bigl( _{\ \text{\small{$n$}}}^{1/2} \bigr) \Bigr| \ {\rlambda(a) \,}^n
 & = -\sum\limits_{n=1}^{\infty} \ \bigl( _{\ \text{\small{$n$}}}^{1/2} \bigr)
        \,{\bigl(-\rlambda (a)\bigr) \,}^n \\
 & = 1-\sqrt{1-\rlambda (a)} < 1.
\end{align*}
Let $c$ now be an element of $\tld{A}$ with $\rlambda(c) \leq 1$ and
${(\,e+c\,)}^{\,2} = e+a$. It shall be shown that $c = b$. The elements
$x := e+b$, $y := e+c$ have the same square. Furthermore, we have
$y\,(\,e+a\,) = {(\,e+c\,)}^{\,3} = (\,e+a\,)\,y$, i.e.\ the element $y$ commutes
with $a$. By continuity, it follows that $y$ also commutes with $b$, hence
with $x$. Finally, $x+y = 2e+(\,b+c\,)$ is invertible as
$\rlambda(\,b+c\,) \leq  \rlambda(b) + \rlambda(c) < 2$, cf.\ \ref{commrlsub}.
It follows from the preceding lemma \ref{samesquare} that $x = y$, or,
in other words, that $b = c$. \end{proof}

\begin{lemma}[Ford's Square Root Lemma]\label{Ford}%
\index{concepts}{Ford's Square Root Lemma}%
\index{concepts}{Lemma!Ford's Square Root}%
\index{concepts}{Theorem!Ford's Square Root Lemma}%
If $a$ is a Hermitian \linebreak element of a Banach \st-algebra,
satisfying $\rlambda(a) < 1$, then the square root $e+b$ of $e+a$
(with $b$ as in the preceding theorem \ref{sqroot}) is \linebreak Hermitian. 
\end{lemma}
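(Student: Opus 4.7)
The plan is to exploit the uniqueness clause in Theorem \ref{sqroot}, which states that there is at most one element $c \in \tld{A}$ satisfying both $(e+c)^2 = e+a$ and $\rlambda(c) \leq 1$. It therefore suffices to show that $b^*$ is such an element; then automatically $b^* = b$, i.e., $e+b$ is Hermitian. Note that this approach sidesteps any question of continuity of the involution (which is not part of the definition of a Banach \st-algebra in this text), since we never take a limit under $\st$.

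First I would apply the involution to both sides of the identity $(e+b)^2 = e+a$. The right-hand side is invariant because $a \in A\sa$ and $e^* = e$. On the left, the anti-multiplicative property of an algebra involution gives
\[ \bigl((e+b)^2\bigr)^* = (e+b)^* (e+b)^* = (e+b^*)^2. \]
Combining these yields $(e+b^*)^2 = e+a$, so $b^*$ does satisfy the defining equation.

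Second, I need the estimate $\rlambda(b^*) \leq 1$. Here I would invoke Proposition \ref{Brlsymm}, which is available precisely because $A$ is a Banach \st-algebra (the corresponding statement fails for arbitrary normed \st-algebras). It gives $\rlambda(b^*) = \rlambda(b)$, and Theorem \ref{sqroot} already furnishes $\rlambda(b) < 1$. Hence $\rlambda(b^*) < 1 \leq 1$, as required.

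Having verified both conditions, the uniqueness part of Theorem \ref{sqroot} applied to $c := b^* \in \tld{A}$ forces $b^* = b$. There is no real obstacle: the only subtle point is recognising that the assumption of a \emph{Banach} \st-algebra (as opposed to a merely normed \st-algebra) is used exactly once, to guarantee the symmetry $\rlambda(b^*) = \rlambda(b)$ via Proposition \ref{Brlsymm}.
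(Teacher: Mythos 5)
Your proof is correct and follows essentially the same route as the paper's: apply the involution to $(e+b)^2 = e+a$, use Proposition \ref{Brlsymm} to get $\rlambda(b^*) = \rlambda(b) < 1$, and conclude $b^* = b$ from the uniqueness clause of Theorem \ref{sqroot}. The only difference is cosmetic: the paper first remarks that the claim is obvious when the involution is continuous before giving this same argument for the general case.
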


\begin{proof} This is obvious if the involution is continuous.
Now for the general case.
Consider the element $b$ of the preceding theorem \ref{sqroot}.
We have to show that $b^{\,*} = b$. The element $b^{\,*}$ satisfies
${(\,e+b^{\,*}\,)}^{\,2} = {(\,e+a\,)}^{\,*} = e+a$ as well as
$\rlambda(b^{\,*}) = \rlambda(b) < 1$ by \ref{Brlsymm}. So $b = b^{\,*}$
by the uniqueness statement of the preceding theorem \ref{sqroot}.
\end{proof}

\medskip
Ford's Square Root Lemma, although basic, was overlooked for a long time.
See Palmer \cite[p. 1159]{Palm}.

\begin{theorem}\label{possqroot}%
Let $A$ be a Banach algebra, and let $x$ be an element of $\tld{A}$ with
$\s(x) \subset \ ] \,0, \infty \,[$. (Please note: \ref{speczero}.) Then $x$ has a
unique square root $y$ in $\tld{A}$ with $\s(y) \subset \ ] \,0, \infty \,[$.
Furthermore $x$ has no other square root in $\tld{A}$ with spectrum in
$[ \,0, \infty \,[$. The element $y$ belongs to the closed subalgebra of $\tld{A}$
generated by $x$. If $A$ is a Banach \st-algebra, and if $x$ is Hermitian, so is $y$.
\end{theorem}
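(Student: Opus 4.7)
The plan is to reduce to Theorem \ref{sqroot} by rescaling $x$ into the form $e+a$ with $\rlambda(a) < 1$, and then to transfer existence, uniqueness, membership in the subalgebra, and Hermitianness back to $y$.

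First I would use that $\s(x)$ is a compact subset of $\,]\,0, \infty \,[$ to choose a real $\mu > 0$ with $\s(x) \subset \,]\,0, 2\mu \,[$, and set $a := \mu^{-1} x - e \in \tld{A}$. By the Rational Spectral Mapping Theorem \ref{ratspecmapthm}, $\s(a) = \{ \,\mu^{-1} \lambda - 1 : \lambda \in \s(x) \,\} \subset \,]\,{-1}, 1 \,[$, whence $\rlambda(a) < 1$ by the Spectral Radius Formula \ref{specradform}. Theorem \ref{sqroot}, applied inside the unital Banach algebra $\tld{A}$ (which coincides with its own unitisation), then produces $b \in \tld{A}$ with $(e+b)^2 = \mu^{-1} x$ and $\rlambda(b) < 1$; I would set $y := \mu^{1/2}(e+b)$, so that $y^2 = x$. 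Since $b$ is a norm-limit of polynomials in $a$, hence of polynomials in $x$, the element $y$ lies in the closed subalgebra of $\tld{A}$ generated by $x$.

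To verify $\s(y) \subset \,]\,0, \infty \,[$, I would combine two observations via \ref{ratspecmapthm}: on one hand $\s(y) = \mu^{1/2}(1+\s(b))$ sits in the open disc of radius $\mu^{1/2}$ about $\mu^{1/2}$; on the other, $\s(y)^2 = \s(x) \subset \,]\,0, \infty \,[$ forces every element of $\s(y)$ to be real. The intersection of that disc with the real axis is $\,]\,0, 2 \mu^{1/2} \,[$, which gives the claim. For uniqueness: let $y' \in \tld{A}$ be any square root of $x$ with $\s(y') \subset [\,0, \infty \,[$. Since $0 \notin \s(x) = \s(y')^2$, the Rational Spectral Mapping Theorem rules out $0 \in \s(y')$, so $\s(y') \subset \,]\,0, \infty \,[$; writing $\beta := \max \s(x) < 2\mu$, one gets $\s(y') \subset \,]\,0, \beta^{1/2} \,]$. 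The element $c' := \mu^{-1/2} y' - e$ then satisfies $(e+c')^2 = \mu^{-1} y'^{\,2} = e + a$, and by \ref{ratspecmapthm} $\s(c') \subset \,\bigl]\,{-1},\ (\beta/\mu)^{1/2} - 1 \,\bigr] \subset \,]\,{-1}, 1 \,[$, so $\rlambda(c') < 1$. The uniqueness clause of Theorem \ref{sqroot} then yields $c' = b$, hence $y' = y$.

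Finally, if $A$ is a Banach \st-algebra and $x$ is Hermitian, then $a = \mu^{-1} x - e$ is Hermitian (because $\mu$ is real), so Ford's Square Root Lemma \ref{Ford} shows that $b$, and therefore $y = \mu^{1/2}(e+b)$, is Hermitian. The one delicate point is the uniqueness step: one must first exclude $0$ from $\s(y')$ in order to bound $\rlambda(c')$ and invoke the uniqueness clause of Theorem \ref{sqroot}; this exclusion is forced by $0 \notin \s(x)$ through the Rational Spectral Mapping Theorem.
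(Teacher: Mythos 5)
Your overall strategy --- rescale so that Theorem \ref{sqroot} applies, then transfer existence, uniqueness and Hermitianness back --- is exactly the paper's route (the paper simply normalises $\rlambda(x) \leq 1$ at the outset instead of carrying the factor $\mu$ through), and your spectrum and uniqueness arguments are sound. But there is a genuine gap in the step where you place $y$ in the closed subalgebra generated by $x$. You argue that $b$ is a norm-limit of polynomials in $a$, ``hence of polynomials in $x$''. The polynomials supplied by \ref{sqroot} have no constant term in $a$, but $a = \mu^{-1}x - e$ itself involves the unit, so a polynomial in $a$ without constant term is a polynomial in $x$ \emph{with} a constant term. What your argument actually yields is $y \in \mathds{C}e + B$, where $B$ denotes the closed subalgebra of $\tld{A}$ generated by $x$; it does not yield $y \in B$, since $B$ need not contain $e$ a priori.

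The gap can be closed as the paper does: write $y = \nu e + f$ with $\nu \in \mathds{C}$ and $f \in B$, using that $\mathds{C}e + B$ is the closed subalgebra of $\tld{A}$ generated by $x$ and $e$, cf.\ the proof of \ref{Banachunitis}. Squaring gives $x = \nu^{\,2} e + g$ with $g \in B$. If $\nu \neq 0$, then $e = \nu^{\,-2}(x - g) \in B$, and hence $y \in B$; if $\nu = 0$, then $y = f \in B$ directly. With this supplement your proof is complete.
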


\begin{proof} We may assume that $\rlambda(x) \leq 1$. The
element $a := x-e$ then has $\s(a) \subset \ ]-1,0\,]$, whence
$\rlambda(a) < 1$. Consider now $b$ as in \ref{sqroot}. Then
$y := e + b$ is a square root of $e + a = x$, by \ref{sqroot}.
Furthermore, $\s(y) \subset \mathds{R} \setminus \{ 0 \}$ by the Rational
Spectral Mapping Theorem. Also $\rlambda(b) < 1$, which implies that
$\s(y) \subset \ ] \,0,\infty \,[$. Let now $z \in \tld{A}$ be another square
root of $x$ with $\s(z) \subset \,[ \,0,\infty \,[$. Then $\s(z) \subset \,[ \,0,1 \,]$
by the Rational Spectral Mapping Theorem. Thus, the element $c:= z-e$
has $\s(c) \subset \,[ \,-1,0 \,]$, so $\rlambda(c) \leq 1$. Since
${(\,e+c\,)}^{\,2} = x = e+a$, theorem \ref{sqroot} implies that
$c = b$, whence $z = y$. Let now $B$ denote the closed
subalgebra of $\tld{A}$ generated by $x$. Clearly $y$ belongs to the
closed subalgebra of $\tld{A}$ generated by $x$ and $e$, which is
$\mathds{C}e + B$, cf.\ the proof of \ref{Banachunitis}. So let $y = \mu e + f$
with $f \in B$ and $\mu \in \mathds{C}$. Then $x = {\mu}^{\,2\,} e + g$ with
$g \in B$. So if $\mu \neq 0$, then $e \in B$, whence $y \in B$. If $\mu = 0$,
then $y = f \in B$ as well. The last statement follows from the preceding
lemma \ref{Ford}.
\end{proof}
\index{concepts}{square root|)}

\medskip
For C*-algebras, a stronger result holds, cf.\ \ref{C*sqroot}. \pagebreak

\clearpage


\section{Hermitian Banach \texorpdfstring{$*$-}{\80\052\80\055}Algebras}%
\label{secHerm}

\begin{definition}[Hermitian \protect\st-algebras]%
\index{concepts}{Hermitian!s-algebra@\protect\st-algebra}%
\index{concepts}{algebra!Banach s-algebra@Banach \protect\st-algebra!Hermitian}
\index{concepts}{algebra!s-algebra@\protect\st-algebra!Hermitian}
A \st-algebra is said to be \linebreak \underline{Hermitian} if each of its
Hermitian elements has real spectrum.
\end{definition}

\begin{proposition}%
A \st-algebra $A$ is Hermitian if and only if its \linebreak unitisation $\tld{A}$ is.
\end{proposition}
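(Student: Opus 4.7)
The plan is a straightforward reduction to the unitisation construction. If $A$ already has a unit, then $\tld{A} = A$ by definition \ref{normunitis}-adjacent conventions, so the equivalence is tautological. The real content is therefore in the case where $A$ has no unit, and my plan is to explicitly identify the Hermitian part of $\tld{A}$ and then relate spectra via \ref{specunit}.

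First I would determine $\tld{A}\sa$. Since $(\mu e + a)^* = \overline{\mu}\,e + a^*$, an element $\mu e + a \in \tld{A}$ (with $\mu \in \mathds{C}$, $a \in A$) is Hermitian if and only if $\mu \in \mathds{R}$ and $a \in A\sa$. Thus $\tld{A}\sa = \mathds{R}\,e + A\sa$. Next, for such an element, translation invariance of the spectrum gives
\[ \s_{\tld{A}}(\mu e + a) = \mu + \s_{\tld{A}}(a), \]
because $\lambda e - (\mu e + a) = (\lambda - \mu)e - a$ is invertible in $\tld{A}$ exactly when $\lambda - \mu \notin \s_{\tld{A}}(a)$. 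By \ref{specunit}, $\s_{\tld{A}}(a) = \s_A(a)$, so
\[ \s_{\tld{A}}(\mu e + a) = \mu + \s_A(a). \]

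Both implications now fall out at once. For the forward direction, if $A$ is Hermitian then $\s_A(a) \subset \mathds{R}$ for every $a \in A\sa$, and since $\mu \in \mathds{R}$, the right-hand side above is a subset of $\mathds{R}$, so every Hermitian element of $\tld{A}$ has real spectrum. For the converse, assume $\tld{A}$ is Hermitian and let $a \in A\sa$; then $a \in \tld{A}\sa$ (the case $\mu = 0$), so $\s_A(a) = \s_{\tld{A}}(a) \subset \mathds{R}$ again by \ref{specunit}. There is really no obstacle here: the only subtlety is remembering to invoke \ref{specunit} rather than trying to deduce the equality of spectra from scratch, and verifying that the identification $\tld{A}\sa = \mathds{R}\,e + A\sa$ forces the coefficient of $e$ to be real. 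Both points are immediate from the definition of the involution on $\tld{A}$.
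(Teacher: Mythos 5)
Your proof is correct and follows essentially the same route as the paper, whose entire argument is the identity $\s_{\tld{A}}(\lambda e+a) = \lambda + \s_{\tld{A}}(a) = \lambda + \s_A(a)$ for $a \in A\sa$ and $\lambda \in \mathds{R}$, invoking \ref{specunit}. You merely spell out the identification $\tld{A}\sa = \mathds{R}e + A\sa$ and the two implications, which the paper leaves implicit.
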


\begin{proof}
For $a \in A\sa$ and $\lambda \in \mathds{R}$, we have, by \ref{specunit},
\[ \s_{\tld{A}} \,(\lambda e+a) = \lambda + \s_{\tld{A}} \,(a) = \lambda + \s_A \,(a).
\qedhere \]
\end{proof}

\medskip
The following result is responsible for all of the advanced properties of
Hermitian Banach \st-algebras.

\begin{theorem}\label{fundHerm}%
For a Banach \st-algebra $A$ the following statements are equivalent.
\begin{itemize}
   \item[$(i)$] \ $A$ is Hermitian,
  \item[$(ii)$] \ $\iu \notin \s(a)$ whenever $a \in A$ is Hermitian,
 \item[$(iii)$] \ $\rlambda(a) \leq \rsigma(a)$ for all $a \in A$,
 \item[$(iv)$] \ $\rlambda(b) = \rsigma(b)$ for all normal $b \in A$,
  \item[$(v)$] \ $\rlambda(b) \leq {|\,b^*b\,| \,}^{1/2}$ for all normal $b \in A$.
\end{itemize}
\end{theorem}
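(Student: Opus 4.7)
I will prove the equivalences via the cycle (i) $\Rightarrow$ (iii) $\Rightarrow$ (iv) $\Rightarrow$ (v) $\Rightarrow$ (ii) $\Rightarrow$ (i), together with the trivial (i) $\Rightarrow$ (ii). The direct implications (iii) $\Rightarrow$ (iv) $\Rightarrow$ (v) are quick: combining (iii) with \ref{Bnormalrsleqrl} (which gives $\rsigma(b) \leq \rlambda(b)$ for normal $b$) yields $\rlambda(b) = \rsigma(b)$ for normal $b$, i.e.\ (iv); and then (iv) implies (v) since $\rlambda(b) = \rsigma(b) = \rlambda(b^*b)^{1/2} \leq |\,b^*b\,|^{\,1/2}$ for normal $b$.

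For (ii) $\Rightarrow$ (i) I use a polynomial-detection argument. Given Hermitian $a \in A$ with some $\alpha + \iu\beta \in \s(a)$ and $\beta \neq 0$, I find real numbers $c_1, c_2$ satisfying $c_1(\alpha + \iu\beta) + c_2(\alpha + \iu\beta)^{\,2} = \iu$ (a solvable real-linear $2 \times 2$ system; for instance $c_1 = 1/\beta$ and $c_2 = 0$ in the case $\alpha = 0$, otherwise $c_1 = (\beta^{\,2} - \alpha^{\,2})/[\beta(\alpha^{\,2}+\beta^{\,2})]$ and $c_2 = \alpha/[\beta(\alpha^{\,2}+\beta^{\,2})]$). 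Then $P(x) := c_1 x + c_2 x^{\,2}$ satisfies $P(0) = 0$, so $P(a) \in A$; is Hermitian because $P$ has real coefficients; and by the Rational Spectral Mapping Theorem \ref{ratspecmapthm}, $\iu = P(\alpha + \iu\beta) \in \s\bigl(P(a)\bigr)$, contradicting (ii).

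For (v) $\Rightarrow$ (ii), I reduce to the unital case: by the proposition preceding the theorem, $A$ is Hermitian iff $\tld A$ is, and the polynomial-detection trick transfers (ii) between $A$ and $\tld A$, so the unital case of (v) $\Rightarrow$ (ii) suffices (with the straightforward transfer of (v) to the unitisation via \ref{rlunit} handled as a small lemma). Assume then that $A$ is unital and that a Hermitian $a \in A$ has $\iu \in \s(a)$. For $\alpha > 0$, set $b_\alpha := \alpha a + \iu e \in A$; $b_\alpha$ is normal since $a$ is Hermitian. By \ref{ratspecmapthm}, $(1 + \alpha)\iu \in \s(b_\alpha)$, so $\rlambda(b_\alpha) \geq 1 + \alpha$. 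A direct computation (using $a$ Hermitian) gives $b_\alpha^{\,*} b_\alpha = \alpha^{\,2} a^{\,2} + e$, whence $|\,b_\alpha^{\,*} b_\alpha\,| \leq 1 + \alpha^{\,2} |\,a\,|^{\,2}$. Inserting into (v) yields $(1+\alpha)^{\,2} \leq 1 + \alpha^{\,2} |\,a\,|^{\,2}$, i.e.\ $2/\alpha + 1 \leq |\,a\,|^{\,2}$, which fails as $\alpha \to 0^+$, a contradiction.

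The main obstacle is (i) $\Rightarrow$ (iii), Pt\'ak's inequality, for which I would use Ford's Square Root Lemma \ref{Ford}. WLOG $A$ is unital. Given $a \in A$ and $\lambda \in \mathds{C}$ with $|\lambda|^{\,2} > \rlambda(a^*a)$, I aim to show $\lambda \notin \s(a)$. Since $A$ is Hermitian, $\s(a^*a) \subset \mathds{R}$, and $|\lambda|^{\,2}$ lies outside it; hence $|\lambda|^{\,2} e - a^*a$ is invertible. Applying Ford's Square Root Lemma to the Hermitian element $-a^*a / |\lambda|^{\,2}$ (whose spectral radius is less than $1$) produces a Hermitian $c = e + b \in \tld A$ with $c^{\,2} = e - a^*a / |\lambda|^{\,2}$ and $\rlambda(b) < 1$; so $\s(c) \subset (0, 2)$ and $c$ is invertible. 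The delicate, hardest step is then to construct an explicit inverse of $\lambda e - a$ from $c$, $a$, and $a^*$: because $c$ lies in the closed subalgebra generated by $a^*a$ and therefore commutes with $a^*a$ but not, in general, with $a$ itself, the naive ``difference of squares'' factorisation is unavailable, and one must exploit the Hermitian hypothesis repeatedly to invert various Hermitian intermediate elements whose spectra must be shown to avoid $0$. Carrying this out yields $\s(a) \subset \{\mu \in \mathds{C} : |\mu| \leq \rsigma(a)\}$, hence (iii).
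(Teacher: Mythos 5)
Your chain (iii) $\Rightarrow$ (iv) $\Rightarrow$ (v) is correct and matches the paper, and your (ii) $\Rightarrow$ (i) via the real quadratic $P(x)=c_1x+c_2x^{\,2}$ with $P(\alpha+\iu\beta)=\iu$ is a correct and genuinely different shortcut (the paper only closes that arrow by going all the way around through (v) $\Rightarrow$ (i)). But the proposal has a real gap at its central implication (i) $\Rightarrow$ (iii): after producing the Hermitian invertible square root $c$ of $e-a^*a/{|\lambda|}^{\,2}$ you declare the construction of an inverse of $\lambda e-a$ to be "the delicate, hardest step" and then assert that "carrying this out yields (iii)" without carrying it out. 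That step \emph{is} Pt\'ak's inequality; nothing of substance has been proved. The paper's version (stated as (ii) $\Rightarrow$ (iii)) runs as follows: normalise so that $1\in\s(a)$ and $\rsigma(a)<1$, take a Hermitian $b\in\tld{A}$ with $b^{\,2}=e-a^*a$ by Ford's Lemma \ref{Ford}, note $b$ is invertible by the Rational Spectral Mapping Theorem, and use the identity $(e+a^*)(e-a)=b\,\bigl(e+b^{-1}(a^*-a)b^{-1}\bigr)\,b$; the middle factor is $e+\iu h$ with $h:=-\iu\,b^{-1}(a^*-a)b^{-1}$ Hermitian, hence invertible by (ii), so $e-a$ is left invertible, and the symmetric factorisation of $(e-a)(e+a^*)$ using a square root of $e-aa^*$ gives a right inverse, contradicting $1\in\s(a)$. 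Some such argument must appear explicitly.

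Your (v) $\Rightarrow$ (ii) also has two unrepaired holes. First, the "straightforward transfer of (v) to the unitisation" is not straightforward: for a normal $c=\mu e+b\in\tld{A}$ one would need $\bigl(\,|\mu|+{|\,b^*b\,|}^{1/2}\bigr)^{\,2}\leq {|\mu|}^{\,2}+|\,\overline{\mu}b+\mu b^*+b^*b\,|$, which is a \emph{lower} bound on the norm of a sum and does not follow from (v) on $A$. Second, even granting a unit, the estimate $|\,\alpha^{\,2}a^{\,2}+e\,|\leq 1+\alpha^{\,2}{|\,a\,|}^{\,2}$ silently uses $|\,e\,|\leq 1$, whereas a Banach algebra only guarantees $|\,e\,|\geq 1$; if $|\,e\,|>1$ the resulting inequality $(1+\alpha)^{\,2}\leq |\,e\,|+\alpha^{\,2}{|\,a\,|}^{\,2}$ produces no contradiction as $\alpha\to 0^{+}$. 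The paper's proof of (v) $\Rightarrow$ (i) avoids both problems at once: for $0\neq\lambda\in\s(a)$ and $\mu\in\mathds{R}$ it applies (v) to $b:=(a+\iu\mu e)^{\,n}\,\lambda^{-1}a$, which is a polynomial in $a$ without constant term and hence lies in $A$ itself, and then takes $n$-th roots and lets $n\to\infty$; this converts the norm bound into the spectral-radius bound ${|\,\lambda+\iu\mu\,|}^{\,2}\leq\rlambda\bigl(a^{\,2}\bigr)+\mu^{\,2}$ for all real $\mu$, forcing $\mathrm{Im}\,\lambda=0$. You could rescue your variant by applying (v) to ${b_\alpha}^{\,n}$ and taking $n$-th roots, but as written the argument does not close.
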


\begin{proof} (i) $\Rightarrow$ (ii) is trivial.

(ii) $\Rightarrow$ (iii): Assume that (ii) holds and that
$\rlambda(a) > \rsigma(a)$ for some $a \in A.$ Upon replacing $a$
with a suitable multiple, we can assume that $1 \in \s(a)$ and
$\rsigma(a) < 1$. There then exists a Hermitian element $b$ of
$\tld{A}$ such that ${b\,}^2 = e-a^*a$ (Ford's Square Root Lemma \ref{Ford}).
Since $b$ is invertible (Rational Spectral Mapping Theorem), it follows that
\[(e+a^*) \,(e-a) = {b\,}^2 + a^* -a = b \,\bigl( e + b^{-1} \,(a^* - a) \,b^{-1} \bigr) \,b. \]
Now the element $-\iu \,b^{-1} \,(a^*-a) \,b^{-1}$ is a Hermitian element of $A$,
so that $\iu \,\bigl( e+b^{-1} \,(a^*-a) \,b^{-1} \bigr)$ is invertible by assumption.
This implies that $e-a$ has a left inverse. One can apply a similar argument to
\[ (e-a) \,(e+a^*) = c \,\bigl( e+c^{-1} \,(a^*-a) \,c^{-1} \bigr) \,c, \]
where $c$ is a Hermitian element of $\tld{A}$ with ${c\,}^2 = e-aa^*$
(by $\rsigma(a^*) = \rsigma(a) < 1$, cf.\ \ref{rsC*}). Thus $e-a$ also has a
right inverse, so that $1$ would not be in the spectrum of $a$ by
\ref{leftrightinv}, a contradiction.

(iii) $\Rightarrow$ (iv) follows from the fact that
$\rsigma(b) \leq \rlambda(b)$ for all normal
$b \in A$, cf.\ \ref{Bnormalrsleqrl}.

(iv) $\Rightarrow$ (v) is trivial. 

(v) $\Rightarrow$ (i): Assume that (v) holds. Let $a$ be a Hermitian element
of $A$ and let $\lambda \in \s(a)$. We have to show that $\lambda$ is real.
For this purpose, we can assume that $\lambda$ is non-zero. Let $\mu$ be an
arbitrary real number, and let $n$ be an integer $\geq 1$. Define
$u := \lambda^{-1}a$ and $b := {(a + \iu \mu e) \,}^n \,u$. Then
${(\lambda + \iu \mu) \,}^n$ is in the spectrum of $b$
(Rational Spectral Mapping Theorem). This implies
\[ {|\,\lambda + \iu \mu \,|\,}^{2n} \leq {\rlambda( \,b \,) \,}^2 \leq |\,b^*b\,|
\leq \bigl| \,{ \bigl( \,{a \,}^2 + {\mu \,}^2 \,e \,\bigr) \,}^n \,\bigr| \cdot |\,u^*u\,|. \]
It follows that
\[ {| \,\lambda + \iu \mu \,|\,}^2
\leq {\bigl| \,{ \bigl( \,{a \,}^2 + {\mu \,}^2 \,e \,\bigr) \,}^n \,\bigr| \,}^{1/n}
\cdot {| \,u^*u \,| \,}^{1/n} \]
for all integers $n \geq 1$, so that by \ref{commrlsub}, we have
\[ {| \,\lambda + \iu \mu \,|\,}^2 \leq \rlambda \bigl( \,{a \,}^{2} + {\mu \,}^{2} \,e \,\bigr)
\leq \rlambda \bigl( \,{a \,}^2 \,\bigr)+ {\mu \,}^2. \]
Decomposing $\lambda = \alpha + \iu \beta $ with $\alpha $ and $\beta $ real,
one finds
\[ {\alpha \,}^2 + {\beta \,}^2 + 2 \beta \mu \leq \rlambda \bigl( \,{a \,}^2 \,\bigr) \]
for all real numbers $\mu$, so that $\beta$ must be zero, i.e.\ $\lambda$
must be real.
\end{proof}

\medskip
We now have the following fundamental fact:

\begin{theorem}\label{C*Herm}%
A C*-algebra is Hermitian.
\end{theorem}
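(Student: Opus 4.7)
The plan is to deduce this immediately from Theorem \ref{fundHerm}, since a C*-algebra is by definition a complete pre-C*-algebra and hence a Banach \st-algebra, so the equivalence applies. The most economical route is to verify condition $(iv)$, namely that $\rlambda(b) = \rsigma(b)$ for every normal element $b$.

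To see that $(iv)$ holds in a C*-algebra $\bigl(A,\|\cdot\|\bigr)$, I would simply combine two facts already established in the text. First, by \ref{C*rs}, every pre-C*-algebra satisfies $\|a\| = \rsigma(a)$ for all $a \in A$. Second, by \ref{C*rl}, every pre-C*-algebra satisfies $\|b\| = \rlambda(b)$ for every normal $b \in A$. Chaining these two identities gives $\rlambda(b) = \|b\| = \rsigma(b)$ for all normal $b$, which is exactly condition $(iv)$ of Theorem \ref{fundHerm}. Hence $A$ is Hermitian.

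Equivalently, one could verify condition $(v)$ directly: for normal $b$, the C*-property together with \ref{C*rl} yields $\rlambda(b) = \|b\| = \|b^*b\|^{1/2}$, with equality throughout. Either route avoids any further calculation. There is no serious obstacle here; the real work was done in establishing \ref{C*rl} and the equivalences in \ref{fundHerm}, and the present statement is essentially a one-line corollary of those results.
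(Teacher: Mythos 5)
Your proof is correct. It differs slightly from the paper's: the paper verifies condition $(iii)$ of \ref{fundHerm}, namely $\rlambda(a) \leq \rsigma(a)$ for \emph{all} $a$, which follows at once from the trivial bound $\rlambda(a) \leq \|\,a\,\|$ built into the definition \ref{rldef} together with $\|\,a\,\| = \rsigma(a)$ from \ref{C*rs}. You instead verify condition $(iv)$ via \ref{C*rl}, which is a genuinely deeper fact (its proof passes through completeness and \ref{Bnormalrsleqrl}), so your route invokes more machinery than is strictly needed, though nothing circular: \ref{C*rl} is established independently of \ref{fundHerm} and of the present theorem. The paper's choice of $(iii)$ buys a proof that rests only on the definition of the spectral radius and the C*-identity $\|\,a\,\| = \rsigma(a)$; yours buys nothing extra here but is equally valid.
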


\begin{proof}
For an element $a$ of a C*-algebra we have by \ref{rldef} and \ref{C*rs}:
\[ \rlambda(a) \leq \| \,a \,\| = \rsigma(a). \qedhere \]
\end{proof}

\begin{theorem}\label{Herminher}%
Every closed \st-subalgebra of a Hermitian Banach \st-algebra is Hermitian as well.
\end{theorem}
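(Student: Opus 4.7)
The plan is to reduce the statement to one of the norm-based characterizations of Hermiticity from Theorem \ref{fundHerm}, thereby avoiding any direct confrontation with the possible inclusion $\s_A(b) \subsetneq \s_B(b)$ when passing to a subalgebra.

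First I would note that a closed $*$-subalgebra $B$ of a Banach $*$-algebra $A$ is itself a Banach $*$-algebra under the inherited norm and involution. The next key observation is that the function $\rlambda$, being defined in \ref{rldef} as $\rlambda(a) = \inf_{n \geq 1} |\,a^{\,n}\,|^{\,1/n}$, depends only on the norm of powers of $a$, and hence is independent of whether $a$ is viewed as an element of $B$ or of $A$. Consequently the Pt\'ak function $\rsigma(a) = \rlambda(a^*a)^{\,1/2}$ is likewise independent of the ambient algebra.

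Now suppose $A$ is Hermitian. By the implication $(i) \Rightarrow (iii)$ of Theorem \ref{fundHerm}, we have $\rlambda(a) \leq \rsigma(a)$ for every $a \in A$. Restricting this inequality to elements $b \in B$, and using the norm-only dependence of $\rlambda$ and $\rsigma$ noted above, we obtain $\rlambda(b) \leq \rsigma(b)$ for every $b \in B$. Applying the implication $(iii) \Rightarrow (i)$ of Theorem \ref{fundHerm} to the Banach $*$-algebra $B$, we conclude that $B$ is Hermitian.

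There is no real obstacle here; the only subtle point, which should be pointed out explicitly to avoid any confusion with the inclusion $\s_A(b) \setminus \{0\} \subset \s_B(b) \setminus \{0\}$ from \ref{specsubalg}, is that $\rlambda$ is a norm-theoretic quantity rather than a spectrum-theoretic one. Once this is in place, the characterization $(iii)$ in \ref{fundHerm}, which is phrased purely in terms of the norm via $\rlambda$ and $\rsigma$, automatically descends to closed $*$-subalgebras.
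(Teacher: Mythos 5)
Your proof is correct and is essentially the paper's own argument: the paper's proof consists of the single remark that a Banach \st-algebra $C$ is Hermitian if and only if $\rlambda(a) \leq \rsigma(a)$ for all $a \in C$ (i.e.\ \ref{fundHerm} (i) $\Leftrightarrow$ (iii)), which descends to closed \st-subalgebras exactly because $\rlambda$ and $\rsigma$ are norm-theoretic. Your explicit emphasis on this last point is a sensible elaboration but not a different route.
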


\begin{proof}
One uses the fact that a Banach \st-algebra $C$ is
Hermitian if and only if for every $a \in C$ one has
$\rlambda(a) \leq \rsigma(a)$.
\end{proof}

\begin{theorem}\label{specHermincl}%
Let $B$ be a complete Hermitian \st-subalgebra
of some normed \st-algebra $A$. We then have
\[ \s_B (b) \subset \s_A (b) \quad \text{for all} \quad b \in B. \]
\end{theorem}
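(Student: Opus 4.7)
I will argue by contrapositive: if $\lambda \notin \s_A (b)$, then $\lambda \notin \s_B (b)$. The backbone of the argument is the \v{S}ilov-type inclusion \ref{nointerior}, which immediately handles Hermitian elements, together with the standard $c^*c$/$cc^*$ trick to reduce the general case to the Hermitian one.

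For the Hermitian case, let $h \in B\sa$. Since $B$ is Hermitian, $\s_B (h) \subset \mathds{R}$ has empty interior, so \ref{nointerior} applied to the complete subalgebra $B$ of $A$ yields $\s_B (h) \subset \s_A (h)$. Moreover, the same reasoning applies to $\tld{B}$ as a complete subalgebra of $\tld{A}$ (via the canonical imbedding \ref{canimb}): $\tld{B}$ inherits the Hermitian property from $B$ and is complete by \ref{Banachunitis}, so $\s_{\tld{B}}(h) \subset \s_{\tld{A}}(h)$ for every Hermitian $h \in \tld{B}\sa$.

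For a general $b \in B$, set $c := \lambda e - b \in \tld{B}$, with $e$ the unit transported by the canonical imbedding. Assuming $B$ is \twiddle-unital in $A$, this $e$ coincides with the unit of $\tld{A}$, so $c$ is the same element when viewed in $\tld{A}$, and is in particular invertible there by hypothesis. Hence the Hermitian elements $c^*c, cc^* \in \tld{B}\sa$ are invertible in $\tld{A}$. The Hermitian case applied in $\tld{B} \subset \tld{A}$ then gives $0 \notin \s_{\tld{B}}(c^*c)$ and $0 \notin \s_{\tld{B}}(cc^*)$, so both $c^*c$ and $cc^*$ are invertible in $\tld{B}$. Consequently $c$ admits the left inverse $(c^*c)^{-1} c^*$ and the right inverse $c^*(cc^*)^{-1}$ in $\tld{B}$, and \ref{leftrightinv} concludes that $c$ is invertible in $\tld{B}$, i.e., $\lambda \notin \s_B (b)$.

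The hardest part will be the residual case in which $B$ has its own unit $e_B \neq e_{\tld{A}}$, so $B$ is not \twiddle-unital in $A$ and the canonical imbedding no longer preserves units. Then $\lambda e_B - b$ and $\lambda e_{\tld{A}} - b$ are genuinely distinct elements of $\tld{A}$, and the reduction via $c^*c$ in $\tld{A}$ collapses. One repairs this by passing to the corner algebra $A' := e_B A e_B$, in which $B$ is \twiddle-unital; the previous argument applied to $B \subset A'$ gives $\s_B (b) \subset \s_{A'}(b)$, and a direct calculation (using that $b$ commutes with $e_B$, so by \ref{comm2} any inverse $d$ of $\lambda e_{\tld{A}} - b$ in $\tld{A}$ does too) shows that $e_B d$ inverts $\lambda e_B - b$ in $A'$, giving $\s_{A'}(b) \subset \s_A (b)$.
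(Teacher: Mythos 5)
Your proof is correct, and its mathematical core is the same as the paper's: for Hermitian $h \in B\sa$ the inclusion $\s_B(h) \subset \s_A(h)$ comes from \ref{nointerior} because $B$ Hermitian forces $\s_B(h)$ to be real, and the general case is transferred through the two Hermitian elements $c^*c$ and $cc^*$, finishing with \ref{leftrightinv}. Where you diverge is in the bookkeeping with units. Because you form $c^*c$ inside $\tld{B}$, you must worry about whether the unit of $\tld{B}$ agrees with that of $\tld{A}$, which forces your case distinction and the corner-algebra detour $A' := e_B A e_B$; that detour does work as you describe ($e_B$ is Hermitian, $A'$ is a unital normed \st-subalgebra of $A$ containing $B$ \twiddle-unitally, and $e_B d = e_B d \mspace{2mu} e_B$ inverts $\lambda e_B - b$ in $A'$ since $d$ commutes with $e_B$ by \ref{comm2}). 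The paper dissolves the issue rather than confronting it: expanding
\[ {(\lambda e - b)}^*(\lambda e - b) = {| \,\lambda \,|}^{\,2} e + ( \,b^*b - \overline{\lambda} b - \lambda b^* \,), \]
it notes that invertibility of this product in $\tld{A}$, respectively in $\tld{B}$, is equivalent to a point determined by $\lambda$ lying outside the spectrum of the Hermitian element $b^*b - \overline{\lambda} b - \lambda b^*$ computed in $A$, respectively in $B$ --- and since that element lies in $B$ itself, each algebra uses its own unitisation and no comparison of units ever arises. So your argument is complete but pays for its directness with an extra case; the paper's reformulation in terms of an element of $B$ makes the residual case you call ``the hardest part'' evaporate.
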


\begin{proof}
Let $b \in B$, $\lambda \in \mathds{C} \setminus \s _A (b)$. Let $e_A$ denote
the unit in $\tld{A}$. Then $c := \lambda e_A - b$ in invertible in $\tld{A}$
(with inverse $c^{\,-1}$ in $\tld{A}$, say). The element
${( \lambda e_A -b )}^*( \lambda e_A -b )$ then also is invertible in $\tld{A}$
(with inverse $c^{\,-1} {(c^{\,-1})}^*$ in $\tld{A}$). That is,
$| \,\lambda \,| ^{\,2} \notin \s _A ( \,b^*b - \overline{\lambda} b - \lambda b^* )$.
Now, since $B$ is Hermitian, the Hermitian element
$b^*b - \overline{\lambda} b - \lambda b^*$ of $B$ has real spectrum in $B$.
Whence $\s _B ( \,b^*b - \overline{\lambda} b - \lambda b^* )
\subset \s _A ( \,b^*b - \overline{\lambda} b - \lambda b^* )$,
by theorem \ref{nointerior}. We conclude that
$| \,\lambda \,| ^{\,2} \notin \s _B ( \,b^*b - \overline{\lambda} b - \lambda b^* )$.
In other words, we have that ${( \lambda e_B -b )}^*( \lambda e_B -b )$
is invertible in $\tld{B}$, with $e_B$ denoting the unit in $\tld{B}$. Putting
$d := \lambda e_B - b \in \tld{B}$, we get that ${d}^{\,*}d$ is invertible in
$\tld{B}$, with inverse ${({d}^{\,*}d)}^{\,-1}$ in $\tld{B}$, say. Then
${({d}^{\,*}d)}^{\,-1} {d}^{\,*}$ is a left inverse of $d$ in $\tld{B}$. Considering
similarly the element $( \lambda e_A - b) {( \lambda e_A - b)}^*$ in $\tld{A}$,
we find that $d {d}^{\,*}$ is invertible in $\tld{B}$, with inverse
${(d {d}^{\,*})}^{\,-1}$ in $\tld{B}$, say. Then ${d}^{\,*} {(d {d}^{\,*})}^{\,-1}$
is a right inverse of $d$ in $\tld{B}$. Therefore $d = \lambda e_B - b$
is invertible in $\tld{B}$, by \ref{leftrightinv}. We have shown that
$\lambda \notin \s _B (b)$.
\end{proof}

\medskip
From \ref{specsubalg} it follows now

\begin{theorem}\label{specHermsubalg}%
If $B$ is a complete Hermitian \st-subalgebra
of some normed \st-algebra $A$, then
\[ \s_A (b) \setminus \{ 0 \} = \s_B (b) \setminus \{ 0 \}
\quad \text{for all} \quad b \in B. \]
If furthermore $B$ is \twiddle-unital in $A$, then
\[ \s_A (b) = \s_B (b) \quad \text{for all} \quad b \in B. \]
\end{theorem}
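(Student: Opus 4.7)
The plan is to combine the two previously-established inclusions directly. The hypothesis that $B$ is a complete Hermitian \st-subalgebra of a normed \st-algebra $A$ is precisely what is assumed in the immediately preceding theorem \ref{specHermincl}, which gives the inclusion
\[ \s_B (b) \subset \s_A (b) \quad \text{for all} \quad b \in B. \]
The other direction is handled by the general theorem \ref{specsubalg} on subalgebras: for any subalgebra $B$ of $A$ and any $b \in B$, one has $\s_A (b) \setminus \{0\} \subset \s_B (b) \setminus \{0\}$, with the stronger inclusion $\s_A (b) \subset \s_B (b)$ available whenever $B$ is \twiddle-unital in $A$.

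For the first claim, I would intersect both inclusions with $\mathds{C} \setminus \{0\}$: the inclusion from \ref{specHermincl} yields $\s_B (b) \setminus \{0\} \subset \s_A (b) \setminus \{0\}$, while \ref{specsubalg} gives the reverse inclusion $\s_A (b) \setminus \{0\} \subset \s_B (b) \setminus \{0\}$, producing equality. For the second claim, under the additional hypothesis that $B$ is \twiddle-unital in $A$, the stronger form of \ref{specsubalg} upgrades this to $\s_A (b) \subset \s_B (b)$, which together with $\s_B (b) \subset \s_A (b)$ from \ref{specHermincl} yields equality of the full spectra.

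There is no real obstacle here; the substantive work has already been done in establishing \ref{specHermincl}, whose proof leverages the Hermitian hypothesis through the fact that a Hermitian \st-algebra lets one reduce questions about invertibility of $\lambda e - b$ to invertibility of the Hermitian element $b^*b - \overline{\lambda} b - \lambda b^*$ (whose spectrum, being real, has no interior, so theorem \ref{nointerior} applies). The present statement is a clean corollary packaging that inclusion with the companion inclusion from \ref{specsubalg}.
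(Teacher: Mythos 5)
Your proposal is correct and is exactly the paper's argument: the paper introduces this theorem with ``From \ref{specsubalg} it follows now,'' meaning it is obtained by combining the inclusion $\s_B(b) \subset \s_A(b)$ from \ref{specHermincl} with the inclusions of \ref{specsubalg}, just as you do. Nothing further is needed.
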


It is clear that \ref{specHermincl} and \ref{specHermsubalg} are
most of the time used in conjunction with \ref{Herminher}.

\bigskip
In the sense detailed below, within the realm of Hermitian
Banach \st-algebras, the ``punctured spectrum''
$\s (a) \setminus \{ 0 \}$ of an element $a$ is independent on
the Hermitian Banach \st-algebra with respect to which
the spectrum is taken.

\begin{corollary}\label{specHermend}%
If $A$ and $B$ are Hermitian Banach \st-algebras, such
that the norms of $A$ and $B$ coincide on some common
complete \st-subalgebra $C$, then
\[ \s _A (c) \setminus \{ 0 \} = \s _B (c) \setminus \{ 0 \}
\quad \text{for all} \quad c \in C. \]
\end{corollary}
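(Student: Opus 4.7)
Given $c \in A \cap B$, my plan is to squeeze both $\s_A(c)$ and $\s_B(c)$ against the spectrum of $c$ in a common ``small'' Banach \st-algebra built from $c$ alone, using the Hermitian spectral permanence result \ref{specHermsubalg} on both sides.

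The first step is to form the \st-subalgebra $S$ of $A \cap B$ generated by $c$ (that is, the linear span of finite words in $c$ and $c^{\,*}$). By hypothesis the norms of $A$ and $B$ agree on $A \cap B$, hence on $S$, so $S$ is an unambiguously normed \st-algebra. Let $C$ be the closure of $S$ inside $A$ and $D$ be the closure of $S$ inside $B$; by \ref{precompl} both are complete (well, both are closed \st-subalgebras of complete Banach \st-algebras, so Banach \st-algebras themselves). By Theorem \ref{Herminher} each of $C$ and $D$, being a closed \st-subalgebra of a Hermitian Banach \st-algebra, is itself Hermitian.

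The second step is to identify $C$ and $D$. Both are completions of the single normed \st-algebra $S$ (same underlying algebra, same norm on $S$), so Theorem \ref{continuation} on unique continuous extension yields an isometric \st-algebra isomorphism $\varphi : C \to D$ whose restriction to $S$ is the identity; in particular $\varphi(c) = c$. Passing to unitisations, $\varphi$ extends canonically to a unital algebra isomorphism $\tld\varphi : \tld{C} \to \tld{D}$ sending $\lambda e_{C} - c$ to $\lambda e_{D} - c$, so invertibility transfers and $\s_{C}(c) = \s_{D}(c)$.

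The third step ties everything together. Applying Theorem \ref{specHermsubalg} to the complete Hermitian \st-subalgebra $C$ of $A$ gives $\s_{A}(c) \setminus \{0\} = \s_{C}(c) \setminus \{0\}$, and analogously $\s_{B}(c) \setminus \{0\} = \s_{D}(c) \setminus \{0\}$. Combining with step two yields the asserted equality. The main point deserving care is step two: one must check that the norms on $S$ inherited from $A$ and from $B$ really do agree (which is exactly the hypothesis, since $S \subset A \cap B$) so that the two completions $C$ and $D$ are canonically identified by an isomorphism fixing $c$. Everything else is a direct invocation of prior results, and the asymmetry in \ref{specHermsubalg} between $0$ and nonzero spectral values is harmless because we only claim equality of punctured spectra.
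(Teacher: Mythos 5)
Your overall strategy --- squeezing $\s _A (c)$ and $\s _B (c)$ against the spectrum of $c$ in a common intermediate algebra by applying \ref{specHermsubalg} on each side --- is the right one, but the intermediate algebra you build is not legitimate in this paper's setting. You take $C$ to be the norm closure in $A$ of the \st-subalgebra $S$ generated by $c$, and you assert that $C$ is a closed \st-subalgebra of $A$, hence Hermitian by \ref{Herminher} and eligible for \ref{specHermsubalg}. That assertion needs the involution of $A$ to be continuous: to get $x^* \in C$ from $s_n \to x$ with $s_n \in S$ you must pass ${s_n}^* \to x^*$ to the limit, and this book explicitly works with possibly discontinuous involutions. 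The paper flags exactly this trap in \ref{clogen}, where the closed \st-subalgebra generated by a set is defined as an intersection of closed \st-subalgebras precisely because the closure of a \st-subalgebra need not be \st-stable. Hermiticity does not rescue you: a Banach space with the zero multiplication and an unbounded conjugate-linear involution is a Hermitian Banach \st-algebra with discontinuous involution. Since both \ref{Herminher} and \ref{specHermsubalg} genuinely require $C$ to be a \st-subalgebra, this is a hole rather than a formality. (A smaller instance of the same issue: your $\varphi : C \to D$ is an isometric \emph{algebra} isomorphism, but calling it a \st-algebra isomorphism again presupposes continuity of the involution on the closures; fortunately only the algebra structure matters for comparing spectra, so that particular overstatement is harmless.)

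The repair makes your detour unnecessary: take the intermediate algebra to be $A \cap B$ itself, as the paper does. It is a \st-subalgebra of both $A$ and $B$ (the operations and involutions agree on it, as the corollary tacitly presupposes), it is complete and hence closed in each because the two norms coincide there, and it is automatically Hermitian by \ref{Herminher}; two applications of \ref{specHermsubalg} then give
\[ \s _A (c) \setminus \{ 0 \} = \s _{A \cap B} (c) \setminus \{ 0 \} = \s _B (c) \setminus \{ 0 \}, \]
with no closure, no completion, and no identification of two copies of a completion needed.
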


\begin{proof} The common \st-subalgebra $C$ is a complete, hence
closed, automatically Hermitian \ref{Herminher} \st-subalgebra
of both $A$ and $B$, so it remains to apply the preceding theorem
\ref{specHermsubalg}. \pagebreak
\end{proof}

\medskip
Now towards unitary elements. First an introductory observation.

\begin{observation}\label{line}%
If $z$ and $\mu$ are real numbers with $\mu \neq 0$, then the
number $( \,z - \iu \mu \,) \,{( \,z + \iu \mu \,)}^{\,-1}$ is in the unit
circle, as any non-zero complex number and its complex conjugate
are of the same length. This says that if
$\mu \in \mathds{R} \setminus \{ 0 \}$, then the Moebius transformation
\[ z \mapsto ( \,z - \iu \mu \,) \,{( \,z + \iu \mu \,)}^{\,-1}
\qquad \bigl( \,z \in \mathds{C} \cup \{ \infty \} \,\bigr)  \]
takes the real line to the unit circle. Indeed, it maps the real line onto
\[ \{ z \in \mathds{C} : | \,z \,| = 1 \} \setminus \{ 1 \}, \]
the point at infinity being mapped to $1$. (As Moebius transformations
take generalised circles onto generalised circles.)
\end{observation}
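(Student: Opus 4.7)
The plan is a direct computation in three parts: $(a)$ showing that the modulus of $( z - \iu \mu )( z + \iu \mu )^{-1}$ equals $1$, $(b)$ verifying surjectivity of the Moebius transformation from $\mathds{R}$ onto the unit circle minus $\{ 1 \}$, and $(c)$ checking the assignment of $1$ at the point at infinity. For $(a)$, since $z, \mu \in \mathds{R}$, the complex numbers $z - \iu \mu$ and $z + \iu \mu$ are complex conjugates of each other, and the hypothesis $\mu \neq 0$ ensures that the denominator $z + \iu \mu$ is non-zero (in fact, no real number $z$ can satisfy $z + \iu \mu = 0$ unless $\mu = 0$). Because complex conjugation preserves modulus, $| \,z - \iu \mu \,| = | \,z + \iu \mu \,|$, whence the quotient has modulus $1$.

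For $(b)$, I would invert the Moebius transformation explicitly. Setting $w := ( z - \iu \mu )( z + \iu \mu )^{-1}$ and solving for $z$ gives $z ( 1 - w ) = \iu \mu ( 1 + w )$, hence $z = \iu \mu ( 1 + w )( 1 - w )^{-1}$, provided $w \neq 1$. To see that this $z$ is real whenever $| \,w \,| = 1$ with $w \neq 1$, I would compute the complex conjugate: using $\overline{w} = w^{-1}$ for $w$ on the unit circle, one obtains
\[ \overline{\iu ( 1 + w )( 1 - w )^{-1}}
 = - \iu \,( 1 + w^{-1} )( 1 - w^{-1} )^{-1}
 = - \iu \,( w + 1 )( w - 1 )^{-1}
 = \iu ( 1 + w )( 1 - w )^{-1}, \]
so $z \in \mathds{R}$. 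Conversely, $w = 1$ is never attained by any real $z$, for this would force $z - \iu \mu = z + \iu \mu$ and hence $\mu = 0$, contradicting the hypothesis.

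For $(c)$, dividing numerator and denominator by $z$ and letting $z \to \infty$ yields $( 1 - \iu \mu z^{-1} )( 1 + \iu \mu z^{-1} )^{-1} \to 1$, which is the appropriate value to assign at $\infty$ for the extension of the Moebius transformation to the Riemann sphere $\mathds{C} \cup \{ \infty \}$. There is no real obstacle here; the observation is elementary, and the only point that requires any attention is the explicit inversion in part $(b)$ and the verification that the formula for $z$ actually produces a real number when $| \,w \,| = 1$.
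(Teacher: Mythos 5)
Your proof is correct. The paper itself offers only the two parenthetical justifications contained in the statement: the quotient lies on the unit circle because a non-zero complex number and its conjugate have equal modulus, and surjectivity onto $\{ \,z \in \mathds{C} : | \,z \,| = 1 \,\} \setminus \{ 1 \}$ is delegated to the general fact that Moebius transformations carry generalised circles onto generalised circles. Your part $(a)$ coincides with the paper's first remark, but in part $(b)$ you replace the appeal to the generalised-circle principle by an explicit inversion: you solve $w = ( \,z - \iu \mu \,)( \,z + \iu \mu \,)^{-1}$ for $z = \iu \mu \,( 1 + w )( 1 - w )^{-1}$ and verify directly, using $\overline{w} = w^{-1}$ on the unit circle, that this preimage is real whenever $| \,w \,| = 1$ and $w \neq 1$, and that $w = 1$ is never attained. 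This is more elementary and entirely self-contained (it needs nothing beyond arithmetic with conjugates), at the cost of a short computation; the paper's route is shorter but imports a standard theorem from complex analysis. Your part $(c)$, identifying the value $1$ at the point at infinity by the limit $z \to \infty$, supplies a detail the paper states without comment. The injectivity of the map on $\mathds{R}$, which the paper later uses via "the injective nature of a Moebius transformation", also falls out of your explicit inverse formula for free.
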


\begin{definition}[Cayley transform]%
\index{concepts}{Cayley transform}\label{Cayley}%
Let $A$ be a Banach algebra, and let $a \in \tld{A}$.
For $\mu > \rlambda(a)$, one defines a \underline{Cayley transform}
$u$ of $a$ via
\[ u := ( \,a - \iu \mu e \,) \,{( \,a + \iu \mu e \,)}^{\,-1} \in \tld{A}. \]
\end{definition}

Please note that the assumption $\mu > \rlambda(a)$ implies that
$a + \iu \mu e$ is invertible in $\tld{A}$, so $u$ is well-defined.

\begin{proposition}\label{Moebius}%
Under the assumptions of definition \ref{Cayley}, we have that
$\s(u)$ is contained in the unit circle if and only if $\s(a)$ is real.
\end{proposition}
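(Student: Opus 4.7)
The plan is to apply the Rational Spectral Mapping Theorem \ref{ratspecmapthm} to the rational function
\[ r(x) := (x - \iu \mu)\,{(x + \iu \mu)}^{-1}, \]
so that $u = r(a)$. First I would check that this is legitimate: the only pole of $r$ is at $-\iu \mu$, and since $|-\iu \mu| = \mu > \rlambda(a)$, the Spectral Radius Formula \ref{specradform} guarantees $-\iu \mu \notin \s(a)$. Hence $r$ has no pole on $\s(a)$, and theorem \ref{ratspecmapthm} gives
\[ \s(u) = r\bigl(\s(a)\bigr)
   = \bigl\{\,(\lambda - \iu \mu)\,{(\lambda + \iu \mu)}^{-1} : \lambda \in \s(a) \,\bigr\}. \]

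Next I would invoke Observation \ref{line}: the Moebius transformation $r$ is a bijection of $\mathds{C} \cup \{\infty\}$ onto itself, carrying the real line onto $\{z \in \mathds{C} : |z|=1\} \setminus \{1\}$, with $\infty \mapsto 1$. In particular, $r$ maps $\mathds{R} \cup \{\infty\}$ bijectively onto the full unit circle, and so $r^{-1}$ carries the unit circle onto $\mathds{R} \cup \{\infty\}$. The forward implication is now immediate: if $\s(a) \subset \mathds{R}$, then $\s(u) = r\bigl(\s(a)\bigr)$ is contained in the unit circle.

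For the converse, suppose $\s(u)$ is contained in the unit circle. Since $r$ is a bijection of the Riemann sphere, and since $\s(a) \subset \mathds{C}$ avoids the pole of $r$, one may apply $r^{-1}$ to both sides of $\s(u) = r\bigl(\s(a)\bigr)$ to obtain $\s(a) = r^{-1}\bigl(\s(u)\bigr)$. By the observation just recalled, $r^{-1}\bigl(\s(u)\bigr) \subset r^{-1}(\{|z|=1\}) = \mathds{R} \cup \{\infty\}$. But $\s(a)$ is a bounded subset of $\mathds{C}$, so it cannot contain $\infty$; therefore $\s(a) \subset \mathds{R}$, as required.

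There is really no main obstacle here beyond bookkeeping. The only delicacy is handling the point at infinity under the Moebius correspondence, and this is harmless because $\mu > \rlambda(a) \geq 0$ ensures both that the pole $-\iu \mu$ lies outside $\s(a)$ and that the spectrum $\s(a)$, being a bounded subset of $\mathds{C}$, cannot reach $\infty$.
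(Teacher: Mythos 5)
Your proof is correct and follows essentially the same route as the paper: apply the Rational Spectral Mapping Theorem to the Moebius transformation $r$ (noting the pole $-\iu\mu$ avoids $\s(a)$ since $\mu > \rlambda(a)$), then use Observation \ref{line} together with the injectivity of $r$ on the Riemann sphere to transfer the condition in both directions. Your explicit handling of the point at infinity matches the paper's parenthetical remark that $1 \notin \s(u)$ because $\infty \notin \s(a)$.
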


\begin{proof}
The Moebius transformation $r : z \mapsto r(z)$ given by
\[ r(z) := ( \,z - \iu \mu \,) \,{( \,z + \iu \mu \,)}^{\,-1}
\qquad \bigl( \,z \in \mathds{C} \cup \{ \infty \} \,\bigr) \]
has no pole on $\s(a)$ because $\mu > \rlambda(a)$.
Therefore the Rational Spectral Mapping Theorem implies that
\[ r \bigl( \s(a) \bigr) = \s\bigl( r(a) \bigr) = \s(u). \]
As noted above \ref{line}, the Moebius transformation $r$ maps
the real line onto
\[ \{ z \in \mathds{C} : | \,z \,| = 1 \} \setminus \{ 1 \}, \]
the point at infinity being mapped to $1$.
From the injective nature of a Moebius transformation it follows that
$\s(u)$ is contained in the unit circle if and only if $\s(a)$ is real.
\end{proof}

\smallskip
Next for \st-algebras. \pagebreak

\begin{definition}[unitary elements]\index{concepts}{unitary element}%
An element $u$ of a unital \linebreak \st-algebra is called
\underline{unitary} if $u^*u = uu^* = e$. The unitary elements
of a unital \st-algebra form a group under multiplication.
\end{definition}

The Cayley transformation $a \mapsto u$ behaves as expected
from \ref{line}:

\begin{proposition}\label{unitary}%
If $A$ is a Banach \st-algebra and if $a$ is any Hermitian
element of $\tld{A}$, then the Cayley transform $u$ of $a$
is a unitary element of $\tld{A}$. (Under the assumptions of
definition \ref{Cayley}.)
\end{proposition}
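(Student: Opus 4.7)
The plan is to verify $u^{*}u = uu^{*} = e$ by direct computation, exploiting the fact that $a+\iu\mu e$ and $a-\iu\mu e$ are both polynomials in $a$ and $e$, and hence commute with each other and with their inverses.

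First I would unpack the involution. Since $a$ is Hermitian and $\mu \in \mathds{R}$, the formula for the involution on $\tld{A}$ gives $(a \pm \iu\mu e)^{*} = a \mp \iu\mu e$. Using $(xy)^{*} = y^{*}x^{*}$ and the fact that taking the adjoint commutes with taking the inverse, I obtain
\[ u^{*} = \bigl((a+\iu\mu e)^{-1}\bigr)^{*}\,(a-\iu\mu e)^{*}
 = (a-\iu\mu e)^{-1}\,(a+\iu\mu e). \]
Here I am using that $a-\iu\mu e$ is invertible in $\tld{A}$: the hypothesis $\mu > \rlambda(a)$ gives $|{-\iu\mu}| = \mu > \rlambda(a)$, so $-\iu\mu \notin \s(a)$, and likewise $\iu\mu \notin \s(a)$.

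Next I would establish the key commutation. Obviously $(a+\iu\mu e)(a-\iu\mu e) = a^{2}+\mu^{2}e = (a-\iu\mu e)(a+\iu\mu e)$, so $a+\iu\mu e$ and $a-\iu\mu e$ commute. By Proposition \ref{comm2}, the inverse of either commutes with the other, and hence $(a+\iu\mu e)^{-1}$ and $(a-\iu\mu e)^{-1}$ also commute (and commute with $a+\iu\mu e$ and $a-\iu\mu e$).

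With this in hand, the verification is immediate:
\[ u^{*}u = (a-\iu\mu e)^{-1}(a+\iu\mu e)(a-\iu\mu e)(a+\iu\mu e)^{-1}
 = (a-\iu\mu e)^{-1}(a-\iu\mu e)(a+\iu\mu e)(a+\iu\mu e)^{-1} = e, \]
and by the symmetric computation $uu^{*} = e$ as well. There is no real obstacle; the only point requiring some care is checking that both $a \pm \iu\mu e$ are invertible, which follows from $\mu > \rlambda(a)$.
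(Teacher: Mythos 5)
Your proposal is correct and follows essentially the same route as the paper: compute $u^*$ via the adjoint formula and the commutativity of $a\pm\iu\mu e$ (Proposition \ref{comm2}), then cancel to get $u^*u = uu^* = e$. The only cosmetic difference is that you justify the invertibility of $a-\iu\mu e$ through the spectral radius, whereas one can also note directly that it is the adjoint of the invertible element $a+\iu\mu e$; both are fine.
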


\begin{proof}
By commutativity \ref{comm2} we get
\begin{align*}
u^* & = \bigl[ \,{( \,a + \iu \mu e \,)}^{\,-1} \,\bigr]^* \,( \,a - \iu \mu e \,)^* \\
       & = {( \,a - \iu \mu e \,)}^{\,-1} \,( \,a + \iu \mu e \,) \\
       & = ( \,a + \iu \mu e \,) \,{( \,a - \iu \mu e \,)}^{\,-1},
\end{align*}
so that $u^*u = uu^* = e$.
\end{proof}

\begin{theorem}\label{specunitary}%
A Banach \st-algebra $A$ is Hermitian if and only if the spectrum
of every unitary element of $\tld{A}$ is contained in the unit circle.
\end{theorem}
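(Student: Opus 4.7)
The plan is to prove both implications by leaning on the characterization of Hermitian Banach $*$-algebras in \ref{fundHerm} together with the Cayley transform machinery developed in \ref{Cayley}--\ref{unitary}. No serious obstacle is expected; the main care is in correctly passing between $A$ and $\tld{A}$ (the definition of Hermitian refers to $A$, but unitary elements live in $\tld{A}$).

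For the forward direction, assume $A$ is Hermitian, and let $u \in \tld{A}$ be unitary. Since $u^*u = u u^* = e$, the element $u$ is normal. I would invoke \ref{fundHerm}$(iv)$ (applied to the unitisation $\tld{A}$, which is Hermitian together with $A$) to obtain $\rlambda(u) = \rsigma(u)$. But $\rsigma(u)^{\,2} = \rlambda(u^*u) = \rlambda(e) = 1$, so $\rlambda(u) = 1$, and hence $\s(u) \subset \{|z|\le 1\}$. Applying the same reasoning to the unitary $u^{-1} = u^*$, I get $\rlambda(u^{-1}) = 1$. The Rational Spectral Mapping Theorem \ref{ratspecmapthm} gives $\s(u^{-1}) = \{\lambda^{-1} : \lambda \in \s(u)\}$ (note $0 \notin \s(u)$), so $|\lambda^{-1}| \leq 1$, i.e.\ $|\lambda| \geq 1$ for every $\lambda \in \s(u)$. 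Combining the two bounds yields $|\lambda| = 1$ on $\s(u)$.

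For the reverse direction, assume every unitary element of $\tld{A}$ has spectrum in the unit circle, and let $a \in A$ be Hermitian. Then $a$ is also Hermitian as an element of $\tld{A}$, so for any $\mu > \rlambda(a)$ the Cayley transform $u := (a - \iu \mu e)(a + \iu \mu e)^{-1} \in \tld{A}$ is well-defined and is a unitary element of $\tld{A}$ by \ref{unitary}. By hypothesis, $\s(u)$ is contained in the unit circle, so Proposition \ref{Moebius} forces $\s(a)$ to be real. As $a \in A\sa$ was arbitrary, $A$ is Hermitian. \qed
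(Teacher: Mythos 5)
Your proof is correct and follows essentially the same route as the paper's: \ref{fundHerm} (i) $\Rightarrow$ (iv) together with the Rational Spectral Mapping Theorem applied to $u^{-1}=u^*$ for the forward direction, and the Cayley transform via \ref{unitary} and \ref{Moebius} for the converse. Your explicit remark that one must pass to the unitisation $\tld{A}$ (which is Hermitian together with $A$) is a point the paper's proof leaves tacit, but the argument is the same.
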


\begin{proof}
Assume first that $A$ is Hermitian and let $u$ be any unitary element of
$\tld{A}$. We then have $\rlambda(u) = \rsigma(u) = 1$ by \ref{fundHerm}
(i) $\Rightarrow$ (iv). It follows that the spectrum of $u$ is contained in
the unit disk. Since also $u^{\,-1} = u^*$ is unitary, it follows from
${\s(u)}^{\,-1} = \s(u^{\,-1})$ (Rational Spectral Mapping Theorem) that
also ${\s(u)}^{\,-1}$ is contained in the unit disk. This makes that $\s(u)$
is contained in the unit circle.

Conversely, let $a$ be a Hermitian element of $A$ and let
$\mu > \rlambda(a)$. Then the Cayley transform
$u := ( \,a - \iu\mu e \,) \,{( \,a + \iu\mu e \,)}^{\,-1}$ is a unitary element
of $\tld{A}$, cf.\ \ref{unitary}. If $\s(u)$ is contained in the unit circle,
it follows that $\s(a)$ is real, cf. \ref{Moebius}.
\end{proof}

\begin{corollary}\label{C*unitary}
The spectrum of a unitary element of a unital \linebreak
C*-algebra is contained in the unit circle.
\end{corollary}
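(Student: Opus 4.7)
The plan is to combine the two immediately preceding results. Theorem \ref{C*Herm} shows that any C*-algebra is Hermitian, and the ``only if'' direction of Theorem \ref{specunitary} then gives that every unitary element of the unitisation of a Hermitian Banach \st-algebra has spectrum contained in the unit circle. Stringing these together yields the corollary at once.

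The only detail worth mentioning is the harmless passage between $A$ and $\tld{A}$: since $A$ is assumed unital, the unitisation construction defines $\tld{A} := A$, so every unitary element of $A$ is literally a unitary element of $\tld{A}$, with the same spectrum by \ref{specunit}. There is no real obstacle here, the substantive work having already been invested in \ref{fundHerm}, \ref{C*Herm}, and \ref{specunitary}.

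If one wished a self-contained proof bypassing \ref{specunitary}, one could argue directly from the C*-property: $\| \,u \,\|^{\,2} = \| \,u^{*}u \,\| = \| \,e \,\| = 1$ by \ref{unitCstar}, forcing $\rlambda(u) \leq \| \,u \,\| = 1$, so $\s(u)$ lies in the closed unit disc. Applying the same estimate to the unitary $u^{-1} = u^{*}$ and using $\s(u^{-1}) = \s(u)^{-1}$ from the Rational Spectral Mapping Theorem \ref{ratspecmapthm} then confines $\s(u)$ to the unit circle. This direct route essentially duplicates the machinery behind \ref{specunitary}, however, so citing the latter is the cleanest presentation.
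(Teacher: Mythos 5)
Your proposal is correct and matches the paper exactly: the corollary is stated there without proof precisely because it is the immediate combination of \ref{C*Herm} with the ``only if'' direction of \ref{specunitary}, which is what you give. The direct alternative via the C*-property and the Rational Spectral Mapping Theorem is also sound, but, as you note yourself, it merely re-runs the argument already inside the proof of \ref{specunitary}.
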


\begin{remark}\label{entire}
The definition of Cayley transforms as in \ref{Cayley} stems from
the theory of indefinite inner product spaces,
cf. e.g.\ \cite[Lemma 4.1 p.\ 38]{Bog}. This device allows us to avoid
the Spectral Mapping Theorem for entire functions in the proof of
\ref{specunitary}. \pagebreak
\end{remark}

\clearpage


\section{An Operational Calculus}

We next give a version of the operational calculus with an elementary
proof avoiding Zorn's Lemma. (See also \ref{opcalc} below.)

\begin{theorem}[the operational calculus, weak form]\label{weakopcalc}%
\index{concepts}{operational calculus}%
\index{concepts}{calculus!operational}%
Let $A$ be a C*-algebra, and let $a$ be a \underline{Hermitian}
element of $A$. Denote by $C\bigl(\s(a)\bigr)\0$ the C*-subalgebra
of $C\bigl(\s(a)\bigr)$ consisting of the continuous complex-valued functions
on $\s(a)$ vanishing at $0$. There is a unique \st-algebra homomorphism
\[ C\bigl(\s(a)\bigr)\0 \to A \]
which maps the identity function on $\s(a)$ to $a$. This mapping is an
isomorphism of C*-algebras from $C\bigl(\s(a)\bigr)\0$ onto the closed
subalgebra of $A$ generated by $a$. (Which also is the C*-subalgebra
of $A$ generated by $a$.) This mapping is called the
\underline{operational calculus} for $a$, and it is denoted by $f \mapsto f(a)$.
\end{theorem}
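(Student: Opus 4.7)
The plan is to build $\pi$ by extending the natural map $p\mapsto p(a)$ defined on polynomials without constant term, and to verify the assertions in turn. Note first that since $A$ is a C*-algebra, it is Hermitian by \ref{C*Herm}, so $\s(a)\subset\mathds{R}$; also $\s(a)$ is non-empty and compact by \ref{specradform}. Let $\mathcal{P}$ denote the set of polynomials without constant term, regarded as complex-valued functions on $\s(a)$. Because $\s(a)\subset\mathds{R}$, complex conjugation on $\mathcal{P}$ amounts to conjugating the coefficients, so $\mathcal{P}$ is a \st-subalgebra of $\cont(\s(a))$. The identity $t\mapsto t$ already separates points; if $0\in\s(a)$ then $\mathcal{P}$ vanishes only at $0$, while if $0\notin\s(a)$ then $\mathcal{P}$ vanishes nowhere. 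In either case the extended Stone-Weierstrass Theorem \ref{StWcomp} yields that $\mathcal{P}$ is uniformly dense in $\cont(\s(a))\0$.

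Next I would define $\pi_0(p):=\sum_{k\geq1}c_k\,a^k\in A$ for a formal polynomial $p(x)=\sum_{k\geq1}c_k\,x^k$. This is a \st-homomorphism from the \st-algebra of formal polynomials without constant term to $A$, using $a^{\,*}=a$. The key estimate is that $\| \,p(a) \,\| = \| \,p \,\|_{\,\infty}$, the supremum being taken over $\s(a)$. Indeed, $p(a)$ lies in the commutative subalgebra generated by $a$, hence is normal; so $\|\,p(a)\,\|=\rlambda\bigl(p(a)\bigr)$ by \ref{C*rl}. For $p$ a non-zero polynomial (the zero case being trivial) the Rational Spectral Mapping Theorem \ref{ratspecmapthm} gives $\s\bigl(p(a)\bigr)=p\bigl(\s(a)\bigr)$, whence $\rlambda\bigl(p(a)\bigr)=\max_{t\in\s(a)}|\,p(t)\,|=\|\,p\,\|_{\,\infty}$. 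This identity shows that $\pi_0$ passes to the well-defined map $\mathcal{P}\to A$ on functions, and that it is isometric.

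Since $\mathcal{P}$ is dense in $\cont(\s(a))\0$ and $A$ is complete (a C*-algebra), $\pi_0$ extends by continuity to an isometric \st-homomorphism $\pi:\cont(\s(a))\0\to A$; the range is the image of a complete space under an isometry, hence closed in $A$, and visibly equals the closure of $\{\,p(a):p\in\mathcal{P}\,\}$, i.e.\ the closed subalgebra generated by $a$, which, as $a$ is Hermitian, coincides with the C*-subalgebra generated by $a$. For uniqueness, any \st-homomorphism $\phi:\cont(\s(a))\0\to A$ sending $\id$ to $a$ must send each $p\in\mathcal{P}$ to $p(a)=\pi(p)$, and is automatically continuous by \ref{Cstarcontr}, so $\phi=\pi$ on the dense subspace $\mathcal{P}$ forces $\phi=\pi$ throughout.

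The main obstacle is Step~1: one has to treat $\mathcal{P}$ as a \st-subalgebra not containing the constants and to split cases according to whether $0\in\s(a)$, which is exactly why the \emph{extended} form \ref{StWcomp} of Stone-Weierstrass (rather than the original \ref{StW}) is needed. The remainder rests squarely on the two pillars already available for normal elements in a C*-algebra, namely $\|\,b\,\|=\rlambda(b)$ from \ref{C*rl} and the Rational Spectral Mapping Theorem.
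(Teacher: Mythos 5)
Your proof is correct and follows essentially the same route as the paper's: define $p \mapsto p(a)$ on polynomials without constant term, establish the isometry $\|\,p(a)\,\| = |\,p|_{\s(a)}\,|_{\,\infty}$ via \ref{C*rl}, \ref{specradform} and the Rational Spectral Mapping Theorem, obtain density from the extended Stone-Weierstrass Theorem \ref{StWcomp}, and extend by continuity, with uniqueness from \ref{Cstarcontr}. Your explicit verification that $\s(a)\subset\mathds{R}$ makes $\mathcal{P}$ a \st-subalgebra, and the case split on whether $0\in\s(a)$, merely spell out details the paper leaves tacit.
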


\begin{proof}
Let $\mathds{C}[x]\0$ denote the set of complex polynomials without
\linebreak constant term. For $p \in \mathds{C}[x]\0$, we shall denote by
$p|_{\s(a)} \in C\bigl(\s(a)\bigr)\0$ the restriction to $\s(a)$ of the corresponding
polynomial function. We can define an isometric mapping
$p|_{\text{\small{$\s(a)$}}} \mapsto p(a)$
$(p \in \mathds{C}[x]\0)$. Indeed, for $p \in \mathds{C}[x]\0$,
the element $p(a) \in A$ is normal, and one calculates
\begin{align*}
\| \,p(a) \,\| & = \rlambda\,\bigl( \,p(a) \,\bigr) \tag*{by \ref{C*rl}} \\
 & = \max \ \bigl\{ \,| \,\lambda \,| : \lambda \in \s \,\bigl( \,p(a) \,\bigr) \,\bigr\}
\tag*{by \ref{specradform}} \\
 & = \max \ \bigl\{ \,| \,p(\mu) \,| : \mu \in \s(a) \,\bigr\} \tag*{by \ref{ratspecmapthm}} \\
 & = \bigl| \,p|_{\text{\small{$\s(a)$}}} \,\bigr| _{\infty}.
\end{align*}
Thus, if $p|_{\text{\small{$\s(a)$}}} = q|_{\text{\small{$\s(a)$}}}$ for
$p,q \in \mathds{C}[x]\0$, then
$\bigl| \,(p-q)|_{\text{\small{$\s(a)$}}} \,\bigr| _{\infty} = 0$,
whence $\| \,(p-q) (a) \,\| = 0$, so $p(a) = q(a)$, and the mapping in
question is well-defined. The mapping in question is isometric as well.

The extended Stone-Weierstrass Theorem \ref{StWcomp} implies that
the set of complex polynomial functions vanishing at $0$ is dense in
$C\bigl(\s(a)\bigr)\0$. This together with the continuity result \ref{Cstarcontr}
already implies the uniqueness statement. The above mapping has a unique
extension to a continuous mapping from $C\bigl(\s(a)\bigr)\0$ to $A$. This
continuation then is a C*-algebra isomorphism from $C\bigl(\s(a)\bigr)\0$
onto the closed subalgebra of $A$ generated by $a$. It is used that the
continuation is isometric, which implies that its image is complete, hence
closed in $A$. \pagebreak
\end{proof}

\begin{corollary}\label{calcplus}%
Let $a$ be a Hermitian element of a C*-algebra.
If $f \in C\bigl(\s(a)\bigr)\0$ satisfies $f \geq 0$ pointwise on $\s(a)$,
then $f(a)$ is Hermitian and $\s\bigl(f(a)\bigr) \subset [ \,0, \infty \,[$.
\end{corollary}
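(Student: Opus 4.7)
The plan is to factor $f$ as the square of a non-negative continuous function on $\s(a)$, push this factorisation through the operational calculus, and then read off the spectrum with the Rational Spectral Mapping Theorem. First I would show that $f(a)$ is Hermitian: since $f \geq 0$ pointwise, the function $f$ is real-valued, so $f^* = \overline{f} = f$ as an element of the \st-algebra $C\bigl(\s(a)\bigr)\0$, and the \st-homomorphism property of the operational calculus of \ref{weakopcalc} yields $f(a)^* = f^*(a) = f(a)$.

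Next I would introduce $g := \sqrt{f}$, the pointwise square root. This function is continuous (as the composition of the continuous $f$ with $t \mapsto \sqrt{t}$ on $[\,0,\infty\,[$), real-valued, non-negative, and satisfies $g(0) = 0$ whenever $0 \in \s(a)$, so $g \in C\bigl(\s(a)\bigr)\0$. The relation $g^2 = f$ then transports through the algebra homomorphism of \ref{weakopcalc} to give $f(a) = g(a)^2$, and the reasoning of the previous paragraph, applied to $g$ in place of $f$, shows that $g(a)$ is Hermitian.

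Finally, the Rational Spectral Mapping Theorem \ref{ratspecmapthm} applied to the polynomial $r(x) = x^2$ would give
\[ \s\bigl(f(a)\bigr) = \s\bigl(g(a)^2\bigr) = \bigl\{ \,\mu^2 : \mu \in \s\bigl(g(a)\bigr) \,\bigr\}. \]
Since $A$ is a C*-algebra, it is Hermitian by \ref{C*Herm}, so the Hermitian element $g(a)$ has real spectrum, and therefore $\s\bigl(f(a)\bigr) \subset [\,0,\infty\,[$. The only delicate point in this plan is the verification that the pointwise square root $\sqrt{f}$ genuinely lies in $C\bigl(\s(a)\bigr)\0$, which rests on the hypothesis $f \in C\bigl(\s(a)\bigr)\0$ (in particular $f(0) = 0$ when $0 \in \s(a)$); once this is in hand, the argument is purely mechanical, combining three already-established tools: the \st-homomorphism property of the operational calculus, the Rational Spectral Mapping Theorem, and the Hermitian nature of every C*-algebra.
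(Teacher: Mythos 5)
Your proposal is correct and follows essentially the same route as the paper: factor $f = g^2$ with $g = \sqrt{f} \geq 0$ in $C\bigl(\s(a)\bigr)\0$, observe that $g(a)$ is Hermitian with real spectrum, and conclude via the Rational Spectral Mapping Theorem. Your explicit verification that $\sqrt{f}$ lies in $C\bigl(\s(a)\bigr)\0$ is a detail the paper leaves implicit, but otherwise the arguments coincide.
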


\begin{proof}
Let $g \in C\bigl(\s(a)\bigr)\0$ with $f = {g \,}^2$ and $g \geq 0$ on $\s(a)$.
Then $g$ is Hermitian in $C\bigl(\s(a)\bigr)\0$ as $g$ is real-valued.
Hence $g(a)$ is Hermitian, \linebreak and thus
$\s \bigl(g(a)\bigr) \subset \mathds{R}$. So $f(a) = {g(a) \,}^2$ is Hermitian
and $\s \bigl(f(a)\bigr) = \s {\bigl(g(a)\bigr) \,}^2 \subset [ \,0, \infty \,[$ by the
Rational Spectral Mapping Theorem.
\end{proof}

\begin{theorem}\index{concepts}{square root}\label{C*sqroot}%
Consider a C*-algebra $A$, and let $a$ be a Hermitian element of $A$
with $\s(a) \subset [ \,0, \infty \,[$. Then $a$ has a unique Hermitian square
root with non-negative spectrum in $A$. This square root of $a$ belongs
to the closed subalgebra of $A$ generated by $a$. (Which also is the
C*-subalgebra of $A$ generated by $a$.) See also \ref{C*sqrootrestate} below.
\end{theorem}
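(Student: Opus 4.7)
The plan for existence is a direct application of the operational calculus \ref{weakopcalc}. Because $\s(a) \subset [\,0,\infty\,[$, the function $f(t) := \sqrt{t}$ is continuous on $\s(a)$ and vanishes at $0$, hence defines an element $f \in C\bigl(\s(a)\bigr)\0$. Set $b := f(a)$. Then $b$ automatically lies in the closed subalgebra of $A$ generated by $a$. Since $f$ is real-valued it is Hermitian in $C\bigl(\s(a)\bigr)\0$, so $b$ is Hermitian. Since $f \geq 0$ pointwise on $\s(a)$, corollary \ref{calcplus} yields $\s(b) \subset [\,0,\infty\,[$. Finally $f \cdot f$ equals the identity function $\id_{\s(a)}$, so by the homomorphism property of the calculus we obtain $b^{\,2} = f(a)^{\,2} = \id_{\s(a)}(a) = a$.

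For uniqueness, let $c \in A$ be any Hermitian square root of $a$ with $\s(c) \subset [\,0,\infty\,[$. The main obstacle is that the uniqueness statement of \ref{possqroot} for Banach algebras requires $\s(a) \subset \ ]\,0,\infty\,[\,$, whereas $0$ may well lie in $\s(a)$; and the commutative Gel'fand-Na\u{\i}mark representation, which would make uniqueness transparent, is not yet available at this point in the text. The strategy instead is to approximate $\sqrt{\cdot}$ by polynomials without constant term via corollary \ref{sqrta}, and to apply the operational calculus to both $a$ and $c$ simultaneously. Choose an integer $k$ with $2^{\,2k} \geq \|\,a\,\|$, and let $(q_n)$ be the resulting sequence of polynomials without constant term converging uniformly to $\sqrt{\cdot}$ on $[\,0, 2^{\,2k}\,]$.

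Since $\s(a) \subset [\,0, \|\,a\,\|\,]$, the restrictions $q_n|_{\s(a)}$ tend to $f$ in $C\bigl(\s(a)\bigr)\0$, so by the isometric character of the operational calculus for $a$ we get $q_n(a) \to b$ in $A$. On the other hand $c$ is Hermitian, whence $\|\,c\,\| = \rlambda(c)$ by \ref{C*rl}, and $\|\,c\,\|^{\,2} = \rlambda(c)^{\,2} = \rlambda(c^{\,2}) = \rlambda(a) \leq \|\,a\,\|$ by \ref{rlpowers}, giving $\s(c) \subset [\,0, \sqrt{\|\,a\,\|}\,]$. The polynomials $p_n(s) := q_n(s^{\,2})$ again have no constant term, and since $q_n(t) \to \sqrt{t}$ uniformly for $t \in [\,0, \|\,a\,\|\,]$, we have $p_n(s) \to \sqrt{s^{\,2}} = s$ uniformly for $s \in [\,0, \sqrt{\|\,a\,\|}\,]$. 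Consequently $p_n|_{\s(c)} \to \id_{\s(c)}$ in $C\bigl(\s(c)\bigr)\0$, and the operational calculus for $c$ then gives $p_n(c) \to c$. But purely algebraically $p_n(c) = q_n(c^{\,2}) = q_n(a)$, which we have just seen tends to $b$. Therefore $b = c$.
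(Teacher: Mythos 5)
Your existence argument coincides with the paper's: both define $a^{\,1/2}:=f(a)$ with $f(t)=\sqrt{t}$ via the operational calculus \ref{weakopcalc} and invoke \ref{calcplus}. For uniqueness, however, you take a genuinely different route. The paper uses the classical ``sum of two opposite positive elements'' trick: it introduces a Hermitian square root $c$ of $a^{\,1/2}$ (inside the closed subalgebra generated by $a^{\,1/2}$) and a Hermitian square root $d$ of the competing root $b$, checks that $a, a^{\,1/2}, b, c, d$ all commute, and expands
\[ 0 = \bigl( a - {b}^{\,2} \bigr)\bigl( a^{\,1/2} - b \bigr)
= {\bigl[ ( a^{\,1/2} - b ) c \bigr]}^{\,2} + {\bigl[ ( a^{\,1/2} - b ) d \bigr]}^{\,2}, \]
whence both summands, being squares of Hermitian elements and opposites of one another, vanish; subtracting gives ${( a^{\,1/2} - b )}^{\,3} = 0$, and the C*-property finishes the job. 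Your argument instead exploits the polynomial approximation of \ref{sqrta}: the identity $q_n(c^{\,2}) = p_n(c)$ with $p_n(s) := q_n(s^{\,2})$ lets you run the operational calculus simultaneously for $a$ and for the competing root $c$, and the two limits $q_n(a) \to b$ and $p_n(c) \to c$ force $b = c$. Your route avoids the auxiliary square roots and the positivity bookkeeping entirely, at the cost of the (routine but necessary) spectral estimate $\s(c) \subset [\,0,\sqrt{\|\,a\,\|}\,]$ and the uniform convergence of $p_n$ on that interval — both of which you supply correctly. The paper's argument is self-contained within the order structure; yours is essentially a naturality-of-the-functional-calculus argument and is arguably the more transparent of the two. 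Both are complete proofs.
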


\begin{proof}
For existence, consider $f \in C\bigl(\s(a)\bigr)\0$ given by $f(t) := \sqrt{t}$ for
$t \in \s(a) \subset [ \,0, \infty \,[$. Put ${a \,}^{1/2} := f(a)$, and apply \ref{calcplus}.
Now for uniqueness. Let $b$ be a Hermitian square root of $a$ with
$\s(b) \subset [ \,0, \infty \,[$. We have to show that $b = {a \,}^{1/2}$. Let $c$ be a
Hermitian square root of ${a \,}^{1/2}$ in the closed subalgebra of $A$
generated by ${a \,}^{1/2}$. Let $d$ be a Hermitian square root of $b$. Then
$d$ commutes with $b = {d \,}^2$, hence also with $a = {b \,}^2$. But then $d$
also commutes with ${a \,}^{1/2}$ and $c$. This is so because ${a \,}^{1/2}$ lies
in the closed subalgebra of $A$ generated by $a$ (by construction of
${a \,}^{1/2}$), and because $c$ is in the closed subalgebra of $A$ generated
by ${a \,}^{1/2}$ (by assumption on $c$). We conclude that the Hermitian
elements $a, {a \,}^{1/2}, b, c, d$ all commute. One now calculates
\begin{align*}
0 = & \,\bigl( a - {b \,}^{2\mspace{2mu}} \bigr) \,\bigl( {a \,}^{1/2} - b \bigr) \\
  = & \,\bigl( {a \,}^{1/2} - b \bigr) \,{a \,}^{1/2}
         \,\bigl( {a \,}^{1/2} - b \bigr) + \bigl( {a \,}^{1/2}  -b \bigr) \,b \,\bigl( {a \,}^{1/2} - b \bigr) \\
  = & \,{\bigl[ \,\bigl( {a \,}^{1/2} - b \bigr) \,c \,\bigr] \,}^2+{\bigl[ \,\bigl( {a \,}^{1/2} - b \bigr) \,d \,\bigr] \,}^2.
\end{align*}
Both terms in the last line have non-negative spectrum. Indeed, both terms
are squares of Hermitian elements. Please note \ref{Hermprod} here.
Since the two terms are opposites one of another, both must have spectrum
$\{0\}$, and thence must both be zero, cf.\ \ref{C*rl}. The difference of these
terms is
\[ 0 = \bigl( {a \,}^{1/2} - b \bigr) \,{a \,}^{1/2} \,\bigl({a \,}^{1/2} - b \bigr)
- \bigl( {a \,}^{1/2} - b \bigr) \,b \,\bigl( {a \,}^{1/2} - b \bigr)
= { \bigl( {a \,}^{1/2} - b \bigr) \,}^3. \]
With the C*-property \ref{preC*alg} it follows that
\[ 0 = \| \,{\bigl( {a \,}^{1/2} - b \bigr) \,}^4 \,\| = {\| \,{ \bigl( {a \,}^{1/2} - b \bigr) \,}^2 \,\|\,}^2
= {\| \,{a \,}^{1/2} - b \,\|\,}^4. \pagebreak \qedhere \]
\end{proof}

\clearpage


\section{Odds and Ends: Questions of Imbedding}%
\label{questimbed}

This paragraph is concerned with imbedding for example
a normal element of a normed \st-algebra in a commutative
closed \st-subalgebra. The problem is posed by
possible discontinuity of the involution.

\begin{definition}[\st-stable (or self-adjoint) and normal subsets]%
\label{selfadjointsubset}\index{concepts}{normal!subset}%
\index{concepts}{self-adjoint!subset}%
\index{concepts}{s054@\protect\st-stable}%
\index{concepts}{subset!self-adjoint}%
\index{concepts}{subset!s-stable@\protect\st-stable}%
\index{concepts}{subset!normal}%
Let $S$ be a subset of a \st-algebra. One says that $S$ is
\underline{\st-stable} or \underline{self-adjoint} if with each
element $a$, it also contains the adjoint $a^*$. One says
that $S$ is \underline{normal}, if it is \st-stable and if its
elements commute pairwise.
\end{definition}

\begin{definition}[the commutant, $S'$]%
\index{concepts}{commutant}%
Let $S$ be a subset of an \linebreak algebra $A$.
The \underline{commutant of $S$ in $A$}
is defined as the set of those elements of $A$,
which commute with every element of $S$.
It is denoted by $S'$.
\end{definition}

The reader will easily prove the following two observations.

\begin{observation}\label{commutantp1}%
Let $S$ be a subset of an algebra $A$.
The commutant $S'$ of $S$ in $A$ enjoys the following properties:
\begin{itemize}
   \item[$(i)$] the commutant $S'$ is a subalgebra of $A$,
  \item[$(ii)$] if $A$ is unital, then $S'$ is a unital subalgebra of $A$,
 \item[$(iii)$] if $A$ is a normed algebra, then $S'$ is closed in $A$.
\end{itemize}
If $A$ furthermore is a \st-algebra, one also has:
\begin{itemize}
 \item[$(iv)$] if $S$ is \st-stable, then $S'$ is a \st-subalgebra of $A$.
\end{itemize}
\end{observation}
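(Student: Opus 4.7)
The plan is to verify each of the four listed properties directly from the definition of the commutant. All four are elementary, so the ``main obstacle'' is really just organising the verifications cleanly; no single part is hard.

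For $(i)$, I would fix $a, b \in S'$ and scalars $\lambda, \mu \in \mathds{C}$, and for an arbitrary $s \in S$ compute $(\lambda a + \mu b)\,s = \lambda (as) + \mu (bs) = \lambda (sa) + \mu (sb) = s\,(\lambda a + \mu b)$ using bilinearity, which shows $S'$ is a linear subspace. Similarly, $(ab)\,s = a(bs) = a(sb) = (as)\,b = (sa)\,b = s\,(ab)$ using associativity, which shows $S'$ is closed under multiplication. Hence $S'$ is a subalgebra.

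For $(ii)$, the unit $e$ of $A$ satisfies $es = se = s$ for every $s \in A$, in particular for every $s \in S$, so $e \in S'$; together with $(i)$ this makes $S'$ a unital subalgebra. For $(iii)$, suppose $a_n \to a$ in $A$ with each $a_n \in S'$. Fix $s \in S$. Since multiplication in a normed algebra is jointly continuous, $a_n s \to a s$ and $s a_n \to s a$; as $a_n s = s a_n$ for all $n$, passing to the limit gives $a s = s a$, so $a \in S'$. Thus $S'$ is closed in $A$.

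Finally, for $(iv)$, assume $A$ is a \st-algebra and $S$ is \st-stable, and let $a \in S'$. For any $s \in S$ we have $s^* \in S$ by \st-stability, whence $a s^* = s^* a$. Applying the involution, which reverses the order of products, yields $s\,a^* = a^*\,s$. Since $s \in S$ was arbitrary, $a^* \in S'$, so $S'$ is \st-stable, and combined with $(i)$ it is a \st-subalgebra of $A$. I expect the whole proof to be written in a few lines, with the verifications presented in the order $(i), (ii), (iii), (iv)$ to mirror the statement.
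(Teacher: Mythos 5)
Your proof is correct and is exactly the elementary verification intended: the paper explicitly leaves this observation to the reader ("The reader will easily prove the following two observations"), so there is no written proof to diverge from. All four verifications — bilinearity and associativity for $(i)$, the unit commuting with everything for $(ii)$, continuity of multiplication for $(iii)$, and applying the involution to $a s^* = s^* a$ for $(iv)$ — are sound.
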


\begin{observation}\label{commutantp2}%
Let $S,T$ be subsets of an algebra. We then have
\begin{itemize}
   \item[$(i)$] $S \subset T \Rightarrow T' \subset S'$,
  \item[$(ii)$] $S \subset T \Rightarrow S'' \subset T''$,
 \item[$(iii)$] $S \subset S''$,
 \item[$(iv)$] $S'= S'''$,
  \item[$(v)$] the elements of $S$ commute if and only if $S \subset S'$.
\end{itemize}
\end{observation}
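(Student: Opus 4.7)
The plan is to verify all five items directly from the definition of the commutant, treating them in an order that lets each build on earlier ones. Let me sketch the dependencies: item $(i)$ is the contravariance of $S \mapsto S'$, which is the fundamental formal property; $(ii)$ then follows by applying $(i)$ twice; $(iii)$ is essentially a tautology packaging the meaning of $S'$; $(iv)$ combines $(iii)$ with $(i)$; and $(v)$ just unwinds definitions.

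First I would handle $(i)$. If $a \in T'$, then $a$ commutes with every element of $T$, hence in particular with every element of $S$, so $a \in S'$. For $(ii)$, assume $S \subset T$. Apply $(i)$ once to get $T' \subset S'$, then apply $(i)$ again to the inclusion $T' \subset S'$ (note that $(i)$ is stated for arbitrary subsets, so it applies to commutants too) to get ${S'}\,' \subset {T'}\,'$, i.e.\ $S'' \subset T''$. For $(iii)$, let $a \in S$; by the definition of $S'$, every element of $S'$ commutes with $a$, which means $a$ commutes with every element of $S'$, and this is exactly the statement $a \in S''$.

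The subtle step is $(iv)$, the equality $S' = S'''$. One inclusion comes free from $(iii)$ applied to the subset $S'$ instead of $S$: namely $S' \subset (S')'' = S'''$. For the other inclusion, start from $(iii)$ in the form $S \subset S''$ and apply the contravariance $(i)$ to obtain $(S'')' \subset S'$, which is $S''' \subset S'$. Combining the two inclusions gives $S' = S'''$.

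Finally, $(v)$ is immediate: by definition, $S \subset S'$ means that every element of $S$ commutes with every element of $S$, which is exactly the statement that the elements of $S$ commute pairwise. No real obstacle is expected here; the only thing to be careful about is applying $(i)$ consistently to the right pair of subsets in steps $(ii)$ and $(iv)$, since the contravariance can easily be mis-applied if one loses track of which inclusion is being reversed.
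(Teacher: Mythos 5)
Your argument is correct and item $(iv)$ — the only part the paper actually writes out — is proved exactly as in the text: $S'''\subset S'$ from $(i)$ applied to $S\subset S''$, and $S'\subset S'''$ from $(iii)$ applied to $S'$. The remaining items, which the paper leaves to the reader, are handled correctly by the same direct unwinding of the definition.
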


\begin{proof}
The item (iv): from (iii), we have $S \subset S''$, and it follows by (i)
that $(S'')' \subset S'$. From (iii) again, one also has $S' \subset (S')''$.
\end{proof}

\begin{definition}[the second commutant, $S''$]%
\index{concepts}{commutant!second}%
Let $S$ be a subset of an algebra. The subset $S''$
is called the \underline{second commutant} of $S$.
\pagebreak
\end{definition}

From observations \ref{commutantp1} and \ref{commutantp2}
above, we immediately get:

\begin{observation}\label{scndcobs}%
Let $S$ be a subset of an algebra $A$.
The second commutant $S''$ of $S$ in $A$ has the following properties:
\begin{itemize}
   \item[$(i)$] the second commutant $S''$ is a subalgebra of $A$ containing $S$,
  \item[$(ii)$] if $A$ is unital, then $S''$ is a unital subalgebra of $A$,
 \item[$(iii)$] if $A$ is a normed algebra, then $S''$ is closed in $A$,
 \item[$(iv)$] the elements of $S$ commute if and only if $S''$ is commutative.
\end{itemize}
If $A$ furthermore is a \st-algebra, we also have:
\begin{itemize}
  \item[$(v)$] if $S$ is \st-stable, then $S''$ is a \st-subalgebra of $A$,
 \item[$(vi)$] if $S$ is normal, then $S''$ is a commutative \st-subalgebra of $A$.
\end{itemize}
\end{observation}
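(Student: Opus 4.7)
The plan is to deduce each of the six items by applying the two preceding observations \ref{commutantp1} and \ref{commutantp2} to $S'$ in place of $S$, together with the containment $S \subset S''$.

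First, items (i), (ii), (iii), and (v) fall out immediately by reading observation \ref{commutantp1} with the set $T := S'$: since $S'' = (S')'$ is the commutant of $S'$ in $A$, it is automatically a subalgebra of $A$, unital whenever $A$ is unital, and closed whenever $A$ is a normed algebra. The containment $S \subset S''$ required in (i) is exactly \ref{commutantp2}(iii). For (v), if $S$ is \st-stable, then \ref{commutantp1}(iv) shows that $S'$ is a \st-subalgebra of $A$, hence in particular \st-stable, and then a second application of \ref{commutantp1}(iv) with $T := S'$ gives that $S'' = (S')'$ is a \st-subalgebra of $A$.

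Item (iv) is where the only actual argument is needed. One direction is trivial: if $S''$ is commutative, then since $S \subset S''$ by \ref{commutantp2}(iii), the elements of $S$ commute. For the converse, suppose the elements of $S$ commute. By \ref{commutantp2}(v), this means $S \subset S'$. Applying \ref{commutantp2}(i) (``$S \subset T \Rightarrow T' \subset S'$'') gives $S'' \subset S'$. Combining this with \ref{commutantp2}(iv), which says $S' = S'''$, we obtain $S'' \subset S''' = (S'')'$. By \ref{commutantp2}(v) applied now to $S''$, this says precisely that the elements of $S''$ commute, i.e.\ that $S''$ is commutative.

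Finally, item (vi) is immediate: if $S$ is normal, then $S$ is \st-stable with commuting elements, so (v) gives that $S''$ is a \st-subalgebra of $A$, and (iv) gives that $S''$ is commutative. The only nontrivial maneuver in the entire argument is the one used in (iv), namely combining the order-reversing property of $\,'\,$ with the identity $S' = S'''$ to propagate the commutativity hypothesis on $S$ up to $S''$.
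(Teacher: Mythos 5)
Your proof is correct and follows essentially the same route as the paper: items (i)--(iii), (v), (vi) are read off from \ref{commutantp1} applied to $S'$ together with $S \subset S''$, and the only real work is in (iv), where both arguments reduce to showing $S'' \subset (S'')'$ via \ref{commutantp2}(v). The paper gets that containment by applying \ref{commutantp2}(ii) to $S \subset S'$ (giving $S'' \subset S''' = (S'')'$), whereas you apply \ref{commutantp2}(i) and then invoke $S' = S'''$; these are interchangeable one-line derivations of the same fact.
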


\begin{proof}
Item (iv): the ``only if'' part follows from \ref{commutantp2} (v) \& (ii),
while the ``if'' part follows from \ref{commutantp2} (iii).
\end{proof}

\begin{observation}\label{inv2comm}%
If $a$ is an invertible element of a unital algebra $A$, then its inverse
${a\,}^{-1}$ belongs to the second commutant $\{ \,a \,\}''$ in $A$.
\end{observation}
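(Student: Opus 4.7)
The plan is to reduce this immediately to Proposition \ref{comm2}, which states that for an invertible element $a \in \tld{A}$, another element commutes with $a$ if and only if it commutes with $a^{-1}$. Applied inside the unital algebra $A$ itself (where the inverse also lives), this is exactly what we need.

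More precisely, I would proceed as follows. Unfolding the definition of $\{a\}''$, showing that $a^{-1} \in \{a\}''$ amounts to showing that $a^{-1}$ commutes with every element of the first commutant $\{a\}'$. So pick an arbitrary $b \in \{a\}'$; by definition $ab = ba$. By Proposition \ref{comm2} (applied in $A$, which is unital so $\tld{A} = A$), this is equivalent to $a^{-1}b = ba^{-1}$. Since $b \in \{a\}'$ was arbitrary, $a^{-1}$ lies in $\{a\}''$.

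There is essentially no obstacle here, since the substantive content has already been packaged into Proposition \ref{comm2}; the present observation is a purely formal unpacking of the definition of the second commutant. If one preferred a self-contained argument rather than invoking \ref{comm2}, one could simply note that $ab = ba$ implies, upon multiplying on the left and right by $a^{-1}$, that $ba^{-1} = a^{-1}b$, which is the one-line content of \ref{comm2} anyway.
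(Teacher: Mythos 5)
Your proof is correct and follows exactly the paper's route: the paper's entire proof of this observation is the single reference ``See \ref{comm2}'', and your unpacking of the definition of the second commutant via that proposition is precisely the intended argument.
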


\begin{proof}
See \ref{comm2}.
\end{proof}

\begin{lemma}\label{scndcommspec}%
Let $S$ be a subset of an algebra $A$. Let $B$ denote the second
commutant of $S$ in $\tld{A}$. We then have
\[  \s_{B}(b) = \s_{A}(b) \quad \text{for all} \quad b \in B \cap A. \]
\end{lemma}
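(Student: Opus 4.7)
The plan is to reduce the claim to showing that $\s_B(b) = \s_{\tld{A}}(b)$ for every $b \in B \cap A$, since $\s_A(b) = \s_{\tld{A}}(b)$ by \ref{specunit}. The containment $\s_{\tld{A}}(b) \subset \s_B(b)$ is immediate from \ref{specsubalg}: by \ref{scndcobs}\,(ii) the second commutant $B = S''$ computed in the unital algebra $\tld{A}$ is a unital subalgebra of $\tld{A}$ with the same unit $e$, so $B$ is \twiddle-unital in $\tld{A}$.

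The real content is the reverse containment $\s_B(b) \subset \s_{\tld{A}}(b)$. I would fix $\lambda \in \mathds{C}$ with $\lambda \notin \s_{\tld{A}}(b)$ and show $\lambda \notin \s_B(b)$. Set $c := \lambda e - b \in B$. Since $b \in B = S''$, the element $b$ commutes with every element of $S'$, and $e$ does too, so $c \in S''$. Equivalently, $S' \subset \{c\}'$. Taking commutants reverses inclusions (\ref{commutantp2}\,(i)), so
\[
\{c\}'' \subset S'' = B.
\]
By hypothesis $c$ is invertible in $\tld{A}$, and \ref{inv2comm} tells us that $c^{-1}$ belongs to the second commutant of $\{c\}$ inside $\tld{A}$. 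Combining these two facts, $c^{-1} \in \{c\}'' \subset B$. Thus $c = \lambda e - b$ is invertible in $B$, proving $\lambda \notin \s_B(b)$.

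There is no genuine obstacle here; the one point that needs to be spotted is the use of \ref{inv2comm}, which trades the a priori uncontrolled location of the inverse $c^{-1}$ in $\tld{A}$ for the very controlled location $\{c\}''$, and then the inclusion $\{c\}'' \subset S'' = B$ (coming from $c \in S''$) does the rest. No continuity, completeness, or involution is needed, which is consistent with the generality of the statement.
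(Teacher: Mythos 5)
Your proof is correct and follows essentially the same route as the paper: reduce to $\tld{A}$ via \ref{specunit}, get one inclusion from \ref{specsubalg}, and for the other use \ref{inv2comm} together with the commutant inclusions to place the inverse of $\lambda e - b$ inside $B = S''$. The only cosmetic difference is that you work with $\{\lambda e - b\}''$ directly, whereas the paper passes through $\{b\}''$; both land in $B$ by the same monotonicity and $B'' = B$ arguments.
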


\begin{proof}
Let $b \in B \cap A$. We know that $\s_A(b) = \s_{\tld{A}}(b)$,
cf.\ \ref{specunit}. We also have $\s_{\tld{A}}(b) \subset \s_B(b)$,
cf.\ \ref{specsubalg}. So assume that (for some $\lambda \in \mathds{C}$)
the element $\lambda e - b$ is invertible in $\tld{A}$ with inverse
$d \in \tld{A}$. It is enough to prove that $d \in B$. Since $b \in B$,
we have $d \in \{ \,b \,\}'' \subset B'' = B$, cf.\ \ref{inv2comm} and
\ref{commutantp2} (ii) \& (iv), as $B$ is a (second) commutant.
\end{proof}

\begin{theorem}[Civin \& Yood]\index{concepts}{Civin}\label{CivinYood}%
\index{concepts}{Theorem!Civin \& Yood}\index{concepts}{Yood}%
Let $A$ be a normed algebra, and let $S$ be a subset consisting of
commuting elements of $A$. There then exists a commutative closed
unital subalgebra $B$ of $\tld{A}$ containing $S$ such that
\[ \s_{B}(b) = \s_{A}(b) \quad \text{for all} \quad b \in B \cap A. \]
If furthermore $A$ is a \st-algebra, and if $S$ is normal, then $B$
can be chosen to be a \st-subalgebra of $\tld{A}$.
\end{theorem}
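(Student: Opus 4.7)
The plan is to simply take $B := S''$, the second commutant of $S$ computed inside the unitisation $\tld{A}$, which is a (unital) normed algebra by \ref{normunitis}. Virtually every required property has been isolated in the preceding observations, so the work is one of assembly rather than new construction; I do not foresee any real obstacle, and the main thing to watch is only that all items are invoked in the ambient algebra $\tld{A}$ rather than in $A$.

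First I would check that $B = S''$ satisfies the structural conditions. By observation \ref{scndcobs}(i)--(iii), $B$ is a closed unital subalgebra of $\tld{A}$ containing $S$. Since by hypothesis the elements of $S$ commute pairwise, observation \ref{scndcobs}(iv) yields that $B$ is commutative. This already delivers the first sentence of the theorem up to the spectral equality.

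Next I would establish the spectral equality $\s_B(b) = \s_A(b)$ for $b \in B \cap A$. This is precisely the content of lemma \ref{scndcommspec} applied to the subset $S$ inside $A$: there the second commutant in $\tld{A}$ is taken, and the equality $\s_B(b) = \s_A(b)$ is asserted for every $b \in B \cap A$. Hence nothing further is required at this stage.

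Finally, for the \st-case, suppose $A$ is a \st-algebra and $S$ is normal in the sense of \ref{selfadjointsubset}, i.e.\ \st-stable and consisting of commuting elements. Then observation \ref{scndcobs}(vi) gives directly that $B = S''$ is a commutative \st-subalgebra of $\tld{A}$, while its closure, unitality, containment of $S$, and the spectral identity have already been verified above with no use of the involution. This completes the proposal.
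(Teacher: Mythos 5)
Your proposal is correct and is exactly the paper's own proof: the paper's argument for \ref{CivinYood} consists precisely of taking $B := S''$ in $\tld{A}$ and citing lemma \ref{scndcommspec} together with observation \ref{scndcobs}. Your write-up merely spells out which clauses of \ref{scndcobs} deliver which property, which is a faithful expansion rather than a different route.
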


\begin{proof}
Lemma \ref{scndcommspec} \& observation \ref{scndcobs}. \pagebreak
\end{proof}

\medskip
This often allows one to reduce proofs to the case of commutative unital
Banach algebras. (See for example \ref{specsub} and \ref{instability} below.)

\medskip
We note the following for reference in the ensuing definition.

\begin{observation}\label{normalscndcomm}%
A normal subset $S$ of a normed \st-algebra $A$ is contained
in a commutative closed \st-subalgebra of $A$. The second
commutant $S''$ of $S$ in $A$ is an instance of such a commutative
closed \st-subalgebra of $A$ containing the normal subset $S$.
\end{observation}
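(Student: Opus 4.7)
The plan is essentially to read off the statement from the earlier observation \ref{scndcobs}, since the hard combinatorial work is already packaged there. Let me outline how the pieces fit together.

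First I would note that since $S$ is normal, it is in particular $\st$-stable with pairwise commuting elements, so the hypotheses of \ref{scndcobs} (vi) are satisfied. That item directly tells us that the second commutant $S''$ of $S$ in $A$ is a commutative $\st$-subalgebra of $A$. Combined with item (iii) of \ref{scndcobs}, which asserts that $S''$ is closed in the normed algebra $A$, we obtain that $S''$ is a commutative closed $\st$-subalgebra of $A$. Finally, item (i) of \ref{scndcobs} (or equivalently \ref{commutantp2} (iii)) gives $S \subset S''$, so $S''$ contains $S$ as required.

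Having verified that $S''$ itself is a commutative closed $\st$-subalgebra of $A$ containing $S$, the first assertion (existence of some commutative closed $\st$-subalgebra containing $S$) follows immediately, as $S''$ provides a concrete instance.

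There is no real obstacle here: the entire content has been isolated in the preceding observations \ref{commutantp1}, \ref{commutantp2}, and \ref{scndcobs}. The only mild subtlety is making sure that the hypothesis ``normal'' as defined in \ref{selfadjointsubset} (that is, $\st$-stable plus pairwise commuting) matches the hypothesis of \ref{scndcobs} (vi), but this is literally the definition. Thus the proof will be a one-line citation of \ref{scndcobs} (i), (iii), and (vi).
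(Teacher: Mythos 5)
Your proposal is correct and follows exactly the paper's route: the paper's proof is literally ``See observation \ref{scndcobs}'', and you have just made explicit which items of that observation --- (i), (iii), and (vi) --- are being invoked. No gap; the hypothesis match between ``normal'' in \ref{selfadjointsubset} and in \ref{scndcobs} (vi) is, as you say, definitional.
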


\begin{proof}
See observation \ref{scndcobs}.
\end{proof}

\begin{definition}\label{clogen}%
Let $S$ be a subset of a normed \st-algebra $A$.
The \underline{closed \st-subalgebra of $A$ generated by $S$}
is defined as the intersection of all closed \st-subalgebras of $A$
containing $S$. It is commutative if $S$ is normal, by the
preceding observation \ref{normalscndcomm}.
\end{definition}

Please note that when the involution in $A$ is continuous, then the
closed \st-subalgebra of $A$ generated by a subset $S$ is the closure
of the \st-subalgebra of $A$ generated by $S$. This holds in particular
for \linebreak C*-algebras, and we shall make tacit use of this fact in the
sequel.

\begin{remark}\label{Zorn}
So far we did not use the full version of the Axiom of Choice. (We
did use the countable version of the Axiom of Choice, however.
Namely in using the fact that if $S$ is a subset of a metric space,
then a point in the closure of $S$ is the limit of a sequence in $S$.)
The next chapter \ref{TheGel'fandTransformation} on the Gel'fand
transformation will make crucial use of Zorn's Lemma, which is
equivalent to the full version of the Axiom of Choice. The ensuing
chapter \ref{PositiveElements} on positive elements, however, is
completely independent of chapter \ref{TheGel'fandTransformation},
and does not require Zorn's Lemma. From a logical point of view,
one might well skip the following chapter \ref{TheGel'fandTransformation},
if the priority is to use Zorn's Lemma as late as possible.
\end{remark}

\clearpage


\chapter{The Gel'fand Transformation}%
\label{TheGel'fandTransformation}


\setcounter{section}{15}

\section{Multiplicative Linear Functionals}

\begin{definition}[multiplicative linear functionals, $\Delta(A)$]%
\index{concepts}{multiplicative!linear functional}%
\index{symbols}{D(A)@$\Delta(A)$}%
\index{concepts}{functional!multiplicative}%
If $A$ is an algebra, then a \underline{multiplicative linear functional}
on $A$ is a \underline{non-zero} linear functional $\tau$ on $A$
such that $\tau (ab) = \tau (a) \,\tau (b)$ for all $a,b \in A$. The set of
multiplicative linear functionals on $A$ is denoted by $\Delta (A)$.

Please note that if $A$ is unital with unit $e$, then $\tau (e) = 1$ for any
$\tau \in \Delta (A)$ by $\tau (e) \,\tau (a) = \tau (a)$ for some $a \in A$ with
$\tau (a) \neq 0$.
\end{definition}

\begin{proposition}\label{mlfbounded}%
Every multiplicative linear functional $\tau$ on a Banach
algebra $A$ is continuous with $| \,\tau \,| \leq 1$.
\end{proposition}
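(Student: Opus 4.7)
The plan is to show that $\tau(a)\in\s(a)$ for every $a\in A$, after which the Spectral Radius Formula \ref{specradform} immediately yields $|\tau(a)|\leq\rlambda(a)\leq|\,a\,|$.

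First I would reduce to the unital case. If $A$ has a unit $e$, the remark in the definition already gives $\tau(e)=1$. If $A$ has no unit, I would extend $\tau$ to a functional $\tld{\tau}:\tld{A}\to\mathds{C}$ by the formula $\tld{\tau}(\lambda e+a):=\lambda+\tau(a)$ for $\lambda\in\mathds{C}$, $a\in A$. A direct bilinear computation shows that $\tld{\tau}$ is again a multiplicative linear functional (non-zero since $\tld{\tau}(e)=1$), and it agrees with $\tau$ on $A$, so it suffices to bound $|\tld{\tau}(a)|$ for $a\in A$.

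Next I would observe the general fact that a multiplicative linear functional on a unital algebra sends non-invertible elements to $0$: if $b\in\tld{A}$ is invertible, then $\tld{\tau}(b)\,\tld{\tau}(b^{-1})=\tld{\tau}(e)=1$, so $\tld{\tau}(b)\neq 0$. Contrapositively, $\tld{\tau}(b)=0$ forces $b$ to be non-invertible in $\tld{A}$. Applying this to $b:=\tau(a)\mspace{2mu}e-a$, which satisfies $\tld{\tau}(b)=\tau(a)-\tau(a)=0$, we conclude that $\tau(a)\mspace{2mu}e-a$ is not invertible in $\tld{A}$, i.e.\ $\tau(a)\in\s_{\tld{A}}(a)=\s_A(a)$ by \ref{specunit}.

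Finally I would invoke \ref{specradform}(ii), which uses the Banach hypothesis, to conclude
\[ |\,\tau(a)\,|\leq\max\,\{\,|\,\lambda\,|:\lambda\in\s(a)\,\}=\rlambda(a)\leq|\,a\,| \]
for every $a\in A$, where the last inequality is just \ref{rldef}. Hence $\tau$ is continuous with $|\,\tau\,|\leq 1$. No step is really a serious obstacle here; the only point that requires a small amount of care is the extension to $\tld{A}$ in the non-unital case and the verification that it remains multiplicative, which is a routine bilinear check.
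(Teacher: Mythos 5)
Your proof is correct, but it takes a different route from the paper. The paper argues directly and by contradiction: if $|\,\tau\,| > 1$ one finds $a \in A$ with $|\,a\,| < 1$ and $\tau(a) = 1$; completeness lets one form $b := \sum_{n \geq 1} a^n$, the identity $a + ab = b$ then gives $\tau(b) = \tau(a) + \tau(a)\,\tau(b) = 1 + \tau(b)$, a contradiction. You instead show $\tau(a) \in \s_A(a)$ (via the unitisation of $\tau$ and the observation that a unital multiplicative functional cannot vanish on an invertible element) and then quote the Spectral Radius Formula \ref{specradform}(ii) together with $\rlambda(a) \leq |\,a\,|$. Both arguments are sound and both use completeness in an essential way — the paper through convergence of the geometric series, you through the half of \ref{specradform} asserting $\s(a) \subset \{\,z : |\,z\,| \leq \rlambda(a)\,\}$, which itself rests on the same geometric series lemma \ref{geoseries}. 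What your route buys is the sharper intermediate fact $\tau(a) \in \s(a)$ and the bound $|\,\tau(a)\,| \leq \rlambda(a)$, which the paper in fact records separately a few items later (the inclusion $\wht{a}\bigl(\Delta(A)\bigr) \subset \s(a)$, proved there via \ref{spechom}); the cost is invoking heavier machinery for a statement the paper dispatches in three lines. There is no circularity, since \ref{specradform} precedes \ref{mlfbounded} in the text, and your verification that $\tld{\tau}$ is multiplicative is the same routine computation the paper performs in \ref{spechom}.
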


\begin{proof} Otherwise there would exist $a \in A$ with
$| \,a \,| < 1$ and $\tau (a) = 1$. Putting $b := \sum _{n=1}^{\infty } a^{n}$,
we would then get $a+ab = b$, and therefore $\tau (b) = \tau (a+ab)
= \tau (a) + \tau (a) \tau (b) = 1+ \tau (b)$, a contradiction.
\end{proof}

\begin{definition}[the Gel'fand transform of an element]%
\index{concepts}{Gel'fand!transform}%
\index{symbols}{a07@$\protect\wht{a}$}%
Let $A$ be an algebra. For $a \in A$, one defines
\[ \wht{a}(\tau) := \tau(a) \qquad \bigl( \,\tau \in \Delta (A) \,\bigr). \]
The function $\wht{a} : \tau \mapsto \wht{a}(\tau)$
$( \mspace{1mu}\tau \in \Delta(A) \mspace{1mu})$
is called the \underline{Gel'fand transform} of $a$.
\end{definition}

\begin{proposition}[the Gel'fand transformation]%
\index{concepts}{Gel'fand!transformation}%
If $A$ is an algebra with $\Delta(A) \neq \varnothing$, then the map
\begin{align*}
A & \to \mathds{C}^{\,\text{\Small{$\Delta(A)$}}} \\
a & \mapsto \wht{a}
\end{align*}
is an algebra homomorphism. It is called the \underline{Gel'fand transformation}.
\end{proposition}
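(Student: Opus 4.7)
The proof is essentially a direct pointwise verification on $\Delta(A)$, so my plan is to peel off the two things that have to be checked — linearity and multiplicativity of the map $a \mapsto \wht{a}$ — and reduce each to the corresponding property of an individual $\tau \in \Delta(A)$.

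First I would fix $\tau \in \Delta(A)$, $a, b \in A$, and $\lambda, \mu \in \mathds{C}$, and evaluate the candidate Gel'fand transforms at $\tau$. For linearity, the computation $\widehat{\lambda a + \mu b}(\tau) = \tau(\lambda a + \mu b) = \lambda \,\tau(a) + \mu \,\tau(b) = \lambda \,\wht{a}(\tau) + \mu \,\wht{b}(\tau)$ uses nothing more than linearity of $\tau$, which is part of the definition of a multiplicative linear functional. Since this holds for every $\tau \in \Delta(A)$, and since the vector space operations on $\mathds{C}^{\,\text{\Small{$\Delta(A)$}}}$ are defined pointwise, the equality $\widehat{\lambda a + \mu b} = \lambda \wht{a} + \mu \wht{b}$ follows in $\mathds{C}^{\,\text{\Small{$\Delta(A)$}}}$.

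For multiplicativity, the analogous calculation $\widehat{ab}(\tau) = \tau(ab) = \tau(a)\,\tau(b) = \wht{a}(\tau)\,\wht{b}(\tau) = (\wht{a}\,\wht{b})(\tau)$ uses exactly the multiplicativity clause in the definition of $\Delta(A)$, together with the fact that multiplication in $\mathds{C}^{\,\text{\Small{$\Delta(A)$}}}$ is defined pointwise (as recalled in \ref{FOmega}). Again, since $\tau \in \Delta(A)$ was arbitrary, this gives $\widehat{ab} = \wht{a}\,\wht{b}$ in $\mathds{C}^{\,\text{\Small{$\Delta(A)$}}}$.

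There is no real obstacle here — the assumption $\Delta(A) \neq \varnothing$ is there only to ensure that $\mathds{C}^{\,\text{\Small{$\Delta(A)$}}}$ is a nonzero algebra so that the statement is non-vacuous, and the entire content of the proof is that the pointwise algebraic structure on $\mathds{C}^{\,\text{\Small{$\Delta(A)$}}}$ is exactly designed so that ``being linear and multiplicative at each $\tau$'' translates into ``being an algebra homomorphism.'' I would conclude by noting that the proof does \emph{not} assert injectivity; that question is separate and is governed by the $\ast$-radical / Jacobson radical of $A$.
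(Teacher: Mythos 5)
Your proof is correct and matches the paper's own argument, which verifies multiplicativity by the same pointwise computation $\wht{(ab)}(\tau) = \tau(ab) = \tau(a)\,\tau(b) = \wht{a}(\tau)\,\wht{b}(\tau)$ and leaves linearity as an analogous routine check. Your explicit treatment of linearity and the closing remark about injectivity are harmless additions; nothing more is needed.
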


\begin{proof}
For $a, b \in A$, and $\tau \in \Delta (A)$, we have for example that
\[ \wht{(ab)} ( \tau ) = \tau (ab) = \tau (a) \cdot \tau (b)
= \wht{\vphantom{b}a}\,(\tau) \cdot \wht{b}\,(\tau)
= \bigl( \,\wht{\vphantom{b}a} \cdot \wht{b} \,\bigr) (\tau). \qedhere \]
\end{proof}

\begin{proposition}\label{GelfTransf}%
If $A$ is a Banach algebra with $\Delta(A) \neq \varnothing$,
then the Gel'fand transformation is a contractive algebra homomorphism
\[ A \to {\ell\,}^{\infty} \bigl(\Delta (A)\bigr). \]
\end{proposition}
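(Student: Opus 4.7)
The plan is to combine the two facts already at our disposal: that the Gel'fand transformation $a \mapsto \wht{a}$ is an algebra homomorphism into $\mathds{C}^{\Delta(A)}$ (the preceding proposition), and that every multiplicative linear functional on a Banach algebra has norm at most $1$ (Proposition \ref{mlfbounded}). Nothing more should be needed.

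First I would fix $a \in A$ and estimate the Gel'fand transform pointwise. For any $\tau \in \Delta(A)$, by \ref{mlfbounded} we have $|\tau| \leq 1$, and hence
\[ |\,\wht{a}(\tau)\,| = |\,\tau(a)\,| \leq |\,\tau\,| \cdot |\,a\,| \leq |\,a\,|. \]
Taking the supremum over $\tau \in \Delta(A)$ yields $|\,\wht{a}\,|_{\,\infty} \leq |\,a\,|$. In particular $\wht{a}$ is a bounded function on $\Delta(A)$, so it belongs to ${\ell\,}^{\infty}\bigl(\Delta(A)\bigr)$, and the Gel'fand transformation is therefore a map $A \to {\ell\,}^{\infty}\bigl(\Delta(A)\bigr)$.

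Next I would note that this map is an algebra homomorphism, because it is the composition of the Gel'fand transformation $A \to \mathds{C}^{\,\text{\Small{$\Delta(A)$}}}$ from the preceding proposition (which is already known to be an algebra homomorphism) with the inclusion ${\ell\,}^{\infty}\bigl(\Delta(A)\bigr) \hookrightarrow \mathds{C}^{\,\text{\Small{$\Delta(A)$}}}$, where the latter is a unital \st-subalgebra. The contractivity was just established.

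There is no real obstacle here; the whole argument is a one-line pointwise estimate together with the observation that the algebraic structure is inherited from the ambient $\mathds{C}^{\,\text{\Small{$\Delta(A)$}}}$. The content of the statement is essentially just a repackaging of \ref{mlfbounded} in the language of the supremum norm on $\Delta(A)$.
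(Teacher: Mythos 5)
Your argument is correct and is essentially the paper's own proof: the pointwise estimate $|\,\wht{a}(\tau)\,| = |\,\tau(a)\,| \leq |\,a\,|$ via \ref{mlfbounded}, followed by taking the supremum, with the homomorphism property inherited from the preceding proposition. Nothing to add.
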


\begin{proof}
We have $\bigl| \,\wht{a} ( \tau ) \,\bigr| = | \,\tau (a) \,| \leq | \,a \,|$
by \ref{mlfbounded}, so $\bigl| \,\wht{a} \,\bigr| _{\infty} \leq | \,a \,|$.
\end{proof}

\medskip
We shall relate the range of a Gel'fand transform to the spectrum.
For example, we have:

\begin{proposition}
For an algebra $A$, and $a \in A$, one has
\[ \wht{a} \,\bigl( \Delta (A) \bigr) \,\subset \,\s (a). \]
\end{proposition}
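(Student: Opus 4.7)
The goal is, fixing $\tau \in \Delta(A)$, to show that $\lambda := \tau(a)$ lies in $\s_A(a)$, i.e.\ that $\lambda e - a$ is not invertible in $\tld{A}$. The cleanest approach is to view $\tau$ as an algebra homomorphism $A \to \mathds{C}$ and extend it to the unitisation. Explicitly, I would define $\tld{\tau} \colon \tld{A} \to \mathds{C}$ by setting $\tld{\tau}(e) := 1$ when $A$ lacks a unit, and otherwise noting that already $\tau(e)=1$ since $\tau \neq 0$; in either case $\tld{\tau}$ is a unital algebra homomorphism.

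With $\tld{\tau}$ in hand the argument is a one-liner. Suppose for contradiction that $\lambda e - a$ admits an inverse $c \in \tld{A}$. Applying $\tld{\tau}$ to the identity $(\lambda e - a) c = e$ gives
\[
1 \;=\; \tld{\tau}(e) \;=\; \tld{\tau}(\lambda e - a)\,\tld{\tau}(c)
\;=\; \bigl(\lambda - \tau(a)\bigr)\,\tld{\tau}(c) \;=\; 0,
\]
the desired contradiction. Hence $\lambda e - a$ is not invertible in $\tld{A}$, and $\tau(a) \in \s_A(a)$.

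Alternatively, one can deduce the statement directly from theorem \ref{spechom} applied to $\tau \colon A \to \mathds{C}$, using the trivial computation $\s_{\mathds{C}}\bigl(\tau(a)\bigr) = \{\tau(a)\}$: the non-zero part of the inclusion is immediate, while the value $0$ requires either invoking \ref{speczero} (when $A$ has no unit) or using that $\tau$ is automatically unital when $A$ is unital, so that the second (unital) half of \ref{spechom} applies. There is no real obstacle here; the only point warranting care is the distinction between the unital and non-unital case, which is handled uniformly by passing to $\tld{\tau}$ on $\tld{A}$.
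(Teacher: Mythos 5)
Your proposal is correct, and your ``alternative'' argument via \ref{spechom} (with $\s_{\mathds{C}}(\tau(a)) = \{\tau(a)\}$, using \ref{speczero} in the non-unital case and unitality of $\tau$ in the unital case) is precisely the paper's own proof. Your primary argument --- extending $\tau$ to a unital multiplicative functional $\tld{\tau}$ on $\tld{A}$ and applying it to $(\lambda e - a)c = e$ --- is just the proof of \ref{spechom} unwound in this special case, so the two routes are essentially the same.
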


\begin{proof}
We have to prove that for $\tau \in \Delta (A)$, one has $\tau (a) \in \s (a)$.
This, however, follows from \ref{spechom} because we can interpret $\tau$
as a homomorphism from $A$ to the algebra $\mathds{C}$. It is used that
$\tau$ is unital if $A$ is unital, and that $0 \in \s(a)$ for all $a \in A$ otherwise,
cf.\ \ref{speczero}.
\end{proof}

\medskip
The converse inclusion holds in a commutative unital Banach
\linebreak algebra, see \ref{rangeGTu} below.

We need to relate the spectrum to ideals, to be discussed next.

\begin{definition}[ideals]\index{concepts}{ideal!left}%
\index{concepts}{ideal}\index{concepts}{ideal!proper}%
\index{concepts}{ideal!maximal}%
Let $A$ be an algebra. A \underline{left ideal} in $A$ is a vector subspace
$I$ of $A$ such that $aI \subset I$ for all $a \in A$. A left ideal $I$ of $A$
is called \underline{proper} if $I \neq A$. A left ideal of $A$ is called a
\underline{maximal left ideal} or simply \underline{left maximal} if it is
proper and not properly contained in any other proper left ideal of $A$.
Likewise with ``right'' or ``two-sided'' instead of ``left''. If $A$ is commutative,
we drop these adjectives.
\end{definition}

Ideals are related to multiplicative linear functionals in the following way.
The kernel $\ker \tau$ of a multiplicative linear functional $\tau$ on an
algebra is a proper two-sided ideal, which is both left maximal and right
maximal. (Because $\ker \tau$ has co-dimension one.) The converse
holds in a unital Banach algebra, see \ref{taumaxid} below.

\begin{observation}
Let $A$ be an algebra, and let $I$ be a two-sided ideal in $A$.
The vector space $A\mspace{1mu}/I := \{ \,a+I \subset A : a \in A \,\}$ is an
algebra when equipped with the multiplication $(a+I) \cdot (b+I) := ab+I$
$(a, b \in A)$.
\end{observation}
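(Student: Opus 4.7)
The plan is to verify the two things an algebra structure requires: that the proposed multiplication on $A/I$ is well-defined as a map $(A/I) \times (A/I) \to A/I$, and that it satisfies associativity and bilinearity. The vector space structure on $A/I$ is taken for granted as the standard quotient of vector spaces by the subspace $I$, so no work is needed there beyond noting that $I$ being a two-sided ideal is in particular a vector subspace of $A$.

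The main obstacle, and essentially the only non-routine point, is well-definedness of the product. Given $a + I = a' + I$ and $b + I = b' + I$, I need to show that $ab + I = a'b' + I$, i.e.\ that $ab - a'b' \in I$. The standard trick is to introduce and subtract an intermediate term:
\[ ab - a'b' = (a-a')\,b + a'\,(b-b'). \]
Now $a - a' \in I$, so $(a-a')b \in I$ because $I$ is a right ideal; and $b - b' \in I$, so $a'(b-b') \in I$ because $I$ is a left ideal. Here is precisely where the two-sided hypothesis on $I$ is used, and it is used in full. Adding these two elements of $I$ gives $ab - a'b' \in I$, so the formula $(a+I)\cdot(b+I) := ab+I$ does not depend on the choice of representatives.

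Once well-definedness is in hand, associativity $(a+I)\cdot\bigl((b+I)\cdot(c+I)\bigr) = \bigl((a+I)\cdot(b+I)\bigr)\cdot(c+I)$ reduces immediately to the identity $a(bc) = (ab)c$ in $A$. Bilinearity in each argument is equally immediate: for instance,
\[ \bigl(\lambda(a+I) + \mu(b+I)\bigr)\cdot(c+I) = (\lambda a + \mu b)c + I = \lambda\,ac + \mu\,bc + I = \lambda\,(a+I)(c+I) + \mu\,(b+I)(c+I), \]
using the known bilinearity of multiplication in $A$ and the definition of the vector space operations on the quotient. The analogous computation handles linearity in the second argument. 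This completes the verification that $A/I$ is an algebra.
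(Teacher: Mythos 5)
Your proof is correct and is exactly the standard argument the paper leaves implicit (the observation is stated without proof in the text): well-definedness via the decomposition $ab - a'b' = (a-a')b + a'(b-b')$, using both the left and right ideal properties, after which associativity and bilinearity descend from $A$. Nothing is missing.
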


Item (ii) of the next lemma serves to relate the spectrum to ideals.\pagebreak

\begin{lemma}\label{notinvideal}%
If $A$ is a unital algebra with unit $e$, then:
\begin{itemize}
   \item[$(i)$] a left ideal $I$ in $A$ is proper if and only if $e \notin I$,
  \item[$(ii)$] an element of $A$ is not left invertible in $A$ if and \\
only if it lies in some proper left ideal of $A$.
\end{itemize}
If furthermore $A$ is a unital Banach algebra, then:
\begin{itemize}
 \item[$(iii)$] if a left ideal $I$ of $A$ is proper, so is its closure $\overline{I}$,
 \item[$(iv)$] a maximal left ideal of $A$ is closed in $A$.
\end{itemize}
\end{lemma}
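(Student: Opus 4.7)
The plan is to handle the four items in the natural order, with the heart of the matter being item (iii), which invokes the openness of the invertible group around the unit.

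For (i), the ``only if'' direction is trivial. For the ``if'' direction, note that if $e \in I$ then for any $a \in A$ we have $a = ae \in AI \subset I$, whence $I = A$, contradicting properness; so a proper left ideal cannot contain $e$. For (ii), I would first show that if $a \in A$ is not left invertible, then the principal left ideal $Aa = \{\,xa : x \in A\,\}$ is a proper left ideal containing $a$ (it contains $a = ea$, and cannot contain $e$ by the assumption on $a$, hence is proper by (i)). Conversely, if $a$ lies in some proper left ideal $I$ and admits a left inverse $x$, then $e = xa \in AI \subset I$, forcing $I = A$ by (i), a contradiction. So (i) does the bulk of the work in (ii).

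For (iii), suppose $I$ is a proper left ideal of the unital Banach algebra $A$, and suppose for contradiction that its closure $\overline{I}$ is not proper. Then $e \in \overline{I}$, so there exists $a \in I$ with $| \,e - a \,| < 1$. By the openness of the group of invertible elements (Theorem \ref{GL(A)}), $a = e - (e-a)$ is invertible in $A$; in particular, it has a left inverse $x$ with $xa = e$. But then $e = xa \in AI \subset I$, contradicting the properness of $I$ via (i). Hence $\overline{I}$ must be proper. The one subtle point worth double-checking is that the closure of a left ideal is still a left ideal, which is immediate from joint continuity of multiplication and continuity of addition and scalar multiplication.

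Finally, for (iv), let $I$ be a maximal left ideal of $A$. Its closure $\overline{I}$ is again a left ideal (by the continuity remark above), and it is proper by (iii). Since $I \subset \overline{I}$ and $I$ is maximal among proper left ideals, we must have $I = \overline{I}$, i.e.\ $I$ is closed. No obstacle should arise here; the only nontrivial ingredient in the whole lemma is the use of Theorem \ref{GL(A)} in step (iii), and everything else is a routine application of (i).
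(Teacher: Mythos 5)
Your proposal is correct and follows essentially the same route as the paper: (i) by the ideal property applied to $e$, (ii) via the principal left ideal $Aa$ together with (i), (iii) by the openness of the set of invertible elements around the unit (the paper cites the openness of the set of \emph{left} invertible elements, Lemma \ref{leftinv}, but this makes no difference here), and (iv) as an immediate consequence of (iii) and maximality. Your explicit remark that the closure of a left ideal is again a left ideal is a point the paper leaves tacit, and your bound $| \,e - a \,| < 1$ is justified since $\rlambda(e-a) \leq | \,e-a \,| < 1$ already guarantees invertibility of $a$ by \ref{twosidinv}.
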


\begin{proof}
(i) is obvious. (ii) Assume that $a$ lies in a proper left ideal $I$ of $A$.
If $a$ had a left inverse $b$, one would have $e = ba \in I$, so that $I$
would not be proper. Conversely, assume that $a$ is not left invertible.
Consider the left ideal $I$ given by $I := \{ ba : b \in A \}$. Then
$e \notin I$, so that $a$ lies in the proper left ideal $I$. (iii) follows from
the fact that the set of left invertible elements of a unital Banach algebra
is an open neighbourhood of the unit, cf.\ \ref{leftinv}. (iv) follows from (iii).
\end{proof}

\begin{lemma}\label{inmaxideal}%
A proper left ideal in a unital algebra is contained in a maximal left ideal.
\end{lemma}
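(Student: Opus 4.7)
The plan is to invoke Zorn's Lemma, in line with the remark \ref{Zorn} which announces its use at this point of the text. Let $A$ be a unital algebra with unit $e$, and let $I$ be a proper left ideal of $A$. I would consider the collection
\[ \mathcal{F} := \{ \,J \subset A : J \text{ is a proper left ideal of } A \text{ with } I \subset J \,\}, \]
ordered by inclusion. The goal is to apply Zorn's Lemma to $\mathcal{F}$: a maximal element of $\mathcal{F}$ is automatically a maximal left ideal of $A$ containing $I$, since any strictly larger proper left ideal would also lie in $\mathcal{F}$.

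First I would note that $\mathcal{F}$ is non-empty, as $I \in \mathcal{F}$ by hypothesis. The bulk of the argument is the verification that every chain in $\mathcal{F}$ admits an upper bound in $\mathcal{F}$. Given a totally ordered subfamily $\{ J_\lambda \}_{\lambda \in \Lambda}$ of $\mathcal{F}$, I would form the union $J := \bigcup_{\lambda \in \Lambda} J_\lambda$. The fact that $J$ is a vector subspace and a left ideal of $A$ is routine from the chain property: any two elements of $J$ lie in a common $J_\lambda$, where ideal closure under the relevant operations applies, and $aJ_\lambda \subset J_\lambda \subset J$ for all $a \in A$. Clearly $I \subset J$.

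The key step is verifying that $J$ is still proper. This is where the characterisation \ref{notinvideal}$(i)$ enters: a left ideal of a unital algebra is proper if and only if it does not contain the unit $e$. Since each $J_\lambda$ is proper, $e \notin J_\lambda$ for every $\lambda \in \Lambda$, and therefore $e \notin J$. Hence $J \in \mathcal{F}$ and $J$ is an upper bound of the chain. Zorn's Lemma then yields a maximal element of $\mathcal{F}$, which is the desired maximal left ideal containing $I$.

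I do not anticipate any genuine obstacle here; this is the standard Zorn argument, and the only subtle point is that properness is preserved under unions of chains, which is an entirely trivial consequence of item $(i)$ of \ref{notinvideal}. Note that no appeal to completeness or a norm is needed, so items $(iii)$--$(iv)$ of \ref{notinvideal} play no role in this particular statement.
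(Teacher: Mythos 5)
Your proof is correct and coincides with the paper's own argument: both take the family of proper left ideals containing $I$, order it by inclusion, verify that the union of a chain is a proper left ideal because it cannot contain the unit (this is exactly \ref{notinvideal}$(i)$), and conclude by Zorn's Lemma. No differences worth noting.
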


\begin{proof}
Let $I$ be a proper left ideal in a unital algebra $A$. Let $Z$ be the set of
proper left ideals in $A$ containing $I$, and order $Z$ by inclusion. It shall
be shown that $Z$ is inductively ordered. So let $C \neq \varnothing$ be a
chain in $Z$. Let $J := \bigcup \,C$. It shall be shown that $J \in Z$. The left
ideal $J$ is proper. Indeed one notes that $J$ does not contain the unit of
$A$, because otherwise some element of $C$ would contain the unit. Now
Zorn's Lemma yields a maximal element of $Z$.
\end{proof}

\begin{lemma}\label{field}%
Let $I$ be a proper two-sided ideal in a unital algebra $A$.
The unital algebra $A\mspace{1mu}/I$ then is a division ring
if and only if $I$ is both a maximal left ideal and a maximal right ideal.
\end{lemma}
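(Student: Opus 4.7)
The plan is to unfold both implications using the fact that $A/I$ is a unital algebra (with unit $e+I$, which is non-zero because $I$ is proper, cf.\ \ref{notinvideal}(i)) and to translate invertibility in the quotient into statements about left/right ideals in $A$ via preimages. Throughout I would use proposition \ref{leftrightinv} in the algebra $A/I$ to glue one-sided inverses into a two-sided one.

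For the forward direction, assume $A/I$ is a division ring and let $J$ be a proper left ideal of $A$ with $I \subsetneq J$. Pick $a \in J \setminus I$, so $a+I \neq 0$ in $A/I$. Division-ring-ness yields $b \in A$ with $(b+I)(a+I) = e+I$, i.e.\ $ba - e \in I \subset J$. Since $a \in J$, also $ba \in J$, hence $e = ba - (ba-e) \in J$, contradicting \ref{notinvideal}(i). Thus $I$ is a maximal left ideal, and the identical argument with right multiplication shows $I$ is also a maximal right ideal.

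For the reverse direction, assume $I$ is both left and right maximal. Let $a + I$ be a non-zero element of $A/I$, so $a \notin I$. Form the set $J := I + Aa$; one checks immediately that $J$ is a left ideal of $A$ containing $I$, and the inclusion is strict since $a = ea \in J$ but $a \notin I$. By maximality $J = A$, so $e = i + ba$ for some $i \in I$, $b \in A$. Then $ba - e \in I$, which gives $(b+I)(a+I) = e+I$, so $a+I$ has a left inverse in $A/I$. The symmetric argument with the right ideal $I + aA$ produces a right inverse. Applying proposition \ref{leftrightinv} inside the unital algebra $A/I$, the two one-sided inverses must coincide and yield a two-sided inverse of $a+I$. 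Hence every non-zero element of $A/I$ is invertible, so $A/I$ is a division ring.

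There is no real obstacle: the only point requiring care is to make sure the auxiliary ideals $I + Aa$ and $I + aA$ do strictly contain $I$ (which uses only the unit $e \in A$ and $a \notin I$), and to invoke \ref{leftrightinv} in the quotient rather than trying to produce a two-sided inverse directly from a one-sided one in $A$.
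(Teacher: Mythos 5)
Your proof is correct and follows essentially the same route as the paper: both arguments hinge on translating invertibility of $a+I$ into the statement that the left ideal $I+Aa$ (resp.\ $I+aA$) equals $A$, with \ref{leftrightinv} and \ref{notinvideal}(i) doing the remaining work. The only cosmetic difference is that the paper phrases the whole thing as a chain of equivalences involving left ideals alone, using \ref{leftrightinv} to upgrade ``every non-zero element is left invertible'' to ``division ring'', so that left maximality by itself is already equivalent; your version instead draws one one-sided inverse from each maximality hypothesis and glues them, which is equally valid.
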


\begin{proof}
Please note first that the algebra $A\mspace{1mu}/I$ is unital
because the two-sided ideal $I$ is proper, cf.\ \ref{notinvideal} (i).
For $b \in A \setminus I$, consider the left ideal
\[ Ab+I := \{ \,ab+i : a \in A,\ i \in I \,\}. \]
The left ideal $Ab+I$ contains $b$ as $A$ is unital. It clearly is the
smallest left ideal in $A$ containing both $b$ and $I$. It contains $I$
properly as $b \notin I$ by the assumption on the element $b$.

Hence the following statements are equivalent.
(We shall abbreviate $\underline{a} := a + I \in A\mspace{1mu}/I$
for all $a \in A$.)
\pagebreak

\par each non-zero element $\underline{b}$ of $A\mspace{1mu}/I$
is left invertible (please note \ref{leftrightinv}),
\par for all $\underline{b} \neq \underline{0}$ in $A\mspace{1mu}/I$
there exists $\underline{c}$ in $A\mspace{1mu}/I$ with
$\underline{c} \,\underline{b} = \underline{e}$,
\par for all $b \in A \setminus I$ there exist $c \in A$,
$i \in I$ with $cb + i = e$,
\par for all $b \in A \setminus I$ one has $e \in Ab+I$,
\par for all $b \in A \setminus I$ one has $Ab+I = A$,
\par for each left ideal $J$ containing $I$ properly, one has $J = A$,
\par the proper two-sided ideal $I$ is a maximal left ideal.
\end{proof}

\begin{theorem}\label{taumaxid}%
Let $A$ be a unital Banach algebra. The mapping $\tau \mapsto \ker \tau$
establishes a bijective correspondence between the set $\Delta (A)$
of multiplicative linear functionals $\tau$ on $A$ and the set of proper
two-sided ideals in $A$ which are both left maximal and right maximal.
\end{theorem}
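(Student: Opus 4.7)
The plan is to check first that the map $\tau \mapsto \ker \tau$ is well-defined into the stated target, then establish injectivity by a direct splitting argument, and finally obtain surjectivity via the Gel'fand-Mazur Theorem \ref{Gel'fandMazur} applied to the quotient $A/I$.

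For well-definedness, let $\tau \in \Delta(A)$. Since $\tau$ is a non-zero algebra homomorphism $A \to \mathds{C}$, its kernel $\ker \tau$ is a proper two-sided ideal of codimension $1$. Any proper two-sided ideal strictly containing $\ker \tau$ would have codimension $0$, hence equal $A$, so $\ker \tau$ is both a maximal left ideal and a maximal right ideal. For injectivity, suppose $\tau_1, \tau_2 \in \Delta(A)$ satisfy $\ker \tau_1 = \ker \tau_2 =: I$. Since $\tau_i(e) = 1$ for $i = 1,2$, every $a \in A$ decomposes as $a = \tau_1(a) \,e + \bigl( a - \tau_1(a) \,e \bigr)$ with $a - \tau_1(a) \,e \in I$, whence $\tau_2(a) = \tau_1(a) \,\tau_2(e) = \tau_1(a)$.

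For surjectivity, let $I$ be a proper two-sided ideal of $A$ which is both a maximal left ideal and a maximal right ideal. By \ref{notinvideal}(iv), $I$ is closed in $A$. Hence the quotient $A \,/ \,I$ is a unital Banach algebra in the quotient norm (cf.\ the appendix \ref{quotspaces}), and the multiplication inherited from $A$ makes it an algebra. By Lemma \ref{field}, $A \,/ \,I$ is a division ring. The Gel'fand-Mazur Theorem \ref{Gel'fandMazur} then yields an algebra isomorphism $A \,/ \,I \to \mathds{C}$. Composing the canonical quotient map $A \to A \,/ \,I$ with this isomorphism produces a non-zero algebra homomorphism $\tau : A \to \mathds{C}$, i.e.\ $\tau \in \Delta (A)$, whose kernel is precisely $I$.

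The proof is essentially routine once one has the preceding machinery in place; the main substantive step is the surjectivity, where everything hinges on being able to apply \ref{Gel'fandMazur} to $A \,/ \,I$. The only point requiring care is verifying that $A \,/ \,I$ is a \emph{normed} division ring, which is why \ref{notinvideal}(iv) (the closedness of maximal left ideals in a unital Banach algebra) is the crucial ingredient beyond the purely algebraic Lemma \ref{field}.
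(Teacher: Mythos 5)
Your proof is correct, and for the substantive direction (surjectivity) it follows exactly the paper's route: closedness of $I$ via \ref{notinvideal} (iv), the quotient $A\,/\,I$ being a normed division ring by \ref{field}, and then the Gel'fand-Mazur Theorem \ref{Gel'fandMazur}. The only point where you genuinely diverge is injectivity. The paper argues by contraposition: given $\tau_1 \neq \tau_2$, it picks $a$ with $\tau_1(a)=1$, $\tau_2(a)\neq 1$, sets $b := a^2 - a$, and checks that one of $a$, $b$ separates the two kernels --- an argument that nowhere uses the unit. You instead use unitality directly: since $\tau(e)=1$ for every $\tau \in \Delta(A)$, the decomposition $a = \tau_1(a)\,e + \bigl(a - \tau_1(a)\,e\bigr)$ with the second summand in the common kernel forces $\tau_2 = \tau_1$. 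In the unital setting of this theorem your version is shorter and cleaner; the paper's version has the advantage of surviving verbatim in non-unital situations, which is presumably why it is phrased that way. Both are valid here.
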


\begin{proof}
The kernel of any $\tau \in \Delta (A)$ is a proper two-sided ideal in $A$,
and has co-dimension $1$ in $A$, so it is both left and right maximal.
Conversely, let $I$ be a proper two-sided ideal in $A$, which is both
left and right maximal. Then $A\mspace{1mu}/I$ is a normed algebra
by \ref{notinvideal} (iv), (cf.\ the appendix \ref{quotspace}), and a
division ring by \ref{field}. So it is isomorphic to $\mathds{C}$
as an algebra by the Gel'fand-Mazur Theorem \ref{Gel'fandMazur}.
Let $\pi : A\mspace{1mu}/I \to \mathds{C}$ be an isomorphism,
and put $\tau (a) := \pi (a+I)$ $(a \in A)$. Then $\tau \in \Delta (A)$
and $\ker \tau =I$. To see injectivity of $\tau \mapsto \ker \tau$,
let $\tau _1,\tau _2 \in \Delta (A)$, $\tau _1 \neq \tau _2$.
Let $a \in A$ with $\tau _1 (a) = 1$, $\tau _2 (a) \neq 1$. With
$b := a^2 - a$ we get
$\tau _k (b) = \tau _k (a) \cdot \bigl( \tau _k (a) -1 \bigr)$.
Thus $\tau _1 (b) = 0$, and either $\tau _2 (a) = 0$ or
else $\tau _2 (b) \neq 0$. So either
$b \in \ker \tau _1 \setminus \ker \tau _2$ or
$a \in \ker \tau _2 \setminus \ker \tau _1$.
\end{proof}

\medskip
In the remainder of this paragraph, we shall mainly consider
\linebreak \underline{commutative} Banach algebras. For
emphasis, we restate:

\begin{corollary}\label{taumaxidcomm}%
Let $A$ be a \underline{unital commutative} Banach algebra.
The mapping $\tau \mapsto \ker \tau$ establishes a bijective
correspondence between the set $\Delta (A)$ of multiplicative
linear functionals $\tau$ on $A$ and the set of maximal ideals
in $A$.
\end{corollary}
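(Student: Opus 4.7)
The plan is to deduce this corollary directly from Theorem \ref{taumaxid}, which handles the general (possibly non-commutative) unital Banach algebra case. The only thing to verify is that in the commutative setting, the class of ``proper two-sided ideals which are both left maximal and right maximal'' coincides with the class of ``maximal ideals'' as defined in the excerpt (where, recall, for commutative algebras one drops the adjectives ``left/right/two-sided'' per the definition of ideals).

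First I would observe that since $A$ is commutative, every vector subspace $I$ of $A$ satisfies $aI \subset I$ for all $a \in A$ if and only if $Ia \subset I$ for all $a \in A$, so the notions of left ideal, right ideal, and two-sided ideal all collapse to the single notion of ideal. Consequently, a proper ideal of $A$ is left maximal precisely when it is right maximal, and this condition is by definition exactly the condition of being a maximal ideal in the commutative sense of the excerpt.

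With this identification, the hypothesis ``proper two-sided ideal which is both left maximal and right maximal'' in Theorem \ref{taumaxid} becomes simply ``maximal ideal'' in the commutative sense. Hence Theorem \ref{taumaxid} applied to $A$ yields exactly the bijective correspondence asserted by the corollary, with the same assignment $\tau \mapsto \ker \tau$.

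Since this is a one-line reduction, there is no real obstacle; the only thing to be careful about is pedantically matching the terminology of the commutative case (where ``maximal ideal'' is understood without modifiers) against the non-commutative terminology used in Theorem \ref{taumaxid}. No additional use of the Gel'fand--Mazur Theorem or of Zorn's Lemma is required beyond what already enters the proof of Theorem \ref{taumaxid}.
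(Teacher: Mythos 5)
Your reduction is correct and is exactly what the paper intends: the corollary is presented merely as a restatement of Theorem \ref{taumaxid} for the commutative case, where the notions of left, right, and two-sided ideal coincide and hence ``left maximal and right maximal'' collapses to ``maximal''. No further argument is needed.
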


\begin{theorem}[abstract form of Wiener's Theorem]%
\label{abstrWien}%
\index{concepts}{Theorem!Wiener's!abstract form}%
\index{concepts}{Wiener's Theorem!abstract form}%
\index{concepts}{abstract!form of Wiener's Thm.}%
Let $A$ be a \underline{unital commutative} Banach algebra.
An element $a$ of $A$ is not invertible in $A$ if and only if the
function $\wht{a}$ vanishes at some point $\tau$ in $\Delta (A)$.
In other words, for an element $a$ of $A$ to have an
inverse, it is necessary and sufficient that the function
$\wht{a}$ should not vanish anywhere on $\Delta (A)$.
\end{theorem}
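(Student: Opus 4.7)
The plan is to prove both implications by exploiting the bijective correspondence \ref{taumaxidcomm} between $\Delta(A)$ and the set of maximal ideals of $A$, which reduces the problem to ideal-theoretic statements already established.

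For the easier direction, suppose there exists $\tau \in \Delta(A)$ with $\wht{a}(\tau) = \tau(a) = 0$. If $a$ were invertible with inverse $b$, then applying $\tau$ to the equation $ab = e$ and using multiplicativity together with $\tau(e) = 1$ would yield $\tau(a)\,\tau(b) = 1$, contradicting $\tau(a) = 0$. Hence $a$ is not invertible.

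For the converse, assume $a \in A$ is not invertible. By \ref{notinvideal}(ii), applied in the commutative setting, $a$ lies in some proper ideal of $A$; concretely, the principal ideal $Aa = \{ba : b \in A\}$ contains $a$ (since $A$ is unital) and is proper precisely because $a$ has no inverse. Lemma \ref{inmaxideal} then furnishes a maximal ideal $M$ containing $Aa$, and the correspondence in \ref{taumaxidcomm} produces a $\tau \in \Delta(A)$ with $\ker \tau = M$. In particular $a \in M = \ker \tau$, so $\wht{a}(\tau) = 0$, as required.

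The substantive content all sits in the earlier machinery: the Gel'fand–Mazur Theorem (used to construct $\tau$ from $M$ via the quotient $A/M \cong \mathds{C}$), Zorn's Lemma (for extending a proper ideal to a maximal one), and the closedness of maximal ideals in a unital Banach algebra (needed so that $A/M$ is actually a normed division ring). Given that \ref{taumaxidcomm} already packages all of this, there is no real obstacle remaining; the proof is essentially a two-line application of that correspondence together with the observation that $Aa$ is proper when $a$ is not invertible. The only point to watch is the specialization of \ref{notinvideal}(ii) and \ref{taumaxid} to the commutative case, where left, right and two-sided ideals coincide and every maximal ideal is automatically both left and right maximal.
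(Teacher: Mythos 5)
Your proof is correct and follows exactly the paper's route: the paper's own proof is just the citation chain \ref{notinvideal}(ii), \ref{inmaxideal}, \ref{taumaxidcomm}, which is precisely the machinery you invoke, together with the easy direction via $\tau(a)\,\tau(b)=\tau(e)=1$.
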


\begin{proof}
\ref{notinvideal}(ii), \ref{inmaxideal}, \ref{taumaxidcomm}. \pagebreak
\end{proof}

\begin{theorem}\label{rangetld}%
If $A$ is a commutative Banach algebra, then for $a \in \tld{A}$ we have
\[ \s(a) = \wht{a}\,\bigl(\Delta ( \tld{A} )\bigr). \]
(Please note the tilde.)
\end{theorem}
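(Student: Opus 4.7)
The plan is to prove the two inclusions separately, reducing everything to the unitisation $\tld{A}$, which is a \emph{unital commutative} Banach algebra (commutativity is inherited, and completeness is \ref{Banachunitis}), so that the abstract form of Wiener's Theorem \ref{abstrWien} applies. Throughout I will use the identity $\s_{\tld{A}}(a) = \s_A(a) = \s(a)$ from \ref{specunit}.

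For the inclusion $\wht{a}\,\bigl(\Delta(\tld{A})\bigr) \subset \s(a)$, I would simply invoke the earlier proposition, applied to the algebra $\tld{A}$ in place of $A$: for every $\tau \in \Delta(\tld{A})$, one has $\tau(a) \in \s_{\tld{A}}(a) = \s(a)$.

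For the reverse inclusion $\s(a) \subset \wht{a}\,\bigl(\Delta(\tld{A})\bigr)$, let $\lambda \in \s(a) = \s_{\tld{A}}(a)$. Then $\lambda e - a$ is not invertible in $\tld{A}$. Since $\tld{A}$ is a unital commutative Banach algebra, the abstract form of Wiener's Theorem \ref{abstrWien} produces some $\tau \in \Delta(\tld{A})$ with $\widehat{\lambda e - a}(\tau) = 0$. Using $\tau(e) = 1$ (which holds for multiplicative linear functionals on a unital algebra) and linearity, this gives $\lambda = \tau(a) = \wht{a}(\tau)$, so $\lambda \in \wht{a}\,\bigl(\Delta(\tld{A})\bigr)$.

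There is no real obstacle here: the whole point is that by passing to the unitisation we land in the setting where Wiener's Theorem applies directly, and the two inclusions match up. The only minor care needed is the appeal to \ref{specunit} to identify $\s(a)$ with $\s_{\tld{A}}(a)$ (so that ``not invertible in $\tld{A}$'' is really the defining condition), and the observation that $\tld{A}$ is again commutative so that \ref{abstrWien} is indeed applicable.
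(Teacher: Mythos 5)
Your proof is correct and takes essentially the same route as the paper: the paper's proof is just the same argument written as a chain of equivalences, using both directions of the abstract form of Wiener's Theorem \ref{abstrWien} applied to the unital commutative Banach algebra $\tld{A}$ (your appeal to the earlier proposition for the easy inclusion is simply the ``only if'' half of that equivalence). The only cosmetic remark is that for $a \in \tld{A}$ the identification $\s(a) = \s_{\tld{A}}(a)$ is a matter of definition rather than an application of \ref{specunit}, which concerns $a \in A$.
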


\begin{proof}
For $\lambda \in \mathds{C}$, the following statements are equivalent.
\begin{align*}
& \lambda \in \s(a), \\
& \lambda e-a \text{ is not invertible in } \tld{A}, \\
& \wht{ ( \lambda e - a ) } \,\bigl( \tld{ \tau } \bigr) = 0
\text{ for some } \tld{ \tau } \in \Delta (\tld{A}),
\quad \text{by \ref{abstrWien}} \\
& \lambda = \wht{a} \,\bigl( \tld{ \tau } \bigr)
\text{ for some } \tld{ \tau } \in \Delta (\tld{A}). \pagebreak \qedhere
\end{align*}
\end{proof}

\medskip
In particular:

\begin{theorem}\label{rangeGTu}%
If $A$ is a unital commutative Banach algebra, then for $a \in A$ we have
\[ \s(a) = \wht{a}\,\bigl( \Delta ( A ) \bigr). \]
\end{theorem}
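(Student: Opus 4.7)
The plan is to observe that this follows immediately from the preceding Theorem \ref{rangetld} together with the convention adopted in the definition of the unitisation. Recall that by definition, if $A$ is unital, then $\tld{A} := A$. Hence $\Delta(\tld{A}) = \Delta(A)$ trivially (it is literally the same set of multiplicative linear functionals on the same algebra), and for $a \in A = \tld{A}$ the Gel'fand transform $\wht{a}$ is the same function on $\Delta(A) = \Delta(\tld{A})$.

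Therefore Theorem \ref{rangetld}, applied to $a \in \tld{A} = A$, yields
\[ \s(a) = \wht{a}\,\bigl(\Delta(\tld{A})\bigr) = \wht{a}\,\bigl(\Delta(A)\bigr), \]
which is precisely the assertion. No additional work is required, and there is no genuine obstacle: the whole content of the theorem lies in having already established Theorem \ref{rangetld} (which in turn rested on the abstract form of Wiener's Theorem \ref{abstrWien} and the equality $\s_{\tld{A}}(a) = \s_A(a)$ from \ref{specunit}). The present corollary merely records that in the unital case one does not need to pass through the unitisation, the tilde being cosmetic.
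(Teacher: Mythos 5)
Your proposal is correct and coincides with the paper's own treatment: the paper prefaces this theorem with ``In particular:'' and gives no separate proof, precisely because in the unital case $\tld{A} = A$ by definition, so the statement is a literal specialisation of Theorem \ref{rangetld}.
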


\begin{theorem}%
If $A$ is a commutative Banach algebra without unit,
then for $a \in A$ we have
\[ \s(a) = \wht{a}\,\bigl(\Delta(A)\bigr) \cup \{ 0 \}. \]
(Please note \ref{speczero}.)
\end{theorem}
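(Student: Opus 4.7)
The plan is to apply theorem \ref{rangetld} to the unitisation $\tld{A}$ and then describe $\Delta(\tld{A})$ in terms of $\Delta(A)$. Since $A$ has no unit, we have $A \subset \tld{A}$, and theorem \ref{rangetld} gives for $a \in A$
\[ \s(a) = \wht{a}\bigl(\Delta(\tld{A})\bigr), \]
so the task reduces to showing $\wht{a}\bigl(\Delta(\tld{A})\bigr) = \wht{a}\bigl(\Delta(A)\bigr) \cup \{0\}$ for $a \in A$.

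The key step is to establish a natural bijection between $\Delta(\tld{A})$ and $\Delta(A) \cup \{\tld{\tau}\0\}$, where $\tld{\tau}\0 \in \Delta(\tld{A})$ denotes the augmentation character defined by $\tld{\tau}\0 (\lambda e + b) := \lambda$ for $\lambda \in \mathds{C}$, $b \in A$; this is immediately seen to be multiplicative and non-zero. Given $\tau \in \Delta(A)$, I would extend it to $\tld{\tau} \in \Delta(\tld{A})$ via $\tld{\tau}(\lambda e + b) := \lambda + \tau(b)$; a direct computation (expanding $(\lambda e + a)(\mu e + b)$) shows this is multiplicative, and $\tld{\tau}(e) = 1 \neq 0$ ensures $\tld{\tau} \in \Delta(\tld{A})$. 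Conversely, any $\tld{\tau} \in \Delta(\tld{A})$ satisfies $\tld{\tau}(e) = 1$, so its restriction $\tld{\tau}|_A$ determines $\tld{\tau}$ uniquely. If $\tld{\tau}|_A$ is non-zero, it lies in $\Delta(A)$ and $\tld{\tau}$ is its canonical extension; otherwise $\tld{\tau} = \tld{\tau}\0$.

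Finally, for $a \in A$ the correspondence yields $\wht{a}(\tld{\tau}) = \tld{\tau}(a) = \tau(a) = \wht{a}(\tau)$ when $\tld{\tau}$ extends $\tau \in \Delta(A)$, and $\wht{a}(\tld{\tau}\0) = 0$ by definition of the augmentation character. Combining, $\wht{a}\bigl(\Delta(\tld{A})\bigr) = \wht{a}\bigl(\Delta(A)\bigr) \cup \{0\}$, which together with theorem \ref{rangetld} yields the claim. No serious obstacle arises; the only point requiring a bit of care is the bijection between $\Delta(\tld{A}) \setminus \{\tld{\tau}\0\}$ and $\Delta(A)$, which rests on $\tld{\tau}(e) = 1$ pinning down the extension.
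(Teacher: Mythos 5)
Your proof is correct and follows essentially the same route as the paper's: apply \ref{rangetld} to $\tld{A}$ and then use the dichotomy that a functional in $\Delta(\tld{A})$ either vanishes on $A$ (the augmentation character, contributing the value $0$) or restricts to an element of $\Delta(A)$. The paper states this dichotomy in one line, whereas you spell out the extension and the verification that the augmentation character is multiplicative; that is exactly the content the paper leaves implicit.
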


\begin{proof}
For $\lambda \in \mathds{C}$, the following statements are equivalent.
\begin{align*}
& \lambda \in \s(a), \\
& \lambda = \wht{a} \,\bigl( \tld{ \tau } \bigr)
\text{ for some } \tld{ \tau } \in \Delta (\tld{A}),
\quad \text{by \ref{rangetld}} \\
& \text{either } \lambda = 0 \text{ or } \lambda = \wht{a} \,( \tau )
\text{ for some } \tau \in \Delta (A), \\
& \text{(according as } \tld{ \tau } \text{ vanishes on all of } A
\text{ or not).} \qedhere
\end{align*}
\end{proof}

\medskip
The two preceding items together yield:

\begin{corollary}\label{rangeGT}%
If $A$ is a commutative Banach algebra, then for $a \in A$ we have
\[ \underline{\s(a) \setminus \{ 0 \}
= \wht{a} \,\bigl( \Delta (A) \bigr) \setminus \{ 0 \}}. \pagebreak \]
\end{corollary}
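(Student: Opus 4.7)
The plan is to split into the unital and non-unital cases and invoke the two immediately preceding theorems, then remove $\{0\}$ from both sides in each case.

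First I would handle the case where $A$ is unital. In this case Theorem \ref{rangeGTu} gives $\s(a) = \wht{a}(\Delta(A))$, and removing the point $0$ from both sides yields the claimed identity directly.

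Next I would handle the case where $A$ has no unit. The theorem immediately preceding the corollary gives $\s(a) = \wht{a}(\Delta(A)) \cup \{0\}$. Taking the difference with $\{0\}$ on both sides, and using that $(S \cup \{0\}) \setminus \{0\} = S \setminus \{0\}$ for any set $S \subset \mathds{C}$, one obtains $\s(a) \setminus \{0\} = \wht{a}(\Delta(A)) \setminus \{0\}$.

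The step that requires the most attention is simply making sure the set-theoretic manipulation in the non-unital case is not mis-stated; there is no genuine obstacle, since both ingredients are already established. Thus the corollary follows by the single sentence: in both cases, the equality of the preceding theorems passes to the equality obtained after removing $0$ from both sides.
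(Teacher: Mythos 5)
Your proposal is correct and is precisely the paper's argument: the text derives the corollary by noting that ``the two preceding items together yield'' it, i.e.\ by combining the unital case \ref{rangeGTu} with the non-unital case and deleting $\{0\}$ from both sides. Nothing further is needed.
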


\begin{theorem}\label{normGTrl}%
For a commutative Banach algebra $A$, and $a \in A$, we have
\[ \bigl| \,\wht{a} \,\bigr|_{\infty} = \rlambda(a). \]
\end{theorem}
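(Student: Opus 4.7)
The plan is to derive this identity simply by combining two already-established facts: the Spectral Radius Formula (Theorem \ref{specradform}(ii), or more precisely its normed-algebra form \ref{specmodul}), which yields $\rlambda(a) = \max\{\,|\lambda| : \lambda \in \s(a)\,\}$, together with the description of the range of a Gel'fand transform in Corollary \ref{rangeGT}, namely $\s(a) \setminus \{0\} = \wht{a}\bigl(\Delta(A)\bigr) \setminus \{0\}$.

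First I would dispose of the degenerate case $\Delta(A) = \varnothing$. By \ref{rangeGT} this forces $\s(a) \setminus \{0\} = \varnothing$, so $\s(a) \subset \{0\}$; combined with \ref{specmodul} (which guarantees $\s(a) \neq \varnothing$), we get $\rlambda(a) = 0$, and $|\wht{a}|_\infty = 0$ by the convention that a supremum over an empty set is taken to be $0$ on the cone of non-negative numbers.

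Next, assuming $\Delta(A) \neq \varnothing$, I would write both quantities as suprema of $|z|$ over a subset of $\mathds{C}$: the right-hand side is $\sup\{\,|\lambda| : \lambda \in \s(a)\,\}$ by \ref{specmodul}, while the left-hand side is by definition $\sup\{\,|z| : z \in \wht{a}(\Delta(A))\,\}$. The elementary observation that, for any non-empty $S \subset \mathds{C}$, one has $\sup\{\,|z| : z \in S\,\} = \sup\{\,|z| : z \in S \setminus \{0\}\,\}$ when $S \setminus \{0\} \neq \varnothing$ (and both sides equal $0$ otherwise) then lets me pass freely between $\s(a)$ and $\s(a) \setminus \{0\}$, and similarly for $\wht{a}(\Delta(A))$. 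Invoking \ref{rangeGT} to identify $\s(a) \setminus \{0\} = \wht{a}(\Delta(A)) \setminus \{0\}$ finishes the argument.

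There is essentially no obstacle; the only subtlety is the bookkeeping around the point $0$, which can belong to $\s(a)$ without belonging to $\wht{a}(\Delta(A))$ (for example when $A$ has no unit, by \ref{speczero}), but this asymmetry does not affect the supremum of absolute values.
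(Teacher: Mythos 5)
Your proposal is correct and follows exactly the paper's own route: the paper proves \ref{normGTrl} by combining Corollary \ref{rangeGT} with the Spectral Radius Formula, which is precisely your argument. The extra care you take with the case $\Delta(A) = \varnothing$ and with the harmless discrepancy at the point $0$ is sound bookkeeping that the paper leaves implicit.
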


\begin{proof}
This follows from the preceding corollary \ref{rangeGT} by applying
the Spectral Radius Formula.
\end{proof}

\begin{corollary}\label{C*normGT}%
For a commutative C*-algebra $A$, and $a \in A$, we have
\[ \bigl| \,\wht{a} \,\bigr| _{\infty} = \|\,a\,\|. \]
\end{corollary}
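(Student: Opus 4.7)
The plan is to combine the two immediately preceding results with the fact that in a commutative \st-algebra every element is normal. Specifically, theorem \ref{normGTrl} yields $\bigl| \,\wht{a} \,\bigr|_{\infty} = \rlambda(a)$ for any element $a$ of a commutative Banach algebra, and since a commutative C*-algebra is in particular a commutative Banach algebra, this equality is available. It then suffices to show $\rlambda(a) = \| \,a \,\|$.

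For this I would invoke theorem \ref{C*rl}, which asserts that in a pre-C*-algebra, $\| \,b \,\| = \rlambda(b)$ whenever $b$ is normal. Since $A$ is a commutative C*-algebra, every element of $A$ commutes with its adjoint, so every $a \in A$ is normal; hence $\| \,a \,\| = \rlambda(a)$.

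Chaining these two equalities gives
\[
\bigl| \,\wht{a} \,\bigr| _{\infty} = \rlambda(a) = \| \,a \,\|,
\]
which is the desired identity. There is essentially no obstacle: the corollary is a two-line consequence of \ref{normGTrl} and \ref{C*rl}, and the only observation required is the trivial one that commutativity of $A$ forces all elements to be normal.
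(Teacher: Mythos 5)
Your proof is correct and coincides with the paper's: the corollary is derived from \ref{normGTrl} together with \ref{C*rl}, using that every element of a commutative \st-algebra is normal. The paper's own proof is just the one line ``By \ref{C*rl} we have $\rlambda (a) = \|\,a\,\|$,'' with the rest left implicit.
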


\begin{proof}
By \ref{C*rl} we have $\rlambda (a) = \|\,a\,\|$.
\end{proof}

\begin{corollary}\label{C*GTisom}%
If $A$ is a commutative C*-algebra $\neq \{ 0 \}$, then
$\Delta (A) \neq \varnothing$ and the Gel'fand transformation
$A \to {\ell\,}^{\infty}\bigl(\Delta(A)\bigr)$ is an isometry. 
\end{corollary}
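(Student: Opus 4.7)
The plan is to establish $\Delta(A) \neq \varnothing$ first, after which the isometric nature of the Gel'fand transformation is essentially immediate from the preceding corollary \ref{C*normGT}.

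For non-emptiness, I would argue by contradiction. Suppose that $\Delta(A) = \varnothing$. A commutative C*-algebra is in particular a commutative Banach algebra, so corollary \ref{rangeGT} applies: for every $a \in A$ we have
\[ \s(a) \setminus \{0\} = \wht{a}\,\bigl(\Delta(A)\bigr) \setminus \{0\} = \varnothing, \]
which forces $\s(a) \subset \{0\}$, and hence $\rlambda(a) = 0$ for all $a \in A$. Since $A$ is commutative, every element of $A$ is normal, so theorem \ref{C*rl} yields $\|a\| = \rlambda(a) = 0$ for all $a \in A$. That is, $A = \{0\}$, contradicting the assumption $A \neq \{0\}$. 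Therefore $\Delta(A) \neq \varnothing$.

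With non-emptiness in hand, proposition \ref{GelfTransf} tells us that the Gel'fand transformation $a \mapsto \wht{a}$ is a well-defined contractive algebra homomorphism $A \to \ell^{\infty}\bigl(\Delta(A)\bigr)$, and corollary \ref{C*normGT} gives $\bigl|\,\wht{a}\,\bigr|_{\infty} = \|\,a\,\|$ for all $a \in A$, so this map is an isometry.

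The only real issue is the non-emptiness step, and it is not much of an obstacle: the argument hinges on recognizing that commutativity makes every element normal, which lets \ref{C*rl} translate the vanishing of all spectral radii into the vanishing of all norms. Everything else is direct assembly of previously established results.
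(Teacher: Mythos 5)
Your proof is correct and follows essentially the same route the paper intends: the paper gives no explicit proof for this corollary, but it is meant to be read off from \ref{C*normGT} (i.e.\ $\|\,a\,\| = \rlambda(a) = |\,\wht{a}\,|_{\infty}$ via \ref{C*rl} and \ref{normGTrl}/\ref{rangeGT}), which forces $\Delta(A) \neq \varnothing$ when $A \neq \{0\}$ and gives the isometry at once. Your contradiction argument for non-emptiness is exactly this chain made explicit.
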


\begin{definition}[Hermitian functional]%
\label{Hermfunct}\index{concepts}{functional!Hermitian}%
\index{concepts}{Hermitian!functional}%
Any linear functional $\varphi$ on any \st-algebra $A$ is said to be
\underline{Hermitian} if it satisfies
\[ \varphi (a^*) = \overline{\varphi (a)} \quad \text{for all} \quad a \in A. \]
\end{definition}

\begin{theorem}\label{mlfHerm}%
A commutative Banach \st-algebra $A$ is Hermitian
precisely when each $\tau \in \Delta (A)$ is Hermitian.
\end{theorem}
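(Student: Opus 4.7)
The plan is to use the identification of the spectrum with the range of the Gel'fand transform, namely corollary \ref{rangeGT}, which gives $\s(a) \setminus \{0\} = \wht{a}\bigl(\Delta(A)\bigr) \setminus \{0\}$ for every $a$ in a commutative Banach algebra $A$. This translates the Hermitian property of the algebra (a spectral condition) directly into a pointwise condition on multiplicative linear functionals.

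For the forward implication, suppose $A$ is Hermitian and let $\tau \in \Delta(A)$. If $a \in A\sa$, then $\tau(a) \in \wht{a}\bigl(\Delta(A)\bigr) \subset \s(a)$, which is real by hypothesis; so $\tau$ restricted to $A\sa$ is real-valued. For an arbitrary $c \in A$, write $c = a + \iu b$ with $a := (c+c^*)/2$ and $b := (c-c^*)/(2\iu)$ in $A\sa$ (as in the observation preceding the proposition after \ref{CS}). Then by linearity $\tau(c) = \tau(a) + \iu\tau(b)$ with $\tau(a), \tau(b) \in \mathds{R}$, and $\tau(c^*) = \tau(a) - \iu\tau(b) = \overline{\tau(c)}$, so $\tau$ is Hermitian in the sense of \ref{Hermfunct}.

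For the converse, assume every $\tau \in \Delta(A)$ is Hermitian, and let $a \in A\sa$. For any $\tau \in \Delta(A)$ we have $\tau(a) = \tau(a^*) = \overline{\tau(a)}$, so $\tau(a) \in \mathds{R}$. Thus $\wht{a}\bigl(\Delta(A)\bigr) \subset \mathds{R}$, and by corollary \ref{rangeGT}
\[ \s(a) \setminus \{0\} = \wht{a}\bigl(\Delta(A)\bigr) \setminus \{0\} \subset \mathds{R}, \]
whence $\s(a) \subset \mathds{R}$. This holds for every Hermitian $a$, so $A$ is Hermitian. (The case $\Delta(A) = \varnothing$ is covered automatically, since then $\s(a) \setminus \{0\} = \varnothing$ forces $\s(a) \subset \{0\} \subset \mathds{R}$.)

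There is no real obstacle here: once one invokes \ref{rangeGT} the argument is essentially bookkeeping on the decomposition into Hermitian parts. The only point requiring a moment of care is the possibility that $\Delta(A)$ might be empty, which is harmless as noted above.
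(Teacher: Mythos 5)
Your proof is correct and follows essentially the same route as the paper: both directions rest on the identity $\s(a)\setminus\{0\}=\wht{a}\bigl(\Delta(A)\bigr)\setminus\{0\}$ from \ref{rangeGT} together with the decomposition $c=a+\iu b$ into Hermitian parts. Your extra remark on the case $\Delta(A)=\varnothing$ is a harmless refinement the paper leaves implicit.
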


\begin{proof}
Let $a \in A$. We then have
$\{ \tau (a) : \tau \in \Delta (A) \} \setminus \{ 0 \} = \s(a) \setminus \{ 0 \}$.
Thus, if every $\tau \in \Delta (A)$ is Hermitian, then $\s(a)$ is real for
Hermitian $a \in A$, so that $A$ is Hermitian. Conversely, if $A$ is
Hermitian, then each $\tau \in \Delta (A)$ assumes real values on
Hermitian elements. Given any $c \in A$, however, we can write
$c = a + \iu b$ with Hermitian $a, b \in A$, so that for $\tau \in \Delta (A)$
we have $\tau (c^*) = \tau (a) - \iu \tau (b)
= \overline{\tau (a) + \iu \tau (b)} = \overline{\tau (c)}$.
\end{proof}

\begin{corollary}\label{Hermstarhom}%
Let $A$ be a \underline{Hermitian} commutative Banach \linebreak
\st-algebra with $\Delta(A) \neq \varnothing$. The Gel'fand transformation
then is a \linebreak \st-algebra homomorphism. In particular, the range of
the Gel'fand \linebreak transformation then is a \st-subalgebra of
${\ell\,}^{\infty}\bigl(\Delta(A)\bigr)$ (not merely a \linebreak subalgebra).
\end{corollary}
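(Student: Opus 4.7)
The plan is to reduce the claim directly to the previous result \ref{mlfHerm}, which already supplies the only non-trivial ingredient. We already know from \ref{GelfTransf} that the Gel'fand transformation is a (contractive) algebra homomorphism from $A$ into ${\ell\,}^{\infty}\bigl(\Delta(A)\bigr)$, so the only thing remaining to verify is that it intertwines the involutions, that is, $\wht{a^*} = \bigl(\wht{a}\bigr)^*$ for every $a \in A$, where the involution on ${\ell\,}^{\infty}\bigl(\Delta(A)\bigr)$ is pointwise complex conjugation, cf.\ \ref{FOmega}.

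First I would fix an arbitrary $\tau \in \Delta(A)$ and invoke \ref{mlfHerm}: since $A$ is assumed to be a Hermitian commutative Banach \st-algebra, every multiplicative linear functional on $A$ is Hermitian in the sense of \ref{Hermfunct}, so $\tau(a^*) = \overline{\tau(a)}$ for all $a \in A$. Unwinding the definition of the Gel'fand transform, this reads
\[ \wht{a^*}(\tau) = \tau(a^*) = \overline{\tau(a)} = \overline{\wht{a}(\tau)} = \bigl(\wht{a}\bigr)^{*}(\tau). \]
Since $\tau \in \Delta(A)$ was arbitrary, this gives the functional equation $\wht{a^*} = \bigl(\wht{a}\bigr)^{*}$, which together with \ref{GelfTransf} establishes that the Gel'fand transformation is a \st-algebra homomorphism, cf.\ \ref{hom}.

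For the ``in particular'' clause, the image $\{\,\wht{a} : a \in A\,\}$ is the range of a \st-algebra homomorphism and hence automatically closed under pointwise complex conjugation, so it is a \st-subalgebra of ${\ell\,}^{\infty}\bigl(\Delta(A)\bigr)$ and not just a subalgebra. There is no genuine obstacle here: the entire content of the statement has already been concentrated in \ref{mlfHerm}, which is where one actually had to use the Hermitian hypothesis on $A$ (via the decomposition $c = a + \iu b$ with $a,b \in A\sa$). The present corollary is simply the translation of that pointwise statement into the language of the Gel'fand transformation.
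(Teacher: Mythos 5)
Your proof is correct and is exactly the argument the paper intends: the corollary is stated without proof as an immediate consequence of \ref{mlfHerm}, and your unwinding of $\wht{a^*}(\tau) = \tau(a^*) = \overline{\tau(a)} = \overline{\wht{a}(\tau)}$ is the canonical way to make that deduction explicit.
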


Now a miscellaneous application of multiplicative linear functionals:%
\pagebreak

\begin{proposition}\label{commspecsub}%
Let $A$ be a commutative Banach algebra. For two elements $a,b$
of $A$ we have
\begin{align*}
\s(a+b) \subset & \ \s(a)+\s(b) \\
\s(ab) \subset & \ \s(a) \,\s(b).
\end{align*}
\end{proposition}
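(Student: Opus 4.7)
The plan is to reduce everything to the Gel'fand transformation on the unitisation $\tld{A}$, which by \ref{Banachunitis} is itself a unital commutative Banach algebra, and then exploit Theorem \ref{rangetld}, which states that for every $c \in \tld{A}$,
\[ \s(c) = \wht{c}\bigl(\Delta(\tld{A})\bigr). \]
The decisive point is that a single element $\tld{\tau} \in \Delta(\tld{A})$ simultaneously witnesses values of the spectrum of all elements of $\tld{A}$ via $\tld{\tau}(c) \in \s(c)$; commutativity of $A$ (and hence of $\tld{A}$) is what makes this simultaneous access possible.

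For the first inclusion, I would take an arbitrary $\lambda \in \s(a+b)$ and apply \ref{rangetld} to $a + b \in \tld{A}$ to pick $\tld{\tau} \in \Delta(\tld{A})$ with $\tld{\tau}(a+b) = \lambda$. Multiplicativity (in particular linearity) of $\tld{\tau}$ then gives
\[ \lambda \;=\; \tld{\tau}(a) + \tld{\tau}(b), \]
and applying \ref{rangetld} to $a$ and to $b$ separately yields $\tld{\tau}(a) \in \s(a)$ and $\tld{\tau}(b) \in \s(b)$, so that $\lambda \in \s(a) + \s(b)$.

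For the second inclusion I would run the same argument with $ab$ in place of $a+b$: pick $\tld{\tau} \in \Delta(\tld{A})$ with $\tld{\tau}(ab) = \mu$ and use multiplicativity to write
\[ \mu \;=\; \tld{\tau}(a) \cdot \tld{\tau}(b) \in \s(a)\,\s(b). \]

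There is no real obstacle here; the only thing to double-check is that \ref{rangetld} is genuinely applicable, which requires that $\Delta(\tld{A})$ be non-empty. This is automatic, since $\s(a+b)$ and $\s(ab)$ are themselves non-empty by \ref{specmodul}, so the inclusions are vacuous otherwise; alternatively one notes that $\tld{A}$ is a commutative unital Banach algebra, so by \ref{taumaxidcomm} combined with \ref{inmaxideal} (applied to, say, the proper ideal $\{0\}$ extended to a maximal ideal), $\Delta(\tld{A}) \neq \varnothing$. Compare with \ref{commrlsub} and \ref{commspeccont}, which already established the analogous quantitative statements at the level of $\rlambda$; the present result is in a sense their qualitative set-theoretic counterpart, and follows from the Gel'fand transform with essentially no calculation.
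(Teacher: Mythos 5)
Your proof is correct and takes essentially the same route as the paper's: both apply \ref{rangetld} to obtain a single $\tld{\tau} \in \Delta(\tld{A})$ with $\tld{\tau}(a+b) = \lambda$ (respectively $\tld{\tau}(ab) = \mu$), and then use linearity and multiplicativity of $\tld{\tau}$ together with $\tld{\tau}(a) \in \s(a)$ and $\tld{\tau}(b) \in \s(b)$. Your additional remarks on the non-emptiness of $\Delta(\tld{A})$ are harmless but not needed, since \ref{rangetld} already supplies the required $\tld{\tau}$ for any point of the spectrum.
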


\begin{proof}
This is an application of \ref{rangetld}. Indeed, if $\lambda \in \s(ab)$,
there exists $\tld{\tau} \in \Delta (\tld{A})$ such that
$\tld{\tau} (ab) = \lambda$. But then $\lambda = \tld{\tau} (a) \,\tld{\tau} (b)$
where $\tld{\tau} (a) \in \s(a)$ and $\tld{\tau} (b) \in \s(b)$.
\end{proof}

\begin{theorem}\label{specsub}%
Let $A$ be a Banach algebra. If $a$, $b$ are \underline{commuting}
elements of $A$, then
\begin{align*}
\s(a+b) \subset & \ \s(a)+\s(b) \\
\s(ab) \subset & \ \s(a) \,\s(b).
\end{align*}
\end{theorem}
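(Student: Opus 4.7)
The plan is to reduce the non-commutative statement to the already-proved commutative version \ref{commspecsub} by enlarging the ambient algebra with the Civin-Yood Theorem \ref{CivinYood}.

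Since $a$ and $b$ commute, the set $S := \{a,b\}$ consists of commuting elements of $A$. I would invoke \ref{CivinYood} to obtain a commutative closed unital subalgebra $B$ of $\tld{A}$ containing $S$ such that
\[ \s_B(c) = \s_A(c) \quad \text{for all} \quad c \in B \cap A. \]
Being a closed subalgebra of the Banach algebra $\tld{A}$ (Banach by \ref{Banachunitis}), $B$ is itself a commutative Banach algebra, so \ref{commspecsub} is available inside $B$.

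Next I would observe that the four elements $a$, $b$, $a+b$, $ab$ all lie in $B\cap A$ (they are in $A$ since $A$ is a subalgebra of $\tld{A}$, and in $B$ since $B$ is a subalgebra of $\tld{A}$ containing both $a$ and $b$). Therefore
\[ \s_A(a+b) = \s_B(a+b), \quad \s_A(ab) = \s_B(ab), \quad \s_A(a) = \s_B(a), \quad \s_A(b) = \s_B(b). \]
Applying \ref{commspecsub} inside the commutative Banach algebra $B$ to the pair $a,b \in B$ yields $\s_B(a+b) \subset \s_B(a) + \s_B(b)$ and $\s_B(ab) \subset \s_B(a)\,\s_B(b)$, and substituting the displayed equalities finishes both inclusions.

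There is no real obstacle here: the content of the argument is absorbed into the Civin-Yood reduction. The only point to verify carefully is that the elements whose spectra we compare all genuinely lie in $B \cap A$, so that the spectral equality supplied by \ref{CivinYood} applies to each of them simultaneously.
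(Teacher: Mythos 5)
Your proposal is correct and follows exactly the paper's own route: reduce to the commutative case \ref{commspecsub} via the Civin--Yood Theorem \ref{CivinYood}, noting that $a$, $b$, $a+b$, $ab$ all lie in $B \cap A$ so the spectral equality applies to each. Nothing to add.
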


\begin{proof}
This follows from the preceding proposition \ref{commspecsub} by an
\linebreak application of the Theorem of Civin \& Yood \ref{CivinYood}.
Indeed, this result says that there exists a commutative
closed subalgebra $B$ of $\tld{A}$ con\-taining $a$ and $b$,
such that $\s _A (c) = \s _B (c)$ for $c \in \{ \,a, b, a+b, ab \,\}$.
\end{proof}

\medskip
For the preceding theorem \ref{specsub}, see also \ref{commrlsub}
and \ref{commspeccont}.

\begin{proposition}\label{Hermcomm}%
Let $A$ be a \underline{Hermitian} Banach $*$-algebra,
and let $b \in A$ be \underline{normal}. One then has
\[ \s ({b}^{*} b) = {| \,\s(b) \,|}^{\,2}. \]
In particular $\s({b}^{*} b) \subset [ \,0, \infty \,[$.
\end{proposition}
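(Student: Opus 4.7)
The plan is to reduce the computation to a commutative Hermitian Banach $*$-algebra and then read off the spectra through the Gel'fand transformation, where Hermitian multiplicative linear functionals make the identity $\tau(b^*b) = |\tau(b)|^2$ transparent.

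First I would observe that since $b$ is normal, the set $\{b,b^*\}$ is normal in the sense of \ref{selfadjointsubset}. The Theorem of Civin \& Yood \ref{CivinYood} then furnishes a commutative closed $*$-subalgebra $B$ of $\tld{A}$ containing $b$ and $b^*$ (and therefore $b^*b$), such that $\s_B(c) = \s_A(c)$ for every $c \in B \cap A$; in particular, this applies to $c = b$ and to $c = b^*b$.

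Next I would verify that $B$ is a \emph{Hermitian} commutative unital Banach $*$-algebra. Being closed in the Banach algebra $\tld{A}$ (which is Banach by \ref{Banachunitis}), $B$ is itself a Banach algebra. Hermiticity of $A$ transfers to $\tld{A}$ by the proposition following the definition of Hermitian $*$-algebras, and then to $B$ by \ref{Herminher}, since $B$ is a closed $*$-subalgebra of the Hermitian Banach $*$-algebra $\tld{A}$.

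Now the Gel'fand transformation does the real work. By \ref{mlfHerm}, every $\tau \in \Delta(B)$ is Hermitian, so
\[ \tau(b^*b) \;=\; \tau(b^*)\,\tau(b) \;=\; \overline{\tau(b)}\,\tau(b) \;=\; |\tau(b)|^{\,2}. \]
Since $B$ is unital and commutative, \ref{rangeGTu} gives $\s_B(b) = \wht{b}\bigl(\Delta(B)\bigr)$ and $\s_B(b^*b) = \wht{b^*b}\bigl(\Delta(B)\bigr) = \{|\tau(b)|^{\,2} : \tau \in \Delta(B)\} = |\s_B(b)|^{\,2}$. Transferring through the Civin--Yood identification $\s_A = \s_B$ on $B \cap A$ then yields $\s(b^*b) = |\s(b)|^{\,2}$, and the inclusion $\s(b^*b) \subset [\,0,\infty\,[$ is immediate.

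I do not foresee any real obstacle: the two technical ingredients (the Civin--Yood reduction and the inheritance of Hermiticity by closed $*$-subalgebras) are already in place, so the argument is essentially bookkeeping of spectra combined with the Hermiticity of multiplicative linear functionals on a Hermitian commutative Banach $*$-algebra.
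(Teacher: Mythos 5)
Your proposal is correct and follows essentially the same route as the paper: reduce to a commutative unital Hermitian Banach \st-algebra of $\tld{A}$ containing $b$, $b^*$ and the unit, then read off both spectra via \ref{rangeGTu} together with the Hermiticity of the multiplicative linear functionals \ref{mlfHerm}, so that $\tau(b^*b) = |\,\tau(b)\,|^{\,2}$. The only difference is the device used for the reduction: the paper takes the closed \st-subalgebra of $\tld{A}$ generated by $b$, $b^*$, $e$ and invokes the Hermitian spectral permanence theorem \ref{specHermsubalg}, whereas you invoke the Theorem of Civin \& Yood \ref{CivinYood}; both legitimately give $\s_A(c) = \s_B(c)$ for $c \in \{\,b, b^*b\,\}$, so the rest of the argument goes through unchanged.
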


\begin{proof}
The closed \st-subalgebra $B$ of $\tld{A}$ generated by $b$, ${b}^{*}$,
and $e$ \ref{clogen} is an automatically Hermitian \ref{Herminher}
complete unital \st-sub\-algebra of $\tld{A}$ with $\s _A (c) = \s _B (c)$ for
$c \in \{ \,b, {b}^{*} b \,\}$, cf.\ \ref{specHermsubalg}. The algebra $B$ is
commutative because $b$ is normal, cf \ref{clogen}. By \ref{rangeGTu} and
\ref{mlfHerm}, the following statements are equivalent for $\mu \in \mathds{C}$.
\begin{align*}
 & \mu \in \s_B({b}^{*}b), \\
 & \text{there exists} \ \tau \in \Delta(B) \ \text{with} \ \mu = \tau ({b}^{*} b), \\
 & \text{there exists} \ \tau \in \Delta(B) \ \text{with} \ \mu = {| \,\tau (b) \,|}^{\,2}, \\
 & \mu \in {| \,\s_B(b) \,|}^{\,2}. \qedhere
\end{align*}
\end{proof}

Finally an application to harmonic analysis:
\pagebreak

\begin{example}%
Consider the Banach \st-algebra ${\ell\,}^1 (\mathds{Z})$,
cf.\ example \ref{l1G}. We are interested in knowing the set of
multiplicative linear functionals of ${\ell\,}^1(\mathds{Z})$. To this
end we note that a multiplicative linear functional on
${\ell\,}^1(\mathds{Z})$ is determined by its value at $\delta _1$.
So let $\tau$ be a multiplicative linear functional on
${\ell\,}^1(\mathds{Z})$ and put $\alpha := \tau (\delta _1)$.
We then have $|\,\alpha \,| \leq 1$ by \ref{mlfbounded}. But as
$\delta _{-1}$ is the inverse of $\delta _1$, we also have
$\tau (\delta _{-1}) = \alpha ^{-1}$ and $|\,\alpha ^{-1}\,| \leq 1$.
This makes that $\alpha$ belongs to the unit circle:
$\alpha = {\mathrm{e}}^{\,-\iu t}$ for some
$t \in \mathds{R} / 2 \pi \mathds{Z}$. For an integer $k$, we obtain
$\tau ( \delta _k ) = \bigl( \tau ( \delta _1 ) \bigr)^k = {\mathrm{e}}^{\,-\iu kt}$.
Proposition \ref{mlfbounded} implies that for $a \in {\ell\,}^1(\mathds{Z})$, we get
\[ \wht{a}(\tau) = \tau(a) = \sum _{k \in \mathds{Z}} a(k) \,{\mathrm{e}}^{\,-\iu kt}. \]
Conversely, by putting for some $t \in \mathds{R} / 2 \pi \mathds{Z}$
\[ \tau (a) := \sum _{k \in \mathds{Z}} a(k) \,{\mathrm{e}}^{\,-\iu kt} \qquad ( \,a \in {\ell\,}^1 (\mathds{Z}) \,), \]
a multiplicative linear functional $\tau$ on ${\ell\,}^1(\mathds{Z})$ is defined.
Indeed, for all $a, b \in {\ell\,}^1 (\mathds{Z})$ we find using \ref{CG}
\begin{align*}
\tau (ab) & = \sum _{k \in \mathds{Z}} (ab)(k) \,{\mathrm{e}}^{\,-\iu kt}
            = \sum _{k \in \mathds{Z}} \,\sum _{l+m=k} a(l) \,b(m) \,{\mathrm{e}}^{\,-\iu kt} \\
          & = \biggl( \,\sum _{l \in \mathds{Z}} a(l) \,{\mathrm{e}}^{\,-\iu lt} \biggr) \cdot
              \biggl( \,\sum _{m \in \mathds{Z}} b(m) \,{\mathrm{e}}^{\,-\iu mt} \biggr)
            = \tau (a) \cdot \tau (b).
\end{align*}
As a multiplicative linear functional is determined by its value at
$\delta _1$, it follows that $\Delta \bigl({ \ell\,}^1 (\mathds{Z})\bigr)$
can be identified with $\mathds{R} / 2 \pi \mathds{Z}$. Also,
a function on $\mathds{R} / 2 \pi \mathds{Z}$ is the Gel'fand transform
of an element of ${\ell\,}^1 (\mathds{Z})$ if and only if it has an
absolutely convergent Fourier expansion.%
\end{example}

From the abstract form of Wiener's Theorem \ref{abstrWien}, it follows now:

\begin{theorem}[Wiener]%
\index{concepts}{Theorem!Wiener's}\index{concepts}{Wiener's Theorem}%
Let $f$ be a function on $\mathds{R} / 2 \pi \mathds{Z}$
which has an absolutely convergent Fourier expansion. If
$f(t) \neq 0$ for all \linebreak $t \in \mathds{R} / 2 \pi \mathds{Z}$,
then also $1/f$ has an absolutely convergent Fourier
\linebreak expansion.
\end{theorem}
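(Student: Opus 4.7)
The plan is to deduce Wiener's Theorem directly from the abstract form of Wiener's Theorem \ref{abstrWien}, applied to the unital commutative Banach \st-algebra $A := {\ell\,}^1(\mathds{Z})$, using the identification of $\Delta(A)$ with $\mathds{R}/2\pi\mathds{Z}$ established in the preceding example.

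First I would translate the hypothesis into the language of the algebra. Since the functions on $\mathds{R}/2\pi\mathds{Z}$ with absolutely convergent Fourier expansion are precisely the Gel'fand transforms $\wht{a}$ of elements $a \in {\ell\,}^1(\mathds{Z})$, the assumption that $f$ has absolutely convergent Fourier expansion gives some $a \in {\ell\,}^1(\mathds{Z})$ with $f = \wht{a}$. The assumption $f(t) \neq 0$ for all $t \in \mathds{R}/2\pi\mathds{Z}$ then says that $\wht{a}$ does not vanish anywhere on $\Delta(A)$.

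The key step is now the invocation of \ref{abstrWien}: since $\wht{a}$ does not vanish on $\Delta(A)$, the element $a$ is invertible in $A$. Let $b := a^{-1} \in {\ell\,}^1(\mathds{Z})$. Because the Gel'fand transformation is an algebra homomorphism and the unit of ${\ell\,}^1(\mathds{Z})$ is $\delta_0$ (whose Gel'fand transform is the constant function $1$), we obtain $\wht{b} \cdot \wht{a} = \widehat{ba} = \widehat{\delta_0} = 1$ pointwise on $\Delta(A) \cong \mathds{R}/2\pi\mathds{Z}$. Hence $\wht{b} = 1/\wht{a} = 1/f$, which exhibits $1/f$ as the Gel'fand transform of an element of ${\ell\,}^1(\mathds{Z})$, i.e.\ as a function with absolutely convergent Fourier expansion.

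There is essentially no obstacle; all the work has been done upstream. The real content lies in the identification of $\Delta\bigl({\ell\,}^1(\mathds{Z})\bigr)$ with $\mathds{R}/2\pi\mathds{Z}$, and in the abstract form of Wiener's Theorem, which itself rests on the Gel'fand--Mazur Theorem via the correspondence between maximal ideals and multiplicative linear functionals in \ref{taumaxidcomm}.
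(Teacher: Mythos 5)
Your proposal is correct and is exactly the paper's route: the theorem is presented there as an immediate consequence of the abstract form of Wiener's Theorem \ref{abstrWien} together with the identification of $\Delta\bigl({\ell\,}^1(\mathds{Z})\bigr)$ with $\mathds{R}/2\pi\mathds{Z}$ from the preceding example. You have merely written out the routine translation that the paper leaves implicit.
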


This easy proof of Wiener's Theorem was an early success
of the theory of Gel'fand.
Using \ref{mlfHerm}, the reader can now check that
${\ell\,}^1 (\mathds{Z})$ is a Hermitian Banach \st-algebra.
We shall return to the non-commutative theory in \ref{leftspectrum}.%
\pagebreak

\clearpage


\section{The Gel'fand Topology}%
\label{Gtopo}

\medskip
In this paragraph, let $A$ be a \underline{Banach} algebra.

\begin{definition}[the Gel'fand topology, the spectrum]%
\index{concepts}{Gel'fand!topology}%
\index{concepts}{topology!Gel'fand}\label{Gelftopo}%
\index{concepts}{spectrum!of an algebra}\index{symbols}{D(A)@$\Delta(A)$}%
We imbed the set $\Delta (A)$ in the closed unit ball, cf.\ \ref{mlfbounded},
of the dual normed space of $A$ equipped with the weak* topology,
cf.\ the appendix \ref{weak*top}. The relative topology on $\Delta (A)$
is called the \underline{Gel'fand topology}. When equipped with the
Gel'fand topology, $\Delta (A)$ is called the \underline{spectrum} of $A$.
\end{definition}

\begin{proposition}%
We have:
\begin{itemize}
  \item[$(i)$] the weak* closure $\overline{\Delta(A)}$ is a compact Hausdorff space.
 \item[$(ii)$] the compact Hausdorff space $\overline{\Delta (A)}$ is contained in $\Delta (A) \cup \{ 0 \}$.
\end{itemize}
\end{proposition}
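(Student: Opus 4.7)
The plan is to deduce both statements from Alaoglu's theorem together with the fact that weak* limits of multiplicative functionals remain multiplicative. Throughout, let $B$ denote the closed unit ball of the dual normed space $A'$, equipped with the weak* topology.

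For (i), I would first invoke Alaoglu's theorem to conclude that $B$ is a compact Hausdorff space in the weak* topology. Since $\Delta(A) \subset B$ by \ref{mlfbounded}, the weak* closure $\overline{\Delta(A)}$ is a weak*-closed subset of the compact Hausdorff space $B$, hence itself compact Hausdorff.

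For (ii), the key step is to show that multiplicativity survives passage to weak* limits. Let $\tau \in \overline{\Delta(A)}$; by the definition of the weak* topology, there exists a net $(\tau_\alpha)$ in $\Delta(A)$ such that $\tau_\alpha(a) \to \tau(a)$ for every $a \in A$. Clearly $\tau$ is linear, and for any fixed $a, b \in A$,
\[ \tau(ab) = \lim_\alpha \tau_\alpha(ab) = \lim_\alpha \tau_\alpha(a)\,\tau_\alpha(b) = \tau(a)\,\tau(b), \]
using the joint continuity of multiplication in $\mathds{C}$ on the bounded nets $(\tau_\alpha(a))$ and $(\tau_\alpha(b))$. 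Thus $\tau$ is a multiplicative linear functional in the algebraic sense. Since a multiplicative linear functional on $A$ is by definition required to be non-zero, we have the dichotomy: either $\tau = 0$, or $\tau \in \Delta(A)$. In either case $\tau \in \Delta(A) \cup \{0\}$.

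The only mild subtlety is that the definition of $\Delta(A)$ excludes the zero functional, so $\Delta(A)$ itself need not be weak*-closed — the zero functional may indeed be a limit point. This is exactly why the statement allows $\{0\}$ to appear. No other step poses any real difficulty; the whole argument is a standard application of Alaoglu together with the elementary fact that a pointwise limit of multiplicative maps is multiplicative.
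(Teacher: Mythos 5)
Your proof is correct and follows essentially the same route as the paper: Alaoglu's Theorem for (i), and for (ii) the observation that an adherent point of $\Delta(A)$ in the weak* topology (i.e.\ a pointwise limit) is a linear functional satisfying $\tau(ab)=\tau(a)\,\tau(b)$, hence is either zero or a multiplicative linear functional. The paper's proof is merely a terser version of the same argument.
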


\begin{proof}
(i) follows from Alaoglu's Theorem, cf.\ the appendix \ref{Alaoglu}.
(ii): every adherent point $\tau$ of $\Delta (A)$ is a linear functional
satisfying $\tau(ab) = \tau(a) \,\tau(b)$ for all $a, b \in A$.
Indeed, the weak* topology is just the topology of pointwise
convergence on $A$, cf.\ the appendix \ref{weak*point}.
\end{proof}

\medskip
Hence:

\begin{theorem}\label{mlf0compact}%
Either the Hausdorff space $\Delta(A)$ is compact or
else $\overline{\Delta(A)} = \Delta(A) \cup \{ 0 \}$,
in which case $\Delta (A)$ is locally compact and
$\overline{\Delta (A)}$ is a one-point compactification of
$\Delta (A)$, with $0$ the corresponding point at infinity,
cf.\ \ref{onepcomp}.
\end{theorem}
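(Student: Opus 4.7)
The plan is to deduce both alternatives from the preceding proposition, which gives us that $\overline{\Delta(A)}$ is compact Hausdorff (in the weak* topology) and satisfies $\overline{\Delta(A)} \subset \Delta(A) \cup \{0\}$. First I would record the trivial but essential observation that $0 \notin \Delta(A)$, because multiplicative linear functionals are non-zero by definition. Hence $\overline{\Delta(A)}$ differs from $\Delta(A)$ by at most the single point $0$, and we have a clean dichotomy according to whether $0 \in \overline{\Delta(A)}$ or not.

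In the first case ($0 \notin \overline{\Delta(A)}$), the inclusions $\Delta(A) \subset \overline{\Delta(A)} \subset \Delta(A) \cup \{0\}$ force $\overline{\Delta(A)} = \Delta(A)$, so $\Delta(A)$ is compact. In the second case ($0 \in \overline{\Delta(A)}$), the same inclusions give $\overline{\Delta(A)} = \Delta(A) \cup \{0\}$. Since $\overline{\Delta(A)}$ is Hausdorff, the singleton $\{0\}$ is closed in $\overline{\Delta(A)}$, so its complement $\Delta(A)$ is open in the compact Hausdorff space $\overline{\Delta(A)}$. An open subspace of a compact Hausdorff space is locally compact Hausdorff, so $\Delta(A)$ is locally compact. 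Note that $\Delta(A)$ cannot be compact in this case, for otherwise it would be closed in $\overline{\Delta(A)}$ and thus equal to $\overline{\Delta(A)}$, contradicting $0 \in \overline{\Delta(A)} \setminus \Delta(A)$.

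It remains to identify $\overline{\Delta(A)}$ with the one-point compactification of $\Delta(A)$. By the uniqueness assertion in reminder \ref{onepcomp}, it is enough to exhibit $\overline{\Delta(A)}$ as a compact Hausdorff space containing $\Delta(A)$ as a (topological) subspace, with $0$ as the extra point. Compactness and the Hausdorff property have already been noted; $\Delta(A)$ carries the subspace topology by construction of the Gel'fand topology (definition \ref{Gelftopo}). As a sanity check one may verify the explicit description of neighbourhoods of $0$ from \ref{onepcomp}: the open neighbourhoods of $0$ in $\overline{\Delta(A)}$ are the complements (in $\overline{\Delta(A)}$) of closed sets avoiding $0$; such closed sets are compact (closed in a compact space) and, avoiding $0$, are compact subsets of $\Delta(A)$, which matches the characterization of the point at infinity.

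There is no real obstacle here; the argument is purely topological and follows immediately from the preceding proposition together with the uniqueness of the one-point compactification. The only small point requiring attention is making sure that the alternative ``$\Delta(A)$ compact'' and the alternative ``one-point compactification'' are genuinely disjoint, which is handled by the observation that in the second case $0$ is a genuine limit point lying outside $\Delta(A)$.
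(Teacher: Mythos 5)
Your proposal is correct and is exactly the argument the paper intends: the theorem is stated as an immediate consequence ("Hence:") of the preceding proposition, and your write-up supplies precisely the missing details — the dichotomy on whether $0$ lies in $\overline{\Delta(A)}$, openness of $\Delta(A)$ in the compact Hausdorff closure giving local compactness, and the appeal to the uniqueness of the one-point compactification from \ref{onepcomp}.
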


In particular:

\begin{corollary}%
The spectrum $\Delta(A)$ is a locally compact Hausdorff space.
\end{corollary}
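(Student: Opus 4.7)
The plan is to deduce this corollary directly from the preceding theorem \ref{mlf0compact}, splitting into the two cases it provides. In both cases the Hausdorff property is free, since $\Delta(A)$ is a subspace of the dual of $A$ equipped with the weak* topology, which is Hausdorff (cf.\ the appendix on weak* topologies referenced in \ref{Gelftopo}).

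In the first case of theorem \ref{mlf0compact}, $\Delta(A)$ is compact, and every compact Hausdorff space is trivially locally compact Hausdorff. In the second case, the theorem explicitly states that $\Delta(A)$ is locally compact. This follows because $\overline{\Delta(A)}$ is compact Hausdorff by Alaoglu, and $\Delta(A) = \overline{\Delta(A)} \setminus \{0\}$ is an open subset of this compact Hausdorff space (being the complement of the closed singleton $\{0\}$), and an open subset of a compact Hausdorff space is locally compact Hausdorff in the relative topology.

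I anticipate no real obstacle here: the corollary is a one-line consequence of theorem \ref{mlf0compact}, combined with the standard fact that an open subspace of a compact Hausdorff space is locally compact Hausdorff. The only thing to be careful about is to note both cases uniformly, and to point out that the Hausdorff property is inherited from the weak* topology on the dual of $A$, as fixed in definition \ref{Gelftopo}.
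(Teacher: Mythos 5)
Your argument is correct and coincides with the paper's, which states the corollary as an immediate consequence of theorem \ref{mlf0compact} (hence the ``In particular:'') without further proof. Your case split and the observation that an open subset of a compact Hausdorff space is locally compact are exactly the intended justification.
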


\begin{lemma}[abstract Riemann-Lebesgue Lemma]%
\index{concepts}{Theorem!abstract!Riemann-Lebesgue Lemma}%
\index{concepts}{Theorem!Riem.-Leb.\ Lemma, abstract}%
\index{concepts}{abstract!Riemann-Lebesgue Lemma}%
\index{concepts}{Lemma!abstract Riemann-Lebesgue}%
\index{concepts}{Riem.-Leb.\ Lemma, abstract}%
If $\Delta (A) \neq \varnothing$, then for all $a \in A$ we have
\[ \wht{a} \in \cont\0\bigl(\Delta (A)\bigr). \]
\end{lemma}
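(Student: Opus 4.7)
The plan is to deduce the statement from the way the Gel'fand topology was set up in \ref{Gelftopo} together with the structural result \ref{mlf0compact}. Let $a \in A$ be fixed. First, continuity of $\wht{a}$ on $\Delta(A)$ is essentially built into the definition: $\Delta(A)$ carries the relative weak\st-topology, which by the appendix is just the topology of pointwise convergence on $A$. Thus $\tau \mapsto \tau(a) = \wht{a}(\tau)$ is continuous on the whole closed unit ball of $A'$ in the weak\st-topology, hence a fortiori on the subspace $\Delta(A)$.

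Next I would split according to \ref{mlf0compact}. If $\Delta(A)$ is compact, then $\cont\0\bigl(\Delta(A)\bigr) = \cont\bigl(\Delta(A)\bigr)$ by the convention in \ref{Cnaught}, so the continuity just noted already gives $\wht{a} \in \cont\0\bigl(\Delta(A)\bigr)$. Otherwise, $\Delta(A)$ is locally compact and $\overline{\Delta(A)} = \Delta(A) \cup \{0\}$ is a one-point compactification, with the zero functional playing the role of the point at infinity. By the identification in \ref{Cnaught}, it suffices to exhibit a continuous extension $g$ of $\wht{a}$ to $\overline{\Delta(A)}$ satisfying $g(0) = 0$.

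The natural candidate is the function $g : \overline{\Delta(A)} \to \mathds{C}$ defined by $g(\tau) := \tau(a)$, which is continuous on $\overline{\Delta(A)}$ for exactly the same reason as before (the weak\st-topology is the topology of pointwise convergence on $A$). Clearly $g|_{\Delta(A)} = \wht{a}$ and $g(0) = 0(a) = 0$. This places $g$ in $\{\,h \in \cont(\overline{\Delta(A)}) : h(0) = 0\,\}$, which under the identification of \ref{Cnaught} is precisely $\cont\0\bigl(\Delta(A)\bigr)$, so $\wht{a} \in \cont\0\bigl(\Delta(A)\bigr)$.

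There is really no serious obstacle here: the only subtlety is bookkeeping between $\Delta(A)$ and $\overline{\Delta(A)}$ in the non-compact case, and checking that the zero functional is indeed the point at infinity, which is exactly the content of \ref{mlf0compact}. Everything else reduces to the tautology that evaluation at $a$ is continuous in the topology of pointwise convergence.
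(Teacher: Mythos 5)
Your argument is correct and is essentially the paper's own proof: the evaluations are continuous on $\overline{\Delta(A)} = \Delta(A) \cup \{0\}$ by definition of the weak\st{} topology, they vanish at $0$, and $0$ is the point at infinity when $\Delta(A)$ is not compact. You merely spell out the compact/non-compact case split and the identification from \ref{Cnaught} more explicitly than the paper does.
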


\begin{proof}
The evaluations $\wht{a}$ $(a \in A)$ are continuous on $\Delta(A) \cup \{ 0 \}$,
by definition of the weak* topology, cf.\ the appendix \ref{weak*top}.
They vanish at $0$, which is the point at infinity of $\Delta (A)$ if the
latter is not compact. Hence the Gel'fand transforms $\wht{a}$ $(a \in A)$
belong to $\cont\0\bigl(\Delta(A)\bigr)$.
\pagebreak
\end{proof}

\medskip
From \ref{GelfTransf} we get:

\begin{corollary}%
If $\Delta (A) \neq \varnothing$, then the
Gel'fand transformation $a \mapsto \wht{a}$
is a contractive algebra homomorphism
\[ A \to \cont\0\bigl(\Delta (A)\bigr). \]
\end{corollary}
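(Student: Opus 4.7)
The plan is to combine the two results just established: Proposition \ref{GelfTransf} gives that the Gel'fand transformation is a contractive algebra homomorphism $A \to \ell^\infty(\Delta(A))$, while the abstract Riemann-Lebesgue Lemma states that for every $a \in A$ one has $\wht{a} \in \cont\0(\Delta(A))$. Nothing further is really needed beyond observing that these two facts fit together.

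More precisely, I would argue as follows. Since $\Delta(A)$ is a locally compact Hausdorff space, $\cont\0(\Delta(A))$ is a C*-subalgebra of $\cont_{\mathrm{b}}(\Delta(A))$, which in turn is a unital C*-subalgebra of $\ell^\infty(\Delta(A))$ (all by the standing definitions and reminders at the beginning of \S 4, in particular \ref{Cnaught}). In particular, the norm on $\cont\0(\Delta(A))$ is exactly the restriction to $\cont\0(\Delta(A))$ of the supremum norm on $\ell^\infty(\Delta(A))$. Therefore the map $a \mapsto \wht{a}$, viewed now as a map $A \to \cont\0(\Delta(A))$, is well-defined by the abstract Riemann-Lebesgue Lemma, and it is an algebra homomorphism and contractive because it already has these properties as a map into $\ell^\infty(\Delta(A))$ by \ref{GelfTransf}.

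There is no real obstacle here; the content is entirely in the two cited items. The only thing to be a little careful about is the remark that corestricting a contractive algebra homomorphism to a subalgebra equipped with the \emph{same} norm preserves both the algebra-homomorphism property and contractiveness — this is immediate from the fact that $\cont\0(\Delta(A))$ carries the supremum norm inherited from $\ell^\infty(\Delta(A))$, as recorded in \ref{motCnaught} and \ref{Cnaught}.
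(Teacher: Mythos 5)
Your proposal is correct and matches the paper's own argument exactly: the corollary is stated there as an immediate consequence of the abstract Riemann-Lebesgue Lemma combined with \ref{GelfTransf}, which is precisely the combination you make. Your extra remark that $\cont\0\bigl(\Delta(A)\bigr)$ carries the supremum norm inherited from ${\ell\,}^{\infty}\bigl(\Delta(A)\bigr)$ is the (tacit) justification the paper relies on as well.
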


\begin{proposition}\label{unitalcomp}%
If $A$ is unital, then $\Delta (A)$ is a compact Hausdorff space.
\end{proposition}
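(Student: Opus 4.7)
The plan is to show that in the unital case the zero functional cannot be a weak* limit point of $\Delta(A)$, so that $\Delta(A)$ coincides with its weak* closure $\overline{\Delta(A)}$, which is already known to be compact.

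First I would recall that every $\tau \in \Delta(A)$ satisfies $\tau(e) = 1$, as noted in the definition of multiplicative linear functionals. Next, since the Gel'fand topology on $\Delta(A)$ is (the restriction of) the weak* topology, which is the topology of pointwise convergence on $A$, the evaluation functional $\wht{e} : \tau \mapsto \tau(e)$ is continuous on the weak* closure $\overline{\Delta(A)}$ inside the closed unit ball of $A^{\,*}$. Therefore the set
\[ \bigl\{ \,\varphi \in \overline{\Delta(A)} : \varphi(e) = 1 \,\bigr\} \]
is weak* closed in $\overline{\Delta(A)}$, and it contains $\Delta(A)$ but not the zero functional.

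Combining this with the inclusion $\overline{\Delta(A)} \subset \Delta(A) \cup \{ 0 \}$ from Theorem \ref{mlf0compact}, every adherent point of $\Delta(A)$ lies in $\Delta(A) \cup \{ 0 \}$, while the continuity of $\wht{e}$ forces any such adherent point $\varphi$ to satisfy $\varphi(e) = 1$; this rules out $\varphi = 0$. Hence $\overline{\Delta(A)} = \Delta(A)$, so $\Delta(A)$ is weak* closed in the closed unit ball of $A^{\,*}$. Since that unit ball is weak* compact by Alaoglu's Theorem, $\Delta(A)$ is compact in the Gel'fand topology, and it is Hausdorff as already noted (being a subspace of a Hausdorff space).

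There is no serious obstacle here; the only subtlety is making explicit that the value of $\tau$ at the fixed element $e$ is a continuous function of $\tau$ in the Gel'fand/weak* topology, which is precisely the defining property of the weak* topology.
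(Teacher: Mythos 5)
Your argument is correct and is essentially the paper's own proof: both show that $\tau(e)=1$ for every $\tau\in\Delta(A)$, that this persists to weak* adherent points by continuity of the evaluation $\wht{e}$ (i.e.\ pointwise convergence), and hence that $0\notin\overline{\Delta(A)}$, so $\Delta(A)$ equals its compact closure. The only cosmetic difference is that you invoke Alaoglu directly where the paper cites theorem \ref{mlf0compact}, which packages the same compactness fact.
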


\begin{proof}
If $A$ has a unit $e$, then $\tau (e) = 1$ for all $\tau \in \Delta (A)$,
whence $\sigma (e) = 1$ for all $\sigma \in \overline{\Delta (A)}$, as
the weak* topology is the topology of pointwise convergence on $A$,
cf.\ the appendix \ref{weak*point}. In this case $0$ cannot belong to
$\overline{\Delta (A)}$, so $\Delta (A)$ is compact by theorem
\ref{mlf0compact}.
\end{proof}

\medskip
We shall need the following result.

\begin{proposition}\label{notcomp}%
If $\Delta(A)$ is not compact, then the one-point compactification
of $\Delta(A)$ may be identified with $\Delta(\tld{A})$. More precisely,
in this case, the map which takes any $\tau \in \Delta(\tld{A})$ to its
restriction to $A$, is a homeomorphism from $\Delta(\tld{A})$ onto the
one-point compactification $\overline{\Delta(A)} = \Delta(A) \cup \{ 0 \}$
of $\Delta(A)$.
\end{proposition}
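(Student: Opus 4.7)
The plan is to exhibit an explicit inverse to the restriction map and then invoke the standard compact Hausdorff topology argument. Since $\Delta(A)$ is assumed not to be compact, $A$ must fail to be unital by Proposition \ref{unitalcomp}, so $\tld{A}=\mathds{C}e\oplus A$ with unique decomposition. Define $\Phi : \Delta(\tld{A})\to\overline{\Delta(A)}$ by $\Phi(\tld\tau) := \tld\tau|_A$. First I would check that $\Phi$ lands in $\Delta(A)\cup\{0\}=\overline{\Delta(A)}$: the restriction of any $\tld\tau\in\Delta(\tld{A})$ to $A$ is automatically a linear functional on $A$ satisfying the multiplicative identity on $A$, so it is either identically zero or else a member of $\Delta(A)$.

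Next I would verify injectivity and surjectivity. Injectivity is immediate from the unique decomposition $\lambda e+a$ together with $\tld\tau(e)=1$, which forces $\tld\tau(\lambda e+a)=\lambda+\tld\tau|_A(a)$. For surjectivity I would exhibit the inverse $\Psi$ explicitly by the formula
\[ \Psi(\tau)(\lambda e+a) := \lambda+\tau(a) \qquad (\lambda\in\mathds{C},\,a\in A), \]
interpreting $\tau=0$ as the augmentation character when $\tau$ is the point at infinity of $\overline{\Delta(A)}$. A short calculation using $(\lambda e+a)(\mu e+b)=\lambda\mu e+(\lambda b+\mu a+ab)$ and the multiplicativity of $\tau$ on $A$ shows that $\Psi(\tau)$ is multiplicative; it is non-zero since it sends $e$ to $1$, so $\Psi(\tau)\in\Delta(\tld{A})$. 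One checks $\Phi\circ\Psi=\mathrm{id}$ and $\Psi\circ\Phi=\mathrm{id}$.

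For the topological statement, recall that both Gel'fand topologies are weak* topologies, i.e.\ topologies of pointwise convergence on $\tld{A}$ respectively on $A$, by the appendix reminder on the weak* topology. Continuity of $\Phi$ is therefore immediate: convergence $\tld\tau_\alpha(x)\to\tld\tau(x)$ for every $x\in\tld{A}$ implies the same for every $x\in A$. The domain $\Delta(\tld{A})$ is compact by Proposition \ref{unitalcomp}, and $\overline{\Delta(A)}$ is compact Hausdorff; hence the continuous bijection $\Phi$ is automatically a homeomorphism.

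There is no real obstacle; the only point demanding any care is bookkeeping the identification of $0\in\overline{\Delta(A)}$ (the distinguished point at infinity in the sense of \ref{mlf0compact}) with the augmentation character $\lambda e+a\mapsto\lambda$ on $\tld{A}$, and observing that this character indeed belongs to $\Delta(\tld{A})$ because it is non-zero (its value at $e$ is $1$), even though its restriction to $A$ is the zero functional.
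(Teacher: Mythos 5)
Your proof is correct and follows essentially the same route as the paper's: restriction lands in $\Delta(A)\cup\{0\}$, injectivity from $\tld{A}=\mathds{C}e\oplus A$ and $\tld{\tau}(e)=1$, surjectivity by extending with value $1$ at $e$, continuity from the weak* (pointwise convergence) topology, and the compact-to-Hausdorff bijection argument. The only difference is that you write out the inverse and its multiplicativity check explicitly where the paper leaves it as "easily verified".
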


\begin{proof}
Assume that $\Delta(A)$ is not compact. Proposition \ref{mlf0compact}
implies that $\overline{\Delta(A)} = \Delta(A) \cup \{ 0 \}$ is the one-point
compactification of $\Delta(A)$. Since the spectrum of a unital Banach
algebra is compact by the preceding proposition \ref{unitalcomp}, the
algebra $A$ contains no unit, and $\Delta(\tld{A})$ is compact. The
map referred to in the statement takes $\Delta(\tld{A})$ to
$\Delta(A) \cup \{ 0 \}$, because the restriction of any element of
$\Delta(\tld{A})$ either is in $\Delta(A)$ or else vanishes identically
on $A$. Please note next that with $e$ denoting the unit in $\tld{A}$,
we have $\tld{A} = \mathds{C}e \oplus A$ and $\tld{\tau}(e) = 1$
for all $\tld{\tau} \in \Delta(\tld{A})$. In particular, an element of
$\Delta(\tld{A})$ is uniquely determined by its restriction to $A$,
so the map in question is injective. Also, any $\tau \in \Delta(A) \cup \{ 0 \}$
has an extension to some $\tld{\tau} \in \Delta(\tld{A})$ by putting
$\tld{\tau}(e) := 1$ and extending linearly, as is easily verified.
This means that the map in question is surjective. The map in
question is continuous by the universal property of the weak*
topology, cf.\ the appendix \ref{weak*top}. This map thus is a
continuous bijection from the compact space $\Delta(\tld{A})$
onto the Hausdorff space $\Delta(A) \cup \{ 0 \}$, and thereby
a homeomorphism, cf.\ the appendix \ref{homeomorph}.
\pagebreak
\end{proof}

\begin{proposition}\label{Hermdense}%
If $A$ is a \underline{Hermitian} commutative Banach \linebreak
\st-algebra with $\Delta (A) \neq \varnothing$, then the Gel'fand
transformation has dense range in $\cont\0\bigl(\Delta (A)\bigr)$.
\end{proposition}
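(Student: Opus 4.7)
The plan is to apply the Stone-Weierstrass Theorem \ref{StW} to the image of the Gel'fand transformation inside $\cont\0\bigl(\Delta(A)\bigr)$. For this, three things are needed: that the image lies in $\cont\0\bigl(\Delta(A)\bigr)$, that it is a \st-subalgebra, and that it both separates the points of $\Delta(A)$ and vanishes nowhere on $\Delta(A)$.

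First, the image lies in $\cont\0\bigl(\Delta(A)\bigr)$ by the abstract Riemann-Lebesgue Lemma already established just above. Next, since $A$ is Hermitian and commutative, Corollary \ref{Hermstarhom} tells us that the Gel'fand transformation is a \st-algebra homomorphism, so its image is a \st-subalgebra of $\cont\0\bigl(\Delta(A)\bigr)$ (here is precisely where the hypothesis of Hermiticity is used; without it, we would only get a subalgebra, and Stone-Weierstrass would not apply, as warned in the text following \ref{StW}).

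For the separation condition: if $\tau_1, \tau_2 \in \Delta(A)$ are distinct, then by definition there exists $a \in A$ with $\tau_1(a) \neq \tau_2(a)$, i.e.\ $\wht{a}(\tau_1) \neq \wht{a}(\tau_2)$. For the nowhere-vanishing condition: any $\tau \in \Delta(A)$ is non-zero by definition, so there exists $a \in A$ with $\tau(a) \neq 0$, i.e.\ $\wht{a}(\tau) \neq 0$.

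Having verified all hypotheses, the Stone-Weierstrass Theorem \ref{StW} yields that the image of the Gel'fand transformation is dense in $\cont\0\bigl(\Delta(A)\bigr)$. No step poses a serious obstacle; the only subtle point is that Hermiticity is essential to secure the \st-subalgebra property required by Stone-Weierstrass.
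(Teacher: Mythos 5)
Your proof is correct and follows exactly the route the paper takes: apply the Stone-Weierstrass Theorem \ref{StW} to the range of the Gel'fand transformation, using \ref{Hermstarhom} to secure the \st-subalgebra property, with the separation and nowhere-vanishing conditions being immediate from the definition of $\Delta(A)$. The paper's proof is just a two-line pointer to these same ingredients, so your write-up is simply a fuller version of the same argument.
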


\begin{proof}
One applies the Stone-Weierstrass Theorem \ref{StW}.
This requires \ref{Hermstarhom}.
\end{proof}

\medskip
The remainder of this paragraph is concerned with commutative
C*-algebras and their dual objects: locally compact Hausdorff spaces.

\begin{theorem}[the Commutative Gel'fand-Na\u{\i}mark Theorem]%
\label{commGN}\index{concepts}{Theorem!Gel'fand-Na\u{\i}mark!Commutative}%
\index{concepts}{Gel'fand-Naimark@Gel'fand-Na\u{\i}mark!Theorem!Commutative}%
If $A$ is a commutative C*-algebra $\neq \{ 0 \}$, then the
Gel'fand transformation establishes a C*-algebra
isomorphism from $A$ onto $\cont\0\bigl(\Delta (A)\bigr)$.
\end{theorem}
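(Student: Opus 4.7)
The plan is to assemble the theorem from ingredients already established in the preceding paragraphs, since all the essential work has been done. Let me outline how the pieces fit together.

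First, I would note that a commutative C*-algebra is automatically Hermitian by \ref{C*Herm}, so the Gel'fand transformation is a \st-algebra homomorphism from $A$ into $\mathds{C}^{\,\text{\Small{$\Delta(A)$}}}$ by \ref{Hermstarhom}, provided $\Delta(A) \neq \varnothing$. This nonemptiness, together with the fact that the Gel'fand transformation is an \emph{isometry} into ${\ell\,}^{\infty}\bigl(\Delta(A)\bigr)$, is exactly the content of \ref{C*GTisom}. The abstract Riemann-Lebesgue Lemma then says the Gel'fand transforms actually land in the C*-subalgebra $\cont\0\bigl(\Delta(A)\bigr)$ of ${\ell\,}^{\infty}\bigl(\Delta(A)\bigr)$.

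At this point I have an isometric \st-algebra homomorphism $A \to \cont\0\bigl(\Delta(A)\bigr)$. Its range is complete (since $A$ is complete and the map is an isometry) and hence closed in $\cont\0\bigl(\Delta(A)\bigr)$. By \ref{Hermdense}, which applies because $A$ is Hermitian, commutative, and $\Delta(A) \neq \varnothing$, the range is also dense in $\cont\0\bigl(\Delta(A)\bigr)$. A dense, closed subset is the whole space, so the Gel'fand transformation is surjective onto $\cont\0\bigl(\Delta(A)\bigr)$.

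Finally, an isometric surjective \st-algebra homomorphism between C*-algebras is a C*-algebra isomorphism in the sense of \ref{Cstarisom}, which concludes the proof.

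There is really no serious obstacle here; the theorem is essentially a bookkeeping exercise that collects together the Stone-Weierstrass density result \ref{Hermdense}, the isometry statement \ref{C*GTisom} (which itself comes from $\| \,a \,\| = \rlambda(a) = |\,\wht{a}\,|_{\infty}$ for normal $a$), the automatic Hermitian property \ref{C*Herm}, and the abstract Riemann-Lebesgue Lemma. The conceptual heavy lifting was done earlier in proving that the Gel'fand transformation is norm-preserving for normal elements (which rests on the C*-property via \ref{C*rl}) and in establishing the locally compact Hausdorff structure of $\Delta(A)$.
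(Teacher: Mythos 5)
Your proposal is correct and follows essentially the same route as the paper's own proof: injectivity and isometry from \ref{C*GTisom}, the \st-homomorphism property from \ref{Hermstarhom}, density of the range from \ref{Hermdense}, and surjectivity because the range is complete (hence closed) and dense. The only difference is that you spell out a few supporting facts (the Hermitian property \ref{C*Herm} and the abstract Riemann-Lebesgue Lemma) that the paper leaves implicit.
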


\begin{proof} Corollary \ref{C*GTisom} says that
$\Delta (A) \neq \varnothing$ and that the Gel'fand transformation is
isometric, and therefore injective. The Gel'fand \linebreak transformation
is a \st-algebra homomorphism by \ref{Hermstarhom}. It suffices now
to show that it is surjective. Its range is dense \ref{Hermdense} and
on the other hand complete (because $a \mapsto \wht{a}$ is
isometric) and thus closed. \end{proof}

\medskip
Also of great importance is the following result which identifies the
spectrum of a normal element $b$ of a C*-algebra $A$
with the spectrum \ref{Gelftopo} of the C*-subalgebra of $\tld{A}$
generated by $b$, $b^*$, and $e$. Please note that the
latter C*-subalgebra is commutative as $b$ is normal.

\begin{theorem}\label{spechomeom}%
Let $A$ be a C*-algebra and let $b$ be any normal element of
$A$. Denote by $B$ the C*-subalgebra of $\tld{A}$ generated by
$b$, $b^*$, and $e$. Then $\wht{b}$ is a homeomorphism from
$\Delta (B)$ onto $\s_B (b) = \s_A (b)$.
\end{theorem}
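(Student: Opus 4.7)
The plan is to unpack the structure of $B$ and then proceed through surjectivity, injectivity, and finally the topological upgrade to a homeomorphism.

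First I would observe that since $b$ is normal, the elements $b, b^*, e$ commute pairwise, so the closed $*$-subalgebra $B$ they generate in $\tld{A}$ is a commutative unital C*-algebra. In particular $B$ is Hermitian by \ref{C*Herm}, and it is trivially $\twiddle$-unital in $\tld{A}$ since it contains $e$. Thus theorem \ref{specHermsubalg} together with \ref{specunit} yields
\[ \s_B(b) = \s_{\tld{A}}(b) = \s_A(b), \]
which takes care of one half of the equality in the statement. Moreover, since $B$ is unital, $\Delta(B)$ is a compact Hausdorff space by \ref{unitalcomp}, so the Gel'fand transform $\wht{b}\colon\Delta(B)\to\mathds{C}$ is continuous by the very definition of the Gel'fand topology \ref{Gelftopo}.

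Next I would establish surjectivity of $\wht{b}$ onto $\s_B(b)$. This is immediate from \ref{rangeGTu} applied to the unital commutative Banach algebra $B$: for every $\lambda\in\s_B(b)$ there is some $\tau\in\Delta(B)$ with $\wht{b}(\tau)=\tau(b)=\lambda$. For injectivity, suppose $\tau_1,\tau_2\in\Delta(B)$ satisfy $\wht{b}(\tau_1)=\wht{b}(\tau_2)$, i.e.\ $\tau_1(b)=\tau_2(b)$. Since $B$ is a Hermitian commutative Banach $*$-algebra, \ref{mlfHerm} tells us that each $\tau_i$ is Hermitian, so also $\tau_1(b^*)=\overline{\tau_1(b)}=\overline{\tau_2(b)}=\tau_2(b^*)$; and of course $\tau_1(e)=\tau_2(e)=1$. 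Because $\tau_1$ and $\tau_2$ are multiplicative, linear, and continuous (by \ref{mlfbounded}), they agree on the closed $*$-subalgebra generated by $b,b^*,e$, which is $B$ itself. Hence $\tau_1=\tau_2$.

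At this point $\wht{b}$ is a continuous bijection from the compact space $\Delta(B)$ onto the Hausdorff subspace $\s_B(b)$ of $\mathds{C}$, and a continuous bijection from a compact space to a Hausdorff space is automatically a homeomorphism (the appendix item \ref{homeomorph} invoked in \ref{notcomp}). This completes the proof. I do not anticipate a serious obstacle: the only spot that needs care is justifying $\s_B(b)=\s_A(b)$, since one must not merely invoke \ref{specsubalg} (which would give only the punctured-spectrum inclusion) but rather the Hermitian-subalgebra result \ref{specHermsubalg}, whose hypotheses here are painlessly verified via \ref{C*Herm}.
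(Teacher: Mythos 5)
Your proof is correct and follows essentially the same route as the paper's: the spectrum identification via \ref{specHermsubalg} and \ref{specunit}, surjectivity from \ref{rangeGTu}, injectivity from the Hermitian property \ref{mlfHerm} plus density of the polynomials in $b,b^*$ and continuity \ref{mlfbounded}, and the compact-to-Hausdorff upgrade \ref{homeomorph}. No gaps.
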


\begin{proof}
We have $\s_B (b) = \s_{\tld{A}} (b) = \s_A (b)$ by \ref{specHermsubalg}
\& \ref{specunit}. The map $\wht{b}$ is continuous by definition of the
weak* topology, cf.\ \ref{weak*top}. It maps $\Delta (B)$ surjectively onto
$\s_B (b)$, cf.\ \ref{rangeGTu}. It also is injective: if
$\wht{b} \,( \tau _1 ) = \wht{b} \,( \tau _2 )$ for some
$\tau _1, \tau _2 \in \Delta (B)$, then
$\tau _1 \bigl(p(b,b^*)\bigr) = \tau _2 \bigl(p(b,b^*)\bigr)$ for all
$p \in \mathds{C} [ z , \overline{z} ]$, by \ref{mlfHerm}. Since the
polynomial functions in $b$ and $b^*$ are dense in $B$, it follows
that $\tau _1 = \tau _2$, by \ref{mlfbounded}. The continuous
bijective function $\wht{b}$ is homeomorphic as its
domain $\Delta (B)$ is compact and its range $\s_B (b)$ is a
Hausdorff space, cf.\ the appendix \ref{homeomorph}.
\end{proof}

\medskip
The preceding two theorems \ref{commGN} \& \ref{spechomeom}
together yield the \linebreak following ``operational calculus''.
(See also \ref{weakopcalc} above.) \pagebreak

\begin{corollary}[the operational calculus]\label{opcalc}%
\index{concepts}{operational calculus}%
\index{concepts}{calculus!operational}%
Let $A$ be a  C*-algebra and let $b$ be a normal element of $A$.
Let $B$ be the C*-subalgebra of $\tld{A}$ generated by
$b$, $b^*$, and $e$. For $f \in C\bigl(\s(b)\bigr)$, one denotes by $f(b)$
the element of $B$ satisfying
\[ \wht{f(b)} = f \circ \wht{b} \in C\bigl(\Delta(B)\bigr). \]
The mapping
\begin{alignat*}{2}
C\bigl(\s(b)\bigr) \ & & \to & \ B \\
f \ & & \mapsto & \ f(b)
\end{alignat*}
is a C*-algebra isomorphism, called the
\underline{operational calculus} for $b$. It is
the only \st-algebra homomorphism from
$C\bigl(\s(b)\bigr)$ to $B$ mapping the identity
function to $b$ and the constant function $1$ to $e$.
The following \underline{Spectral Mapping Theorem} holds:
\[ \s\bigl(f(b)\bigr) = f\bigl(\s(b)\bigr) \qquad \Bigl( f \in C\bigl(\s(b)\bigr) \Bigr). \]
\end{corollary}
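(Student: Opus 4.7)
The plan is to assemble the statement from two earlier results: the Commutative Gel'fand–Na\u{\i}mark Theorem \ref{commGN} applied to the commutative unital C*-algebra $B$, together with the spectral homeomorphism of \ref{spechomeom}. Note first that $B$ is commutative because $b$ is normal, so the set $\{b, b^*, e\}$ is normal in the sense of \ref{selfadjointsubset}, and hence generates a commutative closed \st-subalgebra by \ref{normalscndcomm}; $B$ is also unital, so $\Delta(B)$ is compact by \ref{unitalcomp} and $C\0(\Delta(B)) = C(\Delta(B))$.

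By \ref{commGN}, the Gel'fand transformation $\Gamma : B \to C(\Delta(B))$ is a C*-algebra isomorphism, and by \ref{spechomeom} the map $\wht{b} : \Delta(B) \to \s(b)$ is a homeomorphism. Pullback along $\wht{b}$ gives a C*-algebra isomorphism
\[ \Phi : C(\s(b)) \to C(\Delta(B)), \qquad f \mapsto f \circ \wht{b}. \]
Define $f(b) := \Gamma^{-1}\bigl(\Phi(f)\bigr) \in B$; this is automatically the unique element of $B$ satisfying $\wht{f(b)} = f \circ \wht{b}$, and $f \mapsto f(b)$ is a C*-algebra isomorphism $C(\s(b)) \to B$ as a composition of two such. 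Under this correspondence the identity function on $\s(b)$ maps to $b$ (since $\wht{b} = \mathrm{id}_{\s(b)} \circ \wht{b}$) and the constant function $1$ maps to $e$ (since $\wht{e} \equiv 1$).

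For the uniqueness assertion, any \st-algebra homomorphism $\Psi : C(\s(b)) \to B$ sending the identity function to $b$ and the constant $1$ to $e$ must send $z^n \overline{z}^m$ to $b^n (b^*)^m$ for all integers $n, m \geq 0$, and hence agrees with $f \mapsto f(b)$ on the \st-subalgebra of polynomial functions in $z$ and $\overline{z}$. This subalgebra is dense in $C(\s(b))$ by the Stone–Weierstrass Theorem \ref{StW}, since it separates points and contains the constants. Because \st-algebra homomorphisms from C*-algebras to pre-C*-algebras are contractive by \ref{Cstarcontr}, both $\Psi$ and $f \mapsto f(b)$ are continuous, and so they coincide on all of $C(\s(b))$.

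Finally, the Spectral Mapping Theorem is immediate from what has been assembled: by \ref{rangeGTu} applied to $f(b) \in B$, together with \ref{spechomeom},
\[ \s\bigl(f(b)\bigr) = \wht{f(b)}\bigl(\Delta(B)\bigr) = (f \circ \wht{b})\bigl(\Delta(B)\bigr) = f\bigl(\wht{b}(\Delta(B))\bigr) = f\bigl(\s(b)\bigr), \]
using that $\s_B(f(b)) = \s_A(f(b)) = \s(f(b))$ by \ref{specHermsubalg} since $B$ is Hermitian (\ref{C*Herm}, \ref{Herminher}) and \twiddle-unital in $\tld{A}$. There is no real obstacle here — the whole proof is an assembly of \ref{commGN}, \ref{spechomeom}, \ref{StW}, and \ref{Cstarcontr}; the only point requiring attention is to verify that the two identifications $B \cong C(\Delta(B))$ and $\Delta(B) \cong \s(b)$ are compatible in the sense that $\wht{b}$ really plays the role of the identity function on $\s(b)$ under the Gel'fand transform.
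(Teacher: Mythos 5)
Your proof is correct and follows essentially the same route as the paper, which obtains the corollary by combining the Commutative Gel'fand--Na\u{\i}mark Theorem \ref{commGN} with the homeomorphism of \ref{spechomeom}, and settles uniqueness via \ref{Cstarcontr} together with the Stone--Weierstrass Theorem \ref{StW}. You have merely spelled out in detail what the paper compresses into a one-line reference to those same ingredients.
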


\begin{proof}
The uniqueness statement follows from \ref{Cstarcontr}
and the Stone-\linebreak Weierstrass Theorem \ref{StW}.
\end{proof}

\begin{theorem}\label{C*comp}
Let $A$ be a commutative C*-algebra $\neq \{ 0 \}$.
Then $\Delta(A)$ is compact if and only if $A$ is unital.
\end{theorem}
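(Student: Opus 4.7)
The forward direction is already in hand: Proposition \ref{unitalcomp} asserts that whenever $A$ is unital (a Banach algebra, in fact), $\Delta(A)$ is compact. So the only content is the converse, that compactness of $\Delta(A)$ forces $A$ to possess a unit.

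My plan is to read off the statement directly from the Commutative Gel'fand-Na\u{\i}mark Theorem \ref{commGN}. Since $A$ is a commutative C*-algebra with $A \neq \{0\}$, Corollary \ref{C*GTisom} guarantees $\Delta(A) \neq \varnothing$, and \ref{commGN} then yields a C*-algebra isomorphism
\[ \wht{\ } : A \longrightarrow \cont\0\bigl(\Delta(A)\bigr). \]
Assuming $\Delta(A)$ is compact, the definition \ref{Cnaught} gives $\cont\0\bigl(\Delta(A)\bigr) = \cont\bigl(\Delta(A)\bigr)$, which contains the constant function $1_{\Delta(A)}$ as a unit. Pulling this unit back through the isomorphism produces a unit in $A$, so $A$ is unital.

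There is essentially no obstacle here once the Commutative Gel'fand-Na\u{\i}mark Theorem is available; the only minor point to check is that the isomorphism in \ref{commGN} is a \emph{unital} algebra isomorphism in the case at hand, which is automatic once both sides happen to be unital, since a \st-algebra isomorphism between unital algebras sends unit to unit (the image of the unit is a unit in the range, and units are unique). Thus the proof reduces to two short invocations: \ref{unitalcomp} for one implication, and \ref{commGN} together with the observation $\cont\0(K) = \cont(K)$ for compact $K$ for the other.
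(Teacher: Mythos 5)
Your proof is correct and follows exactly the same route as the paper: the forward direction via \ref{unitalcomp}, and the converse by noting $\cont\0\bigl(\Delta(A)\bigr) = \cont\bigl(\Delta(A)\bigr)$ has a unit when $\Delta(A)$ is compact and pulling it back through the isomorphism of \ref{commGN}. Nothing to add.
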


\begin{proof}
If $A$ is unital, then $\Delta(A)$ is compact by proposition \ref{unitalcomp}.
Conversely, if $\Delta(A)$ is compact, then
$\cont\0\bigl(\Delta(A)\bigr) = C\bigl(\Delta(A)\bigr)$
has a unit (as $\Delta(A) \neq \varnothing$ by \ref{C*GTisom}). So $A$ is
unital as well because $A$ is isomorphic to
$\cont\0\bigl(\Delta(A)\bigr) = C\bigl(\Delta(A)\bigr)$
by the Commutative Gel'fand-Na\u{\i}mark Theorem \ref{commGN}.
\end{proof}

\begin{corollary}\label{speconepcomp}
If $A \neq \{ 0 \}$ is a commutative C*-algebra without unit,
then $\Delta(\tld{A})$ can be identified with the one-point
compactification of $\Delta(A)$ via restriction of functionals.
\vphantom{$\tld{A}$}
\end{corollary}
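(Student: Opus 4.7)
The plan is to chain together three earlier results: \ref{C*GTisom}, \ref{C*comp}, \ref{mlf0compact}, and \ref{notcomp}. The statement is essentially a specialization of the general Banach algebra fact \ref{notcomp} to the commutative C*-algebra setting; the only thing that needs to be checked is that the hypothesis of \ref{notcomp} (that $\Delta(A)$ is not compact) is met here.

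First I would note that $\Delta(A) \neq \varnothing$: since $A$ is a commutative C*-algebra $\neq \{0\}$, this is part of corollary \ref{C*GTisom}. Next, I would verify that $\Delta(A)$ is not compact. By theorem \ref{C*comp}, for a commutative C*-algebra $\neq \{0\}$ the spectrum $\Delta(A)$ is compact precisely when $A$ is unital. Since by assumption $A$ has no unit, $\Delta(A)$ fails to be compact.

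Having established that $\Delta(A)$ is not compact, theorem \ref{mlf0compact} applies: $\overline{\Delta(A)} = \Delta(A) \cup \{0\}$ is a one-point compactification of the locally compact Hausdorff space $\Delta(A)$, with $0$ serving as the point at infinity. Proposition \ref{notcomp} then yields exactly what is needed: the map sending $\tld{\tau} \in \Delta(\tld{A})$ to $\tld{\tau}|_A$ is a homeomorphism from $\Delta(\tld{A})$ onto $\overline{\Delta(A)} = \Delta(A) \cup \{0\}$, i.e.\ onto the one-point compactification of $\Delta(A)$.

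There is no real obstacle here; all the work has been done in the preceding propositions. The only subtlety worth flagging in the write-up is the reason $\Delta(A)$ is not compact — namely that one must invoke the nontrivial direction of \ref{C*comp}, which in turn rests on the Commutative Gel'fand-Na\u{\i}mark Theorem \ref{commGN}, rather than on any direct algebraic argument (a Banach algebra without unit can still have compact spectrum in general; it is the C*-algebra structure together with $A \neq \{0\}$ that forces non-compactness).
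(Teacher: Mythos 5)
Your argument is correct and is exactly the paper's own proof: the text deduces the corollary from \ref{notcomp} together with \ref{C*comp}, using the latter to see that $\Delta(A)$ is not compact since $A$ has no unit. Your additional remarks on $\Delta(A)\neq\varnothing$ and on \ref{mlf0compact} merely make explicit what is already packaged inside \ref{notcomp}.
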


\begin{proof}
This follows now from \ref{notcomp} together with \ref{C*comp}.%
\end{proof}

\begin{proposition}\label{C*zerodiv}%
\index{concepts}{divisors of zero}\index{concepts}{zero divisors}%
A commutative C*-algebra of dimension at least $2$ contains
divisors of zero, i.e.\ non-zero elements $a, b$ with $a b = 0$.
\end{proposition}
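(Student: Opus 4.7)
The plan is to reduce the statement to the already established result \ref{exdivzero}, which says that $\cont\0(\Omega)$ contains divisors of zero whenever $\Omega$ is a locally compact Hausdorff space with at least two distinct points. The bridge will be the Commutative Gel'fand-Na\u{\i}mark Theorem \ref{commGN}, which identifies any commutative C*-algebra $A \neq \{0\}$ with $\cont\0(\Delta(A))$ via a C*-algebra isomorphism.

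So let $A$ be a commutative C*-algebra with $\dim A \geq 2$. In particular $A \neq \{0\}$, and \ref{commGN} yields a C*-algebra isomorphism from $A$ onto $\cont\0(\Delta(A))$. Since this is a linear bijection, we have $\dim \cont\0(\Delta(A)) \geq 2$. First I would note that $\Delta(A) \neq \varnothing$ by \ref{C*GTisom} (or equivalently because otherwise $\cont\0(\Delta(A)) = \{0\}$). Next I would rule out the case that $\Delta(A)$ consists of a single point: in that case $\cont\0(\Delta(A))$ is just $\mathds{C}$, of dimension $1$, contradicting $\dim A \geq 2$. Hence $\Delta(A)$ is a locally compact Hausdorff space containing at least two distinct points.

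Now I would invoke \ref{exdivzero} directly: there exist non-zero functions $f, g \in \cont\0(\Delta(A))$ with $fg = 0$. Pulling back through the inverse of the Gel'fand transformation, we obtain non-zero elements $a, b \in A$ with $ab = ba = 0$ (commutativity giving us both orders automatically), which are thus divisors of zero in the sense of \ref{zerodivdef}.

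There is no real obstacle here; the only point requiring minimal care is the dimension bookkeeping that forces $\Delta(A)$ to have at least two points, which is immediate once one recalls that a one-point locally compact Hausdorff space yields $\cont\0 \cong \mathds{C}$.
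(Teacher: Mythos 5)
Your proof is correct and follows exactly the route the paper takes: compose the Commutative Gel'fand-Na\u{\i}mark Theorem \ref{commGN} with the function-algebra result \ref{exdivzero}. The paper's proof is a one-line citation of these two results; your only addition is spelling out the dimension bookkeeping that forces $\Delta(A)$ to have at least two points, which is a reasonable detail to make explicit.
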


\begin{proof}
This follows from the Commutative Gel'fand-Na\u{\i}mark Theorem
\ref{commGN} together with \ref{exdivzero}. \pagebreak
\end{proof}

\medskip
Now for locally compact Hausdorff spaces.

What is the spectrum of the commutative C*-algebra $\cont\0(\Omega)$,
if $\Omega$ is a locally compact Hausdorff space $\neq \varnothing$?
Precisely $\Omega$, as shown next.

\begin{theorem}\label{nonews}\index{concepts}{evaluation}%
Let $\Omega$ be a locally compact Hausdorff space $\neq \varnothing$.
The spectrum $\Delta \bigl( \cont\0(\Omega) \bigr)$ then is homeomorphic
to $\Omega$.

Indeed, for each $x \in \Omega$, consider the \underline{evaluation}
$\varepsilon_{x}$ at $x$ given by
\begin{alignat*}{2}
\varepsilon_{x} : \ & & \cont\0(\Omega) \to \ & \mathds{C} \\
                              \ & & f \mapsto  \ & \varepsilon_{x} (f) := f(x).
\end{alignat*}
The map $x \mapsto \varepsilon_{x}$ then is a homeomorphism
$\Omega \to \Delta \bigl(\cont\0(\Omega)\bigr)$.

Clearly $\wht{f}(\varepsilon_{x}) = f(x)$ for all $f \in \cont (\Omega)$
and all $x \in \Omega$. Thus, upon identifying
$\varepsilon_{x} \in \Delta \bigl(\cont\0(\Omega)\bigr)$ with $x \in \Omega$,
the Gel'fand$\vphantom{\wht{f}}$ transformation on $\cont\0(\Omega)$
becomes the identity map.$\vphantom{\wht{f}}$
\end{theorem}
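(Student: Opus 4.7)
The plan is to show that $\varepsilon : x \mapsto \varepsilon_x$ is a continuous bijection $\Omega \to \Delta(\cont\0(\Omega))$ and then that it is open, hence a homeomorphism. Well-definedness ($\varepsilon_x$ is linear, multiplicative, and non-zero, so $\varepsilon_x \in \Delta(\cont\0(\Omega))$) and injectivity of $\varepsilon$ both follow from proposition \ref{necStW}, since $\cont\0(\Omega)$ vanishes nowhere on $\Omega$ and separates its points. Continuity of $\varepsilon$ is immediate from the universal property of the weak* topology (appendix \ref{weak*point}): for each $f \in \cont\0(\Omega)$, the composite $x \mapsto \varepsilon_x(f) = f(x)$ is continuous.

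The main obstacle is surjectivity. Fix $\tau \in \Delta(A)$ with $A := \cont\0(\Omega)$. The strategy is to find $x_0 \in \Omega$ such that every $f \in \ker \tau$ vanishes at $x_0$; then $\ker \tau \subseteq \ker \varepsilon_{x_0}$, both kernels are codimension-one hyperplanes so they coincide, and two non-zero multiplicative functionals with the same kernel are equal (they must be proportional, and multiplicativity forces the scalar to be $1$), giving $\tau = \varepsilon_{x_0}$. By \ref{mlfbounded} and the density in \ref{Ccdense}, pick $h \in \cont_\mathrm{c}(\Omega)$ with $\tau(h) = 1$ and set $K_0 := \mathrm{supp}(h)$, which is compact. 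Suppose no such $x_0$ exists. For each $x \in \Omega$ choose $f_x \in \ker \tau$ with $f_x(x) \neq 0$; then $|f_x|^2 = f_x^* f_x$ also lies in $\ker \tau$ (because $\tau(f_x^* f_x) = \tau(f_x^*)\,\tau(f_x) = 0$), is non-negative, and is strictly positive on an open neighbourhood $U_x$ of $x$. Cover the compact $K_0$ by finitely many $U_{x_1}, \ldots, U_{x_n}$ and form $g := \sum_{i=1}^{n} |f_{x_i}|^2 \in \ker \tau$; then $g \geq \delta > 0$ on $K_0$ for some $\delta$. The crucial construction is $\psi := h^2/g$ on $K_0$, extended by zero off $K_0$: this is continuous (on $\partial K_0$ the numerator vanishes, because $h$ is zero on the boundary of its support, while the denominator is bounded below by $\delta$), compactly supported, hence $\psi \in A$, and $\psi\,g = h^2$ throughout $\Omega$. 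Applying $\tau$ yields the contradiction $1 = \tau(h)^2 = \tau(h^2) = \tau(\psi)\,\tau(g) = 0$.

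Finally, to see $\varepsilon$ is open, given $x_0 \in \Omega$ and a neighbourhood $U$, local compactness together with Urysohn's Lemma \ref{Urysohn} applied inside the one-point compactification of $\Omega$ (cf.\ \ref{onepcomp}), using the disjoint closed sets $\{x_0\}$ and the complement of the interior of a compact neighbourhood of $x_0$ contained in $U$, produces $f \in \cont\0(\Omega)$ with $f(x_0) = 1$ and $f$ vanishing outside $U$. Then $W := \{\tau \in \Delta(A) : |\tau(f) - 1| < 1/2\}$ is a weak* neighbourhood of $\varepsilon_{x_0}$, and by the already-established surjectivity any $\tau \in W$ equals $\varepsilon_y$ for some $y \in \Omega$ with $f(y) > 1/2$, forcing $y \in U$. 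Hence $W \subseteq \varepsilon(U)$, so $\varepsilon(U)$ is open. The concluding identification of the Gelfand transformation with the identity map is immediate from $\wht{f}(\varepsilon_x) = \varepsilon_x(f) = f(x)$ for all $f \in \cont\0(\Omega)$ and $x \in \Omega$.
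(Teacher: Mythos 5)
Your proof is correct, and it takes a genuinely different route from the paper's. The paper first settles the compact case by invoking the correspondence between multiplicative linear functionals and maximal ideals (\ref{taumaxidcomm}, which rests on Gel'fand--Mazur and Zorn's Lemma) together with the covering argument producing an invertible $g = \sum_i \overline{f_i}f_i \in I$, and then concludes that a continuous bijection from a compact space to a Hausdorff space is a homeomorphism; the non-compact case is then reduced to the compact one through a chain of identifications ($\cont(K) \cong \tld{\cont\0(\Omega)}$, $\Delta\bigl(\cont(K)\bigr) \cong \overline{\Delta\bigl(\cont\0(\Omega)\bigr)}$ via \ref{speconepcomp}, \ref{notcomp}, \ref{Cstarisom}). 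You instead give a single unified argument: surjectivity is obtained directly by localising to the compact support $K_0$ of an $h \in \cont_\mathrm{c}(\Omega)$ with $\tau(h)=1$ and exhibiting the quotient $\psi = h^2/g$ as an element of $\cont_\mathrm{c}(\Omega)$ with $\psi g = h^2$ (the continuity of $\psi$ across $\partial K_0$ being exactly the point that needs, and receives, justification), and openness is proved by hand with a Urysohn function since the compactness shortcut is unavailable when $\Omega$ is not compact. What your approach buys is independence from the maximal-ideal machinery and from the unitisation/one-point-compactification dictionary; what the paper's approach buys is that the hard work is concentrated in the compact case and the general case falls out of structural results that are wanted elsewhere anyway. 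The only cosmetic remark is that when $\Omega$ is already compact your appeal to the one-point compactification should be replaced by applying Urysohn's Lemma \ref{Urysohn} in $\Omega$ itself (which is normal, being compact Hausdorff); this changes nothing of substance.
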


Before going to the proof, we note the following two corollaries.

\begin{corollary}%
Two locally compact Hausdorff spaces $\Omega$, $\Omega'$ $\neq \varnothing$
are homeomorphic whenever the C*-algebras $\cont\0(\Omega)$, $\cont\0(\Omega')$
are isomorphic as \st-algebras. (Cf.\ \ref{Cstarisom}.) In this sense a locally compact
Hausdorff space $\Omega \neq \varnothing$ is determined up to a homeomorphism
by the \st-algebraic structure of the C*-algebra $\cont\0(\Omega)$ alone. 
\end{corollary}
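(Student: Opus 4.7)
The plan is to transport the \st-algebra isomorphism $\pi : \cont\0(\Omega) \to \cont\0(\Omega')$ to a homeomorphism between the spectra, and then invoke Theorem~\ref{nonews} to identify these spectra with the original spaces. Concretely, I would define the dual map
\[ \pi^{\ast} : \Delta\bigl(\cont\0(\Omega')\bigr) \to \Delta\bigl(\cont\0(\Omega)\bigr), \qquad \tau \mapsto \tau \circ \pi. \]
First I would check that $\pi^{\ast}$ is well-defined: since $\pi$ is an algebra homomorphism, $\tau \circ \pi$ is a multiplicative linear functional on $\cont\0(\Omega)$, and it is non-zero because $\pi$ is surjective (so if $\tau \circ \pi = 0$ then $\tau = 0$, contradicting $\tau \in \Delta(\cont\0(\Omega'))$). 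Note that $\pi$ is automatically continuous by \ref{Cstarisom}, so this composition is legitimate.

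Next I would show that $\pi^{\ast}$ is a homeomorphism. Bijectivity is immediate: the map induced in the same way by $\pi^{-1}$ (which is also a \st-algebra isomorphism between C*-algebras, hence continuous) is a two-sided inverse. Continuity follows from the universal property of the weak* topology (see appendix \ref{weak*top}): for each fixed $a \in \cont\0(\Omega)$, the composite $\tau \mapsto \tau(\pi(a))$ is continuous on $\Delta(\cont\0(\Omega'))$ since $\pi(a) \in \cont\0(\Omega')$, so evaluation at $\pi(a)$ is weak*-continuous. The same argument applied to $\pi^{-1}$ shows continuity of the inverse, so $\pi^{\ast}$ is a homeomorphism.

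Finally, Theorem~\ref{nonews} provides canonical homeomorphisms $\varepsilon : \Omega \to \Delta(\cont\0(\Omega))$ and $\varepsilon' : \Omega' \to \Delta(\cont\0(\Omega'))$ via $x \mapsto \varepsilon_{x}$. Composing, the map
\[ \varphi := \varepsilon^{-1} \circ \pi^{\ast} \circ \varepsilon' : \Omega' \to \Omega \]
is a homeomorphism, which is what was to be proved. No serious obstacle arises; the only thing one must not gloss over is that the Gel'fand topology is by definition the weak* topology (\ref{Gelftopo}), so continuity of $\pi^{\ast}$ reduces to the elementary universal property of the weak* topology rather than requiring any deeper fact.
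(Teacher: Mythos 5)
Your argument is correct and is exactly the route the paper intends: the corollary is left as an immediate consequence of Theorem \ref{nonews}, obtained by dualising the \st-algebra isomorphism to a weak*-homeomorphism of spectra and conjugating by the canonical homeomorphisms $x \mapsto \varepsilon_x$. All the details you supply (non-vanishing of $\tau \circ \pi$ via surjectivity, continuity of $\pi^{\ast}$ from the universal property of the weak* topology, and the inverse induced by $\pi^{-1}$) are sound.
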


\begin{corollary}%
There is a bijective correspondence between the equivalence classes of
commutative C*-algebras $\neq \{ 0 \}$ modulo \st-algebra isomorphism
and the equivalence classes of locally compact Hausdorff spaces
$\neq \varnothing$ modulo homeomorphism.
\end{corollary}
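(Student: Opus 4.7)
The plan is to exhibit mutually inverse maps between the two sets of equivalence classes. Define
\[ \Phi : [A] \mapsto [\Delta(A)], \qquad \Psi : [\Omega] \mapsto [\cont\0(\Omega)], \]
where $A$ ranges over commutative C*-algebras $\neq \{0\}$ and $\Omega$ over locally compact Hausdorff spaces $\neq \varnothing$. Note that $\Delta(A)$ is indeed a non-empty locally compact Hausdorff space, by \ref{C*GTisom} and the corollary following \ref{mlf0compact}; and $\cont\0(\Omega)$ is a commutative C*-algebra $\neq \{0\}$, which is non-zero since the evaluations $\varepsilon_x$ of \ref{nonews} are non-zero linear functionals.

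First I would check that $\Phi$ and $\Psi$ are well-defined on equivalence classes. For $\Psi$, a homeomorphism $\varphi : \Omega \to \Omega'$ induces a \st-algebra isomorphism
\[ \cont\0(\Omega') \to \cont\0(\Omega), \quad f \mapsto f \circ \varphi, \]
whose inverse is $g \mapsto g \circ \varphi^{-1}$ (one verifies pointwise that this respects the pointwise operations and the involution). For $\Phi$, suppose $A \cong B$ as \st-algebras. Then the Commutative Gel'fand-Na\u{\i}mark Theorem \ref{commGN} yields $\cont\0(\Delta(A)) \cong A \cong B \cong \cont\0(\Delta(B))$ as \st-algebras, so by the immediately preceding corollary (the one just before the statement to be proved) the spaces $\Delta(A)$ and $\Delta(B)$ are homeomorphic.

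Next I would verify that $\Psi \circ \Phi = \mathrm{id}$ and $\Phi \circ \Psi = \mathrm{id}$ on equivalence classes. The first is exactly the content of the Commutative Gel'fand-Na\u{\i}mark Theorem \ref{commGN}, which provides a \st-algebra isomorphism $A \to \cont\0(\Delta(A)) = \Psi(\Phi([A]))$. The second is exactly the content of theorem \ref{nonews}, which produces the homeomorphism $x \mapsto \varepsilon_x$ from $\Omega$ onto $\Delta(\cont\0(\Omega)) = \Phi(\Psi([\Omega]))$. Hence $\Phi$ and $\Psi$ are mutually inverse bijections between the two sets of equivalence classes.

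There is no real obstacle here: the corollary is essentially a bookkeeping consequence of the two deep theorems \ref{commGN} and \ref{nonews}, together with the functoriality of $\cont\0(-)$ under homeomorphisms. The only point that requires a moment's thought is well-definedness of $\Phi$, and this is precisely what the previous corollary has already packaged for us.
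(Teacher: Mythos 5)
Your proposal is correct and follows exactly the route the paper intends: the corollary is stated without proof as immediate bookkeeping from the Commutative Gel'fand-Na\u{\i}mark Theorem \ref{commGN}, theorem \ref{nonews}, and the preceding corollary, which is precisely how you organise the argument. The only cosmetic remark is that the non-vanishing of $\cont\0(\Omega)$ is more naturally cited from \ref{necStW} than from the evaluations of \ref{nonews}, but this changes nothing.
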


\begin{proof}[\hspace{-3.8ex}\mdseries{\scshape{Proof of \protect\ref{nonews}}}]%
Each $\varepsilon_{x}$ is a multiplicative linear functional on $\cont \0(\Omega)$,
being non-zero as $\cont\0(\Omega)$ vanishes nowhere on $\Omega$, by \ref{necStW}.
The map $x \mapsto \varepsilon_{x}$ is injective as $\cont \0(\Omega)$ separates
the points of $\Omega$, by \ref{necStW}. This map is continuous in the weak*
topology, that is, the topology of pointwise convergence on $\cont \0(\Omega)$,
cf.\ the appendix \ref{weak*point}. Indeed, if $x_{\alpha} \to x$ in $\Omega$, then
$\varepsilon_{x_{\alpha}} (f) = f(x_{\alpha}) \to f(x) = \varepsilon_{x} (f)$ for all
$f \in \cont \0(\Omega)$.

Assume first that $\Omega$ is compact, so $\cont\0(\Omega) = \cont (\Omega)$.
It suffices to show that the above map $x \mapsto \varepsilon_{x}$ is surjective
onto $\Delta \bigl(\cont (\Omega)\bigr)$ as a continuous bijection from a
compact space to a Hausdorff space is a homeomorphism, cf.\ the appendix
\ref{homeomorph}. By corollary \ref{taumaxidcomm}, the sets
$\ker \varepsilon_{x} = \{ \,f \in \cont (\Omega) : f(x) = 0 \,\}$ $(x \in \Omega)$
are maximal ideals in $\cont (\Omega)$, and it is enough to show that each
maximal ideal in $\cont (\Omega)$ is of this form. It suffices prove that each
maximal ideal in $\cont (\Omega)$ is contained in some set of this form.
So let $I$ be a maximal ideal in $\cont (\Omega)$,
and assume that for each $x \in \Omega$ there exists $f_{x} \in I$ with
$f_{x} (x) \neq 0$. The open sets $\{ y \in \Omega : \,f_{x} (y) \neq 0 \,\}$
$(x \in \Omega)$ cover $\Omega$, so by passing to a finite subcover,
we get $f_1, \ldots , f_n \in I$ such that $g:= \sum_{i=1}^{n} \overline{f_i} f_i$
is $> 0$ on $\Omega$. \linebreak So $g \in I$ would be invertible in
$\cont (\Omega)$, contradicting lemma \ref{notinvideal} (ii).

Assume next that $\Omega$ is not compact, and consider the one-point
compactification $K$ of $\Omega$. Let $\infty \in K \setminus \Omega$ be
the corresponding point at infinity. We then may identify $\cont\0(\Omega)$
with the set of continuous complex-valued functions on $K$ vanishing at
$\infty$, cf.\ \ref{Cnaught}. Hence we may identify $\cont (K)$ with
$\mathds{C} 1_{\Omega} + \cont\0(\Omega)$ as a \st-algebra. (Indeed, if
$f \in \cont (K)$, then $f - f(\infty) 1_K$ vanishes at $\infty$.) Now
$\mathds{C}1_\Omega + \cont\0(\Omega)$ is a \linebreak
C*-subalgebra of $\cont_\mathrm{b}(\Omega)$, cf.\ the proof of
\ref{Banachunitis}. It can be identified with the unitisation of
$\cont\0(\Omega)$, by \ref{Cstarisom}. (Please note that $\cont\0(\Omega)$
contains no unit, by \ref{nounit}.) Hence $\cont (K)$ can be identified
with the unitisation of $\cont\0(\Omega)$, by \ref{Cstarisom} again.
Next note that $\cont\0(\Omega)$ satisfies the assumptions of corollary
\ref{speconepcomp}, because $\cont\0(\Omega) \neq \{ 0 \}$ by \ref{necStW},
and because $\cont\0(\Omega)$ contains no unit, as noted before. It follows
that $\Delta \bigl( \cont (K) \bigr)$ may be identified with the one-point
compactification of $\Delta \bigl( \cont\0(\Omega) \bigr)$. Therefore the map
\[ \varepsilon : \Omega \to \Delta\bigl(\cont\0(\Omega)\bigr),\ x \mapsto \varepsilon_x \]
is (essentially) a restriction of the map
\[ \delta : K \to \Delta\bigl(\cont (K)\bigr),\ y \mapsto \delta_y, \]
with $\delta_y$ the evaluation of functions in $\cont (K)$ at $y \in K$.
We have to prove that $\varepsilon$ is a homeomorphism.
We know from the preceding paragraph of this proof that $\delta$ is a
homeomorphism. The domain $K$ of $\delta$ differs from the domain
$\Omega$ of $\varepsilon$ only in the point $\infty$. Also,
$\Delta\bigl(\cont (K)\bigr)$ differs from
$\Delta\bigl(\cont\0(\Omega)\bigr)$ precisely in the point at infinity of
$\Delta\bigl(\cont\0(\Omega)\bigr)$. So $\delta$ must map $\infty$ to the
point at infinity of $\Delta\bigl(\cont\0(\Omega)\bigr)$, whence
$\varepsilon$ must be surjective, and thus homeomorphic, as $\delta$ is so.
\end{proof}

\begin{proposition}[one-point compactification as a spectrum]\label{compficspect}%
Let $\Omega \neq \varnothing$ be a locally compact Hausdorff space,
which is not compact. The one-point compactification of $\Omega$
can be identified with the spectrum of the unital C*-subalgebra
$\mathds{C} 1_\Omega + \cont\0(\Omega)$ of $\cont_\mathrm{b}(\Omega)$.
Also, the C*-algebra $\mathds{C} 1_\Omega + \cont\0(\Omega)$
may be identified with the unitisation of $\cont\0(\Omega)$.
\end{proposition}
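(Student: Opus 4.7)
The plan is to derive this proposition as a direct consequence of the non-compact case of the proof of Theorem \ref{nonews}, where the main identifications have essentially already been worked out. Let $K$ denote the one-point compactification of $\Omega$, with corresponding point at infinity $\infty \in K \setminus \Omega$.

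First I would identify $\mathds{C}1_\Omega + \cont\0(\Omega)$ with $\cont(K)$ as C*-algebras. By \ref{Cnaught}, we identify $\cont\0(\Omega)$ with $\{g \in \cont(K) : g(\infty) = 0\}$. Any $f \in \cont(K)$ decomposes as $f = f(\infty) 1_K + \bigl(f - f(\infty) 1_K\bigr)$, where the second summand lies in $\{g \in \cont(K) : g(\infty) = 0\}$; thus $\cont(K) = \mathds{C}1_K + \cont\0(\Omega)$ as a vector space sum. Restriction to $\Omega$ yields a unital \st-algebra homomorphism from $\cont(K)$ onto $\mathds{C}1_\Omega + \cont\0(\Omega) \subset \cont_\mathrm{b}(\Omega)$, which is injective because $\Omega$ is dense in $K$ (as $\Omega$ is not compact, by \ref{onepcomp}). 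Hence, by \ref{Cstarisom}, it is a C*-algebra isomorphism.

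Second, for the unitisation statement, I would invoke \ref{nounit} to note that $\cont\0(\Omega)$ has no unit. Since $\mathds{C}1_\Omega + \cont\0(\Omega)$ is a unital C*-algebra whose unit $1_\Omega$ does not lie in $\cont\0(\Omega)$, it is \twiddle-unital in the sense of \ref{twiddleunital}, and the canonical imbedding \ref{canimb} identifies it with the unitisation of $\cont\0(\Omega)$ as a unital \st-algebra. The norms agree by the proposition preceding \ref{complunit} (or simply by \ref{Cstarisom}, since both are C*-algebra norms extending that of $\cont\0(\Omega)$).

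Finally, for the spectrum identification, I would apply Theorem \ref{nonews} to the compact Hausdorff space $K$ to get a homeomorphism $K \to \Delta\bigl(\cont(K)\bigr)$ given by $y \mapsto \delta_y$, where $\delta_y$ denotes evaluation at $y$. Composing with the C*-algebra isomorphism from the first step, and transporting through the induced homeomorphism of spectra (which is automatic, since \st-algebra isomorphisms of C*-algebras are weak*-homeomorphisms on their spectra), yields the desired homeomorphism between $K$ and $\Delta\bigl(\mathds{C}1_\Omega + \cont\0(\Omega)\bigr)$. There is no real obstacle here; the only mild care needed is verifying the consistency of the various identifications — essentially, that evaluation at a point $y \in K$ on $\cont(K)$ corresponds, under restriction, to evaluation at $y$ on $\mathds{C}1_\Omega + \cont\0(\Omega)$ when $y \in \Omega$, and to the unique extension $\tld{\tau}$ with $\tld{\tau}(1_\Omega) = 1$ and $\tld{\tau}|_{\cont\0(\Omega)} = 0$ when $y = \infty$, which matches the one-point-compactification picture of \ref{notcomp} and \ref{speconepcomp}.
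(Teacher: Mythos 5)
Your proposal is correct and follows essentially the same route as the paper, whose proof of this proposition is simply the remark that it can be extracted from the proof of Theorem \ref{nonews}: you carry out exactly that extraction, using the same identifications ($\cont\0(\Omega)$ with the functions on $K$ vanishing at $\infty$, $\cont (K)$ with $\mathds{C}1_\Omega + \cont\0(\Omega)$ via the isometric restriction map, and the compact case of \ref{nonews} applied to $K$ for the spectrum). The only cosmetic point is that the isometry of the restriction map (from density of $\Omega$ in $K$) already yields completeness of the image, so the appeal to \ref{Cstarisom} is not strictly needed there.
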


\begin{proof}
This can be extracted from the preceding proof. \pagebreak
\end{proof}

\clearpage


\section{The Stone-\texorpdfstring{\v{C}}{\81\014}ech Compactification}%
\label{StoneCech}

\begin{definition}[completely regular spaces]%
\index{concepts}{completely regular!space}%
A topological space $\Omega$ is called \underline{completely regular}
if it is a Hausdorff space, and if for every non-empty proper closed subset
$C$ of $\Omega$ and every point $x \in \Omega \setminus C$, there exists
a continuous function $f$ on $\Omega$ taking values in $[ \,0, 1 \,]$, such that
$f$ vanishes on $C$ and $f(x) = 1$.
\end{definition}

Please note that every subspace of a completely regular Hausdorff space
is itself a completely regular Hausdorff space.

For example every normal Hausdorff space is completely regular
by Urysohn's Lemma \ref{Urysohn}. In particular, every compact
Hausdorff space is completely regular.

It follows that every locally compact Hausdorff space is completely regular
(by considering the one-point compactification). Also, metrisable spaces
are completely regular. (Scale and truncate the distance function from a
point to a fixed closed set, cf.\ the appendix \ref{distance}.)

\begin{definition}[compactification]\index{concepts}{compactification}%
A \underline{compactification} of a Hausdorff space $\Omega$ is a pair
$(K, \phi)$, where $K$ is a compact Hausdorff space, and $\phi$ is a
homeomorphism of $\Omega$ onto a dense subset of $K$. We shall
identify $\Omega$ with its homeomorphic image $\phi(\Omega) \subset K$,
and we will simply speak of ``the compactification $K$ of $\Omega$''.
\end{definition}

Please note that at most completely regular Hausdorff spaces can have
a compactification. (Because of the preceding comments.)

For example $\bigl( \,[ \,-1, 1 \,], \tanh \,\bigr)$ is a compactification of $\mathds{R}$.

\begin{definition}[equivalence of two compactifications]%
\index{concepts}{equivalent!compactifications}%
Any  two compactifications $(K_1,\phi_1)$, $(K_2,\phi_2)$ of a
Hausdorff space $\Omega$ are said to be \underline{equivalent}
in case there exists a homeomorphism $\theta : K_1 \to K_2$ with
$\theta \circ \phi_1 = \phi_2$.
\end{definition}

\begin{definition}[completely regular algebras]%
\index{concepts}{completely regular!algebra}%
Let $\Omega \neq \varnothing$ be a Hausdorff space, and let $A$ be a subalgebra
of $\cont_\mathrm{b}(\Omega)$. One says that $A$ is \underline{completely regular}
if for every non-empty proper closed subset $C$ of $\Omega$ and for every point
$x \in \Omega \setminus C$, there exists a function $f \in A$ that vanishes on $C$ and 
that satisfies $f(x) \neq 0$. It is not difficult to see that $\Omega$ is completely regular
if and only if the C*-algebra $\cont_\mathrm{b}(\Omega)$ is completely regular.
\pagebreak
\end{definition}

We can now characterise all compactifications of a completely
\linebreak regular Hausdorff space: 

\begin{theorem}[all compactifications]\label{allcomp}%
Let $\Omega \neq \varnothing$ be a completely regular Hausdorff space.
The following statements hold.
\begin{itemize}
   \item[$(i)$]   If $A$ is a completely regular unital C*-subalgebra of
                         $\cont_\mathrm{b}(\Omega)$, then $(\Delta(A), \varepsilon)$ is a
                         compactification of $\Omega$, where $\varepsilon$ is the map
                         associating with each $x \in \Omega$ the evaluation
                         $\varepsilon_{x}$ at $x$ of the functions in $A$.
                         (That is, $\varepsilon_x : A \to \mathds{C}, f \mapsto f(x)$.)
                         For $f \in A$, there exists a (necessarily unique) function
                         $g \in C\bigl( \Delta (A) \bigr)$ with $f = g \circ \varepsilon$.
                         It is interpreted as the ``continuation of $f$ to the compactification''.
                         It is given as the Gel'fand transform of $f$.
  \item[$(ii)$]   If $(K,\phi)$ is a compactification of $\Omega$, then
                         \[ A_K := \{ \,g \circ \phi : g \in \cont (K) \,\} \subset \cont_\mathrm{b}(\Omega) \]
                         is a completely regular unital C*-subalgebra of $\cont_\mathrm{b}(\Omega)$,
                         and the ``restriction map'' $\cont (K) \to A_K$,
                         $g \mapsto g \circ \phi$ is a C*-algebra isomorphism.
                         The compactification $(K,\phi)$ then is equivalent to the
                         compactification associated with $A := A_K$ as in $(i)$.
 \item[$(iii)$]   Two compactifications $K$, $K'$ of $\Omega$ are equivalent
                         if and only if $A_K = A_{K'}$.
\end{itemize}
In other words, the equivalence classes of compactifications $K$
of $\Omega$ are in bijective correspondence with the spectra of
the completely regular unital C*-subalgebras $A_K$ of
$\cont_\mathrm{b}(\Omega)$. The $A_K$ consisting of those functions in
$\cont_\mathrm{b}(\Omega)$ which have a continuation to $K$.
\end{theorem}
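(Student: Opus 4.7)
The backbone of the argument is the Commutative Gel'fand-Na\u{\i}mark Theorem \ref{commGN} together with the duality theorem \ref{nonews}. For $(i)$, I note first that $A$ is a commutative unital C*-algebra, so $\Delta(A)$ is a compact Hausdorff space by \ref{unitalcomp}, and the Gel'fand transformation is a C*-algebra isomorphism $A \to \cont(\Delta(A))$. The map $\varepsilon : x \mapsto \varepsilon_x$ takes values in $\Delta(A)$ because $A$ is unital (so $\varepsilon_x(1_\Omega) = 1 \neq 0$) and is obviously multiplicative; it is continuous because the weak* topology is the topology of pointwise convergence on $A$, and $x \mapsto f(x)$ is continuous for each $f \in A \subset \cont_{\mathrm{b}}(\Omega)$. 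Both the injectivity of $\varepsilon$ and the fact that it is a homeomorphism onto its image follow from the complete regularity of $A$: given a closed $C \subset \Omega$ and $x \in \Omega \setminus C$, pick $f \in A$ with $f|_C = 0$ and $f(x) \neq 0$; then $\{\tau \in \Delta(A) : \tau(f) \neq 0\}$ is a weak*-open neighbourhood of $\varepsilon_x$ disjoint from $\varepsilon(C)$.

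Density of $\varepsilon(\Omega)$ in $\Delta(A)$ goes by contradiction: otherwise Urysohn's Lemma \ref{Urysohn} in the (normal) compact Hausdorff space $\Delta(A)$ would furnish a nonzero $g \in \cont(\Delta(A))$ vanishing on $\overline{\varepsilon(\Omega)}$; writing $g = \wht{f}$ via \ref{commGN} produces a nonzero $f \in A$ with $f(x) = \wht{f}(\varepsilon_x) = 0$ for every $x$, a contradiction. The continuation statement at the end of $(i)$ is then just the Gel'fand transform of $f$, with uniqueness coming from density.

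For $(ii)$, the restriction map $\rho : g \mapsto g \circ \phi$ from $\cont(K)$ to $\cont_{\mathrm{b}}(\Omega)$ is a unital \st-algebra homomorphism, injective because $\phi(\Omega)$ is dense in the Hausdorff space $K$. By \ref{Cstarisom}, $\rho$ is automatically isometric, so its image $A_K$ is complete, hence a closed unital C*-subalgebra of $\cont_{\mathrm{b}}(\Omega)$, and $\rho$ is a C*-algebra isomorphism onto $A_K$. Complete regularity of $A_K$ is established as follows: given a closed $C \subsetneq \Omega$ and $x \in \Omega \setminus C$, the set $\phi(C)$ is closed in $\phi(\Omega)$ because $\phi$ is a homeomorphism onto its image, hence $\phi(x) \notin \overline{\phi(C)}^{K}$; Urysohn's Lemma inside $K$ supplies $g \in \cont(K)$ vanishing on $\overline{\phi(C)}^{K}$ with $g(\phi(x)) = 1$, and $g \circ \phi \in A_K$ witnesses complete regularity.

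To prove the equivalence asserted in $(ii)$, I compose the isomorphisms $\cont(K) \xrightarrow{\rho} A_K \xrightarrow{\,\wht{\phantom{x}}\,} \cont(\Delta(A_K))$ and invoke \ref{nonews} to obtain a homeomorphism $\theta : \Delta(A_K) \to K$ characterised by $\tau(g \circ \phi) = g(\theta(\tau))$ for all $\tau \in \Delta(A_K)$ and $g \in \cont(K)$. Specialising to $\tau = \varepsilon_x$ gives $g(\phi(x)) = g(\theta(\varepsilon_x))$ for every $g \in \cont(K)$, so $\theta \circ \varepsilon = \phi$ since $\cont(K)$ separates the points of $K$ by \ref{Urysimple}. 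Part $(iii)$ is then immediate: a homeomorphism $\theta : K \to K'$ with $\theta \circ \phi = \phi'$ yields $g \circ \phi = (g \circ \theta^{-1}) \circ \phi'$, hence $A_K = A_{K'}$, while the converse is that if $A_K = A_{K'}$ then both $K$ and $K'$ are equivalent to $(\Delta(A_K),\varepsilon) = (\Delta(A_{K'}),\varepsilon)$ by $(ii)$, hence to each other. The main obstacle is keeping the two dualities (for $\cont(K)$ and for $A_K$) cleanly aligned so that the induced homeomorphism $\theta$ restricts correctly to $\Omega$; all remaining steps are direct applications of Gel'fand-Na\u{\i}mark, Urysohn's Lemma, or the automatic isometry of C*-algebra morphisms.
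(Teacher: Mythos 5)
Your proof is correct and follows essentially the same route as the paper's: Gel'fand--Na\u{\i}mark plus Urysohn for density of $\varepsilon(\Omega)$ and openness of $\varepsilon$ in $(i)$, automatic isometry of the restriction map in $(ii)$, and the duality of \ref{nonews} to produce the equivalence, with $(iii)$ reduced to $(ii)$. The only substantive differences are that you spell out the complete regularity of $A_K$ via Urysohn's Lemma inside $K$ (a point the paper asserts without detail) and that you construct the homeomorphism $\theta$ in the direction $\Delta(A_K) \to K$ rather than $K \to \Delta(A_K)$; both are harmless variations.
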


\begin{proof}
(i): Let $A$ be a completely regular unital C*-subalgebra of
$\cont_\mathrm{b}(\Omega)$. Then $\Delta(A)$ is a compact
Hausdorff space by \ref{unitalcomp}. For $x \in \Omega$, the
evaluation $\varepsilon_{x}$ at $x$ of the functions in $A$
is a multiplicative linear functional on $A$ because it is
non-zero as $A$ is unital in $\cont_\mathrm{b}(\Omega)$.
The map $\varepsilon : x \mapsto \varepsilon_{x}$ thus takes
$\Omega$ to $\Delta (A)$. Also, the Gel'fand transform $g$
of a function $f$ in $A$ satisfies $f = g \circ \varepsilon$ as
$f (x) = \varepsilon_{x} (f) = g (\varepsilon_{x})$ for all $x \in \Omega$.
The set $\varepsilon (\Omega)$ is dense in $\Delta (A)$. For otherwise
there would exist a non-zero continuous function $g$ on $\Delta (A)$
vanishing on $\varepsilon (\Omega)$, by Urysohn's Lemma \ref{Urysohn}.
The Commutative Gel'fand-Na\u{\i}mark Theorem \ref{commGN} would imply
that there exists a unique function $f \in A$ whose Gel'fand transform is $g$,
and since $g \neq 0$, one would have $f \neq 0$, contradicting
$f = g \circ \varepsilon = 0$. The map $\varepsilon : \Omega \to \Delta (A)$
is injective because $A$ separates the points of $\Omega$.
(Since $A$ is completely regular and $\Omega$ is Hausdorff.) 
This map is continuous in the weak* topology, that is, the
topology of pointwise convergence on $A$, cf.\ the appendix \ref{weak*point}.
For if $x_{\alpha} \to x$ in $\Omega$, then $f(x_{\alpha}) \to f(x)$
for all $f \in A$. To see that $\varepsilon$ is a homeomorphism from
$\Omega$ onto $\varepsilon (\Omega)$, it remains to be shown that
$\varepsilon : \Omega \to \varepsilon (\Omega)$ is open. So let $U$
be a non-empty proper open subset of $\Omega$. For $x \in U$,
let $f \in A$ such that $f$ vanishes on $\Omega \setminus U$
and $f(x) \neq 0$. Denote by $g$ the Gel'fand transform of $f$.
The set $V := g^{-1}(\mathds{C} \setminus \{0\})$ is an open
neighbourhood of $\varepsilon_{x}$ in $\Delta (A)$. Now the set
$\varepsilon (U)$ contains $V \cap \varepsilon (\Omega)$ and so
is a neighbourhood of $\varepsilon_{x}$ in $\varepsilon (\Omega)$.
Therefore $\varepsilon (U)$ is open in $\varepsilon (\Omega)$. 

(ii): Let $(K,\phi)$ be a compactification of $\Omega$. The map from $\cont (K)$
to $A_K$ given by $g \mapsto g \circ \phi$, is interpreted as ``restriction to
$\Omega$''. It clearly is a unital \st-algebra homomorphism. It is isometric
(and thus injective) by density of $\phi(\Omega)$ in $K$. Hence the range
$A_K$ of this map is complete, and thus a unital and completely regular
C*-subalgebra of $\cont_\mathrm{b}(\Omega)$. Since the above map is surjective
by definition, it is a C*-algebra isomorphism. It shall next be shown that
$(K,\phi)$ is equivalent to the compactification associated with $A := A_K$
as in (i). For $x \in K$, consider $\theta_x : f \to g(x)$, $(f \in A_K)$, where
$g$ is the unique function in $\cont (K)$ such that $f = g \,\circ \,\phi$, see above.
Then $\theta_x$ is a multiplicative linear functional on $A_K$ because
it is non-zero as $A_K$ is unital in $\cont_\mathrm{b}(\Omega)$. Now the map
$\theta : K \to \Delta(A_K)$, $x \mapsto \theta_x$ is the desired
homeomorphism $K \to \Delta(A_K)$. Indeed, this map is a
homeomorphism by theorem \ref{nonews} because $A_K$ is isomorphic
as a C*-algebra to $\cont (K)$, by the above. It is also clear that
$\theta \circ \phi = \varepsilon$.

(iii) follows essentially from the last statement of (ii).
\end{proof}

\begin{example}%
If $\Omega \neq \varnothing$ is a locally compact Hausdorff space which
is not compact, then the one-point compactification of $\Omega$ is
associated with $\mathds{C} 1_{\Omega} + \cont\0(\Omega)$, as can be
seen from proposition \ref{compficspect}.
\end{example}

\begin{definition}[Stone-\v{C}ech compactification]%
\index{concepts}{Stone-Cech@Stone-\v{C}ech compactification}%
\index{concepts}{compactification!Stone-Cech@Stone-\v{C}ech}%
Let $(K,\phi)$ be a \linebreak compactification of a Hausdorff space $\Omega$.
Then $(K,\phi)$ is called a \linebreak \underline{Stone-\v{C}ech compactification}
of $\Omega$ if every $f \in \cont_\mathrm{b}(\Omega)$ is of the form $f = g \circ \phi$
for some $g \in \cont (K)$. The function $g$ then is uniquely determined by
density of $\phi (\Omega)$ in $K$.

In other words, a Stone-\v{C}ech compactification of a Hausdorff space
$\Omega$ is a compact Hausdorff space $K$ containing $\Omega$ as
a dense subset, such that every function $f \in \cont_\mathrm{b}(\Omega)$ has a
(necessarily unique) continuation to $K$.
\end{definition}

\begin{corollary}%
Let $\Omega \neq \varnothing$ be a completely regular Hausdorff space.
There then exists a Stone-\v{C}ech compactification of $\Omega$,
and it is unique within equivalence. It is denoted by
$(\beta\/\Omega, \varepsilon)$, or simply by $\beta\/\Omega$.%
\pagebreak
\end{corollary}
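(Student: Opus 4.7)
The plan is to apply the classification Theorem \ref{allcomp} with the particular choice $A := \cont_\mathrm{b}(\Omega)$, which is the largest conceivable completely regular unital C*-subalgebra of $\cont_\mathrm{b}(\Omega)$. Since $\Omega$ is completely regular by hypothesis, the remark following the definition of completely regular algebras tells us that $\cont_\mathrm{b}(\Omega)$ is itself a completely regular unital C*-subalgebra of $\cont_\mathrm{b}(\Omega)$. So part $(i)$ of Theorem \ref{allcomp} applies and produces a compactification $\bigl(\Delta(\cont_\mathrm{b}(\Omega)), \varepsilon\bigr)$ of $\Omega$, where $\varepsilon$ is the evaluation map.

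Next I would verify the Stone-\v{C}ech property for this candidate. By part $(i)$ of Theorem \ref{allcomp}, every $f \in A = \cont_\mathrm{b}(\Omega)$ admits a continuous extension to $\Delta(A)$, namely its Gel'fand transform $\wht{f} \in \cont\bigl(\Delta(A)\bigr)$, satisfying $f = \wht{f} \circ \varepsilon$. This is exactly the defining property of a Stone-\v{C}ech compactification, so existence is established, and I would define $\beta\/\Omega := \Delta\bigl(\cont_\mathrm{b}(\Omega)\bigr)$.

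For uniqueness within equivalence, I would argue as follows. Suppose $(K,\phi)$ is any Stone-\v{C}ech compactification of $\Omega$. By part $(ii)$ of Theorem \ref{allcomp},
\[ A_K = \{ \,g \circ \phi : g \in \cont (K) \,\} \]
is a completely regular unital C*-subalgebra of $\cont_\mathrm{b}(\Omega)$, and $(K,\phi)$ is equivalent to the compactification $\bigl(\Delta(A_K), \varepsilon\bigr)$ constructed from $A_K$ as in $(i)$. However, the Stone-\v{C}ech property is precisely the statement that every $f \in \cont_\mathrm{b}(\Omega)$ is of the form $g \circ \phi$ for some $g \in \cont (K)$, i.e.\ $A_K = \cont_\mathrm{b}(\Omega)$. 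Hence any Stone-\v{C}ech compactification $(K,\phi)$ is equivalent to $\bigl(\Delta(\cont_\mathrm{b}(\Omega)), \varepsilon\bigr)$; equivalently, by part $(iii)$, any two Stone-\v{C}ech compactifications are equivalent to each other.

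The work is really all in Theorem \ref{allcomp}; once we have that correspondence between equivalence classes of compactifications and completely regular unital C*-subalgebras of $\cont_\mathrm{b}(\Omega)$, the Stone-\v{C}ech compactification simply corresponds to the maximal element $\cont_\mathrm{b}(\Omega)$ itself. I do not anticipate any real obstacle: the only point that needs to be stated carefully is why $\cont_\mathrm{b}(\Omega)$ qualifies as ``completely regular'' in the algebraic sense, which is tautologically ensured by the topological complete regularity of $\Omega$.
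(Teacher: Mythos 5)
Your proposal is correct and follows exactly the paper's route: the paper's own proof is the one-line instruction to apply Theorem \ref{allcomp} with $A := \cont_\mathrm{b}(\Omega)$, and you have simply spelled out the details (existence from part $(i)$ via the Gel'fand transforms, uniqueness from parts $(ii)$ and $(iii)$ via $A_K = \cont_\mathrm{b}(\Omega)$). No gaps.
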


\begin{proof}
Apply theorem \ref{allcomp} above with $A := \cont_\mathrm{b}(\Omega)$.
\end{proof}

\medskip
Hence the following memorable result.

\begin{corollary}
A Hausdorff space is completely regular if and only if it is homeomorphic
to a subspace of a compact Hausdorff space.
\end{corollary}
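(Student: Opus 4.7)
The plan is to prove both implications by invoking results already in the text; essentially the entire work has been done and only needs to be assembled.

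For the forward direction, suppose $\Omega$ is a completely regular Hausdorff space. If $\Omega = \varnothing$, the statement is trivial (regard $\Omega$ as a subspace of any compact Hausdorff space). If $\Omega \neq \varnothing$, invoke the preceding corollary on the existence of the Stone-\v{C}ech compactification: there exists a compact Hausdorff space $\beta\/\Omega$ together with a homeomorphism $\varepsilon$ from $\Omega$ onto a (dense) subspace of $\beta\/\Omega$. In particular, $\Omega$ is homeomorphic to a subspace of a compact Hausdorff space.

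For the converse direction, suppose $\Omega$ is homeomorphic to a subspace of a compact Hausdorff space $K$. Recall that compact Hausdorff spaces are normal (as cited at the beginning of section \ref{someoffunctions}, just before observation \ref{Urysimple}), and hence completely regular by Urysohn's Lemma \ref{Urysohn}, as noted explicitly at the start of section \ref{StoneCech}. Also noted there is that every subspace of a completely regular Hausdorff space is completely regular. Therefore the subspace of $K$ corresponding to $\Omega$ under the homeomorphism is completely regular, and complete regularity is obviously preserved under homeomorphism, so $\Omega$ itself is completely regular.

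No real obstacle arises here: both halves are immediate from machinery already established. The only mild subtlety is the vacuous case $\Omega = \varnothing$ in the forward direction, which is not covered by the preceding corollary (stated for $\Omega \neq \varnothing$) but is trivial on its own.
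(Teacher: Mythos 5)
Your proof is correct and assembles exactly the ingredients the paper intends: the Stone-\v{C}ech compactification for the forward direction, and the remarks at the start of the section (compact Hausdorff spaces are normal, hence completely regular, and subspaces inherit complete regularity) for the converse. The explicit treatment of the case $\Omega = \varnothing$ is a careful touch the paper glosses over, but otherwise this is the same argument.
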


Thus for example every metrisable space is homeomorphic to a subspace
of a compact Hausdorff space.

(Also note that a Hausdorff space is locally compact if and only if it is
homeomorphic to an open subspace of a compact Hausdorff space.)

\begin{theorem}[universal property]%
\index{concepts}{universal!property!Stone-Cech@of Stone-\v{C}ech compactif.}
Let $\Omega \neq \varnothing$ be a completely regular Hausdorff space.
The Stone-\v{C}ech compactification $(\beta\/\Omega,\varepsilon)$ of
$\Omega$ has the following \underline{universal property}. Let $K$
be a compact Hausdorff space, and $\phi$ be a continuous function
$\Omega \to K$. Then $\phi$ has a (necessarily unique) continuation
to $\beta\/\Omega$. That is, there exists a (necessarily unique) continuous
function $\tld{\phi} : \beta\/\Omega \to K$ with $\tld{\phi} \circ \varepsilon = \phi$.
If $(K,\phi)$ is a compactification of $\Omega$, then $\tld{\phi}$ is surjective.
This shows among other things that the Stone-\v{C}ech compactification
$\beta\/\Omega$ of $\Omega$ is the ``largest'' compactification of $\Omega$
in the sense that any other compactification of $\Omega$ is a continuous
image of $\beta\/\Omega$.
\end{theorem}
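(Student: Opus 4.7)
The plan is to use Gelfand duality for commutative unital C*-algebras. Given the continuous map $\phi : \Omega \to K$ with $K$ compact Hausdorff, composition gives a unital \st-algebra homomorphism $\phi^* : \cont(K) \to \cont_\mathrm{b}(\Omega)$, $g \mapsto g \circ \phi$. Since $\beta\/\Omega$ is a Stone-\v{C}ech compactification, the restriction-to-$\Omega$ map $\cont(\beta\/\Omega) \to \cont_\mathrm{b}(\Omega)$, $h \mapsto h \circ \varepsilon$ is a C*-algebra isomorphism by theorem \ref{allcomp} (ii) applied to $A = \cont_\mathrm{b}(\Omega)$. Composing $\phi^*$ with the inverse of this isomorphism produces a unital \st-algebra homomorphism $\Phi : \cont(K) \to \cont(\beta\/\Omega)$ characterised by the identity $\Phi(g) \circ \varepsilon = g \circ \phi$ for every $g \in \cont(K)$.

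Next I would turn $\Phi$ into a continuous map $\tld{\phi} : \beta\/\Omega \to K$. For each $y \in \beta\/\Omega$, the composition of $\Phi$ with the evaluation $\delta_y : \cont(\beta\/\Omega) \to \mathds{C}$ at $y$ is a multiplicative linear functional on $\cont(K)$: it is multiplicative as a composition of \st-algebra homomorphisms, and non-zero because $\Phi(1) = 1$ so it sends $1$ to $1$. By theorem \ref{nonews} this functional is the evaluation at a unique point of $K$, which I define to be $\tld{\phi}(y)$. By construction one has the key identity
\[ g\bigl(\tld{\phi}(y)\bigr) = \Phi(g)(y) \qquad \bigl(\,y \in \beta\/\Omega,\ g \in \cont(K)\,\bigr), \]
that is, $g \circ \tld{\phi} = \Phi(g) \in \cont(\beta\/\Omega)$ for every $g \in \cont(K)$. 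Under the homeomorphic identification $K = \Delta(\cont(K))$ of theorem \ref{nonews}, the topology of $K$ is the topology of pointwise convergence on $\cont(K)$, so $g \circ \tld{\phi}$ being continuous for every $g \in \cont(K)$ forces $\tld{\phi}$ to be continuous.

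The relation $\tld{\phi} \circ \varepsilon = \phi$ then follows from the defining property of $\Phi$: for $z \in \Omega$ and any $g \in \cont(K)$,
\[ g\bigl(\tld{\phi}(\varepsilon(z))\bigr) = \Phi(g)\bigl(\varepsilon(z)\bigr) = (g \circ \phi)(z) = g\bigl(\phi(z)\bigr), \]
and since $\cont(K)$ separates the points of $K$ (by \ref{KnecStW}), $\tld{\phi}(\varepsilon(z)) = \phi(z)$. Uniqueness of $\tld{\phi}$ is immediate from density of $\varepsilon(\Omega)$ in $\beta\/\Omega$ together with the Hausdorff property of $K$. Finally, if $(K,\phi)$ itself is a compactification of $\Omega$, then $\tld{\phi}(\beta\/\Omega)$ is compact hence closed in $K$, and it contains the dense subset $\tld{\phi}(\varepsilon(\Omega)) = \phi(\Omega)$ of $K$, so $\tld{\phi}$ is surjective.

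The main obstacle is the construction of $\tld{\phi}$ itself: one must invoke both halves of the Stone-\v{C}ech correspondence, namely that continuations exist on $\beta\/\Omega$ for every bounded continuous function on $\Omega$ (to define $\Phi$), and the identification of $K$ with $\Delta(\cont(K))$ via evaluations from \ref{nonews} (to recover the point $\tld{\phi}(y)$ from the functional $\delta_y \circ \Phi$). Once the identity $g \circ \tld{\phi} = \Phi(g)$ is in hand, continuity, the intertwining relation, uniqueness, and surjectivity all follow by routine arguments.
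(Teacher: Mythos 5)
Your proof is correct and is essentially the paper's own argument: both construct $\tld{\phi}$ as the Gel'fand dual of the adjoint homomorphism $g \mapsto g \circ \phi$, the only difference being that you route through $\cont(\beta\/\Omega)$ and evaluation functionals where the paper applies the second adjoint $\pi^*$ directly on the spectrum $\Delta\bigl(\cont_\mathrm{b}(\Omega)\bigr)$ and composes with the inverse of the homeomorphism $K \to \Delta\bigl(\cont(K)\bigr)$. Unwinding your $\delta_y \circ \Phi$ gives exactly the paper's $\pi^*$, so the two constructions coincide.
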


\begin{proof}
Put $A := \cont_\mathrm{b}(\Omega)$ and $B := \cont (K)$. Let $(\beta K, i)$
be the Stone-\v{C}ech compactification of $K$. Then $i : K \to \beta K$ is a
homeomorphism as $K$ is compact. The map $\phi$ induces an ``adjoint''
map $\pi : B \to A$ by \linebreak putting $\pi (g) := g \circ \phi$, $(g \in B)$.
The map $\pi$ is a unital \st-algebra homomorphism, and hence in turn
induces a ``second adjoint'' map \linebreak $\pi ^* : \Delta (A) \to \Delta (B)$,
by letting $\pi ^* (\tau) := \tau \circ \pi$, $\bigl( \,\tau \in \Delta (A) \,\bigr)$. The
map $\pi ^*$ is continuous by the universal property of the weak* topology,
cf.\ the appendix \ref{weak*top}. We have $\pi ^* \circ \varepsilon = i \circ \phi$
as for $x \in \Omega$, $g \in B$, one computes
\begin{align*}
& \ \bigl( (\pi ^* \circ \varepsilon) (x) \bigr) (g)
= \bigl( \pi ^* (\varepsilon_{x}) \bigr) (g)
= ( \varepsilon_{x} \circ \pi ) (g)
= \varepsilon_{x} \bigl( \pi (g) \bigr)
= \varepsilon_{x} (g \circ \phi) \\
= & \ (g \circ \phi) (x)
= g \bigl( \phi (x) \bigr)
= \Bigl( i \bigl( \phi (x) \bigr) \Bigr) (g)
= \bigl( ( i \circ \phi ) (x) \bigr) (g).
\end{align*}
Now let $\tld{\phi} := i^{-1} \circ \pi ^* : \beta\/\Omega \to K$.
This map is continuous because $i$ is open. Also
$\tld{\phi} \circ \varepsilon = i^{-1} \circ \pi ^* \circ \varepsilon = \phi$ as
required. Finally, if $(K,\phi)$ is a compactification, then $\phi(\Omega)$
is dense in $K$. In this case, $\tld{\phi}(\beta\/\Omega)$ will be dense in
$K$, the more, and also compact, so that $\tld{\phi}(\beta\/\Omega) = K$.
\pagebreak
\end{proof}

\clearpage


\chapter{Positive Elements}%
\label{PositiveElements}


\setcounter{section}{18}

\section{The Positive Cone in a Hermitian Banach %
\texorpdfstring{$*$-}{\80\052\80\055}Algebra}

\medskip
\begin{reminder}[cone]\index{concepts}{cone}%
A \underline{cone} in a real vector space $V$ is a non-empty subset $C$
of $V$ such that for $c \in C$ and $\lambda > 0$ also $\lambda c \in C$.
\end{reminder}

\begin{definition}[positive elements]\index{symbols}{a1@$a \geq 0$}%
\index{symbols}{A9@$A_+$}\index{concepts}{positive!element}%
Let $A$ be a \st-algebra. We shall denote by
\[ A_+ := \bigl\{ \,a \in A\sa : \s _A (a) \subset [ \,0, \infty \,[ \,\bigr\} \]
the set of \underline{positive} elements in $A$. To indicate
that an element $a \in A$ is positive, we shall also write
\[ a \geq 0. \]
We stress that positive elements are required to be \underline{Hermitian}.
The set of positive elements in $A$ is a cone in $A\sa$.
\end{definition}

\begin{proposition}\label{hompos}%
Let $\pi : A \to B$ be a \st-algebra homomorphism from a \st-algebra
$A$ to a \st-algebra $B$. If $a \in A_+$, then $\pi (a) \in B_+$.
\end{proposition}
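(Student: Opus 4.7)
The plan is to verify the two defining conditions of positivity for $\pi(a)$: that it is Hermitian, and that its spectrum in $B$ lies in $[\,0,\infty\,[$.

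First I would handle Hermiticity. Since $\pi$ is a \st-algebra homomorphism, it satisfies $\pi(x^*) = \pi(x)^*$ for all $x \in A$. Applying this to $a$ and using $a = a^*$ yields $\pi(a)^* = \pi(a^*) = \pi(a)$, so $\pi(a) \in B\sa$.

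Next I would show $\s_B(\pi(a)) \subset [\,0,\infty\,[$. By the general spectral inclusion \ref{spechom} for arbitrary (not necessarily unital) \st-algebra homomorphisms, one has
\[ \s_B(\pi(a)) \setminus \{0\} \subset \s_A(a) \setminus \{0\}. \]
Since $a \in A_+$ means $\s_A(a) \subset [\,0,\infty\,[$, the right-hand side is contained in $[\,0,\infty\,[$, hence so is $\s_B(\pi(a)) \setminus \{0\}$. Adjoining $0$ (which lies in $[\,0,\infty\,[$ anyway) gives $\s_B(\pi(a)) \subset [\,0,\infty\,[$, as required.

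There is no real obstacle here: the statement is an immediate packaging of the definition of $A_+$ together with \ref{spechom}. The only subtlety worth flagging is that \ref{spechom} only guarantees the inclusion of punctured spectra in the non-unital setting, but this is harmless because $0$ is trivially in $[\,0,\infty\,[$. Combining both bullets gives $\pi(a) \in B_+$.
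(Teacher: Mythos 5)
Your proposal is correct and follows exactly the paper's own (very terse) argument: the paper's proof consists solely of the inclusion $\s\bigl(\pi(a)\bigr)\setminus\{0\}\subset\s(a)\setminus\{0\}$ via \ref{spechom}, with Hermiticity of $\pi(a)$ and the harmlessness of adjoining $0$ left implicit. You have simply made those implicit steps explicit, which is fine.
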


\begin{proof}
$\s\bigl(\pi (a)\bigr) \setminus \{ 0 \} \subset \s(a) \setminus \{ 0 \} $,
cf.\ \ref{spechom}.
\end{proof}

\begin{proposition}\label{plussubal}%
Consider a normed \st-algebra $A$ as well as a Hermitian complete
\st-subalgebra $B$ of $A$. (Cf.\ \ref{Herminher}.) For $b \in B$, we get
\[ b \in B_+ \Leftrightarrow b \in A_+.\]
\end{proposition}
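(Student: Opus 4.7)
The plan is to reduce this to the spectral invariance result \ref{specHermsubalg}, which gives $\s_A(b) \setminus \{0\} = \s_B(b) \setminus \{0\}$ for every $b \in B$ under exactly the hypotheses assumed here (complete Hermitian \st-subalgebra of a normed \st-algebra). The hypothesis that $B$ is Hermitian is crucial, since it is what allows \ref{specHermincl} (and hence \ref{specHermsubalg}) to apply without requiring \twiddle-unitality.

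First I would observe that the Hermitian condition $b = b^*$ is a statement about the involution alone, and the involution on $B$ is the restriction of that on $A$. Hence $b \in B\sa$ if and only if $b \in A\sa$, so in both implications we need only worry about the spectral inclusion.

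Next I would note the trivial set-theoretic fact that for a subset $S \subset \mathds{C}$, the inclusion $S \subset [\,0,\infty\,[$ holds if and only if $S \setminus \{0\} \subset [\,0,\infty\,[$, since $0 \in [\,0,\infty\,[$. Applying this to both $\s_A(b)$ and $\s_B(b)$, and then invoking \ref{specHermsubalg} to identify $\s_A(b) \setminus \{0\}$ with $\s_B(b) \setminus \{0\}$, one obtains the chain of equivalences
\[
\s_A(b) \subset [\,0,\infty\,[ \;\Leftrightarrow\; \s_A(b) \setminus \{0\} \subset [\,0,\infty\,[ \;\Leftrightarrow\; \s_B(b) \setminus \{0\} \subset [\,0,\infty\,[ \;\Leftrightarrow\; \s_B(b) \subset [\,0,\infty\,[,
\]
which together with the remark on Hermitian elements yields $b \in A_+ \Leftrightarrow b \in B_+$.

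There is no real obstacle here, the work has been done upstream in Šilov's Theorem \ref{spbdry} and its Hermitian refinement \ref{specHermincl}--\ref{specHermsubalg}; the only subtle point is being careful about the role of the element $0$ in the spectrum, since $0$ may well lie in $\s_A(b)$ (for instance whenever $A$ has no unit, by \ref{speczero}) without destroying positivity. The absorption of $0$ into $[\,0,\infty\,[$ is precisely what makes the ``punctured'' spectral equality of \ref{specHermsubalg} strong enough to conclude, so that \twiddle-unitality of $B$ in $A$ is not needed for this statement.
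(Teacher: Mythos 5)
Your proof is correct and follows exactly the paper's route: the paper's own argument is the one-line citation of \ref{specHermsubalg} giving $\s_B(b)\setminus\{0\} = \s_A(b)\setminus\{0\}$, with the absorption of $0$ into $[\,0,\infty\,[$ left implicit. Your spelling out of that absorption and of the agreement of the Hermitian conditions is a faithful elaboration of the same argument.
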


\begin{proof}
$\s_B (b) \setminus \{ 0 \} = \s_A (b) \setminus \{ 0 \}$,
cf.\ \ref{specHermsubalg}.
\end{proof}

\begin{theorem}[(in-)stability of $A_+$ under multiplication]\label{instability}%
Let $A$ be a Hermitian Banach \st-algebra. If $a,b \in A_+$, then
the product $ab$ is in $A_+$ if and only if $a$ and $b$ commute.
\end{theorem}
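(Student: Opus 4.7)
The ``only if'' direction should be immediate from observation \ref{Hermprod}. Since positive elements are by definition Hermitian, $ab \in A_+$ forces $ab$ to be self-adjoint. Combined with $a^* = a$ and $b^* = b$, this yields $ab = (ab)^* = b^*a^* = ba$, so $a$ and $b$ commute.

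For the ``if'' direction, suppose $a, b \in A_+$ commute. First, observation \ref{Hermprod} gives that $ab$ is Hermitian, so it remains only to show $\s_A(ab) \subset [ \,0, \infty \,[$. My plan is to reduce to the commutative case using the Theorem of Civin \& Yood \ref{CivinYood}. Since $a$ and $b$ are Hermitian and commute, the set $\{a, b\}$ is normal in $A$, so \ref{CivinYood} yields a commutative closed unital \st-subalgebra $B$ of $\tld{A}$ containing $a$ and $b$, with $\s_B(c) = \s_A(c)$ for every $c \in B \cap A$. The algebra $\tld{A}$ is Hermitian because $A$ is (first proposition of \S\ 14), and therefore so is the closed \st-subalgebra $B$ by \ref{Herminher}.

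Now I would work inside the commutative Hermitian unital Banach \st-algebra $B$. For any $\tau \in \Delta (B)$, proposition \ref{mlfHerm} ensures $\tau$ is Hermitian, and by \ref{rangeGTu} one has $\tau(a) \in \s_B(a) = \s_A(a) \subset [\,0, \infty\,[$ and likewise $\tau(b) \in [\,0, \infty\,[$. Multiplicativity of $\tau$ then gives $\wht{(ab)}(\tau) = \tau(a)\,\tau(b) \geq 0$. Since this holds for every $\tau \in \Delta (B)$, a second application of \ref{rangeGTu} yields $\s_B(ab) = \wht{(ab)}\bigl(\Delta (B)\bigr) \subset [\,0, \infty\,[$. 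Transporting back via Civin \& Yood, $\s_A(ab) = \s_B(ab) \subset [\,0, \infty\,[$, and combining with the self-adjointness of $ab$ obtained at the outset, $ab \in A_+$.

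The only delicate point is the bookkeeping of spectra across the algebras $A$, $\tld{A}$, $B$; this is precisely the service rendered by Civin \& Yood \ref{CivinYood} together with the inheritance of the Hermitian property \ref{Herminher}. Everything else is a direct application of Gel'fand theory for commutative Hermitian Banach \st-algebras.
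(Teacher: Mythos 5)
Your proof is correct, but it is not the route the paper takes. After the common reduction via Civin \& Yood \ref{CivinYood}, you pass to Gel'fand theory: multiplicative linear functionals on the commutative unital algebra $B$, together with \ref{rangeGTu}, immediately give $\s_B(ab) = \wht{ab}\bigl(\Delta(B)\bigr) \subset [\,0,\infty\,[$. The paper deliberately avoids this; its stated design goal (cf.\ \ref{Zorn} and remark \ref{remark2}) is to keep the chapter on positive elements independent of the Gel'fand transformation, because the existence of multiplicative linear functionals rests on maximal ideals \ref{inmaxideal} and hence on the full Axiom of Choice. Instead, the paper's proof takes Hermitian square roots $a_n$, $b_n$ of $\frac1n e + a$ and $\frac1n e + b$ via \ref{possqroot}, observes that $a_n b_n$ is Hermitian by commutativity and so has real spectrum, deduces $\s\bigl((a_n b_n)^2\bigr) \subset [\,0,\infty\,[$ from the Rational Spectral Mapping Theorem, and concludes $ab = \lim_n (a_n b_n)^2 \geq 0$ by the uniform continuity of the spectrum in a commutative Banach algebra \ref{commspeccont}. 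What your approach buys is brevity and, as the author notes in \ref{remark2}, it does not actually use the Hermitian hypothesis on $A$ in the ``if'' direction (your invocations of \ref{Herminher} and \ref{mlfHerm} are harmless but dispensable: you only need $\s_B(a) = \s_A(a) \subset [\,0,\infty\,[$ and multiplicativity of $\tau$). What the paper's approach buys is freedom from Zorn's Lemma, at the cost of a more hands-on argument.
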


\begin{proof}
For the ``only if'' part, see \ref{Hermprod}. Assume now that
$a, b \in A_+$ commute. We can then assume that $A$ is
commutative and unital by an application of the Theorem of
Civin \& Yood, cf.\ \ref{CivinYood}. Indeed, this result says that
there exists an automatically Hermitian \ref{Herminher}
commutative unital closed \st-subalgebra $B$ of $\tld{A}$
containing $a$ and $b$, such that $\s_A (c) = \s_B (c)$ for
$c \in \{ \,a, b, ab \,\}$. Now the elements $\frac{1}{n}e+a$
$(n \geq 1)$ have Hermitian square roots $a_n = {a_n}^*$,
cf.\ \ref{possqroot}. The elements $\frac{1}{n}e+b$ $(n \geq 1)$
have Hermitian square roots $b_n = {b_n}^*$. The elements
$a_n b_n$ are Hermitian by commutativity, and so have real
spectrum. Hence $\s \,\bigl(\,{( \,a_n b_n \,)}^{\,2}\,\bigr) \subset [\,0, \infty \,[$
for all integers $n \geq 1$. (By the Rational Spectral Mapping Theorem).
The continuity of the spectrum function in a commutative
Banach algebra \ref{commspeccont} shows that
\[ ab = \lim _{n \to \infty} {a_n}^2 \,{b_n}^2 =
\lim _{n \to \infty} {( \,a_n b_n \,) \,}^2 \geq 0. \qedhere \]
\end{proof}

\smallskip
Our next aim is the Shirali-Ford Theorem \ref{ShiraliFord}.
On the way, we shall see that the set of positive elements
of a Hermitian Banach \st-algebra $A$ is a closed convex
cone in $A\sa$.

\begin{proposition}\label{Hermsubmult}%
Let $A$ be a Hermitian Banach \st-algebra. For two
\underline{Hermitian} elements $a, b \in A$, we have
\[ \rlambda(ab) \leq \rlambda(a) \,\rlambda(b).\]
\end{proposition}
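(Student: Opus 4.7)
The key is to exploit that $a, b$ Hermitian implies $(ab)^* = ba$, giving us a useful identity for the Pt\'ak function $\rsigma$, which can then be compared to $\rlambda$ using the Hermiticity of $A$ via \ref{fundHerm}.

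First I would establish the identity $\rsigma(ab)^2 = \rlambda(a^2 b^2)$ for Hermitian $a, b$. Indeed, by definition
\[ \rsigma(ab)^2 = \rlambda\bigl((ab)^*(ab)\bigr) = \rlambda(ba \cdot ab) = \rlambda(ba^2b), \]
and applying $\rlambda(xy) = \rlambda(yx)$ from \ref{rlcomm} with $x := b$, $y := a^2 b$ gives $\rlambda(ba^2b) = \rlambda(a^2b^2)$. Combined with the Hermiticity hypothesis through the implication (i) $\Rightarrow$ (iii) of \ref{fundHerm}, which supplies $\rlambda(ab) \leq \rsigma(ab)$, this yields the basic inequality
\[ \rlambda(ab)^2 \leq \rlambda(a^2 b^2). \]

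Next I would iterate. Since $a^{2^n}$ and $b^{2^n}$ are again Hermitian elements of $A$, applying the basic inequality to them gives $\rlambda(a^{2^n} b^{2^n})^2 \leq \rlambda(a^{2^{n+1}} b^{2^{n+1}})$. A straightforward induction on $n$ then produces
\[ \rlambda(ab)^{2^n} \leq \rlambda(a^{2^n} b^{2^n}) \qquad \text{for all integers } n \geq 0. \]

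Finally, I would bound the right-hand side by the norm and pass to the limit. Since $\rlambda(x) \leq |x|$ and the norm is submultiplicative,
\[ \rlambda(ab)^{2^n} \leq \bigl|\,a^{2^n}\,\bigr| \cdot \bigl|\,b^{2^n}\,\bigr|, \]
whence, upon taking $2^n$-th roots,
\[ \rlambda(ab) \leq \bigl|\,a^{2^n}\,\bigr|^{1/2^n} \cdot \bigl|\,b^{2^n}\,\bigr|^{1/2^n}. \]
By \ref{rlamlim}, both factors on the right converge to $\rlambda(a)$ and $\rlambda(b)$ respectively as $n \to \infty$ (as subsequences of sequences whose full limits exist), yielding the claim. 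The main subtlety is that one cannot simply invoke $\rsigma(ab) \leq \rsigma(a)\rsigma(b)$ directly --- proposition \ref{commrs} requires $a^*a$ and $bb^*$ to commute, which here amounts to $a^2$ and $b^2$ commuting, a hypothesis we do not have; this is precisely why the iterative squaring argument is needed.
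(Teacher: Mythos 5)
Your proof is correct and follows essentially the same route as the paper: the inequality $\rlambda(ab) \leq \rsigma(ab) = \rlambda(a^2b^2)^{1/2}$ via \ref{fundHerm} (i) $\Rightarrow$ (iii) and \ref{rlcomm}, iterated by squaring, bounded by the norm, and passed to the limit. The paper compresses the induction into one line, but the argument is identical; your closing remark about why \ref{commrs} is not directly applicable is a correct observation.
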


\begin{proof} 
By \ref{fundHerm} (i) $\Rightarrow$ (iii), we have
\[ \rlambda (ab) \leq \rsigma (ab) =
{\rlambda (baab) \,}^{1/2} = {\rlambda \bigl( \,{a \,}^2 \,{b\,}^2 \,\bigr) \,}^{1/2} \]
where we have made use of the fact that
$\rlambda (cd) = \rlambda (dc)$ for $c,d \in A$, cf.\ \ref{rlcomm}.
By induction it follows that
\[ \rlambda (ab) \leq
{\rlambda \Bigl( \,{a \,}^{( \,{2 \,}^{n})} \,{b \,}^{(\,{2\,}^{n})} \,\Bigr) \,}^{1 \,/ \,( \,{{2\,}^{n}} \,)}
\leq {\bigl|\,{a \,}^{( \,{2 \,}^{n})}\,\bigr| \,}^{1 \,/ \,( \,{{2 \,}^{n}} \,)}
\cdot {\bigl|\,{b \,}^{( \,{2 \,}^{n} \,)}\,\bigr| \,}^{1 \,/ \,( \,{{2 \,}^{n}} \,)} \]
for all $n \geq 1$. It remains to take the limit for $n \to \infty$. \end{proof}

\begin{theorem}\label{plusconvexcone}%
If $A$ is a Hermitian Banach \st-algebra
then $A_+$ is a convex cone in $A\sa$.
\end{theorem}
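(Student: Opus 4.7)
\emph{Plan.} The cone property $\lambda A_+ \subset A_+$ for $\lambda \geq 0$ is immediate from $\s(\lambda a) = \lambda\,\s(a)$ via the Rational Spectral Mapping Theorem \ref{ratspecmapthm}; the content of the theorem is that $a + b \in A_+$ whenever $a, b \in A_+$. My plan is to reduce this to the auxiliary statement that $\rlambda$ is subadditive on the Hermitian part, namely
\[ \rlambda(u + v) \ \leq \ \rlambda(u) + \rlambda(v) \quad \text{for all } u, v \in A\sa. \qquad (\star) \]
Granted $(\star)$, set $\alpha := \rlambda(a)$ and $\beta := \rlambda(b)$, and apply $(\star)$ to the Hermitian elements $u := \alpha e - a$ and $v := \beta e - b$, whose spectra lie in $[0,\alpha]$ and $[0,\beta]$ respectively (because $A$ is Hermitian) and whose spectral radii are therefore at most $\alpha$ and $\beta$. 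One obtains $\rlambda\bigl((\alpha+\beta)e - (a+b)\bigr) \leq \alpha+\beta$; combined with $\s\bigl((\alpha+\beta)e - (a+b)\bigr) \subset \mathds{R}$, this yields $\s\bigl((\alpha+\beta)e - (a+b)\bigr) \subset [-(\alpha+\beta),\,\alpha+\beta]$, whence $\s(a+b) \subset [0,\,2(\alpha+\beta)] \subset [0,\infty)$ and $a+b \in A_+$.

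The heart of the proof is $(\star)$. Fix Hermitian $u, v$ and $s > \rlambda(u) + \rlambda(v)$; I show $s \notin \s(u+v)$, the case $-s \notin \s(u+v)$ then following by replacing $u, v$ with $-u, -v$ and using $\rlambda(-u) = \rlambda(u)$. Since $s - \rlambda(u) > \rlambda(v) \geq 0$, the Hermitian element $se - u$ has $\s(se - u) \subset [\,s - \rlambda(u),\, s + \rlambda(u)\,] \subset \,]\,0,\infty\,[\,$, so by \ref{possqroot} there is a Hermitian $c \in \tld{A}$ with $c^2 = se - u$ and $\s(c) \subset \,]\,0,\infty\,[\,$; in particular $c$ is invertible with Hermitian inverse. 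The factorisation
\[ se - (u+v) \ = \ c^{\,2} - v \ = \ c\,\bigl(\,e - c^{-1} v\, c^{-1}\,\bigr)\,c \]
reduces invertibility of $se - (u+v)$ to the condition $1 \notin \s(c^{-1} v\, c^{-1})$. Now $c^{-1} v\, c^{-1}$ is Hermitian, and combining \ref{rlcomm} (to cycle the factors) with \ref{Hermsubmult} (applied to the Hermitian pair $c^{-2}, v$) yields
\[ \rlambda(c^{-1} v\, c^{-1}) \ = \ \rlambda(v\, c^{-2}) \ \leq \ \rlambda(v)\,\rlambda(c^{-2}) \ \leq \ \frac{\rlambda(v)}{s - \rlambda(u)} \ < \ 1, \]
where $\rlambda(c^{-2}) \leq 1/(s - \rlambda(u))$ is read off from \ref{ratspecmapthm} applied to $c^{-2} = (se - u)^{-1}$. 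Hence $1 \notin \s(c^{-1} v\, c^{-1})$ and $s \notin \s(u+v)$, completing $(\star)$.

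The main obstacle is identifying the right element to take the square root of. The obvious attempt of writing $te + a = c^{\,2}$ (for $a \in A_+$, $t > 0$) and factoring $te + a + b = c\,(e + c^{-1} b\, c^{-1})\,c$ produces only the bound $\rlambda(c^{-1} b\, c^{-1}) \leq \rlambda(b)/t$, which is strictly less than $1$ solely for $t > \rlambda(b)$, leaving the range $0 < t \leq \rlambda(b)$ uncovered. The remedy is to pass to the level of arbitrary Hermitian $u, v$ with the explicit coupling $s > \rlambda(u) + \rlambda(v)$, which produces a denominator $s - \rlambda(u)$ strictly exceeding $\rlambda(v)$ and therefore closes the estimate with no gap.
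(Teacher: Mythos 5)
Your proof is correct, and it takes a genuinely different route from the paper's. The paper argues in the opposite logical order: it establishes the convexity of $A_+$ directly and only afterwards derives the subadditivity of $\rlambda$ on $A\sa$ (its \ref{Hermrlcontsa}) as a corollary of that convexity, via the translation observation recorded in \ref{critpos}. Concretely, after reducing -- exactly as you do, by the cone property -- to showing that $e+a+b$ is invertible in $\tld{A}$, the paper introduces the Hermitian elements $c := a\,(e+a)^{-1}$ and $d := b\,(e+b)^{-1}$, which satisfy $\rlambda(c)<1$ and $\rlambda(d)<1$ by the Rational Spectral Mapping Theorem, exploits the identity $(e+a)(e-cd)(e+b)=e+a+b$, and concludes from \ref{Hermsubmult} that $\rlambda(cd)<1$, so that $e-cd$, hence $e+a+b$, is invertible; no square roots are needed. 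You instead prove the subadditivity statement $(\star)$ first, by a square-root-and-congruence argument in the spirit of the proof of \ref{fundHerm} (ii) $\Rightarrow$ (iii), and then recover convexity by the device of \ref{critpos}. Both proofs hinge on the same submultiplicativity estimate \ref{Hermsubmult}, and your reversal of order is non-circular, since your proof of $(\star)$ uses only \ref{Hermsubmult}, \ref{possqroot}, \ref{rlcomm} and the Rational Spectral Mapping Theorem, none of which depend on the present theorem. What the paper's factorisation buys is brevity and the avoidance of square roots; what yours buys is that the subadditivity of $\rlambda$ on the Hermitian part emerges as an independent byproduct rather than as a consequence of the convexity of the positive cone.
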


\begin{proof}
Let $a,b$ be positive elements of $A$. It suffices to prove that
$a+b$ is positive as $A_+$ is a cone. It is enough to show that
$e+a+b$ is invertible in $\tld{A}$, for the same reason.
Since $e+a$ and $e+b$ are invertible in $\tld{A}$, we may define
$c := a \,{( \,e+a \,)}^{-1}, d := b \,{( \,e+b \,)}^{-1}$. These two
elements of $A$ are Hermitian by \ref{Hermprod} and \ref{comm2}.
\pagebreak
The Rational Spectral Mapping Theorem yields $\rlambda(c) < 1$
and $\rlambda(d) < 1$. The preceding proposition \ref{Hermsubmult}
then gives $\rlambda(cd)<1$, so $e-cd$ is invertible in $\tld{A}$.
We now have $(\,e+a\,)\,(\,e-cd\,)\,(\,e+b\,) = e+a+b$, so $e+a+b$
is invertible in $\tld{A}$.
\end{proof}

\begin{proposition}\label{Hermrlcontsa}%
Let $A$ be a Hermitian Banach \st-algebra. Let $a, b \in A$
be \underline{Hermitian} elements. We then have
\[ \rlambda(a+b) \leq \rlambda(a)+\rlambda(b), \]
whence also
\[ | \,\rlambda(a)-\rlambda(b) \,| \leq \rlambda(a-b) \leq | \,a-b \,|.\]
It follows that $\rlambda$ is uniformly continuous on $A\sa$.
(See also \ref{Hermspeccont}.)
\end{proposition}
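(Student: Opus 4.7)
The plan is to use the convex cone property \ref{plusconvexcone} to ``sandwich'' any Hermitian element $a$ between $\pm \rlambda(a) \,e$, in the sense that both $\rlambda(a)\,e \pm a$ are positive. First I would establish this sandwich: since $A$ is Hermitian, a Hermitian element $a \in A$ has $\s (a) \subset \mathds{R}$, so $\s (a) \subset [\,-\rlambda (a), \rlambda (a)\,]$ by definition of $\rlambda$. Applying the Rational Spectral Mapping Theorem \ref{ratspecmapthm} to the polynomial $\rlambda (a) - x$ (respectively $\rlambda (a) + x$) gives
\[ \s \bigl( \,\rlambda (a) \,e \pm a \,\bigr) \subset [\,0, \infty \,[, \]
so that both $\rlambda (a) \,e \pm a$ lie in $(\tld{A})_+$. (Note that $\tld{A}$ is itself a Hermitian Banach \st-algebra, so that \ref{plusconvexcone} applies to $\tld{A}$.)

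Next I would add: by theorem \ref{plusconvexcone}, $(\tld{A})_+$ is a convex cone, hence for Hermitian $a, b \in A$,
\[ \bigl( \,\rlambda (a) + \rlambda (b) \,\bigr) \,e - (a+b) \quad \text{and} \quad
   \bigl( \,\rlambda (a) + \rlambda (b) \,\bigr) \,e + (a+b) \]
are both positive in $\tld{A}$. Using the Rational Spectral Mapping Theorem again in the reverse direction, this forces
\[ \s (a+b) \subset \bigl[ \,-\bigl(\rlambda (a) + \rlambda (b)\bigr), \,\rlambda (a) + \rlambda (b) \,\bigr], \]
which is precisely $\rlambda (a+b) \leq \rlambda (a) + \rlambda (b)$.

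For the second chain of inequalities, I would apply the first part to the Hermitian pair $b$ and $a-b$, obtaining $\rlambda (a) \leq \rlambda (b) + \rlambda (a-b)$, hence $\rlambda (a) - \rlambda (b) \leq \rlambda (a-b)$; by symmetry in $a$ and $b$ (noting that $\rlambda (b-a) = \rlambda (a-b)$, for instance because $\s (-c) = -\s (c)$), one gets $| \,\rlambda (a) - \rlambda (b) \,| \leq \rlambda (a-b)$. The final inequality $\rlambda (a-b) \leq | \,a-b \,|$ is immediate from the definition \ref{rldef}. The uniform continuity of $\rlambda$ on $A\sa$ is then read off directly. The only genuine content is the first inequality, and the crux there is already encapsulated in \ref{plusconvexcone}; once the sandwich $\rlambda (a) \,e \pm a \geq 0$ is noted, the argument is a one-line application of convexity of the positive cone, with no further subtlety.
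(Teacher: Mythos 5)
Your proof is correct and follows essentially the same route as the paper: the sandwich $\rlambda(a)\,e \pm a \geq 0$ (valid because the spectrum of a Hermitian element is real in a Hermitian algebra), followed by convexity of the positive cone from \ref{plusconvexcone}, is exactly the argument given there. The additional detail you supply on deriving the second chain of inequalities and the uniform continuity is routine and matches what the paper leaves implicit.
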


\begin{proof}
We have
\[ \rlambda(a) \,e \pm a \geq 0, \quad \rlambda(b) \,e \pm b \geq 0, \]
and so, by \ref{plusconvexcone}
\[ \bigl( \,\rlambda(a)+\rlambda(b) \,\bigr)\,e \pm (a+b) \geq 0, \]
whence
\[ \rlambda(a+b) \leq \rlambda(a)+\rlambda(b). \qedhere \]
\end{proof}

\begin{lemma}\label{critpos}%
Let $A$ be a Banach algebra, and let $a$ be an element of
$A$ with real spectrum. For $\tau \geq \rlambda(a)$, we have
\[ \min \,\s(a) = \tau - \rlambda ( \tau e - a ). \]
(Please note that $\s(\tau e-a) \subset [ \,0, \infty \,[$.) Hence the
following statements are equivalent.
\begin{itemize}
   \item[$(i)$] $\s(a) \subset [ \,0, \infty \,[$,
  \item[$(ii)$] $\rlambda(\tau e-a) \leq \tau$ for all $\tau \geq \rlambda(a)$,
 \item[$(iii)$] $\rlambda(\tau e-a) \leq \tau$ for any $\tau \geq \rlambda(a)$.
\end{itemize}
\end{lemma}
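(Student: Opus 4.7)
The plan is to compute $\rlambda(\tau e - a)$ directly via the Rational Spectral Mapping Theorem \ref{ratspecmapthm} and the Spectral Radius Formula \ref{specradform}, and then read off the three equivalences.

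First I would observe that since $\tau \geq \rlambda(a)$ and $\s(a)$ is real, we have $\tau \geq \lambda$ for every $\lambda \in \s(a)$; indeed, $|\lambda| \leq \rlambda(a) \leq \tau$ by the Spectral Radius Formula \ref{specradform}, and as $\lambda$ is real this gives $\lambda \leq \tau$. Applying the Rational Spectral Mapping Theorem \ref{ratspecmapthm} to the polynomial $r(x) = \tau - x$, one gets
\[ \s(\tau e - a) = \{ \,\tau - \lambda : \lambda \in \s(a) \,\} \subset [ \,0, \infty \,[. \]
Then by the Spectral Radius Formula \ref{specradform} again,
\[ \rlambda(\tau e - a) = \max \,\{ \,| \,\tau - \lambda \,| : \lambda \in \s(a) \,\} = \max \,\{ \,\tau - \lambda : \lambda \in \s(a) \,\} = \tau - \min \,\s(a), \]
which is precisely the claimed formula. (Here I use that $\s(a)$ is compact and non-empty, cf.\ \ref{specradform}, so that the minimum is attained.)

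For the equivalences, the implication $(i) \Rightarrow (ii)$ is immediate from the formula: if $\s(a) \subset [ \,0, \infty \,[$, then $\min \s(a) \geq 0$, whence $\rlambda(\tau e - a) = \tau - \min \s(a) \leq \tau$ for every $\tau \geq \rlambda(a)$. The implication $(ii) \Rightarrow (iii)$ is trivial since $\rlambda(a) \geq \rlambda(a)$, so one can take $\tau := \rlambda(a)$. Finally, for $(iii) \Rightarrow (i)$: if $\rlambda(\tau e - a) \leq \tau$ for some $\tau \geq \rlambda(a)$, the formula gives $\min \s(a) = \tau - \rlambda(\tau e - a) \geq 0$, so $\s(a) \subset [ \,0, \infty \,[$.

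The only mild subtlety is justifying that $\s(a)$ is a non-empty compact real set so that $\min \s(a)$ is well-defined and realised; this comes directly from \ref{specradform} together with the hypothesis that $\s(a)$ is real. There is no real obstacle here — the lemma is a clean bookkeeping consequence of the Rational Spectral Mapping Theorem and the Spectral Radius Formula.
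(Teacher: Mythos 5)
Your proof is correct and follows essentially the same route as the paper: the paper's own argument is exactly the computation $\rlambda(\tau e-a) = \max\{|\tau-\lambda| : \lambda \in \s(a)\} = \max\{\tau-\lambda : \lambda \in \s(a)\} = \tau - \min\s(a)$, using the spectral mapping for $\tau - x$ and the Spectral Radius Formula, with the absolute values dropped because $\tau \geq \rlambda(a) \geq \lambda$ for real $\lambda$. Your explicit write-out of the three equivalences is a harmless elaboration of what the paper leaves to the reader.
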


\begin{proof}
For $\tau \geq \rlambda(a)$, we obtain
\begin{align*}
\rlambda(\tau e-a) & = \max\,\{\,| \,\lambda \,| : \lambda \in \s(\tau e-a)\,\} \\
 & = \max \,\{ \,| \,\lambda \,| : \lambda \in \tau-\s(a) \,\} \\
 & = \max \,\{ \,| \,\tau-\lambda \,| : \lambda \in \s(a) \,\} \\
 & = \max \,\{ \,\tau-\lambda:\lambda\in \s(a) \,\} = \tau - \min \,\s(a). \qedhere
\end{align*}
\end{proof}

\begin{theorem}\label{plusclosed}%
If $A$ is a Hermitian Banach \st-algebra then $A_+$ is closed in $A\sa$.
\pagebreak
\end{theorem}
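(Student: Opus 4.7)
The plan is to combine the continuity of $\rlambda$ on Hermitian parts (Proposition \ref{Hermrlcontsa}) with the $\rlambda$-characterisation of positivity from Lemma \ref{critpos}. These two tools essentially cut the proof open.

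First I would fix a sequence $(a_n)$ in $A_+$ converging to some $a \in A\sa$ and aim to show $a \in A_+$. Since $A$ is Hermitian, the element $a$ automatically has real spectrum, so Lemma \ref{critpos} is applicable to $a$. Pick any $\tau > \rlambda(a)$; by the uniform continuity of $\rlambda$ on $A\sa$ (Proposition \ref{Hermrlcontsa}), we have $\rlambda(a_n) \to \rlambda(a)$, and thus $\tau \geq \rlambda(a_n)$ for all sufficiently large $n$. For such $n$, the positivity $a_n \geq 0$ together with Lemma \ref{critpos} gives
\[ \rlambda(\tau e - a_n) \leq \tau. \]

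Next I would pass to the limit in this inequality. For this one needs the continuity of $\rlambda$ on $\tld{A}\sa$, not just on $A\sa$; but $\tld{A}$ is itself a Hermitian Banach \st-algebra (this is the first proposition of the present section), so Proposition \ref{Hermrlcontsa} applied to $\tld{A}$ supplies the needed uniform continuity of $\rlambda$ on $\tld{A}\sa$. Since $\tau e - a_n \to \tau e - a$ in $\tld{A}\sa$, we conclude $\rlambda(\tau e - a_n) \to \rlambda(\tau e - a)$, whence $\rlambda(\tau e - a) \leq \tau$. Invoking the converse implication of Lemma \ref{critpos} for $a$ (with the chosen $\tau \geq \rlambda(a)$) then yields $a \in A_+$, as required.

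The only mildly delicate point is this transition from $A\sa$ to $\tld{A}\sa$ for the continuity statement; everything else is an almost mechanical assembly of the previously proved items. No computation is involved beyond passing to the limit in a single scalar inequality.
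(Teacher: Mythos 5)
Your argument is correct and follows essentially the same route as the paper: both proofs combine the uniform continuity of $\rlambda$ on the Hermitian part (Proposition \ref{Hermrlcontsa}) with the $\rlambda$-criterion for positivity (Lemma \ref{critpos}), passing to the limit in $\rlambda(\tau e - a_n) \leq \tau$. The only cosmetic difference is that the paper fixes $\tau \geq | \,a_n \,| \geq \rlambda(a_n)$ for all $n$ via boundedness of the convergent sequence, whereas you take $\tau > \rlambda(a)$ and use continuity of $\rlambda$ to get $\tau \geq \rlambda(a_n)$ eventually; your explicit remark that the continuity is really needed on $\tld{A}\sa$ is a point the paper leaves implicit.
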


\begin{proof}
Let $(a_n)$ be a sequence in $A_+$ converging to an
element $a$ of $A\sa$. There then exists $\tau \geq 0$ such
that $\tau \geq | \,a_n \,| \geq \rlambda(a_n)$ for all $n$. It
follows that also $\tau \geq | \,a \,| \geq \rlambda(a)$.
By \ref{Hermrlcontsa} and \ref{critpos}, we have
\[ \rlambda(\tau e-a) = \lim_{n \to \infty} \rlambda(\tau e-a_n)
\leq \tau. \qedhere \]
\end{proof}

\begin{theorem}%
If $A$ is a Hermitian Banach \st-algebra then
$A_+$ is a closed convex cone in $A\sa$.
\end{theorem}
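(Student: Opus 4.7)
The plan is essentially trivial: the statement is the conjunction of two results that have just been established, so my proof would be a one-line combination. More precisely, I would simply invoke Theorem \ref{plusconvexcone}, which asserts that $A_+$ is a convex cone in $A\sa$, together with Theorem \ref{plusclosed}, which asserts that $A_+$ is closed in $A\sa$. Taking the conjunction gives the claim.

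There is no real obstacle here; all the substantive work was done in the two preceding theorems. Theorem \ref{plusconvexcone} rested on the Hermitian version of the submultiplicative estimate $\rlambda(ab) \leq \rlambda(a)\,\rlambda(b)$ for Hermitian elements (Proposition \ref{Hermsubmult}), used to show $e-cd$ is invertible for $c := a(e+a)^{-1}$, $d := b(e+b)^{-1}$, which in turn forced $e+a+b$ invertible. Theorem \ref{plusclosed} rested on the characterisation of positivity in Lemma \ref{critpos} combined with the uniform continuity of $\rlambda$ on $A\sa$ from Proposition \ref{Hermrlcontsa}. With these in hand, nothing further is required.

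If I were to write the proof in the text, it would read simply: ``This follows by combining Theorems \ref{plusconvexcone} and \ref{plusclosed}.'' Any attempt to expand this would merely duplicate material already proved, so I would not elaborate further.
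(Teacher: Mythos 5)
Your proposal is correct and is exactly the paper's own proof: the text simply cites theorems \ref{plusconvexcone} and \ref{plusclosed}, which together give the claim. Nothing further is needed.
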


\begin{proof}
This is the theorems \ref{plusconvexcone} and \ref{plusclosed}.
\end{proof}

\begin{theorem}\label{preShiraliFord}%
Let $A$ be a Banach \st-algebra. Let $\alpha \geq 2$ and put
$\beta := \alpha+1$. The following statements are equivalent.
\begin{itemize}
   \item[$(i)$] $A$ is Hermitian,
  \item[$(ii)$] for each $a \in A$ one has $\s(a^*a) \subset [ \,-\lambda , \lambda \,]$, \\
where $\lambda := \max \,\bigl( \,( \,\s(a^*a) \cap [ \,0, \infty \,[ \,) \cup \{\,0\,\} \,\bigr)$,
 \item[$(iii)$] for all $a \in A$ with $\rsigma(a) < 1$, one has $\rsigma(c) \leq 1$, \\
where $c := {\alpha \,}^{-1} ( \beta a-aa^*a )$,
 \item[$(iv)$] $\s(a^*a)$ does not contain any $\lambda < 0$ whenever $a \in A$.
  \item[$(v)$] $e+a^*a$ is invertible in $\tld{A}$ for all $a \in A$.
\end{itemize}
\end{theorem}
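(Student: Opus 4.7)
The plan is to prove the equivalence by establishing a cycle of implications. First I would dispatch the easier ones. The equivalence (iv) $\Leftrightarrow$ (v) is a scaling argument: any negative $\lambda \in \s(a^*a)$ yields $-1 \in \s\bigl((a/\sqrt{|\lambda|})^{*}(a/\sqrt{|\lambda|})\bigr)$, equivalent to non-invertibility of $e + a'^{*}a'$ for the scaled element. For (iv) $\Rightarrow$ (i), take $a := c$ with $c \in A\sa$, so $a^*a = c^{2}$; the Rational Spectral Mapping Theorem \ref{ratspecmapthm} then shows that $\iu \in \s(c)$ forces $-1 \in \s(c^{2}) = \s(a^*a)$, contradicting (iv), and Hermitian-ness follows via \ref{fundHerm}. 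For (v) $\Rightarrow$ (i), the factorization $e + c^{2} = (e + \iu c)(e - \iu c)$ for Hermitian $c$ has commuting factors whose product is invertible by (v) applied with $a := c$; hence each factor is invertible by \ref{comm1}, excluding $\pm \iu$ from $\s(c)$.

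The analysis of (iii) uses the identity $c^*c = \alpha^{-2}(\beta e - a^*a)^{2} (a^*a)$, which places $c^*c$ in a commutative closed subalgebra of $\tld{A}$ containing $a^*a$ (via the Theorem of Civin \& Yood, \ref{CivinYood}). The Rational Spectral Mapping Theorem yields $\s(c^*c) = q\bigl(\s(a^*a)\bigr)$ with $q(t) := \alpha^{-2}(\beta - t)^{2} t$. This polynomial satisfies $q(1) = 1$ and is monotone increasing on $[\,0, 1\,]$ (since $\beta / 3 \geq 1$ for $\alpha \geq 2$), while on the negative axis $|q(-s)| = f(s) := \alpha^{-2}(\beta + s)^{2} s$ strictly exceeds $s$ for $s > 0$ and has no positive fixed point (the equation $(\beta + s)^{2} = \alpha^{2}$ gives $s = -1$). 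For (iv) $\Rightarrow$ (iii), combining (iv) with (iv) $\Rightarrow$ (i) yields $\s(a^*a) \subset [\,0, \infty \,[$, whence $\s(c^*c) \subset q\bigl([\,0, 1\,)\bigr) \subset [\,0, 1\,)$. For the converse (iii) $\Rightarrow$ (iv), a failure of (iv) yields after scaling some $a$ with $\rsigma(a) < 1$ and $-s_{0} \in \s(a^*a)$ for $s_{0} \in (0, 1)$; then $-f(s_{0}) \in \s(c^*c)$ and (iii) forces $f(s_{0}) \leq 1$. Iterating (applied to $rc$ with $r \uparrow 1$ to retrieve a strict bound $\rsigma(rc) < 1$, then passing to the limit) produces a bounded strictly-increasing sequence $s_{n} = f^{n}(s_{0})$, contradicting the absence of a positive fixed point of $f$.

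The treatment of (ii) runs in parallel. Observe that (ii) is equivalent to $\s(a^*a) \subset \mathds{R}$ together with the spectral radius $\rlambda(a^*a)$ being attained at a non-negative spectral value. Applying (ii) to $c$ constrains $|\min\s(c^*c)| \leq \max\bigl(\s(c^*c) \cap [\,0, \infty \,[\bigr)$, i.e., $f(s_{n}) \leq h(\lambda_{n})$ with $h(\lambda) := \alpha^{-2}(\beta - \lambda)^{2} \lambda$; since the iterated sequence $h^{n}(\lambda_{0})$ converges to $1$ (the only fixed point of $h$ in $(0, 2\alpha + 1)$ reachable from below) while $f^{n}(s_{0})$ is unbounded, compatibility forces $s_{0} = 0$, yielding (ii) $\Rightarrow$ (iv). The reverse (iv) $\Rightarrow$ (ii) is immediate from $\s(a^*a) \subset [\,0, \infty \,[$.

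The principal obstacle is closing the cycle by proving (i) $\Rightarrow$ one of (ii)--(v); this carries the Shirali-Ford content in full. My preferred route is (i) $\Rightarrow$ (v). For $\rsigma(a) < 1$ the invertibility of $e + a^*a$ is automatic since $\rlambda(a^*a) < 1$; for general $a$ I would perturb $a^*a$ by $\tau e$ with $\tau > \rlambda(a^*a)$ to attain strictly positive spectrum, extract a positive Hermitian square root $d$ by \ref{possqroot}, and then deploy factorizations in the style of the proof of \ref{fundHerm}(ii) $\Rightarrow$ (iii), namely expressions of the form $d \bigl[e + d^{-1}(a^{*} - a)d^{-1}\bigr] d$, invoking \ref{fundHerm}(ii) (which places $\iu$ outside the spectrum of every Hermitian element) to force invertibility of the bracket. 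Orchestrating these factorizations so that invertibility of $e + a^*a$ itself is recoverable is the delicate step and the true heart of Shirali-Ford.
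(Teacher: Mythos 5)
Your architecture for the cycle among (ii)--(v) and back to (i) is essentially the paper's: the identity $c^*c = \alpha^{-2}\,(a^*a)\,(\beta e - a^*a)^2$, the polynomial $p(x) = \alpha^{-2}x(\beta-x)^2$ with $p(1)=1$ and no fixed point on the negative axis, and the iteration that expels negative spectrum are all the right ingredients. (The paper runs your (iii) $\Rightarrow$ (iv) iteration more cleanly by setting $\delta := -\inf \bigcup_{\rsigma(a)<1} \s(a^*a)\cap\mathds{R}$ over all elements at once, which avoids your $r \uparrow 1$ limiting step at each stage, but your single-element version can be made to work.)

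The genuine gap is the implication out of (i), which you correctly identify as the heart of the matter but do not prove. Your proposed route (i) $\Rightarrow$ (v) via factorisations in the style of \ref{fundHerm} cannot be completed as sketched: those factorisations need $\rsigma(a) < 1$ in order to extract a Hermitian square root of $e - a^*a$ via Ford's Lemma \ref{Ford}, and they are engineered to exclude points of modulus exceeding $\rsigma(a)$ from $\s(a)$, i.e.\ to prove $\rlambda \leq \rsigma$; they say nothing about how far $\s(a^*a)$ reaches into the negative axis, which is exactly what (v) asks. The paper instead proves the weaker-looking (i) $\Rightarrow$ (ii), and this is the idea you are missing: write $a = b + \iu c$ with $b, c \in A\sa$, so that $a^*a + aa^* = 2\,(b^2 + c^2)$; since $A$ is Hermitian, $\tld{A}_+$ is a convex cone by \ref{plusconvexcone}, so from $\lambda e - a^*a \geq 0$ one gets $\lambda e + aa^* = 2\,(b^2+c^2) + (\lambda e - a^*a) \geq 0$, hence $\s(aa^*) \subset [\,-\lambda, \infty\,[$, and combining with $\s(a^*a)\setminus\{0\} = \s(aa^*)\setminus\{0\}$ (\ref{speccomm}) yields the two-sided bound $\s(a^*a) \subset [\,-\lambda,\lambda\,]$. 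This domination of the negative part of $\s(a^*a)$ by its positive part is precisely the input the iteration needs to bootstrap to full positivity; attempting (i) $\Rightarrow$ (v) in one stroke is what made the theorem resistant historically. Your sketch of (ii) $\Rightarrow$ (iv) is also too vague to check (the sequence $\lambda_n$ and the claim $h^n(\lambda_0) \to 1$ are unjustified), but this becomes moot once (i) $\Rightarrow$ (ii) is available, since (ii) $\Rightarrow$ (iii) then follows in two lines from the observation that $p(x) \leq 1$ for $x \leq 1$.
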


\begin{remark}
In their definition of a C*-algebra, Gel'fand and Na\u{\i}mark
had to assume that $e+a^*a$ is invertible in $\tld{A}$ for all
$a \in A$. The above theorem reduces this to showing that a
C*-algebra is Hermitian \ref{C*Herm}. Before going to the
proof, we note the following consequence.
\end{remark}

\begin{theorem}[the Shirali-Ford Theorem]\label{ShiraliFord}%
\index{concepts}{Theorem!Shirali-Ford}\index{concepts}{Shirali-Ford Theorem}%
A  Banach \st-algebra $A$ is Hermitian if and only if $a^*a \geq 0$ for all $a \in A$.
\end{theorem}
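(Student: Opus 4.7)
The Shirali-Ford Theorem is essentially a direct corollary of the preceding theorem \ref{preShiraliFord}, specifically the equivalence of its conditions $(i)$ and $(iv)$. The plan is to derive both implications from this equivalence, using in one direction the fact that in a Hermitian \st-algebra, Hermitian elements automatically have real spectrum, so that the absence of \emph{negative} spectrum already guarantees \emph{non-negative} spectrum.

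For the ``if'' direction, suppose that $a^*a \geq 0$ for every $a \in A$. Then in particular $\s(a^*a) \subset [\,0, \infty\,[$, so $\s(a^*a)$ contains no $\lambda < 0$, which is exactly condition $(iv)$ of \ref{preShiraliFord}. By the equivalence $(iv) \Leftrightarrow (i)$ there, $A$ is Hermitian.

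For the ``only if'' direction, assume $A$ is Hermitian, and let $a \in A$ be arbitrary. By \ref{preShiraliFord}, condition $(i)$ implies condition $(iv)$, so $\s(a^*a)$ contains no $\lambda < 0$. On the other hand, the element $a^*a$ is Hermitian, and since $A$ is Hermitian, this forces $\s(a^*a) \subset \mathds{R}$. Combining these two facts gives $\s(a^*a) \subset [\,0, \infty\,[$, which together with Hermiticity of $a^*a$ means precisely $a^*a \geq 0$ in the sense of our definition of positive elements.

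There is essentially no obstacle once \ref{preShiraliFord} is in hand: the only subtlety is to remember that the definition of $A_+$ requires both Hermiticity \emph{and} non-negative spectrum, so one must explicitly invoke the reality of $\s(a^*a)$ in the forward direction; the preceding theorem only guarantees the absence of strictly negative spectral values, not a priori the absence of non-real ones.
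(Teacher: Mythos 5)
Your proof is correct and follows exactly the route the paper intends: the theorem is presented there as an immediate consequence of \ref{preShiraliFord} via the equivalence $(i) \Leftrightarrow (iv)$, combined (in the forward direction) with the reality of $\s(a^*a)$ coming from the Hermitian hypothesis. Your explicit remark that $(iv)$ alone only excludes negative real spectral values, so that Hermiticity of $A$ must be invoked to get $\s(a^*a) \subset [\,0,\infty\,[$, is precisely the one point worth spelling out.
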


\begin{proof}[\hspace{-3.85ex}\mdseries{\scshape{Proof of \protect\ref{preShiraliFord}}}]%
The proof is a simplified variant of the one in the book by Bonsall \& Duncan
\cite[p.\ 226]{BD}, and uses a polynomial rather than a rational function.

(i) $\Rightarrow$ (ii). Assume that $A$ is Hermitian, let $a \in A$, and
let $\lambda$ be as in (ii). We obtain
$\s(a^*a) \subset \ ] - \infty , \lambda \,]$ as $a^*a$ is Hermitian. Let
now $a=b + \iu c$ where $b,c \in A\sa$. We then have
$aa^*+a^*a = 2 \,({b\,}^2+{c\,}^2)$, whence
$\lambda e+aa^*= 2 \,({b\,}^2+{c\,}^2)+(\lambda e-a^*a) \geq 0$ by convexity of
$\tld{A}_+$ \ref{plusconvexcone}, so that $\s(aa^*) \subset [ \,- \lambda , \infty \,[$.
One concludes that $\s(a^*a) \subset [ \,- \lambda , \lambda \,]$ as
$\s(a^*a) \setminus \{ 0 \} = \s(aa^*) \setminus \{ 0 \}$, cf.\ \ref{speccomm}.
\pagebreak

Intermediate step. Consider the polynomial $p(x) = {\alpha \,}^{-2} \,x \,{( \beta -x) \,}^2$.
If $c$ is as in (iii), then $c^*c = p(a^*a)$, whence $\s(c^*c) = p\bigl(\s(a^*a)\bigr)$.
Please note that $p(1) = 1$ and hence $p(x) \leq 1$ for all $x \leq 1$ by monotony.

(ii) $\Rightarrow$ (iii). Assume that (ii) holds. Let $a \in A$ with
$\rsigma(a) \leq 1$. Then $\s(a^*a) \subset [-1,1]$ by (ii). Let $c$
be as in (iii). Then $\s(c^*c) \subset \,]-\infty,1]$ by the intermediate
step. Again by (ii), it follows that $\s(c^*c) \subset [-1,1]$.

(iii) $\Rightarrow$ (iv). Assume that (iii) holds and that (iv) does not hold. Let
$\delta := - \inf \,\bigcup _{\text{\smaller{$\,a \in A, \,\rsigma (a) < 1$}}}
\,\s(a^*a) \cap \mathds{R}$. We then have $\delta > 0$. Hence there
exists $a \in A$, $\gamma > 0$, such that $\rsigma (a) < 1$,
$- \gamma \in \s(a^*a)$, and $\gamma \geq {( \alpha / \beta ) \,}^2 \,\delta$.
With $c$ as in (iii), we then have $\rsigma (c) \leq 1$ by assumption. \linebreak
The intermediate step implies that $p( - \gamma ) \in \s( c^*c ) \cap \mathds{R}$, whence
for $0 < \theta < 1$ also $\theta \,p ( - \gamma ) \in \s \,( \,\theta \,c^*c \,) \cap \mathds{R}$.
So the definition of $\delta$ gives $\theta \,p( - \gamma ) \geq - \delta$ for $0 < \theta < 1$,
from which we get $p( - \gamma ) \geq - \delta$ by letting $\theta$ approach $1$.
That is ${\alpha \,}^{-2} \,\gamma \,{( \beta + \gamma ) \,}^2 \leq \delta$. Hence, using
$\gamma > 0$, we obtain the contradiction $\gamma < {( \alpha / \beta )}^{\,2\,}\delta$.

(iv) $\Rightarrow$ (v) is trivial.

(v) $\Rightarrow$ (i). Assume that (v) holds. Let $a \in A$ be Hermitian.
Then $-1 \notin \s({a \,}^2)$, so that $i \notin \s(a)$ by the Rational Spectral
Mapping Theorem. It follows via \ref{fundHerm} (ii) $\Rightarrow$ (i) that
$A$ is Hermitian.
\end{proof}

\begin{lemma}\label{rsHerm}%
Let $\pi : A \to B$ be a \st-algebra homomorphism from a Banach
\st-algebra $A$ to a Banach \st-algebra $B$. If $B$ is Hermitian, and if
\[ \rsigma \bigl(\pi (a)\bigr) = \rsigma(a)\quad \text{for all} \quad a \in A, \]
then $A$ is Hermitian.
\end{lemma}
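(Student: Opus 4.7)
The plan is to verify the definition of Hermitian directly: for every $a \in A\sa$, I will show $\s_A(a) \subset \mathds{R}$. The first step is a preservation lemma for spectral radii. For $a \in A\sa$, both $a$ and $\pi(a)$ are Hermitian in their respective Banach $*$-algebras, so by \ref{Hermrseqrl} the Pt\'{a}k function and the spectral radius coincide on each: $\rlambda(a) = \rsigma(a)$ and $\rlambda(\pi(a)) = \rsigma(\pi(a))$. The hypothesis $\rsigma(a) = \rsigma(\pi(a))$ therefore yields $\rlambda(a) = \rlambda(\pi(a))$ for every $a \in A\sa$.

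I next bootstrap this identity to real polynomial substitutions. For any real polynomial $r \in \mathds{R}[t]$ without constant term and any $a \in A\sa$, the element $r(a)$ lies in $A$ (no unitisation needed), is Hermitian, and satisfies $\pi(r(a)) = r(\pi(a))$. Applying the identity of the first step to $r(a)$ in place of $a$, together with the Rational Spectral Mapping Theorem \ref{ratspecmapthm}, gives
\[ \sup_{\lambda \in \s_A(a)} |r(\lambda)| = \sup_{\mu \in \s_B(\pi(a))} |r(\mu)| \]
for every real polynomial $r$ without constant term. Write $R := \rlambda(a) = \rlambda(\pi(a))$; since $B$ is Hermitian, $\s_B(\pi(a))$ is contained in $\mathds{R}$, and hence in $[-R, R]$.

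To derive a contradiction, suppose $\lambda_0 = \alpha_0 + \iu \beta_0 \in \s_A(a)$ for some $a \in A\sa$ with $\beta_0 \neq 0$. Choose $M > 0$ so large that the real quadratic $q(t) := M - (t - \alpha_0)^2 - \beta_0^2$ is strictly positive on $[-R, R]$; for instance, $M := (R + |\alpha_0|)^2 + \beta_0^2 + 1$ will do. Then $q(\lambda_0) = M$, while $0 < q(\mu) \leq M - \beta_0^2$ for every $\mu \in [-R, R]$. For each integer $n \geq 1$, the real polynomial $P_n(t) := t \cdot q(t)^n$ has no constant term, so the bootstrapped identity applied to $P_n$ reads $\sup_{\s_A(a)} |P_n| = \sup_{\s_B(\pi(a))} |P_n|$. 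The left side is at least $|P_n(\lambda_0)| = M^n |\lambda_0|$, whereas the right side is at most $R (M - \beta_0^2)^n$. Since $|\lambda_0| \geq |\beta_0| > 0$, the ratio
\[ \frac{M^n |\lambda_0|}{R (M - \beta_0^2)^n} \;=\; \frac{|\lambda_0|}{R} \left( \frac{M}{M - \beta_0^2} \right)^n \]
tends to infinity as $n \to \infty$, since $M > M - \beta_0^2$. This produces the required contradiction, so $A$ is Hermitian.

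The main obstacle is the polynomial construction in the final step: I need a real polynomial \emph{without} constant term (to ensure $P_n(a) \in A$, the only class of elements to which the hypothesis on $\rsigma$ directly applies through the first step) that is nevertheless much larger at the putative non-real spectral point $\lambda_0$ than on the real interval $[-R,R]$. The quadratic $q$ is engineered so that its values on the real axis miss its value at $\lambda_0$ by the additive gap $\beta_0^2$, and multiplying $q(t)^n$ by $t$ cleanly kills the constant term at the cost of an extra factor bounded by $|\lambda_0|$ on one side and by $R$ on the other; the exponential amplification through the power $n$ then overwhelms this loss and furnishes the contradiction.
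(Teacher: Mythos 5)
Your proof is correct, but it takes a genuinely different route from the paper's. The paper disposes of the lemma in one line by invoking the equivalence (i) $\Leftrightarrow$ (iii) of \ref{preShiraliFord}: Hermiticity of a Banach \st-algebra is there characterised by a condition stated purely in terms of $\rsigma$ and the \st-polynomial $c = \alpha^{-1}(\beta a - aa^*a)$, and any such condition transports verbatim along a $\rsigma$-preserving \st-homomorphism into a Hermitian algebra. You instead attack the definition directly: \ref{Hermrseqrl} converts the hypothesis into $\rlambda\bigl(\pi(a)\bigr) = \rlambda(a)$ for $a \in A\sa$, the Rational Spectral Mapping Theorem \ref{ratspecmapthm} and the Spectral Radius Formula \ref{specradform} propagate this to $\sup_{\s_A(a)}|r| = \sup_{\s_B(\pi(a))}|r|$ for every non-constant real polynomial $r$ without constant term, and the amplified polynomials $t\,q(t)^n$ then separate a hypothetical non-real point $\lambda_0 \in \s_A(a)$ from the real interval $[-R,R] \supset \s_B\bigl(\pi(a)\bigr)$. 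The details check out: $P_n$ is non-constant so \ref{ratspecmapthm} applies, $R \geq |\lambda_0| \geq |\beta_0| > 0$ so the final division is legitimate, and $q \leq M - {\beta_0}^2 < M = q(\lambda_0)$ gives the exponential gap. What each approach buys: the paper's proof is essentially free once the (nontrivial) theorem \ref{preShiraliFord} is in place; yours is self-contained modulo the basic spectral calculus of Chapter 2, and it in fact establishes the formally stronger statement that it suffices to assume $\rsigma\bigl(\pi(a)\bigr) = \rsigma(a)$ for \emph{Hermitian} $a$ only (equivalently, preservation of $\rlambda$ on $A\sa$) — the price being the ad hoc polynomial construction in the last step.
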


\begin{proof}
This follows from \ref{preShiraliFord} (i) $\Leftrightarrow$ (iii).
\end{proof}

\begin{theorem}[stability of $A_+$ under \st-congruence]%
\label{stcongr}%
Let $A$ be a Hermitian Banach \st-algebra. If $a \in A_+$
then $c^*ac \in A_+$ for all $c \in \tld{A}$.
\end{theorem}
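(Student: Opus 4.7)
The plan is to work throughout in the unitisation $\tld{A}$, which is itself a Hermitian Banach \st-algebra (by the first proposition of \S\,\ref{secHerm}), so that the Shirali-Ford Theorem \ref{ShiraliFord} and the closure of the positive cone \ref{plusclosed} are available there. Note first that $c^*ac$ is Hermitian since $a = a^*$. Moreover, if $A$ has no unit then $A$ is a two-sided ideal in $\tld{A}$, so $c^*ac \in A$ in any case. By \ref{specunit}, an element of $A$ is positive in $A$ if and only if it is positive in $\tld{A}$, so it suffices to show that $\s_{\tld{A}}(c^*ac) \subset [\,0,\infty\,[$.

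The idea is a square-root trick combined with an approximation. For each integer $n \geq 1$, set
\[ a_n := a + \tfrac{1}{n}\,e \in \tld{A}. \]
Then $\s_{\tld{A}}(a_n) = \s_{\tld{A}}(a) + \tfrac{1}{n} \subset [\,\tfrac{1}{n},\infty\,[ \subset \,]\,0,\infty\,[$, using $\s_{\tld{A}}(a) = \s_A(a) \subset [\,0,\infty\,[$. Hence Theorem \ref{possqroot} yields a Hermitian element $b_n \in \tld{A}$ with ${b_n}^{\,2} = a_n$. Consequently
\[ c^*a_n c = c^*\,{b_n}^{\,*}\,b_n\,c = (b_n c)^*(b_n c), \]
and by the Shirali-Ford Theorem \ref{ShiraliFord} applied to the Hermitian Banach \st-algebra $\tld{A}$, this element belongs to $\tld{A}_+$.

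Finally, letting $n \to \infty$ we have
\[ c^*a_n c = c^*ac + \tfrac{1}{n}\,c^*c \ \longrightarrow \ c^*ac \]
in norm in $\tld{A}$. Since $\tld{A}$ is Hermitian, the cone $\tld{A}_+$ is closed in $\tld{A}\sa$ by Theorem \ref{plusclosed}, so the limit $c^*ac$ also lies in $\tld{A}_+$. Combined with the earlier remark that $c^*ac \in A$ and that $\s_{\tld{A}} = \s_A$ on $A$, this gives $c^*ac \in A_+$, as desired.

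The main obstacle is that \ref{possqroot} requires spectrum strictly inside $\,]\,0,\infty\,[$, not merely in $[\,0,\infty\,[$, which is why a direct square root of $a$ is not immediately available; the perturbation $a \mapsto a + \tfrac{1}{n}e$ circumvents this, and closedness of the positive cone (itself a consequence of the Hermitian hypothesis) lets us pass to the limit.
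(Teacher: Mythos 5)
Your proof is correct and follows essentially the same route as the paper's own argument: perturb $a$ to $a + \tfrac{1}{n}e$, take Hermitian square roots via \ref{possqroot}, apply the Shirali-Ford Theorem \ref{ShiraliFord} to $(b_n c)^*(b_n c)$, and pass to the limit using the closedness of the positive cone \ref{plusclosed}. The only difference is that you spell out the (correct) bookkeeping about working in $\tld{A}$ and returning to $A$, which the paper leaves implicit.
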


\begin{proof}
The elements $\frac{1}{n}e+a$ $(n \geq 1)$ have Hermitian square
roots $b_n = {b_n}^*$ in $\tld{A}\sa$, cf.\ \ref{possqroot}. It follows
from \ref{ShiraliFord} and \ref{plusclosed} that
\[ c^*ac = \lim _{n \to \infty} c^* {b_n}^2 c =
\lim _{n \to \infty} {(b_n c)}^*{(b_n c)} \geq 0. \qedhere \]
\end{proof}

\medskip
We also give the next result, in view of its strength, exhibited by
the ensuing corollary. \pagebreak

\begin{proposition}\label{sapreShiraliFord}%
Let $A$ be a Banach \st-algebra. Let $\alpha \geq 2$ and put
$\beta := \alpha+1$. The following statements are equivalent.
\begin{itemize}
   \item[$(i)$] $A$ is Hermitian,
  \item[$(ii)$] for each $a \in A\sa$ one has
  $\s ^{\,}\bigl( _{\,}{a}^{\,2} \,\bigr) \subset [ \,- \lambda , \lambda \,]$, \\
  where $\lambda := \max \,\bigl( \,( \,\s \bigl({a}^{\,2\,} \bigr) \cap [ \,0, \infty \,[ \,) \cup \{\,0\,\} \,\bigr)$,
 \item[$(iii)$] for all $a \in A\sa$ with $\rlambda (a) < 1$, one has $\rlambda(c) \leq 1$, \\
  where $c := {\alpha \,}^{-1} \bigl( \beta a - {a}^{\,3\,} \bigr)$,
 \item[$(iv)$] for all $a \in A\sa$ with $\rlambda(a) < 1$, and all $\lambda \in \s(a)$, \\
  one has $\bigl| \,{\alpha}^{\,-1} \lambda \,\bigl( \beta -{\lambda}^{\,2\,} \bigr) \,\bigr| \leq 1$,
  \item[$(v)$] $\s ^{\,}\bigl( _{\,}{a}^{\,2} \,\bigr)$ does not contain any $\lambda < 0$ for all $a \in A\sa$.
 \item[$(vi)$] $e + {a }^{\,2}$ is invertible in $\tld{A}$ for all $a \in A\sa$.
\end{itemize}
\end{proposition}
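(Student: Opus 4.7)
The plan is to mirror the proof of \ref{preShiraliFord}, restricted throughout to Hermitian elements; this substantially simplifies several identities, since $a\,a^{*}\mspace{-2mu}a = {a}^{\,3}$ and $a^{*}a = {a}^{\,2}$ for $a \in A\sa$. The implications $(i) \Rightarrow (ii)$, $(v) \Rightarrow (vi)$, and $(vi) \Rightarrow (i)$ are the easy ones. For $(i) \Rightarrow (ii)$: if $A$ is Hermitian and $a \in A\sa$, then $\s(a) \subset \mathds{R}$, so the Rational Spectral Mapping Theorem yields $\s({a}^{\,2}) = \{\,{\mu}^{\,2} : \mu \in \s(a) \,\} \subset [\,0, \infty \,[$, whence the value of $\lambda$ in $(ii)$ is $\rlambda({a}^{\,2})$ and $\s({a}^{\,2}) \subset [\,0, \lambda \,] \subset [\,-\lambda, \lambda \,]$. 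Implication $(v) \Rightarrow (vi)$ is immediate, as $-1 \notin \s({a}^{\,2})$ makes $e + {a}^{\,2}$ invertible in $\tld{A}$. For $(vi) \Rightarrow (i)$: invertibility of $e + {a}^{\,2}$ for every Hermitian $a$ gives $-1 \notin \s({a}^{\,2})$, hence $\iu \notin \s(a)$ by the Rational Spectral Mapping Theorem, and \ref{fundHerm} $(ii) \Rightarrow (i)$ delivers $(i)$.

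Consider the polynomial $p(x) := \alpha^{\,-2} \,x \,(\beta - x)^{\,2}$ from the proof of \ref{preShiraliFord}. For $a \in A\sa$, the element $c := \alpha^{\,-1} ( \beta a - {a}^{\,3} )$ is Hermitian and satisfies ${c}^{\,2} = p({a}^{\,2})$, so the Rational Spectral Mapping Theorem gives $\s({c}^{\,2}) = p \bigl( \s({a}^{\,2}) \bigr)$. An elementary analysis shows $p(1) = 1$, $p \leq 0$ on $[\,-1, 0 \,]$, and $p'(x) = \alpha^{\,-2} (\beta - x)(\beta - 3x) \geq 0$ on $[\,0, 1 \,]$ (since $\beta \geq 3$), so $p(x) \leq 1$ throughout $[\,-1, 1 \,]$. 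For $(ii) \Rightarrow (iii)$: if $\rlambda(a) < 1$, then $\rlambda({a}^{\,2}) < 1$, so $(ii)$ applied to $a$ forces $\s({a}^{\,2}) \subset [\,-1, 1 \,]$, and hence $\s({c}^{\,2}) \cap [\,0, \infty \,[ \,\subset \,[\,0, 1 \,]$; applying $(ii)$ now to $c \in A\sa$ yields $\s({c}^{\,2}) \subset [\,-1, 1 \,]$, so $\rlambda(c)^{\,2} = \rlambda({c}^{\,2}) \leq 1$ by \ref{rlpowers}. The equivalence $(iii) \Leftrightarrow (iv)$ is immediate from the Rational Spectral Mapping Theorem, since $\s(c) = \{\, \alpha^{\,-1} \mu \,(\beta - {\mu}^{\,2}) : \mu \in \s(a) \,\}$.

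The main obstacle is $(iii) \Rightarrow (v)$, for which I imitate the $(iii) \Rightarrow (iv)$ step in \ref{preShiraliFord}. Assume $(iii)$ holds and $(v)$ fails, and put
\[ \delta := -\inf \bigcup_{a \in A\sa,\ \rlambda(a) < 1} \s({a}^{\,2}) \cap \mathds{R}, \]
which satisfies $0 < \delta \leq 1$, the upper bound coming from the fact that $\rlambda(a) < 1$ implies $\s({a}^{\,2}) \subset \{\, z \in \mathds{C} : | \,z \,| < 1 \,\}$. By the definition of $\delta$, one can choose $a \in A\sa$ with $\rlambda(a) < 1$ and $-\gamma \in \s({a}^{\,2})$ such that $\gamma \geq (\alpha / \beta)^{\,2} \,\delta$. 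By $(iii)$ and the intermediate step, the Hermitian element $d := \alpha^{\,-1} ( \beta a - {a}^{\,3} )$ satisfies $\rlambda(d) \leq 1$ and $p(-\gamma) = - \alpha^{\,-2} \,\gamma \,(\beta + \gamma)^{\,2} \in \s({d}^{\,2}) \cap \mathds{R}$. For $0 < \theta < 1$, the element $\theta^{\,1/2} d \in A\sa$ has $\rlambda( \theta^{\,1/2} d ) = \theta^{\,1/2} \,\rlambda(d) \leq \theta^{\,1/2} < 1$, and $\theta \,p(-\gamma) \in \s \bigl( (\theta^{\,1/2} d)^{\,2} \bigr) \cap \mathds{R}$; the definition of $\delta$ then forces $\theta \,p(-\gamma) \geq -\delta$, and letting $\theta \to 1$ gives $\alpha^{\,-2} \,\gamma \,(\beta + \gamma)^{\,2} \leq \delta$. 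Since $\gamma > 0$ entails $(\beta + \gamma)^{\,2} > \beta^{\,2}$, this produces the contradiction $\gamma < (\alpha / \beta)^{\,2} \,\delta$. The delicate point, as in \ref{preShiraliFord}, is to verify that the scaled element $\theta^{\,1/2} d$ stays admissible for the definition of $\delta$, which is automatic since $d$ is Hermitian.
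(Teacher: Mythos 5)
Your proof is correct and is precisely the argument the paper intends: its own proof of \ref{sapreShiraliFord} consists of the single remark that one follows the lines of \ref{preShiraliFord} using \ref{Hermrseqrl}, and your write-up carries that out faithfully, including a correct treatment of the extra item $(iv)$ via the Rational Spectral Mapping Theorem and a sound scaling argument in $(iii) \Rightarrow (v)$. The only (welcome) deviation is that your $(i) \Rightarrow (ii)$ is simpler than its analogue in \ref{preShiraliFord}, since for Hermitian $a$ the inclusion $\s\bigl({a}^{\,2}\bigr) \subset [\,0, \infty\,[$ is immediate from the Rational Spectral Mapping Theorem, so the decomposition $a = b + \iu c$ and the convexity of $\tld{A}_+$ are not needed.
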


\begin{proof}
The proof follows the lines of the proof of theorem \ref{preShiraliFord},
cf.\ \ref{Hermrseqrl}.
\end{proof}

\begin{corollary}%
A Banach \st-algebra $A$ is Hermitian if (and only if) there exists a
compact subset $S$ of the open unit disc, such that $a \in A\sa$ and
$\rlambda (a) < 1$ together imply $\s(a) \subset \ ] -1,1 \,[ \ \cup \ S$.
\end{corollary}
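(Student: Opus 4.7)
The plan is to reduce to condition (iv) of Proposition \ref{sapreShiraliFord}, which says: $A$ is Hermitian iff for some (equivalently, every) $\alpha \geq 2$, with $\beta := \alpha + 1$, one has $\bigl| \,\alpha^{-1} \lambda (\beta - \lambda^{\,2\,}) \,\bigr| \leq 1$ for every $a \in A\sa$ with $\rlambda (a) < 1$ and every $\lambda \in \s(a)$. The ``only if'' direction is immediate: a Hermitian element in a Hermitian $A$ has real spectrum, so $\rlambda(a) < 1$ forces $\s(a) \subset \ ] \,-1, 1 \,[$, and one may take $S := \varnothing$. For the ``if'' direction, the idea is that the hypothesis places the spectrum of Hermitian elements with $\rlambda < 1$ either on the real segment $]-1,1[$ (where the polynomial $p(x) := \alpha^{-1} x (\beta - x^{\,2\,})$ is harmless for any $\alpha \geq 2$) or in a compact set strictly inside the disc (where $|p|$ can be controlled by taking $\alpha$ large).

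Concretely, since $S$ is a compact subset of the open unit disc, choose $r \in [\,0, 1 \,[$ with $| \,\lambda \,| \leq r$ for all $\lambda \in S$. Then pick $\alpha \geq 2$ large enough that also $\alpha \geq r \,(1 + r^{\,2\,}) \,/\, (1 - r)$, and set $\beta := \alpha + 1$ and $p(x) := \alpha^{-1} x (\beta - x^{\,2\,})$. I would then establish two pointwise bounds on $p$.

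First, $| \,p(x) \,| \leq 1$ for real $x \in [\,-1, 1\,]$: the derivative $p'(x) = \alpha^{-1} (\beta - 3 x^{\,2\,})$ vanishes only where $x^{\,2} = \beta/3 \geq 1$ (since $\alpha \geq 2$), so $p$ has no critical points in $]\,-1, 1\,[$, and its extrema on $[\,-1, 1\,]$ are attained at $\pm 1$, where $p( \pm 1) = \pm 1$. Second, $| \,p(\lambda) \,| \leq 1$ for every $\lambda \in S$: the triangle inequality gives
\[ | \,p(\lambda) \,| \leq \alpha^{-1} | \,\lambda \,| \,\bigl( \beta + | \,\lambda \,|^{\,2} \,\bigr) \leq \alpha^{-1} r \,( \,\alpha + 1 + r^{\,2\,}) = r + \alpha^{-1} r \,(1 + r^{\,2\,}) \leq r + (1 - r) = 1 \]
by the choice of $\alpha$.

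With these bounds in hand, condition (iv) of \ref{sapreShiraliFord} is immediate: for $a \in A\sa$ with $\rlambda(a) < 1$ and $\lambda \in \s(a)$, we have $| \,\lambda \,| \leq \rlambda(a) < 1$, and by hypothesis $\lambda \in \ ] \,-1, 1 \,[ \ \cup \ S$. In either case $| \,p(\lambda) \,| \leq 1$. Proposition \ref{sapreShiraliFord} then yields that $A$ is Hermitian. The main conceptual step is recognising that \ref{sapreShiraliFord}(iv) is a pointwise polynomial inequality on the spectrum, which is exactly the type of statement that can be verified separately on the real segment and on a compact ``exceptional'' set; the analytic content then reduces to showing that $p$ is a contraction on $[\,-1,1\,]$ (automatic once $\alpha \geq 2$) and can be made a contraction on $S$ by choosing $\alpha$ large. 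No real obstacle is expected beyond carefully fitting the hypothesis into the form required by \ref{sapreShiraliFord}.
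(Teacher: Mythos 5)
Your proposal is correct and follows essentially the same route as the paper: both reduce to condition (iv) of \ref{sapreShiraliFord} and verify the polynomial inequality separately on $]-1,1[$ (by elementary calculus) and on $S$ (by choosing $\alpha$ large enough relative to $\max\{|\,\lambda\,| : \lambda \in S\}$). The only difference is the arithmetic of the choice of $\alpha$ — you require $\alpha \geq r\,(1+r^{\,2})/(1-r)$ via the triangle inequality, while the paper requires $\alpha/(\alpha+2) \geq r$ — and both choices work.
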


\begin{proof}
Let $A$ be a Banach \st-algebra. Assume that $S$ is a compact subset
of the open unit disc such that $a \in A\sa$ and $\rlambda (a) < 1$
together imply $\s(a) \subset \ ] -1,1 \,[ \ \cup \ S$. To show that $A$ is
Hermitian, one applies (iv) of the preceding proposition \ref{sapreShiraliFord}
with $\alpha \geq 2$ so large that
\[ \alpha/(\alpha+2)\geq\max\,\{\,|\,\lambda\,| : \lambda \in S \,\}. \]
Indeed, with $\beta := \alpha +1$ we have the following.
Let $a \in A\sa$ with $\rlambda (a) < 1$, and let $\lambda \in \s(a)$.
If $\lambda$ does not belong to $] -1,1 \,[$, then it belongs to $S$, whence
\[|\,\lambda\,| \leq \alpha^{\,} / (\alpha+2) = \alpha^{\,} / (\beta+1)
\leq \alpha^{\,} / \bigl| \,\beta - {\lambda}^{\,2\,} \bigr|,\]
Also, if $\lambda$ belongs to $] -1,1 \,[$, then
\[ \bigl| \,{\alpha \,}^{-1} \lambda \,\bigl( \,\beta - {\lambda}^{\,2\,} \bigr) \,\bigr| < 1 \]
by differential calculus.
\end{proof}

\begin{remark}\label{remark2}%
Some of the results in this paragraph have easy proofs in presence of
commutativity, using multiplicative linear functionals, as in \ref{Hermcomm}.
Also, in \ref{instability}, the assumption that $A$ be \linebreak Hermitian
can be dropped, cf.\ \ref{specsub}. Our proofs leading to these
facts however use the Lemma of Zorn, cf.\ the remark \ref{Zorn}.
\pagebreak
\end{remark}

\clearpage

%


\section{C\texorpdfstring{*-}{\80\052\80\055}Seminorms on %
Banach \texorpdfstring{$*$-}{\80\052\80\055}Algebras}%
\label{PtakHerm}

\begin{definition}[C*-seminorm]\label{Cstarsemin}%
\index{concepts}{C8@C*-seminorm}%
\index{concepts}{C7@C*-property}%
An \underline{algebra seminorm} is a seminorm $p$
on an algebra $A$ such that
\[ p(ab) \leq p(a) \,p(b) \quad \text{for all} \quad a, b \in A. \]

A \underline{C*-seminorm} is an algebra seminorm $p$ on a
\st-algebra $A$, such that for all $a \in A$, one has
\[ p(a)^{\,2} = p(a^*a) \quad \text{as well as} \quad p(a^*) = p(a). \]
The first equality is called the \underline{C*-property}.

An equivalent definition is that a C*-seminorm is an algebra
seminorm $p$ on a \st-algebra $A$, satisfying
\[ p(a)^{\,2} \leq p(a^*a) \quad \text{for all} \quad a \in A, \]
cf.\ the proof of \ref{condC*}.
\end{definition}

\begin{theorem}\label{existhom}%
For a C*-seminorm $p$ on a \st-algebra $A$
there exists a \st-algebra homomorphism
$\pi$ from $A$ to a C*-algebra such that
\[ \| \,\pi(a) \,\| = p(a) \quad \text{for all} \quad a \in A. \]
\end{theorem}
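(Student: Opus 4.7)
The plan is to quotient out the kernel of $p$ and complete, giving the desired C*-algebra, with $\pi$ the natural quotient map composed with the inclusion into the completion.

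First I would define $N := \{\,a \in A : p(a) = 0\,\}$. The key algebraic step is to verify that $N$ is a \st-stable two-sided ideal of $A$. This is straightforward from the definition of a C*-seminorm: submultiplicativity $p(ab) \leq p(a) \,p(b)$ shows that $a \in N$ implies $ab, ba \in N$ for all $b \in A$, and the identity $p(a^*) = p(a)$ shows that $N$ is \st-stable. Since $N$ is a vector subspace (by the seminorm axioms), the quotient $A/N$ inherits a natural structure of \st-algebra.

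Next I would define $\| \,a + N \,\| := p(a)$ on $A/N$. This is well-defined on cosets because if $a - a' \in N$, then $| \,p(a) - p(a') \,| \leq p(a-a') = 0$. The submultiplicativity of $p$ transfers to submultiplicativity of $\| \cdot \|$, and the C*-property of $p$ transfers immediately to the C*-property of $\| \cdot \|$. Thus $(A/N, \| \cdot \|)$ is a pre-C*-algebra (cf.\ \ref{preC*alg}), and the canonical projection $q : A \to A/N$ is a \st-algebra homomorphism satisfying $\| \,q(a) \,\| = p(a)$ for all $a \in A$.

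Finally I would apply \ref{precompl} to obtain a C*-algebra $B$ which is the completion of the pre-C*-algebra $A/N$; this is legitimate because pre-C*-algebras have isometric (hence continuous) involution. Letting $\iota : A/N \hookrightarrow B$ denote the canonical isometric inclusion, the composition $\pi := \iota \circ q : A \to B$ is then a \st-algebra homomorphism into a C*-algebra satisfying $\| \,\pi(a) \,\| = \| \,q(a) \,\| = p(a)$ for all $a \in A$.

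There is no real obstacle here; the only point requiring a moment of attention is checking that the seminorm descends well-defined to a norm on the quotient, and that the submultiplicativity together with the C*-property pass to the quotient unchanged, both of which follow mechanically from the definitions.
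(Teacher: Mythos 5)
Your proposal is correct and follows exactly the paper's own argument: quotient $A$ by the \st-stable two-sided ideal on which $p$ vanishes, equip the quotient with the induced norm (well-defined via $|\,p(a)-p(b)\,| \leq p(a-b)$), observe it is a pre-C*-algebra, and complete using \ref{precompl}. No differences worth noting.
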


\begin{proof}
It is easily seen that the set $I$ on which $p$ vanishes is a
\st-stable two-sided ideal in $A$. Furthermore, from
\[ | \,p(a)-p(b) \,| \leq p(a-b) \quad (a, b \in A) \]
it follows that $ \| \,a+I \,\| := p(a)$ $(a \in A)$ is well-defined.
This definition makes $A\mspace{1mu}/I$ into a pre-C*-algebra.
(As $p$ is a C*-seminorm.) We get a \st-algebra homomorphism
$\pi : a \mapsto a+I$ from $A$ to the pre-C*-algebra
$A\mspace{1mu}/I$ with $\| \,\pi(a) \,\| = p(a)$ for all $a \in A$.
It remains to complete $A\mspace{1mu}/I$.
\end{proof}

\begin{corollary}\label{contC*semin}%
A C*-seminorm $p$ on a \underline{Banach} \st-algebra $A$
is continuous and satisfies
\[ p(a) \leq \rsigma(a) \quad \text{for all} \quad a \in A. \]
\end{corollary}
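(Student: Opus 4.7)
The plan is to reduce the statement to the two previous machinery results \ref{existhom} and \ref{contraux}, plus the automatic continuity theorem \ref{automcont}, all of which are already available at this point in the text.

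First I would apply theorem \ref{existhom} to the given C*-seminorm $p$, which produces a \st-algebra homomorphism $\pi : A \to B$ into a C*-algebra $B$ such that $\| \,\pi(a) \,\| = p(a)$ for every $a \in A$. This reformulates the problem entirely in terms of $\pi$: both the claimed inequality and the continuity of $p$ become statements about $\|\,\pi(\cdot)\,\|$.

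Next, since $A$ is a Banach \st-algebra and $B$ is a (pre-)C*-algebra, theorem \ref{contraux} applies verbatim to $\pi$ and yields
\[ p(a) \;=\; \| \,\pi(a) \,\| \;\leq\; \rsigma(a) \quad \text{for all} \quad a \in A, \]
which is the desired inequality.

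Finally, for the continuity of $p$, I would invoke the automatic continuity theorem \ref{automcont}: a \st-algebra homomorphism from a Banach \st-algebra to a pre-C*-algebra is automatically continuous. Applied to $\pi$, this says that $a \mapsto \|\,\pi(a)\,\| = p(a)$ is continuous on $A$. No step here is really an obstacle, since the preparatory work was done in the previous paragraph; the only subtle point worth emphasising is that we cannot naively try to bound $\rsigma(a)$ by a constant multiple of $|\,a\,|$ (the involution on $A$ need not be continuous), which is precisely why the appeal to \ref{automcont} rather than to a direct estimate through $\rsigma$ is necessary.
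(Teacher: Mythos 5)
Your proposal is correct and follows exactly the paper's own proof: theorem \ref{existhom} to produce $\pi$ with $\|\,\pi(a)\,\| = p(a)$, theorem \ref{contraux} for the inequality $p(a) \leq \rsigma(a)$, and theorem \ref{automcont} for continuity. Your closing remark about why one cannot bypass \ref{automcont} with a direct norm estimate on $\rsigma$ is a sensible observation but not needed for the argument.
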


\begin{proof}
The preceding theorem \ref{existhom} guarantees the existence
of a \linebreak \st-algebra homomorphism $\pi$ from $A$ to a
C*-algebra with $\| \,\pi(a) \,\| = p(a)$ for all $a \in A$. This \st-algebra
homomorphism $\pi$ satisfies $\| \,\pi(a) \,\| \leq \rsigma(a)$ by
theorem \ref{contraux}, and is continuous by theorem \ref{automcont}.
\end{proof}

\begin{theorem}[Pt\'{a}k]\label{PtakThm}%
\index{concepts}{Theorem!Pt\'{a}k}%
\index{concepts}{Pt\'{a}k!Theorem}%
The Pt\'{a}k function $\rsigma$ on a \underline{Hermitian} \linebreak
Banach \st-algebra is a C*-seminorm. \pagebreak
\end{theorem}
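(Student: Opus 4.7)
The approach is to verify each clause of the definition of a C*-seminorm (\ref{Cstarsemin}). The C*-property $\rsigma(a^*a) = \rsigma(a)^{\,2}$ and the isometry of the involution $\rsigma(a^*) = \rsigma(a)$ are already contained in proposition \ref{rsC*}, while the homogeneity $\rsigma(\lambda a) = |\lambda|\,\rsigma(a)$ is immediate from $\rlambda\bigl((\lambda a)^*(\lambda a)\bigr) = |\lambda|^{\,2}\,\rlambda(a^*a)$. The work therefore reduces to proving that $\rsigma$ is submultiplicative and subadditive, both of which will crucially invoke the Shirali-Ford Theorem \ref{ShiraliFord}.

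Submultiplicativity is the easier of the two. One computes
\[
\rsigma(ab)^{\,2} = \rlambda(b^*a^*ab) = \rlambda(a^*a \cdot bb^*)
\]
by \ref{rlcomm}. Both factors $a^*a$ and $bb^*$ are Hermitian, so \ref{Hermsubmult} yields $\rlambda(a^*a \cdot bb^*) \leq \rlambda(a^*a)\,\rlambda(bb^*) = \rsigma(a)^{\,2}\,\rsigma(b)^{\,2}$, where the second factor uses \ref{rsC*}.

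Subadditivity is the heart of the matter. I would first establish an auxiliary monotonicity lemma: if $p, q \in A_+$ with $q - p \in A_+$, then $\rlambda(p) \leq \rlambda(q)$. Indeed, the Rational Spectral Mapping Theorem places $\rlambda(q)e - q$ in $\tld{A}_+$, and convexity of $\tld{A}_+$ (\ref{plusconvexcone}, applied to the automatically Hermitian unitisation $\tld{A}$) produces $\rlambda(q) e - p = (\rlambda(q) e - q) + (q - p) \in \tld{A}_+$, which forces $\s(p) \subset [\,0, \rlambda(q)\,]$ and thus $\rlambda(p) \leq \rlambda(q)$. For the subadditivity itself, the classical ``parallelogram'' trick is essential: for every $t > 0$, Shirali-Ford applied to $ta - t^{-1}b$ yields $(ta - t^{-1}b)^*(ta - t^{-1}b) \geq 0$, which rearranges to $a^*b + b^*a \leq t^2 a^*a + t^{-2} b^*b$ and hence to
\[
(a+b)^*(a+b) \leq (1+t^2) \,a^*a + (1+t^{-2}) \,b^*b.
\]
Both sides lie in $A_+$ by Shirali-Ford and convexity (\ref{plusconvexcone}), so the monotonicity lemma followed by subadditivity of $\rlambda$ on Hermitian elements (\ref{Hermrlcontsa}) delivers
\[
\rsigma(a+b)^{\,2} \leq (1+t^2)\,\rsigma(a)^{\,2} + (1+t^{-2})\,\rsigma(b)^{\,2}
\qquad (t > 0).
\]
Minimizing the right-hand side at $t^2 = \rsigma(b)/\rsigma(a)$ (with the degenerate cases $\rsigma(a)=0$ or $\rsigma(b)=0$ dispatched by letting $t$ tend to $\infty$ or $0$) yields $\rsigma(a+b)^{\,2} \leq (\rsigma(a) + \rsigma(b))^{\,2}$. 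The decisive obstacle is precisely this subadditivity step: a naive estimate via $(a+b)^*(a+b) + (a-b)^*(a-b) = 2a^*a + 2b^*b$ gives only a factor $\sqrt{2}$ too large, and the optimization over the scaling parameter $t$ is needed to recover the triangle inequality with sharp constant.
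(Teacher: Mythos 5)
Your proof is correct, and for the decisive step -- subadditivity -- it takes a genuinely different route from the paper's. The paper first proves submultiplicativity, then establishes the intermediate inequality $\rlambda\bigl(\frac12(c+c^*)\bigr) \leq \rsigma(c)$ (via $c^*c+cc^* = 2(a^{\,2}+b^{\,2})$ and convexity of the positive cone), and finally expands $(a+b)^*(a+b)$ and bounds the cross term by $\rlambda(a^*b+b^*a) \leq 2\,\rsigma(a^*b) \leq 2\,\rsigma(a)\,\rsigma(b)$, feeding the already-proven submultiplicativity back into the subadditivity argument. You instead derive the order inequality $(a+b)^*(a+b) \leq (1+t^2)\,a^*a + (1+t^{-2})\,b^*b$ from the Shirali-Ford Theorem \ref{ShiraliFord} applied to $ta - t^{-1}b$, pass to spectral radii via a monotonicity lemma on $A_+$ (your lemma is sound, and the same convexity device appears inside the paper's proof of its intermediate step), and then optimise over the scaling parameter $t$. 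The trade-off: your argument is the self-contained ``Cauchy--Schwarz with a parameter'' computation and makes no use of submultiplicativity in the subadditivity step, but it invokes the full strength of Shirali-Ford ($a^*a \geq 0$ for arbitrary $a$), whereas the paper's route gets by with convexity of the positive cone \ref{plusconvexcone} together with the reality of spectra of Hermitian elements, never needing positivity of general $a^*a$. Since \ref{ShiraliFord} is proved before \ref{PtakThm} in the text, your appeal to it is legitimate, and all the auxiliary facts you cite (\ref{rsC*}, \ref{rlcomm}, \ref{Hermsubmult}, \ref{Hermrlcontsa}, \ref{plusconvexcone}) are available at this point.
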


\begin{proof} Let $A$ be a Hermitian Banach \st-algebra.
The Pt\'{a}k function $\rsigma$ on $A$ satisfies the C*-property
by proposition \ref{rsC*}. The same proposition shows that
$\rsigma(a^*) = \rsigma(a)$ for all $a \in A$. To see that
$\rsigma$ is submultiplicative, let $a, b \in A$. By proposition
\ref{rlcomm} and the submultiplicativity property of $\rlambda$
shown in proposition \ref{Hermsubmult}, we have that
\begin{align*}
\rsigma(ab)^{\,2} & = \rlambda(b^*a^*ab) \\
 & = \rlambda(a^*abb^*) \\
 & \leq \rlambda(a^*a) \,\rlambda(bb^*) = \rsigma(a)^{\,2} \,\rsigma(b^*)^{\,2}
 = \rsigma(a)^{\,2} \,\rsigma(b)^{\,2}.
\end{align*}
We shall next show the intermediate result that for $c \in A$ one has
\[ \rlambda \,\bigl( \,{\frac12 \,(c+c^*)} \,\bigr) \leq \rsigma(c). \]
Let $c = a + \iu b$ with $a, b \in A\sa$. We then have
\[ c^*c + cc^* = 2 \,{ ( \,a^{\,2} + b^{\,2} \,)}. \]
Also, by convexity of $A_+$, cf.\ theorem \ref{plusconvexcone}, one has
\[ \rlambda \,( \,a^{\,2} + b^{\,2} \,) \,e - a^{\,2} =
\bigl\{ \,\rlambda \,( \,a^{\,2} + b^{\,2} \,) \,e
- ( \,a^{\,2} + b^{\,2} \,) \,\bigr\} + b^{\,2} \geq 0, \]
and so
\[ \rlambda \,( \,a^{\,2} \,) \leq \rlambda \,( \,a^{\,2} + b^{\,2} \,). \]
Whence, by proposition \ref{rlpowers} and from subadditivity of $\rlambda$
on $A\sa$, cf.\ proposition \ref{Hermrlcontsa}, we get
\begin{alignat*}{2}
 {\rlambda \,\Bigl( \,{\frac12 \,(c+c^*)} \,\Bigr)}^{\,2} & = {\rlambda(a)}^{\,2} & \\
 & = \rlambda ( \,a^{\,2} \,) & \\
 & \leq \rlambda \,( \,a^{\,2} + b^{\,2} \,)
 & \ = & \ \frac12 \,\Bigl( \,{\rlambda (c^*c + cc^*)} \,\Bigr) \\
 & & \ \leq & \ \frac12 \,\Bigl( {\,\rsigma(c)^{\,2} + \rsigma(c^*)^{\,2}} \,\Bigr)
 = \rsigma(c)^{\,2}.
\end{alignat*}
We can now show that $\rsigma$ is subadditive. For $a, b \in A$, by
subadditivity of $\rlambda$ on $A\sa$, cf.\ proposition \ref{Hermrlcontsa},
we have that
\begin{align*}
 \rsigma (a+b)^{\,2} & = \rlambda \bigl( \,(a^*+b^*)(a+b) \,\bigr) \\
 & \leq \rsigma(a)^{\,2} + \rsigma(b)^{\,2} + \rlambda(a^*b+b^*a).
\end{align*}
Using the intermediate result shown above, we find that
\[ \rlambda(a^*b+b^*a) \leq 2 \,\rsigma(a^*b)
\leq 2 \,\rsigma(a^*) \,\rsigma(b) = 2 \,\rsigma(a) \,\rsigma(b), \]
by submultiplicativity proved before. Collecting inequalities, we get
\[ \rsigma(a+b)^{\,2} \leq
 \rsigma(a)^{\,2} + \rsigma(b)^{\,2} + 2 \,\rsigma(a) \,\rsigma(b)
 = {\bigl( \,\rsigma(a) + \rsigma(b) \,\bigr)}^{\,2}, \]
as required. \pagebreak
\end{proof}

\medskip
This Theorem of Pt\'{a}k has a strong converse as we shall show next.

\begin{theorem}\label{Ptakconv}%
A Banach \st-algebra $A$ is Hermitian if and only if
the Pt\'{a}k function $\rsigma$ on $A$ is a seminorm.
\end{theorem}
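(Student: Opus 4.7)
The forward direction is immediate from Pt\'ak's theorem \ref{PtakThm}: if $A$ is Hermitian, the Pt\'ak function $\rsigma$ is a C*-seminorm, hence in particular a seminorm.

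For the converse, assume $\rsigma$ is a seminorm on $A$. The plan is to promote $\rsigma$ to a full C*-seminorm, and then to deduce Hermiticity from the representation theorem \ref{existhom} in combination with \ref{C*Herm} and \ref{rsHerm}. Proposition \ref{rsC*} already gives the C*-property $\rsigma(a^*a) = \rsigma(a)^{2}$ and the invariance $\rsigma(a^*) = \rsigma(a)$ unconditionally, while homogeneity $\rsigma(\lambda a) = |\lambda|\,\rsigma(a)$ is immediate from the definition of $\rsigma$. Thus the seminorm hypothesis on $\rsigma$ is really the hypothesis of subadditivity, and to make $\rsigma$ a C*-seminorm in the sense of \ref{Cstarsemin} the only missing ingredient is submultiplicativity, $\rsigma(ab) \leq \rsigma(a)\,\rsigma(b)$ for all $a,b \in A$. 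Once this has been established, theorem \ref{existhom} produces a \st-algebra homomorphism $\pi : A \to B$ into some C*-algebra $B$ with $\| \pi(a) \| = \rsigma(a)$ for every $a \in A$; since $\| \cdot \| = \rsigma$ on the C*-algebra $B$ by \ref{C*rs}, we have $\rsigma\bigl(\pi(a)\bigr) = \rsigma(a)$ for all $a$. The C*-algebra $B$ is Hermitian by \ref{C*Herm}, and lemma \ref{rsHerm} concludes that $A$ is Hermitian as well.

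The main obstacle is therefore the submultiplicativity of $\rsigma$ under only the subadditivity assumption. I would attack it through the identity $\rsigma(ab)^{2} = \rlambda(b^*a^*ab) = \rlambda(a^*a\cdot bb^*)$, which follows from \ref{rlcomm} and reduces the problem to proving $\rlambda(uv) \leq \rlambda(u)\,\rlambda(v)$ for the Hermitian elements $u := a^*a$ and $v := bb^*$. Over a Hermitian algebra this is precisely \ref{Hermsubmult}, whose proof breaks down here because it relies on the Hermitian inequality $\rlambda(\cdot) \leq \rsigma(\cdot)$. The alternative available to me is that, by \ref{Hermrseqrl}, $\rsigma$ and $\rlambda$ coincide on $A\sa$, so the subadditivity assumption on $\rsigma$ translates into subadditivity of $\rlambda$ on the real subspace $A\sa$. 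Combining this with $\rlambda(h^{n}) = \rlambda(h)^{n}$ from \ref{rlpowers} and the fact that $(u \pm tv)^{2}$ is Hermitian for real $t$, I would extract information on the Hermitian element $uv+vu$ by a polarisation difference of the estimates $\rlambda\bigl((u \pm tv)^{2}\bigr) = \rlambda(u \pm tv)^{2} \leq \bigl(\rlambda(u)+t\rlambda(v)\bigr)^{2}$, optimise in $t$, and then iterate the C*-identity $\rsigma(c)^{2} = \rsigma(c^*c)$ applied to $c = ab$ to drive any remaining constant in front of $\rlambda(u)\rlambda(v)$ down to $1$ in the limit. The delicate bookkeeping in this iterative tightening of constants is where the proof is genuinely concentrated, and is the step I expect to be most technically demanding.
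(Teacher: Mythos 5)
Your forward direction and your endgame for the converse (promote $\rsigma$ to a C*-seminorm, apply \ref{existhom} to get $\pi$ with $\| \,\pi(a) \,\| = \rsigma(a)$, then conclude via \ref{C*rs}, \ref{C*Herm} and \ref{rsHerm}) coincide with the paper's argument. But the middle of your proof has a genuine gap: you set yourself the task of proving submultiplicativity of $\rsigma$ on all of $A$ from subadditivity alone, and you do not actually carry it out. The ``polarisation difference'' you sketch only controls the \emph{Hermitian} element $uv+vu$: from
\[ (u+tv)^{2} - (u-tv)^{2} = 2t\,(uv+vu) \]
and subadditivity of $\rlambda$ on $A\sa$ one gets, after optimising in $t$, a bound of the form $\rlambda(uv+vu) \leq 4\,\rlambda(u)\,\rlambda(v)$. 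To convert this into a bound on $\rlambda(uv)$ you would need something like a reverse triangle inequality for $\rlambda$ on non-commuting Hermitian elements, which is not available; $\rlambda(uv)$ and $\rlambda(uv+vu)$ are not comparable in general. The subsequent ``iterative tightening of constants'' is asserted rather than performed, and it is precisely the step on which the whole converse would rest. As written, the proof is incomplete.

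The missing idea is a reduction that makes global submultiplicativity unnecessary. Hermiticity is a statement about the spectrum of each single Hermitian element $a$, and by \ref{specsubalg} it suffices to compute that spectrum in the closed \st-subalgebra $B$ of $A$ generated by $a$, which is \emph{commutative} by \ref{clogen}. Since $\rsigma$ and $\rlambda$ are defined purely in terms of the norm, the restriction of $\rsigma$ to $B$ is the Pt\'ak function of $B$, hence still a seminorm; and on a commutative normed \st-algebra $\rsigma$ is automatically submultiplicative by \ref{commrs}, while \ref{rsC*} supplies the C*-property unconditionally. So $\rsigma$ is a C*-seminorm on $B$, your endgame applies to $B$, and $B$ (hence $a$, hence $A$) is Hermitian. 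This commutative reduction is exactly what the paper does, and it replaces the hardest part of your plan with a one-line appeal to \ref{commrs}.
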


\begin{proof}
The ``only if'' part is clear from the preceding Theorem of Pt\'{a}k
\ref{PtakThm}. Conversely assume that the Pt\'{a}k function $\rsigma$
on the Banach \linebreak \st-algebra $A$ is a seminorm. To show
that $A$ is Hermitian, we can go over to the closed \st-subalgebra
generated by a Hermitian element, by \ref{specsubalg}. We may
thus assume that $A$ is commutative, by \ref{clogen}. \linebreak We
then have $\rsigma(ab) \leq \rsigma(a) \,\rsigma(b)$ for all $a,b \in A$
by \ref{commrs}. We \linebreak also have $\rsigma(a^*a) = {\rsigma(a) \,}^2$
and $\rsigma(a^*) = \rsigma(a)$ by \ref{rsC*}. Thus $\rsigma$ is a
C*-seminorm on $A$, and from theorem \ref{existhom} we get a
\st-algebra homomorphism $\pi$ from $A$ to a C*-algebra with
$\rsigma(a) = \| \,\pi(a) \,\|$ for all $a \in A$. Now lemma \ref{rsHerm}
and \ref{C*rs} imply that $A$ is Hermitian.
\end{proof}

\begin{theorem}
A commutative Banach \st-algebra $B$ is Hermitian if and only if
the spectral radius $\rlambda$ on $B$ is a C*-seminorm.
\end{theorem}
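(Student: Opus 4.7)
The plan is to reduce this statement to two results already established: the Theorem of Pt\'ak \ref{PtakThm} (the Pt\'ak function $\rsigma$ is a C*-seminorm on any Hermitian Banach \st-algebra) and its converse \ref{Ptakconv} (if $\rsigma$ is a seminorm on a Banach \st-algebra, the algebra is Hermitian). The bridge between $\rlambda$ and $\rsigma$ in the commutative setting is that these two functions coincide.

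For the forward direction, I would assume $B$ is Hermitian and commutative. Since $B$ is commutative, every element of $B$ is normal, so by \ref{fundHerm} (i) $\Rightarrow$ (iv) one has $\rlambda(b) = \rsigma(b)$ for every $b \in B$. The Theorem of Pt\'ak \ref{PtakThm} then tells us that $\rsigma$, and hence $\rlambda$, is a C*-seminorm on $B$.

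For the converse, I would assume $\rlambda$ is a C*-seminorm on the commutative Banach \st-algebra $B$. The C*-property gives $\rlambda(a)^{2} = \rlambda(a^{*}a) = \rsigma(a)^{2}$ for every $a \in B$, and taking non-negative square roots yields $\rlambda(a) = \rsigma(a)$ for all $a \in B$. In particular, $\rsigma$ inherits the seminorm property from $\rlambda$, so \ref{Ptakconv} applies and $B$ is Hermitian.

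There is no real obstacle here; the only point that deserves care is the observation that the C*-property alone forces $\rlambda = \rsigma$ pointwise on $B$, which is what lets us invoke \ref{Ptakconv} without any further work. Everything else is a direct citation of the results just proved in this paragraph and in \S\ 9 and \S\ 14.
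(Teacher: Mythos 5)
Your proof is correct, but it routes through heavier machinery than the paper's own argument. Both proofs pivot on the same observation: since every element of a commutative algebra is normal, \ref{fundHerm} (i) $\Leftrightarrow$ (iv) says that $B$ is Hermitian exactly when $\rlambda(b) = \rsigma(b)$ for all $b \in B$, and by the definition of $\rsigma$ this identity is literally the C*-property $\rlambda(b)^{\,2} = \rlambda(b^*b)$ for $\rlambda$. At that point the paper is essentially done: it only remains to recall that $\rlambda$ is \emph{always} an algebra seminorm on a commutative Banach algebra, by \ref{commrlsub}, so ``$\rlambda$ is a C*-seminorm'' collapses to ``$\rlambda$ satisfies the C*-property'', and neither the Theorem of Pt\'{a}k \ref{PtakThm} nor its converse \ref{Ptakconv} is needed. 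You instead import the seminorm property of $\rsigma$ from \ref{PtakThm} in the forward direction, and feed the seminorm property of $\rlambda$ into \ref{Ptakconv} in the backward direction; the transfer in both directions rests on the same pointwise identity $\rlambda = \rsigma$ that you correctly isolate. This is valid and free of circularity, since both cited results precede the theorem, but it presents the statement as a corollary of the Pt\'{a}k machinery (whose proofs rest on convexity of the positive cone and the Shirali--Ford circle of ideas), whereas the paper's proof exposes it as an elementary consequence of \ref{fundHerm} and \ref{commrlsub} alone. The one thing your route buys is that the forward direction requires no separate verification that $\rlambda$ is an algebra seminorm, since \ref{PtakThm} hands you the full C*-seminorm property of $\rsigma$ in one stroke.
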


\begin{proof}
A commutative Banach \st-algebra $B$ is Hermitian, if and
only if $\rlambda(b) = \rsigma(b)$ for all $b \in B$, cf.\ \ref{fundHerm}
(i) $\Leftrightarrow$ (iv). This is equivalent to
$\rlambda(b)^{\,2} = \rlambda(b^*b)$ for all $b \in B$,
which is precisely the C*-property for the spectral radius
$\rlambda$ on $B$. We conclude that actually $B$ is Hermitian
if and only if $\rlambda$ satisfies the C*-property. The statement
now follows from the fact that the spectral radius $\rlambda$ on
the commutative Banach \st-algebra $B$ is an algebra seminorm
by proposition \ref{commrlsub}.
\end{proof}

\medskip
We also note the following consequence of the Theorem of Pt\'{a}k.

\begin{corollary}\label{rscont}%
The Pt\'{a}k function $\rsigma$ on a Hermitian Banach \st-algebra is
uniformly continuous.
\end{corollary}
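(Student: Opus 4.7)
The plan is to assemble the corollary from two results that appear just above it in the text, namely Pt\'ak's Theorem \ref{PtakThm} and the automatic continuity of C*-seminorms on Banach \st-algebras \ref{contC*semin}. First I would invoke Pt\'ak's Theorem to obtain that $\rsigma$ is a C*-seminorm on the Hermitian Banach \st-algebra $A$. In particular, $\rsigma$ is a seminorm, so subadditivity yields the reverse triangle inequality
\[ | \,\rsigma(a) - \rsigma(b) \,| \leq \rsigma(a-b) \quad \text{for all} \quad a, b \in A. \]

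Next I would appeal to corollary \ref{contC*semin}, which asserts that any C*-seminorm on a Banach \st-algebra is continuous. (Recall that that corollary is proved by factoring the seminorm through a \st-algebra homomorphism into a C*-algebra via \ref{existhom}, and invoking the automatic continuity result \ref{automcont}.) A seminorm on a normed space is continuous precisely when it is dominated by some constant multiple of the norm, so there exists $C > 0$ with
\[ \rsigma(a) \leq C \,| \,a \,| \quad \text{for all} \quad a \in A. \]

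Combining the two displays gives
\[ | \,\rsigma(a) - \rsigma(b) \,| \leq \rsigma(a-b) \leq C \,| \,a-b \,| \quad \text{for all} \quad a, b \in A, \]
which is Lipschitz, and hence in particular uniform, continuity of $\rsigma$ on $A$. There is no real obstacle here: the entire substance of the argument is concentrated in Pt\'ak's Theorem, and the corollary is a matter of reading off its consequences together with the standard equivalence of continuity and boundedness for seminorms.
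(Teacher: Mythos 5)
Your proof is correct and follows the same route as the paper: Pt\'ak's Theorem \ref{PtakThm} gives that $\rsigma$ is a C*-seminorm, and corollary \ref{contC*semin} gives its continuity, whence uniform (indeed Lipschitz) continuity via the reverse triangle inequality for seminorms. You have merely made explicit the last step, which the paper leaves implicit.
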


\begin{proof}
The Pt\'{a}k function $\rsigma$ on a Hermitian Banach \st-algebra is
a C*-seminorm by the Theorem of Pt\'{a}k \ref{PtakThm}, and thus
a continuous seminorm by corollary \ref{contC*semin}.
\end{proof}

\begin{definition}[the greatest C*-seminorm, $\mathrm{m}$]%
\index{symbols}{m(a)@$\mathrm{m}(a)$}\label{greatestC*sn}%
\index{concepts}{C8@C*-seminorm!greatest}%
Let $A$ be a Banach \st-algebra.
The \underline{greatest C*-seminorm} $\mathrm{m}$ on $A$ is defined as
\[ \mathrm{m}(a) := \sup \,\{ \,p(a) \geq 0 : p \text{ is a C*-seminorm on } A \,\}
 \leq \rsigma(a) \ \ \ (a \in A). \]
Please note that $\mathrm{m}(a) \leq \rsigma(a)$ for all $a \in A$.
\pagebreak
\end{definition}

\begin{proof}
This follows from the fact that a C*-seminorm $p$ on the Banach
\st-algebra $A$ satisfies $p(a) \leq \rsigma(a)$ for all $a \in A$,
cf.\ corollary \ref{contC*semin}.
\end{proof}

\medskip
From the theorems \ref{PtakThm} \& \ref{Ptakconv}, we then get

\begin{corollary}\label{PtakRaikov}%
A Banach \st-algebra $A$ is Hermitian if and only if the Pt\'{a}k function
$\rsigma$ on $A$ equals the greatest C*-seminorm $\mathrm{m}$ on $A$.
\end{corollary}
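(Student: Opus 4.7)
The plan is to deduce both directions from results already established in the excerpt: the Theorem of Pt\'ak \ref{PtakThm}, its converse \ref{Ptakconv}, and the bound $\mathrm{m}(a) \leq \rsigma(a)$ noted in \ref{greatestC*sn} (which follows from \ref{contC*semin}).

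For the forward direction, suppose $A$ is Hermitian. By the Theorem of Pt\'ak \ref{PtakThm}, the function $\rsigma$ is itself a C*-seminorm on $A$. Hence $\rsigma$ appears as one of the seminorms in the supremum defining $\mathrm{m}$, so $\rsigma(a) \leq \mathrm{m}(a)$ for every $a \in A$. Combined with the reverse inequality $\mathrm{m}(a) \leq \rsigma(a)$ already recorded in definition \ref{greatestC*sn}, this gives $\mathrm{m} = \rsigma$.

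For the converse, suppose $\mathrm{m} = \rsigma$ on $A$. The pointwise supremum of a family of seminorms that is bounded above by some real-valued function is itself a seminorm, so $\mathrm{m}$ is a seminorm (subadditivity and absolute homogeneity pass to suprema, and finiteness is ensured by $\mathrm{m} \leq \rsigma < \infty$). Consequently $\rsigma$ is a seminorm, and theorem \ref{Ptakconv} then forces $A$ to be Hermitian.

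Essentially no obstacle is expected: the argument is a bookkeeping exercise combining \ref{PtakThm}, \ref{Ptakconv}, and \ref{greatestC*sn}. The only point that merits a brief verification is that $\mathrm{m}$, being defined as a pointwise supremum, inherits the seminorm axioms from its constituents; this is immediate from the definitions once one notes that the supremum is finite.
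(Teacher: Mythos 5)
Your argument is correct and coincides with the paper's own derivation, which obtains the corollary directly from the Theorem of Pt\'{a}k \ref{PtakThm} and its converse \ref{Ptakconv} together with the inequality $\mathrm{m} \leq \rsigma$ from \ref{greatestC*sn}. The one point you flag for verification --- that a pointwise supremum of seminorms bounded above by a finite function is again a seminorm --- is indeed the only detail needed, and your justification of it is sound.
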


We are next going to establish two results reminiscent of
\ref{commspeccont} above.

\medskip
Let $\mathds{D}$ denote the closed unit disc. 

\begin{theorem}\label{Normalspeccont}%
Let $A$ be a Hermitian Banach \st-algebra. For an arbitrary element
$a \in A$, and a \underline{normal} element $b \in A$, we have
\[ \s(a) \subset \s(b) + \rsigma (b-a) \,\mathds{D}. \]
By \ref{rscont}, one says that the spectrum function is uniformly
continuous on the set of normal elements of $A$. (By symmetry
in $a$ and $b$.)
\end{theorem}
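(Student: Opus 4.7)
The plan is to mimic the proof of Theorem~\ref{commspeccont}, substituting the Pt\'{a}k function $\rsigma$ for the spectral radius $\rlambda$, and exploiting the fact that $\rsigma$ is submultiplicative on the Hermitian Banach \st-algebra $A$ by Pt\'{a}k's Theorem~\ref{PtakThm}. This submultiplicativity will play the role that the commuting-elements hypothesis of~\ref{commrlsub} played there, and it is essentially the only spot where the Hermitian hypothesis enters.

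Concretely, I would argue by contradiction: suppose some $\mu \in \s(a)$ satisfies $\mathrm{dist}\bigl(\mu,\s(b)\bigr) > \rsigma(b-a)$. Then $\mu \notin \s(b)$, so $\mu e - b$ is invertible in $\tld{A}$. Because $b$ is normal, the inverse $(\mu e - b)^{-1}$ lies in the second commutant of $\{b\}$ in $\tld{A}$ by~\ref{inv2comm}, and hence commutes with $b$ and $b^{*}$, and therefore with $(\mu e - b)^{*} = \overline{\mu} e - b^{*}$; so $(\mu e - b)^{-1}$ is again normal. Since $A$ (and thus $\tld{A}$) is Hermitian, Theorem~\ref{fundHerm}(iv) gives $\rsigma = \rlambda$ on normal elements, and then Theorem~\ref{Bdistance} yields
\[
\rsigma\bigl((\mu e - b)^{-1}\bigr) \,=\, \rlambda\bigl((\mu e - b)^{-1}\bigr) \,=\, \mathrm{dist}\bigl(\mu,\s(b)\bigr)^{-1}.
\]

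By Pt\'{a}k's Theorem~\ref{PtakThm}, $\rsigma$ is submultiplicative, hence
\[
\rsigma\bigl((\mu e - b)^{-1}(b-a)\bigr) \,\leq\, \rsigma\bigl((\mu e - b)^{-1}\bigr) \cdot \rsigma(b-a) \,<\, 1.
\]
Using $\rlambda \leq \rsigma$ valid in any Hermitian Banach \st-algebra (Theorem~\ref{fundHerm}(iii)), it follows that $\rlambda\bigl((\mu e - b)^{-1}(b-a)\bigr) < 1$, so $e + (\mu e - b)^{-1}(b-a)$ is invertible in $\tld{A}$ (cf.~\ref{twosidinv}). Therefore
\[
\mu e - a \,=\, (\mu e - b)\,\bigl[\,e + (\mu e - b)^{-1}(b-a)\,\bigr]
\]
is invertible, contradicting $\mu \in \s(a)$.

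No serious obstacle is anticipated; the only point requiring a moment of care is verifying that $(\mu e - b)^{-1}$ remains normal, which legitimises replacing $\rsigma$ by $\rlambda$ on that element. Everything else is a direct transcription of the proof of~\ref{commspeccont}, with Pt\'{a}k's submultiplicativity of $\rsigma$ replacing the commutativity requirement that forced~\ref{commspeccont} to assume $a$ and $b$ commute.
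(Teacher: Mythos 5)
Your proof is correct and is essentially the paper's own argument: the same contradiction setup, the same use of \ref{Bdistance}, the same passage between $\rlambda$ and $\rsigma$ via \ref{fundHerm} exploiting normality of $(\mu e - b)^{-1}$, Pt\'{a}k's submultiplicativity \ref{PtakThm}, and the same factorisation $\mu e - a = (\mu e - b)\bigl[\,e + (\mu e - b)^{-1}(b-a)\,\bigr]$. Your explicit verification that $(\mu e - b)^{-1}$ is normal is a welcome detail the paper leaves tacit.
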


\begin{proof}
Suppose the inclusion is not true. There then exists $\mu \in \s(a)$
with $\mathrm{dist}\bigl(\mu, \s(b)\bigr) > \rsigma (b-a)$.
Note that then $\mu e - b$ is invertible. By \ref{Bdistance}, we find
$\rlambda \bigl( \,{(\mu e - b)}^{\,-1} \,\bigr) \,\rsigma (b-a) < 1$. This is the same
as $\rsigma \bigl( \,{(\mu e - b)}^{\,-1} \,\bigr) \,\rsigma (b-a) < 1$, by \ref{fundHerm}
(i) $\Rightarrow$ (iv), as ${(\mu e - b)}^{\,-1}$ is normal. From \ref{PtakThm},
we get $\rsigma \bigl( \,{(\mu e - b) \,}^{-1} \,(b-a) \,\bigr) < 1$,
and so $\rlambda \bigl( \,{(\mu e - b) \,}^{-1} \,(b-a) \,\bigr) < 1$, by \ref{fundHerm}
(i) $\Rightarrow$ (iii). We have $\mu e-a = (\mu e - b) + (b-a)$. Whence
$\mu e-a = (\mu e - b) \,\bigl[ \,e + {(\mu e - b) \,}^{-1} \,(b-a) \,\bigr]$ is invertible,
a contradiction.\vphantom{$)^{-1}$} This proof is similar to that of
\ref{commspeccont}.
\end{proof}

\medskip
Of particular interest is the following consequence.

\begin{corollary}\label{Hermspeccont}%
Let $A$ be a Hermitian Banach \st-algebra. For two
\underline{Hermitian} elements $a, b \in A$, we have
\[ \s(a) \subset \s(b) + \rlambda (b-a) \,\mathds{D}
\subset \s(b) + | \,b-a \,| \,\mathds{D}. \]
One says that the spectrum function is uniformly continuous
on $A\sa$. (By symmetry in $a$ and $b$.)
\end{corollary}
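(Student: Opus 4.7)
The plan is to derive this directly from Theorem \ref{Normalspeccont}, using the fact that in this setting all the quantities collapse nicely. Since $b$ is Hermitian, it is in particular normal, so Theorem \ref{Normalspeccont} applies and gives
\[ \s(a) \subset \s(b) + \rsigma(b-a)\,\mathds{D}. \]
The point is then to replace $\rsigma(b-a)$ with $\rlambda(b-a)$.

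First I would observe that since $a$ and $b$ are both Hermitian, the difference $b-a$ is again Hermitian. Proposition \ref{Hermrseqrl} then tells us that $\rsigma(c) = \rlambda(c)$ for any Hermitian element $c$ of a normed \st-algebra (this does not even require the Hermitian hypothesis on $A$). Applying this to $c = b-a$ yields $\rsigma(b-a) = \rlambda(b-a)$, which upgrades the inclusion above to
\[ \s(a) \subset \s(b) + \rlambda(b-a)\,\mathds{D}, \]
the first of the two desired inclusions.

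The second inclusion $\s(b) + \rlambda(b-a)\,\mathds{D} \subset \s(b) + |\,b-a\,|\,\mathds{D}$ is immediate from the trivial bound $\rlambda(c) \leq |\,c\,|$ noted in definition \ref{rldef}. There is essentially no obstacle here; the whole proof is a one-line application of Theorem \ref{Normalspeccont} combined with \ref{Hermrseqrl}. The only subtlety worth mentioning is the symmetry remark in the statement: swapping the roles of $a$ and $b$ gives the reverse inclusion with the same radius $\rlambda(b-a) = \rlambda(a-b)$, which is why one is entitled to speak of uniform continuity of the spectrum function on $A\sa$ (in the Hausdorff distance on compact subsets of $\mathds{C}$).
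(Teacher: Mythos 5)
Your proof is correct and is essentially identical to the paper's own argument: both deduce the result from Theorem \ref{Normalspeccont} by noting that $b-a$ is Hermitian, so $\rsigma(b-a)=\rlambda(b-a)$ by \ref{Hermrseqrl}, with the second inclusion following from $\rlambda(b-a)\leq|\,b-a\,|$. No gaps.
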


\begin{proof}
This follows from the preceding theorem \ref{Normalspeccont}
because for $a, b \in A\sa$, we have $a-b \in A\sa$, so
$\rsigma (b-a) = \rlambda (b-a)$, by \ref{Hermrseqrl}.
\end{proof}

\medskip
See also \ref{Hermrlcontsa} above. \pagebreak

\clearpage

%


\section{The Polar Factorisation of Invertible Elements}%
\label{polarfactoris}

\begin{definition}[$\mspace{1mu}a^{\,1/2}\mspace{2mu}$]%
\label{invpossqrt}\index{concepts}{square root}%
\index{symbols}{a17@$a^{\,1/2}$}%
Let $A$ be a Banach \st-algebra. An \underline{invertible}
positive element $a$ of $\tld{A}$ has a unique positive
square root in $\tld{A}$. This unique positive square
root of $a$ in $\tld{A}$ is denoted by $a^{\,1/2}$. It belongs
to the closed subalgebra of $\tld{A}$ generated by $a$.
With $a$, also $a^{\,1/2}$ is invertible.
\end{definition}

\begin{proof}
Simply apply theorem \ref{possqroot}.
\end{proof}

\medskip
The next three items require Hermitian Banach \st-algebras.

\begin{definition}[$ \,| \,a\,| \,$]\label{invabsval}%
\index{concepts}{absolute value}\index{symbols}{a2@${"|}\,a\,{"|}$}%
Let $A$ be a Hermitian Banach \st-algebra.
Let $a$ be an \underline{invertible} element of $\tld{A}$.
Then $a^*a$ is an invertible positive element of $\tld{A}$.
Hence the \underline{absolute value}
\[ | \,a \,| := {( \,a^*a \,)}^{\,1/2} \]
is well-defined by the preceding item \ref{invpossqrt}.
(Even though the notation clashes with the notation for the norm.)
Furthermore, the following statements hold.
The element $| \,a \,|$ is the unique positive square root
of $a^*a$ in $\tld{A}$. It belongs to the closed \st-subalgebra
of $\tld{A}$ generated by $a$. With $a$, also the absolute
value $| \,a \,|$ is invertible.
\end{definition}

\begin{proof}
We have $a^*a \geq 0$ by the Shirali-Ford Theorem \ref{ShiraliFord}
as $\tld{A}$ is Hermitian. Also, with $a$, the elements $a^*$ and
$a^*a$ are invertible, with respective inverses ${( \,a^{\,-1} \,)}^*$
and $a^{\,-1\,} {( \,a^{\,-1} \,)}^*$. Therefore $a^*a$ is an invertible
positive element of $\tld{A}$, and the statement follows from the
preceding item \ref{invpossqrt}.
\end{proof}

\begin{definition}%
[polar factorisation]\label{polfactdef}\index{concepts}{polar factorisation}%
If $A$ is a Hermitian Banach \linebreak \st-algebra, and if
$a \in \tld{A}$ is \underline{invertible}, then a pair
$(u, p) \in \tld{A} \times \tld{A}$ shall be called a
\underline{polar factorisation} of $a$, whenever
\[ a = u p, \quad \text{with } u \text{ unitary in } \tld{A},
\text{ and } p \text{ positive in } \tld{A}. \]
We stress that this is a good definition only for invertible elements.
\end{definition}

\begin{theorem}[existence and uniqueness of a polar factorisation]%
\label{polfactexuni}\index{concepts}{factorisation!polar}%
Let $A$ be a Hermitian Banach \st-algebra. An \underline{invertible}
element $a$ of $\tld{A}$ has a unique polar factorisation $(u, p)$.
The element $p$ is the absolute value $| \,a \,|$ of $a$. \pagebreak
\end{theorem}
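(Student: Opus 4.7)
The plan is to construct a polar factorisation explicitly from the absolute value $| \,a \,|$ supplied by definition \ref{invabsval}, and then to read off uniqueness from the uniqueness part of \ref{possqroot}.

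\textbf{Existence.} I would set $p := | \,a \,| = ( \,a^*a \,)^{\,1/2}$, which makes sense in $\tld{A}$ by \ref{invabsval}, since $a$ invertible in $\tld{A}$ implies that $a^*a$ is an invertible positive element of $\tld{A}$ (using the Shirali-Ford Theorem \ref{ShiraliFord}, as $A$ is Hermitian). By the same item, $p$ is Hermitian, positive, and invertible in $\tld{A}$, with $p^{\,2} = a^*a$. Next define $u := a \,p^{\,-1} \in \tld{A}$. A direct calculation gives
\[
u^* u \,= \,p^{\,-1} \,a^* a \,p^{\,-1} \,= \,p^{\,-1} \,p^{\,2} \,p^{\,-1} \,= \,e,
\]
using $p^* = p$. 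Since $a$ and $p$ are both invertible, so is $u$, and $u^* u = e$ then forces $u^* = u^{\,-1}$ by \ref{leftrightinv}, hence also $u u^* = e$. Thus $u$ is unitary, and $(u,p)$ is a polar factorisation of $a$ with $p = | \,a \,|$.

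\textbf{Uniqueness.} Suppose $a = u p$ is \emph{any} polar factorisation in the sense of \ref{polfactdef}, with $u$ unitary and $p$ positive in $\tld{A}$. Then
\[
a^* a \,= \,p^* \,u^* u \,p \,= \,p^{\,2}.
\]
Because $a$ is invertible and $u$ is unitary (hence invertible), $p = u^{\,-1} a$ is invertible in $\tld{A}$. Being positive, this forces $\s(p) \subset \ ] \,0, \infty \,[$ by the Rational Spectral Mapping Theorem \ref{ratspecmapthm}. Thus $p$ is a positive square root of $a^*a$ with spectrum in $\ ] \,0, \infty \,[$. But theorem \ref{possqroot} guarantees that such a square root in $\tld{A}$ is unique, so $p$ must coincide with $( \,a^*a \,)^{\,1/2} = | \,a \,|$. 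The equation $a = u p$ then determines $u = a \,p^{\,-1}$ uniquely as well.

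\textbf{Main obstacle.} The only delicate point is ensuring that $u$ really is unitary rather than merely a left inverse candidate: one must invoke that $u$ is invertible (which follows since $a$ and $p$ are) together with \ref{leftrightinv} to upgrade $u^* u = e$ to $u u^* = e$. Everything else is bookkeeping around the absolute value, whose good properties are already packaged in \ref{invabsval} and \ref{possqroot}.
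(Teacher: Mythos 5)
Your argument is correct and follows essentially the same route as the paper: any polar factorisation forces $p^{\,2} = a^*a$, whence $p = | \,a \,|$ by uniqueness of the positive square root, and $u := a \,{| \,a \,| \,}^{-1}$ is then checked to be unitary via $u^*u = e$, invertibility of $u$, and \ref{leftrightinv}. One cosmetic remark: the citation of the Rational Spectral Mapping Theorem to get $\s(p) \subset \ ] \,0, \infty \,[$ is unnecessary, since positivity gives $\s(p) \subset [ \,0, \infty \,[$ and invertibility simply excludes $0$; alternatively, the uniqueness statement packaged in \ref{invabsval} already yields $p = | \,a \,|$ directly.
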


\begin{proof}
Assume first that $(u,p)$ is a polar factorisation of $a$. Then
$a^*a = {p\,}^2$ because $a^*a = (up)^*(up) = pu^*up = {p\,}^2$
since $p = p^*$ (as $p$ is positive), and because $u^*u = e$
(as $u$ is unitary). In other words $p$ must be a positive square
root of $a^*a$ in $\tld{A}$. We conclude that $p$ is the absolute
values $| \,a \,|$ of $a$, cf.\ \ref{invabsval}. Since $| \,a \,|$ is
invertible by \ref{invabsval}, the equation $a = u \,| \,a \,|$ implies
$u = a \,{| \,a \,| \,}^{-1}$. In particular, $u$ is uniquely
determined, and so there is at most one polar factorisation
of $a$. We can also use the necessary relationship
$u = a \,{| \,a \,| \,}^{-1}$ to define $u$. We note that
$u := a \,{| \,a \,| \,}^{-1}$ satisfies $u^*u = e$ because
\[ u^*u = {\bigl({ \,| \,a \,| \,}^{-1} \,\bigr)}^*\,a^*\,a\,{| \,a \,| \,}^{-1}
=  {\bigl({ \,| \,a \,| \,}^{-1} \,\bigr)}^*\,{| \,a \,| \,}^2\,{| \, a \,| \,}^{-1} = e. \]
We also note that $u$ is invertible, as $a$ is invertible by assumption.
So $u$ has a left inverse $u^*$, as well as some right inverse.
It follows from \ref{leftrightinv} that $u^*$ is an inverse of $u$,
i.e.\ the element $u$ is unitary. In other words, a polar factorisation
of $a$ exists, and the proof is complete.
\end{proof}

\begin{lemma}[polar factorisation of adjoint]\label{poladj}%
Consider a Hermitian Banach \st-algebra $A$, and let $a$ be an
invertible element of $\tld{A}$. Let $( \,u, | \,a \,| \,)$ be the polar
factorisation of $a$. The following statements hold.
\begin{itemize}
  \item[$(i)$] $( \,u^*, | \,a^* \,| \,)$ is the polar factorisation of $a^*$,
 \item[$(ii)$] $| \,a^* \,| = u \cdot | \,a \,| \cdot u^*$.
\end{itemize}
\end{lemma}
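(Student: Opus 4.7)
The plan is to read both statements off from the uniqueness of the polar factorisation \ref{polfactexuni}, once we have massaged $a^*$ into the form ``unitary times positive''. Starting from $a = u \,| \,a \,|$, taking adjoints (using that $| \,a \,|$ is Hermitian since positive) gives
\[ a^* = | \,a \,| \,u^*, \]
which is the wrong order. Inserting $u^* u = e$ rewrites this as
\[ a^* = u^* \cdot \bigl( \,u \,| \,a \,| \,u^* \,\bigr). \]

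I then need to check two things to recognise this as a polar factorisation of $a^*$ in the sense of \ref{polfactdef}: first, that $u^*$ is unitary in $\tld{A}$, which is immediate from $u^*u = uu^* = e$ and symmetry; second, that $u \,| \,a \,| \,u^*$ is positive in $\tld{A}$. For this second point, I would invoke the stability of the positive cone under \st-congruence, theorem \ref{stcongr}, applied in the Hermitian Banach \st-algebra $\tld{A}$ (which is Hermitian by the proposition at the beginning of \S~\ref{secHerm}). Since $| \,a \,| \in \tld{A}_+$ and $u^* \in \tld{A}$, the theorem (applied with $c := u^*$) yields $\bigl( \,u^* \,\bigr)^{\!*} \,| \,a \,| \,u^* = u \,| \,a \,| \,u^* \in \tld{A}_+$. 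We also need $a^*$ to be invertible so that its polar factorisation is well-defined; this is immediate as the inverse is ${(\,a^{\,-1}\,)}^*$.

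Having displayed $\bigl( \,u^*, \,u \,| \,a \,| \,u^* \,\bigr)$ as a polar factorisation of $a^*$, the uniqueness clause of \ref{polfactexuni} forces this pair to coincide with $\bigl( \,\text{unitary part of } a^*, \,| \,a^* \,| \,\bigr)$. This simultaneously proves (i) and identifies $| \,a^* \,| = u \cdot | \,a \,| \cdot u^*$, which is (ii). There is no real obstacle here: the lemma is essentially a bookkeeping consequence of \ref{polfactexuni} and \ref{stcongr}, the only point requiring a moment's care being the application of the \st-congruence stability in $\tld{A}$ rather than $A$ itself.
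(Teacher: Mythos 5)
Your proposal is correct and follows essentially the same route as the paper's own proof: take adjoints to get $a^* = | \,a \,| \,u^* = u^* \,( \,u \cdot | \,a \,| \cdot u^* \,)$, verify positivity of $u \cdot | \,a \,| \cdot u^*$ via \ref{stcongr}, and conclude by the uniqueness clause of \ref{polfactexuni}. Your extra remark about applying \ref{stcongr} in $\tld{A}$ rather than $A$ is a correct and welcome precision.
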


\begin{proof}
Please note first that with $a$, the adjoint $a^*$ is invertible.
From $a = u \,| \,a \,|$, we get
\[ a^* = | \,a \,| \,u^* = u^* \,\bigl( \,u \cdot | \,a \,| \cdot u^* \,\bigr). \]
Now the element $u \cdot | \,a \,| \cdot u^*$ is positive by \ref{stcongr}.
Also, the element $u^*$ is unitary. Hence $( \,u^*, \,u \cdot | \,a \,| \cdot u^* \,)$
must be the the polar factorisation of $a^*$, by uniqueness.
It follows that $| \,a^* \,| = u \cdot | \,a \,| \cdot u^*$.
\end{proof}

\begin{theorem}[commutativity in the polar factorisation]\label{commpolfact}%
Let $A$ be a Hermitian Banach \st-algebra, and let $a$ be an
invertible element of $\tld{A}$. Let $( \,u, | \,a \,| \,)$ be the polar
factorisation of $a$. The following statements are equivalent.
\begin{itemize}
   \item[$(i)$] the factors $u$ and $| \,a \,|$ commute,
  \item[$(ii)$] $a$ is normal,
 \item[$(iii)$] the absolute values of $a$ and $a^*$ coincide. \pagebreak
\end{itemize}
\end{theorem}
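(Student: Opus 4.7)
The plan is to prove the equivalence by a cyclic chain (i) $\Rightarrow$ (ii) $\Rightarrow$ (iii) $\Rightarrow$ (i), using only three tools: the defining relation $a = u \,| \,a \,|$ with $u^*u = uu^* = e$, the uniqueness of the positive square root of an invertible positive element in $\tld{A}$ (definition \ref{invpossqrt}), and the relation $| \,a^* \,| = u \cdot | \,a \,| \cdot u^*$ supplied by lemma \ref{poladj}. Since $\tld{A}$ is Hermitian (via \ref{plussubal} and the preceding paragraph), all these facts apply, and no further analytical input should be needed.

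For (i) $\Rightarrow$ (ii), I would simply compute $a^*a$ and $aa^*$ from $a = u \,| \,a \,|$. One gets $a^*a = | \,a \,| \,u^* u \,| \,a \,| = {| \,a \,| \,}^2$ always, while $aa^* = u \,{| \,a \,| \,}^2 \,u^*$; if $u$ and $| \,a \,|$ commute the latter collapses to ${| \,a \,| \,}^2 \,u u^* = {| \,a \,| \,}^2$ as well, whence $a$ is normal.

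For (ii) $\Rightarrow$ (iii), the identity $aa^* = a^*a$ reads ${| \,a^* \,| \,}^2 = {| \,a \,| \,}^2$ by the very definitions of these absolute values, and since both $| \,a \,|$ and $| \,a^* \,|$ are positive square roots of the \emph{same} invertible positive element of $\tld{A}$, the uniqueness clause in \ref{invpossqrt} (applied inside $\tld{A}$, using that both elements belong to $\tld{A}$) forces $| \,a \,| = | \,a^* \,|$.

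For (iii) $\Rightarrow$ (i), the hard-looking conjugation identity in lemma \ref{poladj}(ii) does all the work: from $| \,a^* \,| = u \cdot | \,a \,| \cdot u^*$ and the hypothesis $| \,a \,| = | \,a^* \,|$ I obtain $| \,a \,| = u \cdot | \,a \,| \cdot u^*$, and right-multiplication by $u$, using $u^* u = e$, yields $| \,a \,| \,u = u \,| \,a \,|$. I do not foresee any genuine obstacle; the only thing to be slightly careful about is to invoke uniqueness of the positive square root in step (ii) $\Rightarrow$ (iii) only after observing that $a^*a$ and $aa^*$ are \emph{invertible} positive elements of $\tld{A}$, which is exactly what was established in \ref{invabsval}.
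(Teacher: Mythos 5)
Your proof is correct and rests on exactly the same two ingredients as the paper's: the uniqueness of the positive square root of an invertible positive element (for the link between normality and $| \,a \,| = | \,a^* \,|$) and the conjugation identity $| \,a^* \,| = u \cdot | \,a \,| \cdot u^*$ from lemma \ref{poladj} (for the link with commutativity of the factors). The only difference is organisational — you run a cycle (i) $\Rightarrow$ (ii) $\Rightarrow$ (iii) $\Rightarrow$ (i) where the paper proves the two equivalences (ii) $\Leftrightarrow$ (iii) and (i) $\Leftrightarrow$ (iii) — which is immaterial.
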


\begin{proof}
The statements (ii) and (iii) are equivalent because
\[ a^*a = {| \,a \,|}^{\,2} \qquad \text{and} \qquad aa^* = {| \,a^* \,|}^{\,2} \]
(uniqueness of the positive square root of invertible positive elements,
cf.\ \ref{invpossqrt}.) In order to see that (i) and (iii) are equivalent, we
use the fact that
\[ | \,a^* \,| = u \cdot | \,a \,| \cdot u^*, \]
cf.\ the preceding lemma \ref{poladj}. If $u$ and $| \,a \,|$ commute, then
\[ | \,a^* \,| = u \cdot | \,a \,| \cdot u^* = | \,a \,| \,u \,u^* = | \,a \,|, \]
so $| \,a^* \,| = | \,a \,|$. Conversely, if $| \,a^* \,| = | \,a \,|$, then
\[ | \,a \,| \,u = | \,a^* \,| \,u = u \cdot | \,a \,| \cdot u^* \cdot u = u \,| \,a \,|, \]
so $u$ and $| \,a \,|$ commute.
\end{proof}

\bigskip
The reader will know the terms in the following two
definitions from Hilbert space theory.

\medskip
\begin{definition}[idempotents and involutory elements]%
\index{concepts}{involutory}\index{concepts}{idempotent}%
Let $A$ be an algebra. One says that $p \in A$ is an
\underline{idempotent}, if ${p\,}^2 = p$. One says that
$a \in \tld{A}$ is \underline{involutory}, if ${a\,}^2 = e$,
that is, if $a = a^{\,-1}$.
\end{definition}

\begin{definition}[projections and reflections]\label{projrefl}%
\index{concepts}{reflection}\index{concepts}{projection}%
\index{concepts}{orthogonal!projections}%
\index{concepts}{projections!orthogonal}%
\index{concepts}{complementary projections}%
\index{concepts}{projections!complementary}%
Let $A$ be a \st-algebra.

A \underline{projection} in the \st-algebra $A$ is a
Hermitian idempotent of $A$, that is,
an element $p \in A$ with $p^* = p = {p\,}^2$.

Two projections $p$ and $q$ in $A$ are called
\underline{orthogonal}, if $p \,q = 0$. Then also
$q \,p = 0$ by taking adjoints.

Two projections $p$ and $q$ in $\tld{A}$ are
called \underline{complementary}, if they are
orthogonal, and if $p + q = e$.

One says that $u \in \tld{A}$ is a \underline{reflection},
if $u$ is both Hermitian and unitary: $u = u^* = u^{\,-1}$.
\end{definition}

\begin{example}%
If $p$ and $q$ are two complementary projections
in a unital \st-algebra, then $p - q$ is a reflection.
Indeed, one computes:
\begin{align*}
( \,p - q \,)^* \,( \,p - q \,) = ( \,p - q \,) \,( \,p - q \,)^* & = {( \,p - q \,)}^{\,2} \\
& = {p}^{\,2} + {q}^{\,2} - p \,q - q \,p \\
& = p + q = e. \pagebreak
\end{align*}
\end{example}

\begin{theorem}[the structure of reflections]\label{structrefl}%
Let $u$ be a reflection in a unital \st-algebra. Then the
following statements hold.
\begin{itemize}
   \item[$(i)$] $u$ is involutory: ${u}^{\,2} = e$,
  \item[$(ii)$] $\s(u) \subset \{ -1, 1\}$,
 \item[$(iii)$] we have $u = p - q$, where the elements
                       $p := \frac12 \,( \,e + u \,)$ and $q:= \frac12 \,( \,e - u \,)$
                       are complementary projections.
\end{itemize}
\end{theorem}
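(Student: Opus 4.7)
The plan is to verify all three items by direct computation, essentially unpacking the definitions. Nothing deep is required; the only substantive tool is the Rational Spectral Mapping Theorem \ref{ratspecmapthm}.

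For (i), I would simply note that since $u$ is unitary we have $u^{\,-1} = u^*$, and since $u$ is Hermitian we have $u^* = u$. Hence $u^{\,2} = u \cdot u = u \cdot u^{\,-1} = e$. This is just an observation, but it is the key algebraic identity feeding the rest of the argument.

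For (ii), I would apply the Rational Spectral Mapping Theorem \ref{ratspecmapthm} to the polynomial $r(x) = x^{\,2}$, which has no pole on $\s(u)$ and is non-constant. From (i) we have $u^{\,2} = e$, so $\s(u^{\,2}) = \{1\}$. The theorem then gives $\{1\} = \s(u^{\,2}) = r(\s(u)) = \{\,\lambda^{\,2} : \lambda \in \s(u)\,\}$, so every $\lambda \in \s(u)$ satisfies $\lambda^{\,2} = 1$, whence $\lambda \in \{-1, 1\}$.

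For (iii), I would set $p := \frac12 (e + u)$ and $q := \frac12 (e - u)$ and check the required properties by mechanical computation. Clearly $p + q = e$ and $p - q = u$. Both $p$ and $q$ are Hermitian because $e$ and $u$ are. For the idempotency of $p$, one computes
\[ p^{\,2} = \tfrac14 \,( \,e + u \,)^{\,2} = \tfrac14 \,\bigl( \,e + 2u + u^{\,2} \,\bigr) = \tfrac14 \,( \,2e + 2u \,) = p, \]
using (i); similarly $q^{\,2} = q$. Finally, orthogonality follows from
\[ p \,q = \tfrac14 \,( \,e + u \,) \,( \,e - u \,) = \tfrac14 \,\bigl( \,e - u^{\,2} \,\bigr) = 0, \]
again using (i). Since $p, q$ are projections with $p + q = e$ and $p \,q = 0$, they are complementary in the sense of \ref{projrefl}, and $u = p - q$ by construction.

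There is no real obstacle here; the whole statement is essentially a spectral-decomposition observation made possible by the identity $u^{\,2} = e$, which in turn is the only algebraic content that needs to be extracted from the defining properties of a reflection.
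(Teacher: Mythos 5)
Your proposal is correct and follows essentially the same route as the paper's proof: deduce $u^{\,2}=e$ from $u=u^*=u^{\,-1}$, get (ii) from the Rational Spectral Mapping Theorem applied to $x \mapsto x^{\,2}$, and verify (iii) by the same direct computations of $p^{\,2}$, $q^{\,2}$, and $p\,q$. No gaps.
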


\begin{proof}
The element $u$ is involutory because $u = u^* = u^{\,-1}$ implies that
${u}^{\,2} = e$. It follows from the Rational Spectral Mapping Theorem that
$\s(u) \subset \{ -1, 1\}$. Next, the elements $p$ and $q$ as in (iii)
obviously are Hermitian, and we have $p - q= u$ as well as $p + q = e$.
To show that $p$ and $q$ are idempotents, we note that
\[ {\biggl(\frac{e \pm u}{2}\biggr)}^2 = \frac{{e}^{\,2} + {u}^{\,2} \pm 2u}{4}
= \frac{2e \pm 2u}{4} = \frac{e \pm u}{2}. \]
To show that the projections $p$ and $q$ are orthogonal, we calculate:
\[ p \,q = \frac12 \,( \,e + u \,) \cdot \frac12 \,( \,e - u \,)
= \frac14 \,( \,{e}^{\,2} - {u}^{\,2} \,) = 0. \qedhere \]
\end{proof}

\begin{corollary}%
In a unital \st-algebra, the reflections are precisely the differences
of complementary projections.
\end{corollary}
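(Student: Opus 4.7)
The plan is to prove the corollary as a straightforward combination of two facts already established in the excerpt, one for each direction of the ``precisely'' claim.

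First I would handle the easy inclusion: every difference of complementary projections is a reflection. This is exactly the content of the example displayed just before Theorem \ref{structrefl}, where for complementary projections $p, q$ one verifies directly that ${( p - q )}^{\,2} = {p}^{\,2} + {q}^{\,2} - p\mspace{1mu}q - q\mspace{1mu}p = p + q = e$, using the idempotence of $p, q$ and their orthogonality $p\mspace{1mu}q = q\mspace{1mu}p = 0$. Combined with the obvious Hermiticity of $p - q$, this shows $p - q$ is both Hermitian and involutory, hence a reflection (as $u = u^*$ and $u^{\,2} = e$ imply $u = u^* = u^{\,-1}$).

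For the converse inclusion, every reflection arises in this way, I would simply invoke part (iii) of Theorem \ref{structrefl}, which states that a reflection $u$ decomposes as $u = p - q$ with $p := \frac12 ( e + u )$ and $q := \frac12 ( e - u )$ complementary projections.

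Since no real obstacle remains beyond citing the two results, the proof would essentially read: ``Combine the example preceding Theorem \ref{structrefl} with part (iii) of Theorem \ref{structrefl}.'' The only point worth being explicit about is the logical structure (one inclusion each), and perhaps noting that in the forward direction (reflection $\Rightarrow$ difference of complementary projections), the decomposition furnished by Theorem \ref{structrefl} is canonical, although the corollary as stated requires only existence of such a representation. No computation needs to be ground through, as all needed identities have already been verified in the preceding items.
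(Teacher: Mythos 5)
Your proposal is correct and matches the paper's intent exactly: the corollary is stated without proof precisely because it is the immediate combination of the example preceding Theorem \ref{structrefl} (difference of complementary projections $\Rightarrow$ reflection) and Theorem \ref{structrefl} (iii) (reflection $\Rightarrow$ difference of complementary projections). Nothing further is needed.
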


\begin{theorem}[orthogonal decomposition]\label{anorthdeco}%
\index{concepts}{orthogonal!decomposition}%
\index{concepts}{decomposition!orthogonal}%
\index{symbols}{a3@$a_+$}\index{symbols}{a4@$a_-$}
If $A$ is a Hermitian Banach \st-algebra, and if $a$ is an
\underline{invertible} Hermitian element of $\tld{A}$,
there exists a unique decomposition $a = a _{+} - a _{-}$,
with $a _{+}$ and $a _{-}$ positive elements of
$\tld{A}$ satisfying $a _{+} \,a _{-} = a _{-} \,a _{+} = 0$,
namely $a _{+} = \frac12 \,\bigl( \,| \,a \,| + a \,\bigr)$ and
$a _{-} = \frac12 \,\bigl( \,| \,a \,| -a \,\bigr)$.
\end{theorem}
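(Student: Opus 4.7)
The plan is to base existence on the polar factorisation of $a$, and uniqueness on the uniqueness of positive square roots of invertible positive elements (item \ref{invpossqrt}). The reduction point is that once $a_+$ and $a_-$ are produced as commuting positive elements with $a_+ - a_- = a$, the remaining algebraic identities follow mechanically.

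For existence, start from the polar factorisation $a = u \,| \,a \,|$ with $u$ unitary and $| \,a \,|$ positive in $\tld{A}$, cf.\ \ref{polfactexuni}. Since $a$ is Hermitian (hence normal), theorem \ref{commpolfact} ensures that $u$ and $| \,a \,|$ commute and that $| \,a^* \,| = | \,a \,|$; then lemma \ref{poladj} identifies $(u^*, | \,a \,|)$ as the polar factorisation of $a^* = a$, so uniqueness of the polar factorisation gives $u^* = u$, i.e.\ $u$ is a reflection. Invoking \ref{structrefl}, I would write $u = p - q$ with $p := \frac12(e + u)$ and $q := \frac12(e - u)$ complementary projections in $\tld{A}$; these are polynomials in $u$, so they commute with $| \,a \,|$. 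Setting $a_+ := p \,| \,a \,|$ and $a_- := q \,| \,a \,|$, one has $a_+ - a_- = (p - q) \,| \,a \,| = u \,| \,a \,| = a$ and $a_+ + a_- = (p+q) \,| \,a \,| = | \,a \,|$, which delivers the stated formulas $a_\pm = \frac12 \bigl( \,| \,a \,| \pm a \,\bigr)$. Positivity of $a_\pm$ then follows from \ref{instability}: $p$, $q$, $| \,a \,|$ are pairwise commuting positive elements of the Hermitian Banach \st-algebra $\tld{A}$, and in such an algebra commuting positive elements have a positive product. Orthogonality is automatic: $a_+ \,a_- = p \,q \,{| \,a \,| \,}^2 = 0$, and $a_- \,a_+ = 0$ by symmetry.

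For uniqueness, suppose $a = b_+ - b_-$ is any decomposition of the required form. Then $(b_+ + b_-)^2 = b_+^{\,2} + b_-^{\,2} = (b_+ - b_-)^2 = a^{\,2} = a^*a$, the cross terms cancelling by the orthogonality $b_+ b_- = b_- b_+ = 0$. The element $b_+ + b_-$ is positive in $\tld{A}$ by convexity of the positive cone, cf.\ \ref{plusconvexcone}. Since $a$ is invertible, so is $a^*a$, and hence so is $b_+ + b_-$. The uniqueness part of \ref{invpossqrt}, applied to the invertible positive element $a^*a$ in $\tld{A}$, forces $b_+ + b_- = | \,a \,|$; combined with $b_+ - b_- = a$, this recovers the explicit formulas.

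The principal obstacle I anticipate is the positivity of $a_\pm$: the sum expressions $\frac12 \bigl( \,| \,a \,| \pm a \,\bigr)$ do not manifestly yield positive elements, since in a Banach \st-algebra one has no functional calculus available to read spectra off directly from a linear combination of Hermitian elements. Routing through the polar factorisation circumvents this difficulty, since the reflection $u = p - q$ delivers commuting projections $p$, $q$ that commute with $| \,a \,|$, so that \ref{instability} produces positivity as a \emph{product} of commuting positive factors rather than as a sum.
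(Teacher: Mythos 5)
Your proposal is correct and follows essentially the same route as the paper's proof: existence via the polar factorisation, recognising $u$ as a reflection, splitting it into complementary projections $p, q$ that commute with $|\,a\,|$, and invoking \ref{instability} for positivity; uniqueness via $(a_+ + a_-)^2 = a^*a$ and the uniqueness of the positive square root. The only cosmetic difference is that you deduce $u = u^*$ from \ref{poladj} and uniqueness of the polar factorisation, whereas the paper reads it off from $u = a\,{|\,a\,|\,}^{-1}$ being a product of commuting Hermitian elements, cf.\ \ref{Hermprod}.
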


\begin{proof}
To show uniqueness, let $0 \leq a _{+}$, $a _{-} \in \tld{A}$
with $a = a _{+} - a _{-}$, and $a _{+} \,a _{-} = a _{-} \,a _{+} = 0$.
Then ${a\,}^2 = {(a _{+} - a _{-}) \,}^ 2 = {(a _{+} + a _{-}) \,}^2$, whence
$a _{+} + a _{-} \geq 0$ \ref{plusconvexcone} is a positive square
root of ${a \,}^2 = a^*a$. This makes that $a _{+} + a _{-} = | \,a \,|$,
the absolute value of $a$, cf.\ theorem \ref{invabsval}. The
elements $a _{+}$ and $a _{-}$ now are uniquely determined
by the linear system $a _{+} - a _{-} = a$, $a _{+} + a _{-} = | \,a \,|$,
namely $a _{+} = \frac12 \,\bigl( \,| \,a \,| + a \,\bigr)$,
$a _{-} = \frac12 \,\bigl( \,| \,a \,| -a \,\bigr)$.

Now for existence. By theorem \ref{polfactexuni}, there exists a
unique unitary $u \in \tld{A}$ such that $a = u \,| \,a \,|$.
Please note that with $a$ and $u$, also the element
$| \,a \,| = {u \,}^{-1} \,a$ is invertible.
We claim that the elements $a$, $| \,a \,|$, ${| \,a \,| \,}^{-1}$, $u$
commute pairwise. Since $a$ is Hermitian, this follows easily
from theorem \ref{commpolfact} and proposition \ref{comm2}.
\pagebreak

Our intuition says that $u$ should be Hermitian because $a$ is so.
Indeed $u = a { \,| \,a \,| \,}^{-1}$ is Hermitian by the commutativity
of $a$ and ${| \,a \,| \,}^{-1}$, cf.\ \ref{Hermprod}.
This makes that $u$ is a Hermitian unitary element, that is:
a reflection, and thus of the form $u = p - q$ with
$p$ and $q$ two complementary projections in
$\tld{A}$, cf.\ \ref{structrefl}. We can now put
$a _{+} := p \,| \,a \,|$, $a _{-} := q \,| \,a \,|$. Please note that a
projection in $\tld{A}$ is positive by the Rational Spectral
Mapping Theorem. So, to see that the elements $a _{+}$ and
$a _{-}$ are positive, we need to show that our two
projections commute with $| \,a \,|$, cf.\ theorem \ref{instability}.
This however follows from the commutativity of $u$ and $| \,a \,|$
by the special form of our projections, see theorem \ref{structrefl} (iii).
The commutativity of our two projections with $| \,a \,|$ is used
again in the following two calculations:
$a _{+} \,a _{-} = p \,| \,a \,| \,q \,| \,a \,| =  p \,q \,{| \,a \,| \,}^2 = 0$, and
$a _{-} \,a _{+} = q \,| \,a \,| \,p \,| \,a \,| = q \,p \,{| \,a \,| \,}^2 = 0$ by
the orthogonality of our two projections.
Also $a = u \,| \,a \,| = (p - q) \,| \,a \,| = a _{+} - a _{-}$.
\end{proof}

\medskip
A stronger result holds in C*-algebras, cf.\ theorem \ref{orthdec} below.

\bigskip
We have again avoided the Gel'fand transformation,
see our remarks \ref{Zorn} and \ref{remark2}.

\clearpage


\section{The Order Structure in a C\texorpdfstring{*-}{\80\052\80\055}Algebra}

\medskip
In this paragraph, let $(A,\| \cdot \|)$ be a C*-algebra.

\begin{theorem}%
We have $A_+ \cap (-A_+) = \{ 0 \}$.
\end{theorem}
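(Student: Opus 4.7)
The plan is to combine the Rational Spectral Mapping Theorem with the C*-algebra norm formula for normal elements \ref{C*rl}. Suppose $a \in A_+ \cap (-A_+)$. Then by definition $a$ is Hermitian, and both $\s(a)$ and $\s(-a)$ are contained in $[\,0,\infty\,[$.

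First I would observe that by the Rational Spectral Mapping Theorem \ref{ratspecmapthm} applied to the polynomial $r(x) = -x$, one has $\s(-a) = -\s(a)$. Combining $\s(-a) \subset [\,0,\infty\,[$ with $\s(a) \subset [\,0,\infty\,[$ then forces
\[ \s(a) \subset [\,0,\infty\,[ \,\cap\, ]\mspace{-3mu}-\mspace{-3mu}\infty,0\,] = \{0\}. \]

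Since $a$ is Hermitian, $a$ is normal, so theorem \ref{C*rl} applies and gives
\[ \|\,a\,\| = \rlambda(a) = \max\,\{\,|\,\lambda\,| : \lambda \in \s(a) \,\} = 0, \]
whence $a = 0$. No obstacle is expected: the whole argument is a one-line deduction from the Rational Spectral Mapping Theorem together with the fact that on normal elements of a C*-algebra the norm coincides with the spectral radius.
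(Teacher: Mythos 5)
Your proof is correct and follows essentially the same route as the paper: conclude $\s(a)=\{0\}$ and then use $\|\,a\,\| = \rlambda(a)$ for normal elements (\ref{C*rl}) to get $a=0$. The only difference is that you spell out the step $\s(-a) = -\s(a)$ via the Rational Spectral Mapping Theorem, which the paper leaves implicit.
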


\begin{proof}
For $a \in A_+ \cap (-A_+)$, one has $\s(a) = \{ 0 \}$, whence
$\| \,a \,\| = \rlambda(a) = 0$, cf.\ \ref{C*rl}, so that $a = 0$.
\end{proof}

\begin{definition}[proper convex cone]%
A convex cone $C$ in a real vector space is called
\underline{proper} if $C \cap (-C) = \{ 0 \}$.
\end{definition}

The positive cone $A_+$ thus is a proper closed convex cone in $A\sa$.

\begin{definition}[ordered vector space]%
Let $B$ be a real vector space and let $C$
be a proper convex cone in $B$. By putting
\[ a \leq b :\Leftrightarrow b-a \in C \qquad ( \,a,b \in B \,), \]
one defines an order relation in $B$. In this way $B$ becomes
an \underline{ordered} \underline{vector space} in the following
sense:
\begin{alignat*}{3}
    a \leq b & \Rightarrow a+c \leq b+c
    \quad &\text{for all} \quad &a, b, c \in B, \\
   a \leq b & \Rightarrow \lambda a \leq \lambda b
   \quad &\text{for all} \quad &a, b \in B, \lambda \in [ \,0, \infty \,[.
\end{alignat*}
\end{definition}

The Hermitian part $A\sa$ thus is an ordered vector space.

\begin{proposition}\label{ordernorm}%
For $a \in A\sa$, we have in $\tld{A}\sa$:
\[ - \| \,a \,\| \,e \leq a \leq \| \,a \,\| \,e. \]
\end{proposition}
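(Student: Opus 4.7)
The plan is to unwind the definition of the order: by definition $-\|a\|e \leq a \leq \|a\|e$ in $\tld{A}\sa$ means precisely that both $\|a\|e - a$ and $\|a\|e + a$ belong to $\tld{A}_+$, i.e., both are Hermitian elements of $\tld{A}$ with spectrum contained in $[\,0,\infty\,[$.

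First I would note that since $a \in A\sa$, the elements $\|a\|e \pm a$ belong to $\tld{A}\sa$. By \ref{specunit}, the spectrum of $a$ does not depend on whether it is computed in $A$ or in $\tld{A}$. Since $A$ is a C*-algebra, it is Hermitian by \ref{C*Herm}, so the Hermitian element $a$ has real spectrum: $\s(a) \subset \mathds{R}$. Combined with \ref{C*rl}, which gives $\rlambda(a) = \|a\|$ (as $a$ is normal, being Hermitian), we obtain
\[ \s(a) \subset [\,-\|a\|,\|a\|\,]. \]

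Next I would apply the Rational Spectral Mapping Theorem \ref{ratspecmapthm} to the polynomials $r_{\pm}(x) = \|a\| \pm x$, yielding
\[ \s(\|a\|e \pm a) \,=\, \|a\| \pm \s(a) \,\subset\, [\,0, 2\|a\|\,] \,\subset\, [\,0,\infty\,[. \]
Hence $\|a\|e \pm a \in \tld{A}_+$, which by the definition of the order on $\tld{A}\sa$ gives exactly $-\|a\|e \leq a \leq \|a\|e$.

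There is no real obstacle here — the proof is just a direct assembly of \ref{C*Herm}, \ref{C*rl}, and the Rational Spectral Mapping Theorem. The only mild care needed is to invoke \ref{specunit} so that the spectrum of $a$ viewed as an element of $A$ matches that viewed in $\tld{A}$, which legitimises working inside $\tld{A}\sa$ where the cone inequality is formulated.
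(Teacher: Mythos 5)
Your proof is correct and is precisely the routine argument the paper leaves implicit (no proof is given for \ref{ordernorm}): since $A$ is Hermitian by \ref{C*Herm} and $\rlambda(a)=\|\,a\,\|$ by \ref{C*rl}, one gets $\s(a)\subset[\,-\|\,a\,\|,\|\,a\,\|\,]$, and the Rational Spectral Mapping Theorem then places $\s(\|\,a\,\|\,e\pm a)$ in $[\,0,\infty\,[$. The appeal to \ref{specunit} to pass between $\s_A$ and $\s_{\tld{A}}$ is the right piece of care, so nothing is missing.
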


\begin{proposition}\label{onbded}%
If $A$ is unital, then a subset of $A\sa$ is order-bounded
if and only if it is norm-bounded.
\end{proposition}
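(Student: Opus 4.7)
The plan is to prove both directions separately, with only the reverse direction requiring real work.

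For the easy forward direction, suppose $S \subset A\sa$ is norm-bounded by $M$. Since $A$ is unital, proposition \ref{ordernorm} takes place in $A\sa$ itself and gives $-Me \leq a \leq Me$ for every $a \in S$, so $S$ is order-bounded.

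For the reverse direction, suppose $b \leq a \leq c$ for all $a \in S$, with $b, c \in A\sa$ fixed. The goal is to bound $\|a\|$ uniformly. The key intermediate lemma is the standard monotonicity of the norm on positive elements: if $0 \leq x \leq y$ in $A\sa$, then $\|x\| \leq \|y\|$. To prove this lemma, apply \ref{ordernorm} to $y$ to get $y \leq \|y\|\,e$, whence by transitivity $0 \leq x \leq \|y\|\,e$. Then $\|y\|\,e - x \geq 0$ means $\s(\|y\|\,e - x) \subset [\,0,\infty\,[$, so every $\lambda \in \s(x)$ satisfies $\lambda \leq \|y\|$, combined with $\s(x) \subset [\,0,\infty\,[$ (since $x \geq 0$) this gives $\s(x) \subset [\,0,\|y\|\,]$. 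Hence $\rlambda(x) \leq \|y\|$, and since $x$ is Hermitian (thus normal), \ref{C*rl} gives $\|x\| = \rlambda(x) \leq \|y\|$.

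Now apply the lemma with $x := a - b$ and $y := c - b$: from $b \leq a \leq c$ we have $0 \leq a - b \leq c - b$, so $\|a - b\| \leq \|c - b\|$. The triangle inequality then yields
\[ \|a\| \leq \|a - b\| + \|b\| \leq \|c - b\| + \|b\|, \]
which is a bound independent of $a \in S$.

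The only nontrivial step is the monotonicity lemma, and even that is essentially a packaging of \ref{ordernorm} with the identity $\|x\| = \rlambda(x)$ from \ref{C*rl}; no real obstacle is anticipated. Unitality of $A$ is used implicitly so that $e$ lies in $A$ itself and all manipulations take place within $A\sa$ rather than in $\tld{A}\sa$.
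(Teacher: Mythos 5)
Your proof is correct. The forward direction coincides with the paper's. For the reverse direction the paper argues more directly: from $x \leq a \leq y$ for all $a$ in the set, it applies \ref{ordernorm} to $x$ and to $y$ to get $-\|\,x\,\|\,e \leq a \leq \|\,y\,\|\,e$, reads off $\s(a) \subset [\,-\|\,x\,\|, \|\,y\,\|\,]$ from the two positivity statements, and concludes $\|\,a\,\| = \rlambda(a) \leq \max\{\,\|\,x\,\|, \|\,y\,\|\,\}$ via \ref{C*rl}. You instead first establish monotonicity of the norm on the positive cone and then translate by $b$; that intermediate lemma is exactly proposition \ref{monotnorm}, which the paper states and proves (by essentially your argument) a few items later, so you cannot cite it here but your self-contained proof of it is sound. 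The trade-off is minor: the paper's route is a line shorter and yields the sharper constant $\max\{\,\|\,x\,\|, \|\,y\,\|\,\}$ in place of your $\|\,c-b\,\| + \|\,b\,\|$, while your route isolates a reusable monotonicity fact. Both rest on the same three ingredients: \ref{ordernorm}, the spectral reading of $u \geq 0$, and the identity $\|\,u\,\| = \rlambda(u)$ for Hermitian $u$.
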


\begin{proof}
Assume that $A$ is unital, and that $B$ is a subset of $A\sa$.
\linebreak
If $B$ is norm-bounded, then it is order-bounded by the preceding
\linebreak
proposition \ref{ordernorm}. Conversely, let $B$ be order-bounded,
and let \linebreak $x$, $y \in A\sa$ with $x \leq b \leq y$ for all $b \in B$.
For $b \in B$, we then also have $- \| \,x \,\| \,e \leq b \leq \| \,y \,\| \,e$,
again by the preceding proposition \ref{ordernorm}.
Hence $\s (b) \subset \bigl[ \, - \| \,x \,\| , \| \,y \,\| \,\bigr]$,
so $\| \,b \,\| \leq \max \,\bigl\{ \,\| \,x \,\|, \| \,y \,\| \,\bigr\}$ as
$\| \,b \,\| = \rlambda (b)$, cf.\ \ref{C*rl}. \pagebreak
\end{proof}

\begin{theorem}[$\,{a \,}^{1/2}\:$]\label{C*sqrootrestate}%
\index{concepts}{square root}\index{symbols}{a17@$a^{\,1/2}$}%
A positive element $a \in A$ has a unique positive square root
in $A$. This unique positive square root of $a$ in $A$ is denoted by
${a \,}^{1/2}$. It belongs to the closed subalgebra of $A$ generated
by $a$. (Which also is the C*-subalgebra of $A$ generated by $a$.)
The above notation is compatible with the one introduced in \ref{invpossqrt}.
\end{theorem}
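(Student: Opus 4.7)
The plan is to observe that this theorem is essentially a repackaging of Theorem \ref{C*sqroot}, reformulated in the notation of positive elements introduced in the current chapter. So the bulk of the work is already done, and the proof proposal amounts to checking that definitions match, together with verifying the compatibility claim with \ref{invpossqrt}.

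First I would unpack what the statement says. An element $a \in A_+$ is, by definition, Hermitian with $\s(a) \subset [\,0,\infty\,[$, and a ``positive square root'' means an element of $A_+$, i.e.\ a Hermitian element with non-negative spectrum whose square is $a$. Under this translation, the existence and uniqueness claims, as well as the statement that the square root lies in the closed (equivalently, the C*-) subalgebra of $A$ generated by $a$, are exactly Theorem \ref{C*sqroot} applied to $a$. I would just cite \ref{C*sqroot} and set ${a\,}^{1/2}$ to be the element produced there.

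The one point that genuinely requires argument is the compatibility with \ref{invpossqrt}. Here $A$ is now a C*-algebra (hence in particular a Banach \st-algebra), and \ref{invpossqrt} defined ${a\,}^{1/2}$ for an invertible positive element of $\tld{A}$ via \ref{possqroot}, as the unique square root in $\tld{A}$ with spectrum in $\,]\,0,\infty\,[$. When $a \in A$ is positive and invertible in $A$ (so also in $\tld{A}$), the element ${a\,}^{1/2}$ in the sense of the present theorem is a Hermitian element of $A$ with $\s({a\,}^{1/2}) \subset [\,0,\infty\,[$ and square $a$; by the Rational Spectral Mapping Theorem, $0 \notin \s({a\,}^{1/2})$, since $0 \notin \s(a) = \s({a\,}^{1/2})^{\,2}$. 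Thus $\s({a\,}^{1/2}) \subset \,]\,0,\infty\,[$, and the uniqueness clause of \ref{possqroot} (applied in $\tld{A}$) identifies it with the square root from \ref{invpossqrt}.

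I do not expect any real obstacle: both the existence/uniqueness and the ``generated subalgebra'' assertion are already proved in \ref{C*sqroot}, and the compatibility reduces to a one-line spectral argument using the Rational Spectral Mapping Theorem together with the uniqueness in \ref{possqroot}. The only mildly delicate point is remembering that \ref{invpossqrt} lives in $\tld{A}$ while the present statement lives in $A$, which is harmless because when $a \in A$ is invertible positive, the square root constructed in either framework belongs to the closed (\st-)subalgebra generated by $a$, and this subalgebra sits inside $A$.
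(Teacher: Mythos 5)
Your proposal is correct and matches the paper, whose entire proof is the single line ``This is a restatement of theorem \ref{C*sqroot}.'' Your additional verification of the compatibility with \ref{invpossqrt} (via the Rational Spectral Mapping Theorem and the uniqueness clause of \ref{possqroot}) is a point the paper leaves implicit, and your argument for it is sound.
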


\begin{proof}
This is a restatement of theorem \ref{C*sqroot}.
\end{proof}

\begin{theorem}\label{sqrtShiraliFord}
\index{concepts}{positive!element!in C*-algebra}%
For $a \in A$, the following statements are equivalent.
\begin{itemize}
   \item[$(i)$] $a \geq 0$,
  \item[$(ii)$] $a = {b\,}^2$ for some $b$ in $A_+$,
 \item[$(iii)$] $a = c^*c$ for some $c$ in $A$.
\end{itemize}
\end{theorem}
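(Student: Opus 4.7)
The plan is to establish the cyclic chain $(i) \Rightarrow (ii) \Rightarrow (iii) \Rightarrow (i)$, relying on results already in place.

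First, for $(i) \Rightarrow (ii)$, I would simply invoke Theorem \ref{C*sqrootrestate}: if $a \geq 0$, then $a$ admits a positive square root ${a \,}^{1/2} \in A$, so setting $b := {a \,}^{1/2}$ gives $a = {b \,}^2$ with $b \in A_+$. This step is essentially immediate and requires no new work.

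Next, for $(ii) \Rightarrow (iii)$, I would observe that elements of $A_+$ are by definition Hermitian (see the definition preceding \ref{hompos}). Hence, if $a = {b \,}^2$ with $b \in A_+$, then $b^* = b$, and so $a = {b \,}^2 = b^*b$. Thus $c := b$ does the job. Again, this is immediate from the definitions.

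The only step of substance is $(iii) \Rightarrow (i)$, which asserts that $c^*c \geq 0$ for every $c \in A$. This is precisely the Shirali–Ford Theorem \ref{ShiraliFord}, which characterises Hermitian Banach \st-algebras by the property that $c^*c \geq 0$ for all $c$. Since every C*-algebra is Hermitian by Theorem \ref{C*Herm}, the implication follows. The main obstacle, then, lies not in the present argument but in the machinery already developed: it rests on the Shirali–Ford Theorem and on the fact that C*-algebras are Hermitian, both of which are available here, so the proof itself reduces to a short citation of these results.
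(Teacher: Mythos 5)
Your proof is correct and takes essentially the same route as the paper, which cites only the Shirali--Ford Theorem for $(iii) \Rightarrow (i)$ and leaves $(i) \Rightarrow (ii)$ (the positive square root, \ref{C*sqrootrestate}) and the trivial $(ii) \Rightarrow (iii)$ implicit. Your write-up merely makes those two easy steps explicit.
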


\begin{proof}
(iii) $\Rightarrow$ (i): the Shirali-Ford Theorem \ref{ShiraliFord}.
\end{proof}

\begin{theorem}%
Let $\pi$ be a \st-algebra homomorphism from $A$ to a
\st-algebra $B$. Then $\pi$ is injective if and only if $a \in A_+$
and $\pi (a) = 0$ together imply $a = 0$. 
\end{theorem}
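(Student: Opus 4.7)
The plan is straightforward. The ``only if'' direction is immediate: if $\pi$ is injective, then $\pi(a) = 0$ forces $a = 0$, so in particular this holds for positive $a$. The substance is in the converse.

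For the ``if'' direction, I would assume the hypothesis that positive elements in $\ker \pi$ vanish, and then show $\ker \pi = \{ 0 \}$. Take any $c \in A$ with $\pi (c) = 0$. Since $\pi$ is a \st-algebra homomorphism, one has
\[ \pi (c^*c) = \pi (c)^* \pi (c) = 0. \]
Next I would invoke theorem \ref{sqrtShiraliFord} (iii) $\Rightarrow$ (i) to conclude that $c^*c \in A_+$. Our hypothesis applied to $c^*c$ then yields $c^*c = 0$. Finally, the C*-property, available because $A$ is a C*-algebra (as standing in this paragraph), gives
\[ \| \,c \,\|^{\,2} = \| \,c^*c \,\| = 0, \]
so $c = 0$, proving injectivity of $\pi$.

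The step that really does the work is the appeal to theorem \ref{sqrtShiraliFord}, which rests on the Shirali-Ford Theorem \ref{ShiraliFord}; everything else is purely formal. No obstacle is anticipated beyond remembering to use the C*-property rather than trying to argue about $c$ directly.
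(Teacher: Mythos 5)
Your proof is correct and follows essentially the same route as the paper's: the paper states the contrapositive ("if $0 \neq b \in \ker \pi$, then $0 \neq b^*b \in \ker \pi \cap A_+$"), invoking exactly the same two ingredients, namely the C*-property \ref{preC*alg} and theorem \ref{sqrtShiraliFord}. Your direct formulation is just a rephrasing of the same argument.
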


\begin{proof}
If $0 \neq b \in \ker \pi$, then $0 \neq b^*b \in \ker \pi \cap A_+$
by the C*-property \ref{preC*alg} and the preceding theorem
\ref{sqrtShiraliFord}.
\end{proof}

\begin{definition}[$\,| \,a \,|\,$]\label{absval}%
\index{concepts}{absolute value}\index{symbols}{a2@${"|}\,a\,{"|}$}%
The \underline{absolute value} of $a \in A$ is defined as
\[ |\,a\,| := {(a^*a) \,}^{1/2}. \]
Please note that $| \,a \,|$ lies in the closed \st-subalgebra
of $A$ generated by $a$. (Which also is the C*-subalgebra
of $A$ generated by $a$.) See \ref{C*sqrootrestate}. The
above notation is compatible with the one introduced in \ref{invabsval}.
\end{definition}

\begin{proposition}\label{abschar}%
An element $a$ of $A$ is positive if and only if
\[ a = | \,a \,|. \]
\end{proposition}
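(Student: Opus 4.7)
The plan is to handle the two implications separately, using the uniqueness part of theorem \ref{C*sqrootrestate} as the essential tool.

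For the ``if'' direction, suppose $a = |\,a\,| = (a^*a)^{1/2}$. By the very definition of $(a^*a)^{1/2}$ in \ref{C*sqrootrestate} (applied to the positive element $a^*a \in A_+$, which is positive by \ref{sqrtShiraliFord}), the element $(a^*a)^{1/2}$ is positive. Hence $a \in A_+$.

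For the ``only if'' direction, suppose $a \geq 0$. Then $a$ is Hermitian, so $a^*a = a^2$. Moreover $a^2 \in A_+$: since $\s(a) \subset [\,0,\infty\,[$, the Rational Spectral Mapping Theorem \ref{ratspecmapthm} yields $\s(a^2) = \s(a)^{\,2} \subset [\,0,\infty\,[$, and $a^2$ is Hermitian as the square of a Hermitian element commutes with itself (cf.\ \ref{Hermprod}). Thus $a$ itself is a positive element of $A$ whose square equals $a^*a$. By the uniqueness of the positive square root in theorem \ref{C*sqrootrestate}, $a$ must coincide with $(a^*a)^{1/2} = |\,a\,|$.

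There is no real obstacle here; the only point that requires a moment of care is verifying that $a^*a$ is genuinely positive (so that $(a^*a)^{1/2}$ is defined as in \ref{absval}) and that the uniqueness statement of \ref{C*sqrootrestate} applies to identify $a$ with this square root. Both are immediate from the results already established.
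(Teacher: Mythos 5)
Your proof is correct and follows the same route as the paper, which simply invokes the uniqueness of the positive square root from \ref{C*sqrootrestate}; you have merely spelled out the details (that $|\,a\,| \geq 0$ by construction for one direction, and that a positive $a$ is itself a positive square root of $a^*a = a^{\,2}$ for the other). No gaps.
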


\begin{proof}
This follows from the uniqueness of the positive square root,
cf.\ \ref{C*sqrootrestate}.
\end{proof}

\begin{theorem}\label{homabs}%
If $\pi$ is a \st-algebra homomorphism from $A$ to another C*-algebra, then
\[ \pi \,\bigl(\,|\,a\,|\,\bigr) = \bigl| \,\pi (a) \,\bigr|
\quad \text{for all} \quad a \in A. \pagebreak \]
\end{theorem}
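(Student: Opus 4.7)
The plan is to reduce the identity to the uniqueness of the positive square root in a C*-algebra. Writing $B$ for the target C*-algebra, the definition gives $|a| = (a^*a)^{1/2}$, the \emph{unique} positive square root of $a^*a$ in $A$ (theorem \ref{C*sqrootrestate}), and similarly $|\pi(a)| = (\pi(a)^*\pi(a))^{1/2}$ is the unique positive square root of $\pi(a)^*\pi(a)$ in $B$. So it suffices to exhibit $\pi(|a|)$ as a positive square root of $\pi(a)^*\pi(a)$ in $B$.

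First I would check the square root identity. Since $\pi$ is a \st-algebra homomorphism, applying it to the equation ${|a|}^{\,2} = a^*a$ yields
\[ \pi(|a|)^{\,2} = \pi({|a|}^{\,2}) = \pi(a^*a) = \pi(a)^* \,\pi(a). \]
Thus $\pi(|a|)$ is at least \emph{a} square root of $\pi(a)^*\pi(a)$ in $B$.

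Second, I would verify that $\pi(|a|)$ is positive in $B$. By construction, $|a| \in A_+$, and by proposition \ref{hompos} a \st-algebra homomorphism sends positive elements to positive elements; hence $\pi(|a|) \in B_+$. (Equivalently: $\pi(|a|)$ is Hermitian because $\pi$ commutes with the involution and $|a|$ is Hermitian, and the spectral inclusion \ref{spechom} gives $\s_B(\pi(|a|)) \setminus \{0\} \subset \s_A(|a|) \setminus \{0\} \subset [\,0,\infty\,[$.)

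Finally, combining the two steps, $\pi(|a|)$ is a positive square root of $\pi(a)^*\pi(a)$ in $B$, so by the uniqueness part of theorem \ref{C*sqrootrestate} applied in $B$,
\[ \pi(|a|) = (\pi(a)^*\pi(a))^{1/2} = |\pi(a)|. \]
There is no real obstacle here: the only conceptual point is that positivity of Hermitian elements is preserved by any \st-algebra homomorphism (a fact which is automatic once one works with the spectral definition of $A_+$ and invokes \ref{spechom}), after which uniqueness of the positive square root in the C*-algebra $B$ closes the argument.
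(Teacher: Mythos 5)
Your proof is correct and is essentially the paper's own argument: both verify that $\pi(|\,a\,|)$ is positive (via \ref{hompos}) and squares to $\pi(a)^*\pi(a)$, then invoke the uniqueness of the positive square root in the target C*-algebra. Nothing to add.
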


\begin{proof}
It suffices to note that $\pi \,\bigl(\,|\,a\,|\,\bigr)$ is the positive square root of
\linebreak
${\pi (a)}^*\pi (a)$. Indeed $\pi \,\bigl(\,|\,a\,|\,\bigr) \geq 0$ as $|\,a\,| \geq 0$,
cf.\ \ref{hompos}, and ${\pi \,\bigl(\,|\,a\,|\,\bigr) \,}^2 = \pi \,\bigl(\,{|\,a\,| \,}^2\,\bigr)
= \pi (a^*a) = {\pi (a)}^*{\pi (a)}$.
\end{proof}

\begin{proposition}\label{isonorm}%
For $a \in A$, we have
\[ \|\,|\,a\,|\,\| = \| \,a \,\| \quad \text{ as well as } \quad
\rsigma(a) = \rlambda \bigl(\,|\,a\,|\,\bigr). \]
\end{proposition}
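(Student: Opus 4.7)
The plan is to derive both equalities from the defining relation $|a|^2 = a^*a$ together with the fact that $|a|$ is positive (hence Hermitian and in particular normal), so that the earlier results for normal elements in a C*-algebra apply to it.

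First I would handle the spectral radius equality $\rsigma(a)=\rlambda(\,|a|\,)$. By definition of the Pt\'ak function, $\rsigma(a)^{\,2}=\rlambda(a^*a)$. Since $|a|$ is the positive square root of $a^*a$, one has $|a|^{\,2}=a^*a$, so proposition \ref{rlpowers} gives
\[
\rsigma(a)^{\,2}=\rlambda\bigl(|a|^{\,2}\bigr)=\rlambda\bigl(|a|\bigr)^{\,2}.
\]
Taking the non-negative square root (noting both sides are $\geq 0$) yields the claimed identity.

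For the norm equality $\|\,|a|\,\|=\|a\|$, the shortest route is via the C*-property \ref{preC*alg}: since $|a|$ is Hermitian, $|a|^*|a|=|a|^{\,2}=a^*a$, hence
\[
\|\,|a|\,\|^{\,2}=\|\,|a|^*|a|\,\|=\|a^*a\|=\|a\|^{\,2}.
\]
Alternatively, one could combine theorem \ref{C*rl} (which gives $\|\,|a|\,\|=\rlambda(|a|)$ because $|a|$ is normal) with the first equality of the proposition and theorem \ref{C*rs} ($\|a\|=\rsigma(a)$) to reach the same conclusion; I would mention this as a consistency check tying the two statements together.

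There is no real obstacle here: both identities are immediate corollaries of material already in place (the definition of $\rsigma$, uniqueness of the positive square root, proposition \ref{rlpowers}, and the C*-property). The only thing to be mildly careful about is not to circularly invoke $\|a\|=\rsigma(a)$ when proving the norm equality, which is why I would prefer the direct C*-property argument above.
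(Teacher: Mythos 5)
Your proof is correct and follows essentially the same route as the paper: both rest on $|\,a\,|$ being Hermitian with ${|\,a\,|\,}^2 = a^*a$, combined with \ref{rlpowers} and the standard C*-identities; indeed the paper proves the norm equality by the chain ${\|\,|\,a\,|\,\|\,}^2 = {\rlambda(|\,a\,|)\,}^2 = \rlambda({|\,a\,|\,}^2) = \rlambda(a^*a) = {\rsigma(a)\,}^2 = {\|\,a\,\|\,}^2$ using \ref{C*rl} and \ref{C*rs}, which is precisely your ``alternative'' route. Your direct use of the C*-property for the norm equality is an equally valid micro-variant, and your worry about circularity is unnecessary, since $\|\,a\,\| = \rsigma(a)$ from \ref{C*rs} is already established independently for C*-algebras.
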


\begin{proof}
One calculates with \ref{C*rl}, \ref{rlpowers} \& \ref{C*rs}:
\[ {\|\,|\,a\,|\,\|\,}^2 = \,{\rlambda \bigl(\,|\,a\,|\,\bigr) \,}^2
            = \,\rlambda \bigl(\,{|\,a\,|\,}^2\,\bigr)
            = \,\rlambda (a^*a)
            = \,{\rsigma (a) \,}^2 = {\| \,a \,\|\,}^2. \qedhere \]
\end{proof}

\begin{proposition}\label{monotnorm}%
For $a, b$ $\in A\sa$, if $0 \leq a \leq b$ then $\|\,a\,\| \leq \|\,b\,\|$.
\end{proposition}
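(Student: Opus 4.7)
The plan is to deduce an upper bound on the spectrum of $a$ from the order relation $a \leq b$, and then to exploit the C*-identity $\|a\| = \rlambda(a)$ for Hermitian (hence normal) elements, cf.\ \ref{C*rl}.

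First I would invoke \ref{ordernorm} applied to $b$: working inside $\tld{A}\sa$, we have $b \leq \|b\|\,e$. Combined with the hypothesis $a \leq b$, transitivity in the ordered vector space $\tld{A}\sa$ gives $a \leq \|b\|\,e$. By definition of the order, this means
\[ \|b\|\,e - a \in \tld{A}_+, \]
so that $\s_{\tld{A}}(\|b\|\,e - a) \subset [\,0,\infty\,[$, and shifting by $\|b\|$ yields $\s_{\tld{A}}(a) \subset \ ]-\infty,\|b\|\,]$. Since $\s_A(a) = \s_{\tld{A}}(a)$ by \ref{specunit}, we get $\s_A(a) \subset \ ]-\infty,\|b\|\,]$.

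Next, the hypothesis $a \geq 0$ says exactly that $\s_A(a) \subset [\,0,\infty\,[$. Intersecting, $\s_A(a) \subset [\,0,\|b\|\,]$, and hence $\rlambda(a) \leq \|b\|$. Finally, since $a$ is Hermitian it is in particular normal, so \ref{C*rl} gives $\|a\| = \rlambda(a)$, and therefore $\|a\| \leq \|b\|$, as required.

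There is no real obstacle here: the argument is a direct chaining of \ref{ordernorm}, the definition of the order induced by the positive cone, \ref{specunit}, and \ref{C*rl}. The only thing to be slightly careful about is that the inequality $b \leq \|b\|\,e$ lives in the unitisation $\tld{A}\sa$ when $A$ lacks a unit; but as \ref{specunit} ensures that the spectrum is unchanged upon passing to $\tld{A}$, the positivity conclusion transfers back to $A$ with no loss.
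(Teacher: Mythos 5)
Your proof is correct and follows essentially the same route as the paper: bound the spectrum of $a$ above by $\|\,b\,\|$ via $a \leq b \leq \|\,b\,\|\,e$ and convexity of the positive cone, then conclude with $\|\,a\,\| = \rlambda(a)$ from \ref{C*rl}. The only cosmetic difference is that you obtain the lower spectral bound directly from $a \geq 0$, whereas the paper derives $\|\,b\,\|\,e + a \geq 0$ from $b + a \geq 0$; both yield $\rlambda(a) \leq \|\,b\,\|$.
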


\begin{proof}
If $0 \leq a \leq b$ then $b \leq \|\,b\,\|\,e$ and
$b \pm a \geq 0$. It follows that $\|\,b\,\| \,e \pm a \geq 0$,
whence $\|\,b\,\| \geq \rlambda(a) = \|\,a\,\|$.
\end{proof}

\begin{corollary}%
For $a, b$ $\in A$, if $|\,a\,| \leq |\,b\,|$ then $\|\,a\,\| \leq \|\,b\,\|$.
\end{corollary}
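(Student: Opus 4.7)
The plan is to chain together the two results immediately preceding the corollary. First, I would observe that both $|a|$ and $|b|$ are positive elements of $A$: this follows directly from Definition \ref{absval} together with Theorem \ref{C*sqrootrestate}, which constructs $|a| = (a^*a)^{1/2}$ as the positive square root of the positive element $a^*a$ (positivity of $a^*a$ being Theorem \ref{sqrtShiraliFord}), and similarly for $|b|$. Hence the hypothesis $|a| \leq |b|$ is really a statement about two elements in the positive cone $A_+$, namely $0 \leq |a| \leq |b|$.

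Next I would invoke Proposition \ref{monotnorm}, which says that for Hermitian elements with $0 \leq x \leq y$ one has $\|x\| \leq \|y\|$. Applied to $x := |a|$ and $y := |b|$, this gives
\[ \bigl\| \,|a| \,\bigr\| \leq \bigl\| \,|b| \,\bigr\|. \]
Finally, Proposition \ref{isonorm} asserts that $\bigl\| \,|a| \,\bigr\| = \|a\|$ and $\bigl\| \,|b| \,\bigr\| = \|b\|$, so substituting yields the desired conclusion $\|a\| \leq \|b\|$.

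There is no real obstacle here; the work has already been done in the two preceding propositions, and the corollary is simply the translation of the order-monotonicity of the norm on $A\sa$ (applied to the absolute values) back to a statement about arbitrary elements via the isometry $a \mapsto |a|$ at the level of norms. The only thing worth flagging is the small verification that $|a|, |b| \in A_+$, which is immediate from the construction of the square root in Theorem \ref{C*sqrootrestate}.
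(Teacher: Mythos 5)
Your proof is correct and follows exactly the paper's own argument: apply Proposition \ref{monotnorm} to $0 \leq |\,a\,| \leq |\,b\,|$ and then Proposition \ref{isonorm} to identify $\|\,|\,a\,|\,\|$ with $\|\,a\,\|$. The extra remark verifying $|\,a\,|, |\,b\,| \in A_+$ is a harmless and correct elaboration of what the paper leaves implicit.
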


\begin{proof}
If $|\,a\,| \leq |\,b\,|$, then $\|\,|\,a\,|\,\| \leq \|\,|\,b\,|\,\|$, cf.\ \ref{monotnorm},
which is the same as $\|\,a\,\| \leq \|\,b\,\|$, cf.\ \ref{isonorm}.
\end{proof}

\begin{theorem}[the orthogonal decomposition]\label{orthdec}%
\index{concepts}{orthogonal!decomposition}%
\index{concepts}{decomposition!orthogonal}%
\index{symbols}{a3@$a_+$}\index{symbols}{a4@$a_-$}%
For a \underline{Hermitian} element $a \in A$, one defines
\[ a_+ := \frac{1}{2} \,\bigl(\,|\,a\,|+a\,\bigr)\quad and 
\quad a_- := \frac{1}{2} \,\bigl(\,|\,a\,|-a\,\bigr). \]
We then have
\begin{itemize}
   \item[$(i)$] $a_+, a_- \in A_+$,
  \item[$(ii)$] $a = a_+ - a_-$,
 \item[$(iii)$] $a_+ \,a_- = a_ - \,a_+ = 0$,
\end{itemize}
and these properties characterise $a_+$, $a_-$ uniquely.

\noindent Please note that the elements $a_+$, $a_-$ lie in the
closed subalgebra of $A$ generated by $a$. (Which also is the
C*-subalgebra of $A$ generated by $a$.) See \ref{absval} and
use that $a$ is Hermitian. \\
The above notation is compatible with the one introduced in
\ref{anorthdeco}. \pagebreak
\end{theorem}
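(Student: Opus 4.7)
The plan is to reduce the problem to pointwise statements in $C\bigl( \s(a) \bigr)\0$ via the operational calculus \ref{weakopcalc} for the Hermitian element $a$. Introduce $f_+, f_- \in C\bigl( \s(a) \bigr)\0$ given by $f_+(t) := \frac12 \,( \,| \,t \,| + t \,) = \max \,\{ \,t, 0 \,\}$ and $f_-(t) := \frac12 \,( \,| \,t \,| - t \,) = \max \,\{ \,-t, 0 \,\}$; both are continuous on $\s(a) \subset \mathds{R}$, vanish at $0$, and take values in $[ \,0, \infty \,[$. The function $t \mapsto | \,t \,|$ on $\s(a)$ is the unique nonnegative square root in $C\bigl( \s(a) \bigr)\0$ of $t \mapsto t^{\,2}$, hence is sent by the operational calculus to the unique positive square root of $a^{\,2} = a^*a$ in $A$, namely $| \,a \,|$, by \ref{C*sqrootrestate}. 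Consequently the operational calculus sends $f_+$ to $a_+$ and $f_-$ to $a_-$.

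Properties (i), (ii), (iii) then follow by transferring pointwise identities through the operational calculus: (i) is \ref{calcplus} applied to the nonnegative functions $f_+$ and $f_-$; (ii) comes from $f_+ - f_- = \mathrm{id}_{\s(a)}$ together with the fact that the identity function is sent to $a$; and (iii) follows from $f_+ \cdot f_- = 0$ and the multiplicativity of the operational calculus.

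For uniqueness, suppose $b, c \in A$ satisfy $b, c \geq 0$, $a = b - c$, and $b\,c = c\,b = 0$. The orthogonality makes the cross terms vanish in
\[ a^{\,2} = ( \,b - c \,)^{\,2} = b^{\,2} + c^{\,2} = ( \,b + c \,)^{\,2}. \]
Since $A_+$ is a convex cone by \ref{plusconvexcone} (applied to the Hermitian, cf.\ \ref{C*Herm}, Banach \st-algebra $A$), we have $b + c \geq 0$. Thus $b + c$ is a positive square root of $a^*a = a^{\,2}$, and uniqueness of the positive square root in a C*-algebra \ref{C*sqrootrestate} forces $b + c = | \,a \,|$. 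Combining this with $b - c = a$ gives $b = \frac12 \,( \,| \,a \,| + a \,) = a_+$ and $c = \frac12 \,( \,| \,a \,| - a \,) = a_-$.

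The main obstacle is the uniqueness step, where one must establish that $b + c$ is actually $| \,a \,|$ rather than merely some square root of $a^{\,2}$; this requires the positivity of $b + c$ combined with \ref{C*sqrootrestate}. Contrast this with \ref{anorthdeco}, the analogous decomposition for \emph{invertible} Hermitian elements in a general Hermitian Banach \st-algebra, which relied on the polar factorisation; in the C*-algebra setting, the operational calculus gives direct access to $| \,a \,|$ and obviates any invertibility hypothesis.
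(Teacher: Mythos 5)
Your proof is correct and follows essentially the same route as the paper's: both establish uniqueness by squaring $b-c$ and $b+c$ to identify $b+c$ with $|\,a\,|$ via the uniqueness of the positive square root, and both obtain existence by pushing the functions $\frac12(\mathrm{abs} \pm \id)$ through the operational calculus \ref{weakopcalc} and invoking \ref{calcplus}. The only cosmetic difference is that the paper verifies (ii) and (iii) by direct algebra with $|\,a\,|$ while you transfer the corresponding pointwise identities through the calculus.
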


\begin{proof} We shall first show uniqueness. So let $a_1, a_2 \in A_+$
with $a = a_1-a_2$ and $a_1\,a_2 = a_2 \,a_1= 0$. We obtain
${|\,a\,| \,}^2 = {a \,}^2 = {(a_1-a_2) \,}^2 = {(a_1+a_2) \,}^2$, whence
$|\,a\,| = a_1+a_2$ by the uniqueness of the positive square root in $A$,
and so $a_1 = a_+$, $a_2 = a_-$.
We shall next prove that $a_+$ and $a_-$ do enjoy these properties.
(ii) is clear. (iii) holds because for example
$a_+ \,a_- = \frac{1}{4} \,\bigl(\,|\,a\,|+a\,\bigr) \,\bigl(\,|\,a\,|-a\,\bigr)
= \frac{1}{4} \,\bigl(\,{|\,a\,| \,}^2-{a \,}^2\,\bigr)= 0$,
using that $| \,a \,|$ commutes with $a$, as $| \,a \,|$ lies in the closed
subalgebra of $A$ generated by the Hermitian element $a$.
For the proof of (i) we make use of the weak form of the operational
calculus \ref{weakopcalc}.
Let ``$\mathrm{abs}$'' denote the function $t \to | \,t \,|$ on $\s(a)$,
and let ``$\id$'' denote the identity function on $\s(a)$.
We have $a = \id (a)$ by a defining property of the operational
calculus. We also have
$| \,a \,| = {(a^*a) \,}^{1/2} = {\bigl({ \,a \,}^2 \,\bigr) \,}^{1/2} = \mathrm{abs}(a)$
by \ref{calcplus} together with the uniqueness of the positive
square root. Now consider the functions on $\s(a)$ given by
$f_+ := \frac{1}{2} \,( \mathrm{abs} + \id )$
and $f_- := \frac{1}{2} \,( \mathrm{abs} - \id )$.
We then have $f_+ (a), f_- (a) \in A_+$ by \ref{calcplus}.
From the homomorphic nature of the operational calculus, we have
$a = f_+ (a) - f_- (a)$ as well as $f_+ (a) f_- (a) = f_- (a) f_+(a) = 0$.
It follows from the uniqueness property shown above that
$f_+ (a) = a_+$ and $f_- (a) = a_-$, and this proves the statement.
\end{proof}

\begin{corollary}\label{generic}%
The positive part $A_+$ of $A$ is a generic
convex cone in $A\sa$, i.e.\ $A\sa = A_+ +(-A_+)$.
\end{corollary}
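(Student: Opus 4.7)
The plan is to invoke the orthogonal decomposition theorem \ref{orthdec} directly. Given any $a \in A\sa$, that theorem produces elements $a_+, a_- \in A_+$ with $a = a_+ - a_-$, which exhibits $a$ as a member of $A_+ + (-A_+)$. Since $A_+ \subset A\sa$, the reverse inclusion $A_+ + (-A_+) \subset A\sa$ is immediate from the fact that $A\sa$ is a real vector subspace of $A$.

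More concretely, I would write: the inclusion $A_+ + (-A_+) \subset A\sa$ holds because $A_+ \subset A\sa$ and $A\sa$ is closed under addition and scalar multiplication by real numbers. Conversely, for the inclusion $A\sa \subset A_+ + (-A_+)$, let $a \in A\sa$ be arbitrary. By theorem \ref{orthdec}, the elements $a_+ := \tfrac12 \bigl( |a| + a \bigr)$ and $a_- := \tfrac12 \bigl( |a| - a \bigr)$ both lie in $A_+$, and satisfy $a = a_+ - a_-$. Hence $a \in A_+ + (-A_+)$, which completes the proof.

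There is no real obstacle here: the corollary is a one-line consequence of the orthogonal decomposition, and the only thing to verify (beyond citing \ref{orthdec}) is the trivial inclusion $A_+ + (-A_+) \subset A\sa$. The substantive content, namely the existence of the positive and negative parts $a_\pm$ of a Hermitian element, has already been established in the preceding theorem.
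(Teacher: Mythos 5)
Your proof is correct and is exactly the intended argument: the paper states this as an immediate consequence of the orthogonal decomposition \ref{orthdec} and gives no separate proof, which is precisely the one-line deduction you spell out.
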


\begin{corollary}\label{sandwich}%
For a Hermitian element $a \in A$, we have
\[ -| \,a \,| \leq a \leq | \,a \,|. \]
\end{corollary}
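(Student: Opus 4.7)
The plan is to read off the inequality directly from the orthogonal decomposition theorem \ref{orthdec}, which has just been established. That theorem gives, for a Hermitian element $a \in A$, the decomposition $a = a_+ - a_-$ with $a_+, a_- \in A_+$, where by definition
\[ a_+ = \tfrac{1}{2}\bigl(|a| + a\bigr), \qquad a_- = \tfrac{1}{2}\bigl(|a| - a\bigr). \]

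From these explicit formulas, I would observe that
\[ |a| - a = 2\,a_- \quad\text{and}\quad |a| + a = 2\,a_+. \]
Since $a_+, a_- \in A_+$ and $A_+$ is a cone, both $2a_+$ and $2a_-$ lie in $A_+$, which is exactly to say $|a| - a \geq 0$ and $|a| + a \geq 0$ in the order on $A\sa$ induced by $A_+$. Rewriting these two relations yields $a \leq |a|$ and $-|a| \leq a$ respectively, which is the desired sandwich.

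There is no real obstacle here: the work was all done in proving the orthogonal decomposition \ref{orthdec} (in particular that $a_+ + a_- = |a|$, which came from the uniqueness of the positive square root applied to $(a_+ + a_-)^2 = a^2 = |a|^2$). The present corollary is purely a matter of reading off the signs in the defining formulas for $a_\pm$ and invoking the definition of the order relation on $A\sa$.
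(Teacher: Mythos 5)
Your proof is correct and is essentially identical to the paper's own argument: the paper also deduces $a \leq |\,a\,|$ from $|\,a\,| - a = 2\,a_- \geq 0$ and $-|\,a\,| \leq a$ from $|\,a\,| + a = 2\,a_+ \geq 0$, using the positivity of $a_\pm$ established in \ref{orthdec}.
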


\begin{proof}
We have $a \leq | \,a \,|$ because $| \,a \,| - a = 2\,a_- \geq 0$.
Similarly $- | \,a \,| \leq a$ because $| \,a \,| + a = 2\,a_+ \geq 0$.
\end{proof}

\begin{corollary}\label{product}%
Every Hermitian element of the C*-algebra $A$ is the
product of two Hermitian elements of $A$. Please note
that the factors necessarily commute, cf.\ \ref{Hermprod}.
\end{corollary}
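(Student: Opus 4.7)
The plan is to use the orthogonal decomposition of Hermitian elements in a C*-algebra, together with the existence of positive square roots, to explicitly write $a$ as a product of two commuting Hermitian elements.

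First, given a Hermitian $a \in A$, apply Theorem \ref{orthdec} to write $a = a_+ - a_-$ with $a_+, a_- \in A_+$ satisfying $a_+ a_- = a_- a_+ = 0$. Next, by Theorem \ref{C*sqrootrestate}, form the positive square roots $a_+^{1/2}, a_-^{1/2} \in A_+$, which are in particular Hermitian. I then set
\[ b := a_+^{1/2} + a_-^{1/2}, \qquad c := a_+^{1/2} - a_-^{1/2}, \]
both of which are Hermitian, and claim that $bc = a$.

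For the computation to collapse to $a_+ - a_-$, the key step is to verify that $a_+^{1/2}$ and $a_-^{1/2}$ commute. This is where I expect the only subtlety to lie. I will argue as follows: by Theorem \ref{C*sqrootrestate}, $a_+^{1/2}$ lies in the closed subalgebra of $A$ generated by $a_+$, so any element of $A$ that commutes with $a_+$ also commutes with $a_+^{1/2}$; in particular $a_-$ commutes with $a_+^{1/2}$, since $a_- a_+ = a_+ a_-$. Applying the same reasoning to $a_-^{1/2}$ (which lies in the closed subalgebra generated by $a_-$), I get that $a_+^{1/2}$ commutes with $a_-^{1/2}$.

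With commutativity in hand, the cross terms cancel:
\[ bc = (a_+^{1/2} + a_-^{1/2})(a_+^{1/2} - a_-^{1/2}) = a_+ - a_+^{1/2} a_-^{1/2} + a_-^{1/2} a_+^{1/2} - a_- = a_+ - a_- = a, \]
which completes the proof. The parenthetical remark in the statement about the factors commuting is automatic: if $a = bc$ is Hermitian with $b, c$ Hermitian, then $a = a^* = c^* b^* = cb$, so $bc = cb$, in agreement with Observation \ref{Hermprod}.
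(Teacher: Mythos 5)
Your proof is correct and follows essentially the same route as the paper: orthogonal decomposition $a = a_+ - a_-$, positive square roots, and the difference-of-squares factorisation $a = \bigl(a_+^{1/2} + a_-^{1/2}\bigr)\bigl(a_+^{1/2} - a_-^{1/2}\bigr)$. The only cosmetic difference is in how commutativity of the square roots is justified (you pass through commutants step by step, while the paper notes that everything lies in the commutative closed subalgebra generated by $a$); both arguments are valid.
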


\begin{proof} For a Hermitian element $a \in A$, we compute:
\begin{align*}
a = a_+ - a_-
   & = \Bigl[ \,{{( \,a_+ \,)}^{\,1/2}} \,\Bigr]^{\,2} - \Bigl[ \,{{( \,a_- \,)}^{\,1/2}} \,\Bigr]^{\,2} \\
   & = \Bigl[ \,{{( \,a_+ \,)}^{\,1/2} + {( \,a_- \,)}^{\,1/2}} \,\Bigr]
\cdot \Bigl[ \,{{( \,a_+ \,)}^{\,1/2} - {( \,a_- \,)}^{\,1/2}} \,\Bigr],
\end{align*}
using the fact that ${(\, a_+ \,)}^{\,1/2}$ and ${( \,a_- \,)}^{\,1/2}$ commute as
they lie in the closed subalgebra generated by $a$, cf.\ \ref{orthdec} and
\ref{C*sqrootrestate}. \pagebreak
\end{proof}

\medskip
We now turn to special C*-algebras.

\begin{theorem}[absolute values in C*-subalgebras of $\ell^{\,\infty}(\Omega)$]%
\label{Coabs}%
Let $\Omega$ be any non-empty set.
Let $A$ be any C*-subalgebra of $\ell^{\,\infty}(\Omega)$.
Then the absolute value of an element $f$ of the C*-algebra $A$ is given
by the pointwise absolute value $x \mapsto | \,f(x) \,|$, $(x \in \Omega)$.
\end{theorem}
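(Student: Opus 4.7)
The plan is to verify that the pointwise function $g:x\mapsto |f(x)|$ is a positive square root of $f^*f$ inside the C*-algebra $A$, and then invoke uniqueness of the positive square root to conclude that $g$ coincides with $|f|=(f^*f)^{1/2}$. So the proof breaks into four small verifications: membership ($g\in A$), the square identity ($g^2=f^*f$), positivity ($g\in A_+$), and the uniqueness conclusion.

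The first point, $g\in A$, is not tautological because $A$ is only assumed to be a C*-subalgebra of $\ell^{\,\infty}(\Omega)$, not a sublattice. However, it is exactly the content of Proposition \ref{existsqrt}, which was proved precisely for this purpose using Lemma \ref{sqrtexists}. The second point is the elementary pointwise identity $|f(x)|^{2}=\overline{f(x)}\,f(x)=(f^*f)(x)$ for every $x\in\Omega$.

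The only step requiring a little thought is the third: showing that $g$ is positive \emph{in $A$}, i.e.\ $\s_A(g)\subset[\,0,\infty\,[$. Here I would argue as follows. The ambient algebra $\ell^{\,\infty}(\Omega)$ is a commutative C*-algebra, hence Hermitian by Theorem \ref{C*Herm}; any C*-subalgebra $A$ of it is closed, so Theorem \ref{Herminher} makes $A$ Hermitian as well. Proposition \ref{plussubal} then yields the equivalence
\[
 g\in A_{+}\ \Longleftrightarrow\ g\in\ell^{\,\infty}(\Omega)_{+}.
\]
Finally, $g$ is a non-negative real-valued bounded function, and in $\ell^{\,\infty}(\Omega)$ such a function has spectrum contained in $[\,0,\infty\,[$ (its spectrum is the closure of its range, since $\lambda-g$ is invertible iff it is bounded away from zero). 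Hence $g\in\ell^{\,\infty}(\Omega)_{+}$, and therefore $g\in A_{+}$.

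With $g\in A_{+}$ and $g^{2}=f^{*}f$ established, $g$ is a positive square root of $f^{*}f$ in $A$. The uniqueness of such a square root, Theorem \ref{C*sqrootrestate}, forces $g=(f^{*}f)^{1/2}=|f|$, which is the claim. The main obstacle is really the first step, namely having $g$ available inside $A$; once Proposition \ref{existsqrt} is invoked, the remainder is a straightforward application of the Hermitian-subalgebra spectrum theory from \S\ \ref{secHerm} together with the uniqueness of positive square roots.
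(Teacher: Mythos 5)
Your proposal is correct. The skeleton — get the pointwise absolute value $g$ into $A$ via \ref{existsqrt}, check $g^{\,2}=f^*f$ pointwise, check $g\in A_+$, conclude by uniqueness of the positive square root \ref{C*sqrootrestate} — is exactly the paper's skeleton, and steps one, two and four coincide with the paper's proof. Where you diverge is the positivity step. The paper stays entirely inside $A$: proposition \ref{existsqrt} also supplies the pointwise square root $\sqrt{|\,f\,|}$ as a Hermitian element of $A$, so $|\,f\,|=\bigl(\sqrt{|\,f\,|}\,\bigr)^{2}$ has spectrum $\s\bigl(\sqrt{|\,f\,|}\,\bigr)^{2}\subset[\,0,\infty\,[$ by the Rational Spectral Mapping Theorem \ref{ratspecmapthm} together with the fact that Hermitian elements of a C*-algebra have real spectrum \ref{C*Herm}. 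You instead transfer the question to the ambient algebra: $A$ is Hermitian and complete, so \ref{plussubal} (resting on the spectral permanence \ref{specHermsubalg}) reduces $g\in A_+$ to $g\in\ell^{\,\infty}(\Omega)_+$, which you settle by identifying $\s_{\ell^{\,\infty}(\Omega)}(g)$ with the closure of the range of $g$. Both arguments are sound; the paper's buys economy (no spectral-permanence machinery, only the square of a Hermitian element already delivered by \ref{existsqrt}), while yours makes the positivity of a pointwise non-negative function visibly a statement about the ambient function algebra, at the cost of invoking the Hermitian-subalgebra spectrum theory of \S\ \ref{secHerm}.
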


\begin{proof}
Let $f \in A$. Let $| \,f \,| : x \mapsto | \,f(x) \,|$, $(x \in \Omega)$
denote the pointwise absolute value. The function $| \,f \,|$
as well as its pointwise square root $\sqrt{| \,f \,|}$ are in $A\sa$
by proposition \ref{existsqrt}. Then $| \,f \,| \in A_+$ by the Rational
Spectral Mapping Theorem because
\[ | \,f \,| = {\sqrt{| \,f \,|}}^{\ 2} \]
and because $\sqrt{| \,f \,|}$ is Hermitian. From ${| \,f \,|}^{\,2} = \overline{f}f$
one sees that $| \,f \,|$ is a positive square root of $\overline{f} f$.
So $| \,f \,|$ is the absolute value of $f$ by uniqueness of the
positive square root.
\end{proof}

\begin{corollary}\label{pointwiseorder}
Let $\Omega$ be any non-empty set.
Let $A$ be any \linebreak C*-subalgebra of $\ell^{\,\infty}(\Omega)$.
The order in $A\sa$ then is the pointwise order.
\end{corollary}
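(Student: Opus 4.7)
The plan is to reduce the claim to a characterisation of the positive cone, and then to combine the two preceding results \ref{Coabs} and \ref{abschar} to read off positivity pointwise.

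First I would unravel the definition of the order: for $f, g \in A\sa$, one has $f \leq g$ precisely when $g - f \in A_+$, and $g - f$ is itself Hermitian. Hence the entire corollary reduces to showing that for $h \in A\sa$, the relation $h \geq 0$ in the C*-algebra $A$ is equivalent to $h(x) \geq 0$ for all $x \in \Omega$. Observe at this point that any Hermitian $h \in A\sa \subset \ell^{\,\infty}(\Omega)$ is automatically real-valued (because $h = h^*$ means $h(x) = \overline{h(x)}$ pointwise).

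Next I would invoke proposition \ref{abschar}, which says that $h \geq 0$ in $A$ if and only if $h = | \,h \,|$, the absolute value being taken in the C*-algebra $A$ as in \ref{absval}. The crucial input is then theorem \ref{Coabs}, which identifies this C*-algebraic absolute value $|\,h\,|$ with the pointwise absolute value $x \mapsto | \,h(x) \,|$. Combining these, the condition $h \geq 0$ becomes: $h(x) = | \,h(x) \,|$ for every $x \in \Omega$. Since $h$ is real-valued, this latter equality holds at a point $x$ exactly when $h(x) \geq 0$. Putting everything together yields $h \geq 0$ in $A$ iff $h(x) \geq 0$ for all $x \in \Omega$, which is exactly what was required.

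There is no real obstacle here; the entire argument is an assembly of \ref{Coabs} and \ref{abschar}, the only subtlety being the bookkeeping observation that Hermitian elements of $\ell^{\,\infty}(\Omega)$ are real-valued so that $t = | \,t \,|$ is equivalent to $t \geq 0$ for the values $t = h(x)$.
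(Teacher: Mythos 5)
Your argument is correct and is essentially the paper's own proof: the corollary is stated immediately after \ref{Coabs} and the paper proves it by citing \ref{abschar}, exactly the combination you use. Your write-up merely spells out the routine reduction (order to positive cone, Hermitian elements of $\ell^{\,\infty}(\Omega)$ are real-valued) that the paper leaves implicit.
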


\begin{proof} This follows now from \ref{abschar}. \end{proof}

\begin{theorem}[positivity in $\blop(H)$]\label{posop}%
\index{concepts}{positive!bounded operator}%
\index{concepts}{operator!positive bounded}%
Let $H$ be a Hilbert space and let $a \in \blop(H)$.
The following properties are equivalent.
\begin{itemize}
  \item[$(i)$] $a \geq 0$,
 \item[$(ii)$] $\langle ax, x \rangle \geq 0$ for all $x \in H$.
\end{itemize}
\end{theorem}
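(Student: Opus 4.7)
The plan is to prove the two implications separately, using the positive square root and the orthogonal decomposition now available in the C*-algebra $\blop(H)$.

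\emph{Implication (i) $\Rightarrow$ (ii).} This is the straightforward direction. If $a \geq 0$, Theorem \ref{sqrtShiraliFord} (via \ref{C*sqrootrestate}) furnishes a Hermitian $b \in \blop(H)$ with $b^2 = a$. Then for every $x \in H$,
\[ \langle ax, x\rangle = \langle b^2 x, x\rangle = \langle bx, bx\rangle = \|bx\|^2 \geq 0. \]

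\emph{Implication (ii) $\Rightarrow$ (i).} I would split this into two stages: first that $a$ is Hermitian, then that $\s(a) \subset [0,\infty[$. For self-adjointness, the hypothesis forces $\langle ax, x\rangle$ to be real, so
\[ \langle ax, x\rangle = \overline{\langle ax, x\rangle} = \langle x, ax\rangle = \langle a^*x, x\rangle, \]
whence $\langle (a-a^*)x, x\rangle = 0$ for all $x$. A standard polarization argument in the complex Hilbert space $H$ then gives $a = a^*$.

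For the spectral condition, I would apply the orthogonal decomposition \ref{orthdec} to the now-Hermitian $a$ to write $a = a_+ - a_-$ with $a_+, a_- \geq 0$ and $a_+ a_- = a_- a_+ = 0$, and aim to show $a_- = 0$. Fix $y \in H$ and set $z := a_- y$. Using $a_+ a_- = 0$, one computes $az = (a_+ - a_-)a_- y = -a_-^2 y = -a_- z$, so
\[ 0 \leq \langle az, z\rangle = -\langle a_- z, z\rangle. \]
Simultaneously, the already-proved forward direction applied to the positive element $a_-$ gives $\langle a_- z, z\rangle \geq 0$. Hence both sides vanish. Writing $a_- = b^2$ with $b := a_-^{1/2}$ (Hermitian by \ref{C*sqrootrestate}), the identity $\langle a_- z, z\rangle = \|bz\|^2 = 0$ yields $bz = 0$ and therefore $a_- z = b(bz) = 0$. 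Since $y$ was arbitrary, $a_-^2 = 0$ as an operator, and the C*-property gives $\|a_-\|^2 = \|a_-^* a_-\| = \|a_-^2\| = 0$, so $a_- = 0$ and $a = a_+ \geq 0$.

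The only real difficulty lies in the reverse implication, and the key trick is to feed the forward direction back into itself through the negative part $a_-$: this converts the scalar positivity hypothesis into the operator identity $a_-^2 = 0$, after which the C*-property finishes the job. Once one sees this maneuver, everything else is routine use of the tools already developed in the previous paragraphs.
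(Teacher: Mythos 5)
Your proof is correct, but the reverse implication takes a genuinely different route from the paper's. For (ii) $\Rightarrow$ (i) the paper argues directly on the spectrum: for $\lambda < 0$ it expands ${\| \,(a - \lambda \mathds{1})x \,\| \,}^2 = {\| \,ax \,\| \,}^2 - 2\lambda \,\langle ax, x \rangle + {\lambda \,}^2 {\| \,x \,\| \,}^2 \geq {\lambda \,}^2 {\| \,x \,\| \,}^2$, so $a - \lambda \mathds{1}$ has a bounded left inverse on its range; Hermiticity gives dense range, hence left invertibility in $\blop(H)$, and then invertibility by \ref{Hermlrinv}. That argument needs only elementary Hilbert space estimates and none of the order-theoretic machinery. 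You instead invoke the orthogonal decomposition \ref{orthdec} (which sits upstream of \ref{posop} in the same paragraph, so there is no circularity) and bootstrap the already-proved forward direction through $a_-$ to force $\langle a_- z, z \rangle = 0$ for $z = a_- y$, whence ${a_-}^{\,2} = 0$ and $a_- = 0$ by the C*-property. This is the standard ``algebraic'' argument — essentially the same maneuver used in the Shirali--Ford circle of ideas to show $a^*a \geq 0$ in abstract C*-algebras — and it generalises beyond the concrete $\blop(H)$ setting, at the cost of leaning on the operational calculus hidden inside \ref{orthdec}. One further difference: the paper merely asserts that (ii) forces $a$ to be Hermitian, whereas you supply the polarization argument explicitly; both proofs genuinely need that step, so your added detail is welcome rather than redundant.
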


\begin{proof} Each of the above two properties implies that $a$ is Hermitian. \\
(i) $\Rightarrow$ (ii). If $a \geq 0$, there exists
$b \in \blop(H)\sa$ such that $a = {b\,}^2$, and so
\[ \langle ax, x \rangle = \langle {b\,}^2x, x \rangle =
\langle bx, bx \rangle \geq 0\quad \text{for all } x \in H. \]
(ii) $\Rightarrow$ (i). Assume that $\langle ax, x \rangle \geq 0$ for
all $x \in H$ and let $\lambda <  0$. Then
\[ {\| \,(a-\lambda \mathds{1})x \,\| \,}^2 =
\ {\| \,ax \,\| \,}^2 - 2 \,\lambda \,\langle ax, x \rangle + {\lambda \,}^2 \,{\| \,x \,\| \,}^2
\geq \,{\lambda \,}^2 \,{\| \,x \,\| \,}^2. \]
This implies that $a-\lambda \mathds{1}$ has a bounded left inverse
(defined on the range of $a - \lambda \mathds{1}$). 
In particular, the null space of $a-\lambda \mathds{1}$ is $\{ 0 \}$.
It follows that the range of $a-\lambda \mathds{1}$ is dense in $H$,
as $a-\lambda \mathds{1}$ is Hermitian. So the left inverse of
$a-\lambda \mathds{1}$ has a continuation to a bounded linear
operator defined on all of $H$. So $a-\lambda \mathds{1}$ is left
invertible in $\blop(H)$. This Hermitian element then is invertible
in $\blop(H)$ according to \ref{Hermlrinv}. \pagebreak
\end{proof}

\begin{proposition}\label{prepordcompl}%
Let $H$ be a Hilbert space. Suppose that an element $a \in \blop(H)\sa$ satisfies
\[ \langle a x, x \rangle = \sup_{b \in B} \,\langle b x, x \rangle
\quad \text{for all} \quad x \in H \]
for some set $B \subset \blop(H)\sa$. Then
\[ a = \sup B \]
is the least upper bound (or supremum) of $B$ in $\blop(H)\sa$.
\end{proposition}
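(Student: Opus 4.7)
The plan is to reduce everything to the criterion for positivity of a bounded operator on a Hilbert space given in Theorem \ref{posop}, which identifies $c \geq 0$ with $\langle c x , x \rangle \geq 0$ for all $x \in H$. Once this characterisation is available, the order relation on $\blop(H)\sa$ becomes pointwise in the quadratic-form sense: for $c, d \in \blop(H)\sa$ one has $c \leq d$ if and only if $\langle c x, x\rangle \leq \langle d x, x\rangle$ for every $x \in H$. So the statement to prove almost transcribes, with supremum turning from a pointwise numerical supremum into an operator supremum.

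First I would verify that $a$ is an upper bound for $B$. For any fixed $b \in B$ and any $x \in H$, the defining identity gives $\langle b x, x \rangle \leq \sup_{b' \in B} \langle b' x, x \rangle = \langle a x, x \rangle$, so $\langle (a - b) x, x \rangle \geq 0$ for all $x$. Since $a - b \in \blop(H)\sa$, Theorem \ref{posop} yields $a - b \geq 0$, i.e.\ $b \leq a$.

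Next I would show that $a$ is the \emph{least} upper bound. Let $c \in \blop(H)\sa$ be any upper bound of $B$, so that $b \leq c$ for every $b \in B$. Then $\langle b x, x \rangle \leq \langle c x, x \rangle$ for all $b \in B$ and all $x \in H$ by Theorem \ref{posop} applied to $c - b \geq 0$. Taking the supremum over $b \in B$ on the left gives $\langle a x, x \rangle = \sup_{b \in B}\langle b x, x\rangle \leq \langle c x, x\rangle$ for all $x \in H$, hence $\langle (c - a) x, x \rangle \geq 0$ for all $x$, and Theorem \ref{posop} once more forces $c - a \geq 0$, i.e.\ $a \leq c$.

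There is essentially no obstacle here; the only subtlety worth flagging is that the hypothesis already provides the pointwise supremum of the numerical quadratic forms as a \emph{bounded} quadratic form realised by the self-adjoint operator $a$, so no separate argument is needed to ensure that the supremum ``exists'' as a bounded operator — its existence is baked into the assumption. Without that assumption, suprema of directed families of self-adjoint operators need not exist in $\blop(H)\sa$ without additional bounds, but here we are spared that issue.
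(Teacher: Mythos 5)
Your proof is correct and follows essentially the same route as the paper's: both arguments use Theorem \ref{posop} to translate the operator order on $\blop(H)\sa$ into the pointwise order of the quadratic forms $x \mapsto \langle \cdot\, x, x\rangle$, first to see that $a$ is an upper bound of $B$ and then to compare $a$ with an arbitrary upper bound $c$. Nothing is missing.
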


\begin{proof}
This is an application of the preceding theorem \ref{posop}.
For $x \in H$, we have that
\[ \langle a x, x \rangle = \sup_{b \in B} \,\langle b x, x \rangle \]
is an (the least) upper bound of $\{ \,\langle b x, x \rangle \geq 0 : b \in B \,\}$,
so $a$ is an upper bound of $B$, by \ref{posop}.
To prove that $a$ is the least upper bound of $B$, let $c \in \blop(H)\sa$
be an arbitrary upper bound of $B$. We have to prove that $c \geq a$.
For $x \in H$, we have by \ref{posop} that $\langle c x, x \rangle$ is an
upper bound of $S := \{ \,\langle b x, x \rangle \geq 0 : b \in B \,\}$, and
thus greater than or equal to the least upper bound of $S$, which is
\[ \sup_{b \in B} \,\langle b x, x \rangle = \langle a x, x \rangle. \]
Therefore $c \geq a$, by \ref{posop} again, as was to be shown.
\end{proof}

\begin{theorem}[the monotone completeness of $\blop(H)\sa$]\label{ordercompl}%
Let $H$ be a Hilbert space. Let $(a_i)_{i \in I}$ be an increasing
net in $\blop(H)\sa$, which is bounded above in $\blop(H)\sa$.
There then exists in $\blop(H)\sa$ the least upper bound
\[  \sup_{i \in I} a_i \]
of the set $\{ \,a_i \in \blop(H)\sa : i \in I \,\}$.

Moreover, the net $(a_i)_{i \in I}$ converges pointwise on $H$
to its supremum. That is, with
\[ a := \sup_{i \in I} a_i \]
we have
\[ a \mspace{2mu} x = \lim _{i \in I} a_i \mspace{2mu} x
\quad \text{for all} \quad x \in H. \pagebreak \]
\end{theorem}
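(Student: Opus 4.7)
The plan is to construct $a := \sup_{i\in I} a_i$ by taking weak limits of the matrix coefficients, then apply Proposition \ref{prepordcompl} to identify the result as the supremum, and finally obtain norm convergence via a Cauchy-Schwarz type estimate for positive bounded operators.

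First, I would fix any $i_0 \in I$ and note that the tail $\{ a_i : i \geq i_0 \}$ is order-bounded between $a_{i_0}$ and any upper bound $b \in \blop(H)\sa$, hence norm-bounded by some constant $M$ thanks to Proposition \ref{onbded}. For each $x \in H$, Theorem \ref{posop} yields $\langle a_i x, x\rangle \leq \langle b x, x\rangle$, so the increasing real net $\bigl(\langle a_i x, x\rangle\bigr)_{i \geq i_0}$ converges to its supremum $q(x)$. By polarisation, each matrix coefficient $\langle a_i x, y\rangle$ is a finite linear combination of terms $\langle a_i z, z\rangle$ with $z \in \{ x + \iu^{\,k} y : 0 \leq k \leq 3 \}$, so
\[ B(x,y) := \lim_{i} \langle a_i x, y\rangle \]
exists for all $x, y \in H$. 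The bound $|B(x,y)| \leq M\,\|x\|\,\|y\|$ makes $B$ a bounded sesquilinear form, and the Riesz representation theorem produces a unique $a \in \blop(H)$ with $\langle a x, y\rangle = B(x,y)$. Since $B(x,x) = q(x) \in \mathds{R}$, the operator $a$ is Hermitian.

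Having $\langle a x, x\rangle = \sup_{i} \langle a_i x, x\rangle$ for every $x \in H$, I can invoke Proposition \ref{prepordcompl} to conclude that $a = \sup_{i \in I} a_i$ in $\blop(H)\sa$. For the pointwise-convergence statement, the key observation is that $a - a_i \geq 0$. Writing $S_i := ( a - a_i )^{1/2}$, as provided by Theorem \ref{C*sqrootrestate}, I would estimate
\[ \| (a-a_i)\,x \,\|^{\,2} = \| \,S_i (S_i x) \,\|^{\,2} \leq \| S_i \|^{\,2} \,\| S_i x \|^{\,2}
= \| \,a - a_i \,\| \cdot \langle (a - a_i)\,x,\, x \rangle. \]
The norms $\| \,a - a_i \,\|$ remain uniformly bounded (the net lies between $a_{i_0}$ and $a$, so Proposition \ref{onbded} applies once more), while $\langle (a - a_i)\,x,\, x\rangle = \langle a x, x\rangle - \langle a_i x, x\rangle \to 0$ by the construction of $a$. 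Hence $a_i x \to a x$ in $H$ for every $x$.

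The main obstacle is the correct handling of boundedness needed to manufacture $a$ from the pointwise sesquilinear limits: without the uniform norm bound that Proposition \ref{onbded} extracts from order-boundedness, there would be no guarantee that $B$ defines a bounded operator at all. The Cauchy-Schwarz style inequality $\| T x \|^{\,2} \leq \| T \| \cdot \langle T x, x\rangle$ for positive $T$, obtained by factoring $T = S^{\,2}$ with $S = T^{1/2}$, is then the technical heart of the pointwise-convergence part, and depends on the positive square root produced by the operational calculus inside the C*-algebra $\blop(H)$.
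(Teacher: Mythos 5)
Your proposal is correct and follows essentially the same route as the paper: polarisation of the convergent quadratic forms, the uniform norm bound from order-boundedness via \ref{onbded} to produce $a$ as a bounded Hermitian operator, identification of $a$ as the supremum through \ref{prepordcompl}, and the estimate $\| (a-a_i)x \|^{\,2} \leq \| a-a_i \| \cdot \langle (a-a_i)x, x\rangle$ from the positive square root and the C*-property for the pointwise convergence. The only cosmetic difference is that the paper bounds $\| a - a_i \|$ by noting the sections are decreasing (via \ref{monotnorm}) rather than reapplying \ref{onbded}; both are valid.
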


\begin{proof} For $x \in H$, the net
$\bigl( \langle a_i x, x \rangle \bigr)_{i \in I}$
is increasing and bounded above, so that
$\lim _{i \in I} \,\langle a_i x, x \rangle =
\sup _{i \in I} \,\langle a_i x, x \rangle$
exists and is finite. By polarisation, we may define
a Hermitian sesquilinear form $\varphi$ on $H$ via
\[ \varphi (x, y) = \lim _{i \in I} \,\langle a_i x, y \rangle \quad (x,y \in H). \]
The Hermitian sesquilinear form $\varphi$ is bounded in the sense that
\[ \sup _{\|\,x\,\|,\,\|\,y\,\| \,\leq \,1} \,|\,\varphi \,(x,y)\,| < \infty. \]
This is so because any section $( a_i )_{\,\text{\small{$i \geq i\0$}}}$
of the net is norm-bounded by proposition \ref{onbded}.
Hence there is a unique $a \in \blop(H)\sa$ with
\[ \langle ax, y \rangle = \varphi (x,y) = \lim _{i \in I}
\,\langle a_ix,y \rangle\quad \text{for all} \quad x,y \in H. \]
For $x \in H$, we get that
$\langle ax,x \rangle = \sup _{i \in I} \,\langle a_ix,x \rangle$,
and it follows by the preceding proposition \ref{prepordcompl} that
$a$ indeed is the supremum of the set $\{ \,a_i \in \blop(H)\sa : i \in I \,\}$.
It remains to be shown that the net $(a_i)_{i \in I}$ converges
pointwise to $a$. For this purpose, we use that $a-a_i \geq 0$
for all $i \in I$. For $x \in H$, we have by the C*-property \ref{preC*alg} that
\begin{align*}
& \ {\|\,(a-a_i) \,x\,\|}^{\,2} \\
\leq & \ {\| \,{(a-a_i)}^{\,1/2} \,\|}^{\,2} \cdot {\| \,{(a-a_i)}^{\,1/2\,} x \,\|}^{\,2}
= \ \|\,a-a_i\,\| \,\langle (a-a_i) \,x, x \rangle.
\end{align*}
The statement follows because any section
$( \,\|\,a-a_i\,\| \,)_{\,\text{\small{$i \geq i\0$}}}$ is
de\-creasing by \ref{monotnorm}, and thus bounded.
\end{proof}

\begin{corollary}
Let $H$ be a Hilbert space. Let $(a_i)_{i \in I}$ be an increasing
net in $\blop(H)\sa$, which is bounded above in $\blop(H)\sa$.
For $a \in \blop(H)\sa$, the following statements are equivalent.
\begin{itemize}
   \item[$(i)$] $a = \sup_{i \in I} a_i$ in $\blop(H)\sa$,
  \item[$(ii)$] $a x = \lim_{i \in I} a_i x$ for all $x \in H$,
 \item[$(iii)$] $\langle a x, x \rangle = \lim_{i \in I} \,\langle a_i x, x \rangle$
                       for all $x \in H$,
 \item[$(iv)$] $\langle a x, x \rangle = \sup_{i \in I} \,\langle a_i x, x \rangle$
                       for all $x \in H$,
\end{itemize}
\end{corollary}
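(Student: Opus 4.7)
The plan is to establish the cyclic chain of implications (i) $\Rightarrow$ (ii) $\Rightarrow$ (iii) $\Rightarrow$ (iv) $\Rightarrow$ (i), drawing essentially on the two immediately preceding results (Theorem \ref{ordercompl} and Proposition \ref{prepordcompl}) together with elementary facts about the inner product and monotone nets of real numbers. There is no substantive new content to prove; the corollary is a packaging of the two preceding items into a single ``equivalent conditions'' statement.

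First I would handle (i) $\Rightarrow$ (ii): by Theorem \ref{ordercompl}, the hypothesis of an increasing net bounded above in $\blop(H)\sa$ guarantees the existence of an element $a' \in \blop(H)\sa$ with $a' = \sup_{i \in I} a_i$ and, moreover, $a' x = \lim_{i \in I} a_i x$ for all $x \in H$. Since the supremum in an ordered set is unique when it exists, (i) forces $a = a'$, and (ii) follows at once. For (ii) $\Rightarrow$ (iii) I would simply invoke the (joint) continuity of the inner product together with the fact that the net $(a_i x)$ converges, so that $\langle a_i x, x \rangle \to \langle a x, x \rangle$.

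Next, for (iii) $\Rightarrow$ (iv), the key observation is that for each $x \in H$ the real net $\bigl( \langle a_i x, x \rangle \bigr)_{i \in I}$ is \emph{increasing}: if $i \leq j$ then $a_j - a_i \geq 0$, and Theorem \ref{posop} yields $\langle (a_j - a_i) x, x \rangle \geq 0$, i.e.\ $\langle a_i x, x \rangle \leq \langle a_j x, x \rangle$. An increasing net of real numbers that has a limit has that limit as its supremum, so (iii) upgrades to (iv). Finally, (iv) $\Rightarrow$ (i) is precisely the content of Proposition \ref{prepordcompl} applied to the set $B := \{ a_i : i \in I \}$, closing the cycle.

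The proof is essentially bookkeeping; the only ``tricky'' ingredient worth emphasising is the monotonicity step from (iii) to (iv), which is what ensures that an a priori mere \emph{limit} in $\mathds{R}$ is in fact a \emph{supremum}. No part of the argument presents a genuine obstacle, since the analytic heart of the matter (existence of the supremum and its realisation as the pointwise strong limit) has already been carried out in Theorem \ref{ordercompl}.
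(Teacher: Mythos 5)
Your proof is correct and follows exactly the same cyclic chain (i) $\Rightarrow$ (ii) $\Rightarrow$ (iii) $\Rightarrow$ (iv) $\Rightarrow$ (i) as the paper, citing the same three ingredients (\ref{ordercompl}, \ref{posop}, \ref{prepordcompl}) at the same steps; you merely spell out the monotonicity argument for (iii) $\Rightarrow$ (iv) that the paper leaves implicit.
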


\begin{proof}
The least upper bound $\sup_{i \in I} a_i$ exists in $\blop(H)\sa$
by the preceding theorem \ref{ordercompl}. The implication
(i) $\Rightarrow$ (ii) follows from the preceding theorem
\ref{ordercompl}. (ii) $\Rightarrow$ (iii) is trivial. (iii) $\Rightarrow$ (iv)
follows from theorem \ref{posop}. (iv) $\Rightarrow$ (i) follows
from proposition \ref{prepordcompl}.
\end{proof}

\medskip
We return to abstract C*-algebras with the following theorem.
It will play a decisive role in the next paragraph. \pagebreak

\begin{theorem}[monotony of the inverse operation]%
\label{monotinv}%
Let $a,b \in \tld{A}\sa$ with $0 \leq a \leq b$ and with $a$ invertible.
Then $b$ is invertible and we have $0 \leq {b \,}^{-1} \leq {a \,}^{-1}$.
\end{theorem}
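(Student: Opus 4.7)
The plan is to conjugate the inequality $a \leq b$ by $a^{-1/2}$, thereby replacing the lower bound by the unit $e$, invert where this is transparent via the Rational Spectral Mapping Theorem, and then conjugate back by $a^{-1/2}$ to recover the desired statement for $b^{-1}$. Throughout I work inside the unital C*-algebra $\tld{A}$, which carries a natural C*-algebra structure by \ref{preCstarunitis}.

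First I would set up the square roots. Since $a$ is a positive invertible element of $\tld{A}\sa$, item \ref{invpossqrt} supplies a unique positive square root $a^{1/2} \in \tld{A}$, which is invertible because $a$ is. Its inverse $(a^{1/2})^{-1}$ is positive and is itself a square root of $a^{-1}$, so by the uniqueness clause of \ref{invpossqrt} it equals $(a^{-1})^{1/2}$; I denote this common element $a^{-1/2}$. It is in particular Hermitian and satisfies $a^{-1/2} \,a \,a^{-1/2} = e$. Furthermore, \st-congruence \ref{stcongr}, applied in $\tld{A}$, tells us that the linear map $\Phi : \tld{A}\sa \to \tld{A}\sa$ given by $\Phi(x) := a^{-1/2} x \,a^{-1/2}$ sends the positive cone into itself, and so is order-preserving.

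Applying $\Phi$ to the inequality $0 \leq a \leq b$ yields
\[ 0 \leq e \leq c, \quad \text{with} \quad c := a^{-1/2} b \,a^{-1/2}. \]
Hence $\s(c) \subset [ \,1, \infty \,[$, so $c$ is invertible, and the Rational Spectral Mapping Theorem \ref{ratspecmapthm} gives $\s(c^{-1}) \subset \,] \,0, 1 \,]$, that is, $0 \leq c^{-1} \leq e$. The invertibility of $c = a^{-1/2} b \,a^{-1/2}$ together with that of $a^{-1/2}$ forces $b$ itself to be invertible, with $b^{-1} = a^{-1/2} c^{-1} a^{-1/2} = \Phi(c^{-1})$. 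Applying $\Phi$ one more time to $0 \leq c^{-1} \leq e$ finally produces $0 \leq b^{-1} \leq a^{-1}$, as required.

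The main obstacle is really only bookkeeping: verifying that $a^{-1/2}$ deserves its name via the uniqueness part of \ref{invpossqrt}, and checking that the positive-cone machinery (square roots, \st-congruence, Rational Spectral Mapping) applies inside the unital C*-algebra $\tld{A}$ rather than only inside $A$. Once this is in hand, the proof is nothing more than two symmetric applications of the order-preserving map $\Phi$ sandwiching the elementary spectral observation that $c \geq e$ implies $0 \leq c^{-1} \leq e$.
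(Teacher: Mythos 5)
Your argument is correct, but it is organised differently from the paper's proof, and the comparison is instructive. The paper first gets invertibility of $b$ from $a \geq \alpha e$ for some $\alpha > 0$ (so that $b \geq \alpha e$ as well), then conjugates $a \leq b$ by $b^{-1/2}$ to obtain $0 \leq b^{-1/2} a \,b^{-1/2} \leq e$, and finally transfers this to $0 \leq a^{1/2} b^{-1} a^{1/2} \leq e$ by a norm computation: $\| \,a^{1/2} b^{-1} a^{1/2} \,\| \leq {\| \,a^{1/2} b^{-1/2} \,\|}^{\,2} = \| \,b^{-1/2} a \,b^{-1/2} \,\|$, using the isometry of the involution and the C*-property \ref{preC*alg} together with $\| \cdot \| = \rlambda$ on Hermitian elements \ref{C*rl}. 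You instead conjugate once by $a^{-1/2}$, reduce to the transparent spectral fact that $e \leq c$ implies $0 \leq c^{-1} \leq e$ via the Rational Spectral Mapping Theorem \ref{ratspecmapthm}, and conjugate back; invertibility of $b$ falls out of the factorisation $b = a^{1/2} c \,a^{1/2}$ rather than being established separately. Your route avoids the C*-norm identity entirely and needs only the order-preservation of \st-congruence \ref{stcongr} plus spectral mapping, at the modest price of the bookkeeping you flag yourself: identifying $(a^{1/2})^{-1}$ with $(a^{-1})^{1/2}$ via the uniqueness clause of \ref{invpossqrt}/\ref{possqroot}. Both proofs are complete; yours is arguably the more purely order-theoretic of the two, while the paper's exploits the metric structure of the C*-norm.
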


\begin{proof}
Since $a \geq 0$ is invertible, we have $a \geq \alpha e$ for some
$\alpha > 0$, so that also $b \geq \alpha e$, from which it follows that
$b \geq 0$ is invertible. Also ${b \,}^{-1} \geq 0$ by the Rational Spectral
Mapping Theorem. On one hand, we have
\[ 0 \leq a \leq b = {b \,}^{1/2} \,e \,{b \,}^{1/2} \]
whence, by \ref{stcongr},
\[ 0 \leq {b \,}^{-1/2} \,a \,{b \,}^{-1/2} \leq e \]
so that
\[ \bigl\| \,{b \,}^{-1/2} \,a \,{b \,}^{-1/2} \,\bigr\|
= \rlambda \bigl( \,{b \,}^{-1/2} \,a \,{b \,}^{-1/2} \,\bigr) \leq 1. \]
On the other hand, we have
\[ \bigl\| \,{a \,}^{1/2} \,{b \,}^{-1} \,{a \,}^{1/2} \,\bigr\|
\leq \bigl\| \,{a \,}^{1/2} \,{b \,}^{-1/2} \,\bigr\|
\cdot \bigl\| \,{b \,}^{-1/2} \,{a \,}^{1/2} \,\bigr\|. \]
By the isometry of the involution and the C*-property \ref{preC*alg},
it follows that
\[ \bigl\| \,{a \,}^{1/2} \,{b \,}^{-1} \,{a \,}^{1/2} \,\bigr\|
\leq {\bigl\| \,{a \,}^{1/2} \,{b \,}^{-1/2} \,\bigr\| \,}^{2} =
\bigl\| \,{b \,}^{-1/2} \,a \,{b \,}^{-1/2} \,\bigr\| \leq 1. \]
This implies
\[ 0 \leq {a \,}^{1/2} \,{b \,}^{-1} \,{a \,}^{1/2} \leq e \]
which is the same as $b^{-1} \leq a^{-1}$, by \ref{stcongr} again.
\end{proof}

\medskip
And now for something completely different:

\begin{definition}[positive linear functionals]%
A linear functional $\varphi$ on a \st-algebra is called
\underline{positive} if $\varphi (a^*a) \geq 0$ for all elements $a$.
\end{definition}

\begin{observation}\label{plfCstar}%
A linear functional $\varphi$ on the C*-algebra $A$ is positive
if and only if $\varphi (a) \geq 0$ for all $a \in A_+$,
cf.\ \ref{sqrtShiraliFord}.
\end{observation}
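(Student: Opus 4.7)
The plan is to derive this observation directly from Theorem \ref{sqrtShiraliFord}, which furnishes the key equivalence $a \in A_+ \Leftrightarrow a = c^{*}c$ for some $c \in A$. The statement is essentially a reformulation of this characterization at the level of linear functionals, so no serious obstacle is expected; the only care needed is to invoke the correct direction of \ref{sqrtShiraliFord} at each step.

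First, I would establish the forward implication. Suppose $\varphi$ is positive, i.e.\ $\varphi(a^{*}a) \geq 0$ for all $a \in A$. Let $a \in A_+$. By Theorem \ref{sqrtShiraliFord} (the implication (i) $\Rightarrow$ (ii)), there exists $b \in A_+$ with $a = b^{\,2}$. Since $b$ is Hermitian, $b^{\,2} = b^{*}b$, so $\varphi(a) = \varphi(b^{*}b) \geq 0$ by positivity of $\varphi$. Alternatively, one can simply take $c := a^{\,1/2} \in A_+$ from \ref{C*sqrootrestate}.

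Conversely, assume $\varphi(a) \geq 0$ for every $a \in A_+$. For an arbitrary $a \in A$, the element $a^{*}a$ belongs to $A_+$ by Theorem \ref{sqrtShiraliFord} (the implication (iii) $\Rightarrow$ (i), which rests on the Shirali-Ford Theorem \ref{ShiraliFord}). Hence $\varphi(a^{*}a) \geq 0$, so $\varphi$ is positive by definition. This completes the equivalence, and the argument is a two-line application of \ref{sqrtShiraliFord} in each direction.
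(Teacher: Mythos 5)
Your argument is correct and is precisely the one the paper intends: the observation is stated with only the reference ``cf.\ \ref{sqrtShiraliFord}'', and both directions follow exactly as you describe — writing $a \in A_+$ as $b^*b$ with $b = a^{1/2}$ for the forward implication, and using $a^*a \in A_+$ (the Shirali-Ford direction) for the converse. Nothing to add.
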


\begin{observation}\label{plfpointwise}%
Let $\Omega$ be any non-empty set.
Let $A$ be any C*-subalgebra of $\ell^{\,\infty}(\Omega)$.
A linear functional $\varphi$ on $A$ is positive
if and only if $\varphi (f) \geq 0$ for every $f \in A$ with
$f \geq 0$ pointwise on $\Omega$.
\end{observation}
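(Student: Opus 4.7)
The plan is to derive the statement as a direct consequence of two items already established in the text. The first step will be to invoke observation \ref{plfCstar}, which reduces positivity of a linear functional $\varphi$ on the C*-algebra $A$ to the seemingly weaker condition that $\varphi (a) \geq 0$ for every $a \in A_+$. After this reduction, the entire remaining task is to identify $A_+$ with the set of elements of $A$ that are pointwise non-negative on $\Omega$.

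For this identification I will appeal to corollary \ref{pointwiseorder}, which asserts that the order on $A\sa$ inherited from the C*-algebra structure coincides with the pointwise order on $\Omega$. In particular, taking $g := 0$ in the pointwise order, one sees that an element $f \in A\sa$ lies in $A_+$ precisely when $f(x) \geq 0$ for every $x \in \Omega$. Substituting this equivalence into the criterion supplied by \ref{plfCstar} will immediately give the desired biconditional, in both directions at once.

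I do not anticipate any genuine obstacle here: all the substance has been packed into the preceding chain of results, namely the computation of absolute values in C*-subalgebras of $\ell^{\,\infty}(\Omega)$ in \ref{Coabs} (which itself relies on the existence of pointwise square roots \ref{existsqrt}), combined with the characterisation \ref{abschar} of positive elements through the identity $a = | \,a \,|$. The proof itself will therefore reduce to a one-line combination of \ref{plfCstar} and \ref{pointwiseorder}, with no further calculation required.
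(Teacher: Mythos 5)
Your proposal is correct and coincides with the paper's own proof, which likewise derives the observation directly from \ref{plfCstar} together with \ref{pointwiseorder}. Nothing further is needed.
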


\begin{proof}
This follows from the preceding observation
\ref{plfCstar} together with corollary \ref{pointwiseorder}.
\pagebreak
\end{proof}

\begin{theorem}\label{plfC*bded}%
\index{concepts}{positive!functional!on C*-algebra}%
Every positive linear functional on the C*-algebra $A$ is
Hermitian and continuous.
\end{theorem}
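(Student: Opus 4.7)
The plan is to establish the Hermitian property first, as a direct consequence of the orthogonal decomposition in C*-algebras, and then tackle continuity via a boundedness argument on the positive cone.

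For the Hermitian property, I would first note that $\varphi$ is real-valued on $A\sa$. Indeed, by the orthogonal decomposition \ref{orthdec} (or more simply by \ref{generic}), any $a \in A\sa$ can be written as $a = a_+ - a_-$ with $a_\pm \in A_+$; positivity of $\varphi$ then gives $\varphi(a) = \varphi(a_+) - \varphi(a_-) \in \mathds{R}$. For arbitrary $a \in A$, writing $a = b + \iu c$ with $b, c \in A\sa$ (so that $a^* = b - \iu c$), one immediately obtains $\varphi(a^*) = \varphi(b) - \iu \varphi(c) = \overline{\varphi(b) + \iu \varphi(c)} = \overline{\varphi(a)}$.

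For continuity, the key intermediate step is to show that $\varphi$ is bounded on the unit ball of $A_+$, i.e.\ that there exists a constant $M$ with $\varphi(a) \leq M \| a \|$ for all $a \in A_+$. I would argue by contradiction: if no such $M$ exists, one can extract a sequence $(a_n)$ in $A_+$ with $\| a_n \| \leq 1$ and $\varphi(a_n) \geq 4^n$. The partial sums $\sum_{n=1}^{N} 2^{-n} a_n$ form an increasing sequence in $A_+$ which is Cauchy in norm, hence converges to some $a \in A$. Because $A_+$ is closed in $A\sa$ by \ref{plusclosed} (applied to the Hermitian Banach \st-algebra $A$, which is Hermitian by \ref{C*Herm}), the limit $a$ is positive, and each partial sum satisfies $\sum_{n=1}^{N} 2^{-n} a_n \leq a$. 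Applying $\varphi$ to this inequality (using that $\varphi$ is non-negative on $A_+$, hence monotone on $A\sa$ in the order induced by $A_+$) yields $\sum_{n=1}^{N} 2^{-n} \varphi(a_n) \leq \varphi(a)$, but the left side contains terms $\geq 2^n$, producing the desired contradiction.

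Once boundedness on $A_+$ is established, extension to all of $A$ is routine. For $a \in A\sa$, the orthogonal decomposition gives $a = a_+ - a_-$ with $0 \leq a_\pm \leq | \,a \,|$, so by \ref{monotnorm} and \ref{isonorm} one has $\| a_\pm \| \leq \| \,| \,a \,| \,\| = \| a \|$, whence $| \,\varphi(a) \,| \leq \varphi(a_+) + \varphi(a_-) \leq 2 M \| a \|$. For general $a = b + \iu c$ with $b, c \in A\sa$ (and $\| b \|, \| c \| \leq \| a \|$ by isometry of the involution), this yields $| \,\varphi(a) \,| \leq 4 M \| a \|$.

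The main obstacle is the boundedness step on $A_+$, which is essentially a uniform boundedness argument specific to ordered Banach spaces with closed positive cone. Its success hinges on the two ingredients supplied above: the closedness of $A_+$ in $A\sa$ (theorem \ref{plusclosed}, available because C*-algebras are Hermitian Banach \st-algebras by \ref{C*Herm}), and the monotony $0 \leq x \leq y \Rightarrow \varphi(x) \leq \varphi(y)$, which is immediate from positivity applied to $y - x$.
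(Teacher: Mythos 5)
Your proposal is correct and follows essentially the same route as the paper: the Hermitian property via the orthogonal decomposition, and continuity via a gliding-hump series argument that exploits the closedness of the positive cone $A_+$ together with monotonicity of $\varphi$ on it, followed by the sandwich $-|\,a\,| \leq a \leq |\,a\,|$ to pass to $A\sa$ and then to all of $A$. The only differences are cosmetic (weights $2^{-n}$ with $\varphi(a_n) \geq 4^n$ instead of the paper's $n^{-2}$ with $\varphi(a_n) \geq n^2$, and a slightly larger final constant).
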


\begin{proof} Let $\varphi$ be a positive linear functional on $A$.
The fact that $\varphi$ is Hermitian \ref{Hermfunct}
follows from the orthogonal decomposition \ref{orthdec}. It shall
next be shown that $\varphi$ is bounded on the positive part
of the unit ball of $A$. If it were not, one could choose a sequence
$(a_n)$ in the positive part of the unit ball of $A$ such that
$\varphi (a_n) \geq n^2$ for all $n \geq 1$. The elements
\[ a := \sum _{n = 1} ^{\infty} \frac{1}{n^2} \,a_n,\quad
   b_m := \sum _{1 \leq n \leq m} \frac{1}{n^2} \,a_n,\quad
   a - b_m = \sum _{n \geq m+1} \frac{1}{n^2} \,a_n \]
would all lie in the closed convex cone $A_+$. It would follow that
\[ \varphi (a) \geq \varphi (b_m) =
    \sum _{1 \leq n \leq m} \frac{\varphi (a_n)}{n^2} \geq m \]
for all integers $m \geq 1$, a contradiction. We obtain
\[ \mu := \sup \,\bigl\{ \,\varphi (a) : a \in A_+, \| \,a \,\| \leq 1 \,\bigr\}
< \infty. \]
For $b$ in $A\sa$, we have $-|\,b\,| \leq b \leq |\,b\,|$,
cf.\ \ref{sandwich}, whence
\[ | \,\varphi (b) \,| \leq \varphi \bigl(\,|\,b\,|\,\bigr)
\leq \mu \,\| \,|\,b\,| \,\| = \mu \,\|\,b\,\|. \]
For $c$ in $A$ and $c = a + \iu b$ with $a,b \in A\sa$, we have
$\|\,a\,\|, \|\,b\,\| \leq \|\,c\,\|$ (by isometry of the involution),
and so $|\,\varphi (c) \,| \leq 2 \,\mu \,\| \,c \,\|$.
\end{proof}

\medskip
\begin{remark}
Accept for a moment the Commutative Gel'fand-Na\u{\i}mark
Theorem \ref{commGN}. (Recall the deal we made in \ref{Zorn}.)
We then see from \ref{pointwiseorder} \& \ref{Cstarlattice} that the
Hermitian part of a commutative C*-algebra is a vector lattice
(with respect to the order structure of this paragraph). It turns out,
however, that the Hermitian part of a non-commutative C*-algebra
is not a lattice, cf.\ \cite[1.4.9 p.\ 14]{PedC}.
\end{remark}

\clearpage


\section{The Canonical Approximate Unit in a C\texorpdfstring{*-}{\80\052\80\055}Algebra}%
\label{canapun}%

\begin{definition}%
A \underline{two-sided approximate unit} is a net
$(e_j)_{j \in J}$ in a normed algebra $A$, such that
\[ \lim _{\text{\footnotesize{$j \in J$}}} \,e_j \,a
= \lim _{\text{\footnotesize{$j \in J$}}} \,a \,e_j
= a \quad \text{for all} \quad a \in A. \]
\end{definition}

\begin{theorem}\label{C*canapprunit}%
\index{concepts}{unit!approximate!canonical}%
\index{symbols}{J(A)@$J(A)$}%
\index{concepts}{canonical!approximate unit}%
\index{concepts}{approximate unit!canonical}%
Let $A$ be a C*-algebra. We put
\[ J(A) := \{\,j \in A_+ : \|\,j\,\| < 1\,\}. \]
Then $(j)_{j \in J(A)}$ is a two-sided approximate unit in $A$.
It is called the \linebreak \underline{canonical approximate unit}.
\end{theorem}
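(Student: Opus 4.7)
The plan is to prove two things: that $(J(A), \leq)$ is upward directed, and that $\|\,j a - a\,\| \to 0$ and $\|\,a j - a\,\| \to 0$ for every $a \in A$ as $j$ ranges over $J(A)$.

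For directedness, I would use the bijection $\phi : J(A) \to A_+$ defined by $\phi(j) := j(e-j)^{-1} = (e-j)^{-1} - e$, computed in $\tld{A}$ (well-defined since $\|\,j\,\| < 1$ gives $\s(j) \subset [\,0, 1\,[\,$, so $e - j$ is positive and invertible). Its inverse is $\psi(s) := s(e+s)^{-1}$, and both maps send their domain into the codomain by the Rational Spectral Mapping Theorem. Monotony of inversion \ref{monotinv} applied to the positive invertible elements $e-j_1, e-j_2$ yields
\[ j_1 \leq j_2 \iff e-j_2 \leq e-j_1 \iff (e-j_1)^{-1} \leq (e-j_2)^{-1} \iff \phi(j_1) \leq \phi(j_2), \]
so $\phi$ is an order-isomorphism. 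Since $(A_+, \leq)$ is trivially directed (take $s_1 + s_2$ as upper bound), $(J(A), \leq)$ is directed as well.

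For the convergence, I first reduce to positive $a$. For arbitrary $a \in A$, the C*-property gives
\[ \|\,(e-j)\,a\,\|^{\,2} = \|\,(e-j)\,aa^*\,(e-j)\,\| = \|\,(e-j)\,(aa^*)^{1/2}\,\|^{\,2}, \]
the last equality because $y\,y^* = (e-j)\,aa^*\,(e-j)$ for $y := (e-j)\,(aa^*)^{1/2}$. Similarly $\|\,a\,(e-j)\,\| = \|\,(e-j)\,(a^*a)^{1/2}\,\|$. It therefore suffices to show $\|\,(e-j)\,c\,\| \to 0$ for every $c \in A_+$.

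For this positive case, fix $c \in A_+$ and $\varepsilon > 0$. The weak operational calculus \ref{weakopcalc} applied to $t \mapsto t/(t+\varepsilon)$ produces $j_\varepsilon := c(c+\varepsilon e)^{-1} \in J(A)$ commuting with $c$, with $\|\,j_\varepsilon\,\| \leq \|\,c\,\|/(\|\,c\,\| + \varepsilon) < 1$. Since $c$ and $j_\varepsilon$ commute, $c\,(e - j_\varepsilon)\,c = \varepsilon\,c^{\,2}\,(c+\varepsilon e)^{-1}$, of norm at most $\varepsilon\,\|\,c\,\|$. Now for $j \in J(A)$ with $j \geq j_\varepsilon$, two order facts apply: $(e-j)^{\,2} \leq e-j$ (because $(e-j) - (e-j)^{\,2} = (e-j)\,j \geq 0$ by stability of the positive cone under products of commuting positives \ref{instability}), and $e - j \leq e - j_\varepsilon$ by assumption. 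Stability of $A_+$ under \st-congruence by $c$ \ref{stcongr} and monotony of the norm on positives \ref{monotnorm} yield
\[ \|\,(e-j)\,c\,\|^{\,2} = \|\,c\,(e-j)^{\,2}\,c\,\| \leq \|\,c\,(e-j)\,c\,\| \leq \|\,c\,(e-j_\varepsilon)\,c\,\| \leq \varepsilon\,\|\,c\,\|, \]
completing the proof. The main obstacle is the organisation of this final chain: it requires combining \st-congruence stability, monotony of both inversion and the norm on the positive cone, and the order estimate $(e-j)^{\,2} \leq e-j$, each applied at the right moment.
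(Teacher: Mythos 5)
Your proof is correct, and its two main steps follow the same strategy as the paper's: directedness of $J(A)$ is obtained from the monotony of the inverse operation \ref{monotinv} applied through the map $j \mapsto j\,(e-j)^{-1}$, and the approximating elements $c\,(c+\varepsilon e)^{-1}$ are (up to a rescaling of the parameter) exactly the paper's $\alpha a\,(e+\alpha a)^{-1}$. You deviate in two local ways, both harmless and arguably cleaner. For directedness, you package the argument as an order isomorphism between $(J(A),\leq)$ and $(A_+,\leq)$, where the paper instead verifies $c \geq a$ and $c \geq b$ by a direct chain of inequalities; your version makes the mechanism more transparent, at the cost of having to check both directions of the equivalence (which monotony of inversion does supply). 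For the reduction to positive elements, you use the C*-identity $\|\,(e-j)\,a\,\| = \|\,(e-j)\,(aa^*)^{1/2}\,\|$, whereas the paper decomposes a general element as a linear combination of positive elements \ref{generic} and works with the sandwiched quantity $\|\,a\,(e-j)\,a\,\|$ for Hermitian $a$; your route avoids the four-term decomposition. Your closing chain — $(e-j)^{\,2} \leq e-j$ via \ref{instability}, then \st-congruence \ref{stcongr} and monotony of the norm \ref{monotnorm} — is a correct rearrangement of the estimates the paper obtains at the same spot from the Rational Spectral Mapping Theorem.
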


\begin{proof}
It shall first be shown that $J(A)$ is directed up. Let $a,b \in J(A)$.
The elements
\[ a_1 := a \,{( \,e - a \,) \,}^{-1}, \quad b_1 := b \,{( \,e - b \,) \,}^{-1} \]
then belong to $A_+$ by the Rational Spectral Mapping Theorem.
Now consider
\[ c := ( \,a_1 + b_1 \,) \,{\bigl[ \,e + ( \,a_1 + b_1 \,) \,\bigr] \,}^{-1}. \]
Then $c$ is in $J(A)$. Moreover we have
\begin{align*}
 & \ e-{\bigl[ \,e + ( \,a_1 + b_1 \,) \,\bigr] \,}^{-1} \\
 = & \ \bigl[ \,e + ( \,a_1 + b_1 \,) \,\bigr]
 \,{\bigl[ e + ( \,a_1 + b_1 \,) \,\bigr] \,}^{-1} - {\bigl[ \,e + ( \,a_1 + b_1 \,) \,\bigr] \,}^{-1} \\
 = & \ \bigl[ \,e + ( \,a_1 + b_1 \,) -e \,\bigr] \,{\bigl[ \,e + ( \,a_1 + b_1 \,) \,\bigr] \,}^{-1} = c.
\end{align*}
From the monotony of the inverse operation \ref{monotinv} it follows that
\begin{align*}
c = e-{\bigl[ \,e + ( \,a_1 + b_1 \,) \,\bigr] \,}^{-1} & \geq e - {( \,e + a_1 \,) \,}^{-1} \\
& \geq e - {\bigl[ \,e + a \,{( \,e - a \,) \,}^{-1} \,\bigr] \,}^{-1} \\
& \geq e - {\bigl[ \,( \,e - a + a \,) \,{( \,e - a \,) \,}^{-1} \,\bigr] \,}^{-1} \\
& \geq e - ( \,e - a \,) = a,
\end{align*}
and similarly $c \geq b$, which proves that $J(A)$ is directed up.

It shall be shown next that the net $(j)_{j \in J(A)}$ is a two-sided approximate
unit in $A$. We claim that it suffices to show that for each $a \in A_+$ one has
\[ \lim _{\text{\footnotesize{$j \in J(A)$}}} \| \,a \,( \,e - j \,) \,a \,\| = 0. \pagebreak \]
To see this, we note the following. For $a \in A\sa$, and $j \in J(A)$, we have
\begin{align*}
    & \ {\| \,a - j \,a \,\| \,}^2 = {\| \,( \,e - j \,) \,a \,\| \,}^2 \\
 = & \ \| \,a \,{( \,e - j \,) \,}^2 \,a\,\| = \| \,a \,{( \,e - j \,) \,}^{1/2}
 \,( \,e - j \,) \,{( \,e - j \,) \,}^{1/2} \,a \,\| \\
\leq & \ \| \,a \,{( \,e - j \,) \,}^{1/2} \,{( \,e - j \,) \,}^{1/2} \,a \,\| = \| \,a \,( \,e - j \,) \,a \,\|.
\end{align*}
One also uses the fact that $A$ is linearly spanned by $A_+$,
cf.\ \ref{generic}. The isometry of the involution is used to pass
from multiplication on the left to multiplication on the right. 

So let $a \in A_+$, $\varepsilon > 0$. Put
\[ \alpha := \bigl( \,1+ \|\,a\,\| \,\bigr) / \varepsilon > 0, \]
and consider
\[ j\0 := \alpha a \,{( \,e + \alpha a \,) \,}^{-1}, \]
which belongs to $J(A)$ by the Rational Spectral Mapping Theorem.
For $j \in J(A)$ with $j \geq j\0$, we get
\begin{align*}
0 \leq a \,( \,e - j \,) \,a \leq a \,( \,e - j\0 \,) \,a
= & \ {a \,}^2 \,\bigl[ \,e - \alpha a \,{( \,e + \alpha a \,) \,}^{-1} \,\bigr] \\
 = & \ {a \,}^2 \,\bigl[ \,( \,e + \alpha a - \alpha a \,) \,{( \,e + \alpha a \,) \,}^{-1} \,\bigr] \\
 = & \ {a \,}^2 \,{( \,e + \alpha a \,) \,}^{-1} \\
 = & \ {\alpha \,}^{-1}\,a \cdot {\alpha a} \,{( \,e + \alpha a \,) \,}^{-1} \\
 \leq & \ {\alpha \,}^{-1}\,a,
\end{align*}
where the last inequality follows from the
Rational Spectral Mapping Theorem. Hence
\[ \|\,a \,( \,e - j \,) \,a \,\| \leq {\alpha \,}^{-1} \,\| \,a \,\|
= \varepsilon \cdot \frac{\| \,a \,\|}{1 + \|\,a\,\|} < \varepsilon. \qedhere \]
\end{proof}

\clearpage


\section{Quotients and Images of C\texorpdfstring{*-}{\80\052\80\055}Algebras}

\begin{reminder}[unital homomorphism]%
Recall from \ref{unitalhom} that if $A, B$ are unital algebras, then an algebra
homomorphism $A \to B$ is called \underline{unital} if it maps unit to unit.
\end{reminder}

\begin{proposition}[unitisation of a homomorphism]%
\label{homunit}\index{concepts}{homomorphism!unitisation}%
\index{concepts}{unitisation!of homomorphism}%
Let $A, B$ be normed algebras, and let $\pi : A \to B$ be a
non-zero algebra homomorphism with dense range. The mapping
$\pi$ then has a (necessarily unique) extension to a unital algebra
homomorphism $\tld{\pi} : \tld{A} \to \tld{B}$. We shall say that $\tld{\pi}$
is the \underline{unitisation} of $\pi$. If $\pi$ is injective, so is $\tld{\pi}$.
\end{proposition}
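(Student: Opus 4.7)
The plan is to split into cases according to whether $A$ and $B$ possess units, and to construct $\tld\pi$ on each direct-sum summand of $\tld A$. The uniqueness assertion is immediate from the hypothesis: any unital algebra homomorphism extending $\pi$ must satisfy $\tld\pi(e_{\tld A}) = e_{\tld B}$ and $\tld\pi|_A = \pi$, and since $\tld A = \mathds{C}\,e_{\tld A} + A$ in all cases, $\tld\pi$ is determined by linearity.

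First I would dispose of the case in which $A$ has a unit $e_A$. Since $\pi$ is non-zero, $\pi(e_A)\neq 0$, for otherwise $\pi(a)=\pi(e_A a)=\pi(e_A)\pi(a)=0$ for every $a\in A$. Given $b\in B$, density of $\pi(A)$ yields a sequence $(a_n)$ in $A$ with $\pi(a_n)\to b$, and continuity of multiplication then gives $\pi(e_A)\,b=\lim\pi(e_A a_n)=\lim\pi(a_n)=b$, and similarly $b\,\pi(e_A)=b$. Thus $\pi(e_A)$ is a unit in $B$, so $\tld B=B$, $\tld A=A$, and $\tld\pi:=\pi$ already satisfies every requirement.

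Next I would treat the case in which $A$ has no unit, so $\tld A=\mathds{C}e_A\oplus A$ as vector spaces. Writing $e_B$ for the unit of $\tld B$, define
\[ \tld\pi(\lambda e_A+a):=\lambda e_B+\pi(a)\qquad(\lambda\in\mathds{C},\ a\in A). \]
Linearity is obvious, $\tld\pi(e_A)=e_B$, and $\tld\pi|_A=\pi$. For multiplicativity, a direct computation using the multiplication rule in $\tld A$ from the definition of the unitisation shows both
\[ \tld\pi\bigl((\lambda e_A+a)(\mu e_A+b)\bigr)=\lambda\mu\,e_B+\lambda\pi(b)+\mu\pi(a)+\pi(a)\pi(b) \]
and
\[ \tld\pi(\lambda e_A+a)\,\tld\pi(\mu e_A+b)=(\lambda e_B+\pi(a))(\mu e_B+\pi(b)) \]
agree with the above expression.

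The part that needs the most care is injectivity in this second case, and this is where I anticipate the main obstacle, namely the possibility that $B$ already has a unit even though $A$ does not. Suppose $\pi$ is injective and $\tld\pi(\lambda e_A+a)=0$, i.e.\ $\lambda e_B+\pi(a)=0$ in $\tld B$. If $B$ has no unit, then $\tld B=\mathds{C}e_B\oplus A$ is a vector-space direct sum, so $\lambda=0$ and $\pi(a)=0$, whence $a=0$ by injectivity of $\pi$. If $B$ has a unit, then $\tld B=B$ and the conclusion is not automatic. Assume $\lambda\neq 0$; then $e_B=-\lambda^{-1}\pi(a)\in\pi(A)$, so $e_B=\pi(u)$ for some $u\in A$ with $u\neq 0$ (since $\pi(u)=e_B\neq 0$). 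For any $x\in A$, $\pi(ux)=\pi(u)\pi(x)=e_B\pi(x)=\pi(x)$, so $ux=x$ by injectivity of $\pi$, and similarly $xu=x$; this exhibits $u$ as a unit in $A$, contradicting the case hypothesis. Hence $\lambda=0$, and then $\pi(a)=0$ forces $a=0$, completing the proof.
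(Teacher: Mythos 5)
Your proof is correct and follows essentially the same route as the paper's: show $\pi(e_A)$ is a unit in $B$ via density and continuity of multiplication when $A$ is unital, otherwise send unit to unit, with injectivity preserved because $e_B$ cannot lie in $\pi(A)$ (else $A$ would have a unit by injectivity). Only a typo to fix: in the non-unital case you wrote $\tld B=\mathds{C}e_B\oplus A$ where you mean $\mathds{C}e_B\oplus B$.
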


\begin{proof}
If $A$ has a unit $e_A$, then $\pi (e_A) \neq 0$ is a unit in $B$,
because $\pi$ is non-zero, and because $\pi (A)$ is dense in $B$.
If $A$ has no unit, we extend $\pi$ to a unital algebra homomorphism
$\tld{A} \to \tld{B}$ by requiring that unit be mapped to unit. Injectivity
then is preserved because the unit in $\tld{B}$ is linearly independent
of $\pi (A)$, as $\pi (A)$ has no unit by injectivity.
\end{proof}

\begin{lemma}\label{injpos}%
Let $A$, $B$ be C*-algebras.
Let $\pi : A \to B$ be an injective \st-algebra homomorphism.
For $a \in A$, we then have
\[ \pi (a) \geq 0 \Leftrightarrow a \geq 0. \]
\end{lemma}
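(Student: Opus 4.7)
The forward implication $a \geq 0 \Rightarrow \pi(a) \geq 0$ is already known from \ref{hompos}, so the content of the lemma is the reverse implication. My plan is to reduce everything to the characterisation of positive elements via the absolute value, namely \ref{abschar}, which states that $c \in A_+$ if and only if $c = |c|$.

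First I would invoke \ref{homabs}, which guarantees that $\pi\bigl(|a|\bigr) = |\pi(a)|$ for every $a \in A$, since $\pi$ is a \st-algebra homomorphism between C*-algebras. Assuming now $\pi(a) \geq 0$, applying \ref{abschar} in $B$ yields $|\pi(a)| = \pi(a)$. Combining these two equalities gives $\pi\bigl(|a|\bigr) = \pi(a)$, and then injectivity of $\pi$ forces $|a| = a$. A second application of \ref{abschar}, this time in $A$, delivers $a \in A_+$.

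The main substantive ingredient is really \ref{homabs}, whose proof relies on the uniqueness of the positive square root of $a^{*}a$; there is no genuine obstacle once that is at hand. As an alternative route (should one wish to avoid referring to \ref{homabs}), one could instead use \ref{orthdec}: from $\pi(a) \geq 0$ we get that $\pi(a)$ is Hermitian, whence $\pi(a - a^{*}) = 0$ and so $a$ is Hermitian by injectivity. Writing $a = a_{+} - a_{-}$ with $a_{+}, a_{-} \in A_+$ and $a_{+} a_{-} = a_{-} a_{+} = 0$, applying $\pi$ yields an orthogonal decomposition of $\pi(a)$, which by the uniqueness statement in \ref{orthdec} must coincide with the trivial decomposition $\pi(a) = \pi(a) - 0$; hence $\pi(a_{-}) = 0$, and injectivity gives $a_{-} = 0$, i.e.\ $a = a_{+} \geq 0$. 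Either route is short and essentially mechanical; the injectivity hypothesis is used in exactly one place to pull a positivity statement back across $\pi$.
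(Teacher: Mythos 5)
Your primary argument is exactly the paper's proof: the reverse implication is obtained by combining \ref{abschar} and \ref{homabs} to get $\pi\bigl(\,|\,a\,|\,\bigr) = \bigl|\,\pi(a)\,\bigr| = \pi(a)$ and then using injectivity to conclude $a = |\,a\,| \geq 0$. The alternative route via the orthogonal decomposition \ref{orthdec} is also sound, but the main line of reasoning coincides with the paper's, so nothing further is needed.
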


\begin{proof}
We know that $a \geq 0 \Rightarrow \pi (a) \geq 0$, cf.\ \ref{hompos}.
Let now $a \in A$ with $\pi (a) \geq 0$. Then
$\pi(a) = \bigl| \,\pi(a) \,\bigr| = \pi \,\bigl( \,| \,a \,| \,\bigr)$,
cf.\ \ref{abschar} \& \ref{homabs}. Hence $a = | \,a \,| \geq 0$
by injectivity of $\pi$, as was to be shown.
\end{proof}

\begin{lemma}\label{posposrs}%
Let $A$, $B$ be two unital Banach \st-algebras, of which $B$ is Hermitian.
Let $\pi : A \to B$ be a unital \st-algebra homomorphism. Assume that
\[ \pi (b) \geq 0 \Rightarrow b \geq 0
\qquad \text{for every} \quad b \in A\sa. \]
Then
\[ \rsigma \bigl( \pi (a) \bigr) = \rsigma (a)
\qquad \text{for every} \quad a \in A.  \]
\end{lemma}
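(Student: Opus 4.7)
The plan is to reduce the problem to showing that $\rlambda$ is preserved by $\pi$ on positive elements of $A$, and then use the hypothesis ``$\pi(b) \geq 0 \Rightarrow b \geq 0$'' to propagate the spectral radius estimate from $B$ back to $A$. Concretely, for $a \in A$ we have $\pi(a^*a) = \pi(a)^*\pi(a)$, so $\rsigma\bigl(\pi(a)\bigr) = \rlambda\bigl(\pi(a^*a)\bigr)^{1/2}$, while $\rsigma(a) = \rlambda(a^*a)^{1/2}$. Thus the statement will follow if I can establish that $\rlambda\bigl(\pi(c)\bigr) = \rlambda(c)$ for every element of $A$ of the form $c = a^*a$.

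First I would observe that every such $c = a^*a$ actually lies in $A_+$. Indeed, since $B$ is a Hermitian Banach \st-algebra, the Shirali-Ford Theorem \ref{ShiraliFord} gives $\pi(c) = \pi(a)^*\pi(a) \geq 0$ in $B$. The hypothesis then yields $c \geq 0$ in $A$. So it suffices to prove that $\rlambda\bigl(\pi(c)\bigr) = \rlambda(c)$ for every $c \in A_+$.

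The inequality $\rlambda\bigl(\pi(c)\bigr) \leq \rlambda(c)$ is immediate: since $A$ and $B$ are unital and $\pi$ is unital, theorem \ref{spechom} yields $\s_B\bigl(\pi(c)\bigr) \subset \s_A(c)$. For the reverse inequality, let $c \in A_+$ and fix any real $t > \rlambda\bigl(\pi(c)\bigr)$. From $\pi(c) \in B_+$ (by \ref{hompos}) and the Rational Spectral Mapping Theorem we get
\[ \s_B\bigl(t e_B - \pi(c)\bigr) = t - \s_B\bigl(\pi(c)\bigr) \subset [\,t - \rlambda(\pi(c)),\,t\,] \subset [\,0,\infty\,[, \]
so $t e_B - \pi(c) \geq 0$ in $B$. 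Since $\pi(t e_A - c) = t e_B - \pi(c)$ and $t e_A - c$ is Hermitian, the hypothesis forces $t e_A - c \geq 0$ in $A$. Consequently $\s_A(c) \subset \ ] -\infty, t \,]$, which combined with $c \in A_+$ gives $\rlambda(c) \leq t$. Letting $t \downarrow \rlambda\bigl(\pi(c)\bigr)$ yields $\rlambda(c) \leq \rlambda\bigl(\pi(c)\bigr)$.

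Combining both inequalities gives $\rlambda\bigl(\pi(a^*a)\bigr) = \rlambda(a^*a)$ for every $a \in A$, and taking square roots completes the proof. The conceptually delicate step, where the assumption on $\pi$ is truly needed, is the inequality $\rlambda(c) \leq \rlambda\bigl(\pi(c)\bigr)$; everything else is formal manipulation using earlier results in the text (Shirali-Ford for $B$, the unital form of \ref{spechom}, and the Rational Spectral Mapping Theorem).
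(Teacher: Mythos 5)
Your proof is correct and follows essentially the same route as the paper's: the hypothesis is used to pull a positivity statement of the form $t e_B - \pi(c) \geq 0$ (available because $B$ is Hermitian) back to $t e_A - c \geq 0$, which bounds $\s_A(c)$ and yields $\rlambda(c) \leq \rlambda\bigl(\pi(c)\bigr)$. The paper phrases this slightly more generally, proving $\rlambda\bigl(\pi(b)\bigr) = \rlambda(b)$ for \emph{every} Hermitian $b \in A$ via the two-sided estimate $\rlambda\bigl(\pi(b)\bigr)e \pm \pi(b) \geq 0$, which avoids your appeal to the Shirali-Ford Theorem, but the mechanism is identical.
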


\begin{proof}
We have $\rlambda \bigl(\pi (a)\bigr) \leq \rlambda (a)$
for all $a \in A$ because
\[ \s_B \bigl(\pi(a)\bigr) \setminus \{ 0 \} \subset \s_A (a) \setminus \{ 0 \}, \]
cf.\ \ref{spechom}.
\pagebreak

Let now $b$ be a Hermitian element of $A$. Then
\[ \rlambda \bigl(\pi(b)\bigr) \,e_B \pm \pi(b) \geq 0 \]
as $B$ is Hermitian. Our assumption then implies
\[ \rlambda \bigl(\pi(b)\bigr) \,e_A \pm b \geq 0, \]
whence $\rlambda (b) \leq \rlambda \bigl(\pi(b)\bigr)$.
Collecting inequalities, we find
\[ \rlambda \bigl(\pi(b)\bigr) = \rlambda(b)
\quad \text{for every Hermitian element} \quad b \in A, \]
which is enough to prove the statement.
\end{proof}

\begin{theorem}\label{C*injisometric}%
An injective \st-algebra homomorphism $\pi$  from a \linebreak
C*-algebra $A$ to a pre-C*-algebra $B$ is isometric.
As a consequence, $\pi (A)$ is complete.
\end{theorem}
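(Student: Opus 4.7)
The plan is to reduce to the unital C*-algebra setting and then combine lemmas \ref{injpos} and \ref{posposrs} with the identity $\|\,\cdot\,\| = \rsigma(\cdot)$ of theorem \ref{C*rs}. We already know from \ref{Cstarcontr} that $\pi$ is contractive, so the whole point is to prove $\|\,\pi(a)\,\| \geq \|\,a\,\|$ for all $a \in A$.

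First I would replace $B$ by its completion, which is a C*-algebra by \ref{precompl} and carries a norm extending the original. Next I would restrict the codomain to $C := \overline{\pi(A)}$, which is a C*-algebra (a closed \st-subalgebra of a C*-algebra). Now $\pi : A \to C$ is an injective \st-algebra homomorphism with dense range, so proposition \ref{homunit} unitises it to an injective unital \st-algebra homomorphism $\tld{\pi} : \tld{A} \to \tld{C}$ between unital C*-algebras (unitisations of C*-algebras are C*-algebras by \ref{preCstarunitis}, and injectivity is preserved).

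The crucial step is then the following: by lemma \ref{injpos} applied to $\tld{\pi}$, one has for every Hermitian $b \in \tld{A}$ the equivalence
\[ \tld{\pi}(b) \geq 0 \ \Longleftrightarrow \ b \geq 0. \]
Since the C*-algebra $\tld{C}$ is Hermitian by \ref{C*Herm}, lemma \ref{posposrs} now yields $\rsigma\bigl(\tld{\pi}(a)\bigr) = \rsigma(a)$ for every $a \in \tld{A}$. Specialising to $a \in A$, and using that the Pt\'ak function is unchanged under the unital embeddings $A \subset \tld{A}$ and $C \subset \tld{C}$ (since the spectrum is, cf.\ \ref{specunit}, and the involution and multiplication are the ambient ones), we obtain $\rsigma\bigl(\pi(a)\bigr) = \rsigma(a)$. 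Applying theorem \ref{C*rs} in the C*-algebras $A$ and $C$ finally gives
\[ \|\,\pi(a)\,\| = \rsigma\bigl(\pi(a)\bigr) = \rsigma(a) = \|\,a\,\|, \]
so $\pi$ is isometric. The completeness of $\pi(A)$ then follows immediately: an isometric image of a complete space is complete.

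The main obstacle is really bookkeeping rather than depth: one has to pass to the completion of $B$, restrict to the closure of the range to get dense range, and then unitise, all while making sure injectivity and the C*-algebra structure survive. Once these reductions are in place, the two preparatory lemmas \ref{injpos} and \ref{posposrs} do all the work.
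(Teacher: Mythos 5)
Your proof is correct and follows essentially the same route as the paper's: reduce to the case where the codomain is a C*-algebra and the range is dense, unitise via \ref{homunit}, combine \ref{injpos} with \ref{posposrs} to get $\rsigma\bigl(\pi(a)\bigr) = \rsigma(a)$, and conclude by \ref{C*rs}. The paper states these reductions more tersely, but the argument is the same.
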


\begin{proof}
We can assume that $B$ is a C*-algebra, and that $\pi$ has dense
range and is non-zero. The unitisation $\tld{\pi}$ then is well-defined
and injective by proposition \ref{homunit}. The preceding two lemmata
imply that $\rsigma \bigl(\pi(a)\bigr) = \rsigma(a)$ for all $a \in A$,
which is the same as $\| \,\pi(a) \,\| = \| \,a \,\|$ for all $a \in A$, cf.\ \ref{C*rs}.
\end{proof}

\begin{remark}\label{reminjiso}
This is a strong improvement of \ref{Cstarisom} in so far
as it allows us to conclude that $\pi(A)$ is a C*-algebra.
\end{remark}

Next an expression for the quotient norm.
(Cf.\ the appendix \ref{quotspace}.)
Here we use the canonical approximate unit,
see the preceding paragraph, \ref{canapun}.

\begin{lemma}[quotient norm]\label{quotexpr}%
\index{concepts}{quotient!norm}\index{concepts}{norm!quotient}%
Let $I$ be a \st-stable \ref{selfadjointsubset} closed \linebreak
two-sided ideal in a C*-algebra $A$. For $a \in A$ we then have
\[ \inf _{\text{\footnotesize{$c \in I$}}} \,\| \,a + c \,\|
= \lim _{\text{\footnotesize{$j \in J(I)$}}} \,\| \,a - a \,j \,\|. \]
\end{lemma}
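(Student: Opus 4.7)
The plan is to prove two inequalities. Since $I$ is a closed $*$-subalgebra of $A$, it is itself a C*-algebra, so the canonical approximate unit $(j)_{j \in J(I)}$ from \ref{C*canapprunit} is an increasing net in $I_+$ (with $\|j\| < 1$) satisfying $cj \to c$ and $jc \to c$ for every $c \in I$.

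For the easy direction, I would note that for each $j \in J(I)$, the element $-aj$ lies in $I$ because $j \in I$ and $I$ is a right ideal. Hence
\[
\|a - aj\| = \|a + (-aj)\| \geq \inf_{c \in I}\|a + c\|,
\]
and passing to the limit (once existence is established) gives $\lim_{j} \|a - aj\| \geq \inf_{c \in I}\|a + c\|$.

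For the reverse inequality, the key observation is that $\|e - j\| \leq 1$ in $\tld{A}$. Indeed, $j$ is a positive element of $A$ with $\|j\| < 1$, so $\sigma_A(j) \subset [0,1)$; by \ref{specunit} then $\sigma_{\tld{A}}(e - j) \subset (0,1]$, and since $e - j$ is Hermitian, \ref{C*rl} yields $\|e-j\| = \rlambda(e-j) \leq 1$. Fix an arbitrary $c \in I$ and write
\[
a - aj = (a+c)(e-j) - (c - cj).
\]
Applying the submultiplicativity of the norm and the bound $\|e - j\| \leq 1$, I would estimate
\[
\|a - aj\| \leq \|a+c\|\cdot\|e-j\| + \|c - cj\| \leq \|a+c\| + \|c - cj\|.
\]
Since $c \in I$ and $(j)_{j \in J(I)}$ is an approximate unit for the C*-algebra $I$, we have $\|c - cj\| \to 0$. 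Hence $\limsup_j \|a - aj\| \leq \|a+c\|$, and taking the infimum over $c \in I$ gives $\limsup_j \|a - aj\| \leq \inf_{c \in I} \|a+c\|$.

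Combining the two estimates yields
\[
\inf_{c \in I}\|a+c\| \leq \liminf_j \|a - aj\| \leq \limsup_j \|a - aj\| \leq \inf_{c \in I}\|a + c\|,
\]
so all quantities are equal and in particular the limit exists and equals $\inf_{c \in I}\|a+c\|$. The only delicate point is justifying $\|e - j\| \leq 1$ in the unitisation, which is handled cleanly by the interplay of \ref{specunit}, \ref{C*rl} and the fact that $J(I) \subset I_+$ consists of elements of norm strictly less than $1$; everything else is a routine triangle-inequality manipulation combined with the defining property of the canonical approximate unit.
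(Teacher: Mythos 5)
Your proof is correct and follows essentially the same route as the paper's: the identity $a - aj = (a+c)(e-j) - (c - cj)$ together with $\| \,e - j \,\| \leq 1$ and the approximate-unit property $\| \,c - cj \,\| \to 0$. You are in fact slightly more explicit than the paper, which leaves the lower bound $\| \,a - aj \,\| \geq \inf_{c \in I} \| \,a + c \,\|$ (from $-aj \in I$) and the estimate $\| \,e - j \,\| \leq 1$ tacit.
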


\begin{proof}
Let $a \in A$. Denote $\alpha := \inf _{\,\text{\footnotesize{$c \in I$}}} \,\| \,a + c \,\|$.
For $\varepsilon > 0$, take $c \in I$ with
$\| \,a + c \,\| < \alpha + \varepsilon / 2$. Then choose
$j\0 \in J(I)$ such that $\| \,c - c \,j \,\| < \varepsilon / 2$
whenever $j \in J(I)$, $j \geq j\0$. Under
these circumstances we have
\begin{align*}
\| \,a - a \,j \,\| \leq & \ \| \,( \,a + c \,) \,( \,e - j \,)\,\| + \|\,c - c \,j \,\| \\
      \leq & \ \| \,a + c \,\| + \|\,c - c \,j \,\| < \alpha + \varepsilon. \pagebreak \qedhere
\end{align*}
\end{proof}

\begin{theorem}[factorisation]%
\label{factor}\index{concepts}{quotient!C*-algebra}\label{C*quotient}%
Let $I$ be a \st-stable \ref{selfadjointsubset} closed
two-sided ideal in a C*-algebra $A$. Then
$A\mspace{1mu}/I$ is a C*-algebra.
\end{theorem}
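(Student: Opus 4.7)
The plan is to verify that $A/I$ satisfies the C*-property, given that we already know from the appendix that $A/I$ is a Banach algebra under the quotient norm. First I would confirm that $A/I$ carries a $*$-algebra structure: since $I$ is $*$-stable, the prescription $(a+I)^\ast := a^\ast + I$ is well-defined, and the isometry of the involution in $A$ together with $*$-stability of $I$ gives
\[
\|(a+I)^\ast\| = \inf_{c \in I}\|a^\ast + c\| = \inf_{c \in I}\|a + c^\ast\| = \|a+I\|,
\]
so the induced involution is isometric on the quotient. Thus $A/I$ is a Banach $*$-algebra.

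Next, by Proposition \ref{condC*} it suffices to establish the one-sided estimate
\[
\|a+I\|^2 \leq \|a^\ast a + I\| \qquad (a \in A).
\]
The decisive tool is the expression for the quotient norm provided by Lemma \ref{quotexpr}, which rewrites $\|a+I\| = \lim_{j \in J(I)}\|a - aj\|$ using the canonical approximate unit $J(I)$ of the C*-algebra $I$ (which is indeed a C*-algebra since $I$ is a closed $*$-stable subalgebra of $A$). The point is that this representation is compatible with squaring under the C*-property, allowing the full C*-identity of $A$ to be transported to $A/I$.

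The computation I would carry out (working in $\tld{A}$, noting that all intermediate elements actually lie in $A$) runs as follows:
\begin{align*}
\|a+I\|^2
&= \lim_{j \in J(I)}\|a-aj\|^2
= \lim_{j \in J(I)}\bigl\|(a-aj)^\ast (a-aj)\bigr\| \\
&= \lim_{j \in J(I)}\bigl\|(e-j)\,a^\ast a\,(e-j)\bigr\| \\
&\leq \lim_{j \in J(I)}\|e-j\|\cdot\|a^\ast a\,(e-j)\|
\leq \lim_{j \in J(I)}\|a^\ast a - a^\ast a\,j\|
= \|a^\ast a + I\|,
\end{align*}
where the first equality in the second line uses the C*-property of $A$, and the inequality $\|e-j\| \leq 1$ is justified by observing that $j \geq 0$ with $\|j\|<1$ forces $\sigma_{\tld{A}}(e-j) \subset (0,1]$, so that $\|e-j\| = r_\lambda(e-j) \leq 1$ since $e-j$ is Hermitian in the Hermitian Banach $*$-algebra $\tld{A}$. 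The last equality again invokes Lemma \ref{quotexpr}.

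The main obstacle is really a matter of bookkeeping: ensuring that the formula of Lemma \ref{quotexpr} applied to $a^\ast a$ matches what comes out of the C*-identity in $A$, and that $\|e-j\|\le 1$ cleanly reduces the ``sandwich'' $(e-j)a^\ast a(e-j)$ to the one-sided expression $a^\ast a - a^\ast a j$. Beyond this, the estimate is an essentially direct translation of the C*-property of $A$ through the canonical approximate unit of $I$.
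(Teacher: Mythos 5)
Your proposal is correct and follows essentially the same route as the paper: quotient norm expressed via the canonical approximate unit of $I$ (Lemma \ref{quotexpr}), the C*-identity applied to $a(e-j)$, the estimate $\|(e-j)\,a^*a\,(e-j)\| \leq \|a^*a\,(e-j)\|$ using $\|e-j\| \leq 1$, and then \ref{condC*}. The only difference is that you spell out the isometry of the induced involution and the spectral justification of $\|e-j\|\leq 1$, which the paper leaves implicit.
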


\begin{proof}
It is clear that $A\mspace{1mu}/I$ is a Banach \st-algebra, cf.\ the appendix
\ref{quotspaces}. The preceding lemma \ref{quotexpr} gives 
the following expression for the square of the quotient norm:
\begin{align*}
{| \,a + I \,| \,}^2 &
= \inf _{\text{\footnotesize{$c \in I$}}} {\| \,a + c \,\| \,}^2 \\
 & = \lim _{\text{\footnotesize{$j \in J(I)$}}} \,{\| \,a \,( \,e - j \,)\,\| \,}^2 \\
 & = \lim _{\text{\footnotesize{$j \in J(I)$}}} \,\| \,( \,e - j \,) \,a^*a \,( \,e - j \,)\,\| \\
 & \leq \lim _{\text{\footnotesize{$j \in J(I)$}}} \,\| \,a^*a \,( \,e - j \,)\,\|
 = \inf _{\text{\footnotesize{$c \in I$}}} \| \, a^*a+c \,\|
 = |\,a^*a+I\,|.
\end{align*}
The statement follows now from \ref{condC*}.
\end{proof}

\begin{theorem}%
Let $\pi$ be any \st-algebra homomorphism from a \linebreak C*-algebra
$A$ to a pre-C*-algebra. Then $A \,/ \ker \pi$ is a C*-algebra.
The \st-algebra isomorphism
\begin{alignat*}{2}
 & A \,/ \ker \pi & \ \to & \ \pi (A)  \\
 & a+\ker \pi & \ \mapsto & \ \pi (a)
\end{alignat*}
is isometric, and hence the range $\pi (A)$ is a C*-algebra.
\end{theorem}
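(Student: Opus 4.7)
The plan is to reduce the statement to the factorisation theorem \ref{factor} together with the isometry theorem \ref{C*injisometric}. The whole argument is a short assembly job, as all the heavy lifting has been done in the preceding paragraphs.

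First I would verify that $\ker \pi$ is a \st-stable closed two-sided ideal in $A$. \st-stability and the two-sided ideal property are immediate from the fact that $\pi$ is a \st-algebra homomorphism (if $a \in \ker \pi$, then $\pi(a^*) = \pi(a)^* = 0$, and for any $b \in A$, $\pi(ab) = \pi(a)\pi(b) = 0$, similarly on the right). Closedness follows from lemma \ref{kerclosed}, which applies since $A$ is in particular a Banach \st-algebra and the codomain is a pre-C*-algebra; alternatively, one can invoke the automatic continuity theorem \ref{automcont} directly.

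Having established that $\ker \pi$ is a \st-stable closed two-sided ideal in the C*-algebra $A$, the factorisation theorem \ref{factor} (i.e.\ \ref{C*quotient}) immediately gives that $A \,/ \ker \pi$ is a C*-algebra.

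Finally, the canonical factorisation yields an injective \st-algebra homomorphism
\[ \pi_1 : A \,/ \ker \pi \to \pi(A), \qquad a + \ker \pi \mapsto \pi(a), \]
where $\pi(A)$ sits inside the given pre-C*-algebra as a \st-subalgebra, hence is itself a pre-C*-algebra in the inherited norm. Now $\pi_1$ is an injective \st-algebra homomorphism from a C*-algebra to a pre-C*-algebra, so by \ref{C*injisometric} it is isometric, and its range $\pi(A)$ is complete, i.e.\ a C*-algebra. The only conceptual point to watch is the clean separation between the \emph{quotient} statement (which needs \ref{factor}) and the \emph{isometry} statement (which needs \ref{C*injisometric}); there is no real obstacle, as every ingredient is already in place.
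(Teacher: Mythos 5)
Your proposal is correct and follows essentially the same route as the paper: closedness of $\ker \pi$ (the paper cites the contractivity result \ref{Cstarcontr}, you cite \ref{kerclosed} or \ref{automcont} — all equivalent here), then \ref{factor} for the quotient, then \ref{C*injisometric} for the isometry and completeness of the range. No gaps.
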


\begin{proof} It follows from \ref{Cstarcontr} that $\ker \pi$
is closed, so that $A \,/ \ker \pi$ is a C*-algebra by the
preceding theorem \ref{factor}. The statement then follows
from \ref{C*injisometric}.
\end{proof}

\medskip
In other words:

\begin{theorem}[factorisation of homomorphisms]\label{homfact}%
Let $\pi$ be a \linebreak \st-algebra homomorphism from a C*-algebra
$A$ to a pre-C*-algebra. \linebreak Then $A \,/ \ker \pi$ and $\pi(A)$ are
C*-algebras. The \st-algebra homomorphism $\pi$ factors to an
isomorphism of C*-algebras from $A \,/ \ker \pi$ onto $\pi(A)$.
\end{theorem}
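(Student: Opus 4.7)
The plan is essentially to assemble the two immediately preceding results into one clean statement, so the proof will be almost a citation chain. First I would observe that $\ker \pi$ is closed in $A$: since $\pi$ is a \st-algebra homomorphism from a C*-algebra to a pre-C*-algebra, it is contractive by \ref{Cstarcontr} (or in any case continuous by \ref{automcont}), so its kernel is closed. The kernel is visibly a two-sided ideal, and it is \st-stable because $\pi$ respects the involution. Thus \ref{factor} applies and yields that $A\mspace{1mu}/\ker\pi$ is a C*-algebra.

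Next I would factor $\pi$ through the quotient in the standard way: define $\bar{\pi}: A\mspace{1mu}/\ker\pi \to B$ (where $B$ denotes the target pre-C*-algebra) by $\bar{\pi}(a+\ker\pi) := \pi(a)$. This is well-defined, linear, multiplicative, respects the involution, and is injective by construction. At this point the first sentence of the theorem is done for $A\mspace{1mu}/\ker\pi$, and $\bar{\pi}$ is an injective \st-algebra homomorphism from a C*-algebra into a pre-C*-algebra.

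Now I apply \ref{C*injisometric}, which says precisely that such an injective \st-algebra homomorphism is isometric, and that its range is therefore complete. Since $\bar{\pi}\bigl(A\mspace{1mu}/\ker\pi\bigr) = \pi(A)$, this tells us that $\pi(A)$ is a complete \st-subalgebra of the pre-C*-algebra $B$, i.e.\ a C*-algebra in its own right. Combining these pieces, $\bar{\pi}$ is a bijective isometric \st-algebra homomorphism from the C*-algebra $A\mspace{1mu}/\ker\pi$ onto the C*-algebra $\pi(A)$, hence a C*-algebra isomorphism in the sense of \ref{Cstarisom}.

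There is no real obstacle here; the statement is a bookkeeping consequence of \ref{factor} and \ref{C*injisometric}, with \ref{Cstarcontr} used to ensure closedness of the kernel. The only point that requires any care at all is checking that $\ker\pi$ is \st-stable and closed so that \ref{factor} is applicable, and that the factored map $\bar{\pi}$ is indeed injective so that \ref{C*injisometric} is applicable; both verifications are immediate.
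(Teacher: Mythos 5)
Your argument is correct and follows exactly the route the paper takes: \ref{Cstarcontr} gives closedness of the \st-stable two-sided ideal $\ker\pi$, \ref{factor} makes $A\mspace{1mu}/\ker\pi$ a C*-algebra, and \ref{C*injisometric} applied to the injective factored map yields isometry and completeness of $\pi(A)$. No gaps; your write-up is just a more explicit version of the paper's citation chain.
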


In particular, the C*-algebras form a subcategory of the category of
\st-algebras, whose morphisms are the \st-algebra homomorphisms.
\pagebreak

\clearpage

\addtocontents{toc}{\protect\pagebreak}


\part{Representations and Positive Linear Functionals}\label{part2}

\addtocontents{toc}{\protect\vspace{0.4em}}

\chapter{General Properties of Representations}


\setcounter{section}{24}

\section{Continuity Properties of Representations}

\medskip
For a complex vector space $V$, we will denote by
$\mathrm{End}(V)$ the algebra of linear operators on $V$.
Here ``$\mathrm{End}$'' stands for ``endomorphism''.

\begin{definition}[representation on a pre-Hilbert space]%
\index{concepts}{representation}\index{symbols}{R3@$\range(\pi)$}%
\index{concepts}{representation!space}\label{repdef}%
If $A$ is a \linebreak \st-algebra, and if $(H, \langle \cdot , \cdot \rangle)$
is a \underline{pre}-Hilbert space, then a \underline{representation} of $A$
on $H$ is an algebra homomorphism
\[ \pi : A \to \mathrm{End}(H) \]
such that
\[ \langle \pi (a)x,y \rangle = \langle x, \pi (a^*)y \rangle \]
for all $x,y \in H$ and for all $a \in A$. One then defines the \underline{range}
of $\pi$ as
\[ \range(\pi) := \pi (A) = \{ \pi (a) \in \mathrm{End}(H) : a \in A \}. \]
One says that $(H, \langle \cdot , \cdot \rangle)$ is the
\underline{representation space} of $\pi$. (Recall that all the pre-Hilbert
spaces in this book shall be \underline{complex} pre-Hilbert spaces.)
\end{definition}

\begin{theorem}\label{HellToepl}
Let $\pi$ be a representation of a \st-algebra $A$ on a \underline{Hilbert}
space $H$. The operators $\pi(a)$ $(a \in A)$ then all are bounded. \linebreak
We may thus consider $\pi$ as a \st-algebra homomorphism $A \to \blop(H)$.
\end{theorem}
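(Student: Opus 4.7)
The plan is to apply the Closed Graph Theorem to each operator $\pi(a)$ individually, exploiting the fact that the identity $\langle \pi(a)x,y\rangle = \langle x,\pi(a^*)y\rangle$ provides $\pi(a)$ with an everywhere-defined ``adjoint'' $\pi(a^*)$. Since $H$ is complete (this is where the Hilbert space hypothesis enters decisively), the Closed Graph Theorem applies to linear maps $H \to H$.

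Fix $a \in A$. I would verify that $\pi(a)$ has closed graph: take a sequence $(x_n)$ in $H$ with $x_n \to x$ and $\pi(a)x_n \to z$, and aim to show $z = \pi(a)x$. For any $y \in H$, continuity of the inner product in each variable gives
\[
\langle z,y\rangle = \lim_{n \to \infty}\langle \pi(a)x_n,y\rangle
= \lim_{n \to \infty}\langle x_n,\pi(a^*)y\rangle
= \langle x,\pi(a^*)y\rangle = \langle \pi(a)x,y\rangle,
\]
where the first step uses convergence of $\pi(a)x_n$ to $z$ in $H$, the second uses the defining property of a representation as in \ref{repdef}, and the third uses convergence of $x_n$ to $x$. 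Since $y \in H$ is arbitrary, $z = \pi(a)x$, so the graph of $\pi(a)$ is closed.

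The Closed Graph Theorem then yields that $\pi(a) \in \blop(H)$. Since $a$ was arbitrary, this shows that $\pi$ takes values in $\blop(H)$, and the algebra homomorphism property is inherited unchanged from the original $\pi : A \to \mathrm{End}(H)$.

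The main (indeed only) obstacle is the appeal to completeness of $H$ via the Closed Graph Theorem; note in particular that no estimate on $\|\pi(a)\|$ in terms of any norm or spectral quantity of $a$ is produced here---only the boundedness of each individual $\pi(a)$. Quantitative contractivity statements in terms of $\rsigma(a)$ or $\|a\|$, such as those in \ref{contraux}, would be separate matters requiring the Banach (or pre-C*-) structure on $A$.
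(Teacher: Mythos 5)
Your proof is correct. The paper itself argues differently in form though not in substance: it decomposes $\pi(a) = \pi(b) + \iu\,\pi(c)$ with $b, c \in A\sa$ and then invokes the Hellinger--Toeplitz Theorem (a symmetric, everywhere-defined operator on a Hilbert space is bounded) for each of the two symmetric operators $\pi(b)$, $\pi(c)$. You instead apply the Closed Graph Theorem directly to $\pi(a)$ for arbitrary $a$, using the everywhere-defined $\pi(a^*)$ as a formal adjoint to verify closedness of the graph. Since the standard proof of Hellinger--Toeplitz is precisely this closed-graph argument in the special case $a^* = a$, your route is essentially a self-contained proof of the slightly more general ``two everywhere-defined operators in adjoint relation are bounded'' statement, and it dispenses with the decomposition into Hermitian parts. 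What the paper's version buys is brevity by citation; what yours buys is a complete argument with one fewer reduction step. Your closing remark is also apt: neither approach yields any quantitative bound on $\|\pi(a)\|$, which is indeed deferred to results such as \ref{weaksa} and \ref{contraux} under additional hypotheses on $A$.
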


\begin{proof}
Decompose $\pi (a) = \pi (b) + \iu \pi (c)$, where $b,c \in A\sa$ and use the
Hellinger-Toeplitz Theorem, which says that a symmetric operator, defined
on all of a Hilbert space, is bounded, see e.g.\ Kreyszig \cite[10.1-1 p.\ 525]{Krey}.
(See also Pedersen \cite[Proposition 2.3.11 p.\ 59]{PedB}.)
\end{proof}

\medskip
We shall now be interested in a criterion that guarantees boundedness
of the operators $\pi (a)$ for a representation $\pi$ acting on a pre-Hilbert
space merely: \ref{weaksa} - \ref{sigmaAsa}. \pagebreak

\begin{definition}[positive linear functionals]%
\index{concepts}{functional!positive}\index{concepts}{positive!functional}%
A linear functional $\varphi$ on a \st-algebra $A$ is called
\underline{positive} if $\varphi (a^*a) \geq 0$ for all $a \in A$.
\end{definition}

If $\pi$ is a representation of a \st-algebra $A$ on a pre-Hilbert space $H$,
then for all $x \in H$, the linear functional given by
\[ a \mapsto \langle \pi(a) x, x \rangle \qquad ( \,a \in A \,) \]
is a positive linear functional. Indeed, for $x \in H$ and $a \in A$, we have
\[ \langle \pi (a^*a) x, x \rangle = \langle \pi (a) x, \pi (a) x \rangle \geq 0. \]

\begin{definition}[weak continuity]%
\index{concepts}{representation!weakly continuous!on Asa@on $A\protect\sa$}%
\index{concepts}{weakly continuous!As@on $A\protect\sa$}%
\index{concepts}{representation!weakly continuous}%
\index{concepts}{weakly continuous}%
Let $\pi$ be a representation of a \linebreak normed \st-algebra $A$ on a
pre-Hilbert space $H$. We shall say that $\pi$ is
\underline{weakly continuous on a subset} $B$ of $A$, if for all $x \in H$,
the positive linear functional
\[ a \mapsto \langle \pi (a) x, x \rangle \]
is continuous on $B$. We shall say that the representation $\pi$ is
\underline{weakly} \underline{continuous}, if $\pi$ is weakly continuous
on all of $A$.
\end{definition}

\begin{theorem}\label{weaksa}%
Let $\pi$ be a representation of a normed \st-algebra $A$ on a
pre-Hilbert space $H$. Assume that $\pi$ is weakly continuous on $A\sa$.
The operators $\pi (a)$ $(a \in A)$ then all are bounded with
\[ \|\,\pi (a) \,\| \leq \rsigma(a)\quad \text{for all} \quad a \in A. \]
\end{theorem}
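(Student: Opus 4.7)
The plan is to adapt the Berg--Christensen--Ressel trick, iterating the Cauchy--Schwarz inequality along powers of a Hermitian element in order to transfer the weak continuity on $A\sa$ into a norm bound governed by the Pt\'ak function. I would first reduce to the Hermitian case. For $b \in A\sa$, the defining property of a representation gives $\langle \pi(b)x, y\rangle = \langle x, \pi(b)y\rangle$, so $\pi(b)$ is symmetric, and therefore $\|\pi(b)x\|^2 = \langle \pi(b^2)x, x\rangle$. Cauchy--Schwarz yields $\|\pi(b)x\|^2 \leq \|\pi(b^2)x\|\cdot\|x\|$, and since $b^{2^n}$ is itself Hermitian for every $n$, iterating this bound produces, by an easy induction,
\[
\|\pi(b)x\| \;\leq\; \|\pi(b^{2^n})x\|^{1/2^n}\,\|x\|^{(2^n-1)/2^n}
\]
for every integer $n \geq 1$.

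Next I would feed in the hypothesis. Weak continuity of $\pi$ on $A\sa$ gives, for each fixed $x \in H$, a constant $M_x \geq 0$ with
\[
|\langle \pi(a)x, x\rangle| \;\leq\; M_x\,|a| \qquad (a \in A\sa).
\]
Applied to the Hermitian element $b^{2^{n+1}}$, and using once more the identity $\|\pi(b^{2^n})x\|^2 = \langle \pi(b^{2^{n+1}})x, x\rangle$, this gives $\|\pi(b^{2^n})x\| \leq M_x^{1/2}\,|b^{2^{n+1}}|^{1/2}$. Substituting into the displayed inequality and raising to the $2^n$-th root yields
\[
\|\pi(b)x\| \;\leq\; M_x^{1/2^{n+1}}\,|b^{2^{n+1}}|^{1/2^{n+1}}\,\|x\|^{(2^n-1)/2^n}.
\]
Passing to the limit $n \to \infty$ and invoking Theorem \ref{rlamlim} (so that $|b^{2^{n+1}}|^{1/2^{n+1}} \to \rlambda(b)$), together with $M_x^{1/2^{n+1}} \to 1$, yields $\|\pi(b)x\| \leq \rlambda(b)\,\|x\|$ for every $b \in A\sa$ and every $x \in H$.

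For arbitrary $a \in A$ I would then apply this to the Hermitian element $a^*a$. Cauchy--Schwarz gives
\[
\|\pi(a)x\|^2 \;=\; \langle \pi(a^*a)x, x\rangle \;\leq\; \|\pi(a^*a)x\|\,\|x\| \;\leq\; \rlambda(a^*a)\,\|x\|^2 \;=\; \rsigma(a)^2\,\|x\|^2,
\]
so $\|\pi(a)x\| \leq \rsigma(a)\,\|x\|$ and hence $\pi(a)$ is bounded with $\|\pi(a)\| \leq \rsigma(a)$, as claimed.

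The delicate point is bookkeeping: the constant $M_x$ depends on $x$ but not on $n$, and it is essential that it enter the estimate through the $2^{n+1}$-th root (which kills it in the limit) while it is the expression $|b^{2^{n+1}}|^{1/2^{n+1}}$ that is allowed to mature into $\rlambda(b)$ rather than the coarser bound $|b|$. This is the whole content of the Berg--Christensen--Ressel device: iterating Cauchy--Schwarz along $b, b^2, b^4, \ldots$ upgrades the crude continuity bound for $\varphi_x(a) = \langle \pi(a)x,x\rangle$ into a sharp spectral estimate, and the passage from $b \in A\sa$ to arbitrary $a \in A$ is then a single additional Cauchy--Schwarz applied to $a^*a$.
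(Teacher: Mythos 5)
Your proof is correct and is essentially the paper's own argument: the same iterated Cauchy--Schwarz along the powers $(a^*a)^{2^n}$, with the weak-continuity constant entering through a $2^{n+1}$-th root and the limit $|c^{\,n}|^{1/n} \to \rlambda(c)$ doing the rest. The only difference is organizational --- you isolate the Hermitian estimate $\|\pi(b)x\| \leq \rlambda(b)\,\|x\|$ as an intermediate step and then apply it to $a^*a$, whereas the paper runs the induction $r_n \leq s_n$ directly on $(a^*a)^{2^n}$ for arbitrary $a$; the content is identical.
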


\begin{proof}
Let $a \in A$ and $x \in H$ with $\|\,x\,\| \leq 1$ be fixed. For $n \geq 0$, put
\[ r_n := {\Bigl({ \,\| \,\pi (a) \,x \,\| \,}^2 \,\Bigr) \,}^{({ \,{2 \,}^n \,})}
\qquad \text{and} \qquad
s_n := \langle \,\pi \,\Bigl( { \,( \,a^*a \,) \,}^{( \,{2 \,}^n \,)} \,\Bigr) \,x , \,x \,\rangle. \]
We then have $r_n \leq s_n$ for all $n \geq 0$. Indeed, this obviously holds
for $n = 0$. Let now $n$ be any integer $\geq 0$. The Cauchy-Schwarz
inequality yields
\begin{align*}
{| \,s_n \,| \,}^2 \,= &
\ { \Bigl|\,\langle \,\pi \,\Bigl( { \,( \,a^*a \,) \,}^{( \,{2 \,}^n \,)} \,\Bigr) \,x, \,x \,\rangle \,\Bigr| \,}^2 \\
    \leq & \ \langle \,\pi \,\Bigl( { \,( \,a^*a \,) \,}^{( \,{2 \,}^n \,)} \,\Bigr) \,x,
    \,\pi \,\Bigl( { \,( \,a^*a \,) \,}^{( \,{2 \,}^n \,)} \,\Bigr) \,x \,\rangle {\,\| \,x \,\| \,}^2 \\
    \leq & \ \langle \,\pi \,\Bigl( { \,( \,a^*a \,) \,}^{( \,{2 \,}^{n+1} \,)} \,\Bigr) \,x, \,x \,\rangle = s_{n+1}.
\end{align*}
Thus, if $r_n \leq s_n$, then $s_n = |\,s_n\,|$, and hence
\[ r_{n+1} = {r_n \,}^2 \leq {s_n \,}^2 = {|\,s_n\,| \,}^2 \leq s_{n+1}. \pagebreak \]
By induction it follows that $r_n \leq s_n$ for all $n \geq 0$.
Let now $c > 0$ be such that
\[ |\, \langle \pi (b)x, x \rangle \,| \leq c \,|\,b\,| \]
for all $b \in A\sa$. For $n \geq 0$, we get
\begin{align*}
\|\,\pi (a) \,x \,\| \ & \leq
\ {\Biggl[ { \,\langle \,\pi \,\Bigl({ \,( \,a^*a \,) \,}^{( \,{2 \,}^n \,) \,}\Bigr) \,x,
\,x \,\rangle \,}^{1\,/ \,( \,{{2 \,}^{n}} \,)} \,\Biggr] \,}^{1 \,/ \,2} \\
 & \leq \ {c \,} ^{1 \,/  \,(\,{{2 \,}^{n+1}}\,)}
 \cdot{\Biggl[\,{ \,\Bigl| \,{( \,a^*a \,) \,}^{( \,{2 \,}^n \,)} \,\Bigr| \,}^{1 \,/ \,( \,{{2\,}^{n}} \,)} \,\Biggr] \,}^{1 \,/ \,2}
\to \,\rsigma(a).
\end{align*}
It follows that
\[ \|\,\pi (a) \,x \,\| \leq \rsigma(a). \qedhere \]
\end{proof}

\begin{definition}[$\sigma$-contractive linear maps]%
\label{sigmacontr}\index{concepts}{s1@$\sigma$-contractive}%
\index{concepts}{representation!s-contractive@$\sigma$-contractive}%
\index{concepts}{contractive!s-contractive@$\sigma$-contractive}%
A linear map $\pi$ from a normed \st-algebra $A$ to a normed
space shall be called \underline{$\sigma$-contractive} if
\[ |\,\pi (a)\,| \leq \rsigma(a) \]
holds for all $a \in A$.
\end{definition}

\begin{proposition}\label{sigmaAsa}
Let $\pi$ be a $\sigma$-contractive linear map from a normed
\st-algebra $A$ to a normed space. We then have:
\begin{itemize}
   \item[$(i)$] $\pi$ is contractive in the accessory \st-norm on $A$, see \ref{auxnorm},
  \item[$(ii)$] $|\,\pi (a)\,| \leq \rlambda(a) \leq |\,a\,|$ for all Hermitian $a \in A$,
 \item[$(iii)$] $\pi$ is contractive on $A\sa$.
\end{itemize}
\end{proposition}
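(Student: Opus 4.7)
The plan is to chain together the definition of $\sigma$-contractivity with properties of the Pt\'ak function $\rsigma$ established earlier. All three items will follow by short computations; the main task is to organise them.

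For (i), I would start from the hypothesis $|\,\pi(a)\,| \leq \rsigma(a) = \rlambda(a^*a)^{\,1/2}$. Since $\rlambda(a^*a) \leq |\,a^*a\,| \leq |\,a^*\,| \cdot |\,a\,|$ by \ref{rldef} and submultiplicativity of the algebra norm, and since $\max\{|\,a\,|,|\,a^*\,|\} = \|\,a\,\|$ by definition \ref{auxnorm} of the accessory $*$-norm, we get $\rsigma(a) \leq \|\,a\,\|$, whence $|\,\pi(a)\,| \leq \|\,a\,\|$. The only potential subtlety is that the accessory $*$-norm is defined regardless of whether the original involution was isometric, so this estimate is unconditional.

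For (ii), for Hermitian $a \in A$ I would invoke \ref{Hermrseqrl}, which states $\rsigma(a) = \rlambda(a)$, together with the trivial inequality $\rlambda(a) \leq |\,a\,|$ from \ref{rldef}. Combining with $\sigma$-contractivity gives the chain
\[ |\,\pi(a)\,| \leq \rsigma(a) = \rlambda(a) \leq |\,a\,|. \]

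Item (iii) is then an immediate consequence of (ii), since it asserts precisely that $|\,\pi(a)\,| \leq |\,a\,|$ on $A\sa$. No real obstacle arises here; the proposition is essentially a bookkeeping consequence of \ref{Hermrseqrl} and the submultiplicativity of the norm, and the only thing to be careful about is that nowhere is completeness of $A$ or continuity of the involution assumed, in keeping with the book's general framework.
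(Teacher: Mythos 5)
Your proof is correct and follows exactly the paper's own route: (i) from the estimate $\rsigma(a) \leq \|\,a\,\|$ in the accessory \st-norm, (ii) from \ref{Hermrseqrl} together with $\rlambda(a) \leq |\,a\,|$, and (iii) as an immediate consequence of (ii). The paper's proof merely cites these two facts without spelling out the chain $\rlambda(a^*a)^{1/2} \leq (|\,a^*\,|\cdot|\,a\,|)^{1/2} \leq \max\{|\,a\,|,|\,a^*\,|\}$, which you have correctly supplied.
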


\begin{proof}
(i) follows from the fact that $\rsigma(a) \leq \|\,a\,\|$ holds for all $a \in A$,
where $\|\,\cdot\,\|$ denotes the accessory \st-norm. (ii) follows from the fact
that for a Hermitian element $a$ of $A$, one has $\rsigma(a) = \rlambda(a)$,
cf.\ \ref{Hermrseqrl}.
\end{proof}

\begin{corollary}\label{reprisomcontr}
If $A$ is a normed \st-algebra with isometric involution, then every
$\sigma$-contractive linear map from $A$ to a normed space is contractive.
\end{corollary}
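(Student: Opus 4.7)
The plan is straightforward: unpack the definition of $\sigma$-contractive and bound $\rsigma(a)$ by $|a|$ using isometry of the involution.

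First, recall that $\rsigma(a) = \rlambda(a^*a)^{1/2}$ by definition, and that $\rlambda(b) \leq |b|$ for any element $b$ of a normed algebra (see \ref{rldef}). Applying this with $b = a^*a$, together with submultiplicativity of the algebra norm, yields
\[ \rsigma(a)^{\,2} = \rlambda(a^*a) \leq |\,a^*a\,| \leq |\,a^*\,| \cdot |\,a\,|. \]
Here the assumption that the involution in $A$ is isometric gives $|\,a^*\,| = |\,a\,|$, so $\rsigma(a)^{\,2} \leq |\,a\,|^{\,2}$, i.e.\ $\rsigma(a) \leq |\,a\,|$.

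Combining this with the hypothesis that $\pi$ is $\sigma$-contractive, we immediately obtain
\[ |\,\pi(a)\,| \leq \rsigma(a) \leq |\,a\,| \quad \text{for all} \quad a \in A, \]
which is the desired contractivity. There is no real obstacle here; the only ingredient beyond the definitions is the elementary estimate $\rsigma(a) \leq |\,a\,|$ in a normed \st-algebra with isometric involution, which is a direct consequence of the C*-style computation above. Alternatively, one could cite item $(i)$ of the preceding proposition \ref{sigmaAsa}: the accessory \st-norm $\|\cdot\|$ coincides with $|\cdot|$ precisely when the involution is isometric, so contractivity in the accessory \st-norm collapses to contractivity in the original norm.
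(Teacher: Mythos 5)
Your proof is correct and follows essentially the same route as the paper: the corollary is an immediate consequence of the bound $\rsigma(a) \leq \|\,a\,\|$ in the accessory \st-norm (item $(i)$ of \ref{sigmaAsa}), which collapses to $\rsigma(a) \leq |\,a\,|$ when the involution is isometric — exactly the estimate you derive directly from $\rlambda(a^*a) \leq |\,a^*a\,| \leq |\,a^*\,|\cdot|\,a\,|$.
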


\begin{proposition}\label{commsubBsigmacontr}
If $A$ is a commutative \st-subalgebra of a Banach \st-algebra, then every
$\sigma$-contractive linear map from $A$ to a normed space is contractive.
\end{proposition}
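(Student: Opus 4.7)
The plan is to reduce everything to showing $\rsigma(a) \leq |\,a\,|$ for every $a \in A$, whereupon $\sigma$-contractivity of $\pi$ gives $|\,\pi(a)\,| \leq \rsigma(a) \leq |\,a\,|$ directly. In fact I will establish the stronger inequality $\rsigma(a) \leq \rlambda(a)$.

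The key observation is that $A$, although only assumed to be a \st-subalgebra of some Banach \st-algebra $B$, inherits two useful features. First, $A$ is itself a commutative normed \st-algebra in the induced norm, so for any $a \in A$ the elements $a$ and $a^*$ lie in $A$ and commute. Proposition \ref{commrlsub}, applied in the normed algebra $A$, then yields
\[
\rlambda(a^*a) \leq \rlambda(a^*) \,\rlambda(a).
\]
Second, $\rlambda$ is computed purely from the norms of the powers $a^n$, which are the same whether one views $a$ as an element of $A$ or of the ambient Banach \st-algebra $B$; therefore $\rlambda_A(c) = \rlambda_B(c)$ for every $c \in A$. Since $B$ is a Banach \st-algebra, proposition \ref{Brlsymm} applies inside $B$ to give $\rlambda_B(a^*) = \rlambda_B(a)$, and transferring this back to $A$ yields $\rlambda(a^*) = \rlambda(a)$.

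Combining these two inputs with the defining identity $\rsigma(a)^{\,2} = \rlambda(a^*a)$ gives
\[
\rsigma(a)^{\,2} \leq \rlambda(a^*) \,\rlambda(a) = \rlambda(a)^{\,2},
\]
hence $\rsigma(a) \leq \rlambda(a) \leq |\,a\,|$. Feeding this into the $\sigma$-contractivity assumption finishes the proof. There is no real obstacle here; the only thing to watch is that the symmetry $\rlambda(a^*) = \rlambda(a)$ genuinely requires an ambient Banach \st-algebra (cf.\ the remark after \ref{Brlsymm} pointing to \ref{counterexbis}), which is precisely the role played by the hypothesis that $A$ sits inside one.
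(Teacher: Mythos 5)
Your proof is correct and is essentially the paper's argument: the paper simply cites \ref{Bnormalrsleqrl} (every element of the commutative $A$ is normal in the ambient Banach \st-algebra, so $\rsigma(a) \leq \rlambda(a) \leq |\,a\,|$), and the proof of that proposition is precisely the computation via \ref{commrlsub} and \ref{Brlsymm} that you write out. Your additional care about transferring $\rlambda$ and $\rlambda(a^*)=\rlambda(a)$ between $A$ and the ambient algebra is sound and matches the intended reading.
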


\begin{proof}
This follows from the fact that in a Banach \st-algebra, one has
$\rsigma(b) \leq \rlambda(b)$ for all normal elements $b$,
cf.\ \ref{Bnormalrsleqrl}. \pagebreak
\end{proof}

\begin{proposition}\label{continvol}
Let $A$ be a normed \st-algebra. For the following statements, we have
\[ (i) \Rightarrow (ii) \Rightarrow (iii) \Rightarrow (iv) \Rightarrow (v) \Rightarrow (vi). \]
\begin{itemize}
   \item[$(i)$] the involution in $A$ is continuous,
  \item[$(ii)$] the involution maps sequences converging to $0$
                       to bounded \linebreak sequences,
 \item[$(iii)$] the map $a \mapsto a^*a$ is continuous at $0$,
 \item[$(iv)$] the Pt\'{a}k function $\rsigma$ is continuous at $0$,
  \item[$(v)$] the Pt\'{a}k function $\rsigma$ is bounded in a
neighbourhood of $0$,
 \item[$(vi)$] every $\sigma$-contractive linear map from $A$
to a normed space is continuous.
\end{itemize}
\end{proposition}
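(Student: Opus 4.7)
The chain of five implications is essentially a routine unwinding of the definitions together with the two inequalities $\rsigma(a)^{\,2} = \rlambda(a^*a) \leq | \,a^*a \,|$ and $| \,\pi(a) \,| \leq \rsigma(a)$. The plan is to verify each implication in turn; none should be difficult, and I expect the last step $(v) \Rightarrow (vi)$ to require the only small argument, namely a scaling by linearity.

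First I would dispatch $(i) \Rightarrow (ii)$ as immediate: a continuous involution sends null sequences to null sequences, and null sequences are bounded. For $(ii) \Rightarrow (iii)$, given $a_n \to 0$, the sequence $( \,a_n{}^{\!*} \,)$ is bounded by hypothesis, say $| \,a_n{}^{\!*} \,| \leq M$, and the submultiplicativity of the algebra norm yields $| \,a_n{}^{\!*} a_n \,| \leq M \,| \,a_n \,| \to 0$, so $a \mapsto a^*a$ is continuous at $0$. For $(iii) \Rightarrow (iv)$, I would invoke the fundamental estimate
\[ \rsigma(a)^{\,2} = \rlambda(a^*a) \leq | \,a^*a \,| \qquad (a \in A), \]
which comes from the definitions \ref{rldef} and of $\rsigma$. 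If $a_n \to 0$, then $a_n{}^{\!*} a_n \to 0$ by $(iii)$, whence $\rsigma(a_n)^{\,2} \to 0$, so $\rsigma(a_n) \to 0$.

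The implication $(iv) \Rightarrow (v)$ is purely topological: continuity of the non-negative function $\rsigma$ at $0$, applied with $\varepsilon = 1$ say, produces a $\delta > 0$ such that $\rsigma(a) < 1$ whenever $| \,a \,| < \delta$, so $\rsigma$ is bounded on a neighbourhood of $0$. Finally, for $(v) \Rightarrow (vi)$, suppose $\rsigma \leq M$ on the ball $\{ \,a \in A : | \,a \,| \leq \delta \,\}$ for some $M, \delta > 0$, and let $\pi : A \to V$ be any $\sigma$-contractive linear map to a normed space $V$. For any non-zero $a \in A$, the element $\delta a \,/ \,| \,a \,|$ lies in that ball, so by $\sigma$-contractivity and linearity,
\[ | \,\pi(a) \,| = \frac{| \,a \,|}{\delta} \,\Bigl| \,\pi\bigl( \,\delta a \,/ \,| \,a \,| \,\bigr) \,\Bigr|
\leq \frac{| \,a \,|}{\delta} \,\rsigma\bigl( \,\delta a \,/ \,| \,a \,| \,\bigr) \leq \frac{M}{\delta} \,| \,a \,|, \]
which shows that $\pi$ is bounded, hence continuous. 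The only subtle point is that $\rsigma$ is not itself linear, so one cannot immediately transfer a pointwise bound on a ball into a global Lipschitz bound for $\rsigma$; but this is not needed, since one only needs the resulting bound for the linear map $\pi$, where the scaling argument above works without issue.
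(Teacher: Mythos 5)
Your proof is correct; the paper in fact states this proposition without proof, treating the chain of implications as routine, and your argument supplies exactly the intended verification (the inequality $\rsigma(a)^{\,2}=\rlambda(a^*a)\leq|\,a^*a\,|$ for $(iii)\Rightarrow(iv)$ and the scaling argument for $(v)\Rightarrow(vi)$ being the only points of substance). Nothing to add.
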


\begin{corollary}\label{Hermcont}%
If $A$ is a \st-subalgebra of a Hermitian Banach \linebreak \st-algebra,
then every $\sigma$-contractive linear map from $A$ to a normed space
is continuous.
\end{corollary}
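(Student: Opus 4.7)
The plan is to reduce to proposition \ref{continvol} by showing that the Pt\'{a}k function $\rsigma$ is continuous at $0$ on $A$. First I would observe that $\rsigma$ is insensitive to the choice of ambient algebra: for $a \in A$, we have $\rsigma(a) = \rlambda(a^*a)^{1/2}$, and by \ref{rldef} together with \ref{rlamlim}, the spectral radius $\rlambda$ of an element is the limit of the sequence $|x^n|^{1/n}$, which depends only on the norm. Since $A$ is a \st-subalgebra of a Hermitian Banach \st-algebra $B$ (and the norm on $A$ is the restriction of that of $B$), the value $\rsigma(a)$ computed in $A$ coincides with the value $\rsigma(a)$ computed in $B$.

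Next I would invoke the Theorem of Pt\'{a}k \ref{PtakThm}: since $B$ is a Hermitian Banach \st-algebra, the Pt\'{a}k function $\rsigma$ is a C*-seminorm on $B$. By corollary \ref{contC*semin}, any C*-seminorm on a Banach \st-algebra is continuous; in particular $\rsigma$ is continuous at $0$ on $B$. By the identification above, $\rsigma$ is then continuous at $0$ on $A$ as well.

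Finally I would apply proposition \ref{continvol} to the normed \st-algebra $A$ (which inherits its norm from $B$): statement (iv) there is precisely that $\rsigma$ is continuous at $0$, and the implication (iv) $\Rightarrow$ (vi) yields that every $\sigma$-contractive linear map from $A$ to a normed space is continuous, as required. I do not anticipate any serious obstacle; the only subtlety is the initial remark that $\rsigma$ is computed the same way in $A$ and in $B$, which must be stated explicitly so that the Pt\'{a}k-type continuity of $\rsigma$ in the Hermitian ambient algebra can be transferred to the subalgebra.
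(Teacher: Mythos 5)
Your proof is correct and follows essentially the same route as the paper, which simply cites the uniform continuity of $\rsigma$ on the ambient Hermitian Banach \st-algebra (corollary \ref{rscont}, itself a consequence of Pt\'{a}k's Theorem \ref{PtakThm} and \ref{contC*semin}) and then implicitly applies \ref{continvol}. Your explicit remark that $\rsigma$ is computed identically in $A$ and in the ambient algebra is a detail the paper leaves tacit, but it is the same argument.
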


\begin{proof}
The Pt\'{a}k function $\rsigma$ on a Hermitian Banach \st-algebra
is uniformly continuous, cf.\ \ref{rscont}.
\end{proof}

\medskip
In the preceding corollary \ref{Hermcont}, the assumption of a Hermitian
nature can be dropped, cf.\ \cite[vol.\ II Theorem 11.1.4 p.\ 1157]{Palm}.

\bigskip
Next for slightly stronger assumptions: weakly
continuous representations on Hilbert spaces.

\begin{theorem}\label{uniformbd}%
Let $H, H\0$ be normed spaces, and let $C$ be a subset of the
normed space $\blop(H,H\0)$ of bounded linear operators $H \to H\0$.
If $H$ is complete, then $C$ is bounded in $\blop(H,H\0)$ if and only if
\[ \sup _{T \in C} |\,\ell(Tx)\,| < \infty \]
for every $x$ in $H$ and every continuous linear functional $\ell$ on $H\0$.
\end{theorem}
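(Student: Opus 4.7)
The ``only if'' direction is immediate: if $C$ is bounded in $\blop(H,H\0)$, with $M := \sup_{T \in C} |\,T\,| < \infty$, then for any $x \in H$ and any continuous linear functional $\ell$ on $H\0$, one has $|\,\ell(Tx)\,| \leq |\,\ell\,| \cdot |\,T\,| \cdot |\,x\,| \leq |\,\ell\,| \cdot M \cdot |\,x\,|$, which is finite uniformly in $T \in C$. No completeness hypothesis is needed for this direction.

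The substantive direction is the ``if'' part, which is essentially the Uniform Boundedness Principle (Banach-Steinhaus), applied twice. My plan is to proceed in two stages. First, fix $x \in H$ and show that $\{ \,Tx : T \in C \,\}$ is norm-bounded in $H\0$. By hypothesis, for every continuous linear functional $\ell$ on $H\0$, the set $\{ \,\ell(Tx) : T \in C \,\}$ is bounded in $\mathds{C}$. View each $Tx$ as an element of the bidual $H\0^{**}$ via the canonical isometric embedding: that is, $Tx$ becomes the evaluation $\ell \mapsto \ell(Tx)$ on $H\0^*$. Since the dual $H\0^*$ of a normed space is always a Banach space, the classical Uniform Boundedness Principle applies to the family of evaluations $\{ \,Tx : T \in C \,\} \subset \blop(H\0^*, \mathds{C})$, yielding that $\sup_{T \in C} |\,Tx\,|_{H\0^{**}} < \infty$, whence $\sup_{T \in C} |\,Tx\,|_{H\0} < \infty$ by isometry of the canonical embedding.

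Second, having established pointwise boundedness $\sup_{T \in C} |\,Tx\,| < \infty$ for each $x \in H$, I would apply the Uniform Boundedness Principle a second time, now to the family $C \subset \blop(H, H\0)$, where this time the completeness hypothesis on $H$ is invoked. This produces $\sup_{T \in C} |\,T\,| < \infty$, which is the desired boundedness of $C$ in $\blop(H, H\0)$.

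The main point to be careful about is that completeness of $H\0$ is \emph{not} assumed; completeness enters only through $H$ (in the second application) and through the automatic completeness of the scalar dual $H\0^*$ (in the first application). The classical Uniform Boundedness Principle itself — a pointwise-bounded family of bounded linear operators from a Banach space to a normed space is uniformly bounded — is a standard prerequisite from functional analysis and may be assumed. The only mild subtlety is the reduction from weak boundedness to norm boundedness in a normed space which is not assumed complete, and this is handled cleanly by the canonical embedding into the bidual as indicated above.
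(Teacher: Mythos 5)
Your proof is correct and follows essentially the same route as the paper, which likewise applies the Uniform Boundedness Principle twice, first after imbedding $H\0$ isometrically into its second dual (so that the first application takes place over the automatically complete dual $H\0^*$), and then over the complete space $H$. Your write-up merely spells out the details that the paper leaves implicit, including the observation that completeness of $H\0$ is never needed.
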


\begin{proof} The uniform boundedness principle is applied twice.
At the first time, one imbeds $H\0$ isometrically into its second dual.
\end{proof}

\begin{theorem}\label{weakcontHilbert}
Let $\pi$ be a \underline{weakly continuous} representation of a normed
\st-algebra $A$ on a \underline{Hilbert} space $H$. Then $\pi$ is continuous.
\linebreak Furthermore, for a normal element $b$ of $A$, we have
\[ \| \,\pi(b) \,\| \leq \rlambda(b) \leq | \,b \,|. \]
It follows that if $A$ is commutative, then $\pi$ is contractive. \pagebreak
\end{theorem}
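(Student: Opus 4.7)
The plan is to derive everything from the weak continuity hypothesis in three stages, using \ref{weaksa}, the uniform boundedness principle packaged as \ref{uniformbd}, and finally the automatic continuity result \ref{commbdedcontr} for normal elements.

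First, since $\pi$ is weakly continuous on all of $A$, in particular on $A\sa$, I would immediately invoke Theorem \ref{weaksa} to conclude that $\pi$ is $\sigma$-contractive:
\[ \|\,\pi(a)\,\| \leq \rsigma(a) \quad \text{for all} \quad a \in A. \]
In particular every operator $\pi(a)$ is bounded, so $\pi$ may legitimately be viewed as a \st-algebra homomorphism $A \to \blop(H)$ (one could alternatively obtain boundedness of each $\pi(a)$ from \ref{HellToepl}, using that $H$ is a Hilbert space).

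Next comes the core step: showing that $\pi : A \to \blop(H)$ is norm-continuous. For each fixed $x \in H$, the positive linear functional $a \mapsto \langle \pi(a)x, x \rangle$ is continuous on $A$ by hypothesis, hence bounded on the closed unit ball $\{a \in A : |\,a\,| \leq 1\}$. By the polarisation identity
\[ \langle \pi(a)x, y \rangle = \frac{1}{4} \sum_{k=0}^{3} \iu^{\,k}
\,\langle \pi(a)(x + \iu^{\,k} y), \,x + \iu^{\,k} y \rangle, \]
the same holds for $a \mapsto \langle \pi(a)x, y \rangle$ for every $y \in H$. Using the Riesz representation theorem to write every continuous linear functional on $H$ as $v \mapsto \langle v, y \rangle$, we get
\[ \sup_{|a| \leq 1} |\,\ell(\pi(a)x)\,| < \infty \]
for every $x \in H$ and every $\ell \in H^{\,*}$. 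Since $H$ is complete, Theorem \ref{uniformbd} applies to the family $C := \{\pi(a) : |\,a\,| \leq 1\} \subset \blop(H)$ and yields that $C$ is norm-bounded. Therefore $\pi$ is a continuous linear map $A \to \blop(H)$.

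With continuity in hand, the remaining assertions are immediate. Since $\blop(H)$ is a C*-algebra, Corollary \ref{commbdedcontr} applied to the continuous \st-algebra homomorphism $\pi : A \to \blop(H)$ gives
\[ \|\,\pi(b)\,\| \leq \rlambda(b) \leq |\,b\,| \quad \text{for every normal } b \in A. \]
If $A$ is commutative, every element is normal, so $\pi$ is contractive. The only real obstacle is the continuity step: once one recognises that weak continuity of the diagonal matrix coefficients polarises to all matrix coefficients and then feeds into the Banach–Steinhaus-type statement \ref{uniformbd} via completeness of $H$, the rest is just invoking the machinery of \S 11 and \S 9.
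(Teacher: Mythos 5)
Your proposal is correct and follows essentially the same route as the paper's own proof: boundedness of the operators via \ref{HellToepl} or \ref{weaksa}, polarisation of the diagonal matrix coefficients, the uniform boundedness statement \ref{uniformbd} applied to $\{\,\pi(a) : |\,a\,| \leq 1\,\}$, and then \ref{commbdedcontr} for the normal-element estimate and the commutative case. The extra detail you supply (Riesz representation of functionals on $H$, the explicit polarisation identity) only makes explicit what the paper leaves implicit.
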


\begin{proof}
The operators $\pi (a)$ $(a \in A)$ are bounded by \ref{HellToepl} or \ref{weaksa}.
The linear functionals $A \ni a \mapsto \langle \pi (a) x, y \rangle$ $(x, y \in H)$
are continuous by polarisation. One then applies the preceding theorem
\ref{uniformbd} with \linebreak $C := \{ \,\pi (a) \in \blop(H) : a \in A, \,|\,a\,| \leq 1 \,\}$
to the effect that $\pi$ is con\-tinuous. The remaining statements follow from
\ref{commbdedcontr}.
\end{proof}

\medskip
Now for Banach \st-algebras.

\begin{lemma}\label{preBanachbded}
Let $\varphi$ be a positive linear functional on a \underline{unital}
\linebreak Banach \st-algebra $A$. For $a \in A\sa$, we then have
\[ |\,\varphi (a)\,| \leq \varphi (e) \,\rlambda (a) \leq \varphi (e) \,| \,a \,|. \]
In particular, $\varphi$ is continuous on $A\sa$. See also \ref{eBbded}
below.
\end{lemma}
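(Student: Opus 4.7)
The plan is to reduce to the normalised case $\rlambda(a)<1$ and then exploit Ford's Square Root Lemma \ref{Ford} to write $e\pm a$ as squares of Hermitian elements, whereupon positivity of $\varphi$ finishes the job.

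First I would dispose of the easy second inequality $\rlambda(a)\leq|a|$, which is just the definition \ref{rldef}. I would also note at the outset that $\varphi(e)=\varphi(e^*e)\geq 0$, so the bound we are after makes sense. The main content is the first inequality $|\varphi(a)|\leq\varphi(e)\,\rlambda(a)$.

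The central step is the following claim: \emph{if $b\in A\sa$ satisfies $\rlambda(b)<1$, then $\varphi(b)\leq\varphi(e)$.} Indeed, the element $-b\in A\sa$ also satisfies $\rlambda(-b)<1$, so Ford's Square Root Lemma \ref{Ford} (applied with ``$a$'' there taken to be $-b$) furnishes a Hermitian element $c\in\tld{A}$ with $c^{\,2}=e-b$. Positivity of $\varphi$ on $\tld{A}$ — more precisely, the restriction of the canonical extension of $\varphi$ to $\tld{A}$, or just the observation that $c\in\mathds{C}e+A$ so $c^{\,2}-e=-b\in A$ — then gives
\[
\varphi(e)-\varphi(b)=\varphi(c^{\,2})=\varphi(c^{\,*}c)\geq 0,
\]
proving the claim. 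Applying the same argument to $-b$ yields $-\varphi(e)\leq\varphi(b)$, hence $|\varphi(b)|\leq\varphi(e)$ whenever $\rlambda(b)<1$.

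From here the general case follows by homogeneity. Given $a\in A\sa$ and any real $t>\rlambda(a)$, the element $b:=a/t$ lies in $A\sa$ with $\rlambda(b)=\rlambda(a)/t<1$, whence $|\varphi(a)|=t\,|\varphi(b)|\leq t\,\varphi(e)$. Letting $t\searrow\rlambda(a)$ gives $|\varphi(a)|\leq\varphi(e)\,\rlambda(a)$ (and in particular $\varphi(a)=0$ when $\rlambda(a)=0$). The continuity statement on $A\sa$ then follows from $\rlambda(a)\leq|a|$ together with sub-additivity of $\rlambda$ on commuting elements.

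The only subtle point — but one that is already handled by \ref{Ford} — is that the involution need not be continuous here, so one cannot simply take adjoints of the power series defining the square root; Ford's Square Root Lemma is precisely what lets us bypass this obstacle and conclude that the square root of $e+a$ is Hermitian for Hermitian $a$ with $\rlambda(a)<1$. Everything else is bookkeeping.
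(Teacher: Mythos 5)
Your argument is correct and is essentially the paper's own proof: both apply Ford's Square Root Lemma to produce a Hermitian square root of $e-b$ for Hermitian $b$ with $\rlambda(b)<1$, deduce $|\varphi(b)|\leq\varphi(e)$ via positivity (the paper just makes explicit the small point that $\varphi(b)$ is real, which your two one-sided bounds implicitly require), and then rescale by $\gamma>\rlambda(a)$. The only cosmetic difference is at the end: the bound $|\varphi(a)|\leq\varphi(e)\,|\,a\,|$ already gives continuity on the real subspace $A\sa$ directly, with no need to invoke sub-additivity of $\rlambda$.
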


\begin{proof}
Let $b$ be a Hermitian element of $A$ with $\rlambda(b) < 1$.
There exists by Ford's Square Root Lemma \ref{Ford} a Hermitian square
root $c$ of $e-b$. One thus has $c^*c = c^2 = e-b$, and it follows that
\[ 0 \leq \varphi (c^*c) = \varphi (e-b), \]
whence $\varphi (b)$ is real (because $\varphi (e) = \varphi (e^*e)$ is), and
\[ \varphi (b) \leq \varphi (e). \]
Upon replacing $b$ with $-b$, we obtain
\[ |\,\varphi (b) \,| \leq \varphi (e). \]
Let now $a$ be an arbitrary Hermitian element of $A$. For
$\gamma > \rlambda(a)$, the above implies that
\[ |\,\varphi (\gamma^{-1}a)\,| \leq \varphi (e), \]
whence
\[ |\,\varphi (a)\,| \leq \varphi (e) \,\gamma. \]
Since this holds for arbitrary $\gamma > \rlambda(a)$, we get
\[ |\,\varphi (a)\,| \leq \varphi (e) \,\rlambda (a). \qedhere \]
\end{proof}

\begin{theorem}\label{Banachbounded}%
Let $\pi$ be a representation of a Banach \st-algebra $A$ on a pre-Hilbert
space $H$. The operators $\pi(a)$ $(a \in A)$ then all are bounded, and
the representation $\pi$ is continuous. \pagebreak
\end{theorem}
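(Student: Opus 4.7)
The plan is to reduce the problem to the case of a unital Banach \st-algebra, apply the boundedness lemma \ref{preBanachbded} to establish weak continuity on the Hermitian part, and then invoke the $\sigma$-contractive machinery plus automatic continuity.

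First I would pass to the unitisation. If $A$ is not unital, I would extend $\pi$ to $\tld{\pi} : \tld{A} \to \mathrm{End}(H)$ by declaring $\tld{\pi}(e) := \mathds{1}$; this is easily checked to be a representation of the unital Banach \st-algebra $\tld{A}$ (recall \ref{Banachunitis}) on $H$. For each fixed $x \in H$, the linear functional
\[ \varphi_x : a \mapsto \langle \tld{\pi}(a) x, x \rangle \qquad (a \in \tld{A}) \]
is positive, since $\varphi_x(a^*a) = \langle \tld{\pi}(a)x, \tld{\pi}(a)x \rangle \geq 0$.

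Next I would apply Lemma \ref{preBanachbded}, which says that a positive linear functional on a unital Banach \st-algebra is bounded on the Hermitian part, with
\[ | \,\varphi_x(a) \,| \leq \varphi_x(e) \,\rlambda(a) \leq \varphi_x(e) \,| \,a \,|
\qquad \text{for all} \quad a \in \tld{A}\sa. \]
Restricting to $a \in A\sa \subset \tld{A}\sa$, this shows that $\pi$ is weakly continuous on $A\sa$. Theorem \ref{weaksa} then immediately yields that every operator $\pi(a)$ $(a \in A)$ is bounded with $\| \,\pi(a) \,\| \leq \rsigma(a)$; in particular $\pi$ is $\sigma$-contractive as a map $A \to \blop(H)$.

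Finally, having identified $\pi$ with a \st-algebra homomorphism from the Banach \st-algebra $A$ into the pre-C*-algebra $\blop(H)$, the automatic continuity theorem \ref{automcont} delivers continuity of $\pi$ at once. The only genuine delicate point in this plan is the unitisation step: one must check that adjoining $\mathds{1}$ as the image of $e$ is compatible with the \st-algebra structure when $A$ already has a unit (in which case $\pi$ of the existing unit need not a priori equal $\mathds{1}$), but one then simply works with $A = \tld{A}$ and uses the original $\pi$, since Lemma \ref{preBanachbded} only requires $\tld{A}$ to be unital, not that $\tld{\pi}$ be unital --- what is needed is merely positivity of $\varphi_x$, which holds in either case.
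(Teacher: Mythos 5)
Your proposal is correct and follows essentially the same route as the paper's own proof: unitise, observe that each $a \mapsto \langle \tld{\pi}(a)x,x\rangle$ is a positive linear functional so that Lemma \ref{preBanachbded} gives weak continuity on $A\sa$, deduce boundedness of the operators from \ref{weaksa}, and conclude continuity from \ref{automcont}. Your closing remark about the already-unital case is a sensible clarification but does not change the argument.
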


\begin{proof}
Extend $\pi$ to a representation $\tld{\pi}$ of $\tld{A}$ on $H$ by
requiring that $\tld{\pi} (e) = \mathds{1}$ if $A$ has no unit (and by
extending linearly). By the preceding lemma \ref{preBanachbded},
the positive linear functional $a \to \langle \tld{\pi} (a) x, x \rangle$
$(a \in \tld{A})$ is continuous on $\tld{A}\sa$ for each $x \in H$.
This implies that $\pi$ is weakly continuous on $A\sa$. It follows
by \ref{weaksa} that the operators $\pi (a)$ are bounded on $H$.
We obtain that $\range(\pi)$ is a pre-C*-algebra, so that $\pi$ is
continuous by theorem \ref{automcont}.
\end{proof}

\begin{corollary}
A representation of a commutative Banach \linebreak
\st-algebra on a pre-Hilbert space is contractive.
\end{corollary}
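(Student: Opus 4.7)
The plan is to chain the preceding theorem \ref{Banachbounded} with the earlier result \ref{commbdedcontr} (or equivalently \ref{commsubBsigmacontr}). Let $\pi$ be a representation of the commutative Banach \st-algebra $A$ on a pre-Hilbert space $H$. By theorem \ref{Banachbounded}, each operator $\pi(a)$ is bounded, so $\pi$ may be viewed as a \st-algebra homomorphism from $A$ to the pre-C*-algebra $\blop(H)$, and that same theorem asserts that $\pi$ is continuous.

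Having established continuity, I would simply invoke corollary \ref{commbdedcontr}: a continuous \st-algebra homomorphism from a normed \st-algebra to a pre-C*-algebra satisfies $\|\pi(b)\| \leq \rlambda(b) \leq |b|$ on every normal element $b$. Since $A$ is commutative, every element of $A$ is normal, so the inequality $\|\pi(a)\| \leq |a|$ holds for all $a \in A$, which is precisely the assertion that $\pi$ is contractive.

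An essentially equivalent route would be to go via $\sigma$-contractivity: the proof of \ref{Banachbounded} shows that $\pi$ is weakly continuous on $A\sa$ (through \ref{preBanachbded}), and then \ref{weaksa} yields $\|\pi(a)\| \leq \rsigma(a)$ for every $a\in A$, i.e.\ $\pi$ is $\sigma$-contractive in the sense of \ref{sigmacontr}. Since $A$, being a commutative Banach \st-algebra, is in particular a commutative \st-subalgebra of a Banach \st-algebra, proposition \ref{commsubBsigmacontr} upgrades $\sigma$-contractivity to contractivity.

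There is no real obstacle: both cited results do the work and the commutativity hypothesis is used only to ensure that every element is normal (so that the inequality $\rsigma(b) \leq \rlambda(b) \leq |b|$ of \ref{Bnormalrsleqrl} applies uniformly on $A$). The only thing to be mildly careful about is making sure continuity of $\pi$ is actually in hand before quoting \ref{commbdedcontr}; this is guaranteed by the immediately preceding theorem \ref{Banachbounded}, so the corollary is a one-line consequence.
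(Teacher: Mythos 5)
Your proof is correct and follows essentially the same route as the paper: the paper's one-line citation of \ref{weakcontHilbert} bottoms out in exactly the chain you describe, namely continuity from \ref{Banachbounded} followed by the normal-element estimate of \ref{commbdedcontr} (equivalently, $\sigma$-contractivity plus \ref{commsubBsigmacontr}). The only cosmetic point is that for a pre-Hilbert space one should speak of the pre-C*-algebra $\range(\pi)$ rather than $\blop(H)$ as the codomain, which is how the proof of \ref{Banachbounded} phrases it.
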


\begin{proof}
This follows now from \ref{weakcontHilbert}.
\end{proof}

\begin{corollary}
A representation of a Banach \st-algebra with \linebreak isometric
involution on a pre-Hilbert space is contractive.
\end{corollary}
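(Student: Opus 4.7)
The plan is to reduce the statement to the immediately preceding machinery: show that the representation is $\sigma$-contractive, and then invoke that the isometric involution makes $\sigma$-contractivity coincide with ordinary contractivity.

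First I would establish the $\sigma$-contractivity bound $\|\pi(a)\| \leq \rsigma(a)$ for all $a \in A$. This is exactly the content of the preceding theorem \ref{Banachbounded}, whose proof already gives this bound through the following chain: extend $\pi$ to $\tld{\pi}$ on $\tld{A}$ by sending the adjoined unit to $\mathds{1}$; apply lemma \ref{preBanachbded} to see that each positive linear functional $a \mapsto \langle \tld{\pi}(a)x,x\rangle$ is continuous on $\tld{A}\sa$, hence $\pi$ is weakly continuous on $A\sa$; then theorem \ref{weaksa} produces precisely $\|\pi(a)\| \leq \rsigma(a)$. So this step is essentially free, just a matter of pointing at the right items.

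Next I would invoke corollary \ref{reprisomcontr}, which asserts that any $\sigma$-contractive linear map from a normed \st-algebra with isometric involution into a normed space is automatically contractive. Unwinding that corollary (via \ref{sigmaAsa}(i)): the accessory \st-norm $\|\cdot\|$ equals the original norm $|\cdot|$ when the involution is isometric, and $\sigma$-contractivity always dominates contractivity in the accessory \st-norm because
\[ \rsigma(a)^{\,2} = \rlambda(a^*a) \leq |\,a^*a\,| \leq |\,a^*\,|\cdot|\,a\,| = |\,a\,|^{\,2}. \]
Combining these two steps yields $\|\pi(a)\| \leq \rsigma(a) \leq |\,a\,|$ for every $a \in A$, which is the desired contractivity.

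There is no real obstacle here; the only thing to be careful about is that the weak continuity argument needs the unitisation step so that lemma \ref{preBanachbded} (which is stated for unital Banach \st-algebras) applies. Since $\tld{A}$ inherits a Banach algebra structure by \ref{Banachunitis} and $\tld{\pi}(e)=\mathds{1}$ makes the extended functionals positive on $\tld{A}$, this is routine. The statement is thus essentially a corollary of \ref{Banachbounded} together with \ref{reprisomcontr}.
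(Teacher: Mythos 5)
Your argument is correct and is essentially the paper's own proof: the text derives this corollary directly from \ref{weaksa} (which gives $\|\pi(a)\| \leq \rsigma(a)$ once weak continuity on $A\sa$ is known, exactly as established in the proof of \ref{Banachbounded}) together with \ref{reprisomcontr}. Your unwinding of the intermediate steps is accurate but adds nothing beyond what those two references already contain.
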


\begin{proof}
This follows now from \ref{weaksa} and \ref{reprisomcontr}.
\end{proof}

\begin{corollary}\label{C*repcontract}%
Every representation of a C*-algebra on a pre-Hilbert space is contractive.
\end{corollary}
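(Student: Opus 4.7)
The plan is to combine Theorem \ref{Banachbounded} with the continuity theorem \ref{Cstarcontr} for \st-homomorphisms out of C*-algebras. Since a C*-algebra $A$ is in particular a Banach \st-algebra, Theorem \ref{Banachbounded} applies: every operator $\pi(a)$ is bounded on the pre-Hilbert space $H$, so $\pi$ may be viewed as a \st-algebra homomorphism from $A$ into $\blop(H)$.

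Next, I would observe that $\blop(H)$ is already a pre-C*-algebra even when $H$ is only a pre-Hilbert space. This is the only step with any real content, and the argument is exactly the one given in the corollary following \ref{condC*}: for $T \in \blop(H)$ and $x \in H$,
\[
\| \,Tx \,\|^{\,2} = \langle T^{*}Tx, x \rangle \leq \| \,T^{*}T \,\| \cdot \| \,x \,\|^{\,2},
\]
so $\| \,T \,\|^{\,2} \leq \| \,T^{*}T \,\|$, and then \ref{condC*} yields the full C*-property and isometric involution on $\blop(H)$.

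With $\range(\pi)$ sitting inside the pre-C*-algebra $\blop(H)$, the result \ref{Cstarcontr}, which says that any \st-algebra homomorphism from a C*-algebra to a pre-C*-algebra is contractive, applies directly and gives $\| \,\pi(a) \,\| \leq \| \,a \,\|$ for all $a \in A$. As an alternative route, one could instead invoke the weak continuity on $A\sa$ established inside the proof of \ref{Banachbounded}, combine it with \ref{weaksa} to obtain $\| \,\pi(a) \,\| \leq \rsigma(a)$, and finish using the C*-identity $\rsigma(a) = \| \,a \,\|$ from \ref{C*rs}; this is essentially the same argument unfolded, since \ref{Cstarcontr} ultimately rests on these two facts. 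No serious obstacle is expected — the only thing to be careful about is not to assume completeness of $H$, which is why the observation that $\blop(H)$ is a pre-C*-algebra (rather than a C*-algebra) is the critical link.
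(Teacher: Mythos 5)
Your overall strategy works and your second, ``unfolded'' route is in fact the paper's own: the corollary is stated immediately after ``a representation of a Banach \st-algebra with isometric involution on a pre-Hilbert space is contractive'', and follows from it in one line since a C*-algebra is such an algebra (via \ref{Banachbounded}/\ref{weaksa} giving $\|\,\pi(a)\,\| \leq \rsigma(a)$ and \ref{reprisomcontr}, or equivalently $\rsigma(a)=\|\,a\,\|$ from \ref{C*rs}).

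There is, however, one genuinely false intermediate claim in your main route: $\blop(H)$ is \emph{not} a pre-C*-algebra when $H$ is merely a pre-Hilbert space, because it is not even a \st-algebra. A bounded operator $T$ on a pre-Hilbert space need not possess an adjoint mapping $H$ into $H$; the vector $T^*y$ produced by Riesz representation lives in the completion and may fail to lie in $H$ (take $H$ the finitely supported sequences in $\ell^{\,2}$ and $Tx := \bigl(\sum_n x_n/n\bigr)e_1$; then $T^*e_1 = (1/n)_n \notin H$). So your displayed computation $\|\,Tx\,\|^{\,2} = \langle T^*Tx, x\rangle$ is not available for arbitrary $T \in \blop(H)$. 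The repair is immediate and is exactly what the paper does inside the proof of \ref{Banachbounded}: restrict to $\range(\pi)$, which \emph{is} a \st-algebra of operators because the defining identity $\langle \pi(a)x,y\rangle = \langle x, \pi(a^*)y\rangle$ supplies the adjoint $\pi(a)^* = \pi(a^*)$ within the range; your estimate then shows $\range(\pi)$ is a pre-C*-algebra, and \ref{Cstarcontr} applies with codomain $\range(\pi)$. With that substitution your argument is correct and differs from the paper's only in packaging.
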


\begin{definition}[faithful representation]%
\index{concepts}{representation!faithful}%
\index{concepts}{faithful representation}%
A representation of a \st-algebra on a pre-Hilbert
space is called \underline{faithful} if it is injective.
\end{definition}

\begin{theorem}\label{faithisometric}
A faithful representation of a C*-algebra on a \linebreak Hilbert space is isometric.
\end{theorem}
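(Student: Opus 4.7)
The plan is to deduce this directly from two results already established in the excerpt, namely Theorem \ref{HellToepl} and Theorem \ref{C*injisometric}. Since the representation space $H$ is a complete Hilbert space, Theorem \ref{HellToepl} applies and tells us that every operator $\pi(a)$ is bounded, so we may regard $\pi$ as a \st-algebra homomorphism
\[ \pi : A \to \blop(H). \]
Since $H$ is a Hilbert space, $\blop(H)$ is a C*-algebra, and in particular a pre-C*-algebra.

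Now $\pi$ is by assumption faithful, meaning injective. Thus $\pi$ is an injective \st-algebra homomorphism from the C*-algebra $A$ into the pre-C*-algebra $\blop(H)$. Theorem \ref{C*injisometric} applies verbatim in this situation and yields that $\pi$ is isometric, which is the desired conclusion.

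There is no genuine obstacle here; the content has already been done, once in \ref{HellToepl} (to promote $\pi$ to a map into a C*-algebra) and once in \ref{C*injisometric} (automatic isometry of injective \st-homomorphisms between C*-algebras, proved via the lemmas \ref{injpos} and \ref{posposrs} and the identity $\|a\| = \rsigma(a)$). The only thing worth emphasising in the write-up is why $\blop(H)$ qualifies as a pre-C*-algebra: this was established in the corollary immediately following \ref{condC*}, where the C*-property on $\blop(H)$ was verified from the Cauchy--Schwarz inequality.
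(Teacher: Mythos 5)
Your proof is correct and follows exactly the route the paper takes: the paper's own proof is the single citation of \ref{C*injisometric}, with the boundedness of the operators (via \ref{HellToepl}) and the fact that $\blop(H)$ is a C*-algebra taken as understood. You have merely spelled out these implicit steps.
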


\begin{proof} \ref{C*injisometric}. \end{proof}

\begin{theorem}\label{rangeC*}
If $\pi$ is a representation of a C*-algebra $A$ on a Hilbert space, then
$A \,/ \ker \pi$ and $\range(\pi)$ are C*-algebras. The representation $\pi$
factors to a C*-algebra isomorphism from $A \,/ \ker \pi$ onto $\range(\pi)$.
\end{theorem}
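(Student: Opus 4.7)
The plan is to reduce the statement to the factorisation theorem for \st-algebra homomorphisms of C*-algebras, namely \ref{homfact}. That theorem says: if $\pi$ is a \st-algebra homomorphism from a C*-algebra $A$ to a pre-C*-algebra, then $A\mspace{1mu}/\ker\pi$ and $\pi(A)$ are C*-algebras, and $\pi$ factors to an isomorphism of C*-algebras from $A\mspace{1mu}/\ker\pi$ onto $\pi(A)$. So everything boils down to recognising $\pi$ as such a homomorphism into a pre-C*-algebra.

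First I would observe that since the representation space $H$ is a Hilbert space (not merely a pre-Hilbert space), theorem \ref{HellToepl} applies: each operator $\pi(a)$ is bounded, and $\pi$ may be regarded as a \st-algebra homomorphism $A\to\blop(H)$. Since $H$ is a Hilbert space, $\blop(H)$ is a C*-algebra (in particular a pre-C*-algebra), cf.\ the corollary following \ref{condC*}. Thus $\pi$ is a \st-algebra homomorphism from a C*-algebra to a pre-C*-algebra.

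Now I would simply invoke \ref{homfact} to conclude that $A\mspace{1mu}/\ker\pi$ is a C*-algebra, that $\range(\pi)=\pi(A)$ is a C*-algebra, and that $\pi$ factors through $A\mspace{1mu}/\ker\pi$ as an isomorphism of C*-algebras onto $\range(\pi)$. No obstacle arises here; the only ``work'' is the reinterpretation of a representation on a Hilbert space as a \st-algebra homomorphism into $\blop(H)$ via \ref{HellToepl}, after which \ref{homfact} delivers the statement verbatim. (Continuity of $\pi$, which would otherwise have to be checked to obtain closedness of $\ker\pi$, is also automatic by \ref{C*repcontract}, but this is already built into \ref{homfact} via \ref{Cstarcontr}.)
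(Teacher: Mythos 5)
Your proposal is correct and is exactly the paper's argument: the paper's proof consists of the single citation \ref{homfact}, with the reinterpretation of $\pi$ as a \st-algebra homomorphism into the C*-algebra $\blop(H)$ via \ref{HellToepl} left implicit. Your spelling out of that reinterpretation, and the remark on automatic continuity, merely makes the intended reduction explicit.
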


\begin{proof} \ref{homfact}. \end{proof}

\medskip
For further continuity results, see \ref{bdedstatesdef} - \ref{kerpsi}.

\begin{remark}%
The condition of isometry of the involution can sometimes be weakened
to requiring that $| \,a^*a \,| \leq {| \,a \,|}^{\,2}$ holds for all $a$.\linebreak%
\pagebreak%
\end{remark}

\clearpage


\section{Direct Sums}%
\label{DirectSums}

\begin{definition}[direct sum of Hilbert spaces]%
\index{concepts}{direct sum!of Hilbert spaces}%
\index{symbols}{p01@$\oplus_{i \in I} x_i$}%
\index{symbols}{p02@$\oplus_{i \in I} H_i$}\label{dirHil}%
Let $\{\,H_i\,\}_{\,i \in I}$ be a family of Hilbert spaces. The direct sum
$\oplus _{i \in I} \,H_i$ consists of all families $\{\,x_i\,\}_{\,i \in I}$ such
that $x_i \in H_i$ $(i \in I)$ and $\{ \,\| \,x_i \,\| \,\}_{\,i \in I} \in {\ell\,}^2(I)$.
The element $\{\,x_i\,\} _{\,i \in I}$ then is denoted by $\oplus _{i \in I} \,x_i$.
The direct sum $\oplus _{i \in I} \,H_i$ is a Hilbert space with respect to
the inner product
\[ \langle \oplus _{i \in I} \,x_i,\oplus _{i \in I} \,y_i \rangle
:= \sum _{i \in I} \langle x_i, y_i \rangle
\qquad ( \,\oplus _{i \in I} \,x_i, \oplus _{i \in I} \,y_i \in \oplus _{i \in I} \,H_i \,). \]
\end{definition}

\begin{proof}
Let $\oplus _{i \in I} \,x_i$, $\oplus _{i \in I} \,y_i
\in \oplus _{i \in I} \,H_i$. We then have
\begin{align*}
{\biggl( \,\sum _{i \in I} {\| \,x_i+y_i \,\| \,}^2 \biggr)}^{1/2}
\leq & \ {\biggl( \,\sum _{i \in I} {\Bigl( \,\| \,x_i \,\|
+ \|\, y_i \,\| \,\Bigr) \,}^2 \,\biggr)}^{1/2} \\
\leq & \ {\biggl( \,\sum _{i \in I}{\| \,x_i \,\| \,}^2 \biggr)}^{1/2}
+ {\biggl( \,\sum _{i \in I} {\| \,y_i \,\| \,}^2 \biggr)}^{1/2}
\end{align*}
so that $\oplus _{i \in I} \,H_i$ is a vector space. We also have
\[ \sum _{i \in I} | \,\langle x_i ,y_i \rangle \,|
\leq \ \sum _{i \in I}\|\, x_i \,\| \cdot \| \,y_i \,\|
\leq \ {\biggl( \,\sum _{i \in I} {\| \,x_i \,\| \,}^2 \biggr)}^{1/2}
     {\biggl( \,\sum _{i \in I} {\| \,y_i \,\| \,}^2 \biggr)}^{1/2}, \]
so that $\oplus _{i \in I} \,H_i$ is a pre-Hilbert space.
Let $( \,\oplus _{i \in I} \,x_{i,n} \,)_{\,n}$ be a Cauchy sequence in
$\oplus _{i \in I} \,H_i$. For given $\varepsilon > 0$, there then exists
$n\0(\varepsilon)$ such that
\[ {\biggl( \,\sum _{i \in I} {\| \,x_{i,m} - x_{i,n} \,\| \,}^2 \biggr)}^{1/2}
\leq \varepsilon\quad \text{for all} \quad m,n \geq n\0(\varepsilon). \]
From this we conclude that for each $i \in I$, the sequence
$( \,x_{i,n} \,)_{\,n}$ is a Cauchy sequence in
$H_i$, and therefore converges to an element $x_i$ of $H_i$.
With $F$ a finite subset of $I$, we have
\[ {\biggl( \,\sum _{i \in F} {\| \,x_{i,m} - x_{i,n} \,\| \,}^2 \biggr)}^{1/2}
\leq \varepsilon\quad \text{for all} \quad m,n \geq n\0(\varepsilon), \]
so that by letting $m \to \infty$, we get
\[ {\biggl( \,\sum _{i \in F} {\| \,x_i - x_{i,n} \,\| \,}^2 \biggr)}^{1/2}
\leq \varepsilon\quad \text{for all} \quad n \geq n\0(\varepsilon), \]
and since $F$ is an arbitrary finite subset of $I$, it follows that
\[ {\biggl( \,\sum _{i \in I} {\| \,x_i - x_{i,n} \,\| \,}^2 \biggr)}^{1/2}
\leq \varepsilon\quad \text{for all} \quad n \geq n\0(\varepsilon).
\pagebreak \]
This not only shows that $\oplus _{i \in I} \,(x_i -x_{i,n})$, and hence
$\oplus _{i \in I} \,x_i$, belong to $\oplus _{i \in I} \,H_i$, but also that
$( \,\oplus _{i \in I} \,x_{i,n} \,)_{\,n}$ converges in
$\oplus _{i \in I} \,H_i$ to $\oplus _{i \in I} \,x_i$, so that
$\oplus _{i \in I} \,H_i$ is complete.
\end{proof}

\medskip
If the family $\{ \,H_i \,\}_{\,i \in I}$ is a family of pairwise
orthogonal subspaces of a Hilbert space $H$, we imbed
the direct sum $\oplus _{i \in I} H_i$ in $H$.

\begin{definition}[direct sum of operators]\label{dirop}%
\index{concepts}{direct sum!of operators}%
\index{symbols}{p03@$\oplus_{i \in I} b_i$}%
Consider a  family of bounded linear operators $\{\,b_i\,\}_{\,i \in I}$
on Hilbert spaces $H_i$ $(i \in I)$ respectively. Assume that
$\sup _{i \in I} \,\|\,b_i\,\| < \infty$. One then defines
\[ ( \oplus _{i \in I} \,b_i ) ( \oplus _{i \in I} \,x_i ) := \oplus _{i \in I} \,(b_i x_i)
\qquad ( \,\oplus _{i \in I} \,x_i \in \oplus _{i \in I} \,H_i \,). \]
Then $\oplus _{i \in I} \,b_i$ is a bounded linear operator on
$\oplus _{i \in I} H_i$ with norm
\[ \| \oplus _{i \in I} b_i \,\| = \sup _{i \in I} \,\| \,b_i \,\|.  \]
\end{definition}

\begin{proof}
With $c := \sup _{i \in I} \,\| \,b_i \,\|$, we have
\begin{align*}
{\| \,( \oplus _{i \in I} \,b_i ) ( \oplus _{i \in I} \,x_i ) \,\| \,}^2
\,& = \  \sum _{i \in I} \,{\| \,b_i x_i \,\| \,}^2 \\
  & \leq \ \sum _{i \in I} \,{\| \,b_i \,\| \,}^2 \cdot {\| \,x_i \,\| \,}^2 \\
  & \leq \ {c \,}^2 \cdot \,\sum _{i \in I} \,{\| \,x_i \,\| \,}^2
= {c \,}^2 \cdot {\,\| \oplus _{i \in I} x_i \,\| \,}^2,
\end{align*}
so $\oplus _{i \in I} b_i$ is a bounded linear operator on
$\oplus _{i \in I} H_i$ with $\| \oplus _{i \in I} b_i \,\| \leq c$.
We obviously have $\| \oplus _{i \in I} b_i \,\| \geq c$.
Indeed, clearly $\| \oplus _{i \in I} b_i  \,\| \geq \| \,b_i \,\|$ for all $i \in I$.
This says that $\| \oplus _{i \in I} b_i \,\|$ is an upper bound of the set
$S := \{ \,\| \,b_i \,\| : i \in I \,\}$, so greater than or equal to the least upper
bound of the set $S$, which is $\sup _{i \in I} \| \,b_i \,\| = c$.
\end{proof}

\begin{definition}[direct sum of representations]%
\label{directsumsigma}\index{concepts}{direct sum!of representations}%
\index{concepts}{representation!direct sum}%
\index{symbols}{p04@$\oplus_{i \in I} \pi_i$}%
Consider a family $\{ \,\pi_i \,\} _{\,i \in I}$ of representations
of a \st-algebra $A$ on Hilbert spaces $H_i$ $(i \in I)$ respectively. If
$\{ \,\| \,\pi _i (a) \,\| \,\} _{\,i \in I}$ is bounded for each $a \in A$, we may
define the direct sum representation $\oplus _{i \in I} \,\pi _i$ given by
\begin{alignat*}{2}
\oplus _{i \in I} \,\pi _i : \ & A \ &\to \ & \blop (\oplus _{i \in I} \,H_i) \\
 \ & a \ &\mapsto \ & \oplus _{i \in I} \pi _i (a).
\end{alignat*}
Hence the direct sum of a family of $\sigma$-contractive
representations of a normed \st-algebra is well-defined and
$\sigma$-contractive. \pagebreak
\end{definition}

\clearpage


\section{Cyclic and Non-Degenerate Representations}%
\label{CyclicNonDeg}

\medskip
In this paragraph, let $\pi$ be a representation of a \st-algebra $A$
on a Hilbert space $(H, \langle \cdot , \cdot \rangle)$.

\begin{definition}[invariant subspaces]%
\index{concepts}{invariant subspace}\index{concepts}{subspace!invariant}%
Consider a bounded linear operator $b$ on $H$. A subspace $M$ of $H$
is called \underline{invariant} under $b$, if $bx \in M$ for all $x \in M$.

A subspace $M$ of $H$ is called \underline{invariant} under
$\pi$, if it is invariant under all the operators $\pi(a)$ $(a \in A)$.
\end{definition}

\begin{proposition}%
If $M$ is a subspace of $H$ invariant under $\pi$, so is $M^{\perp}$.
\end{proposition}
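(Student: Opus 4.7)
The plan is to use only the defining adjoint property of a representation, namely $\langle \pi(a)x, y\rangle = \langle x, \pi(a^*)y\rangle$ for all $x,y \in H$ and $a \in A$, together with the characterisation of $M^\perp$ by orthogonality. Fix $y \in M^\perp$ and $a \in A$; I want to show $\pi(a)y \in M^\perp$, i.e.\ that $\langle \pi(a)y, x\rangle = 0$ for every $x \in M$.

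The key step is to push $\pi(a)$ across the inner product via the adjoint relation: for arbitrary $x \in M$,
\[ \langle \pi(a)y, x\rangle = \langle y, \pi(a^*)x\rangle. \]
Here I rely on the fact that $\pi(a) \in \blop(H)$ (automatic by \ref{HellToepl}) so that the inner product expressions are well-defined in the usual sense. Now invariance of $M$ under $\pi$ is applied to the element $a^* \in A$: since $x \in M$, one has $\pi(a^*)x \in M$. But $y \in M^\perp$, so $\langle y, \pi(a^*)x\rangle = 0$, and hence $\langle \pi(a)y, x\rangle = 0$. As $x \in M$ was arbitrary, this gives $\pi(a)y \in M^\perp$.

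There is no real obstacle; the only point worth noting is that the argument uses invariance of $M$ precisely at $a^*$ rather than at $a$, which is why the statement requires $A$ to be a \st-algebra (so that $a^* \in A$) and why the proof rests essentially on the fact that $\pi$ is a \st-representation, not merely an algebra homomorphism. It is also implicit that $M^\perp$ is itself a (closed) vector subspace of $H$, which is a standard fact about orthogonal complements and requires no additional hypothesis on $M$ beyond being a subset of $H$.
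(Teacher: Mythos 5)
Your proof is correct and is essentially identical to the paper's: both fix $y \in M^{\perp}$, move $\pi(a)$ across the inner product via the adjoint relation $\langle \pi(a)y, x\rangle = \langle y, \pi(a^*)x\rangle$, and then use invariance of $M$ at $a^*$. The additional remarks about where the \st-structure is used are accurate but not needed.
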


\begin{proof}
Let $M$ be a subspace of $H$ invariant under $\pi$. Let $y \in M^{\perp}$,
i.e.\ $\langle y, x \rangle = 0$ for all $x \in M$. For $a \in A$ we then have
\[ \langle \pi (a)y, x \rangle = \langle y,\pi (a^*)x \rangle = 0 \]
for all $x \in M$, so that also $\pi (a)y \in M^{\perp}$.
\end{proof}

\medskip
This is why invariant subspaces are also called \underline{reducing}
subspaces.\index{concepts}{reducing}\index{concepts}{subspace!reducing}

\begin{definition}[subrepresentations]\label{subrep}%
\index{concepts}{subrepresentation}%
\index{concepts}{representation!subrepresentation}%
\index{symbols}{p08@$\pi _M$}%
If $M$ is a closed subspace of $H$ invariant under $\pi$, let $\pi _M$
denote the representation of $A$ on $M$ defined by
\begin{alignat*}{2}
\pi _M : \ & A & \ \to & \  \blop(M) \\
           & a & \ \mapsto & \ \pi (a) |_M.
\end{alignat*}
The representation $\pi _M$ is called a
\underline{subrepresentation} of $\pi$.
\end{definition}

\begin{proposition}\label{comminvar}%
Let $M$ be a closed subspace of $H$, and let $p$ be the projection
on $M$. If $p$ commutes with a bounded linear operator $b$ on $H$,
then $M$ is invariant under $b$.
\end{proposition}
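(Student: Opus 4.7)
The plan is to give a short, direct argument using the defining property of the projection $p$ onto $M$, namely that $px = x$ for every $x \in M$ and that $\range(p) = M$.

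First I would take an arbitrary $x \in M$. Since $p$ is the projection on $M$, we have $px = x$. Then I would apply $b$ to both sides and use the hypothesis $bp = pb$ to rewrite
\[ bx = b(px) = (bp)x = (pb)x = p(bx). \]
The final equation $bx = p(bx)$ exhibits $bx$ as an element of the range of $p$, which is $M$. Hence $bx \in M$, and since $x \in M$ was arbitrary, $M$ is invariant under $b$.

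There is no real obstacle here: the entire content is the two-line manipulation above, which only relies on the characterisation of $M$ as the range of $p$ together with the identity $p^2 = p$ (implicit in saying ``$p$ is the projection on $M$''). I would keep the proof to essentially these two displayed equalities without further embellishment.
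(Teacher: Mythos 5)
Your proof is correct and is essentially identical to the paper's: both take $x \in M$, write $bx = b(px) = p(bx)$ using the commutation hypothesis, and conclude $bx \in \range(p) = M$. Nothing further is needed.
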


\begin{proof}
Assume that $p$ commutes with a bounded linear operator $b$
on $H$. For $x \in M$, we then have $b x = b p x = p b x \in M$,
so that $M$ is invariant under $b$. \pagebreak
\end{proof}

\begin{definition}[commutants]%
\index{concepts}{commutant}%
\index{symbols}{p1@$\pi '$}\label{repcommutant}%
One denotes by
\[ \pi' := \{ \,c \in \blop(H) : c \,\pi (a) = \pi (a)c \text{ for all } a \in A \,\} \]
the \underline{commutant} of $\pi$. Similarly, let $S \subset \blop(H)$.
The set of all operators in $\blop(H)$ which commute with every operator in
$S$ is called the commutant of $S$ and is denoted by $S'$.
(Cf.\ \ref{questimbed}.)
\end{definition}

\begin{theorem}\label{invarcommutant}%
\index{concepts}{commutant}%
\index{concepts}{subspace!invariant!commutant}%
Let $M$ be a closed subspace of $H$ and let $p$ be the projection on $M$.
Then $M$ is invariant under $\pi$ if and only if $p \in \pi'$.
\end{theorem}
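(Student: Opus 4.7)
The plan is to prove the two implications separately, with the backward direction being an immediate consequence of an already established result, and the forward direction hinging on the fact that for \st-representations, invariance is automatically symmetric with respect to orthogonal complementation.

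For the backward direction, I would simply observe that if $p \in \pi'$, then $p$ commutes with $\pi(a)$ for every $a \in A$, so by proposition \ref{comminvar}, the subspace $M$ is invariant under each $\pi(a)$, which by definition means that $M$ is invariant under $\pi$. This direction is a direct quotation of the earlier result.

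For the forward direction, assume $M$ is invariant under $\pi$. The key input is the proposition stated just before definition \ref{subrep}, which asserts that $M^{\perp}$ is then also invariant under $\pi$. With this in hand, the strategy is to exploit the orthogonal decomposition $H = M \oplus M^{\perp}$ to show that $p$ commutes with every $\pi(a)$. Concretely, for any $x \in H$, write $x = p\,x + (\mathds{1} - p) \,x$ with $p\,x \in M$ and $(\mathds{1} - p) \,x \in M^{\perp}$. For $a \in A$, invariance of $M$ gives $\pi(a) \,p\,x \in M$, while invariance of $M^{\perp}$ gives $\pi(a) \,(\mathds{1} - p)\,x \in M^{\perp}$. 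Applying $p$ to $\pi(a) \,x = \pi(a) \,p\,x + \pi(a) \,(\mathds{1} - p)\,x$ therefore yields
\[ p \,\pi(a) \,x = \pi(a) \,p\,x, \]
since $p$ acts as the identity on the first summand and annihilates the second. As $x \in H$ was arbitrary, this gives $p \,\pi(a) = \pi(a) \,p$ for all $a \in A$, that is, $p \in \pi'$.

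There is no real obstacle here; the proof is essentially routine once the symmetry $M$ invariant $\Leftrightarrow$ $M^{\perp}$ invariant (valid because $\pi$ preserves adjoints) is invoked. The only point deserving attention is to make explicit that invariance of \emph{both} $M$ and $M^{\perp}$ is needed to conclude commutation with $p$; invariance of $M$ alone would only yield $p \,\pi(a) \,p = \pi(a) \,p$, which is weaker than $p \,\pi(a) = \pi(a) \,p$.
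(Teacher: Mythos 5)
Your proof is correct. The backward direction is the same as the paper's (the paper simply re-proves proposition \ref{comminvar} inline rather than citing it). For the forward direction you take a genuinely different, though closely related, route: you invoke the separately established fact that $M^{\perp}$ is invariant whenever $M$ is, and then read off $p\,\pi(a) = \pi(a)\,p$ from the decomposition $H = M \oplus M^{\perp}$. The paper instead stays purely algebraic: from invariance of $M$ it derives $p\,\pi(a)\,p = \pi(a)\,p$, then applies the involution in $\blop(H)$ and replaces $a$ by $a^*$ to obtain $p\,\pi(a)\,p = p\,\pi(a)$, and combines the two identities. Both arguments ultimately rest on the same input, namely $\pi(a)^* = \pi(a^*)$ — in your version it is hidden inside the proposition on invariance of $M^{\perp}$, in the paper's version it appears explicitly in the adjoint step. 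Your version is perhaps more geometric and transparent; the paper's avoids any appeal to the orthogonal-complement proposition and is self-contained in two lines. Your closing remark, that invariance of $M$ alone only gives the weaker relation $p\,\pi(a)\,p = \pi(a)\,p$, is exactly the paper's intermediate identity, so the two proofs diverge precisely at the point where that relation must be upgraded.
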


\begin{proof} Assume that $p \in \pi'$. For $x \in M$ and
$a \in A$, we then have $\pi (a)x = \pi (a)px = p \pi (a)x \in M$, so
that $M$ is invariant under $\pi$. Conversely, assume that $M$
is invariant under $\pi$. For $x \in H$ and $a \in A$, we get
$\pi (a)px \in M$, whence $p \pi (a)px = \pi (a)px$, so that
$p \pi (a)p = \pi (a)p$ for all $a \in A$. By applying the involution in
$\blop(H)$, and replacing $a$ by $a^*$, we obtain that also
$p \pi (a)p = p \pi (a)$, whence $p \pi (a) = \pi (a)p$, i.e.\ $p \in \pi'$.
\end{proof}

\begin{definition}[non-degenerate representations]%
\label{nondegenerate}\index{concepts}{non-degenerate}%
\index{concepts}{representation!non-degenerate}%
The representation $\pi$ is called \underline{non-degenerate}
if the closed invariant subspace
\[ N:=\{ \,x \in H : \pi (a)x = 0 \text{ for all } a \in A \,\} \]
is $\{ 0 \}$. Consider the closed invariant subspace
\[ R := \overline{\mathrm{span}} \,\{ \,\pi (a) x \in H: a \in A,\ x \in H \,\}. \]
We have $R^{\perp} = N$, so that $\pi$ is non-degenerate
if and only if $R$ is all of $H$. Also $H = R \oplus N$, or
$\pi = \pi _R \oplus \pi _N$. This says that $\pi$ is the direct sum
of a non-degenerate representation and a null representation. In
particular, by passing to the subrepresentation $\pi _R$, we can
always assume that $\pi$ is non-degenerate.
\end{definition}

\begin{proof}
It suffices to prove that $R^{\perp} = N$. Let $a \in A$ and $x \in H$.
For $y \in N$, we find
\[ \langle \pi (a) x, y \rangle = \langle x, \pi (a^*) y \rangle = 0, \]
so $N \subset R^{\perp}$. On the other hand, for $y \in R^{\perp}$,
we find
\[ \langle \pi (a) y, x \rangle = \langle y, \pi (a^*) x \rangle = 0, \]
which shows that $\pi (a) y \perp H$, or $y \in N$, so
$R^{\perp} \subset N$.
\end{proof}

\medskip
If $\pi$ is non-degenerate, we will also say that $A$
\underline{acts non-degenerately} on $H$ via $\pi$.
\pagebreak

\begin{observation}\label{unitnondeg}%
A representation of a unital \st-algebra on a Hilbert space $\neq \{ \,0 \,\}$ is
non-degenerate if and only if it is unital \ref{unitalhom}.%
\end{observation}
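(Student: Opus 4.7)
The plan is to observe that $\pi(e)$ is a projection and to identify its range and kernel in terms of the canonical decomposition $H = R \oplus N$ from \ref{nondegenerate}. Throughout, let $A$ be unital with unit $e$, and write $\mathds{1}$ for the identity on $H$.

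First I would record two facts about $p := \pi(e)$. Since a unit in a \st-algebra is automatically Hermitian, $p^* = \pi(e^*) = \pi(e) = p$; and since $e^2 = e$, we have $p^2 = \pi(e)\pi(e) = \pi(e^2) = p$. Hence $p$ is a projection on $H$. Next I would identify $p$ with the projection onto $R$, where
\[ R = \overline{\mathrm{span}} \,\{ \,\pi(a)x : a \in A,\ x \in H \,\} \]
as in \ref{nondegenerate}. For the inclusion $\mathrm{ran}(p) \subset R$, note that $p x = \pi(e) x \in \pi(A)H \subset R$ for every $x \in H$. For $R \subset \mathrm{ran}(p)$, observe that $\pi(e)\pi(a)x = \pi(ea)x = \pi(a)x$ for all $a \in A$, $x \in H$, so $p$ fixes every vector of the form $\pi(a)x$; since $p$ is bounded and $\mathrm{ran}(p)$ is closed (as $p$ is a projection), $p$ fixes all of $R$, giving $R \subset \mathrm{ran}(p)$.

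From this the equivalence is immediate. If $\pi$ is non-degenerate, then $R = H$ by \ref{nondegenerate}, so $p$ is the projection onto $H$, i.e.\ $p = \mathds{1}$, which is the unital condition $\pi(e) = \mathds{1}$. Conversely, if $\pi$ is unital, then $p = \mathds{1}$, so for any $x \in N$ one has $x = \mathds{1}x = \pi(e)x = 0$, giving $N = \{0\}$, i.e.\ $\pi$ is non-degenerate.

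There is no real obstacle here; the content is only that the unit maps to the projection onto the ``essential part'' $R$ of the representation space. The cleanest phrasing will be to establish $\mathrm{ran}(\pi(e)) = R$ first, from which both directions of the equivalence follow at once via the characterisation of non-degeneracy as $R = H$ recalled in \ref{nondegenerate}.
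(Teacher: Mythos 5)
Your argument is correct and complete: $\pi(e)$ is a Hermitian idempotent, hence a (bounded, by \ref{HellToepl}) projection, and your identification of its range with the essential subspace $R$ of \ref{nondegenerate} yields both implications at once. The paper states this observation without proof, so there is nothing to compare against; your write-up supplies exactly the intended routine verification, and the only cosmetic remark is that the converse direction could equally be phrased via $R \subset \mathrm{ran}(\pi(e)) = H$ rather than via $N$, but both are immediate from $H = R \oplus N$.
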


\begin{observation}\label{subnondeg}%
If $\pi$ is non-degenerate, so is every subrepresentation of $\pi$.
\end{observation}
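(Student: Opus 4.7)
The plan is to argue directly from the definition of non-degeneracy given in \ref{nondegenerate}. Let $M$ be a closed invariant subspace of $H$, and consider the subrepresentation $\pi_M$ as in \ref{subrep}. Denote by
\[ N := \{\,x \in H : \pi(a)x = 0 \text{ for all } a \in A\,\} \]
the null subspace for $\pi$, and by
\[ N_M := \{\,x \in M : \pi_M(a)x = 0 \text{ for all } a \in A\,\} \]
the null subspace for $\pi_M$. Non-degeneracy of $\pi$ is the hypothesis $N = \{0\}$; what has to be shown is that $N_M = \{0\}$.

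First I would observe that $\pi_M(a) = \pi(a)|_M$ by definition, so the condition $\pi_M(a)x = 0$ for all $a \in A$ is, for $x \in M$, exactly the condition $\pi(a)x = 0$ for all $a \in A$. Therefore
\[ N_M = M \cap N. \]
Since $N = \{0\}$ by assumption, this gives $N_M \subset N = \{0\}$, hence $N_M = \{0\}$, which is precisely what it means for $\pi_M$ to be non-degenerate.

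There is no real obstacle here; the statement is essentially a tautology once the definitions are unwound. The only thing to keep in mind is to invoke the correct characterisation of non-degeneracy (vanishing of $N$), rather than the alternative one using the closed span $R$ of \ref{nondegenerate} — the latter characterisation would give a less direct argument, since the span $R_M$ associated with $\pi_M$ need not obviously equal $M \cap R$ without passing through orthogonal complements inside $M$.
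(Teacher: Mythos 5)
Your proof is correct, and since the paper states this observation without any proof (treating it as immediate from the definitions), your argument via $N_M = M \cap N = \{0\}$ is exactly the intended one. Nothing further is needed.
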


\begin{definition}[cyclic representations]%
\index{concepts}{representation!cyclic}\index{concepts}{cyclic!vector}%
\index{concepts}{cyclic!representation}\index{concepts}{vector!cyclic}%
One says that $\pi$ is \underline{cyclic} if $H$
contains a non-zero vector $x$ such that the set
\[ \{ \,\pi (a)x \in H : a \in A \,\} \]
is dense in $H$. The vector $x$ then is called a
\underline{cyclic vector}. In particular, a cyclic representation
is non-degenerate.
\end{definition}

Of great importance is the following observation.

\begin{lemma}[cyclic subspaces]\label{cyclicsubspace}%
\index{concepts}{cyclic!subspace}\index{concepts}{subspace!cyclic}%
Assume that $\pi$ is non-degenerate, and that $c$ is a
non-zero vector in $H$. Consider the closed invariant subspace
\[ M := \overline{\pi (A)c}. \]
Then $c$ belongs to $M$. In particular, $\pi _M$ is cyclic with cyclic
vector $c$. One says that $M$ is a \underline{cyclic subspace} of $H$.
\end{lemma}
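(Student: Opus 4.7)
The plan is to show that $c \in M$ by decomposing $c$ orthogonally with respect to the closed subspace $M$ and using non-degeneracy to kill the component in $M^{\perp}$. Since $H$ is a Hilbert space and $M$ is closed, I would first write $c = c_1 + c_2$ uniquely with $c_1 \in M$ and $c_2 \in M^{\perp}$. The goal then becomes to prove $c_2 = 0$.

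The key observation is that both $M$ and $M^{\perp}$ are invariant under $\pi$. Invariance of $M$ is immediate from its definition as the closure of $\pi(A) c$ (multiplication by $\pi(a)$ sends $\pi(b) c$ to $\pi(ab) c \in \pi(A) c$, and then one passes to the closure using boundedness of $\pi(a)$, which holds by \ref{HellToepl}). Invariance of $M^{\perp}$ is the proposition stated earlier in the excerpt. With invariance in hand, for any $a \in A$ we have $\pi(a) c_1 \in M$ and $\pi(a) c_2 \in M^{\perp}$, while $\pi(a) c \in \pi(A) c \subset M$. By uniqueness of the orthogonal decomposition of $\pi(a) c$ in $M \oplus M^{\perp}$, the $M^{\perp}$ component must vanish, that is, $\pi(a) c_2 = 0$ for every $a \in A$.

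This places $c_2$ in the subspace $N = \{ x \in H : \pi(a) x = 0 \text{ for all } a \in A \}$ from \ref{nondegenerate}. Non-degeneracy of $\pi$ forces $N = \{0\}$, hence $c_2 = 0$ and $c = c_1 \in M$. The final statement that $\pi_M$ is cyclic with cyclic vector $c$ is then immediate: by construction $\overline{\pi(A) c} = M$, so $c$ is cyclic for $\pi_M$, and $c \neq 0$ by hypothesis.

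There is no real obstacle here; the only subtlety to keep in mind is to invoke the correct invariance result for $M^{\perp}$ and to justify the use of orthogonal projection, which requires that $M$ be closed (built into the definition) and that $H$ be a Hilbert space (given).
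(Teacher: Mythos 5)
Your proof is correct and is essentially the paper's argument: the paper writes the same decomposition as $c = pc + (\mathds{1}-p)c$ with $p$ the projection onto $M$, uses $p \in \pi'$ (from \ref{invarcommutant}) to get $\pi(a)(\mathds{1}-p)c = (\mathds{1}-p)\pi(a)c = 0$, and concludes $(\mathds{1}-p)c = 0$ by non-degeneracy. Your route via invariance of $M^{\perp}$ and uniqueness of the orthogonal decomposition is just a rephrasing of that same step.
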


\begin{proof}
Let $p$ denote the projection upon $M$. We then have $p \in \pi'$
by theorem \ref{invarcommutant}. Thus, for all $a \in A$, we get
\[ \pi(a)(\mathds{1}-p)c = (\mathds{1}-p) \pi (a)c = 0, \]
which implies that $(\mathds{1}-p)c = 0$ as $\pi$ is non-degenerate.
Therefore $c = pc \in M$.
\end{proof}

\begin{corollary}[essential representation]%
If $\pi$ is non-degenerate then $\pi$ is \underline{essential},
in the sense that the subset
\[ \{ \,\pi (a) x \in H: a \in A,\ x \in H \,\} \]
is dense in $H$.
\end{corollary}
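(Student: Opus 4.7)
The plan is to invoke the cyclic subspace lemma \ref{cyclicsubspace} directly, which already does almost all the work. The key observation is that that lemma is phrased in a way which is actually stronger than a bare density statement about linear spans: it asserts that the vector $c$ itself lies in the closure of $\pi(A)c$, so $c$ is a limit of elements that already belong to the set $\{\pi(a)x : a \in A,\ x \in H\}$ without any linear combinations being necessary.

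Concretely, I would argue as follows. Let $c \in H$. If $c = 0$, then $c = \pi(a)\cdot 0$ for any $a \in A$, so $c$ trivially belongs to the set in question. If $c \neq 0$, then since $\pi$ is non-degenerate, the hypotheses of lemma \ref{cyclicsubspace} are satisfied with the non-zero vector $c$. That lemma then gives $c \in \overline{\pi(A)c}$, so there is a sequence $(a_n)$ in $A$ with $\pi(a_n)c \to c$. Each $\pi(a_n)c$ lies in $\{\pi(a)x : a \in A,\ x \in H\}$, hence $c$ lies in the closure of this set. Since $c \in H$ was arbitrary, the set is dense in $H$.

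There is no real obstacle here: the whole content of the corollary is already packed into the cyclic subspace lemma, and the only thing one might overlook is handling the case $c = 0$ separately (since the lemma was stated for non-zero $c$). It is worth emphasising in the writeup that the statement of the corollary is genuinely stronger than the tautological density of $\mathrm{span}\{\pi(a)x\}$ built into the definition \ref{nondegenerate}, precisely because one is forbidden from taking linear combinations; this is exactly why lemma \ref{cyclicsubspace} is needed rather than just the definition.
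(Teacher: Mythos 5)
Your argument is correct and is exactly the route the paper intends: the corollary is stated immediately after lemma \ref{cyclicsubspace} with no separate proof precisely because it follows by applying that lemma to each non-zero vector $c$, as you do. Your explicit handling of the case $c=0$ and your remark that the lemma gives $c\in\overline{\pi(A)c}$ without needing linear combinations are both accurate refinements of what the paper leaves implicit.
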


\begin{observation}\label{dirsumnondeg}%
It is easily seen that a direct sum of non-degenerate representations
is again non-degenerate.
\end{observation}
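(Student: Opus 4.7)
The plan is to unwind the definitions of the direct sum representation \ref{directsumsigma} and of non-degeneracy \ref{nondegenerate}, reducing the question to the component representations $\pi_i$. Put $\pi := \oplus_{i \in I} \pi_i$ and $H := \oplus_{i \in I} H_i$, and let
\[ N := \{\,\oplus_{i \in I} x_i \in H : \pi(a)(\oplus_{i \in I} x_i) = 0 \text{ for all } a \in A\,\} \]
be the null subspace of $\pi$. The goal is to show $N = \{0\}$.

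First I would fix an arbitrary $\oplus_{i \in I} x_i \in N$ and compute, using the definition of $\oplus_{i \in I} \pi_i(a)$ as the direct sum operator from \ref{dirop}, that
\[ 0 = \pi(a)(\oplus_{i \in I} x_i) = \oplus_{i \in I} \pi_i(a) x_i \quad \text{for all } a \in A. \]
Since a direct sum of vectors in $\oplus_{i \in I} H_i$ vanishes if and only if each component vanishes (which is immediate from the definition of the inner product in \ref{dirHil}, or simply from the fact that the $H_i$ are pairwise orthogonal subspaces of $H$), this yields $\pi_i(a) x_i = 0$ for every $i \in I$ and every $a \in A$.

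Fixing $i \in I$, this says exactly that $x_i$ lies in the null subspace of $\pi_i$. By the non-degeneracy hypothesis on $\pi_i$, that null subspace is $\{0\}$, so $x_i = 0$. As $i$ was arbitrary, $\oplus_{i \in I} x_i = 0$, hence $N = \{0\}$, which is the non-degeneracy of $\pi$. No real obstacle is expected here; the only point worth verifying carefully is that the componentwise action of $\oplus_{i \in I} \pi_i(a)$ is literally the one given in \ref{dirop}, so that vanishing of the direct sum forces vanishing of each component.
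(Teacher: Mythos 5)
Your argument is correct and is exactly the definitional unwinding the paper has in mind when it calls this "easily seen" (the paper supplies no proof): the direct sum operator acts componentwise, so the null subspace of $\oplus_{i \in I}\pi_i$ is the direct sum of the null subspaces of the $\pi_i$, each of which is $\{0\}$ by hypothesis. Nothing further is needed.
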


In the converse direction we have: \pagebreak

\begin{theorem}\index{concepts}{direct sum!decomposition}%
\index{concepts}{representation!direct sum!decomposition}%
\index{concepts}{decomposition!direct sum}\label{directsumdeco}%
If $\pi$ is non-degenerate and if $H \neq \{ 0 \}$, then
$\pi$ is the direct sum of cyclic subrepresentations.
\end{theorem}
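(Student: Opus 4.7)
The plan is to apply Zorn's Lemma to a suitable collection of pairwise orthogonal cyclic subspaces, exactly as one does in the Hilbert-space theory of group representations. Concretely, I would consider the set $\mathcal{Z}$ of all families $\{M_i\}_{i \in I}$ of pairwise orthogonal non-zero closed invariant subspaces of $H$ such that each subrepresentation $\pi_{M_i}$ is cyclic. Order $\mathcal{Z}$ by set-theoretic inclusion. The set $\mathcal{Z}$ is non-empty: since $H \neq \{0\}$ and $\pi$ is non-degenerate, pick any $c \in H \setminus \{0\}$; by Lemma \ref{cyclicsubspace}, $M := \overline{\pi(A)c}$ is a closed invariant subspace containing $c$, and $\pi_M$ is cyclic with cyclic vector $c$, so $\{M\} \in \mathcal{Z}$. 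For any chain in $\mathcal{Z}$, the union of the chain is again a member of $\mathcal{Z}$ (pairwise orthogonality and the cyclicity of each entry are preserved). Hence Zorn's Lemma yields a maximal family $\{M_i\}_{i \in I}$ in $\mathcal{Z}$.

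Next I would consider the closed invariant subspace $M := \overline{\mathrm{span}}\,\bigcup_{i \in I} M_i$, which is the internal Hilbert space direct sum $\oplus_{i \in I} M_i$ because the $M_i$ are pairwise orthogonal. Its orthogonal complement $M^{\perp}$ is again a closed invariant subspace (by the proposition preceding \ref{subrep}). The key step — and the only one where something could go wrong — is to show $M^{\perp} = \{0\}$. Suppose to the contrary that $M^{\perp} \neq \{0\}$ and pick $c \in M^{\perp}$ with $c \neq 0$. Since $\pi$ is non-degenerate, the subrepresentation $\pi_{M^{\perp}}$ is also non-degenerate (Observation \ref{subnondeg}). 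By Lemma \ref{cyclicsubspace} applied to $\pi_{M^{\perp}}$, the space $N := \overline{\pi(A)c} \subset M^{\perp}$ contains $c$, is closed and invariant, and $\pi_N$ is cyclic. But $N$ is orthogonal to every $M_i$, so $\{M_i\}_{i \in I} \cup \{N\}$ lies in $\mathcal{Z}$ and properly enlarges $\{M_i\}_{i \in I}$, contradicting maximality. Therefore $M^{\perp} = \{0\}$ and $H = \oplus_{i \in I} M_i$.

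Finally, the decomposition $\pi = \oplus_{i \in I} \pi_{M_i}$ in the sense of \ref{directsumsigma} follows at once: each operator $\pi(a)$ restricts to $\pi_{M_i}(a)$ on $M_i$, the family $\{\|\pi_{M_i}(a)\|\}_{i \in I}$ is bounded above by $\|\pi(a)\|$ (from \ref{HellToepl}), and $\pi(a)$ coincides with $\oplus_{i \in I} \pi_{M_i}(a)$ on the dense subspace $\bigoplus_{i \in I}^{\mathrm{alg}} M_i$, hence everywhere by continuity. The main obstacle is really the use of Zorn's Lemma (cf.\ the remark \ref{Zorn}); all the analytic content is packaged into Lemma \ref{cyclicsubspace}, which is what makes the maximality argument go through cleanly.
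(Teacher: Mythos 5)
Your proof is correct and follows essentially the same route as the paper: Zorn's Lemma applied to collections of mutually orthogonal cyclic closed invariant subspaces, with Lemma \ref{cyclicsubspace} supplying both the non-emptiness of the collection and the contradiction with maximality when the orthogonal complement is non-zero. Your version is slightly more explicit about why the new cyclic subspace lies inside $M^{\perp}$ and about identifying $\pi$ with the direct sum of the subrepresentations, but there is no substantive difference.
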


\begin{proof}
Let $Z$ denote the set of all non-empty sets consisting of mutually
orthogonal, cyclic \ref{cyclicsubspace} closed invariant subspaces
of $H$. The set $Z$ is not empty since for $0 \neq c \in H$, we may
consider $M_c := \overline{\pi (A)c}$, so that $\{\, M_c \,\} \in Z$ by
\ref{cyclicsubspace}. Furthermore, $Z$ is inductively ordered by
inclusion. Thus, Zorn's Lemma guarantees the existence of a maximal
element $F$ of $Z$. Consider now $H_1 := \oplus _{M \in F} M$.
We claim that $H_1 = H$. Indeed, if $0 \neq c \in {H_1}^{\perp}$, then
$M_c := \overline{\pi (A)c}$ is a cyclic closed invariant subspace of
$H$ by \ref{cyclicsubspace}, and $F \cup \{ \,M_c \,\}$ would be a set
in $Z$ strictly larger than the maximal set $F$. It follows that
$\pi = \oplus _{M \in F} \,\pi _M$. 
\end{proof}

\medskip
Our next aim is lemma \ref{coeffequal}, which needs some preparation.

\begin{proposition}\label{closureunitary}%
Let $I$ be an isometric linear operator defined on a dense subspace
of a Hilbert space $H_1$, with dense range in a Hilbert space $H_2$.
Then the closure $U$ of $I$ is a unitary operator from $H_1$ onto $H_2$.
\end{proposition}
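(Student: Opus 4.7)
The plan is to construct $U$ as the continuous extension of $I$, then check isometry, surjectivity, and finally the unitary property by polarisation.

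First I would argue that $U$ is well-defined as a bounded linear map $H_1 \to H_2$. Since $I$ is an isometry defined on a dense subspace of $H_1$, it is uniformly continuous there; because $H_2$ is complete, the unique continuous extension theorem \ref{continuation} yields a unique continuous function $U : H_1 \to H_2$ extending $I$. Linearity of $U$ follows from the linearity of $I$ together with continuity of addition and scalar multiplication. For any $x \in H_1$, choosing a sequence $(x_n)$ in the domain of $I$ with $x_n \to x$, the continuity of the norm gives
\[
\| \,U x \,\| = \lim_{n \to \infty} \| \,I x_n \,\| = \lim_{n \to \infty} \| \,x_n \,\| = \| \,x \,\|,
\]
so $U$ is isometric; in particular $U$ is injective.

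Next I would show $U$ is surjective onto $H_2$. The range of $U$ contains the range of $I$, which is dense in $H_2$ by hypothesis. On the other hand, since $U$ is an isometry from the complete space $H_1$, its range is complete, hence closed in $H_2$. A closed dense subset of $H_2$ must equal $H_2$, so $U$ is surjective.

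Finally, I would deduce that $U$ is unitary. By the polarisation identity, an isometric linear map between complex Hilbert spaces preserves inner products:
\[
\langle \,U x, U y \,\rangle = \langle \,x, y \,\rangle \qquad ( \,x, y \in H_1 \,).
\]
Combined with the bijectivity established above, this gives $U^* U = \mathds{1}_{H_1}$ and $U U^* = \mathds{1}_{H_2}$, so $U$ is a unitary operator from $H_1$ onto $H_2$. There is no serious obstacle here; the only delicate point is making sure the ``closure'' of $I$ is interpreted as the continuous extension furnished by completeness of $H_2$, and invoking completeness of $H_1$ at the right moment to conclude that $\range(U)$ is closed.
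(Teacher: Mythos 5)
Your proof is correct and follows essentially the same route as the paper, whose entire argument is the observation that the range of $U$ is complete (hence closed) and contains the dense range of $I$; you have simply filled in the standard details of constructing the extension, checking isometry, and polarising.
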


\begin{proof}
The range of $U$ is a complete, hence closed
subspace of $H_2$, containing the dense range of $I$.
\end{proof}

\begin{definition}[spatial equivalence]\label{spatequiv}%
\index{concepts}{spatially equivalent!representations}%
\index{concepts}{equivalent!spatially!representations}%
Let $\pi_1$, $\pi_2$ be two representations of the \st-algebra $A$
on Hilbert spaces $H_1$, $H_2$  respectively. The representations
$\pi_1$ and $\pi_2$ are called \underline{spatially equivalent}
if there exists a unitary operator $U$ from $H_1$ to $H_2$ such that
\[ U \pi_1 (a) = \pi_2 (a) U \quad \text{for all} \quad a \in A. \]
\end{definition}

\begin{definition}[intertwining operators, $C(\pi_1,\pi_2)$]%
\label{intertwinop}\index{concepts}{intertwining operator}%
\index{concepts}{operator!intertwining}%
\index{symbols}{C45@$C(\pi_1,\pi_2)$}%
Let $\pi_1$, $\pi_2$ be two representations of the \st-algebra $A$
on Hilbert spaces $H_1$, $H_2$ \linebreak respectively.
A bounded linear operator $b$ from $H_1$ to $H_2$
is said to \underline{intertwine} $\pi_1$ with $\pi_2$, if
\[ b \pi_1 (a) = \pi_2 (a) b \quad \text{for all} \quad  a \in A. \]
The set of all bounded linear operators intertwining $\pi_1$ with
$\pi_2$ is \linebreak denoted by $C(\pi_1,\pi_2)$. Please note that
$C(\pi, \pi) = \pi'$, cf.\ \ref{repcommutant}.
\end{definition}

The following result is almost miraculous. \pagebreak

\begin{lemma}\label{coeffequal}%
Let $\pi_1$, $\pi_2$ be cyclic representations of the \st-algebra $A$
on Hilbert spaces $H_1$, $H_2$, with cyclic vectors $c_1$, $c_2$.
For $a \in A$, let
\begin{align*}
\varphi_1 (a) & := \langle \pi_1 (a) c_1, c_1 \rangle, \\
\varphi_2 (a) & := \langle \pi_2 (a) c_2, c_2 \rangle .
\end{align*}
If $\varphi_1 = \varphi_2$, then $\pi_1$ and $\pi_2$ are spatially
equivalent. Furthermore, there then exists a unique unitary operator
in $C(\pi_1, \pi_2)$ taking $c_1$ to $c_2$.
\end{lemma}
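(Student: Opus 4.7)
The plan is to define the desired unitary $U$ on the dense subspace $\pi_1(A)c_1 \subset H_1$ by the formula $U(\pi_1(a)c_1) := \pi_2(a)c_2$ for $a \in A$. The central computation that makes everything work is
\[ \| \,\pi_i(a) c_i \,\|^{\,2} = \langle \pi_i(a^*a)c_i, c_i \rangle = \varphi_i(a^*a) \qquad (i = 1, 2), \]
combined with the hypothesis $\varphi_1 = \varphi_2$. Applied to $a - b$ in place of $a$, this identity shows simultaneously that $U$ is well-defined (if $\pi_1(a)c_1 = \pi_1(b)c_1$ then $\|\pi_2(a)c_2 - \pi_2(b)c_2\| = \|\pi_1(a)c_1 - \pi_1(b)c_1\| = 0$) and that $U$ is isometric on $\pi_1(A)c_1$.

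Next I would invoke cyclicity of $\pi_1, \pi_2$ to see that $\pi_1(A)c_1$ is dense in $H_1$ and $\pi_2(A)c_2$ is dense in $H_2$, and then apply proposition \ref{closureunitary} to extend $U$ to a unitary operator $H_1 \to H_2$, still denoted $U$. The intertwining relation $U \pi_1(a) = \pi_2(a) U$ holds on the dense subspace $\pi_1(A)c_1$ because $U \pi_1(a) \bigl( \pi_1(b)c_1 \bigr) = \pi_2(ab)c_2 = \pi_2(a) \bigl( U \pi_1(b)c_1 \bigr)$, and thus extends to all of $H_1$ by continuity.

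The part I expect to require some care is verifying $U c_1 = c_2$, because $A$ need not contain a unit, so $c_1$ is not a priori of the form $\pi_1(a)c_1$. However, a cyclic representation is non-degenerate, so lemma \ref{cyclicsubspace} yields $c_1 \in \overline{\pi_1(A)c_1}$; pick a net (or sequence) $\pi_1(a_n)c_1 \to c_1$. By continuity of $U$, $Uc_1 = \lim_n \pi_2(a_n)c_2$. To identify this limit with $c_2$, I would check orthogonally against the dense set $\pi_2(A)c_2$: for every $b \in A$,
\[ \langle U c_1, \pi_2(b)c_2 \rangle = \lim_n \langle \pi_2(a_n)c_2, \pi_2(b)c_2 \rangle = \lim_n \varphi_2(b^*a_n) = \lim_n \varphi_1(b^*a_n), \]
and the last expression equals $\lim_n \langle \pi_1(a_n)c_1, \pi_1(b)c_1 \rangle = \langle c_1, \pi_1(b)c_1 \rangle = \varphi_1(b^*) = \varphi_2(b^*) = \langle c_2, \pi_2(b)c_2 \rangle$. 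Density of $\pi_2(A)c_2$ in $H_2$ then forces $Uc_1 = c_2$.

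Uniqueness is immediate: if $V$ is any unitary in $C(\pi_1, \pi_2)$ with $Vc_1 = c_2$, then for each $a \in A$ the intertwining relation gives $V \pi_1(a) c_1 = \pi_2(a) V c_1 = \pi_2(a) c_2 = U \pi_1(a) c_1$, so $V$ agrees with $U$ on the dense subspace $\pi_1(A)c_1$ and hence on all of $H_1$.
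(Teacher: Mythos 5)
Your proof is correct and follows essentially the same route as the paper: define the isometry on $\pi_1(A)c_1$ via the key identity $\|\pi_i(a)c_i\|^{\,2} = \varphi_i(a^*a)$, extend by \ref{closureunitary}, and check the intertwining relation on the dense subspace. The only (harmless) variation is in verifying $Uc_1 = c_2$: the paper notes that the intertwining relation gives $\pi_2(a)(Uc_1 - c_2) = 0$ for all $a$ and concludes by non-degeneracy of $\pi_2$, which avoids your approximating sequence and the inner-product computation against $\pi_2(A)c_2$.
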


\begin{proof} 
Consider the subspaces
\begin{align*}
K_1 & := \{ \,\pi_1 (a) c_1 \in H_1 : a \in A \,\} \\
K_2 & := \{ \,\pi_2 (a) c_2 \in H_2 : a \in A \,\},
\end{align*}
which are dense in $H_1$ and $H_2$ respectively (because $c_1$ and $c_2$
are cyclic for $\pi_1$ and $\pi_2$ respectively).

Assume that $U$ is a unitary operator in $C(\pi_1 , \pi_2)$ taking $c_1$ to $c_2$.
On one hand, we then have
\[ U \pi_1 (a) = \pi_2 (a) U \quad \text{for all} \quad a \in A, \]
On the other hand $U c_1 = c_2$. These two facts together imply that
\[ U \text{ takes any vector } \pi_1 (a) c_1 \text{ to } \pi_2 (a) c_2
\quad ( \,a \in A \,). \tag*{$(*)$} \]
Since $K_1$ is dense in $H_1$, there can exist at most one unitary operator
$U$ with the properties as above, by continuity of $U$.

It shall next be shown that the above requirement $(*)$ indeed leads to a unitary
operator $U$ in $C(\pi_1 , \pi_2)$ taking $c_1$ to $c_2$. We set out to prove
that an isometric operator $I : K_1 \to K_2$ is well-defined by putting
\[ Iz := \pi_2 (a) c_2 \in K_2 \text{ when } z = \pi_1 (a) c_1 \in K_1 \quad ( \,a \in A \,), \]
in agreement with $(*)$. For $a \in A$, and $z := \pi_1 (a) c_1 \in K_1$,
we then have by assumption
\begin{align*}
\langle Iz, Iz \rangle & = \langle \pi_2 (a) c_2, \pi_2 (a) c_2 \rangle \\
 & = \langle \pi_2 (a^*a) c_2, c_2 \rangle \\
 & = \varphi _2 (a^*a) \\
 & = \varphi _1 (a^*a) \\
 & = \langle \pi_1 (a^*a) c_1, c_1 \rangle \\
 & = \langle \pi_1 (a) c_1, \pi_1 (a) c_1 \rangle = \langle z, z \rangle.
\end{align*}
It follows that $I$ is well-defined. Indeed, if for $a, b \in A$, one has
\[ \pi_1 (a) c_1 = \pi_1 (b) c_1, \]
then with $z := \pi_1 (a-b) c_1$, we have $z = 0$. We obtain that
$0 = \langle Iz, Iz \rangle$, so $0 = Iz = I \pi_1 (a-b) c_1 = \pi_2 (a-b) c_2$, that is
\[ \pi_2 (a) c_2 = \pi_2 (b) c_2, \]
as was to be shown.

It can clearly be seen from the above that $I$ is isometric.
Furthermore $I : K_1 \to K_2$ is densely defined in $H_1$ with dense range
in $H_2$. It follows that the closure $U$ of $I$ is a unitary operator from
$H_1$ onto $H_2$, cf.\ \ref{closureunitary}.

It shall next be shown that this operator $U$ intertwines $\pi_1$ with $\pi_2$.
Indeed, if $z = \pi_1 (a) c_1 \in K_1$ with $a \in A$, then we have for all
$b \in A$ that
\begin{align*}
\pi_2 (b) Iz & = \pi_2 (b)\pi_2 (a) c_2 \\
 & = \pi_2 (ba) c_2 \\
 & = I \pi_1 (ba) c_1 \\
 & = I \pi_1 (b) \pi_1 (a) c_1 = I \pi_1 (b) z.
\end{align*}
By continuity of $U$ and $\pi (b)$ \ref{HellToepl}, it follows that
\[ \pi_2 (b) U = U \pi_1 (b) \quad \text{for all} \quad b \in A, \]
so that $U$ is a unitary operator in $C(\pi_1 , \pi_2)$.

It remains to be shown that $U$ takes $c_1$ to $c_2$. For $a \in A$,
we have
\[ U \pi_1 (a) c_1 = \pi_2 (a) c_2, \]
whence
\[ \pi_2 (a) U c_1 = \pi_2 (a) c_2. \]
That is, we have
\[ \pi_2 (a) \bigl( U c_1 - c_2 \bigr) = 0 \quad \text{for all} \quad a \in A. \]
Since $\pi_2$ is cyclic, and thereby non-degenerate, it follows that
\[ U c_1 - c_2 = 0, \]
or $U c_1 = c_2$.
\end{proof}

\medskip
As an important example, we shall establish next that
``the commutant of $\blop(H)$ is trivial'', cf.\ \ref{BHirred} below.

\begin{definition}%
For vectors $x$, $y \in H$, we shall denote by $x \odot y$
the bounded linear operator on $H$ (of rank zero or one) given by
\[ (x \odot y) \,z := \langle z, y \rangle \,x \qquad ( \,z \in H \,). \pagebreak \]
\end{definition}

\begin{theorem}\label{tensor}%
Assume that $H \neq \{ 0 \}$, and let $\{ \,x_i \,\}_{i \in I}$
be an \linebreak orthonormal basis of $H$. If some bounded
linear operator $c$ on $H$ \linebreak commutes with all the
bounded linear operators $x_i \odot x_j$ $(i, j \in I)$, then
$c$ is a scalar multiple of the unit operator on $H$.
\end{theorem}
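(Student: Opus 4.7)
The plan is to test the commutation relation $c \,(x_i \odot x_j) = (x_i \odot x_j) \,c$ against basis vectors. Evaluating each side of this operator identity at an arbitrary $z \in H$ gives
\[ \langle z, x_j \rangle \,c \,x_i = \langle c\,z, x_j \rangle \,x_i. \]
The natural choice $z := x_j$ collapses the left-hand side to $c \,x_i$ and the right-hand side to $\langle c\,x_j, x_j \rangle \,x_i$. Thus $c \,x_i = \langle c\,x_j, x_j \rangle \,x_i$ for every pair of indices $i, j \in I$.

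Next I would extract a scalar from this identity. Setting $\lambda_j := \langle c\,x_j, x_j \rangle$, the displayed equation reads $c \,x_i = \lambda_j \,x_i$ for all $i, j \in I$. The left-hand side is independent of $j$, so all the numbers $\lambda_j$ must agree; call the common value $\lambda \in \mathds{C}$. In particular, specialising $j = i$ shows that $\lambda = \lambda_i = \langle c\,x_i, x_i \rangle$ for every $i$, and we obtain
\[ c \,x_i = \lambda \,x_i \quad \text{for all} \quad i \in I. \]

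Finally, I would invoke the fact that $\{ \,x_i \,\}_{i \in I}$ is an orthonormal basis, together with the boundedness (and hence continuity) of $c$, to conclude $c = \lambda \mathds{1}$. Since $c$ and $\lambda \mathds{1}$ are two continuous linear operators that coincide on the total orthonormal set $\{ \,x_i \,\}_{i \in I}$, they agree on the closed linear span, which is all of $H$. There is no real obstacle here: the argument is a direct evaluation, with the only subtlety being the observation that the $j$-independence of the left-hand side of the key identity forces all the diagonal matrix entries $\langle c\,x_j, x_j \rangle$ to coincide.
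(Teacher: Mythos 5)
Your proof is correct and follows essentially the same route as the paper's: test the commutation relation against basis vectors and read off that $c$ acts as a scalar. Your version is in fact slightly more streamlined (the single substitution $z = x_j$ gives $c\,x_i = \langle c\,x_j, x_j\rangle\,x_i$ directly, bypassing the paper's computation of the off-diagonal matrix entries) and more explicit about the final continuity-and-density step.
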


\begin{proof}
If $c$ commutes with each $x_i \odot x_j$ $(i, j \in I)$, we get
\[ \langle c z, x_j \rangle \,x_i = \langle z, x_j \rangle \,c x_i \quad
\text{for all} \quad z \in H, \ i, j \in I. \]
Especially for $z = x_k$ $(k \in I)$, we obtain
\[ \langle c x_k, x_j \rangle \,x_i
= \langle x_k ,x_j \rangle \,c x_i = \delta_{kj} c x_i. \]
In particular we have
\[ \langle c x_k, x_j \rangle = 0 \quad
\text{for all} \quad k, j \in I, \ k \neq j \]
and
\[ \langle c x_j, x_j \rangle \,x_i = c x_i \quad
\text{for all} \quad i, j \in I. \]
It follows that there exists some $\lambda \in \mathds{C}$ such that
\[ \langle c x_j, x_j \rangle = \lambda \quad \text{for all} \quad j \in I. \]
Thus
\[ \langle c x_i, x_j \rangle = \lambda \,\delta_{ij} \quad
\text{for all} \quad i, j \in I. \qedhere \]
\end{proof}

\bigskip
\begin{corollary}\label{BHirred}%
We have
\[ \blop(H)' = \mathds{C} \mathds{1}. \]
\end{corollary}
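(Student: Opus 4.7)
The plan is to observe that this corollary follows immediately from Theorem \ref{tensor}, with essentially no extra work required. The statement to prove is a double inclusion: $\mathds{C}\mathds{1} \subset \blop(H)'$ and $\blop(H)' \subset \mathds{C}\mathds{1}$.

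For the first inclusion, the identity operator $\mathds{1}$ on $H$ commutes with every bounded linear operator on $H$ by definition, and hence every scalar multiple of $\mathds{1}$ lies in $\blop(H)'$. This is trivial. The case $H = \{0\}$ also makes the statement trivial, so I would begin by reducing to the case $H \neq \{0\}$.

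For the reverse inclusion, the strategy is the following. Invoke Zorn's Lemma (in the guise of the usual existence proof for orthonormal bases) to fix an orthonormal basis $\{x_i\}_{i \in I}$ of $H$. For each pair $i,j \in I$, the rank-at-most-one operator $x_i \odot x_j$ is a bounded linear operator on $H$, with operator norm at most $\|x_i\| \cdot \|x_j\| = 1$. Given $c \in \blop(H)'$, the definition of the commutant forces $c$ to commute with every operator in $\blop(H)$, in particular with each $x_i \odot x_j$. Theorem \ref{tensor} then applies and yields $c = \lambda \mathds{1}$ for some $\lambda \in \mathds{C}$.

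There is no real obstacle in this proof: all the substantive computation has already been carried out in Theorem \ref{tensor} by choosing $z = x_k$ and extracting $\langle c x_i, x_j \rangle = \lambda \,\delta_{ij}$. The only thing to verify explicitly is that the operators $x_i \odot x_j$ are indeed elements of $\blop(H)$, which is immediate from the Cauchy--Schwarz inequality.
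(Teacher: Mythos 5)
Your proof is correct and is exactly the argument the paper intends: the corollary is stated immediately after Theorem \ref{tensor} with no separate proof, precisely because applying that theorem to an orthonormal basis and an arbitrary $c \in \blop(H)'$ (which in particular commutes with every $x_i \odot x_j$) gives $c = \lambda \mathds{1}$, while the inclusion $\mathds{C}\mathds{1} \subset \blop(H)'$ is trivial. Your explicit reduction to $H \neq \{0\}$ and the remark that $x_i \odot x_j \in \blop(H)$ are harmless additions that match the hypotheses of Theorem \ref{tensor}.
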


\clearpage


\chapter{States}

\setcounter{section}{27}


\section{The GNS Construction}

\begin{introduction}\index{concepts}{GNS construction|(}%
Given a representation $\pi$ of a \st-algebra $A$ on a Hilbert
space $H$, one may consider the positive linear functionals
\[ a \mapsto \langle \pi (a) x, x \rangle \qquad ( \,a \in A \,) \]
for each $x \in H$.

In this paragraph as well as in the next one, we
shall be occupied with inverting this relationship.

That is, given a normed \st-algebra $A$ and a positive linear functional
$\varphi$ on $A$ satisfying certain conditions, we shall construct a
representation $\pi _{\varphi}$ of $A$ on a Hilbert space $H _{\varphi}$
in such a way that
\[ \varphi (a) = \langle \pi _{\varphi} (a) x, x \rangle \qquad ( \,a \in A \,) \]
for some $x \in H _{\varphi}$. The present paragraph serves as a rough
foundation for the ensuing one, which puts the results into sweeter words.
\end{introduction}

\begin{proposition}[induced Hilbert form]\label{inducedHf}%
If $\varphi$ is a positive linear functional on a \st-algebra $A$, then
\[ \langle a, b \rangle_\varphi := \varphi (b^*a) \qquad ( \,a,b \in A \,) \]
defines a positive Hilbert form $\langle \cdot , \cdot \rangle _\varphi$
on $A$ in the sense of the following definition.
\end{proposition}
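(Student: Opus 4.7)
The plan is to verify that the formula $\langle a,b\rangle_\varphi := \varphi(b^*a)$ satisfies each axiom that a positive Hilbert form is required to possess, which presumably means: sesquilinearity (linear in the first slot, conjugate-linear in the second), Hermitian symmetry, and positivity of the associated quadratic form. Of these, sesquilinearity and positivity are immediate from the structure we already have, while the Hermitian symmetry is the one that needs an argument (and indeed can be derived purely from the other two, which I expect to be the slightly delicate point).

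First I would handle sesquilinearity. Linearity in $a$ is immediate from linearity of $\varphi$ together with the left-distributivity of the product in $A$: for $a_1,a_2,b\in A$ and $\lambda,\mu\in\mathds{C}$ we have $b^*(\lambda a_1+\mu a_2)=\lambda b^*a_1+\mu b^*a_2$, so applying $\varphi$ yields the required linearity. Conjugate linearity in $b$ uses conjugate linearity of the involution \emph{and} right-distributivity of the product: $(\lambda b_1+\mu b_2)^*a=\overline{\lambda}b_1^*a+\overline{\mu}b_2^*a$, whence $\langle a,\lambda b_1+\mu b_2\rangle_\varphi=\overline{\lambda}\langle a,b_1\rangle_\varphi+\overline{\mu}\langle a,b_2\rangle_\varphi$. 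Positivity is even quicker: $\langle a,a\rangle_\varphi=\varphi(a^*a)\geq 0$ by the very definition of a positive linear functional.

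The main obstacle, and the only step requiring a small computation, is Hermitian symmetry $\langle a,b\rangle_\varphi=\overline{\langle b,a\rangle_\varphi}$. My plan is to derive it from the two properties already shown, by the standard polarisation trick. Set $\alpha:=\langle a,b\rangle_\varphi$ and $\beta:=\langle b,a\rangle_\varphi$. Expanding $\langle a+b,a+b\rangle_\varphi\geq 0$ via sesquilinearity yields $\langle a,a\rangle_\varphi+\alpha+\beta+\langle b,b\rangle_\varphi\geq 0$; since the two diagonal terms are non-negative reals and the sum is real, $\alpha+\beta\in\mathds{R}$. Expanding $\langle a+\iu b,a+\iu b\rangle_\varphi\geq 0$ similarly gives $\langle a,a\rangle_\varphi-\iu\alpha+\iu\beta+\langle b,b\rangle_\varphi\geq 0$, so $\iu(\beta-\alpha)\in\mathds{R}$. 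Writing $\alpha=x+\iu y$, $\beta=u+\iu v$ and combining these two real-valuedness conditions forces $u=x$ and $v=-y$, i.e.\ $\beta=\overline{\alpha}$, as desired.

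Having established sesquilinearity, Hermitian symmetry, and positivity, we have exactly a positive Hilbert form on $A$ in the expected sense. No completeness, continuity, or unital assumption on $A$ is used, and only the two algebraic properties of an involution (together with associativity) enter the argument; this is why the statement holds at the very general level of an arbitrary $*$-algebra, which is the setting needed for the ensuing GNS construction.
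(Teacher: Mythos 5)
Your verification of sesquilinearity, positivity, and Hermitian symmetry is correct (the polarisation argument for $\beta=\overline{\alpha}$ is fine), but you have guessed the wrong target definition, and as a result the one property that actually constitutes the content of the proposition is never checked. In this text, a \emph{positive Hilbert form} (Definition \ref{Hilbertform}) is a positive semidefinite sesquilinear form satisfying the compatibility condition
\[ \langle ab, c \rangle = \langle b, a^*c \rangle \quad \text{for all} \quad a,b,c \in A; \]
Hermitian symmetry and the Cauchy--Schwarz inequality are listed there only as consequences. Your proof establishes the consequences but omits the axiom. For the form $\langle a,b\rangle_\varphi=\varphi(b^*a)$ the missing verification is a one-line computation using the anti-multiplicativity of the involution and associativity:
\[ \langle ab, c \rangle_\varphi = \varphi\bigl(c^*(ab)\bigr) = \varphi\bigl((c^*a)b\bigr)
= \varphi\bigl({(a^*c)}^*b\bigr) = \langle b, a^*c \rangle_\varphi, \]
since ${(a^*c)}^* = c^*a$. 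This identity is not decorative: it is exactly what makes the isotropic subspace a left ideal and the left-translation operators satisfy the adjoint relation in the GNS construction, so a proof of the proposition that does not address it has missed its point. With that line added, your argument is complete; the paper itself offers no written proof, treating all of these verifications as immediate.
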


\begin{definition}%
[positive Hilbert forms \hbox{\protect\cite[p.\ 349]{Dieu}}]%
\label{Hilbertform}\index{concepts}{Hilbert form}%
\index{concepts}{positive!Hilbert form}%
A \underline{positive} \underline{Hilbert form} on a \st-algebra $A$ is a positive
semidefinite sesquilinear form $\langle \cdot , \cdot \rangle$ on $A$ such that
\[ \langle ab, c \rangle = \langle b, a^*c \rangle \quad \text{for all} \quad a,b,c \in A. \]
Please note that for all $a, b, c \in A$ we then also have
\begin{align*}
\langle a, b \rangle & = \overline{\langle b, a \rangle}, 
\tag*{\textit{(the sesquilinear form is Hermitian)}} \\
{| \,\langle a, b \rangle \,| \,}^2 & \leq \langle a, a \rangle \cdot \langle b, b \rangle.
\pagebreak \tag*{\textit{(Cauchy-Schwarz inequality)}}%
\end{align*}%
\end{definition}

\begin{definition}\label{quotientA}%
\index{concepts}{isotropic}\index{concepts}{subspace!isotropic}%
\index{symbols}{A9@$\underline{A}$}%
\index{symbols}{a5@$\underline{a}$}%
Let $\langle \cdot , \cdot \rangle$ be a positive Hilbert form on a
\st-algebra $A$. Denote by $I$ the isotropic subspace of the inner
product space \linebreak $(A, \langle \cdot, \cdot \rangle)$, that is
$I = \{ \,b \in A : \langle b, c \rangle = 0 \text{ for all } c \in A \,\}$.
The isotropic subspace $I$ is a left ideal in $A$ as
$\langle ab, c \rangle = \langle b, a^* c \rangle$ holds for all $a, b, c \in A$.
Denote by $\underline{A}$ the quotient space $A\mspace{1mu}/I$.
For $a \in A$, let
\[ \underline{a} := a+I \in \underline{A}. \]
In $\underline{A}$ one defines an inner product by
\[ \langle \underline{a} , \underline{b} \rangle :=
\langle a, b \rangle \qquad ( \,a, b \in A \,). \]
In this way $\underline{A}$ becomes a pre-Hilbert space (as
$I = \{ \,b \in A : \langle b, b \rangle = 0 \,\}$ by the Cauchy-Schwarz
inequality).
\end{definition}

\begin{definition}[the GNS representation]\label{GNS}%
\index{concepts}{representation!GNS}%
Let $\langle \cdot, \cdot \rangle$ be a positive Hilbert form on a
\st-algebra $A$. For $a \in A$, the translation operator
\begin{align*}
\pi (a) : \underline{A} & \to \underline{A} \\
\underline{b} & \mapsto \pi (a) \underline{b} := \underline{ab}
\end{align*}
is well-defined because $I$ is a left ideal. One thus obtains a
representation $\pi$ of $A$ on the pre-Hilbert space $\underline{A}$.
It is called the \underline{representation} \underline{associated with
$\langle \cdot , \cdot \rangle$}. If all of the operators $\pi (a)$ $(a \in A)$
are bounded, then by continuation of these operators, we get a
representation of $A$ on the completion of $\underline{A}$.
This latter representation is called the
\underline{GNS representation associated with $\langle \cdot , \cdot \rangle$}.
\end{definition}

\begin{definition}[$\pi _{\varphi}$, $H_{\varphi}$]%
\label{GNSvarphi}\index{symbols}{H1@$H_{\varphi}$}%
\index{concepts}{GNS construction!Hphi@$H _{\varphi}$}%
\index{concepts}{GNS construction!piphi@$\pi_ {\varphi}$}%
\index{symbols}{p2@$\pi_ {\varphi}$}\index{concepts}{representation!GNS}%
Let $\varphi$ be a positive linear functional on a \st-algebra $A$.
Consider the Hilbert form $\langle \cdot , \cdot \rangle _{\varphi}$ induced by
$\varphi$, as in \ref{inducedHf}. One says that the representation associated
with $\langle \cdot , \cdot \rangle _{\varphi}$ is the representation associated
with $\varphi$. One denotes by $H_{\varphi}$ the completion of $\underline{A}$.
If the operators $\pi (a)$ $(a \in A)$ are bounded, one
denotes by $\pi _{\varphi}$ the GNS representation associated
with $\langle \cdot , \cdot \rangle _{\varphi}$. It is then called the
\underline{GNS representation associated with $\varphi$}.
\end{definition}

The acronym GNS stands for Gel'fand, Na\u{\i}mark and Segal, the
originators of the theory.

\begin{definition}[weak continuity on $A\sa$]%
\label{weakbdedsadef}%
\index{concepts}{functional!w@weakly continuous on $A\protect\sa$}%
\index{concepts}{weakly continuous!As@on $A\protect\sa$}%
A positive linear functional $\varphi$ on a normed \st-algebra
$A$ shall be called \underline{weakly continuous}
\underline{on $A\sa$} if for each $b \in A$, the positive linear
functional $a \mapsto \varphi (b^*ab)$ is continuous on $A\sa$.
\pagebreak
\end{definition}

\begin{theorem}\label{weakbdedsarep}%
Let $\varphi$ be a positive linear functional on a normed \linebreak
\st-algebra $A$. Then $\varphi$ is weakly continuous on $A\sa$ if
and only if the \linebreak representation associated with $\varphi$
is weakly continuous on $A\sa$. In this event, the GNS representation
$\pi _{\varphi}$ is well-defined and is a $\sigma$-contractive
representation of $A$ on $H_{\varphi}$. Cf.\ \ref{weaksa}, \ref{sigmacontr}.
\end{theorem}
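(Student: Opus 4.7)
The plan is to reduce everything to the single identity
\[ \langle \pi(a)\underline{b},\underline{b}\rangle_\varphi
 = \langle \underline{ab},\underline{b}\rangle_\varphi
 = \varphi(b^{*}ab) \qquad (a,b \in A), \]
which expresses the diagonal matrix coefficient of the associated representation at the vector $\underline{b}$ as the value $\varphi(b^{*}ab)$. Since every vector of $\underline{A}$ is of the form $\underline{b}$ for some $b \in A$, the equivalence of weak continuity on $A\sa$ for $\varphi$ and for the associated representation on $\underline{A}$ is essentially a tautology: the continuous linear functionals on $A\sa$ appearing in one definition are precisely those appearing in the other, indexed by $b$ versus $\underline{b}$. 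I would state this identity first and read both implications off it.

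Next, for the ``in this event'' part, I would feed the conclusion that the associated representation is weakly continuous on $A\sa$ into Theorem \ref{weaksa}. That theorem applies to representations on pre-Hilbert spaces and yields, under weak continuity on $A\sa$, that every operator $\pi(a)$ is bounded with $\|\pi(a)\|\leq \rsigma(a)$. This is exactly the hypothesis needed in Definition \ref{GNS} to promote the associated representation to an honest GNS representation $\pi_\varphi$ on the Hilbert space $H_\varphi$.

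The remaining task is to check that the continuous extensions of the bounded operators $\pi(a)$ to $H_\varphi$ still form a representation and remain $\sigma$-contractive. Boundedness of the extensions with the same norm is automatic from uniform continuity, and the algebra-homomorphism and involution identities
\[ \pi_\varphi(ab) = \pi_\varphi(a)\pi_\varphi(b), \qquad
 \langle \pi_\varphi(a)x,y\rangle = \langle x,\pi_\varphi(a^{*})y\rangle \]
extend from the dense subspace $\underline{A}$ to $H_\varphi$ by continuity of the inner product and of the operator multiplication (on bounded subsets, cf.\ \ref{continuation}). The norm estimate $\|\pi_\varphi(a)\|\leq \rsigma(a)$ transfers unchanged to $H_\varphi$, giving $\sigma$-contractivity.

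I do not anticipate a genuinely hard step here: the whole theorem is really a repackaging of Theorem \ref{weaksa} combined with the observation about the diagonal matrix coefficients. The only point requiring a little care is the very first one, namely recognising that the obviously sufficient condition ``for every $b\in A$, $a\mapsto\varphi(b^{*}ab)$ is continuous on $A\sa$'' is also necessary for weak continuity of the associated representation on $A\sa$; but this is immediate because, by construction of $\underline{A}$, the vectors $\underline{b}$ exhaust the pre-Hilbert space on which the associated representation acts.
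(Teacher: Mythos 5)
Your proof is correct and follows the paper's own route: the paper's entire argument is the single identity $\varphi(b^*ab)=\langle\pi(a)\underline{b},\underline{b}\rangle_\varphi$, with the remaining assertions delegated to Theorem \ref{weaksa} and Definition \ref{GNS}, exactly as you do. The extra care you take over extending the bounded operators to $H_\varphi$ is already built into Definition \ref{GNS}, so nothing is missing.
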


\begin{proof}
For $a,b \in A$ we have $\varphi (b^*ab) =
\langle \pi (a) \underline{b} , \underline{b} \rangle _{\varphi}$. 
\end{proof}

\begin{corollary}\label{Banachweakbdedsa}%
Let $A$ be a Banach \st-algebra. Then every positive linear
functional on $A$ is weakly continuous on $A\sa$.
\end{corollary}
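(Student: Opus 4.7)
The plan is to reduce to the unital case handled by lemma \ref{preBanachbded}, by means of a standard unitization trick together with the observation that ``sandwiching by $b$'' keeps everything inside $A$ even when one enlarges the argument to $\tld{A}$.

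Fix $b \in A$, and set $\psi(a) := \varphi(b^*ab)$ for $a \in A$; the goal is to show that $\psi$ is continuous on $A\sa$. First I would extend $\psi$ to a linear functional $\tld{\psi}$ on $\tld{A}$ by the same formula. This makes sense: for any $d = c + \lambda e \in \tld{A}$ with $c \in A$ and $\lambda \in \mathds{C}$, the element
\[ b^*db = b^*cb + \lambda \,b^*b \]
lies in $A$, so $\tld{\psi}(d) := \varphi(b^*db)$ is well-defined, and it is visibly linear.

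Next I would verify that $\tld{\psi}$ is positive on $\tld{A}$. The key remark is that if $d \in \tld{A}$, then $db \in A$ (because $(c+\lambda e)b = cb + \lambda b \in A$), so
\[ \tld{\psi}(d^*d) = \varphi(b^*d^*db) = \varphi\bigl((db)^*(db)\bigr) \geq 0 \]
by positivity of $\varphi$ on $A$. So $\tld{\psi}$ is a positive linear functional on the unital Banach \st-algebra $\tld{A}$ (recall theorem \ref{Banachunitis}).

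At this point lemma \ref{preBanachbded} applies to $\tld{\psi}$, giving $\bigl| \,\tld{\psi}(x) \,\bigr| \leq \tld{\psi}(e) \,|\,x\,|$ for all $x \in \tld{A}\sa$. Restricting to $A\sa \subset \tld{A}\sa$ and recalling that $\tld{\psi}$ agrees with $\psi$ there, we obtain continuity of $\psi$ on $A\sa$, which is exactly what is required by definition \ref{weakbdedsadef}. There is no serious obstacle here; the only step that requires a moment's thought is the positivity of $\tld{\psi}$, and this is immediate once one notices that $db$ stays in $A$ for $d \in \tld{A}$, which is the whole reason for considering the ``sandwich'' $\varphi(b^*ab)$ rather than $\varphi(a)$ in the definition of weak continuity on $A\sa$.
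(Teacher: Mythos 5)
Your proof is correct and is in substance the paper's own argument: the paper deduces the corollary from theorem \ref{Banachbounded}, whose proof likewise extends to the unitisation and invokes lemma \ref{preBanachbded}, the functional $\langle \tld{\pi}(d)\,\underline{b}, \underline{b} \rangle_{\varphi}$ appearing there being exactly your $\tld{\psi}(d) = \varphi(b^*db)$. You have merely unpacked the representation-theoretic phrasing into a direct statement about the functional, which changes nothing essential.
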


\begin{proof}
This follows now from theorem \ref{Banachbounded}.
\end{proof}

\begin{definition}[variation \hbox{\cite[p.\ 307]{Gaal}}]%
\index{concepts}{variation}%
\index{symbols}{v(phi)@$v(\varphi)$}\label{variation}%
\index{concepts}{GNS construction!v(phi)@$v(\varphi)$}%
A positive linear functional $\varphi$ on a \st-algebra $A$
is said to have \underline{finite variation} if
\[ {| \,\varphi (a) \,| \,}^2 \leq \gamma \,\varphi (a^*a) \quad \text{for all} \quad a \in A \]
holds for some $\gamma \geq 0$. If this is the case, we shall say that
\[ v ( \varphi ) := \,\inf \,\{ \,\gamma \geq 0 :
{| \,\varphi (a) \,| \,}^2 \leq \gamma \,\varphi (a^*a) \text{ for all } a \in A \,\} \]
is the \underline{variation} of $\varphi$. We then also have
\[ {| \,\varphi (a) \,| \,}^2 \leq v ( \varphi ) \,\varphi (a^*a) \quad \text{for all} \quad a \in A. \]
\end{definition}

\begin{proof}
Assume that $\varphi$ has finite variation. With 
\[ S := \{ \,\gamma \geq 0 : {| \,\varphi (a) \,|}^{\,2} \leq \gamma \,\varphi (a^*a)
\ \text{for all} \ a \in A \,\}\]
we have that
\[ v (\varphi) = \inf S = \text{the greatest lower bound of} \ S. \]
Let now $b \in A$ be fixed. It must be shown that
\[ {| \,\varphi (b) \,|}^{\,2} \leq v (\varphi) \,\varphi (b^*b). \]
We have
\[ {| \,\varphi (b) \,|}^{\,2} \leq \gamma \,\varphi (b^*b)
\quad \text{for all} \quad \gamma \in S. \]
That is,
\[ {| \,\varphi (b) \,|}^{\,2}\ \text{is a lower bound of} \ S \,\varphi (b^*b). \]
Since
\[ v (\varphi) \,\varphi (b^*b) \ \text{is the greatest lower bound of} \ S \,\varphi (b^*b), \]
it follows that
\[ {| \,\varphi (b) \,|}^{\,2} \leq v (\varphi) \,\varphi (b^*b), \]
as was to be shown.
\end{proof}

\begin{proposition}\label{variationinequal}%
Let $\pi$ be a representation of a \st-algebra $A$ on a \linebreak
pre-Hilbert space $H$. For $x \in H$, consider the positive linear
functional $\varphi$ on $A$ defined by
\[ \varphi(a):=\langle \pi(a)x, x \rangle \qquad ( \,a \in A \,). \]
Then $\varphi$ is Hermitian and has finite variation
$v(\varphi) \leq \langle x, x \rangle$.
\end{proposition}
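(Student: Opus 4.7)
The plan is to verify directly the two assertions from the definitions, using only the $*$-algebra representation property and the Cauchy--Schwarz inequality in the pre-Hilbert space $H$. Both parts are essentially one-line computations.

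First I would show Hermiticity. Using the defining property of a representation, namely $\langle \pi(a^*)y, z\rangle = \langle y, \pi(a)z\rangle$ (equivalently, $\pi(a^*)$ is the adjoint of $\pi(a)$ on $H$, restricted to $H$), one computes
\[
\varphi(a^*) = \langle \pi(a^*)x, x\rangle = \langle x, \pi(a)x\rangle = \overline{\langle \pi(a)x, x\rangle} = \overline{\varphi(a)},
\]
so $\varphi$ is Hermitian in the sense of \ref{Hermfunct}.

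For the variation bound, I would apply the Cauchy--Schwarz inequality in $H$ to the vectors $\pi(a)x$ and $x$, and then use the representation property again to collapse $\langle \pi(a)x,\pi(a)x\rangle$ into $\varphi(a^*a)$:
\[
|\varphi(a)|^{2} = |\langle \pi(a)x, x\rangle|^{2}
\leq \langle \pi(a)x, \pi(a)x\rangle \cdot \langle x, x\rangle
= \langle \pi(a^*a)x, x\rangle \cdot \langle x, x\rangle
= \langle x, x\rangle \,\varphi(a^*a).
\]
By definition \ref{variation}, this shows that $\varphi$ has finite variation and that $v(\varphi) \leq \langle x, x\rangle$, finishing the proof.

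There is no real obstacle here: the statement is a direct translation of Cauchy--Schwarz in $H$ into a bound on $\varphi$, together with the elementary adjoint relation defining a $*$-representation. The only point to remain careful about is that $\pi(a)$ need not be bounded (we are working on a pre-Hilbert space), but that is irrelevant, since Cauchy--Schwarz holds for arbitrary vectors and the identity $\langle \pi(a)x, \pi(a)x\rangle = \langle \pi(a^*a)x, x\rangle$ is purely algebraic.
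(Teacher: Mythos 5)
Your proof is correct and follows the same route as the paper's: Cauchy--Schwarz applied to $\pi(a)x$ and $x$ together with the identity $\langle \pi(a)x,\pi(a)x\rangle = \langle \pi(a^*a)x,x\rangle$ for the variation bound, and the adjoint relation $\langle \pi(a^*)x,x\rangle = \langle x,\pi(a)x\rangle$ for Hermiticity. Nothing to add.
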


\begin{proof}
For $a \in A$, we have
\[ {| \,\varphi (a) \,| \,}^2 = {|\,\langle \pi (a) x, x \rangle \,| \,}^2
\leq {\|\,\pi(a)x\,\| \,}^2 \,{ \|\,x\,\| \,}^2 = \varphi(a^*a) \,\langle x, x \rangle, \]
so that $v ( \varphi ) \leq \langle x, x \rangle$. Finally $\varphi$
is Hermitian \ref{Hermfunct}, because for $a \in A$, we have
\[ \varphi(a^*)=\langle\pi(a^*)x,x \rangle=\langle x,\pi(a)x \rangle
=\overline{\langle \pi (a) x,x \rangle} = \overline{\varphi(a)}. \qedhere \]
\end{proof}

\begin{proposition}\label{variationequal}%
Let $\pi$ be a \underline{non-degenerate} representation of a
\linebreak \st-algebra $A$ on a \underline{Hilbert} space $H$.
For $x \in H$, consider the positive \linebreak linear functional
$\varphi$ on $A$ defined by
\[ \varphi (a):= \langle \pi (a) x, x \rangle \qquad ( \,a \in A \,). \]
The variation of $\varphi$ then is
\[ v(\varphi) = \langle x, x \rangle. \]
\end{proposition}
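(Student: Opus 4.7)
The plan is to use the cyclic subspace lemma \ref{cyclicsubspace} together with the inequality from the preceding proposition \ref{variationinequal} to sandwich $v(\varphi)$ between two quantities both equal to $\langle x, x \rangle$.

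Proposition \ref{variationinequal} already gives the inequality $v(\varphi) \leq \langle x, x \rangle$, so the task reduces to proving the reverse inequality $v(\varphi) \geq \langle x, x \rangle$. The case $x = 0$ is trivial, since then $\varphi \equiv 0$ and $\langle x, x \rangle = 0$. Assuming $x \neq 0$, the key idea is to exhibit a sequence of elements $a_n \in A$ on which the defining inequality ${| \,\varphi (a) \,| \,}^2 \leq \gamma \,\varphi (a^* a)$ becomes asymptotically sharp.

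To produce such a sequence, I would invoke lemma \ref{cyclicsubspace}: since $\pi$ is non-degenerate and $x \neq 0$, the vector $x$ lies in the closed cyclic subspace $M := \overline{\pi(A) x}$. Because $H$ is a Hilbert space (hence metric) and $\pi(A)x$ is dense in $M$, there exists a sequence $(a_n) \subset A$ with $\pi(a_n) x \to x$. Then by continuity of the inner product,
\[ \varphi(a_n) = \langle \pi(a_n) x, x \rangle \ \to \ \langle x, x \rangle,
\qquad \varphi(a_n^* a_n) = \| \,\pi(a_n) x \,\|^{\,2} \ \to \ \langle x, x \rangle. \]

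Now let $\gamma \geq 0$ satisfy ${| \,\varphi (a) \,| \,}^2 \leq \gamma \,\varphi (a^* a)$ for all $a \in A$. Applying this inequality to $a = a_n$ and passing to the limit gives $\langle x, x \rangle^{\,2} \leq \gamma \,\langle x, x \rangle$. Since $\langle x, x \rangle > 0$, one concludes $\gamma \geq \langle x, x \rangle$, and taking the infimum over all admissible $\gamma$ yields $v(\varphi) \geq \langle x, x \rangle$. Combined with \ref{variationinequal}, this gives the desired equality.

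The only mild obstacle is the bookkeeping around the case $\varphi(a_n^* a_n) = 0$ (if it occurs for some $n$, then $\pi(a_n)x = 0$ and both sides of the inequality vanish, so such terms are harmless); the substantive step is the appeal to \ref{cyclicsubspace}, which is precisely what makes the combination of non-degeneracy and completeness of $H$ pay off.
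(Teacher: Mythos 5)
Your proposal is correct and follows essentially the same route as the paper's own proof: the inequality $v(\varphi) \leq \langle x, x \rangle$ from \ref{variationinequal}, then non-degeneracy via \ref{cyclicsubspace} to produce a sequence $(a_n)$ with $\pi(a_n)x \to x$, and passage to the limit in the defining inequality for the variation. Your explicit treatment of the cases $x = 0$ and $\varphi(a_n^*a_n) = 0$ is a minor tidying of details the paper leaves implicit.
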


\begin{proof}
The preceding proposition \ref{variationinequal} shows that
$v ( \varphi ) \leq \langle x, x \rangle $. In order to prove the
opposite inequality, consider the closed subspace
$M := \overline{\pi (A) x}$ of $H$. Then $x \in M$ because $\pi$ is
non-degenerate, cf.\ \ref{cyclicsubspace}. There thus exists a
sequence $(a_n)$ in $A$ such that
\[ x = \lim _{n \to \infty} \pi (a_n) x. \]
We then have
\begin{align*}
 & \,\left| \ {| \,\varphi (a_n) \,| \,}^2
 - {\| \,x \,\| \,}^2 \cdot \varphi ({a_n}^*{a_n}) \ \right| \\
 = & \,\left| \ {| \,\langle \pi (a_n) x, x \rangle \,| \,}^2
- {\| \,x \,\| \,}^2 \cdot {\| \,\pi (a_n) x \,\| \,}^2 \ \right| \\
 \leq & \,\left| \ {| \,\langle \pi (a_n) x, x \rangle \,| \,}^2
 - {\| x \| \,}^4 \ \right|
 + {\| \,x \,\| \,}^2 \cdot \left| \ {\| \,\pi (a_n) x \,\| \,}^2
 - {\| \,x \,\| \,}^2 \ \right|
\end{align*}
so that
\[ \lim _{n \to \infty} \left| \ {| \,\varphi (a_n) \,| \,}^2 -
\,{\| \,x \,\| \,}^2 \cdot \varphi ({a_n}^*{a_n}) \ \right| = 0, \]
by
\[ \lim _{n \to \infty} \langle \pi (a_n) x, x \rangle = {\| \,x \,\| \,}^2 \]
and
\[ \lim _{n \to \infty} \| \,\pi (a_n) x \,\| = \| \,x \,\|. \pagebreak \qedhere \]
\end{proof}

\begin{definition}[reproducing vector, $c_{\varphi}$]%
\label{reprovectordef}%
\index{symbols}{c@$c _{\varphi}$}\index{concepts}{reproducing vector}%
\index{concepts}{GNS construction!cphi@$c _{\varphi}$}%
Let $\varphi$ be a positive \linebreak linear functional on a \st-algebra $A$.
A \underline{reproducing vector for $\varphi$} is a vector $c_{\varphi}$
in $H_{\varphi}$ such that
\[ \varphi (a) = \langle \underline{a}, c_{\varphi} \rangle _{\varphi}
\quad \text{for all} \quad a \in A. \]
It is then uniquely determined by $\varphi$. We shall reserve the
notation $c_{\varphi}$ for the reproducing vector of $\varphi$.
\end{definition}

\begin{proposition}%
Let $\varphi$ be a positive linear functional on a unital \st-algebra $A$.
Then $\underline{e}$ is the reproducing vector for $\varphi$.
\end{proposition}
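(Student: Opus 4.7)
The plan is to verify the defining equation of the reproducing vector directly from the definition of the induced inner product. Concretely, we want to show that for every $a \in A$,
\[ \varphi(a) = \langle \underline{a}, \underline{e} \rangle_\varphi. \]
By \ref{inducedHf} and \ref{quotientA}, the inner product on $\underline{A}$ is given by $\langle \underline{a}, \underline{b} \rangle_\varphi = \varphi(b^*a)$ for $a, b \in A$. Taking $b = e$, the right-hand side becomes $\varphi(e^* a)$.

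The only nontrivial ingredient is the observation, already recorded in the paragraph following the definition of a unit, that the unit $e$ of a \st-algebra is automatically Hermitian, i.e.\ $e^* = e$. (Indeed, $e^*$ is also a unit, and units are unique.) Combined with the unit property $ea = a$, this yields $\varphi(e^* a) = \varphi(e a) = \varphi(a)$, which is precisely the required identity.

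I would therefore write the proof as a single-line computation
\[ \langle \underline{a}, \underline{e} \rangle_\varphi = \varphi(e^* a) = \varphi(e a) = \varphi(a), \]
preceded by the remark that $e^* = e$. There is no obstacle here: the statement is essentially a direct unwinding of definitions, and uniqueness of $c_\varphi$ (already noted in \ref{reprovectordef}) then forces $c_\varphi = \underline{e}$.
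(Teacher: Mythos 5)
Your proof is correct and follows exactly the same route as the paper's: both reduce the claim to the one-line computation $\langle \underline{a}, \underline{e} \rangle_\varphi = \varphi(e^*a) = \varphi(a)$, with your version merely making explicit the (already recorded) fact that $e^* = e$. No issues.
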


\begin{proof}
For $a \in A$ we have
\[ \varphi (a) = \varphi (e^*a) =
\langle \underline{a}, \underline{e} \rangle _{\varphi}. \qedhere \]
\end{proof}

\begin{proposition}\label{reprovectorfinvar}%
Let $\varphi$ be a positive linear functional on a \linebreak
\st-algebra $A$. Then $\varphi$ has a reproducing vector if
and only if $\varphi$ has finite variation. In the affirmative
case, we have
\[ v ( \varphi ) = \langle c_{\varphi}, c_{\varphi} \rangle _{\varphi}. \]
\end{proposition}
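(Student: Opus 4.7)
The plan is to identify the reproducing vector with a Riesz representative of the bounded linear functional that $\varphi$ induces on the pre-Hilbert space $\underline{A}$, and then to read off the equality $v(\varphi) = \langle c_\varphi, c_\varphi \rangle_\varphi$ from the matching pair of inequalities coming from Cauchy--Schwarz and from Riesz--Fr\'echet.

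For the easy (``only if'') direction, I would assume that a reproducing vector $c_\varphi \in H_\varphi$ exists and apply the Cauchy--Schwarz inequality in $H_\varphi$ to
\[ \varphi(a) = \langle \underline{a}, c_\varphi \rangle_\varphi \qquad (a \in A). \]
This yields
\[ | \,\varphi(a) \,|^{\,2} \leq \langle \underline{a}, \underline{a} \rangle_\varphi \cdot \langle c_\varphi, c_\varphi \rangle_\varphi
 = \varphi(a^*a) \cdot \langle c_\varphi, c_\varphi \rangle_\varphi, \]
proving that $\varphi$ has finite variation with $v(\varphi) \leq \langle c_\varphi, c_\varphi \rangle_\varphi$.

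For the converse, I would suppose $\varphi$ has finite variation and consider the prescription $\ell(\underline{a}) := \varphi(a)$, $(a \in A)$, on $\underline{A}$. The finite variation bound ${| \,\varphi(a) \,|}^{\,2} \leq v(\varphi) \,\varphi(a^*a)$ simultaneously shows that $\ell$ is well-defined on the quotient $\underline{A} = A / I$ (since $\underline{a} = \underline{b}$ forces $\varphi\bigl((a-b)^*(a-b)\bigr) = 0$ and hence $\varphi(a-b) = 0$) and that $\ell$ is bounded on $\underline{A}$ with $| \,\ell \,|^{\,2} \leq v(\varphi)$. By continuity, $\ell$ extends uniquely to a bounded linear functional on the completion $H_\varphi$, with the same norm. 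The Riesz--Fr\'echet theorem then produces a unique $c_\varphi \in H_\varphi$ such that $\ell(\xi) = \langle \xi, c_\varphi \rangle_\varphi$ for all $\xi \in H_\varphi$, and in particular $\varphi(a) = \langle \underline{a}, c_\varphi \rangle_\varphi$ for all $a \in A$, so $c_\varphi$ is the desired reproducing vector.

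For the final equality, I would combine both estimates: the Cauchy--Schwarz step of the first paragraph gives $v(\varphi) \leq \langle c_\varphi, c_\varphi \rangle_\varphi$, while the Riesz--Fr\'echet step of the second paragraph yields $\langle c_\varphi, c_\varphi \rangle_\varphi = | \,\ell \,|^{\,2} \leq v(\varphi)$. There is no serious obstacle here; the only point that requires a moment's care is checking that $\ell$ descends to $\underline{A}$ before invoking Riesz, but this is precisely what the finite-variation hypothesis buys us.
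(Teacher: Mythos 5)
Your proposal is correct and follows essentially the same route as the paper: Cauchy--Schwarz for the "only if" direction and the bound $v(\varphi) \leq \langle c_{\varphi}, c_{\varphi} \rangle_{\varphi}$, then the observation that finite variation makes $\underline{a} \mapsto \varphi(a)$ a well-defined bounded functional of norm $\leq v(\varphi)^{1/2}$ on $\underline{A}$, represented by a vector in $H_{\varphi}$ via Riesz--Fr\'echet. The only cosmetic difference is that you make the passage to the completion and the appeal to Riesz--Fr\'echet explicit where the paper compresses it into one sentence.
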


\begin{proof}
If $\varphi$ has a reproducing vector $c_{\varphi}$,
then the Cauchy-Schwarz inequality implies that
\[ {| \,\varphi(a) \,| \,}^2
= {| \,\langle \underline{a}, c_{\varphi} \rangle _{\varphi} \,| \,}^2
\leq {\| \,\underline{a} \,\| \,}^2 \cdot {\| \,c_{\varphi} \,\| \,}^2
= \varphi (a^*a) \cdot {\| \,c_{\varphi}\,\| \,}^2, \]
which shows that
\[ v ( \varphi ) \leq {\| \,c_{\varphi} \,\| \,}^2. \tag*{$(*)$} \]
Conversely assume that $\varphi$ has finite variation. For
$a \in A$ we obtain
\[ | \,\varphi (a) \,| \leq {v ( \varphi ) \,}^{1/2} \cdot {\varphi (a^*a) \,}^{1/2} =
{v ( \varphi ) \,}^{1/2} \cdot \| \,\underline{a} \,\|. \]
Thus $\varphi$ vanishes on the isotropic subspace of
$( A , \langle \cdot , \cdot \rangle _{\varphi} )$, and so $\varphi$
may be considered as a bounded linear functional of norm
$\leq {v(\varphi) \,}^{1/2}$ on the pre-Hilbert space $\underline{A}$.
Hence there exists a vector $c_{\varphi}$ in $H_{\varphi}$ with
\[ \| \,c_{\varphi} \,\| \leq {v ( \varphi ) \,}^{1/2} \tag*{$(**)$} \]
such that
\[ \varphi (a) = \langle \underline{a}, c_{\varphi} \rangle _{\varphi} \]
for all $a \in A$. We have
\[ v ( \varphi ) = \langle c_{\varphi}, c_{\varphi} \rangle _{\varphi} \]
by the inequalities $(*)$ and $(**)$. \pagebreak
\end{proof}

\begin{theorem}\label{finvarcyclic}%
Let $\varphi$ be a positive linear functional of finite vari\-ation on a
normed \st-algebra $A$, which is weakly continuous on $A\sa$.
Let $c_{\varphi}$ be the reproducing vector of $\varphi$. We then have
\[ \pi_{\varphi}(a)c_{\varphi}=\underline{a} \quad \text{for all} \quad a \in A. \]
Thus, if $\varphi \neq 0$, then $\pi _{\varphi}$ is cyclic with cyclic vector
$c_{\varphi}$. Moreover
\[ \varphi(a)=\langle\pi_{\varphi}(a)c_{\varphi},c_{\varphi}\rangle_{\varphi}
\quad \text{for all} \quad a \in A.  \]
\end{theorem}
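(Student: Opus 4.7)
The plan is as follows. First I would invoke theorem \ref{weakbdedsarep} to confirm that, under the hypothesis of weak continuity on $A\sa$, the GNS representation $\pi_{\varphi}$ is actually well-defined as a $\sigma$-contractive representation on the Hilbert space $H_{\varphi}$, so the operators $\pi_{\varphi}(a)$ are the bounded extensions to $H_{\varphi}$ of the translation operators $\pi(a)$ on $\underline{A}$. The existence of $c_{\varphi}$ is in turn guaranteed by the finite-variation hypothesis via proposition \ref{reprovectorfinvar}.

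The core of the argument is the identity $\pi_{\varphi}(a)c_{\varphi} = \underline{a}$ for every $a \in A$. Since $\underline{A}$ is dense in $H_{\varphi}$, it suffices to check that the two vectors have the same inner product against every $\underline{b}$ with $b \in A$. On one hand, by definition of the induced Hilbert form,
\[
\langle \underline{a}, \underline{b} \rangle_{\varphi} = \varphi(b^{*}a).
\]
On the other hand, using the $*$-representation property $\pi_{\varphi}(a)^{*} = \pi_{\varphi}(a^{*})$ (which propagates from $\underline{A}$ to $H_{\varphi}$ by density and continuity), the fact that $\pi_{\varphi}(a^{*})\underline{b} = \underline{a^{*}b}$, and finally the reproducing property of $c_{\varphi}$, I would compute
\[
\langle \pi_{\varphi}(a)c_{\varphi}, \underline{b} \rangle_{\varphi}
= \langle c_{\varphi}, \pi_{\varphi}(a^{*}) \underline{b} \rangle_{\varphi}
= \langle c_{\varphi}, \underline{a^{*}b} \rangle_{\varphi}
= \overline{\varphi(a^{*}b)}.
\]
The required coincidence $\varphi(b^{*}a) = \overline{\varphi(a^{*}b)}$ is then exactly the Hermitian symmetry automatically enjoyed by the positive Hilbert form $\langle \cdot, \cdot \rangle_{\varphi}$, cf.\ definition \ref{Hilbertform}.

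Once $\pi_{\varphi}(a)c_{\varphi} = \underline{a}$ is established, the remaining claims follow at once. The orbit $\pi_{\varphi}(A)c_{\varphi}$ coincides with $\underline{A}$, which is dense in $H_{\varphi}$, so $c_{\varphi}$ is cyclic provided $c_{\varphi} \neq 0$; and $c_{\varphi} = 0$ would force $\varphi(a) = \langle \underline{a}, 0 \rangle_{\varphi} = 0$ for all $a$, contrary to $\varphi \neq 0$. Substituting $\underline{a} = \pi_{\varphi}(a)c_{\varphi}$ into the reproducing identity $\varphi(a) = \langle \underline{a}, c_{\varphi} \rangle_{\varphi}$ yields the final equality $\varphi(a) = \langle \pi_{\varphi}(a)c_{\varphi}, c_{\varphi} \rangle_{\varphi}$.

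The one point requiring a touch of care, and the ``main obstacle'' such as it is, is the adjoint step $\pi_{\varphi}(a)^{*} = \pi_{\varphi}(a^{*})$: the $*$-representation identity $\langle \pi(a)x, y \rangle_{\varphi} = \langle x, \pi(a^{*})y \rangle_{\varphi}$ holds by definition only for $x, y \in \underline{A}$, and one must pass to its continuous extension on all of $H_{\varphi}$, which is legitimate precisely because \ref{weakbdedsarep} has delivered boundedness of each $\pi_{\varphi}(a)$.
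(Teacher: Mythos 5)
Your proposal is correct and follows essentially the same route as the paper: establish existence of $\pi_{\varphi}$ via \ref{weakbdedsarep} and of $c_{\varphi}$ via \ref{reprovectorfinvar}, then verify $\pi_{\varphi}(a)c_{\varphi}=\underline{a}$ by pairing against the dense set of vectors $\underline{b}$, moving $\pi_{\varphi}(a)$ to the other side of the inner product and applying the reproducing property. The paper performs the computation on $\langle\underline{b},\pi_{\varphi}(a)c_{\varphi}\rangle_{\varphi}$ rather than its conjugate, which lets it avoid the explicit appeal to Hermitian symmetry of the form, but this is only a cosmetic difference.
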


\begin{proof}
The GNS representation $\pi _{\varphi}$ exists by \ref{weakbdedsarep}.
For $a, b \in A$, we compute
\[ \langle\underline{b},\pi_{\varphi}(a)c_{\varphi}\rangle_{\varphi}
 = \langle\pi_{\varphi}(a^*)\underline{b},c_{\varphi}\rangle_{\varphi}
 = \langle\underline{a^*b},c_{\varphi}\rangle_{\varphi}
 = \varphi(a^*b) = \langle\underline{b},\underline{a}\rangle_{\varphi}, \]
which implies that $\pi_{\varphi}(a)c_{\varphi}=\underline{a}$ for all
$a \in A$. One then inserts this into the defining relation of a
reproducing vector.
\end{proof}

\begin{corollary}\label{Banachfinvarbded}%
Every positive linear functional of finite vari\-ation
on a Banach \st-algebra is continuous.
\end{corollary}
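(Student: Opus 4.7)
The plan is to string together the preceding three results \ref{Banachweakbdedsa}, \ref{weakbdedsarep}, and \ref{finvarcyclic}, so that the continuity of $\varphi$ is reduced to the continuity of the GNS representation, which in turn is available from \ref{Banachbounded}.

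First I would invoke \ref{Banachweakbdedsa}, which tells us that the positive linear functional $\varphi$ on the Banach \st-algebra $A$ is automatically weakly continuous on $A\sa$. Since by assumption $\varphi$ has finite variation, the hypotheses of \ref{finvarcyclic} are met, yielding the reproducing vector $c_{\varphi} \in H_{\varphi}$ together with the representation identity
\[
\varphi(a) = \langle \pi_{\varphi}(a)\,c_{\varphi},\,c_{\varphi} \rangle_{\varphi}
\quad \text{for all} \quad a \in A.
\]
The GNS representation $\pi_{\varphi}$ is well-defined by \ref{weakbdedsarep}.

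Next I would apply \ref{Banachbounded}, which states that any representation of a Banach \st-algebra on a pre-Hilbert space is continuous. Hence there exists a constant $M \geq 0$ with $\|\pi_{\varphi}(a)\| \leq M\,|a|$ for every $a \in A$. Combining this with the Cauchy-Schwarz inequality applied to the formula above, one gets
\[
|\varphi(a)| \leq \|\pi_{\varphi}(a)\,c_{\varphi}\|_{\varphi} \cdot \|c_{\varphi}\|_{\varphi}
\leq M\,\|c_{\varphi}\|_{\varphi}^{\,2}\,|a|
\quad \text{for all} \quad a \in A,
\]
which is the desired continuity of $\varphi$.

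There is no real obstacle here: all the heavy lifting has been done in the preceding items, in particular the nontrivial boundedness result \ref{Banachbounded} for representations. The only minor subtlety is to make sure the three preceding hypotheses chain correctly (finite variation plus weak continuity on $A\sa$ to produce the reproducing vector, then boundedness of the representation to transfer continuity to $\varphi$), but this is bookkeeping rather than a substantive difficulty.
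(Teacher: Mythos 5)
Your proof is correct and follows exactly the paper's own route: \ref{Banachweakbdedsa} gives weak continuity on $A\sa$, \ref{weakbdedsarep} and \ref{finvarcyclic} produce the GNS representation and the identity $\varphi(a) = \langle \pi_{\varphi}(a)c_{\varphi},c_{\varphi}\rangle_{\varphi}$, and \ref{Banachbounded} supplies the continuity of $\pi_{\varphi}$. The only difference is that you spell out the final Cauchy--Schwarz estimate, which the paper leaves implicit.
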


\begin{proof}
Let $\varphi$ be a positive linear functional of finite variation on a Banach
\st-algebra $A$. Then $\varphi$ is weakly continuous on $A\sa$ by
\ref{Banachweakbdedsa}. Hence the GNS representation $\pi _{\varphi}$
exists by \ref{weakbdedsarep}, and from \ref{finvarcyclic} we get
$\varphi(a) = \langle \pi _{\varphi} (a) c_{\varphi} , c_{\varphi} \rangle _{\varphi}$
for all $a \in A$. The continuity of $\pi _{\varphi}$, which is guaranteed by
\ref{Banachbounded}, now implies that $\varphi$ is continuous. 
\end{proof}

\begin{theorem}\label{finvarconvex}%
Let $A$ be a normed \st-algebra. The set of positive \linebreak
linear functionals of finite variation on $A$ which are weakly
continuous \linebreak on $A\sa$, is a convex cone, and the variation
$v$ is additive and \linebreak $\mathds{R}_+$-homogeneous
on this convex cone.
\end{theorem}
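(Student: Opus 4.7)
The closure of the set under addition and nonnegative scalar multiplication is the easy half: positivity and weak continuity on $A\sa$ are preserved in a plain manner, while finite variation follows from applying the defining inequality of $v(\varphi)$ and $v(\psi)$ to each summand, followed by the Cauchy-Schwarz inequality in $\mathds{R}^2$, to get
\[ | \,(\lambda \varphi + \mu \psi)(a) \,|^{\,2} \leq
\bigl( \,\lambda \,v(\varphi) + \mu \,v(\psi) \,\bigr) \,(\lambda \varphi + \mu \psi)(a^*a) \]
for $\lambda, \mu \geq 0$. This immediately gives $v(\lambda \varphi + \mu \psi) \leq \lambda v(\varphi) + \mu v(\psi)$, yielding both subadditivity and the upper bound $v(\lambda \varphi) \leq \lambda v(\varphi)$. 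The matching lower bound for homogeneity, when $\lambda > 0$, comes from $v(\varphi) = v \bigl( \,\lambda^{-1} (\lambda \varphi) \,\bigr) \leq \lambda^{-1} \,v(\lambda \varphi)$, while the case $\lambda = 0$ is trivial.

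For additivity, the plan is to realise the GNS data of $\varphi + \psi$ inside the direct sum of the GNS data of $\varphi$ and $\psi$. The case where one of the summands vanishes is trivial, so assume both $\varphi$ and $\psi$ are nonzero; then $c_\varphi \neq 0$ and $c_\psi \neq 0$ by \ref{reprovectorfinvar}, since $\| \,c_\varphi \,\|^{\,2} = v(\varphi) > 0$. By \ref{weakbdedsarep} the GNS representations $\pi_\varphi$ and $\pi_\psi$ exist (and are $\sigma$-contractive); by \ref{finvarcyclic} they are cyclic with cyclic vectors $c_\varphi$ and $c_\psi$, hence non-degenerate. Form the direct sum representation $\pi := \pi_\varphi \oplus \pi_\psi$ on $H_\varphi \oplus H_\psi$; it is non-degenerate by \ref{dirsumnondeg}. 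Since $(c_\varphi, c_\psi) \neq 0$, \ref{cyclicsubspace} ensures that the cyclic closed invariant subspace $M := \overline{\pi(A)(c_\varphi, c_\psi)}$ contains $(c_\varphi, c_\psi)$, so that the subrepresentation $\pi_M$ is cyclic with cyclic vector $(c_\varphi, c_\psi)$.

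Using \ref{finvarcyclic} one computes $\pi(a)(c_\varphi, c_\psi) = (\,\underline{a}, \underline{a}\,)$ (the two components lying in $H_\varphi$ and $H_\psi$ respectively), whence the positive linear functional associated with the cyclic vector $(c_\varphi, c_\psi)$ of $\pi_M$ is
\[ a \mapsto \langle \pi(a)(c_\varphi, c_\psi), (c_\varphi, c_\psi) \rangle
= \varphi(a) + \psi(a). \]
This matches the positive linear functional associated with the cyclic vector $c_{\varphi+\psi}$ of $\pi_{\varphi+\psi}$, so \ref{coeffequal} yields a unitary operator from $H_{\varphi+\psi}$ onto $M$ sending $c_{\varphi+\psi}$ to $(c_\varphi, c_\psi)$. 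Taking squared norms delivers the additivity:
\[ v(\varphi+\psi) = \| \,c_{\varphi+\psi} \,\|^{\,2}
= \| \,(c_\varphi, c_\psi) \,\|^{\,2} = \| \,c_\varphi \,\|^{\,2} + \| \,c_\psi \,\|^{\,2}
= v(\varphi) + v(\psi). \]
The main obstacle is justifying that $(c_\varphi, c_\psi)$ actually lies in the cyclic subspace $M$ it generates: this nontrivial fact is delivered by \ref{cyclicsubspace}, and its applicability depends critically on the non-degeneracy of the direct sum representation, which in turn rests on the cyclicity statement of \ref{finvarcyclic}.
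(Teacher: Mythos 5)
Your proof is correct and follows essentially the same route as the paper: additivity is obtained by realising $\varphi+\psi$ as the coefficient functional of the vector $c_\varphi \oplus c_\psi$ in the direct sum of the two GNS representations. The only difference is the closing step: the paper applies \ref{variationequal} directly to the non-degenerate direct sum (giving $v(\varphi+\psi) = \| \,c_\varphi \oplus c_\psi \,\|^{\,2}$ at once), which spares the detour through the cyclic subspace $M$ and the unitary from \ref{coeffequal}; your explicit Cauchy-Schwarz argument for the cone property and homogeneity supplies what the paper dismisses as elementary.
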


\begin{proof}
It is elementary to prove that $v$ is $\mathds{R}_+$-homogeneous.
Let $\varphi _1$ and $\varphi _2$ be two positive linear
functionals of finite variation on $A$, which are weakly
continuous on $A\sa$. The sum $\varphi := \varphi_1 + \varphi_2$
then also is weakly continuous on $A\sa$. Let $\pi _1$ and $\pi _2$
be the GNS representations associated with $\varphi _1$ and
$\varphi _2$. Consider the direct sum $\pi := \pi _1 \oplus \pi _2$.
It is a non-degenerate representation. Let
$\langle \cdot , \cdot \rangle$ denote the inner product in the
direct sum $H_{\text{\footnotesize{$\varphi_1$}}}
\oplus H_{\text{\footnotesize{$\varphi_2$}}}$.
Let $c _1$ and $c _2$ be the reproducing vectors of
$\varphi _1$ and $\varphi _2$ respectively. For
$c := c _1 \oplus c _2$, one finds with \ref{finvarcyclic} that
\[ \langle \pi (a) c, c \rangle
= \langle \pi _1 (a) c _1, c _1 \rangle _{\text{\footnotesize{$\varphi_1$}}}
+ \langle \pi _2 (a) c _2, c_2 \rangle _{\text{\footnotesize{$\varphi_2$}}}
= \varphi _1 (a) + \varphi _2 (a) = \varphi (a) \]
for all $a \in A$. Therefore, according to \ref{variationequal}, the
variation of $\varphi$ is
\[ v ( \varphi ) = \langle c, c \rangle
= \langle c _1, c _1 \rangle _{\text{\footnotesize{$\varphi_1$}}}
+ \langle c _2, c _2 \rangle _{\text{\footnotesize{$\varphi_2$}}}
= v ( \varphi _1 ) + v ( \varphi _2 ). \pagebreak \qedhere \]
\end{proof}

\clearpage


\section{States on Normed \texorpdfstring{$*$-}{\80\052\80\055}Algebras}

\medskip
In this paragraph, let $A$ be a normed \st-algebra.

\begin{definition}[states, $\statespace (A)$]\index{concepts}{state}%
\index{concepts}{functional!state}\index{symbols}{S5@$\statespace (A)$}%
A positive linear functional $\psi$ on $A$ shall be called a \underline{state}
if it is weakly continuous on $A\sa$, and has finite variation $v(\psi) = 1$. The
set of states on $A$ is denoted by $\underline{\statespace (A)}$.
\end{definition}

\begin{definition}[quasi-states, $\quasistates (A)$]%
\index{symbols}{QS(A)@$\quasistates (A)$}%
\index{concepts}{quasi-state}\index{concepts}{functional!quasi-state}%
A \underline{quasi-state} on $A$ shall be a positive linear functional $\varphi$
on $A$ of finite variation $v(\varphi) \leq 1$, which is weakly continuous on
$A\sa$. The set of quasi-states on $A$ is denoted by $\underline{\quasistates (A)}$.
\end{definition}

\begin{theorem}\label{SAconvex}%
The sets $\statespace (A)$ and $\quasistates (A)$ are convex.
\end{theorem}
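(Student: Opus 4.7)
The plan is to deduce both convexity statements directly from Theorem \ref{finvarconvex} (finvarconvex), which asserts that the positive linear functionals of finite variation on $A$ which are weakly continuous on $A\sa$ form a convex cone on which the variation $v$ is both additive and $\mathds{R}_+$-homogeneous. Since $\statespace(A)$ and $\quasistates(A)$ are subsets of this convex cone, closure under convex combinations reduces to checking the variation condition, which is preserved by convex combinations thanks to the two linearity properties of $v$.

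More concretely, I would first fix $\varphi_1, \varphi_2 \in \quasistates(A)$ and $t \in [\,0, 1\,]$, and set $\varphi := t\varphi_1 + (1-t)\varphi_2$. By theorem \ref{finvarconvex}, $\varphi$ belongs to the convex cone of positive linear functionals of finite variation on $A$ which are weakly continuous on $A\sa$. Using $\mathds{R}_+$-homogeneity and additivity of $v$ on this cone, I would then compute
\[ v(\varphi) = v(t\varphi_1) + v\bigl((1-t)\varphi_2\bigr) = t\,v(\varphi_1) + (1-t)\,v(\varphi_2) \leq t + (1-t) = 1, \]
so $\varphi \in \quasistates(A)$. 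This gives convexity of $\quasistates(A)$.

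The argument for $\statespace(A)$ is the same, with equalities in place of the final inequality: for $\psi_1, \psi_2 \in \statespace(A)$ and $t \in [\,0, 1\,]$, the functional $\psi := t\psi_1 + (1-t)\psi_2$ again lies in the above convex cone by theorem \ref{finvarconvex}, and
\[ v(\psi) = t\,v(\psi_1) + (1-t)\,v(\psi_2) = t + (1-t) = 1, \]
so $\psi \in \statespace(A)$.

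There is really no obstacle here, as the substantive content is all packaged in theorem \ref{finvarconvex}; the only thing to verify is that the variation condition defining each set (``$=1$'' for states, ``$\leq 1$'' for quasi-states) is stable under convex combinations, and this is immediate from additivity and $\mathds{R}_+$-homogeneity of $v$.
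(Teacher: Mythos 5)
Your proof is correct and is exactly the paper's argument: the paper's proof of this theorem consists solely of the citation \ref{finvarconvex}, and you have merely spelled out the routine verification that additivity and $\mathds{R}_+$-homogeneity of the variation $v$ on that convex cone preserve the defining conditions $v = 1$ and $v \leq 1$ under convex combinations.
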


\begin{proof} \ref{finvarconvex}. \end{proof}

\begin{theorem}\label{staterep}\index{concepts}{state}%
Let $\pi$ be a non-degenerate $\sigma$-contractive representation
of $A$ on a Hilbert space $H$. If $x$ is a unit vector in $H$, then
\[ \psi (a) := \langle \pi (a) x, x \rangle \qquad ( \,a \in A \,) \]
defines a state on $A$.
\end{theorem}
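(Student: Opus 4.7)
The plan is to verify the three defining properties of a state in turn: that $\psi$ is a positive linear functional, that it is weakly continuous on $A\sa$, and that it has finite variation equal to $1$. Most of the work is already done by earlier results in the paragraph; the proposal is essentially a matter of assembling them correctly.

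First I would check the trivial parts. Linearity of $\psi$ is immediate from the linearity of $\pi$ and the sesquilinearity of the inner product. Positivity follows by writing $\psi(a^*a) = \langle \pi(a^*a)x, x\rangle = \langle \pi(a)x, \pi(a)x\rangle \geq 0$, using that $\pi$ is a \st-algebra homomorphism.

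Next I would address weak continuity on $A\sa$. For fixed $b \in A$, one has
\[ \psi(b^*ab) = \langle \pi(a)\pi(b)x, \pi(b)x\rangle \qquad (a \in A). \]
Since $\pi$ is $\sigma$-contractive, proposition \ref{sigmaAsa}(iii) tells us that $\pi$ is contractive on the real subspace $A\sa$, so $\|\pi(a)\| \leq |a|$ for $a \in A\sa$. The Cauchy-Schwarz inequality then yields
\[ |\psi(b^*ab)| \leq \|\pi(a)\pi(b)x\| \cdot \|\pi(b)x\| \leq |a| \cdot \|\pi(b)x\|^{\,2} \qquad (a \in A\sa), \]
which shows the required continuity on $A\sa$, cf.\ definition \ref{weakbdedsadef}.

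Finally I would pin down the variation. Proposition \ref{variationinequal} directly gives that $\psi$ has finite variation with $v(\psi) \leq \langle x, x \rangle = 1$. For the reverse inequality, the hypothesis of non-degeneracy is used: proposition \ref{variationequal} applies verbatim (non-degenerate representation on a Hilbert space, positive linear functional defined by a vector state) and yields $v(\psi) = \langle x, x\rangle = 1$. This is the only step where non-degeneracy enters, and it is not really an obstacle since the relevant lemma is already available; the main content of the proof is simply identifying which previously established items to cite, with \ref{sigmaAsa}, \ref{variationinequal}, and \ref{variationequal} doing all the substantive work.
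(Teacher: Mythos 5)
Your proposal is correct and follows the paper's own route exactly: the paper's proof is the bare citation ``\ref{sigmaAsa}, \ref{variationequal}'', which is precisely your combination of $\sigma$-contractivity on $A\sa$ for the weak continuity and proposition \ref{variationequal} (via non-degeneracy) for $v(\psi)=\langle x,x\rangle=1$. You have merely written out the details that the paper leaves implicit.
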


\begin{proof} \ref{sigmaAsa}, \ref{variationequal}. \end{proof}

\begin{theorem}\label{repstate}\index{concepts}{representation!GNS}%
Let $\psi$ be a state on $A$. The GNS representation $\pi _{\psi}$ then is a
$\sigma$-contractive cyclic representation on the Hilbert space $H_{\psi}$.
The reproducing vector $c _{\psi}$ is a cyclic unit vector for $\pi _{\psi}$. We have
\[ \psi (a) = \langle \pi _{\psi} (a) c _{\psi}, c _{\psi} \rangle _{\psi}
\quad \text{for all} \quad a \in A. \]
\end{theorem}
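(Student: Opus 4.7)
The plan is to assemble the theorem from the three key results already established about positive linear functionals: \ref{weakbdedsarep}, \ref{reprovectorfinvar}, and \ref{finvarcyclic}. Since a state is by definition a positive linear functional that is weakly continuous on $A\sa$ and has finite variation equal to $1$, each of its defining properties triggers exactly one of these prior results.

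First I would invoke Theorem \ref{weakbdedsarep}: because $\psi$ is weakly continuous on $A\sa$, the GNS representation $\pi_{\psi}$ is well-defined on $H_{\psi}$ and is $\sigma$-contractive. This handles the assertion about $\pi_{\psi}$ being a $\sigma$-contractive representation on the Hilbert space $H_{\psi}$.

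Next I would invoke Proposition \ref{reprovectorfinvar}: because $\psi$ has finite variation, there exists a reproducing vector $c_{\psi} \in H_{\psi}$, and moreover $\langle c_{\psi}, c_{\psi} \rangle_{\psi} = v(\psi) = 1$, so $c_{\psi}$ is a unit vector. Note also that $v(\psi) = 1$ forces $\psi \neq 0$, which will be needed for cyclicity.

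Finally I would apply Theorem \ref{finvarcyclic}, whose hypotheses (finite variation and weak continuity on $A\sa$) are both met: it yields both $\pi_{\psi}(a) c_{\psi} = \underline{a}$ for all $a \in A$ and the reproducing identity $\psi(a) = \langle \pi_{\psi}(a) c_{\psi}, c_{\psi} \rangle_{\psi}$. Since by the construction of $H_{\psi}$ in Definition \ref{GNSvarphi} the set $\{\underline{a} : a \in A\}$ is dense in $H_{\psi}$, the first equation shows that $\pi_{\psi}(A) c_{\psi}$ is dense in $H_{\psi}$, i.e., $c_{\psi}$ is a cyclic vector for $\pi_{\psi}$. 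There is no real obstacle here: the theorem is essentially a repackaging of the earlier lemmata under the cleaner terminology of ``state'', and the only minor point to watch is that $\psi \neq 0$ (needed to call the representation cyclic rather than merely having a distinguished vector), which is automatic from $v(\psi) = 1$.
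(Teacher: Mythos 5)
Your proposal is correct and follows exactly the paper's own route: the paper proves this theorem by citing precisely the three results \ref{weakbdedsarep}, \ref{reprovectorfinvar} and \ref{finvarcyclic}, and your write-up simply spells out how each one contributes. The extra observations (that $v(\psi)=1$ makes $c_\psi$ a unit vector and forces $\psi\neq 0$, and that density of $\{\underline{a}:a\in A\}$ gives cyclicity) are exactly the details the paper leaves implicit.
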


\begin{proof}
\ref{weakbdedsarep}, \ref{reprovectorfinvar} and \ref{finvarcyclic}.
\end{proof}

\begin{corollary}\label{propquasi}%
Every quasi-state $\varphi$ on $A$ is $\sigma$-contractive and Hermitian.
In the accessory \st-norm we have $\| \,\varphi \,\| \leq 1$. In particular,
$\varphi$ is contractive on $A\sa$.
\end{corollary}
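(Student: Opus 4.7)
The plan is to reduce the statement to the already-established results \ref{repstate} (GNS for states) and \ref{sigmaAsa} ($\sigma$-contractive maps in accessory \st-norm), by normalizing the quasi-state to a state.

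First I would dispose of the trivial case $v(\varphi) = 0$: the defining inequality ${|\varphi(a)|}^{\,2} \leq v(\varphi)\,\varphi(a^*a) = 0$ forces $\varphi \equiv 0$, and all three assertions hold vacuously. So assume $v(\varphi) > 0$. By the $\mathds{R}_+$-homogeneity of the variation $v$ on the convex cone of weakly continuous finite-variation positive linear functionals (theorem \ref{finvarconvex}), the functional $\psi := v(\varphi)^{\,-1}\varphi$ is weakly continuous on $A\sa$, positive, and satisfies $v(\psi) = 1$; that is, $\psi \in \statespace(A)$.

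Next I would invoke theorem \ref{repstate}: the GNS representation $\pi_{\psi}$ is a $\sigma$-contractive representation on $H_\psi$, and $\psi(a) = \langle \pi_{\psi}(a)\,c_{\psi},\,c_{\psi}\rangle_{\psi}$ with $c_\psi$ a unit vector. Combining the Cauchy--Schwarz inequality with the $\sigma$-contractivity of $\pi_\psi$ gives
\[ |\psi(a)| \leq \|\pi_{\psi}(a)\|\cdot\|c_{\psi}\|^{\,2} \leq \rsigma(a) \quad (a \in A), \]
so that $\psi$ itself is $\sigma$-contractive as a linear functional. Since $v(\varphi) \leq 1$, it follows that
\[ |\varphi(a)| = v(\varphi)\,|\psi(a)| \leq \rsigma(a), \]
which is the $\sigma$-contractivity of $\varphi$. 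For the Hermitian property, from the reproducing formula
\[ \psi(a^*) = \langle \pi_{\psi}(a^*)\,c_{\psi},\,c_{\psi}\rangle_{\psi}
= \langle c_{\psi},\,\pi_{\psi}(a)\,c_{\psi}\rangle_{\psi} = \overline{\psi(a)} \]
(using that $\pi_\psi$ is a \st-representation), so $\psi$ is Hermitian, and hence so is $\varphi = v(\varphi)\,\psi$ because $v(\varphi)$ is a non-negative real number.

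Finally, the accessory \st-norm bound and the contractivity on $A\sa$ follow from proposition \ref{sigmaAsa} applied to the $\sigma$-contractive functional $\varphi$: part (i) yields $\|\varphi\| \leq 1$ in the accessory \st-norm $\| \cdot \|$ on $A$, and part (iii) yields contractivity on $A\sa$. No step presents a genuine obstacle; the only slightly delicate point is ensuring $\psi$ really is a state, which is handled by the homogeneity of $v$ from \ref{finvarconvex}.
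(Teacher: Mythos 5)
Your proof is correct and runs along essentially the same lines as the paper's: the paper applies the GNS machinery directly to the quasi-state $\varphi$ (whose reproducing vector has $\| \,c_{\varphi} \,\|^{\,2} = v(\varphi) \leq 1$) and cites \ref{variationinequal} for the Hermitian property, then finishes with \ref{sigmaAsa}, exactly as you do. Your normalisation to the state $\psi := v(\varphi)^{\,-1}\varphi$ is a harmless cosmetic detour that changes nothing essential.
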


\begin{proof}
This follows now from \ref{variationinequal} and \ref{sigmaAsa}. \pagebreak
\end{proof}

\begin{definition}\label{topQS}%
We imbed the set $\quasistates (A)$ of quasi-states on $A$ in the unit ball of
the dual normed space of the real normed space $A\sa$, cf.\ \ref{propquasi}.
The set $\quasistates (A)$ then is a compact Hausdorff space in the \linebreak
weak* topology. (By Alaoglu's Theorem, cf.\ the appendix \ref{Alaoglu}.)
\end{definition}

\begin{theorem}%
Let $\pi$ be a cyclic $\sigma$-contractive representation of $A$ on a Hilbert
space $H$. There then exists a state $\psi$ on $A$ such that $\pi$ is spatially
equivalent to $\pi _{\psi}$. Indeed, each cyclic unit vector $c$ of $\pi$ gives
rise to such a state $\psi$ by putting
\[ \psi (a) := \langle \pi (a) c, c \rangle \qquad ( \,a \in A \,). \]
There then exists a unique unitary operator intertwining $\pi$ with $\pi _{\psi}$
taking $c$ to the reproducing vector $c_{\psi}$.
\end{theorem}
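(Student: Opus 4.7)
The plan is to assemble the statement directly from the preceding three items: \ref{staterep}, \ref{repstate}, and the intertwining lemma \ref{coeffequal}. Essentially everything has been prepared, so the proof is just a matter of verifying that the hypotheses of \ref{coeffequal} are met by $(\pi, c)$ and $(\pi_\psi, c_\psi)$.

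First I would observe that a cyclic representation is non-degenerate, so theorem \ref{staterep} applies to the unit vector $c$: the formula $\psi(a) := \langle \pi(a)c, c \rangle$ defines a state on $A$. In particular, $\psi$ is weakly continuous on $A\sa$ and has finite variation $v(\psi) = 1$, so the GNS construction of \ref{GNSvarphi} produces the representation $\pi_\psi$ on $H_\psi$. Next, theorem \ref{repstate} gives that $\pi_\psi$ is cyclic with cyclic unit vector $c_\psi$ (the reproducing vector), and that
\[ \psi(a) = \langle \pi_\psi(a) c_\psi, c_\psi \rangle_\psi \quad \text{for all} \quad a \in A. \]

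At this point I would invoke lemma \ref{coeffequal}. Both $(\pi, c)$ and $(\pi_\psi, c_\psi)$ are cyclic representations of $A$ whose diagonal coefficient at the respective cyclic vectors equals $\psi$. The lemma therefore yields a unique unitary operator $U \in C(\pi, \pi_\psi)$ with $Uc = c_\psi$, which is precisely the spatial equivalence we want, together with the stated uniqueness.

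There is essentially no obstacle here: every step is a direct citation. The only thing to verify carefully is that $c$ really is a unit vector (so that \ref{staterep} applies and yields a state, not merely a quasi-state) — but this is part of the hypothesis of the statement. Thus no calculations beyond those already carried out in the three cited items are needed.
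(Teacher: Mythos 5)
Your proposal is correct and follows essentially the same route as the paper: the paper's own proof consists precisely of observing that $\langle \pi(a)c, c\rangle = \psi(a) = \langle \pi_\psi(a)c_\psi, c_\psi\rangle_\psi$ for all $a \in A$ and then citing \ref{coeffequal}. Your additional citations of \ref{staterep} and \ref{repstate} merely spell out preparatory facts the paper leaves implicit.
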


\begin{proof}
This follows from \ref{coeffequal} because
\[ \langle \pi (a) c, c \rangle = \psi (a) =
\langle \pi _{\psi} (a) c_{\psi}, c_{\psi} \rangle _{\psi}
\quad \text{for all} \quad a \in A. \qedhere \]
\end{proof}

\begin{theorem}\label{stateratio}%
The set of states on $A$ parametrises the class of cyclic $\sigma$-contractive
representations of $A$ on Hilbert spaces up to spatial equivalence. More
precisely, if $\psi$ is a state on $A$, then $\pi _{\psi}$ is a \linebreak
$\sigma$-contractive cyclic representation of $A$. Conversely, if $\pi$ is any
\linebreak $\sigma$-contractive cyclic representation of $A$ on a Hilbert
space, there \linebreak exists a state $\psi$ on $A$ such that $\pi$ is spatially
equivalent to $\pi _{\psi}$.
\end{theorem}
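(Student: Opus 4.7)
The proof is essentially a packaging of results already established in the preceding paragraphs, so the plan is to assemble them in the right order rather than produce new arguments.

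The forward direction is immediate: given a state $\psi$ on $A$, theorem \ref{repstate} asserts exactly that the GNS representation $\pi_{\psi}$ is a $\sigma$-contractive cyclic representation of $A$ on the Hilbert space $H_{\psi}$, with cyclic unit vector $c_{\psi}$ satisfying $\psi(a) = \langle \pi_{\psi}(a) c_{\psi}, c_{\psi} \rangle_{\psi}$ for all $a \in A$. So there is nothing to do here.

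For the converse, let $\pi$ be a cyclic $\sigma$-contractive representation of $A$ on a Hilbert space $H$, and pick a cyclic unit vector $c \in H$. Define $\psi(a) := \langle \pi(a) c, c \rangle$ for $a \in A$. I first claim that $\psi$ is a state. It is evidently a positive linear functional. For weak continuity on $A\sa$, observe that for every $b \in A$,
\[ \psi(b^*ab) = \langle \pi(a) \pi(b) c, \pi(b) c \rangle, \]
and since $\pi$ is $\sigma$-contractive, it is contractive on $A\sa$ by \ref{sigmaAsa}(iii), so the functional $a \mapsto \psi(b^*ab)$ is bounded by $\|\pi(b) c\|^2 \cdot |a|$ on $A\sa$. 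Since $\pi$ is cyclic it is non-degenerate, so proposition \ref{variationequal} applies and gives $v(\psi) = \langle c, c \rangle = 1$; in particular $\psi$ has finite variation and is a state.

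Finally, spatial equivalence of $\pi$ and $\pi_{\psi}$ follows from lemma \ref{coeffequal}. Indeed, $\pi$ is cyclic with cyclic vector $c$ by hypothesis, and by theorem \ref{repstate}, $\pi_{\psi}$ is cyclic with cyclic vector $c_{\psi}$ satisfying
\[ \langle \pi_{\psi}(a) c_{\psi}, c_{\psi} \rangle_{\psi} = \psi(a) = \langle \pi(a) c, c \rangle \quad \text{for all} \quad a \in A. \]
The two coefficient functionals coincide, so lemma \ref{coeffequal} produces a unitary operator from $H$ onto $H_{\psi}$ intertwining $\pi$ with $\pi_{\psi}$ (and sending $c$ to $c_{\psi}$), which is exactly the required spatial equivalence. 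There is no real obstacle here; the only point requiring care is verifying the weak continuity on $A\sa$ for $\psi$ through its translates $a \mapsto \psi(b^*ab)$, which is handled by the vector-state reformulation above together with $\sigma$-contractivity.
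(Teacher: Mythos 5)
Your proposal is correct and follows essentially the same route as the paper: the forward direction is \ref{repstate}, the verification that $a \mapsto \langle \pi(a)c,c\rangle$ is a state is exactly the content of \ref{staterep} (via \ref{sigmaAsa} and \ref{variationequal}), and the spatial equivalence is obtained from \ref{coeffequal} by matching coefficient functionals, just as in the paper's preceding theorem. No gaps.
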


Also of importance is the following uniqueness theorem. (Indeed there is
another way to define the GNS representation associated with a state.)

\begin{theorem}%
The GNS representation $\pi_{\psi}$, associated with a state $\psi$
on $A$, is determined up to spatial equivalence by the condition
\[ \psi (a) = \langle \pi _{\psi} (a) c _{\psi} , c _{\psi} \rangle _{\psi}
\quad \text{for all} \quad a \in A, \]
in the following sense. If $\pi$ is another cyclic representation of $A$ on a
Hilbert space, with cyclic vector $c$, such that
\[ \psi (a) = \langle \pi (a) c , c \rangle \quad \text{for all} \quad a \in A, \]
then $\pi$ is spatially equivalent to $\pi _{\psi}$, and there exists a unique
unitary operator intertwining $\pi$ with $\pi _{\psi}$, taking $c$ to $c _{\psi}$.
\index{concepts}{GNS construction|)}
\end{theorem}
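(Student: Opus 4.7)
The plan is to reduce the statement to a direct application of Lemma \ref{coeffequal} on the spatial equivalence of cyclic representations with matching coefficient functionals. I would first set up the two candidates for comparison: the GNS representation $\pi_\psi$ on $H_\psi$ with distinguished vector $c_\psi$, and the given cyclic representation $\pi$ on its Hilbert space with cyclic vector $c$. By Theorem \ref{repstate}, the vector $c_\psi$ is a cyclic unit vector for $\pi_\psi$, and one has $\psi(a) = \langle \pi_\psi(a) c_\psi, c_\psi\rangle_\psi$ for all $a \in A$. By hypothesis, one also has $\psi(a) = \langle \pi(a) c, c\rangle$ for all $a \in A$.

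Next I would verify that the hypotheses of Lemma \ref{coeffequal} are met: both $\pi_\psi$ and $\pi$ are cyclic representations of the \st-algebra $A$ on Hilbert spaces, with cyclic vectors $c_\psi$ and $c$ respectively, and the two associated coefficient functionals $a \mapsto \langle \pi_\psi(a) c_\psi, c_\psi\rangle_\psi$ and $a \mapsto \langle \pi(a) c, c\rangle$ are both equal to $\psi$, hence equal to each other. Lemma \ref{coeffequal} then yields a unique unitary operator $U \in C(\pi, \pi_\psi)$ with $Uc = c_\psi$, which is precisely the intertwining operator claimed in the statement, and which witnesses the spatial equivalence of $\pi$ and $\pi_\psi$.

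There is essentially no obstacle here, since the work has already been done in Lemma \ref{coeffequal}: the non-trivial content is the construction of $U$ as the continuous extension of the isometric map $\pi(a) c \mapsto \pi_\psi(a) c_\psi$, where the well-definedness and isometric character both follow from the equality of the coefficient functionals. The only point worth making explicit in the write-up is that the roles of $\pi_1, \pi_2$ in Lemma \ref{coeffequal} are taken here by $\pi$ and $\pi_\psi$ (in that order), so the unique unitary delivered by the lemma intertwines $\pi$ with $\pi_\psi$ and sends $c$ to $c_\psi$, exactly as asserted. Thus the proof reduces to a one-line invocation of Lemma \ref{coeffequal} once the coefficient functionals are identified.
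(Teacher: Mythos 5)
Your proposal is correct and coincides with the paper's own proof, which simply invokes Lemma \ref{coeffequal}; your identification of the coefficient functionals via Theorem \ref{repstate} and the hypothesis is exactly the verification needed.
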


\begin{proof} \ref{coeffequal}. \pagebreak \end{proof}

\begin{proposition}\label{plusstate}%
If $\psi$ is a state on $A$, then
\[ \psi (a) \geq 0 \quad \text{for all} \quad a \in A_+. \]
\end{proposition}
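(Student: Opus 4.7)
The plan is to leverage the GNS representation associated with the state $\psi$, thereby transferring the positivity question from the abstract normed \st-algebra $A$ to the concrete C*-algebra $\blop(H_{\psi})$, where positivity is characterised by the pointwise inequality of theorem \ref{posop}.

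First I would invoke theorem \ref{repstate}, which applies because $\psi$ is a state: this produces the cyclic $\sigma$-contractive GNS representation $\pi_{\psi} : A \to \blop(H_{\psi})$ together with the reproducing unit vector $c_{\psi} \in H_{\psi}$, and gives the key identity
\[ \psi(a) = \langle \pi_{\psi}(a) c_{\psi}, c_{\psi} \rangle_{\psi} \quad \text{for all} \quad a \in A. \]
Next, since $\pi_{\psi}$ is a \st-algebra homomorphism from $A$ into the \st-algebra $\blop(H_{\psi})$, proposition \ref{hompos} yields that $\pi_{\psi}(a) \in \blop(H_{\psi})_+$ whenever $a \in A_+$, i.e.\ $\pi_{\psi}(a)$ is a positive bounded operator on the Hilbert space $H_{\psi}$.

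Finally I would apply theorem \ref{posop}, the characterisation of positivity in $\blop(H_{\psi})$, to conclude that
\[ \langle \pi_{\psi}(a) x, x \rangle_{\psi} \geq 0 \quad \text{for all} \quad x \in H_{\psi}. \]
Specialising to $x = c_{\psi}$ and combining with the GNS identity above, one obtains $\psi(a) \geq 0$, as required.

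There is no real obstacle here: the statement is essentially a corollary of the machinery already assembled. The only subtlety worth flagging is that positivity of $a$ in the abstract \st-algebra $A$ is a purely spectral condition, and one cannot in general write $a = b^* b$ within $A$; it is precisely the passage through $\pi_{\psi}$ into a C*-algebra (where such a representation of positive elements is available) that makes the argument go through. This is also why the direct approach via the defining inequality $\psi(b^* b) \geq 0$ is insufficient and the GNS construction is needed.
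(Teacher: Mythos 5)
Your proof is correct and follows exactly the paper's own route: pass to the GNS representation via \ref{repstate}, use \ref{hompos} to see that $\pi_{\psi}(a)$ is a positive operator, and conclude with \ref{posop} applied to the vector $c_{\psi}$. No gaps.
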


\begin{proof}
If $a \in A_+$ then $\pi _{\psi} (a) \in \blop(H) _+$, cf.\ \ref{hompos}, whence, by \ref{posop},
\[ \psi (a) = \langle \pi _{\psi} (a) c _{\psi}, c _{\psi} \rangle _{\psi} \geq 0. \qedhere \]
\end{proof}

\medskip
The remaining items \ref{bdedstatesdef} - \ref{kerpsi} give a sort of
justification for our terminology ``state'' for important classes of normed \st-algebras.

\begin{definition}[normed \protect\st-algebras with continuous states]%
\index{concepts}{algebra!normed s-algebra@normed \protect\st-algebra!with continuous states}%
\index{concepts}{states!with continuous states}\label{bdedstatesdef}%
\index{concepts}{continuous states!normed \protect\st-algebra with}%
We shall say that $A$ \underline{has continuous states}, or that $A$ is a normed
\st-algebra \underline{with continuous states}, if every state on $A$ is continuous.
\end{definition}

In practice, normed \st-algebras have continuous states:

\begin{proposition}\label{bdedstatessuffcond}%
$A$ has continuous states whenever any of the three following conditions is satisfied:
\begin{itemize}
   \item[$(i)$] the involution in $A$ is continuous,
  \item[$(ii)$] $A$ is complete, that is, $A$ is a Banach \st-algebra,
 \item[$(iii)$] $A$ is a commutative \st-subalgebra of a Banach \st-algebra.
 \item[$(iv)$] $A$ is a \st-subalgebra of a Hermitian Banach \st-algebra.
\end{itemize}
\end{proposition}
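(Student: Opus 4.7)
The overall plan is to exploit the fact, established in \ref{propquasi}, that every state is $\sigma$-contractive in the sense of \ref{sigmacontr}. Once this is in hand, each of the four conditions reduces to a previously proved automatic-continuity result for $\sigma$-contractive linear maps. So the strategy is: reduce ``state'' to ``$\sigma$-contractive linear functional'', then invoke the appropriate theorem for each hypothesis. No calculation of variation or detour through $H_\psi$ is needed; the work has already been done.

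For (i), assuming the involution is continuous, I would combine \ref{sigmaAsa}(i), which says that a $\sigma$-contractive map is contractive in the accessory $*$-norm $\|\cdot\|$, with \ref{involcont}, which states that when the involution is continuous the accessory $*$-norm is equivalent to the original norm. A state is therefore bounded with respect to the original norm. For (ii), the state $\psi$ has finite variation $v(\psi)=1$, so \ref{Banachfinvarbded} directly gives that $\psi$ is continuous on the Banach $*$-algebra $A$; no appeal to $\sigma$-contractivity is actually necessary here, though it also works via route (iv) since Banach $*$-algebras are $*$-subalgebras of themselves (but there is no Hermiticity assumption, so one really wants \ref{Banachfinvarbded}).

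For (iii), the state $\psi$ is $\sigma$-contractive, and \ref{commsubBsigmacontr} asserts that every $\sigma$-contractive linear map from a commutative $*$-subalgebra of a Banach $*$-algebra into a normed space is contractive. Hence $\psi$ is contractive, in particular continuous. For (iv), the analogous statement is \ref{Hermcont}: every $\sigma$-contractive linear map from a $*$-subalgebra of a Hermitian Banach $*$-algebra to a normed space is continuous. This applies directly to $\psi$.

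I do not foresee a significant obstacle; the content of the proposition is essentially that the generic pathology (discontinuity of states) cannot occur once any of these mild structural hypotheses holds, and all four implications are one-line applications of results already in the text. The only point requiring a small amount of care is to be sure that we need only the weak continuity of $\psi$ on $A\sa$ (built into the definition of state) to invoke \ref{sigmaAsa}, which was already folded into \ref{propquasi}; once \ref{propquasi} is cited, all four cases are immediate.
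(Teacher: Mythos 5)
Your proposal is correct and follows essentially the same route as the paper, which simply cites \ref{continvol}, \ref{Banachfinvarbded}, \ref{commsubBsigmacontr}, and \ref{Hermcont} for the four cases respectively; your handling of (i) via \ref{sigmaAsa}(i) together with \ref{involcont} is just an unpacking of the implication (i) $\Rightarrow$ (vi) of \ref{continvol}. The reduction of each case to $\sigma$-contractivity via \ref{propquasi} is exactly the intended mechanism.
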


\begin{proof}
\ref{continvol}, \ref{Banachfinvarbded}, \ref{commsubBsigmacontr}, \ref{Hermcont}.
\end{proof}

\begin{theorem}\label{bdedstatesthm}%
If $A$ has continuous states, then a $\sigma$-contractive
representation $\pi$ of $A$ on a Hilbert space $H$ is continuous.
\end{theorem}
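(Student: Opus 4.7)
The plan is to reduce to a non-degenerate subrepresentation so that the vector functionals $a \mapsto \langle \pi(a)x,x\rangle$ are genuine states, then use the hypothesis on $A$ to obtain weak continuity, and finally invoke Theorem \ref{weakcontHilbert} to upgrade weak continuity to norm continuity.

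First I would decompose $H = R \oplus N$ as in \ref{nondegenerate}, write $\pi = \pi_R \oplus 0$, and observe that $\|\pi(a)\| = \|\pi_R(a)\|$ for every $a \in A$. Hence it suffices to prove the statement for $\pi_R$, and I may assume from the outset that $\pi$ is non-degenerate. The $\sigma$-contractivity is inherited by $\pi_R$ since $\|\pi_R(a)\| \leq \|\pi(a)\| \leq \rsigma(a)$.

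Next, for every unit vector $x \in H$, the positive linear functional
\[ \psi_x(a) := \langle \pi(a) x, x \rangle \qquad (a \in A) \]
is a state on $A$ by Theorem \ref{staterep}, using that $\pi$ is both non-degenerate and $\sigma$-contractive. Since $A$ has continuous states, each $\psi_x$ is continuous on $A$. For an arbitrary non-zero vector $x \in H$, the identity $\psi_x = \|x\|^{\,2}\,\psi_{x/\|x\|}$ shows that $a \mapsto \langle \pi(a)x, x\rangle$ is still continuous, and for $x=0$ this is trivial. Consequently $\pi$ is weakly continuous on all of $A$ in the sense of the definition preceding \ref{weaksa}. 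Theorem \ref{weakcontHilbert} then yields continuity of $\pi$.

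The only subtle point is the initial reduction to the non-degenerate case. Without it, for degenerate $x$ the functional $\psi_x$ has variation $v(\psi_x) \leq \langle x,x\rangle$ with possibly strict inequality (cf.\ \ref{variationinequal}), so it need only be a quasi-state and the hypothesis ``every state is continuous'' does not apply verbatim. Passing to $\pi_R$ restores the equality $v(\psi_x) = \langle x,x \rangle$ via \ref{variationequal} and hence produces a bona fide state. An alternative route that avoids the reduction entirely is to note, using Theorem \ref{finvarconvex}, that any quasi-state of positive variation is a positive scalar multiple of a state and that a quasi-state of zero variation vanishes identically; either way, quasi-states are continuous under the hypothesis, and the same polarisation-free argument proceeds.
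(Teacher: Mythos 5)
Your proof is correct and follows essentially the same route as the paper's: the vector functionals $a \mapsto \langle \pi(a)x,x\rangle$ are continuous by the hypothesis on states, so $\pi$ is weakly continuous, and \ref{weakcontHilbert} finishes. The paper simply observes directly that these functionals are quasi-states (hence nonnegative multiples of states, hence continuous), which is exactly the alternative you sketch in your final paragraph, making the reduction to the non-degenerate case unnecessary.
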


\begin{proof} Let $x$ be a vector in the unit ball of $H$. The positive linear
functional $a \mapsto \langle \pi (a) x, x \rangle$ $(a \in A)$
then is a quasi-state (by \ref{sigmaAsa} as well as \ref{variationinequal}),
and hence is continuous. This says that $\pi$ is weakly continuous on the
Hilbert space $H$, and thus continuous, cf.\ \ref{weakcontHilbert}.
\end{proof}

\begin{corollary}\label{bdedstatesGNSbded}%
If $A$ has continuous states, then the GNS \linebreak
representations associated with states on $A$ are continuous.
\end{corollary}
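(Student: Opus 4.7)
The plan is to observe that this corollary follows immediately by combining two earlier results in the paragraph. First I would invoke Theorem \ref{repstate}, which guarantees that for any state $\psi$ on $A$, the GNS representation $\pi_\psi$ is a $\sigma$-contractive representation of $A$ on the Hilbert space $H_\psi$. This sets up the hypotheses needed for the next step.

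Then I would apply Theorem \ref{bdedstatesthm}, which asserts precisely that, under the standing assumption that $A$ has continuous states, every $\sigma$-contractive representation of $A$ on a Hilbert space is continuous. Specializing this statement to the representation $\pi_\psi$ on $H_\psi$ yields the desired continuity.

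Since both ingredients are already established in the excerpt, there is no real obstacle to overcome; the proof is a one-line concatenation. The only thing worth verifying (but immediate from Theorem \ref{repstate}) is that $\pi_\psi$ lands on a genuine Hilbert space rather than merely a pre-Hilbert space, so that Theorem \ref{bdedstatesthm} applies directly without any completion argument.
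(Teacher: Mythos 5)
Your proposal is correct and is exactly the argument the paper intends: the corollary is stated without proof precisely because it is the immediate concatenation of \ref{repstate} (which gives that $\pi_\psi$ is a $\sigma$-contractive representation on the genuine Hilbert space $H_\psi$, the completion of $\underline{A}$) with \ref{bdedstatesthm}. Nothing further is needed.
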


\begin{remark}\label{kerpsi}
It is well-known that a linear functional $\psi$ on a normed space is
continuous if and only if $\ker \psi$ is closed, and thence if and only
if $\psi$ has closed graph. (Hint: factor the functional over its kernel,
and use the quotient norm, cf.\ the appendix \ref{quotspace}.)
\pagebreak
\end{remark}

\clearpage


\section{Unit or Approximate Unit and Continuous Involution}

\begin{proposition}\label{vare}%
A positive linear functional $\varphi$ on a unital \linebreak \st-algebra
$A$ is Hermitian and has finite variation $v(\varphi) = \varphi(e)$.
\end{proposition}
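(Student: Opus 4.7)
The plan is to split the statement into its two assertions and attack them in the natural order: first Hermiticity, and then the variation identity, which will follow essentially from the Cauchy-Schwarz inequality applied to the induced positive Hilbert form \ref{inducedHf}.

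To prove that $\varphi$ is Hermitian, I would fix $a \in A$ and feed the two auxiliary elements $e+a$ and $e+\iu a$ into the positivity assumption. Expanding
\[ \varphi \bigl( \,(e+a)^*(e+a) \,\bigr) \geq 0
\quad \text{and} \quad
\varphi \bigl( \,(e+\iu a)^*(e+\iu a) \,\bigr) \geq 0, \]
and using that $\varphi(e) = \varphi(e^*e) \geq 0$ and $\varphi(a^*a) \geq 0$ are real, one reads off that $\varphi(a) + \varphi(a^*)$ is real and that $\varphi(a) - \varphi(a^*)$ is purely imaginary, which together force $\varphi(a^*) = \overline{\varphi(a)}$. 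This is a routine trick and I do not anticipate any obstacle here.

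For the variation identity, the key observation is that the induced sesquilinear form $\langle a, b \rangle_\varphi = \varphi(b^*a)$ from proposition \ref{inducedHf} is a positive Hilbert form, so the Cauchy-Schwarz inequality \ref{Hilbertform} applies. Since $A$ is unital, I can write $\varphi(a) = \varphi(e^*a) = \langle a, e \rangle_\varphi$, and Cauchy-Schwarz then gives
\[ | \,\varphi(a) \,|^{\,2} = | \,\langle a, e \rangle_\varphi \,|^{\,2}
\leq \langle a, a \rangle_\varphi \cdot \langle e, e \rangle_\varphi
= \varphi(a^*a) \,\varphi(e) \]
for every $a \in A$. This shows that $\varphi$ has finite variation with $v(\varphi) \leq \varphi(e)$, per definition \ref{variation}.

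The converse inequality I would get by specialising the defining inequality of the variation (from \ref{variation}) to $a := e$, which yields $\varphi(e)^{\,2} \leq v(\varphi) \,\varphi(e)$. If $\varphi(e) > 0$, dividing gives $\varphi(e) \leq v(\varphi)$, hence equality. If $\varphi(e) = 0$, then the Cauchy-Schwarz estimate already shows $\varphi \equiv 0$, so both $v(\varphi)$ and $\varphi(e)$ vanish and the identity still holds trivially. I expect no real obstacle; the only mild subtlety is the bifurcation according to whether $\varphi(e)$ vanishes, which one must handle explicitly since the argument at the end of \ref{variation} takes an infimum.
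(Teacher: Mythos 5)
Your proof is correct and follows essentially the same route as the paper's: the bound $v(\varphi) \leq \varphi(e)$ comes from the Cauchy--Schwarz inequality for the induced positive Hilbert form applied to $\varphi(a) = \langle a, e \rangle_\varphi$, and Hermiticity comes from positivity of $\varphi(b^*b)$ --- the paper merely routes the latter through the automatic Hermitian symmetry of the form recorded in \ref{Hilbertform}, whereas you unfold the same computation directly via $e+a$ and $e+\iu a$. You are in fact slightly more explicit than the paper about the reverse inequality $\varphi(e) \leq v(\varphi)$ (obtained by specialising to $a = e$, with the degenerate case $\varphi(e) = 0$ handled separately), which the paper's displayed proof leaves implicit.
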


\begin{proof}
For $a \in A$, we have
\begin{align*}
\varphi (a^*) = \varphi (a^*e) = \langle e, a \rangle _{\varphi}
= \overline{\langle a, e \rangle} _{\varphi} = \overline{\varphi (e^*a)}
= \overline{\varphi (a)},
\end{align*}
as well as
\begin{align*}
{| \,\varphi (a) \,| \,}^2 & = {| \,\varphi (e^*a) \,| \,}^2
= {| \,\langle a, e \rangle _{\varphi} \,| \,}^2 \\
 & \leq \langle a, a \rangle _{\varphi} \,\langle e, e \rangle _{\varphi}
 = \varphi (e^*e) \,\varphi (a^*a) = \varphi (e) \,\varphi (a^*a).\ \qedhere
\end{align*}
\end{proof}

\begin{corollary}%
Let $A$ be a unital normed \st-algebra with continuous states. Then a
state on $A$ can be defined as a continuous positive linear functional
$\psi$ on $A$ with $\psi (e) = 1$.
\end{corollary}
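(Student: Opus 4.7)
The plan is to verify that the two descriptions of a state coincide, by a direct appeal to the preceding Proposition \ref{vare} together with the hypothesis that $A$ has continuous states.

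First I would show that every state $\psi$ in the sense of the paper is a continuous positive linear functional with $\psi(e) = 1$. Continuity is immediate from the standing hypothesis that $A$ has continuous states (Definition \ref{bdedstatesdef}). Proposition \ref{vare} gives $v(\psi) = \psi(e)$ for any positive linear functional on a unital \st-algebra, and since by definition $v(\psi) = 1$, this forces $\psi(e) = 1$.

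Conversely, suppose $\psi$ is a continuous positive linear functional on $A$ with $\psi(e) = 1$. By Proposition \ref{vare} again, $\psi$ has finite variation with $v(\psi) = \psi(e) = 1$. It remains to verify that $\psi$ is weakly continuous on $A\sa$ in the sense of Definition \ref{weakbdedsadef}. But for each $b \in A$, the map $a \mapsto b^*ab$ is continuous from $A\sa$ into $A$ by joint continuity of the multiplication in the normed algebra $A$, and $\psi$ is continuous on $A$ by assumption; hence $a \mapsto \psi(b^*ab)$ is continuous on $A\sa$ as a composition of continuous maps. Therefore $\psi$ is a state in the sense of the paper.

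There is no real obstacle here; the whole argument is just the observation that in the unital setting, finite variation $v(\psi) = 1$ reduces to the normalisation $\psi(e) = 1$ (by \ref{vare}), while the continuity assumption absorbs the weak continuity condition on $A\sa$ trivially.
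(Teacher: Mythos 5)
Your proof is correct and is exactly the argument the paper intends (the paper leaves this corollary without an explicit proof, as an immediate consequence of \ref{vare} and the definition of continuous states). Both directions are handled properly, including the only point that needed checking: that for fixed $b$ the map $a \mapsto \psi(b^*ab)$ is continuous on $A\sa$, which follows from submultiplicativity of the norm since $b$ and $b^*$ are fixed and no continuity of the involution is needed.
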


\begin{theorem}\label{eBbded}%
A positive linear functional on a unital Banach \linebreak \st-algebra is
continuous. A state on a unital Banach \st-algebra can be defined as an
automatically continuous positive linear functional $\psi$ with $\psi (e) = 1$.
\end{theorem}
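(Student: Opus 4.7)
The plan is to chain together two earlier results: proposition \ref{vare}, which controls the variation of a positive linear functional on a unital \st-algebra, and corollary \ref{Banachfinvarbded}, which gives continuity of positive linear functionals of finite variation on a Banach \st-algebra. Since neither input is hard (both are already proved), the present theorem is essentially a corollary; my ``proof'' is mostly bookkeeping to make sure the variation is finite so that \ref{Banachfinvarbded} applies.

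First I would prove the main claim: let $\varphi$ be a positive linear functional on a unital Banach \st-algebra $A$. By proposition \ref{vare}, $\varphi$ is Hermitian and has finite variation with $v(\varphi) = \varphi(e)$ (in particular $v(\varphi) < \infty$, since $\varphi(e)$ is a definite scalar). Then corollary \ref{Banachfinvarbded} applies and gives that $\varphi$ is continuous.

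Next I would establish the alternative definition of states in the unital Banach \st-algebra case. For one direction, let $\psi$ be any state on $A$ in the sense of the original definition. By \ref{vare}, $v(\psi) = \psi(e)$, and the definition of a state gives $v(\psi) = 1$, hence $\psi(e) = 1$; the first part of the theorem already supplies continuity. For the converse, let $\psi$ be a positive linear functional on $A$ with $\psi(e) = 1$. Proposition \ref{vare} gives $v(\psi) = \psi(e) = 1$, which is finite, and then \ref{Banachfinvarbded} yields that $\psi$ is continuous; continuity on all of $A$ implies in particular weak continuity on $A\sa$ in the sense of \ref{weakbdedsadef}. So $\psi$ meets all three defining properties of a state (weak continuity on $A\sa$, finite variation, and variation equal to $1$). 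The ``automatically continuous'' qualifier is justified by the first half of the theorem applied to such a $\psi$.

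There is no genuine obstacle here; the only thing to watch for is that one must cite \ref{vare} before \ref{Banachfinvarbded} — one cannot apply \ref{Banachfinvarbded} directly without first knowing that $\varphi$ has finite variation, and in the non-unital case this would fail in general, which is precisely why unitality is used. Since nothing in the argument requires anything beyond these two cited items, the write-up can be kept to a couple of lines.
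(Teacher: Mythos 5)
Your proof is correct and follows exactly the route the paper takes: its entire proof of \ref{eBbded} is the citation of \ref{vare} followed by \ref{Banachfinvarbded}, which is precisely the chain you spell out. Your additional bookkeeping (checking the three defining properties of a state, noting that \ref{vare} must come first to secure finite variation) is sound and just makes explicit what the paper leaves implicit.
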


\begin{proof}
This follows from \ref{vare} and \ref{Banachfinvarbded}.
\end{proof}

\begin{proposition}[extensibility]\label{extendable}
\index{concepts}{extensible}\index{concepts}{functional!extensible}%
A positive linear functional $\varphi$ on a \st-algebra
$A$ without unit can be extended to a positive linear functional
$\tld{\varphi}$ on $\tld{A}$ if and only if $\varphi$ is Hermitian
and has finite variation. In this case, $\varphi$ is called
\underline{extensible}. One then can choose for
$\tld{\varphi}(e)$ any real value $\geq v(\varphi)$, and
the choice $\tld{\varphi}(e) := v(\varphi)$ is minimal.
\end{proposition}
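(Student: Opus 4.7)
The plan is to prove both implications separately, and for the sufficiency part exhibit an explicit one-parameter family of extensions indexed by the real value assigned to the unit.

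For necessity, suppose that $\varphi$ admits a positive linear extension $\tld{\varphi}$ to $\tld{A}$. Since $\tld{A}$ is unital, proposition \ref{vare} applies to $\tld{\varphi}$ and yields that $\tld{\varphi}$ is Hermitian with finite variation $v(\tld{\varphi}) = \tld{\varphi}(e)$. Restricting the relations $\tld{\varphi}(a^*) = \overline{\tld{\varphi}(a)}$ and ${|\,\tld{\varphi}(a)\,|}^2 \leq \tld{\varphi}(e) \,\tld{\varphi}(a^*a)$ to $a \in A$ shows that $\varphi$ is Hermitian and that $v(\varphi) \leq \tld{\varphi}(e) < \infty$. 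This also settles the minimality claim: any extension must satisfy $\tld{\varphi}(e) \geq v(\varphi)$.

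For sufficiency, suppose $\varphi$ is Hermitian with finite variation and fix any real $\lambda \geq v(\varphi)$. Using the direct sum decomposition $\tld{A} = \mathds{C}e \oplus A$, I would define
\[ \tld{\varphi}(\alpha e + a) := \alpha \lambda + \varphi(a) \qquad (\,\alpha \in \mathds{C},\ a \in A\,). \]
Linearity and the extension property are immediate. The real task is positivity: given $b = \alpha e + a \in \tld{A}$, one computes $b^*b = {|\,\alpha\,|}^2 \,e + \overline{\alpha}\,a + \alpha\,a^* + a^*a$, so that, using the Hermitian property $\varphi(a^*) = \overline{\varphi(a)}$,
\[ \tld{\varphi}(b^*b) = {|\,\alpha\,|}^2 \,\lambda + 2 \,\mathrm{Re}\bigl(\overline{\alpha}\,\varphi(a)\bigr) + \varphi(a^*a). \]

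The main obstacle, and the single point where the finite variation hypothesis is used, is to show that this real expression is non-negative. The key inequality $|\varphi(a)| \leq \sqrt{v(\varphi)} \cdot \sqrt{\varphi(a^*a)} \leq \sqrt{\lambda} \cdot \sqrt{\varphi(a^*a)}$, valid by the definition of $v(\varphi)$ together with $\lambda \geq v(\varphi)$, yields $2 \,\mathrm{Re}(\overline{\alpha}\,\varphi(a)) \geq -2 \,|\,\alpha\,| \sqrt{\lambda} \,\sqrt{\varphi(a^*a)}$, and plugging this in exhibits $\tld{\varphi}(b^*b)$ as a perfect square:
\[ \tld{\varphi}(b^*b) \geq {\bigl(\,|\,\alpha\,| \,\sqrt{\lambda} - \sqrt{\varphi(a^*a)}\,\bigr)}^{2} \geq 0. \]
The choice $\lambda = v(\varphi)$ is then minimal by the necessity half. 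Altogether, this gives both the existence criterion and the characterisation of $\tld{\varphi}(e)$.
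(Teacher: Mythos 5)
Your proof is correct and follows essentially the same route as the paper's: necessity and minimality via \ref{vare}, and sufficiency by defining $\tld{\varphi}(\alpha e + a) := \alpha\lambda + \varphi(a)$ and verifying positivity from the expansion of $(\alpha e + a)^*(\alpha e + a)$ together with the finite-variation inequality. The only (cosmetic) difference is that the paper passes to polar form and treats the resulting expression as a real quadratic in $r \geq 0$, whereas you bound the cross term directly and complete the square; both rest on the same estimate ${|\,\varphi(a)\,|}^2 \leq v(\varphi)\,\varphi(a^*a) \leq \lambda\,\varphi(a^*a)$.
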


\begin{proof}
The ``only if'' part follows from \ref{vare}. Conversely,
let $\varphi$ be Hermitian and have finite variation. Extend
$\varphi$ to a linear functional $\tld{\varphi}$ on $\tld{A}$ with
$\tld{\varphi}(e) := \gamma$ for some $\gamma \geq v (\varphi)$.
For $a \in A$ and $\lambda \in \mathds{C}$, we get
(using the Hermitian nature of $\varphi$):
\begin{align*}
\tld{\varphi}\,\bigl( \,(\lambda e+a)^* (\lambda e+a) \,\bigr)
= & \ \tld{\varphi} \,\bigl( \,{| \,\lambda \,| \,}^2 \,e
+ \overline{\lambda} a + \lambda a^* + a^*a \,\bigr) \\
= & \ \gamma \,{| \,\lambda \,| \,}^2 +
 2 \,\mathrm{Re} \,\bigl( \,\overline{\lambda} \varphi (a) \bigr) + \varphi (a^*a).
\end{align*}
Let now $\lambda =: r \exp (\iu \alpha)$, $a =: \exp (\iu \alpha) \,b$
with $r \in [ \,0, \infty \,[$, $\alpha \in \mathds{R}$, and $b \in A$.
The above expression then becomes
\[ \gamma \,{r \,}^2 + 2 \,r \,\mathrm{Re} \,\bigl( \varphi (b) \bigr) + \varphi (b^*b).
\pagebreak \]
This expression is non-negative because by construction we have
\[ {\bigl| \,\mathrm{Re} \,\bigl( \varphi (b) \bigr) \,\bigr| \,}^2 \leq {| \,\varphi (b) \,| \,}^2
\leq v (\varphi) \,\varphi (b^*b) \leq \gamma \,\varphi (b^*b). \]
The choice $\tld{\varphi}(e) := v (\varphi)$ is minimal as
a positive linear functional $\psi$ on $\tld{A}$ extending
$\varphi$ satisfies $v(\varphi) \leq v(\psi) = \psi(e)$, cf.\ \ref{vare}.
\end{proof}

\begin{proposition}\label{extquasi}%
Let $\varphi$ be a quasi-state on a normed  \st-algebra $A$
without unit. If $A$ is a pre-C*-algebra and if the completion of $A$
contains a unit, assume that furthermore $\varphi$ is a state.
By extending $\varphi$ to a linear functional $\tld{\varphi}$ on $\tld{A}$
with $\tld{\varphi}(e) := 1$, we obtain a state $\tld{\varphi}$ on $\tld{A}$
which extends $\varphi$. Every state on $\tld{A}$ is of this form.
\end{proposition}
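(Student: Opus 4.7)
The plan is to verify the three defining properties of a state for $\tld{\varphi}$ (positivity, $v(\tld{\varphi}) = 1$, and weak continuity on $\tld{A}\sa$), and then to prove the converse by restriction.

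\textbf{Step 1 (positivity and variation).} By \ref{propquasi}, $\varphi$ is Hermitian, and by hypothesis $v(\varphi) \leq 1$. Applying the extensibility proposition \ref{extendable} with $\gamma := 1 \geq v(\varphi)$ produces a positive linear extension $\tld{\varphi}$ on $\tld{A}$ with $\tld{\varphi}(e) := 1$. Since $\tld{A}$ is unital, \ref{vare} applied to $\tld{\varphi}$ immediately gives $v(\tld{\varphi}) = \tld{\varphi}(e) = 1$ and Hermiticity for free.

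\textbf{Step 2 (weak continuity, the main step).} The strategy is to realise $\tld{\varphi}$ as a diagonal coefficient $x \mapsto \langle \sigma(x) c, c \rangle$ of a weakly continuous representation $\sigma$ of $\tld{A}$ on a Hilbert space. Start with the GNS representation $\pi_{\varphi}$ on $H_{\varphi}$, with reproducing vector $c_{\varphi}$ satisfying $\|c_{\varphi}\|^{\,2} = v(\varphi)$ and $\varphi(a) = \langle \pi_{\varphi}(a) c_{\varphi}, c_{\varphi} \rangle_{\varphi}$ (by \ref{reprovectorfinvar}, \ref{finvarcyclic}; existence guaranteed by \ref{weakbdedsarep}, since $\varphi$ is weakly continuous on $A\sa$), and extend it to $\tld{\pi_{\varphi}} : \tld{A} \to \blop(H_{\varphi})$ by $\tld{\pi_{\varphi}}(e) := \mathds{1}$. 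Now the two unitisation conventions of \ref{normunitis} force a case split. In the generic case, where $\tld{A}$ carries the norm $|\mu e + a| := |\mu| + |a|$, the character $\chi(\mu e + a) := \mu$ is Lipschitz on $\tld{A}$, and we form $\sigma := \tld{\pi_{\varphi}} \oplus \chi$ on $H_{\varphi} \oplus \mathds{C}$ with $c := c_{\varphi} \oplus \sqrt{1 - v(\varphi)}$; then $\|c\|^{\,2} = 1$ and a direct computation gives $\tld{\varphi}(x) = \langle \sigma(x) c, c \rangle$, while weak continuity of $\sigma$ on $\tld{A}\sa$ reduces to continuity of $\chi$ together with continuity of $a \mapsto \pi_{\varphi}(a) y$ on $A\sa$ (the latter via $\sigma$-contractivity of $\pi_{\varphi}$, cf.\ \ref{sigmaAsa}). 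In the pre-C*-algebra case where the completion $B$ contains a unit $e_{B}$, the extra hypothesis forces $v(\varphi) = 1$, so no augmentation is needed: take $\sigma := \tld{\pi_{\varphi}}$ and $c := c_{\varphi}$ (a unit vector). Since $A$ is a pre-C*-algebra, $\pi_{\varphi}$ is contractive by \ref{C*rs}, and so it extends uniquely to a contractive representation $\hat{\pi}$ of $B$ on $H_{\varphi}$; cyclicity of $\pi_{\varphi}$ passes to non-degeneracy of $\hat{\pi}$, so \ref{unitnondeg} yields $\hat{\pi}(e_{B}) = \mathds{1} = \tld{\pi_{\varphi}}(e)$, giving $\tld{\pi_{\varphi}} = \hat{\pi}|_{\tld{A}}$. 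Continuity of $\hat{\pi}$ in the $B$-norm (which restricts to the $\tld{A}$-norm, by the proposition preceding \ref{complunit}) now yields weak continuity of $\tld{\varphi}$ on $\tld{A}\sa$.

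\textbf{Step 3 (converse).} Let $\psi$ be a state on $\tld{A}$; then $\psi(e) = v(\psi) = 1$ by \ref{vare}, matching the extension convention. Its restriction $\varphi := \psi|_{A}$ is positive with $v(\varphi) \leq v(\psi) = 1$, and weakly continuous on $A\sa$ because the norm on $A$ coincides with the restriction of the $\tld{A}$-norm; hence $\varphi$ is a quasi-state on $A$. In the pre-C*-algebra case where the completion contains a unit, $A$ is dense in $\tld{A}$ by \ref{complunit} and $\psi$ is contractive (being a state on the pre-C*-algebra $\tld{A}$, via \ref{propquasi} and \ref{C*rs}); approximating any $x \in \tld{A}$ by sequences in $A$ then upgrades $v(\varphi) \geq v(\psi) = 1$, so $\varphi$ is in fact a state, as required by the hypothesis. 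Finally, since $\psi$ and the extension $\tld{\varphi}$ of Steps 1--2 agree on $A$ and both take the value $1$ at $e$, they coincide on all of $\tld{A} = \mathds{C} e \oplus A$.

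\textbf{Main obstacle.} The delicate point is Step 2 in the pre-C*-algebra case with unital completion: the augmenting character $\chi$ ceases to be continuous precisely because $A$ is dense in $\tld{A}$, so the naive coordinate-splitting argument from the generic case breaks down. One must instead exploit the unique extension of $\pi_{\varphi}$ to the C*-algebra $B$ in conjunction with \ref{unitnondeg} to transfer continuity back to $\tld{\pi_{\varphi}}$ and thus to $\tld{\varphi}$.
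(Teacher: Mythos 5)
Your Steps 1 and 3 are sound, and the two branches of Step 2 that you do treat are argued correctly. But the case split in Step 2 is not exhaustive, and this is a genuine gap. By \ref{normunitis}, the unitisation $\tld{A}$ carries the norm $|\,\mu e + a\,| = |\,\mu\,| + |\,a\,|$ only when $A$ is \emph{not} a pre-C*-algebra, so your ``generic case'' excludes every pre-C*-algebra $A$; your second branch then covers only those pre-C*-algebras whose completion is unital. The case of a pre-C*-algebra $A$ whose completion $B$ has \emph{no} unit (for instance $A = \cont_\mathrm{c}(\mathds{R})$ with the supremum norm, whose completion $\cont\0(\mathds{R})$ has no unit by \ref{nounit}) falls through both branches: there $\tld{A}$ carries the pre-C*-norm of \ref{preCstarunitis}, your Lipschitz estimate for the character $\chi$ is not available as stated, and no other argument is offered. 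The gap is easily closed: by \ref{Cstarequiv} (via the completion, using \ref{sphere}) the pre-C*-norm on $\tld{A}$ is equivalent to $|\,\mu\,| + \|\,a\,\|$, so $\chi$ is still continuous and your augmentation argument runs verbatim. But as written the proof simply does not cover this class of algebras. This is exactly the three-way split the paper makes.

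For comparison, the paper's own proof of the continuity step is more direct: it observes that $\tld{\varphi}(\mu e + a) = \mu + \varphi(a)$ is contractive, hence continuous, on $\tld{A}\sa$ with respect to the norm $|\,\mu\,| + |\,a\,|$ (using that a quasi-state is contractive on $A\sa$, cf.\ \ref{propquasi}), and then notes that in the first two cases this is the norm on $\tld{A}$, or an equivalent one; only the unital-completion case requires the density and continuation argument that you also use. Your GNS detour through $\tld{\pi_{\varphi}} \oplus \chi$ is valid but adds machinery without gaining anything over that one-line estimate; the genuinely non-routine content is, as you correctly identify in your closing remark, the case where $A$ is dense in $\tld{A}$.
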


\begin{proof} Recall that a quasi-state is Hermitian by \ref{propquasi},
and thus extensible. We shall show that $\tld{\varphi}$ is continuous on
$\tld{A}\sa$. It is clear that $\tld{\varphi}$ is continuous on $\tld{A}\sa$ with
respect to the norm $| \,\lambda \,| + \| \,a \,\|$, $(\lambda \in \mathds{R}$,
$a \in A\sa)$. Thus, if $A$ is not a pre-C*-algebra, then $\tld{\varphi}$ is
continuous on $\tld{A}\sa$ by definition \ref{normunitis}.
Assume now that $A$ is a pre-C*-algebra. Let $B$ be the completion of $A$.
If $B$ contains no unit, we apply \ref{Cstarequiv} to the effect that $\tld{\varphi}$
is continuous on $\tld{A}\sa$ with respect to the norm as in \ref{preCstarunitis}.
Also the last statement is fairly obvious in the above two cases.
If $B$ does have a unit, then $A$ is dense in $\tld{A}$ by \ref{complunit}.
We see that the states on $A$ and $\tld{A}$ are related by continuation.
(This uses the continuity of the involution.) This way of extending states
is compatible with the way described in the proposition, by \ref{vare}.
The statement follows immediately.
\end{proof}

\begin{corollary}\label{statecanext}%
A state on a normed \st-algebra can be extended to a state on the unitisation.
\end{corollary}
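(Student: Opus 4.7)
The plan is to reduce immediately to proposition \ref{extquasi}. If $A$ is already unital, then $\tld{A} = A$ and there is nothing to prove, so I would only need to address the case where $A$ has no unit. In that case, a state $\psi$ on $A$ is by definition a quasi-state (with variation $v(\psi) = 1$), so proposition \ref{extquasi} applies verbatim: extending $\psi$ to a linear functional $\tld{\psi}$ on $\tld{A}$ by setting $\tld{\psi}(e) := 1$ yields a state on $\tld{A}$ that restricts to $\psi$ on $A$.

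The only point that requires a moment's care is the auxiliary hypothesis in \ref{extquasi}: \emph{if $A$ is a pre-C*-algebra and the completion of $A$ contains a unit, then $\varphi$ must furthermore be a state}. Here this hypothesis is trivially satisfied, because the object we started with was already assumed to be a state, not merely a quasi-state. Thus no additional verification is needed on this front.

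Consequently the proof is essentially a one-line invocation of \ref{extquasi}, with a preliminary case split on whether $A$ is unital. There is no real obstacle: the content of the extension (both its well-definedness as a linear functional and its continuity properties under the various norms on $\tld{A}$ described in \ref{normunitis}, \ref{preCstarunitis}, and \ref{Cstarequiv}) has already been absorbed into the statement and proof of \ref{extquasi}. The only minor subtlety worth flagging explicitly in the write-up is that the variation $v(\psi) = 1$ of a state satisfies $v(\psi) \leq 1$, so the value $\tld{\psi}(e) := 1$ is admissible as a choice $\geq v(\psi)$ in the sense of \ref{extendable}, and this is precisely why the value $1$ is prescribed in \ref{extquasi}.
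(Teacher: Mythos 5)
Your proof is correct and follows the paper's intended route: the corollary is stated immediately after proposition \ref{extquasi} precisely because it is a direct consequence of it, with the unital case being trivial since $\tld{A} = A$ there. Your observation that the auxiliary hypothesis of \ref{extquasi} is automatically met because one starts with a state (not merely a quasi-state) is exactly the right point to flag.
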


\begin{definition}[approximate units]%
\index{concepts}{approximate unit}\index{concepts}{unit!approximate}%
Let $A$ be a normed algebra. A \underline{left approximate unit} in $A$
is a net $(e_i) _{i \in I}$ in $A$ such that
\[ a = \lim _{i \in I} e_i a \quad \text{for all} \quad a \in A. \]
Analogously one defines a right approximate unit, a one-sided
approximate unit, as well as a two-sided approximate unit.
A one-sided or two-sided approximate unit $(e_i) _{i \in I}$ in $A$ is
called \underline{bounded} if the net $(e_i) _{i \in I}$ is bounded in $A$.
\end{definition}

Recall that a C*-algebra has a bounded two-sided approximate unit
contained in the open unit ball, cf.\ \ref{C*canapprunit}. \pagebreak

\begin{proposition}\label{bdedfinvar}%
Let $\varphi$ be a continuous positive linear functional on a normed
\st-algebra $A$ with continuous involution and bounded one-sided
approximate unit $(e_i) _{i \in I}$. Then $\varphi$ has finite variation.
\end{proposition}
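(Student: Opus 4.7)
The plan is to show the variation inequality $|\varphi(a)|^2 \leq \gamma \,\varphi(a^*a)$ for all $a \in A$ and some $\gamma \geq 0$ by exploiting the approximate unit together with the Cauchy--Schwarz inequality for the positive Hilbert form $\langle \cdot,\cdot\rangle_\varphi$ induced by $\varphi$, cf.\ \ref{inducedHf}.

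First I would reduce to the case of a bounded \emph{left} approximate unit. If $(e_i)_{i \in I}$ is a bounded right approximate unit, then by continuity of the involution \ref{involcont}, taking adjoints of $a^* = \lim_i a^*e_i$ yields $a = \lim_i e_i^* a$, so $(e_i^*)_{i \in I}$ is a left approximate unit; moreover it is bounded, since continuity of the involution furnishes a constant $K > 0$ with $|e_i^*| \leq K\,|e_i|$. Hence we may assume $a = \lim_i e_i a$ for all $a \in A$, and we fix constants $M$ with $|e_i| \leq M$ and $K$ with $|b^*| \leq K\,|b|$ for all $b \in A$, as well as a constant $c \geq 0$ such that $|\varphi(b)| \leq c\,|b|$ for all $b \in A$.

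Next, for fixed $a \in A$, continuity of $\varphi$ gives
\[ \varphi(a) = \lim_{i \in I} \varphi(e_i a) = \lim_{i \in I} \langle a, e_i^*\rangle_\varphi, \]
recalling that $\langle b, c\rangle_\varphi = \varphi(c^*b)$. The Cauchy--Schwarz inequality for the positive Hilbert form $\langle \cdot,\cdot\rangle_\varphi$ (cf.\ \ref{Hilbertform}) then yields
\[ |\varphi(e_i a)|^2 = |\langle a, e_i^*\rangle_\varphi|^2
\leq \langle a, a\rangle_\varphi \cdot \langle e_i^*, e_i^*\rangle_\varphi
= \varphi(a^*a) \cdot \varphi(e_i e_i^*). \]
The key estimate is the uniform bound
\[ \varphi(e_i e_i^*) \leq c\,|e_i e_i^*| \leq c\,|e_i| \cdot |e_i^*| \leq c K M^2 =: \gamma, \]
which uses, in succession, continuity of $\varphi$, submultiplicativity of the algebra norm, boundedness of the approximate unit, and continuity of the involution.

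Combining these estimates gives $|\varphi(e_i a)|^2 \leq \gamma \,\varphi(a^*a)$ for every $i \in I$, and passing to the limit in $i$ produces
\[ |\varphi(a)|^2 \leq \gamma \,\varphi(a^*a), \]
which is precisely the finite-variation condition of \ref{variation}. The only real subtlety is the initial reduction to the case of a left approximate unit; this is where the continuity of the involution is genuinely needed, since without it the bounded family $(e_i^*)$ need not even be bounded, and the Cauchy--Schwarz estimate against $\varphi(e_i e_i^*)$ would break down.
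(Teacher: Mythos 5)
Your proof is correct and follows essentially the same route as the paper's: Cauchy--Schwarz for the induced Hilbert form applied to $\varphi(e_i a) = \langle a, e_i^*\rangle_\varphi$, a uniform bound on $\varphi(e_i e_i^*)$ via continuity of $\varphi$ and of the involution, and passage to the limit, with the right-approximate-unit case reduced to the left one by taking adjoints.
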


\begin{proof}
Let $c$ denote a bound of the approximate unit and let $d$ denote
the norm of the involution. The Cauchy-Schwarz inequality implies
\begin{align*}
{| \,\varphi (e_i a) \,| \,}^2
 & = {| \,\langle a, {e_i}^* \rangle _{\varphi} \,| \,}^2
\leq \langle a, a \rangle _{\varphi}
\,\langle {e_i}^*, {e_i}^*  \rangle _{\varphi} \\
 & \leq \varphi (e_i {e_i}^*) \,\varphi (a^*a)
\leq | \,\varphi \,| \,d\,{c\,}^2 \,\varphi (a^*a)
\end{align*}
for all $a \in A$. Hence in the case of a bounded left approximate unit
\[ {| \,\varphi (a) \,| \,}^2 \leq d\,{c\,}^2 \,| \,\varphi \,| \,\varphi (a^*a) \]
for all $a \in A$. If $(e_i)_{i \in I}$ is a bounded right approximate unit,
then $({e_i}^*)_{i \in I}$ is a bounded left approximate unit by continuity
of the involution.
\end{proof}

\begin{theorem}\label{varnorm}\index{symbols}{v(phi)@$v(\varphi)$}%
Let $A$ be a normed \st-algebra with isometric involution and with
a bounded one-sided approximate unit in the closed unit ball.
If $\varphi$ is a continuous positive linear functional on $A$, then
$\varphi$ has finite variation given by
\[ v (\varphi) = | \,\varphi \,|. \]
\end{theorem}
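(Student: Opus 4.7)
The plan is to establish the two inequalities $v(\varphi) \leq |\,\varphi\,|$ and $v(\varphi) \geq |\,\varphi\,|$ separately, the first being a direct application of the preceding proposition \ref{bdedfinvar}, the second following from the definition of variation combined with submultiplicativity of the norm and isometry of the involution.

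First I would apply proposition \ref{bdedfinvar}: since the involution is isometric we may take $d = 1$, and since the approximate unit lies in the closed unit ball we may take $c = 1$. The proof of \ref{bdedfinvar} then yields the estimate
\[ {| \,\varphi(a) \,| \,}^2 \leq | \,\varphi \,| \,\varphi(a^*a) \quad \text{for all} \quad a \in A, \]
which shows simultaneously that $\varphi$ has finite variation and that $v(\varphi) \leq | \,\varphi \,|$.

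For the reverse inequality, I would start from the very definition of $v(\varphi)$, which gives
\[ {| \,\varphi(a) \,| \,}^2 \leq v(\varphi) \,\varphi(a^*a) \quad \text{for all} \quad a \in A. \]
For $a$ in the closed unit ball of $A$, submultiplicativity of the norm and isometry of the involution yield $| \,a^*a \,| \leq | \,a^* \,| \cdot | \,a \,| \leq 1$, so that $\varphi(a^*a) \leq | \,\varphi \,| \cdot | \,a^*a \,| \leq | \,\varphi \,|$. Substituting back gives ${| \,\varphi(a) \,| \,}^2 \leq v(\varphi) \cdot | \,\varphi \,|$ for all $a$ in the closed unit ball, and taking the supremum over such $a$ produces ${| \,\varphi \,| \,}^2 \leq v(\varphi) \cdot | \,\varphi \,|$. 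If $| \,\varphi \,| > 0$ we may divide to conclude $| \,\varphi \,| \leq v(\varphi)$, while if $| \,\varphi \,| = 0$ then $\varphi = 0$ and both sides of the desired equation are zero.

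There is no real obstacle here; the argument is short and direct. The only subtle point is remembering that $v(\varphi)$ is defined as an infimum but nevertheless satisfies the defining inequality itself (as established in \ref{variation}), which is what lets the second step go through.
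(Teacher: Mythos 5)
Your proof is correct, and while the first inequality $v(\varphi)\leq|\,\varphi\,|$ is obtained exactly as in the paper (by specialising the estimate from \ref{bdedfinvar} with $c=d=1$), your argument for the converse inequality takes a genuinely different and more elementary route. The paper passes through the GNS construction: it notes that $\pi_{\varphi}$ is contractive (by \ref{weakbdedsarep} and \ref{reprisomcontr}, using the isometric involution), writes $\varphi(a)=\langle\pi_{\varphi}(a)c_{\varphi},c_{\varphi}\rangle_{\varphi}$ via \ref{finvarcyclic}, and uses $v(\varphi)={\|\,c_{\varphi}\,\|\,}^2$ from \ref{reprovectorfinvar} to get $|\,\varphi(a)\,|\leq|\,a\,|\cdot v(\varphi)$. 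You instead stay entirely at the level of the defining inequality ${|\,\varphi(a)\,|\,}^2\leq v(\varphi)\,\varphi(a^*a)$ (legitimately, since \ref{variation} shows the infimum itself satisfies it) and bound $\varphi(a^*a)\leq|\,\varphi\,|\cdot|\,a^*a\,|\leq|\,\varphi\,|$ on the unit ball, using only continuity, submultiplicativity, positivity of $\varphi(a^*a)$, and the isometry of the involution; the degenerate case $|\,\varphi\,|=0$ is handled separately. Your route avoids the representation-theoretic machinery altogether and makes transparent that the converse inequality needs no approximate unit at all (that hypothesis enters only through \ref{bdedfinvar} for finiteness of the variation and the forward inequality); what the paper's route buys is a uniform treatment consistent with its later use of the same GNS identities, but as a self-contained argument yours is shorter and cleaner.
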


\begin{proof} The preceding proof shows that $v (\varphi) \leq | \,\varphi \,|$.
To prove the converse inequality, please note that $\pi_{\varphi}$ is contractive
by \ref{weakbdedsarep} \& \ref{reprisomcontr}. By \ref{finvarcyclic} we have
\[ | \,\varphi (a) \,| =
   | \,\langle \pi_{\varphi} (a) c_{\varphi}, c_{\varphi} \rangle_{\varphi} \,|
   \quad \text{for all} \quad a \in A, \]
whence, by \ref{reprovectorfinvar},
\[ | \,\varphi (a) \,| \leq | \,\pi_{\varphi} (a) \,| \cdot {\| \,c_{\varphi} \,\| \,}^2 \leq
   | \,a \,| \cdot v (\varphi) \quad \text{for all} \quad a \in A. \qedhere \]
\end{proof}

\begin{corollary}\label{stateredef}%
Let $A$ be a normed \st-algebra with isometric involution and with
a bounded one-sided approximate unit in the closed unit ball.
Then a state on $A$ can be defined as a continuous positive linear
functional of norm $1$.
\end{corollary}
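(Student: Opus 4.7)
The plan is to prove the two set-theoretic inclusions directly, using the machinery already built up in the previous two items, \ref{bdedfinvar} and \ref{varnorm}, together with the characterisation of normed \st-algebras with continuous states from \ref{bdedstatessuffcond}. Isometry of the involution is continuity of the involution, which will be the single hypothesis pulling most of the weight.

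For the ``only if'' direction, let $\psi$ be a state on $A$. By definition $v(\psi) = 1$, and $\psi$ is weakly continuous on $A\sa$. Since the involution in $A$ is isometric, hence continuous, Proposition \ref{bdedstatessuffcond}(i) applies, so by Definition \ref{bdedstatesdef} every state on $A$ is continuous. Applied to $\psi$, Theorem \ref{varnorm} then gives $|\,\psi\,| = v(\psi) = 1$.

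For the ``if'' direction, let $\psi$ be a continuous positive linear functional on $A$ with $|\,\psi\,| = 1$. Theorem \ref{varnorm} directly yields that $\psi$ has finite variation $v(\psi) = |\,\psi\,| = 1$. It remains to verify weak continuity on $A\sa$ in the sense of Definition \ref{weakbdedsadef}; but for any $b \in A$ and $a \in A\sa$, the submultiplicativity of the norm gives $| \,\psi(b^*ab) \,| \leq |\,\psi\,| \cdot | \,b^*ab \,| \leq | \,b \,|^{\,2} \,| \,a \,|$, so $a \mapsto \psi(b^*ab)$ is continuous on all of $A$ and \emph{a fortiori} on $A\sa$. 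Hence $\psi$ is a state.

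There is no real obstacle: the substantive work was done in \ref{bdedfinvar} (where the bounded one-sided approximate unit and continuity of involution are used through a Cauchy--Schwarz argument to extract finite variation from mere continuity) and in \ref{varnorm} (where isometry of the involution together with the contractive nature of the GNS representation yields the reverse inequality $|\,\psi\,| \leq v(\psi)$). The present corollary is just the packaging of those results into a clean alternative definition.
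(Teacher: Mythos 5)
Your proof is correct and follows essentially the same route as the paper, which simply cites the continuity of states (\ref{bdedstatessuffcond}~(i)) and leans implicitly on the preceding theorem \ref{varnorm}; you have merely spelled out the two inclusions that the paper leaves to the reader. The only cosmetic remark is that in the ``if'' direction the bound $|\,\psi(b^*ab)\,| \leq |\,\psi\,|\cdot|\,b^*\,|\cdot|\,a\,|\cdot|\,b\,|$ already gives continuity of $a \mapsto \psi(b^*ab)$ without invoking isometry of the involution, since $|\,b^*\,|$ is a fixed finite constant.
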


\begin{proof}
This follows from the fact that $A$ has continuous states,
cf.\ \ref{bdedstatessuffcond} (i).
\end{proof}

\medskip
For the preceding two results, see also \ref{postVarop} below. \pagebreak

\begin{corollary}\label{C*varnorm}
A positive linear functional $\varphi$ on a C*-algebra
is continuous and has finite variation given by
\[ v (\varphi) = \| \,\varphi \,\|. \]
It follows that a state on a C*-algebra can be defined as an
automatically continuous positive linear functional of norm $1$.
\end{corollary}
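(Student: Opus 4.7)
The plan is to reduce the statement to the combination of two already-established results: proposition \ref{plfC*bded} (which gives continuity of positive linear functionals on C*-algebras), and theorem \ref{varnorm} (which equates variation with norm under the hypotheses of isometric involution plus a bounded one-sided approximate unit in the closed unit ball).

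First, I would note that a C*-algebra $A$ is, by definition \ref{preC*alg}, a normed \st-algebra with isometric involution. By \ref{plfC*bded}, any positive linear functional $\varphi$ on $A$ is automatically continuous. Second, I would invoke theorem \ref{C*canapprunit} on the canonical approximate unit: the net $(j)_{j \in J(A)}$, where $J(A) = \{j \in A_+ : \|j\| < 1\}$, is a two-sided approximate unit bounded by $1$, hence in particular a bounded one-sided approximate unit contained in the closed unit ball. Thus both hypotheses of \ref{varnorm} are met.

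Applying \ref{varnorm} to the continuous positive linear functional $\varphi$ then yields $v(\varphi) = \|\varphi\|$, which is the first claim.

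For the second claim, the definition of a state requires weak continuity on $A\sa$, finite variation equal to $1$, and positivity. By \ref{plfC*bded} any positive linear functional is continuous, hence weakly continuous on $A\sa$, and by the first part $v(\varphi) = 1$ is equivalent to $\|\varphi\| = 1$. So a state on a C*-algebra is precisely an (automatically continuous) positive linear functional of norm $1$; this is exactly corollary \ref{stateredef} specialised to the C*-case. No obstacle is anticipated, as all the substantive work has been done in the earlier two results.
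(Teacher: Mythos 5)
Your proof is correct and follows essentially the same route as the paper, which also derives the corollary from \ref{plfC*bded} (continuity) together with \ref{C*canapprunit} (the canonical approximate unit in the unit ball), feeding these into \ref{varnorm} and \ref{stateredef}. Your write-up merely makes explicit the intermediate appeals that the paper leaves implicit.
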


\begin{proof}
This follows now from \ref{plfC*bded} and \ref{C*canapprunit}.
\end{proof}

\medskip
Now for the unital case.

\begin{corollary}\label{unitalcase}%
Let $A$ be a normed \st-algebra with isometric \linebreak involution and with
unit $e$ of norm $1$. If $\varphi$ is a continuous positive linear
functional on $A$, then
\[ | \,\varphi \,| = \varphi (e). \]
\end{corollary}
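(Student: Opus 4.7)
The plan is to chain two earlier results: \ref{vare}, which gives $v(\varphi) = \varphi(e)$ for any positive linear functional on a unital \st-algebra, and \ref{varnorm}, which gives $v(\varphi) = |\,\varphi\,|$ whenever the algebra has isometric involution and a bounded one-sided approximate unit in the closed unit ball. The hypotheses fit together perfectly: a unit of norm $1$ is itself a (constant) bounded two-sided approximate unit lying in the closed unit ball, and the involution is isometric by assumption.

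First I would invoke \ref{vare}, which does not even need continuity: it gives both that $\varphi$ is Hermitian with finite variation, and the explicit identity $v(\varphi) = \varphi(e)$. Since $\varphi$ is assumed continuous, the variation statement is what matters here. Next I would apply \ref{varnorm} with the constant net $(e)$ as our bounded one-sided approximate unit; the hypothesis $|\,e\,| = 1$ makes it sit in the closed unit ball, so we obtain $v(\varphi) = |\,\varphi\,|$. Combining the two equalities gives $|\,\varphi\,| = \varphi(e)$.

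There is really no obstacle here: both ingredients are already in hand, and the corollary is essentially a restatement of \ref{varnorm} in the presence of a unit, using \ref{vare} to evaluate the variation. If one preferred a self-contained argument instead of quoting \ref{varnorm}, one could run the Cauchy--Schwarz estimate for $\langle \cdot, \cdot\rangle_\varphi$ directly: $|\,\varphi(a)\,|^{2} = |\,\varphi(e^{*}a)\,|^{2} = |\,\langle a, e\rangle_\varphi\,|^{2} \leq \varphi(a^{*}a)\,\varphi(e) \leq |\,\varphi\,|\,|\,a\,|^{2}\,\varphi(e)$, giving $|\,\varphi\,|^{2} \leq |\,\varphi\,|\,\varphi(e)$ and hence $|\,\varphi\,| \leq \varphi(e)$; the reverse inequality $\varphi(e) \leq |\,\varphi\,|$ follows from $|\,e\,| = 1$ together with the fact that $\varphi(e) = \varphi(e^{*}e) \geq 0$ is real.
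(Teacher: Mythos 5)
Your proposal is correct and follows exactly the paper's own route: the proof there is simply ``a trivial consequence of \ref{vare} and \ref{varnorm}'', which is precisely your chain $|\,\varphi\,| = v(\varphi) = \varphi(e)$. The self-contained Cauchy--Schwarz argument you sketch as an alternative is also sound, but the cited results are already the intended mechanism.
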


\begin{proof}
This is a trivial consequence of \ref{vare} and \ref{varnorm}.
\end{proof}

\medskip
The next two results are frequently used. 

\begin{theorem}\label{posunitalC*}
A continuous linear functional $\varphi$ on a unital \linebreak
C*-algebra $A$ is positive if and only if
\[ \| \,\varphi \,\| = \varphi (e). \]
\end{theorem}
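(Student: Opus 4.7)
The plan is to treat the two implications separately, with the nontrivial content lying in the ``if'' direction.

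For the ``only if'' direction, suppose $\varphi$ is positive. By \ref{C*varnorm}, $\varphi$ is automatically continuous with $\|\varphi\| = v(\varphi)$. Since $A$ is unital, \ref{vare} gives $v(\varphi) = \varphi(e)$. Combining these yields $\|\varphi\| = \varphi(e)$. (Alternatively, invoke \ref{unitalcase} directly, using that a unital C*-algebra has isometric involution and $\|e\| = 1$ by \ref{unitCstar}.)

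For the ``if'' direction, assume $\varphi$ is continuous with $\|\varphi\| = \varphi(e)$. We may harmlessly rescale so that $\varphi(e) = \|\varphi\| = 1$ (the case $\varphi(e)=0$ forces $\varphi = 0$, which is trivially positive). I would first show $\varphi$ is Hermitian on $A\sa$. Given $a \in A\sa$, write $\varphi(a) = \alpha + \iu \beta$ with $\alpha,\beta \in \mathds{R}$; the goal is $\beta = 0$. For real $t$, the element $a + \iu t e$ is normal, so by \ref{C*rl} its norm equals its spectral radius, and by the Rational Spectral Mapping Theorem $\s(a+\iu t e) = \s(a) + \iu t \subset \mathds{R} + \iu t$, whence $\|a + \iu t e\|^{\,2} \leq \|a\|^{\,2} + t^{\,2}$. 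On the other hand $\varphi(a + \iu t e) = \alpha + \iu(\beta + t)$, so
\[ \alpha^{\,2} + (\beta+t)^{\,2} = |\varphi(a+\iu t e)|^{\,2} \leq \|\varphi\|^{\,2}\,\|a+\iu t e\|^{\,2} \leq \|a\|^{\,2} + t^{\,2} \]
for every $t \in \mathds{R}$. Expanding and letting $t \to \pm \infty$ forces $\beta = 0$.

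It remains to show $\varphi(a) \geq 0$ for every $a \in A_+$. By homogeneity one may assume $\|a\| \leq 1$, so $\s(a) \subset [\,0,1\,]$ by definition of positivity, and hence $\s(e-a) \subset [\,0,1\,]$ by the Rational Spectral Mapping Theorem. Since $e-a$ is Hermitian, \ref{C*rl} gives $\|e-a\| = \rlambda(e-a) \leq 1$. Therefore $|\varphi(e-a)| \leq 1$; and since $\varphi$ is Hermitian and $e-a$ is Hermitian, $\varphi(e-a)$ is real, giving $\varphi(e-a) \leq 1$, i.e.\ $\varphi(a) \geq \varphi(e) - 1 = 0$, as required. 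The main obstacle in the argument is establishing the Hermitian property of $\varphi$; once that is in hand, the standard ``squeeze'' via $\|e-a\| \leq 1$ is immediate, and it is precisely here that the C*-identity (through \ref{C*rl}) does the essential work.
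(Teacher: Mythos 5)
Your proof is correct, but the ``if'' direction takes a genuinely different route from the paper's. The paper runs a single disc-separation argument directly on $a^*a$: since $\s(a^*a) \subset [\,0,\infty\,[$ by the Shirali-Ford Theorem \ref{ShiraliFord}, if $\varphi(a^*a)$ were not a non-negative real one could enclose $\s(a^*a)$ in a large closed disc $\{\,|\,z\0 - z\,| \leq \rho\,\}$ missing $\varphi(a^*a)$; then $\|\,z\0 e - a^*a\,\| = \rlambda(z\0 e - a^*a) \leq \rho$ by \ref{C*rl}, and $|\,z\0 - \varphi(a^*a)\,| \leq \|\,\varphi\,\|\cdot\|\,z\0 e - a^*a\,\| \leq \rho$ gives the contradiction. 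This kills the imaginary part and the negativity of $\varphi(a^*a)$ in one stroke. You instead follow the classical two-step path: Hermitian-ness via the translation $a + \iu t e$ and $t \to \pm\infty$, then positivity on $A_+$ via the squeeze $\|\,e - a\,\| \leq 1$. Both arguments are sound and both lean on the same engine, namely that norm equals spectral radius for normal elements (\ref{C*rl}); the paper's version is shorter, while yours isolates the Hermitian property as a reusable intermediate fact. One small point you leave implicit: having shown $\varphi(a) \geq 0$ for all $a \in A_+$, you still need that every $a^*a$ lies in $A_+$ to conclude that $\varphi$ is positive in the sense of the definition; that is again the Shirali-Ford Theorem, packaged in the paper as \ref{sqrtShiraliFord} and \ref{plfCstar}, so it is available but should be cited. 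The ``only if'' direction matches the paper, which simply invokes \ref{unitalcase}.
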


\begin{proof}
The ``only if'' part follows from the preceding corollary \ref{unitalcase}.
Suppose now that $\| \,\varphi \,\| = \varphi (e)$. We can then assume
that also $\| \,\varphi \,\| = 1$. If $a \in A$ and $\varphi (a^*a)$
is not $\geq 0$, there exists a (large) closed disc
$\{ z \in \mathds{C} : | \,z\0 - z \,| \leq \rho \}$ which contains $\s(a^*a)$,
but does not contain $\varphi (a^*a)$. (Please note that
$\s(a^*a) \subset [0,\infty[$ by the Shirali-Ford Theorem \ref{ShiraliFord}.)
Consequently $\s(z\0 e - a^*a)$ is contained in the disc
$\{ z \in \mathds{C} : | \,z \,| \leq \rho \}$, and so
$\| \,z\0 e -a^*a \,\| = \rlambda (z\0 e - a^*a) \leq \rho$, cf.\ \ref{C*rl}.
Hence the contradiction
\[ | \,z\0 - \varphi (a^*a) \,| = | \,\varphi (z\0 e - a^*a) \,|
\leq \| \,\varphi \,\| \cdot \| \,z\0 e - a^*a \,\| \leq \rho. \qedhere \]
\end{proof}

\begin{corollary}\label{stateunitalC*}
A state on a unital C*-algebra can be defined
as a continuous linear functional $\psi$ such that
\[ \| \,\psi \,\| = \psi (e) = 1. \]
\end{corollary}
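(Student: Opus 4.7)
The plan is to combine the two immediately preceding results: Theorem \ref{posunitalC*} characterises positivity of a continuous linear functional on a unital C*-algebra by the identity $\|\varphi\| = \varphi(e)$, while Corollary \ref{C*varnorm} says that a state on any C*-algebra is just an (automatically continuous) positive linear functional of norm $1$. The statement to prove is essentially the intersection of these two characterisations in the unital case.

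First I would verify the ``only if'' direction. Let $\psi$ be a state on the unital C*-algebra $A$. By Corollary \ref{C*varnorm}, $\psi$ is automatically continuous and satisfies $\|\psi\| = v(\psi) = 1$. Since $\psi$ is in particular positive, Theorem \ref{posunitalC*} gives $\|\psi\| = \psi(e)$, so $\psi(e) = 1$ as required.

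For the ``if'' direction, suppose $\psi$ is a continuous linear functional on $A$ with $\|\psi\| = \psi(e) = 1$. Then Theorem \ref{posunitalC*} applies and yields that $\psi$ is positive. Invoking Corollary \ref{C*varnorm} once more, the positive linear functional $\psi$ has $v(\psi) = \|\psi\| = 1$, so $\psi$ is a state by definition (the weak continuity on $A\sa$ built into the definition of a state is automatic on a C*-algebra by Theorem \ref{plfC*bded}).

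There is no real obstacle here; the work has all been done in Theorem \ref{posunitalC*} and Corollary \ref{C*varnorm}. The only thing to be careful about is not to double up hypotheses: continuity of $\psi$ is part of the assumption in the redefinition but is automatic once positivity is established, so the redefinition genuinely is an equivalent alternative definition rather than a stronger one.
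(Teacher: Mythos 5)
Your proof is correct and follows essentially the same route as the paper, which derives the corollary from Theorem \ref{posunitalC*} together with the identity $v(\psi) = \psi(e)$ from \ref{vare} (you use $v(\psi) = \|\psi\|$ from \ref{C*varnorm} instead, which is interchangeable here since $\|\psi\| = \psi(e)$ is assumed). No gaps.
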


\begin{proof}
This follows now from \ref{vare}. \pagebreak
\end{proof}

\begin{theorem}\label{repapprunit}%
Let $A$ be a normed \st-algebra with a bounded left approximate unit
$(e_i) _{i \in I}$. Let $\pi$ be a continuous non-degenerate representation
of $A$ on a Hilbert space $H$. We then have
\[ \lim _{i \in I} \pi (e_i) x = x \quad \text{for all} \quad x \in H. \]
\end{theorem}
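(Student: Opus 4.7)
The plan is to combine three standard ingredients: a uniform bound on the operators $\pi(e_i)$, convergence on the distinguished set of vectors of the form $\pi(a)y$, and the density in $H$ of such vectors guaranteed by non-degeneracy. The proof is then a routine three-epsilon argument; the only mildly subtle point is that we must invoke the \emph{essential} nature of a non-degenerate representation (cf.\ the corollary following \ref{cyclicsubspace}) rather than merely span-density, to avoid an extra step with finite sums.

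First I would fix constants: since $(e_i)_{i \in I}$ is bounded in $A$, there is $M \geq 0$ with $| \,e_i \,| \leq M$ for all $i \in I$; and since $\pi$ is continuous there is $c \geq 0$ with $\| \,\pi(a) \,\| \leq c \,| \,a \,|$ for all $a \in A$. Setting $K := cM$ yields the uniform bound $\| \,\pi(e_i) \,\| \leq K$ for all $i \in I$.

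Next I would verify the convergence on distinguished vectors. For any $a \in A$ and $y \in H$, the homomorphism property gives $\pi(e_i) \,\pi(a) \,y = \pi(e_i a) \,y$. Since $(e_i)$ is a left approximate unit, $e_i a \to a$ in $A$, and continuity of $\pi$ then yields
\[ \pi(e_i) \,\pi(a) \,y = \pi(e_i a) \,y \ \to \ \pi(a) \,y. \]

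Finally, the main step. Let $x \in H$ and $\varepsilon > 0$. Because $\pi$ is non-degenerate, it is essential, so the set $D := \{ \,\pi(a) \,y : a \in A, \ y \in H \,\}$ is dense in $H$. Choose $d \in D$ with $\| \,x - d \,\| < \varepsilon \,/ \,\bigl( \,2(K+1) \,\bigr)$. By the preceding step applied to $d = \pi(a) \,y$, there exists $i_0 \in I$ such that $\| \,\pi(e_i) \,d - d \,\| < \varepsilon / 2$ for all $i \geq i_0$. For such $i$, the triangle inequality gives
\[ \| \,\pi(e_i) \,x - x \,\| \,\leq \,\| \,\pi(e_i) \,\| \cdot \| \,x - d \,\|
 + \| \,\pi(e_i) \,d - d \,\| + \| \,d - x \,\| \,<\, \varepsilon, \]
which proves $\pi(e_i) \,x \to x$. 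The main obstacle, if any, is the appeal to essentiality to reduce directly to vectors of the form $\pi(a) \,y$; without this, one would need a preliminary reduction from finite sums to single terms, which the essentiality corollary renders unnecessary.
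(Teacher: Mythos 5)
Your proof is correct and follows essentially the same route as the paper's: a uniform bound on the operators $\pi(e_i)$, convergence on vectors of the form $\pi(a)y$, and a three-epsilon argument over a dense set. The only (cosmetic) difference is that you invoke the essentiality corollary to work with the set $\{\,\pi(a)y\,\}$ directly, whereas the paper works with its linear span $Z$, on which convergence follows anyway by linearity.
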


\begin{proof}
Suppose first that $x \in H$ is of the form $x = \pi (a)y$ for
some $a \in A$, $y \in H$. By continuity of $\pi$ it follows that
\[ \lim _{i \in I} \| \,\pi (e_i) x - x \,\|
= \lim _{i \in I} \| \,\pi (e_i a-a)y \,\| = 0. \]
Thus $\lim _{i \in I} \pi (e_i)z = z$ for all $z$ in the dense subspace
\[ Z := \mathrm{span} \,\{ \,\pi (a)y \in H : a \in A, y \in H \,\}. \]
Denote by $c$ a bound of the representation, and by $d$ a bound
of the approximate unit. Given $x \in H$, $\varepsilon > 0$, there exists
$z \in Z$ with
\[ \| \,x-z \,\| < \frac{\varepsilon}{3\,(\,1+cd\,)}. \]
Then let $i \in I$ such that for all $j \in I$ with $j \geq i$ one has
\[ \| \,\pi (e_j)z-z \,\| < \frac{\varepsilon}{3}. \]
For all such $j$ we get
\begin{align*}
&\ \| \,\pi (e_j)x-x \,\| \\
\leq &\ \| \,\pi (e_j)x-\pi (e_j)z \,\| + \| \,\pi (e_j)z-z \,\| + \| \,z-x \,\| < \varepsilon.
 \qedhere
\end{align*}
\end{proof}

\medskip
We next give a description of an important class of states obtained
from the Riesz Representation Theorem, cf.\ the appendix \ref{Riesz}.

\begin{lemma}\label{C0Riesz}%
Let $\Omega \neq \varnothing$ be a locally compact Hausdorff space.
For a state $\psi$ on $\cont\0(\Omega)$ there exists a unique inner regular
Borel probability measure $\mu$ on $\Omega$ such that 
\[ \psi (f) = \int f \,\diff \mu\quad \text{for all} \quad f \in \cont\0(\Omega). \]
(For the definition of an ``inner regular Borel probability measure'',
see the appendix \ref{Boreldef} \& \ref{inregBormeas}.)
\end{lemma}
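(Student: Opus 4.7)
The plan is to invoke the Riesz Representation Theorem for $\cont\0(\Omega)$, which is cited in the excerpt as being available in the appendix. Since $\cont\0(\Omega)$ is a C*-algebra (see \ref{Cnaught}), corollary \ref{C*varnorm} tells me that a state on $\cont\0(\Omega)$ is precisely an automatically continuous positive linear functional of norm $1$. In particular, $\psi$ is a continuous positive linear functional on $\cont\0(\Omega)$, and by observation \ref{plfpointwise} (or equivalently \ref{pointwiseorder} via the identification of $\cont\0(\Omega)$ with a C*-subalgebra of $\ell^\infty(\Omega)$), positivity of $\psi$ is equivalent to $\psi(f) \geq 0$ whenever $f \geq 0$ pointwise. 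Thus $\psi$ is a positive linear functional on $\cont\0(\Omega)$ in the sense required by the Riesz Representation Theorem.

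First I would apply the Riesz Representation Theorem to obtain a unique inner regular Borel measure $\mu$ on $\Omega$ with $\psi(f) = \int f\,\diff\mu$ for all $f \in \cont\0(\Omega)$. This immediately gives existence and uniqueness of $\mu$ as an inner regular Borel measure; what remains is to identify the total mass and confirm that $\mu$ is a \emph{probability} measure, i.e.\ that $\mu(\Omega) = 1$.

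The main obstacle is the total-mass computation, since when $\Omega$ is not compact, the constant function $1_\Omega$ does not lie in $\cont\0(\Omega)$, so we cannot simply write $\mu(\Omega) = \psi(1_\Omega)$. Instead I would use that the Riesz correspondence matches the norm of the functional with the total variation of the measure, which for the positive measure $\mu$ is $\mu(\Omega)$; since $\psi$ is a state, corollary \ref{C*varnorm} gives $\| \,\psi \,\| = 1$, yielding $\mu(\Omega) = 1$. As a self-contained alternative, proposition \ref{Ccdense} says $\cont_\mathrm{c}(\Omega)$ is dense in $\cont\0(\Omega)$, so
\[ 1 = \| \,\psi \,\| = \sup \,\bigl\{ \,\psi(f) : f \in \cont\0(\Omega),\ 0 \leq f \leq 1_\Omega,\ f \in \cont_\mathrm{c}(\Omega) \,\bigr\} \]
(using positivity to restrict the supremum to $f \in \cont\0(\Omega)\sa$ with $| \,f \,|_{\,\infty} \leq 1$, then to $0 \leq f \leq 1_\Omega$, and then approximating by compactly supported functions as in the proof of \ref{Ccdense}). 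For such $f$ we have $\int f \,\diff\mu \leq \mu(\mathrm{supp}(f)) \leq \mu(\Omega)$, giving $\mu(\Omega) \geq 1$, while inner regularity and monotone convergence yield the reverse inequality $\mu(\Omega) \leq 1$ (any compact $C \subset \Omega$ admits, by Urysohn's Lemma \ref{Urysohn} applied to the one-point compactification, some $f \in \cont_\mathrm{c}(\Omega)$ with $1_C \leq f \leq 1_\Omega$, whence $\mu(C) \leq \psi(f) \leq 1$). Thus $\mu(\Omega) = 1$, and $\mu$ is the required inner regular Borel probability measure. Uniqueness is inherited directly from the uniqueness clause of the Riesz Representation Theorem.
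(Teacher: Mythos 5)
Your proof is correct and rests on exactly the same ingredients as the paper's, which simply cites the Riesz Representation Theorem \ref{Riesz} together with \ref{plfpointwise}, \ref{C*varnorm} and \ref{Ccdense}. The only real difference is where the normalisation $\mu(\Omega)=1$ comes from: the version of the Riesz theorem stated in the appendix \ref{Riesz} applies to a positive linear functional of norm $1$ on $\cont_\mathrm{c}(\Omega)$ and already delivers a \emph{probability} measure, so your total-mass computation, while correct, re-proves part of what that statement supplies. If you work from the appendix's version, the step to make explicit instead is the passage from the representation formula on $\cont_\mathrm{c}(\Omega)$ to all of $\cont\0(\Omega)$: both $\psi$ and $f \mapsto \int f \,\diff \mu$ are continuous on $\cont\0(\Omega)$ (the latter with norm $\leq \mu(\Omega) = 1$), and they agree on the dense subspace $\cont_\mathrm{c}(\Omega)$ by \ref{Ccdense}, hence everywhere.
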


\begin{proof}
This is an application of the Riesz Representation Theorem, cf.\ the
appendix \ref{Riesz}, using the items \ref{plfpointwise}, \ref{C*varnorm},
and \ref{Ccdense}.\pagebreak
\end{proof}

\begin{definition}[$\probmeas (\Omega)$]%
\index{symbols}{M1@$\probmeas (\Omega)$}%
If $\Omega \neq \varnothing$ is a locally compact Hausdorff space,
one denotes by $\probmeas (\Omega)$ the set of inner regular
Borel probability measures on $\Omega$, cf.\ the appendix
\ref{Boreldef} \& \ref{inregBormeas}.
\end{definition}

\begin{theorem}\label{predisint}%
For a state $\psi$ on a commutative C*-algebra $A$, there
exists a unique measure $\mu \in \probmeas \bigl(\Delta(A)\bigr)$ such that
\[ \psi (a) = \int \wht{a} \,\diff \mu\quad \text{for all} \quad a \in A. \]
\end{theorem}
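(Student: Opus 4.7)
The plan is to reduce the assertion to the previous lemma \ref{C0Riesz} by transporting the state $\psi$ from $A$ to $\cont\0\bigl(\Delta(A)\bigr)$ along the Gel'fand isomorphism.

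First I would invoke the Commutative Gel'fand-Na\u{\i}mark Theorem \ref{commGN}: since $A$ is a commutative C*-algebra, either $A = \{0\}$ (in which case the statement is vacuous, and one takes any probability measure on $\Delta(A) = \varnothing$ --- actually this case must be excluded, and indeed a state cannot exist on $\{0\}$), or else the Gel'fand transformation $a \mapsto \wht a$ is a C*-algebra isomorphism from $A$ onto $\cont\0\bigl(\Delta(A)\bigr)$. In particular it is an isometric \st-algebra isomorphism, so its inverse $\wht a \mapsto a$ is also a C*-algebra isomorphism. Then I would transport $\psi$ to a linear functional on $\cont\0\bigl(\Delta(A)\bigr)$ by setting $\wht\psi(\wht a) := \psi(a)$, $(a \in A)$.

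Next I would verify that $\wht\psi$ is a state on $\cont\0\bigl(\Delta(A)\bigr)$. Continuity follows from the isometric nature of the Gel'fand transformation together with \ref{C*varnorm}, which ensures $\psi$ is continuous with $\|\psi\| = v(\psi) = 1$, and hence $\|\wht\psi\| = 1$. Positivity follows because every positive element $f \in \cont\0\bigl(\Delta(A)\bigr)_+$ is of the form $f = \wht a$ for some $a \in A_+$ (by \ref{hompos} applied to the inverse Gel'fand transformation, together with the fact that \st-algebra isomorphisms preserve positivity, cf.\ \ref{injpos}). So $\wht\psi(f) = \psi(a) \geq 0$ by \ref{plusstate}. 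Normalisation $v(\wht\psi) = 1$ is immediate from the isometric nature of the transport, using \ref{C*varnorm} applied to the C*-algebra $\cont\0\bigl(\Delta(A)\bigr)$.

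Now I would apply lemma \ref{C0Riesz} with $\Omega := \Delta(A)$ (which is a locally compact Hausdorff space by the results of \S\ \ref{Gtopo}, in particular non-empty by \ref{C*GTisom}) to the state $\wht\psi$ on $\cont\0\bigl(\Delta(A)\bigr)$. This yields a unique $\mu \in \probmeas\bigl(\Delta(A)\bigr)$ with
\[ \wht\psi(f) = \int f \,\diff\mu \qquad \text{for all} \quad f \in \cont\0\bigl(\Delta(A)\bigr). \]
Specialising to $f = \wht a$ for $a \in A$ gives $\psi(a) = \wht\psi(\wht a) = \int \wht a \,\diff\mu$, which is the desired formula. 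Uniqueness of $\mu$ follows from the uniqueness clause in \ref{C0Riesz} together with the surjectivity of the Gel'fand transformation $A \to \cont\0\bigl(\Delta(A)\bigr)$: any candidate $\mu'$ satisfying the formula defines via integration the same functional $\wht\psi$ on all of $\cont\0\bigl(\Delta(A)\bigr)$, hence must agree with $\mu$.

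There is no real obstacle here; the proof is essentially a two-line reduction once one has the Gel'fand-Na\u{\i}mark isomorphism and the Riesz representation lemma in hand. The only minor point requiring care is checking that the transported functional $\wht\psi$ is indeed a state in the sense of \ref{stateunitalC*}/\ref{C*varnorm}, which amounts to the fact that the Gel'fand transformation is an isometric \st-isomorphism and therefore preserves all the structure used to define states.
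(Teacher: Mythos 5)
Your proposal is correct and follows essentially the same route as the paper's own proof: transport $\psi$ along the Gel'fand isomorphism of \ref{commGN} to a state $\wht\psi$ on $\cont\0\bigl(\Delta(A)\bigr)$, apply lemma \ref{C0Riesz}, and read off both existence and uniqueness. The extra care you take in verifying that $\wht\psi$ is indeed a state is a detail the paper leaves implicit, but the argument is the same.
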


\begin{proof}
We note that $A \neq \{ 0 \}$ because $\psi$ is a state on $A$. It follows
that the Gel'fand transformation $a \mapsto \wht{a}$ establishes an
isomorphism of the C*-algebra $A$ onto the C*-algebra $\cont\0\bigl(\Delta(A)\bigr)$,
by the Commutative Gel'fand-Na\u{\i}mark Theorem \ref{commGN}. By defining
\[ \wht{\psi}(\wht{\vphantom{\psi}a}) := \psi (a)
\qquad ( \,a \in A \,), \]
we get a state $\wht{\psi}$ on $\cont\0\bigl(\Delta(A)\bigr)$. By lemma \ref{C0Riesz}
there exists a measure $\mu \in \probmeas \bigl(\Delta(A)\bigr)$ such that
\[ \psi (a) = \wht{\psi}(\wht{\vphantom{\psi}a})
= \int \wht{\vphantom{\psi}a} \,\diff \mu
\quad \text{for all} \quad a \in A. \]
Uniqueness is now clear from lemma \ref{C0Riesz} again.
\end{proof}

\medskip
We shall return to this theme in \ref{Commutativity}.

The impatient reader can go from here to part \ref{part3} of the book, on the
spectral theory of representations.

\clearpage


\section{The Continuity Theorem of Varopoulos}%
\label{Varopsect}

Our next objective is the Continuity Theorem of Varopoulos \ref{Varopoulos}.
As it is little used, we do not give a complete proof. We state without proof
two closely related Factorisation Theorems of Paul J.\ Cohen and Nicholas
Th.\ Varopoulos. We give them here in a form needed later.

\begin{theorem}[Paul J.\ Cohen]\label{Cohen}%
\index{concepts}{factorisation!Theorem!Cohen}%
\index{concepts}{Theorem!Cohen's Factorisation}%
\index{concepts}{Cohen's Factorisation Thm.}%
Let $A$ be a Banach algebra with a bounded right approximate unit.
Then every element of $A$ is a product of two elements of $A$.
\end{theorem}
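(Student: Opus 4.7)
The plan is to construct, by a careful inductive process inside the unitisation $\tld{A}$, an invertible element $b \in \tld{A}$ and an element $c \in A$ with $a = bc$, and then to extract from this representation a factorisation of $a$ using two elements of $A$ itself.

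Concretely, I would fix the bound $M$ on the bounded right approximate unit $(e_j)_{j \in J}$ and pick small parameters $0 < \beta, \gamma < 1$ with $\beta(1+M)/(1-\gamma) < 1$. Inductively I would then select indices $i_1, i_2, \ldots$ in $J$ and form the partial sums
\[
b_n \;:=\; e \;-\; \beta \sum_{k=1}^{n} \gamma^{k-1}\,(e - e_{i_k}) \;\in\; \tld{A}.
\]
By the choice of $\beta$ and $\gamma$, the norm $|\,e - b_n\,|$ stays uniformly below $1$, so each $b_n$ is invertible in the Banach algebra $\tld{A}$ via the Neumann series of \ref{geoseries}, with a uniform bound on $|\,{b_n}^{-1}\,|$.

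The crux of the induction is the choice of $i_{n+1}$. Using the resolvent identity
\[
{b_{n+1}}^{-1} - {b_n}^{-1} \;=\; -\,{b_{n+1}}^{-1}\,(b_{n+1} - b_n)\,{b_n}^{-1}
\]
together with the explicit form $b_{n+1} - b_n = -\beta\gamma^n(e - e_{i_{n+1}})$, one writes $a_{n+1} - a_n$ (where $a_n := {b_n}^{-1} a \in A$) as a multiple of $a_n - a_n\,e_{i_{n+1}}$, up to a factor that is uniformly bounded and already decays geometrically as $\beta\gamma^n$. Invoking the defining property $x\,e_j \to x$ of the right approximate unit, applied to the already-constructed $a_n \in A$, I can pick $i_{n+1}$ making $|\,a_{n+1} - a_n\,| < 2^{-n}$. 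Simultaneously $|\,b_{n+1} - b_n\,| \leq \beta\gamma^n(1+M)$, so $(b_n)$ is Cauchy in $\tld{A}$ and $(a_n)$ is Cauchy in $A$. Passing to the limit, $b := \lim b_n \in \tld{A}$ is still invertible (its norm-distance to $e$ is still below $1$), while $c := \lim a_n \in A$, and $a = bc$. Writing $b = e - w$ with $w \in A$, the Neumann expansion gives $c = a + \bigl(\sum_{k \geq 1} w^k\bigr) a$, so in particular $c$ lies in the closed right ideal of $A$ generated by $a$. The identity $a = (e-w)c = c - wc$ is finally converted into a genuine product of two elements of $A$ by absorbing the unit component of $b$: one replaces $c$ by $c\,e_j$ for a sufficiently large $e_j$ (which changes $c$ by arbitrarily little by the right-approximate-unit property applied to $c \in A$) and folds the correction into the left factor; iterating this adjustment and appealing to completeness of $A$ yields an exact factorisation $a = b'c'$ with $b', c' \in A$.

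The hard part will be the twin constraint imposed on the inductive choice of $i_{n+1}$: the single element $e_{i_{n+1}}$ must act as an approximate right identity on the inductively constructed $a_n \in A$ well enough to bring $|\,a_{n+1} - a_n\,|$ below $2^{-n}$, while the update $b_{n+1} - b_n$ must remain compatible with the exponentially decaying geometric-weight structure and preserve the uniform Neumann-series bound on $|\,{b_n}^{-1}\,|$. Threading these requirements through the resolvent identity is the technical heart of the argument, and it is the reason for the otherwise ad hoc looking weights $\beta\gamma^{k-1}$ in the definition of $b_n$.
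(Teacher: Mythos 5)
The paper states Cohen's Factorisation Theorem without proof (it refers to Bonsall--Duncan, \.{Z}elazko, Fell--Doran and Palmer), so there is no in-text argument to compare yours against; I judge the proposal on its own terms. Your construction is recognisably the classical one, and the portion that produces a factorisation $a = bc$ with $b$ invertible in $\tld{A}$ and $c \in A$ is essentially sound, apart from a left/right slip: with $a_n := {b_n}^{-1}a$ the resolvent identity gives $a_{n+1}-a_n = \beta\gamma^{n}\,{b_{n+1}}^{-1}\bigl(a_n - e_{i_{n+1}}a_n\bigr)$, and a bounded \emph{right} approximate unit gives no control over $a_n - e_{i_{n+1}}a_n$; you must instead set $a_n := a\,{b_n}^{-1}$ so that the increment involves $a_n - a_n e_{i_{n+1}}$, which is what you in fact wrote down.

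The genuine gap is in the choice of weights, and it is fatal to the final step. Your condition $\beta(1+M)/(1-\gamma)<1$ forces $\sum_{k\geq 1}\beta\gamma^{k-1} = \beta/(1-\gamma) < 1/(1+M) < 1$, so the coefficient of $e$ in $b_n$, namely $1-\beta(1-\gamma^{n})/(1-\gamma)$, converges to $1-\beta/(1-\gamma) \geq M/(1+M) > 0$. Hence $b = \lim b_n$ has a non-zero scalar component and, when $A$ has no unit, lies outside $A$; since $A$ is a closed hyperplane in $\tld{A}$, no norm-small perturbation of $b$ can land in $A$, so the concluding manoeuvre of ``replacing $c$ by $c\,e_j$ and folding the correction into the left factor, iterating'' cannot converge to a factorisation inside $A$: each adjustment moves things by arbitrarily little while the obstruction (the scalar component of the left factor) stays bounded away from zero. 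From $a = c - wc$ with $w, c \in A$ one simply cannot extract a product of two elements of $A$ by soft means. The entire difficulty of Cohen's theorem lives exactly here: one must take weights summing to $1$ (the classical choice is $\gamma(1-\gamma)^{k-1}$, i.e.\ $\beta = 1-\gamma$ in your notation), so that the scalar component $(1-\gamma)^{n}$ of $b_n$ tends to $0$ and the limit $b$ lies in $A$. But then $|\,e-b_n\,|<1$ fails, the cheap uniform Neumann-series bound of \ref{geoseries} is lost, and invertibility of $b_n$ together with the convergence of $a\,{b_n}^{-1}$ has to be won by writing $b_n$ as a product of near-identity factors $\gamma e_{i_k} + (1-\gamma)e$ (each within distance $\gamma(1+M)<1$ of $e$) and letting the inductive choice of $e_{i_{n+1}}$ do double duty. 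Your uniform invertibility is purchased at exactly the price of making the theorem unreachable.
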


\begin{theorem}[Nicholas Th.\ Varopoulos]\label{Varopoulosfact}%
\index{concepts}{Theorem!Varopoulos!Factorisation}%
\index{concepts}{Varopoulos' Theorem!Factorisation}%
\index{concepts}{factorisation!Theorem!Varopoulos}%
Let $A$ be a  Banach algebra with a bounded right approximate unit. For
a sequence $(a_n)$ in $A$ converging to $0$, there exists a sequence
$(b_n)$ in $A$ converging to $0$ as well as $c \in A$ with $a_n = {b_n}c$
for all $n$.
\end{theorem}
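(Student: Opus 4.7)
The plan is to adapt the iterative factorisation construction underlying Cohen's Theorem \ref{Cohen} so that the choices made at each step are valid simultaneously for every $a_n$ in the given null sequence. Let $M$ be a bound for the right approximate unit $(e_\lambda)_{\lambda \in \Lambda}$, and fix $\alpha \in \ ]\,M/(M+1),\,1\,[$, setting $\beta := 1 - \alpha$. Working in the unitisation $\tld{A}$, the basic building block is that, for every approximate-unit element $f$, the element $\alpha e + \beta f \in \tld{A}$ is invertible with a uniform norm bound on its inverse (by the geometric-series argument of \ref{geoseries}, since the spectral radius of $(\beta/\alpha) f$ is at most $\beta M/\alpha < 1$).

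Cohen's scheme, adapted to our setting, constructs inductively a sequence $(f_k)_{k \geq 1}$ from the approximate unit together with a sequence $(u_n)_{n \geq 1}$ in $\tld{A}$, each $u_n$ invertible, such that $u_n \to c$ for some $c \in A$ (with vanishing scalar part) while the inverses $u_n^{\,-1}$ need not converge in $\tld{A}$, yet each product $a_m u_n^{\,-1}$ is Cauchy in $A$. The key telescoping estimate has the schematic form
\[
a_m \bigl( u_{n+1}^{\,-1} - u_n^{\,-1} \bigr) = a_m (e - f_{n+1}) \cdot X_{n+1}
\]
with $X_{n+1} \in \tld{A}$ of controlled norm, using that $f_{n+1}$ commutes with its own rational functions. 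Thus the whole argument reduces to choosing $f_{n+1}$ so that $\|a_m (e - f_{n+1})\|$ is summably small as $n$ varies, \emph{uniformly in} $m$. This uniformity in $m$ is the Varopoulos refinement, and is the main obstacle; it is made possible by the hypothesis $a_m \to 0$, which forces the set $\{a_m : m \geq 1\} \cup \{0\}$ to be norm-compact in $A$. For any such compact $K \subset A$ and any $\varepsilon > 0$, a bounded right approximate unit automatically provides a single $e_\lambda$ with $\sup_{x \in K} \|x(e - e_\lambda)\| < \varepsilon$: cover $K$ by finitely many balls about $x_1, \ldots, x_N$ of radius $\varepsilon/(3(1+M))$, pick $e_\lambda$ (using the directed nature of the net) so that $\|x_i(e - e_\lambda)\| < \varepsilon/3$ for each $i$, and apply the triangle inequality.

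With this uniform selection in hand, I would carry out Cohen's construction verbatim, making the decay of $\|a_m (e - f_{n+1})\|$ (uniform in $m$) fast enough to dominate the possibly geometric growth of $\|X_{n+1}\|$—a purely quantitative coupling between the chosen summable error schedule and the parameters of the construction. The outcome is a single $c := \lim_n u_n \in A$ together with, for each $m$, a well-defined $b_m := \lim_n a_m u_n^{\,-1} \in A$ satisfying $a_m = \lim_n (a_m u_n^{\,-1}) \,u_n = b_m c$ by continuity of the multiplication. The required null convergence $b_m \to 0$ is then read off from the telescoping estimate itself: one obtains a bound of the form $\|b_m\| \leq C \,\|a_m u_1^{\,-1}\| + \sum_{n \geq 1} \|a_m(e - f_{n+1})\| \cdot \|X_{n+1}\|$, whose right-hand side is dominated by a fixed multiple of $\|a_m\|$ plus uniformly small tail terms, hence tends to $0$ with $\|a_m\|$.
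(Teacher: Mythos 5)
The paper offers no proof of this statement to compare against: Theorems \ref{Cohen} and \ref{Varopoulosfact} are explicitly stated without proof, with the reader referred to \cite{Zel}, \cite{BD}, \cite{FeDo} and \cite{Palm}. Your sketch follows the standard Cohen--Hewitt construction as found in those references, and it correctly isolates the two points that make the Varopoulos refinement work: (a) since $a_m \to 0$, the set $\{ \,a_m : m \geq 1 \,\} \cup \{ 0 \}$ is norm-compact, and a bounded right approximate unit approximates the identity \emph{uniformly} on norm-compact sets (your $\varepsilon/3$ argument is correct, including the use of directedness to satisfy finitely many conditions at once); and (b) the telescoping term $a_m \bigl( u_{n+1}^{\,-1} - u_n^{\,-1} \bigr)$ can be rewritten with the factor $e - f_{n+1}$ adjacent to $a_m$, because $e - f_{n+1}$ commutes with the Neumann series for $( \alpha e + \beta f_{n+1} )^{-1}$, leaving $X_{n+1} = \beta \,u_{n+1}^{\,-1}$ of norm at most $\beta \,(\alpha - \beta M)^{-(n+1)}$. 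Coupling a summable schedule for $\sup_m \| \,a_m ( e - f_{n+1} ) \,\|$ against this geometric growth, and passing to the limit termwise (each summand also being bounded by $(1+M) \,\| \,a_m \,\| \cdot \| \,X_{n+1} \,\|$, so dominated convergence gives $b_m \to 0$), is exactly the standard quantitative bookkeeping. The only ingredient you leave implicit is that the choice of $f_{n+1}$ must simultaneously serve a second purpose, namely making the $A$-component $x_n$ of $u_n$ satisfy $\| \,x_n f_{n+1} - x_n \,\|$ small so that the products $u_n$ converge in $\tld{A}$ to an element $c$ of $A$ (the scalar part $\alpha^{\,n}$ tending to $0$ takes care of the rest); since this is again a finite list of right-approximate-unit conditions, it is compatible with the uniform condition on the $a_m$, and the sketch stands.
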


Proofs can be found for example at the following places: \cite[\S\ 6]{Zel},
\cite[\S\ 11]{BD}, \cite[section V.9]{FeDo} or \cite[section 5.2]{Palm}.

\begin{theorem}\label{preVarop1}%
Let $A$ be a Banach \st-algebra with a bounded right approximate unit.
A positive Hilbert form $\langle \cdot , \cdot \rangle$ on $A$, cf.\ \ref{Hilbertform},
is bounded in the sense that there exists $\gamma \geq 0$ such that
\[ | \,\langle a, b \rangle \,| \leq \gamma \cdot | \,a \,| \cdot | \,b \,|
\quad \text{for all} \quad a, b \in A. \]
\end{theorem}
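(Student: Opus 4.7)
The plan is to apply the Closed Graph Theorem to the linear map $T: A \to H_\varphi$ defined by $T(a) := \underline{a}$, where $H_\varphi$ denotes the completion of the inner product space $\underline{A}$ (see \ref{quotientA}) associated with the Hilbert form. Once $T$ is shown to be bounded, say $\|\underline{a}\| \leq M|a|$, the Cauchy-Schwarz inequality in $H_\varphi$ gives $|\langle a, b\rangle| \leq \omega(a)\omega(b) \leq M^2|a||b|$ (where $\omega(x) := \langle x, x\rangle^{1/2}$), so one may take $\gamma := M^2$.

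The preparatory observation is that for each fixed $b \in A$, the map $\varphi_b : A \to \mathds{C}$, $a \mapsto \langle ab, b\rangle$, is a positive linear functional of finite variation $v(\varphi_b) \leq \langle b, b\rangle$. Indeed, Cauchy-Schwarz on the Hilbert form yields $|\varphi_b(a)|^2 = |\langle ab, b\rangle|^2 \leq \langle ab, ab\rangle \cdot \langle b, b\rangle = \varphi_b(a^*a)\langle b, b\rangle$. Since $A$ is a Banach \st-algebra, Corollary \ref{Banachfinvarbded} implies each $\varphi_b$ is continuous on $A$. A polarisation identity then shows that for every fixed $d, c \in A$, the linear functional $a \mapsto \langle ad, c\rangle$ is continuous on $A$, namely
\[
\langle ad, c\rangle = \frac{1}{4}\sum_{k=0}^{3} i^{k}\, \varphi_{d + i^{k} c}(a),
\]
which is a finite linear combination of continuous functionals.

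To verify that $T$ has closed graph, suppose $a_n \to 0$ in $A$ while $\underline{a_n} \to \xi$ in $H_\varphi$; one must show $\xi = 0$. Since the vectors $\underline{c}$, $c \in A$, are dense in $H_\varphi$, it suffices to show that $\langle a_n, c\rangle \to 0$ for each $c \in A$. Here the hypothesis on the bounded right approximate unit is decisive: Varopoulos' Factorisation Theorem \ref{Varopoulosfact} applied to the null sequence $(a_n)$ produces $b_n \to 0$ in $A$ and a fixed $d \in A$ with $a_n = b_n d$. The polarisation identity above then gives $\langle a_n, c\rangle = \langle b_n d, c\rangle = \frac{1}{4}\sum_k i^k \varphi_{d + i^k c}(b_n)$, and each $\varphi_{d + i^k c}(b_n) \to 0$ by continuity of $\varphi_{d + i^k c}$. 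Hence $\langle a_n, c\rangle \to 0$, so $\xi = 0$, and the Closed Graph Theorem yields the required boundedness of $T$.

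The main obstacle is the self-referential character of sesquilinear continuity: a direct attempt to prove continuity of $a \mapsto \langle a, c\rangle$ simply reduces (via the Hilbert-form identity and Cohen's Factorisation \ref{Cohen}) to the same question for a shifted sequence, with no termination in sight. The deadlock is broken by combining two ingredients. First, the automatic-continuity result \ref{Banachfinvarbded} furnishes continuity of the ``diagonal'' positive functionals $\varphi_b$ for free. Second, Varopoulos' factorisation converts an arbitrary null sequence into one of the form $b_n d$ with $b_n \to 0$ and $d$ fixed, which is exactly the input required by the polarisation identity, so that the seemingly intractable pairing $\langle a_n, c\rangle$ becomes a continuous function evaluated at a null sequence.
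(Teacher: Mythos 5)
Your proof is correct. It rests on the same two pillars as the paper's: the Factorisation Theorem of Varopoulos \ref{Varopoulosfact}, used to convert a null sequence $(a_n)$ into products $b_n d$ with $b_n \to 0$ and $d$ fixed, and an automatic continuity result for Banach \st-algebras to conclude that $\langle a_n, c \rangle \to 0$. Where you differ is in the packaging. The paper invokes the continuity of the representation $\pi$ associated with the form (\ref{Banachbounded}), writes $\langle b_n d, c \rangle = \langle \pi(b_n) \underline{d}, \underline{c} \rangle \to 0$, and then upgrades separate continuity of the Hermitian form to joint boundedness by the uniform boundedness principle. You instead work with the diagonal positive functionals $\varphi_b(a) := \langle ab, b \rangle$, whose finite variation $v(\varphi_b) \leq \langle b, b \rangle$ and hence continuity come from \ref{Banachfinvarbded}, recover the off-diagonal pairings by polarisation, and conclude via the Closed Graph Theorem applied to the quotient map $a \mapsto \underline{a}$ into $H_\varphi$. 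The two automatic-continuity inputs are essentially interchangeable (\ref{Banachfinvarbded} is itself deduced from \ref{Banachbounded} in the text), and the closed-graph and uniform-boundedness endgames are Baire-category arguments of comparable weight; your version has the small advantage of producing the bound in the clean form $\langle a, a \rangle \leq \gamma \,{| \,a \,| \,}^2$ directly, from which the sesquilinear estimate is immediate by the Cauchy-Schwarz inequality.
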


\begin{proof}
Let $\pi$ be the representation associated with $\langle \cdot , \cdot \rangle$,
cf.\ \ref{GNS}. The representation $\pi$ is continuous by \ref{Banachbounded}.
It shall be shown in a first step that $\langle \cdot , \cdot \rangle$
is continuous in the first variable. So let $(a_n)$ be a sequence in $A$
converging to $0$. By the Factorisation Theorem of N.\ Th.\ Varopoulos
(theorem \ref{Varopoulosfact} above), there exists a sequence $(b_n)$
converging to $0$ in $A$ as well as $c \in A$, such that $a_n = {b_n}c$
for all $n$. For arbitrary $d \in A$, we obtain
\[ \langle a_n, d \rangle = \langle {b_n}c, d \rangle
= \langle \pi(b_n) \underline{c}, \underline{d} \rangle \to 0 \]
by continuity of $\pi$ (theorem \ref{Banachbounded}). It follows that
$\langle \cdot , \cdot \rangle$ is separately continuous in both variables
(as $\langle \cdot , \cdot \rangle$ is Hermitian). An application of the
uniform boundedness principle then shows that there exists
$\gamma \geq 0$ such that
\[ | \,\langle a, b \rangle \,| \leq \gamma \cdot | \,a \,| \cdot | \,b \,|
\quad \text{for all} \quad a, b \in A. \pagebreak  \qedhere \]
\end{proof}

\begin{theorem}\label{preVarop2}%
Let $A$ be a normed \st-algebra with a bounded right approximate
unit $(e_i)_{i \in I}$. Let $\langle \cdot , \cdot \rangle$ be a bounded
non-zero positive Hilbert form on $A$. The corresponding GNS
representation $\pi$ then is cyclic. Furthermore, there exists a cyclic
vector $c$ for $\pi$, such that the positive linear functional of finite variation
$a \mapsto \langle \pi (a) c, c \rangle$ $(a \in A)$ induces the positive
Hilbert form $\langle \cdot , \cdot \rangle$. (Cf.\ \ref{inducedHf}.) The
vector $c$ can be chosen to be any adherent point of the net
$(\underline{e_i})_{i \in I}$ in the weak topology. 
\end{theorem}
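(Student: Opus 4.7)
The plan is to obtain the cyclic vector $c$ as a weak adherent point of the net $(\underline{e_i})_{i \in I}$ inside the completion $H$ of $\underline{A}$. Such an adherent point will exist because the net is bounded in $H$: with $M := \sup_i |e_i| < \infty$ and $\gamma$ a bound of the Hilbert form (so $|\langle a, b\rangle| \leq \gamma \,|a|\,|b|$), one has
\[ \|\underline{e_i}\|^{\,2} = \langle e_i, e_i \rangle \leq \gamma \,|e_i|^{\,2} \leq \gamma \,M^{\,2}, \]
so $(\underline{e_i})_{i \in I}$ lies in a weakly compact ball of $H$.

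Before extracting $c$, I would first check that the GNS representation $\pi$ is well defined, that is, that each operator $\pi(a)$ on $\underline{A}$ is bounded. For $a, b \in A$, the identity $\langle ab, ab \rangle = \langle b, a^*ab\rangle$ together with the Cauchy-Schwarz inequality for the Hilbert form gives $\|\pi(a) \underline{b}\|^{\,2} \leq \|\underline{b}\| \cdot \|\pi(a^*a) \underline{b}\|$. Iterating this on the Hermitian element $a^*a$ yields, for each $n \geq 1$,
\[ \|\pi(a) \underline{b}\|^{\,2^n} \leq \|\underline{b}\|^{\,2^n - 1} \cdot \|\pi\bigl((a^*a)^{\,2^{\,n-1}}\bigr) \underline{b}\|. \]
Combining with the crude estimate $\|\pi(x) \underline{b}\| \leq \sqrt{\gamma}\,|x|\,|b|$ applied to $x := (a^*a)^{\,2^{\,n-1}}$, then extracting $2^n$-th roots and sending $n \to \infty$ using \ref{rlamlim}, one obtains $\|\pi(a) \underline{b}\| \leq \rsigma(a) \,\|\underline{b}\|$. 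Hence $\pi(a)$ extends uniquely to a bounded operator on $H$ of norm at most $\rsigma(a)$.

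Now fix any weak adherent point $c$ of $(\underline{e_i})_{i \in I}$ and pass to a subnet $(\underline{e_{i_\alpha}})_\alpha$ with $\underline{e_{i_\alpha}} \to c$ weakly. For each $a \in A$, weak continuity of the bounded operator $\pi(a)$ gives $\pi(a) \underline{e_{i_\alpha}} \to \pi(a) c$ weakly. On the other hand $\pi(a) \underline{e_{i_\alpha}} = \underline{a e_{i_\alpha}}$, and since $(e_i)_{i \in I}$ is a right approximate unit one has $a e_{i_\alpha} \to a$ in $A$, which by boundedness of the Hilbert form forces $\|\underline{a e_{i_\alpha}} - \underline{a}\|^{\,2} \leq \gamma \,|a e_{i_\alpha} - a|^{\,2} \to 0$. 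Uniqueness of weak limits then yields $\pi(a) c = \underline{a}$, so $\pi(A) c \supset \underline{A}$ is dense in $H$, exhibiting $c$ as a cyclic vector for $\pi$.

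Finally, identifying the induced form is a short calculation: for $a, b \in A$, using $\pi(a) c = \underline{a}$, $\pi(b) c = \underline{b}$, and the $*$-property of the representation,
\[ \langle \pi(b^*a) c, c \rangle = \langle \pi(a) c, \pi(b) c \rangle = \langle \underline{a}, \underline{b} \rangle = \langle a, b \rangle, \]
so $\varphi(a) := \langle \pi(a) c, c \rangle$ induces the given Hilbert form in the sense of \ref{inducedHf}. The most delicate step is the boundedness of $\pi(a)$, where the hypothesis on the Hilbert form has to be combined with the Spectral Radius Formula for normed algebras; once that is in hand, the remainder is a formal interplay of weak and norm limits driven by the right approximate unit.
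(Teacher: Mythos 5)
Your proof is correct and follows essentially the same route as the paper: boundedness of the form makes the net $(\underline{e_i})$ bounded, a weak adherent point $c$ satisfies $\pi(a)c=\underline{a}$ by combining weak convergence with the norm convergence $\underline{ae_i}\to\underline{a}$, and the induced functional is then identified by a direct computation. The only difference is that you re-derive the bound $\|\pi(a)\|\leq\rsigma(a)$ by the squaring iteration, whereas the paper simply cites \ref{weaksa}, whose proof is that same iteration.
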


\begin{proof}
By boundedness of $\langle \cdot , \cdot \rangle$, the representation
associated with $\langle \cdot , \cdot \rangle$ is weakly continuous on
$A\sa$, and thus the representing operators are bounded, cf.\ \ref{weaksa}.
Continuation of the representing operators to the completion of $\underline{A}$,
yields a GNS representation $\pi$ associated with $\langle \cdot , \cdot \rangle$
on the completion of $\underline{A}$, cf.\ \ref{GNS}. Again by boundedness of
$\langle \cdot , \cdot \rangle$, the net $(\underline{e_i})_{i \in I}$ is bounded
and therefore has an adherent point in the weak topology on the completion
of $\underline{A}$. Let $c$ be any such adherent point. By going over to a
subnet, we can assume that $(\underline{e_i})_{i \in I}$ converges weakly to
$c$. It shall be shown that $\pi (a)c = \underline{a}$ for all $a \in A$, which
implies that $c$ is cyclic for $\pi$. It is enough to show that for $b \in A$, one has
$\langle \pi (a)c, \underline{b} \rangle = \langle \underline{a}, \underline{b} \rangle$.
So one calculates
\begin{align*}
 & \langle \pi (a)c, \underline{b} \rangle
= \langle c, \pi (a)^* \underline{b} \rangle
= \lim _{i \in I} \,\langle \underline{e_i}, \pi (a)^* \underline{b} \rangle \\
= \,&\lim _{i \in I} \,\langle \pi (a)\underline{e_i}, \underline{b} \rangle
= \lim _{i \in I} \,\langle a e_i, b \rangle
= \langle a, b \rangle = \langle \underline{a}, \underline {b} \rangle,
\end{align*}
where we have used the continuity of $\langle \cdot, \cdot \rangle$.
Consider now the positive linear functional $\varphi$ on $A$ defined by
\[ \varphi (a) := \langle \pi (a) c, c \rangle \qquad (a \in A). \]
We have
\[ \varphi (a^*a) = \langle \pi (a^*a) c, c \rangle = {\| \,\pi (a)c \,\| \,}^2
= {\| \,\underline{a} \,\| \,}^2
= \langle \underline{a}, \underline{a} \rangle
= \langle a, a \rangle \]
for all $a \in A$, whence, after polarisation,
\[ \langle a, b \rangle = \varphi(b^*a) \quad \text{for all} \quad a, b \in A. \]
That is, $\varphi$ induces the positive Hilbert form
$\langle \cdot , \cdot \rangle$, cf.\ \ref{inducedHf}.
\end{proof}

\medskip
We see from the proofs that so far it is the bounded right
approximate units which are effective. Now for the case
of a bounded two-sided approximate unit. We shall prove
that, in the preceding theorem \ref{preVarop2}, if we are
in presence of a bounded two-sided approximate unit
$(e_i)_{i \in I}$, and if $A$ is complete, then the cyclic
vector $c$ can be chosen to be the limit in norm of the
net $(\underline{e_i})_{i \in I}$.
\pagebreak

\begin{theorem}
Let $A$ be a Banach \st-algebra with a bounded two-sided
approximate unit $(e_i)_{i \in I}$. Let $\langle \cdot , \cdot \rangle$
be a non-zero positive Hilbert form on $A$. The corresponding GNS
representation $\pi$ then is cyclic. Furthermore, there exists a cyclic
vector $c$ for $\pi$, such that the positive linear functional of finite
variation $a \mapsto \langle \pi (a) c, c \rangle$ $(a \in A)$ induces
the positive Hilbert form $\langle \cdot , \cdot \rangle$. The vector
$c$ can be chosen to be the limit in norm of the net
$(\underline{e_i})_{i \in I}$.
\end{theorem}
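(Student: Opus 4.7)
My plan is to build directly on theorem \ref{preVarop2} and show that the only extra input needed is the approximate-unit convergence result \ref{repapprunit} together with the fact that, for a Banach \st-algebra, the GNS representation is automatically continuous by \ref{Banachbounded}.

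First I would verify that the hypotheses of the two preceding theorems are met. Because $(e_i)_{i \in I}$ is in particular a bounded right approximate unit and $A$ is a Banach \st-algebra, theorem \ref{preVarop1} gives $\gamma \geq 0$ with $|\langle a, b \rangle| \leq \gamma \,|a|\,|b|$ for all $a,b \in A$, so the form is bounded. Theorem \ref{preVarop2} then applies and yields a cyclic vector $c$ in the completion of $\underline{A}$ which may be taken to be any weak adherent point of $(\underline{e_i})_{i \in I}$. Fix such a $c$. Inspecting the proof of \ref{preVarop2}, the crucial identity $\pi(a)c = \underline{a}$ holds for every $a \in A$; in particular
\[ \pi(e_i)\,c = \underline{e_i} \qquad (i \in I). \]
This is the bridge between the cyclic vector $c$ and the net $(\underline{e_i})_{i \in I}$.

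Next I would apply \ref{repapprunit}. Since $c$ is cyclic, $\pi$ is non-degenerate; and since $A$ is a Banach \st-algebra, the representation $\pi$ on the completion $H_\varphi$ of $\underline{A}$ is continuous by \ref{Banachbounded}. With $(e_i)_{i \in I}$ a bounded two-sided (in particular left) approximate unit, \ref{repapprunit} gives
\[ \lim_{i \in I} \pi(e_i)\,x = x \qquad \text{for every } x \in H_\varphi. \]
Applying this to $x = c$ and using the identity $\pi(e_i)c = \underline{e_i}$ from the previous step yields $\underline{e_i} \to c$ in norm along the \emph{original} net $I$, not merely along a subnet.

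There is no real obstacle: the substantive content is already packaged in \ref{preVarop1}, \ref{preVarop2}, \ref{Banachbounded} and \ref{repapprunit}, and the Cohen/Varopoulos factorisation theorems enter only through their use in \ref{preVarop1}. The one delicate point worth stating cleanly is that once one adherent point $c$ of $(\underline{e_i})$ is the norm limit of the full net, there cannot be any other adherent point, so the choice of $c$ in \ref{preVarop2} is in fact unique and coincides with $\lim_{i \in I} \underline{e_i}$. This uniqueness comment is really a byproduct of the argument rather than a separate step.
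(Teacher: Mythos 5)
Your proposal is correct and follows essentially the same route as the paper's own proof: boundedness of the form via \ref{preVarop1}, existence of the cyclic vector via \ref{preVarop2} together with the identity $\pi(e_i)c = \underline{e_i}$, and then norm convergence $\underline{e_i} \to c$ from \ref{Banachbounded} and \ref{repapprunit} applied to the non-degenerate representation $\pi$. The closing remark about uniqueness of the adherent point is a harmless extra observation not needed for the statement.
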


\begin{proof}
The Hilbert form $\langle \cdot , \cdot \rangle$ is bounded by
\ref{preVarop1}, so \ref{preVarop2} is applic\-able. Only the last
statement needs to be proved. The representation $\pi$ is
continuous by \ref{Banachbounded}, and from the preceding
proof, we have $\underline{e_i} = \pi (e_i)c$ for all $i \in I$. Since
$(e_i)_{i \in I}$ among other things is a bounded left approximate
unit, \ref{repapprunit} implies that $\underline{e_i} = \pi (e_i)c \to c$
because $\pi$ is cyclic by the preceding theorem \ref{preVarop2},
and thus non-degenerate. 
\end{proof}

\medskip
Next for bounded one-sided approximate units:

\begin{theorem}[Varopoulos, Shirali]\label{Varopoulos}%
\index{concepts}{Theorem!Varopoulos!Continuity}%
\index{concepts}{Varopoulos' Theorem!Continuity}%
A positive linear functional on a Banach \st-algebra with a bounded
one-sided approximate unit has finite variation, and thence also is
continuous, cf.\ \ref{Banachfinvarbded}.
\end{theorem}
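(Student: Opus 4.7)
The plan is to combine the two preceding lemmata \ref{preVarop1} and \ref{preVarop2} with Cohen's Factorisation Theorem \ref{Cohen} to obtain the conclusion directly when $A$ has a bounded right approximate unit, and then to reduce the case of a bounded left approximate unit to the former by passing to the opposite algebra. Since continuity of $\varphi$ follows from finite variation by \ref{Banachfinvarbded}, the task reduces to showing that $\varphi$ has finite variation, and one may also assume $\varphi \neq 0$.

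First I would handle the case in which $A$ has a bounded right approximate unit. Consider the positive Hilbert form $\langle a, b \rangle := \varphi(b^* a)$ on $A$; by \ref{preVarop1} it is bounded, so \ref{preVarop2} produces a cyclic vector $c$ in $H_{\varphi}$ for the GNS representation $\pi$ such that the positive linear functional $\psi(a) := \langle \pi(a) c, c \rangle$ induces the same Hilbert form as $\varphi$. In particular $\psi(b^* a) = \varphi(b^* a)$ for all $a, b \in A$, and $\psi$ has finite variation $v(\psi) \leq \langle c, c \rangle$ by \ref{variationinequal}, so it suffices to show that $\psi = \varphi$ on all of $A$. This follows from Cohen's Factorisation Theorem \ref{Cohen}: every $x \in A$ factorises as $x = bc$ with $b, c \in A$, hence as $x = (b^*)^* c$, so $\psi(x) = \varphi(x)$.

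For the case of a bounded left approximate unit $(e_i)$, the trick is to pass to the opposite \st-algebra $A^{\mathrm{op}}$, obtained by reversing the multiplication, $a \cdot_{\mathrm{op}} b := ba$, while retaining the norm and involution of $A$. It is routine that $A^{\mathrm{op}}$ is a Banach \st-algebra in which $(e_i)$ is a bounded right approximate unit, and that $\varphi$, regarded as a linear functional $\varphi^{\mathrm{op}}$ on $A^{\mathrm{op}}$, is positive since $\varphi^{\mathrm{op}}(a^* \cdot_{\mathrm{op}} a) = \varphi(a a^*) = \varphi((a^*)^*(a^*)) \geq 0$. The case already treated, applied to $\varphi^{\mathrm{op}}$, gives $\varphi^{\mathrm{op}}(a) = \langle \pi^{\mathrm{op}}(a) c, c \rangle =: \psi^{\mathrm{op}}(a)$ for some cyclic vector $c$. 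The functional $\psi^{\mathrm{op}}$ is visibly Hermitian on $A^{\mathrm{op}}$ via $\psi^{\mathrm{op}}(a^*) = \langle \pi^{\mathrm{op}}(a)^* c, c \rangle = \overline{\psi^{\mathrm{op}}(a)}$, so $\varphi$ itself is Hermitian on $A$. Moreover the finite variation of $\varphi^{\mathrm{op}}$ translates into an estimate $|\varphi(a)|^2 \leq C \,\varphi(a a^*)$ on $A$ for some $C \geq 0$; replacing $a$ by $a^*$ and using $|\varphi(a)|^2 = |\varphi(a^*)|^2$ then yields $|\varphi(a)|^2 \leq C \,\varphi(a^* a)$, which is the required finite variation of $\varphi$.

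The main obstacle is that the possible discontinuity of the involution rules out converting a bounded left approximate unit into a bounded right one simply by taking adjoints, and this is what forces the opposite-algebra detour in the second case; correspondingly, the bound naturally produced by working in $A^{\mathrm{op}}$ involves $\varphi(a a^*)$ rather than $\varphi(a^* a)$, and can be turned into a genuine finite-variation estimate only by exploiting Hermitian-ness of $\varphi$, which is itself extracted from the Hilbert-space formula for $\psi^{\mathrm{op}}$.
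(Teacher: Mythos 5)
Your proposal is correct and follows exactly the route the paper takes: \ref{preVarop1} and \ref{preVarop2} combined with Cohen's Factorisation Theorem \ref{Cohen} for the right approximate unit case, then reversal of the multiplication plus the Hermitian property (extracted via \ref{variationinequal}) for the left case. You have merely filled in the details that the paper's two-sentence proof leaves implicit, and you have done so accurately.
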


\begin{proof} In the case of a bounded right approximate unit, the
conclusion follows from the theorems \ref{preVarop1} and
\ref{preVarop2} by an application of Cohen's Factorisation Theorem
\ref{Cohen}. In presence of a bounded left approximate unit, we
change the multiplication to $(a,b) \mapsto ba$, and use the fact
that the functional turns out to be Hermitian, cf.\ \ref{variationinequal}.
\end{proof}

\medskip
This result is due to Varopoulos for the special case of a continuous
involution. It was extended to the general case by Shirali.

\begin{corollary}\index{symbols}{v(phi)@$v(\varphi)$}\label{postVarop}%
Let $A$ be a Banach \st-algebra with isometric \linebreak involution
and with a bounded one-sided approximate unit in the closed unit ball.
A positive linear functional $\varphi$ on $A$ is continuous and
has finite variation given by
\[ v (\varphi) = | \,\varphi \,|. \]
A state on $A$ can be defined as an automatically
continuous positive linear functional of norm $1$.
\end{corollary}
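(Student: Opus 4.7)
My plan is to derive this corollary as a direct concatenation of the Continuity Theorem of Varopoulos \ref{Varopoulos} with Theorem \ref{varnorm}, and then to rephrase the definition of a state using the resulting norm-variation identity.

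First, I would invoke Varopoulos' Theorem \ref{Varopoulos}: since $A$ is a Banach \st-algebra with a bounded one-sided approximate unit, every positive linear functional $\varphi$ on $A$ has finite variation. The statement of \ref{Varopoulos} itself also gives continuity, via the reference forward to \ref{Banachfinvarbded}, so at this stage we already know $\varphi$ is bounded on $A$ and that $v(\varphi) < \infty$.

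Next, I would feed this continuous $\varphi$ into Theorem \ref{varnorm}. The hypotheses of \ref{varnorm} require a normed \st-algebra with isometric involution and a bounded one-sided approximate unit in the closed unit ball, which is precisely what we assume (Banach being stronger than normed); and they require the functional to be continuous, which we just obtained. The conclusion of \ref{varnorm} is exactly $v(\varphi) = |\,\varphi\,|$, giving the first claim of the corollary.

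For the second claim, I would compare the general definition of a state with the simpler description available here. By the paragraph defining states, $\psi$ is a state precisely when it is a positive linear functional, weakly continuous on $A\sa$, with $v(\psi) = 1$. Under our hypotheses, positivity already forces continuity on all of $A$ (in particular weak continuity on $A\sa$) by the first part of the corollary, and $v(\psi) = 1$ is equivalent to $|\,\psi\,| = 1$ by the identity $v(\psi) = |\,\psi\,|$ just established. So ``positive, weakly continuous on $A\sa$, and $v = 1$'' collapses to ``positive and of norm $1$'' with continuity being automatic, which is the asserted reformulation.

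The proof as I have sketched it is essentially a bookkeeping exercise once \ref{Varopoulos} and \ref{varnorm} are in hand; the only genuine obstacle was bundled into \ref{Varopoulos} itself, whose proof rests on the Factorisation Theorems of Cohen and Varopoulos. Given those, no further argument is required here.
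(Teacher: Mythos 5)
Your proposal is correct and follows essentially the same route as the paper, which derives the corollary from the Varopoulos Theorem \ref{Varopoulos} together with \ref{varnorm} and \ref{stateredef}; your only deviation is that you re-derive the content of \ref{stateredef} by hand instead of citing it, which is harmless.
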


\begin{proof}
This follows now from \ref{varnorm} and \ref{stateredef}.
\end{proof}

\medskip
For C*-algebras, we don't need this result, see \ref{C*varnorm}.
\pagebreak

\clearpage


\addtocontents{toc}{\protect\vspace{0.2em}}

\chapter{The Enveloping C\texorpdfstring{*-}{\80\052\80\055}Algebra}

\setcounter{section}{31}


\section{C\texorpdfstring{*}{\80\052}(A)}

\medskip
In this paragraph, let $A$ be a normed \st-algebra.

\begin{observation}%
A state $\psi$ on $A$ satisfies
\[ {\psi (a^*a) \,}^{1/2} \leq \rsigma(a) \qquad \text{for all} \quad a \in A. \]
\end{observation}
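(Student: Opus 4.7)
The plan is to use the GNS construction associated with the state $\psi$, which is precisely the tool the author has built up for expressing states as vector states of a representation. By Theorem \ref{repstate}, if $\psi$ is a state on $A$, then the GNS representation $\pi_\psi$ exists on the Hilbert space $H_\psi$, is $\sigma$-contractive, and has a reproducing vector $c_\psi$ which is a \emph{unit} vector satisfying
\[
\psi(a) = \langle \pi_\psi(a) c_\psi, c_\psi \rangle_\psi \quad \text{for all } a \in A.
\]

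First I would apply this formula to $a^*a$ and use that $\pi_\psi$ is a \st-algebra homomorphism, together with the defining property of the inner product:
\[
\psi(a^*a) = \langle \pi_\psi(a^*a) c_\psi, c_\psi \rangle_\psi
= \langle \pi_\psi(a) c_\psi, \pi_\psi(a) c_\psi \rangle_\psi
= \| \pi_\psi(a) c_\psi \|^2.
\]
Taking square roots gives $\psi(a^*a)^{1/2} = \| \pi_\psi(a) c_\psi \|$.

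Then I would bound this by the operator norm times the length of $c_\psi$, which is $1$ since $c_\psi$ is a unit vector:
\[
\psi(a^*a)^{1/2} = \| \pi_\psi(a) c_\psi \| \leq \| \pi_\psi(a) \| \cdot \| c_\psi \|_\psi = \| \pi_\psi(a) \|.
\]
Finally, $\sigma$-contractivity of $\pi_\psi$ (again by Theorem \ref{repstate}) yields $\| \pi_\psi(a) \| \leq \rsigma(a)$, completing the estimate.

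There is no real obstacle here: the statement is essentially a direct consequence of the already established facts that states arise as unit-vector matrix coefficients of their associated GNS representations, and that those representations are $\sigma$-contractive. The only thing to keep in mind is that everything stays inside the pre-Hilbert framework of definition \ref{GNSvarphi}, but since the reproducing vector lives in $H_\psi$ and $\pi_\psi$ acts on $H_\psi$, there is no subtlety to address.
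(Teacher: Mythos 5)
Your argument is correct and is essentially identical to the paper's own proof: both invoke Theorem \ref{repstate} to write $\psi(a^*a)^{1/2} = \| \,\pi_{\psi}(a)c_{\psi} \,\|$ and then use the fact that $c_{\psi}$ is a unit vector together with the $\sigma$-contractivity of $\pi_{\psi}$ to bound this by $\rsigma(a)$. Nothing further is needed.
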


\begin{proof} Theorem \ref{repstate} implies that
\[ {\psi(a^*a) \,}^{1/2}
= {\langle \pi _{\psi}(a^*a) c_{\psi}, c_{\psi} \rangle _{\psi}}^{1/2}
= \| \,\pi _{\psi}(a)c_{\psi} \,\| \leq \rsigma(a). \qedhere \]
\end{proof}

\begin{definition}[$\,\| \cdot \|\0\,$]%
\index{symbols}{a6@${"|}{"|}\,a\,{"|}{"|}\0$}%
For $a \in A$, one defines
\[ \| \,a \,\|\0 := \,\sup _{\text{\small{$\psi \in \statespace (A)$}}}
\,{\psi (a^*a) \,}^{1/2} \leq \rsigma(a). \]
If $\statespace (A) = \varnothing$, one puts $\| \,a \,\|\0 := 0$ for all $a \in A$.
\end{definition}

We shall see that $\| \cdot \|\0$ is a C*-seminorm \ref{Cstarsemin}
with a certain extremality property.

\begin{theorem}\label{boundedo}%
If $\pi$ is a representation of $A$ on a pre-Hilbert space $H$,
which is weakly continuous on $A\sa$, then
\[ \| \,\pi (a) \,\| \leq \| \,a \,\|\0 \qquad \text{for all} \quad a \in A. \]
\end{theorem}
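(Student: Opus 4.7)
The plan is to show that for every unit vector $y$ in the ``non-degenerate part'' of $H$, the positive linear functional $a \mapsto \langle \pi(a)y,y\rangle$ is in fact a state on $A$, and then to bound $\|\pi(a)y\|^2$ by $\psi(a^*a)$ evaluated at this state.

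First, I would reduce to the case of a Hilbert space by passing to the completion of $H$. Since $\pi$ is weakly continuous on $A\sa$, theorem \ref{weaksa} says the operators $\pi(a)$ are already bounded on $H$, hence extend uniquely by continuity to bounded operators on the completion $\overline{H}$, yielding a representation $\tld{\pi}$ that is still weakly continuous on $A\sa$ (by density and continuity of the inner product), and whose operator norms agree with those of $\pi$. So assume $H$ is complete. Next, split $H = R \oplus N$ as in \ref{nondegenerate}, where $N = \{x \in H : \pi(a)x = 0 \text{ for all } a \in A\}$ and $R = N^{\perp}$, with $\pi_R$ non-degenerate. Since $\pi(a)$ vanishes on $N$, we have $\|\pi(a)\| = \|\pi_R(a)\|$, so it suffices to bound $\|\pi_R(a)y\|$ for unit vectors $y \in R$.

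Now fix a unit vector $y \in R$ and put $M := \overline{\pi_R(A)y}$, a closed invariant subspace of $R$. By \ref{cyclicsubspace} applied to the non-degenerate representation $\pi_R$, the vector $y$ lies in $M$, so the subrepresentation $\pi_M := (\pi_R)_M$ is cyclic with cyclic unit vector $y$. Because $\pi$ (and hence $\pi_M$, as a restriction) is weakly continuous on $A\sa$, theorem \ref{weaksa} tells us that $\pi_M$ is $\sigma$-contractive. Hence by \ref{staterep} the formula
\[ \psi_y(a) := \langle \pi_M(a)y,y\rangle = \langle \pi_R(a)y,y\rangle \qquad (a \in A) \]
defines an element $\psi_y \in \statespace(A)$.

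It then remains to observe that
\[ \|\pi_R(a)y\|^2 = \langle \pi_R(a^*a)y,y\rangle = \psi_y(a^*a) \leq \|\,a\,\|\0^{\,2} \]
by the very definition of $\|\,a\,\|\0$. Taking the supremum over unit vectors $y \in R$ gives $\|\pi_R(a)\|^2 \leq \|\,a\,\|\0^{\,2}$, hence $\|\pi(a)\| = \|\pi_R(a)\| \leq \|\,a\,\|\0$, as desired. There is no real obstacle; the key structural step is the passage from an arbitrary unit vector to its own cyclic subrepresentation via \ref{cyclicsubspace}, which is exactly what turns the quasi-state $\psi_y$ (of variation bounded by $\|y\|^2$) into an honest state (of variation equal to $\|y\|^2 = 1$), so that it can be tested against the supremum defining $\|\cdot\|\0$.
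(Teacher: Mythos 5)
Your proof is correct and follows essentially the same route as the paper's: reduce to a non-degenerate representation on a Hilbert space, turn each unit vector into a state via \ref{staterep}, and bound $\|\pi(a)x\|^2 = \psi(a^*a)$ by the supremum defining $\|\,a\,\|\0$. The only difference is cosmetic: your detour through the cyclic subspace $M$ merely unfolds the proof of \ref{variationequal}, which is already packaged inside \ref{staterep}, so you could apply \ref{staterep} directly to $\pi_R$ and the unit vector $y$.
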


\begin{proof}
We may assume that $H$ is a Hilbert space $\neq \{ 0 \}$
and that $\pi$ is non-degenerate. (Indeed, this follows from
\ref{weaksa} \& \ref{nondegenerate}.) Let $x \in H$ with
$\| \,x \,\| = 1$. Consider the state $\psi$ on $A$ defined by
\[ \psi (a) := \langle \pi (a) x, x \rangle \qquad ( \,a \in A \,), \]
cf.\ \ref{staterep}. We obtain
\[ \| \,\pi (a) x \,\| = {\langle \pi (a) x, \pi (a) x \rangle \,}^{1/2}
= {\psi (a^*a) \,}^{1/2} \leq \| \,a \,\|\0. \pagebreak \qedhere \]
\end{proof}

\begin{definition}[the universal representation, $\pi\univ$]%
\index{concepts}{representation!universal}%
\index{concepts}{universal!representation}%
\index{symbols}{p3@$\pi_{u}$}\index{symbols}{H2@$H_{u}$}%
We define
\[ \pi\univ := \oplus \,_{\text{\small{$\psi \in \statespace (A)$}}}
\,\pi _{\text{\small{$\psi$}}}, \qquad
H\univ := \oplus \,_{\text{\small{$\psi \in \statespace (A)$}}}
\,H _{\text{\small{$\psi$}}}. \]
One says that $\pi \univ$ is the \underline{universal representation}
of $A$ and that $H \univ$ is the universal Hilbert space of $A$.
The universal representation is \linebreak $\sigma$-contractive
by \ref{directsumsigma}, and non-degenerate by \ref{dirsumnondeg}. 
\end{definition}

\begin{theorem}\label{GNuniv}%
We have
\[ \| \,\pi \univ (a) \,\| = \| \,a \,\|\0 \quad \text{for all} \quad a \in A. \]
\end{theorem}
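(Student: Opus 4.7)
The plan is to establish both inequalities $\| \pi\univ(a) \| \leq \| a \|\0$ and $\| a \|\0 \leq \| \pi\univ(a) \|$ separately, each of which will turn out to follow almost directly from results already at our disposal.

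For the inequality $\| a \|\0 \leq \| \pi\univ(a) \|$, I would fix an arbitrary state $\psi \in \statespace(A)$ and invoke \ref{repstate}, which says that the reproducing vector $c_{\text{\small{$\psi$}}}$ is a cyclic \emph{unit} vector for $\pi_{\text{\small{$\psi$}}}$ satisfying
\[ \psi(a^*a) = \langle \pi_{\text{\small{$\psi$}}}(a^*a) c_{\text{\small{$\psi$}}}, c_{\text{\small{$\psi$}}} \rangle _{\text{\small{$\psi$}}}
= \| \,\pi_{\text{\small{$\psi$}}}(a) c_{\text{\small{$\psi$}}} \,\| _{\text{\small{$\psi$}}}^{\,2}. \]
Since $c_{\text{\small{$\psi$}}}$ is a unit vector, this gives $\psi(a^*a)^{1/2} \leq \| \pi_{\text{\small{$\psi$}}}(a) \|$. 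Taking the supremum over $\psi \in \statespace(A)$ and using the formula for the norm of a direct sum of operators from \ref{dirop}, namely
\[ \| \,\pi\univ (a) \,\| = \sup_{\text{\small{$\psi \in \statespace(A)$}}} \| \,\pi_{\text{\small{$\psi$}}} (a) \,\|, \]
we obtain $\| a \|\0 \leq \| \pi\univ(a) \|$. (The degenerate case $\statespace(A) = \varnothing$ is trivial, both quantities being zero by convention and by the direct sum then being empty.)

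For the reverse inequality $\| \pi\univ(a) \| \leq \| a \|\0$, I would apply theorem \ref{boundedo} directly to the universal representation $\pi\univ$ acting on the Hilbert space $H\univ$. This requires verifying that $\pi\univ$ is weakly continuous on $A\sa$. Since each $\pi_{\text{\small{$\psi$}}}$ is $\sigma$-contractive (by \ref{repstate}), the direct sum $\pi\univ$ is $\sigma$-contractive as well (by \ref{directsumsigma}); proposition \ref{sigmaAsa}(iii) then yields that $\pi\univ$ is contractive, hence continuous, on $A\sa$, which in turn implies weak continuity on $A\sa$ by taking matrix coefficients. Theorem \ref{boundedo} then gives $\| \pi\univ(a) \| \leq \| a \|\0$.

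Combining both inequalities yields the claim. No genuinely hard step is involved here; the main thing to watch is simply to cite the correct auxiliary results (\ref{repstate}, \ref{dirop}, \ref{directsumsigma}, \ref{sigmaAsa}, \ref{boundedo}) in the right order, so that the identification of $\| \pi\univ(a) \|$ as the supremum of the $\| \pi_{\text{\small{$\psi$}}}(a) \|$ and the use of the unit vector $c_{\text{\small{$\psi$}}}$ in each GNS summand cleanly pin the norm of the universal representation to the quantity $\| a \|\0$.
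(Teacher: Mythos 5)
Your proof is correct and follows essentially the same route as the paper: the inequality $\| \,\pi \univ (a) \,\| \leq \| \,a \,\|\0$ via the $\sigma$-contractivity of $\pi\univ$ and theorem \ref{boundedo}, and the converse via $\psi (a^*a) = {\| \,\pi _{\psi} (a) c_{\psi} \,\| \,}^2 \leq {\| \,\pi _{\psi} (a) \,\| \,}^2 \leq {\| \,\pi \univ (a) \,\| \,}^2$ using the unit reproducing vector $c_{\psi}$. The only cosmetic difference is that you phrase the last step through the supremum formula of \ref{dirop} rather than the direct bound $\| \,\pi _{\psi} (a) \,\| \leq \| \,\pi \univ (a) \,\|$, which amounts to the same thing.
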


\begin{proof}
The representation $\pi \univ$ is $\sigma$-contractive as noted
before, so that, by \ref{boundedo},
\[ \| \,\pi \univ (a) \,\| \leq \| \,a \,\|\0 \]
for all $a \in A$. In order to show the converse inequality, let
$a \in A$ be fixed. Let $\psi \in \statespace (A)$. It suffices then to prove that
$\psi (a^*a) \leq {\| \,\pi \univ (a) \,\| \,}^2$. We have
\begin{align*}
\psi (a^*a)
& = \langle \pi _{\psi} (a^*a) c_{\psi}, c_{\psi} \rangle _{\psi}
= \langle \pi _{\psi} (a) c_{\psi}, \pi _{\psi} (a) c_{\psi} \rangle _{\psi} \\
& = {\| \,\pi _{\psi} (a) c_{\psi} \,\| \,}^2
\leq {\| \,\pi _{\psi} (a) \,\| \,}^2 \leq {\| \,\pi \univ (a) \,\| \,}^2. \qedhere
\end{align*}
\end{proof}

\begin{definition}[the Gel'fand-Na\u{\i}mark seminorm, $\| \cdot \|\0$]%
\index{concepts}{Gel'fand-Naimark@Gel'fand-Na\u{\i}mark!seminorm}%
\label{GNSseminorm}%
We see from the preceding theorem \ref{GNuniv} that $\| \cdot \| \0$
is a C*-seminorm on $A$. This C*-seminorm $\| \cdot \| \0$
is called the \underline{Gel'fand-Na\u{\i}mark seminorm}. It satisfies
\[ \| \,a \,\|\0 \leq \rsigma(a) \leq \| \,a \,\| \quad \text{for all} \quad a \in A, \]
where $\| \cdot \|$ denotes the accessory \st-norm on $A$.
\end{definition}

\begin{proposition}\label{bstGNc}%
If $A$ has continuous states, then the Gel'fand-Na\u{\i}mark seminorm
is continuous on $A$. If $A$ furthermore is commutative, then the
Gel'fand-Na\u{\i}mark seminorm is dominated by the norm in $A$.
\end{proposition}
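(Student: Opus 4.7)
The plan is to reduce both assertions to continuity statements about the universal representation $\pi\univ$, by exploiting the identity
\[ \| \,a \,\|\0 = \| \,\pi\univ (a) \,\| \qquad (a \in A) \]
established in Theorem \ref{GNuniv}. Once this reduction is made, each claim will follow from results proved earlier in the text.

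For the first assertion, I would observe that $\pi\univ$ is a $\sigma$-contractive representation of $A$ on the Hilbert space $H\univ$, as noted in the definition of $\pi\univ$ (using \ref{directsumsigma} and \ref{dirsumnondeg}). Under the standing hypothesis that $A$ has continuous states in the sense of \ref{bdedstatesdef}, Theorem \ref{bdedstatesthm} applies and yields that $\pi\univ$ is continuous as a linear map from $A$ to $\blop (H\univ)$. Thus there exists some $c \geq 0$ with $\| \,\pi\univ (a) \,\| \leq c \cdot | \,a \,|$ for all $a \in A$, whence $\| \,a \,\|\0 \leq c \cdot | \,a \,|$, giving the continuity of the seminorm $\| \cdot \|\0$ on $A$.

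For the second assertion, assume furthermore that $A$ is commutative. The first part has already established the continuity of $\pi\univ$, and $\blop (H\univ)$ is a C*-algebra, so Corollary \ref{commbdedcontr} applies directly: a continuous \st-algebra homomorphism from a commutative normed \st-algebra to a pre-C*-algebra is contractive. This immediately gives $\| \,a \,\|\0 = \| \,\pi\univ (a) \,\| \leq | \,a \,|$ for every $a \in A$, which is the required domination.

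There is no serious obstacle here; the proof is a direct invocation of the cited results. The only point worth flagging is that the reduction via Theorem \ref{GNuniv} is essential, since it converts the defining supremum over $\statespace (A)$ into the operator norm of a single $\sigma$-contractive representation, to which the continuity machinery of \ref{bdedstatesthm} and the contractivity result \ref{commbdedcontr} can be applied verbatim.
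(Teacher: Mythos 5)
Your proof is correct and follows essentially the same route as the paper, which likewise reduces everything to the continuity of the $\sigma$-contractive universal representation via \ref{GNuniv} and \ref{bdedstatesthm}. The only cosmetic difference is that for the commutative case the paper cites \ref{weakcontHilbert}, whose contractivity claim is itself deduced from the corollary \ref{commbdedcontr} that you invoke directly.
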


\begin{proof}
This follows from \ref{bdedstatesthm} and \ref{weakcontHilbert}.
\end{proof}

\medskip
A basic object in representation theory is the following enveloping
\linebreak C*-algebra. It is useful for pulling down results. \pagebreak

\begin{definition}[the enveloping C*-algebra, $\Cstar(A)$]%
\index{symbols}{s035@\protect\st-rad(A)}%
\index{concepts}{s04@\protect\st-radical}%
\index{symbols}{C5@$\Cstar(A)$}\index{symbols}{j@$j$}%
\index{concepts}{C6@C*-algebra!enveloping}%
\index{concepts}{enveloping C*-algebra}%
\index{concepts}{algebra!C*-algebra!enveloping}%
The \underline{\st-radical} of $A$ is defined to be the \st-stable
\ref{selfadjointsubset} two-sided ideal on which $\| \cdot \|\0$ vanishes.
It shall be denoted by \st-$\mathrm{rad}(A)$.
We introduce a norm $\| \cdot \|$ on $A\mspace{1mu}/$\st-$\mathrm{rad}(A)$
by defining
\[ \| \,a+\text{\st-}\mathrm{rad}(A) \,\| := \| \,a \,\|\0 \quad \text{for all} \quad a \in A. \]
Then $(A\mspace{1mu}/$\st-$\mathrm{rad}(A), \| \cdot \,\|)$ is isomorphic as a normed
\st-algebra to the range of the universal representation, and thus is a
pre-C*-algebra. Let $(\Cstar(A),\| \cdot \|)$ denote the completion. It is
called the \underline{enveloping} \underline{C*-algebra} of $A$. We put
\[ j : A \to \Cstar(A),\quad a \mapsto a+\text{\st-}\mathrm{rad}(A). \]
\end{definition}

\begin{proposition}\label{jdense}%
The \st-algebra homomorphism $j : A \to \Cstar(A)$ is
$\sigma$-contractive and its range is dense in $\Cstar(A)$.
\end{proposition}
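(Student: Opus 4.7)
The plan is to unwind the definitions from \ref{GNSseminorm} and the construction of $\Cstar(A)$ immediately preceding the proposition; no new ideas are required. First I would verify that $j$ is indeed a \st-algebra homomorphism: the quotient map $A \to A\mspace{1mu}/\text{\st-}\mathrm{rad}(A)$ is a \st-algebra homomorphism because $\text{\st-}\mathrm{rad}(A)$ is a \st-stable two-sided ideal, and composing with the inclusion of $A\mspace{1mu}/\text{\st-}\mathrm{rad}(A)$ into its completion $\Cstar(A)$ preserves this structure.

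Next I would check $\sigma$-contractivity. By the very definition of the norm on $A\mspace{1mu}/\text{\st-}\mathrm{rad}(A)$, we have $\| \,j(a) \,\| = \| \,a \,\|\0$ for all $a \in A$. But the Gel'fand-Na\u{\i}mark seminorm satisfies $\| \,a \,\|\0 \leq \rsigma(a)$ for all $a \in A$, as recorded at the end of \ref{GNSseminorm}. Combining these two facts yields
\[ \| \,j(a) \,\| \leq \rsigma(a) \quad \text{for all} \quad a \in A, \]
which is exactly the defining property \ref{sigmacontr} of a $\sigma$-contractive map.

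Finally, density of the range follows directly from the construction: $\Cstar(A)$ was defined as the completion of the pre-C*-algebra $\bigl( \,A\mspace{1mu}/\text{\st-}\mathrm{rad}(A), \| \cdot \| \,\bigr)$, and a normed space is by definition dense in its completion. Since $j(A) = A\mspace{1mu}/\text{\st-}\mathrm{rad}(A)$ as a set, the range $j(A)$ is dense in $\Cstar(A)$. There is no hard part here; the proposition is essentially a sanity check that the preceding definition behaves as advertised, and the main purpose of stating it explicitly is probably to have a clean reference for later use (e.g.\ for showing the universal property of the enveloping C*-algebra).
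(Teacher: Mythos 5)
Your proof is correct and matches what the paper intends: the paper in fact states \ref{jdense} without any proof, precisely because it is the immediate unwinding of \ref{GNSseminorm} and the construction of $\Cstar(A)$ that you carry out ($\| \,j(a) \,\| = \| \,a \,\|\0 \leq \rsigma(a)$, plus density of a normed space in its completion). Nothing to add.
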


\begin{proposition}%
If $A$ has continuous states, then the mapping \linebreak
$j : A \to \Cstar(A)$ is continuous. If furthermore $A$ is commutative,
then $j$ is contractive.
\end{proposition}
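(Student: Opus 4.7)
The plan is to reduce everything directly to Proposition \ref{bstGNc} via the identity $\|j(a)\| = \|a\|\0$ for $a \in A$, which holds by the very definition of the norm on the pre-C*-algebra $A\mspace{1mu}/\text{\st-}\mathrm{rad}(A)$ and its extension to the completion $\Cstar(A)$.

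First I would observe that the map $j : A \to \Cstar(A)$ is linear (being the composition of the quotient map with the isometric inclusion into the completion), so continuity of $j$ will follow as soon as one shows that $a \mapsto \|j(a)\|$ is continuous on $A$. But by construction $\|j(a)\| = \|a\|\0$, so the first assertion is nothing more than the statement that $\|\cdot\|\0$ is continuous on $A$ when $A$ has continuous states. This is precisely the first half of Proposition \ref{bstGNc}.

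For the second assertion, assume in addition that $A$ is commutative. Then the second half of Proposition \ref{bstGNc} gives $\|a\|\0 \leq |a|$ for every $a \in A$. Combined with $\|j(a)\| = \|a\|\0$, this yields $\|j(a)\| \leq |a|$ for all $a \in A$, which is exactly the statement that $j$ is contractive.

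Since both assertions are immediate consequences of Proposition \ref{bstGNc} together with the tautological identity $\|j(a)\| = \|a\|\0$, there is no real obstacle; the substantive work has already been done in establishing \ref{bstGNc}, which in turn rests on \ref{bdedstatesthm} and \ref{weakcontHilbert} (for continuity) and on \ref{commbdedcontr}/\ref{commsubBsigmacontr} packaged through the universal representation (for the contractive bound in the commutative case). The only thing to be careful about in the writeup is to make clear that the equality $\|j(a)\| = \|a\|\0$ is valid by definition of the quotient norm on $A\mspace{1mu}/\text{\st-}\mathrm{rad}(A)$, and that this extends to the completion because $j(A)$ sits isometrically inside $\Cstar(A)$.
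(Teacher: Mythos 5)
Your proof is correct and follows exactly the paper's route: the paper's own proof is simply ``This follows from \ref{bstGNc}'', and you have just spelled out the (tautological) identity $\| \,j(a) \,\| = \| \,a \,\|\0$ that makes this reduction work. Nothing to add.
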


\begin{proof}
This follows from \ref{bstGNc}.
\end{proof}

\medskip
We now come to a group of statements which collectively might be
called the ``\underline{universal property}'' of the enveloping C*-algebra.
\index{concepts}{universal!property!of enveloping C*-algebra}

\begin{proposition}\index{symbols}{p4@$\pi \0$}%
Let $H$ be a Hilbert space. The assignment
\[ \pi := \pi \0 \circ j \]
establishes a bijection from the set of all representations $\pi \0$ of
$\Cstar(A)$ on $H$ onto the set of all $\sigma$-contractive representations
$\pi$ of $A$ on $H$.
\end{proposition}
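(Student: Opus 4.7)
The plan is to verify well-definedness of the assignment, then injectivity, and finally surjectivity, the last being the substantive step.

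\textbf{Well-definedness.} Given a representation $\pi\0$ of the C*-algebra $\Cstar(A)$ on $H$, corollary \ref{C*repcontract} makes $\pi\0$ contractive, while $j:A\to\Cstar(A)$ is $\sigma$-contractive by proposition \ref{jdense}. Composition therefore produces a $\sigma$-contractive representation of $A$ on $H$, so $\pi\0\circ j$ lies in the target set.

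\textbf{Injectivity.} If $\pi\0$ and $\pi\0{}'$ are representations of $\Cstar(A)$ on $H$ with $\pi\0\circ j=\pi\0{}'\circ j$, then they agree on the dense subset $j(A)\subset\Cstar(A)$ (density by \ref{jdense}). Both are continuous (again by \ref{C*repcontract}), hence they coincide on all of $\Cstar(A)$.

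\textbf{Surjectivity.} Let $\pi$ be a $\sigma$-contractive representation of $A$ on $H$. The plan is to show that $\pi$ is dominated by the Gel'fand-Na\u{\i}mark seminorm $\|\cdot\|\0$, then to factor $\pi$ through $j$ and extend by density. First, proposition \ref{sigmaAsa}(iii) gives that $\pi$ is contractive on $A\sa$, so for every $x\in H$ the functional $a\mapsto\langle\pi(a)x,x\rangle$ is continuous on $A\sa$; in other words, $\pi$ is weakly continuous on $A\sa$. Theorem \ref{boundedo} now yields
\[
\|\pi(a)\|\leq\|a\|\0\qquad\text{for all }a\in A.
\]
Consequently $\pi$ vanishes on $\text{\st-}\mathrm{rad}(A)$, and so factors to a \st-algebra homomorphism $\tld{\pi}:A/\text{\st-}\mathrm{rad}(A)\to\blop(H)$ which, by construction, is contractive in the norm $\|\,a+\text{\st-}\mathrm{rad}(A)\,\|=\|a\|\0$. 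Since $A/\text{\st-}\mathrm{rad}(A)$ is dense in $\Cstar(A)$ and $\tld{\pi}$ is contractive, $\tld{\pi}$ extends uniquely to a bounded linear map $\pi\0:\Cstar(A)\to\blop(H)$. Continuity of multiplication and of the involution in $\Cstar(A)$ and $\blop(H)$ transfers the algebraic identities from the dense subset, so $\pi\0$ is a \st-algebra homomorphism. By construction $\pi\0\circ j=\pi$.

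\textbf{Main obstacle.} The only delicate point is ensuring that bare $\sigma$-contractivity suffices to invoke theorem \ref{boundedo}; the subtlety is that \ref{boundedo} requires weak continuity on $A\sa$, and nothing has been assumed about continuity of the involution in $A$. The resolution is precisely proposition \ref{sigmaAsa}(iii), which exploits the fact that $\rsigma$ coincides with $\rlambda$ on Hermitian elements so that $\|\pi(a)\|\leq|a|$ for $a\in A\sa$, giving the required weak continuity on $A\sa$ for free.
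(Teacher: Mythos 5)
Your proof is correct and follows essentially the same route as the paper's: contractivity of $\pi\0$ via \ref{C*repcontract} for well-definedness, density of $j(A)$ for injectivity, and the bound $\|\pi(a)\|\leq\|a\|\0$ from \ref{boundedo} to factor $\pi$ through $j$ for surjectivity. You additionally make explicit the verification that $\sigma$-contractivity yields weak continuity on $A\sa$ via \ref{sigmaAsa}(iii), a hypothesis of \ref{boundedo} that the paper's proof leaves tacit; this is a welcome clarification rather than a divergence.
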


\begin{proof} Let $\pi \0$ be a representation of $\Cstar(A)$
on $H$. Then $\pi \0$ is contractive by \ref{C*repcontract},
and it follows that $\pi := \pi \0 \circ j$ is a $\sigma$-contractive
representation of $A$ on $H$, by \ref{jdense}. Conversely, let $\pi$
be a $\sigma$-contractive representation of $A$ on $H$. We then have
$\| \,\pi (a) \,\| \leq \| \,a \,\|\0$ by \ref{boundedo}. It follows that $\pi$
vanishes on \st-$\mathrm{rad}(A)$ and thus induces a representation
$\pi \0$ of $\Cstar(A)$ such that $\pi = \pi \0 \circ j$. To show injectivity, let
$\rho \0$, $\sigma \0$ be two representations of $\Cstar(A)$ on $H$ with
$\rho \0 \circ j = \sigma \0 \circ j$. Then $\rho \0$ and $\sigma \0$
agree on the dense set $j(A)$, hence everywhere, by continuity.
\pagebreak
\end{proof}

\begin{proposition}\label{piopi}%
Let $\pi$ be a representation of $A$ on a Hilbert space $H$, which is
weakly continuous on $A\sa$. Let $\pi \0$ be the corresponding
representation of $\Cstar(A)$. Then $\range(\pi \0)$ is the closure of
$\range(\pi)$. It follows that
\begin{itemize}
   \item[$(i)$] $\pi$ is non-degenerate if and only if $\pi \0$ is so,
  \item[$(ii)$] $\pi$ is cyclic if and only if $\pi \0$ is so,
 \item[$(iii)$] ${\pi \0}' = {\pi}'.$
\end{itemize}
\end{proposition}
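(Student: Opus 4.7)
The plan is to establish the range equality $\range(\pi\0) = \overline{\range(\pi)}$ first; the three consequences will then all be immediate applications of density-and-continuity.

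For the range equality, I would assemble three facts. First, by construction $\pi = \pi\0 \circ j$, so $\range(\pi) = \pi\0 \bigl( j(A) \bigr)$. Second, by \ref{jdense} the image $j(A)$ is dense in $\Cstar(A)$, and $\pi\0$, being a \st-algebra homomorphism from a C*-algebra to a pre-C*-algebra, is contractive by \ref{C*repcontract}; therefore $\pi\0 \bigl( j(A) \bigr)$ is dense in $\pi\0 \bigl( \Cstar(A) \bigr) = \range(\pi\0)$. Third, $\range(\pi\0)$ is itself a C*-algebra by the factorisation theorem \ref{homfact}, and is in particular norm-closed in $\blop(H)$. Combining these, $\range(\pi\0) = \overline{\pi\0(j(A))} = \overline{\range(\pi)}$.

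With the range equality at hand, (i) and (ii) follow from the observation that for each $x \in H$, the set $\pi(A)\mspace{2mu}x = \pi\0 \bigl( j(A) \bigr) x$ is dense in $\pi\0 \bigl( \Cstar(A) \bigr) x$, because $\pi\0$ is continuous and $j(A)$ is dense in $\Cstar(A)$. Taking closures therefore gives $\overline{\pi(A)\mspace{2mu}x} = \overline{\pi\0 \bigl( \Cstar(A) \bigr) x}$ for every $x$; this directly yields (ii), since $c$ is cyclic for one representation iff it is cyclic for the other, and it yields (i) after taking the closed linear span over $x \in H$ in the characterisation of non-degeneracy from \ref{nondegenerate}.

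For (iii), the inclusion ${\pi\0}' \subset \pi'$ is obvious because $\pi(A) \subset \range(\pi\0)$. For the converse, let $T \in \pi'$; then $b \mapsto T \,\pi\0(b) - \pi\0(b) \,T$ is a continuous mapping $\Cstar(A) \to \blop(H)$ that vanishes on the dense subset $j(A)$, hence vanishes on all of $\Cstar(A)$, giving $T \in {\pi\0}'$. I do not anticipate a genuine obstacle: the only step with any substance is that $\range(\pi\0)$ is norm-closed, and this is already packaged in \ref{homfact}; everything else is a routine density-and-continuity bookkeeping atop the universal property of $\Cstar(A)$.
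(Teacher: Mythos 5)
Your proof is correct and follows essentially the same route as the paper's: density of $j(A)$ in $\Cstar(A)$ together with continuity of $\pi\0$ gives density of $\range(\pi)$ in $\range(\pi\0)$, and closedness of $\range(\pi\0)$ as a C*-algebra (the paper cites \ref{rangeC*}, the representation-specific form of \ref{homfact}) yields the range equality. The paper leaves (i)--(iii) as immediate consequences; your density-and-continuity bookkeeping for them is exactly the intended argument.
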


\begin{proof}
The range of $\pi \0$ is a C*-algebra by \ref{rangeC*}.
Furthermore $\range(\pi)$ is dense in $\range(\pi \0)$ by
\ref{jdense}, from which it then follows that $\range(\pi \0)$ is
the closure of $\range(\pi)$.
\end{proof}

\begin{proposition}%
If $\psi$ is a state on $A$, then
\[ | \,\psi(a) \,| \leq \| \,a \,\|\0 \quad \text{for all} \quad a \in A. \]
\end{proposition}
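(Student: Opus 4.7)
The plan is to combine two facts that are already in our toolkit: (a) a state has variation at most $1$ and thus satisfies the Cauchy--Schwarz--type bound ${| \,\psi(a) \,| \,}^2 \leq \psi(a^*a)$, and (b) the Gel'fand--Na\u{\i}mark seminorm is by definition the supremum of ${\psi(a^*a) \,}^{1/2}$ over all states. Chaining these two bounds gives the result in one line.

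More precisely, I would first note that by \ref{reprovectorfinvar} a state $\psi$ has $v(\psi) = \langle c_\psi, c_\psi \rangle_\psi = 1$, so the defining inequality of finite variation \ref{variation} applied with $\gamma = v(\psi) = 1$ yields
\[ {| \,\psi(a) \,| \,}^2 \leq \psi(a^*a) \quad \text{for all} \quad a \in A. \]
(Alternatively, one can deduce this directly from \ref{repstate} and the Cauchy--Schwarz inequality in $H_\psi$, using that $c_\psi$ is a unit vector: ${| \,\psi(a) \,| \,}^2 = {| \,\langle \pi_\psi(a) c_\psi, c_\psi \rangle_\psi \,| \,}^2 \leq \|\,\pi_\psi(a) c_\psi\,\|^{\,2} = \psi(a^*a)$.)

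Then, by the very definition of the Gel'fand--Na\u{\i}mark seminorm in \ref{GNSseminorm}, one has $\psi(a^*a)^{1/2} \leq \| \,a \,\|\0$, whence
\[ | \,\psi(a) \,| \leq {\psi(a^*a) \,}^{1/2} \leq \| \,a \,\|\0, \]
as desired. There is no real obstacle here; the statement is essentially a repackaging of the finite variation bound, made meaningful by interpreting $\| \cdot \|\0$ as the supremum defining it. The only thing one has to watch is the edge case $\statespace(A) = \varnothing$, but then there is no state $\psi$ to consider and the claim is vacuous.
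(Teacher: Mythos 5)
Your argument is correct and is essentially the paper's own proof: the paper also chains ${|\,\psi(a)\,|\,}^2 \leq \psi(a^*a)$ (from $v(\psi)=1$) with $\psi(a^*a)^{1/2} \leq \sup_{\varphi \in \statespace(A)} \varphi(a^*a)^{1/2} = \|\,a\,\|\0$. Nothing further is needed.
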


\begin{proof} We have
\[ | \,\psi (a) \,| \leq {\psi (a^*a) \,}^{1/2} \leq
\sup _{\varphi \in \statespace (A)} {\varphi (a^*a) \,}^{1/2} = \| \,a \,\|\0. \qedhere \]
\end{proof}

\begin{corollary}\label{linboundedo}%
If $\varphi$ is a positive linear functional of  finite variation
on $A$, which is weakly continuous on $A\sa$, then
\[ | \,\varphi (a) \,| \leq v( \varphi ) \,\| \,a \,\|\0 \quad \text{for all} \quad a \in A. \]
\end{corollary}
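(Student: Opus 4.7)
The plan is to reduce the statement to the preceding proposition by normalising $\varphi$ to a state. First I would dispose of the degenerate case: if $v(\varphi) = 0$, then the defining inequality ${| \,\varphi (a) \,| \,}^2 \leq v ( \varphi ) \,\varphi (a^*a)$ from \ref{variation} forces $\varphi = 0$ identically, and the claimed bound is trivial.

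So assume $v(\varphi) > 0$ and set $\psi := v(\varphi)^{-1} \,\varphi$. I would check that $\psi$ satisfies the three requirements to be a state in the sense introduced in this chapter. Positivity is immediate since a non-negative scalar multiple of a positive linear functional is positive. Weak continuity on $A\sa$ is preserved under scalar multiplication (the definition \ref{weakbdedsadef} involves the functionals $a \mapsto \psi(b^*ab) = v(\varphi)^{-1} \varphi(b^*ab)$). Finally, the variation is correct: by the $\mathds{R}_+$-homogeneity of $v$ (part of theorem \ref{finvarconvex}), one has $v(\psi) = v(\varphi)^{-1} \,v(\varphi) = 1$.

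Having established $\psi \in \statespace(A)$, the preceding proposition yields
\[ | \,\psi(a) \,| \leq \| \,a \,\|\0 \quad \text{for all} \quad a \in A, \]
and multiplying through by $v(\varphi)$ gives the desired inequality. There is no real obstacle here; the only point worth flagging is that the $\mathds{R}_+$-homogeneity of $v$ used in the normalisation step is precisely one half of theorem \ref{finvarconvex}, and the hypotheses (positivity, finite variation, weak continuity on $A\sa$) are exactly what is needed to remain inside the convex cone on which $v$ is $\mathds{R}_+$-homogeneous.
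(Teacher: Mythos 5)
Your proof is correct and is essentially the intended derivation: the paper states this as an immediate corollary of the preceding proposition, obtained by normalising $\varphi$ to the state $v(\varphi)^{-1}\varphi$ (with the degenerate case $v(\varphi)=0$ handled exactly as you do via the inequality ${|\,\varphi(a)\,|\,}^2 \leq v(\varphi)\,\varphi(a^*a)$). Your attention to the $\mathds{R}_+$-homogeneity of the variation from \ref{finvarconvex} is exactly the right justification for $v(\psi)=1$.
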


\begin{proposition}\index{symbols}{f1@$\varphi \0$}%
By putting
\[ \varphi := \varphi \0 \circ j \]
we establish a bijection from the set of positive linear functionals
$\varphi \0$ on $\Cstar(A)$ onto the set of positive linear functionals
$\varphi$ of finite variation on $A$, which are weakly continuous on
$A\sa$. Then $v ( \varphi ) = v ( \varphi \0 )$.
\end{proposition}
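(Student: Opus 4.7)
The plan is to set up the bijection directly using corollary \ref{linboundedo} as the key analytic tool, exploiting the fact that $\Cstar(A)$-positivity is automatically accompanied by continuity (\ref{plfC*bded}) and that $v(\varphi_0) = \|\varphi_0\|$ for positive functionals on a C*-algebra (\ref{C*varnorm}). First I would verify the easy direction. Given a positive linear functional $\varphi_0$ on $\Cstar(A)$, set $\varphi := \varphi_0 \circ j$. Positivity of $\varphi$ is immediate from $\varphi(a^*a) = \varphi_0\bigl(j(a)^* j(a)\bigr) \geq 0$. The Cauchy--Schwarz inequality for positive functionals on $\Cstar(A)$ gives
\[ |\varphi(a)|^2 = |\varphi_0(j(a))|^2 \leq v(\varphi_0)\,\varphi_0\bigl(j(a)^* j(a)\bigr) = v(\varphi_0)\,\varphi(a^*a), \]
so $\varphi$ has finite variation with $v(\varphi) \leq v(\varphi_0)$. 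Since $j$ is $\sigma$-contractive (\ref{jdense}), one has $\|j(a)\| \leq \rsigma(a) = \rlambda(a) \leq |a|$ for Hermitian $a$; hence $\varphi$ is norm-continuous on $A\sa$, and composing with the continuous map $a \mapsto b^*ab$ from $A\sa$ into $A\sa$ yields the weak continuity on $A\sa$ required by \ref{weakbdedsadef}.

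For the reverse direction, suppose $\varphi$ is a positive linear functional on $A$ of finite variation, weakly continuous on $A\sa$. Corollary \ref{linboundedo} furnishes the essential estimate
\[ |\varphi(a)| \leq v(\varphi)\,\|a\|\0 = v(\varphi)\,\|j(a)\| \quad \text{for all } a \in A. \]
Thus $\varphi$ vanishes on $\ker(j) = \text{\st-}\mathrm{rad}(A)$ and factors through $j$ to a bounded linear functional on the dense subalgebra $j(A) \subset \Cstar(A)$ of norm at most $v(\varphi)$. Extending by continuity produces a bounded linear functional $\varphi_0$ on $\Cstar(A)$ with $\|\varphi_0\| \leq v(\varphi)$ and $\varphi = \varphi_0 \circ j$.

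The main obstacle is to verify that the extension $\varphi_0$ obtained in this way is actually positive; without this, we cannot invoke \ref{C*varnorm}. I would argue by density: given $c \in \Cstar(A)$, pick a sequence $(a_n) \subset A$ with $j(a_n) \to c$; then $j(a_n^* a_n) = j(a_n)^* j(a_n) \to c^*c$ in $\Cstar(A)$, and by continuity of $\varphi_0$,
\[ \varphi_0(c^*c) = \lim_n \varphi_0\bigl(j(a_n^*a_n)\bigr) = \lim_n \varphi(a_n^*a_n) \geq 0. \]
By \ref{sqrtShiraliFord}, every positive element of $\Cstar(A)$ is of this form, so $\varphi_0$ is positive. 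Now \ref{C*varnorm} yields $v(\varphi_0) = \|\varphi_0\| \leq v(\varphi)$, while the Cauchy--Schwarz computation of the first paragraph applied to this $\varphi_0$ gives the reverse inequality $v(\varphi) \leq v(\varphi_0)$; hence $v(\varphi) = v(\varphi_0)$. Finally, injectivity of $\varphi_0 \mapsto \varphi_0 \circ j$ is immediate, since a continuous linear functional on $\Cstar(A)$ is determined by its restriction to the dense range $j(A)$, and the two constructions above are inverse to each other, establishing the claimed bijection.
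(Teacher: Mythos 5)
Your proof is correct and follows essentially the same route as the paper: the forward direction via the $\sigma$-contractivity of $j$ on $A\sa$, and the reverse direction via the estimate of \ref{linboundedo} followed by factoring through $j$ and extending over the dense range $j(A)$, with injectivity from density. Your explicit density argument for the positivity of the extension $\varphi\0$, and your derivation of $v(\varphi) = v(\varphi\0)$ from \ref{C*varnorm} combined with the Cauchy--Schwarz inequality, are slightly more detailed than the paper's (which instead observes $v(\varphi) = v(\varphi\0|_{j(A)})$ and passes to the closure by continuity), but they amount to the same argument.
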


\begin{proof} 
Please note first that every positive linear functional on a \linebreak
C*-algebra is continuous and has finite variation, by \ref{C*varnorm}.

Let $\varphi \0$ be a positive linear functional on $\Cstar(A)$.
Then $\varphi := \varphi \0 \circ j$ is a positive linear functional on $A$,
which is weakly continuous on $A\sa$ because $j$ is contractive on $A\sa$
cf.\ \ref{jdense} \& \ref{sigmaAsa}. For $a \in A$, we have
\begin{gather*}
{| \,\varphi (a) \,| \,}^2 = {| \,( \varphi \0 \circ j ) (a) \,| \,}^2, \\
\varphi (a^*a) = ( \varphi \0 \circ j ) (a^*a),
\end{gather*}
so that $v ( \varphi ) = v ( \varphi \0 |_{j(A)})$. By continuity of $\varphi \0$
on $\Cstar(A)$ and by continuity of the involution in $\Cstar(A)$, it follows that
$v ( \varphi ) = v ( \varphi \0 )$. \pagebreak

Conversely, let $\varphi$ be a positive linear functional of finite variation
on $A$, which is weakly continuous on $A\sa$. By \ref{linboundedo},
$\varphi$ also is bounded with respect to $\| \cdot \|\0$. It therefore
induces a positive linear functional $\varphi \0$ on $\Cstar(A)$
such that $\varphi = \varphi \0 \circ j$. To show injectivity, let
$\varphi \0$, ${\varphi \0}'$ be positive linear functionals on $\Cstar(A)$ with
$\varphi \0 \circ j = {\varphi \0}' \circ j$. Then $\varphi \0$ and ${\varphi \0}'$
coincide on the dense set $j(A)$, hence everywhere, by continuity of
$\varphi \0$ and ${\varphi \0}'$ on $\Cstar(A)$. 
\end{proof}

\begin{reminder}%
We imbed the set $\quasistates (A)$ of quasi-states on $A$ in the unit ball of
the dual normed space of the real normed space $A\sa$, cf.\ \ref{propquasi}.
The set $\quasistates (A)$ then is a compact Hausdorff space in the \linebreak
weak* topology. (By Alaoglu's Theorem, cf.\ the appendix \ref{Alaoglu}.)
\end{reminder}

\begin{theorem}\label{affinehomeom}%
The assignment
\[ \varphi := \varphi \0 \circ j \]
is an \underline{affine homeomorphism} from the set $\quasistates\bigl(\Cstar(A)\bigr)$ of
quasi-states $\varphi \0$ on $\Cstar(A)$ onto the set $\quasistates (A)$ of quasi-states
$\varphi$ on $A$, and hence from the set $\statespace\bigl(\Cstar(A)\bigr)$ onto the set
$\statespace (A)$.
\end{theorem}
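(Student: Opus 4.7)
The plan is to leverage the bijection between positive linear functionals established in the preceding proposition, refine it to quasi-states and states using the identity $v(\varphi) = v(\varphi_0)$, and then use compactness to upgrade continuity to a homeomorphism.

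First I would observe that every positive linear functional $\varphi_0$ on the C*-algebra $\Cstar(A)$ is automatically continuous with finite variation $v(\varphi_0) = \| \,\varphi_0 \,\|$ by \ref{C*varnorm}. Consequently $\varphi_0$ is a quasi-state on $\Cstar(A)$ exactly when $v(\varphi_0) \leq 1$, and a state exactly when $v(\varphi_0) = 1$. By the preceding proposition, the assignment $\varphi_0 \mapsto \varphi_0 \circ j$ is a bijection onto the set of positive linear functionals $\varphi$ of finite variation on $A$ that are weakly continuous on $A\sa$ (which is the defining class of quasi-states once the bound on $v$ is imposed), and it preserves the variation: $v(\varphi) = v(\varphi_0)$. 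Hence the bijection restricts to bijections $\quasistates(\Cstar(A)) \to \quasistates(A)$ and $\statespace(\Cstar(A)) \to \statespace(A)$. Since $\varphi_0 \mapsto \varphi_0 \circ j$ is linear in $\varphi_0$, it is in particular affine.

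Next I would verify continuity in the weak* topologies. By \ref{topQS}, $\quasistates(A)$ carries the relative weak* topology as a subspace of the dual of $A\sa$, i.e.\ the topology of pointwise convergence on $A\sa$; similarly $\quasistates(\Cstar(A))$ carries the topology of pointwise convergence on $\Cstar(A)$. If $\varphi_0^{(\alpha)} \to \varphi_0$ pointwise on $\Cstar(A)$, then for every $a \in A\sa$ we have $(\varphi_0^{(\alpha)} \circ j)(a) = \varphi_0^{(\alpha)}(j(a)) \to \varphi_0(j(a)) = (\varphi_0 \circ j)(a)$, so $\varphi_0^{(\alpha)} \circ j \to \varphi_0 \circ j$ in $\quasistates(A)$. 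Thus the assignment is continuous.

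Now for the homeomorphism statement: both $\quasistates(\Cstar(A))$ and $\quasistates(A)$ are compact Hausdorff by Alaoglu's theorem (cf.\ \ref{topQS}). A continuous bijection between compact Hausdorff spaces is automatically a homeomorphism (cf.\ the appendix \ref{homeomorph}), so the map $\quasistates(\Cstar(A)) \to \quasistates(A)$ is an affine homeomorphism. The restriction to states is then also a homeomorphism onto its image $\statespace(A)$, because a restriction of a homeomorphism to corresponding subspaces (with the induced topologies) is again a homeomorphism.

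The only mildly subtle point, which I would take as the main obstacle to spell out, is the matching of the two weak* topologies: on the $\Cstar(A)$ side, the topology is defined via pointwise convergence on the whole C*-algebra $\Cstar(A)$, while on the $A$ side it is defined only via pointwise convergence on the real Banach space $A\sa$. Continuity in one direction is immediate as above, and the reverse direction is then handled gratis by the compact-Hausdorff argument, so no further comparison of topologies is needed.
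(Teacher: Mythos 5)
Your proof is correct and follows essentially the same route as the paper: the bijection (with preservation of the variation) comes from the preceding proposition, continuity follows from the universal property of the weak* topology (pointwise convergence), and the upgrade to a homeomorphism is the standard continuous-bijection-from-a-compact-space-to-a-Hausdorff-space argument of the appendix. The extra details you supply (the restriction of the bijection to quasi-states and states via $v(\varphi)=v(\varphi\0)$, and the remark on the asymmetry between pointwise convergence on $\Cstar(A)$ versus on $A\sa$) are left implicit in the paper but are exactly the right points to make explicit.
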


\begin{proof}
This assignment is continuous by the universal property of the weak* topology,
cf.\ the appendix \ref{weak*top}. The assignment, being a continuous bijection
from the compact space $\quasistates\bigl(\Cstar(A)\bigr)$ to the Hausdorff space
$\quasistates (A)$, is a homeomorphism, cf.\ the appendix \ref{homeomorph}. 
\end{proof}

\medskip
The next two results put the ``extension processes'' for states
and their GNS representations in a useful relationship.

\begin{proposition}\label{piopsio}%
Let $\psi$ be a state on $A$, and let $\psi\0$ be the corresponding state
on $\Cstar(A)$. Let $(\pi _{\psi}) \0$ denote the representation of $\Cstar(A)$
corresponding to the GNS representation $\pi _{\psi}$. We then have
\[ \psi \0 (b) = \langle (\pi _{\psi}) \0 (b) c _{\psi},c _{\psi} \rangle _{\psi}
\quad \text{ for all} \quad b \in \Cstar(A). \]
\end{proposition}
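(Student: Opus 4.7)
The plan is to reduce the identity on all of $\Cstar(A)$ to its restriction to the dense image $j(A)$, where it follows tautologically from the defining bijections, and then extend by continuity.

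First I would unwind what the correspondences give on elements of $A$. By the bijection $\varphi \mapsto \varphi \0 \circ j$ of the previous proposition, we have $\psi_0 \circ j = \psi$ as linear functionals on $A$. Similarly, by the universal property of $\Cstar(A)$, the representation $(\pi_\psi)_0$ is characterised by $(\pi_\psi)_0 \circ j = \pi_\psi$. Combining these with the reproducing identity $\psi(a) = \langle \pi_\psi(a) c_\psi, c_\psi \rangle_\psi$ from \ref{repstate}, I get for every $a \in A$
\[ \psi_0 \bigl( j(a) \bigr) = \psi(a) = \langle \pi_\psi(a) c_\psi, c_\psi \rangle_\psi = \langle (\pi_\psi)_0 \bigl( j(a) \bigr) c_\psi, c_\psi \rangle_\psi. \]
Thus the two sides of the asserted identity agree on the subset $j(A) \subset \Cstar(A)$.

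Next I would invoke density: $j(A)$ is dense in $\Cstar(A)$ by \ref{jdense}. It remains to check continuity of both sides as functions of $b \in \Cstar(A)$. The left-hand side $b \mapsto \psi_0(b)$ is continuous because $\psi_0$ is a positive linear functional on a C*-algebra, hence automatically continuous by \ref{C*varnorm} (or \ref{plfC*bded}). The right-hand side $b \mapsto \langle (\pi_\psi)_0(b) c_\psi, c_\psi \rangle_\psi$ is continuous because $(\pi_\psi)_0$ is a representation of a C*-algebra on a Hilbert space, and is therefore contractive by \ref{C*repcontract}; the inner product with $c_\psi$ is then bounded in $b$ by $\|c_\psi\|^2 \,\|b\|$.

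Since two continuous functions on $\Cstar(A)$ which agree on the dense subset $j(A)$ must agree everywhere, the identity extends to all $b \in \Cstar(A)$. There is no real obstacle here; the only small point to be careful about is citing the right automatic continuity result for positive linear functionals on C*-algebras, and verifying that $(\pi_\psi)_0$ is actually a representation of $\Cstar(A)$ on the Hilbert space $H_\psi$ (so that \ref{C*repcontract} applies), which is built into the correspondence used to define $(\pi_\psi)_0$.
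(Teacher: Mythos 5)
Your proof is correct and follows essentially the same route as the paper: the paper defines $\psi_1(b) := \langle (\pi_\psi)\0(b)c_\psi, c_\psi\rangle_\psi$, checks $\psi_1 \circ j = \psi$ by the same computation you give, and concludes $\psi_1 = \psi\0$ from the injectivity of the correspondence $\varphi\0 \mapsto \varphi\0 \circ j$ — which is itself proved by exactly the density-plus-continuity argument you spell out. Your version merely unwinds that injectivity step explicitly.
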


\begin{proof}
Consider the state $\psi _1$ on $\Cstar(A)$ defined by
\[ \psi _1 (b) := \langle (\pi _{\psi}) \0 (b) c _{\psi}, c _{\psi} \rangle _{\psi}
\qquad \bigl( \,b \in \Cstar(A) \,\bigr). \]
We get
\[ \psi _1 \bigl(j(a)\bigr)
= \langle (\pi _{\psi}) \0 \bigl(j(a)\bigr) c _{\psi}, c_{\psi} \rangle _{\psi}
= \langle \pi _{\psi} (a)c _{\psi}, c_{\psi} \rangle _{\psi} = \psi (a), \]
for all $a \in A$, so that $\psi _1 = \psi \0$. \pagebreak
\end{proof}

\begin{corollary}\label{oequiv}%
For a state $\psi$ on $A$, the representation $\pi _{(\psi \0)}$ is spatially
equivalent to $(\pi _{\psi}) \0$, and there exists a unique unitary operator
intertwining $\pi _{(\psi \0)}$ with $(\pi _{\psi}) \0$ taking $c _{(\psi \0)}$
to $c _{\psi}$.
\end{corollary}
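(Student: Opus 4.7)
The plan is to apply Lemma \ref{coeffequal} to the two cyclic representations $(\pi_\psi)\0$ and $\pi_{(\psi\0)}$ of $\Cstar(A)$, with the candidate cyclic vectors $c_\psi$ and $c_{(\psi\0)}$ respectively. To invoke that lemma I need three ingredients: both representations must be cyclic with the designated vectors, and the two positive linear functionals on $\Cstar(A)$ arising as matrix coefficients at those vectors must coincide. Uniqueness of the intertwining unitary then comes for free from the uniqueness clause of Lemma \ref{coeffequal}.

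First I would check that $(\pi_\psi)\0$ is cyclic on $H_\psi$ with cyclic vector $c_\psi$. By Theorem \ref{repstate}, the GNS representation $\pi_\psi$ is cyclic with cyclic vector $c_\psi$. Now $\pi_\psi$ is $\sigma$-contractive (hence weakly continuous on $A\sa$), so Proposition \ref{piopi}(ii) applies and gives that $(\pi_\psi)\0$ is cyclic; moreover $(\pi_\psi)\0(j(a))\,c_\psi = \pi_\psi(a)\,c_\psi$ makes plain that the same vector $c_\psi$ serves as a cyclic vector. Second, applying Theorem \ref{repstate} directly to the state $\psi\0$ on the C*-algebra $\Cstar(A)$, the GNS representation $\pi_{(\psi\0)}$ is cyclic on $H_{(\psi\0)}$ with cyclic unit vector $c_{(\psi\0)}$.

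Third, and this is the matching of coefficients, I compute
\[
\langle (\pi_\psi)\0(b)\,c_\psi,\,c_\psi\rangle_\psi \;=\; \psi\0(b) \;=\; \langle \pi_{(\psi\0)}(b)\,c_{(\psi\0)},\,c_{(\psi\0)}\rangle_{(\psi\0)}
\qquad \bigl(b\in\Cstar(A)\bigr).
\]
The left equality is exactly the content of Proposition \ref{piopsio}, while the right equality is the defining coefficient formula for the GNS representation associated with a state, supplied once more by Theorem \ref{repstate} applied to $\psi\0$ on $\Cstar(A)$.

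With all three ingredients in place, Lemma \ref{coeffequal} yields a unique unitary operator $U\colon H_{(\psi\0)} \to H_\psi$ belonging to $C\bigl(\pi_{(\psi\0)},(\pi_\psi)\0\bigr)$ and satisfying $U\,c_{(\psi\0)} = c_\psi$. The main obstacle, such as it is, is conceptual rather than computational: one has to keep track of three closely-related constructions — the extension of states via the universal property of $\Cstar(A)$, the extension of representations via the same universal property, and the GNS construction itself — and verify that ``first extend, then GNS'' coincides with ``first GNS, then extend''. Once Proposition \ref{piopsio} has encoded this compatibility at the level of coefficient functions, Lemma \ref{coeffequal} does all the remaining work.
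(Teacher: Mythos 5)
Your proof is correct and follows exactly the route the paper takes: the paper's own proof is just the citation of Lemma \ref{coeffequal}, and your argument supplies precisely the verifications (cyclicity of both representations with the designated vectors, and the matching of coefficient functionals via Proposition \ref{piopsio} and Theorem \ref{repstate}) that make that citation legitimate.
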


\begin{proof} \ref{coeffequal}. \end{proof}

\begin{proposition}\label{reform2}%
If $A$ has continuous states, then \st-$\mathrm{rad}(A)$ is closed in $A$.
\end{proposition}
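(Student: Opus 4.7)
The plan is to invoke the continuity of the Gel'fand--Na\u{\i}mark seminorm $\|\cdot\|\0$ under the hypothesis that $A$ has continuous states, and then read off closedness of the \st-radical as a preimage of $\{0\}$ under a continuous map.

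First I would recall that, by the very definition of the \st-radical in \ref{GNSseminorm}, one has
\[ \text{\st-}\mathrm{rad}(A) = \{\,a \in A : \|\,a\,\|\0 = 0\,\}. \]
Thus it suffices to establish that the Gel'fand--Na\u{\i}mark seminorm $\|\cdot\|\0 : A \to [\,0,\infty\,[$ is continuous on $A$, for then $\text{\st-}\mathrm{rad}(A)$ is the preimage of the closed set $\{0\}$ under a continuous function, and is accordingly closed in $A$.

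Next I would invoke proposition \ref{bstGNc}, which states precisely that if $A$ has continuous states, then $\|\cdot\|\0$ is continuous on $A$. (One could alternatively argue directly: by \ref{GNuniv} one has $\|\,a\,\|\0 = \|\,\pi\univ(a)\,\|$ for all $a \in A$; the universal representation $\pi\univ$ is $\sigma$-contractive by \ref{directsumsigma}, and therefore continuous by \ref{bdedstatesthm} under the hypothesis that $A$ has continuous states; composition with the norm on $\blop(H\univ)$ then yields continuity of $\|\cdot\|\0$.) Combining these observations completes the proof.

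There is essentially no obstacle here: the work has been done in the preceding items, and the statement is a one-line consequence once one notices that the \st-radical is a zero set of $\|\cdot\|\0$. The only subtlety worth mentioning is that continuity of the individual states $\psi$ alone would only give lower semicontinuity of the supremum $\|\,a\,\|\0 = \sup_{\psi \in \statespace(A)} \psi(a^*a)^{1/2}$; it is the stronger result \ref{bstGNc}, routed through the $\sigma$-contractive universal representation, that supplies genuine continuity.
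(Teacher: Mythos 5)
Your proof is correct and follows exactly the route the paper takes: the paper's own proof simply says the statement is a consequence of \ref{bstGNc}, and you have spelled out the (entirely routine) details that \st-$\mathrm{rad}(A)$ is the zero set of the continuous seminorm $\|\cdot\|\0$. The paper additionally remarks that for Banach \st-algebras the result also follows from \ref{kerclosed}, but your argument is the intended one.
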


\begin{proof}
This is a consequence of \ref{bstGNc}. For Banach \st-algebras,
this also follows from \ref{kerclosed}.
\end{proof}

\medskip
We close this paragraph with a discussion
of \st-semisimple normed \linebreak \st-algebras.

\begin{definition}[\st-semisimple normed \st-algebras]%
\index{concepts}{s05@\protect\st-semisimple}%
We  will say that $A$ is \underline{\st-semisimple} if
\st-$\mathrm{rad}(A) = \{0\}$.
\end{definition}

Please note that $A$ is \st-semisimple if and only
if its universal representation is faithful.

\begin{proposition}\label{faithsemi}%
If $A$ has a faithful representation on a pre-Hilbert space,
which is weakly continuous on $A\sa$, then $A$ is \st-semi\-simple.
\end{proposition}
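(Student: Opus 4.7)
The plan is to apply Theorem \ref{boundedo} directly. Suppose $\pi$ is a faithful representation of $A$ on a pre-Hilbert space $H$ that is weakly continuous on $A\sa$. By Theorem \ref{boundedo}, the operators $\pi(a)$ are bounded and satisfy
\[ \| \,\pi(a) \,\| \leq \| \,a \,\|\0 \quad \text{for all} \quad a \in A. \]

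Now let $a \in \text{\st-}\mathrm{rad}(A)$. By the definition of the \st-radical (see \ref{GNSseminorm}), this means $\| \,a \,\|\0 = 0$. Combining with the inequality above gives $\| \,\pi(a) \,\| = 0$, so $\pi(a) = 0$. Since $\pi$ is faithful, i.e.\ injective, it follows that $a = 0$. Hence $\text{\st-}\mathrm{rad}(A) = \{0\}$, which is precisely the definition of \st-semisimplicity.

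There is no real obstacle here; the statement is essentially a corollary of Theorem \ref{boundedo} combined with the definitions of the Gel'fand-Na\u{\i}mark seminorm and the \st-radical. The only thing to note is that the hypothesis of weak continuity on $A\sa$ is exactly what is needed to invoke \ref{boundedo}, which in turn is what makes the operators $\pi(a)$ bounded in the first place (so that $\| \,\pi(a) \,\|$ is meaningful as an operator norm).
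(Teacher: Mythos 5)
Your proof is correct and is exactly the argument the paper gives: by \ref{boundedo} a representation that is weakly continuous on $A\sa$ vanishes on \st-$\mathrm{rad}(A)$, and faithfulness then forces the \st-radical to be $\{0\}$. Nothing is missing.
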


\begin{proof}
A representation which is weakly continuous on
$A\sa$ vanishes on the \st-radical by \ref{boundedo}.
\end{proof}

\begin{proposition}\label{stsemipreC*}%
If $A$ is \st-semisimple, then $\bigl( \,A, \| \cdot \|\0 \,\bigr)$
is a pre-C*-algebra and $\Cstar(A)$ is its completion. 
\end{proposition}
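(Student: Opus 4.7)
The plan is to unwind the definition of the enveloping C*-algebra under the hypothesis $\text{\st-}\mathrm{rad}(A) = \{0\}$; once this is done, the statement amounts to little more than a tautology.

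First I would invoke \ref{GNSseminorm} to note that $\|\cdot\|\0$ is a C*-seminorm on $A$. By the very definition of the \st-radical as the set where $\|\cdot\|\0$ vanishes, the hypothesis $\text{\st-}\mathrm{rad}(A) = \{0\}$ says exactly that $\|\cdot\|\0$ separates points of $A$. A C*-seminorm that separates points is a norm satisfying the C*-property (cf.\ \ref{Cstarsemin}), so $(A, \|\cdot\|\0)$ is a pre-C*-algebra.

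Next, I would recall that by the construction preceding \ref{jdense}, the enveloping C*-algebra $\Cstar(A)$ is defined as the completion of the pre-C*-algebra $\bigl(A\mspace{1mu}/\text{\st-}\mathrm{rad}(A),\| \cdot \|\bigr)$, where the quotient norm is given by $\|\,a + \text{\st-}\mathrm{rad}(A)\,\| = \|\,a\,\|\0$. Under our hypothesis, the quotient map $j : A \to A\mspace{1mu}/\text{\st-}\mathrm{rad}(A) = A$ becomes the identity on $A$, and the quotient norm is literally $\|\cdot\|\0$. Since by \ref{jdense} the range of $j$ is dense in $\Cstar(A)$, this exhibits $\Cstar(A)$ as the completion of $(A, \|\cdot\|\0)$, as required.

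There is no genuine obstacle; the only thing to watch is to be explicit that a C*-seminorm that is also a norm automatically satisfies the C*-norm axioms of \ref{preC*alg}, and that under \st-semisimplicity the canonical map $j$ really is the identity embedding of $A$ into its completion with respect to $\|\cdot\|\0$.
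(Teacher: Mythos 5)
Your proof is correct and is exactly the intended argument: the paper omits the proof entirely, treating the statement as immediate from the definition of $\Cstar(A)$ in \ref{GNSseminorm} once one notes that \st-semisimplicity makes the C*-seminorm $\| \cdot \|\0$ a norm. Your unwinding of the definitions supplies precisely the missing routine details.
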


\begin{proposition}\label{reform1}%
If $A$ is \st-semisimple, then the involution in $A$ has closed
graph. In particular, it then follows that $A\sa$ is closed in $A$.
\end{proposition}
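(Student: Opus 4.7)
The plan is to deduce this proposition by a direct application of Lemma \ref{injclosgraph}, taking the role of the \st-algebra homomorphism $\pi$ to be the canonical map $j : A \to \Cstar(A)$.

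First I would check that $j$ satisfies all the hypotheses of Lemma \ref{injclosgraph}. It is a \st-algebra homomorphism into a C*-algebra by the very construction of the enveloping C*-algebra, and it is injective precisely because $\ker j = $ \st-$\mathrm{rad}(A) = \{0\}$ under the \st-semisimplicity assumption. The only non-trivial hypothesis to verify is that $j$ is continuous on $A\sa$. For this I would invoke Proposition \ref{jdense}, which states that $j$ is $\sigma$-contractive, so $\| \,j(a) \,\| \leq \rsigma(a)$ for all $a \in A$; combining this with Proposition \ref{sigmaAsa}(ii) (or equivalently with \ref{Hermrseqrl}, which gives $\rsigma(a) = \rlambda(a) \leq | \,a \,|$ for Hermitian $a$), we see that $j$ is in fact contractive, hence continuous, on the real subspace $A\sa$.

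Once the three hypotheses are in place, Lemma \ref{injclosgraph} directly delivers that the involution in $A$ has closed graph. For the ``in particular'' clause, I would argue as follows: if $(a_n)$ is a sequence in $A\sa$ converging to some $a \in A$, then by self-adjointness of the $a_n$ we also have ${a_n}^* = a_n \to a$; applying the closed-graph property of the involution to the sequence $(a_n, {a_n}^*) = (a_n, a_n)$, both coordinates of which converge to $a$, yields $a^* = a$, that is, $a \in A\sa$.

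There is essentially no obstacle here, since the proposition is a clean specialisation of Lemma \ref{injclosgraph}; the only subtle point is remembering why $j$ is automatically continuous on $A\sa$ even though the involution itself — and therefore $j$ globally — need not be continuous. This is exactly the content that makes the items \ref{weaksa}--\ref{sigmaAsa} on $\sigma$-contractivity pay off in this generality.
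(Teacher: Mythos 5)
Your proposal is correct and follows exactly the route the paper intends: the paper's proof of this proposition is simply the citation of Lemma \ref{injclosgraph}, applied to the injective, $\sigma$-contractive (hence contractive on $A\sa$) map $j : A \to \Cstar(A)$, which is precisely what you spell out. Your verification of the hypotheses and your deduction of the ``in particular'' clause are both what the paper leaves implicit.
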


\begin{proof}
This follows from \ref{injclosgraph}.
\end{proof}

\begin{corollary}\label{stsemiscont}%
The involution in any \st-semisimple Banach \linebreak
\st-algebra is continuous.
\pagebreak
\end{corollary}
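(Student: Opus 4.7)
The plan is to combine the closed graph property from the preceding proposition \ref{reform1} with the completeness of $A$ via the Closed Graph Theorem. Since the statement assumes $A$ is a Banach $*$-algebra, the underlying normed space is complete, which is precisely the hypothesis needed to upgrade ``closed graph'' to ``continuous'' for a linear map.

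Concretely, I would first invoke proposition \ref{reform1}: because $A$ is $*$-semisimple, the involution $a \mapsto a^*$ on $A$ has closed graph as a map $A \to A$. Then I would observe that this involution is conjugate-linear (hence in particular real-linear), and that both the domain and codomain are the same Banach space $A$, viewed as a real Banach space. The Closed Graph Theorem, applied in this real-Banach-space setting, immediately yields continuity of the involution.

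There is no real obstacle here; the work has already been done in \ref{reform1} (which in turn rests on \ref{injclosgraph}, applied to the universal representation $\pi\univ$ that is faithful by $*$-semisimplicity). The only minor point worth noting is that the involution is conjugate-linear rather than complex-linear, so one should apply the Closed Graph Theorem to $A$ viewed as a real Banach space; alternatively, one can precompose with complex conjugation of scalars, which is a homeomorphism, to reduce to the complex-linear case. Either way, no computation is needed, and the proof reduces to a one-line appeal to \ref{reform1} together with the Closed Graph Theorem.
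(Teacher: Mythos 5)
Your proof is correct and matches the paper's intended argument: the corollary is drawn from proposition \ref{reform1} (closed graph of the involution) combined with completeness via the Closed Graph Theorem, exactly as the paper does elsewhere (e.g.\ in the proof of \ref{automcont}). Your remark about treating the conjugate-linear involution as a real-linear map on the underlying real Banach space is a valid and welcome clarification, but does not change the route.
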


\clearpage


\section{The Theorems of Ra\texorpdfstring{\u{\i}}{\81\055}kov and of %
Gel'fand \texorpdfstring{\&}{\80\046} Na\texorpdfstring{\u{\i}}{\81\055}mark}

Our next aim is a result of Ra\u{\i}kov \ref{Raikov}, of which the
Gel'fand-Na\u{\i}mark Theorem \ref{Gel'fandNaimark} is a special case.

\begin{theorem}\label{Krein}%
Let $C$ be a convex cone in a real vector space $B$. Let $f\0$ be a linear
functional on a subspace $M\0$ of $B$ such that $f\0(x) \geq 0$ for all
$x \in M\0 \cap C$. Assume that $M\0+C = B$. The functional $f\0$
then has a linear extension $f$ to $B$ such that $f(x) \geq 0$ for all $x \in C$.
\end{theorem}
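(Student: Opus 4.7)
The plan is to prove this via Zorn's lemma, by a classical one-dimensional extension argument analogous to the proof of the Hahn-Banach Theorem. Let $\mathcal{Z}$ denote the set of pairs $(M,f)$ where $M$ is a linear subspace of $B$ with $M_0 \subset M$, and $f \colon M \to \mathds{R}$ is a linear functional extending $f_0$ and satisfying $f(x) \geq 0$ for all $x \in M \cap C$. Order $\mathcal{Z}$ by extension: $(M,f) \leq (M',f')$ iff $M \subset M'$ and $f' |_M = f$. Every chain in $\mathcal{Z}$ has an upper bound obtained by taking the union of the domains and defining the functional coherently. By Zorn's Lemma, $\mathcal{Z}$ contains a maximal element $(M,f)$. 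I claim that $M = B$, which proves the theorem.

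Suppose for contradiction that there exists $b \in B \setminus M$. Put $M_1 := M + \mathds{R} b$, and attempt to define a linear extension $f_1$ on $M_1$ by setting $f_1(m + \lambda b) := f(m) + \lambda \alpha$ for some $\alpha \in \mathds{R}$ to be chosen. The requirement that $f_1$ be non-negative on $M_1 \cap C$ translates (by scaling according to the sign of $\lambda$) into the two conditions
\begin{align*}
\alpha & \geq -f(m) \quad \text{whenever } m \in M \text{ and } m+b \in C, \\
\alpha & \leq f(m') \quad \text{whenever } m' \in M \text{ and } m'-b \in C.
\end{align*}
Such $\alpha$ exists provided both sets are non-empty and the supremum of the left-hand side is $\leq$ the infimum of the right-hand side. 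For non-emptiness, the assumption $M_0 + C = B$ gives $b = m_0 + c$ with $m_0 \in M_0$, $c \in C$, whence $-m_0 + b = c \in C$, so $-m_0 \in M_0 \subset M$ populates the first set; an analogous argument applied to $-b$ populates the second set. For the inequality, suppose $m + b \in C$ and $m' - b \in C$; since $C$ is a convex cone, it is closed under addition (the midpoint $\tfrac12(m+b) + \tfrac12(m'-b) = \tfrac12(m+m')$ lies in $C$ by convexity, hence $m+m' \in C$ by the cone property). Therefore $m + m' \in M \cap C$, so $f(m) + f(m') = f(m+m') \geq 0$, i.e.\ $-f(m) \leq f(m')$.

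Choosing any $\alpha$ between the supremum and the infimum yields a well-defined linear functional $f_1$ on $M_1$ extending $f$ and non-negative on $M_1 \cap C$, so $(M_1, f_1) \in \mathcal{Z}$ strictly dominates $(M,f)$, contradicting maximality. Hence $M = B$, and $f$ is the desired extension. The main subtlety is verifying that the two candidate sets for bounding $\alpha$ are non-empty; this is exactly where the hypothesis $M_0 + C = B$ intervenes, and it is the reason the hypothesis cannot be weakened to merely asking that $M_0$ be a subspace.
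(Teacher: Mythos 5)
Your proof is correct and follows essentially the same route as the paper's: a one\-/dimensional extension step in which the admissible interval for $\alpha$ is shown to be non-empty via $M\0 + C = B$ and order-consistent via the additivity of the convex cone $C$, combined with Zorn's Lemma (the paper performs the extension step first and invokes Zorn at the end, whereas you take a maximal element first and then extend, but this is an inessential reordering). No gaps.
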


\begin{proof}
Assume that $f\0$ has been extended to a linear functional $f_1$ on a
subspace $M_1$ of $B$ such that $f_1(x) \geq 0$ for all $x \in M_1  \cap C$.
Assume that $M_1  \neq B$. Let $y \in B$, $y \notin M_1$ and put
$M_2 := \mathrm{span} \,( M_1 \cup \{ \,y \,\} )$. Define
\[ E' := M_1 \cap (y-C), \quad E'' := M_1 \cap (y+C). \]
The sets $E'$ and $E''$ are non-void because $y, -y \in M\0+C$
(by the assumption that $M\0+C = B$).
If $x' \in E'$, and $x'' \in E''$, then $y-x' \in C$ and $x''-y \in C$, so
$x''-x' \in M_1 \cap C$, whence $f_1(x') \leq f_1(x'')$. It follows that with
\[ a := \sup _{x' \in E'} f_1(x'),\quad b := \inf _{x'' \in E''} f_1(x'') \]
one has $-\infty < a \leq b < \infty$. With $a \leq c \leq b$ we then have
\[ f_1(x') \leq c \leq f_1(x'') \quad \text{for all} \quad x' \in E', x'' \in E''. \]
Define a linear functional $f_2$ on
$M_2 = \mathrm{span} \,( M_1  \cup \{ \,y \,\} )$ by
\[ f_2(x+\alpha y) := f_1(x)+\alpha c
\quad \text{for all} \quad x \in M_1, \alpha \in \mathds{R}. \]
It shall be shown that $f_2(x) \geq 0$ for all $x \in M_2 \cap C$. So let
$x \in M_1$, $\alpha \in \mathds{R} \setminus \{0\}$. There then exist $\beta > 0$
and $z \in M_1$ such that either $x+\alpha y = \beta (z+y)$ or $x+\alpha y =
\beta (z-y)$. Thus, if $x+\alpha y \in C$, then either $z+y \in C$ or $z-y
\in C$. If $z+y \in C$, then $-z \in E'$, so that $f_1(-z) \leq c$, or
$f_1(z) \geq -c$, whence $f_2(z+y) \geq 0$. If $z-y \in C$, then $z \in E''$,
so that $f_1(z) \geq c$, whence $f_2(z-y) \geq 0$. Thus $f_1$ has been
extended to a linear functional $f_2$ on the subspace $M_2$ properly
containing $M_1$, such that $f_2(x) \geq 0$ for all $x \in M_2 \cap C$.

One defines
\begin{align*}
Z := \{ \,(M,f) : \ & M \text{ is a subspace of } B \text{ containing } M\0, \\
 & f \text{ is a linear functional on } M \text{ extending } f\0 \\
 & \text{such that } f(x) \geq 0 \text{ for all } x \in M \cap C \,\}. 
\end{align*}
Applying Zorn's Lemma yields a maximal element $(M,f)$ of $Z$ (with the
obvious ordering). It follows from the above that $M = B$. \pagebreak
\end{proof}

\begin{theorem}\label{extstate}%
If $A$ is a Hermitian Banach \st-algebra, then a state on a closed
\st-subalgebra of $A$ can be extended to a state on all of $A$.
\end{theorem}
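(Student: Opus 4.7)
Let $B$ be a closed \st-subalgebra of $A$ and $\psi$ a state on $B$. The strategy is to apply the Krein extension theorem \ref{Krein} with the positive cone $\tld{A}_+$, after first reducing to the unital situation. Since $B$ is a closed \st-subalgebra of the Hermitian Banach \st-algebra $A$, it is itself Hermitian by \ref{Herminher}, and hence so is $\tld{B}$ (cf.\ the first proposition of \S\ \ref{secHerm}). By \ref{statecanext}, extend $\psi$ to a state $\tld{\psi}$ on $\tld{B}$. Via the canonical imbedding \ref{canimb}, view $\tld{B}$ as a \st-subalgebra of $\tld{A}$; since $B$ is closed in $A$ and $\mathds{C}e$ is finite-dimensional, $\tld{B}$ is complete, hence a complete Hermitian \st-subalgebra of $\tld{A}$, so \ref{plussubal} is applicable.

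Now apply the Krein theorem \ref{Krein} with $V := \tld{A}\sa$, $C := \tld{A}_+$ (a convex cone by \ref{plusconvexcone}), $M\0 := \tld{B}\sa$, and $f\0 := \tld{\psi}|_{\tld{B}\sa}$. Two hypotheses need verification. First, for $x \in \tld{B}\sa \cap \tld{A}_+$, proposition \ref{plussubal} yields $x \in \tld{B}_+$, whence $f\0(x) = \tld{\psi}(x) \geq 0$ by \ref{plusstate}. Second, for $a \in \tld{A}\sa$ arbitrary, the Hermitian nature of $\tld{A}$ gives $\s(a) \subset \mathds{R}$, so the Rational Spectral Mapping Theorem yields $\rlambda(a)\,e + a \in \tld{A}_+$, and the decomposition
\[ a \,=\, -\rlambda(a)\,e \,+\, \bigl(\rlambda(a)\,e + a\bigr) \,\in\, \tld{B}\sa + \tld{A}_+ \]
shows that $M\0 + C = \tld{A}\sa$.

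Krein's theorem then produces a real-linear extension $f : \tld{A}\sa \to \mathds{R}$ of $\tld{\psi}|_{\tld{B}\sa}$ that is non-negative on $\tld{A}_+$. Complexify by setting $\Psi(a + \iu b) := f(a) + \iu f(b)$ for $a, b \in \tld{A}\sa$; since $\tld{\psi}$ is Hermitian it is determined by its real part on $\tld{B}\sa$, so $\Psi$ extends $\tld{\psi}$. For positivity, observe that for each $c \in \tld{A}$ the Shirali-Ford Theorem \ref{ShiraliFord} delivers $c^{*}c \in \tld{A}_+$, giving $\Psi(c^{*}c) = f(c^{*}c) \geq 0$. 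Finally, $\Psi$ is automatically weakly continuous on $\tld{A}\sa$ by \ref{Banachweakbdedsa}, and since $\tld{A}$ is unital, \ref{vare} forces $v(\Psi) = \Psi(e) = \tld{\psi}(e) = 1$; thus $\Psi$ is a state on $\tld{A}$. The restriction $\Psi|_A$ is a positive linear functional on $A$ extending $\psi$, weakly continuous on $A\sa$, and satisfies
\[ 1 = v(\psi) = v(\Psi|_B) \leq v(\Psi|_A) \leq v(\Psi) = 1 \]
by monotonicity of variation under restriction to subalgebras. So $\Psi|_A$ is the desired state.

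The main obstacle is arranging the two hypotheses of the Krein extension theorem simultaneously: the surjectivity condition $\tld{B}\sa + \tld{A}_+ = \tld{A}\sa$ is exactly where the Hermitian nature of $A$ is indispensable (it is what provides $\rlambda(a)\,e \pm a \geq 0$), while compatibility of positivity between $\tld{B}$ and $\tld{A}$ rests on the spectral-permanence result \ref{plussubal} for Hermitian \st-subalgebras. The Shirali-Ford Theorem \ref{ShiraliFord} then carries the burden of translating non-negativity on $\tld{A}_+$ into the definition of positivity for $\Psi$.
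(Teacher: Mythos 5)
Your argument follows the paper's own proof almost step for step (unitise, extend the state to the unitisation, apply \ref{Krein} with the cone $\tld{A}_+$, invoke \ref{plussubal}, \ref{plusstate} and \ref{ShiraliFord}, restrict back to $A$), but there is one genuine gap: your choice $M\0 := \tld{B}\sa$ under the canonical imbedding need not contain the unit $e$ of $\tld{A}$. By \ref{canimb} and \ref{twidunitp}, the canonical image of $\tld{B}$ in $\tld{A}$ contains $e$ only when $B$ is \twiddle-unital in $A$; if $B$ has a unit of its own which is not a unit of $A$ (in particular whenever $A$ has no unit but the closed \st-subalgebra $B$ does), then $\tld{B} = B$ and $e \notin \tld{B}$. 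In that case $-\rlambda(a)\,e$ does not lie in $M\0$, so your decomposition does not exhibit $a$ as an element of $\tld{B}\sa + \tld{A}_+$ --- and the surjectivity hypothesis $M\0 + C = \tld{A}\sa$ of \ref{Krein} genuinely fails, it is not merely unproved: writing $-\lambda e = b + c$ with $b \in B\sa$, $c \in \tld{A}_+$ and $\lambda > 0$ would give $\s(b) = -\lambda - \s(c) \subset \ ]\,{-\infty}, -\lambda \,]$, contradicting $0 \in \s(b)$ for the element $b$ of the non-unital algebra $A$, cf.\ \ref{speczero}. The same defect makes the expression $\tld{\psi}(e)$ in your final computation of $v(\Psi)$ meaningless in this case.

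The repair is exactly what the paper does: instead of $\tld{B}$, work with $E := \mathds{C}e + D$, where $e$ is the unit of $\tld{A}$ and $D$ is the given closed \st-subalgebra. This $E$ is always a complete Hermitian unital \st-subalgebra of $\tld{A}$ containing $e$ (cf.\ \ref{Herminher} and the proof of \ref{Banachunitis}), and $\psi$ extends to a state on $E$ by setting $\tld{\psi}(e) := 1$, as in the proof of \ref{extendable} together with \ref{eBbded}. With $M\0 := E\sa$ your verification of the two Krein hypotheses, the positivity of the complexified extension via the Shirali-Ford Theorem \ref{ShiraliFord}, and the final restriction step all go through unchanged and coincide with the paper's argument.
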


\begin{proof} Let $D$ be a closed \st-subalgebra of the Hermitian Banach
\linebreak
\st-algebra $A$. Put $E := \mathds{C}e+D$, where $e$ is the unit in $\tld{A}$.
Then $E$ is a Hermitian Banach \st-algebra\vphantom{$\tld{A}$}, cf.\ \ref{Herminher}
as well as the proof of \ref{Banachunitis}. Let $\psi$ be a state on $D$. Putting
$\tld{\psi}(e) := 1$ and extending linearly, we get a state $\tld{\psi}$ on $E$ which
extends $\psi$, cf.\ the proof of \ref{extendable}, as well as \ref{eBbded}. We want
to apply the preceding theorem \ref{Krein} with $B := \tld{A}\sa$, $C := \tld{A}_+$,
$M\0 := E\sa$, $f\0 := \tld{\psi}|_{\text{\small{$E\sa$}}}$. The assumptions are
satisfied as $\tld{A}_+$ is convex by \ref{plusconvexcone}, $\tld{\psi}$ is
Hermitian by \ref{propquasi}, $\tld{\psi}(b) \geq 0$ for all $b \in E_+$ by \ref{plusstate},
$E_+ = \tld{A}_+ \cap E\sa$ by \ref{plussubal}, and $E\sa+\tld{A}_+ = \tld{A}\sa$
because for $a \in \tld{A}\sa$ one has $\rlambda(a)e+a \geq 0$. By the preceding
extension theorem \ref{Krein}, it follows that $\tld{\psi}|_{\text{\small{$E\sa$}}}$ has
an extension to a linear functional $f$ on $\tld{A}\sa$ such that $f(a) \geq 0$ for all
$a \in \tld{A}_+$. Let $\tld{\varphi}$ be the linear extension of $f$ to $\tld{A}$. We
then have $\tld{\varphi} \,(a^*a) \geq 0$ for all $a \in \tld{A}$ because
$a^*a \in \tld{A}_+$ by the Shirali-Ford Theorem \ref{ShiraliFord}. Moreover,
$\tld{\varphi}$ is a state on $\tld{A}$ because $\tld{\varphi}(e) = \tld{\psi}(e) = 1$,
cf.\ \ref{eBbded}. Let $\varphi$ be the restriction of $\tld{\varphi}$ to $A$. Then
$\vphantom{\tld{A}}\varphi$ is a quasi-state extending $\psi$, and thus a
state\vphantom{$\tld{A}$}.
\end{proof}

\begin{theorem}\label{posequiv}%
Let $A$ be a Hermitian Banach \st-algebra, and let $a$ be a
Hermitian element of $A$. The following properties are equivalent.
\begin{itemize}
   \item[$(i)$] $a \geq 0$,
  \item[$(ii)$] $\pi \univ (a) \geq 0$,
 \item[$(iii)$] $\psi(a) \geq 0$ \quad for all \quad $\psi \in \statespace (A)$.
\end{itemize}
\end{theorem}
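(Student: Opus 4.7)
The plan is to establish the cycle (i) $\Rightarrow$ (ii) $\Rightarrow$ (iii) $\Rightarrow$ (i), of which the last implication is the delicate one.

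For (i) $\Rightarrow$ (ii), I would simply invoke \ref{hompos}: since $\pi\univ$ is a \st-algebra homomorphism from $A$ to $\blop(H\univ)$, positive elements of $A$ map to positive operators. For (ii) $\Rightarrow$ (iii), I would use the direct-sum structure $\pi\univ = \oplus_{\psi \in \statespace(A)} \pi_\psi$ on $H\univ = \oplus_{\psi \in \statespace(A)} H_\psi$. Testing against vectors of the form $x = x_\psi$ supported in a single summand, the characterization of positive bounded operators via inner products (\ref{posop}) shows that $\pi\univ(a) \geq 0$ forces $\pi_\psi(a) \geq 0$ for every state $\psi$. Applying \ref{posop} again together with the reproducing identity from \ref{repstate}, I get $\psi(a) = \langle \pi_\psi(a) c_\psi, c_\psi \rangle_\psi \geq 0$.

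The hard part is (iii) $\Rightarrow$ (i). I would argue by contradiction: assuming $a \not\geq 0$, I shall exhibit a state $\psi$ on $A$ with $\psi(a) < 0$. Since $A$ is Hermitian and $a$ is Hermitian, $\s_A(a) \subset \mathds{R}$, so there exists $\lambda > 0$ with $-\lambda \in \s_A(a)$. Let $D$ denote the closed \st-subalgebra of $A$ generated by $a$; it is commutative (as $a$ is Hermitian, hence normal) and, by \ref{Herminher}, Hermitian. The canonical imbedding realises $\tld{D}$ as a \twiddle-unital closed \st-subalgebra of the Hermitian Banach \st-algebra $\tld{A}$, and $\tld{D}$ is itself a commutative unital Hermitian Banach \st-algebra. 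By \ref{specHermsubalg} (applied across the imbedding $\tld{D} \hookrightarrow \tld{A}$, noting $-\lambda \neq 0$) together with \ref{specunit}, we have $-\lambda \in \s_A(a) = \s_{\tld{A}}(a) = \s_{\tld{D}}(a)$. Now \ref{rangeGTu} produces a $\tld{\tau} \in \Delta(\tld{D})$ with $\tld{\tau}(a) = -\lambda$.

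The key observation is that this multiplicative linear functional is automatically a state. Indeed, by \ref{mlfHerm} (since $\tld{D}$ is commutative Hermitian Banach), $\tld{\tau}$ is Hermitian, so $\tld{\tau}(c^*c) = \overline{\tld{\tau}(c)}\,\tld{\tau}(c) = | \,\tld{\tau}(c) \,|^{\,2} \geq 0$, making $\tld{\tau}$ positive; since $\tld{\tau}(e) = 1$, \ref{eBbded} tells us $\tld{\tau}$ is a state on $\tld{D}$. Now I apply the extension result \ref{extstate} to the Hermitian Banach \st-algebra $\tld{A}$ and its closed \st-subalgebra $\tld{D}$: the state $\tld{\tau}$ extends to a state $\tld{\psi}$ on $\tld{A}$ with $\tld{\psi}(a) = -\lambda$. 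Restricting to $A$ yields a positive linear functional $\psi := \tld{\psi}|_A$ which is weakly continuous on $A\sa$ (automatic by \ref{Banachweakbdedsa}) and has $v(\psi) \leq v(\tld{\psi}) = 1$, hence is a quasi-state. Since $\psi(a) = -\lambda \neq 0$, we have $\psi \neq 0$ and so $v(\psi) > 0$; by the $\mathds{R}_+$-homogeneity of the variation (\ref{finvarconvex}), the rescaled functional $\psi / v(\psi)$ is a genuine state on $A$ whose value at $a$ is $-\lambda / v(\psi) < 0$, contradicting (iii). The main obstacle throughout is arranging the right ambient algebra so that \ref{extstate} is applicable — working with unitisations and the canonical imbedding is what makes $\tld{\tau}$ a bona fide state (rather than merely a positive functional of variation $\leq 1$) and thus eligible for the Hahn-Banach-style extension \ref{Krein}/\ref{extstate}.
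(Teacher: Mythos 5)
Your argument is correct and is essentially the paper's own proof run in the contrapositive direction: both reduce to the commutative Hermitian closed \st-subalgebra generated by $a$, observe that multiplicative linear functionals on it are automatically states, invoke the extension theorem \ref{extstate}, and use Gel'fand theory to tie the spectrum of $a$ to the values of those functionals. The only cosmetic difference is that you pass through the unitisations $\tld{D} \subset \tld{A}$ and then restrict and renormalise, whereas the paper applies \ref{extstate} directly to the pair $D \subset A$ and reads off $\s_D(a) \subset [\,0, \infty\,[$ from \ref{rangeGT}.
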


\begin{proof}
The implications (i) $\Rightarrow$ (ii) $\Leftrightarrow$ (iii) hold for
normed \st-algebras in general. Indeed (i) $\Rightarrow$ (ii) follows
from \ref{hompos} while (ii) $\Leftrightarrow$ (iii) follows from
$\pi \univ (a) \geq 0 \Leftrightarrow \langle \pi\univ (a)x,x\rangle \geq 0$
for all $x \in H\univ$, cf.\ \ref{posop}. The closed \st-subalgebra $B$ of $A$
generated by $a$ is a commutative \ref{clogen} Hermitian \ref{Herminher}
Banach \st-algebra. One notes that any multiplicative linear functional
$\tau$ on $B$ is a state. Indeed, as $\tau$ is Hermitian \ref{mlfHerm},
we have $| \,\tau (b) \,|\,^2 = \tau (b^*b) $ for all $b \in B$,
so that $\tau$ is positive and has variation $1$. Furthermore, $\tau$ is
continuous by \ref{mlfbounded}. The proof of (iii) $\Rightarrow$ (i)
goes as follows. If $\psi(a) \geq 0$ for all $\psi$ in $\statespace (A)$ then
by the preceding theorem \ref{extstate} also $\psi(a) \geq 0$ for all $\psi$ in
$\statespace (B)$. In particular $\tau(a) \geq 0$ for all $\tau \in \Delta (B)$.
It follows from \ref{rangeGT} that $a \in B_+$, whence $a \in A_+$ by
\ref{specsubalg}.
\pagebreak
\end{proof}

\begin{theorem}[Ra\u{\i}kov's Criterion]\label{Raikov}%
\index{concepts}{Theorem!Ra\u{\i}kov's Criterion}%
\index{concepts}{Ra\u{\i}kov's Criterion}%
For a Banach \st-algebra $A$, the following statements are equivalent.
\begin{itemize}
   \item[$(i)$] $A$ is Hermitian,
  \item[$(ii)$] $\| \,\pi \univ (a) \,\| = \rsigma(a)$ \quad for all \quad $a \in A$,
 \item[$(iii)$] $\| \,a \,\|\0 = \rsigma(a)$ \quad for all \quad $a \in A$.
\end{itemize}
\end{theorem}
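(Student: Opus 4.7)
My plan is as follows. The equivalence $(ii) \Leftrightarrow (iii)$ is immediate from theorem \ref{GNuniv}, which states that $\| \,\pi \univ (a) \,\| = \| \,a\, \|\0$ for every $a \in A$. For $(iii) \Rightarrow (i)$, I would note that $\| \cdot \|\0$ is a C*-seminorm by \ref{GNSseminorm}, hence in particular a seminorm; so if $\| \,a\, \|\0 = \rsigma(a)$ for all $a$, then $\rsigma$ is a seminorm on $A$, and $A$ is Hermitian by the converse of Pt\'{a}k's Theorem \ref{Ptakconv}. The remaining and main task is therefore $(i) \Rightarrow (iii)$.

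For $(i) \Rightarrow (iii)$, only the inequality $\rsigma(a) \leq \| \,a\, \|\0$ needs to be established, since the reverse is part of \ref{GNSseminorm}. Fix $a \in A$, set $b := a^*a$, and let $B$ denote the closed commutative unital \st-subalgebra of $\tld{A}$ generated by $b$ and $e$. By the Shirali-Ford Theorem \ref{ShiraliFord}, $b \in A_+$, so $\rsigma(a)^{\,2} = \rlambda(b) = \max \s(b)$. Since $A$ is Hermitian, so is $\tld{A}$ (by the first proposition of \S\ \ref{secHerm}), and hence also $B$ by \ref{Herminher}. Combining \ref{specunit}, \ref{specHermsubalg} and \ref{rangeGTu} yields $\s_A(b) = \s_B(b) = \wht{b}\,\bigl(\Delta(B)\bigr)$, so there exists $\tau \in \Delta(B)$ with $\tau(b) = \rlambda(b) = \rsigma(a)^{\,2}$.

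Next I would observe that $\tau$ is a state on $B$. Since $B$ is commutative and Hermitian, $\tau$ is Hermitian by \ref{mlfHerm}; multiplicativity then gives $\tau(c^*c) = |\,\tau(c)\,|^{\,2} \geq 0$ for all $c \in B$, so $\tau$ is positive. Together with $\tau(e) = 1$ and \ref{vare}, this shows $v(\tau) = 1$, and continuity of $\tau$ is automatic by \ref{mlfbounded}. Applying the extension theorem \ref{extstate} to the Hermitian Banach \st-algebra $\tld{A}$ and its closed \st-subalgebra $B$, we obtain a state $\tld{\psi}$ on $\tld{A}$ extending $\tau$, and in particular $\tld{\psi}(a^*a) = \rsigma(a)^{\,2}$.

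The main obstacle is the non-unital case, since the supremum defining $\| \,a\, \|\0$ ranges over states of $A$, while the construction above produced a state on $\tld{A}$. If $A$ is already unital, then $\tld{A} = A$ and we are done. Otherwise, restricting $\tld{\psi}$ to $A$ yields a quasi-state $\varphi := \tld{\psi}|_A$ (see \ref{extquasi}) with $\varphi(a^*a) = \rsigma(a)^{\,2}$. We may assume $\rsigma(a) > 0$; then $v(\varphi) > 0$ by positivity, and the positive homogeneity of the variation (direct from definition \ref{variation}) makes $\psi := \varphi / v(\varphi)$ a state on $A$ satisfying
\[ \psi(a^*a) = \varphi(a^*a) \,/ \,v(\varphi) \geq \varphi(a^*a) = \rsigma(a)^{\,2}, \]
since $v(\varphi) \leq 1$. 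Hence $\| \,a\, \|\0 \geq \rsigma(a)$ in all cases, completing the proof.
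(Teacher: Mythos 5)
Your proof is correct, but it is organized differently from the paper's. For the passage from Hermiticity to $(iii)$, the paper first shows that the Gel'fand-Na\u{\i}mark seminorms of $A$ and of $\tld{A}$ agree on $A$ (via \ref{statecanext} and the restriction of states), reduces to the unital case, and then invokes the positivity criterion \ref{posequiv} together with lemma \ref{posposrs} to obtain $\rsigma\bigl(\pi\univ(a)\bigr) = \rsigma(a)$; you instead construct, for each fixed $a$, an explicit state $\psi$ with $\psi(a^*a) = {\rsigma(a)\,}^2$ by choosing a multiplicative linear functional attaining the spectral radius of $a^*a$ on the commutative subalgebra $B$, extending it by \ref{extstate}, and, in the non-unital case, normalising the restricted quasi-state by its variation. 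The underlying engine (the extension theorem \ref{extstate} and the Gel'fand theory of $B$) is exactly what powers the paper's \ref{posequiv}, so the two arguments are close cousins; yours is self-contained at the level of this theorem and avoids \ref{posposrs}, at the price of the normalisation trick where the paper simply identifies the two Gel'fand-Na\u{\i}mark seminorms. For the converse direction you close the circle through $(iii)$ and Pt\'{a}k's theorem \ref{Ptakconv} (legitimate, since \ref{Ptakconv} is proved independently of \ref{Raikov}), whereas the paper goes through $(ii)$ using \ref{rsHerm} and \ref{C*rs}; both are equally short. One pedantic remark: the fact that the restriction to $A$ of a state on $\tld{A}$ is a quasi-state on $A$ is not what \ref{extquasi} asserts (that proposition goes in the opposite direction); it follows instead from \ref{Banachweakbdedsa} and the definition of the variation, and the paper's own proof uses it without separate justification as well.
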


\begin{proof}
(ii) $\Leftrightarrow$ (iii): $\| \,a \,\|\0 = \| \,\pi \univ (a) \,\|$, cf.\ \ref{GNuniv}.
(ii) $\Rightarrow$ (i): \ref{rsHerm} and \ref{C*rs}.
(i) $\Rightarrow$ (ii): The Gel'fand-Na\u{\i}mark seminorm on $\tld{A}$
coincides on $A$ with $\| \cdot \|\0$, because a state on $A$ can be
extened to a state on $\tld{A}$, by \ref{statecanext}, and because the
restriction to $A$ of a state on $\tld{A}$ is a quasi-state on $A$. One
can thus assume that $A$ is unital. Then $\pi \univ$ is unital, by
\ref{unitnondeg}. If $A$ is Hermitian, then the preceding theorem
\ref{posequiv} and lemma \ref{posposrs} imply that for every
$a \in A$, one has $\rsigma \bigl(\pi \univ(a)\bigr) = \rsigma(a)$,
or $\| \,\pi \univ (a) \,\| = \rsigma(a)$, cf.\ \ref{C*rs}.
\end{proof}

\begin{theorem}\label{maincommHerm}%
If $A$ is a commutative Hermitian Banach \st-algebra with
$\Delta (A) \neq \varnothing$, then for all $a \in A$, we have
\[ \| \,a \,\|\0 = | \,\wht{a} \,|_{\infty} \]
\end{theorem}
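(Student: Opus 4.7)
The plan is to chain together three equalities, each of which is already established in the excerpt, so no new work is really needed beyond assembling the pieces.

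First, I would invoke Ra\u{\i}kov's Criterion \ref{Raikov}: since $A$ is a Hermitian Banach \st-algebra, we have
\[ \| \,a \,\|\0 = \rsigma(a) \qquad \text{for all } a \in A. \]
Next, since $A$ is commutative, every element of $A$ is normal. Therefore by the implication (i) $\Rightarrow$ (iv) of \ref{fundHerm}, the Hermitian nature of $A$ gives
\[ \rsigma(a) = \rlambda(a) \qquad \text{for all } a \in A. \]
Finally, since $A$ is a commutative Banach algebra with $\Delta(A) \neq \varnothing$, theorem \ref{normGTrl} yields
\[ \rlambda(a) = \bigl| \,\wht{a} \,\bigr|_{\infty} \qquad \text{for all } a \in A. \]
Concatenating these three identities gives the desired conclusion $\| \,a \,\|\0 = | \,\wht{a} \,|_{\infty}$.

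There is no substantive obstacle here: the work has all been done in the preceding machinery. The only thing to verify is that the hypothesis $\Delta(A) \neq \varnothing$ is genuinely needed only for the last step (to invoke \ref{normGTrl} via its underlying lemma \ref{rangeGT}), while commutativity of $A$ is needed twice, once to ensure every element is normal so that \ref{fundHerm}(iv) applies to all of $A$, and once to apply the Gel'fand transformation identity. Hermitianness of $A$ feeds both Ra\u{\i}kov's Criterion and the $\rsigma = \rlambda$ identity for normal elements.
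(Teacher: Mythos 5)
Your proof is correct and follows essentially the same route as the paper: the paper's own proof is the one-line chain $\rsigma(a) = \rlambda(a) = | \,\wht{a} \,|_{\infty}$ via \ref{fundHerm} (iv) and \ref{normGTrl}, with the first link $\| \,a \,\|\0 = \rsigma(a)$ supplied by the immediately preceding Ra\u{\i}kov Criterion \ref{Raikov}. You have merely made explicit what the paper leaves implicit.
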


\begin{proof}
$\rsigma(a) = \rlambda(a) = | \,\wht{a} \,|_{\infty}$
by \ref{fundHerm} (iv) \& \ref{normGTrl}.
\end{proof}

\begin{corollary}%
Let $A$ be a commutative Hermitian Banach \linebreak \st-algebra
with $\Delta (A) \neq \varnothing$. Then $A$ is \st-semisimple if and
only if the Gel'fand transformation $A \to \cont\0\bigl(\Delta(A)\bigr)$
is injective.
\end{corollary}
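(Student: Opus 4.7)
The plan is to deduce this corollary directly from the preceding theorem \ref{maincommHerm}, which asserts that under the hypotheses at hand (commutative, Hermitian, $\Delta(A) \neq \varnothing$), one has
\[ \| \,a \,\|\0 = | \,\wht{a} \,|_{\infty} \quad \text{for all} \quad a \in A. \]
The key observation is that this equality identifies two kernels: the \st-radical is, by definition, the set of elements on which $\| \cdot \|\0$ vanishes, while the kernel of the Gel'fand transformation consists of those $a \in A$ for which $\wht{a}$ vanishes identically on $\Delta(A)$, i.e.\ for which $| \,\wht{a} \,|_{\infty} = 0$.

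First I would unwind the definitions. Recall from \ref{GNSseminorm} that $\| \cdot \|\0$ is the Gel'fand-Na\u{\i}mark seminorm, and that the \st-radical is defined as the \st-stable two-sided ideal on which $\| \cdot \|\0$ vanishes, i.e.\
\[ \text{\st-}\mathrm{rad}(A) = \{ \,a \in A : \| \,a \,\|\0 = 0 \,\}. \]
Thus $A$ is \st-semisimple if and only if $\| \,a \,\|\0 = 0$ implies $a = 0$. On the other hand, the Gel'fand transformation $A \to \cont\0\bigl(\Delta(A)\bigr)$, $a \mapsto \wht{a}$, is injective precisely when $\wht{a} = 0$ implies $a = 0$, and $\wht{a} = 0$ is equivalent to $| \,\wht{a} \,|_{\infty} = 0$.

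Combining these observations with the identity $\| \,a \,\|\0 = | \,\wht{a} \,|_{\infty}$ furnished by \ref{maincommHerm}, one sees that the two conditions describing the kernels coincide, and so
\[ \text{\st-}\mathrm{rad}(A) = \ker \bigl( \,a \mapsto \wht{a} \,\bigr). \]
Therefore $\text{\st-}\mathrm{rad}(A) = \{ 0 \}$ if and only if the Gel'fand transformation is injective, which is the desired equivalence. There is essentially no obstacle here: the entire substance of the corollary is already packaged in theorem \ref{maincommHerm}, and the argument is just a matter of comparing kernels. The only small point worth stating explicitly for the reader is the equivalence $\wht{a} = 0 \Leftrightarrow | \,\wht{a} \,|_{\infty} = 0$, which is immediate from the definition of the supremum norm.
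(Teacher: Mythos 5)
Your proof is correct and follows exactly the route the paper intends: the corollary is stated immediately after theorem \ref{maincommHerm} with no separate proof precisely because it reduces to comparing the kernel of $\| \cdot \|\0$ (the \st-radical, by definition \ref{GNSseminorm}) with the kernel of the Gel'fand transformation via the identity $\| \,a \,\|\0 = | \,\wht{a} \,|_{\infty}$. Nothing is missing.
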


\begin{corollary}%
Let $A$ be a commutative Hermitian Banach \linebreak \st-algebra
with $\Delta (A) \neq \varnothing$. Then $\Cstar(A)$ is isomorphic
as a C*-algebra to $\cont\0\bigl(\Delta(A)\bigr)$.
\end{corollary}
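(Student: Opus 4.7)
The plan is to show that the Gel'fand transformation $\Gamma : A \to \cont\0(\Delta(A))$, $a \mapsto \wht{a}$, descends through the $*$-radical to an isometric $*$-algebra homomorphism from the pre-C*-algebra $(A/\text{*-rad}(A), \|\cdot\|\0)$ onto a dense $*$-subalgebra of $\cont\0(\Delta(A))$, and then extend to the completions.

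First I would observe that, since $A$ is a commutative Hermitian Banach $*$-algebra with $\Delta(A) \neq \varnothing$, the Gel'fand transformation $\Gamma$ is a $*$-algebra homomorphism into $\cont\0(\Delta(A))$ by \ref{Hermstarhom} together with the abstract Riemann-Lebesgue Lemma in \ref{Gtopo}. By the preceding corollary \ref{maincommHerm}, we have
\[ \|a\|\0 = |\wht{a}|_{\infty} = |\Gamma(a)|_{\infty} \qquad (a \in A). \]
In particular, $\ker \Gamma = \{a \in A : \|a\|\0 = 0\} = \text{*-rad}(A)$, so $\Gamma$ factors through an injective $*$-algebra homomorphism
\[ \overline{\Gamma} : A/\text{*-rad}(A) \to \cont\0(\Delta(A)) \]
which is isometric in the norm $\|a + \text{*-rad}(A)\| = \|a\|\0$ inherited on $A/\text{*-rad}(A)$ from $\Cstar(A)$.

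Next I would extend by continuity. Since $\overline{\Gamma}$ is isometric, it has a unique continuation to an isometric $*$-algebra homomorphism $\Phi : \Cstar(A) \to \cont\0(\Delta(A))$, using that the domain $A/\text{*-rad}(A)$ is dense in $\Cstar(A)$ by construction and that $\cont\0(\Delta(A))$ is complete. The map $\Phi$ is automatically a C*-algebra homomorphism, and being isometric it is in particular injective.

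Finally I would establish surjectivity: the range $\Phi(\Cstar(A))$ is complete (as the isometric image of a complete space), hence closed in $\cont\0(\Delta(A))$, and it contains $\Gamma(A) = \overline{\Gamma}(A/\text{*-rad}(A))$, which is dense in $\cont\0(\Delta(A))$ by \ref{Hermdense} (the Stone-Weierstrass argument). Therefore $\Phi$ is surjective, and we obtain the desired C*-algebra isomorphism $\Cstar(A) \cong \cont\0(\Delta(A))$. There is no serious obstacle here: the whole argument is a synthesis of \ref{maincommHerm}, \ref{Hermstarhom} and \ref{Hermdense}, with the routine ``isometric extension to completions'' step being the only technical point.
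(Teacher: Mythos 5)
Your proposal is correct and is essentially the paper's own proof: the paper also deduces from \ref{maincommHerm} that $\bigl(A/\text{\st-}\mathrm{rad}(A), \| \cdot \|\0\bigr)$ is isometrically \st-isomorphic to $\{\,\wht{a} : a \in A\,\} \subset \cont\0\bigl(\Delta(A)\bigr)$, and then invokes the density from \ref{Hermdense} to pass to the completions. You have merely spelled out the routine extension step in more detail.
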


\begin{proof}
By \ref{maincommHerm}, the pre-C*-algebra
$\bigl(A\mspace{1mu}/$\st-$\mathrm{rad}(A),\| \cdot \|\bigr)$
is isomorphic to
$\{ \,\wht{a} \in \cont\0\bigl(\Delta(A)\bigr) : a \in A \,\}$,
which is dense in $\cont\0\bigl(\Delta(A)\bigr)$ by \ref{Hermdense}.%
\end{proof}

\begin{theorem}[the Gel'fand-Na\u{\i}mark Theorem]%
\index{concepts}{Theorem!Gel'fand-Na\u{\i}mark}\label{Gel'fandNaimark}%
\index{concepts}{Gel'fand-Naimark@Gel'fand-Na\u{\i}mark!Theorem}%
If $(A, \| \cdot \|)$ is a C*-algebra, then for all $a \in A$, we have
\[  \| \,\pi \univ (a) \,\| = \| \,a \,\|\0 = \| \,a \,\|, \]
so that the universal representation of $A$ establishes an isomorphism
of \linebreak C*-algebras from $A$ onto a C*-algebra of bounded linear
operators on a Hilbert space. Furthermore $\Cstar(A)$ can be identified
with $A$. \pagebreak
\end{theorem}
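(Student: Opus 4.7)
The plan is to chain together three results that have been set up just for this theorem. First, a C*-algebra is Hermitian by theorem \ref{C*Herm}. Being also a Banach \st-algebra, Ra\u{\i}kov's Criterion \ref{Raikov} applies, yielding
\[ \| \,\pi \univ (a) \,\| = \| \,a \,\|\0 = \rsigma(a) \quad \text{for all} \quad a \in A. \]
Since $(A,\|\cdot\|)$ is in particular a pre-C*-algebra, theorem \ref{C*rs} gives $\rsigma(a) = \|\,a\,\|$ for all $a \in A$. Concatenating these equalities yields the desired triple equality $\| \,\pi \univ (a) \,\| = \| \,a \,\|\0 = \| \,a \,\|$.

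For the assertion about the universal representation establishing an isomorphism of C*-algebras, I would argue as follows. The equality $\|\pi\univ(a)\| = \|a\|$ shows that $\pi\univ$ is isometric, hence faithful. (Alternatively, one may cite \ref{faithisometric}, but the direct route via the equality we have just established is more natural here.) Since $\pi\univ$ is isometric and $A$ is complete, the range $\range(\pi\univ)$ is a complete \st-subalgebra of $\blop(H\univ)$, hence a C*-algebra (a closed \st-subalgebra of $\blop(H\univ)$). Thus $\pi\univ$ provides a C*-algebra isomorphism from $A$ onto a C*-algebra of bounded linear operators on the Hilbert space $H\univ$.

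For the final statement that $\Cstar(A)$ may be identified with $A$, observe that since $\|\cdot\|\0 = \|\cdot\|$ is already a norm on $A$, the \st-radical $\text{\st-}\mathrm{rad}(A) = \{ \,a \in A : \|\,a\,\|\0 = 0 \,\}$ reduces to $\{0\}$, so $A$ is \st-semisimple. By \ref{stsemipreC*}, $\bigl(A, \|\cdot\|\0\bigr)$ is then a pre-C*-algebra whose completion is $\Cstar(A)$. But the norms $\|\cdot\|\0$ and $\|\cdot\|$ coincide on $A$, and $A$ is already complete in $\|\cdot\|$, so the canonical map $j : A \to \Cstar(A)$ is an isometric \st-algebra homomorphism with dense range into a complete space, therefore a C*-algebra isomorphism.

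There is no real obstacle: every ingredient has been built up in the preceding paragraphs so that the proof is essentially a bookkeeping exercise stringing together \ref{C*Herm}, \ref{Raikov}, \ref{C*rs}, \ref{GNuniv}, and \ref{stsemipreC*}. The only subtlety worth flagging is making sure that the hypothesis of Ra\u{\i}kov's Criterion (that $A$ be a Banach \st-algebra) is met, which is immediate because any C*-algebra is in particular a Banach \st-algebra.
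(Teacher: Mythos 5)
Your proof is correct and follows exactly the paper's route: the paper's own proof is the one-line "Ra\u{\i}kov's Criterion \ref{Raikov} together with $\rsigma(a) = \|\,a\,\|$ from \ref{C*rs}," with the Hermitian hypothesis \ref{C*Herm} and the consequences about the range and about $\Cstar(A)$ left implicit. You have merely spelled out these routine steps, and your bookkeeping (isometry $\Rightarrow$ faithfulness $\Rightarrow$ complete, hence closed, range; triviality of the \st-radical $\Rightarrow$ $\Cstar(A) = A$) is accurate.
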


\begin{proof}
This follows from Ra\u{\i}kov's Criterion \ref{Raikov} together with the fact
that $\rsigma(a) = \| \,a \,\|$ for all elements $a$ of a C*-algebra, cf.\ \ref{C*rs}.
\end{proof}

\medskip
The Gel'fand-Na\u{\i}mark Theorem has important consequences in
\linebreak connection with C*-seminorms. Please note that the
following result essentially contains the Gel'fand-Na\u{\i}mark Theorem.

\begin{theorem}\label{existrep}%
For a C*-seminorm $p$ on a \st-algebra $A$, there exists a
representation $\pi$ of $A$ on a Hilbert space, such that
\[ p(a) = \| \,\pi(a) \,\| \quad \text{for all} \quad a \in A. \]
\end{theorem}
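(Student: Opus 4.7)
The plan is to factor the problem through two results already established in the text: Theorem \ref{existhom}, which produces a \st-algebra homomorphism into a C*-algebra realising the given C*-seminorm as the C*-algebra norm, and the Gel'fand-Na\u{\i}mark Theorem \ref{Gel'fandNaimark}, which realises any C*-algebra isometrically as operators on a Hilbert space.

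First I would apply Theorem \ref{existhom} to the C*-seminorm $p$ on $A$. This yields a \st-algebra homomorphism
\[ \pi_0 : A \to B \]
into some C*-algebra $(B, \| \cdot \|)$ such that $\| \pi_0 (a) \| = p(a)$ for every $a \in A$. (Inspecting the proof of \ref{existhom}, one can take $B$ to be the completion of $A \,/ I$, where $I = \{ \,a \in A : p(a) = 0 \,\}$, equipped with the quotient of $p$.)

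Next I would apply the Gel'fand-Na\u{\i}mark Theorem \ref{Gel'fandNaimark} to the C*-algebra $B$. This gives the universal representation
\[ \pi_B := \pi_u : B \to \blop(H_u) \]
of $B$ on its universal Hilbert space $H_u$, which is an isometric \st-algebra isomorphism onto its range. In particular $\| \pi_B (b) \| = \| b \|$ for all $b \in B$.

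Finally, I would set $\pi := \pi_B \circ \pi_0 : A \to \blop(H_u)$. As a composition of \st-algebra homomorphisms, this is a \st-algebra homomorphism, and since its image lies in $\blop(H_u)$, it is a representation of $A$ on the Hilbert space $H_u$ in the sense of \ref{repdef}. For every $a \in A$ we compute
\[ \| \,\pi(a) \,\| = \| \,\pi_B \bigl( \pi_0(a) \bigr) \,\| = \| \,\pi_0(a) \,\| = p(a), \]
where the middle equality uses the isometry of $\pi_B$. There is no serious obstacle here: the work has been done upstream in \ref{existhom} and \ref{Gel'fandNaimark}, and the present statement is essentially a restatement of those two results linked by composition. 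The only thing one needs to notice is that composition with an isometric \st-algebra isomorphism preserves the norm, which is immediate.
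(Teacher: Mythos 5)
Your proposal is correct and follows exactly the paper's own argument: apply Theorem \ref{existhom} to obtain a \st-algebra homomorphism into a C*-algebra realising $p$ as the norm, then compose with the isometric representation furnished by the Gel'fand-Na\u{\i}mark Theorem \ref{Gel'fandNaimark}. No gaps.
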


\begin{proof}
Let $p$ be a C*-seminorm on a \st-algebra $A$. There then exists a
\st-algebra homomorphism $\pi_1$ from $A$ to a C*-algebra $B$,
such that $p(a) = \| \,\pi_1 (a) \,\|$ for all $a \in A$, cf.\ \ref{existhom}.
By the Gel'fand-Na\u{\i}mark Theorem \ref{Gel'fandNaimark},
there exists an isometric \st-algebra isomorphism $\pi_2$ from
the C*-algebra $B$ onto a C*-algebra of bounded linear operators on
a Hilbert space. Then $\pi := \pi_2 \circ \pi_1$ is a representation of
$A$ on a Hilbert space with $p(a) = \| \,\pi(a) \,\|$ for all $a \in A$.
\end{proof}

\medskip
We obtain the following fundamental extremality property of
the Gel'fand-Na\u{\i}mark seminorm on Banach \st-algebras.

\begin{theorem}\label{GNsnchar}%
Let $A$ be a Banach \st-algebra. Then the Gel'fand-Na\u{\i}mark
seminorm on $A$ is the greatest C*-seminorm on $A$:
\[ \| \,a \,\|\0 = \mathrm{m}(a) \quad \text{for all} \quad a \in A, \]
cf.\ \ref{greatestC*sn}.
\end{theorem}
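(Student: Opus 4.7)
The plan is to prove the two inequalities $\|\cdot\|_{\mathrm{o}} \leq \mathrm{m}$ and $\mathrm{m} \leq \|\cdot\|_{\mathrm{o}}$ separately. The first is essentially definitional: by \ref{GNSseminorm}, the Gel'fand-Na\u{\i}mark seminorm $\|\cdot\|_{\mathrm{o}}$ is itself a C*-seminorm on $A$, and since $\mathrm{m}(a)$ is defined as the supremum of $p(a)$ taken over \emph{all} C*-seminorms $p$ on $A$, it follows at once that $\|a\|_{\mathrm{o}} \leq \mathrm{m}(a)$ for every $a \in A$.

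For the reverse inequality, the key idea is to convert an arbitrary C*-seminorm on $A$ into a Hilbert-space representation via \ref{existrep}, and then invoke the universal property of $\|\cdot\|_{\mathrm{o}}$ encoded in \ref{boundedo}. Concretely, first I would fix an arbitrary C*-seminorm $p$ on $A$. Theorem \ref{existrep} (which depends on the full Gel'fand-Na\u{\i}mark Theorem \ref{Gel'fandNaimark}) produces a representation $\pi$ of $A$ on some Hilbert space $H$ such that $p(a) = \|\pi(a)\|$ for all $a \in A$.

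Next, I would exploit the Banach-ness of $A$. By theorem \ref{Banachbounded}, any representation of a Banach \st-algebra on a pre-Hilbert space has bounded operators and is continuous; in particular $\pi$ is continuous and hence \emph{a fortiori} weakly continuous on $A\sa$. Now \ref{boundedo} applies and yields $\|\pi(a)\| \leq \|a\|_{\mathrm{o}}$ for every $a \in A$, so that $p(a) \leq \|a\|_{\mathrm{o}}$. Taking the supremum over all C*-seminorms $p$ gives $\mathrm{m}(a) \leq \|a\|_{\mathrm{o}}$, completing the proof.

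There is no real obstacle to this argument once \ref{existrep} is in hand; the only subtlety is recognising that the Banach hypothesis is used only to guarantee automatic continuity of the representation $\pi$ produced by \ref{existrep}, which in turn activates \ref{boundedo}. Without completeness of $A$, one would not automatically have weak continuity of $\pi$ on $A\sa$, and the argument would break down --- this is why the statement is restricted to Banach \st-algebras.
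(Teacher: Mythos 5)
Your proof is correct and follows essentially the same route as the paper: the easy inequality from $\| \cdot \|\0$ being a C*-seminorm, and the reverse via \ref{existrep}, \ref{Banachbounded}, and \ref{boundedo}. The only (immaterial) difference is that you apply \ref{existrep} to each individual C*-seminorm $p$ and take the supremum at the end, whereas the paper applies it once to $\mathrm{m}$ itself.
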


\begin{proof}
The Gel'fand-Na\u{\i}mark seminorm is a C*-seminorm by \ref{GNuniv},
whence $\| \,a \,\|\0 \leq \mathrm{m}(a)$ for all $a \in A$. Conversely,
for the greatest \linebreak C*-seminorm on $A$, there exists a
representation $\pi$ of $A$ on a Hilbert space with
$\mathrm{m}(a) = \| \,\pi(a) \,\|$ for all $a \in A$,
cf.\ the preceding theorem \ref{existrep}. Now $\| \,\pi(a) \,\| \leq \| \,a \,\|\0$
for all $a \in A$ by \ref{boundedo} as representations of Banach \st-algebras
on Hilbert spaces are continuous, cf.\ \ref{Banachbounded}. Hence
$\mathrm{m}(a) \leq \| \,a \,\|\0$ for all $a \in A$, and the statement follows.
\end{proof}

\medskip
From Ra\u{\i}kov's Criterion \ref{Raikov} we now regain \ref{PtakRaikov}.

\bigskip
It is not difficult to formulate and prove a version of the preceding theorem
\ref{GNsnchar} for normed \st-algebras instead of Banach \st-algebras.
\pagebreak

\clearpage


\section{Commutativity}%
\label{Commutativity}

\medskip
In this paragraph, let $A$ be a \underline{commutative} normed
\st-algebra.

\begin{definition}[$\Delta^*\protect\bsa(A)$]\label{mlfHbsa}%
\index{symbols}{D(A)bsa@$\Delta^*\protect\bsa(A)$}%
We denote by $\Delta^*\bsa(A)$ the set of \linebreak Hermitian
multiplicative linear functionals on $A$, which are bounded on $A\sa$.
Please note that
\[ \Delta^*\bsa(A) = \statespace (A) \cap \Delta (A). \]
\end{definition}

As in \ref{Gtopo} we obtain:

\begin{definition}\label{topDbsa}%
We equip the set $\Delta^*\bsa(A)$ with the topology \linebreak
inherited from $\quasistates (A)$, cf.\ \ref{topQS}. It becomes in this way a locally
\linebreak compact Hausdorff space.
\end{definition}

\begin{proposition}%
If $A$ furthermore is a Hermitian commutative Banach \st-algebra, then
$\Delta^*\bsa(A) = \Delta(A)$, also in the sense of topo\-logical spaces.
(As $A$ is spanned by $A\sa$, cf.\ the appendix \ref{weak*point}.)
\end{proposition}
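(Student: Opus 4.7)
The plan is to establish the identification $\Delta^*\bsa(A) = \Delta(A)$ first at the level of sets, and then at the level of topologies, drawing almost entirely on previously established machinery.

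For the set-theoretic equality, the inclusion $\Delta^*\bsa(A) \subseteq \Delta(A)$ is immediate from definition \ref{mlfHbsa}, since elements of $\Delta^*\bsa(A)$ are by construction multiplicative linear functionals. For the converse, let $\tau \in \Delta(A)$. Since $A$ is a Hermitian commutative Banach \st-algebra, \ref{mlfHerm} guarantees that $\tau$ is Hermitian. Moreover, by \ref{mlfbounded}, every multiplicative linear functional on a Banach algebra is continuous with $|\tau| \leq 1$, so in particular $\tau$ is bounded on $A\sa$. Thus $\tau \in \Delta^*\bsa(A)$.

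For the topological equality, I would compare the topologies directly. The topology on $\Delta(A)$ is the Gel'fand topology of \ref{Gelftopo}, which by the universal property of the weak* topology (cf.\ the appendix \ref{weak*point}) coincides with the topology of pointwise convergence on $A$. The topology on $\Delta^*\bsa(A)$ is inherited from $\quasistates(A)$ (cf.\ \ref{topQS} and \ref{topDbsa}), which carries the weak* topology as a subset of the dual of the real normed space $A\sa$; by \ref{weak*point} again, this is the topology of pointwise convergence on $A\sa$. The key observation is that every $c \in A$ admits a decomposition $c = a + \iu b$ with $a, b \in A\sa$, namely $a = (c + c^*)/2$ and $b = (c - c^*)/(2\iu)$, so $A = A\sa + \iu A\sa$ as a complex vector space. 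Consequently, a net $(\tau_\alpha)$ in $\Delta^*\bsa(A)$ converges pointwise on $A\sa$ if and only if it converges pointwise on $A$, by linearity of each $\tau_\alpha$ and $\tau$. The two topologies therefore coincide.

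There is no genuine obstacle here: the argument is essentially a bookkeeping exercise combining \ref{mlfHerm}, \ref{mlfbounded}, and the characterisation of the weak* topology as pointwise convergence. The only minor subtlety worth flagging is that one should avoid invoking continuity of the involution (which is not automatic for general Hermitian Banach \st-algebras, though it does hold in the \st-semisimple case by \ref{stsemiscont}); the decomposition $A = A\sa + \iu A\sa$ uses only the existence of the involution, not its continuity, and that is all that the topological comparison requires.
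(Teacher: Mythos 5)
Your proof is correct and follows essentially the same route the paper intends: the set equality comes from \ref{mlfHerm} and \ref{mlfbounded}, and the topological identification rests on the decomposition $A = A\sa + \iu A\sa$ together with the characterisation of the weak* topology as pointwise convergence, which is precisely the content of the paper's parenthetical hint. Your remark that continuity of the involution is not needed is a sensible precaution, though nothing more is required.
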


\begin{proposition}%
\index{symbols}{a07@$\protect\wht{a}$}\label{commdense}%
If $\Delta^*\bsa(A) \neq \varnothing$, the Gel'fand transformation
\begin{align*}
A & \to \cont\0\bigl(\Delta^*\bsa(A)\bigr) \\
a & \mapsto \wht{a}
\end{align*}
then is a $\sigma$-contractive \st-algebra homomorphism,
whose range is dense in $\cont\0\bigl(\Delta^*\bsa(A)\bigr)$.
(We shall abbreviate
$\wht{a}|_{\Delta^*_{\text{\small{bsa}}}(A)}$ to $\wht{a}$).
\end{proposition}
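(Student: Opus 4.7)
The plan is to verify the three claims — (a) $a\mapsto \wht{a}$ is a $*$-algebra homomorphism into $\cont\0(\Delta^*\bsa(A))$, (b) it is $\sigma$-contractive, (c) its range is dense — in that order, pulling most of the technical work from the material already assembled in the chapter.

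For (a), the algebra-homomorphism property is immediate from the general Gel'fand transformation. The $*$-property $\wht{a^*} = \overline{\wht{a}}$ holds pointwise on $\Delta^*\bsa(A)$ because the elements of $\Delta^*\bsa(A)$ are, by definition \ref{mlfHbsa}, Hermitian functionals. The serious point here is that each $\wht{a}$ must land in $\cont\0(\Delta^*\bsa(A))$, not merely in $\cont(\Delta^*\bsa(A))$. Continuity of $\wht{a}$ is tautological from the fact that the topology on $\Delta^*\bsa(A)$ is inherited from $\quasistates(A)$, which carries the weak* topology (pointwise convergence on $A$). For vanishing at infinity, I will mimic the classical argument of \S\ref{Gtopo}: the closure $\overline{\Delta^*\bsa(A)}$ inside the compact Hausdorff space $\quasistates(A)$ is compact, and any adherent point $\varphi$ is, by pointwise passage to the limit, both multiplicative and Hermitian, hence either lies in $\Delta^*\bsa(A)$ or else is the zero functional. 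Therefore $\overline{\Delta^*\bsa(A)} \subset \Delta^*\bsa(A) \cup \{0\}$. Either $\Delta^*\bsa(A)$ is already compact, or else $\overline{\Delta^*\bsa(A)}$ is a one-point compactification with $0$ as the point at infinity, cf.\ \ref{mlf0compact}. Since $\wht{a}$ extends continuously to $\overline{\Delta^*\bsa(A)}$ and vanishes at $0$, the restriction $\wht{a}|_{\Delta^*\bsa(A)}$ belongs to $\cont\0(\Delta^*\bsa(A))$, cf.\ the discussion \ref{motCnaught}–\ref{Cnaught}.

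For (b), note that every $\tau \in \Delta^*\bsa(A)$ is a state, as observed in \ref{mlfHbsa}; so by the definition of the Gel'fand-Na\u{\i}mark seminorm and \ref{GNSseminorm},
\[ |\wht{a}(\tau)| = |\tau(a)| \leq {\tau(a^*a)}^{1/2} \leq \|a\|\0 \leq \rsigma(a) \]
for every $\tau \in \Delta^*\bsa(A)$, giving $|\wht{a}|_\infty \leq \rsigma(a)$.

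For (c), I intend to apply the Stone-Weierstrass Theorem \ref{StW} to the locally compact Hausdorff space $\Delta^*\bsa(A)$ and to the $*$-subalgebra $\{\wht{a}:a\in A\}$ of $\cont\0(\Delta^*\bsa(A))$ produced by the Gel'fand transformation. Two hypotheses must be checked. Separation of points is automatic: if $\tau_1 \neq \tau_2$ in $\Delta^*\bsa(A)$, then by definition there exists $a \in A$ with $\tau_1(a) \neq \tau_2(a)$, i.e.\ $\wht{a}(\tau_1) \neq \wht{a}(\tau_2)$. Vanishing nowhere is immediate as well: every $\tau \in \Delta^*\bsa(A)$ is a \emph{non-zero} multiplicative functional, so there exists $a \in A$ with $\wht{a}(\tau) = \tau(a) \neq 0$. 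The main (and really the only) obstacle in the whole argument is the topological subtlety in (a) — pinning down that the adherent-point analysis genuinely makes each $\wht{a}$ vanish at infinity even without any completeness or continuity hypothesis on $A$; but this is handled entirely by transferring the one-point compactification argument from \ref{Gtopo} to the present setting.
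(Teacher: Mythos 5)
Your proposal is correct and follows essentially the same route as the paper: the paper's proof consists of the single line ``One applies the Stone-Weierstrass Theorem \ref{StW}'', with the topological groundwork (local compactness of $\Delta^*\bsa(A)$ and the vanishing of the $\wht{a}$ at infinity) delegated to \ref{topDbsa}, which in turn transfers the one-point-compactification argument of \ref{Gtopo} exactly as you do. Your write-up merely makes explicit the adherent-point analysis inside the compact set $\quasistates(A)$, the use of $|\tau(a)|\leq\tau(a^*a)^{1/2}\leq\rsigma(a)$ for states, and the two Stone-Weierstrass hypotheses, all of which the paper leaves implicit.
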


\begin{proof}
One applies the Stone-Weierstrass Theorem \ref{StW}.
\end{proof}

\begin{proposition}\label{tauhom}%
The homeomorphism $\varphi\0 \mapsto \varphi\0 \circ j$ \ref{affinehomeom}
from \linebreak $S\bigl(\Cstar(A)\bigr)$ onto $\statespace (A)$ restricts to a
homeomorphism from $\Delta\bigl(\Cstar(A)\bigr)$ onto $\Delta^*\bsa(A)$.
\end{proposition}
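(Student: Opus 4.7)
The plan is to verify that the affine homeomorphism of \ref{affinehomeom} sends $\Delta\bigl(\Cstar(A)\bigr)$ bijectively onto $\Delta^*\bsa(A)$; the topological statement then follows automatically from \ref{affinehomeom} since both subspaces carry their subspace topologies (and for multiplicative linear functionals on either algebra, weak* convergence on the whole algebra coincides with weak* convergence on the Hermitian part, because every element decomposes as $a + \iu b$ with $a,b$ Hermitian).

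First, I would check that the map sends $\Delta\bigl(\Cstar(A)\bigr)$ into $\Delta^*\bsa(A)$. Since $\Cstar(A)$ is a commutative C*-algebra, it is Hermitian by \ref{C*Herm}, so every $\varphi\0 \in \Delta\bigl(\Cstar(A)\bigr)$ is Hermitian by \ref{mlfHerm} and satisfies ${|\varphi\0(b)|}^{2} = \varphi\0(b^*b)$ for all $b \in \Cstar(A)$; combined with continuity from \ref{mlfbounded}, this shows $\varphi\0 \in \statespace\bigl(\Cstar(A)\bigr)$. Hence by \ref{affinehomeom}, $\tau := \varphi\0 \circ j \in \statespace(A)$. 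Since $j$ is a \st-algebra homomorphism and $\varphi\0$ is multiplicative, $\tau$ is multiplicative; it is non-zero because $\varphi\0 \neq 0$ and $j(A)$ is dense in $\Cstar(A)$ (cf.\ \ref{jdense}) with $\varphi\0$ continuous. Thus $\tau \in \statespace(A) \cap \Delta(A) = \Delta^*\bsa(A)$.

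Conversely, given $\tau \in \Delta^*\bsa(A) = \statespace(A) \cap \Delta(A)$, \ref{affinehomeom} yields a unique $\varphi\0 \in \statespace\bigl(\Cstar(A)\bigr)$ with $\tau = \varphi\0 \circ j$. The key step is to show $\varphi\0$ is multiplicative on all of $\Cstar(A)$. For $a, b \in A$,
\[
\varphi\0\bigl(j(a)\,j(b)\bigr) = \varphi\0\bigl(j(ab)\bigr) = \tau(ab) = \tau(a)\,\tau(b) = \varphi\0\bigl(j(a)\bigr)\,\varphi\0\bigl(j(b)\bigr),
\]
so $\varphi\0$ is multiplicative on the dense subalgebra $j(A)$ of $\Cstar(A)$. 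Here is the main (yet mild) obstacle: to extend this to all of $\Cstar(A)$, I use that $\varphi\0$ is continuous (being a state on a C*-algebra, by \ref{C*varnorm}), together with joint continuity of multiplication in the Banach algebra $\Cstar(A)$; an approximation argument with nets in $j(A)$ converging to arbitrary $x, y \in \Cstar(A)$ then gives $\varphi\0(xy) = \varphi\0(x)\,\varphi\0(y)$. Since $\tau \neq 0$, also $\varphi\0 \neq 0$, so $\varphi\0 \in \Delta\bigl(\Cstar(A)\bigr)$.

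Combining the two directions shows the restriction is a well-defined bijection. Because it is the restriction of the homeomorphism of \ref{affinehomeom} to subspaces (both equipped with their weak* subspace topologies from $\statespace\bigl(\Cstar(A)\bigr)$ and $\quasistates(A)$ respectively, which agree with the Gel'fand topology of \ref{Gelftopo} and the topology of \ref{topDbsa}, by the Hermitian decomposition remark above), the bijection is a homeomorphism, completing the proof.
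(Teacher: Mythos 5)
Your proof is correct and follows the route the paper intends: identify $\Delta\bigl(\Cstar(A)\bigr)$ and $\Delta^*\bsa(A)$ inside the respective state spaces (using \ref{C*Herm}, \ref{mlfHerm}, \ref{mlfbounded} on one side and $\Delta^*\bsa(A) = \statespace(A) \cap \Delta(A)$ on the other), check that $\varphi\0 \mapsto \varphi\0 \circ j$ preserves multiplicativity in both directions via density of $j(A)$ and continuity, and then restrict the homeomorphism of \ref{affinehomeom}. The paper states the proposition without proof, and your argument, including the observation that pointwise convergence on the Hermitian part agrees with pointwise convergence on the whole algebra, supplies exactly the missing details.
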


\begin{corollary}%
If $\Delta^*\bsa(A) \neq \varnothing$, then $\Cstar(A)$ is isomorphic as a \linebreak
C*-algebra to $\cont\0\bigl(\Delta^*\bsa(A)\bigr)$, cf.\ \ref{commGN}.
One then has
\[ \| \,a \,\|\0 = \bigl| \,\wht{a}|_{\Delta^*_{\text{\small{bsa}}}(A)} \,\bigr|_{\infty}
\quad \text{for all} \quad a \in A. \]
It follows that $A$ is \st-semisimple if and only if the Gel'fand trans\-formation
$A \to \cont\0\bigl(\Delta^*\bsa(A)\bigr)$ is injective. \pagebreak
\end{corollary}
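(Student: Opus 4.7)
The plan is to chain three earlier results: the homeomorphism $\Delta\bigl(\Cstar(A)\bigr) \cong \Delta^*\bsa(A)$ from Proposition~\ref{tauhom}, the Commutative Gel'fand-Na\u{\i}mark Theorem~\ref{commGN}, and the Gel'fand-Na\u{\i}mark Theorem~\ref{Gel'fandNaimark} applied to $\Cstar(A)$ itself. First I would note that $\Cstar(A)$ is commutative. This is because $A\mspace{1mu}/\text{\st-}\mathrm{rad}(A)$ is commutative as a quotient of the commutative algebra $A$, and its completion $\Cstar(A)$ inherits commutativity by continuity of multiplication. Next, the assumption $\Delta^*\bsa(A) \neq \varnothing$ combined with the homeomorphism of Proposition~\ref{tauhom} gives $\Delta\bigl(\Cstar(A)\bigr) \neq \varnothing$; hence $\Cstar(A) \neq \{0\}$, so Theorem~\ref{commGN} is applicable and produces a C*-algebra isomorphism $\Cstar(A) \to \cont\0\bigl(\Delta(\Cstar(A))\bigr)$. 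Composing with the homeomorphism of Proposition~\ref{tauhom} yields the desired isomorphism $\Cstar(A) \to \cont\0\bigl(\Delta^*\bsa(A)\bigr)$.

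To establish the norm formula, I would track an element $a \in A$ through the construction. On the one hand, by the Gel'fand-Na\u{\i}mark Theorem~\ref{Gel'fandNaimark} applied to the C*-algebra $\Cstar(A)$, together with Corollary~\ref{C*normGT}, one has $\| \,j(a) \,\| = \bigl| \,\wht{j(a)} \,\bigr| _{\infty}$, where the supremum is over $\Delta\bigl(\Cstar(A)\bigr)$. By definition, $\| \,a \,\|\0 = \| \,j(a) \,\|$. On the other hand, for $\tau \in \Delta\bigl(\Cstar(A)\bigr)$ one computes $\wht{j(a)}(\tau) = \tau\bigl(j(a)\bigr) = (\tau \circ j)(a) = \wht{a}(\tau \circ j)$, and under the homeomorphism $\tau \mapsto \tau \circ j$ of Proposition~\ref{tauhom}, this exhibits $\wht{j(a)}$ on $\Delta\bigl(\Cstar(A)\bigr)$ as the pullback of $\wht{a}|_{\Delta^*_{\text{\small{bsa}}}(A)}$ on $\Delta^*\bsa(A)$. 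Taking suprema gives $\| \,a \,\|\0 = \bigl| \,\wht{a}|_{\Delta^*_{\text{\small{bsa}}}(A)} \,\bigr|_{\infty}$.

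For the final equivalence, I would just unravel definitions. By the norm formula just proved, the \st-radical of $A$ is precisely the kernel of the map $a \mapsto \wht{a}|_{\Delta^*_{\text{\small{bsa}}}(A)}$ from $A$ into $\cont\0\bigl(\Delta^*\bsa(A)\bigr)$. Hence $A$ is \st-semisimple, i.e.\ $\text{\st-}\mathrm{rad}(A) = \{0\}$, if and only if this Gel'fand transformation is injective.

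The main point where some care is required is the verification that the C*-algebra isomorphism $\Cstar(A) \to \cont\0\bigl(\Delta^*\bsa(A)\bigr)$ obtained by composing Theorem~\ref{commGN} with Proposition~\ref{tauhom} sends $j(a)$ to $\wht{a}|_{\Delta^*_{\text{\small{bsa}}}(A)}$; but this is just the computation $\wht{j(a)}(\tau) = \wht{a}(\tau \circ j)$ recorded above, so no genuine obstacle remains. Everything else is bookkeeping using results already in hand.
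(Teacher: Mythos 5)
Your proposal is correct and follows exactly the route the paper intends: the corollary is stated immediately after Proposition \ref{tauhom} and cites \ref{commGN}, and the argument is precisely the chaining of the homeomorphism $\Delta\bigl(\Cstar(A)\bigr) \cong \Delta^*\bsa(A)$ with the Commutative Gel'fand-Na\u{\i}mark Theorem, plus \ref{C*normGT} and the identity $\| \,a \,\|\0 = \| \,j(a) \,\|$ for the norm formula. The only superfluous step is the appeal to \ref{Gel'fandNaimark}, which is not needed once \ref{C*normGT} is in hand.
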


\begin{theorem}[the abstract Bochner Theorem]%
\label{Bochner}\index{concepts}{Theorem!abstract!Bochner}%
\index{concepts}{abstract!Bochner Theorem}%
\index{concepts}{Bochner Thm., abstr.}%
The formula
\[ \psi (a) = \int \wht{a} \,\diff \mu \qquad (a \in A) \]
establishes an affine bijection from
$\probmeas \bigl(\Delta^*\bsa(A)\bigr)$\,onto $\statespace (A)$.
\end{theorem}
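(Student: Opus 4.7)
The plan is to reduce the statement to the corresponding C*-algebra result \ref{predisint} via the universal property of the enveloping C*-algebra, as hinted in the introduction. Note first that $\Cstar(A)$ is commutative because $A$ is, and so by the Commutative Gel'fand-Na\u{\i}mark Theorem \ref{commGN} we may apply \ref{predisint} to it. The key ingredients to be chained together are: the affine homeomorphism $\statespace(\Cstar(A)) \to \statespace(A)$, $\psi\0 \mapsto \psi\0 \circ j$, from \ref{affinehomeom}; the affine bijection $\probmeas(\Delta(\Cstar(A))) \to \statespace(\Cstar(A))$ given by integration, from \ref{predisint}; and the homeomorphism $\Theta : \Delta(\Cstar(A)) \to \Delta^*\bsa(A)$, $\tau\0 \mapsto \tau\0 \circ j$, from \ref{tauhom}, which induces an affine bijection $\Theta_* : \probmeas(\Delta(\Cstar(A))) \to \probmeas(\Delta^*\bsa(A))$ by pushforward.

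First I would handle the degenerate case: if $\Delta^*\bsa(A) = \varnothing$, then by \ref{tauhom} also $\Delta(\Cstar(A)) = \varnothing$, so by \ref{C*GTisom} $\Cstar(A) = \{0\}$, and by \ref{affinehomeom} $\statespace(A) = \varnothing$; both sides are empty and there is nothing to show. So assume $\Delta^*\bsa(A) \neq \varnothing$. Given $\psi \in \statespace(A)$, let $\psi\0 \in \statespace(\Cstar(A))$ be the corresponding state via \ref{affinehomeom}, let $\mu\0 \in \probmeas(\Delta(\Cstar(A)))$ be the measure associated with $\psi\0$ by \ref{predisint}, and set $\mu := \Theta_*\mu\0$. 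The composition
\[
\statespace(A) \ \longrightarrow \ \statespace(\Cstar(A)) \ \longrightarrow \ \probmeas(\Delta(\Cstar(A))) \ \longrightarrow \ \probmeas(\Delta^*\bsa(A))
\]
is an affine bijection as a composition of affine bijections, and I would take its inverse to be the desired correspondence.

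It then remains to verify that $\psi(a) = \int \wht{a}\,\diff\mu$ for $a \in A$. The crucial identity is that for $\tau\0 \in \Delta(\Cstar(A))$ with $\tau := \Theta(\tau\0) = \tau\0 \circ j$, one has
\[
\wht{\vphantom{b}a}(\tau) \,=\, \tau(a) \,=\, \tau\0\bigl(j(a)\bigr) \,=\, \wht{j(a)}(\tau\0),
\]
i.e.\ $\wht{a} \circ \Theta = \wht{j(a)}$ on $\Delta(\Cstar(A))$. Applying the change-of-variables formula for an image measure to this, and then \ref{predisint} for $\Cstar(A)$, gives
\[
\int \wht{\vphantom{b}a}\,\diff\mu = \int (\wht{\vphantom{b}a} \circ \Theta)\,\diff\mu\0 = \int \wht{j(a)}\,\diff\mu\0 = \psi\0\bigl(j(a)\bigr) = \psi(a),
\]
as required.

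The main obstacle is really only the bookkeeping: I need to check that $\Theta_*$ maps $\probmeas(\Delta(\Cstar(A)))$ bijectively onto $\probmeas(\Delta^*\bsa(A))$. Since $\Theta$ is a homeomorphism between locally compact Hausdorff spaces, the Borel $\sigma$-algebras correspond, the pushforward of a probability measure is a probability measure, and inner regularity transports along $\Theta$ because compact sets map to compact sets under both $\Theta$ and $\Theta^{-1}$; this is straightforward and the argument is affine by linearity of the integral.
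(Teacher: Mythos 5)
Your treatment of the direction ``state $\mapsto$ measure'' coincides with the paper's: pass to the corresponding state $\psi\0$ on $\Cstar(A)$ via \ref{affinehomeom}, apply \ref{predisint}, and push the resulting measure forward along the homeomorphism of \ref{tauhom}; the change-of-variables computation via $\wht{a}\circ\Theta = \wht{j(a)}$ is exactly the one in the text. That half is sound.

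The gap is in the opposite direction. You invoke \ref{predisint} as ``the affine bijection $\probmeas\bigl(\Delta(\Cstar(A))\bigr) \to \statespace\bigl(\Cstar(A)\bigr)$ given by integration'', but \ref{predisint} only asserts that \emph{each state} is represented by a unique measure; it is an injection of $\statespace\bigl(\Cstar(A)\bigr)$ \emph{into} $\probmeas\bigl(\Delta(\Cstar(A))\bigr)$ and says nothing about surjectivity onto the set of all inner regular Borel probability measures. Hence your chain is a chain of injections, not bijections, and the ``inverse'' you propose to take is not known to be defined on all of $\probmeas\bigl(\Delta^*\bsa(A)\bigr)$. What is missing --- and what occupies the second half of the paper's proof --- is the verification that for an arbitrary $\mu \in \probmeas\bigl(\Delta^*\bsa(A)\bigr)$ the functional $\varphi(a) := \int \wht{a}\,\diff\mu$ is in fact a state on $A$: positivity is immediate from $\varphi(a^*a) = \int {|\,\wht{a}\,|\,}^2\,\diff\mu \geq 0$, and weak continuity on $A\sa$ together with $v(\varphi)\leq 1$ follow from the contractivity on $A\sa$ of the functionals in $\Delta^*\bsa(A)$ and from Cauchy--Schwarz, but the normalisation $v(\varphi) = 1$ is not automatic. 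The paper obtains it by writing $\varphi = \lambda\psi$ with $\psi$ a state and $0 \leq \lambda \leq 1$, representing $\psi$ by a measure $\nu$ via the already-proved direction, deducing $\mu = \lambda\nu$ from the density of the functions $\wht{a}$ in $\cont\0\bigl(\Delta^*\bsa(A)\bigr)$ \ref{commdense}, and concluding $\lambda = 1$ because both are probability measures. Some such argument must be supplied before the claimed correspondence is a bijection.
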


\begin{proof} Let $\psi$ be a state on $A$. Let $\psi \0$ be the
corresponding state on $\Cstar(A)$. By \ref{predisint}, there exists a
unique measure $\mu \0 \in \probmeas \Bigl(\Delta\bigl(\Cstar(A)\bigr)\Bigr)$
such that
\[ \psi \0(b) = \int \wht{b} \,\diff \mu \0
\quad \text{for all } b \in \Cstar(A). \]
Let $\mu$ be the image measure of $\mu \0$ under the homeomorphism
$\tau \0 \mapsto \tau \0 \circ j$ from $\Delta\bigl(\Cstar(A)\bigr)$ onto
$\Delta^*\bsa(A)$, cf.\ \ref{tauhom}. (For the concept of image measure,
see the appendix \ref{imagebegin} - \ref{imageend}.) For $a \in A$, we get
\begin{align*}
\psi (a) & = \psi \0\bigl(j(a)\bigr) \\
& = \int \wht{j(a)} \,\diff \mu \0 \\
& = \int \wht{j(a)}(\tau \0) \,\diff \mu \0 (\tau \0) \\
& = \int \tau \0 \bigl(j(a)\bigr) \,\diff \mu \0 (\tau \0) \\
& = \int \wht{a} (\tau \0 \circ j) \,\diff \mu \0 (\tau \0) \\
& = \int \wht{a} (\tau ) \,\diff \mu (\tau ) = \int \wht{a} \,\diff \mu.
\end{align*}
Uniqueness of $\mu$ follows from the fact that the functions
$\wht{a}$ $(a \in A)$ are dense in $\cont\0\bigl(\Delta^*\bsa(A)\bigr)$,
cf.\ \ref{commdense}.

Conversely, let $\mu$ be a measure in $\probmeas \bigl(\Delta^*\bsa(A)\bigr)$
and consider the linear functional $\varphi$ on $A$ defined by
\[ \varphi (a) := \int \wht{a} \,\diff \mu \qquad (a \in A). \]
Then $\varphi$ is a positive linear functional on $A$ because for
$a \in A$, we have
\[ \varphi (a^*a) = \int {| \,\wht{a} \,| \,}^2 \,\diff \mu \geq 0. \pagebreak \]
The positive linear functional $\varphi$ is contractive on $A\sa$
because all the $\tau \in \Delta^*\bsa(A)$ are contractive on $A\sa$
as they are states. Indeed
\[ | \,\varphi (b) \,| \leq \int | \,\tau(b) \,| \,\diff \mu (\tau) \leq | \,b \,| 
\quad \text{for all} \quad b \in A\sa. \]
Furthermore, we have $v (\varphi ) \leq 1$ by the Cauchy-Schwarz inequality:
\[ {| \,\varphi (a) \,| \,}^2 = {\biggl| \,\int \wht{a} \,\diff \mu \,\biggr| \,}^2
\leq \int {| \,\wht{a} \,| \,}^2 \,\diff \mu \ \cdot \int {1 \,}^2 \,\diff \mu = \varphi (a^*a) \]
for all $a \in A$. Thus $\varphi = \lambda \psi$ with $\psi$ a state
on $A$ and $0 \leq \lambda \leq 1$. By the above, there exists a unique
measure $\nu \in \probmeas \bigl(\Delta^*\bsa(A)\bigr)$ such that
\[ \psi (a) = \int \wht{a} \,\diff \nu \quad \text{for all} \quad a \in A. \]
We obtain
\[ \varphi (a) = \int \wht{a} \,\diff (\lambda \nu ). \]
as well as
\[ \varphi (a) = \int \wht{a} \,\diff \mu. \]
By density of the functions $\wht{a}$ in
$\cont\0\bigl(\Delta^*\bsa(A)\bigr)$, it follows
$\mu = \lambda \nu$, whence $\lambda = 1$,
so that $\varphi = \psi$ is a state on $A$.
\end{proof}

\medskip
See also \ref{Bochnerbis} \& \ref{Bremark} below.

\begin{counterexample}\label{counterex}%
There exists a commutative unital nor\-med \st-algebra $A$ such that
every Hermitian multiplicative linear functional on $A$ is contractive
on $A\sa$, yet discontinuous, and thereby a discontinuous state.
\end{counterexample}

\begin{proof} We take $A := \mathds{C}[\mathds{Z}]$ as
\st-algebra, cf.\ \ref{CG}. (We shall define a norm later on.)
Let $\tau$ be a Hermitian multiplicative linear functional on $A$.
If we put $u := \tau(\delta _1)$, then
\[ \tau(a) = \sum _{n \in \mathds{Z}} \,a(n) \,u^{\,n}
\quad \text{for all} \quad a \in A. \]
With $e := \delta_0$ the unit in $A$, we have
\begin{align*}
{| \,u \,| \,}^2 & = u \,\overline{u} = \tau(\delta_1) \,\overline{\tau(\delta _1)}
= \tau(\delta_1) \,\tau({\delta _1}^*) \\
 & = \tau(\delta_1) \,\tau(\delta _{-1}) = \tau(\delta_1 \delta_{-1}) = \tau (e) = 1,
\pagebreak
\end{align*}
which makes that $u = \tau(\delta _1)$ belongs to the unit circle. The
mapping taking a Hermitian multiplicative linear functional $\tau$ on
$A$ to $u = \tau(\delta _1)$ is surjective onto the unit circle, because
if $u$ is in the unit circle then
\[ \tau(a) := \sum _{n \in \mathds{Z}} \,a(n) \,u^{\,n} \quad (a \in A)  \]
defines a Hermitian multiplicative linear functional on $A$ such that
$\tau(\delta _1) = u$. This mapping also is injective because a Hermitian
multiplicative linear functional on $A$ is uniquely determined by its value
on $\delta _1$. In other words, the Hermitian multiplicative linear functionals
$\tau$ on $A$ correspond bijectively to the points $u = \tau (\delta _1)$ in
the unit circle.

We shall now introduce a norm on $A$. Let $\gamma > 1$ be fixed. Then
\[ | \,a \,| := \sum _{n \in \mathds{Z}} \,| \,a(n) \,| \,{\gamma \,}^n \]
defines an algebra norm on $A$. (We leave the verification to the reader.)
If $a \in A\sa$, then $a(-n) = \overline{a(n)}$
for all $n \in \mathds{Z}$, so for $a \in A\sa$ we have
\[ | \,a \,| = | \,a(0) \,| + \sum _{n \geq 1} \,| \,a(n) \,|
\,\Bigl( {\gamma \,}^n + \frac{1}{{\gamma \,}^n} \Bigr). \]
Let $\tau$ be a Hermitian multiplicative linear functional on $A$.
Then $\tau(\delta _n) = \bigl(\tau(\delta _1)\bigr)^n$ is in the unit circle
for all $n \in \mathds{Z}$, so for $a \in A\sa$ we find
\[ | \,\tau(a) \,| \leq | \,a(0) \,| + 2 \,\sum _{n \geq 1} \,| \,a(n) \,| . \]
We see that if $a \in A\sa$, then $| \,\tau(a) \,| \leq | \,a \,|$, so $\tau$
is contractive on $A\sa$. However
\begin{alignat*}{2}
 & | \,\delta _n \,|  = {\gamma \,}^n \to 0 & &
\quad \text{for } n \to -\infty, \\
 & | \,\tau(\delta _n) \,|  = 1 & &
\quad \text{for all } n \in \mathds{Z},
\end{alignat*}
which shows that $\tau$ is discontinuous.
\end{proof}

\begin{remark}\label{counterexbis}
This gives an example of a commutative normed \linebreak \st-algebra
which cannot be imbedded in a Banach \st-algebra, see
\ref{bdedstatessuffcond} (iii). The above normed \st-algebra also serves
as a counterexample for the statements \ref{Brlsymm} \& \ref{Bnormalrsleqrl}
with normed \st-algebras instead of Banach \st-algebras. Please
note that the above commutative unital normed \linebreak \st-algebra
does not carry any continuous Hermitian multiplicative linear functional.
\pagebreak
\end{remark}

\clearpage


\addtocontents{toc}{\protect\vspace{0.2em}}

\chapter{Irreducible Representations}

\setcounter{section}{34}


\section{General \texorpdfstring{$*$-}{\80\052\80\055}Algebras: Indecomposable Functionals}

\medskip
Let throughout $A$ be a \st-algebra and $H \neq \{0\}$ a Hilbert space.

\begin{definition}[irreducible representations]%
\index{concepts}{representation!irreducible}\index{concepts}{irreducible}%
A representation of $A$ on $H$ is called \underline{irreducible}
if it is non-zero and if it has no closed reducing subspaces other
than $\{0\}$ and $H$. We shall then also say that the \st-algebra
$A$ \underline{acts irreducibly} on $H$ via this representation.
\end{definition}

\begin{proposition}\label{irredcyclic}%
A non-zero representation $\pi$ of $A$ on $H$ is irreducible
if and only if every non-zero vector in $H$ is cyclic for $\pi$.
\end{proposition}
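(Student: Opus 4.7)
The plan is to argue both implications by studying the cyclic subspace $M := \overline{\pi(A)x}$ generated by a non-zero vector $x \in H$, which is always a closed invariant subspace under $\pi$, and by invoking Lemma \ref{cyclicsubspace} to control when $x$ itself lies in $M$.

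For the ``only if'' direction I would first observe that an irreducible representation is automatically non-degenerate. Indeed, the closed invariant subspace $N := \{\,y \in H : \pi(a)y = 0 \text{ for all } a \in A\,\}$ from \ref{nondegenerate} is either $\{0\}$ or all of $H$ by irreducibility; the latter would force $\pi = 0$, contradicting the hypothesis that $\pi$ is non-zero. With $\pi$ non-degenerate, Lemma \ref{cyclicsubspace} applies: for any $0 \neq x \in H$, the cyclic subspace $M := \overline{\pi(A)x}$ actually contains $x$, so $M \neq \{0\}$. Irreducibility then forces $M = H$, i.e.\ $x$ is a cyclic vector for $\pi$.

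For the ``if'' direction, suppose every non-zero vector in $H$ is cyclic. First, $\pi$ is non-zero: pick any $0 \neq x \in H$ (possible since $H \neq \{0\}$); then $\overline{\pi(A)x} = H \neq \{0\}$, so $\pi(a)x \neq 0$ for some $a \in A$. Next, let $M$ be any closed reducing subspace of $H$ with $M \neq \{0\}$, and pick $0 \neq x \in M$. Invariance of $M$ gives $\pi(A)x \subset M$, hence $\overline{\pi(A)x} \subset M$; but $x$ is cyclic by hypothesis, so $\overline{\pi(A)x} = H$, whence $M = H$. Therefore the only closed reducing subspaces of $H$ are $\{0\}$ and $H$, and $\pi$ is irreducible.

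The only subtle point is the initial step that irreducibility implies non-degeneracy, which is needed to apply Lemma \ref{cyclicsubspace} and conclude $x \in \overline{\pi(A)x}$; without this, one could only conclude that $\overline{\pi(A)x}$ equals $\{0\}$ or $H$, leaving open the possibility that it is $\{0\}$. Everything else is a routine matter of chasing closed invariant subspaces.
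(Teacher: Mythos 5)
Your proof is correct and follows essentially the same route as the paper: irreducibility forces non-degeneracy, Lemma \ref{cyclicsubspace} then gives $x \in \overline{\pi(A)x}$ so that this cyclic subspace is a non-zero reducing subspace and must equal $H$, and conversely a vector in a proper non-zero closed invariant subspace cannot be cyclic. The only difference is that you spell out the non-degeneracy step and the (already hypothesised) non-vanishing of $\pi$ explicitly, which the paper leaves to a cross-reference.
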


\begin{proof} Assume that $\pi$ is irreducible. Then $\pi$ must be
non-degenerate, cf.\ \ref{nondegenerate}. Let $x$ be a non-zero vector
in $H$ and consider $M = \overline{\range(\pi)x}$. Then $x \in M$,
because $\pi$ is non-degenerate, cf.\ \ref{cyclicsubspace}. In particular,
$M$ is a closed reducing subspace $\neq \{0\}$, so that $M = H$.
Conversely, any vector in a proper closed invariant subspace cannot be cyclic.
\end{proof}

\begin{lemma}[Schur's Lemma]%
\index{concepts}{Theorem!Schur's Lemma}%
\index{concepts}{Schur's Lemma}%
\index{concepts}{Lemma!Schur's}\label{Schur}%
A non-zero representation $\pi$ of $A$ on $H$
is irreducible if and only if $\pi' = \mathds{C}\mathds{1}$.
\end{lemma}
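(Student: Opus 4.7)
The plan is to prove both implications using the correspondence between closed reducing subspaces and projections in $\pi'$ supplied by Theorem \ref{invarcommutant}.

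For the easy direction, assume $\pi' = \mathds{C}\mathds{1}$. Let $M$ be a closed subspace of $H$ invariant under $\pi$, and let $p$ denote the projection onto $M$. By Theorem \ref{invarcommutant}, $p \in \pi'$, so $p = \lambda \mathds{1}$ for some $\lambda \in \mathds{C}$. Since $p^2 = p$, we get $\lambda^2 = \lambda$, so $\lambda \in \{0,1\}$, meaning $M \in \{\{0\},H\}$. Combined with the hypothesis that $\pi$ is non-zero, this yields irreducibility.

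For the non-trivial direction, assume $\pi$ is irreducible. First I would observe that $\pi'$ is a unital C*-subalgebra of $\blop(H)$: it is a closed unital subalgebra by Observation \ref{commutantp1}, and since $\pi(A)$ is \st-stable ($\pi$ being a \st-algebra homomorphism), $\pi'$ is \st-stable as well. Take any $c \in \pi'$; writing $c = \frac12(c+c^*) + \iu \cdot \frac{1}{2\iu}(c-c^*)$, it suffices to show that every Hermitian $c \in \pi'$ is a real multiple of $\mathds{1}$. The main step will be to argue that $\s(c)$ consists of a single point. Suppose instead $\s(c)$ contains two distinct real numbers $\lambda_1, \lambda_2$; pick continuous real-valued functions $f_1, f_2$ on $\s(c)$ vanishing at $0$, with disjoint supports, and with $f_i(\lambda_i) \neq 0$. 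Applying the weak form of the operational calculus \ref{weakopcalc} inside the C*-subalgebra of $\pi'$ generated by $c$, we obtain non-zero elements $f_1(c), f_2(c) \in \pi'$ satisfying $f_1(c) f_2(c) = 0$.

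Now set $M := \overline{\,\mathrm{range}\,f_1(c)}$. Since $f_1(c) \in \pi'$, the subspace $M$ is invariant under $\pi$. It is non-zero because $f_1(c) \neq 0$. On the other hand $\mathrm{range}\,f_2(c) \subset \ker f_1(c) = M^\perp$ (using that $f_1(c)$ is Hermitian to equate its kernel with the orthogonal complement of the closure of its range), so $M^\perp \neq \{0\}$, i.e.\ $M \neq H$. This non-trivial closed reducing subspace contradicts irreducibility, so $\s(c)$ must be a singleton $\{\lambda\}$. Finally, since $c - \lambda \mathds{1}$ is Hermitian in the C*-algebra $\blop(H)$, Theorem \ref{C*rl} gives $\|c - \lambda \mathds{1}\| = \rlambda(c - \lambda \mathds{1}) = 0$, whence $c = \lambda \mathds{1}$. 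The subtle point to watch is ensuring that the operational calculus applied to $c$ actually produces elements that still lie in $\pi'$; this is taken care of by the fact that $\pi'$ is a C*-subalgebra of $\blop(H)$ containing $c$, since such a C*-subalgebra contains the full C*-subalgebra of $\blop(H)$ generated by $c$.
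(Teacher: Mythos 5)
Your forward direction coincides with the paper's. In the reverse direction you take a genuinely different route: where the paper adjoins $\mathds{1}$, forms the unital C*-subalgebra $C$ of $\pi'$ generated by a Hermitian $c$, produces divisors of zero in $C$ as soon as $\dim C \geq 2$ (via \ref{C*zerodiv}), and then kills one of them using the fact that every non-zero vector is cyclic (\ref{irredcyclic}), you instead argue that $\s(c)$ must be a singleton, manufacturing the two orthogonal elements by the non-unital operational calculus \ref{weakopcalc} and exhibiting the closure of the range of $f_1(c)$ as a proper non-zero reducing subspace. The skeleton is sound, and your use of $\ker f_1(c) = M^\perp$ for Hermitian $f_1(c)$ is a clean way to see $M \neq H$.

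There is, however, one genuine gap: the construction of $f_1, f_2$ fails when one of the two chosen spectral points is $0$, and in the critical case $\s(c) = \{ \,0, \mu \,\}$ with $\mu \neq 0$ there is no way around it, since you would need $f_1 \in C\bigl(\s(c)\bigr)\0$ with both $f_1(0) = 0$ and $f_1(\lambda_1) = f_1(0) \neq 0$. This is precisely the case where $c$ is a non-zero multiple of a non-trivial projection in $\pi'$ --- arguably the most important configuration Schur's Lemma must exclude --- so the argument as written does not close. The paper sidesteps this by working in the \emph{unital} algebra generated by $c$ and $\mathds{1}$, isomorphic to $C\bigl(\s(c)\bigr)$ rather than $C\bigl(\s(c)\bigr)\0$, where no vanishing condition at $0$ intervenes. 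Your proof is easily repaired: either replace $c$ by $c - t\mathds{1} \in \pi'$ for a real $t$ chosen so that both selected spectral points become non-zero (concluding $c = (\lambda + t)\mathds{1}$ at the end), or treat the two-point case directly by noting that $\s(c) = \{ \,0, \mu \,\}$ forces $\mu^{\,-1} c$ to be a projection in $\pi'$ different from $0$ and $\mathds{1}$, whose range is then a forbidden reducing subspace by \ref{invarcommutant}.
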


\begin{proof} Assume that $\pi' = \mathds{C}\mathds{1}$. Let $M$
be a closed reducing subspace of $\pi$ and let $p$ be the projection on
$M$. Then $p$ belongs to $\pi'$ by \ref{invarcommutant}, whence either
$p = 0$ or $p = \mathds{1}$. Assume conversely that $\pi$ is irreducible.
Then $\pi'$ is a unital C*-subalgebra of $\blop(H)$. Consider a Hermitian
element $a$ of $\pi'$ and let $C$ be the C*-subalgebra of $\pi'$
generated by $a$ and $\mathds{1}$. Please note that $C$ is commutative.
In order to prove the lemma, it suffices to show that $C$ is one-dimensional.
So assume that $C$ has dimension at least $2$. Then $C$ then contains
divisors of zero, i.e.\ non-zero elements $c, d$ with $c d = 0$, cf.\ \ref{C*zerodiv}.
Choosing $x \in H$ with $d x \neq 0$, we reach the contradiction
$c H = c \overline{\pi (A) d x} \subset \overline{\pi (A) c d x} = \{0\}$
as the vector $d x \neq 0$ is cyclic by the preceding proposition
\ref{irredcyclic}. \pagebreak
\end{proof}

\begin{theorem}\label{commirroned}%
If $A$ is commutative, then a non-zero representation $\pi$ of
$A$ on $H$ is irreducible if and only if $H$ is one-dimensional.
\end{theorem}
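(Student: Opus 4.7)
The plan is to use Schur's Lemma \ref{Schur} together with the cyclicity characterisation \ref{irredcyclic} just established. One direction is essentially trivial: if $H$ is one-dimensional, then the only closed subspaces of $H$ are $\{0\}$ and $H$ itself, so every non-zero representation of $A$ on $H$ is automatically irreducible, regardless of whether $A$ is commutative.

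For the converse, suppose $A$ is commutative and $\pi$ is irreducible. The key observation is that commutativity of $A$ forces $\range(\pi) \subset \pi'$, since $\pi(a) \pi(b) = \pi(ab) = \pi(ba) = \pi(b) \pi(a)$ for all $a,b \in A$. By Schur's Lemma \ref{Schur}, irreducibility gives $\pi' = \mathds{C} \mathds{1}$, so every $\pi(a)$ is a scalar multiple of the identity on $H$.

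Now fix any non-zero vector $x \in H$. On the one hand, since each $\pi(a)$ acts as a scalar, $\pi(A) x \subset \mathds{C} x$, hence $\overline{\pi(A)x} \subset \mathds{C}x$. On the other hand, by \ref{irredcyclic}, $x$ is cyclic for $\pi$, so $\overline{\pi(A)x} = H$. Combining these two facts yields $H \subset \mathds{C}x$, so $H$ is one-dimensional.

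There is no serious obstacle here; the main subtlety is merely to note that commutativity implies $\range(\pi) \subset \pi'$ so that Schur's Lemma applies in this sharp form, and then to combine it with the cyclicity of every non-zero vector. Both ingredients are immediately available from the preceding two items in the text.
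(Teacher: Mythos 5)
Your proof is correct and follows essentially the same route as the paper: commutativity gives $\range(\pi) \subset \pi'$, and Schur's Lemma \ref{Schur} then forces every $\pi(a)$ to be a scalar. The only cosmetic difference is the last step, where you invoke cyclicity of a non-zero vector via \ref{irredcyclic} to get $H = \overline{\pi(A)x} \subset \mathds{C}x$, whereas the paper observes that every subspace is then invariant and appeals directly to irreducibility; both are immediate.
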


\begin{proof}
Since $A$ is commutative, we have $\range(\pi) \subset \pi'$.
Thus, if $\pi$ is irreducible, then $\range(\pi) \subset \pi' = \mathds{C}\mathds{1}$
by the preceding Lemma of Schur \ref{Schur}. Then every subspace of
$H$ is invariant under $\pi$, so $H$ must be one-dimensional.
\end{proof}

\medskip
An irreducible representation of a non-commutative \st-algebra
need not be one-dimensional. Indeed:

\begin{proposition}%
The C*-algebra $\blop(H)$ acts irreducibly on $H$.
\end{proposition}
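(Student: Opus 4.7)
The plan is to apply Schur's Lemma \ref{Schur} to the identity representation $\pi := \id : \blop(H) \to \blop(H)$. First I would verify that $\id$ qualifies as a representation in the sense of \ref{repdef}: it is an algebra homomorphism, and the defining relation $\langle \pi(a)x, y \rangle = \langle x, \pi(a^*)y \rangle$ is precisely the definition of the Hilbert space adjoint, so it holds automatically. The representation is non-zero since $\mathds{1} = \id(\mathds{1}) \neq 0$ (using $H \neq \{0\}$).

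By Schur's Lemma \ref{Schur}, it then remains to show that the commutant $\pi' = \blop(H)'$ (in the sense of \ref{repcommutant}) equals $\mathds{C}\mathds{1}$. But this is exactly the content of Corollary \ref{BHirred}, which has already been established in the excerpt as a direct consequence of Theorem \ref{tensor}: any bounded operator commuting with every rank-one operator $x_i \odot x_j$ coming from an orthonormal basis must be a scalar multiple of the identity, and since such rank-one operators belong to $\blop(H)$, any element of $\blop(H)'$ is in particular a scalar multiple of $\mathds{1}$.

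Since $\blop(H)$ is non-zero and its commutant is trivial, Schur's Lemma \ref{Schur} yields that $\blop(H)$ acts irreducibly on $H$. There is essentially no obstacle here: the entire technical work has been carried out in preparing Theorem \ref{tensor} and its Corollary \ref{BHirred}, so the proof is a one-line invocation of these two earlier results. The only thing worth making explicit is the (trivial) verification that the identity map is a legitimate representation in the sense of the paper's conventions.
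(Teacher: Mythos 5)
Your proof is correct and is exactly the paper's argument: the paper likewise deduces the statement from Corollary \ref{BHirred} via Schur's Lemma \ref{Schur}. The extra remark that the identity map is a legitimate representation is a harmless elaboration of what the paper leaves implicit.
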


\begin{proof}
This follows from \ref{BHirred} via Schur's Lemma \ref{Schur}.
\end{proof}

\begin{proposition}\label{spherical}%
Let $\pi$ be an irreducible representation of $A$ on $H$.
Assume that $x, y$ are vectors in $H$ with
\[ \langle \pi (a) x, x \rangle = \langle \pi (a) y, y \rangle
\quad \text{for all} \quad a \in A. \]
There then exists a complex number $u$ of modulus $1$ with $y = ux$.
\end{proposition}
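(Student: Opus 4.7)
The plan is to combine Lemma \ref{coeffequal} on spatial equivalence of cyclic representations with Schur's Lemma \ref{Schur}.

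First I would handle the degenerate case. If $x = 0$, then $\langle \pi(a) y, y \rangle = 0$ for all $a \in A$; applied with $a^*a$ in place of $a$ this gives $\|\pi(a) y\|^2 = 0$, so $y$ lies in the closed invariant subspace $\{ z \in H : \pi(a) z = 0 \text{ for all } a \in A \}$. Since $\pi$ is irreducible and $H \neq \{0\}$, this subspace is $\{0\}$ by \ref{nondegenerate}, so $y = 0$ and any $u$ of modulus $1$ works. The same argument (with the roles of $x$ and $y$ swapped) shows that $x \neq 0$ forces $y \neq 0$.

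Assume now $x, y \neq 0$. By Proposition \ref{irredcyclic}, both $x$ and $y$ are cyclic vectors for $\pi$. The two positive linear functionals
\[ \varphi_1(a) := \langle \pi(a) x, x \rangle, \qquad \varphi_2(a) := \langle \pi(a) y, y \rangle \qquad (a \in A) \]
coincide by assumption. Applying Lemma \ref{coeffequal} with $\pi_1 = \pi_2 = \pi$, $c_1 = x$, $c_2 = y$, we obtain a unitary operator $U$ intertwining $\pi$ with itself and satisfying $Ux = y$. In other words, $U$ is a unitary element of the commutant $\pi'$.

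By Schur's Lemma \ref{Schur}, the irreducibility of $\pi$ gives $\pi' = \mathds{C} \mathds{1}$. Hence $U = u \mathds{1}$ for some $u \in \mathds{C}$, and since $U$ is unitary we have $|u| = 1$. The relation $Ux = y$ then reads $y = u x$, as desired. The crucial ingredient is Lemma \ref{coeffequal}, which produces the intertwining unitary from the equality of the coefficient functionals; Schur's Lemma then does the rest essentially for free.
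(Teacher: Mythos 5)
Your proof is correct and follows the same route as the paper's: the main case combines Lemma \ref{coeffequal} (producing the intertwining unitary in $\pi'$) with Schur's Lemma \ref{Schur}, exactly as the paper does. Your treatment of the degenerate case ($x=0$ forces $y=0$) via non-degeneracy is a minor variant of the paper's argument, which instead notes that $y$ would be orthogonal to $\pi(A)y$ and hence could not be cyclic; both are fine.
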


\begin{proof} Assume first that both $x$ and $y$ are non-zero. Then $x, y$
are cyclic vectors for $\pi$, by \ref{irredcyclic}. So the assumption implies that
there exists a unitary operator $U \in \pi' = C(\pi,\pi)$ taking $x$ to $y$,
cf.\ \ref{coeffequal}. Since $\pi$ is irreducible, it follows from Schur's Lemma
\ref{Schur} that $U = u\mathds{1}$ with $u \in \mathds{C}, | \,u \,| = 1$. Assume
next that $x = 0$, $y \neq 0$. The equation
\[ 0=\langle \pi (a)x,x \rangle =\langle\pi (a)y,y \rangle \qquad ( \,a \in A \,) \]
then would imply that the vector $y \neq 0$ was orthogonal to all the vectors
$\pi (a)y$ $(a \in A)$, so that $y$ could not be a cyclic vector, in contradiction
with \ref{irredcyclic}.
\end{proof}

\begin{lemma}\label{posoprep}%
Let $\varphi$ be a positive semidefinite sesquilinear form on
$H$ with $\sup _{\,\| \,x \,\| \leq 1} \varphi (x, x) < \infty$. There
then exists a unique linear operator $a \in \blop(H)_+$ such that
\[ \varphi (x, y) = \langle ax, y \rangle\quad ( \,x,y \in H \,). \]
\end{lemma}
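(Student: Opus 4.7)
The plan is to invoke the Riesz representation theorem twice: once to obtain $a$ as an operator, and implicitly in extracting its bound from the Cauchy--Schwarz inequality for $\varphi$. Set $M := \sup_{\|x\| \leq 1} \varphi(x,x) < \infty$, which is finite by hypothesis. Since $\varphi$ is positive semidefinite, it satisfies the Cauchy--Schwarz inequality $|\varphi(x,y)|^2 \leq \varphi(x,x)\,\varphi(y,y)$, and combining this with the bound $\varphi(z,z) \leq M \|z\|^2$ for all $z \in H$, I obtain $|\varphi(x,y)| \leq M \|x\| \|y\|$, so that $\varphi$ is a bounded sesquilinear form.

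Next, I would fix $x \in H$ and consider the map $y \mapsto \overline{\varphi(x,y)}$, which is linear in $y$ and bounded by $M \|x\|$. The Riesz representation theorem yields a unique vector, which I denote $ax \in H$, such that $\overline{\varphi(x,y)} = \langle y, ax \rangle$ for all $y \in H$, equivalently $\varphi(x,y) = \langle ax, y \rangle$. The uniqueness clause of Riesz, together with linearity of $\varphi$ in its first argument, forces the assignment $x \mapsto ax$ to be linear, and the calculation just performed also gives $\|ax\| \leq M \|x\|$, so $a \in \blop(H)$.

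It remains to verify that $a \geq 0$ in the sense of positive operators, and to establish uniqueness. For the former, I note that $\langle ax, x \rangle = \varphi(x,x) \geq 0$ for every $x \in H$, which by theorem \ref{posop} is exactly the statement that $a \in \blop(H)_+$ (the Hermitian nature of $a$ being absorbed into this criterion, though it is also visible directly from $\langle ax, y \rangle = \varphi(x,y) = \overline{\varphi(y,x)} = \overline{\langle ay, x \rangle} = \langle x, ay \rangle$). Uniqueness is immediate: if $a_1, a_2 \in \blop(H)_+$ both represent $\varphi$, then $\langle (a_1 - a_2)x, y \rangle = 0$ for all $x, y \in H$, so $a_1 = a_2$.

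There is no substantial obstacle here; the whole argument is a direct unpacking of Riesz and Cauchy--Schwarz. The only mild point of care is to be consistent with the convention that the inner product is conjugate linear in the second argument (so that $\overline{\varphi(x,\cdot)}$ is the linear functional to which Riesz is applied), and to not confuse the bound $M$ with the operator norm of $a$ (although in fact both coincide, but that is not needed for the statement).
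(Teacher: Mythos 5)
Your proof is correct and follows essentially the same route as the paper: Cauchy--Schwarz plus the hypothesis gives boundedness of $\varphi$, the representation of bounded sesquilinear forms (which the paper cites as ``a well-known theorem'' and you unpack via Riesz) gives the unique $a \in \blop(H)$, and positivity follows from \ref{posop} exactly as you argue.
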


\begin{proof} We have $\sup _{\,\| \,x \,\|, \| \,y \,\| \leq 1} | \,\varphi (x,y) \,| < \infty$
by the Cauchy-Schwarz \linebreak inequality. It follows from a well-known
theorem that there exists a unique bounded linear operator $a$ on $H$
with $\langle ax,y \rangle = \varphi (x,y)$ for all $x,y \in H$. The operator
$a$ is positive because $\varphi$ is positive semidefinite, cf.\ \ref{posop}.
\pagebreak
\end{proof}

\medskip
Our next aim is theorem \ref{indecequiv} below, which leads to
theorem \ref{irredindecpure}.

\begin{definition}[subordination]%
\index{concepts}{subordinate}\index{concepts}{functional!subordinate}%
Let $\varphi_1, \varphi_2$ be positive linear \linebreak functionals on $A$.
One says that $\varphi_1$ is \underline{subordinate} to $\varphi_2$ if
there is $\lambda \geq 0$ such that $\lambda \varphi_2 - \varphi_1$ is
a positive linear functional.
\end{definition}

\begin{theorem}\label{subordrep}%
Let $\pi$ be a cyclic representation of $A$ on $H$ and let $c$ be a cyclic
vector for $\pi$. Consider the positive linear functional $\varphi$ on $A$
defined by
\[ \varphi (a) := \langle \pi (a) c, c \rangle\quad ( \,a \in A \,). \]
The equation
\[ \varphi_1 (a) = \langle b \pi(a) c, c \rangle\quad ( \,a \in A \,) \]
establishes a bijection between operators $b \in {\pi'}_+$
and Hermitian positive linear functionals $\varphi_1$ on $A$
of finite variation that are subordinate to $\varphi$.
\end{theorem}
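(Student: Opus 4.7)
\textbf{Proof plan for Theorem \ref{subordrep}.}

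The forward direction is a routine verification. Given $b \in {\pi'}_+$, define $\varphi_1(a) := \langle b\pi(a)c, c\rangle$. Since $b^{1/2}$ lies in the closed subalgebra generated by $b$ (by \ref{C*sqrootrestate}), and $\pi'$ is a closed subalgebra of $\blop(H)$, we have $b^{1/2} \in \pi' \cap \blop(H)_+$. Then $\varphi_1(a^*a) = \langle b\pi(a)^*\pi(a)c, c\rangle = \langle b^{1/2}\pi(a)c, b^{1/2}\pi(a)c\rangle \geq 0$, and a similar computation with an insertion of $\pi(a^*)$ gives the Hermitian property. Finite variation follows from Cauchy-Schwarz applied to $\langle b^{1/2}\pi(a)c, b^{1/2}c\rangle$, which yields $v(\varphi_1) \leq \langle bc, c\rangle$. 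Subordination is obtained by picking $\lambda := \|b\|$: then $\lambda\mathds{1} - b \in {\pi'}_+$ by \ref{ordernorm}, so the same construction applied to $\lambda\mathds{1} - b$ produces the positive functional $\lambda\varphi - \varphi_1$.

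For the backward direction, the key step is to construct $b$ from $\varphi_1$ via a sesquilinear form on $\pi(A)c$. Assume first that $A$ is unital; I will dispose of the non-unital case by unitisation at the end. Attempt to define a positive semidefinite sesquilinear form $\beta$ on the dense subspace $\pi(A)c \subset H$ by
\[ \beta(\pi(a_1)c, \pi(a_2)c) := \varphi_1(a_2^*a_1). \]
Well-definedness is the main technical point: if $\pi(d)c = 0$, I must show $\varphi_1(a_2^*d) = 0$ for every $a_2 \in A$. By \ref{inducedHf}, the functional $\varphi_1$ induces a positive Hilbert form $\langle \cdot, \cdot\rangle_{\varphi_1}$ on $A$, so Cauchy-Schwarz gives
\[ |\varphi_1(a_2^*d)|^2 \leq \varphi_1(a_2^*a_2)\,\varphi_1(d^*d). \]
Subordination then provides the crucial estimate $\varphi_1(d^*d) \leq \lambda\varphi(d^*d) = \lambda\|\pi(d)c\|^2 = 0$, yielding well-definedness in the first argument; symmetry handles the second argument. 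Boundedness of $\beta$ on $\pi(A)c$ follows from iterating the same Cauchy-Schwarz bound and using $\varphi_1(a_i^*a_i) \leq \lambda\|\pi(a_i)c\|^2$.

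Once $\beta$ is shown positive semidefinite and bounded, it extends continuously to all of $H \times H$, and \ref{posoprep} yields a unique $b \in \blop(H)_+$ with $\langle b\pi(a_1)c, \pi(a_2)c\rangle = \varphi_1(a_2^*a_1)$. To show $b \in \pi'$, I will compute both $\langle b\pi(a)\pi(a_1)c, \pi(a_2)c\rangle$ and $\langle \pi(a)b\pi(a_1)c, \pi(a_2)c\rangle$: both reduce to $\varphi_1(a_2^*aa_1)$, so the operators $b\pi(a)$ and $\pi(a)b$ agree on the dense set $\pi(A)c$, hence everywhere. Specialising to $a_2 = e$ (unital case) gives $\varphi_1(a) = \langle b\pi(a)c, c\rangle$. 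Injectivity of $b \mapsto \varphi_1$ is immediate: if $\langle b\pi(a)c, c\rangle = 0$ for all $a$, then polarising via $\langle b\pi(a_1)c, \pi(a_2)c\rangle = \langle b\pi(a_2^*a_1)c, c\rangle$ shows $b$ vanishes on $\pi(A)c$, and density forces $b = 0$. The non-unital case is handled by extending $\pi$ to $\tilde A$ with $\tilde\pi(e) := \mathds{1}$ and extending $\varphi_1$, $\varphi$ to $\tilde A$ via \ref{extendable} with compatible values at $e$; the unital construction on $\tilde A$ then produces the required $b$, and the defining formula restricts to $A$. The principal obstacle in the argument is the well-definedness of $\beta$, where the interplay between the induced Hilbert form and the subordination bound is essential.
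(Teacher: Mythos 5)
Your forward direction and the construction of $b$ in the backward direction follow the paper's argument essentially step for step: the same sesquilinear form $\beta(\pi(a_1)c,\pi(a_2)c)=\varphi_1(a_2^*a_1)$, the same use of the Cauchy-Schwarz inequality for the Hilbert form induced by $\varphi_1$ combined with the subordination bound to obtain well-definedness and the estimate $\beta(x,x)\leq\lambda\langle x,x\rangle$, the same appeal to \ref{posoprep}, and the same computation showing $b\in\pi'$. All of that is sound.

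The gap is in your final step, the identification $\varphi_1(a)=\langle b\pi(a)c,c\rangle$. In the unital case ``set $a_2=e$'' is fine (a cyclic representation is non-degenerate, hence unital, so $\pi(e)c=c$). But the theorem is stated for a general \st-algebra, and your reduction of the non-unital case by unitisation does not go through as described. To run the construction on $\tld{A}$ you need $\tld{\varphi}_1$ to be subordinate to $\tld{\varphi}$ on $\tld{A}$ --- that subordination is precisely what makes the form well-defined and bounded on $\tld{\pi}(\tld{A})c=\mathds{C}c+\pi(A)c$. Extending $\varphi_1$ and $\varphi$ separately via \ref{extendable} does not deliver this: one would need $\lambda\varphi-\varphi_1$ to be Hermitian of finite variation, with $v(\varphi_1)+v(\lambda\varphi-\varphi_1)\leq\lambda\,\|\,c\,\|^{\,2}$ so that the values at $e$ can be chosen compatibly, and neither the finite variation of $\lambda\varphi-\varphi_1$ nor this inequality is available a priori; both become evident only after $b$ has been constructed and one knows $\lambda$ may be replaced by $\|\,b\,\|$ --- which is circular. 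The paper closes exactly this gap without unitising: from $|\,\varphi_1(a)\,|^{\,2}\leq v(\varphi_1)\,\varphi_1(a^*a)\leq\lambda\,v(\varphi_1)\,\|\,\pi(a)c\,\|^{\,2}$ the functional $\pi(a)c\mapsto\varphi_1(a)$ is bounded on $\pi(A)c$ and hence represented by a vector $d\in H$; one then checks $b\,\pi(g)c=\pi(g)d$ and concludes $\langle b\pi(a)c,c\rangle=\langle\pi(a)d,c\rangle=\overline{\varphi_1(a^*)}=\varphi_1(a)$, using the Hermitian hypothesis on $\varphi_1$. Note that your plan never uses that hypothesis in the backward direction --- a sign that this last step is where the real work sits. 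Replace the unitisation by this representing-vector argument, or supply an honest proof that the subordination survives unitisation.
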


\begin{proof}
Let $b \in {\pi'}_+$. It is immediate that
\[ \varphi_1 (a) := \langle b \pi (a)c,c \rangle
= \langle \pi(a) b^{1/2} c, b^{1/2} c \rangle \quad ( \,a \in A \,) \]
defines a Hermitian positive linear functional of finite variation on $A$,
cf.\ \ref{C*sqrootrestate} and \ref{variationinequal}. To show that $\varphi_1$
is subordinate to $\varphi$, we note that $b \leq \| \,b \,\| \,\mathds{1}$, whence
\[ \| \,b \,\| \,\langle x,x \rangle - \langle bx,x \rangle \geq 0 \]
for all $x \in H$. With $x := \pi (a) c$ it follows that
\[ \| \,b \,\| \,\langle \pi (a^*a)c,c \rangle
- \langle b \pi (a^*a)c,c \rangle \geq 0, \]
which says that $\| \,b \,\| \,\varphi - \varphi_1$ is a positive linear
functional on $A$.

Conversely, let $\varphi_1$ be a Hermitian positive linear functional
of finite variation subordinate to $\varphi$. There then exists
$\lambda \geq 0$ such that
\[ 0 \leq \varphi_1 (a^*a) \leq \lambda \,\varphi (a^*a)  \tag*{(\st)} \]
for all $a \in A$.

For $x = \pi (f)c, y = \pi (g)c$, $(f,g \in A)$, we put
\[ \alpha (x,y) := \varphi_1 (g^*f). \]
This expression depends only on $x,y$. Indeed, let for example also
$x = \pi (h)c$, i.e.\ $\pi (f-h)c = 0$. We obtain
\[ \varphi \,\bigl((f-h)^*(f-h)\bigr) = \langle \pi (f-h)c, \pi (f-h)c \rangle = 0, \]
so that by (\st),
\[ \varphi_1 \bigl((f-h)^*(f-h)\bigr) = 0. \pagebreak \]
The Cauchy-Schwarz inequality now implies that
\[ \varphi_1 \bigl(g^*(f-h)\bigr) = 0,  \]
whence
\[ \varphi_1 (g^*f) = \varphi_1 (g^*h), \]
as claimed. Denote by $H\0$ the set of all vectors $x \in H$ of the form
$x = \pi (f)c$ $(f \in A)$. The vector space $H\0$ is dense in $H$ because
$c$ is cyclic for $\pi$. The positive semidefinite sesquilinear form $\alpha$
is bounded on $H \0$ since (\st) may be rewritten as
\[ \alpha (x, x) \leq \lambda \,\langle x, x \rangle \tag*{(\st\st)} \]
for all $x \in H\0$. Therefore $\alpha$ has a unique continuation to a bounded
positive semidefinite sesquilinear form on $H$. The inequality (\st\st) then
is valid for all $x \in H$. By \ref{posoprep} there is a unique operator
$b \in \blop(H)_+$ such that
\[ \alpha (x,y) = \langle bx,y \rangle
\quad \text{for all} \quad x, y \in H. \]
It shall be shown that $b \in \pi'$. For $x = \pi (f)c, y = \pi (g)c$, and $a \in A$,
we have
\[ \langle b \pi (a)x,y \rangle = \langle b \pi (af)c, \pi (g)c \rangle
= \alpha \bigl(\pi(af)c, \pi(g)c\bigr) = \varphi_1 (g^*af), \]
as well as
\begin{align*}
\langle \pi (a)bx,y \rangle & = \langle bx,\pi (a^*)y \rangle
= \langle b \pi (f)c,\pi (a^*g)c \rangle = \alpha \bigl(\pi(f)c, \pi(a^*g)c\bigr) \\
 & = \varphi_1 \bigl((a^*g)^*f\bigr) = \varphi_1 (g^*af),
\end{align*}
so that
\[ \langle b \pi (a)x,y \rangle = \langle \pi (a)bx,y \rangle \]
for all $x,y \in H\0$, and thus for all $x,y \in H$, whence $b \in \pi'$.
In order to prove that
\[ \varphi_1 (a) = \langle b \pi (a)c,c \rangle
\quad \text{for all} \quad a \in A, \]
one first notices that
\[ {| \,\varphi_1 (a) \,| \,}^2 \leq v(\varphi_1) \,\varphi_1(a^*a)
\leq v(\varphi_1) \,\lambda \,\varphi (a^*a)
= \lambda \,v(\varphi_1) \,{\| \,\pi (a)c \,\| \,}^2, \]
so that $\pi (a)c \mapsto \varphi_1 (a)$ is a bounded linear functional on
$H\0$, hence extends to a bounded linear functional on $H$. (The expression
$\varphi_1 (a)$ depends only on $\pi (a)c$ as is seen as follows: if
$\pi (f)c = \pi (g)c$, we have
\[ {| \,\varphi_1 (f) - \varphi_1 (g) \,| \,}^2
= {| \,\varphi_1 (f-g) \,| \,}^2 \leq \lambda \,v(\varphi_1 ) \,{\| \,\pi (f-g)c \,\| \,}^2
= 0.) \]
There thus exists a vector $d \in H$ with
\[ \varphi_1 (a) = \langle \pi (a)c,d \rangle
\quad \text{for all} \quad a \in A. \pagebreak \]
For $f, g \in A$, we get
\begin{align*}
\langle \pi (f)c,b \pi (g)c \rangle & = \langle b \pi (f)c, \pi (g)c \rangle
= \alpha (\pi (f)c,\pi (g)c) \\
 & = \varphi_1 (g^*f) = \langle \pi (g^*f)c,d \rangle
= \langle \pi (f)c, \pi (g) d \rangle,
\end{align*}
so that
\[ b \pi (g)c = \pi (g)d \quad \text{for all} \quad g \in A. \]
For $a \in A$, it follows that
\[ \langle b \pi (a)c,c \rangle = \langle \pi (a)d,c \rangle
= \overline{\langle \pi (a^*)c,d \rangle}
= \overline{\varphi_1(a^*)} = \varphi_1(a). \]
Uniqueness of $b \in \pi'$ satisfying
\[ \varphi_1(a) = \langle b \pi (a) c,c \rangle \quad \text{for all} \quad a \in A \]
follows from the identity
\[ \varphi_1 (g^*f) = \langle b \pi (f) c,\pi (g) c \rangle
\quad \text{for all} \quad f, g \in A, \]
and the fact that $H\0$ is dense in $H$. 
\end{proof}

\begin{definition}[indecomposable positive linear functionals]%
\index{concepts}{indecomposable}\index{concepts}{functional!indecomposable}%
Let $\varphi$ be a Hermitian positive linear functional
of finite variation on $A$. The functional $\varphi$ is called
\underline{indecomposable} if every other Hermitian positive
linear functional of finite variation on $A$, which is
subordinate to $\varphi$, is a multiple of $\varphi$.
\end{definition}

\begin{theorem}\label{indecequiv}%
Let $\pi$ be a cyclic representation of $A$ on $H$ and let $c$ be a cyclic
vector for $\pi$. Consider the positive linear functional $\varphi$ on $A$
given by
\[ \varphi (a) := \langle \pi (a)c,c \rangle\quad ( \,a \in A \,). \]
Then $\pi$ is irreducible if and only if $\varphi$ is indecomposable.
\end{theorem}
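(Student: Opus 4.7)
The plan is to derive the equivalence directly from Theorem \ref{subordrep}, which provides a bijection between $\pi'_+$ and the set of Hermitian positive linear functionals of finite variation subordinate to $\varphi$, together with Schur's Lemma \ref{Schur}. Note first that since $c$ is a cyclic vector, $c\neq 0$, and since $H\neq\{0\}$ we need $\pi(A)c$ dense in $H$, so $\pi$ is non-zero; hence Schur's Lemma is applicable and states that $\pi$ is irreducible iff $\pi' = \mathds{C}\mathds{1}$.

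For the ``only if'' direction, assume $\pi$ is irreducible. By Schur, $\pi' = \mathds{C}\mathds{1}$, so $\pi'_+ = \{\lambda\mathds{1} : \lambda\geq 0\}$. Let $\varphi_1$ be any Hermitian positive linear functional of finite variation subordinate to $\varphi$. By \ref{subordrep}, there exists $b\in\pi'_+$ with $\varphi_1(a) = \langle b\pi(a)c, c\rangle$ for all $a\in A$; writing $b=\lambda\mathds{1}$ with $\lambda\geq 0$, we get $\varphi_1(a) = \lambda\langle\pi(a)c,c\rangle = \lambda\varphi(a)$. Hence $\varphi$ is indecomposable.

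For the ``if'' direction, assume $\varphi$ is indecomposable. Let $b\in\pi'_+$ and define $\varphi_1(a) := \langle b\pi(a)c, c\rangle$ for $a\in A$. By \ref{subordrep}, $\varphi_1$ is a Hermitian positive linear functional of finite variation subordinate to $\varphi$, hence by indecomposability $\varphi_1 = \lambda\varphi$ for some $\lambda\in\mathds{C}$; since $\varphi_1\geq 0$ on elements of the form $a^*a$ and $\varphi\neq 0$ (as $c\neq 0$ and $\pi$ is non-degenerate, forcing $\varphi(a^*a) > 0$ for some $a$), we have $\lambda\geq 0$. But then $\lambda\mathds{1}\in\pi'_+$ also satisfies $\langle\lambda\mathds{1}\cdot\pi(a)c,c\rangle = \lambda\varphi(a) = \varphi_1(a)$ for all $a\in A$, so the uniqueness part of the bijection in \ref{subordrep} yields $b = \lambda\mathds{1}$. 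Thus $\pi'_+ \subset \mathds{C}\mathds{1}$.

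It remains to upgrade this to $\pi' = \mathds{C}\mathds{1}$. This is the only mildly subtle step. The commutant $\pi'$ is a unital C*-subalgebra of $\blop(H)$, cf.\ \ref{commutantp1}. Any Hermitian element $x\in\pi'$ admits the orthogonal decomposition $x = x_+ - x_-$ with $x_\pm \in \pi'_+$ (applying \ref{orthdec} within the C*-algebra $\pi'$), so the inclusion $\pi'_+\subset\mathds{C}\mathds{1}$ gives $\pi'\sa\subset\mathds{R}\mathds{1}$; decomposing an arbitrary element as $a+\iu b$ with $a,b\in\pi'\sa$ yields $\pi'\subset\mathds{C}\mathds{1}$, and the reverse inclusion is automatic. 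Schur's Lemma \ref{Schur} now gives that $\pi$ is irreducible, completing the proof.
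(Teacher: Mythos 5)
Your proof is correct. The forward direction coincides with the paper's: Schur's Lemma gives $\pi' = \mathds{C}\mathds{1}$, and the bijection of \ref{subordrep} then forces every subordinate functional to be a multiple of $\varphi$. In the converse direction you take a genuinely different (though related) route. The paper tests only the \emph{projections} $p$ onto closed invariant subspaces: these lie in ${\pi'}_+$ by \ref{invarcommutant}, the uniqueness in \ref{subordrep} forces $p = \lambda\mathds{1}$, and since a projection has $\lambda \in \{0,1\}$ one concludes directly from the definition of irreducibility, with no need for Schur's Lemma or any structure theory of the commutant. You instead characterise \emph{all} of ${\pi'}_+$ as $[\,0,\infty\,[\,\mathds{1}$, upgrade to $\pi' = \mathds{C}\mathds{1}$ via the orthogonal decomposition \ref{orthdec} applied inside the C*-algebra $\pi'$, and then invoke Schur's Lemma \ref{Schur}. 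Your version proves slightly more along the way (triviality of the whole commutant rather than just of its projections) at the cost of importing the orthogonal decomposition; the paper's version is more economical. Both arguments are sound, and your auxiliary checks ($\pi \neq 0$, $\varphi \neq 0$, hence $\lambda \geq 0$, and the identification of positivity in $\pi'$ with positivity in $\blop(H)$) are all legitimate.
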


\begin{proof} Assume that $\pi$ is irreducible and let $\varphi_1$ be
a Hermitian positive linear functional of finite variation subordinate to
$\varphi$. There exists $b \in {\pi'}_+$ with
$\varphi_1 (a) = \langle b \pi (a) c,c \rangle$ for all $a \in A$. Since
$\pi$ is irreducible, it follows that $b = \lambda \mathds{1}$ for some
$\lambda \geq 0$, so that $\varphi_1 = \lambda \varphi$. Conversely,
let $\varphi$ be indecomposable and let $p$ denote the projection
on a closed invariant subspace of $\pi$. We then have $p \in {\pi'}_+$,
cf.\ \ref{invarcommutant}, so that the Hermitian positive linear functional
$\varphi_1$ of finite variation defined by
$\varphi_1(a) := \langle p \pi (a)c,c \rangle$ $(a \in A)$, is subordinate
to $\varphi$, whence $\varphi_1 = \lambda \varphi$ for some
$\lambda \in \mathds{R}$. Since $p$, as an element of ${\pi'}_+$,
is uniquely determined by $\varphi_1$, it follows that
$p = \lambda \mathds{1}$, but then $\lambda = 0$ or $1$ as $p$
is a projection, and so $\pi$ is irreducible. \pagebreak
\end{proof}

\clearpage


\section{Normed \texorpdfstring{$*$-}{\80\052\80\055}Algebras: Pure States}

Throughout this paragraph, let $A$ be a normed \st-algebra,
and \linebreak assume that $\psi$ is a state on $A$.

\begin{definition}[pure states, $\purestates (A)$]%
\index{concepts}{pure state}\index{concepts}{state!pure}%
\index{symbols}{P9@$\purestates (A)$}%
\index{concepts}{functional!state!pure}%
One says that $\psi$ is a \linebreak \underline{pure state} on $A$
if it is an extreme point of the convex set $\statespace (A)$ of states
on $A$, cf.\ \ref{SAconvex}. The set of pure states on $A$ is denoted
by $\purestates (A)$.
\end{definition}

\begin{observation}\label{homPS}%
The affine homeomorphism $\varphi \0 \mapsto \varphi \0 \circ j$
from $\statespace \bigl(\Cstar(A)\bigr)$ onto $\statespace (A)$,
cf.\ \ref{affinehomeom}, restricts to a homeomorphism from
$\purestates \bigl(\Cstar(A)\bigr)$ onto $\purestates (A)$.
\end{observation}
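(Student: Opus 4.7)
The plan is to exploit the fact that $\purestates$ is defined purely in terms of the convex structure of $\statespace$ (as the set of extreme points), combined with the structure that was already extracted in \ref{affinehomeom}.

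First I would recall that by \ref{affinehomeom}, the map $\Phi : \varphi\0 \mapsto \varphi\0 \circ j$ is an affine homeomorphism from $\statespace\bigl(\Cstar(A)\bigr)$ onto $\statespace(A)$. Being an affine bijection between convex sets, it must send extreme points to extreme points bijectively. Indeed, if $\Phi(\varphi\0)$ decomposes as $\lambda \psi_1 + (1-\lambda) \psi_2$ with $\psi_1, \psi_2 \in \statespace(A)$ and $\lambda \in {]\,0,1\,[}$, then applying the inverse $\Phi^{-1}$ (which is also affine, since $\Phi$ is an affine bijection) yields $\varphi\0 = \lambda \,\Phi^{-1}(\psi_1) + (1-\lambda) \,\Phi^{-1}(\psi_2)$, so that $\varphi\0$ extreme in $\statespace\bigl(\Cstar(A)\bigr)$ forces $\Phi^{-1}(\psi_1) = \Phi^{-1}(\psi_2) = \varphi\0$, and hence $\psi_1 = \psi_2 = \Phi(\varphi\0)$. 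The converse direction is symmetric, using that $\Phi^{-1}$ is likewise an affine bijection.

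Consequently, $\Phi$ restricts to a bijection from $\purestates\bigl(\Cstar(A)\bigr)$ onto $\purestates(A)$. Since the restriction of a homeomorphism to any subset is a homeomorphism onto its image (when the image is equipped with the subspace topology inherited from $\statespace(A)$, which is precisely the topology carried by $\purestates(A)$ by definition), the restriction is a homeomorphism. There is essentially no obstacle here: the whole statement is a formal consequence of the fact, proved in \ref{affinehomeom}, that $\Phi$ is an affine homeomorphism, together with the general principle that affine bijections between convex sets preserve extreme points. No further structural properties of $A$ or $\Cstar(A)$ need to be invoked.
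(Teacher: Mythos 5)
Your proof is correct and follows the same route as the paper, whose own proof is just the one-line remark that the map is affine (citing \ref{affinehomeom}); you have merely spelled out the standard fact that an affine bijection between convex sets carries extreme points to extreme points. Nothing further is needed.
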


\begin{proof}
The homeomorphism $\varphi \0 \mapsto \varphi \0 \circ j$
from $\statespace \bigl(\Cstar(A)\bigr)$
onto $\statespace (A)$ is affine, cf.\ \ref{affinehomeom}.
\end{proof}

\begin{lemma}\label{subordo}%
Let $\psi \0$ be the state on $\Cstar(A)$ corresponding to $\psi$.
Then $\psi$ is indecomposable precisely when $\psi \0$ is so.
\end{lemma}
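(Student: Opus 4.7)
The strategy is to transport subordination along the affine bijection $\varphi\0 \leftrightarrow \varphi\0 \circ j$ between positive linear functionals on $\Cstar(A)$ and positive linear functionals of finite variation on $A$ that are weakly continuous on $A\sa$ (cf.\ the items preceding \ref{affinehomeom}). The main obstacle will be that the definition of indecomposability in $A$ involves arbitrary Hermitian positive linear functionals of finite variation subordinate to $\psi$, and such functionals are not a priori weakly continuous on $A\sa$, so they do not obviously lie in the range of this bijection.

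For the direction $\psi$ indecomposable $\Rightarrow \psi\0$ indecomposable, I would take a Hermitian positive linear functional $\varphi\0_1$ of finite variation on $\Cstar(A)$ with $\lambda \psi\0 - \varphi\0_1 \geq 0$, and form the pullback $\varphi_1 := \varphi\0_1 \circ j$. Checking the conditions is routine: $\varphi_1$ is Hermitian, positive, and of finite variation, and it is weakly continuous on $A\sa$ because $j$ is $\sigma$-contractive (hence continuous on $A\sa$ by \ref{sigmaAsa}) and $\varphi\0_1$ is continuous by \ref{C*varnorm}. From the identity $(\lambda \psi - \varphi_1)(a^*a) = (\lambda \psi\0 - \varphi\0_1) \bigl( j(a)^* j(a) \bigr) \geq 0$ I see that $\varphi_1$ is subordinate to $\psi$, so by hypothesis $\varphi_1 = \mu \psi$, and injectivity of $\varphi\0 \mapsto \varphi\0 \circ j$ (density of $j(A)$ plus continuity) forces $\varphi\0_1 = \mu \psi\0$.

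The hard direction is the converse. Given a Hermitian positive linear functional $\varphi_1$ of finite variation on $A$ subordinate to $\psi$, I would sidestep the continuity obstacle by invoking theorem \ref{subordrep} applied to the cyclic $\sigma$-contractive GNS data $(\pi_\psi, c_\psi)$ available from \ref{repstate}: this forces $\varphi_1$ to take the form $\varphi_1(a) = \langle \pi_\psi(a) \,b^{1/2} c_\psi, b^{1/2} c_\psi \rangle_\psi$ for some $b \in (\pi_\psi)'_+$. For any $c \in A$ and $a \in A\sa$, setting $y := \pi_\psi(c) \,b^{1/2} c_\psi$ gives
\[ | \,\varphi_1(c^*ac) \,| = | \,\langle \pi_\psi(a) \,y, y \rangle_\psi \,| \leq \rsigma(a) \,\| \,y \,\|^2 \leq | \,a \,| \cdot \| \,y \,\|^2, \]
where I use the $\sigma$-contractivity of $\pi_\psi$ together with $\rsigma = \rlambda$ on Hermitian elements (\ref{Hermrseqrl}); this establishes the weak continuity of $\varphi_1$ on $A\sa$.

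Now $\varphi_1$ corresponds to a positive linear functional $\varphi\0_1$ on $\Cstar(A)$, automatically Hermitian and of finite variation by \ref{plfC*bded} and \ref{C*varnorm}. The required subordination $\lambda \psi\0 - \varphi\0_1 \geq 0$ follows from $(\lambda \psi\0 - \varphi\0_1) \bigl( j(a)^* j(a) \bigr) = (\lambda \psi - \varphi_1)(a^*a) \geq 0$ on extending by continuity (of $\psi\0$, $\varphi\0_1$, multiplication, and involution) and density of $j(A)$ in $\Cstar(A)$ to arbitrary $b^*b$ with $b \in \Cstar(A)$. Indecomposability of $\psi\0$ then gives $\varphi\0_1 = \mu \psi\0$, and composing with $j$ yields $\varphi_1 = \mu \psi$, as required.
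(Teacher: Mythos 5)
Your proof is correct, but it takes a genuinely different route from the paper's. The paper disposes of the lemma in three lines by translating everything into representation language: by \ref{indecequiv}, indecomposability of $\psi$ (resp.\ $\psi\0$) is equivalent to irreducibility of $\pi_{\psi}$ (resp.\ $\pi_{(\psi\0)}$); then $\pi_{(\psi\0)}$ is spatially equivalent to $(\pi_{\psi})\0$ by \ref{oequiv}, and $(\pi_{\psi})\0$ is irreducible precisely when $\pi_{\psi}$ is, because the two have the same commutant by \ref{piopi}~(iii). You instead transport subordinate functionals directly along the bijection $\varphi\0 \mapsto \varphi\0 \circ j$, and the one non-routine point --- that a Hermitian positive linear functional of finite variation subordinate to $\psi$ is automatically weakly continuous on $A\sa$, hence lies in the range of that bijection --- you settle by applying \ref{subordrep} to the GNS data of $\psi$. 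That is exactly the device the paper itself uses a moment later inside the proof of \ref{indecopure} (there only under the extra hypotheses of a continuous involution and a bounded left approximate unit), so your argument is very much in the spirit of the text even though it bypasses the irreducibility dictionary entirely. What the paper's route buys is brevity and reuse of already-established structural facts about commutants; what yours buys is a self-contained functional-analytic argument that makes explicit where the continuity issue lives and why it is harmless, at the cost of redoing the density/continuity bookkeeping that the bijection propositions before \ref{affinehomeom} and the continuity results \ref{C*varnorm} already encapsulate. Both directions of your argument check out, including the extension of the positivity of $\lambda\psi\0 - \varphi\0_1$ from $j(A)$ to all of $\Cstar(A)$ by density and continuity.
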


\begin{proof}
This is an application of theorem \ref{indecequiv}.
The representation $\pi _{(\psi \0)}$ is spatially equivalent to
$(\pi _{\psi}) \0$, cf.\ \ref{oequiv}. Therefore $\pi _{(\psi \0)}$ is
irreducible precisely when $(\pi _{\psi}) \0$ is. Now $(\pi _{\psi}) \0$
is irreducible if and only if $\pi _{\psi}$ is irreducible, as is shown
by \ref{piopi} (iii).
\end{proof}

\begin{theorem}\label{indecopure}%
The state $\psi$ is indecomposable if and only if it is a pure state.
\end{theorem}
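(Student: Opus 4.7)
The plan is to reduce to the case of a C*-algebra and there give a direct argument. By observation \ref{homPS}, the state $\psi$ on $A$ is pure if and only if the corresponding state $\psi\0$ on $\Cstar(A)$ is pure. By the preceding lemma \ref{subordo}, $\psi$ is indecomposable if and only if $\psi\0$ is indecomposable. Hence it suffices to prove the theorem under the additional assumption that $A$ itself is a C*-algebra. The advantage of this reduction is that on a C*-algebra every positive linear functional is automatically continuous and of finite variation, cf.\ \ref{C*varnorm}, so all worries about weak continuity on $A\sa$ or finiteness of variation disappear.

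So assume now that $A$ is a C*-algebra. First I would show that indecomposability implies purity. Suppose $\psi = t\,\psi_1 + (1-t)\,\psi_2$ with $\psi_1, \psi_2$ states on $A$ and $0 < t < 1$. The functional $t\,\psi_1$ is a Hermitian positive linear functional of finite variation, and $\psi - t\,\psi_1 = (1-t)\,\psi_2$ is positive, so $t\,\psi_1$ is subordinate to $\psi$. Indecomposability of $\psi$ then forces $t\,\psi_1 = \lambda\,\psi$ for some $\lambda \geq 0$. Taking the variation on both sides and using that $v$ is $\mathds{R}_+$-homogeneous on this cone (cf.\ \ref{finvarconvex}) gives $t = \lambda$, so $\psi_1 = \psi$. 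Symmetrically $\psi_2 = \psi$, so $\psi$ is extreme in $\statespace(A)$.

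Conversely, assume that $\psi$ is pure, and let $\varphi$ be a Hermitian positive linear functional of finite variation subordinate to $\psi$. Choose $\lambda \geq 0$ with $\lambda\,\psi - \varphi \geq 0$. One may assume both $\varphi$ and $\lambda\,\psi - \varphi$ are non-zero, otherwise the conclusion is already immediate. Set $t := v(\varphi)$ and $s := v(\lambda\,\psi - \varphi)$. By the additivity of the variation on this convex cone (\ref{finvarconvex}), we have $t + s = v(\lambda\,\psi) = \lambda$. The functionals $\varphi/t$ and $(\lambda\,\psi - \varphi)/s$ are then states on $A$, and
\[ \psi = \frac{t}{\lambda} \cdot \frac{\varphi}{t} + \frac{s}{\lambda} \cdot \frac{\lambda\,\psi - \varphi}{s} \]
is a non-trivial convex combination of states. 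Purity forces $\varphi/t = \psi$, i.e.\ $\varphi = t\,\psi$ is a scalar multiple of $\psi$. Hence $\psi$ is indecomposable.

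The one place where care is required is guaranteeing that the pieces appearing in the convex decompositions are again genuine states in the sense of the paper, namely that they are weakly continuous on $A\sa$ and have variation exactly $1$; this is precisely why reducing to a C*-algebra at the start is essential, since there the continuity and finite-variation properties come for free from \ref{C*varnorm} and \ref{finvarconvex} takes care of the bookkeeping of variations.
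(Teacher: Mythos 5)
Your proof is correct and follows essentially the same route as the paper: the reduction to the enveloping C*-algebra via \ref{homPS} and \ref{subordo}, combined with a direct convexity-and-variation argument in a setting where positive linear functionals are automatically continuous and of finite variation. The only difference is cosmetic — the paper proves the special case first (for normed \st-algebras with continuous involution and a bounded left approximate unit, which covers C*-algebras) and then pulls down the general case, whereas you reduce first and then argue; the bookkeeping with $v$ via \ref{finvarconvex} is identical.
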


\begin{proof} We shall first prove this if $A$ has continuous
involution and a bounded left approximate unit. Let the state
$\psi$ be indecomposable. Assume that
$\psi = \lambda _1 \psi _1 + \lambda _2 \psi _2$ where
$\lambda _1, \lambda _2 > 0, \lambda _1 + \lambda _2 = 1$ and
$\psi _1, \psi _2 \in \statespace (A)$. Since $\lambda _k \psi _k$ is subordinate
to $\psi$, we have $\lambda _k \psi _k = \mu _k \psi$ for some $\mu _k \geq 0$.
We obtain $\lambda _k = v(\lambda _k \psi _k) = v(\mu _k \psi ) = \mu _k$
so that $\psi _k = \psi$ for $k = 1,2$. Therefore $\psi$ is an extreme point
of $\statespace (A)$. Let next the state $\psi$ be pure. Consider a Hermitian
positive linear functional $\psi _1$ of finite variation on $A$ subordinate
to $\psi$. Then $\psi _1$ is bounded on $A\sa$ by applying
\ref{subordrep} to $\pi_{\psi}$. But then $\psi _1$ and $\psi$ are bounded
as the involution in $A$ is continuous by assumption.
It must be shown that $\psi _1$ is a multiple of $\psi$. Assume without loss
of generality that $\psi _1 \in \statespace (A)$. There then exists $\lambda > 0$
such that $\psi - \lambda \psi _1 =: \psi'$ is a positive linear functional. This
functional is continuous and thus has finite variation because by assumption
$A$ has continuous involution and a bounded left approximate unit,
cf.\ \ref{bdedfinvar}. It must be shown that $\psi _1$ is a multiple of $\psi$.
Assume that $\psi' \neq 0$. Put $\psi _2 := \psi'/\mu$ where $\mu := v(\psi') > 0$
as $\psi' \neq 0$. Then $\psi _2$ is a state on $A$ with
$\psi = \lambda \psi _1 + \mu \psi _2$. It follows that $1 = \lambda + \mu$
by additivity of the variation, cf.\ \ref{finvarconvex}. Since $\lambda, \mu > 0$,
it follows that $\psi _1 = \psi _2 = \psi$ because $\psi$ is an extreme point
of $\statespace (A)$, and in particular $\psi _1$ is a multiple of  $\psi$.
This proves the theorem in presence of a continuous involution and a
bounded left approximate unit. We shall pull down the general case from
$\Cstar(A)$. On one hand, $\psi$ is a pure state precisely when $\psi \0$
is a pure state, cf.\ \ref{homPS}. On the other hand, $\psi$ is indecomposable
if and only if $\psi \0$ is indecomposable, by \ref{subordo}.
\end{proof}

\begin{theorem}\label{irredindecpure}%
The following conditions are equivalent.
\begin{itemize}
   \item[$(i)$] $\pi _{\psi}$ is irreducible,
  \item[$(ii)$] $\psi$ is indecomposable,
 \item[$(iii)$] $\psi$ is a pure state.
\end{itemize}
\end{theorem}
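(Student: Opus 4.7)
The plan is to show that this theorem is essentially an immediate corollary obtained by combining three results already established in the excerpt, so there is no real obstacle — the work has been done in the preceding lemmata and theorems. I would simply explain how each equivalence falls out.

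First, for the equivalence (i) $\Leftrightarrow$ (ii), I would invoke Theorem \ref{indecequiv} applied to the cyclic representation $\pi_\psi$. By Theorem \ref{repstate}, $\pi_\psi$ is a cyclic representation of $A$ on $H_\psi$ with cyclic unit vector $c_\psi$, and one has the reproducing identity
\[ \psi(a) = \langle \pi_\psi(a) c_\psi, c_\psi \rangle_\psi \quad \text{for all} \quad a \in A. \]
Thus $\psi$ plays exactly the role of the functional $\varphi$ in the statement of Theorem \ref{indecequiv}, and that theorem tells us directly that $\pi_\psi$ is irreducible if and only if $\psi$ is indecomposable.

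Second, for the equivalence (ii) $\Leftrightarrow$ (iii), this is simply the content of Theorem \ref{indecopure}, which asserts that a state is indecomposable if and only if it is a pure state. Since $\psi$ is assumed to be a state throughout the paragraph, no further verification is needed.

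Combining these two biconditionals yields the chain (i) $\Leftrightarrow$ (ii) $\Leftrightarrow$ (iii), which is the full statement. There is no genuine difficulty: the heavy lifting was done in Theorem \ref{subordrep} (to get the bijection between positive operators in $\pi'$ and subordinate functionals), in Theorem \ref{indecequiv} (to convert this into the equivalence between irreducibility and indecomposability), and in Theorem \ref{indecopure} (to identify indecomposability with the extremal property, using in particular the pull-down argument through $\Cstar(A)$ to avoid the need for a bounded approximate unit and continuous involution in $A$ itself). The present theorem is the clean packaging of these facts in the language of states.
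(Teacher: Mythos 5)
Your proof is correct and follows exactly the paper's own route: the paper's proof of this theorem consists precisely of the citations \ref{indecequiv} and \ref{indecopure}, with the applicability of \ref{indecequiv} to $\pi_{\psi}$ resting on \ref{repstate} just as you explain. Nothing is missing.
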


\begin{proof}
\ref{indecequiv} and \ref{indecopure}.
\end{proof}

\begin{theorem}\label{cPSD}%
If $A$ is \underline{commutative}, then $\purestates (A) = \Delta^*\bsa(A)$.
\end{theorem}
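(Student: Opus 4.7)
The plan is to combine Theorem \ref{irredindecpure}, which says that a state $\psi$ is pure precisely when the GNS representation $\pi_\psi$ is irreducible, with Theorem \ref{commirroned}, which in the commutative setting forces irreducible representations to act on one-dimensional Hilbert spaces. Both inclusions then become short calculations.

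For the inclusion $\purestates(A) \subseteq \Delta^*\bsa(A)$, I would take $\psi \in \purestates(A)$. By \ref{irredindecpure}, $\pi_\psi$ is irreducible, and since $A$ is commutative, \ref{commirroned} gives that $H_\psi$ is one-dimensional. The reproducing vector $c_\psi$ is a unit cyclic vector (\ref{repstate}), so for each $a \in A$ there is a scalar $\tau(a) \in \mathds{C}$ with $\pi_\psi(a) c_\psi = \tau(a) c_\psi$, and the \st-algebra homomorphism property of $\pi_\psi$ transfers directly to $\tau$, so $\tau$ is a Hermitian algebra homomorphism $A \to \mathds{C}$. The defining identity
\[ \psi(a) = \langle \pi_\psi(a) c_\psi, c_\psi \rangle_\psi = \tau(a)\,\langle c_\psi, c_\psi \rangle_\psi = \tau(a) \]
yields $\psi = \tau$. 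Since $\psi$ is a state it is non-zero, so $\tau \in \Delta(A)$; and since states are contractive on $A\sa$ by \ref{propquasi}, $\tau$ is bounded on $A\sa$. Hence $\psi = \tau \in \Delta^*\bsa(A)$.

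For the reverse inclusion $\Delta^*\bsa(A) \subseteq \purestates(A)$, I would start from $\tau \in \Delta^*\bsa(A) = \statespace(A) \cap \Delta(A)$ (\ref{mlfHbsa}), so $\tau$ is in particular a state. The GNS construction then goes through (\ref{repstate}). Using that $\tau$ is multiplicative and Hermitian,
\[ \langle a, b \rangle_\tau = \tau(b^*a) = \overline{\tau(b)}\,\tau(a), \]
so the isotropic subspace of the induced positive Hilbert form coincides with $\ker \tau$, which has codimension one in $A$ because $\tau \neq 0$. Thus $\underline{A} = A/\ker\tau$ is one-dimensional, as is its completion $H_\tau$; and $\pi_\tau$ is non-zero because $\pi_\tau = 0$ would give $\tau(ab) = 0$ for all $a, b$, contradicting multiplicativity of $\tau \neq 0$. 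A non-zero representation on a one-dimensional space is automatically irreducible, so by \ref{irredindecpure} the state $\tau$ is pure.

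There is no serious obstacle: everything reduces to already-established facts. The only point deserving care is verifying that the functional $\tau$ extracted in the first inclusion really lands in $\Delta^*\bsa(A)$ (non-vanishing, Hermitian, multiplicative, and bounded on $A\sa$), and that in the second inclusion the isotropic subspace for the induced form is exactly $\ker\tau$ so that the GNS space collapses to one dimension; both check out immediately from multiplicativity and from the definition of a state.
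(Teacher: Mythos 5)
Your proof is correct and follows exactly the route the paper intends: its own proof is just the citation of \ref{commirroned} and \ref{mlfHbsa}, which, combined with the immediately preceding equivalence \ref{irredindecpure}, is precisely the argument you have written out in detail. Both of your inclusions check out, including the verification that the isotropic subspace of the induced form is $\ker\tau$.
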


\begin{proof} \ref{commirroned} and \ref{mlfHbsa}. \end{proof}

\begin{theorem}%
The set of pure states on $A$ parametrises the class of irreducible
$\sigma$-contractive representations of $A$ on Hilbert spaces up to
spatial equivalence. More precisely, if $\psi$ is a pure state on $A$,
then $\pi _{\psi}$ is an irreducible $\sigma$-contractive representation
of $A$. Conversely, if $\pi$ is an irreducible $\sigma$-contractive
representation of $A$ on a Hilbert space, there exists a pure state $\psi$
on $A$ such that $\pi$ is spatially equivalent to $\pi _{\psi}$.
(Cf.\ \ref{stateratio}.)
\end{theorem}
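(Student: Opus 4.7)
The plan is to assemble the statement from two previously established results: the parametrisation theorem \ref{stateratio} for cyclic $\sigma$-contractive representations, and the equivalence \ref{irredindecpure} between irreducibility of $\pi_\psi$ and purity of $\psi$. The proof is essentially a matter of observing that ``irreducible'' is a strengthening of ``cyclic'', both on the representation side and (via \ref{irredindecpure}) on the state side.

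For the first assertion, given a pure state $\psi$ on $A$, I would invoke theorem \ref{repstate} which already guarantees that $\pi_\psi$ is a $\sigma$-contractive cyclic representation of $A$ on $H_\psi$, with $c_\psi$ as cyclic unit vector. Since $\psi$ is pure, theorem \ref{irredindecpure} (iii)$\Rightarrow$(i) upgrades ``cyclic'' to ``irreducible'': that is, $\pi_\psi$ is an irreducible $\sigma$-contractive representation, as required.

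For the converse, let $\pi$ be an irreducible $\sigma$-contractive representation of $A$ on a Hilbert space $H \neq \{0\}$. The key observation is that $\pi$ is automatically cyclic: by \ref{irredcyclic}, every non-zero vector of $H$ is cyclic for $\pi$, so I pick any unit vector $c \in H$. Applying \ref{stateratio} to the cyclic $\sigma$-contractive representation $\pi$ with cyclic unit vector $c$, I obtain the state $\psi$ on $A$ defined by $\psi(a) := \langle \pi(a) c, c \rangle$ $(a \in A)$, together with a spatial equivalence $\pi \sim \pi_\psi$. It remains to see that $\psi$ is pure, which I would deduce by showing that $\pi_\psi$ is irreducible and then invoking \ref{irredindecpure} (i)$\Rightarrow$(iii). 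The irreducibility of $\pi_\psi$ follows from that of $\pi$ because spatial equivalence transports closed invariant subspaces: if $U : H \to H_\psi$ is a unitary intertwiner, then $U$ establishes a bijection between the closed invariant subspaces of $\pi$ and those of $\pi_\psi$, so the absence of non-trivial closed invariant subspaces for $\pi$ forces the same for $\pi_\psi$.

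There is no serious obstacle in this proof; the substantive work has been done in \ref{stateratio}, \ref{irredcyclic}, and \ref{irredindecpure}. The only point that requires a moment's care is the preservation of irreducibility under spatial equivalence, which is the routine verification sketched above.
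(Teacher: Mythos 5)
Your proof is correct and is exactly the assembly the paper intends (the paper offers no explicit proof beyond the cross-reference to \ref{stateratio}): combine the cyclic parametrisation \ref{stateratio} with the equivalence \ref{irredindecpure}, using \ref{irredcyclic} to see that an irreducible representation is cyclic with any unit vector as cyclic vector. The only remark worth adding is that in the converse direction you could obtain purity of $\psi$ slightly more directly by applying \ref{indecequiv} to $\pi$ itself (with cyclic vector $c$) to get indecomposability, and then \ref{indecopure}, which avoids the (entirely routine) transport of irreducibility across the spatial equivalence.
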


\begin{observation}[non-uniqueness]%
If $\pi$ is any multi-dimensional irreducible $\sigma$-contractive
representation of some normed \st-algebra (necessarily non-commutative),
there exists a continuum of pure states $\varphi$
such that $\pi$ is spatially equivalent to $\pi _{\varphi}$.
Indeed, this can be seen from proposition \ref{spherical}.
\end{observation}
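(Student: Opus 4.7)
The plan is to produce a continuum of pure states $\varphi$ with $\pi_\varphi$ spatially equivalent to $\pi$ by parametrising such states via unit vectors in the representation space of $\pi$, taken modulo phase.

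First, I would fix the setup. Let $\pi$ be a multi-dimensional irreducible $\sigma$-contractive representation of a normed \st-algebra $A$ on a Hilbert space $H$ (so $\dim H \geq 2$). For each unit vector $x \in H$, set
\[
\varphi_x(a) := \langle \pi(a) x, x \rangle \qquad (a \in A).
\]
Since $\pi$ is irreducible (hence non-zero and non-degenerate) and $\sigma$-contractive, theorem \ref{staterep} yields that $\varphi_x$ is a state on $A$. By \ref{irredcyclic}, every non-zero vector in $H$ is cyclic for $\pi$; in particular $x$ is cyclic. The theorem characterising cyclic $\sigma$-contractive representations up to spatial equivalence then gives that $\pi$ is spatially equivalent to $\pi_{\varphi_x}$. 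Finally, since $\pi$ is irreducible, theorem \ref{irredindecpure} shows that $\varphi_x$ is a pure state.

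The second step is the injectivity/separation argument. By proposition \ref{spherical}, for unit vectors $x, y \in H$, the equality $\varphi_x = \varphi_y$ forces $y = ux$ for some $u \in \mathds{C}$ with $|u|=1$. Consequently, distinct complex lines $\mathds{C}x$ and $\mathds{C}y$ in $H$ produce distinct pure states $\varphi_x \neq \varphi_y$, both inducing $\pi$ up to spatial equivalence.

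Third, I would count: the set of one-dimensional complex subspaces of $H$ is the projective space $\mathds{P}(H)$. Since $\dim H \geq 2$, this contains an isometric copy of $\mathds{C}\mathds{P}^1$, and hence has at least the cardinality of the continuum. Pulling this through the assignment $\mathds{C}x \mapsto \varphi_x$, one obtains a continuum of distinct pure states $\varphi$ such that $\pi$ is spatially equivalent to $\pi_\varphi$.

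There is essentially no obstacle: the content is packaged in the earlier results, and proposition \ref{spherical} supplies exactly the rigidity needed to rule out accidental coincidences among the $\varphi_x$. The only mild subtlety is making sure that when one invokes the cyclic-vector correspondence between pure states and irreducible $\sigma$-contractive representations, one uses the version that attaches a state to \emph{any} chosen cyclic unit vector (rather than to a canonically chosen one), which is precisely the content of the theorem preceding this observation.
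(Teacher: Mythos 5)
Your argument is correct and is exactly the filling-in of the paper's one-line justification: the paper merely points to proposition \ref{spherical}, and your route (states $\varphi_x$ from unit vectors via \ref{staterep}, spatial equivalence via the cyclic-vector correspondence, purity via \ref{irredindecpure}, and injectivity modulo phase via \ref{spherical}, with the continuum coming from the projective space of a space of dimension at least two) is the intended one. No gaps.
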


\begin{proposition}\label{extrquasi}%
The set of extreme points of the non-empty compact convex set $\quasistates (A)$
is $\purestates (A) \cup \{0\}$. Cf.\ \ref{SAconvex} \& \ref{topQS}. \pagebreak
\end{proposition}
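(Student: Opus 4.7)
The plan is to verify the two inclusions separately, using the crucial additivity and $\mathds{R}_+$-homogeneity of the variation $v$ on the convex cone of extensible positive linear functionals weakly continuous on $A\sa$, as established in \ref{finvarconvex}. A key preliminary observation I would record is that $v(\varphi)=0$ implies $\varphi=0$, via the defining inequality ${|\varphi(a)|}^{\,2}\leq v(\varphi)\,\varphi(a^*a)$ from \ref{variation}.

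First I would show $\purestates(A)\cup\{0\}$ consists of extreme points of $\quasistates(A)$. For $0$: if $0=\lambda\varphi_1+(1-\lambda)\varphi_2$ with $\varphi_i\in\quasistates(A)$ and $0<\lambda<1$, then applying $v$ and using its additivity and homogeneity yields $0=\lambda\,v(\varphi_1)+(1-\lambda)\,v(\varphi_2)$, forcing $v(\varphi_1)=v(\varphi_2)=0$, hence $\varphi_1=\varphi_2=0$ by the observation above. For a pure state $\psi$: if $\psi=\lambda\varphi_1+(1-\lambda)\varphi_2$ in $\quasistates(A)$ with $0<\lambda<1$, then $1=v(\psi)=\lambda\,v(\varphi_1)+(1-\lambda)\,v(\varphi_2)$ together with $v(\varphi_i)\leq 1$ forces $v(\varphi_1)=v(\varphi_2)=1$, so both $\varphi_i$ lie in $\statespace(A)$, and extremality of $\psi$ in $\statespace(A)$ \ref{indecopure} gives $\varphi_1=\varphi_2=\psi$.

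Conversely, let $\varphi$ be an extreme point of $\quasistates(A)$ with $\varphi\neq 0$. First I would argue $v(\varphi)=1$: if $0<v(\varphi)<1$, set $\lambda:=v(\varphi)$, so $\varphi/\lambda$ is still weakly continuous on $A\sa$ (continuity scales) and has variation $1$, hence lies in $\statespace(A)\subset\quasistates(A)$; then the decomposition $\varphi=\lambda\cdot(\varphi/\lambda)+(1-\lambda)\cdot 0$ is a non-trivial convex combination in $\quasistates(A)$ with distinct endpoints (since $\varphi\neq 0$), contradicting extremality. Hence $\varphi\in\statespace(A)$. To see $\varphi$ is pure, any decomposition $\varphi=\lambda\psi_1+(1-\lambda)\psi_2$ with $\psi_i\in\statespace(A)$ and $0<\lambda<1$ is in particular a decomposition inside $\quasistates(A)$, so extremality gives $\psi_1=\psi_2=\varphi$.

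No step here looks hard: the whole argument is a bookkeeping exercise once one exploits \ref{finvarconvex}. The only subtle point worth being careful about is the first direction for $0$, where one must know that $v$ vanishes only at the zero functional — but as noted, this is immediate from the Cauchy–Schwarz-type inequality built into the definition of $v(\varphi)$ in \ref{variation}.
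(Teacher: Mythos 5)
Your proof is correct and follows essentially the same route as the paper's: both arguments rest on the additivity and $\mathds{R}_+$-homogeneity of the variation from \ref{finvarconvex}, show that $0$ and the pure states are extreme, and exclude quasi-states with $0 < v(\varphi) < 1$ via the decomposition $\varphi = v(\varphi)\cdot\bigl(v(\varphi)^{-1}\varphi\bigr) + \bigl(1 - v(\varphi)\bigr)\cdot 0$. The only (harmless) deviations are that for the extremality of $0$ the paper deduces $\varphi_k(a^*a)=0$ directly from positivity rather than passing through $v(\varphi_k)=0$, and that you make explicit the final step (a non-pure state is not extreme in $\quasistates(A)$) which the paper leaves implicit.
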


\begin{proof}
It shall first be shown that $0$ is an extreme point of $\quasistates (A)$. So
suppose that $0 = \lambda \,\varphi _1 +(1-\lambda) \,\varphi _2$
with $0 < \lambda < 1$ and $\varphi _1, \varphi _2 \in \quasistates (A)$. For
$a \in A$, we then have $\varphi _k (a^*a) = 0$ by $\varphi _k (a^*a) \geq 0$,
and so $\varphi _k (a) = 0$ by
${| \,\varphi _k (a) \,|}^{\,2} \leq v(\varphi _k) \,\varphi _k (a^*a)$.
It shall next be shown that each $\psi \in \purestates (A)$ is an
extreme point of $\quasistates (A)$. So let $\psi \in \purestates (A)$,
$\psi = \lambda \,\varphi _1 +(1-\lambda ) \,\varphi _2$, where
$0 < \lambda < 1$ and $\varphi _1$, $\varphi _2 \in \quasistates (A)$. By
additivity of the variation \ref{finvarconvex}, we have
$1 = \lambda \,v(\varphi _1)+( 1 - \lambda ) \,v(\varphi _2)$,
whence $v(\varphi _1) = v(\varphi _2) = 1$, so that $\varphi _1$
and $\varphi _2$ are states, from which it follows that
$\psi = \varphi _1 = \varphi _2$. It remains to be shown that
$\varphi \in \quasistates (A)$ with $0 < v(\varphi ) < 1$ is not an extreme point of
$\quasistates (A)$. This is so because then
$\varphi = v(\varphi) \cdot \bigl(v(\varphi)^{\,-1\,} \varphi \bigr)
+ \bigl(1-v(\varphi)\bigr) \cdot 0$. 
\end{proof}

\begin{corollary}%
The non-empty compact convex set $\quasistates (A)$ is the  closed convex hull of
$\purestates (A) \cup \{0\}$.
\end{corollary}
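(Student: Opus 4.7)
The plan is to deduce the corollary directly from the preceding proposition \ref{extrquasi} by invoking the Krein-Milman Theorem. Recall from \ref{topQS} that $\quasistates(A)$ has been imbedded in the unit ball of the dual normed space of the real normed space $A\sa$, equipped with the weak* topology, and that $\quasistates(A)$ is a compact Hausdorff space in this topology (thanks to Alaoglu's Theorem, cf.\ the appendix \ref{Alaoglu}). Moreover, $\quasistates(A)$ is non-empty since it contains $0$, and it is convex by \ref{SAconvex}.

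First I would observe that the weak* topology makes the dual of $A\sa$ into a Hausdorff locally convex topological vector space. The Krein-Milman Theorem then guarantees that the non-empty compact convex set $\quasistates(A)$ equals the closed convex hull of the set of its extreme points, the closure being taken with respect to the weak* topology.

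Next I would apply the preceding proposition \ref{extrquasi}, which identifies the set of extreme points of $\quasistates(A)$ as precisely $\purestates(A) \cup \{0\}$. Substituting this identification into the conclusion of Krein-Milman yields the desired statement that $\quasistates(A)$ is the closed convex hull of $\purestates(A) \cup \{0\}$.

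There is no real obstacle here; the only point that needs a moment's attention is the justification that the ambient space (the dual of $A\sa$ with the weak* topology) is a Hausdorff locally convex topological vector space, so that the Krein-Milman Theorem is indeed applicable. This is standard and could be taken as known from functional analysis, or referred to the appendix on the weak* topology (\ref{weak*top}).
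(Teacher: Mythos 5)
Your proposal is correct and follows exactly the paper's own argument: the paper likewise invokes the Kre\u{\i}n-Milman Theorem for the non-empty compact convex set $\quasistates (A)$ in the weak* topology and combines it with the identification of the extreme points as $\purestates (A) \cup \{0\}$ from the preceding proposition. No gaps.
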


\begin{proof}
The Kre\u{\i}n-Milman Theorem says that a non-empty compact convex set
in a separated locally convex topological vector space is the closed convex
hull of its extreme points.
\end{proof}

\begin{corollary}\label{existPS}%
If a normed \st-algebra has a state, then it has a pure state.
\end{corollary}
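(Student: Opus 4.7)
The plan is to deduce this corollary directly from the preceding corollary, which states that the non-empty compact convex set $\quasistates(A)$ is the closed convex hull of $\purestates(A) \cup \{0\}$. The argument then becomes a simple contrapositive: if $\purestates(A)$ were empty, the closed convex hull of $\{0\}$ would be $\{0\}$, forcing $\quasistates(A) = \{0\}$, and in particular $A$ would have no state at all.

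More concretely, I would argue as follows. Assume $A$ has a state $\psi$. Then $\psi \in \quasistates(A)$, and by definition of a state, $v(\psi) = 1$, so in particular $\psi \neq 0$. Now suppose for contradiction that $\purestates(A) = \varnothing$. Then by the preceding corollary, $\quasistates(A)$ equals the closed convex hull of $\{0\}$, which is $\{0\}$. But this contradicts $\psi \in \quasistates(A)$ with $\psi \neq 0$. Hence $\purestates(A) \neq \varnothing$.

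The whole content of the corollary is already packaged inside the preceding one (which invokes Kre\u{\i}n-Milman together with Proposition \ref{extrquasi} to identify the extreme points of $\quasistates(A)$ as $\purestates(A) \cup \{0\}$). There is essentially no obstacle once those facts are in hand; the only thing one must observe is that a state is a nonzero quasi-state, which is immediate from $v(\psi) = 1$ together with the estimate ${|\varphi(a)|}^2 \leq v(\varphi)\,\varphi(a^*a)$ ruling out the zero functional from having positive variation. So the proof would be just a sentence or two invoking the preceding corollary.
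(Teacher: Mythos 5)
Your proof is correct and is exactly the argument the paper intends: the corollary is stated as an immediate consequence of the preceding one (that $\quasistates(A)$ is the closed convex hull of $\purestates(A)\cup\{0\}$), and your observation that a state is a nonzero quasi-state (since $v(\psi)=1$ while $v(0)=0$) is the only detail needed to close the argument.
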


\begin{theorem}%
The state $\psi$ is in the closed convex hull of $\purestates (A)$.
\end{theorem}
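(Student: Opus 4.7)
The plan is to build on the preceding corollary, which already places $\psi$ in the closed convex hull of $\purestates(A) \cup \{0\}$ via the Kre\u{\i}n-Milman theorem applied to the compact convex set $\quasistates(A)$; the only remaining task is to absorb the (harmless) contribution from $0$.

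First I would choose a net $(\mu_{\alpha})$ in $\mathrm{conv}\bigl(\purestates(A) \cup \{0\}\bigr)$ converging weak$*$ to $\psi$, and write each $\mu_{\alpha} = \sum_i \lambda_i^{\alpha}\,\varphi_i^{\alpha}$ with $\varphi_i^{\alpha} \in \purestates(A)$, $\lambda_i^{\alpha} \geq 0$, and $t_{\alpha} := \sum_i \lambda_i^{\alpha} \leq 1$. When $t_{\alpha} > 0$, the element $\nu_{\alpha} := t_{\alpha}^{-1} \mu_{\alpha}$ is a finite convex combination of pure states, so $\mu_{\alpha} = t_{\alpha}\,\nu_{\alpha}$. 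Since pure states have variation one, and the variation is additive and $\mathds{R}_+$-homogeneous on the cone of quasi-states (Theorem \ref{finvarconvex}), we obtain $v(\mu_{\alpha}) = t_{\alpha}$.

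The main step, and essentially the only non-routine point, is to show $t_{\alpha} \to 1$. This will follow from the weak$*$ lower semicontinuity of $v$ on $\quasistates(A)$: for each $a \in A$ and each $\gamma \geq 0$, the real-valued map $\varphi \mapsto |\,\varphi(a)\,|^{\,2} - \gamma\,\varphi(a^*a)$ is weak$*$ continuous on $\quasistates(A)$, since $a^*a \in A\sa$ and, decomposing $a = b + \iu c$ with $b,c \in A\sa$, the Hermitian character of quasi-states (Corollary \ref{propquasi}) yields $|\,\varphi(a)\,|^{\,2} = \varphi(b)^{\,2} + \varphi(c)^{\,2}$, a weak$*$ continuous function of $\varphi$. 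Hence each sublevel set $\{\,\varphi \in \quasistates(A) : v(\varphi) \leq \gamma\,\}$ is weak$*$ closed, giving $\liminf_{\alpha} v(\mu_{\alpha}) \geq v(\psi) = 1$; combined with $t_{\alpha} \leq 1$, this forces $t_{\alpha} \to 1$.

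To conclude, I would invoke Corollary \ref{existPS} to fix some $\psi_0 \in \purestates(A)$ (non-empty because $\psi \in \statespace(A)$), and set $\rho_{\alpha} := \mu_{\alpha} + (1 - t_{\alpha})\,\psi_0$. Each $\rho_{\alpha}$ is a convex combination of pure states, hence lies in $\mathrm{conv}(\purestates(A))$, and $\rho_{\alpha} - \psi = (\mu_{\alpha} - \psi) + (1 - t_{\alpha})\,\psi_0 \to 0$ in the weak$*$ topology, exhibiting $\psi$ as a weak$*$ limit of elements of $\mathrm{conv}(\purestates(A))$. The main obstacle throughout is recognising that the potential loss of mass in the Kre\u{\i}n-Milman approximation is controlled precisely by the lower-semicontinuous variation functional.
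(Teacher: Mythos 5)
Your proposal is correct and follows essentially the same route as the paper: both reduce the problem to showing that the total mass $t_\alpha = v(\mu_\alpha)$ carried by the pure states tends to $1$, and both deduce this from the additivity of the variation together with the weak* continuity of $\varphi \mapsto |\,\varphi(a)\,|^{2}$ and $\varphi \mapsto \varphi(a^*a)$ on $\quasistates(A)$ (the paper phrases the lower semicontinuity of $v$ as a direct limit computation by contradiction rather than via closed sublevel sets). The only genuine variation is the final step, where you absorb the lost mass by adding $(1-t_\alpha)\,\psi_0$ for a fixed pure state $\psi_0$, whereas the paper renormalises to $v(\varphi_i)^{-1}\varphi_i$ after passing to a subnet on which $v(\varphi_i) \neq 0$; both repairs are valid.
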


\begin{proof} The state $\psi$ is in the closed convex hull of
$\purestates (A) \cup \{0\}$, so that $\psi$ is the limit of a net
$(\varphi _i)_{i \in I}$ where each $\varphi _i$ $(i \in I)$ is of the form
\[ \lambda \cdot 0 + \sum _{j=1}^{n} \,\lambda _j \cdot \psi _j \]
with
\[ \lambda + \sum _{j=1}^{n} \,\lambda _j =1,
\ \lambda \geq 0, \ \lambda _j \geq 0,
\ \psi _j \in \purestates (A)\ (j = 1, \ldots ,n). \]
From \ref{finvarconvex} we get
\[ v(\varphi _i) = \sum _{j=1}^{n} \,\lambda _j \leq 1, \]
so $\limsup v(\varphi _i) \leq 1$. We shall show that
$\liminf v(\varphi _i) \geq 1$, from which $\lim v(\varphi _i) = 1$.
Assume that $\alpha := \liminf v(\varphi _i) < 1$. 
By going over to a subnet, we can then assume that
$\alpha = \lim v(\varphi _i)$. Now we use the fact that the topology on
$\quasistates (A)$ is the topology of pointwise convergence on $A\sa$,
cf.\ the appendix \ref{weak*point}, and the fact that $A$ is spanned by
$A\sa$. For all $a \in A$, one would have
\[ {| \,\psi (a) \,|}^{\,2} = \lim {| \,\varphi _i (a) \,|}^{\,2}
\leq \lim v(\varphi _i) \,\varphi _i(a^*a) = \alpha \,\psi (a^*a), \]
which would imply $v(\psi ) \leq \alpha < 1$, a contradiction. We have
thus shown that $\lim v(\varphi _i) = 1$. In particular, by going over to
a subnet, we may assume that $v(\varphi _i) \neq 0$ for all $i \in I$.
The net $\bigl( v(\varphi _i)^{\,-1\,} \varphi _i \bigr) _{i \in I}$ then
converges to $\psi$, and consists of convex combinations of pure states.
\end{proof}

\begin{definition}[the spectrum, $\wht{A}$]%
\index{concepts}{spectrum!of an algebra}\index{symbols}{AA@$\wht{A}$}%
We shall say that two states on $A$ are spatially equivalent if their
corresponding GNS representations are spatially equivalent. This
defines an equivalence relation on $\statespace (A)$. We define
the \underline{spectrum} $\wht{A}$ of $A$ as the set of spatial
equivalence classes of $\purestates (A)$.
\end{definition}

Please note that if $A$ is commutative, then $\wht{A}$ may be
identified with $\Delta^*\bsa(A)$, cf.\ \ref{cPSD}. 

\begin{definition}%
We shall say that
$(\psi _{\text{\small{$\lambda$}}}) _{\text{\small{$\lambda \in \wht{A}$}}}$
is a choice function if $\psi _{\text{\small{$\lambda$}}} \in \lambda$ for each
$\lambda \in \wht{A}$.
There exists$\vphantom{ _{\text{\small{$\lambda\in \wht{A}$}}}}$
a choice function
$(\psi _{\text{\small{$\lambda$}}}) _{\text{\small{$\lambda \in \wht{A}$}}}$
by the axiom of choice$\vphantom{\tld{A}}$ (cf.\ \ref{existPS}).
\end{definition}

\begin{definition}[the reduced atomic representation, $\pi \ra$]%
\index{symbols}{p5@$\pi_{ra}$}\index{symbols}{H3@$H_{ra}$}%
\index{concepts}{representation!reduced atomic}%
If \linebreak
$(\psi _{\text{\small{$\lambda$}}}) _{\text{\small{$\lambda \in \wht{A}$}}}$
is a choice function, we define
\[ \pi \ra := \oplus \,_{\text{\small{$\lambda \in \wht{A}$}}}
\ \pi _{\text{\small{$\psi _{\text{\small{$\lambda$}}}$}}}. \]
One says that $\pi \ra$ is a reduced atomic representation of $A$. Any two
reduced atomic representations of $A$ are spatially equivalent, so that
one speaks of ``the'' \underline{reduced atomic representation} of $A$.
\end{definition}

Our next aim is theorem \ref{ranorm}.

\begin{theorem}%
If $A$ is a Hermitian Banach \st-algebra, and if $B$ is a closed \st-subalgebra
of $A$, then every pure state on $B$ can be extended to a pure state on $A$.
\pagebreak
\end{theorem}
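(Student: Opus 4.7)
The plan is to extract the desired pure state extension from a Krein--Milman argument applied to the set of all state extensions of $\psi$. Let
\[ E_\psi := \{\,\varphi \in \statespace(A) : \varphi|_B = \psi\,\}. \]
Theorem \ref{extstate} makes $E_\psi$ non-empty, and convexity is immediate. I view $E_\psi$ as a subset of the compact Hausdorff space $\quasistates(A)$ equipped with its weak* topology, cf.\ \ref{topQS}.

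First I would show that $E_\psi$ is closed in $\quasistates(A)$, hence compact. If a net $(\varphi_i) \subset E_\psi$ converges to some $\varphi$ in $\quasistates(A)$, then by pointwise convergence on $A\sa$ (cf.\ the appendix \ref{weak*point}) we have $\varphi(b) = \lim_i \varphi_i(b) = \psi(b)$ for all $b \in B\sa$, hence $\varphi|_B = \psi$ by linearity. This forces $v(\varphi) \geq v(\psi) = 1$, because any constant $\gamma$ witnessing the variation inequality $|\varphi(a)|^{\,2} \leq \gamma\,\varphi(a^*a)$ on all of $A$ automatically witnesses it on the subalgebra $B$; combined with $v(\varphi) \leq 1$, which holds since $\varphi \in \quasistates(A)$, this places $\varphi$ in $E_\psi$. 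By the Krein--Milman Theorem, $E_\psi$ has an extreme point $\varphi_0$.

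It remains to verify that $\varphi_0$ is pure on $A$. Suppose $\varphi_0 = \lambda \varphi_1 + (1-\lambda)\varphi_2$ with $0 < \lambda < 1$ and $\varphi_1,\varphi_2 \in \statespace(A)$. Restriction to $B$ preserves positivity, inherits weak continuity on $B\sa$ from weak continuity on $A\sa$, and cannot increase the variation, so each $\varphi_k|_B$ belongs to $\quasistates(B)$, and $\psi = \lambda(\varphi_1|_B) + (1-\lambda)(\varphi_2|_B)$. Since $\psi$ is pure, it is an extreme point of $\quasistates(B)$ by \ref{extrquasi}, so $\varphi_1|_B = \varphi_2|_B = \psi$. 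Thus $\varphi_1,\varphi_2 \in E_\psi$, and extremality of $\varphi_0$ in $E_\psi$ forces $\varphi_1 = \varphi_2 = \varphi_0$. The main technical hurdle is really just the closedness check, which hinges on the monotonicity of the variation under restriction together with the observation that any extension of $\psi$ automatically saturates the variation bound; the Hermitian hypothesis on $A$ enters only through \ref{extstate}, to guarantee that $E_\psi$ is non-empty in the first place.
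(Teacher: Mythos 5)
Your proposal is correct and follows essentially the same route as the paper: take the set of state extensions of $\psi$ (non-empty by \ref{extstate}, convex, and compact in $\quasistates(A)$), pick an extreme point, and show it is pure by restricting a convex decomposition to $B$ and invoking the extremality of $\psi$ in $\quasistates(B)$ via \ref{extrquasi}. Your closedness check — that an adherent point of the extension set automatically saturates the variation bound because restriction cannot increase the variation — is exactly the detail the paper disposes of in a parenthetical remark.
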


\begin{proof}
Let $\varphi$ be a pure state on $B$. Let $K$ be the set of states on $A$ which
extend $\varphi$, cf.\ \ref{extstate}. Then $K$ is a non-empty compact convex
subset of $\statespace (A)$. (Indeed every adherent point of $K$ in the compact
set $\quasistates(A)$ extends $\varphi$, and thus is a state, too.) It follows that
$K$ has an extreme point, $\psi$, say. It shall be shown that  $\psi$ is a pure state.
So let $\psi = \lambda \psi _1+(1-\lambda )\psi _2$ with
$0 < \lambda < 1$ and $\psi _1, \psi _2 \in \statespace (A)$. We then also have
$\varphi = \psi |_B = \lambda \psi _1|_B + (1-\lambda ) \psi _2|_B$, so that
$\psi _1|_B = \psi _2|_B = \psi |_B = \varphi$, by \ref{extrquasi}. It follows that
$\psi _1, \psi _2$ belong to $K$. As $\psi$ is an extreme point of $K$, one has
$\psi _1 = \psi _2$, and so $\psi$ is a pure state.
\end{proof}

\begin{theorem}%
If $A$ is a Hermitian Banach \st-algebra and if $a$ is a Hermitian element
of $A$, then the following properties are equivalent.
\begin{itemize}
   \item[$(i)$] $a \geq 0$,
  \item[$(ii)$] $\pi \ra (a) \geq 0$,
 \item[$(iii)$] $\psi (a) \geq 0$ \quad for all \quad $\psi \in \purestates (A)$.
\end{itemize}
\end{theorem}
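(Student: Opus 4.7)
The plan is to pattern the proof after theorem \ref{posequiv}, replacing the universal representation by the reduced atomic one and the set of states by the set of pure states. The direction (i) $\Rightarrow$ (ii) is immediate from \ref{hompos}, since $\pi\ra$ is a \st-algebra homomorphism. For the direction (ii) $\Rightarrow$ (iii), I would argue as follows: since $\pi\ra = \oplus _{\lambda \in \wht{A}} \pi _{\psi_\lambda}$, the positivity of $\pi\ra(a)$ forces each summand $\pi _{\psi_\lambda}(a)$ to be positive. Given an arbitrary pure state $\psi$, its class $\lambda := [\psi] \in \wht{A}$ contains both $\psi$ and the chosen representative $\psi_\lambda$, so $\pi_\psi$ is spatially equivalent to $\pi_{\psi_\lambda}$; hence $\pi_\psi(a) \geq 0$, and then $\psi(a) = \langle \pi_\psi(a) c_\psi, c_\psi \rangle_\psi \geq 0$ by \ref{repstate} and \ref{posop}.

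The more delicate direction is (iii) $\Rightarrow$ (ii). Fix $\lambda \in \wht{A}$ and let $y$ be a non-zero vector in $H_{\psi_\lambda}$. Since $\pi_{\psi_\lambda}$ is irreducible, \ref{irredcyclic} tells us that $y$ is cyclic. Define a state
\[ \psi_y := \|\,y\,\|^{-2} \,\langle \pi_{\psi_\lambda}(\,\cdot\,) y, y \rangle_{\psi_\lambda} \]
on $A$ (well-defined as a state by \ref{staterep}). Since $y$ is cyclic for the irreducible representation $\pi_{\psi_\lambda}$, the associated GNS representation $\pi_{\psi_y}$ is spatially equivalent to $\pi_{\psi_\lambda}$ and hence irreducible; by \ref{irredindecpure}, $\psi_y$ is then a pure state. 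The assumption (iii) yields $\psi_y(a) \geq 0$, that is, $\langle \pi_{\psi_\lambda}(a) y, y \rangle_{\psi_\lambda} \geq 0$. As this holds for every $y \in H_{\psi_\lambda}$, \ref{posop} gives $\pi_{\psi_\lambda}(a) \geq 0$, and taking direct sums yields $\pi\ra(a) \geq 0$.

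For (iii) $\Rightarrow$ (i), I would invoke the theorem immediately preceding, which asserts that every state $\psi \in \statespace(A)$ lies in the closed convex hull of $\purestates(A)$ with respect to the weak-$*$ topology on $\quasistates(A)$. Since $a$ is Hermitian, the map $\varphi \mapsto \varphi(a)$ is affine and continuous on $\statespace(A)$ in this topology (recall \ref{topQS} and that the topology is that of pointwise convergence on $A\sa$). The hypothesis $\psi(a) \geq 0$ on pure states therefore propagates to all states by continuity and convexity. Applying theorem \ref{posequiv} then gives $a \geq 0$.

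The main obstacle is the implication (iii) $\Rightarrow$ (ii): one must exploit the irreducibility of each summand $\pi_{\psi_\lambda}$ to manufacture, from a completely arbitrary non-zero vector $y$ in $H_{\psi_\lambda}$, a bona fide pure state of $A$ on which the hypothesis can be tested. Once that conversion is in hand, the rest reduces to bookkeeping with spatial equivalence and the direct-sum structure; and the final passage (iii) $\Rightarrow$ (i) is a straightforward density argument that would fail without the Hermitian assumption on $a$ (so that $\psi \mapsto \psi(a)$ is a real-valued continuous affine function on $\statespace(A)$).
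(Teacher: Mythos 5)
Your proof is correct, and it is worth recording where it departs from the paper's. The paper disposes of this theorem with the single remark that it ``follows in the same way as \ref{posequiv}'', which means in particular that (iii) $\Rightarrow$ (i) is intended to run as follows: pass to the commutative \ref{clogen}, automatically Hermitian \ref{Herminher}, closed \st-subalgebra $B$ of $A$ generated by $a$; observe that every multiplicative linear functional on $B$ is a pure state of $B$ (\ref{mlfHerm}, \ref{cPSD}); extend each of these to a pure state of $A$ by the pure-state extension theorem proved immediately before the statement (that theorem, not the convex-hull one, is the ``immediately preceding'' result); and conclude $a \in B_+$ from \ref{rangeGT} and then $a \in A_+$ from \ref{specsubalg}. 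You instead deduce (iii) $\Rightarrow$ (i) from the density theorem of \S\ 36 asserting that every state lies in the weak* closed convex hull of $\purestates (A)$: since $a$ is Hermitian, $\varphi \mapsto \varphi(a)$ is a real-valued continuous affine function on $\quasistates (A)$, so positivity on $\purestates (A)$ propagates to all of $\statespace (A)$, and \ref{posequiv} finishes. This is a genuinely different and perfectly legitimate route; it avoids re-entering the Gel'fand machinery at the cost of invoking the Kre\u{\i}n--Milman-based density theorem, which is available since it is proved earlier in the same section. Your handling of (ii) $\Leftrightarrow$ (iii) is also exactly what a correct imitation of \ref{posequiv} requires but does not literally contain: a general unit vector of the representation space of $\pi \ra$ does not yield a pure state, so one must, as you do, argue summand by summand, using \ref{staterep}, \ref{irredcyclic} and \ref{irredindecpure} to recognise every vector state of an irreducible $\pi_{\psi_\lambda}$ as pure, and spatial equivalence together with \ref{posop} to pass between an arbitrary pure state and the chosen representative of its class.
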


\begin{proof}
This follows in the same way as \ref{posequiv}.
\end{proof}

\begin{theorem}\label{ranorm}%
For all $a \in A$ we have
\[ \| \,\pi \ra (a) \,\| = \| \,a \,\|\0. \]
\end{theorem}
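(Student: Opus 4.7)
The plan is to exploit the enveloping C*-algebra $\Cstar(A)$ together with the preceding theorem. First, for the easy inequality $\|\pi\ra(a)\| \leq \|a\|\0$: the GNS representations $\pi_{\psi_{\lambda}}$ are $\sigma$-contractive by \ref{repstate}, so $\pi\ra$ is itself $\sigma$-contractive by \ref{directsumsigma}, hence weakly continuous on $A\sa$, and \ref{boundedo} delivers $\|\pi\ra(a)\| \leq \|a\|\0$ for every $a \in A$.

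For the reverse inequality I would factor $\pi\ra$ through the enveloping C*-algebra. Since $\pi\ra$ is $\sigma$-contractive, the universal property of $\Cstar(A)$ yields a representation $(\pi\ra)\0$ of $\Cstar(A)$ on $H\ra$ with $\pi\ra = (\pi\ra)\0 \circ j$. Because $\|j(a)\| = \|a\|\0$ by the very construction of $\Cstar(A)$, it suffices to show that $(\pi\ra)\0$ is isometric on $\Cstar(A)$, and by \ref{C*injisometric} this reduces to showing that $(\pi\ra)\0$ is injective. The key observation is that, up to spatial equivalence, $(\pi\ra)\0$ \emph{is} a reduced atomic representation of the Hermitian Banach \st-algebra $\Cstar(A)$ (Hermitian by \ref{C*Herm}): by \ref{homPS} the assignment $\psi \mapsto \psi\0$ is a bijection $\purestates(A) \leftrightarrow \purestates\bigl(\Cstar(A)\bigr)$ that respects spatial equivalence thanks to \ref{oequiv}, and applying \ref{oequiv} summand by summand identifies $(\pi\ra)\0 = \oplus_{\lambda} (\pi_{\psi_{\lambda}})\0$ (up to spatial equivalence) with $\oplus_{\lambda} \pi_{(\psi_{\lambda})\0}$, a reduced atomic representation of $\Cstar(A)$.

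The preceding theorem, applied to the Hermitian Banach \st-algebra $\Cstar(A)$, now gives the crucial implication: for Hermitian $c \in \Cstar(A)$, $(\pi\ra)\0(c) \geq 0$ forces $c \geq 0$. So if $(\pi\ra)\0(c) = 0$ for some $c \in \Cstar(A)$, then $(\pi\ra)\0(-c^*c) = 0 \geq 0$, hence $-c^*c \geq 0$ in $\Cstar(A)$; together with $c^*c \geq 0$ and the proper nature of the positive cone of the C*-algebra $\Cstar(A)$, this yields $c^*c = 0$, whence $c = 0$ by the C*-property. Thus $(\pi\ra)\0$ is injective, $\|j(a)\| = \|(\pi\ra)\0(j(a))\| = \|\pi\ra(a)\|$, and the proof is complete. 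The main obstacle is the identification of $(\pi\ra)\0$ with the reduced atomic representation of $\Cstar(A)$; this is routine bookkeeping via \ref{homPS} and \ref{oequiv}, but has to be spelled out in order to make the preceding theorem applicable to $\Cstar(A)$.
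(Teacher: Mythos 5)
Your proof is correct, and its backbone coincides with the paper's: both arguments pull the result down to the enveloping C*-algebra by identifying $(\pi\ra)\0$ with a reduced atomic representation of $\Cstar(A)$ via \ref{homPS} and \ref{oequiv}, and both draw all their strength from the preceding theorem (the positivity equivalences for $\pi\ra$). Where you differ is in the endgame. The paper first proves, for an arbitrary Hermitian Banach \st-algebra, that $\| \,\pi\ra(a) \,\| = \rsigma(a)$ by the same mechanism as in Ra\u{\i}kov's Criterion \ref{Raikov} (the positivity theorem combined with lemma \ref{posposrs}), specialises this to the C*-algebra $\Cstar(A)$, where $\rsigma = \| \cdot \|$ by \ref{C*rs}, and then transfers along $j$. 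You instead stay entirely inside $\Cstar(A)$ and use the positivity theorem only to prove that $(\pi\ra)\0$ is injective, letting \ref{C*injisometric} convert injectivity into isometry. Since \ref{C*injisometric} is itself derived in the paper from \ref{posposrs}, the two endgames are close relatives; yours is slightly more streamlined (and renders your first paragraph redundant, since the isometry of $(\pi\ra)\0$ already gives the full equality $\| \,\pi\ra(a) \,\| = \| \,j(a) \,\| = \| \,a \,\|\0$), while the paper's route yields the sharper intermediate fact $\| \,\pi\ra(a) \,\| = \rsigma(a)$ for every Hermitian Banach \st-algebra as a byproduct. You are also right that the identification of $(\pi\ra)\0$ with the reduced atomic representation of $\Cstar(A)$ is the one point that must be spelled out; it is exactly what the paper compresses into the phrase ``where we have used \ref{homPS}''.
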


\begin{proof} Assume first that $A$ is a Hermitian Banach \st-algebra.
As in \ref{Raikov}, it follows that $\| \,\pi \ra (a) \,\| = \rsigma(a)$.
If $A$ is a C*-algebra, we therefore get $\| \,\pi \ra (a) \,\| = \| \,a \,\|$, cf.\ \ref{C*rs}.
In case $A$ is a general normed \st-algebra, we pull down the
result from the enveloping C*-algebra. For $a \in A$, we have
\[ \| \,a \,\|\0 = \| \,j(a) \,\| = \| \,\pi \ra \bigl(j(a)\bigr) \,\| = \| \,\pi \ra (a) \,\|, \]
where we have used \ref{homPS}.
\end{proof}

\begin{corollary}%
The following statements are equivalent.
\begin{itemize}
   \item[$(i)$] $A$ is \st-semisimple,
  \item[$(ii)$] $\pi\ra$ is faithful,
 \item[$(iii)$] the irreducible $\sigma$-contractive representations of $A$ on Hilbert
                       spaces separate the points of $A$.
\end{itemize}
\end{corollary}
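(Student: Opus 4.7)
The plan is to prove the cycle of implications (i) $\Leftrightarrow$ (ii) $\Leftrightarrow$ (iii), using that the reduced atomic representation $\pi\ra$ is, by construction, a direct sum of irreducible $\sigma$-contractive representations $\pi_{\psi_\lambda}$ (one from each spatial equivalence class $\lambda \in \wht{A}$), and that theorem \ref{ranorm} identifies $\|\pi\ra(a)\|$ with the Gel'fand-Na\u{\i}mark seminorm $\|a\|\0$.

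First, (i) $\Leftrightarrow$ (ii) will be almost immediate. By definition, $A$ is \st-semisimple iff \st-$\mathrm{rad}(A) = \{a \in A : \|a\|\0 = 0\}$ equals $\{0\}$. By theorem \ref{ranorm}, $\|a\|\0 = \|\pi\ra(a)\|$, so this vanishing condition is the same as saying $\pi\ra(a) = 0 \Rightarrow a = 0$, that is, $\pi\ra$ is injective. (One uses only that $\|\cdot\|$ on the target C*-algebra is a genuine norm.)

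For (ii) $\Rightarrow$ (iii), I will note that each summand $\pi_{\psi_\lambda}$ of $\pi\ra$ is an irreducible $\sigma$-contractive representation of $A$ on a Hilbert space, by theorem \ref{repstate} together with theorem \ref{irredindecpure} (since $\psi_\lambda$ is a pure state). Faithfulness of the direct sum means that for every $0 \neq a \in A$ there exists $\lambda \in \wht{A}$ with $\pi_{\psi_\lambda}(a) \neq 0$; hence the family $\{\pi_{\psi_\lambda}\}_{\lambda \in \wht{A}}$ already separates the points of $A$, and a fortiori so does the larger class of all irreducible $\sigma$-contractive representations.

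For (iii) $\Rightarrow$ (ii), which is the step requiring the most care but is still straightforward, I will argue as follows. Fix $0 \neq a \in A$; by (iii) there is an irreducible $\sigma$-contractive representation $\pi$ of $A$ on some Hilbert space with $\pi(a) \neq 0$. By theorem \ref{irredindecpure} together with the classification result preceding it, $\pi$ is spatially equivalent to $\pi_\psi$ for some pure state $\psi$ on $A$, so let $\lambda \in \wht{A}$ be its spatial equivalence class. Then $\pi_{\psi_\lambda}$ and $\pi_\psi$ are spatially equivalent, and since a unitary intertwiner carries $\ker$ to $\ker$, we get $\pi_{\psi_\lambda}(a) \neq 0$. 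Hence $\pi\ra(a) \neq 0$, so $\|a\|\0 = \|\pi\ra(a)\| \neq 0$ by theorem \ref{ranorm}, so $a \notin$ \st-$\mathrm{rad}(A)$, giving (i) (equivalently (ii) via the first step).

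The main (very mild) obstacle is the bookkeeping in (iii) $\Rightarrow$ (ii): one must remember that a representative $\pi_\psi$ furnished by theorem \ref{irredindecpure} need not be the specific $\pi_{\psi_\lambda}$ chosen in the definition of $\pi\ra$, so invariance of non-vanishing on $a$ under spatial equivalence of representations has to be used explicitly.
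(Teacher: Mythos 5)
The paper leaves this proof to the reader, so there is nothing to compare against; your argument is correct and is the natural one: (i) $\Leftrightarrow$ (ii) via $\| \,\pi \ra (a) \,\| = \| \,a \,\|\0$ from \ref{ranorm}, and (ii) $\Leftrightarrow$ (iii) via the parametrisation of irreducible $\sigma$-contractive representations by pure states up to spatial equivalence, noting that spatial equivalence preserves the non-vanishing of $\pi(a)$. The only nit is a citation slip: the classification result you invoke in (iii) $\Rightarrow$ (ii) follows \ref{irredindecpure} in the text rather than preceding it.
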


\begin{proof} The proof is left to the reader. \pagebreak \end{proof}

\clearpage


\section{The Left Spectrum in a Hermitian Banach \texorpdfstring{$*$-}{\80\052\80\055}Algebra}%
\label{leftspectrum}

In this paragraph, let $A$ be a \underline{Hermitian} Banach \st-algebra.

\begin{proposition}\label{leftidstate}%
If $A$ is unital, then for a proper left ideal $I$ in $A$, there exists a state
$\varphi$ on $A$ such that
\[ \varphi(a^*a) = 0 \quad \text{for all} \quad a \in I. \]
\end{proposition}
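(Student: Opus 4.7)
The plan is to apply the extension theorem \ref{Krein} to produce a state $\varphi$ on $A$ that vanishes on the real subspace $L := \{\,a + a^* : a \in I\,\}$ of $A\sa$. If such a $\varphi$ is found, then for any $a \in I$ we have $a^*a \in I$ (since $I$ is a left ideal), so $a^*a = (a^*a/2) + (a^*a/2)^* \in L$, yielding $\varphi(a^*a) = 0$ as required. To set up \ref{Krein}, I take $B := A\sa$, $C := A_+$ (a convex cone by \ref{plusconvexcone}), $M_0 := \mathds{R}e + L$, and define $f_0 : M_0 \to \mathds{R}$ by $f_0(re + \ell) := r$ for $r \in \mathds{R}$, $\ell \in L$.

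The crucial step is the following invertibility claim: \emph{if $b \in A$ satisfies $b + b^* \geq \lambda e$ for some real $\lambda > 0$, then $b$ is invertible}. To prove this, set $u := (b+b^*)/2$, so $\s(u) \subset [\,\lambda/2, \infty\,[$ makes $u$ positive and invertible; by \ref{possqroot}, $u$ admits a positive Hermitian invertible square root $u^{1/2}$, with Hermitian inverse $u^{-1/2}$. Setting $v := (b-b^*)/(2\iu) \in A\sa$ and $w := u^{-1/2} v u^{-1/2} \in A\sa$, one factors $b = u^{1/2}(e + \iu w)u^{1/2}$. Since $A$ is Hermitian, $\s(w) \subset \mathds{R}$, so by the Rational Spectral Mapping Theorem \ref{ratspecmapthm}, $\s(e + \iu w) \subset 1 + \iu\,\mathds{R}$, which avoids zero; thus $e + \iu w$, and hence $b$, is invertible. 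This claim secures the two nontrivial hypotheses of \ref{Krein}: first, $e \notin L$ (else $e = a + a^*$ with $a \in I$ would make $a$ invertible, whence $e = a^{-1}a \in I$, contradicting $I$ proper), so $f_0$ is well defined; second, $f_0 \geq 0$ on $M_0 \cap A_+$ (if $re + (a + a^*) \in A_+$ with $r < 0$ and $a \in I$, rescaling by $-1/r > 0$ yields $a' \in I$ with $a' + (a')^* \geq e$, making $a'$ invertible --- contradiction). The remaining hypothesis $M_0 + A_+ = A\sa$ is immediate, as $x + \rlambda(x)\,e \in A_+$ for every $x \in A\sa$.

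Applying \ref{Krein} then delivers a real-linear extension $f : A\sa \to \mathds{R}$ of $f_0$ with $f \geq 0$ on $A_+$. Extending complex-linearly via $\varphi(u + \iu v) := f(u) + \iu f(v)$ for $u, v \in A\sa$ produces a Hermitian $\mathds{C}$-linear functional on $A$ with $\varphi(a^*a) = f(a^*a) \geq 0$ (using Shirali-Ford \ref{ShiraliFord}, whereby $a^*a \in A_+$) and $\varphi(e) = 1$. By \ref{vare} and \ref{eBbded}, $\varphi$ is a continuous positive linear functional with $v(\varphi) = \varphi(e) = 1$, hence a state; and it vanishes on $L$ by construction. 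The main obstacle is the invertibility claim, which essentially uses the Hermitian structure of $A$ through both \ref{possqroot} and the Rational Spectral Mapping Theorem applied to $w$.
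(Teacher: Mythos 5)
Your proof is correct, and it follows the same overall strategy as the paper's: apply the extension theorem \ref{Krein} with $B := A\sa$, $C := A_+$, a real subspace $M_0$ spanned by $e$ and the relevant part of $I$, and the functional prescribed to be $1$ at $e$ and $0$ on that part, then pass to the complex-linear extension and invoke Shirali--Ford \ref{ShiraliFord}, \ref{vare} and \ref{eBbded}. The two arguments diverge only at the crucial positivity verification. The paper takes $M_0 := \mathds{R}e + I\sa$, where $I\sa$ is the set of Hermitian elements of $I$ (which already contains every $a^*a$ with $a \in I$, so nothing is lost); for $x = \lambda e + y \in M_0 \cap A_+$ with $y \in I\sa$, the element $\lambda e - x = -y$ is a \emph{Hermitian} element of the proper left ideal $I$, hence not left invertible by \ref{notinvideal} (ii), hence not invertible by \ref{Hermlrinv}, so $\lambda \in \s(x) \subset [\,0, \infty\,[$ at once. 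You instead take the larger subspace $\mathds{R}e + \{\,a + a^* : a \in I\,\}$, whose generic element $\lambda e - x = -(a + a^*)$ need no longer lie in $I$; this forces you to prove the invertibility lemma that $b + b^* \geq \lambda e$ with $\lambda > 0$ implies $b$ invertible, via the factorisation $b = u^{1/2}(e + \iu w)\,u^{1/2}$, \ref{possqroot}, and the Rational Spectral Mapping Theorem --- essentially the device used in the proof of \ref{fundHerm} (ii) $\Rightarrow$ (iii). Your lemma is correct and yields a formally stronger vanishing statement, namely $\varphi(a + a^*) = 0$ for all $a \in I$ rather than only for Hermitian elements of $I$, but that extra information is recoverable anyway from $\varphi(a^*a) = 0$ and the Cauchy--Schwarz inequality; the paper's choice of subspace has the advantage of making the positivity check a one-line observation.
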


\begin{proof}
Let $M\0$ denote the real subspace of $A\sa$ spanned by $e$ and the
Hermitian elements of $I$. Define a linear functional $f\0$ on $M\0$ by
\[ f\0 (\lambda e + y) := \lambda \qquad \bigl( \,\lambda \in \mathds{R},\ y \in I\sa \,\bigr). \]
Please note here that $e \notin I$, cf.\ \ref{notinvideal} (i). Let $C$ denote the
convex cone $A_+$ in $A\sa$, cf.\ \ref{plusconvexcone}. We have $M\0 + C = A\sa$
as for $a \in A\sa$ one has $\rlambda(a)e+a \geq 0$. Furthermore $f\0(x) \geq 0$
for all $x \in M\0 \cap C$. Indeed, if $x = \lambda e + y \in M\0 \cap C$ with $y \in I\sa$,
then $\lambda e - x = -y$ is a Hermitian element of $I$, which is a proper left ideal,
so that $\lambda e -x$ is not left invertible by \ref{notinvideal} (ii), which implies
that $f\0 (x) = \lambda \in \s(x) \subset [ \,0, \infty \,[$. It follows from \ref{Krein} that $f\0$
has a linear extension $f$ to $A\sa$ such that $f(x) \geq 0$ for all $x \in C = A_+$.
Then $f(a^*a) \geq 0$ for all $a \in A$ by the Shirali-Ford Theorem \ref{ShiraliFord}.
The linear extension $\varphi$ of $f$ from $A\sa$ to $A$ satisfies the requirement,
cf.\ \ref{eBbded}.
\end{proof}

\begin{proposition}\label{ispropleft}%
If $\varphi$ is a state on a unital normed  \st-algebra $B$, then
\[ M := \{ \,b \in B : \varphi(b^*b) = 0 \,\} \]
is a proper left ideal of $B$.
\end{proposition}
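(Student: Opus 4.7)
The plan is to recognise $M$ as the isotropic subspace of the positive Hilbert form $\langle a, b \rangle_\varphi := \varphi(b^*a)$ on $B$ induced by $\varphi$ (see \ref{inducedHf} and \ref{quotientA}), so that the left-ideal property comes for free from the associativity identity $\langle ab, c \rangle_\varphi = \langle b, a^*c \rangle_\varphi$.

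First I would show that
\[ M = \{ \,b \in B : \langle b, c \rangle_\varphi = 0 \text{ for all } c \in B \,\}. \]
The inclusion ``$\supset$'' is obtained by taking $c = b$. The inclusion ``$\subset$'' is the Cauchy-Schwarz inequality for the positive semidefinite sesquilinear form $\langle \cdot, \cdot \rangle_\varphi$: if $\varphi(b^*b) = 0$, then for every $c \in B$,
\[ |\,\langle b, c \rangle_\varphi\,|^{\,2} = |\,\varphi(c^*b)\,|^{\,2} \leq \langle b, b \rangle_\varphi \cdot \langle c, c \rangle_\varphi = \varphi(b^*b) \cdot \varphi(c^*c) = 0. \]
In particular $M$ is a vector subspace of $B$.

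Next, the left-ideal property follows exactly as in \ref{quotientA}: for any $a \in B$ and any $b \in M$, we compute, for every $c \in B$,
\[ \langle ab, c \rangle_\varphi = \varphi(c^*ab) = \varphi\bigl((a^*c)^*b\bigr) = \langle b, a^*c \rangle_\varphi = 0, \]
so $ab \in M$.

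Finally, properness of $M$ is where the unital assumption on $B$ and the state assumption on $\varphi$ are used. By \ref{vare}, the state $\varphi$ is Hermitian and satisfies $\varphi(e) = v(\varphi) = 1$. Hence $\varphi(e^*e) = \varphi(e) = 1 \neq 0$, so $e \notin M$, and therefore $M \neq B$. No step here presents a real obstacle; the only ingredient beyond elementary manipulation is Cauchy-Schwarz, which is built into the definition of a positive Hilbert form \ref{Hilbertform}.
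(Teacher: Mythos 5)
Your proof is correct and follows essentially the same route as the paper's: identify $M$ with the isotropic subspace of the induced positive Hilbert form $\langle\cdot,\cdot\rangle_\varphi$ via Cauchy-Schwarz, deduce the left-ideal property from $\langle ab,c\rangle_\varphi = \langle b,a^*c\rangle_\varphi$ as in \ref{quotientA}, and obtain properness from $\varphi(e^*e)=\varphi(e)=1$ by \ref{vare}. You merely spell out the steps that the paper delegates to the cited references.
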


\begin{proof}
Let $\langle \cdot , \cdot \rangle _{\varphi}$ denote the positive Hilbert
form induced by $\varphi$, cf.\ \ref{inducedHf}. An element $b$ of $B$
is in $M$ if and only if $\langle b , c \rangle _{\varphi} = 0$ for all $c \in B$
by the Cauchy-Schwarz inequality. So $M$ is the isotropic subspace
of $\langle \cdot , \cdot \rangle _{\varphi}$, and thus a left ideal in $B$,
cf.\ \ref{quotientA}. The left ideal $M$ is proper as
$\varphi(e^*e) = \varphi(e) = 1$, cf.\ \ref{vare}.
\end{proof}

\begin{proposition}\label{maxleftstate}%
If $A$ is unital, then for a maximal left ideal $M$ in $A$ there exists a state
$\varphi$ on $A$ such that
\[ M = \{ \,a \in A : \varphi(a^*a) = 0 \,\}. \]
\end{proposition}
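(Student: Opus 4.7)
The plan is to combine the two preceding propositions directly: Proposition \ref{leftidstate} produces a state that vanishes on $\{a^*a : a \in M\}$, and Proposition \ref{ispropleft} shows that the ``vanishing set'' of a state is always a proper left ideal. Maximality of $M$ then forces equality.

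More precisely, first I would apply Proposition \ref{leftidstate} to the proper left ideal $M$ (which is proper since maximal left ideals are proper by definition). This yields a state $\varphi$ on $A$ such that $\varphi(a^*a) = 0$ for every $a \in M$. Setting
\[ N := \{ \,a \in A : \varphi(a^*a) = 0 \,\}, \]
we thereby obtain the inclusion $M \subseteq N$.

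Next I would invoke Proposition \ref{ispropleft}, applied with $B := A$ and the state $\varphi$ just constructed, to conclude that $N$ is a proper left ideal of $A$. At this point we have a chain $M \subseteq N \subsetneq A$ of left ideals; since $M$ is a maximal left ideal, this forces $M = N$, which is exactly the desired equality.

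There is no real obstacle here: the preparatory propositions \ref{leftidstate} and \ref{ispropleft} have already done all the analytic work (the Krein-type extension in \ref{Krein}, the Shirali--Ford theorem \ref{ShiraliFord}, the Cauchy--Schwarz argument turning the isotropic subspace into a left ideal, and the fact that $\varphi(e) = 1$ keeps it proper). The present statement is essentially a one-line corollary obtained by bracketing a proper left ideal between $M$ and $A$ and appealing to maximality.
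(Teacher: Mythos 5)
Your argument is exactly the paper's proof: obtain the state from \ref{leftidstate}, note via \ref{ispropleft} that its vanishing set is a proper left ideal containing $M$, and conclude by maximality. Nothing is missing.
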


\begin{proof}
There is a state $\varphi$ on $A$ such that $\varphi(a^*a) = 0$ for all
$a \in M$, cf.\ \ref{leftidstate}. The set $\{ \,a \in A : \varphi(a^*a) = 0 \,\}$
is a proper left ideal in $A$ containing $M$, cf.\ \ref{ispropleft} and thus
equal to $M$ by maximality of $M$.\pagebreak%
\end{proof}

\medskip
See also \ref{maxleftpurestate} below.

\begin{proposition}\label{invstate}%
If $A$ is unital, then an element $a$ of $A$ is left invertible if and only if
\[ \varphi(a^*a) > 0 \quad \text{for all} \quad \varphi \in \statespace(A). \]
\end{proposition}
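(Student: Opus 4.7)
The plan is to prove both implications using the positive Hilbert form $\langle x, y\rangle_\varphi := \varphi(y^*x)$ associated with a state $\varphi$ (see \ref{inducedHf}) and its Cauchy–Schwarz inequality.

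For the forward direction, suppose $a$ has a left inverse $b$, so $ba = e$. For any state $\varphi$ on $A$, recall $\varphi(e) = v(\varphi) = 1$ by \ref{vare} (and $\varphi$ is continuous on the unital Banach \st-algebra $A$ by \ref{eBbded}, so $\varphi(bb^*)$ is a finite real number). Applying Cauchy–Schwarz to $\langle a, b^*\rangle_\varphi = \varphi(ba) = \varphi(e) = 1$ yields
\[
1 = |\langle a, b^*\rangle_\varphi|^{\,2}
\leq \langle a, a\rangle_\varphi \cdot \langle b^*, b^*\rangle_\varphi
= \varphi(a^*a)\,\varphi(bb^*),
\]
forcing $\varphi(a^*a) > 0$. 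This direction does not actually require $A$ to be Hermitian, only unital with continuous states.

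For the converse, I would argue by contraposition and use the tight correspondence between maximal left ideals and states established in the preceding items. Assume $a$ is not left invertible. Since $A$ is unital, $I := Aa = \{ba : b \in A\}$ is a left ideal containing $a = ea$. Because $a$ has no left inverse, $e \notin I$, so $I$ is proper by \ref{notinvideal}(i). By \ref{inmaxideal}, $I$ is contained in some maximal left ideal $M$ of $A$. Here is where the Hermitian hypothesis comes into play: by \ref{maxleftstate}, which rests on the extension argument \ref{leftidstate} via Ra\u{\i}kov/Shirali–Ford through the convex cone $A_+$, there exists a state $\varphi$ on $A$ with
\[
M = \{\,c \in A : \varphi(c^*c) = 0\,\}.
\]
Since $a \in I \subset M$, we conclude $\varphi(a^*a) = 0$, contradicting the hypothesis that $\varphi(a^*a) > 0$ for every state.

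The only real obstacle is the converse direction, and it is already packaged in \ref{maxleftstate}: the whole weight of the Hermitian Banach \st-algebra machinery (the Shirali–Ford Theorem ensuring $A_+$ is a convex cone, plus Kre\u{\i}n's extension Theorem \ref{Krein}) enters there, not in the present statement. Given that lemma, the proof reduces to the two short arguments above.
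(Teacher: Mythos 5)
Your proof is correct. For the substantive direction (not left invertible $\Rightarrow$ some state kills $a^*a$) you follow exactly the paper's route: pass to the proper left ideal $Aa$ via \ref{notinvideal}, enlarge to a maximal left ideal by \ref{inmaxideal}, and invoke \ref{maxleftstate} — as you say, all the Hermitian machinery is concentrated there. The only divergence is in the easy direction: you apply the Cauchy--Schwarz inequality for the induced Hilbert form directly to $1 = \varphi(ba) = \langle a, b^*\rangle_\varphi$, whereas the paper argues that if $\varphi(a^*a) = 0$ for some state $\varphi$, then $a$ lies in the proper left ideal $\{\,b \in A : \varphi(b^*b) = 0\,\}$ of \ref{ispropleft} and hence cannot be left invertible by \ref{notinvideal}(ii). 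These are the same argument in substance, since \ref{ispropleft} is itself proved by Cauchy--Schwarz; your phrasing has the minor merit of making explicit that this implication uses only that $A$ is a unital \st-algebra and that states satisfy $\varphi(e)=1$, with no Hermitian hypothesis. Either way the step is a one-liner, so nothing is gained or lost.
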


\begin{proof}
Let $a \in A$. If $a$ is not left invertible, then $a$ lies in a proper
left ideal by \ref{notinvideal} (ii), which is contained in a maximal left
ideal by \ref{inmaxideal}, so that $\varphi(a^*a) = 0$ for some state
$\varphi$ on $A$ by proposition \ref{maxleftstate}. Conversely, if
$\varphi(a^*a) = 0$ for some state $\varphi$ on $A$, then $a$ lies
in the proper left ideal $\{ \,b \in A : \varphi(b^*b) = 0 \,\}$, cf.\ proposition
\ref{ispropleft}, so that $a$ cannot be left invertible by \ref{notinvideal} (ii).
\end{proof}

\begin{theorem}\label{maxleftpurestate}%
If $A$ is unital, then for a maximal left ideal $M$ in $A$, there exists a pure
state $\psi$ on $A$ such that
\[ M = \{ \,a \in A : \psi(a^*a) = 0 \,\}. \]
\end{theorem}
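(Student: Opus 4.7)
The plan is to sharpen the construction in proposition \ref{maxleftstate} by extracting an extreme point. Specifically, I would introduce
\[ K := \{\,\varphi \in \quasistates(A) : \varphi(a^*a) = 0 \text{ for all } a \in M\,\}, \]
and look for a suitable extreme point of $K$. First I would check that $K$ is a non-empty, convex, compact subset of $\quasistates(A)$: convexity is immediate; compactness follows because the topology on $\quasistates(A)$ is that of pointwise convergence on $A\sa$ (see \ref{topQS} and the appendix), and each evaluation map $\varphi \mapsto \varphi(a^*a)$ for $a \in M$ is continuous (as $a^*a \in A\sa$), so $K$ is closed in the compact space $\quasistates(A)$; non-emptiness follows from \ref{maxleftstate}, which provides an actual state in $K$. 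By the Kre\u{\i}n–Milman Theorem, $K$ has extreme points, and since $K \neq \{0\}$, it has an extreme point $\psi \neq 0$.

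Next I would show that such a non-zero extreme point $\psi$ of $K$ is necessarily a state. Otherwise $0 < v(\psi) < 1$, and then
\[ \psi = v(\psi) \cdot \bigl(v(\psi)^{-1}\psi\bigr) + \bigl(1 - v(\psi)\bigr) \cdot 0, \]
with $v(\psi)^{-1}\psi \in \statespace(A) \cap K$ and $0 \in K$, a proper convex combination contradicting extremality of $\psi$ in $K$; hence $v(\psi) = 1$. Then I would verify that $\psi$ is in fact a pure state: if $\psi = \lambda \varphi_1 + (1-\lambda)\varphi_2$ with $\varphi_1, \varphi_2 \in \statespace(A)$ and $0 < \lambda < 1$, then for every $a \in M$,
\[ 0 = \psi(a^*a) = \lambda \,\varphi_1(a^*a) + (1 - \lambda)\,\varphi_2(a^*a), \]
and since both summands are non-negative, each vanishes; thus $\varphi_1, \varphi_2 \in K$, and extremality of $\psi$ in $K$ forces $\varphi_1 = \varphi_2 = \psi$.

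Finally, I would identify $M$ with $M' := \{\,a \in A : \psi(a^*a) = 0\,\}$. The inclusion $M \subseteq M'$ is immediate from $\psi \in K$. On the other hand, $M'$ is a proper left ideal of $A$ by proposition \ref{ispropleft}. Since $M \subseteq M'$ and $M$ is a maximal left ideal, $M = M'$. The main obstacle I anticipate is the routine but easily overlooked verification that extremality should be taken in $K$ rather than directly in $\statespace(A)$: one cannot simply invoke \ref{extrquasi} to get a pure state with the right kernel property, because the standard pure states provided by Kre\u{\i}n–Milman applied to $\statespace(A)$ need not vanish on $M$. Restricting to the face $K$ is precisely what forces the extreme point to remember $M$.
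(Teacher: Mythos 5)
Your argument is correct and follows essentially the same route as the paper: both apply the Kre\u{\i}n--Milman Theorem to a compact convex set of functionals annihilating $M$ (in the sense that $\varphi(a^*a)=0$ for all $a \in M$) and show that an extreme point is a pure state whose associated left ideal equals $M$ by maximality, via \ref{maxleftstate} and \ref{ispropleft}. The only difference is bookkeeping: the paper takes $K$ to be the set of \emph{states} with kernel exactly $M$, using maximality of $M$ to see that $K$ is closed in $\quasistates(A)$, whereas you take the trivially closed face of $\quasistates(A)$ of quasi-states vanishing on $M$, at the cost of your extra (correct) step showing that a non-zero extreme point must have variation $1$.
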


\begin{proof}
The set $K$ of those states $\varphi$ on $A$ with
\[ M = \{ \,a \in A : \varphi(a^*a) = 0 \,\} \]
is non-empty by \ref{maxleftstate}. Furthermore, $K$ is a compact
convex subset of the state space of $A$. (Indeed, if $\ell$ is in the
closed convex hull of $K$ inside the compact convex set
$\quasistates(A)$, then $\ell$ is a state, by \ref{eBbded}, with
\[ M \subset \{ \,a \in A : \ell \,(a^*a) = 0 \,\}. \]
But the set on the right hand side is a proper left ideal, and therefore
equal to $M$, by maximality of $M$. So $\ell \in K$.) Thus $K$ has
an extreme point, $\psi$ say. It remains to be shown that the state
$\psi$ is pure. So let
\[ \psi = \lambda \varphi_1 + \mu \varphi_2 \]
with $\varphi_1$, $\varphi_2 \in \statespace (A)$, and
$\lambda, \mu \in \ ] \,0, 1 \,[$ such that $\lambda + \mu = 1$.
We get
\[ M \subset \{ \,a \in A : \varphi_i (a^*a) = 0 \,\} \]
by positivity of $\varphi_i$, for $i \in \{ \,1, 2 \,\}$. As above, it follows
that both sets are equal, i.e.\ $\varphi_1, \varphi_2 \in K$, which implies
that $\psi = \varphi_1 = \varphi_2$.
\end{proof}

\begin{corollary}\label{invpstate}%
If $A$ is unital, then an element $a$ of $A$ is left invertible if and only if
\[ \psi(a^*a) > 0 \quad \text{for all} \quad \psi \in \purestates(A). \]
\end{corollary}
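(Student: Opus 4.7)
The plan is to prove the two directions separately, mirroring the structure of the proof of Proposition \ref{invstate}, but replacing the use of \ref{maxleftstate} with its strengthening to pure states, namely Theorem \ref{maxleftpurestate}.

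For the ``only if'' direction, I would simply observe that pure states are in particular states, so that the implication follows immediately from Proposition \ref{invstate}: if $a$ is left invertible, then $\varphi(a^*a) > 0$ for every state $\varphi$ on $A$, and a fortiori for every pure state $\psi$.

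For the ``if'' direction (the contrapositive is the useful form), suppose $a$ is not left invertible. By \ref{notinvideal} (ii), $a$ lies in some proper left ideal of $A$. By \ref{inmaxideal} (an application of Zorn's Lemma), this proper left ideal is contained in a maximal left ideal $M$ of $A$, so in particular $a \in M$. Now invoke Theorem \ref{maxleftpurestate} to produce a pure state $\psi$ on $A$ such that
\[ M = \{ \,b \in A : \psi(b^*b) = 0 \,\}. \]
Since $a \in M$, this gives $\psi(a^*a) = 0$, contradicting the hypothesis that $\psi(a^*a) > 0$ for all pure states.

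There is no serious obstacle: the heavy lifting has already been done in Theorem \ref{maxleftpurestate}, which packages the extraction of a maximal left ideal's ``null state'' as an extreme point of the convex compact set $K$ of states annihilating $M$ on products of the form $b^*b$. The present corollary is just the specialisation of Proposition \ref{invstate} obtained by replacing arbitrary states with pure states at the cost of using the stronger Theorem \ref{maxleftpurestate} in place of Proposition \ref{maxleftstate}.
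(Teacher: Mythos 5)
Your proposal is correct and is essentially the paper's own proof, which simply says to repeat the argument of Proposition \ref{invstate} with ``state'' replaced by ``pure state'', i.e.\ using Theorem \ref{maxleftpurestate} in place of Proposition \ref{maxleftstate}. Your handling of the easy direction via \ref{invstate} (pure states being states) is a harmless shortcut equivalent to the paper's use of \ref{ispropleft}.
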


\begin{proof}
In the proof of \ref{invstate}, replace ``state'' by ``pure state''. \pagebreak
\end{proof}

A similar characterisation of the right invertible elements holds
as well (\,by changing the multiplication to $(a,b) \mapsto ba$\,),
and thus we get: 

\begin{corollary}\label{normalleftright}%
A normal element of a unital Hermitian Banach \linebreak \st-algebra is
left invertible if and only if it it is right invertible. In this event, the element
is invertible, by \ref{leftrightinv}. See also \ref{Hermlrinv}.
\end{corollary}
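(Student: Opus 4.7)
The plan is to deduce the equivalence from Corollary \ref{invpstate} together with the observation that taking adjoints swaps left invertibility with right invertibility. The work has essentially already been done in \S~\ref{leftspectrum}; what remains is to combine two instances of the pure-state criterion and to exploit normality.

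First I would recall that Corollary \ref{invpstate} gives
\[
a \text{ is left invertible} \iff \psi(a^*a) > 0 \quad \text{for every } \psi \in \purestates(A).
\]
Next I observe that $a$ is right invertible in $A$ if and only if $a^*$ is left invertible in $A$: if $ab = e$ then $b^*a^* = e^* = e$ since the unit is Hermitian, and the converse follows by applying the involution again (recall $(a^*)^* = a$). Applying Corollary \ref{invpstate} to $a^*$ in place of $a$ therefore yields
\[
a \text{ is right invertible} \iff \psi((a^*)^*a^*) > 0 \iff \psi(aa^*) > 0 \quad \text{for every } \psi \in \purestates(A).
\]

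The hypothesis of normality now enters in the cleanest possible way: since $a^*a = aa^*$, the two criteria displayed above are literally the same condition, so $a$ is left invertible precisely when $a$ is right invertible. The closing assertion that the element is then invertible is Proposition \ref{leftrightinv}, already invoked in the statement: any element with both a left and a right inverse has those inverses coincide, giving a two-sided inverse.

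I do not foresee a real obstacle here, as the heavy lifting is concentrated in Corollary \ref{invpstate}, whose proof rests on the separation-theoretic construction of pure states via \ref{maxleftpurestate} (and ultimately on the Shirali--Ford Theorem \ref{ShiraliFord} and Zorn's Lemma). The only point one must be careful about is not to appeal to continuity of the involution when transferring from $a$ to $a^*$; the argument above uses only the algebraic identity $(ab)^* = b^*a^*$ and $e^* = e$, both of which are available in any unital $*$-algebra, so the Hermitian Banach $*$-algebra hypothesis is used solely through \ref{invpstate}.
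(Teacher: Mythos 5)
Your proof is correct and follows essentially the paper's route: the paper also deduces the corollary from the pure-state criterion \ref{invpstate} together with the dual criterion $\psi(aa^*)>0$ for right invertibility, and then uses normality to identify the two conditions. Your derivation of the right-invertibility criterion by passing to $a^*$ via the involution (rather than by passing to the opposite multiplication $(a,b)\mapsto ba$, as the paper suggests) is a harmless variant and has the mild advantage of keeping both conditions quantified over the same set $\purestates(A)$.
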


\begin{definition}[$\mathrm{E}(b)$]%
\label{Ea}\index{symbols}{E(a)@$\mathrm{E}(a)$}%
Let $B$ be a normed \st-algebra, and let $b \in B$. We shall consider
the set $\mathrm{E}(b)$ defined as the set of those pure states $\psi$ on $B$,
for which $c_{\psi}$ is an eigenvector of $\pi_{\psi}(b)$ to the
eigenvalue $\psi(b)$. That is,
\[ \mathrm{E}(b) := \{ \,\psi \in \purestates (B) : \pi_{\psi}(b)c_{\psi} = \psi(b)c_{\psi} \,\}. \]
\end{definition}

We have the following characterisation of $\mathrm{E}(b)$:

\begin{proposition}\label{Eachar}%
For an element $b$ of a normed \st-algebra $B$, we have
\[ \mathrm{E}(b) = \{ \,\psi \in \purestates (B) : {| \,\psi(b) \,| \,}^2 = \psi(b^*b) \,\}. \]
\end{proposition}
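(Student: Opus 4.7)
The plan is to reduce the identity to the equality case of the Cauchy--Schwarz inequality, applied in the GNS Hilbert space $H_\psi$.

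Concretely, for a pure state $\psi$ on $B$, set $x := \pi_\psi(b) c_\psi \in H_\psi$ and $\lambda := \psi(b) \in \mathds{C}$. Recall from \ref{repstate} that the reproducing vector $c_\psi$ is a unit vector satisfying $\psi(a) = \langle \pi_\psi(a) c_\psi, c_\psi \rangle_\psi$ for all $a \in B$. Applying this with $a = b$ gives $\lambda = \langle x, c_\psi \rangle_\psi$, and applying it with $a = b^*b$ together with the homomorphism property of $\pi_\psi$ yields
\[ \psi(b^*b) \;=\; \langle \pi_\psi(b) c_\psi, \pi_\psi(b) c_\psi \rangle_\psi \;=\; \| x \|_\psi^{\,2}. \]
Thus the condition $|\psi(b)|^2 = \psi(b^*b)$ becomes $|\langle x, c_\psi \rangle_\psi|^2 = \|x\|_\psi^{\,2} \cdot \|c_\psi\|_\psi^{\,2}$, which is precisely equality in the Cauchy--Schwarz inequality on $H_\psi$.

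Now I would argue both inclusions. For the inclusion $\mathrm{E}(b) \subseteq \{\psi : |\psi(b)|^2 = \psi(b^*b)\}$, if $x = \lambda c_\psi$, then $\|x\|_\psi^{\,2} = |\lambda|^2 \|c_\psi\|_\psi^{\,2} = |\lambda|^2$, giving $\psi(b^*b) = |\psi(b)|^2$. Conversely, if $|\psi(b)|^2 = \psi(b^*b)$, then equality in Cauchy--Schwarz forces $x$ and $c_\psi$ to be linearly dependent; since $c_\psi$ is a unit vector (in particular nonzero), this yields $x = \mu c_\psi$ for some $\mu \in \mathds{C}$. Taking the inner product with $c_\psi$ then gives $\mu = \langle x, c_\psi \rangle_\psi = \lambda = \psi(b)$, so $\pi_\psi(b) c_\psi = \psi(b) c_\psi$, i.e.\ $\psi \in \mathrm{E}(b)$.

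There is no real obstacle here: once one identifies $\psi(b)$ with $\langle x, c_\psi \rangle_\psi$ and $\psi(b^*b)$ with $\|x\|_\psi^{\,2}$, the statement is the equality case of the Cauchy--Schwarz inequality, together with the fact that $\|c_\psi\|_\psi = 1$ for a state. The only small point to keep in mind is that $c_\psi \neq 0$, which is clear because $v(\psi) = \|c_\psi\|_\psi^{\,2} = 1$.
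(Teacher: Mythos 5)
Your argument is correct. The paper takes a slightly more direct route: it expands the single quantity
\[ {\bigl\| \,\bigl( \pi_{\psi}(b) - \psi(b) \bigr) c_{\psi} \,\bigr\|_{\psi}}^{\,2}
= \psi(b^*b) - {| \,\psi(b) \,| \,}^2, \]
so that the eigenvector condition $\pi_{\psi}(b)c_{\psi} = \psi(b)c_{\psi}$ is \emph{identically} the vanishing of $\psi(b^*b) - {| \,\psi(b) \,| \,}^2$, and both inclusions drop out of one computation without any appeal to the equality case of Cauchy--Schwarz. You instead recognise ${| \,\psi(b) \,| \,}^2 = \psi(b^*b)$ as equality in Cauchy--Schwarz for the pair $\pi_\psi(b)c_\psi$, $c_\psi$, and then use linear dependence plus $\| \,c_\psi \,\| = 1$ to recover the eigenvalue $\psi(b)$. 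Both proofs hinge on the same two identities, $\psi(b) = \langle \pi_\psi(b)c_\psi, c_\psi \rangle_\psi$ and $\psi(b^*b) = \| \,\pi_\psi(b)c_\psi \,\|_{\psi}^{\,2}$, which you justify correctly; your version is marginally longer because the equality case of Cauchy--Schwarz is a heavier tool than the direct expansion, but it is arguably more conceptual, as it exhibits the condition as ``$\pi_\psi(b)c_\psi$ is proportional to $c_\psi$'' before identifying the proportionality constant. Note also that, like the paper, your argument works for an arbitrary state and purity plays no role.
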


\begin{proof}
For a state $\psi$ on $B$, we calculate
\begin{align*}
&\ {\| \,\bigl( \pi_{\psi}(b) - \psi(b) \bigr) c_{\psi} \,\| \,}^2 \\
= &\ \langle \bigl( \pi_{\psi}(b) - \psi(b) \bigr) c_{\psi} ,
\bigl( \pi_{\psi}(b) - \psi(b) \bigr) c_{\psi} \rangle_{\psi} \\
= &\ {\| \,\pi_{\psi}(b)c_{\psi} \,\| \,}^2 + {\| \,\psi(b)c_{\psi} \,\| \,}^2
- 2 \,\mathrm{Re} \,\bigl( \overline{\psi(b)} \langle \pi_{\psi(b)}c_{\psi},
c_{\psi} \rangle_{\psi} \bigr) \\
= &\ \psi(b^*b) - {| \,\psi(b) \,| \,}^2. \qedhere
\end{align*}
\end{proof}

\begin{definition}[the left spectrum, $\mathrm{lsp}_B(b)$]%
\index{symbols}{L15@$\mathrm{lsp}(a)$}%
\index{concepts}{spectrum!of an element!left}%
If $b$ is an element of an algebra $B$, we define the
\underline{left spectrum} of $b$ as the set $\mathrm{lsp}_B(b)$
of those complex numbers $\lambda$ for which $\lambda e - b$
is not left invertible in $\tld{B}$. We shall also abbreviate
$\mathrm{lsp}(b) := \mathrm{lsp}_B(b)$.
\end{definition}

\begin{observation}%
For a normal element of a Hermitian Banach \st-algebra,
the left spectrum coincides with the spectrum, cf.\ \ref{normalleftright}.
\end{observation}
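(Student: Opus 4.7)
The plan is to deduce the observation almost immediately from \ref{normalleftright}, once we have checked that the normality of the element and the Hermitian nature of the algebra pass to the unitisation.

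First I would establish the trivial inclusion $\mathrm{lsp}_B(b) \subset \s_B(b)$, which holds in any algebra: if $\lambda e - b \in \tld{B}$ is invertible, then in particular it has a left inverse, so if it fails to be left invertible it certainly fails to be invertible. This direction does not need normality or the Hermitian assumption.

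For the reverse inclusion $\s_B(b) \subset \mathrm{lsp}_B(b)$, I would fix $\lambda \in \mathds{C}$ with $\lambda \notin \mathrm{lsp}_B(b)$ and show $\lambda \notin \s_B(b)$. The element $\lambda e - b \in \tld{B}$ is normal because $b$ is normal and $e$ commutes with everything (so $\lambda e - b$ and its adjoint $\overline{\lambda} e - b^*$ commute). Next, $\tld{B}$ is a \emph{unital} Hermitian Banach \st-algebra: it is unital by construction, a Banach \st-algebra by \ref{Banachunitis}, and Hermitian because $B$ is Hermitian (using the proposition at the start of \ref{secHerm} that $A$ is Hermitian if and only if $\tld{A}$ is). Thus \ref{normalleftright} applies to the normal element $\lambda e - b$ of $\tld{B}$: its left invertibility forces its right invertibility, and hence, by \ref{leftrightinv}, its invertibility in $\tld{B}$. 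Therefore $\lambda \notin \s_B(b)$, as required.

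There is no real obstacle here; the content lies entirely in \ref{normalleftright}, and the present observation is simply the reformulation of that equivalence in terms of spectra. The only minor points to double-check are the passage of normality and the Hermitian property to $\tld{B}$, both of which are routine.
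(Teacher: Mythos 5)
Your proposal is correct and follows exactly the route the paper intends: the observation is just the spectral reformulation of \ref{normalleftright}, applied in the unitisation $\tld{B}$, and your verifications that $\lambda e - b$ is normal and that $\tld{B}$ is a unital Hermitian Banach \st-algebra are precisely the routine details the paper leaves implicit.
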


\begin{theorem}\label{leftirred}%
If $A$ is unital, then for an element $a$ of $A$, we have
\[ \mathrm{lsp}_A(a) = \{ \,\psi(a) : \psi \in \mathrm{E}(a) \,\}. \]
In particular, each point in the left spectrum is an eigenvalue
in some irreducible representation, cf.\ \ref{Ea}. \pagebreak
\end{theorem}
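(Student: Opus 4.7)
The plan is to prove the two inclusions separately, using the characterisation of $\mathrm{E}(a)$ given in proposition \ref{Eachar}, together with the correspondence between maximal left ideals and pure states established in theorem \ref{maxleftpurestate}.

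For the inclusion $\{\,\psi(a) : \psi \in \mathrm{E}(a)\,\} \subset \mathrm{lsp}_A(a)$, I would argue by contradiction. Suppose $\psi \in \mathrm{E}(a)$ and that $\psi(a) e - a$ admits a left inverse $b \in A$, so that $b\,(\psi(a) e - a) = e$. Applying the representation $\pi_\psi$ and then evaluating the resulting operator identity on the reproducing vector $c_\psi$, I would use $\pi_\psi(a) c_\psi = \psi(a) c_\psi$ (this is the defining property of $\mathrm{E}(a)$) to get $0 = \pi_\psi(e) c_\psi = c_\psi$. But $\langle c_\psi, c_\psi \rangle_\psi = v(\psi) = 1$ by proposition \ref{reprovectorfinvar}, contradicting $c_\psi = 0$.

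For the reverse inclusion, let $\lambda \in \mathrm{lsp}_A(a)$. By lemma \ref{notinvideal} (ii) the element $\lambda e - a$ lies in a proper left ideal, which by lemma \ref{inmaxideal} is contained in a maximal left ideal $M$ of $A$. Theorem \ref{maxleftpurestate} yields a pure state $\psi$ on $A$ with $M = \{\,c \in A : \psi(c^*c) = 0\,\}$. In particular $\psi\bigl((\lambda e - a)^*(\lambda e - a)\bigr) = 0$. I would then use the Cauchy-Schwarz inequality for the induced Hilbert form (proposition \ref{inducedHf}) to conclude that $\psi$ vanishes on $M$, so that $\psi(\lambda e - a) = 0$, giving $\psi(a) = \lambda$. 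Expanding the vanishing inner product
\[ 0 = \psi\bigl((\lambda e - a)^*(\lambda e - a)\bigr)
= |\lambda|^{\,2} - \overline{\lambda}\,\psi(a) - \lambda\,\overline{\psi(a)} + \psi(a^*a) \]
and substituting $\psi(a) = \lambda$ yields $\psi(a^*a) = |\lambda|^{\,2} = |\psi(a)|^{\,2}$, so $\psi \in \mathrm{E}(a)$ by proposition \ref{Eachar}.

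The main obstacle, as usual in this type of statement, is the second inclusion: one must produce a pure state at which $a$ takes the prescribed value $\lambda$ and at which the Cauchy-Schwarz defect vanishes. The tool that makes this possible is precisely theorem \ref{maxleftpurestate}, which in turn rests on the Hermitian nature of $A$ (via the Shirali-Ford theorem used in \ref{leftidstate}). Once that tool is in hand, the computation is purely formal; the only slightly subtle point is noticing that $c \in M_\psi$ forces $\psi(c) = 0$, which follows immediately from Cauchy-Schwarz applied to $\psi(e^*c)$.
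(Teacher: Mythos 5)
Your proposal is correct and follows essentially the same route as the paper: the paper simply cites corollary \ref{invpstate} (left invertibility of $b = \lambda e - a$ is equivalent to $\psi(b^*b) > 0$ for all pure states $\psi$), which packages your appeal to \ref{maxleftpurestate} together with the easy converse, and then reads off $\psi(a) = \lambda$ and ${|\,\psi(a)\,|}^{\,2} = \psi(a^*a)$ from the identity $\psi(b^*b) = {|\,\lambda - \psi(a)\,|}^{\,2} + \psi(a^*a) - {|\,\psi(a)\,|}^{\,2}$. Your GNS eigenvector argument for the inclusion $\{\,\psi(a) : \psi \in \mathrm{E}(a)\,\} \subset \mathrm{lsp}_A(a)$ is a harmless variant of the direction of \ref{invpstate} that rests on \ref{ispropleft}, and your Cauchy--Schwarz derivation of $\psi(a) = \lambda$ replaces the paper's completed-square computation.
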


\begin{proof}
Let $a \in A$, $\lambda \in \mathds{C}$ and put $b := \lambda e - a$.
For a pure state $\psi$ on $A$, we compute
\begin{align*}
\psi(b^*b) & = {| \,\lambda \,| \,}^2 + \psi(a^*a)
- 2 \,\mathrm{Re} \,\bigl( \,\overline{\lambda}\psi(a) \bigr) \\
 & = {| \,\lambda - \psi(a) \,| \,}^2 + \psi(a^*a) - {| \,\psi(a) \,| \,}^2.
\end{align*}
Now
\[ \psi(a^*a) - {| \,\psi(a) \,| \,}^2 \geq 0 \]
as $v(\psi) = 1$. Therefore $\psi(b^*b)$ vanishes if and only if both
$\lambda = \psi(a)$ and ${| \,\psi(a) \,| \,}^2 = \psi(a^*a)$. The statement
follows now from the results \ref{invpstate} and \ref{Eachar}.
\end{proof}

\begin{corollary}\label{lspC}%
If $A$ is unital, then for an element $a$ of $A$, we have
\[ \mathrm{lsp}_A(a) = \mathrm{lsp}_{\Cstar(A)} \bigl( j(a) \bigr). \]
Here, the left spectrum can be replaced with the right spectrum
and with the spectrum.
\end{corollary}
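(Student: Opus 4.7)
The plan is to prove the corollary by applying Theorem \ref{leftirred} to both $A$ and $\Cstar(A)$, and then matching the two resulting descriptions via the bijection between pure states provided by \ref{homPS}. More precisely, Theorem \ref{leftirred} asserts that in a unital Hermitian Banach \st-algebra $B$, the left spectrum of any $b \in B$ equals $\{\,\psi(b) : \psi \in \mathrm{E}(b)\,\}$, and by \ref{Eachar} the set $\mathrm{E}(b)$ consists of exactly those pure states $\psi$ on $B$ satisfying $|\,\psi(b)\,|^2 = \psi(b^*b)$. So what I need is (i) that both $A$ and $\Cstar(A)$ are unital Hermitian Banach \st-algebras, and (ii) that the affine homeomorphism $\psi_0 \mapsto \psi_0 \circ j$ from $\purestates\bigl(\Cstar(A)\bigr)$ onto $\purestates(A)$ of \ref{homPS} restricts to a bijection between $\mathrm{E}\bigl(j(a)\bigr)$ and $\mathrm{E}(a)$ that preserves the ``eigenvalue'' $\psi(a) = \psi_0\bigl(j(a)\bigr)$.

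For (i): by hypothesis $A$ is a unital Hermitian Banach \st-algebra, and by \ref{C*Herm} the C*-algebra $\Cstar(A)$ is Hermitian. To see that $\Cstar(A)$ is unital, note that since $A$ has a unit $e$, the extension theorem \ref{extstate} applied to the state $\lambda e \mapsto \lambda$ on $\mathds{C}e$ (or directly \ref{vare}) produces states on $A$, whence by \ref{GNSseminorm} and \ref{vare} we have $\|\,j(e)\,\| = \|\,e\,\|\0 \geq 1$, so $j(e) \neq 0$. The element $j(e)$ is then an idempotent in $\Cstar(A)$ satisfying $j(e) \,j(a) = j(a) \,j(e) = j(a)$ for all $a \in A$; by density of $j(A)$ in $\Cstar(A)$ (see \ref{jdense}) and continuity of multiplication, $j(e)$ is a unit in $\Cstar(A)$.

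For (ii): given $\psi_0 \in \purestates\bigl(\Cstar(A)\bigr)$ with corresponding pure state $\psi := \psi_0 \circ j$ on $A$, we have the two scalar identities $\psi(a) = \psi_0\bigl(j(a)\bigr)$ and $\psi(a^*a) = \psi_0\bigl(j(a)^* j(a)\bigr)$, since $j$ is a \st-algebra homomorphism. By \ref{Eachar} these mean exactly that $\psi \in \mathrm{E}(a)$ if and only if $\psi_0 \in \mathrm{E}\bigl(j(a)\bigr)$, and in this case $\psi(a) = \psi_0\bigl(j(a)\bigr)$. Combining this with Theorem \ref{leftirred} applied in both $A$ and $\Cstar(A)$ yields
\[ \mathrm{lsp}_A(a) = \{\,\psi(a) : \psi \in \mathrm{E}(a)\,\}
 = \bigl\{\,\psi_0\bigl(j(a)\bigr) : \psi_0 \in \mathrm{E}\bigl(j(a)\bigr)\,\bigr\}
 = \mathrm{lsp}_{\Cstar(A)}\bigl(j(a)\bigr). \]

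For the analogous statement with the right spectrum, I would observe that reversing the multiplication $(a,b) \mapsto ba$ turns $A$ into another unital Hermitian Banach \st-algebra (involution and norm unchanged, and Hermiticity is preserved since it is a spectral condition on Hermitian elements) whose enveloping C*-algebra is $\Cstar(A)$ with reversed multiplication, and in which the original right spectrum becomes the left spectrum; the previous identity then transfers. Finally, the statement for the full spectrum follows from the identity $\s(b) = \mathrm{lsp}(b) \cup \mathrm{rsp}(b)$, which holds in any unital algebra by \ref{leftrightinv}. The one step needing care — and the spot I would expect to have to double-check — is the argument that $\Cstar(A)$ is unital, which relies on the non-triviality of $j(e)$, i.e.\ on the existence of at least one state on $A$, for which \ref{extstate} (specific to Hermitian Banach \st-algebras) is the right tool.
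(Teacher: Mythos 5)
Your proof is correct and follows essentially the same route as the paper: the bijection $\psi\0 \mapsto \psi\0 \circ j$ of \ref{homPS} restricts, via the characterisation \ref{Eachar}, to a bijection between $\mathrm{E}\bigl(j(a)\bigr)$ and $\mathrm{E}(a)$, after which \ref{leftirred} applied in both algebras gives the equality, with the right spectrum handled by reversing the multiplication and the full spectrum by \ref{leftrightinv}. Your explicit verification that $\Cstar(A)$ is unital and Hermitian is a sound filling-in of a step the paper leaves tacit.
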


\begin{proof}
The bijection $\psi\0 \mapsto \psi\0 \circ j$ from $\purestates \bigl(\Cstar(A)\bigr)$
to $\purestates (A)$, cf.\ \ref{homPS}, restricts to a bijection from
$\mathrm{E}\bigl( j(a) \bigr)$ to $\mathrm{E}(a)$, cf.\ \ref{Eachar}.
The statement concerning the right spectrum follows by changing
the multiplication in $A$ to $(a,b) \mapsto ba$. The statement
concerning the spectrum follows from \ref{leftrightinv}.
\end{proof}

\medskip
Now for the case in which $A$ is not necessarily unital.

\begin{theorem}\label{leftirrednu}%
For an element $a$ of $A$, we have
\[ \mathrm{lsp}_A (a) \setminus \{0\}
= \{ \,\psi (a) : \psi \in \mathrm{E}(a) \,\} \setminus \{0\}. \]
\end{theorem}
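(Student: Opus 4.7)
The plan is to reduce to the unital case, Theorem \ref{leftirred}, by passing to the unitisation. If $A$ already has a unit, then $\tld{A} = A$ and the claim is just \ref{leftirred} with $\{0\}$ removed from both sides, so I assume $A$ is non-unital. Then $\tld{A}$ is a unital Hermitian Banach \st-algebra by \ref{Banachunitis} together with the opening proposition of this section, and by the very definition of the left spectrum one has $\mathrm{lsp}_A(a) = \mathrm{lsp}_{\tld{A}}(a)$. Applying Theorem \ref{leftirred} to $\tld{A}$ then gives
\[ \mathrm{lsp}_A(a) \,=\, \{\,\tld{\psi}(a) : \tld{\psi} \in \mathrm{E}_{\tld{A}}(a)\,\}. \]

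The next step is to classify the pure states on $\tld{A}$. By \ref{extquasi}, the assignment $\varphi \mapsto \tld{\varphi}$, with $\tld{\varphi}(e) := 1$ and $\tld{\varphi}|_A = \varphi$, is a bijection from $\quasistates(A)$ onto $\statespace(\tld{A})$, and this bijection is plainly affine. Affine bijections preserve extreme points, so in view of \ref{extrquasi} (which identifies the extreme points of $\quasistates(A)$ with $\purestates(A) \cup \{0\}$), every pure state on $\tld{A}$ either restricts on $A$ to an element of $\purestates(A)$ or is the ``degenerate'' pure state $\tld{\psi}_0$ characterised by $\tld{\psi}_0(e) = 1$ and $\tld{\psi}_0|_A = 0$.

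I would then translate the $\mathrm{E}$-condition via \ref{Eachar}: membership of $\tld{\psi}$ in $\mathrm{E}_{\tld{A}}(a)$ is captured by $|\tld{\psi}(a)|^2 = \tld{\psi}(a^*a)$, a condition depending only on the values of $\tld{\psi}$ at elements of $A$. Hence a non-exceptional $\tld{\psi}$ lies in $\mathrm{E}_{\tld{A}}(a)$ iff its restriction $\psi$ lies in $\mathrm{E}_A(a)$, and in that case $\tld{\psi}(a) = \psi(a)$; meanwhile the exceptional state always satisfies $\tld{\psi}_0 \in \mathrm{E}_{\tld{A}}(a)$ and contributes only $\tld{\psi}_0(a) = 0$. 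Putting these together,
\[ \{\,\tld{\psi}(a) : \tld{\psi} \in \mathrm{E}_{\tld{A}}(a)\,\} \,=\, \{\,\psi(a) : \psi \in \mathrm{E}_A(a)\,\} \cup \{0\}, \]
and discarding $0$ on both sides of the resulting chain of equalities yields the claim.

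The main obstacle is precisely this bookkeeping with pure states under unitisation: the exceptional pure state on $\tld{A}$ does not correspond to any pure state on $A$, and it is exactly this ``extra'' point that forces the removal of $0$ in the statement.
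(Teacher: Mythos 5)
Your proof is correct and follows the same overall route as the paper's: both reduce to the unital theorem \ref{leftirred} via the unitisation, using $\mathrm{lsp}_A(a) = \mathrm{lsp}_{\tld{A}}(a)$. The difference lies in how the pure states of $\tld{A}$ are matched with those of $A$. The paper never classifies $\purestates(\tld{A})$: for ``$\subset$'' it takes the single pure state $\tld{\psi}$ supplied by \ref{leftirred}, restricts it to a quasi-state $\psi$ on $A$, and uses $\lambda \neq 0$ to get $0 \neq |\,\psi(a)\,|^2 = \psi(a^*a) \leq v(\psi)\,\psi(a^*a)$, forcing $v(\psi) = 1$ so that $\psi$ is a state; purity of $\psi$ then follows from the fact that every state on $\tld{A}$ takes the value $1$ at $e$. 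For ``$\supset$'' it just invokes \ref{statecanext}. You instead classify all pure states of $\tld{A}$ in advance, by observing that the bijection $\quasistates(A) \to \statespace(\tld{A})$ of \ref{extquasi} is affine and hence carries the extreme points $\purestates(A) \cup \{0\}$ of \ref{extrquasi} onto $\purestates(\tld{A})$. This is a clean and in fact slightly stronger argument: it yields the unrelativised identity $\mathrm{lsp}_A(a) = \{\,\psi(a) : \psi \in \mathrm{E}(a)\,\} \cup \{0\}$, consistent with \ref{speczero}. The only point you should make explicit is that the caveat in \ref{extquasi} concerning pre-C*-algebras whose completion is unital is vacuous here, since $A$ is complete and assumed non-unital.
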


\begin{proof}
The inclusion ``$\supset$'' follows from \ref{statecanext}, \ref{leftirred},
and \ref{specunit}. Conversely, let $0 \neq \lambda \in \mathrm{lsp}_A (a)$.
We then also have $0 \neq \lambda \in \mathrm{lsp}_{\tld{A}} (a)$ by
\ref{specunit}. Hence there exists a pure state $\tld{\psi}$ on $\tld{A}$
with $0 \neq \lambda = \tld{\psi}(a)$ and ${| \,\tld{\psi} (a) \,| \,}^2 = \tld{\psi} (a^*a)$,
cf.\ \ref{leftirred} \& \ref{Eachar}. The restriction $\psi$ of $\tld{\psi}$ to $A$
then is a quasi-state with $0 \neq \lambda = \psi (a)$, and
${| \,\psi (a) \,| \,}^2 = \psi (a^*a)$. The fact that
$0 \neq {| \,\psi (a) \,| \,}^2 = \psi (a^*a)$ implies that $\psi$ actually is a state.
The state $\psi$ is pure because every state $\varphi$ on $\tld{A}$ satisfies
$\varphi (e) = 1$, cf. \ref{vare}. This makes that $\psi \in \mathrm{E}(a)$,
cf.\ \ref{Eachar}.
\pagebreak
\end{proof}

\begin{corollary}\label{lspCw}%
For an element $a$ of $A$, we have
\[ \mathrm{lsp}_A(a) \setminus \{0\}
= \mathrm{lsp}_{\Cstar(A)} \bigl( j(a) \bigr) \setminus \{0\}. \]
Here, the left spectrum can be replaced with the right spectrum
and with the spectrum.
\end{corollary}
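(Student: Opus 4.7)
The plan is to mirror the proof of Corollary~\ref{lspC}, but using the non-unital characterization of the left spectrum provided by Theorem~\ref{leftirrednu} in place of Theorem~\ref{leftirred}. First I would note that $\Cstar(A)$ is a C*-algebra, hence Hermitian by \ref{C*Herm}, so Theorem~\ref{leftirrednu} applies to both $A$ and $\Cstar(A)$. Applied to $j(a) \in \Cstar(A)$, it gives
\[ \mathrm{lsp}_{\Cstar(A)}\bigl( j(a) \bigr) \setminus \{0\} = \{ \,\psi\0 \bigl( j(a) \bigr) : \psi\0 \in \mathrm{E}\bigl( j(a) \bigr) \,\} \setminus \{0\}, \]
while applied to $a \in A$ it gives the corresponding expression in terms of $\mathrm{E}(a)$.

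Next I would invoke \ref{homPS}, which says that the assignment $\psi\0 \mapsto \psi\0 \circ j$ is a bijection from $\purestates \bigl(\Cstar(A)\bigr)$ onto $\purestates(A)$. Using the characterization of $\mathrm{E}(\cdot)$ from Proposition~\ref{Eachar} and the fact that $\psi\0\bigl(j(a)^* j(a)\bigr) = \psi\0\bigl(j(a^*a)\bigr) = (\psi\0 \circ j)(a^*a)$ together with $\psi\0\bigl(j(a)\bigr) = (\psi\0 \circ j)(a)$, this bijection restricts to a bijection between $\mathrm{E}\bigl(j(a)\bigr)$ and $\mathrm{E}(a)$. Since $\psi\0\bigl(j(a)\bigr) = \psi(a)$ along this correspondence, the two sets of non-zero values coincide, proving the statement for the left spectrum.

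For the right spectrum, I would apply the same argument to the \st-algebra obtained from $A$ by reversing the multiplication to $(a,b) \mapsto ba$; note that reversing the multiplication preserves the involution, the Hermitian nature \ref{fundHerm}, and the enveloping C*-algebra construction, so the above reasoning carries over verbatim. For the spectrum itself, I would use that an element of a unital algebra is invertible iff it is both left and right invertible, so that $\s(b) = \mathrm{lsp}(b) \cup \mathrm{rsp}(b)$ for any $b$ in any unital algebra; applying this in $\tld{A}$ and in $\widetilde{\Cstar(A)}$ to $\lambda e - a$ and $\lambda e - j(a)$ (and using \ref{specunit}) gives the claim for $\s(a) \setminus \{0\}$ as well.

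I do not expect any genuine obstacle: the hard work has already been done in Theorem~\ref{leftirrednu} and \ref{homPS}. The only point requiring a moment's care is verifying that the bijection of pure states restricts cleanly to the $\mathrm{E}$-sets, but this is immediate from the purely algebraic characterization in \ref{Eachar} together with the identity $\psi\0 \circ j = \psi$.
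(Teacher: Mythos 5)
Your proposal is correct and is essentially the paper's own argument: the paper simply says the proof is the same as that of \ref{lspC}, namely that the bijection $\psi\0 \mapsto \psi\0 \circ j$ of pure states restricts via \ref{Eachar} to a bijection of the $\mathrm{E}$-sets, with the right spectrum handled by reversing the multiplication and the spectrum by \ref{leftrightinv} — only now invoking \ref{leftirrednu} in place of \ref{leftirred}, exactly as you do.
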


\begin{proof}
The proof is the same as that of \ref{lspC}.
\end{proof}

\medskip
It is not very far from here to Ra\u{\i}kov's Criterion \ref{Raikov}.

\begin{theorem}
For an element $a$ of $A$, we have
\[ \mathrm{lsp}_A(a) \setminus \{0\}
= \mathrm{lsp} \bigl( \pi\ra (a) \bigr) \setminus \{0\}
= \mathrm{lsp} \bigl( \pi\univ (a) \bigr) \setminus \{0\}, \]
and the latter two sets consist entirely of eigenvalues.
\end{theorem}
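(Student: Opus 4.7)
The plan is to sandwich all three sets between $\mathrm{lsp}_A(a) \setminus \{0\}$, which Theorem~\ref{leftirrednu} identifies with $\{\psi(a) : \psi \in \mathrm{E}(a)\} \setminus \{0\}$. The eigenvalue claim will emerge simultaneously: each nonzero point of $\mathrm{lsp}(\pi\ra(a))$ or $\mathrm{lsp}(\pi\univ(a))$ will be seen to arise as $\psi(a)$ for some $\psi \in \mathrm{E}(a)$, hence as a genuine eigenvalue.

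For the inclusion $\mathrm{lsp}_A(a) \setminus \{0\} \subset \mathrm{lsp}(\pi\ra(a)) \setminus \{0\}$, and likewise for $\pi\univ$, I would argue as follows. Given $0 \neq \mu \in \mathrm{lsp}_A(a)$, Theorem~\ref{leftirrednu} supplies $\psi \in \mathrm{E}(a)$ with $\mu = \psi(a)$, so that $\pi_\psi(a) \,c_\psi = \mu \,c_\psi$ with $c_\psi \neq 0$ (as $c_\psi$ is cyclic for $\pi_\psi$, cf.~\ref{repstate}). Since $\pi_\psi$ is a direct summand of $\pi\univ$, the scalar $\mu$ is an eigenvalue of $\pi\univ(a)$, hence lies in $\mathrm{lsp}(\pi\univ(a))$ (the corresponding translate of $\pi\univ(a)$ fails to be injective, so \emph{a fortiori} fails to be left invertible). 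For $\pi\ra$, the same conclusion follows since $\pi_\psi$ is spatially equivalent to $\pi_{\psi_\lambda}$ for $\lambda = [\psi] \in \wht{A}$, and $\pi_{\psi_\lambda}$ is a summand of $\pi\ra$.

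For the reverse inclusion $\mathrm{lsp}(\pi\ra(a)) \setminus \{0\} \subset \mathrm{lsp}_A(a) \setminus \{0\}$, and similarly for $\pi\univ$, I would exploit the universal factorisation. Both $\pi\ra$ and $\pi\univ$ are $\sigma$-contractive (direct sums of $\sigma$-contractive GNS representations by \ref{repstate} and \ref{directsumsigma}), so they factor as $\pi\ra = (\pi\ra)\0 \circ j$ and $\pi\univ = (\pi\univ)\0 \circ j$ through representations of $\Cstar(A)$. A left-spectrum analogue of Theorem~\ref{spechom} applied to $(\pi\ra)\0 : \Cstar(A) \to \blop(H\ra)$ then gives $\mathrm{lsp}(\pi\ra(a)) \setminus \{0\} \subset \mathrm{lsp}_{\Cstar(A)}(j(a)) \setminus \{0\}$, and the right-hand side equals $\mathrm{lsp}_A(a) \setminus \{0\}$ by Corollary~\ref{lspCw}. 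The identical argument applies to $\pi\univ$. Combining the two directions closes the sandwich and delivers the eigenvalue assertion.

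The point requiring verification is the left-spectrum analogue of Theorem~\ref{spechom}: for any algebra homomorphism $\pi : X \to Y$, one has $\mathrm{lsp}_Y(\pi(x)) \setminus \{0\} \subset \mathrm{lsp}_X(x) \setminus \{0\}$ for every $x \in X$. This requires only a minor adaptation of the proof of \ref{spechom}: if $\lambda \neq 0$ and $b \in \tld{X}$ satisfies $b (\lambda e_X - x) = e_X$, then the very element $c := \tld{\pi}(b) + \lambda^{-1}\bigl(e_Y - \tld{\pi}(e_X)\bigr)$ used in \ref{spechom} still serves as a left inverse of $\lambda e_Y - \pi(x)$ in $\tld{Y}$, because the cross terms $\lambda \,\tld{\pi}(b)\bigl(e_Y - \tld{\pi}(e_X)\bigr)$ and $\lambda^{-1}\bigl(e_Y - \tld{\pi}(e_X)\bigr)\tld{\pi}(\lambda e_X - x)$ in the expansion vanish since $e_X$ is the unit of $\tld{X}$. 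So no genuinely new difficulty arises beyond recognising that the construction of \ref{spechom} is in fact one-sided; the real substance of the proof is the combination of Theorem~\ref{leftirrednu} with the universal factorisation through $\Cstar(A)$.
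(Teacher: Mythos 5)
Your proof is correct and follows essentially the same route as the paper's: the forward inclusion via \ref{leftirrednu} and \ref{Ea} (each nonzero point of $\mathrm{lsp}_A(a)$ is $\psi(a)$ for some $\psi\in\mathrm{E}(a)$, hence an eigenvalue of a direct summand), and the reverse inclusion via the one-sided variant of \ref{spechom}, which you rightly observe requires no new idea since the construction there is already one-sided. The only divergence is cosmetic: for the reverse inclusion the paper applies that one-sided fact directly to $\pi\ra$ and $\pi\univ$ as homomorphisms out of $A$, whereas you route it through $(\pi\ra)\0$ on $\Cstar(A)$ and then invoke \ref{lspCw} — a harmless but unnecessary detour.
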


\begin{proof}
This follows now from \ref{leftirrednu}, \ref{Ea}, and from the fact that
\[ \mathrm{lsp} \bigl( \pi (a) \bigr) \setminus \{0\}
\subset \mathrm{lsp} ( a ) \setminus \{0\} \]
for an algebra homomorphism $\pi$, cf.\ the proof of \ref{spechom}.
\end{proof}

\clearpage


\addtocontents{toc}{\protect\vspace{0.4em}}

\part{Spectral Theory of Representations}\label{part3}

\addtocontents{toc}{\protect\vspace{0.4em}}

\chapter{Spectral Theorems and von Neumann Algebras}

\setcounter{section}{37}


\section{Spectral Measures}

\medskip
For the remainder of this paragraph, let $H \neq \{0\}$ denote a
Hilbert space, let $\mathcal{E}$ denote a $\sigma$-algebra on
a set $\Omega \neq \varnothing$, and let $P$ denote a spectral
measure defined on $\mathcal{E}$ and acting on $H$, in the
sense of the following definition.

\begin{definition}[spectral measures]\label{specmeasdef}%
\index{concepts}{spectral!measure}\index{concepts}{measure!spectral}%
Let $H \neq \{0\}$ be a Hilbert space, and let $\mathcal{E}$ be
a $\sigma$-algebra on a set $\Omega \neq \varnothing$. Then a
\underline{spectral measure} defined on $\mathcal{E}$ and acting
on $H$ is a map
\[ P : \mathcal{E} \to \blop(H) \]
such that
\begin{itemize}
   \item[$(i)$] $P(\Delta)$ is a projection in $H$ for each $\Delta \in \mathcal{E}$,
  \item[$(ii)$] $P(\Omega) = \mathds{1}$,
 \item[$(iii)$] for all $x \in H$, the function
\end{itemize}
\begin{align*}
     \quad \langle P x, x \rangle : \mathcal{E} & \to [ \,0, \infty \,[ \\
      \Delta & \mapsto \langle P(\Delta) x, x \rangle  = {\| \,P(\Delta) x \,\| \,}^2
\end{align*}
\qquad\quad\ \ is a measure.

\medskip
So, if $x$ is a unit vector in $H$, then $\langle P x, x \rangle$ is a probability measure.

For $x$, $y \in H$ arbitrary, one defines a bounded complex measure
$\langle P x, y \rangle$ on $\Omega$ by polarisation:
\[ \langle Px,y \rangle \,:=
\,\frac14 \,\sum _{k=1}^{4} \,\iu ^k \,\langle P(x + \iu ^ky),(x + \iu ^ky) \rangle,
\quad \text{cf.\ the appendix \ref{bdedmeas}.} \]
We then have $\langle P x, y \rangle (\Delta) = \langle P(\Delta) x, y \rangle$
for all $\Delta \in \mathcal{E}$, and all $x,y \in H$.

By abuse of notation, one also puts
\[ P := \{ \,P (\Delta) \in \blop(H) : \Delta \in \mathcal{E} \,\}. \pagebreak \]
\end{definition}

\begin{lemma}\label{projsum}%
Let $p, q$ be projections in $H$. Then their sum $p+q$ is a
projection if and only if $p$ and $q$ are orthogonal \ref{projrefl}.
\end{lemma}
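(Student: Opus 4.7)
The plan is a straightforward algebraic verification on both sides, exploiting the defining relations $p^* = p = p^2$ and $q^* = q = q^2$.

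For the easy direction, assume $pq = 0$ (so also $qp = (pq)^* = 0$). Then I would check the two defining properties of a projection for $p+q$: Hermiticity is immediate since $(p+q)^* = p^* + q^* = p+q$, and idempotency follows from expanding $(p+q)^2 = p^2 + pq + qp + q^2 = p + 0 + 0 + q = p+q$.

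For the converse, suppose $p+q$ is a projection. Expanding the idempotency relation $(p+q)^2 = p + q$ gives $p + pq + qp + q = p+q$, that is,
\[ pq + qp = 0. \tag{$*$} \]
The remaining task, which is the only real step in the argument, is to upgrade $(*)$ to $pq = 0$. I would do this by multiplying $(*)$ on the left by $p$ and using $p^2 = p$ to get $pq + pqp = 0$, i.e.\ $pq = -pqp$; then multiplying $(*)$ on the right by $p$ and using $p^2 = p$ to get $pqp + qp = 0$, i.e.\ $qp = -pqp$. Combining these two identities yields $pq = qp$, which together with $(*)$ gives $2pq = 0$, hence $pq = 0$.

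The main (and only) obstacle is that little symmetrisation trick in the converse; everything else is automatic from the definitions. An alternative approach would be to test $(*)$ against a vector $x \in H$, observing that $\langle (pq+qp)x, x\rangle = 2\,\mathrm{Re}\,\langle px, qx\rangle$ and then polarising, but the direct ring-theoretic manipulation above is quicker and avoids invoking positivity. I would present the proof in a compact paragraph, keeping the two directions clearly separated.
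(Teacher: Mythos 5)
Your proof is correct, and in the converse direction it takes a genuinely different route from the paper. The paper also starts from $pq+qp=0$, but then multiplies by $q$ on \emph{both} sides to get $qpq+qpq=0$, hence $qpq=0$, and finishes with the C*-property: $\|\,pq\,\|^{\,2}=\|\,(pq)^*(pq)\,\|=\|\,qpq\,\|=0$, so $pq=0$. You instead multiply by $p$ on the left and on the right separately, deduce $pq=-pqp=qp$, and combine with $pq+qp=0$ to get $2pq=0$. Your argument is purely ring-theoretic: it uses only idempotency of $p$ and the fact that one may divide by $2$, so it proves the statement for Hermitian idempotents in any complex \st-algebra, with no appeal to the operator norm. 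The paper's argument trades that generality for a one-line finish via the C*-identity, which is very much in the spirit of the surrounding text. Both are complete; yours is the more elementary and more portable of the two.
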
 

\begin{proof}
It is easily seen that the sum of two orthogonal projections is a projection again.
Conversely, if $p+q$ is a projection, then
\[ p+q = {( \,p+q \,)}^{\,2} = p + p\,q + q\,p +q, \]
which implies that $p\,q + q\,p = 0$. Multiplication from the left and from
the right with $q$ yields $q\,p\,q + q\,p\,q = 0$, and so $q\,p\,q = 0$.
The C*-property \ref{preC*alg} implies ${\| \,p\,q \,\|}^{\,2} = \| \,q\,p\,q \,\| = 0$,
that is, $p\,q = 0$.
\end{proof}

\begin{proposition}\label{rep}%
The following statements hold.
\begin{itemize}
   \item[$(i)$] $P$ is finitely additive, i.e.\ if $D$ is a \underline{finite}
   collection of mutually disjoint sets in $\mathcal{E}$, then
  \[ P \,\Bigl(\bigcup _{\text{\footnotesize{$\Delta \in D$}}} \Delta \,\Bigr)
  = \sum _{\text{\footnotesize{$\Delta \in D$}}} P(\Delta), \]
  \item[$(ii)$] $P$ is orthogonal, i.e.\ if $\Delta_1, \Delta_2 \in \mathcal{E}$
  and $\Delta_1 \cap \Delta_2 = \varnothing$, then
  \[ P(\Delta_1) \,P(\Delta_2) = 0, \]
   \item[$(iii)$] the events in $\mathcal{E}$ are independent, i.e.\ for
  $\Delta_1, \Delta_2 \in \mathcal{E}$, we have
  \[ P(\Delta_1 \cap \Delta_2) = P(\Delta_1) \,P(\Delta_2). \]
  In particular $P(\Delta_1)$ and $P(\Delta_2)$ \underline{commute}.
\end{itemize}
\end{proposition}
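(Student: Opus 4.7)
The plan is to deduce the three items in sequence, each resting on the preceding one.

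For (i), I would exploit the polarisation identity to upgrade scalar information to operator information. Fix a finite disjoint collection $D \subset \mathcal{E}$ and set $\Delta := \bigcup_{\Delta' \in D} \Delta'$. Since $\langle P x, x \rangle$ is a (scalar) measure for each $x \in H$, finite additivity at the scalar level gives
\[ \langle P(\Delta) x, x \rangle = \sum_{\Delta' \in D} \langle P(\Delta') x, x \rangle = \Bigl\langle \sum_{\Delta' \in D} P(\Delta') x, x \Bigr\rangle \quad \text{for all} \ x \in H. \]
Polarisation in the complex Hilbert space $H$ then forces $P(\Delta) = \sum_{\Delta' \in D} P(\Delta')$ as bounded operators.

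For (ii), I would feed (i) into the preceding lemma \ref{projsum}. Given disjoint $\Delta_1, \Delta_2 \in \mathcal{E}$, item (i) yields $P(\Delta_1) + P(\Delta_2) = P(\Delta_1 \cup \Delta_2)$, which is a projection by definition \ref{specmeasdef}(i). Lemma \ref{projsum} then gives $P(\Delta_1) \,P(\Delta_2) = 0$.

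For (iii), I would split both sets over the intersection, using $\Delta_1 = (\Delta_1 \cap \Delta_2) \sqcup (\Delta_1 \setminus \Delta_2)$ and $\Delta_2 = (\Delta_1 \cap \Delta_2) \sqcup (\Delta_2 \setminus \Delta_1)$. Applying (i) gives
\[ P(\Delta_1) = P(\Delta_1 \cap \Delta_2) + P(\Delta_1 \setminus \Delta_2), \qquad P(\Delta_2) = P(\Delta_1 \cap \Delta_2) + P(\Delta_2 \setminus \Delta_1). \]
Multiplying these out yields four terms: the first is $P(\Delta_1 \cap \Delta_2)^{\,2} = P(\Delta_1 \cap \Delta_2)$ since $P(\Delta_1 \cap \Delta_2)$ is a projection, and the remaining three are products of projections indexed by pairwise disjoint sets, hence vanish by (ii). The commutativity statement then falls out by symmetry in $\Delta_1$ and $\Delta_2$.

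The main obstacle is really just (i), where one must be careful that scalar additivity of $\langle P x, x \rangle$ genuinely determines the operator $P(\Delta)$; once this is in hand, (ii) is a one-line appeal to lemma \ref{projsum}, and (iii) is a purely algebraic expansion.
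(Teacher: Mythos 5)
Your proposal is correct and follows essentially the same route as the paper: finite additivity via the (polarised) scalar measures, orthogonality via lemma \ref{projsum}, and independence by decomposing both sets over their intersection and expanding the product. The only cosmetic difference is that you polarise from $\langle Px,x\rangle$ in step $(i)$, whereas the paper works directly with the complex measures $\langle Px,y\rangle$ already introduced by polarisation in \ref{specmeasdef}.
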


\begin{proof} (i): For all $x, y \in H$, we have
\[ \langle \,P \,\Bigl(\bigcup _{\text{\footnotesize{$\Delta \in D$}}} \Delta \,\Bigr) \,x, y \,\rangle
= \sum _{\text{\footnotesize{$\Delta \in D$}}} \langle \,P(\Delta) \,x, y \,\rangle
= \langle \sum _{\text{\footnotesize{$\Delta \in D$}}} P(\Delta) \,x, y \,\rangle \]
by finite additivity of the measure
$\langle P x, y \rangle : \Delta \mapsto \langle P(\Delta) x, y \rangle$, cf.\ \ref{specmeasdef}.
\par \noindent (ii): Let $\Delta_1,\Delta_2 \in \mathcal{E}$
with $\Delta_1 \cap \Delta_2 = \varnothing$.
By (i), we have
\[ P(\Delta_1) + P(\Delta_2) = P(\Delta_1 \cup \Delta_2), \]
which says that the sum $P(\Delta_1) + P(\Delta_2)$ is a projection.
The preceding lemma \ref{projsum} implies now that $P(\Delta_1) \,P(\Delta_2) = 0$.
\par \noindent (iii): This follows from (i) and (ii) in the following way.
\begin{align*}
& \ P(\Delta_1)P(\Delta_2) \\
= & \ P\bigl((\Delta_1 \cap \Delta_2) \cup (\Delta_1 \setminus \Delta_2)\bigr)
\cdot P\bigl((\Delta_2 \cap \Delta_1) \cup (\Delta_2 \setminus \Delta_1)\bigr) \\
= & \ \bigl(P(\Delta_1 \cap \Delta_2)+P(\Delta_1 \setminus \Delta_2)\bigr)
\cdot \bigl(P(\Delta_2 \cap \Delta_1)+P(\Delta_2 \setminus \Delta_1)\bigr) \\
= & \ P(\Delta_1 \cap \Delta_2). \pagebreak \qedhere
\end{align*}
\end{proof}

\begin{definition}[$\stepfun (\mathcal{E})$]%
\index{symbols}{S6@$\stepfun(\mathcal{E})$}\index{concepts}{step functions}%
We denote by $\stepfun (\mathcal{E})$ the \st-subalgebra of
$\bigl( \mspace{2mu} {\ell}^{\,\infty}(\Omega), | \cdot |_\infty \mspace{2mu} \bigr)$
consisting of all $\mathcal{E}$-step functions on $\Omega$.
\end{definition}

\begin{definition}[$I_P$]\index{symbols}{I1@$I_P$}%
Let $I_P$ be the linear map on $\stepfun (\mathcal{E})$ with \linebreak
$I_P(1_{\text{\small{$\Delta$}}}) = P(\Delta)$
for all $\Delta \in \mathcal{E}$. That is, if
\[ h = \sum _{\text{\footnotesize{$\Delta \in D$}}}
\alpha_{\text{\footnotesize{$\Delta$}}} \,1_{\text{\footnotesize{$\Delta$}}} \]
is an $\mathcal{E}$-step function on $\Omega$, with $D \subset \mathcal{E}$
a finite partition of $\Omega$, then
\[ I_P (h) := \sum _{\text{\footnotesize{$\Delta \in D$}}}
\alpha_{\text{\footnotesize{$\Delta$}}} \,P(\Delta). \]
Here ``$_{\,}I_P$'' stands for ``$_{\,}$integral with respect to $P$''.
\end{definition}

\begin{observation}
The map $I_P$ is a representation of $\stepfun (\mathcal{E})$ on $H$
by proposition \ref{rep} above.
\end{observation}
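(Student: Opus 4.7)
The plan is to verify the three defining properties of a representation in turn: linearity (which requires well-definedness on $\stepfun(\mathcal{E})$), multiplicativity, and compatibility with involutions. Throughout, the key technical device will be passing to a \emph{common refinement} of two finite partitions of $\Omega$ in $\mathcal{E}$, combined with finite additivity of $P$ established in \ref{rep}(i).

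First, I would show $I_P$ is well defined on $\stepfun(\mathcal{E})$. If $D_1$ and $D_2$ are two finite partitions of $\Omega$ in $\mathcal{E}$ representing the same step function $h$, then the common refinement $D := \{\Delta_1 \cap \Delta_2 : \Delta_1 \in D_1, \Delta_2 \in D_2\}$ is again a finite partition in $\mathcal{E}$, and by finite additivity of $P$ each $P(\Delta_i)$ decomposes as $\sum_{\Delta \in D,\,\Delta \subset \Delta_i} P(\Delta)$. Substituting shows both expressions for $I_P(h)$ collapse to the same sum over $D$. Linearity of $I_P$ follows from the same refinement trick: given $h_1$ with partition $D_1$ and $h_2$ with partition $D_2$, write both functions over the common refinement $D$ and the sum $\lambda h_1 + \mu h_2$ becomes a single sum of the required form.

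For multiplicativity, I would pick two step functions $h_1 = \sum_{\Delta \in D} \alpha_\Delta 1_\Delta$ and $h_2 = \sum_{\Delta \in D} \beta_\Delta 1_\Delta$ written over a common refinement $D$ of their defining partitions. Then $h_1 h_2 = \sum_{\Delta \in D} \alpha_\Delta \beta_\Delta 1_\Delta$, so
\[
I_P(h_1 h_2) = \sum_{\Delta \in D} \alpha_\Delta \beta_\Delta \,P(\Delta).
\]
On the other hand,
\[
I_P(h_1) \,I_P(h_2) = \sum_{\Delta_1,\Delta_2 \in D} \alpha_{\Delta_1}\beta_{\Delta_2}\,P(\Delta_1) P(\Delta_2).
\]
By the orthogonality proved in \ref{rep}(ii), the cross terms with $\Delta_1 \neq \Delta_2$ vanish, and the diagonal terms yield $P(\Delta)^2 = P(\Delta)$ since each $P(\Delta)$ is a projection. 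Hence the two expressions coincide. This is the main computational step, but it is routine once the common refinement is chosen.

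Finally, for the involution, note that $\stepfun(\mathcal{E})$ inherits its involution from ${\ell}^{\infty}(\Omega)$, so $h^*$ is the pointwise complex conjugate $\overline{h}$. Since each $P(\Delta)$ is a projection in $\blop(H)$ and hence self-adjoint, one has
\[
I_P(\overline{h}) = \sum_{\Delta \in D} \overline{\alpha_\Delta}\,P(\Delta) = \Bigl(\sum_{\Delta \in D} \alpha_\Delta\,P(\Delta)\Bigr)^{\!*} = I_P(h)^*,
\]
which gives $\langle I_P(h)x, y \rangle = \langle x, I_P(h^*) y \rangle$ for all $x,y \in H$, as required by \ref{repdef}. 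The main obstacle is simply the bookkeeping with partitions; no real analytic content is needed since finite additivity, orthogonality, and the projection property from \ref{rep} do all the work.
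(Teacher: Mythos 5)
Your verification is correct and is exactly the unpacking the paper intends: the text gives no written proof beyond citing proposition \ref{rep}, and your use of common refinements together with finite additivity, orthogonality (or equivalently \ref{rep}(iii)), and the self-adjointness of the projections $P(\Delta)$ is the standard argument that citation is meant to invoke. No gaps.
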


\begin{proposition}\label{stepint}%
For an $\mathcal{E}$-step function $h$ on $\Omega$, we have
\[ \langle I_P (h) x, x \rangle = \int h \,\diff \langle P x, x \rangle
\quad \text{whenever} \quad x \in H. \]
\end{proposition}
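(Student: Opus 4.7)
The plan is to reduce the identity to the defining property of the measure $\langle Px,x\rangle$ on indicator functions, and then extend by linearity.

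First, I would verify the statement for indicator functions $1_{\Delta}$ with $\Delta \in \mathcal{E}$. On the left-hand side, by the definition of $I_P$, we have $I_P(1_{\Delta}) = P(\Delta)$, so
\[ \langle I_P(1_{\Delta})x, x \rangle = \langle P(\Delta)x, x \rangle. \]
On the right-hand side, the defining property of the measure $\langle Px, x \rangle$ recalled in \ref{specmeasdef} gives
\[ \int 1_{\Delta} \,\diff \langle Px,x \rangle = \langle Px,x \rangle(\Delta) = \langle P(\Delta)x,x \rangle, \]
so the two sides agree on indicators.

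Next, I would invoke linearity in both arguments. The map $I_P$ is linear on $\stepfun(\mathcal{E})$ by definition, so $h \mapsto \langle I_P(h)x,x \rangle$ is a linear functional on $\stepfun(\mathcal{E})$. Similarly, $h \mapsto \int h \,\diff \langle Px,x \rangle$ is linear on $\stepfun(\mathcal{E})$ by linearity of the integral with respect to the finite measure $\langle Px,x \rangle$. Writing an arbitrary $\mathcal{E}$-step function in the canonical form $h = \sum_{\Delta \in D} \alpha_{\Delta} \,1_{\Delta}$ with $D \subset \mathcal{E}$ a finite partition of $\Omega$, both sides split as the same linear combination of the indicator identities established in the previous step, yielding the desired equality.

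There is no real obstacle here; the proof is a routine two-step extension-by-linearity argument once one unpacks the definitions of $I_P$ and of the measure $\langle Px,x \rangle$. The only point one has to be mildly careful about is that $\langle Px,x \rangle$ is indeed a (positive, finite) measure, which is part of the definition \ref{specmeasdef} (iii), so that integration of bounded measurable functions against it is well-defined and linear.
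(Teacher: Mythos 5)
Your proof is correct and is essentially the same as the paper's: the paper also writes $h$ in canonical form $\sum_{\Delta \in D} \alpha_{\Delta}\,1_{\Delta}$, applies the definition of $I_P$, and uses linearity of the inner product and of integration against the measure $\langle Px,x\rangle$ to reduce everything to the identity $\langle P(\Delta)x,x\rangle = \int 1_{\Delta}\,\diff\langle Px,x\rangle$. No gaps.
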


\begin{proof}
Indeed, if
\[ h = \sum _{\text{\footnotesize{$\Delta \in D$}}}
\alpha_{\text{\footnotesize{$\Delta$}}} \,1_{\text{\footnotesize{$\Delta$}}} \]
is a $\mathcal{E}$-step function on $\Omega$, with $D \subset \mathcal{E}$
a finite partition of $\Omega$, then
\begin{align*}
   \langle I_P (h) x, x \rangle
   & = \langle \sum _{\text{\footnotesize{$\Delta \in D$}}}
          \alpha_{\text{\footnotesize{$\Delta$}}} \,P(\Delta) \,x, x \,\rangle \\
   & = \sum _{\text{\footnotesize{$\Delta \in D$}}}
          \alpha_{\text{\footnotesize{$\Delta$}}} \,\langle P(\Delta) \,x, x \rangle \\
   & = \sum _{\text{\footnotesize{$\Delta \in D$}}}
          \alpha_{\text{\footnotesize{$\Delta$}}}
          \int \,1_{\text{\footnotesize{$\Delta$}}} \,\diff \,\langle P x, x \rangle
       = \int h \,\diff \langle P x, x \rangle. \pagebreak \qedhere
\end{align*}
\end{proof}

\begin{definition}[$\bmeas(\mathcal{E})$]%
\index{symbols}{M2@$\bmeas(\mathcal{E})$}%
We denote by $\bmeas(\mathcal{E})$ the subspace of
$\bigl( \mspace{2mu} {\ell}^{\,\infty}(\Omega), | \cdot |_\infty \mspace{2mu} \bigr)$
consisting of all bounded complex-valued
$\mathcal{E}$-measurable functions on $\Omega$,
cf.\ the appendix \ref{measurability}. \pagebreak
\end{definition}

\begin{proposition}\label{closure}%
The vector space $\bmeas(\mathcal{E})$ is the closure of the \st-subalgebra
$\stepfun (\mathcal{E})$ of $\mathcal{E}$-step functions in the C*-algebra
$\bigl( \mspace{2mu} {\ell}^{\,\infty}(\Omega), | \cdot |_\infty \mspace{2mu} \bigr)$.
\end{proposition}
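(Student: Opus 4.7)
The plan is to prove a two-sided inclusion. First I would observe that $\stepfun(\mathcal{E}) \subset \bmeas(\mathcal{E})$ is immediate from the definitions, so it suffices to verify that $\bmeas(\mathcal{E})$ is (a) closed in $\ell^{\,\infty}(\Omega)$ with respect to the supremum norm, and (b) contained in the closure of $\stepfun(\mathcal{E})$.

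For closedness, I would invoke the standard fact that a uniform limit of $\mathcal{E}$-measurable complex-valued functions is $\mathcal{E}$-measurable (pointwise limits of measurable functions are measurable; uniform convergence is a special case). A uniform limit of functions bounded on $\Omega$ is itself bounded, so $\bmeas(\mathcal{E})$ is closed in $\ell^{\,\infty}(\Omega)$. In particular the closure of $\stepfun(\mathcal{E})$ lies inside $\bmeas(\mathcal{E})$.

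For the reverse inclusion, which is the substantive step, I would let $f \in \bmeas(\mathcal{E})$ and reduce to the real non-negative case by splitting $f = (\mathrm{Re}\,f)_+ - (\mathrm{Re}\,f)_- + \iu \,(\mathrm{Im}\,f)_+ - \iu \,(\mathrm{Im}\,f)_-$, each summand being bounded $\mathcal{E}$-measurable and non-negative. So assume $0 \leq f \leq M$ pointwise on $\Omega$. Given $\varepsilon > 0$, choose an integer $n$ with $M/n < \varepsilon$, set $\Delta_k := f^{-1}\bigl( \,[ \,(k-1)M/n, kM/n \,[ \,\bigr)$ for $k = 1, \dots, n-1$ and $\Delta_n := f^{-1}\bigl( \,[ \,(n-1)M/n, M \,] \,\bigr)$. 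These sets lie in $\mathcal{E}$ by measurability of $f$, and they form a finite partition of $\Omega$. The $\mathcal{E}$-step function
\[ h := \sum_{k=1}^{n} \frac{(k-1)M}{n} \,1_{\Delta_k} \]
satisfies $| \,f - h \,| _{\,\infty} \leq M/n < \varepsilon$ by construction. Since $\varepsilon > 0$ is arbitrary, $f$ lies in the closure of $\stepfun(\mathcal{E})$, completing the proof.

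The main obstacle is essentially bookkeeping: ensuring the partition of the range pulls back to a partition of $\Omega$ by sets in $\mathcal{E}$, and handling the reduction from complex-valued to non-negative real-valued functions cleanly. No deep input is needed beyond the definition of measurability and the fact that a uniform limit of measurable functions is measurable.
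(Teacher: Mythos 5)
Your proof is correct and follows essentially the same route as the paper's: one inclusion via the fact that a (uniform, hence pointwise) limit of $\mathcal{E}$-measurable functions is $\mathcal{E}$-measurable, and the other by reducing to bounded non-negative measurable functions and approximating them uniformly by $\mathcal{E}$-step functions obtained from a partition of the range. You merely spell out the explicit construction that the paper leaves as a standard fact.
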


\begin{proof}
Let $\overline{\stepfun (\mathcal{E})}$ denote the closure of $\stepfun (\mathcal{E})$ in
the C*-algebra ${\ell}^{\,\infty}(\Omega)$. We have $\overline{\stepfun (\mathcal{E})}
\subset \bmeas(\mathcal{E})$ as even a pointwise limit of a sequence of
\linebreak $\mathcal{E}$-measurable functions is $\mathcal{E}$-measurable.
Conversely, we also have \linebreak
$\bmeas(\mathcal{E}) \subset \overline{\stepfun (\mathcal{E})}$
because every bounded non-negative $\mathcal{E}$-measurable function is the
uniform limit of an increasing sequence of non-negative $\mathcal{E}$-step functions.
\end{proof}

\begin{corollary}%
The set $\bmeas(\mathcal{E})$ is a C*-subalgebra of ${\ell}^{\,\infty}(\Omega)$.
\end{corollary}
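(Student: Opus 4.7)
The plan is to deduce the statement directly from the preceding proposition \ref{closure}, together with standard facts about closures of \st-subalgebras in C*-algebras.

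First, I would recall that $\stepfun(\mathcal{E})$ is already a \st-subalgebra of $\bigl(\,{\ell}^{\,\infty}(\Omega), | \cdot |_\infty \,\bigr)$, and that by \ref{closure}, the set $\bmeas(\mathcal{E})$ is the closure of $\stepfun(\mathcal{E})$ in ${\ell}^{\,\infty}(\Omega)$. So the whole question reduces to the observation that the closure of a \st-subalgebra in a C*-algebra is itself a C*-subalgebra.

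Next I would verify that the closure of a \st-subalgebra $B$ of ${\ell}^{\,\infty}(\Omega)$ is again a \st-subalgebra. The involution on ${\ell}^{\,\infty}(\Omega)$ is isometric (since ${\ell}^{\,\infty}(\Omega)$ is a C*-algebra, cf.\ \ref{preC*alg}), hence continuous, so the closure is stable under the involution. Since multiplication in any normed algebra is jointly continuous (and in fact uniformly so on bounded subsets), and since the closure of a subspace is a subspace, it follows that the closure of $B$ is a \st-subalgebra of ${\ell}^{\,\infty}(\Omega)$. Applying this with $B := \stepfun(\mathcal{E})$ yields that $\bmeas(\mathcal{E})$ is a \st-subalgebra of ${\ell}^{\,\infty}(\Omega)$.

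Finally, as a closed subspace of the Banach space ${\ell}^{\,\infty}(\Omega)$, the \st-subalgebra $\bmeas(\mathcal{E})$ is complete, and inherits the C*-property from ${\ell}^{\,\infty}(\Omega)$; hence it is a C*-subalgebra (this is exactly the content of the corollary following \ref{condC*}, which states that any closed \st-subalgebra of a C*-algebra is itself a C*-algebra). There is essentially no obstacle: the substantive work was already carried out in the proof of \ref{closure}, where bounded $\mathcal{E}$-measurable functions are approximated uniformly by $\mathcal{E}$-step functions.
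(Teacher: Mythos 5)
Your proof is correct and follows exactly the route the paper intends: the corollary is stated without proof as an immediate consequence of \ref{closure}, and your argument (closure of a \st-subalgebra of a C*-algebra is a closed, hence complete, \st-subalgebra, using continuity of the involution and of multiplication) supplies precisely the missing standard reasoning.
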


\begin{definition}[spectral integrals]%
Let $f \in \bmeas(\mathcal{E})$, and let \linebreak $a \in \blop(H)$.

One writes
\[ a = \int f \,\diff P \qquad \text{(weakly)} \]
if
\[ \langle ax,x \rangle = \int f \,\diff \langle Px,x \rangle
\quad \text{for all } x \in H. \]
By polarisation, we then also have
\[ \langle ax,y \rangle = \int f \,\diff \langle Px,y \rangle
\quad \text{for all } x,y \in H, \]
so that there is at most one such operator $a$ for given $f$ and $P$.

We shall write
\[ a = \int f \,\diff P \qquad \text{(in norm)} \]
if for every $\mathcal{E}$-step function $h$ on $\Omega$, one has
\[ \| \,a - I_P(h) \,\| \leq | \,f-h \,|_{\infty}. \]
\end{definition}

\begin{proposition}%
Let $f \in \bmeas(\mathcal{E})$ and $a \in \blop(H)$. Assume that
\[ a = \int f \,\diff P \qquad \text{(in norm).} \]
We then also have
\[ a = \int f \,\diff P \qquad \text{(weakly)}. \]
In particular, there is at most one such operator $a$ for given $f$ and $P$.
\pagebreak
\end{proposition}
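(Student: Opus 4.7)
The plan is to approximate $f$ uniformly by $\mathcal{E}$-step functions and pass to the limit in both the operator-norm hypothesis and the scalar integral. By proposition \ref{closure}, the space $\bmeas(\mathcal{E})$ is the uniform closure of $\stepfun(\mathcal{E})$ in ${\ell}^{\,\infty}(\Omega)$, so there exists a sequence $(h_n)$ in $\stepfun(\mathcal{E})$ with $|\,f - h_n\,|_{\infty} \to 0$.

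First I would exploit the norm-assumption: it directly gives
\[
\|\,a - I_P(h_n)\,\| \leq |\,f - h_n\,|_{\infty} \to 0,
\]
and continuity of the inner product then yields $\langle I_P(h_n) x, x \rangle \to \langle ax, x \rangle$ for every $x \in H$.

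Second, for each fixed $x \in H$, proposition \ref{stepint} identifies the left-hand side as a scalar spectral integral:
\[
\langle I_P(h_n) x, x \rangle = \int h_n \,\diff \langle P x, x \rangle.
\]
Since $\langle P x, x \rangle$ is a finite positive measure on $\mathcal{E}$ (of total mass ${\| \,x \,\| \,}^2$) and $h_n \to f$ uniformly on $\Omega$, the elementary estimate
\[
\biggl|\,\int h_n \,\diff \langle P x, x \rangle - \int f \,\diff \langle P x, x \rangle\,\biggr|
\leq |\,h_n - f\,|_{\infty} \cdot {\| \,x \,\| \,}^2
\]
shows that $\int h_n \,\diff \langle P x, x \rangle \to \int f \,\diff \langle P x, x \rangle$. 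Combining the two convergences yields $\langle ax, x \rangle = \int f \,\diff \langle P x, x \rangle$ for every $x \in H$, which is the weak definition. Uniqueness of $a$ in the norm formulation now follows from the uniqueness for the weak one, already observed immediately before the statement.

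There is no real obstacle: the argument is driven entirely by the density result \ref{closure}, the elementary identity \ref{stepint}, and the fact that $\langle P x, x \rangle$ is a \emph{bounded} measure (so that uniform convergence of integrands passes to the integrals). The only point that warrants care is making sure one invokes the norm hypothesis with step-function approximants to $f$ rather than to some different function, but this is immediate from the definition.
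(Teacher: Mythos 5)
Your argument is correct and coincides with the paper's own proof: both approximate $f$ uniformly by $\mathcal{E}$-step functions via \ref{closure}, use the norm hypothesis to get $I_P(h_n) \to a$ in operator norm, identify $\langle I_P(h_n)x,x\rangle$ with $\int h_n \,\diff\langle Px,x\rangle$ by \ref{stepint}, and pass to the limit using the boundedness of $\langle Px,x\rangle$. Your explicit estimate $|\int h_n \,\diff\langle Px,x\rangle - \int f \,\diff\langle Px,x\rangle| \leq |\,h_n - f\,|_{\infty}\,{\|\,x\,\|\,}^2$ just makes the last step of the paper's argument more transparent.
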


\begin{proof}
Since the $\mathcal{E}$-measurable function $f$ is bounded, there exists
a sequence $(h_n)$ of $\mathcal{E}$-step functions converging uniformly
to $f$, cf.\ \ref{closure}. From
\[ a = \int f \,\diff P \qquad \text{(in norm)} \]
we have
\[ \| \,a - I_P ( h_n ) \,\| \leq | \,f - h_n \,|_{\infty} \to 0, \]
that is,
\[ a = \lim _{n \to \infty} I_P(h_n).  \]
With \ref{stepint} we get for all $x \in H$:
\[ \langle ax,x \rangle
   = \lim _{n \to \infty} \langle I_P(h_n)x,x \rangle
   = \lim _{n \to \infty} \int h_n \,\diff \langle Px,x \rangle
   = \int f \,\diff \langle Px,x \rangle, \]
which is the same as
\[ a = \int f \,\diff P \qquad \text{(weakly)}. \qedhere \]
\end{proof}

\begin{theorem}[$\pi_{P}(f)$]\index{symbols}{p6@$\pi_P$}%
For $f \in \bmeas(\mathcal{E})$ there exists a (necessarily
\linebreak unique) operator $\pi_{P}(f) \in \blop(H)$ such that
\[ \pi _P(f) = \int f \,\diff P \qquad \text{(in norm)}. \]
The map
\begin{align*}
\pi _P : \bmeas(\mathcal{E}) & \to \blop(H) \\
             f & \mapsto \pi_{P}(f)
\end{align*}
then defines a \underline{representation} $\pi_P$
of $\bmeas(\mathcal{E})$ on $H$.
\end{theorem}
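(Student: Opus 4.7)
The plan is to construct $\pi_P$ as the unique continuous extension of $I_P$ from $\stepfun(\mathcal{E})$ to its uniform closure $\bmeas(\mathcal{E})$ in ${\ell}^{\,\infty}(\Omega)$, which is furnished by proposition \ref{closure}. For this, the key preparatory step is to verify that $I_P$ is contractive with respect to the supremum norm. Any $h \in \stepfun(\mathcal{E})$ can be written in reduced form as $h = \sum _{\Delta \in D} \alpha _\Delta \,1 _\Delta$ with $D \subset \mathcal{E}$ a finite partition of $\Omega$, so that $| \,h \,|_\infty = \max _{\Delta \in D} | \,\alpha _\Delta \,|$. Proposition \ref{rep}\,(ii) tells us that the projections $P(\Delta)$ $(\Delta \in D)$ are pairwise orthogonal, and since they sum to $\mathds{1}$, the vectors $P(\Delta) \,x$ $(\Delta \in D)$ form a pairwise orthogonal decomposition of any $x \in H$. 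Pythagoras' theorem then gives
\[ \| \,I_P(h) \,x \,\|^{\,2} = \sum _{\Delta \in D} | \,\alpha _\Delta \,|^{\,2} \,\| \,P(\Delta) \,x \,\|^{\,2} \leq | \,h \,|_\infty^{\,2} \sum _{\Delta \in D} \| \,P(\Delta) \,x \,\|^{\,2} = | \,h \,|_\infty^{\,2} \,\| \,x \,\|^{\,2}, \]
so that $\| \,I_P(h) \,\| \leq | \,h \,|_\infty$.

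Once this is in hand, the theorem of unique continuous extension \ref{continuation} will produce a (necessarily contractive) linear map $\pi_P : \bmeas(\mathcal{E}) \to \blop(H)$ extending $I_P$, using that $\blop(H)$ is complete. For $f \in \bmeas(\mathcal{E})$ and any $h \in \stepfun(\mathcal{E})$, the contractivity of $\pi_P$ together with $f-h \in \bmeas(\mathcal{E})$ then yields
\[ \| \,\pi_P(f) - I_P(h) \,\| = \| \,\pi_P(f-h) \,\| \leq | \,f-h \,|_\infty, \]
which is precisely the ``in norm'' spectral integral condition required by the theorem. Uniqueness of the operator $\pi_P(f)$ satisfying this condition then follows from density of $\stepfun(\mathcal{E})$ in $\bmeas(\mathcal{E})$.

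It remains to verify that $\pi_P$ is a representation, i.e.\ that it is multiplicative and preserves the involution. Both identities hold on the dense subspace $\stepfun(\mathcal{E})$ by virtue of the preceding observation that $I_P$ is a representation there, and they can be pushed to $\bmeas(\mathcal{E})$ by approximating any $f, g \in \bmeas(\mathcal{E})$ with uniformly convergent (hence uniformly bounded) sequences of step functions, then using joint continuity of operator multiplication on norm-bounded subsets of $\blop(H)$ together with isometry of the adjoint on $\blop(H)$. The main obstacle in the whole argument is really only the orthogonality-based norm bound on $I_P(h)$; after that, everything reduces to standard density and continuous-extension arguments.
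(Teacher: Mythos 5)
Your proposal is correct and follows essentially the same route as the paper: the orthogonality-based bound $\| \,I_P(h)\,x \,\|^{\,2} = \sum_{\Delta \in D} | \,\alpha_\Delta \,|^{\,2}\,\| \,P(\Delta)\,x \,\|^{\,2} \leq | \,h \,|_\infty^{\,2}\,\| \,x \,\|^{\,2}$ giving contractivity of $I_P$, followed by unique continuous extension to $\bmeas(\mathcal{E})$ and the observation that contractivity of the extension yields the ``in norm'' condition. Your explicit verification that multiplicativity and \st-preservation pass to the closure is slightly more detailed than the paper's one-line assertion, but it is the same argument.
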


\begin{proof}
For an $\mathcal{E}$-step function $h$ on $\Omega$, we may write
\[ h = \sum _{\text{\footnotesize{$\Delta \in D$}}}
\alpha_{\text{\footnotesize{$\Delta$}}} \,1_{\text{\footnotesize{$\Delta$}}} \]
where $D \subset \mathcal{E}$ is a finite partition of $\Omega$. We obtain
\[ I_P(h) = \sum _{\text{\footnotesize{$\Delta \in D$}}}
\alpha_{\text{\footnotesize{$\Delta$}}} \,P(\Delta). \]
In order to show that the representation $I_P$ is contractive:
\[ \| \,I_P(h) \,\| \leq | \,h \,|_{\infty}, \pagebreak \]
we note that for $x \in H$, we have, by \ref{rep} (ii),
\begin{align*}
{\| \,I_P(h)x \,\| \,}^2 & = \sum _{\text{\footnotesize{$\Delta \in D$}}}
{| \,\alpha_{\text{\footnotesize{$\Delta$}}} \,| \,}^2 \ {\| \,P(\Delta)x \,\| \,}^2 \\
 & \leq {| \,h \,|_{\infty} \,}^2 \sum _{\text{\footnotesize{$\Delta \in D$}}}
{\| \,P(\Delta)x \,\| \,}^2 = {| \,h \,|_{\infty} \,}^2 \ {\| \,x \,\| \,}^2.
\end{align*}
This implies that the mapping
\[ I_P : h \mapsto I_P(h) \qquad \bigl( \,h \in \stepfun (\mathcal{E}) \,\bigr) \]
has a unique extension to a continuous mapping
\[ \pi _P : \bmeas(\mathcal{E}) \to \blop(H) \]
The latter mapping is a contractive representation
(since $I_P$ is so), from which we get that
\[ \pi _P(f) = \int f \,\diff P \qquad \text{(in norm)} \]
for each $f \in \bmeas(\mathcal{E})$.
\end{proof}

\begin{proposition}\label{weaklyinnorm}%
Let $a$ be a bounded linear operator on $H$ such that
\[ a = \int f \,\diff P \qquad \text{(weakly)} \]
for some $f \in \bmeas(\mathcal{E})$. We then also have
\[ a = \int f \,\diff P  \qquad \text{(in norm)}. \]
\end{proposition}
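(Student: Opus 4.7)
The plan is to reduce the claim to the uniqueness of the weak spectral integral, which was already observed after the definition of spectral integrals (for given $f$ and $P$ at most one operator $a$ satisfies $a = \int f\,\diff P$ weakly, since the scalar measures $\langle Px,y\rangle$ determine the sesquilinear form, which in turn determines $a$).

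First, I would invoke the theorem just established: for every $f \in \bmeas(\mathcal{E})$ there exists a bounded operator $\pi_P(f)$ on $H$ with
\[ \pi_P(f) = \int f \,\diff P \qquad \text{(in norm)}. \]
By the preceding proposition (norm integral implies weak integral), this operator also satisfies
\[ \pi_P(f) = \int f \,\diff P \qquad \text{(weakly)}. \]
Thus the hypothesis $a = \int f\,\diff P$ (weakly) together with the uniqueness of the weak integral forces $a = \pi_P(f)$.

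From $a = \pi_P(f)$ and the definition of $\pi_P(f)$ as the norm integral of $f$ against $P$, I would then conclude
\[ a = \pi_P(f) = \int f \,\diff P \qquad \text{(in norm)}, \]
which is the desired equality.

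There is no real obstacle here; the content of the proposition is essentially bookkeeping. The substantive work was done in constructing $\pi_P$ (extending $I_P$ by continuity from step functions to bounded measurable functions, using $\|I_P(h)\| \leq |h|_\infty$) and in the uniqueness remark following the definition of spectral integrals. The only subtlety worth flagging explicitly in the write-up is that uniqueness of the weak integral is what links the two notions: both $a$ and $\pi_P(f)$ represent the same sesquilinear form $(x,y) \mapsto \int f\,\diff\langle Px,y\rangle$, hence they coincide as operators.
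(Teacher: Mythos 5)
Your argument is correct and is exactly the paper's proof: since $\pi_P(f) = \int f\,\diff P$ both in norm and hence weakly, uniqueness of the weak integral gives $a = \pi_P(f)$, and the norm equality follows. No issues.
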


\begin{proof}
We have $a = \pi _P(f)$ by $\pi _P(f) = \int f \,\diff P$ (weakly) as well.%
\end{proof}

\begin{proposition}\label{normpointwise}%
For $f \in \bmeas(\mathcal{E})$ and $x \in H$, we have
\[ \| \, \pi_P (f) \,x \,\| 
= \| \,f \,\|_{\text{\small{$\langle Px,x \rangle,2$}}}
= {\biggl(\int {| \,f \,| \,}^2 \ \diff \langle Px,x \rangle \biggr)}^{1/2}. \]
\end{proposition}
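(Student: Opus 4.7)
The plan is to reduce the claim to the weak integral representation of $\pi_P$ applied to the non-negative function $|f|^2$, via the C*-property (or, more directly, the defining property of a $*$-representation).

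First I would square both sides and compute
\[
\|\pi_P(f)x\|^2 = \langle \pi_P(f)x,\pi_P(f)x\rangle = \langle \pi_P(f)^*\pi_P(f)x,x\rangle.
\]
Since the theorem immediately preceding the statement asserts that $\pi_P:\bmeas(\mathcal{E})\to\blop(H)$ is a $*$-representation, and since $\bmeas(\mathcal{E})$ is a commutative $*$-algebra under pointwise operations with involution given by complex conjugation, one has
\[
\pi_P(f)^*\pi_P(f) = \pi_P(\overline{f}\,f) = \pi_P(|f|^2).
\]
Hence $\|\pi_P(f)x\|^2 = \langle \pi_P(|f|^2)x,x\rangle$.

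Next I would apply the weak integral representation to the bounded measurable function $|f|^2\in\bmeas(\mathcal{E})$. Since $\pi_P(|f|^2)=\int |f|^2\,\diff P$ (in norm), proposition \ref{weaklyinnorm} (more precisely, the fact that norm convergence implies weak convergence of the integrals, established in the passage immediately after the definition of spectral integrals) gives
\[
\langle \pi_P(|f|^2)x,x\rangle = \int |f|^2\,\diff\langle Px,x\rangle.
\]
Combining the two displays and taking square roots yields the claim, with the identification $\|f\|_{\langle Px,x\rangle,2} = \bigl(\int |f|^2\,\diff\langle Px,x\rangle\bigr)^{1/2}$ being merely the definition of the $L^2$-norm with respect to the positive finite measure $\langle Px,x\rangle$.

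There is no real obstacle: the only subtlety is that one must be sure that $\pi_P$ preserves the involution, and this is exactly what the term ``representation'' in the preceding theorem encodes (see \ref{repdef}). All other steps are either a direct application of an already-proven result (the weak integral representation) or pure definition.
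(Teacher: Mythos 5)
Your proof is correct and is essentially the paper's own argument: the paper likewise computes $\|\pi_P(f)x\|^2 = \langle \pi_P(|f|^2)x,x\rangle = \int |f|^2\,\diff\langle Px,x\rangle$, using the \st-homomorphism property of $\pi_P$ and the weak integral representation. Your write-up merely makes the intermediate steps (passing through $\pi_P(f)^*\pi_P(f)=\pi_P(\overline{f}f)$ and invoking \ref{weaklyinnorm}) explicit.
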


\begin{proof} One calculates
\[ {\| \,\pi _P(f) \,x \,\| \,}^2 = \langle \pi_P\bigl({ \,| \,f \,| \,}^2 \,\bigr) \,x, x \rangle
= \int {| \,f \,| \,}^2 \,\diff \langle Px,x \rangle. \pagebreak \qedhere \]
\end{proof}

\begin{theorem}\label{spanP}%
The range of the representation $\pi _P$ is given by
\[ \range(\pi _P) = \overline{\mathrm{span}}(P). \]
It follows that $\range(\pi _P)$ is the C*-subalgebra of
$\blop(H)$ generated by $P$.
\end{theorem}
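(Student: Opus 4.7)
My plan is to obtain both inclusions from the way $\pi_P$ was constructed in the excerpt, namely as the (unique) continuous extension of the map $I_P$ defined on $\stepfun(\mathcal{E})$, together with the density result in \ref{closure} that $\overline{\stepfun(\mathcal{E})} = \bmeas(\mathcal{E})$.

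For the inclusion $\overline{\mathrm{span}}(P) \subset \range(\pi_P)$, I would first observe that each $P(\Delta) = I_P(1_\Delta) = \pi_P(1_\Delta)$ lies in $\range(\pi_P)$, so $P \subset \range(\pi_P)$, and therefore $\mathrm{span}(P) \subset \range(\pi_P)$ since the range is a linear subspace. It then remains to show that $\range(\pi_P)$ is closed in $\blop(H)$. Here I would invoke \ref{rangeC*}: the corollary after \ref{closure} identifies $\bmeas(\mathcal{E})$ as a C*-subalgebra of $\ell^{\,\infty}(\Omega)$, hence a C*-algebra in its own right, and $\pi_P$ is a \st-algebra homomorphism to the C*-algebra $\blop(H)$, so its range is a C*-algebra and in particular norm-closed.

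For the reverse inclusion $\range(\pi_P) \subset \overline{\mathrm{span}}(P)$, I would use the continuity of $\pi_P$ (it is contractive by construction) together with the density $\overline{\stepfun(\mathcal{E})} = \bmeas(\mathcal{E})$ from \ref{closure}. Since $\pi_P$ agrees with $I_P$ on $\stepfun(\mathcal{E})$ and $I_P(\stepfun(\mathcal{E})) = \mathrm{span}(P)$ (every $\mathcal{E}$-step function is a finite linear combination of indicators $1_\Delta$, mapped to the corresponding linear combination of the $P(\Delta)$), continuity gives
\[ \range(\pi_P) = \pi_P\bigl(\overline{\stepfun(\mathcal{E})}\bigr) \subset \overline{\pi_P(\stepfun(\mathcal{E}))} = \overline{\mathrm{span}}(P). \]

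For the final sentence, I would note that $\range(\pi_P)$ is a C*-subalgebra of $\blop(H)$ containing $P$ (hence contains the C*-subalgebra generated by $P$), while conversely the C*-subalgebra generated by $P$ is norm-closed and contains $\mathrm{span}(P)$, hence contains $\overline{\mathrm{span}}(P) = \range(\pi_P)$. There is no real obstacle in this proof; the only point requiring care is the closedness of $\range(\pi_P)$, which is where the C*-algebraic machinery from \ref{rangeC*} enters, rather than any more elementary topological argument.
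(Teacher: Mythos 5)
Your proposal is correct and follows essentially the same route as the paper's proof: the paper also obtains closedness of $\range(\pi_P)$ from \ref{rangeC*} (using that $\bmeas(\mathcal{E})$ is a C*-algebra) and density of $\mathrm{span}(P)$ in the range from the density of the step functions together with continuity of $\pi_P$. You have merely written out both inclusions explicitly where the paper compresses them into one sentence.
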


\begin{proof}
By \ref{rangeC*} it follows that $\range(\pi _P)$ is a
C*-algebra and so a closed subspace of $\blop(H)$,
containing $\mathrm{span}(P)$ as a dense subset.
\end{proof}

\begin{corollary}\label{spprojcomm}%
We have that
\[ {\pi _P}' = P', \]
cf.\ \ref{repcommutant}.
\end{corollary}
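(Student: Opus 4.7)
My plan is to prove the two inclusions ${\pi_P}' \subset P'$ and $P' \subset {\pi_P}'$ separately, leveraging the description of $\range(\pi_P)$ provided by the immediately preceding theorem \ref{spanP}.

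For the inclusion ${\pi_P}' \subset P'$, I would simply observe that each projection $P(\Delta)$ with $\Delta \in \mathcal{E}$ is the image $\pi_P(1_{\Delta})$, hence lies in $\range(\pi_P)$. Consequently, any bounded operator on $H$ that commutes with every $\pi_P(f)$, $f \in \bmeas(\mathcal{E})$, in particular commutes with every $P(\Delta)$, and so belongs to $P'$.

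For the reverse inclusion $P' \subset {\pi_P}'$, I would take $c \in P'$, so that $c$ commutes with $P(\Delta)$ for all $\Delta \in \mathcal{E}$. By linearity, $c$ then commutes with every element of $\mathrm{span}(P)$. Now I would invoke \ref{spanP} to the effect that $\range(\pi_P) = \overline{\mathrm{span}}(P)$, the closure being taken in the norm of $\blop(H)$. Given any $f \in \bmeas(\mathcal{E})$, there is a sequence $(a_n)$ in $\mathrm{span}(P)$ with $a_n \to \pi_P(f)$ in norm; since multiplication in $\blop(H)$ is jointly continuous (in fact both left and right multiplication by $c$ are norm-continuous), the relations $c \,a_n = a_n \,c$ pass to the limit and yield $c \,\pi_P(f) = \pi_P(f) \,c$. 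Hence $c \in {\pi_P}'$.

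I do not foresee a main obstacle: the result is essentially a formal consequence of \ref{spanP} combined with the continuity of multiplication. The only point that requires a moment of thought is that commutation with a set is preserved under taking the closed linear span, but this is immediate from the joint continuity of operator multiplication in norm.
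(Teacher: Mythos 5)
Your proof is correct and is exactly the argument the paper intends: the corollary is stated without proof as an immediate consequence of \ref{spanP}, namely that $P \subset \range(\pi_P) = \overline{\mathrm{span}}(P)$, so the two commutants coincide because a commutant is stable under linear combinations and norm limits. Nothing to add.
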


\begin{definition}[$P$-a.e.]\index{symbols}{P95@$P$-a.e.}%
A property applicable to the  points in $\Omega$ is said
to hold \underline{$P$-almost everywhere} ($P$-a.e.)
if it holds $\langle Px, x \rangle$-almost everywhere for
all $x \in H$.
\end{definition}

\begin{definition}[$\mathrm{N}(P)$]%
\index{symbols}{N(P)@$\mathrm{N}(P)$}%
We shall denote by $\mathrm{N}(P)$ the set of those functions
in $\bmeas(\mathcal{E})$ which vanish $P$-almost everywhere.
\end{definition}

\begin{theorem}\label{kerspectralint}%
For $f \in \bmeas(\mathcal{E})$, we have
\[ \pi _P (f) = 0 \ \Leftrightarrow \ f = 0 \ P\text{-a.e.} \]
In other words,
\[ \ker \pi _P = \mathrm{N}(P). \]
In particular, $\mathrm{N}(P)$ is a closed \st-stable
\ref{selfadjointsubset} ideal in $\bmeas(\mathcal{E})$.
\end{theorem}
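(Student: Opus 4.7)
The plan is to use proposition \ref{normpointwise} to convert the operator identity $\pi_P(f) = 0$ into a scalar integral condition, and then translate that condition into $P$-almost everywhere vanishing of $f$.

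First I would prove the equivalence pointwise on $H$. By \ref{normpointwise}, for every $x \in H$ one has
\[ \|\pi_P(f)\,x\|^{\,2} = \int |\,f\,|^{\,2}\,\diff \langle Px,x \rangle. \]
Hence $\pi_P(f) = 0$ if and only if $\int |\,f\,|^{\,2}\,\diff \langle Px,x\rangle = 0$ for every $x \in H$. Since $\langle Px,x\rangle$ is a positive measure, this last condition is equivalent to $|\,f\,|^{\,2} = 0$ (and thus $f = 0$) holding $\langle Px,x\rangle$-a.e.\ for every $x \in H$, which is precisely the definition of $f = 0$ $P$-a.e. This gives the asserted equality $\ker \pi_P = \mathrm{N}(P)$.

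For the ``in particular'' part, I would simply invoke general \st-algebra theory: $\pi_P$ is a \st-algebra homomorphism between the \st-algebras $\bmeas(\mathcal{E})$ and $\blop(H)$, and $\pi_P$ is norm-continuous because it is contractive (established during the construction of $\pi_P$, where $\|\pi_P(f)\| \leq |\,f\,|_\infty$). Therefore its kernel is automatically a closed \st-stable two-sided ideal in $\bmeas(\mathcal{E})$, and the identification $\ker \pi_P = \mathrm{N}(P)$ transfers these properties to $\mathrm{N}(P)$.

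There is no real obstacle here: the argument is essentially a direct unfolding of \ref{normpointwise}. The only small subtlety is to spell out that the condition ``$|\,f\,|^{\,2}$ integrates to zero against every $\langle Px,x\rangle$'' means exactly that $f$ vanishes off a set that is $\langle Px,x\rangle$-null for every $x$, matching the definition of $P$-a.e.\ vanishing.
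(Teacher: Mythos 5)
Your proof is correct and follows essentially the same route as the paper, which likewise derives the equivalence directly from the norm identity $\| \,\pi_P(f)\,x\,\| = \| \,f\,\|_{\text{\small{$\langle Px,x\rangle,2$}}}$ of \ref{normpointwise}. Your explicit handling of the ``in particular'' clause via the continuity and \st-homomorphism properties of $\pi_P$ is a sound way to fill in what the paper leaves implicit.
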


\begin{proof}
This follows from
\[ \| \, \pi_P (f) \,x \,\|  = \| \,f \,\|_{\text{\small{$\langle Px,x \rangle,2$}}}
\quad \text{for all} \quad x \in H, \]
cf.\ \ref{normpointwise}.
\end{proof}

\begin{definition}[$\rmLeb ^{\infty}(P)$]%
\index{symbols}{L2@$\rmLeb ^{\infty}(P)$}%
We denote
\[ \rmLeb ^{\infty}(P) := \bmeas(\mathcal{E})/\mathrm{N}(P), \]
which is a commutative C*-algebra, cf.\ \ref{C*quotient}.
\end{definition}

\begin{theorem}\label{LPmain}%
The representation $\pi _P$ factors to an isomorphism
of C*-algebras from $\rmLeb ^{\infty}(P)$ onto $\range(\pi _P)$.
\end{theorem}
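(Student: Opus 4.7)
The plan is to deduce this essentially at once from the factorisation theorem for $*$-algebra homomorphisms between C*-algebras, namely \ref{homfact}. First I would observe that $\bmeas(\mathcal{E})$ is a C*-algebra: by \ref{closure}, it is the closure in ${\ell}^{\,\infty}(\Omega)$ of the $*$-subalgebra $\stepfun(\mathcal{E})$, and ${\ell}^{\,\infty}(\Omega)$ is already known to be a C*-algebra. Hence $\pi_P : \bmeas(\mathcal{E}) \to \blop(H)$ is a $*$-algebra homomorphism between C*-algebras.

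Next I would invoke \ref{homfact} to conclude that both $\bmeas(\mathcal{E})/\ker \pi_P$ and $\range(\pi_P)$ are C*-algebras, and that $\pi_P$ factors to a C*-algebra isomorphism
\[
\bmeas(\mathcal{E})/\ker \pi_P \;\xrightarrow{\sim}\; \range(\pi_P).
\]
(The fact that $\range(\pi_P)$ is a C*-algebra was already noted in the proof of \ref{spanP}, where the same factorisation result was cited.)

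Finally, I would identify the kernel using \ref{kerspectralint}, which tells us that $\ker \pi_P = \mathrm{N}(P)$. By the very definition of $\rmLeb^{\infty}(P)$, the quotient $\bmeas(\mathcal{E})/\ker \pi_P$ is therefore $\rmLeb^{\infty}(P)$, and the induced isomorphism of C*-algebras goes from $\rmLeb^{\infty}(P)$ onto $\range(\pi_P)$, as required.

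There is no real obstacle here: all the substantive work was carried out in establishing \ref{kerspectralint} (the precise description of the kernel) and the general factorisation theorem \ref{homfact}. The statement of Theorem \ref{LPmain} is essentially the packaging of these two facts into a single clean statement, so the proof is a one-line invocation of each.
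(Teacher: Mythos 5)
Your proposal is correct and follows essentially the same route as the paper, which simply cites \ref{rangeC*} (itself a direct restatement of \ref{homfact} for representations) together with the kernel identification \ref{kerspectralint}. The extra observation that $\bmeas(\mathcal{E})$ is a C*-algebra via \ref{closure} is a harmless elaboration of what the paper already records in the corollary following \ref{closure}.
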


\begin{proof}
This follows from \ref{rangeC*} and \ref{kerspectralint}. \pagebreak
\end{proof}

\begin{theorem}\label{spprojform}%
Every projection in $\range(\pi _P)$ is of the
form $P(\Delta)$ for some $\Delta \in \mathcal{E}$.
\end{theorem}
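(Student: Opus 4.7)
The plan is to exploit the isomorphism of C*-algebras $\rmLeb^{\infty}(P) \cong \range(\pi_P)$ from theorem \ref{LPmain} to reduce the problem to one about bounded measurable functions, and then to extract an indicator function as a suitable representative. Given a projection $p \in \range(\pi_P)$, by surjectivity of $\pi_P$ I would first pick any $f \in \bmeas(\mathcal{E})$ with $\pi_P(f) = p$. Since $p$ is a projection, $\pi_P(f^2 - f) = p^2 - p = 0$ and $\pi_P(f^* - f) = p^* - p = 0$, so by theorem \ref{kerspectralint} both $f^2 - f$ and $f^* - f$ belong to $\mathrm{N}(P)$.

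Next, I would construct the desired set $\Delta$ as follows. Define the $\mathcal{E}$-measurable sets
\[ N_1 := \{ \,x \in \Omega : f(x) \neq \overline{f(x)} \,\}, \quad N_2 := \{ \,x \in \Omega : f(x)^2 \neq f(x) \,\}, \quad N := N_1 \cup N_2. \]
Since $1_{N_i}$ vanishes at precisely the points at which $f^* - f$ (respectively $f^2 - f$) vanishes, and the latter functions lie in $\mathrm{N}(P)$, the indicator $1_{N_i}$ vanishes $P$-a.e., yielding $P(N_i) = 0$; using $1_N \leq 1_{N_1} + 1_{N_2}$ together with theorem \ref{kerspectralint} gives $P(N) = 0$ as well. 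On $\Omega \setminus N$ the function $f$ is real-valued and satisfies $f^2 = f$ pointwise, so it takes only the values $0$ and $1$ there. I would then set
\[ \Delta := \{ \,x \in \Omega \setminus N : f(x) = 1 \,\} \in \mathcal{E}, \]
so that $1_\Delta = f \cdot 1_{\Omega \setminus N}$ holds everywhere on $\Omega$. Consequently $1_\Delta - f = -f \cdot 1_N$ vanishes outside $N$, hence $P$-a.e., and applying $\pi_P$ gives $P(\Delta) = \pi_P(1_\Delta) = \pi_P(f) = p$.

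The argument is essentially a routine application of theorems \ref{LPmain} and \ref{kerspectralint}, so no serious obstacle is expected. The only mild subtlety worth highlighting is that one must pass from a representative $f$ which is a projection merely modulo $\mathrm{N}(P)$ to a genuinely $\{0,1\}$-valued representative; this is handled by discarding the $P$-null set $N$ on which $f$ misbehaves and redefining $f$ to be $0$ there, which does not alter the image under $\pi_P$.
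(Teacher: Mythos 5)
Your proof is correct and follows essentially the same route as the paper: pick a representative $f$ with $\pi_P(f)=p$, observe via \ref{kerspectralint} that $f=f^{\,2}$ $P$-a.e., hence $f$ is $\{0,1\}$-valued $P$-a.e., and take $\Delta$ to be (essentially) $f^{-1}(1)$. The only cosmetic difference is that the paper sets $\Delta := f^{-1}(1)$ directly without first excising the null set $N$ (and does not need the Hermitian condition $f^*=f$ $P$-a.e., since $f=f^{\,2}$ already forces values in $\{0,1\}$ pointwise wherever it holds).
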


\begin{proof} Let $f \in \bmeas(\mathcal{E})$ be such
that $\pi _P(f) = {\pi _P(f)}^{\,2}$. We obtain $f = {f }^{\,2}$ $P$-a.e.
It follows that $f$ is $P$-a.e.\ equal to either $0$ or $1$.
With $\Delta = {f}^{\,-1}(1) \in \mathcal{E}$, we thus have
$f = 1_{\text{\small{$\Delta$}}}$ $P$-a.e., and consequently
$\pi _P(f) = \pi _P(1_{\text{\small{$\Delta$}}}) = P(\Delta)$.
\end{proof}

\begin{theorem}\label{piPpos}%
For $f \in \bmeas(\mathcal{E})$, we have
\[ \pi _P(f) \geq 0 \ \Leftrightarrow \ f \geq 0 \ P\text{-a.e.} \]
\end{theorem}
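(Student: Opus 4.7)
The plan is to reduce the equivalence to a statement about positivity in the quotient C*-algebra $\rmLeb^{\infty}(P) = \bmeas(\mathcal{E})/\mathrm{N}(P)$ by invoking the isomorphism of C*-algebras $\rmLeb^{\infty}(P) \to \range(\pi_P)$ established in \ref{LPmain}. Since injective \st-algebra homomorphisms between C*-algebras preserve positivity in both directions by \ref{injpos}, everything comes down to showing that the class $[f]$ is positive in $\rmLeb^{\infty}(P)$ if and only if $f \geq 0$ $P$-a.e. Here I would use the characterisation of positive elements via squares \ref{sqrtShiraliFord}, together with \ref{kerspectralint} to translate equalities modulo $\mathrm{N}(P)$ into $P$-a.e.\ equalities.

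For the implication ``$\Leftarrow$'', assuming $f \geq 0$ $P$-a.e., the natural move is to pass from $f$ to a pointwise non-negative representative in $\bmeas(\mathcal{E})$, say $g := f \cdot 1_{\{f \geq 0\}}$, where $\{f \geq 0\} = f^{-1}([\,0, \infty\,[)$ is $\mathcal{E}$-measurable. I would first check that its complement $\{f \not\geq 0\}$ is $P$-null in the operator sense, i.e.\ $P(\{f \not\geq 0\}) = 0$, by observing that the quadratic form $\langle P(\{f \not\geq 0\})x,x\rangle$ vanishes identically in $x$ and appealing to \ref{posop}. Since $\bmeas(\mathcal{E})$ is a C*-subalgebra of $\ell^{\,\infty}(\Omega)$, the pointwise square root $\sqrt{g}$ then lies in $\bmeas(\mathcal{E})$ by \ref{existsqrt}, and I obtain
\[ \pi_P(f) = \pi_P(g) = \pi_P(\sqrt{g})^{\,*}\,\pi_P(\sqrt{g}) \geq 0 \]
by \ref{sqrtShiraliFord}.

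For the implication ``$\Rightarrow$'', assuming $\pi_P(f) \geq 0$, I would apply \ref{sqrtShiraliFord} inside the C*-algebra $\range(\pi_P)$ (which is a C*-algebra by \ref{rangeC*}) to produce $b \in \range(\pi_P)$ with $\pi_P(f) = b^{\,*} b$. Choosing a preimage $g \in \bmeas(\mathcal{E})$ with $\pi_P(g) = b$ gives
\[ \pi_P(f) = \pi_P(g)^{\,*}\,\pi_P(g) = \pi_P(\,\overline{g}\,g\,), \]
so that $f - |\,g\,|^{\,2} \in \ker \pi_P = \mathrm{N}(P)$ by \ref{kerspectralint}. Thus $f = |\,g\,|^{\,2}$ $P$-a.e., and in particular $f \geq 0$ $P$-a.e.

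The step I expect to demand the most care is the bookkeeping around the definition of ``$P$-a.e.'' in the ``$\Leftarrow$'' direction: one must verify that a pointwise statement failing on an $\mathcal{E}$-measurable set $N$ with $\langle P(N)x,x\rangle = 0$ for every $x \in H$ genuinely corresponds to $P(N) = 0$ as a projection, and not merely to the vanishing of each scalar measure $\langle Px,x\rangle$ at $N$ separately. This is however an immediate application of \ref{posop}, since $P(N)$ is a positive bounded operator and its diagonal quadratic form vanishes identically.
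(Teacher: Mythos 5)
Your argument is correct, but it takes a different route from the paper's. The paper proves both directions at once through the absolute value: it notes that $|\,f\,|$ in the C*-algebra $\bmeas(\mathcal{E})$ is the pointwise absolute value (by \ref{Coabs}), that $\pi_P\bigl(\,|\,f\,|\,\bigr) = \bigl|\,\pi_P(f)\,\bigr|$ (by \ref{homabs}), and that an element of a C*-algebra is positive exactly when it equals its absolute value (by \ref{abschar}); the chain $\pi_P(f) \geq 0 \Leftrightarrow \pi_P(f) = \pi_P\bigl(\,|\,f\,|\,\bigr) \Leftrightarrow f = |\,f\,|$ $P$-a.e.\ then finishes the proof in one stroke via $\ker \pi_P = \mathrm{N}(P)$. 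You instead use the $c^*c$-characterisation \ref{sqrtShiraliFord}, which forces you to treat the two implications separately and, in the ``$\Leftarrow$'' direction, to replace $f$ by the truncation $f \cdot 1_{\{f \geq 0\}}$ and to verify $P(\{f \not\geq 0\}) = 0$ before extracting a pointwise square root via \ref{existsqrt}. Both proofs rest on the same two pillars --- the identification $\ker \pi_P = \mathrm{N}(P)$ from \ref{kerspectralint} and a C*-algebraic positivity criterion that can be read off pointwise in a C*-subalgebra of $\ell^{\,\infty}(\Omega)$ --- but the absolute-value criterion is self-adjusting (the identity $\pi_P\bigl(\,|\,f\,|\,\bigr) = \bigl|\,\pi_P(f)\,\bigr|$ holds for every $f$, positive or not), which is what lets the paper avoid your truncation step and the extraction of a preimage $g$ of the square root $b$ in the ``$\Rightarrow$'' direction. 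Your version is sound and all the lemmas you cite are available at this point in the text; it is simply longer where the paper is short.
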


\begin{proof}
Let $| \,f \,|$ denote the absolute value of $f$ in the C*-algebra
$\bmeas(\mathcal{E})$, cf.\ \ref{absval}. (The element
$| \,f \,|$ is given by the pointwise absolute function
$x \mapsto | \,f(x) \,|$, cf.\ \ref{Coabs}.) We then have
\[ \pi _P\bigl( \,| \,f \,| \,\bigr) = \bigl| \,\pi _P(f) \,\bigr|, \]
cf.\ \ref{homabs}. Using the fact that in a C*-algebra, an element
$a$ is positive if and only if $a = | \,a \,|$, cf.\ \ref{abschar}, we get 
\begin{align*}
\pi _P(f) \geq 0 & \ \Leftrightarrow \ \pi _P(f) = \bigl| \,\pi _P(f) \,\bigr| \\
& \ \Leftrightarrow \ \pi _P(f) = \pi _P\bigl( \,| \,f \,| \,\bigr)
\ \Leftrightarrow \ f = | \,f \,| \ P \text{-a.e.}
\end{align*}
by \ref{kerspectralint}. Since the element $| \,f \,|$ is given
by the pointwise absolute function $x \mapsto | \,f(x) \,|$
(as noted before), we have
\[ f = | \,f \,| \ P \text{-a.e.} \ \Leftrightarrow \ f \geq 0 \ P\text{-a.e.}, \]
and the statement follows.
\end{proof}

\begin{theorem}[the Monotone Convergence Theorem]%
\label{monconvthm}%
If $(f_n)$ is an upper bounded sequence in
$\bmeas(\mathcal{E})\sa$ such that
$f_n \leq f_{n+1}$ $P$-a.e.\ for all $n$, then
the pointwise supremum $f := \sup_n f_n$
is in $\bmeas(\mathcal{E})\sa$, and
\[ \pi_P(f) = \sup_n \,\pi_P(f_n) \quad \text{in } \blop(H)\sa. \]
Furthermore
\[ \pi_P(f) \,x = \lim_n \,\pi_P(f_n) \,x
\quad \text{for all} \quad x \in H. \pagebreak \]
\end{theorem}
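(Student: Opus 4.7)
The plan is to reduce the assertion to the scalar Monotone Convergence Theorem applied to the positive measures $\langle Px,x\rangle$ $(x\in H)$, using Theorem \ref{ordercompl} on the order-completeness of $\blop(H)\sa$ to upgrade weak pointwise convergence to an identification of suprema.

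First I would check that $f:=\sup_n f_n$ really is an element of $\bmeas(\mathcal{E})\sa$. Measurability is clear since a pointwise supremum of countably many real-valued $\mathcal{E}$-measurable functions is $\mathcal{E}$-measurable. Boundedness follows from the hypothesis that $(f_n)$ is upper bounded in $\bmeas(\mathcal{E})\sa$: by Corollary \ref{pointwiseorder}, the order in this C*-subalgebra of $\ell^{\infty}(\Omega)$ is the pointwise order, so some constant $M$ satisfies $f_n\le M$ pointwise on $\Omega$ for all $n$, whence $f\le M$.

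Next I would transfer the monotonicity to $\blop(H)\sa$. Since $f_{n+1}-f_n\ge 0$ $P$-a.e., Theorem \ref{piPpos} yields $\pi_P(f_n)\le\pi_P(f_{n+1})$, so $\bigl(\pi_P(f_n)\bigr)$ is an increasing sequence in $\blop(H)\sa$; it is bounded above by $M\cdot\mathds{1}$ by the same argument applied to $M-f_n\ge 0$. Theorem \ref{ordercompl} then guarantees the existence of $a:=\sup_n \pi_P(f_n)$ in $\blop(H)\sa$, together with $a\,x=\lim_n \pi_P(f_n)\,x$ for every $x\in H$.

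It remains to identify $a$ with $\pi_P(f)$. Fixing $x\in H$, I would note that $P$-a.e.\ equality and inequality imply $\langle Px,x\rangle$-a.e.\ equality and inequality by the very definition of $P$-a.e. Hence $(f_n)$ is increasing $\langle Px,x\rangle$-a.e., and off a $P$-null set the pointwise limit of $f_n$ coincides with the pointwise supremum $f$. The scalar Monotone Convergence Theorem therefore gives
\[
\langle \pi_P(f_n)\,x,x\rangle=\int f_n\,\diff\langle Px,x\rangle \;\longrightarrow\;\int f\,\diff\langle Px,x\rangle=\langle \pi_P(f)\,x,x\rangle.
\]
Combined with the equivalent characterisations of the supremum in the corollary to Theorem \ref{ordercompl} (namely that pointwise convergence of the diagonal quadratic form $\langle\cdot\,x,x\rangle$ characterises the supremum of an increasing bounded net), this forces $\pi_P(f)=a=\sup_n\pi_P(f_n)$ in $\blop(H)\sa$, and the same theorem delivers the pointwise convergence $\pi_P(f)\,x=\lim_n \pi_P(f_n)\,x$ for every $x\in H$.

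The only mildly delicate point I anticipate is the bookkeeping around ``$P$-a.e.''\ versus pointwise suprema: the hypothesis gives monotonicity only off a countable union of $P$-null sets, so one must argue that the pointwise supremum $f$ agrees with $\lim_n f_n$ simultaneously in the sense of every scalar measure $\langle Px,x\rangle$. This is immediate once one unwinds the definition of $P$-almost everywhere, but it is the step that needs to be verbalised cleanly rather than asserted.
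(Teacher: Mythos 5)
Your argument is correct and follows essentially the same route as the paper: both reduce monotonicity to \ref{piPpos}, invoke the order-completeness theorem \ref{ordercompl} to obtain $\sup_n\pi_P(f_n)$ together with pointwise convergence on $H$, and then identify that supremum with $\pi_P(f)$ via the scalar convergence theorems. The only (immaterial) difference is in the identification step: the paper verifies $\|\pi_P(f)x-\pi_P(f_n)x\|^2=\int|f-f_n|^2\,\diff\langle Px,x\rangle\to 0$ directly using \ref{normpointwise}, whereas you pass through the quadratic forms $\langle\pi_P(f_n)x,x\rangle$ and the characterisation of the supremum in the corollary to \ref{ordercompl}.
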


\begin{proof}
Please note first that the order in $\bmeas(\mathcal{E})\sa$
is the pointwise order by \ref{pointwiseorder}.
The pointwise supremum $f := \sup_n f_n$ exists as a
real-valued function because the sequence $(f_n)$ is
upper bounded in $\bmeas(\mathcal{E})\sa$.
Then also $f \in \bmeas(\mathcal{E})\sa$.
On one hand, the sequence $\{ \,\pi_P (f_n) \,\}$
converges pointwise on $H$ to its supremum in $\blop(H)\sa$,
cf.\ \ref{ordercompl}, as it is increasing and upper bounded
by \ref{piPpos} above. On the other hand, the sequence
$\{ \,\pi_P (f_n) \,\}$ converges pointwise on $H$ to $\pi_P(f)$, as
\[ \| \,\pi _P(f) \,x - \pi _P(f_n) \,x \,\|
= {\biggl( \int {| \,f-f_n \,| \,}^2 \,\diff \langle Px,x \rangle \biggr)}^{1/2} \to 0 \]
for all $x \in H$, by Lebesgue's Monotone Convergence Theorem
and by \ref{normpointwise}. (This shows already the last statement.)
As a consequence of these two facts, we get that
\[ \pi_P(f) = \sup_n \,\pi_P(f_n) \quad \text{in } \blop(H)\sa. \qedhere \]
\end{proof}

\begin{corollary}[strong $\sigma$-continuity]\label{strsgcont}%
If $(\Delta _n)$ is an increasing sequence of sets in $\mathcal{E}$, then
\[ P \,\Bigl( \,\bigcup _n \Delta _n \Bigr) \,x = \lim _{n \to \infty} P ( \Delta _n ) \,x
\quad \text{for all} \quad x \in H. \]
\end{corollary}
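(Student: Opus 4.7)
The plan is to derive this corollary as an immediate consequence of the Monotone Convergence Theorem \ref{monconvthm} just proved. Set $\Delta := \bigcup_n \Delta_n \in \mathcal{E}$. Since the sequence $(\Delta_n)$ is increasing, the characteristic functions $1_{\Delta_n}$ form an increasing sequence in $\bmeas(\mathcal{E})\sa$, bounded above by $1_\Omega$, and their pointwise supremum equals $1_\Delta$.

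Applying the last statement of theorem \ref{monconvthm} with $f_n := 1_{\Delta_n}$ and $f := 1_\Delta$ yields
\[ \pi_P(1_\Delta) \,x = \lim_{n \to \infty} \pi_P(1_{\Delta_n}) \,x \quad \text{for all} \quad x \in H. \]
Since by definition $\pi_P$ extends $I_P$, we have $\pi_P(1_{\Delta_n}) = I_P(1_{\Delta_n}) = P(\Delta_n)$ for every $n$, and likewise $\pi_P(1_\Delta) = P(\Delta)$. Substituting gives the claimed equality.

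There is no substantial obstacle here: the Monotone Convergence Theorem supplies exactly the pointwise convergence on $H$ that is needed, and the only remaining matter is the trivial identification of $\pi_P$ applied to a characteristic function with the corresponding projection from the spectral measure.
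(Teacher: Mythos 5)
Your proof is correct and is exactly the argument the paper intends: the corollary is stated immediately after the Monotone Convergence Theorem \ref{monconvthm} precisely so that it follows by taking $f_n := 1_{\Delta_n}$ with pointwise supremum $1_{\bigcup_n \Delta_n}$ and identifying $\pi_P(1_\Delta) = P(\Delta)$. Nothing to add.
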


From \ref{rep} (i), it follows now:

\begin{corollary}[strong $\sigma$-additivity]%
If $(\Delta _n)$ is a sequence of pairwise disjoint sets in $\mathcal{E}$, then
\[ P \,\Bigl( \,\bigcup _n \Delta _n \Bigr) \,x = \sum _n P ( \Delta _n ) \,x
\quad \text{for all} \quad x \in H. \]
\end{corollary}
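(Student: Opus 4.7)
My plan is to combine the finite additivity from proposition \ref{rep}(i) with the strong $\sigma$-continuity just established in corollary \ref{strsgcont}. Given a sequence $(\Delta_n)$ of pairwise disjoint sets in $\mathcal{E}$, I would form the partial unions
\[ E_N := \bigcup_{n=1}^N \Delta_n \qquad (N \geq 1), \]
which is an increasing sequence in $\mathcal{E}$ with $\bigcup_N E_N = \bigcup_n \Delta_n$.

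Since $\Delta_1, \dots, \Delta_N$ are mutually disjoint members of $\mathcal{E}$, finite additivity (proposition \ref{rep}(i)) yields
\[ P(E_N) = \sum_{n=1}^N P(\Delta_n) \qquad (N \geq 1). \]
For any $x \in H$, strong $\sigma$-continuity (\ref{strsgcont}) applied to the increasing sequence $(E_N)$ gives
\[ P\Bigl(\bigcup_n \Delta_n\Bigr) x = \lim_{N \to \infty} P(E_N) x
= \lim_{N \to \infty} \sum_{n=1}^N P(\Delta_n) x = \sum_n P(\Delta_n) x, \]
which is precisely the claimed identity.

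There is no real obstacle here: the statement is essentially a formal reshuffling of the finite additivity with the monotone convergence along the filtration by partial unions. The only thing to check is that forming the partial unions $E_N$ keeps us inside the sequence hypothesis of \ref{strsgcont}, which is immediate since each $E_N \in \mathcal{E}$ and $E_N \subset E_{N+1}$.
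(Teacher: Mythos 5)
Your argument is correct and is exactly the route the paper intends: the corollary is stated immediately after strong $\sigma$-continuity with the remark that it follows from the finite additivity of proposition (38.3)\,(i), i.e.\ by applying (38.21) to the increasing partial unions. Nothing further is needed.
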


That is, the series $\sum_n P(\Delta_n)$ converges pointwise on $H$.

Please note however that an infinite series of non-zero projections in $H$
never converges in the norm of $\blop(H)$, as the terms would have to
converge to $0$ in norm.

\begin{theorem}[the Dominated Convergence Theorem]%
\label{domconvergence}%
If $(f_n)$ is a norm-bounded sequence in $\bmeas(\mathcal{E})$
which converges pointwise $P$-a.e.\ to a function
$f \in \bmeas(\mathcal{E})$, then
\[ \lim _{n \to \infty} \pi _P(f_n) \,x = \pi _P(f) \,x
\quad \text{for all} \quad x \in H. \pagebreak \]
\end{theorem}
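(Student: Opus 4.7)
The plan is to reduce the statement to the scalar Dominated Convergence Theorem by exploiting the isometric formula from \ref{normpointwise}, which says that
\[ \| \,\pi_P(g) \,x \,\| = \biggl( \int {| \,g \,| \,}^{2} \,\diff \langle Px, x \rangle \biggr)^{1/2} \]
for every $g \in \bmeas(\mathcal{E})$ and every $x \in H$. Applying this with $g := f - f_n$ transforms the desired operator-norm convergence into convergence of scalar integrals against the finite measure $\langle Px, x \rangle$.

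First I would fix $x \in H$. The hypothesis that $(f_n)$ is norm-bounded in $\bmeas(\mathcal{E})$, together with the fact that $f$ is bounded, gives a uniform constant $M \geq 0$ such that ${| \,f - f_n \,| \,}^{2} \leq M$ pointwise on $\Omega$ for all $n$. Since $\langle Px, x \rangle$ is a bounded (hence finite) measure on $\mathcal{E}$, the constant function $M$ is $\langle Px, x \rangle$-integrable and thus provides a dominating function. Next, by definition of $P$-a.e., the assumption $f_n \to f$ pointwise $P$-a.e.\ implies $f_n \to f$ pointwise $\langle Px, x \rangle$-a.e., so ${| \,f - f_n \,| \,}^{2} \to 0$ pointwise $\langle Px, x \rangle$-a.e.

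At this point the classical Lebesgue Dominated Convergence Theorem applied to the scalar measure $\langle Px, x \rangle$ yields
\[ \int {| \,f - f_n \,| \,}^{2} \,\diff \langle Px, x \rangle \to 0. \]
Combining this with the isometric identity from \ref{normpointwise} gives $\| \,\pi_P(f) \,x - \pi_P(f_n) \,x \,\| \to 0$, which is the claim. I do not anticipate a real obstacle here: the whole proof is essentially the observation that the map $g \mapsto \pi_P(g) x$ is the $\rmLeb^{2}(\langle Px, x \rangle)$-isometry built into \ref{normpointwise}, so every scalar convergence theorem for $\rmLeb^{2}$ transfers automatically. The only subtle point worth stating explicitly is the translation of ``$P$-a.e.'' into ``$\langle Px, x \rangle$-a.e.''\ for the fixed $x$, which follows directly from the definition.
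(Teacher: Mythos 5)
Your proof is correct and follows essentially the same route as the paper: both reduce the claim via the identity of \ref{normpointwise} to the scalar Lebesgue Dominated Convergence Theorem for the finite measure $\langle Px, x \rangle$, using the uniform norm bound on $(f_n)$ to supply the dominating constant. Your write-up merely makes explicit the domination and the passage from $P$-a.e.\ to $\langle Px, x \rangle$-a.e.\ convergence, which the paper leaves tacit.
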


\begin{proof}
This follows from Lebesgue's Dominated Convergence Theorem
because by \ref{normpointwise} we have
\[ \| \,\pi _P(f) \,x - \pi _P(f_n) \,x \,\|
= {\biggl( \int {| \,f-f_n \,| \,}^2 \,\diff \langle Px,x \rangle \biggr)}^{1/2} \to 0 \]
for all $x \in H$.
\end{proof}

\medskip
Next up are images of spectral measures, which
are needed in the following paragraph already.
For the case of images of ordinary probability measures,
we refer to the appendix \ref{imagebegin} - \ref{imageend}.

\begin{definition}[image of a spectral measure]\label{imagespdef}%
\index{concepts}{spectral!measure!image}%
\index{concepts}{image!spectral measure}%
\index{concepts}{measure!spectral!image}%
Consider another $\sigma$-algebra $\mathcal{E}'$ on another set
$\Omega' \neq \varnothing$, and let $f : \Omega \to \Omega'$ be
some \linebreak $\mathcal{E}$-$\mathcal{E}'$ measurable function. Then
\begin{alignat*}{2}
 f(P) : \mathcal{E}' & \to           &\ & \blop(H)_+ \\
                 \Delta' & \mapsto &\ & P\bigl(f^{-1}(\Delta')\bigr)
\end{alignat*}
defines a spectral measure $f(P) = P \circ f^{-1}$ defined on $\mathcal{E}'$,
called the \underline{image} of $P$ under $f$.
\end{definition}

\begin{proof} This is easily verified. \end{proof}

\medskip
See also the appendix \ref{imagebegin}
for the case of ordinary probability measures.

\begin{theorem}[integration with respect to an image measure]%
\label{imagespthm}%
Let $\mathcal{E}'$ be another $\sigma$-algebra on another set
$\Omega' \neq \varnothing$, and let $f : \Omega \to \Omega'$
be an $\mathcal{E}$-$\mathcal{E}'$ measurable function.
For $g \in \bmeas(\mathcal{E'})$, one has
\[ \int g\ \diff\mspace{2mu}f (P) = \int (g \circ f) \,\diff P,
\quad \text{that is,} \quad \pi_{f(P)}\,(g) = \pi_P \,(g \circ f). \]
\end{theorem}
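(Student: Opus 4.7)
My plan is to reduce the assertion to the analogous change-of-variables formula for ordinary scalar measures, which is available from the appendix (the cited items \ref{imagebegin}--\ref{imageend}). The passage from spectral integrals to scalar integrals is provided by the defining weak relation
\[ \langle \pi_P(h) x, x \rangle = \int h \,\diff \langle Px, x \rangle, \]
which, since $\pi_P$ is a representation \ref{LPmain} hence bounded, identifies operators via their quadratic forms.

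The first step is the key observation that for every $x \in H$ the scalar Borel measure $\langle f(P)\, x, x \rangle$ on $\mathcal{E}'$ coincides with the image $f \bigl(\langle Px, x \rangle\bigr)$ of the scalar measure $\langle Px, x \rangle$ under $f$. Indeed, for $\Delta' \in \mathcal{E}'$ we have by definition \ref{imagespdef}
\[ \langle f(P)(\Delta')\, x, x \rangle
 = \langle P \bigl(f^{-1}(\Delta')\bigr) x, x \rangle
 = \langle Px, x \rangle \bigl(f^{-1}(\Delta')\bigr)
 = f \bigl(\langle Px, x \rangle\bigr)(\Delta'). \]
The measure $f(P)$ is well-defined and genuinely a spectral measure by definition \ref{imagespdef}; and $g \circ f$ is $\mathcal{E}$-measurable and bounded, so $\pi_P(g \circ f)$ is defined.

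The second step is to apply the scalar change-of-variables formula: for any bounded $\mathcal{E}'$-measurable $g$ and any $x \in H$,
\[ \int g \,\diff \langle f(P)\, x, x \rangle
 = \int g \,\diff\, f \bigl( \langle Px, x \rangle \bigr)
 = \int (g \circ f) \,\diff \langle Px, x \rangle. \]
Using the weak defining relation for the spectral integrals $\pi_{f(P)}$ and $\pi_P$, this says exactly that
\[ \langle \pi_{f(P)}(g)\, x, x \rangle = \langle \pi_P(g \circ f)\, x, x \rangle
\quad \text{for all } x \in H, \]
which, by polarisation, forces $\pi_{f(P)}(g) = \pi_P(g \circ f)$.

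I do not expect any real obstacle: once the identification $\langle f(P)\,x,x \rangle = f(\langle Px, x\rangle)$ is made, everything reduces to the classical image-measure formula. The only point requiring minor care is that $g \circ f \in \bmeas(\mathcal{E})$ so that $\pi_P(g \circ f)$ is meaningful, but this is immediate from the $\mathcal{E}$-$\mathcal{E}'$ measurability of $f$ and the boundedness of $g$. If one preferred to avoid passing through the weak integral, an alternative route would be to verify the identity for $g = 1_{\Delta'}$ directly from definition \ref{imagespdef}, extend by linearity to $\stepfun(\mathcal{E}')$, and then invoke the fact that both $\pi_{f(P)}$ and $g \mapsto \pi_P(g \circ f)$ are contractive on $\bmeas(\mathcal{E}')$ (the latter because $g \mapsto g \circ f$ is a contractive $*$-algebra homomorphism into $\bmeas(\mathcal{E})$) to extend by uniform density \ref{closure}.
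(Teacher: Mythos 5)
Your proof is correct, but your primary route differs from the one in the paper. The paper's proof is exactly your ``alternative route'': it verifies the identity for $\mathcal{E}'$-step functions (where it reduces to $f(P)(\Delta') = P\bigl(f^{-1}(\Delta')\bigr)$ together with $1_{\Delta'} \circ f = 1_{f^{-1}(\Delta')}$), and then extends to all of $\bmeas(\mathcal{E}')$ by uniform density \ref{closure} and the contractivity of $\pi_{f(P)}$ and $\pi_P$. Your main argument instead passes through the weak characterisation of the spectral integral: you identify the scalar measure $\langle f(P)\,x,x\rangle$ with the image measure $f\bigl(\langle Px,x\rangle\bigr)$, invoke the scalar change-of-variables formula from the appendix \ref{imageend}, and conclude by polarisation. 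This is a genuinely different reduction: it trades the operator-norm density argument for the already-established scalar image-measure theorem, and it has the advantage of extending verbatim to unbounded measurable functions (indeed the paper reuses precisely this idea when it proves the unbounded analogue \ref{imagesputhm} by citing \ref{imageend} directly), whereas the paper's step-function argument is more self-contained at the operator level and mirrors the construction of $\pi_P$ itself. Both arguments are complete; the only hypotheses you rely on --- that $g \circ f \in \bmeas(\mathcal{E})$ and that $\bmeas(\mathcal{E}') \subset \mathscr{L}^1\bigl(f(\langle Px,x\rangle)\bigr)$ because the measures involved are bounded --- are exactly the ones you flag, and they hold.
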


\begin{proof}
One shows this first for $\mathcal{E'}$-step functions on $\Omega'$.
One then uniformly approximates functions in $\bmeas(\mathcal{E'})$
by $\mathcal{E}'$-step functions on $\Omega'$, and uses the contractivity
of the representations associated with the spectral measures.
\end{proof}

\medskip
See also the appendix \ref{imageend} for the case of ordinary
probability measures.

The preceding theorem \ref{imagespthm} will be extended in
theorem \ref{imagesputhm} below.
\pagebreak

\clearpage


\section{Spectral Theorems}

\begin{definition}[resolution of identity]%
\index{concepts}{resolution of identity}%
\index{concepts}{spectral!measure!resolution of id.}%
\index{concepts}{measure!spectral!resolution of id.}%
Let $\Omega \neq \varnothing$ be a Hausdorff space. Then a
\underline{resolution of identity} on $\Omega$ is a spectral measure
$P$, defined on the Borel $\sigma$-algebra of $\Omega$, acting on a
Hilbert space $H \neq \{0\}$, such that for all unit vectors $x$ in $H$,
the Borel probability measure $\langle Px, x \rangle$ is inner regular,
cf.\ the appendix \ref{Boreldef} \& \ref{inregBormeas}.
\end{definition}

\begin{definition}[the support, $\mathrm{supp}(P)$]%
\index{symbols}{s4@$\mathrm{supp}(P)$}%
\index{concepts}{support!of a resolution of identity}%
\index{concepts}{resolution of identity!support}%
Let $P$ be a resolution of identity on a Hausdorff space $\Omega \neq \varnothing$.
If there exists a largest open subset $\Delta$ of $\Omega$ with $P(\Delta) = 0$,
then $\Omega \setminus \Delta$ is called the \underline{support} of $P$.
The support of $P$ exists, as the union $\Delta$ of all open subsets $O$
of $\Omega$ with $P(O) = 0$ satisfies $P(\Delta) = 0$. (By inner regularity
and by a compactness argument.) The support of $P$ is denoted by
$\mathrm{supp}(P)$.
\end{definition}

\begin{observation}\label{suppimbed}%
Let $P$ be a resolution of identity on a Hausdorff space $\Omega \neq \varnothing$.
If the support of $P$ is all of $\Omega$, then $\cont_\mathrm{b}(\Omega)$
is imbedded in $\rmLeb ^{\infty}(P)$ as two continuous functions
differ on an open subset. This imbedding is isometric by \ref{C*injisometric}.
\end{observation}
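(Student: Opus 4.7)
The plan is to identify the imbedding as the natural \st-algebra homomorphism
\[ \iota : \cont_\mathrm{b}(\Omega) \longrightarrow \rmLeb^{\infty}(P)
= \bmeas(\mathcal{E})/\mathrm{N}(P) \]
obtained by composing the inclusion $\cont_\mathrm{b}(\Omega) \hookrightarrow \bmeas(\mathcal{E})$ (legitimate because bounded continuous functions on a Hausdorff space are Borel measurable) with the quotient map. Both $\cont_\mathrm{b}(\Omega)$ and $\rmLeb^{\infty}(P)$ are C*-algebras, so once $\iota$ is shown to be injective, the cited result \ref{C*injisometric} will automatically upgrade $\iota$ to an isometric \st-algebra homomorphism. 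Thus the whole matter reduces to showing that $\iota$ has trivial kernel.

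To check injectivity, I would take $f \in \cont_\mathrm{b}(\Omega)$ with $\iota(f) = 0$, meaning $f \in \mathrm{N}(P)$, i.e.\ $f$ vanishes $P$-a.e. Form the set
\[ U := \{ \,x \in \Omega : f(x) \neq 0 \,\}, \]
which by continuity of $f$ is \emph{open}, hence in particular Borel measurable. Because $f$ vanishes $P$-a.e., the definition of ``$P$-a.e.'' gives, for every $x \in H$, that $f = 0$ holds $\langle Px,x\rangle$-almost everywhere; since the Borel set $U$ is precisely where this equation fails, one concludes $\langle Px,x\rangle(U) = 0$, i.e.\ $\langle P(U)x,x\rangle = 0$, for every $x \in H$. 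The projection $P(U)$ being a positive operator with identically vanishing quadratic form must itself be zero (for example by \ref{posop}).

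Now $U$ is an open subset of $\Omega$ with $P(U) = 0$, so by the very definition of $\mathrm{supp}(P)$ we have $U \subset \Omega \setminus \mathrm{supp}(P)$. The assumption $\mathrm{supp}(P) = \Omega$ then forces $U = \varnothing$, i.e.\ $f \equiv 0$ on $\Omega$. This proves $\ker \iota = \{0\}$, and invoking \ref{C*injisometric} completes the argument. The main (and essentially only) subtle step is the passage from ``$f = 0$ holds $\langle Px,x\rangle$-a.e.\ for every $x$'' to ``$P(\{f \neq 0\}) = 0$''; this is clean here precisely because $f$ is continuous and hence $\{f \neq 0\}$ is a Borel (indeed open) set, so the null-set statement can be evaluated directly on $U$ itself rather than on some uncontrolled majorising Borel set.
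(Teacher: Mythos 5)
Your proposal is correct and is precisely the argument the paper intends: the paper's one-line justification ("two continuous functions differ on an open subset", plus the support hypothesis and \ref{C*injisometric}) is exactly your injectivity argument via the open set $U = \{f \neq 0\}$ followed by the automatic-isometry result for injective \st-homomorphisms of C*-algebras. Your added care in passing from "$f=0$ $P$-a.e." to "$P(U)=0$" is a sound elaboration of the same route, not a different one.
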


Similarly for the support of inner regular Borel probability measures
on Hausdorff spaces $\neq \varnothing$.

\begin{theorem}[the Spectral Theorem, archetypal form]\label{spthmC*}%
\index{concepts}{Theorem!Spectral!commutative C*-algebra}%
\index{concepts}{spectral!theorem!commutative C*-algebra}%
\index{concepts}{spectral!resolution!commutative C*-algebra}%
Given \linebreak a commutative C*-algebra $B$ of bounded linear operators
acting non-degenerately on a Hilbert space $H \neq \{0\}$, there
is a unique resolution of identity $P$ on $\Delta(B)$, acting on $H$, such that
\[ b = \int \wht{b} \,\diff P \qquad \text{(in norm)}
\qquad \text{for all} \quad b \in B. \]
One says that $P$ is the \underline{spectral resolution} of $B$.
We have
\[ B' = P', \]
and the support of $P$ is all of $\Delta(B)$.
\end{theorem}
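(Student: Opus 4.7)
The plan is to combine the Commutative Gel'fand-Na\u{\i}mark Theorem \ref{commGN} with the Riesz Representation Theorem (cf.\ the appendix \ref{Riesz}) in order to manufacture $P$ from the representation of $\cont\0\bigl(\Delta(B)\bigr)$ on $H$ that is given canonically by $B$. First, the Gel'fand transformation yields an isomorphism of C*-algebras $B \to \cont\0\bigl(\Delta(B)\bigr)$, whose inverse composed with the inclusion $B \hookrightarrow \blop(H)$ produces a non-degenerate representation $\pi : \cont\0\bigl(\Delta(B)\bigr) \to \blop(H)$ with $\pi(\wht{b}) = b$. For each pair $x, y \in H$, the mapping $f \mapsto \langle \pi(f) x, y \rangle$ is a continuous linear functional on $\cont\0\bigl(\Delta(B)\bigr)$; for a unit vector $x$, the functional $f \mapsto \langle \pi(f) x, x \rangle$ is a state in view of \ref{posop}, together with \ref{repapprunit} applied to the canonical approximate unit \ref{C*canapprunit}. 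Invoking \ref{C0Riesz} and polarising, one obtains a unique bounded inner regular Borel measure $\mu_{x,y}$ on $\Delta(B)$ with $\langle \pi(f) x, y \rangle = \int f \,\diff \mu_{x,y}$ for all $f \in \cont\0\bigl(\Delta(B)\bigr)$.

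Next, for each Borel subset $\Delta \subset \Delta(B)$, the bounded sesquilinear form $(x,y) \mapsto \mu_{x,y}(\Delta)$ gives rise to a bounded operator $P(\Delta)$ satisfying $\langle P(\Delta) x, y \rangle = \mu_{x,y}(\Delta)$. The positivity of $\langle P \cdot x, x\rangle = \mu_{x,x}$, together with $\mu_{x,x}\bigl(\Delta(B)\bigr) = \| \,x \,\|^{\,2}$ coming from non-degeneracy via \ref{repapprunit}, takes care of $P\bigl(\Delta(B)\bigr) = \mathds{1}$ and the measure property of $\langle P x, x\rangle$. The crux is showing $P(\Delta)$ is a projection and $P(\Delta_1 \cap \Delta_2) = P(\Delta_1) P(\Delta_2)$. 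The key identity is the \emph{module relation}
\[ \diff \mu_{\pi(g) x, y} = g \,\diff \mu_{x,y} \qquad \bigl(g \in \cont\0\bigl(\Delta(B)\bigr) \bigr), \]
which is immediate from $\langle \pi(f)\pi(g) x, y \rangle = \langle \pi(fg) x, y\rangle$ and Riesz uniqueness. A dominated-convergence/monotone-class argument then extends this relation to bounded Borel $g$ at the level of the scalar measures $\mu_{x,y}$; applying it to $g = 1_{\Delta_1}$ and integrating $1_{\Delta_2}$ against both sides yields both that $P(\Delta)$ is an idempotent Hermitian operator and the multiplicativity $P(\Delta_1 \cap \Delta_2) = P(\Delta_1) P(\Delta_2)$.

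Once $P$ is established as a resolution of identity, for $b \in B$ and $x \in H$ one has $\langle b x, x\rangle = \langle \pi(\wht{b}) x, x \rangle = \int \wht{b} \,\diff \mu_{x,x} = \int \wht{b} \,\diff \langle P x, x \rangle$, so $b = \int \wht{b} \,\diff P$ weakly, and therefore also in norm by \ref{weaklyinnorm}. Uniqueness is now immediate: any other resolution $Q$ satisfying the same integral formula would induce, via $\langle \pi(f) x, y \rangle = \int f \,\diff \langle Q x, y \rangle$ for $f \in \cont\0\bigl(\Delta(B)\bigr)$, the same representing measures as $P$, so $Q = P$ by Riesz uniqueness in \ref{C0Riesz}. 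For the commutant, combine \ref{spanP} and \ref{spprojcomm} to get $P' = \range(\pi_P)'$; the inclusion $B \subset \range(\pi_P)$ yields $P' \subset B'$, and conversely any $c \in B'$ commutes with each $\pi(f)$ for continuous $f$, so $\mu_{c x, y} = \mu_{x, c^* y}$ by Riesz uniqueness, which evaluated on Borel sets shows $c \in P'$. Finally, if $O \subset \Delta(B)$ were a non-empty open set with $P(O) = 0$, then by \ref{necStW} together with Urysohn there would exist a non-zero $f \in \cont\0\bigl(\Delta(B)\bigr)$ vanishing outside $O$, and $\pi(f) = \pi_P(f) = 0$ by \ref{kerspectralint}, contradicting the injectivity of $\pi$.

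The main obstacle is the passage from the continuous-function module identity $\diff \mu_{\pi(g) x, y} = g \,\diff \mu_{x,y}$ to its bounded-Borel counterpart, since $\pi(g)$ is not yet defined for general Borel $g$. The extension has to be performed purely at the level of the scalar measures $\mu_{x,y}$, then transported back to the operators $P(\Delta)$ via the defining sesquilinear form. Once this bridge is in place, all remaining verifications — including the operator-algebraic identities satisfied by $P$ and the identification of its commutant — reduce to routine applications of Riesz uniqueness and polarisation.
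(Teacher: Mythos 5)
Your proposal is correct and follows essentially the same route as the paper's own proof: states of $B$ via unit vectors, the Riesz/Bochner representation \ref{predisint} (which you unfold through the Gel'fand isomorphism and \ref{C0Riesz}), polarisation, the operators $P(\Delta)$ from bounded positive sesquilinear forms via \ref{posoprep}, and the module relation $\wht{b}\cdot\langle Px,y\rangle = \langle Pbx,y\rangle$ extended from continuous functions to indicators to obtain multiplicativity, idempotency, the commutant identity and the support statement. The only cosmetic difference is that the extension of the module relation needs no monotone-class argument — equality of the two complex measures, guaranteed by Riesz uniqueness, already permits integrating arbitrary bounded Borel functions against both sides.
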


\begin{proof}
If $x$ is a unit vector in $H$, then
\[ b \mapsto \langle bx,x \rangle \qquad (b \in B) \]
is a state on $B$, cf.\ \ref{staterep}. It follows that there is a unique inner regular
Borel probability measure $\langle Px,x \rangle$ on $\Delta(B)$ such that
\[ \langle bx,x \rangle = \int \wht{b} \,\diff \langle Px,x \rangle
\quad \text{for all} \quad b \in B, \pagebreak \]
cf.\ \ref{predisint}. Polarisation yields bounded complex measures
$\langle Px,y \rangle$ on $\Delta(B)$ such that
\[ \langle bx,y \rangle = \int \wht{b} \,\diff \langle Px,y \rangle \]
for all $x,y \in H$ and all $b \in B$, cf.\ the appendix \ref{bdedmeas}. For a Borel
set $\Delta$ of $\Delta(B)$, consider the positive semidefinite sesquilinear form
$\varphi$ defined by
\[ \varphi (x,y) := \int 1_{\textstyle\Delta} \,\diff \langle Px,y \rangle
\qquad ( \,x, y \in H \,). \]
Since $\varphi (x,x) \leq 1$ for all $x$ in the unit ball of $H$, there
exists a unique operator $P(\Delta) \in \blop(H)_+$ such that
\[ \varphi (x,y) = \langle P(\Delta)x,y \rangle \quad \text{for all} \quad x, y \in H, \]
cf.\ \ref{posoprep}. In other words
\[ \langle P(\Delta)x,y \rangle = \int 1_{\textstyle\Delta} \,\diff \langle Px,y \rangle
\quad \text{for all} \quad x, y \in H. \]
In particular, $\Delta \mapsto \langle P(\Delta)x,y \rangle$ is the measure
$\langle P x, y \rangle$ for all $x, y \in H$.

Let $\Delta, \Delta'$ be two Borel subsets of $\Delta(B)$. We set out to
show that $P(\Delta) P(\Delta') = P(\Delta \cap \Delta')$. Let $x$, $y \in H$.
For $a$, $b \in B$, we have
\[ \ \int \wht{\vphantom{b}a} \,\wht{b} \,\diff \langle Px,y \rangle
= \ \langle abx,y \rangle
= \ \int \wht{\vphantom{b}a} \,\diff \langle Pbx,y \rangle. \tag*{$(\st)$} \]
Now $\{ \,\wht{a} : a \in B \,\} = \cont\0\bigl(\Delta(B)\bigr)$, see the Commutative
Gel'fand-Na\u{\i}mark Theorem \ref{commGN}. It follows that
\[ \wht{b} \cdot \langle Px,y \rangle = \langle Pbx,y \rangle \]
in the sense of equality of complex measures.
The integrals in $(\st)$ therefore remain equal if $\wht{a}$ is replaced by
$1_{\textstyle\Delta}$. Hence
\begin{align*}
\int 1_{\textstyle\Delta} \,\wht{b} \,\diff \langle Px,y \rangle
= & \ \int 1_{\textstyle\Delta} \,\diff \langle Pbx,y \rangle \\
= & \ \langle P(\Delta) bx,y \rangle \\
= & \ \langle bx,P(\Delta)y \rangle
=     \int \wht{b} \,\diff \langle Px,P(\Delta)y \rangle. \tag*{$(\st\st)$}
\end{align*}
The same reasoning as above shows that the integrals in $(\st\st)$ \pagebreak
remain equal if $\wht{b}$ is replaced by $1_{\textstyle\Delta'}$. Consequently
we have
\begin{align*}
\langle P(\Delta \cap \Delta') x,y \rangle
= & \ \int 1_{\textstyle{\Delta \cap \Delta'}} \,\diff \langle Px,y \rangle \\
= & \ \int 1_{\textstyle{\Delta\vphantom{'}}} 1_{\textstyle{\Delta'}} \,\diff \langle Px,y \rangle \\
= & \ \int 1_{\textstyle{\Delta'}} \,\diff \langle Px,P(\Delta)y \rangle \\
= & \ \langle P(\Delta')x,P(\Delta)y \rangle
=     \langle P(\Delta)P(\Delta')x,y \rangle,
\end{align*}
so that $P(\Delta \cap \Delta') = P(\Delta)P(\Delta')$ indeed.

This implies that $P(\Delta) \in \blop(H)_+$ is an idempotent, and thus a projection,
for each Borel set $\Delta$ of $\Delta(B)$. It follows that
$P\bigl(\Delta(B)\bigr) = \mathds{1}$ as
${\| \,P\bigl(\Delta(B)\bigr) x \,\|}^{\,2} = \langle P \bigl(\Delta(B)\bigr) x, x\rangle = 1 \neq 0$
for each unit vector $x$ in $H$.
Hence $P : \Delta \mapsto P(\Delta)$ is a spectral measure
and thus a resolution of identity.

From
\[ \langle bx,y \rangle = \int \wht{b} \,\diff \langle Px,y \rangle \]
it follows that
\[ b = \int \wht{b} \,\diff P \qquad \text{(weakly)}, \]
whence also
\[ b = \int \wht{b} \,\diff P \qquad \text{(in norm)}, \]
cf.\ \ref{weaklyinnorm}.

Uniqueness follows from the fact that
$\{ \,\wht{b}: b \in B \,\} = \cont\0\bigl(\Delta(B)\bigr)$,
see the Commutative Gel'fand-Na\u{\i}mark Theorem \ref{commGN}.

It shall now be shown that the support of $P$ is all of $\Delta(B)$,
i.e.\ $\varnothing$ is the largest open subset $\Delta$ of $\Delta(B)$
with $P(\Delta) = 0$. Let $\Delta$ be a non-empty open subset of
$\Delta(B)$ and let $x \in \Delta$. There then exists
$f \in \cont_\mathrm{c}(\Delta(B)$ such that
\[ 0 \leq f \leq 1,\ f(x) = 1,\ \mathrm{supp} (f) \subset \Delta. \]
Let $b \in B$ with $\wht{b} = f$. We then have $b \neq 0$, as well as
$b \geq 0$ by \ref{pointwiseorder}. Since $f \leq 1_{\textstyle\Delta}$,
it follows by \ref{piPpos}:
\[ 0 \lneq b = \pi _P \bigl( \,\wht{b} \,\bigr)
= \pi _P(f) \leq \pi _P(1_{\textstyle\Delta}) = P(\Delta), \]
whence $P(\Delta) \neq 0$. \pagebreak

It shall next be shown that $B' = P'$. For $a \in \blop(H)$, the following
statements are equivalent.
\begin{align*}
& a \in P', \\
& P(\Delta)a = aP(\Delta) \qquad \text{ for all Borel sets } \Delta, \\
& \langle P(\Delta) ax,y \rangle = \langle aP(\Delta)x,y \rangle
\qquad \text{ for all Borel sets } \Delta, \text{ for all } x, y \in H, \\
& \langle P(\Delta)ax,y \rangle = \langle P(\Delta)x,a^*y \rangle
\qquad \text{ for all Borel sets } \Delta, \text{ for all } x, y \in H, \\
& \langle Pax,y \rangle = \langle Px,a^*y \rangle
\qquad \text{ for all } x, y \in H, \\
& \int \wht{b} \,\diff \langle Pax,y \rangle
= \int \wht{b} \,\diff \langle Px,a^*y \rangle
\qquad \text{ for all } b \in B, \text{ for all } x, y \in H, \\
& \langle bax,y \rangle = \langle bx,a^*y \rangle
\qquad \text{ for all } b \in B, \text{ for all } x, y \in H, \\
& \langle bax,y \rangle = \langle abx,y \rangle
\qquad \text{ for all } b \in B, \text{ for all } x, y \in H, \\
& ba = ab \qquad \text{ for all } b \in B, \\
& a \in B'.
\end{align*}
The proof is complete. 
\end{proof}

\medskip
Our next aim is the Spectral Theorem for a normal
bounded linear operator \ref{spthmnormalbded}.
We shall need the following result.

\begin{theorem}[Fuglede - Putnam - Rosenblum]%
\index{concepts}{Theorem!Fuglede-Putnam-Rosenblum}%
\index{concepts}{Fuglede-Putn.-Ros.\ Thm.}\label{FPR}%
Let $A$ be a \linebreak pre-C*-algebra. Let $n_1, n_2$ be normal
elements of $A$. Let $a \in A$ and assume that $an_1 = n_2a$.
We then also have $a{n_1}^* = {n_2}^*a$.

In particular, if $n$ is a normal bounded linear operator on a
Hilbert space, then $\{n\}' = \{n^*\}'$.

(Please note that this latter fact is trivial if the operator $n$ is
\linebreak Hermitian or unitary, cf.\ \ref{comm2}.)
\end{theorem}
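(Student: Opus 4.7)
The plan is to give the classical Fuglede--Putnam--Rosenblum proof via Liouville's theorem applied to a Banach-space-valued entire function. First I would reduce to the unital C*-algebra case: the completion of $A$ is a C*-algebra by \ref{precompl}, and it may be unitised via \ref{preCstarunitis}. Neither the hypothesis $a n_1 = n_2 a$ nor the desired conclusion $a n_1^* = n_2^* a$ is affected by this enlargement, since both are equations between fixed elements of the original pre-C*-algebra, and the normal elements $n_1, n_2$ remain normal in the unitised completion. In a unital Banach algebra, absolutely convergent power series define $\exp(z n) \in A$ for each $z \in \mathbb{C}$ and $n \in A$, satisfying the standard identities $\exp(z+w)n = \exp(zn)\exp(wn)$ and (for commuting $n,m$) $\exp(n+m) = \exp(n)\exp(m)$.

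The first step is to iterate $a n_1 = n_2 a$ to get $a n_1^k = n_2^k a$ for every $k \geq 0$, hence
\[ a \,\exp(\bar\lambda n_1) = \exp(\bar\lambda n_2)\, a \qquad (\lambda \in \mathbb{C}), \]
which rearranges to $a = \exp(-\bar\lambda n_2)\, a\, \exp(\bar\lambda n_1)$.

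The decisive step is to introduce the entire $A$-valued function
\[ f(\lambda) := \exp(\lambda n_2^*) \,a\, \exp(-\lambda n_1^*). \]
Substituting the identity above for $a$ inside $f(\lambda)$ and using the normality of $n_1, n_2$ (so that $n_i$ commutes with $n_i^*$ and the relevant exponentials combine), one obtains
\[ f(\lambda) = \exp\bigl(\lambda n_2^* - \bar\lambda n_2\bigr) \cdot a \cdot \exp\bigl(\bar\lambda n_1 - \lambda n_1^*\bigr). \]
Both outer exponents are anti-Hermitian: $(\lambda n_2^* - \bar\lambda n_2)^* = -(\lambda n_2^* - \bar\lambda n_2)$, and similarly for the other. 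Hence each outer factor $u$ satisfies $u^* = \exp(-\text{its exponent}) = u^{-1}$ and so is unitary. By \ref{C*unitary} its spectrum lies in the unit circle, and being normal it has norm one by \ref{C*rl}. Consequently $\|f(\lambda)\| \leq \|a\|$ for every $\lambda \in \mathbb{C}$, so $f$ is a bounded entire function valued in $A$. Liouville's theorem for Banach-space-valued entire functions (reducible to the scalar case by composing with continuous linear functionals and invoking Hahn--Banach) forces $f$ to be constant, equal to $f(0) = a$. Differentiating the power series for $f$ at $\lambda = 0$ yields $0 = f'(0) = n_2^* a - a n_1^*$, which is exactly $a n_1^* = n_2^* a$. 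The special case $\{n\}' = \{n^*\}'$ follows on setting $n_1 = n_2 = n$ and taking $a$ in $\{n\}'$.

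I expect the main obstacle to be the algebraic rearrangement producing the sandwich formula for $f(\lambda)$: every combination of exponentials used there relies essentially on normality (commutation of $n_i$ with $n_i^*$), and one must be careful that after the completion-and-unitisation reduction, the normality of $n_1, n_2$ really does give commutation in the enlarged algebra, not only in $A$. Once this algebraic identity is secured, the remaining ingredients---unitarity of exponentials of anti-Hermitian elements, vector-valued Liouville, and term-by-term differentiation of a norm-convergent power series---are standard.
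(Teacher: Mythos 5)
Your proposal is correct and follows essentially the same route as the paper's proof: iterate the intertwining relation to exponentials, sandwich $a$ between $\exp(\lambda n_2^*)$ and $\exp(-\lambda n_1^*)$, use normality to combine the exponentials into unitaries (your anti-Hermitian exponents are just the paper's $\iu \times$ Hermitian exponents under the substitution $\lambda = \iu z$), and conclude by vector-valued Liouville and equating first-order coefficients. The reduction to the unital C*-algebra case and the special case $\{n\}' = \{n^*\}'$ are handled as in the paper.
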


\begin{proof}
We may assume that $A$ is a C*-algebra.
Please note that it follows from the hypothesis that $a{n_1}^k = {n_2}^ka$
for all integers $k \geq 0$, so if $p \in \mathds{C}[z]$, then
$a p(n_1) = p(n_2)a$. It follows that
\[ a \,\exp(\iu \overline{z}n_1) = \exp(\iu \overline{z}n_2) \,a \]
for all $z \in \mathds{C}$, or equivalently
\[ a = \exp(\iu \overline{z}n_2) \,a \,\exp(-\iu \overline{z}n_1). \]
Since $\exp(x+y) = \exp(x)\exp(y)$ when $x$ and $y$ commute, \pagebreak
the normality of $n_1$ and $n_2$ implies
\begin{align*}
 & \exp\bigl(\iu z{n_2}^*\bigr) \,a \,\exp\bigl(-\iu z{n_1}^*\bigr) \\
= & \ \exp\bigl(\iu z{n_2}^*\bigr)\exp\bigl(\iu \overline{z}n_2\bigr)
\,a \,\exp\bigl(-\iu \overline{z}n_1\bigr)\exp\bigl(-\iu z{n_1}^*\bigr) \\
= & \ \exp\bigl(\iu (z{n_2}^*+\overline{z}n_2)\bigr)
\,a \,\exp\bigl(-\iu (\overline{z}n_1+z{n_1}^*)\bigr)
\end{align*}
The expressions $z{n_2}^*+\overline{z}n_2$ and
$\overline{z}n_1+z{n_1}^*$ are Hermitian, and hence \linebreak
$\exp\bigl(\iu (z{n_2}^*+\overline{z}n_2)\bigr)$ and
$\exp\bigl(-\iu (\overline{z}n_1+z{n_1}^*)\bigr)$ are unitary. This implies
\[ \| \,\exp\bigl(\iu z{n_2}^*\bigr) \,a \,\exp\bigl(-\iu z{n_1}^*\bigr) \,\| \leq \| \,a \,\| \]
for all $z \in \mathds{C}$. However
\[ z \mapsto \exp\bigl(\iu z{n_2}^*\bigr) \,a \,\exp\bigl(-\iu z{n_1}^*\bigr) \]
is an entire function, hence constant by Liouville's Theorem, so that
\[ \exp\bigl(\iu z{n_2}^*\bigr) \,a = a \,\exp\bigl(\iu z{n_1}^*\bigr)
\quad \text{for all} \quad z \in \mathds{C}. \]
By equating the coefficients for $z$, we obtain ${n_2}^*a = a{n_1}^*$.
\end{proof}

\medskip
(We remark in parentheses that the above result immediately carries
over to \st-semisimple normed \st-algebras, cf.\ \ref{stsemipreC*}.)

\begin{theorem}%
[the Spectral Theorem for normal bounded linear operators]%
\label{spthmnormalbded}%
\index{concepts}{Theorem!Spectral!normal bounded operator}%
\index{concepts}{spectral!theorem!normal bounded operator}%
\index{concepts}{spectral!resolution!normal bounded operator}%
Let $b$ be a normal bounded linear operator on a Hilbert space
$H \neq \{0\}$. There then is a unique resolution of identity $P$
on $\s(b)$, acting on $H$, such that
\[ b = \int _{\text{\small{$\s(b)$}}} \id _{\text{\small{$\s(b)$}}} \,\diff P
\quad \text{(in norm)}. \]
This is usually written as
\[ b = \int z \,\diff P(z). \]
One says that $P$ is the \underline{spectral resolution} of $b$. We have
\[ \{ \,b \,\}' = P', \]
and the support of $P$ is all of $\s(b)$. \pagebreak
\end{theorem}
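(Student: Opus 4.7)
The plan is to reduce to the archetypal form of the Spectral Theorem \ref{spthmC*} applied to the commutative unital C*-subalgebra $B$ of $\blop(H)$ generated by $b$, $b^*$, and $\mathds{1}$, and then transport the resulting spectral resolution from $\Delta(B)$ to $\s(b)$ via the Gel'fand transform.

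First I would apply \ref{spthmC*} to $B$ (which is commutative since $b$ is normal, and acts non-degenerately because it is unital) to obtain a unique resolution of identity $Q$ on $\Delta(B)$, acting on $H$, such that $c = \int \wht{c}\,\diff Q$ (in norm) for all $c \in B$, with $B' = Q'$ and $\mathrm{supp}(Q) = \Delta(B)$. Next, by \ref{spechomeom}, the Gel'fand transform $\wht{b}$ is a homeomorphism from $\Delta(B)$ onto $\s(b)$. I would then define $P := \wht{b}(Q)$ as the image spectral measure on the Borel $\sigma$-algebra of $\s(b)$, in the sense of \ref{imagespdef}. Inner regularity of $\langle Px,x\rangle$ for unit vectors $x$ follows from that of $\langle Qx,x\rangle$ by transport along the homeomorphism $\wht{b}$, so $P$ is a resolution of identity on $\s(b)$. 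Using the integration-against-image-measure formula \ref{imagespthm}, applied to $g = \id_{\s(b)}$, I obtain
\[ \int_{\s(b)} \id_{\s(b)}\,\diff P \;=\; \int_{\Delta(B)} (\id_{\s(b)} \circ \wht{b})\,\diff Q \;=\; \int \wht{b}\,\diff Q \;=\; b, \]
the last equality holding in norm.

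For uniqueness, suppose $P_1$ is any resolution of identity on $\s(b)$ with $b = \int z\,\diff P_1(z)$ in norm. Then $\pi_{P_1}$ is a continuous \st-representation of $\bmeas$ into $\blop(H)$ sending $\id_{\s(b)}$ to $b$, hence $\overline{\id}_{\s(b)}$ to $b^*$ and $1_{\s(b)}$ to $\mathds{1}$; by multiplicativity, $\pi_{P_1}$ sends every polynomial $p(z,\overline{z})$ to $p(b,b^*)$. For any unit vector $x$, the state $f \mapsto \langle \pi_{P_1}(f)x,x\rangle$ agrees on $\mathds{C}[z,\overline{z}]|_{\s(b)}$ with the corresponding state defined via $P$; by Stone-Weierstrass \ref{StW}/\ref{StWcomp} and continuity, the associated inner regular Borel probability measures $\langle P_1 x,x\rangle$ and $\langle Px,x\rangle$ agree on $\cont(\s(b))$, and the Riesz Representation Theorem (cf.\ \ref{C0Riesz}) then forces $\langle P_1 x,x\rangle = \langle Px,x\rangle$, whence $P_1 = P$ by polarisation.

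For the support statement, $\mathrm{supp}(P) = \s(b)$ follows from $\mathrm{supp}(Q) = \Delta(B)$ together with the fact that $\wht{b}$ is a homeomorphism, so preimages of open sets correspond. For the commutant identity $\{b\}' = P'$: on one hand $B' = Q' = P'$ by \ref{spthmC*}, \ref{spprojcomm}, and the fact that $P$ and $Q$ generate the same C*-subalgebra of $\blop(H)$ (namely $B$), via \ref{imagespthm} and \ref{spanP}; on the other hand, since $b^* \in B$ we trivially have $B' \subset \{b\}'$, and the reverse inclusion $\{b\}' \subset B'$ is exactly the content of the Fuglede-Putnam-Rosenblum Theorem \ref{FPR}, which gives $\{b\}' \subset \{b^*\}'$ and hence $\{b\}' \subset \{b,b^*\}' = B'$. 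The main obstacle I anticipate is making the commutant identity rigorous: it hinges essentially on invoking \ref{FPR}, without which the inclusion $\{b\}' \subset \{b^*\}'$ is not automatic for a merely normal (as opposed to Hermitian or unitary) operator.
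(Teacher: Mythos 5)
Your proposal is correct and follows essentially the same route as the paper: pass to the commutative unital C*-algebra $B$ generated by $b$, $b^*$, $\mathds{1}$, apply the archetypal Spectral Theorem \ref{spthmC*}, transport the resolution of identity to $\s(b)$ via the homeomorphism of \ref{spechomeom}, prove uniqueness through polynomials in $z,\overline{z}$ and Stone--Weierstrass, and obtain the commutant identity from the Fuglede--Putnam--Rosenblum Theorem \ref{FPR}. The only difference is that you spell out the image-measure bookkeeping (\ref{imagespdef}, \ref{imagespthm}) that the paper compresses into the phrase ``we may identify $\Delta(B)$ with $\s(b)$''.
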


\begin{proof} Let $B$ be the C*-subalgebra of $\blop(H)$ generated by
$b$, $b^*$, and $\mathds{1}$. It is commutative as $b$ is normal. By
\ref{spechomeom}, we may identify $\Delta(B)$ with $\s(b)$ in such a way
that $\wht{b}$ becomes $\id _{\text{\small{$\s(b)$}}}$, and existence follows.
On the other hand, if $Q$ is another resolution of identity on $\s(b)$ with
\[ b = \int z \,\diff Q(z), \]
then
\[ p(b,b^*) = \int p(z,\overline{z}) \,\diff Q(z) \]
for all $p \in \mathds{C}[z,\overline{z}]$. Uniqueness follows then from
the Stone-Weierstrass Theorem \ref{StW}. The statement concerning
the commutants follows from the preceding theorem \ref{FPR}.
\end{proof}

\begin{addendum}[the Borel functional calculus]%
\index{concepts}{function!of an operator}\label{Borelfunct}%
\index{concepts}{Borel!functional calculus}%
\index{concepts}{Borel!function!bounded}%
\index{concepts}{calculus!Borel functional}%
Let $b$ be a normal bounded linear operator on a Hilbert space
$H \neq \{0\}$. Let $P$ be the spectral resolution of $b$. For a
bounded Borel function $f$ on $\s(b)$, one puts
\[ f(b) := \pi_P(f) = \int f \,\diff P \quad \text{(in norm)}, \]
and one says that $f(b)$ is a \underline{bounded Borel function} of $b$.
The C*-algebra isomorphism
\begin{align*}
\rmLeb ^{\infty}(P) & \to \range(\pi_P) \\
f+\mathrm{N}(P) & \mapsto f(b) = \pi _P(f)
\end{align*}
is called the \underline{Borel functional calculus} for $b$,
cf.\ \ref{LPmain}. This mapping extends the operational calculus
$C\bigl(\s(b)\bigr) \to \blop(H)$, cf.\ \ref{opcalc}, \ref{suppimbed}.
\end{addendum}

\begin{remark}\label{rangespres}%
The range of the operational calculus for $b$ is the \linebreak
C*-subalgebra of $\blop(H)$ generated by $b$, $b^*$, and $\mathds{1}$.
We shall give a similar characterisation of the range of the Borel
functional calculus for $b$, see \ref{PBorel} \& \ref{functions} below.
\end{remark}

A point $x$ of a topological space is called isolated, if $\{ x \}$ is open.
\index{concepts}{isolated point}

\begin{theorem}[eigenvalues]\label{eival}%
\index{concepts}{eigenvalue}\index{concepts}{eigenspace}%
Let $b$ be a normal bounded linear operator on a Hilbert space $H \neq \{0\}$.
Let $P$ be its spectral resolution. A complex number $\lambda \in \s(b)$
is an eigenvalue of $b$ if and only if $P\bigl(\{ \,\lambda \,\}\bigr) \neq 0$, i.e.\ if
$P$ has an atom at $\lambda$, in which case the range of
$P\bigl(\{ \,\lambda \,\}\bigr)$ is the eigenspace corresponding to the
eigenvalue $\lambda$.

Hence an isolated point of the spectrum of $b$ is an eigenvalue of $b$.%
\pagebreak%
\end{theorem}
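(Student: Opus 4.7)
The plan is to exploit the Borel functional calculus \ref{Borelfunct} together with the norm formula \ref{normpointwise}, which translates questions about the kernel of an operator $f(b)$ into questions about the support of the scalar measures $\langle Px, x\rangle$. Concretely, fix $\lambda \in \s(b)$ and consider the bounded Borel function $f(z) := z - \lambda$ on $\s(b)$, so that $\pi_P(f) = b - \lambda \mathds{1}$. A complex number $\lambda \in \s(b)$ is an eigenvalue of $b$ precisely when the kernel of $b - \lambda \mathds{1}$ contains some nonzero vector, so the task reduces to identifying this kernel with the range of $P\bigl(\{\lambda\}\bigr)$.

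For the main computation, I would take an arbitrary $x \in H$ and apply \ref{normpointwise} to obtain
\[ \| \,(b - \lambda \mathds{1}) \,x \,\|^{\,2} = \int |\,z - \lambda\,|^{\,2} \ \diff \langle P x, x \rangle (z). \]
Since the integrand $|\,z - \lambda\,|^{\,2}$ is strictly positive off the singleton $\{\lambda\}$, the right side vanishes if and only if the measure $\langle Px, x\rangle$ is concentrated on $\{\lambda\}$, that is, if and only if $\langle P(\s(b) \setminus \{\lambda\}) x, x\rangle = 0$. Because $P\bigl(\s(b) \setminus \{\lambda\}\bigr)$ is a projection, this is in turn equivalent to $P\bigl(\s(b) \setminus \{\lambda\}\bigr) x = 0$. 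Using $P\bigl(\s(b)\bigr) = \mathds{1}$ and the finite additivity \ref{rep}(i) applied to the partition $\s(b) = \{\lambda\} \,\dot{\cup}\, \bigl(\s(b) \setminus \{\lambda\}\bigr)$, this last condition is equivalent to $x = P\bigl(\{\lambda\}\bigr) x$, i.e.\ $x \in \range \,P\bigl(\{\lambda\}\bigr)$.

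Combining these equivalences yields that the eigenspace $\ker(b - \lambda \mathds{1})$ coincides with $\range \,P\bigl(\{\lambda\}\bigr)$, which immediately gives both directions of the first assertion: $\lambda$ is an eigenvalue iff this range is nonzero iff $P\bigl(\{\lambda\}\bigr) \neq 0$. For the final claim about isolated points, I would invoke the fact, recorded in \ref{spthmnormalbded}, that $\mathrm{supp}(P) = \s(b)$. If $\lambda$ is isolated in $\s(b)$, then $\{\lambda\}$ is a nonempty open subset of $\s(b)$, and by definition of the support no such set can satisfy $P\bigl(\{\lambda\}\bigr) = 0$; hence $\lambda$ is an eigenvalue by what was just shown. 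I do not expect a serious obstacle here: every step is a direct application of a result already established in the excerpt, and the only mild subtlety is the passage from $\langle P(\cdot)x, x\rangle = 0$ to $P(\cdot) x = 0$, which is immediate from the projection identity $\| \,P(\Delta) x \,\|^{\,2} = \langle P(\Delta) x, x\rangle$ already recorded in \ref{specmeasdef}(iii).
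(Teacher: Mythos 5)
Your proof is correct, and it takes a genuinely different route from the one in the text. The text splits the argument in two: for the ``if'' direction it computes $b\,P(\{\lambda\})x$ directly using the multiplicativity of $\pi_P$ and the decomposition $\id = \id\cdot 1_{\s(b)\setminus\{\lambda\}} + \id\cdot 1_{\{\lambda\}}$; for the ``only if'' direction it introduces the truncated resolvent functions $f_n$ equal to $(\lambda-z)^{-1}$ on $\{\,|\,z-\lambda\,|>\frac1n\,\}$ and $0$ elsewhere, derives $\pi_P(f_n)(\lambda\mathds{1}-b) = P\bigl(\{\,z : |\,z-\lambda\,|>\frac1n\,\}\bigr)$, and then passes to the limit via the strong $\sigma$-continuity \ref{strsgcont}. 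Your single computation
\[ {\| \,(b-\lambda\mathds{1})\,x \,\| \,}^2 = \int {| \,z-\lambda \,| \,}^2 \,\diff \langle Px,x \rangle (z) \]
from \ref{normpointwise} identifies $\ker(b-\lambda\mathds{1})$ with $\range\,P\bigl(\{\lambda\}\bigr)$ in one stroke, using only the elementary fact that a non-negative function with vanishing integral vanishes almost everywhere, together with $\langle P(\Delta)x,x\rangle = {\|\,P(\Delta)x\,\|\,}^2$ and the finite additivity \ref{rep}(i). This is shorter and avoids the limiting argument entirely; what the text's route exhibits, and yours does not, is the explicit relation between the spectral projections of the punctured neighbourhoods and the truncated resolvent, a technique that is useful elsewhere but not needed for this statement. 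Your treatment of the isolated-point claim via $\mathrm{supp}(P) = \s(b)$ coincides with the text's.
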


\begin{proof}
Assume first that $P\bigl(\{ \lambda \}\bigr) \neq 0$, and let
$0 \neq x \in H$ with $P\bigl(\{ \lambda \}\bigr) \,x = x$. For such $x$,
we have by \ref{rep} (ii) that
\begin{align*}
b\,x & = \pi_P(\mathrm{id}) \,x
= \pi_P(\mathrm{id}) \,P(\{ \lambda \}) \,x \\
 & = \Bigl[ \,\pi_P \,\bigl( \,\mathrm{id} \cdot
 1_{\text{\small{$\s(b) \setminus \{ \lambda \}$}}} \,\bigr)
+ \pi_P \,\bigl( \,\mathrm{id} \cdot 1_{\text{\small{$\{ \lambda \}$}}} \,\bigr) \,\Bigr]
\,P(\{ \lambda \}) \,x \\
 & = \Bigl[ \,b \,P \,\bigl( \,\s(b) \setminus \{ \lambda \} \,\bigr)
+ \lambda \,P ( \{ \lambda \} ) \,\Bigr] \,P(\{ \lambda \}) \,x
= \lambda \,P\bigl(\{ \lambda \}\bigr)^2\,x = \lambda \,x,
\end{align*}
so that $x$ is an eigenvector of $b$ to the eigenvalue $\lambda$.
Conversely, let $x \neq 0$ be an eigenvector of $b$ to the eigenvalue $\lambda$.
For $n \geq 1$, consider the function $f_n$ given by
$f_n(z) = (\lambda - z)^{-1}$ if $| \,\lambda -z \,| > \frac1n$,
and $f_n(z) = 0$ if $| \,\lambda - z \,| \leq \frac1n$. Then
\[ \pi _P(f_n)(\lambda \mathds{1} - b) = \pi _P \bigl(f_n(\lambda - \id)\bigr)
= P\bigl(\bigl\{ \,z : | \,z - \lambda \,| > \textstyle{\frac1n} \,\bigr\}\bigr). \]
We get $P\bigl(\bigl\{ \,z : | \,z - \lambda \,| > \textstyle{\frac1n} \,\bigr\}\bigr) \,x = 0$
as $(\lambda \mathds{1} - b)x = 0$.
Letting $n \to \infty$, we obtain $P\bigl(\{ \,z : z \neq \lambda \,\}\bigr) \,x = 0$,
cf.\ \ref{strsgcont}, whence $P(\{ \lambda \}) \,x = x$.
The last statement follows as the support of $P$ is all of $\s(b)$.
\end{proof}

\begin{definition}[$\Delta^*(A)$]%
\index{symbols}{D(A)@$\Delta^*(A)$}\label{Deltastar}%
If $A$ is a \st-algebra, we denote the set of Hermitian multiplicative
linear functionals on $A$ by $\Delta^*(A)$ and equip it
with the weak* topology, cf.\ the appendix \ref{weak*top}.
\end{definition}

\begin{definition}[$\s(\pi), \pi^*$]%
\label{specpi}\index{symbols}{s3@$\protect\s(\pi)$}%
Let $A$ be a commutative \st-algebra. Let $\pi$ be a non-zero
representation of $A$ on a Hilbert space $H \neq \{0\}$.
Let $B$ denote the closure of $\range(\pi)$ in $\blop(H)$. We define
\[ \s(\pi) := \{ \,\tau \circ \pi \in \mathds{C}^{\,\text{\Small{$A$}}} : \tau \in \Delta(B) \,\}. \]
Then $\s(\pi) \neq \varnothing$ is a subset of $\Delta^*(A)$ with $\s(\pi) \cup \{0\}$
weak* compact. The ``adjoint'' map
\begin{alignat*}{2}\index{symbols}{p7@$\pi^*$}
\pi^* : \Delta(B) & \to &\ & \s(\pi) \\
             \tau & \mapsto &\ & \tau \circ \pi.
\end{alignat*}
is a homeomorphism.
\end{definition}

\begin{proof}
For $\tau \in \Delta(B)$, the functional $\tau \circ \pi$ is non-zero
by density of $\range(\pi)$ in $B$. We have
$\varnothing \neq \s(\pi) \subset \Delta^*(A)$ by \ref{C*GTisom}
and \ref{mlfHerm}. Extend the ``adjoint'' map $\pi^*$ to a map
$\Delta(B) \cup \{0\} \to \s(\pi) \cup \{0\}$, taking $0$ to $0$. The
extended map is bijective and continuous by the universal property
of the weak* topology, cf.\ the appendix \ref{weak*top}. Since the
domain $\Delta(B) \cup \{0\}$ is compact \ref{mlf0compact}, and the
range $\s(\pi) \cup \{0\}$ is Hausdorff, it follows that the extended
map is a homeomorphism, cf.\ the appendix \ref{homeomorph}.
In particular, $\s(\pi) \cup \{0\}$ is compact. \pagebreak
\end{proof}

\medskip
The following item is applicable to $\Omega := \s(\pi)$
with $\s(\pi)$ as in the preceding definition \ref{specpi}.
We stress that $\s(\pi)$ will be a locally compact Hausdorff
space $\neq \varnothing$.

\begin{proposition}\label{cloloc}%
Let $A$ be a \st-algebra. Let $\Omega \neq \varnothing$ be a subset of
$\Delta^*(A)$ with $\Omega \cup \{0\}$ weak* compact. Then $\Omega$
is a closed and locally compact subset of $\Delta^*(A)$. The functions
$\wht{a}| _{\text{\small{$\Omega$}}}$ $(a \in A)$ form a dense subset of
$\cont\0(\Omega)$.
\end{proposition}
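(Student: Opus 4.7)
The plan is to verify the three claims in turn, with the density statement being the substantive one.

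For closedness, I would use that the ambient weak* topological dual $A^*$ (as well as $\Delta^*(A)$ with its subspace topology) is Hausdorff, so the compact set $\Omega \cup \{0\}$ is closed in $A^*$. Since $0 \notin \Delta^*(A)$ (multiplicative linear functionals are by definition non-zero), intersecting with $\Delta^*(A)$ yields $\Omega = (\Omega \cup \{0\}) \cap \Delta^*(A)$, which is closed in $\Delta^*(A)$.

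For local compactness, I would observe that $\Omega \cup \{0\}$ is compact Hausdorff and that $\{0\}$ is a closed point there, so $\Omega = (\Omega \cup \{0\}) \setminus \{0\}$ is open in $\Omega \cup \{0\}$. An open subspace of a compact Hausdorff space is locally compact, giving the claim. Moreover, if $\Omega$ is not compact, then $0$ is a limit point of $\Omega$ in $\Omega \cup \{0\}$, so $\Omega \cup \{0\}$ is (up to homeomorphism) the one-point compactification of $\Omega$, with $0$ corresponding to the point at infinity (cf.\ \ref{onepcomp}, \ref{mlf0compact}).

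For the density statement, I would apply the Stone-Weierstrass Theorem \ref{StW}. First I need to verify the Gel'fand transforms actually land in $\cont\0(\Omega)$: each $\wht{a}$ is continuous on $\Omega \cup \{0\}$ by the universal property of the weak* topology (cf.\ the appendix \ref{weak*top}), and $\wht{a}(0) = 0$; when $\Omega$ is non-compact this says $\wht{a}|_\Omega$ vanishes at infinity in the sense of \ref{motCnaught}, while if $\Omega$ is compact there is nothing to check. Next, the set $\mathcal{G} := \{\,\wht{a}|_\Omega : a \in A\,\}$ is a \st-subalgebra of $\cont\0(\Omega)$: for $\tau \in \Omega \subset \Delta^*(A)$ and $a, b \in A$, the identities $\wht{ab}(\tau) = \tau(a)\tau(b) = \wht{a}(\tau)\wht{b}(\tau)$ and (crucially, using the Hermitian nature of $\tau$) $\wht{a^*}(\tau) = \overline{\tau(a)} = \overline{\wht{a}(\tau)}$ express that $a \mapsto \wht{a}|_\Omega$ is a \st-algebra homomorphism. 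It separates the points of $\Omega$ by definition of the weak* topology, and vanishes nowhere on $\Omega$ since each $\tau \in \Omega$ is a non-zero functional. The Stone-Weierstrass Theorem \ref{StW} then yields density of $\mathcal{G}$ in $\cont\0(\Omega)$.

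The only genuine subtlety is the transition from ``$\wht{a}$ continuous on $\Omega$'' to ``$\wht{a}|_\Omega \in \cont\0(\Omega)$'' in the non-compact case; this is where the hypothesis that $\Omega \cup \{0\}$ (rather than $\Omega$ alone) is compact does the work, by letting $0$ play the role of the point at infinity so that the continuity of $\wht{a}$ at $0$ forces the vanishing-at-infinity condition.
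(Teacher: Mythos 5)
Your proof is correct and follows essentially the same route as the paper's: closedness and local compactness from the compact Hausdorff space $\Omega \cup \{0\}$, membership of the $\wht{a}|_{\text{\small{$\Omega$}}}$ in $\cont\0(\Omega)$ via continuity on $\Omega \cup \{0\}$ and vanishing at $0$, and density by the Stone-Weierstrass Theorem \ref{StW} using the Hermitian nature of the functionals. You simply spell out the verification of the Stone-Weierstrass hypotheses in more detail than the paper does.
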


\begin{proof}
The set $\Omega$ is closed in $\Delta^*(A)$ as it is the intersection
of $\Delta^*(A)$ with the compact Hausdorff space $\Omega \cup \{0\}$.
We also see that $\Omega$ is locally compact because it differs from
the compact Hausdorff space $\Omega \cup \{0\}$ only in the point $0$.
Also, $\Omega \cup \{0\}$ is a one-point compactification of $\Omega$,
if $\Omega$ is not compact. The evaluations $\wht{a}$ $(a \in A)$
are continuous on $\Omega \cup \{0\}$ by definition of the weak* topology,
cf.\ the appendix \ref{weak*top}. Since the evaluations $\wht{a}$ $(a \in A)$
vanish at the point $0$, we conclude that the functions
$\wht{a}| _{\text{\small{$\Omega$}}}$ $(a \in A)$ actually are in
$\cont\0(\Omega)$. These functions are dense in $\cont\0(\Omega)$ by
the Stone-Weierstrass Theorem \ref{StW}. (Please note that this uses the
fact that the functionals in $\Omega$ are Hermitian.)
\end{proof}

\begin{remark}\label{sppisa}%
Let $A$ be a commutative \st-algebra. Let $\pi$ be a non-zero
representation of $A$ on a Hilbert space. Since a multiplicative
linear functional on a Banach algebra is contractive \ref{mlfbounded},
we have the following facts. If $\pi$ is weakly continuous, then
each $\tau \in \s(\pi)$ is contractive, cf.\ \ref{weakcontHilbert}.
Also, $\pi$ is weakly continuous on $A\sa$, then each $\tau \in \s(\pi)$
is contractive on $A\sa$, cf.\ \ref{weaksa}, \ref{sigmaAsa}. In this case
$\s(\pi) \subset \Delta^*\bsa(A)$ as topological spaces, cf.\ \ref{topDbsa}
\& \ref{Deltastar}. (As the \st-algebra $A$ is spanned by $A\sa$,
cf.\ the appendix \ref{weak*point}.)
\end{remark}

\begin{observation}%
Let $A$ be a commutative \st-algebra. Let $\pi$ be a non-zero
representation of $A$ on a Hilbert space. For $a \in A$, we get
\[ \s\bigl(\pi(a)\bigr) \setminus \{0\} = \wht{a}\,\bigl(\s(\pi)\bigr) \setminus \{0\}, \]
whence, by \ref{C*rl},
\[ \| \,\pi(a) \,\| = \bigl| \,\wht{a}| _{\text{\small{$\s(\pi)$}}} \,\bigr|_{\infty}. \]
\end{observation}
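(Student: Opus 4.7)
The plan is to reduce the statement to known results about commutative C*-algebras by working inside the closure $B$ of $\range(\pi)$ in $\blop(H)$, which was already introduced in Definition \ref{specpi}.

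First I would observe that $B$ is a \emph{commutative} C*-algebra: it is a closed \st-subalgebra of $\blop(H)$ (hence a C*-algebra, by the corollary to \ref{condC*}), and commutativity is inherited from $A$ via $\pi$ by passing to the closure. Since $A$ is commutative, every $\pi(a)$ is normal. Also, $B$ is Hermitian by \ref{C*Herm}.

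For the first equation, I would chain the following identifications. By \ref{specHermsubalg} applied to the complete Hermitian \st-subalgebra $B$ of $\blop(H)$,
\[ \s_{\blop(H)}\bigl(\pi(a)\bigr) \setminus \{0\} = \s_B\bigl(\pi(a)\bigr) \setminus \{0\}. \]
By \ref{rangeGT} applied to the commutative Banach \st-algebra $B$,
\[ \s_B\bigl(\pi(a)\bigr) \setminus \{0\} = \wht{\pi(a)}\,\bigl(\Delta(B)\bigr) \setminus \{0\}
= \{\,\tau\bigl(\pi(a)\bigr) : \tau \in \Delta(B)\,\} \setminus \{0\}. \]
Finally, the adjoint map $\pi^* : \tau \mapsto \tau \circ \pi$ from \ref{specpi} sends $\Delta(B)$ onto $\s(\pi)$, so
\[ \{\,\tau\bigl(\pi(a)\bigr) : \tau \in \Delta(B)\,\} = \{\,(\tau \circ \pi)(a) : \tau \in \Delta(B)\,\} = \{\,\sigma(a) : \sigma \in \s(\pi)\,\} = \wht{a}\bigl(\s(\pi)\bigr). \]
Concatenating these equalities produces the claimed identity.

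For the second equation, since $\pi(a) \in B$ is normal in the C*-algebra $B$, theorem \ref{C*rl} together with the Spectral Radius Formula \ref{specradform} gives
\[ \|\,\pi(a)\,\| = \rlambda\bigl(\pi(a)\bigr) = \max\,\bigl\{\,|\lambda| : \lambda \in \s\bigl(\pi(a)\bigr) \,\bigr\}. \]
Adjoining $0$ to both sides of the punctured equality established above does not alter the supremum of moduli (adjoining $0$ contributes only $|0| = 0$), so
\[ \max\,\bigl\{\,|\lambda| : \lambda \in \s\bigl(\pi(a)\bigr) \,\bigr\}
= \sup\,\bigl\{\,|\wht{a}(\sigma)| : \sigma \in \s(\pi) \,\bigr\}
= \bigl|\,\wht{a}|_{\text{\small{$\s(\pi)$}}}\,\bigr|_{\infty}. \]
Here the last supremum is actually attained, as $\wht{a}|_{\s(\pi)} \in \cont\0\bigl(\s(\pi)\bigr)$ by \ref{cloloc}, and extends continuously by $0$ to the one-point compactification $\s(\pi) \cup \{0\}$ from \ref{specpi}.

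The only subtlety — and the place where the proof could silently misstep — is the bookkeeping around the point $0$, which plays a double role: it is always in $\s(\pi(a))$ when $B$ is non-unital (compare \ref{speczero}), and it is the point at infinity of $\s(\pi) \cup \{0\}$. Both equations are stated up to this point $0$, and one must verify that including or excluding $0$ on either side leaves the supremum of moduli untouched, which is immediate.
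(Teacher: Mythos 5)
Your argument is correct and follows essentially the same route as the paper's own proof: pass to the closure $B$ of $\range(\pi)$, apply \ref{rangeGT} to the commutative C*-algebra $B$, and translate $\Delta(B)$ into $\s(\pi)$ via the adjoint map $\pi^*$. You merely make explicit two points the paper leaves tacit, namely the identification of the punctured spectra in $\blop(H)$ and in $B$ via \ref{specHermsubalg}, and the bookkeeping of the point $0$ when passing to suprema of moduli.
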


\begin{proof}
If $B$ denotes the closure of $\range(\pi)$, and if $a \in A$, then
\begin{align*}
\s\bigl(\pi(a)\bigr) \setminus \{0\}
& = \wht{\pi (a)}\,\bigl(\Delta(B)\bigr) \setminus \{0\} \qquad \text{by \ref{rangeGT}} \\
& = \wht{a}\,\bigl(\s(\pi)\bigr) \setminus \{0\}. \pagebreak \qedhere
\end{align*}
\end{proof}

\begin{theorem}[the Spectral Theorem for representations]%
\label{spthmrep}%
\index{concepts}{Theorem!Spectral!representation}%
\index{concepts}{spectral!theorem!representation}%
\index{concepts}{spectral!resolution!representation}%
Let $\pi$ be a non-degenerate representation of a commutative \st-algebra
$A$ on a \linebreak Hilbert space $H \neq \{0\}$. There then exists a unique
resolution of identity $P$ on $\s(\pi)$, acting on $H$, such that
\[ \pi(a) = \int _{\text{\small{$\s(\pi)$}}} \wht{a}|_{\text{\small{$\s(\pi)$}}} \,\diff P
\quad \text{(in norm)} \qquad \text{for all} \quad a \in A. \]
One says that $P$ is the \underline{spectral resolution} of $\pi$. We have
\[ \pi' = P', \]
and the support of $P$ is all of $\s(\pi)$.
\end{theorem}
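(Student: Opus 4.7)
The strategy is to reduce the statement to the archetypal Spectral Theorem \ref{spthmC*} by transporting a spectral resolution from $\Delta(B)$ to $\s(\pi)$ via the homeomorphism $\pi^*$ of \ref{specpi}. Let $B$ denote the closure of $\range(\pi)$ in $\blop(H)$. Since $A$ is commutative, so is $\range(\pi)$, hence also $B$ by continuity of multiplication on bounded subsets. The non-degeneracy of $\pi$ on $H \neq \{0\}$ implies $B \neq \{0\}$, and that $B$ acts non-degenerately on $H$, since $B \supset \range(\pi)$. Applying \ref{spthmC*} to $B$ yields a unique resolution of identity $Q$ on $\Delta(B)$, acting on $H$, with $b = \int \wht{b}\,\diff Q$ in norm for all $b \in B$, together with $B' = Q'$ and $\mathrm{supp}(Q) = \Delta(B)$.

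Next I would define $P := \pi^*(Q)$ as the image spectral measure under $\pi^*$, in the sense of \ref{imagespdef}. Since $\pi^*$ is a homeomorphism $\Delta(B) \to \s(\pi)$, the image $P$ is a spectral measure whose scalar measures $\langle Px, x\rangle$ are inner regular (homeomorphisms preserve compacta), so $P$ is a genuine resolution of identity on $\s(\pi)$. The key identity is that for $a \in A$ and $\tau \in \Delta(B)$, one has
\[ \bigl(\wht{a}|_{\s(\pi)} \circ \pi^*\bigr)(\tau) = \pi^*(\tau)(a) = \tau\bigl(\pi(a)\bigr) = \wht{\pi(a)}(\tau), \]
so by \ref{imagespthm},
\[ \int \wht{a}|_{\s(\pi)}\,\diff P = \int \wht{\pi(a)}\,\diff Q = \pi(a) \]
in norm, which is the required integral representation.

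For the identity $\pi' = P'$, note first that $\pi' = \range(\pi)' = B'$, since the commutant of any subset of a normed algebra is automatically closed, cf.\ \ref{commutantp1}. Then $B' = Q'$ by \ref{spthmC*}, and finally $Q' = P'$ because $\pi^*$ being a homeomorphism is in particular a Borel isomorphism, so the families $\{P(\Delta') : \Delta' \text{ Borel in } \s(\pi)\}$ and $\{Q(\Delta) : \Delta \text{ Borel in } \Delta(B)\}$ coincide. The support statement follows similarly: open subsets of $\s(\pi)$ correspond bijectively under $\pi^*$ to open subsets of $\Delta(B)$, and $P$ vanishes on an open set exactly when $Q$ vanishes on its preimage, so $\mathrm{supp}(P) = \pi^*\bigl(\mathrm{supp}(Q)\bigr) = \pi^*\bigl(\Delta(B)\bigr) = \s(\pi)$. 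For uniqueness, \ref{cloloc} gives that the functions $\wht{a}|_{\s(\pi)}$ with $a \in A$ are dense in $\cont\0(\s(\pi))$, so any competing resolution of identity satisfying the integral formula must yield the same integral against every $f \in \cont\0(\s(\pi))$; by the Riesz representation theorem the scalar measures $\langle Px, x\rangle$ then agree for all $x \in H$, and by polarisation the resolutions themselves agree.

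The main obstacle is conceptual rather than technical: one must recognise that the homeomorphism $\pi^*$ is precisely the right device for transporting every piece of structure simultaneously (spectral projections, commutant, support, and inner regularity of the scalar measures). Once this is granted, the proof becomes essentially bookkeeping between \ref{spthmC*} and \ref{imagespthm}, with all substantive spectral content concentrated in the archetypal commutative C*-algebra case.
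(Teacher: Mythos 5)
Your proof is correct and follows essentially the same route as the paper's: both reduce to the archetypal Spectral Theorem \ref{spthmC*} applied to the closure $B$ of $\range(\pi)$, transport the resulting resolution of identity to $\s(\pi)$ as the image measure under the homeomorphism $\pi^*$ via \ref{imagespthm}, and obtain uniqueness from the density of the functions $\wht{a}|_{\text{\small{$\s(\pi)$}}}$ in $\cont\0\bigl(\s(\pi)\bigr)$ given by \ref{cloloc}. The only difference is that you spell out a few details the paper leaves implicit (that the image of a resolution of identity under a homeomorphism is again a resolution of identity, and why $Q' = P'$), which is fine.
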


\begin{proof} Uniqueness follows from the fact that the functions
$\wht{a}| _{\text{\small{$\s(\pi)$}}}$ $(a \in A)$ are dense in
$\cont\0\bigl(\s(\pi)\bigr)$, cf.\ \ref{cloloc}. For existence, consider the closure
$B$ of $\range(\pi)$. It is a commutative C*-algebra acting non-degenerately
on $H$. The spectral resolution $Q$ of $B$ is a resolution of identity on
$\Delta(B)$ such that
\begin{gather*}
b = \int \wht{b} \,\diff Q \quad \text{for all} \quad b \in B, \\
B' = Q',
\end{gather*}
and such that the support of $Q$ is all of $\Delta(B)$. Let $P := \pi^*(Q)$
be the image of $Q$ under the ``adjoint'' map
\begin{align*}
\pi^* : \Delta(B) & \to \s(\pi) \\
                \tau & \mapsto \tau \circ \pi,
\end{align*}
which is a homeomorphism, cf.\ \ref{specpi}. (For the notion of an
image of a spectral measure, recall \ref{imagespdef} \& \ref{imagespthm}.)
For $a \in A$, we have
\[ \int_{\text{\small{$\s(\pi)$}}} \wht{a}|_{\text{\small{$\s(\pi)$}}} \,\diff P
= \int \wht{a} \circ \pi^* \,\diff Q = \int \wht{\pi(a)} \,\diff Q = \pi(a), \]
by \ref{imagespthm}. Since $\range(\pi)$ is dense in $B$, we have
\[ \pi' = B' = Q' = P'. \]
Since the ``adjoint'' map $\pi^*$ is a homeomorphism, it is clear that
the support of $P$ is all of $\s(\pi)$.
\end{proof}

\medskip
The uniqueness statement in the preceding theorem \ref{spthmrep}
assumes that the resolution of identity lives on $\s(\pi)$.
A stronger uniqueness statement relaxing this assumption will be
given in addendum \ref{spuniq} below. We need the following lemma.
\pagebreak

\begin{lemma}\label{unilemma}%
Let $A$ be a commutative \st-algebra. Let $\pi$ be a non-zero
representation of $A$ on a Hilbert space $H \neq \{0\}$. We then have
\[ \s(\pi) = \{ \,\sigma \in \Delta^*(A) : | \,\sigma(a) \,| \leq \| \,\pi (a) \,\|
\ \text{for all}\ a \in A \,\}. \]
\end{lemma}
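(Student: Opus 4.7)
My plan is to prove the two inclusions separately, with the key bridge being that $\sigma$ factors through $\pi$ precisely when $|\sigma(a)| \leq \|\pi(a)\|$ holds for all $a \in A$.

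For the inclusion ``$\subset$'', I would take $\sigma = \tau \circ \pi$ with $\tau \in \Delta(B)$, where $B$ is the closure of $\range(\pi)$. Since $B$ is a commutative C*-algebra, it is Hermitian \ref{C*Herm}, so $\tau$ is Hermitian by \ref{mlfHerm}. Then $\sigma(a^*) = \tau\bigl(\pi(a)^*\bigr) = \overline{\tau(\pi(a))} = \overline{\sigma(a)}$, so $\sigma \in \Delta^*(A)$ (non-vanishing of $\sigma$ follows from non-vanishing of $\tau$ together with density of $\range(\pi)$ in $B$). The bound $|\sigma(a)| = |\tau(\pi(a))| \leq \|\pi(a)\|$ is immediate from the contractivity of multiplicative linear functionals on Banach algebras \ref{mlfbounded}.

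For the inclusion ``$\supset$'', suppose $\sigma \in \Delta^*(A)$ satisfies $|\sigma(a)| \leq \|\pi(a)\|$ for all $a \in A$. I would define a functional $\tau\0$ on $\range(\pi)$ by setting $\tau\0\bigl(\pi(a)\bigr) := \sigma(a)$. This is well-defined because $\pi(a) = 0$ forces $|\sigma(a)| \leq \|\pi(a)\| = 0$, hence $\sigma(a) = 0$. Linearity and multiplicativity of $\tau\0$ follow directly from those of $\sigma$ and from $\pi$ being a \st-algebra homomorphism. The bound shows that $\tau\0$ is contractive on $\range(\pi)$, so it extends continuously to a contractive linear functional $\tau$ on the closure $B$; continuity of multiplication preserves the multiplicativity in the extension. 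Since $\sigma \neq 0$ (being a multiplicative linear functional), $\tau\0 \neq 0$, whence $\tau \neq 0$ and thus $\tau \in \Delta(B)$. By construction $\sigma = \tau \circ \pi \in \s(\pi)$.

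Neither direction presents a serious obstacle; the only subtle point is verifying that the factorisation $\tau\0(\pi(a)) := \sigma(a)$ is unambiguous, which reduces immediately to the given inequality. Everything else is routine once the definition of $\s(\pi)$ in \ref{specpi} is unfolded.
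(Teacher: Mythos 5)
Your proposal is correct and follows essentially the same route as the paper: the forward inclusion via contractivity of multiplicative linear functionals on the C*-algebra $B = \overline{\range(\pi)}$, and the converse by factoring $\sigma$ through $\pi$ (well-defined by the inequality) and extending continuously to $B$. The only difference is that you re-verify $\s(\pi) \subset \Delta^*(A)$, which the paper already builds into the definition \ref{specpi}; this is harmless.
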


\begin{proof}
Let $B$ denote the closure of $\range(\pi)$ in $\blop(H)$. For
$\sigma \in \s(\pi)$ there exists $\tau \in \Delta(B)$ with $\sigma = \tau \circ \pi$.
By contractivity of $\tau$, cf.\ \ref{mlfbounded}, we then get
\[ | \,\sigma(a) \,| \leq \| \,\pi(a) \,\| \quad \text{ for all} \quad a \in A. \tag*{$(\st)$} \]
Conversely, let $\sigma \in \Delta^*(A)$ such that $(\st)$ holds. Then $\sigma$
vanishes on $\ker \pi$, as is seen from $(\st)$. Thus $\sigma$ factors to a
functional $\varphi \in \Delta^* \bigl( \range(\pi) \bigr)$ such that
$\varphi\bigl(\pi(a)\bigr) = \sigma (a)$ for all $a \in A$. (The functional $\varphi$
is not identically zero because $\sigma$ isn't.) The inequality $(\st)$ implies that
$\varphi$ is contractive, and so $\varphi$ has a unique continuation to a
functional $\tau \in \Delta(B)$. It is immediate that $\tau \circ \pi = \sigma$, and
consequently $\sigma \in \s(\pi)$.
\end{proof}

\begin{addendum}\label{spuniq}%
Let $\pi$ be a non-degenerate representation of a commutative \st-algebra
$A$ on a Hilbert space $H \neq \{0\}$. Assume that $\Omega$ is a subset of
$\Delta^*(A)$ with $\Omega \cup \{0\}$ weak* compact, and that $Q$ is a resolution
of identity on $\Omega$, acting on $H$, such that
\[ \pi(a) = \int _{\text{\small{$\Omega$}}} \wht{a}|_{\text{\small{$\Omega$}}} \,\diff Q
\qquad \text{(in norm)} \qquad \text{for all} \quad a \in A. \]
Assume also that the support of $Q$ is all of $\Omega$.
\par
Then $\Omega = \s(\pi)$ and $Q$ is the spectral resolution of $\pi$ as in \ref{spthmrep}.
\end{addendum}

\begin{proof}
Let $a \in A$. Then $\| \,\pi(a) \,\| = \| \,\pi_Q ( \,\wht{a}|_{\text{\small{$\Omega$}}} ) \,\|$
equals the quotient norm of $\wht{a}|_{\text{\small{$\Omega$}}} + \mathrm{N}(Q)$
in the quotient C*-algebra $\rmLeb ^{\infty}(Q)$, cf.\ \ref{LPmain}. This value
is precisely $| \,\wht{a}|_{\text{\small{$\Omega$}}} \,| _{\,\infty}$ by continuity of
$\wht{a}|_{\text{\small{$\Omega$}}}$ and because the support of $Q$ is all of $\Omega$,
cf.\ \ref{suppimbed}. Therefore
$\bigl| \,\sigma (a) \,\bigr| = \bigl| \ \wht{a}|_{\text{\small{$\Omega$}}} (\sigma) \,\bigr|
\leq \| \,\pi(a) \,\|$ holds for all $\sigma \in \Omega$ and all $a \in A$. The preceding
lemma \ref{unilemma} implies that $\Omega \subset \s(\pi)$. We also have that
$\Omega$ is a closed subset of $\s(\pi)$, by \ref{cloloc}. We thus may consider
$Q$ as a resolution of identity on $\s(\pi)$, whose support is $\Omega$. Letting
$P$ denote the spectral resolution of $\pi$, we get $Q = P$ by the
fact that the functions $\wht{a}| _{\text{\small{$\s(\pi)$}}}$ $(a \in A)$
are dense in $\cont\0\bigl(\s(\pi)\bigr)$, cf.\ \ref{cloloc}. Hence
$\Omega = \mathrm{supp}(Q) = \mathrm{supp}(P) = \s(\pi)$.
\end{proof}

\medskip
We remark that we cannot take all of $\Delta ^* (A)$ as the space on
which a resolution of identity lives, as it can fail to be locally compact.

The hurried reader can go from here to chapter \ref{unbdself}
on unbounded self-adjoint operators. \pagebreak

\clearpage

%


\section{Von Neumann Algebras}

Let throughout $H$ denote a Hilbert space.

\medskip
We now come to second commutants in $\blop(H)$.
We encountered second commutants already in \ref{questimbed}.

\begin{reminder}\index{concepts}{commutant}%
Let $S \subset \blop(H)$. The set of all operators in $\blop(H)$ which
commute with every operator in $S$ is called the \underline{commutant}
of $S$ and is denoted by $S'$. Please note that $S \subset S''$.
\end{reminder}

\begin{observation}%
For two subsets $S$, $T$ of $\blop(H)$, we have
\[ T \subset S \Rightarrow S' \subset T'. \]
\end{observation}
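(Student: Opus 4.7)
The plan is to unwind the definition of the commutant directly. Recall that $S' = \{\, a \in \blop(H) : a\,s = s\,a \text{ for all } s \in S \,\}$, and similarly for $T'$. So to establish the inclusion $S' \subset T'$, I would take an arbitrary $a \in S'$ and verify the defining condition for membership in $T'$, namely that $a\,t = t\,a$ for every $t \in T$.

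The verification is immediate: given $t \in T$, the hypothesis $T \subset S$ yields $t \in S$, and then the assumption $a \in S'$ gives $a\,t = t\,a$. Since $t \in T$ was arbitrary, this shows $a \in T'$, and since $a \in S'$ was arbitrary, this shows $S' \subset T'$. There is no real obstacle here; the statement is a formal consequence of the definition, of the same flavour as item (i) in observation \ref{commutantp2} earlier in the text (which handled the analogous fact for commutants inside a general algebra). The only thing to be slightly careful about is to write out the quantifiers cleanly so that the monotonicity (inclusion-reversing) nature of the operation $S \mapsto S'$ is visible.
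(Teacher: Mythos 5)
Your argument is correct and is exactly the intended one: the paper leaves this observation without proof, treating it as the same immediate consequence of the definition as item (i) of observation \ref{commutantp2}. Nothing further is needed.
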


\begin{observation}\label{thirdcomm}%
For a subset $S$ of $\blop(H)$, we have
\[ S''' = S'. \]
\end{observation}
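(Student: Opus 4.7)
My plan is to reduce this to the two general facts already established for commutants, namely the inclusion $S \subset S''$ (noted just before the statement) and the fact that taking the commutant reverses inclusions (the preceding observation). The argument then is purely formal and mirrors the proof of \ref{commutantp2}(iv) in the abstract algebra setting.

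First I would prove the inclusion $S''' \subset S'$. Starting from the general fact $S \subset S''$ applied to the concrete subset $S \subset \blop(H)$, the preceding observation (that $T \subset S$ implies $S' \subset T'$) yields $(S'')' \subset S'$, which is precisely $S''' \subset S'$.

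Next I would prove the reverse inclusion $S' \subset S'''$ by applying the same general fact $T \subset T''$ not to $S$ but to the set $T := S'$. This immediately gives $S' \subset (S')'' = S'''$. Combining the two inclusions yields $S''' = S'$.

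There is really no obstacle here — the statement is a formal consequence of the two bullet points that precede it, and the proof is essentially a three-line manipulation. The only thing to be careful about is not to confuse the roles of $S$ and $S'$ when applying $T \subset T''$ in the second step; apart from that, no analytic input about $\blop(H)$ is needed, which is consistent with the fact that the identical result already appeared as \ref{commutantp2}(iv) for commutants in an arbitrary algebra.
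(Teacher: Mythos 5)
Your proof is correct and is essentially identical to the paper's: both obtain $S''' \subset S'$ by applying the inclusion-reversing property of the commutant to $S \subset S''$, and both obtain $S' \subset S'''$ by applying $T \subset T''$ to $T := S'$. Nothing to add.
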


\begin{proof}
From $S \subset S''$ it follows that $(S'')' \subset S'$.
Also $S' \subset (S')''$.
\end{proof}

\medskip
Recall that a subset of a \st-algebra is said to be \underline{\st-stable}
(or self-adjoint), if with each element $a$, it also contains the adjoint
$a^*$, cf.\ \ref{selfadjointsubset}.

\begin{observation}%
Let $S$ be a subset of $\blop(H)$. If $S$
is \st-stable, then $S'$ is a C*-algebra.
\end{observation}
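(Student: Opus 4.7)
The plan is to assemble the statement from results already in the paper rather than to reprove anything from scratch. I would argue that $S'$ is a closed $*$-subalgebra of $\mathrm{B}(H)$, and then invoke the general fact that closed $*$-subalgebras of $\mathrm{B}(H)$ are C*-algebras.

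First, I would recall observation \ref{commutantp1}, specialised to the ambient algebra $A = \mathrm{B}(H)$. Items (i) and (iii) of that observation, which need no assumption on $S$, say that $S'$ is a subalgebra of $\mathrm{B}(H)$ and is closed in $\mathrm{B}(H)$. This already handles stability under linear combinations and products and completeness in norm; these facts are immediate, since if $a_n \to a$ in norm and each $a_n$ commutes with every $s \in S$, then passing to the limit in $a_n s = s a_n$ gives $as = sa$.

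Next, I would invoke the hypothesis that $S$ is $*$-stable. By item (iv) of observation \ref{commutantp1}, $S'$ is then a $*$-subalgebra of $\mathrm{B}(H)$. The short reason is that for $a \in S'$ and $s \in S$, the element $s^*$ also lies in $S$, so $a s^* = s^* a$; taking adjoints yields $s a^* = a^* s$, i.e.\ $a^* \in S'$.

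Finally, I would combine these observations with the corollary following proposition \ref{condC*}, which states that $\mathrm{B}(H)$ is a C*-algebra and so is every one of its closed $*$-subalgebras. Since $S'$ has now been identified as a closed $*$-subalgebra of $\mathrm{B}(H)$, the conclusion follows at once. There is no real obstacle here: the entire argument is a bookkeeping exercise citing \ref{commutantp1} together with the fact that the C*-property descends to closed $*$-subalgebras.
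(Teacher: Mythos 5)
Your proposal is correct and is precisely the argument the paper intends (it leaves this observation without proof, expecting the reader to combine observation \ref{commutantp1} (i), (iii), (iv) with the corollary to \ref{condC*} that closed \st-subalgebras of $\blop(H)$ are C*-algebras). Nothing to add.
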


\begin{definition}%
\index{concepts}{algebra!von Neumann}%
\index{concepts}{von Neumann!algebra}%
A \underline{von Neumann algebra}
on $H$ is a subset of $\blop(H)$ which is the commutant of a
\st-stable subset of $\blop(H)$.
\end{definition}

\begin{observation}%
If $H \neq \{0\}$, then each von Neumann algebra
on $H$ is a unital C*-subalgebra of $\blop(H)$.
\end{observation}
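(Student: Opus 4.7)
The plan is straightforward because the key ingredients have already been established in the two observations immediately preceding the statement. The preceding observation asserts that $S'$ is a C*-algebra whenever $S$ is a \st-stable subset of $\blop(H)$, so I effectively only have to address (i) that $S'$ is a C*-\emph{subalgebra} of $\blop(H)$ (operations inherited), and (ii) the word ``unital'', including the role of the hypothesis $H \neq \{0\}$.

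First, I would observe that the identity operator $\mathds{1}$ on $H$ commutes with every bounded linear operator, so $\mathds{1} \in S'$ for \emph{any} subset $S$ of $\blop(H)$. Since $H \neq \{0\}$, the operator $\mathds{1}$ is non-zero, and hence is a genuine unit in the sense of the definition of \ref{hom} (which requires units to be non-zero). Thus $S'$ contains $\mathds{1}$ as a unit, giving the ``unital'' part of the conclusion.

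Second, to see that $S'$ is a C*-subalgebra of $\blop(H)$ — rather than merely some abstract C*-algebra that happens to consist of operators on $H$ — I would unpack the content of the preceding observation. Closure of $S'$ under linear combinations and under composition is immediate from the bilinearity of the commutator relation $as = sa$. Closure under the involution uses the \st-stability of $S$: if $a \in S'$ and $s \in S$, then $s^{*} \in S$, so $a\mspace{1mu}s^{*} = s^{*}a$; taking adjoints yields $sa^{*} = a^{*}s$, hence $a^{*} \in S'$. Finally, $S'$ is norm-closed in $\blop(H)$ by the separate continuity of multiplication: if $a_n \to a$ in $\blop(H)$ with each $a_n \in S'$, then for every $s \in S$ we have $as = \lim a_n s = \lim s a_n = sa$, so $a \in S'$. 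In particular, $S'$ is a norm-closed \st-subalgebra of the C*-algebra $\blop(H)$, and therefore a C*-subalgebra of $\blop(H)$, containing the unit $\mathds{1}$.

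There is no real obstacle here — the statement is a compilation of facts already on the page, and the whole argument is one short paragraph. The only subtlety worth flagging is the hypothesis $H \neq \{0\}$, which enters precisely to ensure $\mathds{1} \neq 0$, so that $\mathds{1}$ qualifies as a unit under the paper's convention.
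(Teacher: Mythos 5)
Your proof is correct and is exactly the argument the paper relies on (the paper states this as an observation without proof, but the ingredients are its earlier facts \ref{commutantp1} on commutants being closed unital \st-subalgebras, combined with the corollary that closed \st-subalgebras of $\blop(H)$ are C*-algebras). You also correctly isolate the only point where $H \neq \{0\}$ is used, namely to ensure $\mathds{1} \neq 0$ so that it qualifies as a unit under the paper's convention.
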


\begin{example}\label{exvN}%
We have that $\blop(H)$ is a von Neumann algebra as
$\blop(H) = (\mathds{C}\mathds{1})'$.
We have that $\mathds{C}\mathds{1}$ is a von Neumann algebra
because $\mathds{C}\mathds{1} = \blop(H)'$, cf.\ \ref{BHirred}.
If $\pi$ is a representation of a \st-algebra on $H$,
then $\pi'$ is a von Neumann algebra.
\end{example}

\begin{remark}
\index{concepts}{algebra!W*-algebra}\index{concepts}{W*-algebra}%
A \underline{W*-algebra} is a C*-algebra which is isomorphic
as a C*-algebra to a von Neumann algebra, see
\cite[vol.\ I, Def.\ III.3.1, p.\ 130]{Tak}.
The pronuniation is ``weak star''.%
\pagebreak
\end{remark}

Von Neumann algebras come in pairs:

\begin{theorem}\label{prepchar}%
If $\mathscr{M}$ is a von Neumann algebra, then
\begin{itemize}
  \item[$(i)$] $\mathscr{M}'$ is a von Neumann algebra,
 \item[$(ii)$] $\mathscr{M}'' = \mathscr{M}$.
\end{itemize}
\end{theorem}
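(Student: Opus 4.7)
The plan is to unwind the definition of a von Neumann algebra and then apply observation \ref{thirdcomm}, which asserts $S''' = S'$ for any subset $S$ of $\blop(H)$. By hypothesis, $\mathscr{M} = S'$ for some \st-stable subset $S \subset \blop(H)$.

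For part (i), the point is that a von Neumann algebra is itself automatically \st-stable. Indeed, since $S$ is \st-stable, its commutant $\mathscr{M} = S'$ is also \st-stable: if $a \in \mathscr{M}$ and $s \in S$, then $s^* \in S$ as well, so $a s^* = s^* a$, and taking adjoints gives $s a^* = a^* s$, whence $a^* \in \mathscr{M}$. Thus $\mathscr{M}$ is itself a \st-stable subset of $\blop(H)$, and so its commutant $\mathscr{M}'$ is a von Neumann algebra by the very definition.

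For part (ii), one simply chains commutants: $\mathscr{M}'' = (S')'' = S''' = S' = \mathscr{M}$, where the middle equality is observation \ref{thirdcomm}.

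I do not anticipate any real obstacle: the entire argument is a matter of chasing the definitions together with the algebraic identity $S''' = S'$, and the only substantive observation is that the commutant of a \st-stable set is again \st-stable, so that part (i) reduces to the definition. The examples in \ref{exvN} already implicitly use this pattern (e.g.\ $\mathds{C}\mathds{1} = \blop(H)'$ shows that a commutant in the wider sense need only be required of a \st-stable set to produce a von Neumann algebra).
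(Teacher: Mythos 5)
Your proof is correct and follows essentially the same route as the paper, which simply cites observation \ref{thirdcomm} ($S'''=S'$) for both parts; your explicit verification that the commutant of a \st-stable set is again \st-stable is exactly the detail the paper leaves implicit (it appears there as the earlier observation that such a commutant is a C*-algebra).
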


\begin{proof}
This follows from observation \ref{thirdcomm}.
\end{proof}

The property in the following corollary is often taken as the
definition of von Neumann algebras. 

\begin{corollary}\index{concepts}{commutant!second}%
A \st-subalgebra $A$ of $\blop(H)$ is a von
Neumann algebra if and only if $A'' = A$.
\end{corollary}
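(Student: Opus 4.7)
The plan is to handle the two implications separately, noting that the forward direction is essentially already contained in the preceding theorem \ref{prepchar}, while the reverse direction requires only a small verification that the commutant of a \st-stable set is itself \st-stable.

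For the forward implication, suppose $A$ is a von Neumann algebra on $H$. Then $A'' = A$ is nothing but part $(ii)$ of the preceding theorem \ref{prepchar}, so this direction is immediate.

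For the reverse implication, suppose $A$ is a \st-subalgebra of $\blop(H)$ satisfying $A'' = A$. The key observation is that $A''$ is always a von Neumann algebra, provided $A$ is \st-stable. To see this, first note that since $A$ is a \st-subalgebra, it is \st-stable in the sense of \ref{selfadjointsubset}. Next, I would verify that the commutant $A'$ of a \st-stable set $A$ is itself \st-stable: if $b \in A'$, then $ba = ab$ for every $a \in A$; taking adjoints gives $a^*b^* = b^*a^*$, and since $a^*$ ranges over $A$ as $a$ does (by \st-stability of $A$), this shows $b^* \in A'$. Therefore $A'' = (A')'$ is the commutant of the \st-stable set $A'$, which is by definition a von Neumann algebra on $H$. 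The assumption $A = A''$ then forces $A$ to be a von Neumann algebra.

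There is no real obstacle here; the content of the corollary is a direct repackaging of $(ii)$ of \ref{prepchar} together with the trivial closure property of commutants under the involution. The only thing to be careful about is making explicit the passage from ``\st-subalgebra'' (a structural hypothesis) to ``\st-stable subset'' (the hypothesis used in the definition of a von Neumann algebra), so that the definition applies to $A'$.
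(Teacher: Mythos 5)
Your proof is correct and follows the route the paper intends: the forward direction is \ref{prepchar}\,$(ii)$, and the reverse direction rests on the observation that the commutant of a \st-stable set is \st-stable, so that $A''=(A')'$ is by definition a von Neumann algebra. The paper leaves the corollary without an explicit proof, and your careful remark about passing from ``\st-subalgebra'' to ``\st-stable subset'' is exactly the small verification that makes the definition applicable to $A'$.
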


A von Neumann algebra is closed under pointwise limits
in the following sense.

\begin{proposition}\label{ptwcl}%
Let $\mathscr{M}$ be a von Neumann algebra on
$H$. Let $(a_i)_{i \in I}$ be a net in $\mathscr{M}$,
which converges pointwise on $H$ to some limit
$a \in \blop(H)$, that is,
\[ a \mspace{2mu} x = \lim_{i \in I} a_i \mspace{2mu} x
\quad \text{for all} \quad x \in H. \]
The pointwise limit $a$ then belongs to $\mathscr{M}$ as well.
\end{proposition}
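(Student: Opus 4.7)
The plan is to exploit the fact that $\mathscr{M}$ being a von Neumann algebra means $\mathscr{M} = \mathscr{M}''$ by theorem \ref{prepchar} (ii). Hence it suffices to show that the pointwise limit $a$ lies in $\mathscr{M}''$, that is, $a$ commutes with every operator in $\mathscr{M}'$.

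So I would fix an arbitrary $b \in \mathscr{M}'$ and a vector $x \in H$, and verify the identity $a\mspace{2mu}bx = b\mspace{2mu}ax$. First I would compute $a\mspace{2mu}bx = \lim_{i \in I} a_i(bx)$ by applying the hypothesis of pointwise convergence to the vector $bx$. Since $a_i \in \mathscr{M}$ and $b \in \mathscr{M}'$, each $a_i$ commutes with $b$, so $a_i(bx) = b(a_i x)$. Then I would move $b$ outside the limit using the continuity of the bounded linear operator $b$: $\lim_{i \in I} b(a_i x) = b\bigl(\lim_{i \in I} a_i x\bigr) = b\mspace{2mu}ax$, again by the pointwise convergence hypothesis applied to $x$. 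Chaining these equalities gives $ab = ba$, and since $b \in \mathscr{M}'$ was arbitrary, this yields $a \in \mathscr{M}'' = \mathscr{M}$.

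There is no real obstacle here; the argument is essentially a one-line diagram chase using only the defining algebraic identity of the bicommutant together with the continuity of bounded operators (which trivially lets one interchange a bounded operator with a pointwise limit). The key conceptual point — and the only thing worth emphasizing — is that one must work with the \emph{commutant} characterization of a von Neumann algebra rather than trying to produce $a$ as a limit of elements inside $\mathscr{M}$ via some topology on $\mathscr{M}$ itself; the bicommutant description makes pointwise (strong) closedness almost tautological.
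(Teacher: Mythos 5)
Your proof is correct and is essentially the paper's own argument: both show that $a$ commutes with everything that the $a_i$ commute with, by pushing the commuting operator through the pointwise limit using its continuity. The only cosmetic difference is that the paper works directly with the \st-stable set $S$ whose commutant defines $\mathscr{M}$, whereas you take $S = \mathscr{M}'$ and invoke $\mathscr{M}'' = \mathscr{M}$ from \ref{prepchar}; the computation is identical.
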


\begin{proof}
The von Neumann algebra $\mathscr{M}$ is the commutant
$S'$ of some \st-stable subset $S$ of $\blop(H)$. We have that
the $a_i$ $(i \in I)$ all are in $\mathscr{M} = S'$. We get that also
$a \in S' = \mathscr{M}$, because for $c \in S$ and $x \in H$,
we compute:
\[ a \mspace{2mu} c \mspace{2mu} x
= \lim_{i \in I} \,a_i \mspace{2mu} c \mspace{2mu} x
= \lim_{i \in I} \,c \mspace{2mu} a_i \mspace{2mu} x
= c \mspace{2mu} a \mspace{2mu} x. \qedhere \]
\end{proof}

\begin{corollary}\label{ordclosed}%
Let $\mathscr{M}$ be a von Neumann algebra on $H$.
Let $(a_i)_{i \in I}$ be an increasing net in $\mathscr{M}\sa$,
which is upper bounded in $\blop(H)\sa$. The least upper bound
$\sup_{i \in I} \,a_i$ in $\blop(H)\sa$ then also belongs to $\mathscr{M}\sa$.
\end{corollary}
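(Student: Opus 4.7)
The plan is to combine the two results that immediately precede the corollary, namely the order completeness theorem \ref{ordercompl} for $\blop(H)\sa$ and the pointwise closure property \ref{ptwcl} of a von Neumann algebra. The idea is that \ref{ordercompl} not only gives existence of the supremum in $\blop(H)\sa$ but also provides the crucial extra information that the net converges pointwise on $H$ to its supremum, and it is precisely pointwise convergence that a von Neumann algebra respects.

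First I would invoke theorem \ref{ordercompl} applied to the given increasing net $(a_i)_{i \in I}$ in $\mathscr{M}\sa \subset \blop(H)\sa$, which is upper bounded in $\blop(H)\sa$ by assumption. This yields the existence of $a := \sup_{i \in I} a_i$ in $\blop(H)\sa$, together with the pointwise convergence statement
\[ a \mspace{2mu} x = \lim_{i \in I} a_i \mspace{2mu} x \quad \text{for all} \quad x \in H. \]

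Next I would apply proposition \ref{ptwcl}: since $(a_i)_{i \in I}$ is a net in the von Neumann algebra $\mathscr{M}$ whose pointwise limit on $H$ is $a \in \blop(H)$, the limit $a$ must itself lie in $\mathscr{M}$. Combined with $a \in \blop(H)\sa$ from the first step, this gives $a \in \mathscr{M} \cap \blop(H)\sa = \mathscr{M}\sa$, which is exactly the claim.

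There is no real obstacle here: the whole content has been packaged into the two preceding results, and the corollary is essentially a one-line synthesis of them. The only point worth being careful about is that we must use the stronger form of \ref{ordercompl} (pointwise convergence to the supremum), not merely the existence of the supremum in $\blop(H)\sa$, since it is pointwise convergence that feeds into \ref{ptwcl}.
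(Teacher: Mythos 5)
Your proposal is correct and follows exactly the paper's own argument: theorem \ref{ordercompl} gives the supremum together with pointwise convergence of the net to it, and proposition \ref{ptwcl} then places that pointwise limit in $\mathscr{M}$ (hence in $\mathscr{M}\sa$, being Hermitian). Nothing is missing.
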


\begin{proof}
Theorem \ref{ordercompl} shows that the net $(a_i)_{i \in I}$
converges pointwise on $H$ to its least upper bound
$\sup_{i \in I} a_i$ in $\blop(H)\sa$. The least upper bound
$\sup_{i \in I} a_i$ then belongs to $\mathscr{M}\sa$ by
the preceding proposition \ref{ptwcl}. \pagebreak
\end{proof}

\medskip
This says that if $\mathscr{M}$ is a von Neumann algebra on $H$,
then $\mathscr{M}\sa$ is ``monotone closed'' in $\blop(H)\sa$.
In particular, $\mathscr{M}\sa$ then is monotone complete.

\begin{proposition}\label{prepgen}%
If $S$ is a \st-stable subset of $\blop(H)$, then the set $S''$
is the smallest von Neumann algebra on $H$ containing $S$.
\end{proposition}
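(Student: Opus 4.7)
The plan is to unpack the definition of a von Neumann algebra and verify three things in turn: that $S''$ contains $S$, that $S''$ is itself a von Neumann algebra, and that it sits inside any other von Neumann algebra containing $S$. None of these should require new machinery; everything should fall out of the formal properties of the commutant operation recorded in observation \ref{thirdcomm} and theorem \ref{prepchar}, together with the definition of a von Neumann algebra as the commutant of a $*$-stable set.

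First, I would note that $S \subset S''$ holds generally (this is the analog of \ref{commutantp2}(iii) but inside $\blop(H)$, and is immediate from the definition: every element of $S$ commutes with every element of $S'$). Next, to see that $S''$ is a von Neumann algebra, I would argue that since $S$ is $*$-stable, the commutant $S'$ is a $*$-stable subset of $\blop(H)$ (adjoints commute with $a$ iff they commute with $a^*$, using that $*$-stability of $S$ lets us pair each defining commutation relation with its adjoint). Then $S'' = (S')'$ is by definition the commutant of a $*$-stable set, hence a von Neumann algebra.

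For minimality, I would take an arbitrary von Neumann algebra $\mathscr{M}$ on $H$ containing $S$. The inclusion $S \subset \mathscr{M}$ gives $\mathscr{M}' \subset S'$ by the monotonicity of the commutant, and applying monotonicity once more gives $S'' \subset \mathscr{M}''$. By theorem \ref{prepchar}(ii), $\mathscr{M}'' = \mathscr{M}$, so $S'' \subset \mathscr{M}$, as required.

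There is no real obstacle here; the only subtle point is verifying that $S'$ inherits $*$-stability from $S$, but this is just the remark that $aT = Ta$ iff $a^*T^* = T^*a^*$, combined with the hypothesis that $T \in S \Leftrightarrow T^* \in S$. Everything else is formal manipulation of the bipolar-type identities already at our disposal.
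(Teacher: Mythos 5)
Your proof is correct and follows the same route as the paper: $S''$ is a von Neumann algebra containing $S$ (since $*$-stability passes from $S$ to $S'$, making $S''=(S')'$ the commutant of a $*$-stable set), and minimality follows from monotonicity of the commutant together with $\mathscr{M}''=\mathscr{M}$ from \ref{prepchar}. The only difference is that you spell out the $*$-stability of $S'$ explicitly, which the paper leaves implicit.
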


\begin{proof}
The set $S''$ is a von Neumann algebra on $H$ containing $S$.
If $\mathscr{M}$ is a von Neumann algebra on $H$ containing $S$,
then $S \subset \mathscr{M}$ whence $\mathscr{M}' \subset S'$
and thus $S'' \subset \mathscr{M}'' = \mathscr{M}$.
\end{proof}

\begin{definition}\index{symbols}{W*(1)@$\vonNeumAlg (S)$}%
\index{symbols}{W*(2)@$\vonNeumAlg (a)$}%
\index{symbols}{W*(3)@$\vonNeumAlg (\pi)$}%
If $S$ is an arbitrary subset of $\blop(H)$, define
\[ S^* := \{ \,a^* \in \blop(H) : a \in S \,\}. \]
It follows from the preceding proposition \ref{prepgen} that
\[ \vonNeumAlg (S) := (S \cup S^*)'' \]
is the smallest von Neumann algebra on $H$ containing $S$.
One says that $\vonNeumAlg (S)$ is
\underline{the von Neumann algebra generated by $S$}.
If $a$ is a bounded linear operator on $H$, we put
\[ \vonNeumAlg (a) := \vonNeumAlg (\{a\}). \]
If $\pi$ is a representation of a \st-algebra
on a Hilbert space, we denote
\[ \vonNeumAlg (\pi) := \vonNeumAlg \bigl(\range(\pi)\bigr), \]
so that then $\vonNeumAlg (\pi) = \range(\pi)'' = (\pi')'$.
\end{definition}

\begin{proposition}\label{Wstarcomm}%
The von Neumann algebra generated by a \linebreak normal
subset \ref{selfadjointsubset} of $\blop(H)$ is commutative.
\end{proposition}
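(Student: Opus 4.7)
The plan is to verify that the definition of $\vonNeumAlg(S) := (S \cup S^*)''$ reduces, under the normality hypothesis, to $S''$, and then to show that $S''$ is commutative by purely formal manipulation of the commutant operation in $\blop(H)$. This is essentially the analogue in $\blop(H)$ of observation \ref{scndcobs} (iv), and the same short argument works.

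First I would unfold the hypothesis. A normal subset $S$ of $\blop(H)$ is by definition \st-stable and consists of pairwise commuting elements (see \ref{selfadjointsubset}). Hence on one hand $S^* = S$, so
\[ \vonNeumAlg(S) = (S \cup S^*)'' = S''. \]
On the other hand, pairwise commutativity of the elements of $S$ is exactly the statement that $S \subset S'$ (compare \ref{commutantp2} (v), which holds verbatim with commutants taken in $\blop(H)$).

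Next I would carry out the two applications of the order-reversing commutant operation. The only fact needed is that for subsets $T_1, T_2$ of $\blop(H)$,
\[ T_1 \subset T_2 \Rightarrow {T_2}' \subset {T_1}'. \]
Applying this to $S \subset S'$ yields $(S')' \subset S'$, i.e.\ $S'' \subset S'$. Applying it again to $S'' \subset S'$ yields $(S')' \subset (S'')'$, i.e.\ $S'' \subset (S'')'$. But $S'' \subset (S'')'$ is precisely the statement that every element of $S''$ commutes with every element of $S''$, that is, $S'' = \vonNeumAlg(S)$ is commutative.

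There is no real obstacle here: the whole argument is a two-step formal manipulation of $'$, once the hypothesis is rewritten as $S \subset S'$. The only thing one has to be careful about is not confusing commutants in the abstract sense of \ref{questimbed} with commutants in $\blop(H)$ in the sense of \ref{repcommutant} — but since the formal properties (order-reversal, $S \subset S''$) are identical, the proof of \ref{scndcobs} (iv) transfers without change.
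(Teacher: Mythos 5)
Your proof is correct and is exactly the argument the paper intends: its proof of \ref{Wstarcomm} simply says ``See \ref{scndcobs}'', whose item (vi) is proved by the very same reduction to $S \subset S'$ followed by two applications of the order-reversing commutant operation. Spelling out that $\vonNeumAlg(S) = S''$ for \st-stable $S$ and that the formal commutant identities transfer verbatim to $\blop(H)$ is a faithful unpacking of that reference, not a different route.
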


\begin{proof}
See \ref{scndcobs}.
\end{proof}

\begin{proposition}\label{W*normal}%
If $b \in \blop(H)$ is \underline{normal}, then $\vonNeumAlg (b) = \{ b \}''$.
\end{proposition}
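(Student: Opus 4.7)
The plan is to unwind the definitions and reduce the claim to a single application of the Fuglede--Putnam--Rosenblum Theorem \ref{FPR}.

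First I would note that, directly from the definition of $\vonNeumAlg(b)$, one has
\[ \vonNeumAlg(b) = \bigl(\{b\} \cup \{b\}^*\bigr)'' = \{b, b^*\}''. \]
Thus the statement $\vonNeumAlg(b) = \{b\}''$ is equivalent to showing $\{b, b^*\}'' = \{b\}''$. Since taking commutants reverses inclusions and $\{b\} \subset \{b, b^*\}$, the inclusion $\{b, b^*\}'' \subset \{b\}''$ is automatic (in fact this is the nontrivial direction here, via the chain $\{b\}' \supset \{b, b^*\}'$ and then $\{b\}'' \subset \{b, b^*\}''$ is the other direction; so I need to be careful about which inclusion is which).

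Actually, the key step is the identity $\{b, b^*\}' = \{b\}'$. The inclusion $\{b, b^*\}' \subset \{b\}'$ is trivial since $\{b\} \subset \{b, b^*\}$. For the opposite inclusion, I would take any $a \in \{b\}'$, i.e.\ an operator commuting with $b$, and apply the Fuglede--Putnam--Rosenblum Theorem \ref{FPR} (with $n_1 = n_2 = b$, which is normal by hypothesis) to conclude that $a$ also commutes with $b^*$, whence $a \in \{b, b^*\}'$. This gives $\{b\}' = \{b, b^*\}'$, and taking commutants once more yields $\{b\}'' = \{b, b^*\}'' = \vonNeumAlg(b)$, as desired.

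There is no real obstacle here beyond correctly invoking \ref{FPR}; the whole content of the proposition is the Fuglede--Putnam--Rosenblum phenomenon that commuting with a normal operator forces commuting with its adjoint. The argument is a one-liner once that theorem is in hand.
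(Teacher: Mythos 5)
Your argument is correct and is essentially the paper's own proof: the Fuglede--Putnam--Rosenblum Theorem \ref{FPR} gives that commuting with the normal operator $b$ forces commuting with $b^*$, so $\{b\}' = \{b,b^*\}'$, and taking commutants yields $\vonNeumAlg(b) = \{b,b^*\}'' = \{b\}''$. The brief hesitation about which inclusion is trivial resolves itself correctly, so there is nothing to fix.
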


\begin{proof}
The Fuglede-Putnam-Rosenblum Theorem \ref{FPR} says
that $\{ b \}' = \{ b^* \}'$, whence $\vonNeumAlg (b) = \{ b, b^* \}'' = \{ b \}''$.
\end{proof}

\begin{proposition}%
A non-zero representation $\pi$ of a \st-algebra on $H$
is irreducible if and only if $\vonNeumAlg (\pi) = \blop(H)$.
\end{proposition}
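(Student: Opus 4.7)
The plan is to reduce the statement to Schur's Lemma \ref{Schur}, which characterises irreducibility of a non-zero representation $\pi$ by the condition $\pi' = \mathds{C}\mathds{1}$. The bridge between this condition and the condition $\vonNeumAlg(\pi) = \blop(H)$ is the identity $\vonNeumAlg(\pi) = (\pi')'$ (which holds by definition of $\vonNeumAlg(\pi)$, using that $\range(\pi)$ is already \st-stable since $\pi$ is a \st-algebra homomorphism), together with the computation of the commutants $(\mathds{C}\mathds{1})' = \blop(H)$ and $\blop(H)' = \mathds{C}\mathds{1}$ noted in example \ref{exvN} and corollary \ref{BHirred}.

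For the forward direction, I would assume $\pi$ is irreducible, so $\pi' = \mathds{C}\mathds{1}$ by Schur's Lemma. Then $\vonNeumAlg(\pi) = (\pi')' = (\mathds{C}\mathds{1})' = \blop(H)$. For the converse, I would assume $\vonNeumAlg(\pi) = \blop(H)$, that is $(\pi')' = \blop(H)$. Taking commutants on both sides and using observation \ref{thirdcomm} (so $\pi''' = \pi'$), this gives $\pi' = \blop(H)' = \mathds{C}\mathds{1}$, and then Schur's Lemma yields that $\pi$ is irreducible.

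There is no real obstacle here; the only thing to be careful about is that the formula $\vonNeumAlg(\pi) = \range(\pi)'' = (\pi')'$ used in the definition relies on $\range(\pi)$ being \st-stable, which holds automatically since $\pi(a)^* = \pi(a^*) \in \range(\pi)$. Everything else is a direct invocation of results already established: Schur's Lemma \ref{Schur}, the triviality of the commutant of $\blop(H)$ \ref{BHirred}, and the double-commutant identity \ref{thirdcomm}.
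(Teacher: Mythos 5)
Your argument is correct and follows exactly the route the paper intends: its proof is the one-line citation of Schur's Lemma \ref{Schur} and corollary \ref{BHirred}, and your computation via $\vonNeumAlg(\pi) = (\pi')'$, $(\mathds{C}\mathds{1})' = \blop(H)$, and the triple-commutant identity \ref{thirdcomm} is just the expected unpacking of that citation. No gaps.
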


\begin{proof}
This follows from Schur's Lemma \ref{Schur} and \ref{BHirred}. \pagebreak
\end{proof}

\begin{proposition}\label{spprojsubset}%
Let $\pi$ be a non-degenerate representation of a commutative \st-algebra
on a Hilbert space $\neq \{0\}$. Let $P$ be the spectral resolution of $\pi$.
We then have $\range(\pi _P) \subset \vonNeumAlg (\pi)$. In particular, for
the spectral projections, we have $P \subset \vonNeumAlg (\pi)$.
\end{proposition}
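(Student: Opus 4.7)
The plan is to chain together two equalities of commutants: first $\pi' = P'$ from the Spectral Theorem for representations \ref{spthmrep}, and second $\pi_P' = P'$ from corollary \ref{spprojcomm}. Putting these together gives $\pi' = \pi_P'$, whence, by taking commutants again, $\vonNeumAlg(\pi) = (\pi')' = (\pi_P')' = \pi_P''$. Since the range of any representation is trivially contained in the second commutant of its range, we obtain $\range(\pi_P) \subset \pi_P'' = \vonNeumAlg(\pi)$, which is the main assertion.

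More concretely, first I would invoke theorem \ref{spthmrep} to identify $\pi' = P'$, noting that here $P$ is regarded as a subset of $\blop(H)$ consisting of its range of spectral projections, as per the abuse of notation established in \ref{specmeasdef}. Next I would apply corollary \ref{spprojcomm}, which gives $\pi_P' = P'$, the key point being that the C*-algebra generated by the spectral projections has the same commutant as the image of the full spectral integration representation $\pi_P$ of $\bmeas(\mathcal{E})$. These two equalities together yield $\pi_P' = \pi'$, and hence $\pi_P'' = (\pi')' = \vonNeumAlg(\pi)$ by definition of the von Neumann algebra generated by $\pi$.

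For the final containment $\range(\pi_P) \subset \pi_P''$, I would simply observe that every element of $\range(\pi_P)$ commutes with every element of $\pi_P'$ (by the definition of the commutant), which is exactly the statement $\range(\pi_P) \subset \pi_P''$. Combining, $\range(\pi_P) \subset \vonNeumAlg(\pi)$.

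The in-particular statement $P \subset \vonNeumAlg(\pi)$ follows at once, since for each $\Delta$ in the underlying $\sigma$-algebra one has $P(\Delta) = \pi_P(1_\Delta) \in \range(\pi_P)$. I do not anticipate any real obstacle here; the whole argument is a bookkeeping exercise in commutants, and all the nontrivial analytic content has already been absorbed into the two preceding results \ref{spthmrep} and \ref{spprojcomm} that we invoke.
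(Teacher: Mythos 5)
Your argument is correct and is essentially identical to the paper's own proof: both chain the commutant equalities ${\pi_P}' = P' = \pi'$ from \ref{spprojcomm} and \ref{spthmrep} and then conclude $\range(\pi_P) \subset \range(\pi_P)'' = (\pi')' = \vonNeumAlg(\pi)$. The closing remark on $P(\Delta) = \pi_P(1_\Delta)$ is a fine way to record the ``in particular'' clause.
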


\begin{proof}
We have
\[ {\pi _P}' = P' = {\pi}', \]
see \ref{spprojcomm} \& \ref{spthmrep}, whence
\[ \range(\pi _P) \subset \range(\pi _P)'' = ({\pi _P}')' = (\pi ')' = \vonNeumAlg (\pi).
\qedhere \]
\end{proof}

\begin{proposition}\label{gensame}%
Let $\pi$ be a non-degenerate representation of a commutative \st-algebra
on a Hilbert space $\neq \{0\}$. Let $P$ be the spectral \linebreak resolution
of $\pi$. Then $\pi _P$ and $\pi$ generate the same von Neumann
\linebreak algebra. Furthermore we have $\vonNeumAlg (\pi) = \vonNeumAlg (P)$.
\end{proposition}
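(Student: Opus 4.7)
The plan is to reduce the statement to a chain of commutant identities that are already available in the text. Recall that for any subset $S \subset \blop(H)$, one has $\vonNeumAlg(S) = (S \cup S^*)''$, which simplifies to $S''$ whenever $S$ is \st-stable. This is the identity I intend to exploit throughout.

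First I would observe that each of the three sets whose generated von Neumann algebras appear in the statement is already \st-stable. Namely, $\range(\pi)$ is \st-stable because $\pi$ is a \st-algebra homomorphism (and $A$ is a \st-algebra), $\range(\pi_P)$ is \st-stable because $\pi_P$ is a \st-algebra homomorphism from $\bmeas(\mathcal{E})$, and the set $P$ of spectral projections is \st-stable because every projection is self-adjoint. Consequently
\[ \vonNeumAlg(\pi) = \range(\pi)'' = (\pi')', \qquad \vonNeumAlg(\pi_P) = \range(\pi_P)'' = ({\pi_P}')', \qquad \vonNeumAlg(P) = P''. \]

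Next I would invoke two commutant identities already established. From the Spectral Theorem for representations \ref{spthmrep}, we have $\pi' = P'$. From corollary \ref{spprojcomm}, we also have ${\pi_P}' = P'$. Putting these together yields
\[ \pi' = {\pi_P}' = P'. \]
Taking commutants once more preserves the equalities, so
\[ (\pi')' = ({\pi_P}')' = (P')' = P''. \]
Combined with the three identities of the previous paragraph, this gives $\vonNeumAlg(\pi) = \vonNeumAlg(\pi_P) = \vonNeumAlg(P)$, as required.

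There is really no obstacle of substance here: the theorem is a purely formal consequence of the commutant computations already packaged in \ref{spthmrep} and \ref{spprojcomm}, together with the general principle that von Neumann algebras are the second commutants of \st-stable generating sets. The only point that requires any attention is the verification that the three generating sets are \st-stable, so that the general formula $\vonNeumAlg(S) = (S \cup S^*)''$ collapses to $S''$; everything else is bookkeeping.
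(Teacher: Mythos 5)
Your argument is correct and is essentially the paper's own proof, which reads in full: ``This follows again from ${\pi _P}' = P' = \pi'$.'' You have merely made explicit the routine verification that the three generating sets are \st-stable, so that each generated von Neumann algebra is the second commutant of its generating set.
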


\begin{proof} This follows again from ${\pi _P}' = P' = \pi'$. \end{proof}

\medskip
Please note that $\range(\pi _P)$ is merely the C*-algebra generated
by $P$, cf.\ \ref{spanP}. On a non-separable Hilbert space, it can happen
that $\vonNeumAlg (\pi)$ is strictly larger than $\range(\pi _P)$, cf.\ \ref{nonsep}
below.

\begin{theorem}\label{Abelian}%
Let $\mathscr{M}$ be a commutative von Neumann  algebra on a Hilbert
space $\neq \{0\}$. Let $P$ be the spectral  resolution of $\mathscr{M}$.
We then have $\mathscr{M} = \range(\pi _P)$.
\end{theorem}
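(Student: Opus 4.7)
The plan is to view the von Neumann algebra $\mathscr{M}$ as the range of its own inclusion representation $\iota : \mathscr{M} \hookrightarrow \blop(H)$, and then to derive both inclusions from previously established properties of spectral resolutions, using the defining property $\mathscr{M}'' = \mathscr{M}$ to close the argument.

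First I would verify that $\iota$ is a non-degenerate representation of the commutative \st-algebra $\mathscr{M}$ on $H$. Non-degeneracy is immediate from \ref{unitnondeg}, since every von Neumann algebra on a nonzero Hilbert space contains $\mathds{1}$ and $\iota$ is unital. The range of $\iota$ is $\mathscr{M}$ itself, which is already closed in $\blop(H)$ (being a C*-algebra), so in the notation of \ref{specpi} the algebra $B$ equals $\mathscr{M}$ and the ``adjoint map'' $\iota^* : \Delta(\mathscr{M}) \to \s(\iota)$ is simply $\tau \mapsto \tau$. Hence the spectral resolution of $\iota$ supplied by Theorem \ref{spthmrep} lives on $\Delta(\mathscr{M})$ and satisfies $b = \int \wht{b}\,\diff P$ (in norm) for all $b \in \mathscr{M}$; by the uniqueness clause of Theorem \ref{spthmC*}, this coincides with the spectral resolution $P$ of $\mathscr{M}$ mentioned in the statement.

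The inclusion $\mathscr{M} \subset \range(\pi_P)$ is then immediate: for each $b \in \mathscr{M}$, the defining relation reads
\[
b = \int \wht{b}\,\diff P = \pi_P\bigl(\wht{b}\bigr) \in \range(\pi_P).
\]
For the reverse inclusion $\range(\pi_P) \subset \mathscr{M}$, I would apply Proposition \ref{spprojsubset} to the representation $\iota$ to get
\[
\range(\pi_P) \subset \vonNeumAlg(\iota) = (\iota')' = \mathscr{M}'',
\]
and then invoke the characterization $\mathscr{M}'' = \mathscr{M}$ of a von Neumann algebra from Theorem \ref{prepchar}. (Alternatively, one can combine \ref{gensame} with $\vonNeumAlg(\mathscr{M}) = \mathscr{M}$ and note that $\range(\pi_P)$ already contains $P$, hence is contained in $\vonNeumAlg(P) = \vonNeumAlg(\iota) = \mathscr{M}$.)

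There is no serious obstacle here; the argument is essentially a bookkeeping exercise assembling \ref{spthmC*}, \ref{spthmrep}, \ref{spprojsubset}, and \ref{prepchar}. The only point that demands a little care is the identification of ``spectral resolution of the commutative C*-algebra $\mathscr{M}$'' (given by \ref{spthmC*}) with ``spectral resolution of the representation $\iota$'' (given by \ref{spthmrep}); this is handled cleanly by the uniqueness statements in those two theorems, together with the observation that $\s(\iota)$ is canonically homeomorphic to $\Delta(\mathscr{M})$.
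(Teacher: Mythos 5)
Your argument is correct and is essentially the proof in the paper: one inclusion comes from the Spectral Theorem (each $b \in \mathscr{M}$ equals $\pi_P(\wht{b})$), and the reverse inclusion comes from \ref{spprojsubset} together with $\vonNeumAlg(\mathscr{M}) = \mathscr{M}'' = \mathscr{M}$. The extra care you take in identifying the spectral resolution of the C*-algebra $\mathscr{M}$ with that of its inclusion representation is a detail the paper leaves implicit, but it is handled correctly.
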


\begin{proof} By the Spectral Theorem we have
$\mathscr{M} \subset \range(\pi _P)$, and by \ref{spprojsubset} we
also have $\range(\pi _P) \subset \vonNeumAlg (\mathscr{M}) = \mathscr{M}$.
\end{proof}

\medskip
A useful observation is the following one:

\begin{corollary}\label{disconn}%
Let $\mathscr{M}$ be a commutative von Neumann algebra on a Hilbert
space $\neq \{0\}$. Let $P$ be the spectral resolution of $\mathscr{M}$.
The imbedding \ref{suppimbed} of $C\bigl(\Delta(\mathscr{M})\bigr)$ into
$\rmLeb ^{\infty}(P)$ then is an isomorphism of C*-algebras. In particular,
for a bounded Borel function $f$ on $\Delta(\mathscr{M})$, there exists a
unique continuous function which is $P$-a.e.\ equal to $f$.
\end{corollary}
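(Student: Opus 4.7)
The plan is to realize the imbedding as the middle edge of a commutative triangle whose other two edges are already known to be C*-algebra isomorphisms, and then read off bijectivity of the imbedding from the diagram. Concretely, since $\mathscr{M}$ is a unital commutative C*-algebra acting non-degenerately on $H$ (it contains $\mathds{1}$), the Commutative Gel'fand-Na\u{\i}mark Theorem \ref{commGN} together with \ref{C*comp} gives a C*-algebra isomorphism
\[ \Gamma : \mathscr{M} \to C\bigl(\Delta(\mathscr{M})\bigr), \quad b \mapsto \wht{b}, \]
where $\Delta(\mathscr{M})$ is compact. On the other hand, Theorem \ref{Abelian} yields $\mathscr{M} = \range(\pi_P)$, and by \ref{LPmain} the representation $\pi_P$ factors to a C*-algebra isomorphism
\[ \tld{\pi}_P : \rmLeb^{\infty}(P) \to \mathscr{M}, \quad [f] \mapsto \pi_P(f). \]
Denote by $\iota : C\bigl(\Delta(\mathscr{M})\bigr) \hookrightarrow \rmLeb^{\infty}(P)$ the imbedding in question, which is well-defined and isometric (hence injective) by \ref{suppimbed} because the support of $P$ is all of $\Delta(\mathscr{M})$, as stated in \ref{spthmC*}.

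Next I would verify that $\tld{\pi}_P \circ \iota \circ \Gamma = \mathrm{id}_{\mathscr{M}}$: for $b \in \mathscr{M}$, one has
\[ (\tld{\pi}_P \circ \iota \circ \Gamma)(b) = \tld{\pi}_P\bigl([\wht{b}\mspace{1mu}]\bigr) = \pi_P(\wht{b}) = \int \wht{b} \,\diff P = b \]
by the defining property of $P$ in the Spectral Theorem \ref{spthmC*}. Since $\Gamma$ and $\tld{\pi}_P$ are both bijective, this forces $\iota = \tld{\pi}_P^{\,-1} \circ \Gamma^{\,-1}$, which is a composition of C*-algebra isomorphisms. Combined with the isometry from \ref{suppimbed}, this shows that $\iota$ is a C*-algebra isomorphism.

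The ``in particular'' assertion follows immediately. Given a bounded Borel function $f$ on $\Delta(\mathscr{M})$, its class $[f] \in \rmLeb^{\infty}(P)$ lies in the range of $\iota$ by surjectivity, so there exists $g \in C\bigl(\Delta(\mathscr{M})\bigr)$ with $[g] = [f]$, i.e., $g = f$ holds $P$-a.e. Uniqueness of $g$ is exactly the injectivity of $\iota$: two continuous functions on $\Delta(\mathscr{M})$ that are $P$-a.e.\ equal determine the same class in $\rmLeb^{\infty}(P)$, hence coincide. There is no real obstacle here beyond assembling the pieces correctly; the only point that requires a moment's care is checking that $\mathscr{M}$ meets the hypotheses of \ref{spthmC*} (non-degeneracy, which is automatic from unitality) so that $P$ genuinely lives on $\Delta(\mathscr{M})$ with full support, making \ref{suppimbed} applicable.
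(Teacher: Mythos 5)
Your proof is correct and follows essentially the same route as the paper, which simply notes that the imbedding is isometric by \ref{suppimbed} and surjective because $\mathscr{M} = \range(\pi_P)$ by \ref{Abelian}. Your commutative triangle merely makes explicit the composition of \ref{LPmain} with the Gel'fand isomorphism that underlies that one-line argument.
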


\begin{proof}
The isometric imbedding \ref{suppimbed} is surjective by
$\mathscr{M} = \range(\pi _P)$, cf.\ the preceding theorem
\ref{Abelian}. \pagebreak
\end{proof}

\begin{definition}[$\mathscr{P}(A)$]\label{setofproj}%
\index{symbols}{P97@$\mathscr{P}(A)$}%
For a \st-algebra $A$, we shall denote by $\mathscr{P}(A)$
the set of projections in $A$, cf.\ \ref{projrefl}.
\end{definition}

\begin{lemma}\label{W*lemma}%
Let $\mathscr{M}$ be a commutative von Neumann algebra on a Hilbert
space $\neq \{0\}$. Let $P$ be the spectral resolution of $\mathscr{M}$.
We then have $\mathscr{P}(\mathscr{M}) = P$ and
$\mathscr{M} = \overline{\mathrm{span}}\bigl(\mathscr{P}(\mathscr{M})\bigr)$.
\end{lemma}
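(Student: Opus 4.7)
The plan is to assemble this statement directly from three results already established in the excerpt: Theorem \ref{Abelian} (which identifies $\mathscr{M}$ with $\range(\pi_P)$), Theorem \ref{spprojform} (which says every projection in $\range(\pi_P)$ has the form $P(\Delta)$ for some Borel set $\Delta$), and Theorem \ref{spanP} (which gives $\range(\pi_P) = \overline{\mathrm{span}}(P)$). The lemma itself introduces no new spectral content; it only repackages these facts in the language of $\mathscr{P}(\mathscr{M})$.

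First I would prove the equality $\mathscr{P}(\mathscr{M}) = P$. The inclusion $P \subset \mathscr{P}(\mathscr{M})$ is immediate: each $P(\Delta)$ is a projection by definition \ref{specmeasdef}, and lies in $\mathscr{M}$ because Theorem \ref{Abelian} gives $\mathscr{M} = \range(\pi_P)$ and $P(\Delta) = \pi_P(1_\Delta) \in \range(\pi_P)$. For the reverse inclusion, let $p \in \mathscr{P}(\mathscr{M})$. Then $p \in \range(\pi_P)$ by Theorem \ref{Abelian}, and Theorem \ref{spprojform} provides a Borel set $\Delta$ with $p = P(\Delta)$, so $p \in P$.

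Next I would derive the span statement. By Theorem \ref{spanP} we have $\range(\pi_P) = \overline{\mathrm{span}}(P)$, and combining this with the identification $\mathscr{M} = \range(\pi_P)$ from Theorem \ref{Abelian} and the equality $\mathscr{P}(\mathscr{M}) = P$ just proved, we obtain
\[ \mathscr{M} \;=\; \range(\pi_P) \;=\; \overline{\mathrm{span}}(P)
\;=\; \overline{\mathrm{span}}\bigl(\mathscr{P}(\mathscr{M})\bigr), \]
which is the second claim.

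There is essentially no obstacle here; the entire content has been pushed into the preceding theorems, particularly into the Spectral Theorem for commutative C*-algebras \ref{spthmC*} (used to construct $P$) and into the characterization of projections in $\range(\pi_P)$ via \ref{spprojform}, which in turn rests on the fact that a bounded Borel function $f$ with $f = f^2$ pointwise must be an indicator function $P$-a.e. The only minor point to keep in mind is the notational convention from \ref{specmeasdef} that $P$ denotes both the map $\Delta \mapsto P(\Delta)$ and the set $\{P(\Delta) : \Delta \in \mathcal{E}\}$, so that the equality $\mathscr{P}(\mathscr{M}) = P$ is meaningful as an equality of sets of operators.
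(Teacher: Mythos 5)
Your proof is correct and follows exactly the same route as the paper, which also deduces the lemma from \ref{Abelian}, \ref{spanP}, and \ref{spprojform}. You have merely spelled out the two inclusions of $\mathscr{P}(\mathscr{M}) = P$ explicitly, which the paper leaves implicit.
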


\begin{proof}
This follows from $\mathscr{M} = \range(\pi _P) = \overline{\mathrm{span}}(P)$,
cf.\ \ref{Abelian} \& \ref{spanP}, and from the fact that every projection
in $\mathscr{M} = \range(\pi _P)$ is of the form $P(\Delta)$ for some Borel set
$\Delta$ of $\Delta(\mathscr{M})$, cf.\ \ref{spprojform}.
\end{proof}

\begin{theorem}\label{W*main}%
A von Neumann algebra $\mathscr{M}$ satisfies
\[ \mathscr{M} = \overline{\mathrm{span}}\bigl(\mathscr{P}(\mathscr{M})\bigr). \]
For emphasis: a von Neumann algebra is the closed linear span of its
\linebreak projections.
\end{theorem}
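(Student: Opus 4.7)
The plan is to reduce the general case to the commutative case settled by Lemma \ref{W*lemma}, via the commutative von Neumann algebra generated by a single Hermitian element. We may assume that $\mathscr{M}$ acts on some $H \neq \{0\}$, the alternative being trivial.

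The inclusion $\overline{\mathrm{span}}\bigl(\mathscr{P}(\mathscr{M})\bigr) \subset \mathscr{M}$ is immediate, since $\mathscr{M}$ is a closed linear subspace of $\blop(H)$ containing all its projections. For the reverse inclusion, I would first observe that $\mathscr{M}$ is a \st-algebra (being the commutant of a \st-stable set), so each $b \in \mathscr{M}$ decomposes as $b = b_1 + \iu b_2$ with $b_1 = (b+b^*)/2$ and $b_2 = (b-b^*)/(2\iu)$ in $\mathscr{M}\sa$. It thus suffices to show $\mathscr{M}\sa \subset \overline{\mathrm{span}}\bigl(\mathscr{P}(\mathscr{M})\bigr)$.

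Next, fix $a \in \mathscr{M}\sa$ and consider $\vonNeumAlg(a) = \{a\}''$ (well-defined because $\{a\}$ is \st-stable, as $a = a^*$; cf.\ \ref{W*normal} for the equality with the second commutant in the normal case). By \ref{Wstarcomm}, $\vonNeumAlg(a)$ is a commutative von Neumann algebra on $H$. Since $a \in \mathscr{M}$, we have $\mathscr{M}' \subset \{a\}'$, and therefore $\vonNeumAlg(a) = \{a\}'' \subset \mathscr{M}'' = \mathscr{M}$, using \ref{prepchar} (ii). In particular, $\mathscr{P}(\vonNeumAlg(a)) \subset \mathscr{P}(\mathscr{M})$. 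Applying Lemma \ref{W*lemma} to the commutative von Neumann algebra $\vonNeumAlg(a)$ yields $\vonNeumAlg(a) = \overline{\mathrm{span}}\bigl(\mathscr{P}(\vonNeumAlg(a))\bigr)$, and since $a \in \vonNeumAlg(a)$ (as $a$ commutes with everything in $\{a\}'$), we conclude that $a \in \overline{\mathrm{span}}\bigl(\mathscr{P}(\vonNeumAlg(a))\bigr) \subset \overline{\mathrm{span}}\bigl(\mathscr{P}(\mathscr{M})\bigr)$.

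The argument is essentially a bookkeeping exercise; the real substance was spent in \ref{W*lemma}, which via the Spectral Theorem identified a commutative von Neumann algebra with the closed span of its spectral projections. The only conceptual step here is the ``commutative-subalgebra trick'' that $\vonNeumAlg(a)$ lies inside $\mathscr{M}$ and inherits enough projections. There is no genuine obstacle, so a proof of two or three lines should suffice once the structural facts \ref{Wstarcomm}, \ref{prepchar}, and \ref{W*lemma} are in hand.
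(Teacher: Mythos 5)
Your proof is correct and follows essentially the same route as the paper: decompose an element of $\mathscr{M}$ into Hermitian parts, observe that each generates a commutative von Neumann algebra contained in $\mathscr{M}$ (via \ref{Wstarcomm} and the bicommutant identity \ref{prepchar}), and apply Lemma \ref{W*lemma}. Your write-up merely makes explicit the inclusion $\vonNeumAlg(a) = \{a\}'' \subset \mathscr{M}'' = \mathscr{M}$, which the paper leaves to the reader.
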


\begin{proof}
An arbitrary element $c \in \mathscr{M}$ can be decomposed as $c = a + \iu b$
with $a,b \in \mathscr{M}\sa$. It now suffices to apply the preceding lemma
\ref{W*lemma} to the commutative von Neumann algebras
$\vonNeumAlg (a), \vonNeumAlg (b) \subset \mathscr{M}$, cf.\ \ref{Wstarcomm}.
\end{proof}

\medskip
The above theorem is of fundamental importance for representation
\linebreak theory. With its help, questions concerning a von
Neumann algebra $\mathscr{M}$ can be transferred to questions
concerning the set $\mathscr{P}(\mathscr{M})$ of projections in
$\mathscr{M}$. When applied to the commutant of a representation,
this yields Schur's Lemma \ref{Schur}.

\begin{corollary}%
A von Neumann algebra $\mathscr{M}$ satisfies
\[ \mathscr{M}' = \mathscr{P}(\mathscr{M})', \]
whence also $\mathscr{M} = \vonNeumAlg \bigl(\mathscr{P}(\mathscr{M})\bigr)$.
\end{corollary}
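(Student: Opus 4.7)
The plan is to reduce both claims to the preceding Theorem \ref{W*main}, which states that $\mathscr{M} = \overline{\mathrm{span}}\bigl(\mathscr{P}(\mathscr{M})\bigr)$, together with the fact (observation \ref{thirdcomm} and its immediate consequences) that commutants reverse inclusions and that $S''' = S'$.

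First I would prove the equality $\mathscr{M}' = \mathscr{P}(\mathscr{M})'$. The inclusion $\mathscr{M}' \subset \mathscr{P}(\mathscr{M})'$ is immediate from $\mathscr{P}(\mathscr{M}) \subset \mathscr{M}$ and the order-reversing property of the commutant. For the reverse inclusion, let $a \in \mathscr{P}(\mathscr{M})'$, so that $a$ commutes with every projection in $\mathscr{M}$. By linearity of the commutation relation, $a$ commutes with every element of $\mathrm{span}\bigl(\mathscr{P}(\mathscr{M})\bigr)$. Since left and right multiplication by the bounded operator $a$ are norm-continuous on $\blop(H)$, the commutation relation $am = ma$ is preserved under taking norm limits in $m$. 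Theorem \ref{W*main} gives $\mathscr{M} = \overline{\mathrm{span}}\bigl(\mathscr{P}(\mathscr{M})\bigr)$, so $a$ commutes with every element of $\mathscr{M}$, i.e., $a \in \mathscr{M}'$.

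Next I would deduce the second equation. Since projections are self-adjoint, the set $\mathscr{P}(\mathscr{M})$ is \st-stable in $\blop(H)$, so by definition $\vonNeumAlg \bigl(\mathscr{P}(\mathscr{M})\bigr) = \mathscr{P}(\mathscr{M})''$. Taking commutants on both sides of the equality $\mathscr{P}(\mathscr{M})' = \mathscr{M}'$ established above, we obtain $\mathscr{P}(\mathscr{M})'' = \mathscr{M}''$. But $\mathscr{M}$ is a von Neumann algebra, so $\mathscr{M}'' = \mathscr{M}$ by theorem \ref{prepchar}(ii), and therefore $\vonNeumAlg \bigl(\mathscr{P}(\mathscr{M})\bigr) = \mathscr{M}$.

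There is no real obstacle here; the only point that requires even a moment of thought is the continuity argument in the first step, namely that commutation with a fixed bounded operator is a closed condition under norm convergence. Everything else is a formal manipulation of commutants combined with Theorem \ref{W*main}.
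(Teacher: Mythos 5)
Your proof is correct and takes essentially the same route the paper intends: the corollary is stated without proof immediately after the theorem that $\mathscr{M} = \overline{\mathrm{span}}\bigl(\mathscr{P}(\mathscr{M})\bigr)$, and your argument --- linearity plus norm-continuity of multiplication to pass from projections to the closed span, then taking commutants and using $\mathscr{M}'' = \mathscr{M}$ --- is exactly the expected deduction. Nothing is missing.
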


\begin{lemma}\label{singlevec}%
Let $\pi$ be a non-degenerate representation of some \linebreak \st-algebra
$A$ on $H$. Consider $b \in \vonNeumAlg (\pi)$. Then for every $x \in H$,
there exists a sequence $(a_n)$ in $A$ with
\[ b \mspace{2mu} x = \lim_{n \to \infty} \pi(a_n) \mspace{2mu} x. \pagebreak \]
\end{lemma}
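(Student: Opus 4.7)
The plan is to exhibit the vector $bx$ as lying in the closure of the orbit $\pi(A)x$, using the standard cyclic-subspace construction combined with the definition of $\vonNeumAlg(\pi)$ as the double commutant.

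First I would introduce the closed subspace $M := \overline{\pi(A)\,x} \subset H$. This subspace is manifestly invariant under every operator $\pi(a)$, $a \in A$. Because $\pi$ is non-degenerate, lemma \ref{cyclicsubspace} applies and gives $x \in M$; equivalently, the orthogonal projection $p$ onto $M$ satisfies $px = x$.

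The next step is to observe that $p$ commutes with $b$. By theorem \ref{invarcommutant}, the invariance of $M$ under $\pi$ means $p \in \pi' = \range(\pi)'$. On the other hand, since $\range(\pi)$ is $\st$-stable (as $\pi$ is a $\st$-homomorphism), the definition of the generated von Neumann algebra collapses to
\[ \vonNeumAlg(\pi) = \bigl(\range(\pi) \cup \range(\pi)^{\,*}\bigr)'' = \range(\pi)''. \]
Hence $b \in \range(\pi)''$, while $p \in \range(\pi)'$, and therefore $bp = pb$.

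Combining the two steps, $bx = b(px) = (bp)x = (pb)x = p(bx)$, which shows $bx \in M = \overline{\pi(A)\,x}$. Unwinding the definition of the closure yields a sequence $(a_n)$ in $A$ with $\pi(a_n)\,x \to bx$, as required. There is no real obstacle here: the proof is entirely a matter of assembling the correct facts (non-degeneracy forces $x \in M$, invariance puts $p$ into the commutant, and double-commutant membership then forces $b$ to commute with $p$); the only place one has to be mildly careful is the identification $\vonNeumAlg(\pi) = \range(\pi)''$, which relies on the $\st$-stability of $\range(\pi)$.
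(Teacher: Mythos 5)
Your proof is correct and follows essentially the same route as the paper: form $M := \overline{\pi(A)x}$, use non-degeneracy and lemma \ref{cyclicsubspace} to get $x \in M$, place the projection $p$ onto $M$ in $\pi'$ via \ref{invarcommutant}, and use $b \in \vonNeumAlg(\pi) = (\pi')'$ to conclude $bx = pbx \in M$. The only cosmetic difference is that you carry out the commutation computation directly where the paper cites \ref{comminvar}, and you may wish to dispose of the trivial case $x = 0$ separately since \ref{cyclicsubspace} is stated for non-zero vectors.
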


\begin{proof}
We can assume that $x \neq 0$. Consider the closed invariant subspace
$M := \overline{\range(\pi)x}$. Then $M$ is cyclic under $\pi$, with cyclic vector
$x$ belonging to $M$, cf.\ \ref{cyclicsubspace}. The projection $p$ on $M$
is in $\pi'$ by \ref{invarcommutant}, and so commutes with $b \in \vonNeumAlg (\pi)$.
This in turn implies that $M$ is invariant under $b$, by \ref{comminvar}.
Since $x \in M$, we then have $b \mspace{2mu} x \in M$, so by definition of
$M$, there exists a sequence $(a_n)$ in $A$ with
\[ b \mspace{2mu} x = \lim_{n \to \infty} \pi(a_n) \mspace{2mu} x. \qedhere \]
\end{proof}

\begin{definition}[amplifications, $\oplus _n H$, $\oplus_n b$, $\oplus_n \pi$]%
\index{concepts}{amplification}\index{symbols}{p05@$\oplus_n H$}%
\index{symbols}{p06@$\oplus_n b$}\index{symbols}{p07@$\oplus_n \pi$}%
\label{amplification}%
The direct sum $\oplus_n H$ of countably infinitely many identical copies
of $H$ is called the \underline{amplification} of $H$, cf.\ \ref{dirHil}.
Similarly , if $b$ is a bounded linear operator on $H$, one considers
the amplification $\oplus_n b$ of $b$, cf.\ \ref{dirop}. 
Also, if $\pi$ is a representation of a \st-algebra on $H$, one considers
the amplification $\oplus_n \pi$ of $\pi$, cf.\ \ref{directsumsigma}.
\end{definition}

\begin{lemma}\label{ampli}%
Let $\pi$ be a representation of a \st-algebra $A$ on $H$.
For any $b \in \vonNeumAlg (\pi)$, we have
$\oplus_n b \in \vonNeumAlg (\oplus_n \pi)$.
\end{lemma}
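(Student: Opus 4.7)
The plan is to compute the commutant $(\oplus_n \pi)'$ explicitly in ``matrix form'' and then verify directly that $\oplus_n b$ commutes with everything there, invoking $\vonNeumAlg(\pi) = (\pi')'$.

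First, I would identify any bounded operator $T$ on $\oplus_n H$ with an ``infinite matrix'' $(T_{ij})_{i,j}$ of bounded operators $T_{ij} \in \blop(H)$, where $T_{ij} := p_i \,T \,q_j$ with $q_j : H \to \oplus_n H$ the canonical injection into the $j$-th summand and $p_i : \oplus_n H \to H$ the canonical projection onto the $i$-th summand. For $a \in \blop(H)$, the amplification $\oplus_n a$ corresponds to the diagonal matrix with $a$ on the diagonal, and composition corresponds to the naive matrix products: $(\oplus_n a)\,T$ has $(i,j)$-entry $a\,T_{ij}$, and $T\,(\oplus_n a)$ has $(i,j)$-entry $T_{ij}\,a$. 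Therefore $T$ lies in $(\oplus_n \pi)'$ precisely when $T_{ij} \,\pi(a) = \pi(a) \,T_{ij}$ for all $a \in A$ and all $i,j$, that is, precisely when every entry $T_{ij}$ lies in $\pi'$.

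Second, since $\range(\pi)$ is $*$-stable (as $\pi$ is a $*$-algebra homomorphism), the definition of $\vonNeumAlg(\pi)$ gives $\vonNeumAlg(\pi) = \range(\pi)'' = (\pi')'$. Hence for our given $b \in \vonNeumAlg(\pi)$, and for any $T \in (\oplus_n \pi)'$, each entry $T_{ij}$ lies in $\pi'$, so $b\,T_{ij} = T_{ij}\,b$. Passing back to matrix form, this says exactly $(\oplus_n b)\,T = T\,(\oplus_n b)$. Thus $\oplus_n b$ commutes with every element of $(\oplus_n \pi)'$, i.e., $\oplus_n b \in (\oplus_n \pi)'' = \vonNeumAlg(\oplus_n \pi)$.

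The only nontrivial point is the matrix calculus for bounded operators on a countably infinite direct sum; however, since we are computing compositions with the diagonal operator $\oplus_n a$ (which is bounded by \ref{dirop}), no convergence issues arise beyond the standard fact that $p_i \,(\oplus_n a)\,T\,q_j = a\,T_{ij}$ and $p_i \,T\,(\oplus_n a)\,q_j = T_{ij}\,a$, which follow immediately from $(\oplus_n a) \,q_j = q_j\,a$ and $p_i \,(\oplus_n a) = a \,p_i$.
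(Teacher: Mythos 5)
Your proof is correct and follows essentially the same route as the paper: identify operators on $\oplus_n H$ with matrices of operators on $H$, observe that membership in $(\oplus_n \pi)'$ is equivalent to all matrix entries lying in $\pi'$, and then use $b \in (\pi')'$ entrywise to conclude $\oplus_n b \in \bigl((\oplus_n \pi)'\bigr)'$. The only cosmetic difference is that you extract entries as $p_i \, T \, q_j \in \blop(H)$ via injections and projections, whereas the paper works with $p_m \, T \, p_n$ directly; this changes nothing of substance.
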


\begin{proof}
The operators $c$ in $\blop (\oplus_n H)$ have a ``matrix representation''
$(c_{mn})_{m,n}$ with each $c_{mn} \in \blop(H)$. If $p_n$ denotes the
projection of $\oplus_n H$ on the n-th term, then the
``matrix entries'' of $c \in \blop (\oplus_n H)$ are given by
$c_{mn} := p_m \mspace{2mu} c \mspace{3mu} p_n \in \blop(H)$.

We claim that an operator $c \in \blop (\oplus_n H)$ belongs to $(\oplus_n \pi)'$
if and only if $c_{mn} \in \pi'$ for all $m, n$.
Indeed, for all $m, n$ and all $a \in A$, we compute
\begin{align*}
 & \ p_m \bigl( c \oplus_n \pi(a) - \oplus_n \pi(a) c \bigr) p_n \\
 = & \ p_m c p_n \pi(a) - \pi(a) p_m c p_n \\
 = & \ c_{mn} \pi(a) - \pi(a) c_{mn}.
\end{align*}
(As $\oplus_n \pi(a)$ is sort of a block diagonal matrix.)
Our claim is proved by looking at when the above expression
vanishes for all $a \in A$.

Let now $b \in \vonNeumAlg (\pi)$. Then for arbitrary $c \in (\oplus_n \pi)'$ the
operator $b$ commutes with all $c_{mn}$ because the latter are in $\pi'$ by the
above. It follows that $\oplus_n b$ commutes with $c$, as is shown as follows.
For all $m, n$ we have
\[ p_m ( \oplus_n b \mspace{2mu} c ) p_n = b c_{mn}
= c_{mn} b = p_m ( c \mspace{2mu} {\oplus_n b} ) p_n. \]
(Again because $\oplus_n b$ is sort of a block diagonal matrix.)
We obtain that $\oplus_n b \in \vonNeumAlg (\oplus_n \pi)$ as $c \in (\oplus_n \pi)'$
was arbitrary. \pagebreak
\end{proof}

\begin{lemma}\label{prevN}%
Let $\pi$ be a non-degenerate representation of some \linebreak \st-algebra
$A$ on $H$. Consider $b \in \vonNeumAlg (\pi)$. Then for any sequence
$(x_n)$ in $H$ with $\bigl( \,\| \,x_n \,\| \,\bigr) \in \ell^{\,2}$, and any
$\varepsilon > 0$, there exists an element $ a \in A$ with
\[ {\biggl( \,\sum _n \,{\bigl\| \,\bigl( \,b - \pi(a) \,\bigr) \,x_n \,\bigr\|\,}^2 \,\biggl)}^{\,1/2}
< \varepsilon. \]
\end{lemma}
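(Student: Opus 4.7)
The plan is to use the amplification trick, which is exactly what the preceding lemmata \ref{singlevec} and \ref{ampli} were set up to support. Lemma \ref{singlevec} gives the analogous statement for a \emph{single} vector: any $b \in \vonNeumAlg(\pi)$ can be strongly approximated at one vector by a sequence from $\pi(A)$. The task is to boost this from one vector to an $\ell^{\,2}$-sequence of vectors, and the standard way to do that is to reinterpret the $\ell^{\,2}$-sequence as a single vector in the amplification $\oplus_n H$.

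First I would form $x := \oplus_n x_n \in \oplus_n H$, which is well-defined by the hypothesis $\bigl(\|x_n\|\bigr) \in \ell^{\,2}$, cf.\ \ref{dirHil}. I would then observe that the amplified representation $\oplus_n \pi$ is non-degenerate on $\oplus_n H$ (it is a direct sum of non-degenerate representations, cf.\ \ref{dirsumnondeg}), and that by lemma \ref{ampli} the amplification $\oplus_n b$ of $b$ lies in $\vonNeumAlg(\oplus_n \pi)$. This is precisely the setup required to apply lemma \ref{singlevec} to the representation $\oplus_n \pi$, the operator $\oplus_n b$, and the single vector $x$.

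Lemma \ref{singlevec} then furnishes a sequence $(a_k)$ in $A$ such that
\[ (\oplus_n b) \,x = \lim_{k \to \infty} (\oplus_n \pi)(a_k) \,x
\quad \text{in } \oplus_n H. \]
Unpacking the definitions \ref{dirop} and \ref{directsumsigma}, the left hand side is $\oplus_n (b x_n)$ and the $k$-th term on the right is $\oplus_n \bigl(\pi(a_k) x_n\bigr)$, so using the definition of the norm on $\oplus_n H$, convergence in $\oplus_n H$ translates to
\[ \lim_{k \to \infty} \sum_n \bigl\| \bigl( b - \pi(a_k) \bigr) x_n \bigr\|^{\,2} = 0. \]
Given $\varepsilon > 0$, choosing $k$ large enough and setting $a := a_k$ yields the desired inequality. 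The only non-routine ingredient is lemma \ref{ampli}, which has already been handled; after that the argument is essentially just translating between the ``single vector in $\oplus_n H$'' picture and the ``$\ell^{\,2}$-sequence in $H$'' picture.
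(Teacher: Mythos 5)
Your proposal is correct and follows essentially the same route as the paper: form $x := \oplus_n x_n$, use lemma \ref{ampli} to place $\oplus_n b$ in $\vonNeumAlg(\oplus_n \pi)$, and apply lemma \ref{singlevec} to that single vector in the amplification. The only cosmetic difference is that you extract a convergent sequence from \ref{singlevec} and then choose a term, while the paper states the $\varepsilon$-approximation directly.
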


\begin{proof}
Put $x := \oplus _n x_n \in \oplus _n H$ and consider
$\oplus_n b \in \vonNeumAlg (\oplus_n \pi)$, cf.\ the preceding lemma \ref{ampli}.
By lemma \ref{singlevec}, there exists an element $a \in A$ with
\[ \bigl\| \,( \,\oplus _n \,b \,) \,x - \bigl( \,\oplus _n \,\pi \,(a) \,\bigr) \,x \,\bigr\|
< \varepsilon. \qedhere \]
\end{proof}

\medskip
The remainder of this paragraph won't be used in the sequel.

\begin{definition}[four topologies on $\blop(H)$]%
\index{concepts}{topology!strong operator}%
The \underline{strong operator} \underline{topology} on $\blop(H)$ is the weak
topology \ref{weaktopdef} induced by the seminorms
\[ \blop (H) \ni a \mapsto \| \,ax \,\| \qquad ( \,x \in H \,). \]

\index{concepts}{topology!weak operator}%
The \underline{weak operator topology} on $\blop(H)$
is the weak topology induced by the linear functionals
\[ \blop (H) \ni a \mapsto \langle ax,y \rangle \qquad ( \,x, y \in H \,). \]

The weak operator topology on $\blop(H)$ is weaker than the strong operator
topology on $\blop(H)$ which in turn is weaker than the norm topology.%

\index{concepts}{topology!1sigma-strong@$\sigma$-strong}%
The \underline{$\sigma$-strong topology} on $\blop(H)$ is the weak
topology induced by the seminorms
\[ \blop (H) \ni a \,\mapsto \,{\biggl( \,\sum _n {\,\| \,ax_n \,\|\,}^2 \biggr)}^{1/2} \]
with $(x_n)$ any sequence in $H$ such that
$\bigl( \,\| \,x_n \,\| \,\bigr) \in {\ell\,}^2$.

\index{concepts}{topology!1sigma-weak@$\sigma$-weak}%
The \underline{$\sigma$-weak topology} on $\blop(H)$ is the weak
topology induced by the linear functionals
\[ \blop (H) \ni a \,\mapsto \,\sum _n \,\langle ax_n,y_n \rangle \]
with $(x_n)$, $(y_n)$ any sequences in $H$
such that $\bigl( \,\| \,x_n \,\| \,\bigr), \bigl( \,\| \,y_n \,\| \,\bigr) \in {\ell\,}^2$.

The $\sigma$-weak topology is weaker than the $\sigma$-strong topology
which in turn is weaker than the norm topology.

Also, the weak operator topology is weaker than the $\sigma$-weak
topology, and similarly the strong operator topology is weaker that the
$\sigma$-strong topology. \pagebreak
\end{definition}

\begin{theorem}[von Neumann's Bicommutant Theorem]%
\index{concepts}{Theorem!von Neumann!Bicommutant}\label{bicomm}%
\index{concepts}{Bicommutant Theorem}%
\index{concepts}{von Neumann!Theorem!Bicommutant}%
Let $\pi$ be a non-degenerate representation of a \st-algebra on $H$.
Then $\vonNeumAlg (\pi)$ is the closure of $\range(\pi)$ in any of these four
topologies: strong operator topology, weak operator topology, $\sigma$-strong
topology, and $\sigma$-weak topology. 
\end{theorem}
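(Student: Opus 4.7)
The plan is to establish the theorem through two complementary inclusions: first, that $\vonNeumAlg(\pi)$ contains the closure of $\range(\pi)$ in the weakest of the four topologies (the weak operator topology); second, that $\vonNeumAlg(\pi)$ is contained in the closure of $\range(\pi)$ in the strongest of the four topologies (the $\sigma$-strong topology). Since the weak operator topology is weaker than the other three and the $\sigma$-strong topology is stronger than the other three, these two inclusions will pinch all four closures to equal $\vonNeumAlg(\pi)$.

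For the first inclusion, I would observe that $\vonNeumAlg(\pi) = \range(\pi)'' = (\pi')'$ is a commutant, and that the commutant of any subset $S$ of $\blop(H)$ is automatically closed in the weak operator topology. Indeed, $S'$ is the intersection over $c \in S$ and $x, y \in H$ of the sets $\{\,a \in \blop(H) : \langle a(cx), y \rangle - \langle ax, c^*y \rangle = 0\,\}$, and for fixed $c, x, y$, the map $a \mapsto \langle a(cx), y \rangle - \langle ax, c^*y \rangle$ is a weak operator continuous linear functional. Since $\range(\pi) \subset \vonNeumAlg(\pi)$ holds trivially, the weak operator closure of $\range(\pi)$ is contained in $\vonNeumAlg(\pi)$.

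For the second inclusion, the key tool is lemma \ref{prevN}, which states that for any $b \in \vonNeumAlg(\pi)$, any sequence $(x_n)$ in $H$ with $\bigl(\,\|\,x_n\,\|\,\bigr) \in \ell^{\,2}$, and any $\varepsilon > 0$, there exists $a \in A$ with
\[ {\biggl( \,\sum _n \,{\bigl\| \,\bigl( \,b - \pi(a) \,\bigr) \,x_n \,\bigr\|\,}^2 \,\biggr)}^{\,1/2} < \varepsilon. \]
Since a basic neighbourhood of $b$ in the $\sigma$-strong topology is given precisely by the seminorms $a \mapsto \bigl( \,\sum _n \,\| \,a x_n \,\| ^{\,2}\,\bigr)^{\,1/2}$ with $\bigl( \,\|\,x_n\,\|\,\bigr) \in \ell^{\,2}$, this says exactly that $b$ lies in the $\sigma$-strong closure of $\range(\pi)$.

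To conclude, I would chain the four closures using the inclusions between topologies: since a stronger topology has more closed sets, the closure in the $\sigma$-strong topology is contained in the closure in the strong operator topology and in the closure in the $\sigma$-weak topology, each of which is contained in the closure in the weak operator topology. Combining with the two inclusions above yields equality throughout, all four closures coinciding with $\vonNeumAlg(\pi)$. The main obstacle has been dealt with in advance by lemma \ref{prevN} and its scaffolding \ref{singlevec}, \ref{ampli}; the essential trick there is that passing to the amplification $\oplus_n H$ converts a $\sigma$-strong seminorm on $\blop(H)$ into an ordinary strong operator seminorm on $\blop(\oplus_n H)$, where a single cyclic subspace argument built on \ref{cyclicsubspace} and \ref{invarcommutant} closes the gap.
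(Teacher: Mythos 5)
Your proposal is correct and follows essentially the same route as the paper: both establish that $\vonNeumAlg (\pi)$, being a commutant, is closed in the weak operator topology (you via intersections of kernels of weak-operator continuous functionals, the paper via a net-limit computation — the same fact in two guises), and both obtain $\sigma$-strong density of $\range(\pi)$ in $\vonNeumAlg (\pi)$ directly from lemma \ref{prevN}, then pinch the four closures together using the comparisons between the topologies. No gaps.
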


\begin{proof}
We shall first prove that $\vonNeumAlg (\pi)$ is closed in the weak operator topology.
It will then be closed in all of the four topologies of the statement, because
the weak operator topology is the weakest among them. So let $(b_i)_{i \in I}$
be a net in $\vonNeumAlg (\pi)$ converging to some $b \in \blop(H)$ in the weak operator
topology. We have to show that $b \in \vonNeumAlg (\pi)$. So let $c \in \pi'$. In order to
prove that $b$ commutes with $c$, we note that for all $x, y \in H$, one has,
by the appendix \ref{netconv}:
\begin{align*}
 \langle bcx,y \rangle & = \lim_{i \in I} \,\langle b_i cx, y \rangle \\
 & = \lim_{i \in I} \,\langle c b_i x, y \rangle \\
 & = \lim_{i \in I} \,\langle b_i x, {c}^{*} y \rangle \\
 & = \langle b x, {c}^{*} y \rangle = \langle cbx,y \rangle.
\end{align*}
It now suffices to prove that $\range(\pi)$ is dense in $\vonNeumAlg (\pi)$ in the
$\sigma$-strong operator topology, as the $\sigma$-strong topology is the
strongest among the four topologies of the statement. So let $b \in \vonNeumAlg (\pi)$.
Let $\varepsilon > 0$ and finitely many sequences $(y_{m,n})_n$ in $H$
with $\bigl( \,\| \,y_{m,n} \,\| \,\bigr)_n \in {\ell\,}^2$ for each $m$ be given.
It is enough to prove that there exists $a \in A$ with
\[ {\biggl( \,\sum _n \,{\bigl\| \,\bigl( \,b - \pi(a) \,\bigr) \,y_{m,n} \,\bigr\|\,}^2 \,\biggl)}^{\,1/2}
< \varepsilon. \]
for all $m$. (By the form of a base for this topology, cf.\ the appendix \ref{weaktopdef}.)
This however follows by applying the preceding lemma \ref{prevN} to a sequence
$(x_n)_n$ in $H$ with $\bigl( \,\| \,x_n \,\| \,\bigr) \in {\ell\,}^2$ which incorporates
each of the finitely many sequences $(y_{m,n})_n$. (A finite product of countable sets
is countable.)
\end{proof}

\begin{corollary}%
Let $A$ be a \st-subalgebra of $\blop(H)$ acting non-degenerately on $H$.
Then $\vonNeumAlg (A)$ is the closure of $A$ in any of the following four topologies:
strong operator topology, weak operator topology, $\sigma$-strong
topology, and $\sigma$-weak topology.
\end{corollary}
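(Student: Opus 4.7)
The plan is to reduce this corollary to von Neumann's Bicommutant Theorem \ref{bicomm} by interpreting the inclusion $A \hookrightarrow \blop(H)$ as a representation. First I would let $\pi : A \to \blop(H)$ denote the identity inclusion, and verify that this is a representation of the \st-algebra $A$ on $H$: it is manifestly an algebra homomorphism, and since $A$ is a \st-subalgebra of $\blop(H)$, it automatically intertwines the involution on $A$ (inherited from $\blop(H)$) with the operator adjoint. The hypothesis that $A$ acts non-degenerately on $H$ then says precisely that $\pi$ is a non-degenerate representation in the sense of \ref{nondegenerate}.

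Next I would observe that $\range(\pi) = A$ by construction, so that by the very definition of the von Neumann algebra generated by a representation, we have
\[ \vonNeumAlg (\pi) = \vonNeumAlg \bigl(\range(\pi)\bigr) = \vonNeumAlg (A). \]
At this stage the Bicommutant Theorem \ref{bicomm}, applied to the non-degenerate representation $\pi$, states that $\vonNeumAlg (\pi)$ equals the closure of $\range(\pi)$ in each of the strong operator, weak operator, $\sigma$-strong, and $\sigma$-weak topologies. Substituting $\vonNeumAlg (A)$ for $\vonNeumAlg (\pi)$ and $A$ for $\range(\pi)$ yields the statement.

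There is essentially no obstacle here: the corollary is a direct translation of the Bicommutant Theorem from the representation-theoretic language of \ref{bicomm} to the subalgebra language of the corollary, the bridge being the tautological identification of a \st-subalgebra of $\blop(H)$ with its own inclusion representation. The only point worth a line of commentary is that the non-degeneracy hypothesis is indispensable, since it is exactly what allows the invocation of \ref{bicomm}.
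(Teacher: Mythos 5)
Your proposal is correct and is exactly the intended argument: the paper states this as an immediate corollary of the Bicommutant Theorem \ref{bicomm}, obtained by viewing the inclusion $A \hookrightarrow \blop(H)$ as a non-degenerate representation with $\range(\pi) = A$ and hence $\vonNeumAlg(\pi) = \vonNeumAlg(A)$. No further commentary is needed.
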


We obtain the following characterisation of von Neumann algebras.\pagebreak

\begin{corollary}%
The von Neumann algebras on $H$ are precisely the \st-subalgebras of
$\blop(H)$ acting non-degenerately on $H$, which are closed in any, hence all,
of these four topologies: strong operator topology, weak operator topology,
$\sigma$-strong topology, and $\sigma$-weak topology.
\end{corollary}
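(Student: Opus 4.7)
The plan is to reduce both directions to von Neumann's Bicommutant Theorem~\ref{bicomm} and its corollary stated immediately above, together with the bicommutant identity $\mathscr{M} = \mathscr{M}''$ from Theorem~\ref{prepchar}.

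For the forward direction, I would first observe that any von Neumann algebra $\mathscr{M}$ on $H$ is, by the statements already recorded in this paragraph, a unital C*-subalgebra of $\blop(H)$ and in particular a \st-subalgebra. The presence of the identity operator $\mathds{1}$ in $\mathscr{M}$ forces $\mathscr{M}$ to act non-degenerately on $H$: the subspace $N = \{\,x \in H : ax = 0 \text{ for all } a \in \mathscr{M}\,\}$ from~\ref{nondegenerate} is annihilated by $\mathds{1}$, hence reduces to $\{0\}$. To obtain closedness in all four topologies simultaneously, I would then apply the preceding corollary to $\mathscr{M}$ itself, viewed as a \st-subalgebra acting non-degenerately: it says that the closure of $\mathscr{M}$ in each of the four topologies equals $\vonNeumAlg(\mathscr{M}) = \mathscr{M}'' = \mathscr{M}$, where the last equality is Theorem~\ref{prepchar}.

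For the converse, suppose $A$ is a \st-subalgebra of $\blop(H)$ acting non-degenerately on $H$ and closed in at least one of the four topologies. The same corollary identifies $\vonNeumAlg(A)$ with the closure of $A$ in that topology, which is $A$ itself. Thus $A = \vonNeumAlg(A) = (A \cup A^*)''$, which exhibits $A$ as the commutant of the \st-stable set $(A \cup A^*)'$ and therefore as a von Neumann algebra. The ``any, hence all'' clause is then automatic: once $A$ has been recognized as a von Neumann algebra, the forward direction gives closedness in the remaining three topologies as well.

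The argument contains no real obstacle; it is a direct repackaging of previously established results. The only point requiring a moment of care is the non-degeneracy claim in the forward direction, which relies on the unitality of von Neumann algebras noted earlier in the paragraph, and the observation that the preceding corollary is applicable to $\mathscr{M}$ itself, so that no extra hypothesis is needed beyond what is built into the definition of a von Neumann algebra.
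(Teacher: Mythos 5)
Your proposal is correct and follows the route the paper intends: both directions reduce to the corollary of the Bicommutant Theorem stated immediately above, together with $\mathscr{M}'' = \mathscr{M}$ from \ref{prepchar}, and your observation that unitality of a von Neumann algebra (for $H \neq \{0\}$) forces non-degeneracy is exactly the point needed to make the forward direction go through.
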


\begin{remark}%
Some authors define von Neumann algebras as \linebreak \st-algebras
of bounded linear operators on a Hilbert space, which are closed in the
strong operator topology, say. This is a more inclusive definition than ours,
but one can always put oneself in our situation by considering a smaller
Hilbert space, cf.\ \ref{nondegenerate}.
\end{remark}

We mention here that among the four topologies considered above,
the $\sigma$-weak topology is of greatest interest, because it actually
is a weak* topology resulting from a predual of $\blop(H)$, namely the
Banach \linebreak \st-algebra of trace class operators on $H$,
equipped with the trace-norm.

\clearpage

%


\section{Multiplication Operators}

\begin{definition}[$\probmeas (\Omega)$]%
If $\Omega \neq \varnothing$ is a locally compact Hausdorff space,
one denotes by $\probmeas (\Omega)$ the set of inner regular
Borel probability measures on $\Omega$, cf.\ the appendix
\ref{Boreldef} \& \ref{inregBormeas}.
\end{definition}

\begin{theorem}[multiplication operators]\label{multop}%
\index{symbols}{M15@$M_{\mu}(g)$}%
Let $\Omega \neq \varnothing$ be a locally  compact Hausdorff
space, and let $\mu \in \probmeas (\Omega)$. If $g$ is a
$\mu$-measurable complex-valued function on $\Omega$, one defines
\[ \domain(g) := \{ \,f \in \rmLeb ^2(\mu) : gf \in \rmLeb ^2(\mu) \,\} \]
as well as
\begin{align*}
M\smu(g) : \domain(g) & \to \rmLeb ^2(\mu) \\
f & \mapsto gf.
\end{align*}
One says that $M\smu(g)$ is the \underline{multiplication operator}
with $g$.

Then $M\smu(g)$ is bounded if and only if
$g \in {\mathscr{L} \,}^{\infty}(\mu)$, in which case
\[ \| \,M\smu(g) \,\| = \| \,g \,\|_{\,\text{\small{$\mu$}},\infty}. \]
For $M\smu(g)$ to be bounded, it suffices to be bounded on
$\cont_\mathrm{c}(\Omega)$.
\end{theorem}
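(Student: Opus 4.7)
The plan is to prove the stated equivalence in both directions and then the refinement concerning $\cont_\mathrm{c}(\Omega)$, leaning on the integration-theoretic data about $\mu$ furnished by the appendix.

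For the easy direction I would set $c := \| \,g \,\|_{\,\mu,\infty}$ and observe that $|\,g\,| \leq c$ holds $\mu$-a.e., so for every $f \in \rmLeb^2(\mu)$ one has $|\,gf\,|^{\,2} \leq c^{\,2} \,|\,f\,|^{\,2}$ $\mu$-a.e.\ Integration gives $gf \in \rmLeb^2(\mu)$ with $\|\,gf\,\|_2 \leq c\,\|\,f\,\|_2$, showing simultaneously that $\domain(g) = \rmLeb^2(\mu)$ and $\|\,M\smu(g)\,\| \leq c$. For the reverse direction I would assume $M\smu(g)$ bounded with norm $c := \|\,M\smu(g)\,\|$ and let $c'$ be any real number strictly less than $\|\,g\,\|_{\,\mu,\infty}$. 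The set $\{\,|\,g\,| > c'\,\}$ has strictly positive measure, and the truncations $F_n := \{\,c' < |\,g\,| \leq n\,\}$ increase to it, so $\mu(F_n) > 0$ for some $n$. Since $g$ is bounded on $F_n$, the normalised indicator $f := \mu(F_n)^{\,-1/2} \,1_{F_n}$ is a unit vector in $\domain(g)$, and $\|\,M\smu(g)\,f\,\|_2^{\,2} = \mu(F_n)^{\,-1} \int_{F_n} |\,g\,|^{\,2}\,\diff\mu \geq (c')^{\,2}$. Hence $c \geq c'$; letting $c' \nearrow \|\,g\,\|_{\,\mu,\infty}$ yields $g \in {\mathscr{L}\,}^{\infty}(\mu)$ and the matching norm bound, completing the equivalence.

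For the refinement, I would assume only $\cont_\mathrm{c}(\Omega) \subset \domain(g)$ and $\|\,gh\,\|_2 \leq c\,\|\,h\,\|_2$ for every $h \in \cont_\mathrm{c}(\Omega)$, and would aim to show $\|\,g\,\|_{\,\mu,\infty} \leq c$, after which the first paragraph supplies boundedness on all of $\domain(g)$. Suppose the contrary: some $c' > c$ makes $A := \{\,|\,g\,| > c'\,\}$ of positive $\mu$-measure. Inner regularity of $\mu$ yields a compact $K \subset A$ with $\mu(K) > 0$. Since $\mu$ is finite, inner regularity applied to the Borel set $\Omega \setminus K$ also gives outer regularity on $K$: for every $\varepsilon > 0$ there is an open $V \supset K$ with $\mu(V) < \mu(K) + \varepsilon$. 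Choose $\varepsilon$ so small that $\mu(V) < (c'/c)^{\,2}\,\mu(K)$ (the case $c = 0$ is even easier, as any open $V \supset K$ already leads to a contradiction below). By the locally compact Hausdorff version of Urysohn's Lemma \ref{Urysohn} (applied in the one-point compactification when $\Omega$ is not compact) there is $h \in \cont_\mathrm{c}(\Omega)$ with $1_K \leq h \leq 1_V$. Then $\|\,gh\,\|_2^{\,2} \geq \int_K |\,g\,|^{\,2}\,\diff\mu \geq (c')^{\,2}\,\mu(K) > c^{\,2}\,\mu(V) \geq c^{\,2}\,\|\,h\,\|_2^{\,2}$, contradicting the hypothesis.

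The main obstacle is the third paragraph: one must manufacture a test function in $\cont_\mathrm{c}(\Omega)$ whose mass is concentrated on the set where $|\,g\,|$ exceeds $c$, yet whose support does not dilute the ratio $\|\,gh\,\|_2 / \|\,h\,\|_2$ below $c$. This requires simultaneously invoking inner regularity of $\mu$ (to obtain a compact core of the bad set), outer regularity (derived from inner regularity together with $\mu(\Omega)<\infty$, to keep the support of the Urysohn approximant from picking up too much extraneous measure), and Urysohn's Lemma in the LCH setting. The first two parts are essentially routine once the idea of testing against normalised indicators of level sets is recognised.
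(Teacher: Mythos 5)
Your proof is correct, but the decisive third step goes by a genuinely different route than the paper's. Where you derive outer regularity of $\mu$ from inner regularity plus finiteness, choose an open $V \supset K$ with $\mu(V)$ close to $\mu(K)$, and squeeze a Urysohn function between $1_K$ and $1_V$ to force a quantitative contradiction, the paper avoids both outer regularity and the locally compact form of Urysohn's Lemma altogether: for $h \in \cont_\mathrm{c}(\Omega)$ with $1_K \leq h \leq 1_\Omega$ it tests against $h^{1/2}$ (still in $\cont_\mathrm{c}(\Omega)$ since $h \geq 0$), obtaining $\int_K |\,g\,|^{\,2}\,\diff\mu \leq c^{\,2} \int h\,\diff\mu$, and then invokes the formula $\mu(K) = \inf\,\{\,\ell(h) : h \in \cont_\mathrm{c}(\Omega),\ 1_K \leq h \leq 1_\Omega\,\}$ from the Riesz Representation Theorem \ref{Riesz} to pass directly to $\int_K |\,g\,|^{\,2}\,\diff\mu \leq c^{\,2}\,\mu(K)$, whence $K$ is $\mu$-null because $|\,g\,| > c$ on $K$. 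The paper's argument is shorter and needs no $\varepsilon$-bookkeeping; yours is more self-contained measure-theoretically, and your second paragraph (testing against normalised indicators of truncated level sets) settles the main equivalence without the small monotone-convergence argument that is needed, but left implicit in the paper, to see that boundedness of $M\smu(g)$ on $\domain(g)$ transfers to $\cont_\mathrm{c}(\Omega)$. One small caution on your Urysohn step: you need $h$ with support contained in $V$ (so that $h \leq 1_V$), and the paper's \ref{Urysohn} is stated for disjoint closed sets in a normal space; working in the one-point compactification you should first interpose a relatively compact open $W$ with $K \subset W \subset \overline{W} \subset V$ and separate $K$ from the complement of $W$, exactly as you hint.
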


\begin{proof}
If $g \in {\mathscr{L} \,}^{\infty}(\mu)$, one easily sees that
$\| \,M\smu(g) \,\| \leq \| \,g \,\|_{\,\text{\small{$\mu$}},\infty}$. The
converse inequality is established as follows. Assume that
\[ \| \,gh \,\|_{\,\text{\small{$\mu$}},2}
\leq c \,\| \,h \,\|_{\,\text{\small{$\mu$}},2}
\quad \text{for all} \quad h \in \cont_\mathrm{c}(\Omega). \]
It shall be shown that $\| \,g \,\|_{\,\text{\small{$\mu$}},\infty} \leq c$.
Let $X := \{ \,t \in \Omega : | \,g(t) \,| > c \,\}$, which is a
$\mu$-measurable set, and thus $\mu$-integrable. (Because
$\mu$ is a probability measure.) We have to prove that $X$ is
a $\mu$-null set. For this it is enough to show that each compact
subset of $X$ is a $\mu$-null set. (By inner regularity.) So let $K$
be a compact subset of $X$. For $h \in \cont_\mathrm{c}(\Omega)$
with $1_K \leq h \leq 1_\Omega$, we get
\begin{align*}
\int {| \,g \,| \,}^2 \,1_K \,\diff \mu
& \,\leq \int {| \,g \,{h \,}^{1/2} \,| \,}^2 \,\diff \mu
= {\| \,g \,{h \,}^{1/2} \,\|_{\,\text{\small{$\mu$}},2} \,}^2 \\
& \,\leq {c \,}^2 \,{\| \,{h \,}^{1/2} \,\|_{\,\text{\small{$\mu$}},2} \,}^2
= {c \,}^2 \int h \,\diff \mu.
\end{align*}
Thus, by taking the infimum over all such functions $h$, we obtain
\[ \int {| \,g \,| \,}^2 \,1_K \,\diff \mu \,\leq {c \,}^2 \,\mu(K), \]
cf.\ the appendix \ref{Riesz}. Since $| \,g \,| > c$ on $K$, the set $K$
must be $\mu$-null, whence $X$ must be a $\mu$-null set (as noted before).
We have shown that $\| \,g \,\|_{\,\text{\small{$\mu$}},\infty} \leq c$. \pagebreak
\end{proof}

\begin{lemma}\label{Linftycomm}%
Let $\mu \in \probmeas (\Omega)$, where $\Omega \neq \varnothing$ is a
locally compact Hausdorff space. If a bounded linear operator $b$
on $\rmLeb ^2(\mu)$ commutes with every $M\smu(f)$ with
$f \in \cont_\mathrm{c}(\Omega)$, then $b = M\smu(g)$ for some
$g \in {\mathscr{L} \,}^{\infty}(\mu)$.
\end{lemma}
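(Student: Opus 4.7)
The plan is to construct the symbol $g$ directly as the image of the constant function $1_\Omega$ under $b$, then show that $b$ agrees with $M_\mu(g)$ on a dense subset, and finally use the last statement of \ref{multop} to promote $g$ to an $L^\infty$-function.

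First I would exploit that $\mu$ is a probability measure: this guarantees $1_\Omega \in \rmLeb^2(\mu)$ and also $\cont_\mathrm{c}(\Omega) \subset \rmLeb^2(\mu)$. Define the candidate symbol as
\[ g := b(1_\Omega) \in \rmLeb^2(\mu). \]
For any $f \in \cont_\mathrm{c}(\Omega)$, the operator $M\smu(f)$ is a bounded linear operator on $\rmLeb^2(\mu)$ by \ref{multop}, and the hypothesis says it commutes with $b$. Since $f = M\smu(f) \mspace{2mu} 1_\Omega$, we compute
\[ b(f) = b \bigl( M\smu(f) \mspace{2mu} 1_\Omega \bigr) = M\smu(f) \mspace{2mu} b(1_\Omega) = f \mspace{2mu} g. \]
Thus $b$ and the (a priori merely densely defined) multiplication operator $M\smu(g)$ agree on $\cont_\mathrm{c}(\Omega)$.

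Next I would control the norm. For every $f \in \cont_\mathrm{c}(\Omega)$,
\[ \| \,g f \,\|_{\,\mu,2} = \| \,b(f) \,\|_{\,\mu,2}
\leq \| \,b \,\| \cdot \| \,f \,\|_{\,\mu,2}, \]
so the restriction of $M\smu(g)$ to $\cont_\mathrm{c}(\Omega)$ is bounded. By the last sentence of theorem \ref{multop}, this suffices for $M\smu(g)$ to be a bounded multiplication operator, and the same theorem then yields $g \in \mathscr{L}^{\,\infty}(\mu)$ with $\domain(g) = \rmLeb^2(\mu)$. The non-trivial ingredient here is exactly the reverse implication of \ref{multop}, which was proved through an inner regularity argument; this is the one step I would simply cite rather than redo.

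Finally I would conclude by a density argument. Both $b$ and $M\smu(g)$ are bounded linear operators on $\rmLeb^2(\mu)$, and they agree on $\cont_\mathrm{c}(\Omega)$, which is dense in $\rmLeb^2(\mu)$ (a standard consequence of inner regularity of $\mu$, essentially \ref{Ccdense} together with the approximation of $\rmLeb^2$-functions by continuous ones). By continuity, $b = M\smu(g)$ on all of $\rmLeb^2(\mu)$. The main potential obstacle would have been ensuring that the natural candidate $g = b(1_\Omega)$ lies in $\rmLeb^\infty$ and not merely $\rmLeb^2$, but this is absorbed cleanly by invoking the ``sufficient to be bounded on $\cont_\mathrm{c}$'' clause of \ref{multop}.
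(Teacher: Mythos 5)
Your proof is correct and follows essentially the same route as the paper's: define $g := b(1_\Omega)$, use the commutation hypothesis to get $b(f) = fg = M\smu(g)f$ on $\cont_\mathrm{c}(\Omega)$, invoke the last clause of \ref{multop} to conclude $g \in \mathscr{L}^{\,\infty}(\mu)$, and finish by density of $\cont_\mathrm{c}(\Omega)$ in $\rmLeb^2(\mu)$ (for which the precise reference is the appendix \ref{CcdenseinLp} rather than \ref{Ccdense}). Your explicit norm estimate $\| \,gf \,\|_{\,\mu,2} \leq \| \,b \,\| \cdot \| \,f \,\|_{\,\mu,2}$ just spells out what the paper leaves implicit when citing \ref{multop}.
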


\begin{proof}
We note first that $1_{\Omega} \in {\mathscr{L}}^{\,2}(\mu)$
as $\mu$ is a probability measure.
For every $f \in \cont_\mathrm{c}(\Omega)$ we have
\[ bf = b \bigl(M\smu(f)1_{\Omega} \bigr)
= M\smu(f)(b1_{\Omega}) = M\smu(f)g \]
with $g := b1_{\Omega} \in {\mathscr{L} \,}^2(\mu)$. We continue to compute
\[ M\smu(f)g = fg = gf = M\smu(g)f. \]
The operators $b$ and $M\smu(g)$ thus coincide on $\cont_\mathrm{c}(\Omega)$.
Then $M\smu(g)$ is bounded by the last statement of the preceding
theorem \ref{multop}, and so $g \in {\mathscr{L} \,}^{\infty}(\mu)$. By
density of $\cont_\mathrm{c}(\Omega)$ in $\rmLeb ^2(\mu)$, cf.\ the appendix
\ref{CcdenseinLp}, it follows that $b$ and $M\smu(g)$ coincide everywhere.
\end{proof}

\begin{definition}%
[maximal commutative \protect\st-subalgebras]%
\index{concepts}{maximal!commutative}%
If $B$ is a \st-algebra, then by a \underline{maximal commutative}
\st-subalgebra of $B$ we shall mean a commutative
\st-subalgebra of $B$ which is not properly
contained in any other commutative \st-subalgebra of $B$.
\end{definition}

\begin{proposition}\label{maxcommprime}%
A \st-subalgebra $A$ is maximal commutative if and only if $A' = A$.
\end{proposition}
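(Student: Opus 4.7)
The plan is as follows. I would first record the preliminary observation that $A'$ is itself a \st-subalgebra of $B$: it is clearly a subalgebra, and if $b \in A'$ and $a \in A$, then $a^* \in A$ by \st-stability of $A$, so $b a^* = a^* b$; taking adjoints yields $a b^* = b^* a$, whence $b^* \in A'$. This small fact is what makes the forward direction work.

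For the implication ``$A$ maximal commutative $\Rightarrow A' = A$'', the inclusion $A \subset A'$ is just a restatement of the commutativity of $A$. For the reverse inclusion, given $b \in A'$, I would decompose $b = b_1 + \iu b_2$ with $b_1 := \tfrac12 (b + b^*)$ and $b_2 := \tfrac{1}{2\iu}(b - b^*)$ in $B\sa$; the preliminary observation gives $b^* \in A'$, so both $b_k \in A' \cap B\sa$. Now, for each $k \in \{1,2\}$ \emph{separately}, I would consider the \st-subalgebra $A_k$ of $B$ generated by $A \cup \{b_k\}$. Because $A$ is already \st-stable and $b_k = b_k^{\,*}$, the generating set $A \cup \{b_k\}$ is \st-stable, so $A_k$ coincides with the ordinary subalgebra generated by $A \cup \{b_k\}$, which is the linear span of products of elements of $A \cup \{b_k\}$. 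Since $A$ is commutative, $b_k$ commutes with itself, and $b_k$ commutes with every element of $A$ (as $b_k \in A'$), every such product is a product of mutually commuting elements, and $A_k$ is commutative. Maximality of $A$ then forces $A_k = A$, so $b_k \in A$, and hence $b = b_1 + \iu b_2 \in A$.

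For the converse, assume $A' = A$. Then $A \subset A'$, which is the statement that $A$ is commutative. If there were a commutative \st-subalgebra $C$ of $B$ strictly containing $A$, then any $c \in C$ would commute with every element of $A$ (since $A \subset C$ and $C$ is commutative), so $c \in A' = A$, forcing $C \subset A$, a contradiction.

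The only genuinely substantive step is the one in the forward direction where I enlarge $A$ by a single Hermitian element $b_k$ and must stay commutative; the key device is to split $b$ into its Hermitian components and process them one at a time, which avoids any question of whether $b$ commutes with $b^*$, and ensures the new generating set remains \st-stable so that no adjoints need to be introduced when forming the \st-subalgebra.
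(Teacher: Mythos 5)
Your proof is correct and takes essentially the same route as the paper's (much terser) argument: both rest on the observation that the \st-subalgebra generated by $A$ and a Hermitian element $b$ is commutative precisely when $b \in A'$, applied to the Hermitian components of an arbitrary element of $A'$. You have simply written out the details the paper leaves implicit.
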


\begin{proof}
Both properties imply that $A$ is a commutative, so $A \subset A'$.
Now the \st-subalgebra generated by $A$ and some Hermitian
element $b$ is commutative if and only if $b \in A'$.
\end{proof}

\begin{observation}[maximal commutative von Neumann algebras]%
\index{concepts}{algebra!von Neumann!maximal commutative}%
\index{concepts}{von Neumann!algebra!maximal commutative}%
If $H$ is a Hilbert space, then a maximal commutative \st-subalgebra
of $\blop(H)$ is a von Neumann algebra, by \ref{maxcommprime}.
Therefore, such an object is simply called a
\underline{maximal commutative von Neumann algebra}.
\end{observation}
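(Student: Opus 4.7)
The statement is essentially immediate from the cited Proposition \ref{maxcommprime} together with the definition of a von Neumann algebra, so my plan is just to spell out the short chain of implications. Let $A$ be a maximal commutative $\ast$-subalgebra of $\blop(H)$. I would first invoke Proposition \ref{maxcommprime}, which characterises maximal commutativity by the identity $A' = A$. This is the only non-trivial input required.

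Next I would observe that $A$, being a $\ast$-subalgebra of $\blop(H)$, is $\ast$-stable in the sense of Definition \ref{selfadjointsubset}. Consequently the set $A'$ (the commutant of the $\ast$-stable set $A$) is a von Neumann algebra directly from the definition at the start of this paragraph. Combining this with the identity $A = A'$ from the previous step yields that $A$ itself is a von Neumann algebra, which is what was to be shown.

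Since the argument reduces to a single substitution into the definition, there is no genuine obstacle: the entire content is packaged in Proposition \ref{maxcommprime}, and the only thing to check separately is the trivial fact that a $\ast$-subalgebra is a $\ast$-stable subset. The second sentence of the observation, which merely introduces terminology, needs no proof.
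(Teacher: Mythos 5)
Your argument is correct and is precisely the one the paper intends: the observation cites \ref{maxcommprime} to get $A'=A$, and since a \st-subalgebra is \st-stable, $A=A'$ exhibits $A$ as the commutant of a \st-stable set, hence a von Neumann algebra by definition. Nothing further is needed.
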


\begin{theorem}\label{maxcommL}%
\index{symbols}{L3@$\rmLeb ^{\infty}(\mu)$}%
Let $\mu \in \probmeas (\Omega)$, with $\Omega \neq \varnothing$
a locally compact Hausdorff space. The mapping
\begin{align*}
\rmLeb ^{\infty}(\mu) & \to \blop\bigl(\rmLeb ^2(\mu)\bigr) \\
g & \mapsto M\smu(g)
\end{align*}
establishes an isomorphism of C*-algebras onto a maximal commutative
\linebreak \st-subalgebra of $\blop\bigl(\rmLeb ^2(\mu)\bigr)$.
In short, one speaks of the maximal \linebreak commutative von
Neumann algebra $\rmLeb ^{\infty}(\mu)$. Note that $\rmLeb ^{\infty}(\mu)$
is the commutant of the \st-algebra formed by the operators $M\smu(f)$ with
$f \in \cont_\mathrm{c}(\Omega)$.\pagebreak
\end{theorem}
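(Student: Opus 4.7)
The plan is to assemble the statement from three already-proved ingredients: Theorem \ref{multop} (for isometry), Lemma \ref{Linftycomm} (for commutant computation), and Proposition \ref{maxcommprime} (characterising maximal commutativity by $A' = A$).

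First I would verify that $g \mapsto M_\mu(g)$ is a \st-algebra homomorphism. Linearity and multiplicativity are immediate from the pointwise operations in $\rmLeb^{\infty}(\mu)$, since $M_\mu(g)M_\mu(h)f = ghf = M_\mu(gh)f$. For the involution, a direct computation
\[ \langle M_\mu(g)f, h\rangle_{\mu,2} = \int gf\,\overline{h}\,\diff\mu = \int f\,\overline{\overline{g}\,h}\,\diff\mu = \langle f, M_\mu(\overline{g})h\rangle_{\mu,2} \]
shows that $M_\mu(g)^{*} = M_\mu(\overline{g})$. Theorem \ref{multop} supplies the isometric identity $\|M_\mu(g)\| = \|g\|_{\mu,\infty}$, so the map is injective; being isometric from a C*-algebra into a C*-algebra, its range $A$ is automatically complete and hence a closed \st-subalgebra of $\blop\bigl(\rmLeb^2(\mu)\bigr)$ (cf.\ \ref{C*injisometric}). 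Thus $g \mapsto M_\mu(g)$ is already an isomorphism of C*-algebras onto $A$, and $A$ is commutative since pointwise multiplication is.

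Next I would establish maximal commutativity and the commutant description in one stroke. Put $B := \{\,M_\mu(f) : f \in \cont_\mathrm{c}(\Omega)\,\} \subset A$. On the one hand, Lemma \ref{Linftycomm} says precisely that every bounded operator on $\rmLeb^2(\mu)$ commuting with every element of $B$ lies in $A$, i.e.\ $B' \subset A$. On the other hand, $A$ is commutative and contains $B$, so $A \subset B'$. Together these give $A = B'$, which is the last assertion of the theorem.

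Finally, taking commutants of the inclusion $B \subset A$ gives $A' \subset B' = A$, while commutativity of $A$ gives $A \subset A'$; hence $A = A'$, and Proposition \ref{maxcommprime} delivers the maximal commutativity of $A$. There is no real obstacle: once Lemma \ref{Linftycomm} and Theorem \ref{multop} are in hand, the argument is just a clean bookkeeping of commutants, with \ref{maxcommprime} converting the identity $A = A'$ into the word ``maximal''.
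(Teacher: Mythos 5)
Your proposal is correct and follows essentially the same route as the paper, whose proof is simply the remark that the theorem ``follows from the preceding items of this paragraph'' — namely \ref{multop}, \ref{Linftycomm} and \ref{maxcommprime}, which are exactly the three ingredients you assemble. Your commutant bookkeeping ($A = B'$, hence $A' \subset B' = A \subset A'$) is the intended argument, just written out in full.
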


\begin{proof}
This follows from the preceding items of this paragraph.
\end{proof}

\begin{definition}[cyclic \st-algebras of operators]\label{cyclicsalg}%
\index{concepts}{cyclic!1s@\protect\st-subalgebra of $\protect\blop(H)$}
\index{concepts}{cyclic!vector}\index{concepts}{vector!cyclic}%
Let $H \neq \{ 0 \}$ be a Hilbert space, and let $B$
be a \st-subalgebra of $\blop (H)$. A non-zero vector
$c \in H$ is called \underline{cyclic} under $B$, if
the subspace $Bc$ is dense in $H$. In this case,
the \st-algebra $B$ is called \underline{cyclic}.
\end{definition}

\begin{observation}\label{cyclicL}%
Let $\mu \in \probmeas (\Omega)$, with $\Omega \neq \varnothing$
a locally compact Hausdorff space. The maximal commutative
von Neumann algebra $\rmLeb ^{\infty}(\mu)$ then is cyclic with
cyclic vector $1_\Omega \in \rmLeb ^2(\mu)$.
\end{observation}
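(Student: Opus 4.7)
The plan is to verify the three requirements of Definition \ref{cyclicsalg} directly, using only facts already established. First, since $\mu$ is a probability measure, one has $\int |1_\Omega|^2 \,\diff\mu = \mu(\Omega) = 1$, so $1_\Omega \in \rmLeb^2(\mu)$ and $1_\Omega \neq 0$ in $\rmLeb^2(\mu)$; this was already observed at the start of the proof of Lemma \ref{Linftycomm}.

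Next, I would identify the orbit of $1_\Omega$ under the von Neumann algebra. By the very definition of the multiplication operator in Theorem \ref{multop}, one has $M_\mu(g) \,1_\Omega = g \cdot 1_\Omega = g$ in $\rmLeb^2(\mu)$ for every $g \in \rmLeb^\infty(\mu)$. Thus the subspace $\rmLeb^\infty(\mu) \cdot 1_\Omega$ is precisely the image of $\rmLeb^\infty(\mu)$ under the natural map into $\rmLeb^2(\mu)$ (which is defined because, once more, $\mu$ is a probability measure, so $\rmLeb^\infty(\mu) \subset \rmLeb^2(\mu)$).

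It then suffices to exhibit some dense subspace of $\rmLeb^2(\mu)$ that sits inside this image. The natural candidate is $\cont_\mathrm{c}(\Omega)$: functions in $\cont_\mathrm{c}(\Omega)$ are bounded, hence lie in $\rmLeb^\infty(\mu)$, and their density in $\rmLeb^2(\mu)$ is exactly the fact cited from the appendix (\ref{CcdenseinLp}) at the end of the proof of Lemma \ref{Linftycomm}. This completes the verification that $1_\Omega$ is cyclic for $\rmLeb^\infty(\mu)$.

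There is no genuine obstacle here; the only mildly delicate point is the assumption that $\mu$ is a \emph{probability} measure, which is what makes $1_\Omega$ simultaneously belong to $\rmLeb^2(\mu)$ and be nonzero. Without this finiteness, $1_\Omega$ would fail to be a vector in $\rmLeb^2(\mu)$ at all, and the statement would have to be reformulated (e.g.\ by producing a strictly positive $\rmLeb^2$-function in place of $1_\Omega$).
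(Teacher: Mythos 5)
Your argument is correct and is essentially the paper's own proof: both rest on the two facts that $1_\Omega \in \rmLeb^2(\mu)$ because $\mu$ is a probability measure, and that $\cont_\mathrm{c}(\Omega) \subset \rmLeb^\infty(\mu)\cdot 1_\Omega$ is dense in $\rmLeb^2(\mu)$ by the appendix result \ref{CcdenseinLp}. You have merely spelled out the intermediate step $M\smu(g)\,1_\Omega = g$ that the paper leaves implicit.
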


\begin{proof}
Please note first that indeed $1_\Omega \in \rmLeb ^2(\mu)$
as $\mu$ is a probability measure. The statement then follows
for example from the density of $\cont_\mathrm{c}(\Omega)$
in $\rmLeb ^2(\mu)$, cf.\ the appendix \ref{CcdenseinLp}.
\end{proof}

\medskip
We finally touch on a concept closely related to maximal commutative
von Neumann algebras.

\begin{definition}[multiplicity-free representation]\label{multfreedef}%
\index{concepts}{representation!multiplicity-free}%
\index{concepts}{multiplicity-free}%
A representation $\pi$ of a \st-algebra on a Hilbert space is called
\underline{multiplicity-free}, if its commutant $\pi'$ is commutative.
\end{definition}

For a motivation for this terminology, see Mosak \cite[Definition 6.16 p.\ 82]{MosS}.

\begin{proposition}\label{multfreechar}%
Let $\pi$ be a representation of a \underline{commutative} \linebreak
\st-algebra on a Hilbert space. Then $\pi$ is multiplicity-free if and only
if the von Neumann algebra $\vonNeumAlg(\pi)$ is maximal commutative.
\end{proposition}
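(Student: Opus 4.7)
The plan is to reduce the problem to the commutant characterization of maximal commutative \st-subalgebras given in \ref{maxcommprime}: a \st-subalgebra $A$ of $\blop(H)$ is maximal commutative iff $A' = A$. Applied to $A := \vonNeumAlg(\pi) = (\pi')'$, and using the third-commutant identity $\vonNeumAlg(\pi)' = ((\pi')')' = \pi'$ from observation \ref{thirdcomm}, the statement ``$\vonNeumAlg(\pi)$ is maximal commutative'' becomes the equality $\vonNeumAlg(\pi) = \pi'$. So the goal is to show that $\pi'$ is commutative if and only if $\pi' = \vonNeumAlg(\pi)$.

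The one decisive ingredient I would exploit is the assumption that the \st-algebra $A$ being represented is commutative. This forces $\range(\pi)$ to be commutative, so every element of $\range(\pi)$ commutes with every element of $\range(\pi)$, i.e.\ $\range(\pi) \subset \range(\pi)' = \pi'$. Taking commutants reverses inclusion, yielding the one-sided inclusion
\[ \vonNeumAlg(\pi) = (\pi')' \subset \range(\pi)' = \pi'. \]
This inclusion is the only place where commutativity of the represented algebra is used, but it is the heart of the argument; without it the equivalence fails.

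Given this inclusion, both directions are now immediate. For ``$\Rightarrow$'': if $\pi'$ is commutative, then $\pi' \subset (\pi')' = \vonNeumAlg(\pi)$, and combined with the reverse inclusion just established we obtain $\pi' = \vonNeumAlg(\pi)$; hence $\vonNeumAlg(\pi)' = \pi' = \vonNeumAlg(\pi)$, so $\vonNeumAlg(\pi)$ is maximal commutative by \ref{maxcommprime}. For ``$\Leftarrow$'': if $\vonNeumAlg(\pi)$ is maximal commutative, then $\vonNeumAlg(\pi) = \vonNeumAlg(\pi)' = \pi'$ (using \ref{thirdcomm} again for the last equality), so $\pi'$ coincides with the commutative algebra $\vonNeumAlg(\pi)$ and is in particular commutative, meaning $\pi$ is multiplicity-free in the sense of \ref{multfreedef}.

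There is no real obstacle here; the proof is essentially a bookkeeping exercise with commutants, and the only substantive input beyond \ref{maxcommprime} and \ref{thirdcomm} is the elementary observation that a commutative subset of $\blop(H)$ is contained in its own commutant. I would write it as a single short paragraph in the final text.
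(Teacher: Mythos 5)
Your proof is correct and follows essentially the same route as the paper: both arguments reduce to the commutant criterion of \ref{maxcommprime} via $\vonNeumAlg(\pi)' = \pi'$ and the key inclusion $\vonNeumAlg(\pi) \subset \pi'$, the only cosmetic difference being that you derive that inclusion directly from $\range(\pi) \subset \range(\pi)'$ and a commutant flip, while the paper quotes the commutativity of $\vonNeumAlg(\pi)$ from \ref{Wstarcomm}. No changes needed.
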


\begin{proof}
Please note first that $\vonNeumAlg(\pi)' = \pi'$, by \ref{thirdcomm}.
As $\vonNeumAlg(\pi)$ is commutative, by \ref{Wstarcomm},
it follows that $\vonNeumAlg(\pi) \subset \vonNeumAlg(\pi)' = \pi'$.
Now, if $\pi$ is multiplicity-free, then $\pi'$ is commutative, so
$\pi' \subset (\pi')'= \vonNeumAlg(\pi)$. As a consequence, we then get
$\vonNeumAlg(\pi) = \pi' = \vonNeumAlg(\pi)'$, that is, $\vonNeumAlg(\pi)$
is maximal commutative, cf.\ \ref{maxcommprime}. Conversely, if
$\vonNeumAlg(\pi)$ is maximal commutative, then
$\pi' = \vonNeumAlg(\pi)' = \vonNeumAlg(\pi)$ is commutative,
that is, $\pi$ is multiplicity-free.
\end{proof}

\medskip
Mosak \cite[Lemma 7.2 p.\ 87 f.]{MosS} treats multiplication operators
with possibly unbounded measures. This requires the concept of a
``local null set'', which is already incorporated in the treatments
\cite{FW}, \cite{CFWvI}, \cite{CFW}. See e.g.\ \cite[p.\ 106]{FW}.
\pagebreak

\clearpage


\section{The Spectral Representation}

In this paragraph, we shall give the prototypes of cyclic representations
of commutative \st-algebras on Hilbert spaces.

\begin{definition}[the spectral representation]\label{specrepdef}%
\index{concepts}{representation!spectral}%
\index{symbols}{p8@$\pi_{\mu}$}%
\index{concepts}{spectral!representation}%
Let $A$ be a \st-algebra. Let $\mu$ be an inner regular Borel
probability measure on a subset of $\Delta^*(A)$ such that
$\mathrm{supp}(\mu) \cup \{ 0 \}$ is weak* compact. The set
$\Omega := \mathrm{supp}(\mu)$ then is a non-empty locally
compact Hausdorff space. We may consider $\mu$ as an
inner regular Borel probability measure on $\Omega$, the
support of which is all of $\Omega$. The functions
$\wht{a} := \wht{a}| _{\text{\small{$\Omega$}}}$ $(a \in A)$
form a dense subset of $\cont\0(\Omega)$, cf.\ \ref{cloloc}.
In particular, these functions are bounded, so the
multiplication operators $M\smu(\wht{a})$ $(a \in A)$
are bounded operators on $\rmLeb ^2(\mu)$. This makes
that the \underline{spectral representation} $\pi \smu$ is
well-defined by
\begin{align*}
\pi \smu : A & \to \blop\bigl(\rmLeb ^2(\mu)\bigr) \\
a & \mapsto M\smu(\wht{a}).
\end{align*}
Please note that the spectral representation $\pi \smu$ is
cyclic, with cyclic unit vector $1_{\Omega} \in \rmLeb ^2(\mu)$
(as $\mu$ is assumed to be a probability measure),
cf.\ the appendix \ref{CcdenseinLp}.
The philosophy is that the spectral representation $\pi \smu$ is
particularly simple as it acts through multiplication operators.
\end{definition}

\begin{theorem}[diagonalisation]\label{Bochnerbis}%
Let $A$ be a commutative \st-algebra. Let $\pi$ be a cyclic
representation of $A$ on a Hilbert space $H \neq \{0\}$.
Let $c$ be a unit cyclic vector for $\pi$, and consider the
positive linear functional $\varphi$ given by
\[ \varphi(a) := \langle \pi(a) c, c \rangle \qquad (a \in A). \]
We then have the following.

$(i)$ There exists a unique measure
$\mu \in \probmeas \bigl(\s(\pi)\bigr)$ such that
\[ \varphi(a) = \int \wht{a}| _{\text{\small{$\s(\pi)$}}} \,\diff \mu
\quad \text{for all} \quad a \in A, \]
namely
\[ \mu = \langle P c , c \rangle \quad
\text{with $P$ the spectral resolution of $\pi$.} \]

$(ii)$ The representation $\pi$ is spatially equivalent to the
spectral representation $\pi \smu$, and there exists a unique
unitary operator $U$ intertwining $\pi$ with $\pi \smu$, taking
$c$ to $1 \in \rmLeb ^2(\mu)$. One says that $U$
\underline{diagonalises} $\pi$.

$(iii)$ It follows that the intertwining operator $U$ is related to the
Gel'fand transformation in the following way: the operator $U$
maps any vector $\pi(a) c$ with $a \in A$ to $\wht{a}$. It may
thus be viewed as a sort of ``Gel'fand-Plancherel transformation''.
\pagebreak
\end{theorem}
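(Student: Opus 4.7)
The plan is to apply the Spectral Theorem for representations \ref{spthmrep} to transport the problem onto $\s(\pi)$, and then to invoke the uniqueness lemma \ref{coeffequal} for cyclic representations in order to produce the diagonalising unitary. The measure $\mu$ will be the obvious one manufactured from the spectral resolution of $\pi$ paired with the cyclic vector $c$.

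For part $(i)$, I would first use \ref{spthmrep} to obtain the spectral resolution $P$ of $\pi$ on $\s(\pi)$, satisfying $\pi(a) = \int \wht{a}|_{\s(\pi)}\,\diff P$ (in norm) for all $a \in A$. Because $c$ is a unit vector, $\mu := \langle Pc, c\rangle$ is an inner regular Borel probability measure on $\s(\pi)$, i.e.\ $\mu \in \probmeas\bigl(\s(\pi)\bigr)$. Pairing the spectral identity for $\pi(a)$ with $c$ on both sides yields
\[
\varphi(a) \,=\, \langle \pi(a)c,c\rangle \,=\, \int \wht{a}|_{\s(\pi)}\,\diff\langle Pc,c\rangle \,=\, \int \wht{a}|_{\s(\pi)}\,\diff\mu .
\]
Uniqueness of $\mu$ follows from \ref{cloloc}, which tells us that the functions $\wht{a}|_{\s(\pi)}$ $(a \in A)$ are dense in $\cont\0\bigl(\s(\pi)\bigr)$; any other measure in $\probmeas\bigl(\s(\pi)\bigr)$ representing $\varphi$ would agree with $\mu$ on a dense subspace of $\cont\0\bigl(\s(\pi)\bigr)$, and hence coincide with it by the uniqueness clause of the Riesz Representation Theorem.

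For part $(ii)$, let $\Omega := \mathrm{supp}(\mu)$, which is a closed subset of $\s(\pi)$; since $\s(\pi) \cup \{0\}$ is weak* compact by \ref{specpi}, so is $\Omega \cup \{0\}$, and hence the spectral representation $\pi\smu$ of \ref{specrepdef} is well-defined on $\rmLeb^2(\mu)$ and is cyclic with cyclic unit vector $1_\Omega \in \rmLeb^2(\mu)$. A direct computation, using that $\mu$ is carried by $\Omega$, gives
\[
\langle \pi\smu(a)\,1_\Omega,\,1_\Omega\rangle_{\rmLeb^2(\mu)} \,=\, \int \wht{a}|_\Omega\,\diff\mu \,=\, \int \wht{a}|_{\s(\pi)}\,\diff\mu \,=\, \varphi(a)
\]
for every $a \in A$. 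Thus the cyclic representations $(\pi,c)$ and $(\pi\smu,1_\Omega)$ induce the same positive linear functional on $A$, so \ref{coeffequal} delivers a unique unitary $U \in C(\pi,\pi\smu)$ with $Uc = 1_\Omega$. Part $(iii)$ is then immediate: from the intertwining property and $Uc = 1_\Omega$ we obtain $U\pi(a)c = \pi\smu(a)Uc = \pi\smu(a)\,1_\Omega = \wht{a}|_\Omega$ for every $a \in A$, which is the asserted ``Gel'fand-Plancherel'' description of $U$.

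I do not anticipate a serious obstacle; the only point requiring care is checking that $\mu$, originally viewed as a probability measure on $\s(\pi)$, can legitimately be fed into the construction \ref{specrepdef} of the spectral representation once one passes to $\Omega = \mathrm{supp}(\mu)$, and that the induced state computation uses the correct domain of integration. Once this bookkeeping between $\s(\pi)$ and $\Omega$ is handled, the entire theorem is a short assembly of \ref{spthmrep}, \ref{specrepdef}, and \ref{coeffequal}.
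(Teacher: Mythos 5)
Your proposal is correct and follows essentially the same route as the paper's own proof: obtain $P$ from the Spectral Theorem \ref{spthmrep}, set $\mu = \langle Pc,c\rangle$, verify the integral identity by pairing the weak spectral integral with $c$, get uniqueness from the density of the $\wht{a}|_{\s(\pi)}$ in $\cont\0\bigl(\s(\pi)\bigr)$ (\ref{cloloc}), and conclude by matching the induced states and invoking \ref{coeffequal}. The bookkeeping point you flag about passing from $\s(\pi)$ to $\Omega = \mathrm{supp}(\mu)$ is handled exactly as you suggest, via the weak* compactness of $\mathrm{supp}(\mu)\cup\{0\}$ as a closed subset of $\s(\pi)\cup\{0\}$.
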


\begin{proof}
Please note that $\s(\pi)$ is a subset of $\Delta^*(A)$
with $\s(\pi) \cup \{ 0 \}$ weak* compact, cf.\ \ref{specpi}.
Uniqueness of $\mu$ follows from the fact that
$\{ \,\wht{a}| _{\text{\small{$\s(\pi)$}}} : a \in A \,\}$
is dense in $\cont\0\bigl(\s(\pi)\bigr)$, cf.\ \ref{cloloc}.
Consider $\mu := \langle P c , c \rangle\in \probmeas \bigl(\s(\pi)\bigr)$.
Then $\mu$ satisfies the assumptions of \ref{specrepdef},
because $\mathrm{supp}(\mu) \cup \{ 0 \}$ is closed in
$\s(\pi) \cup \{ 0 \}$, and thus weak* compact. We have
\[ \varphi (a) = \langle \pi(a) c, c \rangle
= \int \wht{a}| _{\text{\small{$\s(\pi)$}}} \,\diff \mu
\quad \text{for all} \quad a \in A \]
by
\[ \pi(a) = \int _{\text{\small{$\s(\pi)$}}} \wht{a}|_{\text{\small{$\s(\pi)$}}} \,\diff P
\quad \text{(weakly)} \quad \text{for all} \quad a \in A. \]
Now the spectral representation $\pi \smu$ is cyclic, with unit cyclic
vector $1_{\Omega} \in \rmLeb ^2(\mu)$, cf.\ \ref{specrepdef}, where
$\Omega := \mathrm{supp}(\mu)$. With the linear functional
\[ \psi(a) := \langle \pi \smu (a) 1_{\Omega} , 1_{\Omega} \rangle
= \int \wht{a}| _{\text{\small{$\s(\pi)$}}} \,\diff \mu \qquad (a \in A), \]
we then have $\varphi = \psi$. The statement follows now from
\ref{coeffequal}.
\end{proof}

\begin{observation}\index{concepts}{Theorem!abstract!Bochner}%
\label{Bremark}\index{concepts}{abstract!Bochner Theorem}%
\index{concepts}{Bochner Thm., abstr.}%
If $A$ furthermore is a commutative \underline{normed} \linebreak \st-algebra,
if $\psi$ is a state on $A$, and if in the above we choose $\pi := \pi _{\psi}$,
$c := c _{\psi}$, then the measure obtained is  essentially the same as the one
provided by the abstract Bochner Theorem \ref{Bochner}, cf.\ \ref{sppisa}.
\end{observation}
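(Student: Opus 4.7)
The plan is to exploit the uniqueness clauses in both the abstract Bochner Theorem \ref{Bochner} and in theorem \ref{Bochnerbis}, once one has identified the relevant ambient spaces.

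First I would assemble the setup. Since $\psi$ is a state on the commutative normed \st-algebra $A$, theorem \ref{repstate} gives that the GNS representation $\pi_\psi$ is a $\sigma$-contractive cyclic representation of $A$ on $H_\psi$, with unit cyclic vector $c_\psi$, and that $\psi(a) = \langle \pi_\psi(a) c_\psi, c_\psi \rangle_\psi$ for all $a \in A$. This puts us in the hypotheses of theorem \ref{Bochnerbis} with $\varphi = \psi$, so we obtain a unique measure $\mu_1 \in \probmeas\bigl(\s(\pi_\psi)\bigr)$ with $\psi(a) = \int \wht{a}|_{\s(\pi_\psi)} \,\diff \mu_1$ for every $a \in A$. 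On the other hand, the abstract Bochner Theorem \ref{Bochner} supplies a unique measure $\mu_2 \in \probmeas\bigl(\Delta^*\bsa(A)\bigr)$ with $\psi(a) = \int \wht{a}|_{\Delta^*\bsa(A)} \,\diff \mu_2$ for every $a \in A$.

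Next I would compare the underlying spaces using the hint of \ref{sppisa}. Because $\pi_\psi$ is $\sigma$-contractive, it is in particular weakly continuous on $A\sa$ (see \ref{weakbdedsarep} together with \ref{sigmaAsa}), so remark \ref{sppisa} applies and yields $\s(\pi_\psi) \subset \Delta^*\bsa(A)$ not only as sets but as topological spaces. Since $\s(\pi_\psi) \cup \{0\}$ is weak* compact by \ref{specpi}, proposition \ref{cloloc} ensures that $\s(\pi_\psi)$ is a closed subset of $\Delta^*\bsa(A)$. Consequently one may push $\mu_1$ forward along the inclusion $\s(\pi_\psi) \hookrightarrow \Delta^*\bsa(A)$, obtaining an inner regular Borel probability measure $\tld{\mu}_1$ on $\Delta^*\bsa(A)$ whose support is contained in $\s(\pi_\psi)$.

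Finally I would invoke uniqueness. For any $a \in A$, the image-measure formula (appendix \ref{imagebegin}--\ref{imageend}) gives
\[ \int \wht{a}|_{\Delta^*\bsa(A)} \,\diff \tld{\mu}_1
= \int \wht{a}|_{\s(\pi_\psi)} \,\diff \mu_1 = \psi(a), \]
so $\tld{\mu}_1$ solves the same representation problem on $\Delta^*\bsa(A)$ as $\mu_2$ does. The uniqueness statement in the abstract Bochner Theorem \ref{Bochner} then forces $\tld{\mu}_1 = \mu_2$. In particular $\mu_2$ is concentrated on $\s(\pi_\psi)$, and its restriction to that closed subset coincides with $\mu_1$. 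This is the precise sense in which the two measures are ``essentially the same''.

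The main obstacle I anticipate is the bookkeeping step of identifying the pushforward $\tld{\mu}_1$ with an inner regular Borel probability measure on $\Delta^*\bsa(A)$ and verifying that its support sits inside $\s(\pi_\psi)$; the inclusion is only a topological embedding into a possibly larger locally compact space, and one must be careful that inner regularity and the support condition behave well under the push-forward along this closed embedding. Everything else is straightforward bookkeeping once \ref{sppisa} has placed $\s(\pi_\psi)$ inside $\Delta^*\bsa(A)$ in the correct category.
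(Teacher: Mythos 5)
Your argument is correct and is precisely the fleshing-out the paper intends: the observation is left unproved except for the hint ``cf.\ \ref{sppisa}'', and your route --- embed $\s(\pi_\psi)$ as a closed subset of $\Delta^*\bsa(A)$ via \ref{sppisa} and \ref{cloloc}, push the measure of \ref{Bochnerbis} forward, and invoke the uniqueness clause of \ref{Bochner} --- is exactly that. The push-forward bookkeeping you worry about is harmless, since for a closed subspace the Borel sets are the traces of ambient Borel sets and compact subsets remain compact, so inner regularity and the support condition transfer directly.
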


\begin{theorem}\index{symbols}{P95@$P_{\mu}$}\label{exspectres}%
Under the hypotheses of \ref{specrepdef}, and furthermore assuming
that $A$ is commutative, we have the following.

For a Borel set $\Delta$ of $\Omega$, one defines a projection
\[ P\smu(\Delta) := M\smu(1_{\textstyle\Delta}) \in \blop\bigl(\rmLeb ^2(\mu)\bigr). \]
The map
\[ P \smu : \Delta \mapsto P \smu(\Delta)
\qquad ( \,\Delta \text{ a Borel set of } \Omega \,) \]
then is a resolution of identity on $\Omega$. It satisfies
\[ \langle P \smu x,y \rangle = x\, \overline{\vphantom{b}y}
\cdot \mu \quad \text{for all} \quad x,y \in \rmLeb ^2(\mu), \]
so that the support of $P \smu$ is all of $\Omega$.

We get $\s(\pi \smu) = \Omega$, and that $P \smu$
is the \underline{spectral resolution} of $\pi \smu$.

For every bounded Borel function $f$ on $\Omega$, we have
\[ \pi _{\text{\scriptsize{$P$}} \smu}(f) = M \smu(f). \]
The range $\range(\pi _{\text{\scriptsize{$P$}} \smu})$
is equal to the maximal commutative von Neumann algebra $\rmLeb ^{\infty}(\mu)$,
cf.\ \ref{maxcommL}. With $Q \smu$ denoting the restriction of $P \smu$
to the Baire $\sigma$-algebra of $\Omega$ \ref{Bairedef}, we also have that
$\range(\pi _{\text{\scriptsize{$Q$}} \smu})
= \range(\pi _{\text{\scriptsize{$P$}} \smu})$.\pagebreak
\end{theorem}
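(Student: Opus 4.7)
My plan is to establish the measure-theoretic properties of $P_{\mu}$ directly, identify $\pi_{P_{\mu}}(f) = M_{\mu}(f)$ on bounded Borel $f$, and then invoke the uniqueness addendum \ref{spuniq} to pull down the remaining claims. First, since $1_{\Delta}$ is a real idempotent in $\rmLeb^{\infty}(\mu)$, the operator $P_{\mu}(\Delta) = M_{\mu}(1_{\Delta})$ is a projection. For any $x \in \rmLeb^{2}(\mu)$, we have $\langle P_{\mu}(\Delta) x, x \rangle = \int_{\Delta} | \,x \,|^{\,2} \,\diff \mu$, which is the bounded measure ${| \,x \,|}^{\,2} \cdot \mu$ on $\Omega$ with density $| \,x \,|^{\,2} \in \rmLeb^{1}(\mu)$; this measure inherits inner regularity from $\mu$. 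Hence for unit vectors $x$, the set function $\langle P_{\mu} x, x \rangle$ is an inner regular Borel probability measure, so $P_{\mu}$ is a resolution of identity on $\Omega$. Polarisation yields the formula $\langle P_{\mu} x, y \rangle = x \,\overline{y} \cdot \mu$, and the support statement follows from $\mathrm{supp}(\mu) = \Omega$: if $\Delta$ is open with $P_{\mu}(\Delta) = 0$, then $1_{\Delta} = 0$ $\mu$-a.e., forcing $\mu(\Delta) = 0$ and thence $\Delta = \varnothing$.

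Next, I would verify the identity $\pi_{P_{\mu}}(f) = M_{\mu}(f)$ for every bounded Borel function $f$ on $\Omega$. For Borel step functions this is immediate from the definition of $P_{\mu}$ and the linearity of $M_{\mu}$. For general bounded Borel $f$, a uniform approximation by step functions \ref{closure} combined with the fact that both $\pi_{P_{\mu}}$ and $M_{\mu}$ are norm-contractive (the latter via theorem \ref{multop}) extends the identity. Specialising to $f = \wht{a}|_{\Omega}$ gives $\pi_{\mu}(a) = M_{\mu}(\wht{a}) = \pi_{P_{\mu}}(\wht{a}|_{\Omega}) = \int \wht{a}|_{\Omega} \,\diff P_{\mu}$ in norm. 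Together with the support statement, the addendum \ref{spuniq} then delivers $\s(\pi_{\mu}) = \Omega$ and that $P_{\mu}$ is the spectral resolution of $\pi_{\mu}$.

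For the identification $\range(\pi_{P_{\mu}}) = \rmLeb^{\infty}(\mu)$, the isomorphism $\rmLeb^{\infty}(P_{\mu}) \to \range(\pi_{P_{\mu}})$ from \ref{LPmain} together with $\pi_{P_{\mu}}(f) = M_{\mu}(f)$ shows that $\range(\pi_{P_{\mu}})$ consists of all multiplication operators by bounded Borel functions, and equality with the maximal commutative von Neumann algebra $\rmLeb^{\infty}(\mu)$ of \ref{maxcommL} follows from the fact that every bounded $\mu$-measurable function agrees $\mu$-almost everywhere with a bounded Borel function, a consequence of inner regularity.

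The main obstacle I anticipate will be the final equality $\range(\pi_{Q_{\mu}}) = \range(\pi_{P_{\mu}})$. One inclusion is trivial since bounded Baire functions are Borel. The reverse inclusion rests on the measure-theoretic fact that, with respect to an inner regular Borel measure on a locally compact Hausdorff space, every Borel set differs from a Baire set by a $\mu$-null set (this is presumably the content of the appendix material at \hyperlink{extBaire}{\S\ 53}). Granted this, every bounded Borel function is $\mu$-a.e.\ equal to a bounded Baire function, so $M_{\mu}$ descends to an equality of ranges. The delicate point is verifying this regularity-type statement in the generality being used here, but once it is available, the conclusion drops out immediately from the identification $\pi_{Q_{\mu}}(f) = M_{\mu}(f)$ for bounded Baire $f$, proved exactly as in the Borel case.
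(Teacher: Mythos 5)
Your proposal is correct and follows essentially the same route as the paper: it likewise computes $\langle P\smu x, y \rangle = x\,\overline{y}\cdot\mu$, reads off the support from $\mathrm{supp}(\mu)=\Omega$, identifies $\pi_{\text{\scriptsize{$P$}}\smu}(f)=M\smu(f)$ (the paper does this via the weak-integral characterisation rather than your step-function approximation, but the two are interchangeable here), invokes the uniqueness addendum \ref{spuniq}, and settles the Baire case by the fact that every function in $\mathscr{L}^{\,\infty}(\mu)$ is $\mu$-a.e.\ equal to a bounded Baire function. The one correction: that last fact is corollary \ref{Baire} in \S\ 52 of the appendix, not \S\ 53 (which concerns extension of Baire functions from closed subsets), so the "delicate point" you flag is indeed already supplied by the text.
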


\begin{proof}
For a Borel set $\Delta$ of $\Omega$ and $x,y$ in $\rmLeb ^2(\mu)$, we have
\[ \langle P \smu (\Delta) \,x, y \rangle
= \langle M\smu(1_{\textstyle\Delta}) \,x, y \rangle
= \int 1_{\textstyle\Delta} \,x\, \overline{\vphantom{b}y} \,\diff \mu. \]
Therefore, the map
$\langle P \smu \,x, y \rangle : \Delta \mapsto
\langle P \smu (\Delta) \,x, y \rangle$ is a measure, namely
\[ \langle P \smu \,x, y \rangle = x\, \overline{\vphantom{b}y} \cdot \mu. \]
It follows that for each unit vector $x$ in $\rmLeb ^2(\mu)$, the measure
$\langle P \smu x, x \rangle$ is an inner regular Borel probability measure
on $\Omega$. Hence $P \smu (\Omega) = \mathds{1}$, as
${\| \,P \smu (\Omega) x \,\|}^{\,2} = \langle P \smu (\Omega) x, x \rangle = 1 \neq 0$
for all unit vectors $x$. So $P \smu$ is a resolution of identity on $\Omega$.
The support of $P \smu$ is all of $\Omega$ as already the support of
$\langle P \smu 1_{\Omega} , 1_{\Omega} \rangle = \mu$ is all of
$\Omega$. Next, for $a \in A$, we have
\[ \langle \pi \smu (a) \,x, y \rangle
= \langle M\smu\bigl(\,\wht{\vphantom{b}a}\,\bigr) \,x, y \rangle
= \int \wht{\vphantom{b}a} \,x\, \overline{\vphantom{b}y} \,\diff \mu
= \int \wht{\vphantom{b}a} \,\diff \langle P \smu x, y \rangle \]
for all $x,y \in \rmLeb ^2(\mu)$, which says that
\[ \pi \smu (a) = \int _{\Omega} \wht{a}|_{\Omega} \,\diff P \smu
\quad \text{(weakly).} \]
Since the support of $P \smu$ is all of $\Omega$, we get that
$\Omega = \s(\pi \smu)$, and that $P \smu$ is the spectral
resolution of $\pi \smu$. (This follows from the uniqueness
property of the spectral resolution as in the addendum \ref{spuniq}.)
For a bounded Borel function $f$ on $\Omega$, we have
\[ \pi _{\text{\scriptsize{$P$}} \smu}(f) = \int f \,\diff P \smu\quad \text{(weakly),} \]
so that for $x,y \in \rmLeb ^2(\mu)$ we get
\[ \langle \pi _{\text{\scriptsize{$P$}} \smu} (f) \,x, y \rangle
= \int f \,\diff \langle P \smu x, y \rangle
= \int f \,x\, \overline{\vphantom{b}y} \,\diff \mu = \langle M \smu(f) \,x, y \rangle. \]
This implies that
\[ \pi _{\text{\scriptsize{$P$}} \smu} (f) = M\smu (f). \]
This already shows that $\range(\pi _{\text{\scriptsize{$Q$}} \smu})
\subset \range(\pi _{\text{\scriptsize{$P$}} \smu}) \subset \rmLeb ^{\infty}(\mu)$.
Conversely, to see that
$\rmLeb ^{\infty}(\mu) \subset \range(\pi _{\text{\scriptsize{$Q$}} \smu})$,
we use that a function $f \in {\mathscr{L} \,}^{\infty}(\mu)$ is $\mu$-a.e.\ equal
to a bounded Baire function on $\Omega$, cf.\ the appendix \ref{Baire}.
\end{proof}

\begin{corollary}\label{spectralsupp}%
Let $\pi$ be a cyclic representation of a commutative \st-algebra
on a Hilbert space $H \neq \{0\}$. Let $c$ be a unit cyclic vector for $\pi$.
Let $P$ denote the spectral resolution of $\pi$.
Let $\mu := \langle P c , c \rangle \in \probmeas \bigl(\s(\pi)\bigr)$,
as in \ref{Bochnerbis}. Then the support of $\mu$ is all of $\s(\pi)$,
and $\pi _{\text{\scriptsize{$P$}}}$ is spatially equivalent to
$\pi _{\text{\scriptsize{$P$}} \smu}$ as in \ref{exspectres}.
Indeed, every unitary operator intertwining $\pi$ with $\pi \smu$
also intertwines $\pi _{\text{\scriptsize{$P$}}}$ with
$\pi _{\text{\scriptsize{$P$}} \smu}$. \pagebreak
\end{corollary}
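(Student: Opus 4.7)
The plan is to proceed in three stages: first establish (1), then invoke \ref{Bochnerbis} to get the intertwining unitary $U$ as in (2), and finally prove (3), from which (2) follows as a corollary.

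For (1), I would use that by Theorem \ref{spthmrep} the support of $P$ is all of $\s(\pi)$, so it suffices to show that $P(\Delta) \neq 0$ implies $\mu(\Delta) = \langle P(\Delta) c, c\rangle = \|P(\Delta)c\|^2 > 0$ for any Borel set $\Delta$. By \ref{spprojsubset} and \ref{gensame}, the projection $P(\Delta)$ lies in $\vonNeumAlg(\pi)$, which is commutative by \ref{Wstarcomm} since $A$ is commutative. Hence $P(\Delta)$ commutes with every $\pi(a)$. If $P(\Delta)c = 0$, then $P(\Delta)\pi(a)c = \pi(a)P(\Delta)c = 0$ for all $a \in A$, and by cyclicity of $c$ and continuity of $P(\Delta)$ this forces $P(\Delta) = 0$. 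Thus $\mathrm{supp}(\mu) = \s(\pi)$, and we may apply \ref{exspectres} with $\Omega := \s(\pi)$.

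By \ref{Bochnerbis} applied with $\varphi(a) = \langle\pi(a)c, c\rangle$ (and invoking (1) to identify the measure on $\s(\pi)$ built in \ref{Bochnerbis} with the same $\mu$), there exists a unique unitary $U: H \to \rmLeb^2(\mu)$ intertwining $\pi$ with $\pi_\mu$ and taking $c$ to $1_\Omega$. To prove that this $U$ also intertwines $\pi_P$ with $\pi_{P_\mu}$, I would set $Q(\Delta) := U P(\Delta) U^{-1}$ for Borel sets $\Delta$ of $\s(\pi)$. Since $U$ is unitary, $Q$ is a resolution of identity on $\s(\pi)$ acting on $\rmLeb^2(\mu)$ (inner regularity of each $\langle Qx, x\rangle = \langle PU^{-1}x, U^{-1}x\rangle$ is inherited from $P$), and its support is all of $\s(\pi)$ since that of $P$ is. For every $a \in A$, one computes
\[ \int_{\s(\pi)} \wht{a}|_{\s(\pi)} \,\diff Q = U \pi(a) U^{-1} = \pi_\mu(a) \qquad \text{(weakly, hence in norm by \ref{weaklyinnorm}).} \]
View $\pi_\mu$ as a non-degenerate representation of the commutative \st-algebra $A$; then both $Q$ and $P_\mu$ satisfy the hypotheses of the uniqueness addendum \ref{spuniq}, whence $Q = P_\mu$, i.e.\ $UP(\Delta)U^{-1} = P_\mu(\Delta)$ for every Borel set $\Delta$.

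From $U \pi_P(1_\Delta) U^{-1} = \pi_{P_\mu}(1_\Delta)$, linearity yields $U\pi_P(h)U^{-1} = \pi_{P_\mu}(h)$ for every $\mathcal{E}$-step function $h$; taking uniform limits and using that both spectral integrals are defined in norm (and that conjugation by $U$ is isometric) extends this to all $f \in \bmeas(\s(\pi))$. This simultaneously gives the spatial equivalence in (2) and the intertwining property in (3). The main obstacle, in my view, is bookkeeping around the uniqueness step: one must carefully verify that $Q$ satisfies exactly the hypotheses of \ref{spuniq} for the representation $\pi_\mu$ (in particular that $\mathrm{supp}(Q) = \s(\pi)$ and that convergence is in norm), and that the identification $\mathrm{supp}(\mu) = \s(\pi)$ from step (1) really makes \ref{exspectres} applicable with $\Omega = \s(\pi)$ so that $\pi_{P_\mu}$ and $P_\mu$ are defined on the right space.
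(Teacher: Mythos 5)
Your proposal is correct and follows essentially the same route as the paper, whose entire proof is an appeal to the uniqueness addendum \ref{spuniq}: transport $P$ by an intertwining unitary to a resolution of identity for $\pi\smu$ and compare it with $P\smu$ from \ref{exspectres}. Your separate direct verification in step (1) that $\mathrm{supp}(\mu) = \s(\pi)$ is sound but not needed, since the uniqueness argument already forces $\mathrm{supp}(\mu) = \s(\pi\smu) = \s(\pi)$.
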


\begin{proof}
Apply the uniqueness of the spectral resolution
as in the addendum \ref{spuniq}.
Use the proof of the appendix \ref{Baire} \& \ref{specrepdef}.%
\end{proof}

\begin{theorem}\label{spectralrepmain}%
Let $\pi$ be a cyclic representation of a commutative \linebreak
\st-algebra on a Hilbert space $H \neq \{0\}$. Let $P$ be the
spectral resolution of $\pi$. Let $Q$ denote the restriction of $P$
to the Baire $\sigma$-algebra of $\s(\pi)$. We then have that
\[ \range(\pi _Q) = \range(\pi _P) = \vonNeumAlg (\pi) = \pi' \]
is a maximal commutative von Neumann algebra.
\end{theorem}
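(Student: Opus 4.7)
The plan is to reduce everything to the concrete case of the spectral representation $\pi\smu$ on $\rmLeb^2(\mu)$ via diagonalisation, where the identifications are already known, and then transport the result back through the unitary equivalence.

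First, I would apply corollary \ref{spectralsupp}. Since $\pi$ is cyclic, there exists a unit cyclic vector $c$; setting $\mu := \langle Pc,c\rangle \in \probmeas\bigl(\s(\pi)\bigr)$, corollary \ref{spectralsupp} gives a unitary operator $U : H \to \rmLeb^2(\mu)$ intertwining $\pi$ with $\pi\smu$, and at the same time intertwining $\pi_P$ with $\pi_{P\smu}$. Because conjugation by $U$ restricted to the bounded Baire functions on $\s(\pi)$ intertwines spectral integrals against $P$ with those against $P\smu$, the same $U$ intertwines $\pi_Q$ with $\pi_{Q\smu}$. Conjugation by $U$ is a \st-isomorphism $\blop(H)\to\blop(\rmLeb^2(\mu))$ which preserves commutants, hence preserves the property of being a maximal commutative von Neumann algebra.

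Second, theorem \ref{exspectres} gives the target identifications on the $\rmLeb^2(\mu)$-side: the support of $P\smu$ is all of $\s(\pi\smu) = \mathrm{supp}(\mu)$, and
\[
\range(\pi_{Q\smu}) \;=\; \range(\pi_{P\smu}) \;=\; \rmLeb^{\infty}(\mu),
\]
which in turn is a maximal commutative von Neumann algebra in $\blop(\rmLeb^2(\mu))$ by theorem \ref{maxcommL}. Pulling back via $U^{-1}$ yields $\range(\pi_Q) = \range(\pi_P)$ as a maximal commutative von Neumann algebra on $H$.

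Third, I would identify this algebra with $\vonNeumAlg(\pi)$ and with $\pi'$. By \ref{spprojsubset}, $\range(\pi_P) \subset \vonNeumAlg(\pi)$. On the other hand, since $\range(\pi)$ consists of pairwise commuting normal operators, $\vonNeumAlg(\pi)$ is commutative by \ref{Wstarcomm}. A commutative von Neumann algebra that contains a maximal commutative subalgebra must coincide with it, so $\vonNeumAlg(\pi) = \range(\pi_P)$. Finally, since $\vonNeumAlg(\pi)$ is now known to be maximal commutative, proposition \ref{multfreechar} (and the identity $\pi' = \vonNeumAlg(\pi)$ read off from its proof) gives $\pi' = \vonNeumAlg(\pi)$.

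The only delicate point I anticipate is step one: making sure the single unitary $U$ from \ref{spectralsupp} simultaneously intertwines all three pairs $(\pi,\pi\smu)$, $(\pi_P,\pi_{P\smu})$, and $(\pi_Q,\pi_{Q\smu})$. The first two are supplied by \ref{spectralsupp} directly, while the third follows because $Q$ and $Q\smu$ are the restrictions of $P$ and $P\smu$ to the Baire $\sigma$-algebra of $\s(\pi)$, so the intertwining property of $U$ for $\pi_P$ restricts to the subalgebra generated by the Baire projections. Once that is in place, the rest is a matter of transporting the algebraic identities through the unitary and invoking the already established results.
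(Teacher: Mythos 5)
Your proposal is correct and follows essentially the same route as the paper: diagonalise via the unitary from \ref{Bochnerbis}/\ref{spectralsupp}, read off the identifications $\range(\pi_{Q\smu}) = \range(\pi_{P\smu}) = \rmLeb^{\infty}(\mu)$ and its maximal commutativity from \ref{exspectres} and \ref{maxcommL}, and transport back. The only cosmetic difference is that the paper completes the chain $= \vonNeumAlg(\pi\smu) = {\pi\smu}'$ on the $\rmLeb^2(\mu)$ side before transporting, whereas you transport maximal commutativity first and then derive $= \vonNeumAlg(\pi) = \pi'$ on $H$ via \ref{spprojsubset} and \ref{Wstarcomm}; both rest on the same observation that a maximal commutative algebra equals its own commutant.
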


\begin{proof}
Let $c$ be a cyclic unit vector for $\pi$ and let $\mu$ be the measure
$\langle Pc,c \rangle$. Let $P \smu$ be the spectral resolution of the
spectral representation $\pi \smu$, as in \ref{exspectres}. Then $\pi$
is spatially equivalent to $\pi \smu$, cf.\ \ref{Bochnerbis}, and
$\pi _{\text{\scriptsize{$P$}}}$ is spatially equivalent to
$\pi _{\text{\scriptsize{$P$}} \smu}$, by \ref{spectralsupp}.
(Note that both spatial equivalences can be implemented by the same
unitary operator.) Let $Q \smu$ denote the restriction of $P \smu$
to the Baire $\sigma$-algebra. The equalities
\[ \range(\pi _{\text{\scriptsize{$Q$}} \smu})
= \range(\pi _{\text{\scriptsize{$P$}} \smu})
= \vonNeumAlg (\pi \smu) = {\pi \smu}' \]
follow from the fact that
$\range(\pi _{\text{\scriptsize{$Q$}} \smu})
= \range(\pi _{\text{\scriptsize{$P$}} \smu})$
is a maximal commutative von Neumann algebra,
cf.\ the last two statements of theorem \ref{exspectres}. The
proof is completed by applying the spatial equivalences above.
\end{proof}

(Without the assumption of cyclicity, we merely get the inclusions
\[ \range(\pi _Q) \subset \range(\pi _P) \subset \vonNeumAlg (\pi) \subset \pi', \]
the last inclusion holding by commutativity of
$\vonNeumAlg (\pi)$, cf.\ \ref{Wstarcomm}.)

\begin{corollary}[reducing subspaces]%
\index{concepts}{reducing}\index{concepts}{subspace!reducing}%
\index{concepts}{invariant subspace}\index{concepts}{subspace!invariant}%
Let $\pi$ be a cyclic representation of a commutative \st-algebra on a
Hilbert space $H \neq \{0\}$. Let $P$ denote the spectral resolution of
$\pi$. A closed subspace $M$ of $H$ reduces the cyclic representation
$\pi$, if and only if $M$ is the range of some $P(\Delta)$, where $\Delta$
is a Borel (or even Baire) subset of $\s(\pi)$.
\end{corollary}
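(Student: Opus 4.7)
The plan is to combine three earlier results: the commutant characterisation of invariant subspaces \ref{invarcommutant}, the identification of $\pi'$ with the image of the Borel (resp.\ Baire) functional calculus attached to the spectral resolution \ref{spectralrepmain}, and the description of the projections in that image \ref{spprojform}. The cyclicity hypothesis enters crucially in the second ingredient: without it, one only has the chain of inclusions $\range(\pi_Q) \subset \range(\pi_P) \subset \vonNeumAlg(\pi) \subset \pi'$, and these can be strict, so the correspondence between reducing subspaces and spectral projections would break down.

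In more detail, I would first recall that a closed subspace $M$ of $H$ is reducing for $\pi$ if and only if it is invariant under $\pi$, and by \ref{invarcommutant} this happens precisely when the orthogonal projection $p_M$ onto $M$ belongs to the commutant $\pi'$. Next, since $\pi$ is cyclic on a Hilbert space $H \neq \{0\}$ over a commutative \st-algebra, theorem \ref{spectralrepmain} applies and yields the equalities
\[ \range(\pi_Q) \,=\, \range(\pi_P) \,=\, \vonNeumAlg(\pi) \,=\, \pi', \]
where $Q$ denotes the restriction of $P$ to the Baire $\sigma$-algebra of $\s(\pi)$. Combined with the preceding step, this shows that $M$ reduces $\pi$ if and only if $p_M \in \range(\pi_P)$ (equivalently, $p_M \in \range(\pi_Q)$).

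The final step is to invoke \ref{spprojform}, applied to the representations $\pi_P$ of $\bmeas(\mathcal{E})$ and $\pi_Q$ of $\bmeas(\mathcal{E}_0)$ (with $\mathcal{E}$ and $\mathcal{E}_0$ the Borel and Baire $\sigma$-algebras on $\s(\pi)$). That result asserts that every projection in $\range(\pi_P)$ is of the form $P(\Delta)$ for a Borel subset $\Delta \subset \s(\pi)$, and likewise every projection in $\range(\pi_Q)$ is of the form $Q(\Delta) = P(\Delta)$ for a Baire subset $\Delta \subset \s(\pi)$. Hence $p_M = P(\Delta)$ for some such $\Delta$, which is exactly to say that $M$ is the range of $P(\Delta)$. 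Conversely, any $P(\Delta)$ is a projection in $\range(\pi_P) = \pi'$, so its range is automatically reducing.

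The potential obstacle is mostly conceptual rather than computational: one must be careful that the Baire version requires the equality $\range(\pi_Q) = \range(\pi_P)$ (not merely the inclusion), and this equality is precisely what \ref{spectralrepmain} provides in the cyclic setting through the identification with $\rmLeb^{\infty}(\mu)$ via diagonalisation \ref{Bochnerbis} and the last statement of \ref{exspectres}. Once these two ingredients are in hand, the argument is a routine concatenation of equivalences.
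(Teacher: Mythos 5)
Your proof is correct and follows essentially the same route as the paper: reduce to $p_M \in \pi'$ via \ref{invarcommutant}, identify $\pi' = \range(\pi_P) = \range(\pi_Q)$ via \ref{spectralrepmain} (which is where cyclicity is used), and then apply \ref{spprojform} to recognise the projections as $P(\Delta)$ for Borel, respectively Baire, sets $\Delta$. Your additional remarks on why cyclicity is indispensable match the paper's own comment following \ref{spectralrepmain}.
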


\begin{proof}
Let $Q$ be the restriction of $P$ to the Baire $\sigma$-algebra. Let $M$
be a closed subspace of $H$ and let $p$ denote the projection on $M$.
The subspace $M$ reduces $\pi$ if and only if $p \in \pi' = \range(\pi _{Q})$,
cf.\ \ref{invarcommutant} and the preceding theorem \ref{spectralrepmain}.
However a projection in $\range(\pi _{Q})$ is of the form $Q(\Delta)$ for
some Baire subset $\Delta$ of $\s(\pi)$, cf.\ \ref{spprojform}.
\end{proof}

\medskip
Please note that the above two results do not explicitly involve any
specific cyclic vector. \pagebreak

\begin{theorem}\label{cyclicfree}%
A cyclic representation of a commutative \st-algebra on a Hilbert space
is multiplicity-free \ref{multfreedef}.
\end{theorem}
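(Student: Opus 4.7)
The proof should be essentially a one-line deduction from Theorem \ref{spectralrepmain}. My plan is as follows.

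Let $\pi$ be a cyclic representation of a commutative \st-algebra $A$ on a Hilbert space $H$. By the definition of cyclicity, $H$ contains a non-zero cyclic vector, and in particular $H \neq \{0\}$, so Theorem \ref{spectralrepmain} applies: with $P$ the spectral resolution of $\pi$, one has
\[ \pi' = \vonNeumAlg(\pi) = \range(\pi_P), \]
and this von Neumann algebra is \emph{maximal commutative}. In particular, $\pi'$ is commutative, which is exactly the definition of $\pi$ being multiplicity-free (Definition \ref{multfreedef}).

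Thus the entire content of the theorem is packaged inside Theorem \ref{spectralrepmain}, which itself rests on the diagonalisation result \ref{Bochnerbis} (showing spatial equivalence of $\pi$ with a spectral representation $\pi_\mu$) together with the fact \ref{exspectres} that for the spectral representation $\pi_\mu$, the von Neumann algebra generated by $\pi_\mu$ coincides with $L^\infty(\mu)$, which is maximal commutative in $\blop\bigl(L^2(\mu)\bigr)$ by Theorem \ref{maxcommL}. No further obstacle arises; the proof amounts to citing \ref{spectralrepmain} and recognising that ``maximal commutative'' implies ``commutative,'' hence the commutant is commutative.
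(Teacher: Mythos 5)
Your proposal is correct and coincides with the paper's own proof, which likewise reads off from Theorem \ref{spectralrepmain} that the commutant of a cyclic representation of a commutative \st-algebra is (maximal) commutative, hence the representation is multiplicity-free. Your added remark that cyclicity forces $H \neq \{0\}$ so that \ref{spectralrepmain} applies is a harmless precision the paper leaves tacit.
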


\begin{proof}
The commutant of such a representation is (maximal)
commutative by theorem \ref{spectralrepmain}.
\end{proof}

\begin{definition}[perfect measures]\label{prefect}%
\index{concepts}{measure!perfect}\index{concepts}{perfect}%
An inner regular Borel probability measure $\mu$ on a compact
Hausdorff space $K \neq \varnothing$ is called
\underline{perfect}, if the support of $\mu$ is all of $K$,
and if for every $f \in \mathscr{L}^{\,\infty} (\mu)$, there exists a
(necessarily unique) function in $\cont(K)$, which is
$\mu$-a.e.\ equal to $f$. We may then identify the C*-algebras
$\rmLeb ^{\infty}(\mu)$ and $\cont(K)$. (As in \ref{suppimbed}.)
\end{definition}

\begin{definition}[spatial equivalence]%
\index{concepts}{equivalent!spatially!von Neumann algebra}%
\index{concepts}{spatially equivalent!von Neumann algebra}%
Two von Neumann algebras $\mathscr{M}_1$ and $\mathscr{M}_2$
on Hilbert spaces $H_1$ and $H_2$ respectively, are called
\underline{spatially equivalent}, if there exists a unitary operator
$U : H_1 \to H_2$ such that $U \mathscr{M}_1 U^{-1} = \mathscr{M}_2$.
\end{definition}

\begin{theorem}\label{commcyclicchar}%
For a commutative von Neumann algebra $\mathscr{M}$ on a
Hilbert space $\neq \{0\}$, the following statements are equivalent.
\begin{itemize}
 \item[$(i)$] $\mathscr{M}$ is cyclic,
\item[$(ii)$] $\mathscr{M}$ is spatially equivalent to the maximal
                      commutative von Neumann algebra $\rmLeb ^{\infty}(\mu)$
                      for some perfect inner regular Borel probability measure
                      $\mu$ on the compact Hausdorff space $\Delta(\mathscr{M})$.
\item[$(iii)$] $\mathscr{M}$ is spatially equivalent to the maximal
                      commutative von Neumann algebra $\rmLeb ^{\infty}(\mu)$
                      for an inner regular Borel probability measure $\mu$
                      on a locally compact Hausdorff space $\neq \varnothing$.
\end{itemize}
\end{theorem}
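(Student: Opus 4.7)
The plan is to prove the chain of implications (i) $\Rightarrow$ (ii) $\Rightarrow$ (iii) $\Rightarrow$ (i). Of these, (ii) $\Rightarrow$ (iii) is immediate, since $\Delta(\mathscr{M})$ is a non-empty compact, and hence locally compact, Hausdorff space. For (iii) $\Rightarrow$ (i), observation \ref{cyclicL} furnishes the unit cyclic vector $1_\Omega \in \rmLeb^2(\mu)$ for $\rmLeb^\infty(\mu)$, which the intertwining unitary transports to a cyclic vector for $\mathscr{M}$.

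The substantial direction is (i) $\Rightarrow$ (ii). First, I would view $\mathscr{M}$ as the range of its identity representation $\pi$ on $H$, fix a unit cyclic vector $c$ for $\mathscr{M}$, and apply the diagonalisation theorem \ref{Bochnerbis}. This yields a measure $\mu \in \probmeas\bigl(\s(\pi)\bigr)$ together with a unitary $U : H \to \rmLeb^2(\mu)$ intertwining $\pi$ with the spectral representation $\pi\smu$. Because $\range(\pi) = \mathscr{M}$ is already norm-closed in $\blop(H)$, the adjoint map $\pi^*$ of \ref{specpi} is a homeomorphism $\Delta(\mathscr{M}) \to \s(\pi)$, so that $\mu$ may be regarded as an inner regular Borel probability measure on the compact Hausdorff space $\Delta(\mathscr{M})$; by corollary \ref{spectralsupp}, its support is all of $\Delta(\mathscr{M})$.

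Next I would upgrade the relation $U \pi(a) U^{-1} = \pi\smu(a)$ from a spatial equivalence of representations to a spatial equivalence of the associated von Neumann algebras. Since $\mathscr{M} = \vonNeumAlg(\pi)$ and theorem \ref{spectralrepmain} yields $\vonNeumAlg(\pi\smu) = \rmLeb^\infty(\mu)$, while bicommutants are preserved under unitary conjugation, this reduces to the identity $U \vonNeumAlg(\pi) U^{-1} = \vonNeumAlg(\pi\smu)$. The main obstacle will be establishing perfectness of $\mu$, i.e.\ that every element of $\rmLeb^\infty(\mu)$ is $\mu$-a.e.\ equal to a continuous function on $\Delta(\mathscr{M})$. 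For this I would let $P$ be the spectral resolution of $\mathscr{M}$ as in \ref{spthmC*}: theorem \ref{Abelian} supplies a C*-algebra isomorphism $\pi_P : \rmLeb^\infty(P) \to \mathscr{M}$, and corollary \ref{disconn} identifies $\cont\bigl(\Delta(\mathscr{M})\bigr)$ with all of $\rmLeb^\infty(P)$, so that the operational calculus $f \mapsto \pi_P(f)$ already exhausts $\mathscr{M}$. Transporting under $U$, and using corollary \ref{spectralsupp} (so that $U$ intertwines $\pi_P$ with $\pi_{P\smu}$) together with the identity $\pi_{P\smu}(f) = M\smu(f)$ from theorem \ref{exspectres}, the composite isomorphism $\mathscr{M} \cong \rmLeb^\infty(\mu)$ restricts on continuous functions to the natural inclusion $\cont\bigl(\Delta(\mathscr{M})\bigr) \hookrightarrow \rmLeb^\infty(\mu)$; since the image of this composite already fills $\rmLeb^\infty(\mu)$, this inclusion must be surjective, which is precisely the perfectness of $\mu$.
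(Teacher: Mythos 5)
Your proof is correct and follows essentially the same route as the paper, which cites exactly the same chain \ref{Abelian}, \ref{Bochnerbis}, \ref{exspectres}, \ref{spectralsupp} for (i) $\Rightarrow$ (ii), \ref{cyclicL} for (iii) $\Rightarrow$ (i), and treats (ii) $\Rightarrow$ (iii) as trivial. The only (harmless) deviation is in the perfectness step, where the paper routes through the appendix result \ref{Baire} on Baire representatives plus \ref{disconn}, while you obtain it more directly from the surjectivity of the continuous operational calculus onto $\mathscr{M} = \range(\pi_P)$ transported by $U$.
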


\begin{proof}
(i) $\Rightarrow$ (ii): Use \ref{Abelian} in conjunction with \ref{Bochnerbis},
\ref{exspectres} \& \ref{spectralsupp}. Perfectness follows from \ref{spectralsupp},
\ref{disconn} \& the appendix \ref{Baire}.
(ii) $\Rightarrow$ (iii): trivial. (iii) $\Rightarrow$ (i): \ref{cyclicL}.
\end{proof}

\medskip
We see that the prototypes of the cyclic commutative von Neumann algebras
are the maximal commutative von Neumann algebras $\rmLeb ^{\infty}(\mu)$
with $\mu$ a perfect inner regular Borel probability measure on a compact
Hausdorff space $\neq \varnothing$.

\bigskip
See also \ref{maxcommchar} - \ref{sepspatequiv} below.

\bigskip
The reader can take a shortcut from here to chapter \ref{unbdself}
on the Spectral Theorem for unbounded self-adjoint operators.
\pagebreak

\clearpage


\addtocontents{toc}{\protect\vspace{0.2em}}

\chapter{Separability}%
\label{supplmat}

\setcounter{section}{42}

\begin{center}
The present chapter \ref{supplmat} can be skipped at first reading.
\end{center}

\begin{center}
The first two paragraphs of this chapter do not use separability yet.
\end{center}


\medskip
\section{Borel \texorpdfstring{$*$-}{\80\052\80\055}Algebras}%
\label{Borelalg}

\medskip
In this paragraph, let $H$ be a Hilbert space $\neq \{ 0 \}$.

\begin{definition}[Borel \protect\st-algebras, \hbox{\cite[4.5.5]{PedC}}]
\index{concepts}{Borel!0s0@\protect\st-algebra}%
\index{concepts}{algebra!Borel@Borel \protect\st-algebra}%
A \underline{Borel \st-algebra} on $H$ is a C*-subalgebra $\mathscr{B}$ of
$\blop(H)$ such that $\mathscr{B}\sa$ contains the supremum in $\blop(H)\sa$
of each increasing \underline{sequence} in $\mathscr{B}\sa$ which is upper
bounded in $\blop(H)\sa$. (Cf.\ \ref{ordercompl}.)
\end{definition}

\begin{observation}\label{NeuBor}%
Every von Neumann algebra on $H$ is a Borel \st-algebra, cf.\ \ref{ordclosed}.
\end{observation}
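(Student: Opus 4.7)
The proof is essentially immediate given the earlier results, so the plan is short.

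First, I would note that a von Neumann algebra $\mathscr{M}$ on $H \neq \{0\}$ is in particular a unital C*-subalgebra of $\blop(H)$, as observed right after the definition of von Neumann algebras. Hence $\mathscr{M}$ already satisfies the C*-subalgebra requirement in the definition of a Borel \st-algebra.

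The remaining condition is about increasing sequences in $\mathscr{M}\sa$ which are upper bounded in $\blop(H)\sa$. The key observation is that every sequence $(a_n)_{n \in \mathds{N}}$ is a special case of a net (indexed by the directed set $\mathds{N}$). Thus I would apply corollary \ref{ordclosed} directly: given an increasing sequence $(a_n)$ in $\mathscr{M}\sa$ that is upper bounded in $\blop(H)\sa$, the corollary guarantees that its least upper bound $\sup_n a_n$ in $\blop(H)\sa$ exists and belongs to $\mathscr{M}\sa$. This is precisely the defining property of a Borel \st-algebra.

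There is no real obstacle here; the work has been done in \ref{ordercompl} and \ref{ordclosed}. The only thing to be mildly careful about is the logical structure: \ref{ordclosed} is stronger than what we need (it handles arbitrary upper bounded increasing nets, whereas the Borel \st-algebra definition only asks for sequences), so the implication goes in the right direction without further argument.
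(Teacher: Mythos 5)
Your proposal is correct and follows exactly the route the paper intends: the observation's own citation of \ref{ordclosed} is the entire argument, namely that a von Neumann algebra is a unital C*-subalgebra of $\blop(H)$ and that the net version of order-closedness in \ref{ordclosed} subsumes the sequence condition in the definition of a Borel \st-algebra. Nothing is missing.
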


\begin{proposition}\label{ranspecint}%
If $P$ is a spectral measure acting on $H$, then $\range(\pi _P)$ is a
commutative Borel \st-algebra on $H$ containing $\mathds{1}$.
\end{proposition}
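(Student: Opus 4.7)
The plan is to leverage the Monotone Convergence Theorem \ref{monconvthm} together with the positivity criterion \ref{piPpos}. The commutativity and containment of $\mathds{1}$ are quick: by \ref{spanP} we have $\range(\pi_P) = \overline{\mathrm{span}}(P)$, and the projections $P(\Delta)$ $(\Delta \in \mathcal{E})$ pairwise commute by \ref{rep}(iii), so $\range(\pi_P)$ is a commutative C*-subalgebra of $\blop(H)$; moreover $\mathds{1} = P(\Omega) \in \range(\pi_P)$.

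The substantial point is the Borel property. So I would take an increasing sequence $(a_n)$ in $\range(\pi_P)\sa$ which is upper bounded in $\blop(H)\sa$, and show that $\sup_n a_n$ (which exists in $\blop(H)\sa$ by \ref{ordercompl}) lies in $\range(\pi_P)\sa$. By \ref{onbded} the sequence is norm-bounded, say $\| \,a_n \,\| \leq M$ for all $n$, so that $-M \mathds{1} \leq a_n \leq M \mathds{1}$. Using the isomorphism $\rmLeb^\infty(P) \simeq \range(\pi_P)$ from \ref{LPmain}, I would pick representatives $f_n \in \bmeas(\mathcal{E})\sa$ with $\pi_P(f_n) = a_n$. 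By \ref{piPpos} applied to $M \mathds{1} \pm a_n \geq 0$, one has $| \,f_n \,| \leq M$ \ $P$-a.e., and by \ref{piPpos} applied to $a_{n+1} - a_n \geq 0$ one has $f_n \leq f_{n+1}$ \ $P$-a.e.

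The next step is to upgrade these ``$P$-a.e.'' statements to pointwise statements, so that \ref{monconvthm} applies. Let $N$ be the union of the countably many exceptional sets from the inequalities $| \,f_n \,| \leq M$ and $f_n \leq f_{n+1}$; since each $\langle Px, x \rangle$ is a measure (so countably subadditive), $N$ is a $P$-null set. Replacing $f_n$ by $f_n \cdot 1_{\Omega \setminus N}$ does not change $\pi_P(f_n) = a_n$ (by \ref{kerspectralint}), and yields a pointwise increasing sequence in $\bmeas(\mathcal{E})\sa$ with $| \,f_n \,| \leq M$ pointwise. The pointwise supremum $f := \sup_n f_n$ then lies in $\bmeas(\mathcal{E})\sa$, and the Monotone Convergence Theorem \ref{monconvthm} gives $\pi_P(f) = \sup_n \pi_P(f_n) = \sup_n a_n$ in $\blop(H)\sa$. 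Hence $\sup_n a_n \in \range(\pi_P)\sa$, which completes the verification.

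The only mildly delicate point is the bookkeeping in passing from ``$P$-a.e.'' inequalities on the representatives $f_n$ to genuine pointwise inequalities. This is the main (modest) obstacle, and it is handled by the fact that a countable union of $P$-null sets is again $P$-null, together with the freedom to modify the $f_n$ on such a null set without altering $\pi_P(f_n)$.
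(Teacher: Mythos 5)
Your proof is correct and follows essentially the same route as the paper's: both reduce the Borel property to the Monotone Convergence Theorem \ref{monconvthm} by lifting the $a_n$ to representatives $f_n \in \bmeas(\mathcal{E})\sa$ and using \ref{piPpos} to convert the operator inequalities into $P$-a.e.\ inequalities on the $f_n$. The only (immaterial) difference is in how the upper-boundedness hypothesis of \ref{monconvthm} is arranged: the paper replaces $f_n$ by $f_n \wedge g$ for an upper bound $g$ obtained via \ref{onbded} and \ref{Cstarlattice}, whereas you truncate the $f_n$ on a single $P$-null set using the norm bound $M$; both devices are legitimate.
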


\begin{proof}
Let $P$ be a spectral measure, defined on a $\sigma$-algebra
$\mathcal{E}$ on a set $\Omega \neq \varnothing$, acting on $H$.
Then $\range(\pi _P)$ is a C*-subalgebra of $\blop(H)$, by \ref{spanP}.
Let $(a_n)$ be an increasing sequence in $\range(\pi _P)\sa$ which
is upper bounded in $\blop(H)\sa$. Then $(a_n)$ is norm-bounded, and
thus order-bounded in $\range(\pi _P)\sa$ as well. This follows from two
successive applications of \ref{onbded}, using that both $\blop(H)$ and
$\range(\pi _P)$ are unital. So there exists $g \in \bmeas(\mathcal{E})\sa$
with $a_n \leq \pi _P (g)$ for all $n$. Choose a sequence $(f_n)$  in
$\bmeas(\mathcal{E})\sa$ with $\pi _P (f_n) = a_n$ for all $n$.
Please note that then $f_n \leq g$ $P$-a.e.\ for all $n$, by \ref{piPpos}.
We can furthermore assume that the sequence $(f_n)$ is upper bounded
in $\bmeas(\mathcal{E})\sa$. Namely by replacing $(f_n)$ with
the sequence $(f_n \wedge g)$, cf.\ \ref{Cstarlattice}. Indeed,
$f_n \wedge g = f_n$ $P$-a.e.\ for all $n$. The new sequence $(f_n)$
satisfies $f_n \leq f_{n+1}$ $P$-a.e.\ for all $n$, by \ref{piPpos}. So
$(f_n)$ fulfils the hypotheses of the Monotone Convergence Theorem
\ref{monconvthm}, and this finishes the proof.
\pagebreak
\end{proof}

Since the intersection of Borel \st-algebras on $H$ is a Borel \st-algebra on $H$,
we may put:

\begin{definition}[$\mathscr{B}_1(\cdot)$]%
\index{symbols}{Bm1@$\mathscr{B}_1(S)$}%
\index{symbols}{Bm2@$\mathscr{B}_1(a)$}%
\index{symbols}{Bm3@$\mathscr{B}_1(\pi)$}%
For a subset $S$ of $\blop(H)$, let $\mathscr{B}(S)$ denote the Borel \st-algebra
on $H$ generated by $S$. That is, $\mathscr{B}(S)$ is defined as the intersection
of all Borel \st-algebras on $H$ containing $S$. Then $\mathscr{B}(S)$ is the
smallest Borel \st-algebra on $H$ containing $S$.

We shall then also write $\mathscr{B}_1(S) := \mathscr{B}(S\cup\{\mathds{1}\})$.

If $a$ is a bounded linear operator on $H$, we put
$\mathscr{B}_1(a) := \mathscr{B}_1 \bigl( \{ a \} \bigr)$.

If $\pi$ is a non-degenerate representation of a \st-algebra $A$ on $H$,
we put $\mathscr{B}_1(\pi) := \mathscr{B}_1\bigl(\range(\pi)\bigr)$.
\end{definition}

\begin{proposition}\label{Borelcomm}%
If $N$ is a normal \ref{selfadjointsubset} subset of $\blop(H)$, then
$\mathscr{B}(N)$ is commutative. Hence, if $b$ is a normal bounded
linear operator on $H$, then $\mathscr{B}_1(b)$ is commutative.
\end{proposition}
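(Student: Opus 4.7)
The plan is to prove commutativity of $\mathscr{B}(N)$ by a double commutant-type argument, bootstrapping the commutativity of $N$ up to the generated Borel \st-algebra. The key technical lemma is that for any \st-stable subset $S$ of $\blop(H)$, the ``relative commutant''
\[ C_S := \{ \,b \in \mathscr{B}(N) : b s = s b \text{ for all } s \in S \,\} \]
is itself a Borel \st-algebra on $H$, provided it contains the unit (or more carefully, we simply view it as a C*-subalgebra closed under the relevant suprema, without worrying about $\mathds{1}$).

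First I would verify this lemma. That $C_S$ is a \st-subalgebra of $\mathscr{B}(N)$ is routine (closure under adjoints uses that $S$ is \st-stable). Norm-closure follows from continuity of multiplication. For closure under suprema: if $(b_n)$ is an increasing sequence in $(C_S)\sa$ that is upper bounded in $\blop(H)\sa$, then $b := \sup_n b_n$ exists in $\mathscr{B}(N)\sa$ (since $\mathscr{B}(N)$ is a Borel \st-algebra), and by theorem \ref{ordercompl} the net $(b_n)$ converges pointwise to $b$ on $H$. For each $s \in S$ and $x \in H$, this gives
\[ b s x = \lim_n b_n s x = \lim_n s b_n x = s b x, \]
where the last equality uses continuity of $s$. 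Hence $b \in C_S$.

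Second, I would apply this lemma twice. Taking $S := N$: the set $C_N$ is a Borel \st-algebra on $H$, and it contains $N$ itself since $N$ consists of pairwise commuting operators. By minimality of $\mathscr{B}(N)$ we must have $C_N = \mathscr{B}(N)$, i.e.\ every element of $\mathscr{B}(N)$ commutes with every element of $N$. Taking $S := \mathscr{B}(N)$ (which is a \st-stable subset of $\blop(H)$ by being a C*-algebra): the same lemma shows $C_{\mathscr{B}(N)}$ is a Borel \st-algebra, and the previous step shows it contains $N$. Again by minimality, $C_{\mathscr{B}(N)} = \mathscr{B}(N)$, which is exactly the statement that $\mathscr{B}(N)$ is commutative. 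The second statement about $\mathscr{B}_1(b)$ follows by taking $N := \{b, b^*\}$, which is normal precisely because $b$ is normal, and noting that $\mathscr{B}_1(b) = \mathscr{B}(N \cup \{\mathds{1}\})$ is commutative by the same argument applied to $N \cup \{\mathds{1}\}$, which remains normal.

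The main obstacle, and essentially the only nontrivial ingredient, is verifying that $C_S$ is stable under taking suprema of upper-bounded increasing sequences. This uses theorem \ref{ordercompl} in an essential way: without pointwise convergence of the supremum, one could not push the commutation relation through the limit. Everything else is formal manipulation analogous to the classical double commutant arguments used in observations \ref{commutantp1}--\ref{scndcobs}.
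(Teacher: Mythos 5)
Your proof is correct, but it takes a genuinely different route from the paper's. The paper proves the proposition by exhibiting an explicit commutative Borel \st-algebra containing $N$ and invoking minimality once: it takes the commutative C*-subalgebra $A$ of $\blop(H)$ generated by $N$ and $\mathds{1}$, lets $P$ be its spectral resolution, and cites \ref{ranspecint} to see that $\range(\pi_P)$ is a commutative Borel \st-algebra containing $N$; the stated alternative is to use $\vonNeumAlg(N) = N''$, which is commutative by \ref{Wstarcomm} and a Borel \st-algebra by \ref{NeuBor}. Either way the commutativity is imported from a larger, already-understood object. Your argument instead works entirely inside $\mathscr{B}(N)$: the relative commutant $C_S$ is a Borel \st-algebra (the only nontrivial point being stability under suprema, which you correctly reduce to the pointwise convergence statement in \ref{ordercompl} plus continuity of each $s \in S$), and two applications of minimality bootstrap the pairwise commutativity of $N$ up to all of $\mathscr{B}(N)$. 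What your approach buys is independence from both the Spectral Theorem and the von Neumann algebra machinery — it is the exact analogue, for Borel \st-algebras, of the classical commutant trick for monotone classes — at the cost of being slightly longer than the paper's one-line reduction given that the surrounding machinery is available anyway. Your handling of the second statement is also fine: any Borel \st-algebra containing $b$ and $\mathds{1}$ contains $b^*$, so $\mathscr{B}_1(b) = \mathscr{B}(\{b, b^*, \mathds{1}\})$, and $\{b, b^*, \mathds{1}\}$ is a normal subset precisely because $b$ is normal.
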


\begin{proof}
Let $A$ denote the C*-subalgebra of $\blop(H)$ generated by $N$ and
$\mathds{1}$, which is commutative. With $P$ denoting the spectral resolution
of $A$, we have that $\range(\pi _P)$ is a commutative Borel \st-algebra on $H$
containing $N$ and $\mathds{1}$, cf.\ \ref{ranspecint}. This is enough to prove the
statement. An alternative is to use \ref{Wstarcomm} in conjunction with \ref{NeuBor}.
\end{proof}

\begin{proposition}\label{Borelspecint}%
If $P$ is a spectral measure acting on $H$, then
\[ \range(\pi _P) = \mathscr{B}(P). \]
\end{proposition}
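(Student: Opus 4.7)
The plan is to prove the two inclusions separately. The easy inclusion is $\mathscr{B}(P) \subset \range(\pi_P)$: by \ref{ranspecint}, $\range(\pi_P)$ is a commutative Borel \st-algebra on $H$ containing every $P(\Delta) = \pi_P(1_\Delta)$, so it contains the smallest such Borel \st-algebra, which is $\mathscr{B}(P)$. Note that $\mathds{1} = P(\Omega) \in P$, so the question of whether $\mathds{1}$ lies in $\mathscr{B}(P)$ causes no trouble.

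For the reverse inclusion $\range(\pi_P) \subset \mathscr{B}(P)$, I would proceed by a monotone-class style argument. First, for any $\mathcal{E}$-step function $h = \sum_{\Delta \in D} \alpha_\Delta \,1_\Delta$ (with $D$ a finite $\mathcal{E}$-partition of $\Omega$), the operator $\pi_P(h) = I_P(h) = \sum_{\Delta \in D} \alpha_\Delta \,P(\Delta)$ is a finite linear combination of elements of $P$, and therefore lies in $\mathscr{B}(P)$, since $\mathscr{B}(P)$ is a (C*-)subalgebra of $\blop(H)$ containing $P$.

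Next, I would handle bounded non-negative $\mathcal{E}$-measurable functions $f$. Using the standard dyadic approximation, one obtains an increasing sequence $(h_n)$ of non-negative $\mathcal{E}$-step functions converging pointwise (even uniformly) to $f$, with $h_n \leq f$ for all $n$. By the previous step, each $\pi_P(h_n) \in \mathscr{B}(P)\sa$. The sequence $\bigl(\pi_P(h_n)\bigr)$ is increasing in $\blop(H)\sa$ by \ref{piPpos} and is upper-bounded (for instance by $\pi_P(f)$). By the Monotone Convergence Theorem \ref{monconvthm} applied in $\blop(H)\sa$, we have
\[ \pi_P(f) = \sup_n \pi_P(h_n) \quad \text{in } \blop(H)\sa. \]
Since $\mathscr{B}(P)$ is a Borel \st-algebra, the supremum in $\blop(H)\sa$ of such an increasing, upper-bounded sequence in $\mathscr{B}(P)\sa$ lies in $\mathscr{B}(P)\sa$, so $\pi_P(f) \in \mathscr{B}(P)$.

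Finally, for general $f \in \bmeas(\mathcal{E})$, I would decompose $f = (f_1 - f_2) + \iu\,(f_3 - f_4)$ as a complex combination of bounded non-negative $\mathcal{E}$-measurable functions (take the positive/negative parts of $\mathrm{Re}(f)$ and $\mathrm{Im}(f)$), and invoke linearity of $\pi_P$ together with the fact that $\mathscr{B}(P)$ is a vector subspace of $\blop(H)$. The only delicate point — and the real crux of the argument — is verifying that the supremum taken in $\blop(H)\sa$ via the Monotone Convergence Theorem genuinely lies inside $\mathscr{B}(P)\sa$; this is ensured by the very definition of a Borel \st-algebra, so no further work is needed once the monotone approximation and the MCT are correctly set up.
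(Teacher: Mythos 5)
Your proof is correct, and the first inclusion coincides with the paper's. For the converse inclusion $\range(\pi_P) \subset \mathscr{B}(P)$ you take a genuinely different, and heavier, route than the paper. The paper simply quotes \ref{spanP}, which says $\range(\pi_P) = \overline{\mathrm{span}}(P)$, and observes that $\mathscr{B}(P)$ is by definition a C*-subalgebra of $\blop(H)$ containing $P$, hence norm-closed and containing $\mathrm{span}(P)$ together with its closure; no measure-theoretic approximation is needed at all. You instead run a monotone-class argument: step functions, then bounded non-negative measurable functions via the Monotone Convergence Theorem \ref{monconvthm} and the defining order-closure property of Borel \st-algebras, then the four-fold decomposition. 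This works — each step you invoke (\ref{piPpos}, \ref{monconvthm}, the dyadic approximation, the closure of $\mathscr{B}(P)\sa$ under bounded increasing sequential suprema) is available and correctly applied — but it is more machinery than the statement requires: since the step functions converge to $f$ \emph{uniformly} (cf.\ \ref{closure}) and $\pi_P$ is contractive, norm-closedness of $\mathscr{B}(P)$ already finishes the argument, and the MCT buys you nothing extra here. The one thing your route does illustrate is that only the subalgebra structure and the monotone-closure axiom of $\mathscr{B}(P)$ are actually being exercised, whereas the paper's route leans on the C*-(norm-closure) part of the definition; since a Borel \st-algebra has both properties by definition, neither approach is more general than the other.
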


\begin{proof}
By \ref{ranspecint}, we have that $\range(\pi _P)$ is a Borel \st-algebra
containing $P$. It follows that $\mathscr{B}(P) \subset \range(\pi _P)$.
In the converse direction, we have $\range(\pi _P) = \overline{\mathrm{span}}(P)$,
cf.\ \ref{spanP}. Whence also $\range(\pi _P) \subset \mathscr{B}(P)$,
as $\mathscr{B}(P)$ is a C*-subalgebra of $\blop(H)$ containing $P$.
\end{proof}

\medskip
The main result of this paragraph is the following.
Its importance stems mostly from the ensuing consequences.

\begin{theorem}\label{QBorel}%
Let $A$ be a commutative \st-algebra.
Let $\pi$ be a non-degenerate representation of $A$ on $H$.
Let $P$ be the spectral resolution of $A$, and let $Q$ be the
restriction of $P$ to the Baire $\sigma$-algebra of $\s(\pi)$.
We then have
\[ \mathscr{B}_1(\pi) = \range(\pi _Q). \pagebreak \]
\end{theorem}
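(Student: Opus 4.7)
My plan is to establish the two inclusions separately, the first being straightforward and the second requiring a monotone class style argument on bounded Baire functions.

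For the inclusion $\mathscr{B}_1(\pi) \subset \range(\pi_Q)$, I would first observe that every $\wht{a}|_{\s(\pi)}$ with $a \in A$ is a continuous function vanishing at infinity on the locally compact Hausdorff space $\s(\pi)$ (cf.\ \ref{cloloc}), hence Baire measurable. Uniformly approximating by Baire step functions and using the contractivity of the representations $\pi_P$ and $\pi_Q$, one sees that $\pi_P(f) = \pi_Q(f)$ for every bounded Baire $f$ on $\s(\pi)$. In particular, by the Spectral Theorem \ref{spthmrep}, $\pi(a) = \pi_P(\wht{a}|_{\s(\pi)}) = \pi_Q(\wht{a}|_{\s(\pi)}) \in \range(\pi_Q)$, and $\mathds{1} = \pi_Q(1_{\s(\pi)}) \in \range(\pi_Q)$. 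Since $\range(\pi_Q)$ is a commutative Borel \st-algebra on $H$ by \ref{ranspecint}, and $\mathscr{B}_1(\pi)$ is the smallest Borel \st-algebra containing $\range(\pi) \cup \{\mathds{1}\}$, the inclusion follows.

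For the reverse inclusion $\range(\pi_Q) \subset \mathscr{B}_1(\pi)$, I would introduce the set
\[ \mathscr{F} := \{\,f \in \bmeas\bigl(\mathrm{Baire}(\s(\pi))\bigr) : \pi_Q(f) \in \mathscr{B}_1(\pi)\,\}. \]
I would verify that $\mathscr{F}$ is a C*-subalgebra of the bounded Baire functions containing $1_{\s(\pi)}$ and all $\wht{a}|_{\s(\pi)}$ (by the previous paragraph), hence by density \ref{cloloc} and the continuity of $\pi_Q$, containing all of $\cont\0(\s(\pi)) + \mathds{C}\cdot 1_{\s(\pi)}$. The key structural feature is that $\mathscr{F}\sa$ is closed under taking suprema of bounded increasing sequences: if $(f_n)$ is such a sequence with $f = \sup_n f_n$, then the Monotone Convergence Theorem \ref{monconvthm} gives $\pi_Q(f) = \sup_n \pi_Q(f_n)$ in $\blop(H)\sa$, and this supremum lies in $\mathscr{B}_1(\pi)\sa$ by the very definition of a Borel \st-algebra. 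By symmetry (replacing $f$ with $-f$), $\mathscr{F}\sa$ is likewise closed under bounded monotone infima.

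The main obstacle is then to conclude that $\mathscr{F}$ exhausts all bounded Baire functions on $\s(\pi)$. The plan is to invoke the monotone class theorem for bounded Baire functions, presumably in the form developed in the appendix \hyperlink{extBaire}{\S\ 53}: the smallest collection of bounded real functions on a locally compact Hausdorff space that contains $\cont\0 + \mathds{R}\cdot 1$ and is stable under bounded monotone sequential limits coincides with the bounded Baire functions. Granting this, $\mathscr{F}\sa$ contains every bounded real Baire function, and since $\mathscr{F}$ is a vector space stable under complex conjugation, it contains every bounded complex Baire function. Thus $\range(\pi_Q) \subset \mathscr{B}_1(\pi)$, completing the proof. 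The subtle point to watch is that the monotone class argument must be carried out with sequences (not nets), which is exactly the notion of closure built into the definition of a Borel \st-algebra and matches the sequential character of the Baire $\sigma$-algebra.
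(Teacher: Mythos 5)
Your inclusion $\mathscr{B}_1(\pi) \subset \range(\pi_Q)$ is correct and coincides with the paper's argument: $\range(\pi) \cup \{\mathds{1}\} \subset \range(\pi_Q)$, and $\range(\pi_Q)$ is a Borel \st-algebra by \ref{ranspecint} (equivalently $\range(\pi_Q) = \mathscr{B}(Q)$ by \ref{Borelspecint}), so the smallest Borel \st-algebra containing $\range(\pi) \cup \{\mathds{1}\}$ sits inside it.

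The gap is in the reverse inclusion. You reduce it to a ``functional monotone class theorem'': the smallest collection of bounded real functions containing $\cont\0\bigl(\s(\pi)\bigr)\sa + \mathds{R}\,1_{\s(\pi)}$ and stable under bounded monotone sequential limits exhausts the bounded real Baire functions. You say this is ``presumably'' developed in \hyperlink{extBaire}{\S\ 53}; it is not --- that appendix only treats extension of Baire functions from closed subsets, and no functional monotone class theorem appears anywhere in the text. As written, your proof therefore rests on an unproved (though true and standard) external result. The paper circumvents it by descending to the level of \emph{sets}: the Baire $\sigma$-algebra of $\s(\pi)$ is generated by the sets $\Delta = \{\,\wht{a}|_{\s(\pi)} < \alpha\,\}$ with $a \in A\sa$, $\alpha \in \mathds{R}$ (by density of the $\wht{a}|_{\s(\pi)}$ in $\cont\0\bigl(\s(\pi)\bigr)$, cf.\ \ref{cloloc}), and the collection $\{\,\Delta : Q(\Delta) \in \mathscr{B}_1(\pi)\,\}$ is a $\sigma$-algebra (complements because $\mathds{1} \in \mathscr{B}_1(\pi)$, countable increasing unions by the Monotone Convergence Theorem \ref{monconvthm}), so it suffices to handle these generators. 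For them one exhibits $1_{\Delta}$ as the pointwise supremum of the increasing sequence $f_n := 0 \vee \bigl\{ \,1_{\s(\pi)} \wedge \bigl[ \,n \,( \,\alpha 1_{\s(\pi)} - \wht{a}|_{\s(\pi)} \,) \,\bigr] \,\bigr\}$, whose terms lie in the C*-algebra generated by $\wht{a}|_{\s(\pi)}$ and $1_{\s(\pi)}$ by \ref{Cstarlattice}; hence $\pi_Q(f_n) \in \mathscr{B}_1(\pi)$ and $Q(\Delta) = \sup_n \pi_Q(f_n) \in \mathscr{B}_1(\pi)$, whence $\range(\pi_Q) = \overline{\mathrm{span}}(Q) \subset \mathscr{B}_1(\pi)$ by \ref{spanP}. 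If you wish to keep your class $\mathscr{F}$, the same device closes the gap: show that $\{\,\Delta : 1_{\Delta} \in \mathscr{F}\,\}$ is a $\sigma$-algebra containing the generators above, so that $\mathscr{F}$ contains all Baire step functions, and conclude by uniform density \ref{closure} together with the continuity of $\pi_Q$.
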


\begin{proof}
We remind the reader of the fact that the Baire $\sigma$-algebra of $\s(\pi)$
is the smallest $\sigma$-algebra $\mathcal{E}$ on $\s(\pi)$ such that
the functions in $\cont\0\bigl(\s(\pi)\bigr)$ are $\mathcal{E}$-measurable,
cf.\ the appendix \ref{Bairedef}. In particular, the functions in
\[ F := \{ \,\wht{a}|_{\s(\pi)} \in \cont\0\bigl(\s(\pi)\bigr) : a \in A \,\} \]
are bounded Baire functions, and thus in the domain of $\pi_Q$. Hence
\[ \range(\pi) = \{ \,\pi _Q (\wht{a}|_{\s(\pi)}) \in \blop(H) : a \in A \,\}, \tag*{$(*)$} \]
by the spectral theorem. Next we note that the Baire $\sigma$-algebra also
is the smallest $\sigma$-algebra $\mathcal{E}$ on $\s(\pi)$ such that the
functions in $F$ are $\mathcal{E}$-measurable, by density of $F$ in
$\cont\0\bigl(\s(\pi)\bigr)$, cf.\ \ref{cloloc}. Hence the Baire
$\sigma$-algebra of $\s(\pi)$ is generated by the sets of the form
\[ \{ \,\wht{a}|_{\s(\pi)} < \alpha \,\} \qquad ( \,a \in A\sa,\ \alpha \in \mathds{R} \,). \]

We shall show next that $Q \subset \mathscr{B}_1(\pi)$.
It is enough to prove that $Q(\Delta) \in \mathscr{B}_1(\pi)$
for $\Delta = \{ \,\wht{a}|_{\s(\pi)} < \alpha \,\}$ with $a \in A\sa$,
$\alpha \in \mathds{R}$. (This is enough indeed, because of the
preceding and because $\mathscr{B}_1(\pi)$ is a Borel \st-algebra
containing $\mathds{1}$, keeping in mind the Monotone Convergence
Theorem \ref{monconvthm}.) For this purpose, consider the bounded functions
\[ f_n := 0 \vee \biggl\{ \,1_{\s(\pi)} \wedge
\Bigl[ \,n \,( \,\alpha 1_{\s(\pi)} - \wht{a}|_{\s(\pi)} \,) \,\Bigr] \,\biggr\}. \]
for all integers $n \geq 1$. It is easily seen that the sequence $(f_n)$ is
increasing with supremum $1_{\Delta}$. Also, the functions $f_n$ all
belong to the closed \st-subalgebra - of the C*-subalgebra of bounded
Baire functions on $\s(\pi)$ - generated by $\wht{a}|_{\s(\pi)}$ and
$1_{\s(\pi)}$, cf.\ \ref{Cstarlattice}. Therefore, the $\pi _Q (f_n)$ will all be
in $\mathscr{B}_1(\pi)$, by $(*)$, the continuity of $\pi _Q$, and the fact
that $\mathscr{B}_1(\pi)$ is a C*-subalgebra of $\blop(H)$ containing
$\range(\pi)$ as well as $\mathds{1}$.
It follows from the Monotone Convergence Theorem \ref{monconvthm}
that $Q(\Delta) = \pi _Q (1_{\Delta}) = \sup_n \pi _Q (f_n)$ also is in
$\mathscr{B}_1(\pi)$. We have shown that $Q \subset \mathscr{B}_1(\pi)$.

From $Q \subset \mathscr{B}_1(\pi)$, we get
\[ \mathscr{B}_1(Q) \subset \mathscr{B}_1(\pi). \tag*{$(**)$} \]
We also have that $\range(\pi) \subset \range(\pi _Q) = \mathscr{B}_1(Q)$,
by $(*)$ and proposition \ref{Borelspecint} above. Whence also
\[ \mathscr{B}_1(\pi) \subset \mathscr{B}_1(Q) = \range(\pi _Q). \tag*{$(***)$} \]
From $(**)$ and $(***)$, we find
\[ \mathscr{B}_1(\pi) = \range(\pi _Q), \]
as was to be shown. \pagebreak
\end{proof}

Next up is a converse to proposition \ref{ranspecint}.

\begin{theorem}\label{Borelgen}%
Consider a commutative Borel \st-algebra $\mathscr{B}$ on $H$
containing $\mathds{1}$. Let $P$ be the spectral resolution
of $\mathscr{B}$. Let $Q$ be the restriction of $P$ to the Baire
$\sigma$-algebra of $\Delta(\mathscr{B})$. We then have
\[ \mathscr{B} = \mathscr{B}(Q) = \range(\pi _Q) = \overline{\mathrm{span}}(Q). \]
In particular, the set $\mathscr{P}(\mathscr{B})$ of projections in $\mathscr{B}$,
cf.\ \ref{setofproj}, equals $Q$:
\[ \mathscr{P}(\mathscr{B}) = Q. \]
\end{theorem}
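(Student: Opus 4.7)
The plan is to obtain the theorem as a direct consequence of Theorem \ref{QBorel} by taking for the representation $\pi$ the inclusion map of $\mathscr{B}$ into $\blop(H)$. Since $\mathscr{B}$ is commutative and contains $\mathds{1}$, this inclusion is a non-degenerate representation of the commutative \st-algebra $\mathscr{B}$ on $H$, by \ref{unitnondeg}. Moreover, $\range(\pi) = \mathscr{B}$ is already norm-closed (being a C*-algebra), so by \ref{specpi} the locally compact Hausdorff space $\s(\pi)$ can be identified via $\pi^*$ with $\Delta(\mathscr{B})$, and under this identification the spectral resolution of $\pi$ as in \ref{spthmrep} becomes the spectral resolution $P$ of $\mathscr{B}$ as in \ref{spthmC*}. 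Hence the restriction $Q$ of $P$ to the Baire $\sigma$-algebra of $\Delta(\mathscr{B})$ plays the role described in Theorem \ref{QBorel}.

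With this setup, Theorem \ref{QBorel} gives $\mathscr{B}_1(\pi) = \range(\pi_Q)$. The next step is to observe that $\mathscr{B}_1(\pi) = \mathscr{B}$: indeed, $\mathscr{B}_1(\pi)$ is by definition the smallest Borel \st-algebra on $H$ containing $\range(\pi) \cup \{\mathds{1}\} = \mathscr{B} \cup \{\mathds{1}\} = \mathscr{B}$, and since $\mathscr{B}$ is itself such a Borel \st-algebra, the minimality yields equality. Combining this with \ref{Borelspecint} (which gives $\range(\pi_Q) = \mathscr{B}(Q)$) and with \ref{spanP} (which gives $\range(\pi_Q) = \overline{\mathrm{span}}(Q)$) produces the chain of equalities
\[ \mathscr{B} = \mathscr{B}(Q) = \range(\pi_Q) = \overline{\mathrm{span}}(Q), \]
as desired.

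For the final claim $\mathscr{P}(\mathscr{B}) = Q$, the inclusion $Q \subset \mathscr{P}(\mathscr{B})$ is immediate, since each $Q(\Delta)$ is a projection in $\range(\pi_Q) = \mathscr{B}$. The reverse inclusion $\mathscr{P}(\mathscr{B}) \subset Q$ follows from \ref{spprojform}, which asserts precisely that every projection in $\range(\pi_Q)$ is of the form $Q(\Delta)$ for some Baire subset $\Delta$ of $\Delta(\mathscr{B})$.

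I do not anticipate a serious obstacle here, since all of the work is already packaged in the previous theorem \ref{QBorel}; the only mildly delicate point is the identification of $\s(\pi)$ with $\Delta(\mathscr{B})$ and the verification that under this identification the two notions of spectral resolution coincide, but this is built into the uniqueness clauses of \ref{spthmC*} and the addendum \ref{spuniq}.
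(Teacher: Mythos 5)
Your proposal is correct and follows essentially the same route as the paper: apply Theorem \ref{QBorel} to the inclusion representation of $\mathscr{B}$ (so that $\mathscr{B}_1(\pi) = \mathscr{B}$ and $\mathscr{B} = \range(\pi_Q)$), then invoke \ref{Borelspecint}, \ref{spanP}, and \ref{spprojform}. Your explicit verification of the identification $\s(\pi) \cong \Delta(\mathscr{B})$ and of non-degeneracy is a welcome addition of detail that the paper leaves implicit.
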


\begin{proof}
From the preceding theorem \ref{QBorel}, we have
\[ \mathscr{B} = \range(\pi _Q), \]
whence also $Q \subset \mathscr{B}$.
Proposition \ref{Borelspecint} implies that
\[ \range(\pi _Q) = \mathscr{B}(Q). \]
Theorem \ref{spanP} shows that
\[ \range(\pi _Q) = \overline{\mathrm{span}}(Q). \]
The last statement follows from theorem \ref{spprojform}.
\end{proof}

\medskip
From the preceding theorem \ref{Borelgen} and proposition \ref{ranspecint},
we have:

\begin{corollary}[characterisation]\label{Borelchar}%
The C*-algebras $\range(\pi _P)$ with $P$ a spectral measure acting on $H$,
can be characterised as the commutative Borel \st-algebras on $H$ containing
the unit operator.
\end{corollary}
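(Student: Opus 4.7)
The plan is to prove the characterisation by establishing both inclusions, each of which is essentially a direct invocation of a result just proved.

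First I would handle the forward direction: any range $\range(\pi_P)$ of a spectral measure $P$ acting on $H$ is a commutative Borel \st-algebra on $H$ containing the unit operator. This is immediate from Proposition \ref{ranspecint}, once one observes that commutativity follows from $\range(\pi_P) = \overline{\mathrm{span}}(P)$ (Theorem \ref{spanP}) together with the pairwise commutativity of the projections $P(\Delta)$ (Proposition \ref{rep}(iii)). The unit operator lies in $\range(\pi_P)$ because $\pi_P(1_\Omega) = P(\Omega) = \mathds{1}$ by definition of a spectral measure \ref{specmeasdef}(ii).

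For the converse direction, given a commutative Borel \st-algebra $\mathscr{B}$ on $H$ containing $\mathds{1}$, I would apply Theorem \ref{Borelgen} directly: letting $P$ denote the spectral resolution of $\mathscr{B}$ and $Q$ its restriction to the Baire $\sigma$-algebra of $\Delta(\mathscr{B})$, that theorem gives $\mathscr{B} = \range(\pi_Q)$. Since $Q$ is itself a spectral measure acting on $H$, this exhibits $\mathscr{B}$ as $\range(\pi_Q)$ for a spectral measure.

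There is no hard part here: the corollary is a bookkeeping combination of Proposition \ref{ranspecint} and Theorem \ref{Borelgen}. The only subtlety worth flagging is making sure the two directions match up — the spectral measure produced in the converse direction must be verified to be an honest spectral measure in the sense of Definition \ref{specmeasdef}, which it is, since a resolution of identity (and hence its restriction to a sub-$\sigma$-algebra) is by definition a spectral measure.
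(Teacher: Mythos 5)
Your proposal is correct and follows exactly the paper's route: the forward inclusion is Proposition \ref{ranspecint} and the converse is Theorem \ref{Borelgen}, which is precisely how the text derives the corollary. Your extra remarks (commutativity of $\range(\pi_P)$ and $\pi_P(1_\Omega) = \mathds{1}$) are already contained in the statement of \ref{ranspecint}, so nothing further is needed.
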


\begin{remark}
Sometimes a $\sigma$-algebra is called a ``Borel structure''.
\end{remark}

We now have the following generalisation of theorem \ref{W*main}.

\begin{theorem}%
A Borel \st-algebra on $H$ containing $\mathds{1}$ is
the closed linear span of its projections.
\end{theorem}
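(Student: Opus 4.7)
The plan is to mimic the proof of Theorem \ref{W*main}, but with Borel \st-algebras in place of von Neumann algebras, using Theorem \ref{Borelgen} as the commutative ingredient in place of Lemma \ref{W*lemma}.

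First I would reduce to the case of a single Hermitian element. Let $\mathscr{B}$ be a Borel \st-algebra on $H$ containing $\mathds{1}$, and let $c \in \mathscr{B}$ be arbitrary. Decompose $c = a + \iu b$ with $a, b \in \mathscr{B}\sa$ (note $\mathscr{B}\sa$ is well-defined since $\mathscr{B}$ is a C*-subalgebra of $\blop(H)$). It therefore suffices to show that every Hermitian element of $\mathscr{B}$ lies in $\overline{\mathrm{span}}\bigl(\mathscr{P}(\mathscr{B})\bigr)$.

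Next, fix $a \in \mathscr{B}\sa$ and consider the Borel \st-algebra $\mathscr{B}_1(a)$ generated by $\{a\}$ and $\mathds{1}$. Since $a$ is normal (Hermitian), Proposition \ref{Borelcomm} guarantees that $\mathscr{B}_1(a)$ is commutative. Also, $\mathscr{B}_1(a)$ is a commutative Borel \st-algebra containing $\mathds{1}$, so Theorem \ref{Borelgen} applies: with $P$ the spectral resolution of $\mathscr{B}_1(a)$ and $Q$ its restriction to the Baire $\sigma$-algebra of $\Delta\bigl(\mathscr{B}_1(a)\bigr)$, one has
\[ \mathscr{B}_1(a) = \overline{\mathrm{span}}(Q) = \overline{\mathrm{span}}\bigl(\mathscr{P}(\mathscr{B}_1(a))\bigr). \]

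Finally, the minimality of $\mathscr{B}_1(a)$ together with the fact that $\mathscr{B}$ is a Borel \st-algebra containing both $a$ and $\mathds{1}$ yields the inclusion $\mathscr{B}_1(a) \subset \mathscr{B}$, and hence $\mathscr{P}(\mathscr{B}_1(a)) \subset \mathscr{P}(\mathscr{B})$. Combining this with the displayed equation above gives
\[ a \in \mathscr{B}_1(a) = \overline{\mathrm{span}}\bigl(\mathscr{P}(\mathscr{B}_1(a))\bigr) \subset \overline{\mathrm{span}}\bigl(\mathscr{P}(\mathscr{B})\bigr), \]
completing the argument. No real obstacle is expected here, as the only non-trivial ingredient is Theorem \ref{Borelgen}, which has already done the heavy lifting in the commutative case; the present statement is essentially a packaging of that result via the standard trick of reducing to a single normal (here Hermitian) generator.
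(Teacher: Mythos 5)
Your proposal is correct and follows essentially the same route as the paper's own proof: decompose $c = a + \iu b$ into Hermitian parts, note that $\mathscr{B}_1(a)$ and $\mathscr{B}_1(b)$ are commutative by \ref{Borelcomm} and contained in $\mathscr{B}$, and apply \ref{Borelgen}. The paper even makes the same remark you do, that the argument mirrors the proof of \ref{W*main}.
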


\begin{proof}
The statement for the commutative case follows from theorem \ref{Borelgen}
above. The statement for the general case follows by decomposing an
element $c$ of a Borel \st-algebra $\mathscr{B}$ containing $\mathds{1}$
as $c = a + \iu b$, with $a, b \in \mathscr{B}\sa$. Indeed
$\mathscr{B}_1(a), \mathscr{B}_1(b)$ are commutative by proposition \ref{Borelcomm},
and contained in $\mathscr{B}$. Please note that the above proof is similar
to the proof of theorem \ref{W*main}. \pagebreak
\end{proof}

\medskip
Next two miscellaneous results for single normal bounded linear operators.

\begin{theorem}\label{PBorel}%
Let $b$ be a normal bounded linear operator on $H$. Let $P$ be the
spectral resolution of $b$. One then has
\[ \range(\pi _P) = \mathscr{B}_1(b). \]
In particular, the range of the Borel functional calculus
\begin{align*}
\rmLeb ^{\infty}(P) & \to \range(\pi_P) \\
f+\mathrm{N}(P) & \mapsto f(b) = \pi _P(f)
\end{align*}
is the Borel \st-algebra on $H$ generated by $b$, $b^*$, and $\mathds{1}$.
See also \ref{rangespres}.
\end{theorem}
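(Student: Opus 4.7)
The plan is to reduce the statement to the preceding theorem \ref{QBorel}. Let $A$ denote the C*-subalgebra of $\blop(H)$ generated by $b$, $b^*$, and $\mathds{1}$, which is commutative because $b$ is normal. Let $\pi : A \hookrightarrow \blop(H)$ be the inclusion, a unital and hence non-degenerate \ref{unitnondeg} representation of the commutative \st-algebra $A$ on $H$. One checks immediately that $\mathscr{B}_1(\pi) = \mathscr{B}_1(b)$: the Borel \st-algebra $\mathscr{B}_1(b)$ is a C*-subalgebra of $\blop(H)$ containing $b$, $b^*$, and $\mathds{1}$, so it contains $A = \range(\pi)$; conversely $b \in \range(\pi) \subset \mathscr{B}_1(\pi)$.

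Next, I would align the two spectral resolutions. Since the closure of $\range(\pi)$ in $\blop(H)$ is $A$ itself, $\s(\pi)$ may be identified with $\Delta(A)$, and by \ref{spechomeom} the restriction $\wht{b}|_{\s(\pi)} : \s(\pi) \to \s(b)$ is a homeomorphism. Writing $P_\pi$ for the spectral resolution of $\pi$, the defining equation in \ref{spthmrep} specialised to $a = b$ reads $b = \int_{\s(\pi)} \wht{b}|_{\s(\pi)} \,\diff P_\pi$. Combined with the change-of-variables formula \ref{imagespthm} and the uniqueness clause of \ref{spthmnormalbded}, this identifies $P$ as the image spectral measure of $P_\pi$ under $\wht{b}|_{\s(\pi)}$. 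Applying \ref{imagespthm} in both directions (using that a homeomorphism sends Borel sets to Borel sets) then yields $\range(\pi_P) = \range(\pi_{P_\pi})$.

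To bridge the gap to \ref{QBorel}, which is phrased with the Baire restriction $Q_\pi$ of $P_\pi$, I would invoke that $\s(b) \subset \mathds{C}$, and hence $\s(\pi)$ via the homeomorphism, is a compact metric space, on which the Baire and Borel $\sigma$-algebras coincide. Consequently $Q_\pi = P_\pi$ and $\pi_{Q_\pi} = \pi_{P_\pi}$, so \ref{QBorel} gives $\range(\pi_{P_\pi}) = \range(\pi_{Q_\pi}) = \mathscr{B}_1(\pi) = \mathscr{B}_1(b)$. Chaining everything together, $\range(\pi_P) = \mathscr{B}_1(b)$.

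The main obstacle is the Baire--Borel identification on the compact metric space $\s(b)$; it is classical but not isolated as a lemma in the main text. One can sidestep it by proving the nontrivial inclusion $\range(\pi_P) \subset \mathscr{B}_1(b)$ directly via a functional monotone-class argument: the set of bounded Borel functions $f$ on $\s(b)$ with $\pi_P(f) \in \mathscr{B}_1(b)$ is a vector space containing $\cont(\s(b))$ (since the operational calculus \ref{opcalc} has range $A \subset \mathscr{B}_1(b)$) and is closed under suprema of upper-bounded increasing sequences of Hermitian elements, by the Monotone Convergence Theorem \ref{monconvthm} together with the defining closure property of a Borel \st-algebra. The reverse inclusion is immediate from \ref{ranspecint}, since $\range(\pi_P)$ is a Borel \st-algebra containing $b = \int z \,\diff P(z)$ and $\mathds{1}$.
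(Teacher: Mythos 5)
Your proof is correct and follows essentially the same route as the paper's: identify $P$ with the spectral resolution of the C*-subalgebra $A$ generated by $b$, $b^*$, $\mathds{1}$, note $\mathscr{B}_1(b)=\mathscr{B}(A)$, and apply \ref{QBorel} after observing that Baire and Borel $\sigma$-algebras coincide on the compact metric space $\s(b)$. The Baire--Borel coincidence you worried about is in fact isolated in the appendix as \ref{metric}, so no detour is needed.
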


\begin{proof}
The spectral resolution $P$ of $b$ can be identified with the spectral
resolution of the C*-subalgebra $A$ of $\blop(H)$ generated by $b$,
$b^*$, and $\mathds{1}$, cf.\ the proof of \ref{spthmnormalbded}.
It is easily seen that $\mathscr{B}_1(b) = \mathscr{B}(A)$. (Indeed,
$\mathscr{B}(A)$ contains $b$ and $\mathds{1}$, and thus $\mathscr{B}_1(b)$.
Conversely, $\mathscr{B}_1(b)$ is a Borel \st-algebra containing $b$, $b^*$,
and $\mathds{1}$, and thus contains $A$, and so even $\mathscr{B}(A)$.)
Since $\s(b)$ is a compact metric space, the Borel and Baire $\sigma$-algebras
on $\s(b)$ coincide, cf.\ the appendix \ref{metric}. Therefore, theorem
\ref{QBorel} implies now that $\range(\pi _P) = \mathscr{B}(A) = \mathscr{B}_1(b)$.
\end{proof}

\medskip
Borel \st-algebras containing $\mathds{1}$ allow a Borel functional
calculus for normal elements, in the following sense.

\begin{corollary}%
Let $\mathscr{B}$ be a Borel \st-algebra on $H$ containing $\mathds{1}$.
If $b$ is a normal operator in $\mathscr{B}$, then every bounded Borel
function of $b$ lies in $\mathscr{B}$. In particular the spectral resolution
of $b$ takes values in $\mathscr{B}$.
\end{corollary}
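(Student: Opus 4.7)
The plan is to reduce this immediately to the preceding theorem \ref{PBorel}, which does all the real work. The key observation is that the hypotheses on $\mathscr{B}$ give an easy inclusion $\mathscr{B}_1(b) \subset \mathscr{B}$, and then \ref{PBorel} identifies $\mathscr{B}_1(b)$ with the range of the Borel functional calculus for $b$.

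In detail: first I would note that since $\mathscr{B}$ is a C*-subalgebra (in particular \st-stable) of $\blop(H)$ containing the normal operator $b$ and the unit $\mathds{1}$, it contains $b$, $b^*$, and $\mathds{1}$. Since $\mathscr{B}$ is moreover a Borel \st-algebra on $H$, the minimality of the Borel \st-algebra $\mathscr{B}_1(b)$ generated by $b$, $b^*$, and $\mathds{1}$ yields
\[ \mathscr{B}_1(b) \subset \mathscr{B}. \]

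Next I would invoke theorem \ref{PBorel}, which asserts that if $P$ is the spectral resolution of $b$, then $\range(\pi_P) = \mathscr{B}_1(b)$, and the range of the Borel functional calculus for $b$ is precisely this set. Combining this with the inclusion above gives $\range(\pi_P) \subset \mathscr{B}$, which says exactly that every bounded Borel function of $b$ lies in $\mathscr{B}$.

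For the final assertion about the spectral resolution, I would just specialize to indicator functions: for every Borel subset $\Delta$ of $\s(b)$, the indicator $1_\Delta$ is a bounded Borel function, and by definition \ref{Borelfunct} we have $P(\Delta) = \pi_P(1_\Delta) = 1_\Delta(b) \in \range(\pi_P) \subset \mathscr{B}$. There is no real obstacle here; the substance is entirely contained in \ref{PBorel}, and this corollary is a direct packaging of that result.
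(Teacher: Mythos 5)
Your proof is correct and follows exactly the same route as the paper's: the inclusion $\mathscr{B}_1(b) \subset \mathscr{B}$ by minimality, the identification $\range(\pi_P) = \mathscr{B}_1(b)$ from \ref{PBorel}, and the specialisation to indicator functions $P(\Delta) = \pi_P(1_\Delta) = 1_\Delta(b)$ for the last assertion. Nothing to add.
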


\begin{proof}
By the preceding theorem \ref{PBorel}, a bounded Borel function
of $b$ lies in $\mathscr{B}_1(b)$, which is contained in $\mathscr{B}$.
The last statement follows from the fact that a spectral projection
$P(\Delta)$ of $b$, with $P$ the spectral resolution of $b$, and
$\Delta$ a Borel subset of $\s(b)$, is the bounded Borel function
of $b$ given by $1_{\Delta}(b) = \pi _P (1_{\Delta}) = P(\Delta)$.
\end{proof}

\medskip
In \ref{sepBorel} below, we shall prove the convenient criterion
that a commutative Borel \st-algebra containing $\mathds{1}$
and acting on a separable Hilbert space, is a von Neumann algebra.
\pagebreak

\clearpage

%


\section{Subrepresentations and Amplifications}%
\label{subsum}

We shall prove the following two somewhat technical
facts as a preparation for the ensuing \ref{rangepara}.

\begin{proposition}\label{propsubrep}%
Let $\pi$ be a non-degenerate representation of a
commutative \st-algebra $A$ on a Hilbert space
$H \neq \{ 0 \}$. Let $M \neq \{ 0 \}$ be an invariant
subspace of $\pi$, and consider the non-degenerate
subrepresentation $\pi _M$, cf.\ \ref{subrep} \& \ref{subnondeg}.
Let $P$ and $Q$ be the spectral resolutions of $\pi$
and $\pi _M$ respectively. The subspace $M$ is
invariant under $\pi _P$. We have that $\s(\pi _M)$
is a closed subset of $\s(\pi)$, and may therefore
consider $Q$ as a resolution of identity on $\s(\pi)$
with support $\s(\pi _M)$. We then get that $\pi_Q$
is the subrepresentation ${(\pi _P)} _M$ of $\pi_P$:
\[ \pi _Q = {(\pi _P)} _M. \]
\end{proposition}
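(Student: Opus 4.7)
The strategy is to verify the three assertions in turn, using mainly the commutant identifications $\pi' = P' = {\pi_P}'$ from the Spectral Theorem \ref{spthmrep} together with \ref{spprojcomm}, and the uniqueness addendum \ref{spuniq}.

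For the first two assertions: let $p$ denote the orthogonal projection of $H$ onto $M$. Since $M$ is invariant under $\pi$, one has $p \in \pi'$ by \ref{invarcommutant}. The identities $\pi' = P' = {\pi_P}'$ then place $p$ in ${\pi_P}'$, and \ref{comminvar} converts this back into invariance of $M$ under $\pi_P$. For the inclusion $\s(\pi_M) \subset \s(\pi)$, I would invoke the characterisation in lemma \ref{unilemma}: since $\pi_M(a) = \pi(a)|_M$ yields $\|\pi_M(a)\| \leq \|\pi(a)\|$ for every $a \in A$, the inequality defining $\s(\pi_M)$ is stronger than the one defining $\s(\pi)$. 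That $\s(\pi_M)$ is closed in $\s(\pi)$ is a topological observation: $\s(\pi_M) \cup \{0\}$ is weak* compact by \ref{specpi}, hence weak* closed in $\Delta^*(A) \cup \{0\}$, and since $0 \notin \Delta^*(A)$, $\s(\pi_M)$ is closed in $\Delta^*(A)$ and a fortiori in $\s(\pi)$.

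For the main identity $\pi_Q = (\pi_P)_M$, the central step is to introduce a resolution of identity $R$ on $\s(\pi)$, acting on $M$, by $R(\Delta) := P(\Delta)|_M$ for every Borel $\Delta \subset \s(\pi)$. This is well-defined by the first assertion, each $R(\Delta)$ is a projection on $M$ (since $p$ commutes with $P(\Delta)$), and for a unit vector $x \in M$ the measure $\langle R(\cdot)x,x\rangle = \langle P(\cdot)x,x\rangle$ is inner regular because $P$ is. A direct restriction of the defining inequality ``in norm'' shows that the norm-convergent spectral integral $\pi(a) = \int \wht{a}|_{\s(\pi)}\,dP$ yields $\pi_M(a) = \int \wht{a}|_{\s(\pi)}\,dR$ in norm, for every $a \in A$. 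Passing to $\Omega := \mathrm{supp}(R)$, which is closed in $\s(\pi)$ with $\Omega \cup \{0\}$ weak* compact, the uniqueness addendum \ref{spuniq} applied to the non-degenerate representation $\pi_M$ then forces $\Omega = \s(\pi_M)$ and identifies the restriction of $R$ to Borel subsets of $\s(\pi_M)$ with the spectral resolution $Q$ of $\pi_M$. Thus $R$ is precisely the extension of $Q$ to $\s(\pi)$ described in the statement, and for every bounded Borel function $f$ on $\s(\pi)$ one concludes $\pi_Q(f) = \pi_R(f) = \pi_P(f)|_M = (\pi_P)_M(f)$.

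The main obstacle is invoking the addendum \ref{spuniq} correctly: one must be careful to view $R$ as a resolution of identity on its own support $\Omega$, check that the norm-convergent integral formula for $\pi_M$ persists when the integration domain is shrunk from $\s(\pi)$ to $\Omega$, and only then extract both $\Omega = \s(\pi_M)$ and the identification of $R$ with $Q$. After that, the equality $\pi_Q = (\pi_P)_M$ is essentially bookkeeping about extending $Q$ by zero outside $\s(\pi_M)$.
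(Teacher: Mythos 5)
Your proposal is correct and follows essentially the same route as the paper: invariance of $M$ under $\pi_P$ via the common commutant, the inclusion $\s(\pi_M) \subset \s(\pi)$ via lemma \ref{unilemma} and closedness via \ref{cloloc}, and then the identification of $Q$ with $\Delta \mapsto P(\Delta)|_M$ by the uniqueness addendum \ref{spuniq}. Your extra care in first passing to the support of $R$ before invoking \ref{spuniq} is a sensible refinement of the paper's slightly terser final step, but the argument is the same.
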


\begin{proof}
The subspace $M$ is invariant under $\pi _P$ by \ref{invarcommutant},
as $\pi$ and $\pi _P$ have the same commutant,
cf.\ \ref{spprojcomm} \& \ref{spthmrep}. From \ref{unilemma} we get
\[ \s(\pi) = \{ \,\sigma \in \Delta^*(A) : | \,\sigma(a) \,| \leq \| \,\pi (a) \,\|
\ \text{for all}\ a \in A \,\}, \]
and similarly with $\pi _M$ in place of $\pi$. Since
$\| \,\pi _M (a) \,\| \leq \| \,\pi (a) \,\|$ for all $a \in A$, it follows
that $\s(\pi _M) \subset \s(\pi)$. Since $\s(\pi _M)$ is closed in
$\Delta^*(A)$, by \ref{cloloc}, it is closed in $\s(\pi)$ as well.
We may therefore consider $Q$ as a resolution of identity on $\s(\pi)$
with support $\s(\pi _M)$. The function $P_M : \Delta \mapsto P(\Delta)|_M$
$\bigl( \,\Delta$ a Borel subset of $\s(\pi) \,\bigr)$ is a
resolution of identity on $\s(\pi)$, acting on $M$. Since now
$\pi _{(\text{\SMALL{$P$}}_{\text{\tiny{$M$}}})} (\wht{a})
= \pi_M (a) = \pi_Q (\wht{a})$ for all $a \in A$, the uniqueness
of the spectral resolution as in the addendum \ref{spuniq} now
implies that $Q = P_M$, and the statement follows.
\end{proof}

\begin{proposition}\label{propampli}
Let $\pi$ be a non-degenerate representation of a commutative
\st-algebra on a Hilbert space $H \neq \{ 0 \}$. Consider the
amplification $\oplus _n \pi$ on $\oplus _n H$, cf.\ \ref{amplification}.
Let $P$ and $Q$ be the spectral resolutions of $\pi$ and $\oplus _n \pi$
respectively. We then have the equalities
\[ \s(\oplus _n \pi) = \s(\pi) \quad \text{as well as} \quad\pi _Q = \oplus _n \pi _P. \]
\end{proposition}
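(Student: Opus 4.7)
The plan is to construct a natural candidate for $Q$ by pointwise amplification of $P$, and to invoke the uniqueness addendum \ref{spuniq} to simultaneously identify it with $Q$ and $\s(\pi)$ with $\s(\oplus_n \pi)$. The natural candidate is
\[
R(\Delta) := \oplus_n \,P(\Delta) \in \blop (\oplus_n H),
\]
defined for every Borel set $\Delta$ of $\s(\pi)$; this makes sense by \ref{dirop} since $\sup_n \|P(\Delta)\| \leq 1$. Each $R(\Delta)$ is manifestly a projection, and $R(\s(\pi)) = \oplus_n\,\mathds{1} = \mathds{1}$. For a vector $x = \oplus_n x_n \in \oplus_n H$ one has
\[
\langle R(\Delta)\,x,x\rangle = \sum_n \langle P(\Delta)\,x_n, x_n\rangle,
\]
which is a countable sum of finite positive Borel measures of total mass $\|x\|^{\,2}$, hence itself a finite positive Borel measure. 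Inner regularity of this sum is routine: given a Borel set $B$ and $\varepsilon>0$, truncate the series at some $N$ so the tail is $<\varepsilon/2$, then use inner regularity of the finitely many measures $\langle P\,x_n, x_n\rangle$ for $n\leq N$. Thus $R$ is a resolution of identity on $\s(\pi)$ acting on $\oplus_n H$.

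Next I would verify the integration formula for $R$. By linearity of $R$ and $P$, one has $\pi_R(h) = \oplus_n \,\pi_P(h)$ for every $\mathcal{E}$-step function $h$ directly from the definitions. Uniform approximation (\ref{closure}) together with the norm identity $\| \oplus_n \,b_n\| = \sup_n \|b_n\|$ from \ref{dirop} extends this by continuity to all $f \in \bmeas(\mathcal{E})$, so that
\[
\pi_R(f) = \oplus_n \,\pi_P(f) \qquad \text{for all } f \in \bmeas(\mathcal{E}).
\]
Specialising to $f = \wht{a}|_{\s(\pi)}$ with $a \in A$ gives $\pi_R(\wht{a}|_{\s(\pi)}) = \oplus_n \,\pi(a) = (\oplus_n \,\pi)(a)$, which is precisely the integration formula demanded by \ref{spuniq}.

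Finally, the support of $R$ coincides with that of $P$, since for any open $U\subset \s(\pi)$ one has $R(U) = \oplus_n P(U)$, and a direct sum of copies of the same operator vanishes iff the operator itself vanishes. As the support of $P$ is all of $\s(\pi)$ by \ref{spthmrep}, so is the support of $R$. The uniqueness addendum \ref{spuniq}, applied to the non-degenerate representation $\oplus_n \,\pi$ with $\Omega := \s(\pi) \subset \Delta^*(A)$ (weak*-compact together with $\{0\}$ by \ref{specpi}) and spectral measure $R$, then yields both conclusions at once: $\s(\pi) = \s(\oplus_n \,\pi)$ and $R = Q$. Combining the latter with the displayed identity gives $\pi_Q = \oplus_n\,\pi_P$.

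The main obstacle I anticipate is the bookkeeping for inner regularity of the countable-sum measure $\sum_n \langle P\,x_n, x_n\rangle$ in the construction of $R$; everything else reduces to an application of the uniqueness of the spectral resolution and to the compatibility of direct sums with the two elementary operations of forming projections and computing operator norms.
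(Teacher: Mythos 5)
Your proof is correct. You build the candidate $R(\Delta) := \oplus_n P(\Delta)$, check that it is a resolution of identity on $\s(\pi)$ with full support satisfying the integration formula for $\oplus_n \pi$, and then let the uniqueness addendum \ref{spuniq} deliver both conclusions at once. The paper argues in the same spirit but splits the work differently: it first identifies $\s(\oplus_n \pi)$ with $\s(\pi)$ by observing that $\overline{\range(\oplus_n \pi)} = \{\,\oplus_n b : b \in \overline{\range(\pi)}\,\}$ is isomorphic as a C*-algebra to $\overline{\range(\pi)}$, so that their Gel'fand spectra may be identified, and only then invokes the uniqueness of the spectral resolution on that common space to get $Q = \oplus_n P$. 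Your route buys a shorter logical path (one application of \ref{spuniq} instead of a separate spectrum identification), at the cost of having to verify explicitly that $\oplus_n P$ really is a resolution of identity --- in particular the inner regularity of the summed measures $\sum_n \langle P\,x_n, x_n\rangle$, which you sketch correctly and which the paper leaves tacit. Both arguments are sound; yours is the more self-contained of the two.
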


\begin{proof}
The C*-algebra
$\overline{\range(\oplus _n \pi)}
= \{ \,\oplus _n b \in \blop(\oplus _n H) : b \in \overline{\range(\pi)} \,\}$
is isomorphic to the C*-algebra $\overline{\range(\pi)}$.
Hence $\Delta(\overline{\range(\oplus _n \pi)})$ and
$\Delta(\overline{\range(\pi)})$ may be identified. It follows that
$\s(\oplus _n \pi) = \s(\pi)$. The uniqueness of the spectral
resolution implies that $Q(\Delta) = \oplus _n P (\Delta)$ for every
Borel set $\Delta$ of $\s(\pi)$, whence the statement. \pagebreak
\end{proof}

\clearpage

%


\section{Separability: the Range}%
\label{rangepara}

\medskip
In this paragraph, let $H$ be a Hilbert space $\neq \{0\}$.

\begin{introduction}\label{decospace}%
Let $\pi$ be a non-degenerate representation of a commutative
\st-algebra on $H$, and let $P$ be the spectral resolution of $\pi$.
The representation $\pi _P$ factors to an isomorphism of C*-algebras
from $\rmLeb ^{\infty} (P)$ onto $\range(\pi _P)$, cf.\ \ref{LPmain}. It shall
be shown in this paragraph that if $H$ is separable, then (on the side
of the \textit{range} of the factored map), the \linebreak C*-algebra
$\range(\pi _P)$ is all of the von Neumann algebra $\vonNeumAlg (\pi)$,
cf.\ \ref{spprojsubset}. It shall be shown in the next paragraph that if
$H$ is separable, then (on the side of the \textit{domain} of the factored
map), the C*-algebra $\rmLeb ^{\infty} (P)$ can be identified with the maximal
commutative von Neumann algebra $\rmLeb ^{\infty} ( \langle Px, x \rangle )$
for some unit vector $x \in H$. Thus, in the case of a separable Hilbert
space, the representation $\pi _P$ factors to a C*-algebra isomorphism
of \textit{von Neumann algebras}. This C*-algebra isomorphism
$\rmLeb ^{\infty}( \langle P x, x \rangle ) \to \vonNeumAlg (\pi)$ is called the
\underline{$\rmLeb ^{\infty}$ functional calculus}. The inverse of this map
faithfully represents $\vonNeumAlg (\pi)$ by multiplication operators living on
$\s(\pi)$. (So much for the titles of these two paragraphs.)
\end{introduction}

\begin{theorem}\label{gap}%
Consider a non-degenerate representation $\pi$ of a commutative
\st-algebra on $H$. Let $P$ be the spectral resolution of $\pi$, and let
$b \in \vonNeumAlg (\pi)$. Then for any sequence $(x_n)$ in $H$, there
exists a bounded Baire function $f$ on $\s(\pi)$ such that for all $n$, one has
\[ b \,x_n = \pi _P (f) \,x_n. \]
\end{theorem}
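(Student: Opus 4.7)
The plan is to reduce to the single-vector case by amplification, apply Theorem \ref{spectralrepmain} on a cyclic subspace, and finally extend the resulting Baire function back to $\s(\pi)$. First, replacing each $x_n$ by $x_n/c_n$ with $c_n := 2^{\,n} (1+\|\,x_n\,\|)$ is admissible by linearity in the vector, so we may assume $(\|\,x_n\,\|) \in {\ell\,}^{2}$. Form the amplifications $E := \oplus_n H$, $x := \oplus_n x_n \in E$, $\Pi := \oplus_n \pi$, and $B := \oplus_n b$. By Lemma \ref{ampli}, $B \in \vonNeumAlg(\Pi)$; by \ref{propampli}, $\s(\Pi) = \s(\pi)$ and the spectral resolution $P_E$ of $\Pi$ satisfies $\pi_{P_E}(h) = \oplus_n \pi_P(h)$ for every bounded Borel function $h$ on $\s(\pi)$. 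In particular, $\pi_{P_E}(f) x = \oplus_n \pi_P(f) x_n$ while $B x = \oplus_n b x_n$, so it is enough to produce a single bounded Baire function $f$ on $\s(\pi)$ with $B x = \pi_{P_E}(f) x$.

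Next, let $M := \overline{\Pi(A) x}$ denote the cyclic subspace of $E$. By \ref{cyclicsubspace}, $x \in M$, and by \ref{invarcommutant}, the projection $p_M$ onto $M$ lies in $\Pi'$. Commutativity of $A$ forces $\Pi(A) \subset \Pi'$, whence $\vonNeumAlg(\Pi) = \Pi'' \subset \Pi'$, so $B$ commutes with $p_M$ and $M$ is $B$-invariant; the restriction $B|_M$ then commutes with every $\Pi_M(a)$, i.e.\ $B|_M \in (\Pi_M)'$. Since $\Pi_M$ is a cyclic representation of $A$, Theorem \ref{spectralrepmain} identifies $(\Pi_M)'$ with $\range(\pi_{Q_M})$, where $Q_M$ is the restriction to the Baire $\sigma$-algebra of $\s(\Pi_M)$ of the spectral resolution of $\Pi_M$. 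Pick a bounded Baire function $g$ on $\s(\Pi_M)$ with $B|_M = \pi_{Q_M}(g)$, so that $B x = \pi_{Q_M}(g) \,x$.

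The remaining step — and the main obstacle — is to promote $g$ to a bounded Baire function $f$ on $\s(\pi)$ with $f|_{\s(\Pi_M)} = g$. Once $f$ is produced, \ref{propsubrep} applied to $P_E$ gives $\pi_{P_E}(f)|_M = \pi_{Q_M}(g)$; and since $\pi_{P_E}(f) \in \range(\pi_{P_E}) \subset \vonNeumAlg(\Pi)$ leaves $M$ invariant, one concludes $\pi_{P_E}(f) x = \pi_{P_E}(f)|_M \,x = B x$, which upon unpacking the direct sum yields $b \,x_n = \pi_P(f) \,x_n$ for every $n$. The extension is non-trivial because a closed subset of a locally compact Hausdorff space need not be Baire, so naive extension by zero is unavailable. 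The fix is a functional monotone class argument. Tietze's theorem on the one-point compactifications $\s(\Pi_M) \cup \{\infty\} \subset \s(\pi) \cup \{\infty\}$ shows that every $h \in \cont\0(\s(\Pi_M))$ extends to some $\tilde{h} \in \cont\0(\s(\pi))$; moreover, the collection of bounded functions on $\s(\Pi_M)$ admitting a bounded Baire extension to $\s(\pi)$ is a vector space containing the constants, and it is closed under uniformly bounded pointwise sequential limits — given $(h_n) \to h$ with extensions $(\tilde{h}_n)$ which, after coordinatewise truncation, stay uniformly bounded, the function $\limsup_n \mathrm{Re}(\tilde{h}_n) + \iu \limsup_n \mathrm{Im}(\tilde{h}_n)$ is a bounded Baire extension of $h$. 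Since the bounded Baire functions on $\s(\Pi_M)$ form the smallest such class containing $\cont\0(\s(\Pi_M))$, the required extension $f$ of $g$ exists.
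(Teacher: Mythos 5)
Your proof is correct and follows essentially the same route as the paper's: reduce to a single vector by rescaling and amplification via \ref{ampli} and \ref{propampli}, pass to the cyclic subspace generated by that vector, apply \ref{spectralrepmain} to obtain a bounded Baire function on $\s(\Pi_M)$, and extend it to $\s(\pi)$ using that $\s(\Pi_M)$ is closed in $\s(\pi)$ by \ref{propsubrep}. The only divergence is that you re-derive the Baire extension step by a Tietze-plus-monotone-class argument, whereas the paper simply invokes the appendix result \ref{bBaireext}; both are sound.
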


\begin{proof}
We shall first consider the case of a single vector $x \in H$.
We can assume that $x \neq 0$. The closed invariant subspace
$M := \overline{\range(\pi) \,x}$ is cyclic under $\pi$, with cyclic
vector $x \in M$, cf.\ \ref{cyclicsubspace}.
The projection $p$ on $M$ is in $\pi'$ by \ref{invarcommutant},
and so commutes with $b \in \vonNeumAlg (\pi)$. This in turn implies
that $M$ is invariant under $b$, by \ref{comminvar}.
Consequently $b | _M \in \vonNeumAlg ( \pi _M )$.
Let $Q$ denote the spectral resolution of the cyclic representation
$\pi _M$. Let $S$ denote the restriction of $Q$ to the Baire
\linebreak $\sigma$-algebra of $\s(\pi _M)$. We have
$\vonNeumAlg ( \pi _M ) = \range(\pi _S)$ by \ref{spectralrepmain}.
It follows that $b |_M \in \range(\pi _S)$, whence there exists a
bounded Baire function $g$ on $\s(\pi _M)$ with $b |_M = \pi _S (g)$.
In particular $b \,x = \pi _Q (g) \,x$. Now $\s(\pi _M)$ is a closed
subset of $\s(\pi)$, cf.\ \ref{propsubrep}, so the function $g$ has an
extension to a bounded Baire $f$ function on $\s(\pi)$, see the
appendix \ref{bBaireext}. If we consider $Q$ as a resolution of
identity on $\s(\pi)$ with support $\s(\pi _M)$, we have
$\pi _Q = {( \pi _P )} _M$, cf.\ \ref{propsubrep}. In particular, we get
$b \,x = \pi _P (f) \,x$. \pagebreak

For the general case, we can assume that
$\bigl( \,\| \,x_n \,\| \,\bigr) \in {\ell\,}^2$. Put
$x := \oplus _n x_n \in \oplus _n H$. We have
$\oplus _n b \in \vonNeumAlg ( \oplus _n \pi )$ by \ref{ampli}.
Let $T$ denote the spectral resolution of $\oplus _n \pi$.
By the above, there exists a bounded Baire function
$f$ on $\s( \oplus _n \pi )$ with $\oplus _n b \,x = \pi _T (f) \,x$.
Since $\s( \oplus _n \pi ) = \s(\pi)$ and $\pi _T = \oplus _n \pi _P$
by \ref{propampli}, the function $f$ satisfies the requirements.
\end{proof}

\medskip
An immediate consequence is:

\begin{theorem}\label{sepmain}%
Consider a non-degenerate representation $\pi$ of a commutative
\st-algebra on $H$. Let $P$ be the spectral resolution of $\pi$,
and let $Q$ be the restriction of $P$ to the Baire $\sigma$-algebra
of $\s(\pi)$. If $H$ is \underline{separable}, then
\[ \vonNeumAlg (\pi) = \range(\pi_P) = \range(\pi_Q). \]
\end{theorem}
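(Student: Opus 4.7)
The plan is to establish the chain of inclusions $\range(\pi_Q) \subset \range(\pi_P) \subset \vonNeumAlg(\pi) \subset \range(\pi_Q)$, with separability entering only in the last inclusion via Theorem \ref{gap}. The first two inclusions are already available without any separability hypothesis: $\range(\pi_Q) \subset \range(\pi_P)$ holds because $Q$ is the restriction of $P$ to a sub-$\sigma$-algebra (so every bounded Baire function on $\s(\pi)$ is in particular a bounded Borel function, and $\pi_Q$ agrees with $\pi_P$ on such functions), and $\range(\pi_P) \subset \vonNeumAlg(\pi)$ is precisely proposition \ref{spprojsubset}. So only the reverse inclusion $\vonNeumAlg(\pi) \subset \range(\pi_Q)$ requires work.

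For that inclusion, I would choose a countable dense subset $\{x_n\}$ of $H$, which exists by separability. Given $b \in \vonNeumAlg(\pi)$, apply the preceding Theorem \ref{gap} to this particular sequence $(x_n)$ to obtain a bounded Baire function $f$ on $\s(\pi)$ such that
\[ b \,x_n = \pi_P(f) \,x_n \quad \text{for all } n. \]
Since $f$ is a bounded Baire function and $Q$ is precisely the restriction of $P$ to the Baire $\sigma$-algebra of $\s(\pi)$, we have $\pi_P(f) = \pi_Q(f)$. Thus $b$ and $\pi_Q(f)$ are two bounded linear operators on $H$ that coincide on the dense subset $\{x_n\}$, and by continuity they must be equal, so $b = \pi_Q(f) \in \range(\pi_Q)$.

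Combining the chain yields the desired equalities $\vonNeumAlg(\pi) = \range(\pi_P) = \range(\pi_Q)$. The substantive content has already been carried out in Theorem \ref{gap}; separability here plays only the elementary role of allowing us to detect operator equality from agreement on a countable set, so the ``hard part'' has been front-loaded into the preceding result. There is no further obstacle to work around, since the Baire-versus-Borel issue is resolved automatically by the fact that $f$ is produced as a bounded Baire function.
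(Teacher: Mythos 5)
Your proof is correct and is exactly the argument the paper intends: the text derives \ref{sepmain} from \ref{gap} as an ``immediate consequence,'' and your filling-in (apply \ref{gap} to a countable dense sequence, note that the resulting $f$ is Baire so $\pi_P(f)=\pi_Q(f)$, and conclude by continuity) together with \ref{spprojsubset} for the easy inclusions is precisely the intended reasoning.
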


\begin{theorem}[Riesz - von Neumann]\label{functions}%
\index{concepts}{Theorem!Riesz - von Neumann}%
\index{concepts}{Riesz - von Neumann Thm.}%
Let $b$ be a normal bounded linear operator on $H$. If $H$ is
separable, then a bounded linear operator on $H$ is a bounded
Borel function of $b$ \ref{Borelfunct} if and only if it commutes
with every bounded linear operator on $H$ which commutes with $b$.
\end{theorem}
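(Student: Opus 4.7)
The plan is to reduce the statement to a combination of three earlier results: the identification of bounded Borel functions of $b$ with $\range(\pi_P)$ where $P$ is the spectral resolution of $b$, the separability-based equality $\vonNeumAlg(\pi) = \range(\pi_P)$ from \ref{sepmain}, and the Fuglede--Putnam--Rosenblum theorem \ref{FPR}.

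First I would unpack the right-hand side: saying that an operator $T \in \blop(H)$ commutes with every bounded linear operator that commutes with $b$ is, by definition, the statement $T \in \{b\}''$. Since $b$ is normal, proposition \ref{W*normal} gives $\{b\}'' = \vonNeumAlg(b)$, so the condition amounts to $T \in \vonNeumAlg(b)$. Next I would turn to the left-hand side: by theorem \ref{PBorel}, the set of bounded Borel functions of $b$ equals $\range(\pi_P) = \mathscr{B}_1(b)$, where $P$ is the spectral resolution of $b$. Thus the theorem is equivalent to the identity $\range(\pi_P) = \vonNeumAlg(b)$ under the separability hypothesis.

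To establish this identity, I would let $A$ denote the C*-subalgebra of $\blop(H)$ generated by $b$, $b^*$, and $\mathds{1}$, which is commutative (since $b$ is normal) and acts non-degenerately on $H$ (since $\mathds{1} \in A$). As observed in the proof of \ref{spthmnormalbded}, the spectral resolution of $b$ may be identified with the spectral resolution of the commutative C*-algebra $A$ via the homeomorphism $\wht{b} : \Delta(A) \to \s(b)$ of \ref{spechomeom}. Viewing $A$ as the range of its own inclusion representation, the separability of $H$ together with theorem \ref{sepmain} yields
\[ \vonNeumAlg(A) = \range(\pi_P). \]
On the other hand, Fuglede--Putnam--Rosenblum \ref{FPR} gives $\{b\}' = \{b^*\}'$, so that $A' = \{b,b^*,\mathds{1}\}' = \{b\}'$, and therefore $\vonNeumAlg(A) = A'' = \{b\}'' = \vonNeumAlg(b)$. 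Combining, $\range(\pi_P) = \vonNeumAlg(b)$, which is precisely the equivalence claimed.

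The substantive content is entirely absorbed into \ref{sepmain}, so no obstacle remains at this level; the real work was done in the preceding paragraph, where separability entered through theorem \ref{gap} via the amplification argument. The present proof is a bookkeeping exercise combining \ref{PBorel}, \ref{W*normal}, \ref{FPR}, and \ref{sepmain}, with the only minor care needed being the identification of the spectral resolution of the single operator $b$ with that of the commutative C*-algebra $A$ it generates together with $\mathds{1}$.
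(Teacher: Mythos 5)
Your proof is correct and follows essentially the same route as the paper: identify the spectral resolution of $b$ with that of the commutative C*-algebra generated by $b$, $b^*$, and $\mathds{1}$, apply \ref{sepmain} to conclude $\range(\pi_P) = \vonNeumAlg(b)$, and use \ref{W*normal} (equivalently, Fuglede--Putnam--Rosenblum) to identify $\vonNeumAlg(b)$ with $\{b\}''$. The only difference is that you spell out the bookkeeping (via \ref{PBorel} and an explicit appeal to \ref{FPR}) slightly more explicitly than the paper does.
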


\begin{proof}
The spectral resolution $P$ of $b$ can be identified with the spectral
resolution of the C*-subalgebra $A$ of $\blop(H)$ generated by $b$,
$b^*$, and $\mathds{1}$, cf.\ the proof of \ref{spthmnormalbded}.
The preceding theorem \ref{sepmain} therefore implies that the set of
bounded Borel functions of $b$, that is, $\range(\pi _P)$, is a
von Neumann algebra, and thus equal to $\vonNeumAlg (b)$.
Since $b$ is normal, one also has $\vonNeumAlg (b) = \{ b \}''$,
cf.\ \ref{W*normal}.
\end{proof}

\begin{theorem}\label{sepBorel}%
A commutative Borel \st-algebra $\mathscr{B}$ containing $\mathds{1}$ and
acting on a separable Hilbert space $\neq \{ 0 \}$, is a von Neumann algebra.
\end{theorem}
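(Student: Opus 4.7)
The plan is to exhibit $\mathscr{B}$ simultaneously as $\range(\pi_Q)$ for some suitable Baire-restricted spectral measure $Q$, and as a von Neumann algebra, by applying the two available ``identification'' theorems \ref{Borelgen} and \ref{sepmain} to one and the same object.

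First, I would view $\mathscr{B}$ as a commutative C*-subalgebra of $\blop(H)$ acting non-degenerately on $H$ (non-degeneracy is automatic since $\mathds{1} \in \mathscr{B}$). Equivalently, let $\iota : \mathscr{B} \hookrightarrow \blop(H)$ be the inclusion, regarded as a unital, hence non-degenerate representation of the commutative \st-algebra $\mathscr{B}$ on $H$. The closure of $\range(\iota)$ in $\blop(H)$ is $\mathscr{B}$ itself (as $\mathscr{B}$ is a C*-subalgebra, hence norm-closed), so via the homeomorphism $\iota^*$ of \ref{specpi} we identify $\s(\iota)$ with $\Delta(\mathscr{B})$, and the spectral resolution of the representation $\iota$ with the spectral resolution $P$ of the commutative C*-algebra $\mathscr{B}$ as constructed in \ref{spthmC*}. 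Let $Q$ denote the restriction of $P$ to the Baire $\sigma$-algebra of $\Delta(\mathscr{B}) = \s(\iota)$.

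Next I would apply the two identification theorems. On the one hand, since $\mathscr{B}$ is a commutative Borel \st-algebra containing $\mathds{1}$, Theorem \ref{Borelgen} yields
\[ \mathscr{B} \,=\, \range(\pi_Q). \]
On the other hand, since $H$ is separable and $\iota$ is a non-degenerate representation of the commutative \st-algebra $\mathscr{B}$, Theorem \ref{sepmain} gives
\[ \vonNeumAlg(\iota) \,=\, \range(\pi_Q). \]
Comparing these two equalities yields $\mathscr{B} = \vonNeumAlg(\iota)$, which is a von Neumann algebra on $H$ by the very definition of $\vonNeumAlg(\cdot)$.

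There is essentially no obstacle here beyond the bookkeeping of step one, namely the identification of the spectral resolution of $\mathscr{B}$ viewed as a C*-algebra with the spectral resolution of $\iota$ viewed as a representation; this is immediate once one notes that the closure of $\range(\iota)$ in $\blop(H)$ coincides with $\mathscr{B}$ itself. All the real work has been done earlier: Theorem \ref{Borelgen} packages the Borel \st-algebra hypothesis, and Theorem \ref{sepmain} packages the separability hypothesis, each giving an \emph{a priori} different description of the same operator algebra $\range(\pi_Q)$. The present theorem is then the tautological consequence of these two descriptions coinciding.
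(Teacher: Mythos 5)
Your proof is correct and is essentially identical to the paper's own argument: the paper likewise takes $P$ to be the spectral resolution of $\mathscr{B}$, restricts it to the Baire $\sigma$-algebra to get $Q$, and concludes $\vonNeumAlg(\mathscr{B}) = \range(\pi_Q) = \mathscr{B}$ by \ref{sepmain} and \ref{Borelgen}. Your explicit bookkeeping with the inclusion representation $\iota$ merely spells out what the paper leaves implicit.
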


\begin{proof}
Let $P$ be the spectral resolution of $\mathscr{B}$ and let
$Q$ be the \linebreak restriction of $P$ to the Baire $\sigma$-algebra
of $\Delta(\mathscr{B})$. We then get
$\vonNeumAlg (\mathscr{B}) = \range(\pi _Q) = \mathscr{B}$,
by \ref{sepmain} \& \ref{Borelgen}.
\end{proof}

\medskip
Hence the following memorable result, cf.\ \ref{Abelian} \& \ref{Borelchar}.

\begin{theorem}\label{sepspm}%
If $P$ is a spectral measure acting on a separable Hilbert space
$\neq \{ 0 \}$, then $\range(\pi _P)$ is a von Neumann algebra.
\end{theorem}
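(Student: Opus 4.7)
The plan is to derive this statement as an immediate consequence of the two results that precede it, namely proposition \ref{ranspecint} and theorem \ref{sepBorel}. First, I would invoke \ref{ranspecint} to see that $\range(\pi_P)$ is always (regardless of separability) a commutative Borel \st-algebra on $H$ containing the unit operator $\mathds{1}$. Second, I would apply \ref{sepBorel}, which asserts that under the separability hypothesis on $H$, every commutative Borel \st-algebra containing $\mathds{1}$ is automatically a von Neumann algebra. Combining these two observations yields the statement immediately; no additional argument is required at this stage.

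In more detail, the first step is purely structural: one only has to note that $\range(\pi_P)$ is closed under the formation of suprema of upper-bounded increasing sequences of Hermitian elements (via the Monotone Convergence Theorem \ref{monconvthm}, as exploited in the proof of \ref{ranspecint}), and that it contains $\mathds{1} = \pi_P(1_\Omega)$. The second step transports this sequential order-closedness into the full order-closedness characteristic of von Neumann algebras, using separability. The separability hypothesis enters only through \ref{sepBorel}, which in turn rests on \ref{sepmain} and \ref{Borelgen}.

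The genuine obstacle lies upstream, in theorem \ref{gap}, which underlies \ref{sepmain}: given $b \in \vonNeumAlg(\pi)$ and a sequence $(x_n)$ in $H$, one must produce a single bounded Baire function $f$ on $\s(\pi)$ with $b \, x_n = \pi_P(f) \, x_n$ for every $n$. That argument proceeds by reducing to a single vector via the amplification $\oplus_n \pi$ (propositions \ref{propampli} and \ref{ampli}), then cutting down to the cyclic subspace generated by that vector (using \ref{cyclicsubspace} and \ref{propsubrep}), and finally invoking the spectral representation of a cyclic representation through \ref{spectralrepmain} to realise $b$ restricted to this subspace as a bounded Baire function of the spectral measure. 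Once that apparatus is in hand, however, the present theorem \ref{sepspm} requires no further work beyond the two-line combination above.
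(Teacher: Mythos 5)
Your proposal is correct and is essentially the paper's own argument: the text derives \ref{sepspm} from \ref{sepBorel} together with the characterisation \ref{Borelchar}, whose relevant half is exactly \ref{ranspecint}. Your additional remarks on where the separability hypothesis really does its work (via \ref{sepmain}, \ref{gap}) are accurate but not needed for this deduction.
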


\begin{remark}\label{nonsep}
The Riesz - von Neumann Theorem \ref{functions}, and with it
the theorems \ref{sepmain}, \ref{sepBorel}, and \ref{sepspm},
fail to hold in non-separable Hilbert spaces, see \cite [p.\ 65]{Nag}.
\pagebreak
\end{remark}

\clearpage

%


\section{Separability: the Domain}%
\label{separating}

\medskip
In this paragraph, let $H$ be a Hilbert space $\neq \{ 0 \}$, and let $B$
be a \st-subalgebra of $\blop(H)$ acting non-degenerately on $H$.

\begin{definition}[separating vectors]%
\index{concepts}{vector!separating}\index{concepts}{separating!vector}%
A non-zero vector $x \in H$ is called \underline{separating} for $B$,
whenever $bx = 0$ for some $b \in B$ implies $b = 0$.
\end{definition}

\begin{theorem}\label{cyclicsepcomm}%
A non-zero vector $x \in H$ is cyclic under $B$ \ref{cyclicsalg}
if and only if it is separating for $B'$.
\end{theorem}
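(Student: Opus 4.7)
The plan is to establish both implications directly from the correspondence between closed invariant subspaces and projections in the commutant, exploiting the non-degeneracy assumption on $B$.

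For the forward direction (cyclic implies separating), I would argue as follows. Suppose $x \neq 0$ is cyclic for $B$, and let $c \in B'$ satisfy $cx = 0$. For every $b \in B$, one has $c(bx) = b(cx) = 0$, so $c$ vanishes on the dense subspace $Bx$ of $H$. By continuity of $c$, this forces $c = 0$, which shows that $x$ is separating for $B'$.

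For the converse, suppose $x \neq 0$ is separating for $B'$, and set $M := \overline{Bx}$. Since $B$ is a $*$-subalgebra, $M$ is a closed invariant subspace of $H$ (apply invariance of $M$ under $B$ and take adjoints to see that $M^\perp$ is invariant as well, which is the content of the discussion preceding \ref{subrep}). Let $p$ denote the orthogonal projection onto $M$. By theorem \ref{invarcommutant}, applied to the inclusion representation of $B$ in $\blop(H)$, we have $p \in B'$, and hence also $\mathds{1} - p \in B'$. Now invoke the non-degeneracy of $B$ together with lemma \ref{cyclicsubspace} to conclude that $x$ actually belongs to $M = \overline{Bx}$, so that $(\mathds{1} - p)x = 0$. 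Since $x$ is separating for $B'$, this forces $\mathds{1} - p = 0$, whence $M = H$ and $x$ is cyclic for $B$.

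The only real subtlety, and the place where the hypotheses are used in an essential way, is the step $x \in M$ in the backward direction: without non-degeneracy of $B$, the vector $x$ need not lie in the closure of $Bx$, and the argument would break down at that point. Everything else is a routine application of \ref{invarcommutant} and the definitions.
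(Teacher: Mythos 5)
Your proof is correct and follows essentially the same route as the paper's own: the forward direction via continuity of $c$ on the dense subspace $Bx$, and the converse via the projection $p$ onto $\overline{Bx}$, using \ref{invarcommutant} to place $p$ in $B'$ and \ref{cyclicsubspace} (with the standing non-degeneracy hypothesis of the paragraph) to get $px = x$. Your closing remark correctly identifies non-degeneracy as the essential ingredient for the step $x \in \overline{Bx}$.
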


\begin{proof}
Assume first that $x$ is cyclic under $B$, and let $c \in B'$ with $cx = 0$.
Then for $b \in B$, we have $cbx = bcx = 0$, whence $c = 0$ as
$Bx$ is dense in $H$. Next assume that $x$ is separating for $B'$.
With $p$ denoting the projection on the closed subspace $\overline{Bx}$,
we have $p \in B'$ by \ref{invarcommutant}, as well as $px = x$ by
\ref{cyclicsubspace}. Thus $\mathds{1} - p \in B'$ and
$( \mathds{1} - p ) x = 0$, which implies $\mathds{1} - p = 0$ as $x$ is
separating for $B'$. Hence $p = \mathds{1}$, which says that $x$ is
cyclic under $B$.
\end{proof}

\begin{corollary}
Whenever $C$ is a commutative cyclic \st-subalgebra of $\blop(H)$ with
cyclic vector $x$, then $x$ also is a separating vector for $C$.
\end{corollary}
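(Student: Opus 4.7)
The plan is to invoke the preceding theorem \ref{cyclicsepcomm} together with commutativity of $C$. First I would note that since $x$ is a cyclic vector for the \st-subalgebra $C$ of $\blop(H)$, theorem \ref{cyclicsepcomm} applies and gives that $x$ is a separating vector for the commutant $C'$. That is, for any $c \in C'$, the equation $cx = 0$ forces $c = 0$.

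Next, I would use the hypothesis that $C$ is commutative: every element of $C$ commutes with every element of $C$, which is precisely the statement $C \subset C'$. Combining these two facts, if $b \in C$ satisfies $bx = 0$, then $b \in C'$ and $bx = 0$, so by the separating property for $C'$ we conclude $b = 0$. Thus $x$ is separating for $C$, as claimed.

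There is no real obstacle here; the statement is a direct corollary of theorem \ref{cyclicsepcomm} once the inclusion $C \subset C'$ is noted. The only subtlety worth flagging is that $C$ need not equal $C'$ (it need not be maximal commutative), but the one-sided inclusion $C \subset C'$ coming from commutativity is all that is required to transfer the separating property from $C'$ down to $C$.
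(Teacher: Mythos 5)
Your proof is correct and follows exactly the paper's route: the paper's entire proof is the observation that commutativity gives $C \subset C'$, with theorem \ref{cyclicsepcomm} supplying that $x$ is separating for $C'$ and the inclusion transferring this to $C$.
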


\begin{proof}
Since $C$ is commutative, we have $C \subset C'$.
\end{proof}

\begin{theorem}\label{commsepsep}%
If $B$ is commutative and if $H$ is separable,
then $B$ has a separating vector.
\end{theorem}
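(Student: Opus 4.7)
The plan is to combine the direct sum decomposition theorem \ref{directsumdeco} with the principle that in a commutative cyclic \st-algebra, every cyclic vector is automatically separating.

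First I would apply theorem \ref{directsumdeco} to the inclusion representation of $B$ into $\blop(H)$ (which is non-degenerate by hypothesis, and $H \neq \{0\}$) to decompose $H$ as an orthogonal direct sum $H = \oplus_{i \in I} M_i$ of cyclic closed $B$-invariant subspaces. Let $c_i \in M_i$ be a cyclic vector for the subrepresentation of $B$ on $M_i$, normalised so that $\|c_i\| \leq 1$. Since $H$ is separable and the $M_i$ are mutually orthogonal non-zero subspaces (each containing a unit vector), the index set $I$ must be \emph{countable}; this is the one place where separability is used. Re-indexing $I$ by the positive integers and rescaling the $c_i$ so that $\sum_i \|c_i\|^2 < \infty$ (for instance by multiplying by $2^{-i}$), I form the vector
\[ x := \oplus_i c_i \in \oplus_i M_i = H, \]
which lies in $H$ by the definition of the direct sum of Hilbert spaces, cf.\ \ref{dirHil}.

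Next I would verify that $x$ is separating for $B$. Suppose $b \in B$ with $bx = 0$. Each $M_i$ is invariant under $B$, so $b\,c_i \in M_i$ for every $i$, and by orthogonality of the $M_i$, the identity $bx = \oplus_i (b\,c_i) = 0$ forces $b\,c_i = 0$ for all $i$. Now the key point: the restriction $B|_{M_i}$ is a commutative cyclic \st-algebra of bounded linear operators on $M_i$ with cyclic vector $c_i$, so by the corollary to theorem \ref{cyclicsepcomm} (commutativity gives $B|_{M_i} \subset (B|_{M_i})'$, making $c_i$ separating), we obtain $b|_{M_i} = 0$ for every $i$. Since $H = \oplus_i M_i$, this gives $b = 0$.

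The only real delicacy is making sure one can extract a \emph{countable} orthogonal decomposition into cyclic subspaces from the possibly uncountable one produced by theorem \ref{directsumdeco}: but this is immediate from separability together with the fact that each $M_i$ contributes at least one vector to any orthonormal basis of $H$. The rest of the argument is essentially formal, using only the invariance of the $M_i$ and the automatic passage from cyclic to separating in the commutative setting. No further preparation is required, so this completes the proof.
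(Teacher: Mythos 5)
Your proposal is correct and follows essentially the same route as the paper: decompose $H$ via \ref{directsumdeco} into countably many (by separability) mutually orthogonal cyclic invariant subspaces, form the $\ell^2$-weighted sum of the cyclic vectors, and use commutativity to pass from $bc_i = 0$ to $b|_{M_i} = 0$. The only cosmetic difference is that you route the last step through the corollary to \ref{cyclicsepcomm} (cyclic implies separating for a commutative \st-algebra), whereas the paper redoes the same commutativity computation inline with the projections $p_n \in B'$.
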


\begin{proof}
The separable space $H$ is the countable direct sum of closed
subspaces $H_n$ (with each $n \geq 1$) invariant under $B$,
and such that for each $n$, the subspace $H _n$ contains a unit
vector $c_n$ cyclic under $B |_{{\textit{\Small{H}}}_\textit{\tiny{{n}}}}$,
cf.\ \ref{directsumdeco}. It shall be shown that the vector
$x := \sum _{n} {2}^{-n/2} c_n$ in the unit ball of $H$ is a separating
vector for $B$. So let $b \in B$ with $bx = 0$. With $p_n$ denoting
the projection on the closed invariant subspace $H _n$, we have
$p_n \in B'$, cf.\ \ref{invarcommutant}. Using commutativity of $B$,
we get
\[ b B c_n = b B p_n x = B b p_n x = B p_n b x= \{0\}, \]
which implies that $b$ vanishes on each $H _n$, and thus on
all of $H$.
\end{proof}

\begin{theorem}\label{sepmaxcomm}%
Let $\pi$ be a non-degenerate representation of a \linebreak \st-algebra
$A$ on $H$. Assume that $A$ is commutative, and that $H$ is \linebreak
separable. Then $\pi$ is cyclic if and only if $\pi$ is multiplicity-free
\ref{multfreedef}.\pagebreak
\end{theorem}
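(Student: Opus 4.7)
The plan is to handle the two directions separately. The ``only if'' direction is immediate from Theorem \ref{cyclicfree}, which says that a cyclic representation of a commutative \st-algebra on a Hilbert space is multiplicity-free. So the whole content is the ``if'' direction: assuming $\pi$ is multiplicity-free (i.e.\ $\pi'$ is commutative) and $H$ is separable, I must produce a cyclic vector for $\pi$.

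First I would apply Theorem \ref{multfreechar}: since $A$ is commutative and $\pi$ is multiplicity-free, $\vonNeumAlg(\pi)$ is maximal commutative. Tracing through the proof of \ref{multfreechar} (or noting $\vonNeumAlg(\pi)\subset\vonNeumAlg(\pi)'=\pi'$ together with maximality) yields the crucial identity $\vonNeumAlg(\pi)=\pi'$. Now $\pi'$ is a commutative \st-subalgebra of $\blop(H)$ acting non-degenerately on the separable space $H$ (non-degeneracy comes from $\mathds{1}\in\vonNeumAlg(\pi)=\pi'$), so Theorem \ref{commsepsep} furnishes a separating vector $x$ for $\pi'$. By Theorem \ref{cyclicsepcomm}, $x$ is then cyclic under $(\pi')'=\vonNeumAlg(\pi)$, i.e.\ $\overline{\vonNeumAlg(\pi)\,x}=H$.

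The remaining step, which is the main obstacle, is to upgrade ``cyclic under $\vonNeumAlg(\pi)$'' to ``cyclic under $\pi$''. Consider the closed invariant subspace $M:=\overline{\pi(A)\,x}$. Since $\pi$ is non-degenerate, Lemma \ref{cyclicsubspace} gives $x\in M$. Let $p$ denote the projection of $H$ onto $M$. By Theorem \ref{invarcommutant}, $p\in\pi'$. But $\pi'=\vonNeumAlg(\pi)$ is commutative, so $p$ commutes with every element of $\vonNeumAlg(\pi)$. Proposition \ref{comminvar} then shows that $M$ is invariant under each operator in $\vonNeumAlg(\pi)$. Since $x\in M$ and $M$ is closed, we conclude
\[ H=\overline{\vonNeumAlg(\pi)\,x}\subset M, \]
whence $M=H$, and $x$ is a cyclic vector for $\pi$.

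The key idea is thus that maximal commutativity of $\vonNeumAlg(\pi)$ forces the cyclic subspace $\overline{\pi(A)\,x}$ to be stable under the larger algebra $\vonNeumAlg(\pi)$, after which the separating-vector-equals-cyclic-vector dictionary provided by Theorems \ref{commsepsep} and \ref{cyclicsepcomm} supplies what is needed. Separability enters only through \ref{commsepsep}; without it, a commutative von Neumann algebra on $H$ need not have any separating vector.
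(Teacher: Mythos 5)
Your proof is correct, but it takes a longer route than necessary, and the paper's own proof is more direct. The paper simply applies \ref{cyclicsepcomm} with $B := \range(\pi)$ (a \st-subalgebra of $\blop(H)$ acting non-degenerately, with $B' = \pi'$): since $\pi'$ is commutative and $H$ is separable, \ref{commsepsep} gives a separating vector for $\pi'$, and \ref{cyclicsepcomm} then says \emph{verbatim} that such a vector is cyclic under $\range(\pi)$, i.e.\ for $\pi$. What you call the ``main obstacle'' --- upgrading cyclicity under $\vonNeumAlg(\pi)$ to cyclicity under $\pi$ --- only arises because you instead applied \ref{cyclicsepcomm} with $B := (\pi')' = \vonNeumAlg(\pi)$; your invariant-subspace argument closing that gap is sound, but it essentially re-derives the content of \ref{cyclicsepcomm}/\ref{cyclicsubspace} for the smaller algebra. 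A second difference worth noting: the detour through \ref{multfreechar} makes your ``if'' direction depend on the commutativity of $A$, whereas the paper observes that this direction needs only the commutativity of $\pi'$ (the definition of multiplicity-free) and holds for arbitrary \st-algebras $A$; commutativity of $A$ is used only in the ``only if'' direction, via \ref{cyclicfree}, where both you and the paper agree.
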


\begin{proof}
The ``only if'' part holds without the assumption of separability on $H$,
cf.\ \ref{cyclicfree}. The ``if'' part holds without the assumption
of commutativity on $A$, as we shall show next. If $\pi'$ is
commutative, then since $H$ is separable, $\pi'$ has a separating
vector, by \ref{commsepsep}, which will be cyclic under $\pi$,
by \ref{cyclicsepcomm}.
\end{proof}

\medskip
This gives a characterisation of cyclicity in terms of only the
algebraic structure of the commutant.

\begin{theorem}\label{maxcommchar}%
For a commutative von Neumann algebra $\mathscr{M}$ on a
\underline{separable} Hilbert space $\neq \{0\}$, the following
statements are equivalent.
\begin{itemize}
   \item[$(i)$] $\mathscr{M}$ is maximal commutative,
  \item[$(ii)$] $\mathscr{M}$ is cyclic,
 \item[$(iii)$] $\mathscr{M}$ is spatially equivalent to the maximal
                       commutative von Neumann algebra $\rmLeb ^{\infty}(\mu)$
                       for some perfect inner regular Borel probability measure
                       $\mu$ on the compact Hausdorff space $\Delta(\mathscr{M})$.
\end{itemize}
\end{theorem}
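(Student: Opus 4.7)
The plan is to route the proof through the equivalence $(ii) \Leftrightarrow (iii)$, which is already the content of theorem \ref{commcyclicchar} (applied to the locally compact Hausdorff space $\Delta(\mathscr{M})$, which is actually compact here since a commutative von Neumann algebra is unital). Notably, that theorem does not invoke separability. Hence the whole task reduces to establishing the equivalence $(i) \Leftrightarrow (ii)$, and even there separability will only be needed in one of the two directions.

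For the implication $(ii) \Rightarrow (i)$, I would regard the cyclic commutative von Neumann algebra $\mathscr{M}$ as acting on $H$ through the identity embedding $\pi : \mathscr{M} \hookrightarrow \blop(H)$. By hypothesis this representation is cyclic, and $\mathscr{M}$ is commutative, so theorem \ref{spectralrepmain} applies and yields that $\pi' = \vonNeumAlg(\pi)$ is maximal commutative. Now $\vonNeumAlg(\pi) = \mathscr{M}$ because $\mathscr{M}$ is already a von Neumann algebra, and $\pi' = \mathscr{M}'$. Therefore $\mathscr{M} = \mathscr{M}'$, which by \ref{maxcommprime} exactly says that $\mathscr{M}$ is maximal commutative. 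Separability is not used in this direction.

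For the converse $(i) \Rightarrow (ii)$, I would invoke \ref{commsepsep}: since $\mathscr{M}$ is a commutative \st-subalgebra of $\blop(H)$ acting non-degenerately (it contains $\mathds{1}$) and $H$ is separable, $\mathscr{M}$ admits a separating vector $x$. Maximal commutativity of $\mathscr{M}$ means $\mathscr{M}' = \mathscr{M}$, so $x$ is separating for $\mathscr{M}' = \mathscr{M}$. By the cyclic/separating duality \ref{cyclicsepcomm} (applied with $B := \mathscr{M}'$, yielding: $x$ cyclic under $\mathscr{M}'$ iff $x$ separating for $\mathscr{M}'' = \mathscr{M}$), the vector $x$ is cyclic under $\mathscr{M}' = \mathscr{M}$, which is what we wanted.

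The main obstacle, and the only place where separability of $H$ actually enters, is the existence of a separating vector for an arbitrary commutative von Neumann algebra on $H$. This is precisely \ref{commsepsep}, whose proof relies on the direct-sum decomposition of a separable Hilbert space into countably many cyclic subspaces. So this theorem should be read as a compact packaging of three results: the measure-theoretic content is absorbed into \ref{commcyclicchar}, the algebraic characterisation $\mathscr{M} = \mathscr{M}'$ comes from \ref{spectralrepmain} via the identity representation, and separability is spent only once, to convert maximal commutativity back into the existence of a cyclic (separating) vector.
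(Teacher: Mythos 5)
Your proof is correct and follows essentially the same route as the paper: the paper disposes of $(ii)\Leftrightarrow(iii)$ by \ref{commcyclicchar} and of $(i)\Leftrightarrow(ii)$ by citing \ref{multfreechar} and \ref{sepmaxcomm}, whose own proofs are exactly your two arguments (the maximal commutativity of $\pi'=\vonNeumAlg(\pi)$ from \ref{spectralrepmain} for one direction, and \ref{commsepsep} plus the cyclic/separating duality \ref{cyclicsepcomm} for the other). You merely inline those lemmas and bypass the intermediate vocabulary of multiplicity-freeness, which changes nothing of substance.
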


\begin{proof}
(i) $\Leftrightarrow$ (ii): \ref{multfreechar} and \ref{sepmaxcomm}.
(ii) $\Leftrightarrow$ (iii): \ref{commcyclicchar}.
\end{proof}

\begin{observation}
A normal subset of a \st-algebra $A$ is contained in a
maximal commutative \st-subalgebra of $A$, by Zorn's Lemma.
\end{observation}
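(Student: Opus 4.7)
The plan is a standard application of Zorn's Lemma, so I expect no real obstacle — only some care to make sure the maximal element produced is maximal among all commutative \st-subalgebras of $A$, not merely maximal among those containing the given normal set $N$.

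Let $N$ be the normal subset of $A$, and let $\mathcal{Z}$ denote the collection of all commutative \st-subalgebras of $A$ containing $N$, partially ordered by inclusion. First I would check that $\mathcal{Z}$ is non-empty: since $N$ is normal, its elements commute pairwise and $N$ is \st-stable, so for instance the second commutant $N''$ of $N$ in $A$ is a commutative \st-subalgebra of $A$ containing $N$, as already recorded in observation \ref{normalscndcomm}. Hence $N'' \in \mathcal{Z}$.

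Next I would verify that every chain in $\mathcal{Z}$ has an upper bound in $\mathcal{Z}$. Given a non-empty chain $\mathcal{C} \subset \mathcal{Z}$, set $B := \bigcup_{C \in \mathcal{C}} C$. Clearly $N \subset B$. To see that $B$ is a \st-subalgebra, note that any finitely many elements $a_1, \ldots, a_n \in B$ lie in a common $C \in \mathcal{C}$ by the chain property, so any linear combination, product, or adjoint of them again lies in $C \subset B$. The same argument shows $B$ is commutative, since any two elements of $B$ lie in a common commutative member of $\mathcal{C}$. Thus $B \in \mathcal{Z}$ is an upper bound for $\mathcal{C}$.

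Zorn's Lemma now supplies a maximal element $M$ of $\mathcal{Z}$. It remains to argue that $M$ is maximal commutative in $A$, not merely maximal in $\mathcal{Z}$. Suppose $M'$ is a commutative \st-subalgebra of $A$ with $M \subset M'$. Since $N \subset M \subset M'$, we have $M' \in \mathcal{Z}$, and maximality of $M$ in $\mathcal{Z}$ forces $M = M'$. Hence $M$ is a maximal commutative \st-subalgebra of $A$ containing $N$, completing the proof.
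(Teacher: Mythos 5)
Your proof is correct and is exactly the argument the paper intends: the paper offers no details beyond the words ``by Zorn's Lemma'', and your write-up supplies them, including the point (worth making explicit) that maximality in the restricted poset of commutative \st-subalgebras containing $N$ already yields maximality among \emph{all} commutative \st-subalgebras of $A$. One minor remark: for the non-emptiness step in a general (not necessarily normed) \st-algebra it is cleaner to invoke \ref{scndcobs} (vi) directly, since \ref{normalscndcomm} is stated only for normed \st-algebras; alternatively, the \st-subalgebra generated by $N$ serves just as well.
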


\begin{corollary}[joint diagonalisation]\label{sepspatequiv}%
Let $N$ be a normal subset of $\blop(H)$. If $H$ is separable,
there exists a perfect inner regular Borel probability measure
$\mu$ on a compact Hausdorff space $K \neq \varnothing$,
as well as a unitary operator $U : H \to \rmLeb ^2(\mu)$, such
that for each operator $b \in N$, there is a function
$f \in \cont(K)$ with $UbU^{\,-1} = M\smu(f)$.
\end{corollary}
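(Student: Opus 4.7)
The plan is to enlarge $N$ to a maximal commutative \emph{von Neumann algebra} on $H$, diagonalise it by the characterisation \ref{maxcommchar}, and then upgrade the bounded measurable symbols of the elements of $N$ to continuous ones via perfectness of the measure.

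First I would invoke the observation preceding the statement, applied with $A = \blop(H)$: since $N$ is normal, it is contained in some maximal commutative \st-subalgebra $\mathscr{M}$ of $\blop(H)$. By \ref{maxcommprime}, $\mathscr{M}' = \mathscr{M}$; in particular $\mathscr{M}$ is the commutant of a \st-stable set and therefore a von Neumann algebra on $H$ (this is the content of the observation following \ref{maxcommprime} on maximal commutative von Neumann algebras).

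Next I would apply Theorem \ref{maxcommchar}, whose hypothesis is that $H$ be separable (and nonzero, which is a standing assumption in this chapter). The equivalence (i)$\Leftrightarrow$(iii) there yields a perfect inner regular Borel probability measure $\mu$ on the compact Hausdorff space $K := \Delta(\mathscr{M})$, together with a unitary operator $U : H \to \rmLeb^{2}(\mu)$ implementing the spatial equivalence
\[
U \,\mathscr{M}\, U^{-1} \ = \ \rmLeb^{\infty}(\mu),
\]
where the right hand side is viewed as the maximal commutative von Neumann algebra of multiplication operators on $\rmLeb^{2}(\mu)$, cf.\ \ref{maxcommL}.

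Finally, for each $b \in N \subset \mathscr{M}$, the operator $U b U^{-1}$ lies in $\rmLeb^{\infty}(\mu)$, so $U b U^{-1} = M_{\mu}(g)$ for some $g \in \rmLeb^{\infty}(\mu)$. Here I would use that $\mu$ is \emph{perfect} (Definition \ref{prefect}): this identifies $\rmLeb^{\infty}(\mu)$ with $\cont(K)$, so there is a (unique) $f \in \cont(K)$ with $f = g$ $\mu$-a.e., whence $M_{\mu}(g) = M_{\mu}(f)$ as operators on $\rmLeb^{2}(\mu)$. This gives the desired $f$ and completes the proof. The main subtlety is not in any single step but in recognising that the corollary is an immediate harvest of two nontrivial results: the Zorn-type enlargement yielding a maximal commutative von Neumann algebra, and the characterisation \ref{maxcommchar} of such algebras under separability; the passage from $\rmLeb^{\infty}(\mu)$ to $\cont(K)$ is then bought cheaply from perfectness.
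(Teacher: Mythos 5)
Your proof is correct and follows exactly the route the paper intends: the corollary is stated without proof immediately after the Zorn's-Lemma observation and Theorem \ref{maxcommchar}, precisely because it is the combination you describe (enlarge $N$ to a maximal commutative von Neumann algebra via \ref{maxcommprime}, apply (i)$\Rightarrow$(iii) of \ref{maxcommchar}, and use perfectness to replace the $\rmLeb^{\infty}$-symbol by a continuous one). No gaps.
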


\begin{definition}[scalar spectral measure]%
\label{scalspmdef}%
\index{concepts}{measure!scalar spectral}%
\index{concepts}{spectral!measure!scalar}%
\index{concepts}{measure!spectral!scalar}%
\index{concepts}{scalar spectral measure}%
Let $P$ be a spectral \linebreak measure, defined on a $\sigma$-algebra
$\mathcal{E}$. Then a probability measure $\mu$, \linebreak defined on
$\mathcal{E}$, is called a \underline{scalar spectral measure}
for $P$, if for $\Delta \in \mathcal{E}$, one has $P(\Delta) = 0$
precisely when $\mu(\Delta) = 0$. That is, $P$ and $\mu$ should
be mutually absolutely continuous.
\end{definition}

We restrict the definition of scalar spectral measures to probability
measures for the reason that the applications - foremostly the crucial
item \ref{identify} below - require bounded measures.

\bigskip
The following two results deal with the existence of scalar spectral measures.
\pagebreak

\begin{proposition}[separating vectors for spectral measures]%
\index{concepts}{separating!vector}\index{concepts}{vector!separating}%
Let $P$ be a spectral measure, defined on a $\sigma$-algebra
$\mathcal{E}$, and acting on $H$. Then for a unit vector $x \in H$,
the probability measure $\langle P x, x \rangle$ is a scalar spectral
measure for $P$ if and only if $x$ is \underline{separating} for $P$.
(In the obvious extended sense that for $\Delta \in \mathcal{E}$,
one has $P(\Delta)x = 0$ only when $P(\Delta) = 0$.)
\end{proposition}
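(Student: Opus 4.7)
The plan is essentially to unwind the definitions and exploit the fact that $P(\Delta)$ is a projection. Since $P(\Delta)$ is self-adjoint and idempotent, one has for every $\Delta \in \mathcal{E}$:
\[ \langle Px, x \rangle(\Delta) = \langle P(\Delta)x, x \rangle = \langle P(\Delta)^{\,2} x, x \rangle = \langle P(\Delta) x, P(\Delta) x \rangle = \| \,P(\Delta) x \,\|^{\,2}. \]
Hence $\langle Px, x \rangle(\Delta) = 0$ if and only if $P(\Delta) x = 0$. This is the key computation.

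With this identity in hand, the criterion that $\mu := \langle Px, x \rangle$ be a scalar spectral measure for $P$, namely that $\mu(\Delta) = 0 \Leftrightarrow P(\Delta) = 0$ for all $\Delta \in \mathcal{E}$, translates directly into the condition:
\[ P(\Delta) x = 0 \ \Leftrightarrow \ P(\Delta) = 0 \qquad ( \,\Delta \in \mathcal{E} \,). \]
The implication $P(\Delta) = 0 \Rightarrow P(\Delta) x = 0$ being trivial, the content reduces to the reverse implication, which is precisely the extended separating condition from the statement.

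There is no real obstacle here; the argument is a pure unwinding of definitions, the one non-trivial ingredient being the observation that for projections, the squared norm $\| \,P(\Delta) x \,\|^{\,2}$ coincides with the measure-theoretic value $\langle Px, x \rangle(\Delta)$. I would therefore present the proof as a single short paragraph establishing the displayed identity and then noting the two equivalences above.
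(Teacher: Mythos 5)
Your argument is correct and is essentially identical to the paper's own proof: both rest on the identity $\langle Px,x\rangle(\Delta) = \|\,P(\Delta)x\,\|^{\,2}$ (which the paper already records in the definition of a spectral measure) and then unwind the two definitions. Nothing is missing.
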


\begin{proof}
For $x \in H$ and $\Delta \in \mathcal{E}$, we have
\[ {\| \,P(\Delta)x \,\| \,}^2 = \langle P x , x \rangle (\Delta), \]
so $P(\Delta)x = 0$ if and only if $\langle P x , x \rangle (\Delta) = 0$.
Therefore $x$ is separating for $P$ if and only if $\Delta \in \mathcal{E}$
and $\langle P x , x \rangle (\Delta) = 0$ imply $P(\Delta) = 0$,
i.e.\ if and only if $\langle P x , x \rangle$ is a scalar spectral measure. 
\end{proof}

\begin{theorem}\label{exscalspm}%
Let $P$ be a spectral measure acting on $H$. If $H$ is separable,
then there exists a unit vector $x \in H$ such that the probability
measure $\langle P x, x \rangle$ is a scalar spectral measure for $P$.
The vector $x$ can be chosen to be any unit separating vector for $P$,
for example any unit separating vector for $\range(\pi_P)$.
\end{theorem}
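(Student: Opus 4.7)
The plan is to deduce the existence of a unit separating vector for $\range(\pi_P)$ from the preceding results, observe that such a vector automatically separates the smaller set $P$, and then invoke the proposition on separating vectors for spectral measures to conclude. This essentially reduces the theorem to an application of Theorem \ref{commsepsep} to the \st-subalgebra $\range(\pi_P) \subset \blop(H)$.

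First I would verify that $\range(\pi_P)$ satisfies the hypotheses needed to apply Theorem \ref{commsepsep}. It is a \st-subalgebra of $\blop(H)$ because $\pi_P$ is a \st-algebra homomorphism. It is commutative, since the domain $\bmeas(\mathcal{E})$ is a commutative \st-algebra. Finally, it acts non-degenerately on $H$ because it contains the unit operator $\mathds{1} = P(\Omega) = \pi_P(1_\Omega)$. Applying Theorem \ref{commsepsep}, which uses the separability of $H$, one obtains a separating vector for $\range(\pi_P)$; after normalisation this yields a unit separating vector $x \in H$ for $\range(\pi_P)$.

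Next I would observe that since $P \subset \range(\pi_P)$ (indeed $P(\Delta) = \pi_P(1_{\Delta})$ for every $\Delta \in \mathcal{E}$), any vector separating for $\range(\pi_P)$ is, in particular, separating for $P$ in the extended sense required by the preceding proposition: if $\Delta \in \mathcal{E}$ and $P(\Delta)x = 0$, then $P(\Delta) = 0$. By the preceding proposition on separating vectors for spectral measures, the probability measure $\langle Px, x \rangle$ is then a scalar spectral measure for $P$, which is what we wanted.

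The only nontrivial step is the invocation of Theorem \ref{commsepsep}, which in turn relies on the direct sum decomposition into cyclic subrepresentations \ref{directsumdeco} together with countability of the index set, the latter coming from separability of $H$; this is the sole place where separability is used. No additional obstacle arises, since the algebraic verifications for $\range(\pi_P)$ (commutativity, non-degeneracy, \st-stability) are immediate from the definition of $\pi_P$ and the identity $\pi_P(1_\Omega) = \mathds{1}$.
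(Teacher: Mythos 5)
Your proposal is correct and is essentially the paper's own proof, which simply cites Theorem \ref{commsepsep} applied to $\range(\pi_P)$; you have merely filled in the routine verifications (commutativity, non-degeneracy via $\pi_P(1_\Omega) = \mathds{1}$, and the passage from separating for $\range(\pi_P)$ to separating for $P$) that the paper leaves implicit.
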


\begin{proof}
The range $\range(\pi_P)$ has a separating vector by \ref{commsepsep}.
\end{proof}

\medskip
The next two results exhibit the purpose of scalar spectral measures.

\begin{theorem}\label{identify}%
Let $P$ be a spectral measure, defined on a $\sigma$-algebra
$\mathcal{E}$. If $P$ admits a probability measure $\mu$, defined
on $\mathcal{E}$, as a scalar spectral measure, then we may identify
the C*-algebras $\rmLeb ^{\infty}(P)$ and $\rmLeb ^{\infty}(\mu)$.
\end{theorem}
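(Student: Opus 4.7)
The plan is to realise both C*-algebras as quotients of the common space $\bmeas(\mathcal{E})$, and then match the two quotient relations.

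First I would write $\rmLeb^{\infty}(P) = \bmeas(\mathcal{E})/\mathrm{N}(P)$ as already done in the text, and obtain a similar description of $\rmLeb^{\infty}(\mu)$. For the latter, note that every function in $\bmeas(\mathcal{E})$ is a bounded $\mu$-measurable function and so defines an element of $\rmLeb^{\infty}(\mu)$; conversely, by standard integration theory (under the conventions in use, this is an application of the fact that a $\mu$-measurable function is $\mu$-a.e.\ equal to an $\mathcal{E}$-measurable function, together with truncation at the essential supremum), every element of $\rmLeb^{\infty}(\mu)$ is represented by some function in $\bmeas(\mathcal{E})$. This yields a surjective \st-algebra homomorphism $\bmeas(\mathcal{E}) \to \rmLeb^{\infty}(\mu)$ whose kernel consists exactly of those $f \in \bmeas(\mathcal{E})$ with $f = 0$ $\mu$-a.e.

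Second, and this is the heart of the matter, I would show that for $f \in \bmeas(\mathcal{E})$,
\[ f \in \mathrm{N}(P) \ \Longleftrightarrow \ f = 0 \ \mu\text{-a.e.} \]
Set $\Delta := \{ \,t \in \Omega : f(t) \neq 0 \,\} \in \mathcal{E}$. On the $P$-side, for every $x \in H$ one has $\langle P x, x \rangle (\Delta) = \| \,P(\Delta) x \,\|^{\,2}$, so $f$ vanishes $\langle P x, x \rangle$-a.e.\ precisely when $P(\Delta) x = 0$; hence $f \in \mathrm{N}(P)$ if and only if $P(\Delta) = 0$. On the $\mu$-side, $f$ vanishes $\mu$-a.e.\ if and only if $\mu(\Delta) = 0$. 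The defining property of a scalar spectral measure (\ref{scalspmdef}) says exactly that $P(\Delta) = 0 \Leftrightarrow \mu(\Delta) = 0$, and the two kernels coincide.

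Third, combining the two descriptions, the inclusion $\bmeas(\mathcal{E}) \hookrightarrow \mathscr{L}^{\infty}(\mu)$ factors to a well-defined \st-algebra isomorphism
\[ \rmLeb^{\infty}(P) = \bmeas(\mathcal{E}) / \mathrm{N}(P) \ \longrightarrow \ \rmLeb^{\infty}(\mu), \]
which is then automatically isometric by \ref{C*injisometric}, i.e.\ an isomorphism of C*-algebras.

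The only real obstacle is the surjectivity onto $\rmLeb^{\infty}(\mu)$, which depends on the convention about ``$\mu$-measurable'' in the book's integration theory prerequisite; once one agrees that essentially bounded $\mu$-measurable functions admit $\mathcal{E}$-measurable representatives, everything else is purely formal. The equality of null ideals $\mathrm{N}(P) \cap \bmeas(\mathcal{E}) = \mathrm{N}(\mu) \cap \bmeas(\mathcal{E})$ is the conceptual content, and it reduces directly to the definition of a scalar spectral measure.
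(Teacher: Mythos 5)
Your proof is correct and follows essentially the same route as the paper's: both rest on the appendix result \ref{general} that essentially bounded $\mu$-measurable functions admit bounded $\mathcal{E}$-measurable representatives, match the two null ideals via the definition \ref{scalspmdef} of a scalar spectral measure, and conclude by the automatic isometry of \st-isomorphisms between C*-algebras. The paper merely compresses this into three lines, citing \ref{Cstarisom} where you cite \ref{C*injisometric}.
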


\begin{proof}
Assume that some probability measure $\mu$, defined on
$\mathcal{E}$, is a scalar spectral measure for $P$. A function $f$
in ${\mathscr{L} \,}^{\infty}(\mu)$ then is \linebreak $\mu$-a.e.\ equal
to a function $g \in \bmeas(\mathcal{E})$, cf.\ the appendix
\ref{general}, and the function $f$ is $\mu$-a.e.\ zero if and only if
the function $g$ is $P$-a.e.\ zero, by \ref{scalspmdef}.
It remains to apply \ref{Cstarisom}.
\end{proof}

\begin{theorem}\label{scalpurpose}%
Let $P$ be a spectral measure, defined on a $\sigma$-algebra
$\mathcal{E}$. If $P$ admits a probability measure $\mu$,
defined on $\mathcal{E}$, as a scalar spectral measure, then
$\pi_P$ factors to a C*-algebra isomorphism from $\rmLeb ^{\infty}(\mu)$
onto $\range(\pi_P)$.
\end{theorem}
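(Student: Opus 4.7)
The plan is to combine the two preceding results in the obvious way. The representation $\pi_P$ has kernel $\ker \pi_P = \mathrm{N}(P)$ (the functions in $\bmeas(\mathcal{E})$ vanishing $P$-a.e.) by \ref{kerspectralint}, so by \ref{LPmain} it factors to a C*-algebra isomorphism
\[ \rmLeb^{\infty}(P) = \bmeas(\mathcal{E})/\mathrm{N}(P) \ \xrightarrow{\sim} \ \range(\pi_P), \]
sending the class of $f \in \bmeas(\mathcal{E})$ to $\pi_P(f)$. This part requires no new input.

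Next I would invoke \ref{identify} to identify the C*-algebras $\rmLeb^{\infty}(P)$ and $\rmLeb^{\infty}(\mu)$. The content of that identification is that each element of $\mathscr{L}^{\infty}(\mu)$ is $\mu$-a.e.\ equal to some $g \in \bmeas(\mathcal{E})$, and that two functions in $\bmeas(\mathcal{E})$ are $\mu$-a.e.\ equal precisely when they are $P$-a.e.\ equal --- this latter equivalence being exactly the defining property of a scalar spectral measure \ref{scalspmdef}, namely the mutual absolute continuity of $P$ and $\mu$.

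Composing the identification $\rmLeb^{\infty}(\mu) \cong \rmLeb^{\infty}(P)$ from \ref{identify} with the factored isomorphism from \ref{LPmain}, I obtain the desired C*-algebra isomorphism $\rmLeb^{\infty}(\mu) \xrightarrow{\sim} \range(\pi_P)$. Concretely, for $f \in \mathscr{L}^{\infty}(\mu)$, one picks any $g \in \bmeas(\mathcal{E})$ with $f = g$ $\mu$-a.e.\ (equivalently, $P$-a.e.), and sends the class of $f$ to $\pi_P(g)$; the two identifications ensure this is well-defined and bijective onto $\range(\pi_P)$.

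There is no real obstacle here, since the theorem is essentially the composition of two results already established: the only thing worth being careful about is verifying that the identification in \ref{identify} is compatible with the quotient map defining $\rmLeb^{\infty}(P)$, which amounts to noting that the equivalence relations ``$\mu$-a.e.\ equal'' and ``$P$-a.e.\ equal'' coincide on $\bmeas(\mathcal{E})$ by the mutual absolute continuity of $\mu$ and $P$.
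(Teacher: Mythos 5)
Your proposal is correct and is exactly the paper's argument: the paper's proof simply cites \ref{LPmain}, relying on the identification of $\rmLeb^{\infty}(P)$ with $\rmLeb^{\infty}(\mu)$ just established in \ref{identify}. You have merely spelled out the composition in more detail, including the well-definedness check via mutual absolute continuity.
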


\begin{proof}
This follows now from \ref{LPmain}. \pagebreak
\end{proof}

\medskip
We have the following remarkable result:

\begin{theorem}\label{deep}%
Let $P$ be a spectral measure acting on $H$. If $H$ is separable,
there exists a unit vector $x \in H$ such that $\pi_P$ factors to a
C*-algebra isomorphism from $\rmLeb ^{\infty} ( \langle P x, x \rangle )$
onto $\vonNeumAlg (P)$. The vector $x$ can be chosen to be any unit vector
in $H$, such that the probability measure $\langle P x, x \rangle$ is
a scalar spectral measure for $P$, for example any unit separating
vector for $P$, or even $\vonNeumAlg (P)$.
\end{theorem}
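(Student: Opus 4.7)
The plan is to assemble this theorem from the machinery already developed. Given any unit vector $x \in H$ such that $\mu := \langle Px, x \rangle$ is a scalar spectral measure for $P$, theorem \ref{scalpurpose} immediately yields that $\pi_P$ factors to a C*-algebra isomorphism from $\rmLeb^{\infty}(\mu)$ onto $\range(\pi_P)$. So the whole statement will follow once I do two things: first, identify $\range(\pi_P)$ with $\vonNeumAlg(P)$ under the separability hypothesis, and second, verify that a unit vector with the stated property exists and that the examples offered in the statement do work.

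For the first point, I would argue as follows. On one hand, $\range(\pi_P) = \overline{\mathrm{span}}(P)$ by \ref{spanP}, so $\range(\pi_P)$ is contained in any von Neumann algebra containing $P$, whence $\range(\pi_P) \subset \vonNeumAlg(P)$. On the other hand, separability of $H$ together with \ref{sepspm} makes $\range(\pi_P)$ itself a von Neumann algebra; since it contains $P$, it contains $\vonNeumAlg(P)$. The two inclusions give $\range(\pi_P) = \vonNeumAlg(P)$. This is where the separability hypothesis enters, and it is essentially the content of \ref{sepspm}, which in turn rests on the ``gap'' theorem \ref{gap} and the material of paragraph \ref{rangepara}.

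For the second point, I would produce a suitable $x$ as a separating unit vector for $\vonNeumAlg(P)$. Now $P$ is a normal subset of $\blop(H)$: its elements are projections (hence self-adjoint) and commute pairwise by \ref{rep}(iii). Therefore $\vonNeumAlg(P)$ is commutative by \ref{Wstarcomm}, and since $H$ is separable, $\vonNeumAlg(P)$ admits a separating unit vector $x$ by \ref{commsepsep}. Any such $x$ is a fortiori separating for $P \subset \vonNeumAlg(P)$ in the extended sense of the proposition preceding \ref{exscalspm}, and that proposition then guarantees that $\langle Px, x \rangle$ is a scalar spectral measure for $P$. This simultaneously proves the existence claim and justifies both of the stated examples (``any unit separating vector for $P$, or even $\vonNeumAlg(P)$''), and the theorem is complete.

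There is no real obstacle here; the result is a synthesis of existing pieces. The genuine work has already been done in chapter \ref{supplmat} (the gap theorem \ref{gap} and theorem \ref{sepspm} for the range side) and in \ref{commsepsep} for the domain side. If anything, the one place deserving care is the chain $P \subset \vonNeumAlg(P) \subset \blop(H)$: I must use that a separating vector for the larger algebra $\vonNeumAlg(P)$ automatically separates the smaller set $P$, which is immediate but worth noting explicitly so the transition from the operator-algebraic separation of \ref{commsepsep} to the measure-theoretic separation required by \ref{scalspmdef} is transparent.
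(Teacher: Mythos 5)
Your proposal is correct and follows essentially the same route as the paper: the substance is theorem \ref{sepspm} (to upgrade $\range(\pi_P)$ to a von Neumann algebra, identified with $\vonNeumAlg(P)$), combined with \ref{exscalspm} and \ref{scalpurpose}. Your two-inclusion identification of $\range(\pi_P)$ with $\vonNeumAlg(P)$ and your use of \ref{Wstarcomm} and \ref{commsepsep} to produce the separating vector merely spell out details the paper delegates to \ref{spprojcomm} and \ref{exscalspm}.
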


\begin{proof}
Since $H$ is separable, we know from \ref{sepspm} that $\range(\pi_P)$
is a von Neumann algebra, namely $\vonNeumAlg (\pi_P)$, which equals
$\vonNeumAlg (P)$ by \ref{spprojcomm}. The statement follows now from
\ref{exscalspm} and \ref{scalpurpose}.
\end{proof}

\begin{theorem}[the $\rmLeb ^{\infty}$ functional calculus]%
Let $\pi$ be any non-degenerate representation of a commutative
\st-algebra on $H$, and let $P$ be the spectral resolution of $\pi$.
If $H$ is separable, there exists a unit vector $x \in H$ such that
$\pi_P$ factors to a C*-algebra isomorphism from
$\rmLeb ^{\infty} ( \langle P x, x \rangle )$ onto $\vonNeumAlg (\pi)$.
The vector $x$ can be chosen to be any unit vector in $H$, such
that the probability measure $\langle P x, x \rangle$ is a scalar
spectral measure for $P$, for example any unit separating vector
for $P$, or even $\vonNeumAlg (\pi)$.
\end{theorem}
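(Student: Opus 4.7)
The plan is to deduce this essentially immediately from the preceding theorem \ref{deep} applied to the spectral resolution $P$ of $\pi$, using the identification $\vonNeumAlg(\pi) = \vonNeumAlg(P)$ from proposition \ref{gensame}.

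First I would note that $P$ is a spectral measure acting on the separable Hilbert space $H$, so theorem \ref{deep} is directly applicable. It yields a unit vector $x \in H$ such that $\pi_P$ factors to a C*-algebra isomorphism from $\rmLeb ^{\infty}(\langle P x, x \rangle)$ onto $\vonNeumAlg(P)$. The vector $x$ can be chosen to be any unit vector for which $\langle P x, x \rangle$ is a scalar spectral measure for $P$, for example any unit separating vector for $P$ or for $\vonNeumAlg(P)$.

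Next I would invoke proposition \ref{gensame}, which states that $\pi$ and $\pi_P$ generate the same von Neumann algebra, and that $\vonNeumAlg(\pi) = \vonNeumAlg(P)$. Hence the image of the factored map is precisely $\vonNeumAlg(\pi)$, giving the desired C*-algebra isomorphism from $\rmLeb ^{\infty}(\langle P x, x \rangle)$ onto $\vonNeumAlg(\pi)$.

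The only remaining point is the statement about choices of $x$. The assertions about unit vectors $x$ with $\langle P x, x \rangle$ a scalar spectral measure for $P$, and about unit separating vectors for $P$, are inherited verbatim from theorem \ref{deep}. For the last clause, since $\vonNeumAlg(\pi) = \vonNeumAlg(P)$ and by \ref{spprojsubset} one has $P \subset \vonNeumAlg(P)$, any unit separating vector for $\vonNeumAlg(\pi)$ is automatically separating for $P$, and so falls under the hypothesis already covered. There is no real obstacle here — the work was done in theorem \ref{deep}, which in turn rested on the deep input \ref{sepspm} (separability giving that $\range(\pi_P)$ is a von Neumann algebra) and the existence of scalar spectral measures via separating vectors (\ref{exscalspm}, \ref{commsepsep}). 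The present statement is simply the repackaging of \ref{deep} in terms of the representation $\pi$ rather than its spectral resolution $P$.
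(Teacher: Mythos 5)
Your argument is correct and coincides with one of the two routes the paper itself gives: the paper's proof reads ``This follows from \ref{sepmain}, \ref{exscalspm} and \ref{scalpurpose}. The result also follows from the preceding theorem \ref{deep} together with \ref{gensame}.'' Your handling of the last clause (a separating vector for $\vonNeumAlg(\pi) = \vonNeumAlg(P) \supset P$ is separating for $P$) is exactly the intended reading.
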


\begin{proof}
This follows from \ref{sepmain}, \ref{exscalspm} and \ref{scalpurpose}.
The result also follows from the preceding theorem \ref{deep} together
with \ref{gensame}.
\end{proof}

\medskip
Similarly, we obtain the following representation theorem
for commutative von Neumann algebras on separable
Hilbert spaces $\neq \{0\}$.

\begin{theorem}\label{vNCstar}%
A commutative von Neumann algebra $\mathscr{M}$ on a
\linebreak separable Hilbert space $\neq \{0\}$ is isomorphic
as a C*-algebra to $\rmLeb ^{\infty}(\mu)$ for some perfect
inner regular Borel probability measure $\mu$ on the
\linebreak compact Hausdorff space $\Delta(\mathscr{M})$.
The measure $\mu$ can be chosen to be any inner regular
Borel probability measure on $\Delta(\mathscr{M})$ which
is a scalar spectral measure for the spectral resolution of
$\mathscr{M}$.
\end{theorem}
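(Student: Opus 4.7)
The plan is to assemble this theorem from pieces already in place. Let $P$ denote the spectral resolution of $\mathscr{M}$ as a resolution of identity on the compact Hausdorff space $\Delta(\mathscr{M})$ (theorem \ref{spthmC*}); in particular, the support of $P$ is all of $\Delta(\mathscr{M})$. By theorem \ref{Abelian}, we have $\mathscr{M} = \range(\pi_P)$. Now because $H$ is separable, theorem \ref{exscalspm} produces a unit vector $x \in H$ such that the inner regular Borel probability measure $\mu := \langle Px, x\rangle$ on $\Delta(\mathscr{M})$ is a scalar spectral measure for $P$. This is the candidate measure.

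The isomorphism of C*-algebras is then immediate from two general results applied to this $\mu$. Theorem \ref{identify} allows us to identify $\rmLeb^\infty(P)$ with $\rmLeb^\infty(\mu)$, and theorem \ref{scalpurpose} then says that $\pi_P$ factors to a C*-algebra isomorphism from $\rmLeb^\infty(\mu)$ onto $\range(\pi_P)$. Composing with $\mathscr{M} = \range(\pi_P)$ yields the desired C*-algebra isomorphism $\rmLeb^\infty(\mu) \to \mathscr{M}$. For the final assertion of the theorem, any scalar spectral measure for $P$ will function in place of $\mu$ in this argument, so the same chain of implications delivers the isomorphism.

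It remains to verify that $\mu$ is perfect in the sense of definition \ref{prefect}. For the support condition, since $\mu$ and $P$ share the same null sets by definition \ref{scalspmdef}, the support of $\mu$ equals the support of $P$, namely all of $\Delta(\mathscr{M})$. For the continuity condition, one uses corollary \ref{disconn}, which asserts that the natural imbedding $C(\Delta(\mathscr{M})) \hookrightarrow \rmLeb^\infty(P)$ is an isomorphism of C*-algebras; combining this with the identification $\rmLeb^\infty(P) = \rmLeb^\infty(\mu)$ from theorem \ref{identify} shows that every class in $\rmLeb^\infty(\mu)$ has a (necessarily unique) continuous representative on $\Delta(\mathscr{M})$. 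This establishes perfectness. The main conceptual obstacle — representing bounded measurable functions by continuous ones — has already been absorbed into corollary \ref{disconn}, so no new analytic work is required; the theorem is essentially a synthesis of the preceding paragraphs on separability with the Spectral Theorem for commutative C*-algebras.
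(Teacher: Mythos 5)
Your proof is correct and follows essentially the same route as the paper, which simply cites \ref{disconn} together with the appendix for perfectness and leaves the assembly of \ref{Abelian}, \ref{exscalspm}, \ref{identify} and \ref{scalpurpose} implicit. The only cosmetic difference is that you obtain the continuous representative by passing through \ref{identify} (hence the appendix result \ref{general}) where the paper cites the appendix result \ref{Baire} directly; both reduce an $\mathscr{L}^{\infty}(\mu)$ class to a bounded Borel function and then apply \ref{disconn}.
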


\begin{proof}
For perfectness, see \ref{disconn} and the appendix \ref{Baire}.
\end{proof}

\medskip
Please note that here we only have a C*-algebra isomorphism,
while in \ref{maxcommchar}, we have a spatial equivalence
of von Neumann algebras, which is a stronger condition.
\pagebreak

\clearpage


\addtocontents{toc}{\protect\vspace{0.2em}}

\chapter{Single Unbounded Self-Adjoint Operators}%
\label{unbdself}

\setcounter{section}{46}


\section{Spectral Integrals of Unbounded Functions}

\medskip
In this paragraph, let $\mathcal{E}$ be a $\sigma$-algebra on a set
$\Omega \neq \varnothing$, let $H \neq \{0\}$ be a Hilbert space, and let
$P$ be a spectral measure defined on $\mathcal{E}$ and acting on $H$.

We first have to extend the notion of ``linear operator''.

\begin{definition}[linear operator in $H$]\index{concepts}{operator}%
By a \underline{linear operator in $H$} we mean a linear operator
defined on a subspace of $H$, taking values in $H$. We stress that
we then say: linear operator \underline{in} $H$; not \underline{on} $H$.
The domain (of definition) of such a linear operator $a$ is denoted
by $\domain(a)$.\index{symbols}{D2@$\domain(a)$}
\end{definition}

\begin{definition}[$\mathcal{M(E)}$]%
\index{symbols}{M3@$\mathcal{M(E)}$}%
We shall denote by $\mathcal{M(E)}$ the set of \linebreak
complex-valued $\mathcal{E}$-measurable functions on
$\Omega$, cf.\ the appendix, \ref{measurability}.
\end{definition}

We shall associate with each $f \in \mathcal{M(E)}$ a linear operator
in $H$, defined on a dense subspace $\mathcal{D}_f$ of $H$.
Next the definition of $\mathcal{D}_f$.

\begin{proposition}[$\mathcal{D}_f$]\index{symbols}{D3@$\mathcal{D}_f$}%
Let $f \in \mathcal{M(E)}$. One defines
\[ \mathcal{D}_f :=
\{ \,x \in H : f \in {\mathscr{L} \,}^{2}(\langle Px,x \rangle) \,\}. \]
The set $\mathcal{D}_f$ is a dense subspace of $H$.
\end{proposition}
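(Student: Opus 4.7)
The plan is to prove the two assertions separately: first that $\mathcal{D}_f$ is a linear subspace of $H$, and then that it is norm-dense.

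For the subspace property, closure under scalar multiplication is immediate from the identity $\langle P(\lambda x), \lambda x \rangle = |\lambda|^2 \langle Px, x \rangle$, which leaves $\mathscr{L}^2$-membership of $f$ unchanged. For closure under addition, the key inequality is the ``parallelogram estimate'' at the level of measures: for any $\Delta \in \mathcal{E}$ and $x,y \in H$,
\[
\langle P(x+y), x+y\rangle(\Delta) = \|P(\Delta)(x+y)\|^2 \leq 2\,\|P(\Delta)x\|^2 + 2\,\|P(\Delta)y\|^2,
\]
that is, $\langle P(x+y), x+y\rangle \leq 2\langle Px,x\rangle + 2\langle Py,y\rangle$ as positive measures. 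Integrating $|f|^2$ with respect to both sides shows that if $x, y \in \mathcal{D}_f$, then $x+y \in \mathcal{D}_f$.

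For density, the idea is to truncate via level sets of $|f|$. Since $f$ is complex-valued (hence finite everywhere), the sets $\Delta_n := \{\,t \in \Omega : |f(t)| \leq n\,\} \in \mathcal{E}$ form an increasing sequence whose union is $\Omega$. By the strong $\sigma$-continuity \ref{strsgcont}, for every $x \in H$ one has $P(\Delta_n) x \to P(\Omega) x = x$. I would then set $x_n := P(\Delta_n) x$ and verify that each $x_n$ lies in $\mathcal{D}_f$. Using that $P(\Delta_n)$ is a projection commuting with every $P(\Delta)$ \ref{rep}(iii), one computes
\[
\langle P(\Delta) x_n, x_n \rangle = \langle P(\Delta_n) P(\Delta) P(\Delta_n) x, x \rangle = \langle P(\Delta \cap \Delta_n) x, x \rangle,
\]
so that $\langle P x_n, x_n \rangle = 1_{\Delta_n} \cdot \langle Px, x \rangle$ in the sense of measures.

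Hence
\[
\int |f|^2 \,\mathrm{d}\langle P x_n, x_n \rangle = \int_{\Delta_n} |f|^2 \,\mathrm{d}\langle Px, x \rangle \leq n^2 \,\langle Px, x \rangle(\Omega) = n^2 \|x\|^2 < \infty,
\]
which shows $x_n \in \mathcal{D}_f$. Since $x_n \to x$ and $x$ was arbitrary, $\mathcal{D}_f$ is dense in $H$. No step here looks like a genuine obstacle: the subspace part reduces to a pointwise-measure parallelogram inequality, and the density part is the standard truncation argument made precise by the strong $\sigma$-continuity of $P$ and the commutation and projection relations provided by \ref{rep}.
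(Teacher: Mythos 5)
Your proof is correct and follows essentially the same route as the paper's: the parallelogram estimate for closure under addition, and truncation by the level sets $\Delta_n$ of $|f|$ together with the strong $\sigma$-continuity of $P$ for density. The only cosmetic difference is that you phrase the density step in terms of the specific vectors $x_n = P(\Delta_n)x$ rather than the whole range of $P(\Delta_n)$, which amounts to the same computation.
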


\begin{proof}
For $x,y \in H$ and $\Delta \in \mathcal{E}$, we have
(the parallelogram identity):
\[ {\| \,P(\Delta)(x+y) \,\| \,}^2 + { \,\| \,P(\Delta)(x-y) \,\| \,}^2
= 2 \,\bigl( \,{\| \,P(\Delta)x \,\| \,}^2 + {\| \,P(\Delta)y \,\| \,}^2 \,\bigr), \]
whence
\[ \langle P(\Delta)(x+y),(x+y) \rangle
\leq 2 \,\bigl( \,\langle P(\Delta)x,x \rangle + \langle P(\Delta)y,y \rangle \,\bigr), \]
so that $\mathcal{D}_f$ is closed under addition. It shall next be
proved that $\mathcal{D}_f$ is dense in $H$. For $n \geq 1$ let
$\Delta_n := \{ \,| f \,| < n \,\}$. We have $\Omega = \cup _n \Delta _n$
as $f$ is complex-valued. Let $\Delta \in \mathcal{E}$.
For $z$ in the range of $P(\Delta _n)$, we have
\[ P(\Delta) z = P(\Delta) P(\Delta _n) z = P(\Delta \cap \Delta_n) z, \pagebreak \]
so that
\[ \langle Pz,z \rangle (\Delta) = \langle Pz,z \rangle (\Delta \cap \Delta _n), \]
and therefore
\[ \int {| \,f \,| \,}^2 \,\diff \langle Pz,z \rangle
= \int {| \,f \,| \,}^2 \,1_{\textstyle\Delta _n} \,\diff \langle Pz,z \rangle
\leq {n \,}^2 \,{\| \,z \,\| \,}^2 < \infty. \]
This shows that the range of $P(\Delta _n)$ is contained in
$\mathcal{D}_f$. We have
\[ y = \lim _{n \to \infty} P(\Delta _n) y \quad \text{for all} \quad y \in H \]
by $\Omega = \cup _n \Delta _n$ and by \ref{strsgcont}.
This says that $\mathcal{D}_f$ is dense in $H$.
\end{proof}

\begin{definition}[spectral integrals]\label{uspint}%
Consider some linear operator $a$ in $H$, defined on a
subspace $\domain(a)$ of $H$. Let $f \in \mathcal{M(E)}$.

One writes
\[ a = \int f \,\diff P \qquad \text{(weakly)} \]
if $\domain(a) = \mathcal{D}_f$ and if
\[ \langle ax,x \rangle = \int f \,\diff \langle Px,x \rangle \]
for all $x \in \mathcal{D}_f$. (Please note here that
${\mathscr{L} \,}^{2}(\langle Px,x \rangle) \subset {\mathscr{L} \,}^{1}(\langle Px,x \rangle)$
for all $x \in H$ by H\"older's inequality using boundedness of $\langle Px,x \rangle$).
By polarisation, there is at most one such operator $a$ for given $f$ and $P$.

We shall write
\[ a = \int f \,\diff P \qquad \text{(pointwise)} \]
if $\domain(a) = \mathcal{D}_f$ and if for all $x \in \mathcal{D}_f$ and all
$h \in \bmeas(\mathcal{E})$ one has
\[ \| \,\bigl(a - \pi _P(h)\bigr) x \,\| 
= \| \,f-h \,\|_{\text{\small{$\langle Px,x \rangle,2$}}}
= {\biggl(\int {| \,f-h \,| \,}^2 \ \diff \langle Px,x \rangle \biggr)}^{1/2}. \]
(Please note that $\bmeas(\mathcal{E})
\subset {\mathscr{L} \,}^{2}(\langle Px,x \rangle)$
for all $x \in H$ by boundedness of $\langle Px,x \rangle$.)
\end{definition}

\begin{proposition}%
Let $f \in \mathcal{M(E)}$ and let $a$ be a linear operator in $H$. Assume that
\[ a = \int f \,\diff P \qquad \text{(pointwise)}. \]
We then also have
\[ a = \int f \,\diff P \qquad \text{(weakly)}. \]
In particular, there is at most one such operator $a$ for given $f$ and $P$.
\pagebreak
\end{proposition}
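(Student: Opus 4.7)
The plan is to derive the weak equation from the pointwise one by approximating $f$ with bounded truncations.

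First, I would fix $x \in \mathcal{D}_f$, so that $f \in \mathscr{L}^2(\langle Px,x \rangle)$. For each integer $n \geq 1$, I set $h_n := f \cdot 1_{\{|f| \leq n\}}$, which is a bounded $\mathcal{E}$-measurable function, hence $h_n \in \bmeas(\mathcal{E})$. Applying the pointwise hypothesis with $h := h_n$, I obtain
\[ \| \,(a - \pi_P(h_n))\,x \,\|^{\,2} = \int | \,f - h_n \,|^{\,2} \,\diff \langle Px,x \rangle. \]
Since $| \,f - h_n \,|^{\,2} \leq 4 \,| \,f \,|^{\,2} \in \mathscr{L}^1(\langle Px,x \rangle)$ and $| \,f - h_n \,|^{\,2} \to 0$ pointwise, Lebesgue's Dominated Convergence Theorem yields $\pi_P(h_n)\,x \to a\,x$ in $H$. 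By continuity of the inner product, $\langle \pi_P(h_n)\,x, x \rangle \to \langle a\,x, x \rangle$.

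Next, I would extend the step-function formula of \ref{stepint} to all of $\bmeas(\mathcal{E})$: for every $h \in \bmeas(\mathcal{E})$ and every $y \in H$,
\[ \langle \pi_P(h)\,y, y \rangle = \int h \,\diff \langle P y, y \rangle. \]
This follows from \ref{stepint} together with \ref{closure} by uniformly approximating $h$ with $\mathcal{E}$-step functions and using continuity of $\pi_P$ (and boundedness of the measure $\langle Py,y \rangle$). Applied with $h := h_n$ and $y := x$, this gives
\[ \langle \pi_P(h_n)\,x, x \rangle = \int h_n \,\diff \langle Px,x \rangle. \]

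Finally, because $f \in \mathscr{L}^2(\langle Px,x \rangle) \subset \mathscr{L}^1(\langle Px,x \rangle)$ (using boundedness of $\langle Px,x \rangle$ and H\"older, as noted in the statement of \ref{uspint}), and since $| \,h_n \,| \leq | \,f \,|$ with $h_n \to f$ pointwise, Dominated Convergence yields $\int h_n \,\diff \langle Px,x \rangle \to \int f \,\diff \langle Px,x \rangle$. Combining with the limit obtained above gives $\langle a\,x, x \rangle = \int f \,\diff \langle Px,x \rangle$, as required. The equality of domains is built into both definitions, so no separate argument is needed there.

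I do not expect a serious obstacle here: the only slightly delicate point is justifying the extension of the step-function identity of \ref{stepint} to $\bmeas(\mathcal{E})$, but this is immediate from the norm estimate $\| \,\pi_P(h) \,\| \leq | \,h \,|_{\,\infty}$ and total boundedness of $\langle Px,x \rangle$. The uniqueness claim for $a$ is then automatic, since the weak form determines $\langle a x, y \rangle$ for all $x,y \in \mathcal{D}_f$ by polarisation.
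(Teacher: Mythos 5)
Your proof is correct and follows essentially the same route as the paper's: truncate $f$ to bounded functions $f\,1_{\{|f|<n\}}$, use the pointwise hypothesis plus dominated convergence to get $\pi_P(f_n)x \to ax$, and then pass to the limit in $\int f_n \,\diff\langle Px,x\rangle$ by dominated convergence again. The extra care you take in justifying $\langle \pi_P(h)x,x\rangle = \int h \,\diff\langle Px,x\rangle$ for $h \in \bmeas(\mathcal{E})$ is a point the paper uses tacitly (it is the weak form of $\pi_P(h)=\int h\,\diff P$, already established for bounded functions), so nothing is missing.
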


\begin{proof}
For $n \geq 1$, put
$f_n := f \,1_{\text{\small{$\{ \,| \,f \,| < n \,\}$}}} \in \bmeas(\mathcal{E})$.
For $x \in \mathcal{D}_f$, the assumption implies that
\[ ax = \lim _{n \to \infty} \pi _P(f_n)x \]
by the Lebesgue Dominated Convergence Theorem,
whence, by the same theorem, (as
${\mathscr{L} \,}^{2}(\mu) \subset {\mathscr{L} \,}^{1}(\mu)$ for bounded $\mu$),
\[ \langle ax,x \rangle
= \lim _{n \to \infty} \langle \pi _P(f_n)x,x \rangle
= \lim _{n \to \infty} \int f_n \,\diff \langle Px,x \rangle
= \int f \,\diff \langle Px,x \rangle. \qedhere \]
\end{proof}

\begin{theorem}[$\Psi_P(f)$]\index{symbols}{P99@$\Psi _P(f)$}%
For $f \in \mathcal{M(E)}$ there exists a (necessarily unique)
linear operator $\Psi _P(f)$ in $H$ such that
\[ \Psi _P(f) = \int f \,\diff P \qquad \text{(pointwise)}. \]
\end{theorem}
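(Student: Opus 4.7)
The plan is to construct $\Psi_P(f)$ on $\mathcal{D}_f$ as a pointwise limit of bounded spectral integrals. For each integer $n \geq 1$, let $\Delta_n := \{|f| < n\} \in \mathcal{E}$ and put $f_n := f \cdot 1_{\Delta_n} \in \bmeas(\mathcal{E})$. Then $\pi_P(f_n) \in \blop(H)$ is already defined, and we have $f_n \to f$ pointwise on $\Omega$ with $|f_n| \leq |f|$.

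First I would show that for each $x \in \mathcal{D}_f$, the sequence $\bigl(\pi_P(f_n)x\bigr)$ is Cauchy in $H$, so that its limit may be used to define $\Psi_P(f)x$. The key input is the isometric identity from \ref{normpointwise}, which gives
\[ \|\pi_P(f_n)x - \pi_P(f_m)x\|^{\,2} = \|\pi_P(f_n-f_m)x\|^{\,2} = \int |f_n-f_m|^{\,2} \,\diff \langle Px,x \rangle. \]
Since $x \in \mathcal{D}_f$ means $f \in \mathscr{L}^{\,2}(\langle Px, x\rangle)$, and since $|f_n - f_m|^{\,2} \leq 4|f|^{\,2}$ with $f_n - f_m \to 0$ pointwise, Lebesgue's Dominated Convergence Theorem forces the right-hand side to $0$. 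Hence $\Psi_P(f)x := \lim_{n\to\infty} \pi_P(f_n)x$ exists. Linearity of $\Psi_P(f)$ on $\mathcal{D}_f$ is immediate from the linearity of each $\pi_P(f_n)$ and from the fact that $\mathcal{D}_f$ is a subspace (already established).

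Next I would verify the pointwise property: for $x \in \mathcal{D}_f$ and any $h \in \bmeas(\mathcal{E})$,
\[ \bigl\| \bigl(\Psi_P(f) - \pi_P(h)\bigr) x \bigr\| = \|f - h\|_{\text{\small{$\langle Px,x\rangle,2$}}}. \]
Since $\pi_P$ is linear and $\pi_P(f_n - h)x = \pi_P(f_n)x - \pi_P(h)x \to \Psi_P(f)x - \pi_P(h)x$ in norm, proposition \ref{normpointwise} gives
\[ \bigl\| \bigl(\Psi_P(f) - \pi_P(h)\bigr) x \bigr\| = \lim_{n\to\infty} \|\pi_P(f_n - h)x\|
= \lim_{n\to\infty} \|f_n - h\|_{\text{\small{$\langle Px,x\rangle,2$}}}. \]
Another application of Dominated Convergence, using the bound $|f_n - h|^{\,2} \leq 2\bigl(|f|^{\,2} + |h|^{\,2}\bigr) \in \mathscr{L}^{\,1}(\langle Px, x \rangle)$ (here $h$ is bounded and $x \in \mathcal{D}_f$), transports the limit inside the integral to yield $\|f-h\|_{\text{\small{$\langle Px,x\rangle,2$}}}$.

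Uniqueness has already been noted after the definition \ref{uspint}, so only existence needs this argument. The construction is essentially routine once the isometry formula \ref{normpointwise} is in hand; the only subtle point will be arguing Dominated Convergence for arbitrary $h \in \bmeas(\mathcal{E})$ rather than merely for $h$ among the truncations $f_n$, but the bound $|f_n - h|^{\,2} \leq 2(|f|^{\,2} + |h|^{\,2})$ handles this cleanly. The choice of the specific truncating sequence $(f_n)$ is immaterial, as any other sequence $(g_k)$ of bounded measurable functions with $|g_k| \leq |f|$ and $g_k \to f$ pointwise would produce the same limit, by the same Cauchy-sequence argument applied to a mixed sequence.
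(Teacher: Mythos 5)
Your proof is correct and follows essentially the same route as the paper: both rest on the isometry $\|\pi_P(h)x\| = \|h\|_{\langle Px,x\rangle,2}$ of \ref{normpointwise} together with approximation of $f$ in $\rmLeb^2(\langle Px,x\rangle)$ by bounded $\mathcal{E}$-measurable functions. The paper phrases this as the abstract continuation of the isometry $h \mapsto \pi_P(h)x$ from the dense subspace $\bmeas(\mathcal{E})$ to all of $\rmLeb^2(\langle Px,x\rangle)$, whereas you realise that continuation concretely via the truncations $f\,1_{\{|f|<n\}}$ and two applications of dominated convergence --- a difference of presentation only.
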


\begin{proof}
Let $x \in \mathcal{D}_f$ be fixed. Look at
$\bmeas(\mathcal{E})$ as a subspace of \linebreak
${\mathscr{L} \,}^{2}(\langle Px,x \rangle)$.
Consider the map
\[ I_x : {\mathscr{L} \,}^{2}(\langle Px,x \rangle) \supset
\bmeas(\mathcal{E}) \to H,\quad h \mapsto \pi _P(h) x. \]
Since $\bmeas(\mathcal{E})$ is dense in
${\mathscr{L} \,}^{2}(\langle Px,x \rangle)$, and since $I_x$
is (sort of) an isometry by \ref{normpointwise}, it follows
that $I_x$ has a unique continuation to an isometry
$\rmLeb ^2(\langle Px,x \rangle) \to H$, which shall also be
denoted by $I_x$. Putting \linebreak $\Psi _P(f)x := I_x(f)$,
we obtain a function $\Psi _P(f) : \mathcal{D}_f \to H$.
For any $h \in \bmeas(\mathcal{E})$, we have
\[ \| \,\bigl( \Psi _P(f) - \pi _P(h)\bigr)x \,\|
= \| \,I_x(f-h) \,\|
= \| \,f-h \,\|_{\text{\small{$\langle Px,x \rangle,2$}}} \]
because $I_x$ is an isometry. In particular, the
function $\Psi _P(f)$ is linear as it is approximated
pointwise by linear operators $\pi _P(h)$. We conclude that
\[ \Psi _P(f) = \int f \,\diff P \qquad \text{(pointwise)}. \qedhere \]
\end{proof}

\begin{corollary}%
Let $f \in \mathcal{M(E)}$. Let $a$ be a linear operator in $H$ such that
\[ a = \int f \,\diff P \qquad \text{(weakly)}. \]
We then also have
\[ a = \int f \,\diff P \qquad \text{(pointwise)}. \]
\end{corollary}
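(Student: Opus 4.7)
The plan is to deduce this corollary directly from uniqueness in the weak formulation, combined with the existence result just established for $\Psi_P(f)$.

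First I would invoke the preceding theorem to produce the operator $\Psi_P(f)$, which by construction satisfies
\[ \Psi_P(f) = \int f \,\diff P \qquad \text{(pointwise)}. \]
Then I would apply the proposition stated just before that theorem (the one asserting that pointwise integrability implies weak integrability) to conclude that $\Psi_P(f)$ also satisfies
\[ \Psi_P(f) = \int f \,\diff P \qquad \text{(weakly)}. \]

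Next I would use the uniqueness clause of Definition \ref{uspint} for the weak spectral integral: since both $a$ and $\Psi_P(f)$ satisfy the weak relation for the same $f$ and $P$, they must coincide, in particular they share the same domain $\mathcal{D}_f$ and act identically on it. Hence $a = \Psi_P(f)$, and therefore $a$ inherits the pointwise relation from $\Psi_P(f)$.

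There is no real obstacle here; the whole content lies in chaining existence (the preceding theorem) with the implication pointwise $\Rightarrow$ weak (the earlier proposition) and the uniqueness built into the weak definition. The only thing to double-check is that the uniqueness statement in the weak definition is strong enough to force equality of the operators (not just of their quadratic forms), which it is via polarisation as already noted in \ref{uspint}.
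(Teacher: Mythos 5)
Your argument is exactly the paper's: the paper's one-line proof reads ``We have $a = \Psi_P(f)$ by $\Psi_P(f) = \int f\,\diff P$ (weakly) as well,'' which is precisely your chain of existence of $\Psi_P(f)$, the implication pointwise $\Rightarrow$ weak, and the uniqueness (via polarisation) built into the weak definition. The proposal is correct and takes essentially the same approach.
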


\begin{proof}
We have $a = \Psi _P(f)$ by $\Psi _P(f) = \int f \,\diff P$ (weakly) as well.%
\pagebreak
\end{proof}

\begin{proposition}\label{heqnull}%
Let $f \in \mathcal{M(E)}$. For $x \in \mathcal{D}_f$, we then have
\[ \| \,\Psi _P (f) \,x \,\| 
= \| \,f \,\|_{\text{\small{$\langle Px,x \rangle,2$}}}
= {\biggl(\int {| \,f \,| \,}^2 \ \diff \langle Px,x \rangle \biggr)}^{1/2}. \]
\end{proposition}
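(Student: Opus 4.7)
The plan is to read off this proposition directly from the construction of $\Psi_P(f)$ given in the preceding theorem. Recall that for fixed $x \in \mathcal{D}_f$, the operator $\Psi_P(f)\,x$ was defined as $I_x(f)$, where $I_x$ is the (unique continuous extension of the) map
\[ I_x : \bmeas(\mathcal{E}) \to H,\quad h \mapsto \pi_P(h)\,x, \]
viewed as a map from the dense subspace $\bmeas(\mathcal{E})$ of $\rmLeb^2(\langle Px,x\rangle)$ into $H$.

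First I would invoke proposition \ref{normpointwise}, which says that for every $h \in \bmeas(\mathcal{E})$,
\[ \|\pi_P(h)\,x\| = \|h\|_{\langle Px,x\rangle,\,2}. \]
This is exactly the statement that $I_x$ is isometric on the dense subspace $\bmeas(\mathcal{E})$ of $\rmLeb^2(\langle Px,x\rangle)$. Its continuous extension to all of $\rmLeb^2(\langle Px,x\rangle)$ is therefore also isometric (isometries extend to isometries on the completion). Since $f \in \mathcal{D}_f$ means precisely $f \in \rmLeb^2(\langle Px,x\rangle)$, we may apply $I_x$ to $f$ and conclude
\[ \|\Psi_P(f)\,x\| = \|I_x(f)\| = \|f\|_{\langle Px,x\rangle,\,2}
= \biggl(\int |f|^{\,2}\,\diff\langle Px,x\rangle\biggr)^{1/2}. \]

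There is essentially no obstacle: once one unwinds the definition of $\Psi_P(f)\,x$ as $I_x(f)$, the equality is just the isometry property of a continuous extension. An alternative, slightly more hands-on route would be to choose the approximating sequence $f_n := f\,1_{\{|f|<n\}} \in \bmeas(\mathcal{E})$, apply the defining pointwise estimate
\[ \|\bigl(\Psi_P(f)-\pi_P(f_n)\bigr)x\| = \|f-f_n\|_{\langle Px,x\rangle,\,2} \to 0 \]
from the definition of $\int f\,\diff P$ (pointwise), combine with $\|\pi_P(f_n)x\| = \|f_n\|_{\langle Px,x\rangle,\,2}$ from \ref{normpointwise}, and pass to the limit using the reverse triangle inequality together with $\|f_n\|_{\langle Px,x\rangle,\,2}\to\|f\|_{\langle Px,x\rangle,\,2}$ (monotone convergence). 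Either approach settles the proposition in a few lines.
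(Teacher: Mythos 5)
Your proof is correct and rests on the same fact as the paper's: the paper simply sets $h := 0$ in the defining relation of $\Psi_P(f) = \int f\,\diff P$ (pointwise), which is exactly the isometry property of $I_x$ that you invoke. Your first argument is the intended one; the truncation alternative also works but is unnecessary extra effort.
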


\begin{proof}
To see this, put $h := 0$ in the defining relation \ref{uspint} for
\[ \Psi _P(f) = \int f \,\diff P \qquad \text{(pointwise)}. \qedhere \]
\end{proof}

\medskip
For the next item, please recall \ref{imagespdef} \& \ref{imagespthm}.

\begin{theorem}[integration with respect to an image measure]%
\label{imagesputhm}%
\index{concepts}{spectral!measure!image}%
\index{concepts}{image!spectral measure}%
\index{concepts}{measure!spectral!image}%
Let $\mathcal{E}'$ be another $\sigma$-algebra on another set
$\Omega' \neq \varnothing$, and let $f : \Omega \to \Omega'$
be an $\mathcal{E}$-$\mathcal{E}'$ measurable function.
For $g \in \mathcal{M}(\mathcal{E'})$, one has
\[ \int g\ \diff\mspace{2mu}f (P) = \int (g \circ f) \,\diff P,
\quad \text{that is,} \quad \Psi_{f(P)}\,(g) = \Psi_P \,(g \circ f). \]
\end{theorem}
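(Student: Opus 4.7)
The plan is to reduce to the already-established bounded version \ref{imagespthm} by means of the standard truncation approximation, using the pointwise defining property of $\Psi _P$ and the change-of-variables formula for image measures.

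First I would verify that the two operators $\Psi _{f(P)}(g)$ and $\Psi _P(g \circ f)$ have the same domain. For $x \in H$, the bounded measure $\langle f(P) \mspace{2mu} x, x \rangle$ is the image of $\langle P x, x \rangle$ under $f$, directly from the definition \ref{imagespdef}: for $\Delta' \in \mathcal{E}'$,
\[ \langle f(P)(\Delta') \,x, x \rangle = \langle P \bigl(f^{-1}(\Delta')\bigr) \,x, x \rangle = \langle P x, x \rangle \bigl(f^{-1}(\Delta')\bigr). \]
By the ordinary change-of-variables formula for image measures, applied to the non-negative measurable function $| \,g \,|^{\,2}$, one has $g \in {\mathscr{L} \,}^{2}(\langle f(P) \mspace{2mu} x, x \rangle)$ if and only if $g \circ f \in {\mathscr{L} \,}^{2}(\langle P x, x \rangle)$, with equal ${\mathscr{L} \,}^{2}$-norms. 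Hence $\mathcal{D}_g$ for $f(P)$ equals $\mathcal{D}_{g \circ f}$ for $P$.

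Next, I would truncate: for each integer $n \geq 1$ put
\[ g_n := g \cdot 1_{\text{\small{$\{\,| \,g \,| < n\,\}$}}} \in \bmeas(\mathcal{E}'), \]
so that $g_n \circ f = (g \circ f) \cdot 1_{\text{\small{$\{\,| \,g \circ f \,| < n\,\}$}}} \in \bmeas(\mathcal{E})$. The bounded case \ref{imagespthm} gives $\pi _{f(P)}(g_n) = \pi _P(g_n \circ f)$ for every $n$. Now for $x$ in the common domain, the defining property of the pointwise spectral integral \ref{uspint} yields
\[ \| \,\bigl(\Psi _{f(P)}(g) - \pi _{f(P)}(g_n)\bigr) \,x \,\| = \| \,g-g_n \,\|_{\text{\small{$\langle f(P) x,x \rangle,2$}}}, \]
\[ \| \,\bigl(\Psi _P(g \circ f) - \pi _P(g_n \circ f)\bigr) \,x \,\| = \| \,g \circ f - g_n \circ f \,\|_{\text{\small{$\langle Px,x \rangle,2$}}}. \]
The two right-hand sides are equal by the change-of-variables equality of ${\mathscr{L} \,}^{2}$-norms, and both tend to $0$ by Lebesgue's Dominated Convergence Theorem (with dominants $| \,g \,|^{\,2}$ and $| \,g \circ f \,|^{\,2}$ respectively, which are integrable by the assumption $x \in \mathcal{D}_g = \mathcal{D}_{g \circ f}$). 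Combined with the equality $\pi _{f(P)}(g_n) = \pi _P(g_n \circ f)$, passing to the limit gives $\Psi _{f(P)}(g) \,x = \Psi _P(g \circ f) \,x$ on the common domain, which completes the proof.

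There is no serious obstacle here. The only delicate point is that the identification of the domains, and the equality of the two approximation errors, both rely on the standard change-of-variables formula for image measures (cf.\ the appendix, \ref{imagebegin}--\ref{imageend}) extended from non-negative integrands to ${\mathscr{L} \,}^{2}$. Once that is in place, the argument is a clean truncation-and-limit reduction to the bounded theorem \ref{imagespthm}.
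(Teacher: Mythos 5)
Your proposal is correct and is essentially the paper's argument: the paper's proof consists of the single line ``This follows from the appendix \ref{imageend}'', i.e.\ the scalar change-of-variables formula for image measures, which is exactly the ingredient you use to identify the domains and to equate the two $\mathscr{L}^{\,2}$-approximation errors. Your truncation-and-limit presentation just spells out what that citation stands for (one could equally well verify the defining identity of \ref{uspint} directly for all $h \in \bmeas(\mathcal{E}')$, using $\pi_{f(P)}(h) = \pi_P(h \circ f)$ from \ref{imagespthm}), so there is nothing substantively different here.
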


\begin{proof}
This follows from the appendix \ref{imageend}.
\end{proof}

\begin{lemma}\label{Dlemma}%
Let $g \in \mathcal{M(E)}$ and $x \in \mathcal{D}_g$.
For $y := \Psi _P(g)x$, we then have
\[ \langle Py,y \rangle = {| \,g \,| \,}^2 \cdot \langle Px,x \rangle. \]
Moreover, for $h \in \bmeas(\mathcal{E})$, we have
\[ \pi _P(h) y = \Psi _P(hg)x. \]
\end{lemma}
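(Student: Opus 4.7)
The plan is to prove the second formula first by a truncation-and-approximation argument, and then to deduce the first (measure-theoretic) identity as a direct corollary. The idea is that although $g$ may be unbounded, we can always approximate $\Psi_P(g)x$ by the bounded spectral integrals $\pi_P(g_n)x$, where
\[ g_n := g \cdot 1_{\text{\small{$\{|g|<n\}$}}} \in \bmeas(\mathcal{E}), \]
and then use the algebraic properties of $\pi_P$ on $\bmeas(\mathcal{E})$ (which are already available) together with one more passage to the limit.

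For the second formula, first I would observe that $|g_n|^2 \leq |g|^2$ pointwise with $g_n \to g$ pointwise, so since $x \in \mathcal{D}_g$, Lebesgue's Dominated Convergence Theorem yields $\|g - g_n\|_{\langle Px,x\rangle,2} \to 0$. The defining relation \ref{uspint} of $\Psi_P(g)$ (specifically the case $h = g_n$) then gives $\pi_P(g_n)x \to \Psi_P(g)x = y$ in $H$. Applying the bounded operator $\pi_P(h)$ and using that $\pi_P$ is a representation on $\bmeas(\mathcal{E})$, we get
\[ \pi_P(h)\,\pi_P(g_n)\,x = \pi_P(hg_n)\,x \ \to \ \pi_P(h)\,y. \]
On the other hand, $|hg_n| \leq |h|_{\infty}\,|g|$, so $hg \in \mathscr{L}^2(\langle Px,x\rangle)$ (hence $x \in \mathcal{D}_{hg}$) and $\|hg - hg_n\|_{\langle Px,x\rangle,2} \to 0$ again by dominated convergence. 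The defining relation of $\Psi_P(hg)$, now with approximants $hg_n \in \bmeas(\mathcal{E})$, yields $\pi_P(hg_n)\,x \to \Psi_P(hg)\,x$. Comparing the two limits gives $\pi_P(h)\,y = \Psi_P(hg)\,x$.

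For the first formula, I would specialise the second to $h = 1_{\text{\small{$\Delta$}}}$ for $\Delta \in \mathcal{E}$, so that $\pi_P(1_{\text{\small{$\Delta$}}}) = P(\Delta)$ is a projection, and therefore
\[ \langle P(\Delta)\,y,y\rangle = \|P(\Delta)\,y\|^{2}
= \|\Psi_P(1_{\text{\small{$\Delta$}}}\,g)\,x\|^{2}. \]
Proposition \ref{heqnull} rewrites the right-hand side as $\int |1_{\text{\small{$\Delta$}}}\,g|^{2}\,\diff\langle Px,x\rangle = \int_{\Delta} |g|^{2}\,\diff\langle Px,x\rangle$. Since $\Delta \in \mathcal{E}$ was arbitrary, this is precisely the asserted equality of measures $\langle Py,y\rangle = |g|^{2}\cdot\langle Px,x\rangle$.

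The only real obstacle is making sure the two dominated-convergence applications are legitimate; the first one is immediate from $x \in \mathcal{D}_g$, while the second requires the observation that $h \in \bmeas(\mathcal{E})$ forces $hg \in \mathscr{L}^{2}(\langle Px,x\rangle)$ via the crude bound $|hg| \leq |h|_{\infty}\,|g|$. Everything else is bookkeeping built on the multiplicativity of $\pi_P$ on $\bmeas(\mathcal{E})$ and the isometric character of $\Psi_P$ expressed in \ref{heqnull}.
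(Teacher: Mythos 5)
Your proof is correct and follows essentially the same route as the paper's: truncate $g$ to $g_n := g\,1_{\{|g|<n\}}$, use the defining relation of $\Psi_P$ together with dominated convergence to get $\pi_P(g_n)x \to y$ and $\pi_P(hg_n)x \to \Psi_P(hg)x$, and combine with the multiplicativity of $\pi_P$ to obtain the second identity. The paper then derives the measure identity by applying \ref{heqnull} to $\pi_P(h)y = \Psi_P(hg)x$ for arbitrary bounded measurable $h$, whereas you specialise to $h = 1_{\Delta}$; this is the same argument in a marginally more concrete form.
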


\begin{proof}
With $g_n := g \,1_{\text{\small$\{ \,| \,g \,| < n \,\}$}}$
for all integers $n \geq 1$, we have
\[ y = \Psi _P(g)x = \lim _{n \to \infty} \pi _P(g_n)x. \]
For $h \in \bmeas(\mathcal{E})$, we obtain
\begin{align*}
\pi _P(h)y = \pi _P(h) \Psi _P(g)x
& = \lim _{n \to \infty} \pi _P(h) \pi _P(g_n)x \\
& = \lim _{n \to \infty} \pi _P(h g_n)x = \Psi _P(hg)x.
\end{align*}
The last equality follows from $\mathcal{D}_{hg} \supset \mathcal{D}_g$
by boundedness of $h$. In particular, from \ref{heqnull} we get
\[ \int {| \,h \,| \,}^2 \,\diff \langle Py,y \rangle = {\| \,\pi_P(h)y \,\| \,}^2
= {\| \,\Psi_P(hg)x \,\| \,}^2 = \int {| \,hg \,| \,}^2 \,\diff \langle Px,x \rangle \]
for all $h \in \bmeas(\mathcal{E})$. This means that
\[ \langle Py,y \rangle
= {| \,g \,| \,}^2 \cdot \langle Px,x \rangle. \pagebreak \qedhere \]
\end{proof}

\begin{definition}[extension]%
If $a$ and $b$ are linear operators in $H$ with respective
domains $\domain(a)$ and $\domain(b)$, one writes $a \subset b$,
if $b$ is an \underline{extension} of $a$, i.e.\ if
$\domain(a) \subset \domain(b)$ and $ax = bx$ for all $x \in \domain(a)$.
\end{definition}

For operations with linear operators $a, b$ in $H$, we equip the result with
the \underline{maximal} domain. For example
$\domain(a+b) := \domain(a) \cap \domain(b)$,
and $\domain(ab) := \{ \,x \in \domain(b) : b x \in \domain(a) \,\}$.

\begin{theorem}[Addition and Multiplication Theorems]\label{AddMult}%
\index{concepts}{Theorem!Addition \& Multiplication}%
\index{concepts}{Addit.\ \& Multiplicat.\ Thm.}%
For $f,g$ in $\mathcal{M(E)}$ we have
\begin{gather*}
\Psi _P(f) + \Psi _P(g) \subset \Psi _P(f+g), \\
\Psi _P(f) \,\Psi _P(g) \subset \Psi _P(fg).
\end{gather*}
The precise domains are given by
\begin{gather*}
D\bigl(\Psi _P(f) + \Psi _P(g)\bigr) = \mathcal{D}_{| \,f \,| + | \,g \,|}, \\
D\bigl(\Psi _P(f) \,\Psi _P(g)\bigr) = \mathcal{D}_{fg} \cap \mathcal{D}_g.
\end{gather*}
\end{theorem}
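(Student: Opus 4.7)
The plan is to handle both inclusions by first identifying the maximal domain through an $\mathscr{L}^2$-integrability criterion, and then verifying the pointwise action through truncation of the unbounded measurable functions by bounded ones, using Lemma \ref{Dlemma} as the key structural input in the multiplicative part.

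First I would settle the addition statement. By definition the domain of $\Psi _P(f) + \Psi _P(g)$ is $\mathcal{D}_f \cap \mathcal{D}_g$, and I would identify this with $\mathcal{D}_{|f|+|g|}$ via the pointwise inequalities
\[ \max\{|f|^2, |g|^2\} \leq (|f|+|g|)^2 \leq 2\,(|f|^2 + |g|^2), \]
which show that $|f|+|g| \in \mathscr{L}^2(\langle Px, x\rangle)$ iff both $f, g \in \mathscr{L}^2(\langle Px, x\rangle)$, and which simultaneously give $\mathcal{D}_{|f|+|g|} \subset \mathcal{D}_{f+g}$ via $|f+g| \leq |f|+|g|$. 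To obtain the identity $\Psi _P(f+g)x = \Psi _P(f)x + \Psi _P(g)x$ on this common domain, I would use the bounded truncations $f_n := f \cdot 1_{\{|f|<n\}}$ and $g_n := g \cdot 1_{\{|g|<n\}}$. Since $\pi _P$ is additive on $\bmeas(\mathcal{E})$, we have $\pi _P(f_n + g_n)x = \pi _P(f_n)x + \pi _P(g_n)x$, and each term converges appropriately by \ref{heqnull} combined with Lebesgue dominated convergence on $(\Omega, \langle Px, x\rangle)$.

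Next I would turn to the multiplication statement. By definition the domain is $\{x \in \mathcal{D}_g : \Psi _P(g)x \in \mathcal{D}_f\}$. The crucial input is Lemma \ref{Dlemma}: for $x \in \mathcal{D}_g$ and $y := \Psi _P(g)x$, one has $\langle Py, y\rangle = |g|^2 \cdot \langle Px, x\rangle$. Hence
\[ \int |f|^2 \,\diff \langle Py, y\rangle = \int |fg|^2 \,\diff \langle Px, x\rangle, \]
so $y \in \mathcal{D}_f$ iff $x \in \mathcal{D}_{fg}$, which identifies the domain as $\mathcal{D}_g \cap \mathcal{D}_{fg}$. To establish the operator identity on this domain, I would truncate $f$ via $h_n := f \cdot 1_{\{|f|<n\}} \in \bmeas(\mathcal{E})$; the second part of Lemma \ref{Dlemma} then furnishes the key formula $\pi _P(h_n)y = \Psi _P(h_n g)x$ for every $n$. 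Passing $n \to \infty$, the left-hand side converges to $\Psi _P(f)y$ by \ref{heqnull}, since $h_n \to f$ pointwise with $|h_n - f|^2 \leq 4\,|f|^2 \in \mathscr{L}^1(\langle Py, y\rangle)$; the right-hand side converges to $\Psi _P(fg)x$ for the same dominated-convergence reason, using $|h_n g - fg|^2 \leq 4\,|fg|^2 \in \mathscr{L}^1(\langle Px, x\rangle)$.

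The main obstacle will be the twofold use of Lemma \ref{Dlemma}: once to translate $\mathscr{L}^2$-integrability against $\langle Py, y\rangle$ into integrability against $\langle Px, x\rangle$ for the domain identification, and once to realise $\pi _P(h_n)\Psi _P(g)x$ as $\Psi _P(h_n g)x$ so that the truncation argument on the $f$-side can close. Once this structural formula is available, the remainder reduces to routine $\mathscr{L}^2$-dominated-convergence manipulations and the fact, already established, that $\pi _P$ is a \st-algebra homomorphism on $\bmeas(\mathcal{E})$.
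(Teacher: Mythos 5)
Your proposal is correct and follows essentially the same route as the paper's proof: the domain identifications via the $\mathscr{L}^2$-integrability criteria (with Lemma \ref{Dlemma} doing the work for the composition), and the operator identities via truncation by bounded measurable functions together with the defining approximation property of $\Psi_P$ and dominated convergence. Your treatment of the additive part is merely a little more explicit than the paper's, which compresses it into the remark that $\Psi_P$ is approximated pointwise by the linear map $\pi_P$.
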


\begin{proof}
We shall first prove the statements relating to addition.
Let $f,g \in \mathcal{M(E)}$. If $x \in D\bigl(\Psi _P(f) + \Psi _P(g)\bigr)
= D\bigl(\Psi _P(f)\bigr) \cap D\bigl(\Psi _P(g)\bigr)$, then
$f, g \in {\mathscr{L} \,}^{2}(\langle Px,x \rangle)$, whence
$f + g \in {\mathscr{L} \,}^{2}(\langle Px,x \rangle)$, so that
$x \in \mathcal{D}_{f+g}$. It follows that
\[ \Psi _P(f+g)x = \Psi _P(f)x+\Psi _P(g)x \]
because $\Psi _P$ is approximated pointwise by the linear map $\pi _P$.
We have
\[ D\bigl(\Psi _P(f)+\Psi _P(g)\bigr) = \mathcal{D}_{| \,f \,|+| \,g \,|}. \]
Indeed, if $x \in H$, then for the complex-valued $\mathcal{E}$-measurable
functions $f$ and $g$, we have that both
$f, g \in {\mathscr{L} \,}^{2}(\langle Px,x \rangle)$
if and only if $| \,f \,|+| \,g \,| \in {\mathscr{L} \,}^{2}(\langle Px,x \rangle)$.

We shall now prove the statements relating to composition.
Let $g \in \mathcal{M(E)}$, $x \in \mathcal{D}_g$, and put $y := \Psi _P(g)x$.
Let $f \in \mathcal{M(E)}$. Lemma \ref{Dlemma} shows that
$y \in \mathcal{D}_f$ if and only if $x \in \mathcal{D}_{fg}$, and so
\[ D\bigl(\Psi _P(f) \,\Psi _P(g)\bigr) = \mathcal{D}_{fg} \cap \mathcal{D}_g
\qquad (f,g \in \mathcal{M(E)}). \]
For $n \geq 1$, define $f_n := f \,1_{\text{\small{$\{ \,| \,f \,| < n \,\}$}}}$.
If $x \in \mathcal{D}_{fg} \cap \mathcal{D}_g$, then
$f_n g \to fg$ in ${\mathscr{L} \,}^{2}(\langle Px,x \rangle)$ and so
$f_n \to f$ in ${\mathscr{L} \,}^{2}(\langle Py,y \rangle)$ by \ref{Dlemma}.
Applying \ref{Dlemma} with $h := f_n$, we obtain
\begin{align*}
\Psi _P(f) \Psi _P(g) x = \Psi _P(f) y & = \lim _{n \to \infty} \pi _P(f_n) y \\
& = \lim _{n \to \infty} \Psi _P(f_n g) x = \Psi _P(fg) x. \pagebreak \qedhere
\end{align*}
\end{proof}

\begin{definition}[self-adjoint linear operators]\label{selfadjop}%
\index{concepts}{self-adjoint!operator}%
\index{concepts}{operator!self-adjoint}%
\index{concepts}{adjoint!operator}%
\index{concepts}{operator!adjoint}%
\index{concepts}{symmetric operator}%
\index{concepts}{operator!symmetric}%
A linear operator $b$ in $H$ is called \underline{symmetric} if
$\langle b x, y \rangle = \langle x, b y \rangle$ for all $x, y \in \domain(b)$.

Let $a$ be a linear operator in $H$ defined on a \underline{dense}
subspace $\domain(a)$. One defines
\[ \domain(a^*) := \{ \,y \in H : \exists \ y^* \in H \text{ with }
                                   \langle ax,y \rangle = \langle x,y^* \rangle
                                   \ \forall \ x \in \domain(a) \,\}, \]
which is a subspace of $H$. The associated $y^*$ is determined
uniquely because $\domain(a)$ is dense in $H$. Thus
\[ a^*y := y^* \qquad \bigl( \,y \in \domain(a^*) \,\bigr) \]
determines a linear operator $a^*$ in $H$, with domain $\domain(a^*)$.
The operator $a^*$ is called the \underline{adjoint} of $a$. The
operator $a$ is called \underline{self-adjoint} if $a = a^*$.
The operator $a$ is symmetric if and only if $a \subset a^*$.
\end{definition}

\begin{definition}[closed operators]%
\index{concepts}{closed operator}\index{concepts}{operator!closed}%
A linear operator $a$ in $H$ with domain $\domain(a) \subset H$,
is called \underline{closed}, if its graph is closed in $H \times H$.
\end{definition}

\begin{proposition}\label{adjclosed}%
The adjoint of a densely defined linear operator in $H$ is closed.
\end{proposition}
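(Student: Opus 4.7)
The plan is to verify directly that the graph of $a^*$ is closed in $H \times H$ by a routine sequential argument, using only the defining relation of the adjoint together with continuity of the inner product.

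First I would fix a sequence $(y_n)$ in $\domain(a^*)$ such that $y_n \to y$ and $a^*y_n \to z$ in $H$, and aim to show that $y \in \domain(a^*)$ together with $a^*y = z$. By the definition of $a^*$ in \ref{selfadjop}, for every $n$ and every $x \in \domain(a)$ we have the identity $\langle ax, y_n \rangle = \langle x, a^*y_n \rangle$. Passing to the limit on both sides using continuity of the inner product, this yields $\langle ax, y \rangle = \langle x, z \rangle$ for all $x \in \domain(a)$. Consulting the definition of $\domain(a^*)$ again, this is precisely the condition that $y \in \domain(a^*)$ with associated vector $y^* = z$, so $a^*y = z$ as required. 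Density of $\domain(a)$ is used implicitly to make the candidate $y^*$ unique, but it is not actually needed for the closure argument itself.

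There is no real obstacle here: the whole content is that the two sides of the adjoint relation are continuous functions of $y_n$, so the identity survives the limit. An elegant alternative formulation, should one prefer it, is to introduce the flip operator $V : H \oplus H \to H \oplus H$, $V(x,y) := (-y, x)$, which is unitary, and to observe that $\mathrm{graph}(a^*) = V\bigl(\mathrm{graph}(a)\bigr)^{\perp}$; being an orthogonal complement, it is automatically closed. Either route gives the statement in a few lines, and I would present the direct sequential version for clarity.
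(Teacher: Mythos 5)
Your proof is correct and is essentially identical to the paper's own argument: take a sequence in $\mathrm{D}(a^*)$ converging in the graph, pass to the limit in $\langle ax, y_n \rangle = \langle x, a^*y_n \rangle$, and read off membership in the graph of $a^*$. The orthogonal-complement remark is a valid alternative but is not needed.
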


\begin{proof}
Let $a$ be a densely defined linear operator in $H$.
Consider $(y, z)$ in the closure of the graph of $a^*$.
Choose a sequence $(y_n)$ in $\domain(a^*)$ such that
$y_n \to y$ and $a^*y_n \to z$ in $H$. For $x \in \domain(a)$, we get
\[ \langle ax,y \rangle = \lim _{n \to \infty} \langle ax,y_n \rangle
   = \lim _{n \to \infty} \langle x,a^*y_n \rangle = \langle x,z \rangle, \]
whence $y \in \domain(a^*)$ and $a^*y = z$. Thus $(y, z)$ is in the
graph of $a^*$.
\end{proof}

\begin{corollary}\label{selfadjclosed}%
A self-adjoint linear operator in $H$ is closed.
\end{corollary}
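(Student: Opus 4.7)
The plan is to deduce this immediately from the preceding Proposition \ref{adjclosed}, which states that the adjoint of any densely defined linear operator in $H$ is closed. So essentially no new work is needed; the argument is a one-line invocation.

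More explicitly, let $a$ be a self-adjoint linear operator in $H$, meaning $a = a^*$. By the very definition \ref{selfadjop} of the adjoint, the expression $a^*$ is only meaningful when $a$ is densely defined, so the hypothesis $a = a^*$ implicitly requires that $\domain(a)$ be dense in $H$. I would point this out explicitly, since the corollary as stated does not remind the reader of this fact. Once density of $\domain(a)$ is noted, Proposition \ref{adjclosed} applies and yields that $a^*$ is closed. Since $a = a^*$, the operator $a$ itself is closed.

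There is essentially no obstacle here; the only thing worth spelling out is the implicit density assumption built into the notion of self-adjointness, which makes \ref{adjclosed} directly applicable. I would present the proof in a single short sentence, perhaps of the form: ``This follows from Proposition \ref{adjclosed}, since a self-adjoint operator $a$ is by definition densely defined (so that $a^*$ makes sense) and satisfies $a = a^*$.''
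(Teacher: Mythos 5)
Your proof is correct and matches the paper's (implicit) argument exactly: the corollary is stated immediately after Proposition \ref{adjclosed} precisely so that it follows in one line from $a = a^*$, with the dense-domain hypothesis built into the definition \ref{selfadjop} of self-adjointness. Your remark about making the density assumption explicit is a reasonable stylistic addition but not a mathematical necessity.
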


\begin{proposition}\label{submult}%
If $a,b$ are densely defined linear operators in $H$,
and if $ab$ is densely defined as well, then
\[ b^* a^* \subset (ab)^*. \]
\end{proposition}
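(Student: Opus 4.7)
The plan is to unpack the definitions and verify the inclusion by a direct chain of inner product manipulations, with careful attention to domains. Let $y \in \domain(b^* a^*)$; by the convention that compositions carry their maximal domain, this means $y \in \domain(a^*)$ and $a^* y \in \domain(b^*)$. I want to show that $y \in \domain\bigl((ab)^*\bigr)$ and that $(ab)^* y = b^* a^* y$.

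The key step is to pick an arbitrary $x \in \domain(ab)$, which by definition means $x \in \domain(b)$ and $bx \in \domain(a)$, and then to run the computation
\[
\langle (ab)x, y \rangle = \langle a(bx), y \rangle = \langle bx, a^* y \rangle = \langle x, b^* a^* y \rangle.
\]
The first equality is just the definition of $ab$; the second uses $bx \in \domain(a)$ together with $y \in \domain(a^*)$ to invoke the defining property of $a^*$; the third uses $x \in \domain(b)$ together with $a^* y \in \domain(b^*)$ to invoke the defining property of $b^*$.

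The hypothesis that $ab$ is densely defined is then used exactly once, but crucially: it ensures that $(ab)^*$ is a well-defined operator in the sense of \ref{selfadjop}. The identity displayed above, holding for every $x \in \domain(ab)$, is precisely the condition that $y$ lies in $\domain\bigl((ab)^*\bigr)$ with $(ab)^* y = b^* a^* y$, which gives the inclusion $b^* a^* \subset (ab)^*$.

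There is no real obstacle in this proof; the content is purely bookkeeping of domains of composed unbounded operators. The only point worth flagging is that equality $b^* a^* = (ab)^*$ need not hold in general, because $\domain\bigl((ab)^*\bigr)$ can be strictly larger than $\domain(b^* a^*)$ — which is exactly why the statement is formulated as an inclusion rather than an equality.
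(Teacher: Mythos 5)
Your proof is correct and follows essentially the same route as the paper's: take $y \in \domain(b^*a^*)$, and for arbitrary $x \in \domain(ab)$ chain the identities $\langle abx, y\rangle = \langle bx, a^*y\rangle = \langle x, b^*a^*y\rangle$ using the domain memberships, then read off $y \in \domain\bigl((ab)^*\bigr)$ with $(ab)^*y = b^*a^*y$. Your added remarks on where dense definedness of $ab$ enters and why only an inclusion holds are accurate.
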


\begin{proof}
Let $y \in \domain(b^*a^*)$. For arbitrary $x \in \domain(ab)$, we get
\[ \langle abx,y \rangle = \langle bx,a^*y \rangle \]
because $bx \in \domain(a)$ and $y \in \domain(a^*)$. We also have
\[ \langle bx,a^*y \rangle = \langle x,b^*a^*y \rangle \]
because $x \in \domain(b)$ and $a^*y \in \domain(b^*)$. It follows that
$\langle abx,y \rangle = \langle x,b^*a^*y \rangle$.
Hence $y \in D\bigl((ab)^*\bigr)$ and $(ab)^*y = b^*a^*y$.
\pagebreak
\end{proof}

\begin{theorem}%
For $f \in \mathcal{M(E)}$, the following statements hold:
\begin{itemize}
   \item[$(i)$] ${\Psi _P(f) \,}^* = \Psi _P(\overline{f})$,
  \item[$(ii)$] $\Psi _P(f)$ is closed,
 \item[$(iii)$] ${\Psi _P(f) \,}^* \,\Psi _P(f) = \Psi _P \bigl( \,{| \,f \,| \,}^2 \,\bigr)
= \Psi _P(f) \,{\Psi _P(f) \,}^*$.
\end{itemize}
\end{theorem}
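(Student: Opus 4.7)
The plan is to establish (i) first, then deduce (ii) from \ref{adjclosed} and (iii) from the Addition and Multiplication Theorems \ref{AddMult}. Throughout, set $f_n := f \cdot 1_{\{| \,f \,|<n\}} \in \bmeas(\mathcal{E})$ for $n \geq 1$; by the defining relation in \ref{uspint} together with Lebesgue's Dominated Convergence Theorem, $\Psi _P(f)x = \lim _{n \to \infty} \pi _P(f_n) x$ for every $x \in \mathcal{D}_f$. Since $\pi _P$ is a \st-algebra homomorphism, $\pi _P(f_n)^* = \pi _P(\overline{f_n})$.

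For the easy inclusion $\Psi _P(\overline{f}) \subset \Psi _P(f)^*$, observe that $\mathcal{D}_{\overline{f}} = \mathcal{D}_f$, and for $x \in \mathcal{D}_f$, $y \in \mathcal{D}_{\overline{f}}$, passing to the limit in $\langle \pi _P(f_n) x, y \rangle = \langle x, \pi _P(\overline{f_n}) y \rangle$ yields $\langle \Psi _P(f) x, y \rangle = \langle x, \Psi _P(\overline{f}) y \rangle$, so $y \in \domain(\Psi _P(f)^*)$ with $\Psi _P(f)^* y = \Psi _P(\overline{f}) y$.

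The reverse inclusion is the main obstacle: given $y \in \domain(\Psi _P(f)^*)$, one must show $f \in {\mathscr{L} \,}^2(\langle Py, y \rangle)$. The idea is to feed the adjoint relation the test vectors $x := \pi _P(\overline{f_n}) y$. Boundedness of $\overline{f_n}$ gives $x \in H$; since moreover $f \,\overline{f_n} = {| \,f_n \,| \,}^2$ is bounded, the Multiplication Theorem \ref{AddMult} forces $\Psi _P(f) \,\pi _P(\overline{f_n}) y = \pi _P\bigl( \,{| \,f_n \,| \,}^2 \,\bigr) y$. Substituting this into $\langle \Psi _P(f) x, y \rangle = \langle x, \Psi _P(f)^* y \rangle$ and applying the Cauchy-Schwarz inequality together with \ref{normpointwise} yields
\[
\int {| \,f_n \,| \,}^2 \,\diff \langle Py, y \rangle
= \langle \pi _P(\overline{f_n}) y, \Psi _P(f)^* y \rangle
\leq \Bigl( \,\int {| \,f_n \,| \,}^2 \,\diff \langle Py, y \rangle \,\Bigr)^{1/2} \cdot \| \,\Psi _P(f)^* y \,\|,
\]
so that $\int {| \,f_n \,| \,}^2 \,\diff \langle Py, y \rangle \leq {\| \,\Psi _P(f)^* y \,\| \,}^2$ uniformly in $n$. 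Lebesgue's Monotone Convergence Theorem as $n \to \infty$ then gives $\int {| \,f \,| \,}^2 \,\diff \langle Py, y \rangle < \infty$, i.e., $y \in \mathcal{D}_f = \mathcal{D}_{\overline{f}}$; the equality $\Psi _P(f)^* y = \Psi _P(\overline{f}) y$ then follows from the first inclusion.

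Statement (ii) is then immediate: applying (i) to $\overline{f}$ in place of $f$ yields $\Psi _P(f) = \Psi _P(\overline{f})^*$, and adjoints of densely defined operators are closed by \ref{adjclosed}. For (iii), combining (i) with the Multiplication Theorem \ref{AddMult} gives $\Psi _P(f)^* \,\Psi _P(f) = \Psi _P(\overline{f}) \,\Psi _P(f) \subset \Psi _P\bigl( \,{| \,f \,| \,}^2 \,\bigr)$, with precise domain $\mathcal{D}_{| \,f \,|^2} \cap \mathcal{D}_f$. That this intersection equals $\mathcal{D}_{| \,f \,|^2}$ follows from H\"older's inequality applied to the \emph{bounded} measure $\langle Px, x \rangle$: if ${| \,f \,| \,}^2 \in {\mathscr{L} \,}^2(\langle Px, x \rangle)$ then ${| \,f \,|} \in {\mathscr{L} \,}^2(\langle Px, x \rangle)$ too. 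The companion identity $\Psi _P(f) \,\Psi _P(f)^* = \Psi _P\bigl( \,{| \,f \,| \,}^2 \,\bigr)$ is obtained by applying the same argument with $\overline{f}$ in place of $f$.
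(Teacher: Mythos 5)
Your proof is correct and follows essentially the same route as the paper's: the easy inclusion $\Psi_P(\overline{f}) \subset {\Psi_P(f)\,}^*$, then the hard inclusion $\domain\bigl({\Psi_P(f)\,}^*\bigr) \subset \mathcal{D}_{\overline{f}}$ via truncation, a uniform $\rmLeb^2$-bound and monotone convergence, then (ii) from \ref{adjclosed} and (iii) from the Multiplication Theorem \ref{AddMult} together with the Cauchy--Schwarz domain inclusion $\mathcal{D}_{\overline{f}f} \subset \mathcal{D}_f$. The only cosmetic difference is how the uniform bound $\int {|\,f_n\,|\,}^2 \,\diff \langle Py,y \rangle \leq {\| \,{\Psi_P(f)\,}^* y \,\| \,}^2$ is extracted: the paper derives the operator inclusion $\pi_P(h_n) \,{\Psi_P(f)\,}^* \subset \pi_P(\overline{f}h_n)$ from \ref{submult} with $h_n = 1_{\{\,|\,f\,| < n\,\}}$, whereas you feed the test vectors $\pi_P(\overline{f_n})\,y$ directly into the defining relation of the adjoint and apply the Cauchy--Schwarz inequality --- both land on the same estimate.
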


\smallskip
\begin{proof}
(i): For $x \in \mathcal{D}_f$, and
$y \in \mathcal{D}_f = \mathcal{D}_{\overline{f}}$ we have by polarisation
\begin{align*}
\langle \Psi _P(f)x,y \rangle = \int f \,\diff \langle Px,y \rangle
& = \overline{\int \overline{f} \,\diff \langle Py,x \rangle} \\
& = \overline{\langle \Psi _P(\overline{f}) y,x \rangle}
= \langle x,\Psi _P(\overline{f})y \rangle,
\end{align*}
so that $y \in D\bigl({\Psi _P(f) \,}^*\bigr)$ and
$\Psi _P(\overline{f}) \subset {\Psi _P(f) \,}^*$.
In order to prove the reverse inclusion, consider the truncations
$h_n := 1_{\text{\small{$\{ \,| \,f \,| < n \,\}$}}}$ for $n \geq 1$.
The Multiplication Theorem \ref{AddMult} yields
\[ \Psi _P(f) \,\pi _P(h_n) = \pi _P (f h_n). \]
One concludes with the help of \ref{submult} that
\[ \pi _P(h_n) \,{\Psi _P(f) \,}^*
\subset {[ \,\Psi _P(f) \,\pi _P(h_n) \,] \,}^*
= {\pi _P(f h_n) \,}^* = \pi _P(\overline{f} h_n). \]
For $z \in D \,\bigl( \,{\Psi _P(f) \,}^* \,\bigr)$ and $v = {\Psi _P(f) \,}^* \,z$ it follows
\[ \pi _P(h_n)v = \pi _P(\overline{f} h_n)z. \]
Hence
\[ \int {| \,\overline{f} h_n \,| \,}^2 \,\diff \langle Pz,z \rangle
= \int h_n \,\diff \langle Pv,v \rangle \leq \langle v,v \rangle \]
for all $n \geq 1$, so that $z \in \mathcal{D}_{\overline{f}}$.

(ii) follows from (i) together with \ref{adjclosed}.

(iii) follows now from the Multiplication Theorem  \ref{AddMult}
because $\mathcal{D}_{\overline{f}f} \subset \mathcal{D}_f$
by the Cauchy-Schwarz inequality:
\[ {\left| \,\int {| \,f \,| \,}^2 \,\diff \langle Px,x \rangle \,\right| \,}^2
\leq \int {| \,f \,| \,}^4 \,\diff \langle Px,x \rangle
\cdot \int {1 \,}^2 \,\diff \langle Px,x \rangle. \qedhere \]
\end{proof}

\bigskip
\begin{corollary}\label{Psifselfadj}%
If $f$ is a real-valued function in $\mathcal{M(E)}$,
then $\Psi _P(f)$ is a self-adjoint linear operator in $H$.
\end{corollary}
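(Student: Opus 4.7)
The plan is to apply item (i) of the preceding theorem directly: that theorem establishes that for every $f \in \mathcal{M(E)}$, one has ${\Psi _P(f) \,}^* = \Psi _P(\overline{f})$. When $f$ is real-valued, we have $\overline{f} = f$ pointwise on $\Omega$, and hence $\Psi_P(\overline{f}) = \Psi_P(f)$ as linear operators in $H$ with identical domains. Combining these two equalities yields ${\Psi_P(f)}^* = \Psi_P(f)$, which by Definition \ref{selfadjop} is precisely the assertion that $\Psi_P(f)$ is self-adjoint.

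Before invoking the definition of the adjoint, I would briefly note that $\Psi_P(f)$ is densely defined, since its domain is $\mathcal{D}_f$, which we showed to be a dense subspace of $H$; this ensures that the adjoint ${\Psi_P(f)}^*$ is well-defined in the sense of \ref{selfadjop}.

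There is no real obstacle: the content lies entirely in item (i) of the preceding theorem, whose proof uses polarisation to obtain one inclusion $\Psi_P(\overline{f}) \subset {\Psi_P(f)}^*$, and then the truncation sequence $h_n := 1_{\{|f| < n\}}$ together with the Multiplication Theorem \ref{AddMult} and the submultiplicativity \ref{submult} of the adjoint to establish the reverse inclusion. Once that theorem is in hand, the present corollary is immediate.
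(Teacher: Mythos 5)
Your proof is correct and follows exactly the route the paper intends: the corollary is stated without proof precisely because it is immediate from item (i) of the preceding theorem together with $\overline{f}=f$ for real-valued $f$, and your remark that $\mathcal{D}_f$ is dense (so the adjoint is well-defined) is the only supporting observation needed.
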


\clearpage


\section{The Spectral Theorem for Self-Adjoint Operators}

In this paragraph, let $a : H \supset \domain(a) \to H$ be a
self-adjoint linear operator in a Hilbert space $H \neq \{0\}$,
defined on a dense subspace $\domain(a)$ of $H$.

\begin{definition}[the spectrum]%
\index{concepts}{spectrum!of a self-adjoint operator}%
One says that $\lambda \in \mathds{C}$ is a regular value of the
self-adjoint linear operator $a$, if $\lambda \mathds{1}- a$ is injective
as well as surjective onto $H$, and if its left inverse is bounded.
The \underline{spectrum} $\s(a)$ of $a$ is defined as the set of
those complex numbers which are not regular values of $a$.
\end{definition}

\begin{theorem}\label{specisreal}%
The spectrum of the self-adjoint operator $a$ is real.
\end{theorem}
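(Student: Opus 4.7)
The plan is to show directly that every non-real $\lambda \in \mathds{C}$ is a regular value of $a$, i.e.\ that $\lambda \mathds{1} - a$ is injective, surjective onto $H$, and has bounded left inverse. Write $\lambda = \alpha + \iu \beta$ with $\alpha, \beta \in \mathds{R}$ and $\beta \neq 0$. The whole argument rests on one fundamental identity: for every $x \in \domain(a)$,
\[ \| (\lambda \mathds{1} - a) x \|^{\,2} = \| (\alpha \mathds{1} - a) x \|^{\,2} + \beta^{\,2} \| x \|^{\,2}. \]
I would derive this by expanding $(\lambda \mathds{1} - a) x = (\alpha \mathds{1} - a) x + \iu \beta x$ and noting that the cross term involves $\langle (\alpha \mathds{1} - a) x, x \rangle$, which is real because $a$ is symmetric (and $\alpha$ is real), so its imaginary part vanishes.

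From this identity, $\| (\lambda \mathds{1} - a) x \| \geq | \beta | \cdot \| x \|$ for all $x \in \domain(a)$. This immediately gives that $\lambda \mathds{1} - a$ is injective, and that its left inverse is bounded (with bound $1 \,/ \,| \beta |$) on its range. Next I would show that the range $R := (\lambda \mathds{1} - a)(\domain(a))$ is closed in $H$: if $(\lambda \mathds{1} - a) x_n \to y$, then $(x_n)$ is Cauchy by the estimate, so $x_n \to x$ for some $x \in H$, and $a x_n \to \lambda x - y$. Since $a$ is self-adjoint, hence closed by \ref{selfadjclosed}, we obtain $x \in \domain(a)$ with $(\lambda \mathds{1} - a) x = y$, so $y \in R$.

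It remains to prove $R = H$, and this is the step where full self-adjointness (not merely symmetry) is crucial. Let $y \in R^{\perp}$. Then $\langle (\lambda \mathds{1} - a) x, y \rangle = 0$ for all $x \in \domain(a)$, which rewrites as $\langle a x, y \rangle = \langle x, \overline{\lambda} y \rangle$. By definition of the adjoint \ref{selfadjop}, this means $y \in \domain(a^*) = \domain(a)$ and $a y = a^* y = \overline{\lambda} y$. Taking the inner product with $y$, self-adjointness forces $\overline{\lambda} \| y \|^{\,2} = \langle a y, y \rangle \in \mathds{R}$; since $\overline{\lambda} \notin \mathds{R}$, we conclude $y = 0$. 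Thus $R$ is a dense closed subspace, hence all of $H$.

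The main obstacle is this last density step: one has to exploit the equality $a = a^*$ (not merely $a \subset a^*$) to identify the orthogonal complement of $R$ with the eigenspace of $a$ for the eigenvalue $\overline{\lambda}$, and then discard it by self-adjointness. A symmetric operator would only give $a \subset a^*$, and the argument would fail because $y$ might not lie in $\domain(a)$. This is precisely where the distinction between symmetric and self-adjoint becomes essential.
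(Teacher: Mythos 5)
Your proof is correct and follows essentially the same route as the paper: the same norm identity $\|(\lambda\mathds{1}-a)x\|^{\,2} = \|(\alpha\mathds{1}-a)x\|^{\,2} + \beta^{\,2}\|x\|^{\,2}$, the same closed-range argument via closedness of $a$, and the same identification of $R^{\perp}$ with the kernel of $\overline{\lambda}\mathds{1}-a$ using $a=a^*$. The only (immaterial) difference is the last line: the paper kills that kernel by citing the injectivity of $\overline{\lambda}\mathds{1}-a$ already established, whereas you use the reality of $\langle ay,y\rangle$; both are fine.
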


\begin{proof}
Let $\lambda \in \mathds{C} \setminus \mathds{R}$
and let $\lambda =: \alpha + \iu \beta$ with $\alpha$ and $\beta$ real.
For $x \in \domain(a)$, we get
\[ {\| \,\bigl( ( \alpha \mathds{1} - a ) + \iu \beta \mathds{1} \bigr) x \,\| \,}^2
= {\| \,( \alpha \mathds{1} - a ) x \,\| \,}^2 + {\beta \,}^2 \,{\| \,x \,\| \,}^2
\geq {\beta \,}^2 \,{\| \,x \,\| \,}^2, \]
which shows that $\lambda \mathds{1} - a$ has a bounded left inverse.
It remains to be shown that this left inverse is everywhere defined.
It shall first be shown that the range $\range(\lambda \mathds{1} -a)$
is closed. Let $(y_n)$ be a sequence in $\range(\lambda \mathds{1} - a)$
which converges to an element $y \in H$. There then exists a sequence
$(x_n)$ in $\domain(a)$ such that $y_n = (\lambda \mathds{1} - a)x_n$
for all $n$. Next we have
\[ \| \,x_n-x_m \,\| \,\leq \,{| \,\beta \,| \,}^{-1}
\cdot \,\| \,( \lambda \mathds{1} - a ) (x_n-x_m) \,\|
\,= \,{| \,\beta \,| \,}^{-1} \cdot \,\| \,y_n-y_m \,\|, \]
so that $(x_n)$ is a Cauchy sequence and thus converges to
some $x \in H$. Since $a$ is closed \ref{selfadjclosed}, this implies
that $x \in \domain(a)$ and $y = ( \lambda \mathds{1} - a ) x$, so
that $y \in \range(\lambda \mathds{1}-a)$, which shows that
$\range(\lambda \mathds{1}-a)$ is closed. It shall be shown
that $\range(\lambda \mathds{1} - a) = H$. Let $y \in H$ be
orthogonal to $\range(\lambda \mathds{1} - a)$. We have to
show that $y = 0$. We get
\[ \langle (\lambda \mathds{1}-a)x,y \rangle = 0 = \langle x,0 \rangle
\quad \text{for all } x \in \domain(a), \]
which means that
$(\lambda \mathds{1} - a)^*y = (\overline{\lambda}\mathds{1}-a)y$
exists and equals zero. It follows that $y = 0$ because
$\overline{\lambda}\mathds{1}-a$ is injective by the above.
\end{proof}

\begin{definition}[the Cayley transform]%
\index{concepts}{Cayley transform}%
The linear operator
\[ u := (a - \iu \mathds{1}) (a + \iu \mathds{1})^{-1} \]
is called the \underline{Cayley transform} of $a$. Please note that
$u$ is well-defined as a linear operator taking $H$ to itself
by the preceding theorem \ref{specisreal}.\pagebreak
\end{definition}

\begin{proposition}\label{onenoteival}%
The Cayley transform $u$ of $a$ is unitary and $1$ is not an
eigenvalue of $u$. 
\end{proposition}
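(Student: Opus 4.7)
The plan is to verify unitarity by exhibiting the two expressions $\|(a\pm \iu\mathds{1})x\|^{2}$, exploit the symmetry identity they produce, and then exploit the bijectivity statements already packaged in \ref{specisreal}. The fact that $1$ is not an eigenvalue will then follow at once from the definition.

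First I would check that the Cayley transform is everywhere defined and everywhere a bounded operator on $H$. Since $\pm \iu \in \mathds{C}\setminus\mathds{R}$, theorem \ref{specisreal} applied to $\lambda = -\iu$ tells us that $a + \iu\mathds{1}$ is a bijection from $\domain(a)$ onto $H$ with bounded inverse; applied to $\lambda = \iu$, it tells us the same for $a - \iu\mathds{1}$. In particular $(a+\iu\mathds{1})^{-1}$ maps $H$ bijectively onto $\domain(a)$, so the composition $u = (a-\iu\mathds{1})(a+\iu\mathds{1})^{-1}$ is a bounded linear operator from $H$ into $H$, and its range equals the range of $a - \iu\mathds{1}$, which is all of $H$. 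So $u$ is already surjective; it only remains to show that $u$ is an isometry.

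The key computation is that for every $x \in \domain(a)$,
\[
\|(a\pm\iu\mathds{1})x\|^{2} = \|ax\|^{2} \pm \iu\langle x, ax\rangle \mp \iu\langle ax, x\rangle + \|x\|^{2} = \|ax\|^{2} + \|x\|^{2},
\]
where the cross terms cancel because $\langle ax, x\rangle$ is real (the operator $a$ being self-adjoint, hence symmetric on $\domain(a)$). Consequently $\|(a-\iu\mathds{1})x\| = \|(a+\iu\mathds{1})x\|$ for all $x \in \domain(a)$. Given $y \in H$, write $y = (a+\iu\mathds{1})x$ with $x \in \domain(a)$ uniquely determined, so that $uy = (a-\iu\mathds{1})x$; the displayed identity then yields $\|uy\| = \|y\|$. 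Thus $u$ is an isometry of $H$ onto itself, i.e.\ unitary.

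For the eigenvalue statement, suppose $uy = y$ for some $y \in H$ and write $y = (a+\iu\mathds{1})x$ with $x \in \domain(a)$ as above. Then $(a-\iu\mathds{1})x = uy = y = (a+\iu\mathds{1})x$, whence $-2\iu x = 0$, so $x = 0$ and therefore $y = 0$. Hence $1$ is not an eigenvalue of $u$. The only step that required any thought at all was unpacking the bijectivity of $a\pm\iu\mathds{1}$ from \ref{specisreal}; the rest is the standard symmetry-cancellation identity.
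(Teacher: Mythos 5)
Your proof is correct and follows essentially the same route as the paper: the identity $\|(a\pm\iu\mathds{1})x\|^{2}=\|ax\|^{2}+\|x\|^{2}$ gives isometry, bijectivity of $a\pm\iu\mathds{1}$ from \ref{specisreal} gives surjectivity, and the subtraction $(a+\iu\mathds{1})x-(a-\iu\mathds{1})x=2\iu x$ rules out the eigenvalue $1$. Your only addition is to spell out explicitly why $u$ is everywhere defined and bounded, which the paper leaves implicit.
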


\begin{proof} For $x \in \domain(a)$ we have
\[ {\| \,(a - \iu \mathds{1}) x \,\| \,}^2 = {\| \,ax \,\| \,}^2 + {\| \,x \,\| \,}^2
= {\| \,(a + \iu \mathds{1}) x \,\| \,}^2, \]
so that
\[ \| \,u y \,\| = \| \,(a - \iu \mathds{1}) (a + \iu \mathds{1})^{-1} y \,\| = \| \,y \,\|
\quad \text{for all} \quad y \in \domain(u), \]
which shows that $u$ is isometric. It follows from theorem \ref{specisreal}
that $u$ is bijective, hence unitary. It shall next be shown that $1$
is not an eigenvalue of $u$. For $x \in \domain(a)$, let
$y := (a + \iu \mathds{1})x$. We then have $uy = (a - \iu \mathds{1})x$.
By subtraction we obtain $(\mathds{1}-u)y = 2 \iu x$. Thus, if
$(\mathds{1}-u)y = 0$, then $x = 0$, whence $y = 0$. This
shows the injectivity of $\mathds{1}-u$.
\end{proof}

\begin{proposition}\label{spau}%
Let
\[ u := (a - \iu \mathds{1}) (a + \iu \mathds{1})^{-1} \]
be the Cayley transform of $a$.
For $\lambda \in \mathds{R}$, consider
\[ \nu := (\lambda - \iu) (\lambda + \iu)^{-1}. \]
Then $\lambda \in \s(a)$ if and only if $\nu \in \s(u)$.
\end{proposition}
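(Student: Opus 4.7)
The plan is to establish the operator identity
\[ \nu\mathds{1} - u \,=\, \frac{2\iu}{\lambda+\iu}\,(\lambda\mathds{1}-a)\,(a+\iu\mathds{1})^{-1} \]
as an equation of everywhere-defined bounded linear operators on $H$, and then read off the equivalence of the two spectral conditions from it. Note that $\lambda+\iu\neq 0$ because $\lambda$ is real, so the scalar prefactor makes sense.

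To derive the identity, I would proceed pointwise. Let $y\in H$ be arbitrary, and set $x:=(a+\iu\mathds{1})^{-1}y\in\domain(a)$; this is well-defined because $-\iu\notin\s(a)$ by \ref{specisreal}, and $(a+\iu\mathds{1})^{-1}$ is in fact a \emph{bounded} bijection from $H$ onto $\domain(a)$ (the boundedness was shown in the proof of \ref{specisreal}, where the estimate ${\|(a+\iu\mathds{1})x\|}^2={\|ax\|}^2+{\|x\|}^2$ appears). Then $y=(a+\iu\mathds{1})x$ and $uy=(a-\iu\mathds{1})x$, whence $2ax=y+uy$ and $2\iu x=y-uy$. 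A one-line computation gives
\[ (\lambda\mathds{1}-a)x \,=\, \lambda x - ax \,=\, \frac{\lambda(y-uy)-\iu(y+uy)}{2\iu}
\,=\, \frac{\lambda+\iu}{2\iu}\,(\nu\mathds{1}-u)\,y, \]
which is exactly the identity claimed, applied to $y$.

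For the equivalence of the spectral conditions, I would argue both directions from the identity. Suppose first $\nu\notin\s(u)$: since $u\in\blop(H)$, the operator $\nu\mathds{1}-u$ is a bijection of $H$ with bounded inverse, so the right-hand side of the identity is a bounded bijection of $H$. Composing on the right with the bounded bijection $(a+\iu\mathds{1})\colon\domain(a)\to H$ (or rather, inverting the identity), one sees that $\lambda\mathds{1}-a$ is a bijection $\domain(a)\to H$ with bounded inverse $(a+\iu\mathds{1})^{-1}\cdot\frac{2\iu}{\lambda+\iu}(\nu\mathds{1}-u)^{-1}$, so $\lambda\notin\s(a)$. Conversely, if $\lambda\notin\s(a)$, then $(\lambda\mathds{1}-a)^{-1}$ is bounded on $H$ with range $\domain(a)$, and the algebraic rearrangement $(a+\iu\mathds{1})(\lambda\mathds{1}-a)^{-1}=(\lambda+\iu)(\lambda\mathds{1}-a)^{-1}-\mathds{1}$ shows that this composition extends to a bounded operator on $H$, which then furnishes a bounded inverse of $\nu\mathds{1}-u$ via the identity, giving $\nu\notin\s(u)$.

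The main obstacle is domain bookkeeping: because $a$ is unbounded, some care is needed to check that the compositions $(\lambda\mathds{1}-a)(a+\iu\mathds{1})^{-1}$ and $(a+\iu\mathds{1})(\lambda\mathds{1}-a)^{-1}$ really are defined on all of $H$ and are bounded — this uses that both $-\iu$ and, in the converse direction, $\lambda$ are regular values, together with the algebraic rewriting of the second composition shown above. Conceptually, the identity is nothing but the Rational Spectral Mapping Theorem for the Möbius transformation $r(z)=(z-\iu)(z+\iu)^{-1}$ which sends $\mathds{R}\cup\{\infty\}$ bijectively onto the unit circle (see \ref{line}), carrying $\lambda$ to $\nu$; what must be verified by hand here is precisely that this spectral mapping survives in the unbounded setting.
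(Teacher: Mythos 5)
Your proof is correct and rests on exactly the operator identity $\nu\mathds{1}-u=\frac{2\iu}{\lambda+\iu}\,(\lambda\mathds{1}-a)\,(a+\iu\mathds{1})^{-1}$ that the paper's proof also computes (there by a direct algebraic manipulation of $u-\nu\mathds{1}$ rather than pointwise via $x=(a+\iu\mathds{1})^{-1}y$). The only divergence is in the final step: where you construct the bounded inverses explicitly, using the boundedness of $(a+\iu\mathds{1})^{-1}$ and the rearrangement $(a+\iu\mathds{1})(\lambda\mathds{1}-a)^{-1}=(\lambda+\iu)(\lambda\mathds{1}-a)^{-1}-\mathds{1}$, the paper instead transfers injectivity and the ranges across the identity and lets the Closed Graph Theorem supply the boundedness of the left inverses; both routes are sound.
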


\begin{proof}
We compute
\begin{align*}
 &\ u-\nu \mathds{1}\\
= &\ \left[ (a - \iu \mathds{1})-\frac{\lambda -\iu}{\lambda + \iu} \,(a + \iu \mathds{1}) \right]
\cdot (a + \iu \mathds{1})^{-1} \\
= &\ \frac{1}{\lambda + \iu} \,\Bigl[ \,(\lambda + \iu) \,(a - \iu \mathds{1})
    -(\lambda - \iu) \,(a + \iu \mathds{1}) \,\Bigr] \cdot (a + \iu \mathds{1})^{-1} \\
= &\ \frac{2 \iu}{\lambda + \iu} \ (a - \lambda \mathds{1}) \cdot (a + \iu \mathds{1})^{-1}.
\end{align*}
This shows that $u-\nu \mathds{1}$ is injective precisely when
$a-\lambda \mathds{1}$ is so. Also, for the ranges, we have
$\range(u - \nu \mathds{1}) = \range(a - \lambda \mathds{1})$,
so that the statement follows from the Closed Graph Theorem.
(Indeed, an injective operator is closed if and only if its left
inverse is closed, as is easily seen.)
\end{proof}

\begin{corollary}%
The spectrum $\s(a)$ of $a$ is a non-empty closed subset of the real line.
In particular, it is locally compact. \pagebreak
\end{corollary}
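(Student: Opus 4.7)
The plan is to combine Theorem \ref{specisreal} (spectrum is real) with Proposition \ref{spau} (spectrum correspondence via the Cayley transform) in order to pull down the topological facts from the spectrum of the unitary Cayley transform $u$, which is a bounded normal operator and therefore has non-empty compact spectrum contained in the unit circle $C$ by \ref{C*unitary}. The Moebius transformation
\[ r : \lambda \mapsto (\lambda - \iu) \,(\lambda + \iu)^{-1} \]
restricts, according to observation \ref{line}, to a homeomorphism $\mathds{R} \to C \setminus \{1\}$, with $\infty \mapsto 1$. Proposition \ref{spau} asserts precisely that, for $\lambda \in \mathds{R}$, one has $\lambda \in \s(a) \Leftrightarrow r(\lambda) \in \s(u)$, so that $\s(a) = r^{-1}\bigl(\s(u)\bigr)$ as subsets of $\mathds{R}$.

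Closedness in $\mathds{R}$ then follows immediately: $\s(u)$ is closed in $C$, and $r : \mathds{R} \to C \setminus \{1\}$ is continuous, so $r^{-1}\bigl(\s(u)\bigr) = r^{-1}\bigl(\s(u) \cap (C \setminus \{1\})\bigr)$ is closed in $\mathds{R}$, hence $\s(a)$ is a closed subset of the real line, and thereby locally compact.

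The main (if mild) obstacle is non-emptiness, since one must exclude the pathological possibility that $\s(u)$ is concentrated at the excluded point $1$. The key observation is that $u$ is unitary and hence normal, so by the characterisation of the norm via the spectral radius for normal elements in a C*-algebra \ref{C*rl}, we have
\[ \| \,u - \mathds{1} \,\| = \rlambda(u - \mathds{1})
= \max \,\{ \,| \,\nu - 1 \,| : \nu \in \s(u) \,\}. \]
If it were the case that $\s(u) \subset \{1\}$, then this maximum would be zero, forcing $u = \mathds{1}$. But $H \neq \{0\}$ then makes $\mathds{1} - u = 0$ fail to be injective, contradicting proposition \ref{onenoteival}, which says that $1$ is not an eigenvalue of $u$. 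Therefore $\s(u) \not\subset \{1\}$, and picking any $\nu \in \s(u)$ with $\nu \neq 1$, the element $\lambda := r^{-1}(\nu) \in \mathds{R}$ witnesses $\s(a) \neq \varnothing$ by \ref{spau}. Combined with realness from \ref{specisreal}, this completes the proof.
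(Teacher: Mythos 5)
Your argument is correct and follows the same overall route as the paper: pull the topological facts back through the Cayley transform using \ref{spau} and \ref{line}, with realness supplied by \ref{specisreal}. The one place where you genuinely diverge is the non-emptiness step. The paper argues that if $\s(a) = \varnothing$ then $\s(u) = \{1\}$, so $1$ is an isolated point of the spectrum of $u$ and hence an eigenvalue by \ref{eival}, contradicting \ref{onenoteival}; this leans on the spectral resolution of $u$. You instead use only the C*-identity $\| \,u - \mathds{1} \,\| = \rlambda(u - \mathds{1})$ for the normal element $u - \mathds{1}$ \ref{C*rl} to conclude $u = \mathds{1}$ outright, whence $1$ is trivially an eigenvalue on $H \neq \{0\}$ --- the same contradiction, reached by more elementary means. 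Your version buys independence from the Spectral Theorem for normal bounded operators at this point; the paper's version is shorter given that \ref{eival} is already available in context. Both are sound, and your explicit treatment of closedness (preimage of the closed set $\s(u)$ under the continuous map $r : \mathds{R} \to C \setminus \{1\}$) fills in a detail the paper leaves implicit.
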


\begin{proof}
The Moebius transformation $\lambda \mapsto \nu$ of the preceding
proposition \ref{spau} maps $\mathds{R} \cup \{\infty\}$ homeomorphically
onto the unit circle, the point $\infty$ being mapped to $1$, cf.\ \ref{line}.
Now the spectrum of a unitary operator on a Hilbert space is a non-empty
compact subset of the unit circle, cf.\ \ref{C*unitary} and \ref{specradform}.
If $a$ had empty spectrum, then the spectrum of its Cayley transform would
consist of $1$ alone, which would therefore be an eigenvalue, cf.\ \ref{eival},
in contradiction with \ref{onenoteival}.%
\end{proof}

\begin{proposition}\label{swapadj}%
Let $b, c$ be two densely defined linear operators in $H$ such that
$b \subset c$. Then $c^* \subset b^*$.
\end{proposition}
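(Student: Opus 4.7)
The plan is to argue directly from the definition of the adjoint in \ref{selfadjop}. Let $y \in \domain(c^*)$ be arbitrary, and set $y^* := c^*y \in H$. By definition of the adjoint, $y^*$ is characterised by the relation
\[ \langle cx, y \rangle = \langle x, y^* \rangle \quad \text{for all} \quad x \in \domain(c). \]
The key observation is that this identity in particular holds for every $x \in \domain(b)$, since by hypothesis $\domain(b) \subset \domain(c)$.

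Now I would exploit the fact that $b$ and $c$ agree on $\domain(b)$, which is the second part of the relation $b \subset c$. Substituting $bx = cx$ for $x \in \domain(b)$ into the preceding display yields
\[ \langle bx, y \rangle = \langle x, y^* \rangle \quad \text{for all} \quad x \in \domain(b). \]
Since $\domain(b)$ is dense in $H$ by hypothesis (this is what ensures $b^*$ is well-defined at all), the vector $y^*$ on the right-hand side is uniquely determined. Hence, by the defining property of $\domain(b^*)$, we conclude $y \in \domain(b^*)$ and $b^*y = y^* = c^*y$.

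Since $y \in \domain(c^*)$ was arbitrary, this establishes both $\domain(c^*) \subset \domain(b^*)$ and the agreement $b^*y = c^*y$ on $\domain(c^*)$, which is precisely the assertion $c^* \subset b^*$. There is no real obstacle: the statement is a direct definitional unpacking, and the only subtlety worth mentioning explicitly is the use of density of $\domain(b)$ to guarantee $b^*$ makes sense in the first place (needed so that the phrase ``$c^* \subset b^*$'' is meaningful).
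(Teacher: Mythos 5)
Your argument is correct and is essentially identical to the paper's proof: both restrict the defining identity $\langle cx,y\rangle=\langle x,c^*y\rangle$ to $x\in\domain(b)$, use $bx=cx$ there, and invoke the definition of $b^*$. The remark about density of $\domain(b)$ being what makes $b^*$ well-defined is a correct and harmless addition.
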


\begin{proof}
Let $y \in \domain(c^*)$. For all $x \in \domain(b)$, we have
\[ \langle bx,y \rangle = \langle x,c^*y \rangle \]
because $b \subset c$. It follows that $y \in \domain(b^*)$
and $b^*y = c^*y$, so $c^* \subset b^*$.
\end{proof}

\medskip
For the next item, please recall \ref{selfadjop}.

\begin{definition}[maximal symmetric operators]%
\index{concepts}{operator!symmetric!maximal}%
\index{concepts}{symmetric operator!maximal}%
\index{concepts}{maximal!symmetric}%
A symmetric \linebreak linear operator in $H$ is called
\underline{maximal symmetric} if it has no proper symmetric extension.
\end{definition}

\begin{proposition}\label{ismaxsymm}%
The self-adjoint linear operator $a$ is maximal symmetric.
\end{proposition}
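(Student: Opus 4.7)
The plan is to exploit the fact shown in proposition \ref{swapadj} that the adjoint operation reverses inclusions of densely defined linear operators, together with self-adjointness of $a$. Concretely, suppose $b$ is a symmetric extension of $a$ in $H$, so that we have
\[ a \subset b \subset b^*. \]
Since $a$ is densely defined, so is $b$, and thus $b^*$ is well-defined.

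From $a \subset b$ and proposition \ref{swapadj}, I would immediately conclude $b^* \subset a^*$. Because $a$ is self-adjoint, $a^* = a$, so this gives $b^* \subset a$. Chaining with the symmetry of $b$, one obtains
\[ b \subset b^* \subset a \subset b, \]
which forces $b = a$. Hence $a$ admits no proper symmetric extension.

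There is no real obstacle here: the whole argument is a one-line chase through proposition \ref{swapadj} once self-adjointness is used to turn $a^*$ back into $a$. The only point to verify is that $b$ is densely defined (so that $b^*$ makes sense), and this is automatic since $\domain(a) \subset \domain(b)$ and $\domain(a)$ is already dense in $H$.
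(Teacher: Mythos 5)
Your argument is correct and is exactly the paper's proof: from $a \subset b$ and proposition \ref{swapadj} one gets $b^* \subset a^* = a$, yielding the chain $b \subset b^* \subset a \subset b$ and hence $b = a$. Nothing to add.
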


\begin{proof}
Suppose that $b$ is a symmetric extension of $a$. By proposition
\ref{swapadj} above, we then have
\[ b \subset b^* \subset a^* = a \subset b, \]
so that $a = b$.
\end{proof}

\begin{definition}\label{commute}%
\index{concepts}{operator!commuting}\index{concepts}{commuting operator}%
A \underline{bounded} linear operator $c \in \blop(H)$ is said to
\underline{commute} with $a$ if $\domain(a)$ is invariant under $c$
and if $cax = acx$ for all $x \in \domain(a)$.
\end{definition}

\begin{proposition}%
A bounded linear operator $c \in \blop(H)$ commutes with $a$
if and only if $ca \subset ac$, as is seen from a moment's thought.
\end{proposition}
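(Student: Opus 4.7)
The plan is to prove this by simply unpacking the definition of $ca \subset ac$ in terms of the maximal-domain convention fixed in the paper just before Theorem \ref{AddMult}. The whole content is that, because $c$ is a \emph{bounded} operator defined on all of $H$, the two conditions built into Definition \ref{commute} appear precisely as the domain-inclusion part and the equality-on-common-domain part of the relation $ca \subset ac$.

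Concretely, I would first compute the two maximal domains. Since $\domain(c) = H$, one has $\domain(ca) = \{x \in \domain(a) : ax \in \domain(c)\} = \domain(a)$, while $\domain(ac) = \{x \in \domain(c) : cx \in \domain(a)\} = \{x \in H : cx \in \domain(a)\}$. Thus the statement $ca \subset ac$ is, by definition of ``extension'', the conjunction of (i) $\domain(a) \subset \{x \in H : cx \in \domain(a)\}$, and (ii) $cax = acx$ for every $x \in \domain(a)$.

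Condition (i) says exactly that $cx \in \domain(a)$ whenever $x \in \domain(a)$, i.e.\ that $\domain(a)$ is invariant under $c$; condition (ii) is literally the second clause of Definition \ref{commute}. Conversely, if $c$ commutes with $a$ in the sense of Definition \ref{commute}, then $\domain(a) \subset \domain(ac)$ by invariance, and $(ac)x = acx = cax = (ca)x$ on $\domain(a) = \domain(ca)$, so $ca \subset ac$.

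There is no real obstacle here: the only thing to be careful about is to not confuse $\domain(ac)$ with $\domain(a)$, and to remember the convention that composite operators are equipped with the largest possible domain, so that ``$ca \subset ac$'' automatically encodes a domain inclusion as well as a pointwise equality. Since the argument is purely formal, I would present it as a short two-sentence proof without any further calculation.
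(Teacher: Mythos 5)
Your argument is correct and is exactly the "moment's thought" the paper alludes to — the paper gives no written proof, and your unpacking of the maximal-domain convention (so that $\domain(ca)=\domain(a)$ while $\domain(ac)=\{x\in H: cx\in\domain(a)\}$) identifies the two clauses of Definition \ref{commute} with the domain-inclusion and pointwise-equality parts of $ca\subset ac$. Nothing is missing.
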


\begin{definition}\index{symbols}{a65@$\{a\}'$}%
The set of bounded linear operators in $\blop(H)$
commuting with $a$ is denoted by $\{a\}'$. \pagebreak
\end{definition}

\begin{theorem}%
[the Spectral Theorem for unbounded  self-adjoint operators]%
\index{concepts}{Theorem!Spectral!self-adjoint unb.\ operator}%
\index{concepts}{spectral!theorem!self-adjoint unb.\ operator}%
\index{concepts}{spectral!resolution!self-adjoint unb.\ operator}%
\label{specthmunbded}
For the self-adjoint linear operator $a$ in $H$, there exists a unique
resolution of identity $P$ on $\s(a)$, acting on $H$, such that
\[ a = \int _{\text{\small{$\s(a)$}}} \id_{\text{\small{$\s(a)$}}} \,\diff P
\qquad \text{(pointwise)}. \]
This is usually written as
\[ a = \int \lambda \,\diff P(\lambda). \]
One says that $P$ is the \underline{spectral resolution} of $a$. We have
\[ \{a\}' = P', \]
and the support of $P$ is all of $\s(a)$. 
\end{theorem}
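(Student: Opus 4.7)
The plan is to reduce the unbounded case to the Spectral Theorem for normal bounded operators \ref{spthmnormalbded} via the Cayley transform $u := (a - \iu \mathds{1})(a + \iu \mathds{1})^{-1}$, which is unitary and does not have $1$ as an eigenvalue (\ref{onenoteival}). The scalar Moebius map $\varphi(\nu) := \iu(1+\nu)/(1-\nu)$ is a homeomorphism from the unit circle minus $\{1\}$ onto $\mathds{R}$, and by \ref{spau} (together with \ref{line}) it carries $\s(u) \setminus \{1\}$ onto $\s(a)$. The resolution $P$ will be obtained as the image under $\varphi$ of the spectral resolution of $u$, after discarding the null mass at $1$.

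In detail, \ref{spthmnormalbded} gives a unique resolution of identity $Q$ on $\s(u)$ with full support satisfying $u = \int z \, dQ(z)$ and $\{u\}' = Q'$. By \ref{eival}, $Q(\{1\}) = 0$, and $\{1\}$ is not open in $\s(u)$ (otherwise $1$ would be an eigenvalue of $u$ by \ref{eival}), so the restriction of $Q$ to $\s(u) \setminus \{1\}$ retains full support. Let $P := \varphi(Q)$ be the image spectral measure \ref{imagespdef}; it is concentrated on $\s(a)$, inherits inner regularity from $Q$ via the homeomorphism $\varphi$, and has support $\s(a)$. To show $a = \int \lambda \, dP(\lambda)$ pointwise, equivalently $a = \Psi_Q(\varphi)$ by \ref{imagesputhm}, observe that the bounded resolvent $R := (a + \iu \mathds{1})^{-1}$ satisfies $u = \mathds{1} - 2 \iu R$, hence $R = \pi_Q(h)$ with $h(z) := (1-z)/(2\iu)$. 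For $y = Rx \in \domain(a)$, lemma \ref{Dlemma} gives $y \in \mathcal{D}_{\varphi}$ and $\Psi_Q(\varphi) \,y = \Psi_Q(\varphi h) \,x$; the key identity $\varphi(z) \,h(z) = (1+z)/2$ then yields $\Psi_Q(\varphi) \,y = (\mathds{1}+u) \,x/2 = x - \iu R x = a y$, using $a y = x - \iu y$. Thus $a \subset \Psi_Q(\varphi)$, and as $\Psi_Q(\varphi)$ is self-adjoint by \ref{Psifselfadj} while $a$ is maximal symmetric by \ref{ismaxsymm}, equality follows.

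For the commutant, $P' = Q' = \{u\}'$ since $\varphi$ is a bijection between the projection ranges, and a direct verification shows $c \in \{a\}'$ iff $c$ commutes with $R$ iff $c \in \{u\}'$: the forward direction uses $c$-invariance of $\domain(a)$ (cf.\ \ref{commute}) to move $c$ past $a + \iu \mathds{1}$, while the reverse uses that $\domain(a) = \range(R)$ is automatically invariant under any bounded operator commuting with $R$. For uniqueness, any other resolution $P_1$ on $\s(a)$ with full support representing $a$ pushes forward under $\lambda \mapsto (\lambda - \iu)/(\lambda + \iu)$ to a resolution on $\s(u) \setminus \{1\}$ which, extended by zero at $\{1\}$, represents $u$ (by the Multiplication Theorem \ref{AddMult}), and so must coincide with $Q$ by the uniqueness clause of \ref{spthmnormalbded}; pulling back recovers $P_1 = P$. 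The principal obstacle is the establishment of $a \subset \Psi_Q(\varphi)$, which hinges on the identity $\varphi \,h = (1+z)/2$ to reduce an \emph{unbounded} spectral integral, evaluated on the dense range of $R$, to a manipulation in the \emph{bounded} functional calculus of $u$; the rest of the argument is bookkeeping around image measures and maximal symmetry.
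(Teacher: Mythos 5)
Your proposal is correct and follows essentially the same route as the paper's own proof: reduction to the bounded Spectral Theorem \ref{spthmnormalbded} via the Cayley transform, transport of the spectral resolution by the inverse Moebius map as an image spectral measure, the inclusion $a \subset \Psi_Q(\varphi)$ upgraded to equality by maximal symmetry \ref{ismaxsymm}, and the identification $\{a\}' = \{u\}'$ through the resolvent. Your use of $R = (\mathds{1}-u)/(2\iu)$ and the identity $\varphi\mspace{2mu}h = (1+z)/2$ is just a rescaled form of the paper's computation $b\,(\mathds{1}-u) = \iu\,(\mathds{1}+u)$ via the Multiplication Theorem \ref{AddMult}.
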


We split the proof into two lemmata.

\begin{lemma}\label{firstlemmaspecthm}%
Let $u$ denote the Cayley transform of $a$. Put
\[ \Omega := \s(u) \setminus \{1\}. \]
The function $\kappa$ given by
\[ \kappa : \lambda \mapsto (\lambda - \iu)(\lambda + \iu)^{-1} \]
maps $\s(a)$ homeomorphically onto $\Omega$, cf.\ \ref{spau} \& \ref{line}. \\
Let $Q$ be the spectral resolution of $u$. Then $Q(\{1\}) = 0$, so that also
\[ u = \int _{\Omega} \id_{\Omega} \,\diff Q = \int \id \,\diff (Q|_{\Omega})
       \qquad \text{(pointwise)}. \]
Let $P := \kappa^{-1}(Q|_{\Omega})$ be the image of $Q|_{\Omega}$
under $\kappa^{-1}$. Then $P$ is a spectral resolution on $\s(a)$ such that
\[ a = \int _{\text{\small{$\s(a)$}}} \id_{\text{\small{$\s(a)$}}} \,\diff P
      \qquad \text{(pointwise)}. \]
Let conversely $P$ be a resolution of identity on $\s(a)$ such that
\[ a = \int _{\text{\small{$\s(a)$}}} \id_{\text{\small{$\s(a)$}}} \,\diff P
      \qquad \text{(pointwise)}. \]
The image $\kappa(P)$ of $P$ under $\kappa$ then is $Q|_{\Omega}$.
\pagebreak
\end{lemma}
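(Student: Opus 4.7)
The plan is to combine the Spectral Theorem for bounded normal operators \ref{spthmnormalbded}, applied to the Cayley transform $u$, with the push-forward machinery for spectral measures \ref{imagespdef}, \ref{imagesputhm}, using $\kappa$ as the change-of-variable map. The identity $\kappa(\lambda) = (\lambda - \iu)(\lambda + \iu)^{-1}$ is a Moebius transformation carrying $\mathds{R}$ bijectively onto the punctured unit circle, and by \ref{spau} it sends $\s(a)$ homeomorphically onto $\Omega = \s(u) \setminus \{1\}$; in particular, $\kappa$ and $\kappa^{-1}$ are Borel measurable.

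First I would dispose of the point $\{1\}$. By \ref{onenoteival}, $1$ is not an eigenvalue of $u$, and by \ref{eival} applied to the spectral resolution $Q$ of $u$, this forces $Q(\{1\}) = 0$. Hence $u = \int_\Omega \id_\Omega \,\diff Q = \int \id \,\diff(Q|_\Omega)$ in norm, and $Q|_\Omega$ is a resolution of identity on $\Omega$ whose support is all of $\Omega$. Defining $P := \kappa^{-1}(Q|_\Omega)$ as an image spectral measure \ref{imagespdef}, the homeomorphic nature of $\kappa$ immediately yields that $P$ is a resolution of identity on $\s(a)$ with support all of $\s(a)$: inner regularity of each $\langle Px, x\rangle$ is inherited from $\langle Q|_\Omega x, x\rangle$ by transport along the homeomorphism $\kappa$.

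The main step is to verify $a = \int \id_{\s(a)} \,\diff P$ (pointwise). Applying the change-of-variable formula \ref{imagesputhm} gives $\Psi_P(\id_{\s(a)}) = \Psi_{Q|_\Omega}(\kappa^{-1})$, where $\kappa^{-1}(\nu) = \iu(1+\nu)(1-\nu)^{-1}$ on $\Omega$. Since $1-\id$ does not vanish on $\Omega$, the function $(1-\id)^{-1}$ is Borel measurable (though generally unbounded), so $\Psi_{Q|_\Omega}\bigl((1-\id)^{-1}\bigr)$ is a well-defined closed operator and is in fact the inverse of $\mathds{1} - u = \Psi_{Q|_\Omega}(1-\id)$, by the Multiplication Theorem \ref{AddMult} together with the injectivity of $\mathds{1}-u$ from \ref{onenoteival}. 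Applying \ref{AddMult} once more, we obtain the inclusion
\[ \iu\,(\mathds{1}-u)^{-1}(\mathds{1}+u) \subset \Psi_P(\id_{\s(a)}). \]
The left-hand side coincides with $a$ on $\domain(a)$ by unwinding the definition of the Cayley transform, so $a \subset \Psi_P(\id_{\s(a)})$. Since $\Psi_P(\id_{\s(a)})$ is self-adjoint by \ref{Psifselfadj} (the identity on $\s(a) \subset \mathds{R}$ is real-valued) and $a$ is maximal symmetric by \ref{ismaxsymm}, this inclusion must be an equality. The main obstacle is precisely this domain bookkeeping: the Multiplication Theorem in \ref{AddMult} only delivers operator inclusions, and it is the self-adjointness–maximality pair \ref{Psifselfadj}–\ref{ismaxsymm} that upgrades them to equality.

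For the converse uniqueness statement, suppose $P$ is any resolution of identity on $\s(a)$ with $a = \int \id_{\s(a)} \,\diff P$ pointwise. Push forward: let $R := \kappa(P)$, a resolution of identity on $\Omega$, and extend it to $\s(u)$ by declaring $R(\{1\}) := 0$. Running the same Addition/Multiplication calculus in reverse (using that $\kappa$ is bounded on $\s(a)$, so no domain issues arise on the bounded side), one checks $\Psi_R(\id) = (a - \iu\mathds{1})(a + \iu\mathds{1})^{-1} = u$ in norm. The uniqueness clause of \ref{spthmnormalbded}, together with $R(\{1\}) = 0 = Q(\{1\})$, forces $R = Q$ on all of $\s(u)$, and restricting to $\Omega$ yields $\kappa(P) = Q|_\Omega$ as required. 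The statements about $\{a\}' = P'$ and the uniqueness of $P$ itself then follow from the corresponding facts for $Q$ in \ref{spthmnormalbded}, transported through the homeomorphism $\kappa$.
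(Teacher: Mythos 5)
Your proposal is correct and follows essentially the same route as the paper: kill the atom at $1$ via \ref{onenoteival} and \ref{eival}, transport $Q|_\Omega$ by $\kappa^{-1}$, use \ref{imagesputhm} and the Multiplication Theorem \ref{AddMult} to show $a \subset \Psi_P(\id)$, and conclude equality from \ref{Psifselfadj} and the maximal symmetry \ref{ismaxsymm}; the converse by pushing $P$ forward and invoking uniqueness in \ref{spthmnormalbded} is likewise the paper's argument. The only cosmetic difference is that you factor $\Psi_{Q|_\Omega}(\kappa^{-1})$ as $(\mathds{1}-u)^{-1}\cdot\iu(\mathds{1}+u)$, whereas the paper multiplies on the right by $\mathds{1}-u$ and compares domains directly via $\domain(a)=\range(\mathds{1}-u)$.
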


\begin{proof}
We have $Q(\{1\}) = 0$ as $1$ is not an eigenvalue of $u$, see
\ref{onenoteival} and \ref{eival}. Hence also
\[ u = \int \id \,\diff Q = \int _{\Omega} \id_{\Omega} \,\diff Q = \int \id \,\diff (Q|_{\Omega}). \]
As in \ref{imagespdef} above, let
\[ P := \kappa^{-1}(Q|_{\Omega}) \]
be the image of $Q|_{\Omega}$ under $\kappa^{-1}$ and consider
\[ b := \Psi _P(\id) = \int \id \,\diff P = \int \id \,\diff \bigl(\kappa^{-1}(Q|_{\Omega})\bigr), \]
which is a self-adjoint operator, cf.\ \ref{Psifselfadj}. By \ref{imagesputhm} above, we have
\[ b = \int _{\Omega}\kappa^{-1} \,\diff Q = \int _{\Omega} \iu \,\frac{1+\id}{1-\id} \,\diff Q. \]
It shall be shown that $b = a$. Since
\[ \mathds{1}-u = \int _{\Omega} (1-\id) \,\diff Q, \]
the Multiplication Theorem \ref{AddMult} gives
\[ b \,(\mathds{1}-u) = \int _{\Omega} \iu \,(1+\id) \,\diff Q = \iu \,(\mathds{1}+u), \tag*{$(i)$} \]
and in particular
\[ \range(\mathds{1}-u) \subset \domain(b). \tag*{$(ii)$} \]
On the other hand, for $x \in \domain(a)$, put
\[ (a + \iu \mathds{1}) \,x =: z. \]
We then have
\[ (a - \iu \mathds{1}) \,x = u \mspace{2mu} z, \]
so
\begin{align*}
(\mathds{1}-u) \,z & = 2 \,\iu \,x \tag*{$(iii)$} \\
(\mathds{1}+u) \,z & = 2 \,a \,x,
\end{align*}
whence
\[ a \,(\mathds{1}-u) \,z = a \,2 \,\iu \,x = \iu \,2 \,a \,x = \iu \,(\mathds{1}+u) \,z \tag*{$(iv)$} \]
for all $z \in \range(a + \iu \mathds{1})$, which is equal to $H$ as $\s(a)$ is real,
cf.\ \ref{specisreal}. From $(ii)$ and $(iii)$ we have
\[ \domain(a) = \range(\mathds{1}-u) \subset \domain(b). \tag*{$(v)$}  \]
From $(i)$, $(iv)$, and $(v)$, it follows that $b$ is a self-adjoint extension of $a$.
As $a$ is maximal symmetric by \ref{ismaxsymm}, this yields that
$a = b = \Psi _P(\id)$.\pagebreak

Let conversely $P$ be a resolution of identity on $\s(a)$ such that
\[ a = \int _{\text{\small{$\s(a)$}}} \id_{\text{\small{$\s(a)$}}} \,\diff P. \]
Let $R : = \kappa(P)$ be the image of $P$ under $\kappa$ and put
\[ v := \Psi _{R} (\id) = \int \id \,\diff R = \int \id \,\diff \bigl(\kappa(P)\bigr). \]
By \ref{imagesputhm} above again, we find
\[ v= \int \kappa \,\diff P = \int \frac{\id - \iu}{\id + \iu} \,\diff P, \]
so that the Multiplication Theorem \ref{AddMult} gives
\[ v \,(a + \iu \mathds{1}) = a - \iu \mathds{1}, \]
or
\[ v = (a - \iu \mathds{1})(a + \iu \mathds{1})^{-1} = u. \]
By the uniqueness of the spectral resolution, we must have
\[ \kappa(P) = R = Q|_{\Omega}. \qedhere \]
\end{proof}

\begin{lemma}%
If $u$ is the Cayley transform of $a$, then
\[ \{a\}' = \{u\}'. \]
\end{lemma}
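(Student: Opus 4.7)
The plan is to exploit the bijection $a + \iu \mathds{1} : \domain(a) \to H$ (available by \ref{specisreal}) together with the explicit relationships established in the proof of the preceding lemma, namely that for $x \in \domain(a)$ and $z := (a + \iu \mathds{1}) x$ one has
\[ (\mathds{1}-u) \mspace{2mu} z = 2 \mspace{2mu} \iu \mspace{2mu} x, \qquad (\mathds{1}+u) \mspace{2mu} z = 2 \mspace{2mu} a \mspace{2mu} x, \]
so in particular $\domain(a) = \range(\mathds{1}-u)$ with the inverse correspondence $z = (a + \iu \mathds{1}) \mspace{2mu} x$, $x = (\mathds{1}-u) \mspace{2mu} z / (2 \mspace{2mu} \iu)$, $a \mspace{2mu} x = (\mathds{1}+u) \mspace{2mu} z / 2$.

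For the inclusion $\{a\}' \subset \{u\}'$, I would take $c \in \{a\}'$ and argue that $c$ commutes with both $a + \iu \mathds{1}$ and $a - \iu \mathds{1}$ in the sense $c \,(a \pm \iu \mathds{1}) \subset (a \pm \iu \mathds{1}) \mspace{2mu} c$ (using invariance of $\domain(a)$ under $c$). Since $a + \iu \mathds{1}$ is a bijection onto $H$, this upgrades to $c \,(a + \iu \mathds{1})^{-1} = (a + \iu \mathds{1})^{-1} \mspace{2mu} c$ as a genuine operator identity on $H$. Composing then gives $c \mspace{2mu} u = u \mspace{2mu} c$.

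For the reverse inclusion $\{u\}' \subset \{a\}'$, I would take $c$ with $c \mspace{2mu} u = u \mspace{2mu} c$ and, given $x \in \domain(a)$, write $x = (\mathds{1}-u) \mspace{2mu} z / (2 \mspace{2mu} \iu)$ as above. Then
\[ c \mspace{2mu} x = (\mathds{1}-u) \mspace{2mu} c \mspace{2mu} z / (2 \mspace{2mu} \iu) \in \range(\mathds{1}-u) = \domain(a), \]
so $\domain(a)$ is invariant under $c$; and applying the displayed formula for $a x$ to $c z$ in place of $z$ yields
\[ a \mspace{2mu} c \mspace{2mu} x = (\mathds{1}+u) \mspace{2mu} c \mspace{2mu} z / 2 = c \,(\mathds{1}+u) \mspace{2mu} z / 2 = c \mspace{2mu} a \mspace{2mu} x, \]
which is the required commutation.

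The routine part is purely algebraic manipulation; the only delicate point I foresee is keeping the domains straight. One must verify that $c$ sending $\domain(a)$ into $\domain(a)$ is not assumed but actually proved from $c \mspace{2mu} u = u \mspace{2mu} c$, which is where the identity $\domain(a) = \range(\mathds{1}-u)$ from the preceding lemma plays the decisive role. Everything else is a direct translation between the two operator pictures via the Cayley transform.
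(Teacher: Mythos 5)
Your proof is correct and follows essentially the same route as the paper's: the forward inclusion via $c\,(a \pm \iu \mathds{1}) \subset (a \pm \iu \mathds{1})\,c$ and inversion of $a + \iu \mathds{1}$, and the reverse inclusion via the identities $\domain(a) = \range(\mathds{1}-u)$ and $a\,(\mathds{1}-u) = \iu\,(\mathds{1}+u)$ from the preceding lemma. The only difference is cosmetic: you phrase the second half pointwise in terms of $x$ and $z$, where the paper writes the same computation as an operator identity applied to $\mathds{1}-u$.
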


\begin{proof}
If $c \in \{a\}'$ then
\[ c \,(a \pm \iu \mathds{1}) \subset (a \pm \iu \mathds{1}) \,c, \]
whence
\[ (a \pm \iu \mathds{1})^{-1} \,c = c \,(a \pm \iu \mathds{1})^{-1}. \]
It follows that $c \,u = u \,c$.

Conversely, let $c \in \{u\}'$. From the formulae $(iv)$ and $(v)$
of the preceding lemma \ref{firstlemmaspecthm}, $\mathds{1}-u$
has range $\domain(a)$, and
\[ a \,(\mathds{1}-u) = \iu \,(\mathds{1}+u). \]
Therefore we have
\begin{align*}
c \,a \,(\mathds{1}-u) & = \iu \,c \,(\mathds{1}+u) \\
 & = \iu \,(\mathds{1}+u) \,c \\
 & = a \,(\mathds{1}-u) \,c = a \,c \,(\mathds{1}-u),
\end{align*}
whence $c \,a \subset a \,c$. \pagebreak
\end{proof}

\begin{theorem}[the inverse Cayley transform]%
The self-adjoint operator $a$ can be regained from its Cayley transform $u$
as the inverse Cayley transform
\[ a = \iu \,(\mathds{1}+u) \,{(\mathds{1}-u)}^{\,-1}. \]
Please note here that $\mathds{1}-u$ is injective, by \ref{onenoteival}.
\end{theorem}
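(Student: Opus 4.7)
The plan is to extract this identity directly from the relations already derived in the proof of lemma \ref{firstlemmaspecthm}, together with the injectivity of $\mathds{1}-u$ established in proposition \ref{onenoteival}. Since $\mathds{1}-u$ is injective, $(\mathds{1}-u)^{\,-1}$ is a well-defined linear operator whose domain is $\range(\mathds{1}-u)$, and the composite $\iu\,(\mathds{1}+u)\,(\mathds{1}-u)^{\,-1}$ is defined on precisely that same set (because $\mathds{1}+u$ is bounded and everywhere defined).

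First, I would identify the domain of the right-hand side with $\domain(a)$. Formula $(v)$ in the proof of lemma \ref{firstlemmaspecthm} shows
\[ \range(\mathds{1}-u) = \domain(a), \]
so the domain of $\iu\,(\mathds{1}+u)\,(\mathds{1}-u)^{\,-1}$ equals $\domain(a)$, matching the domain of $a$ exactly. This removes all domain subtleties and reduces the claim to a pointwise equality on $\domain(a)$.

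Next, I would verify the formula pointwise. Fix $x \in \domain(a)$ and set $z := (a+\iu\mathds{1})\,x$. The computation recorded just before $(iv)$ in the proof of lemma \ref{firstlemmaspecthm} yields the two identities
\begin{align*}
(\mathds{1}-u)\,z & = 2\,\iu\,x, \\
(\mathds{1}+u)\,z & = 2\,a\,x.
\end{align*}
From the first, $z = 2\,\iu\,(\mathds{1}-u)^{\,-1}\,x$; substituting into the second and dividing by $2$ gives
\[ a\,x = \iu\,(\mathds{1}+u)\,(\mathds{1}-u)^{\,-1}\,x, \]
which is exactly the claim.

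There is essentially no obstacle here, since the heavy lifting was done in establishing the Cayley transform identities $(iii)$--$(v)$ in lemma \ref{firstlemmaspecthm}; the only point requiring care is the domain identification, which is precisely what $(v)$ provides. The final step, if desired, would be to observe that this also confirms that the right-hand side is symmetric and that its closure equals $a$, consistent with $a$ being maximal symmetric by proposition \ref{ismaxsymm}.
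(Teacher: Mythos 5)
Your proposal is correct and follows exactly the route the paper takes: the paper's proof consists of the single line ``See $(iv)$ and $(v)$ of the proof of lemma \ref{firstlemmaspecthm}'', and you have simply spelled out the details of how those two formulae (the identity $a\,(\mathds{1}-u)\,z = \iu\,(\mathds{1}+u)\,z$ together with $\domain(a) = \range(\mathds{1}-u)$ and the injectivity of $\mathds{1}-u$) yield the stated operator equality with matching domains.
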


\begin{proof}
See $(iv)$ and $(v)$ of the proof of lemma \ref{firstlemmaspecthm}.
\end{proof}

\begin{definition}[function of a self-adjoint operator]%
\index{concepts}{function!of an operator}\label{unbdedf}%
Let $P$ be the spectral resolution of the self-adjoint linear
operator $a$. If $f$ is a complex-valued Borel function on $\s(a)$,
one writes
\[ f(a) := \Psi_P (f) = \int f \,\diff P \quad \text{(pointwise)}, \]
and one says that $f(a)$ is a \underline{function} of $a$.
\end{definition}

\begin{example}
The Cayley transform
\[ u := (a - \iu \mathds{1}) (a + \iu \mathds{1})^{-1} \]
is the function $\kappa (a)$ with
\begin{alignat*}{2}
\kappa : \s(a)       & &\ \to            &\ \mathds{C} \\
               \lambda & &\ \mapsto  &\ (\lambda - \iu)(\lambda + \iu)^{-1}.
\end{alignat*}
\end{example}

\begin{proof}
See the proof of the converse part of lemma \ref{firstlemmaspecthm}.
\end{proof}

\clearpage


\section{Application: an Initial Value Problem}%
\label{scopug}

\medskip
In this paragraph, let $H$ be a Hilbert space $\neq \{0\}$.

\begin{definition}%
By a \underline{strongly continuous one-parameter unitary} \underline{group}
on $H$, we shall understand a function
\begin{alignat*}{2}
\mathds{R} & \to           &\ & \blop(H) \\
                   t & \mapsto &\ & U_t
\end{alignat*}
such that
\begin{itemize}
  \item[$(i)$] $U_t$ is unitary for all $t \in \mathds{R}$,
 \item[$(ii)$] $U_{t+s} = U_t \,U_s$ for all $s$, $t \in \mathds{R}$,
                      \quad (the group property)
\item[$(iii)$] for each $x \in H$, the function $t \mapsto U_t \,x$
                       is continuous on $\mathds{R}$.
\end{itemize}
\end{definition}

\begin{definition}[infinitesimal generator]%
\index{concepts}{infinitesimal generator}%
Let $t \mapsto U_t$ be a strongly continuous one-parameter unitary
group on $H$. Then the \underline{infinitesimal} \underline{generator}
of $t \mapsto U_t$ is the linear operator $a$ in $H$, with domain
$\domain(a)$, characterised by the condition that for $x, y \in H$, one has
\[ x \in \domain(a),\ y = a x \quad \Leftrightarrow \quad
\ - \frac{1}{\iu} \frac{\diff}{\diff t} \Bigl|_{\text{\small{$t=0$}}} \,U_t \,x
\text{\ exists and equals\ } y. \]
This is also written as
\[ a = - \frac{1}{\iu} \frac{\diff}{\diff t} \Bigl|_{\text{\small{$t=0$}}} \,U_t
\quad \text{(pointwise)}. \]
\end{definition}

\begin{theorem}[solution to an initial value problem]%
Let $a$ be a self-adjoint linear operator in $H$, and let $P$ be its
spectral resolution. For $t \in \mathds{R}$, we denote by $U_t$ the
bounded linear operator on $H$ given by
\[ U_t :=
\exp(- \iu t a) = \int _{\text{\small{$\s(a)$}}} \exp(- \iu t \lambda) \,\diff P (\lambda)
\quad \text{(pointwise)}, \]
cf.\ \ref{unbdedf}. (Please note that $t \mapsto U_t$ is the Fourier transform of $P$.)

Then $t \mapsto U_t$ is a strongly continuous one-parameter
unitary group on $H$ with infinitesimal generator $a$.

For $x \in \domain(a)$, one then actually has
\[ - \frac{1}{\iu} \frac{\diff}{\diff t} \Bigl|_{\text{\small{$t=s$}}} \,U_t \,x
= a \,U_s \,x \quad \text{for all} \quad s \in \mathds{R}. \]

That is, $u : t \mapsto U_t \,x$ is a
\underline{solution to the initial value problem}
\[ - \frac{1}{\iu} \frac{\diff}{\diff t} \,u(t) = a \,u(t),\quad u(0) = x. \pagebreak \]
\end{theorem}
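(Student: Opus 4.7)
The plan is to transfer everything to the spectral side via the pointwise functional calculus $\Psi_P$ developed in \ref{unbdedf}, exploiting that for each real $t$ the function $\lambda \mapsto \exp(-\iu t \lambda)$ is bounded on $\s(a) \subset \mathds{R}$. Because $|\exp(-\iu t \lambda)| = 1$, the Addition and Multiplication Theorems \ref{AddMult} (applied in the easy bounded case) immediately yield $U_t^{\,*} U_t = U_t U_t^{\,*} = \mathds{1}$ and $U_{t+s} = U_t U_s$, so $t \mapsto U_t$ is a one-parameter group of unitaries. Strong continuity is then a direct dominated convergence argument: by \ref{heqnull},
\[ \| U_t x - U_{t_0} x \|^2 = \int \bigl| \exp(-\iu t \lambda) - \exp(-\iu t_0 \lambda) \bigr|^2 \, \diff \langle Px, x\rangle, \]
with integrand pointwise vanishing and uniformly bounded by $4$, and $\langle Px, x\rangle$ a finite measure.

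Next I would identify the infinitesimal generator $b$ of $t \mapsto U_t$ with $a$. First I show $a \subset b$. Fix $x \in \domain(a) = \mathcal{D}_{\id}$ and observe, again using \ref{heqnull}, that
\[ \left\| \frac{U_t x - x}{t} - (-\iu a x) \right\|^2
   = \int \left| \frac{\exp(-\iu t \lambda) - 1}{t} + \iu \lambda \right|^2 \diff \langle Px, x\rangle. \]
The elementary estimate $|\exp(\iu s) - 1| \leq |s|$ (valid for $s \in \mathds{R}$) shows that the integrand is dominated by $4 |\lambda|^2$, which is $\langle Px, x\rangle$-integrable precisely because $x \in \mathcal{D}_{\id}$; the integrand tends to $0$ pointwise. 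Dominated convergence gives $\lim_{t \to 0} (U_t x - x)/t = -\iu a x$, whence $x \in \domain(b)$ and $b x = a x$.

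To obtain the reverse inclusion $b \subset a$, I would verify that $b$ is symmetric. For $x, z \in \domain(b)$, the unitarity $U_t^{\,*} = U_{-t}$ gives
\[ \left\langle \tfrac{U_t x - x}{t}, z \right\rangle = \left\langle x, \tfrac{U_{-t} z - z}{t} \right\rangle, \]
and after the substitution $\tau = -t$ in the right-hand limit one finds $\langle -\iu bx, z\rangle = \langle x, \iu bz\rangle$, i.e.\ $\langle bx, z\rangle = \langle x, bz\rangle$. Since $a$ is maximal symmetric \ref{ismaxsymm}, the chain $a \subset b \subset a$ forces $b = a$. (Alternatively, taking adjoints of $a \subset b$ via \ref{swapadj} and combining with $b \subset b^{\,*}$ yields $b \subset b^{\,*} \subset a^{\,*} = a$.)

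The extension to arbitrary $s$ follows from the group law $U_t = U_{t-s} U_s$ once one knows that $\domain(a)$ is invariant under each $U_s$ and that $a U_s = U_s a$ on $\domain(a)$. The invariance is immediate from lemma \ref{Dlemma}: since $\langle P(U_s x), U_s x\rangle = |\exp(-\iu s \lambda)|^2 \cdot \langle Px, x\rangle = \langle Px, x\rangle$, the $\rmLeb^2$-finiteness of $\id$ is preserved; the commutation $a U_s x = U_s a x$ then drops out of the Multiplication Theorem \ref{AddMult} applied to the factorisation $\id \cdot \exp(-\iu s \id) = \exp(-\iu s \id) \cdot \id$. Combining with the $t = 0$ case applied to $U_s x \in \domain(a)$ yields the asserted $-\frac{1}{\iu}\frac{\diff}{\diff t}\bigl|_{t=s} U_t x = a U_s x$. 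I expect the main obstacle to be the identification $b = a$ rather than merely $a \subset b$: the dominated convergence producing the generator is routine once one sees the sharp bound $|e^{\iu s}-1| \leq |s|$, but pinning the generator down to exactly $a$ crucially uses maximal symmetry, which in turn rests on the full self-adjointness of $a$ established via the Cayley transform in \ref{firstlemmaspecthm}.
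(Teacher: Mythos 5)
Your proposal is correct and follows essentially the same route as the paper's proof: unitarity and the group law from the functional calculus, strong continuity by dominated convergence, the inclusion $a \subset b$ via the bound $| \,\exp(\iu s) - 1 \,| \leq | \,s \,|$ and dominated convergence, symmetry of the generator from $U_t^{\,*} = U_{-t}$, and the identification $b = a$ by maximal symmetry \ref{ismaxsymm}. The only cosmetic difference is in the final step for arbitrary $s$, where you derive the invariance of $\domain(a)$ and the commutation $aU_s = U_s a$ directly from \ref{Dlemma} and \ref{AddMult}, while the paper cites \ref{commute}, \ref{specthmunbded} and \ref{spprojcomm}; both justifications are valid.
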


\begin{proof}
The spectral resolution $P$ of $a$ lives on the spectrum of $a$,
which is a subset of the real line, cf.\ \ref{specisreal}. The
fact that $\pi_P$ is a representation on $H$ therefore implies that
$U_t$ is unitary for all $t \in \mathds{R}$, and that $U_{t+s} = U_t \,U_s$
for all $s$, $t \in \mathds{R}$. Moreover, the function $t \mapsto U_t \,x$
\linebreak is continuous on $\mathds{R}$ for every $x \in H$ by the Dominated
Convergence Theorem \ref{domconvergence}. Thus $t \mapsto U_t$
$(t \in \mathds{R})$ is a strongly continuous one-parameter unitary group.
Please note that $U_0 = \mathds{1}$, by $U_0 = U_0 U_0$.

We shall show that $a$ is the infinitesimal generator of $t \mapsto U_t$.
For $x \in \domain(a)$ and $t \in \mathds{R}$, we have
\[ {\Bigl\| \ {- \frac{1}{\iu t} \,(U_t \,x - x) - ax} \ \Bigr\| \,}^2
= \int {| \,f_t \,| \,}^2 \,\diff \langle Px, x \rangle, \]
with
\[ f_t (\lambda) = - \frac{1}{\iu t} \,\bigl(\exp(- \iu t \lambda) - 1 + \iu t \lambda \bigr)
\qquad \bigl( \,\lambda \in \s(a) \,\bigr). \]
By power series expansion of $\exp(- \iu t \lambda)$, one sees that $f_t \to 0$
pointwise as $t \to 0$. We have $| \,\exp( \iu s) -1 \,| \leq | \,s \,|$ for all real
$s$ (as chord length is smaller than or equal to arc length). It follows that
\[ | \,f_t \,| \leq 2 \cdot | \,\mathrm{id}_{\text{\small{$\s(a)$}}} \,|
\quad \text{for all} \quad t \in \mathds{R}. \]
Since the identical function on $\s(a)$ is in
${\mathscr{L} \,}^2( \langle Px, x \rangle )$,
the Lebesgue Dominated Convergence Theorem implies that
\[ \lim _{t \to 0} - \frac{1}{\iu t} \,(U_t \,x - x) = a x
\quad \text{for all} \quad x \in \domain(a). \]
Thus, if $b$ denotes the infinitesimal generator of $t \mapsto U_t$, we
have shown that $b$ is an extension of $a$. Now $b$ is symmetric
because for $x$, $y \in \domain(b)$ and $t \in \mathds{R}$, one has
\[ \langle \,- \frac{1}{\iu t} \,(U_t \,x - x), y \,\rangle
= \langle \,- \frac{1}{\iu t} \,(U_t - \mathds{1}) \,x, y \,\rangle
= \langle \,x, \frac{1}{\iu t} \,(U_{-t} \,y - y) \,\rangle. \]
It follows that $b = a$ because $a$ is maximal symmetric, cf.\ \ref{ismaxsymm}.

The second statement follows from the group property of $t \mapsto U_t$
and from the fact that $\domain(a)$ is invariant under $U_t$, by \ref{commute}.
(Indeed, $U_t$ commutes with $a$ by \ref{specthmunbded} and \ref{spprojcomm}.)
\end{proof}

\medskip
This is of prime importance for quantum mechanics, where the initial
value problem is the Schr\"odinger equation for the evolution in time
of the quantum mechanical state described by the vector $u(t)$. \pagebreak

\clearpage

\addtocontents{toc}{\protect\pagebreak}


\phantomsection

\addcontentsline{toc}{part}{Appendix}

\begin{center} \textbf{\huge{Appendix}} \end{center}


\setcounter{section}{50}

\phantomsection

\addcontentsline{toc}{chapter}{\texorpdfstring{\ \ \ \S\ 50.\quad Quotient Spaces}%
{\textsection\ 50. Quotient Spaces}}

\fancyhead[LE]{\SMALL{APPENDIX}}
\fancyhead[RO]{\SMALL{\S\ 50. \ QUOTIENT SPACES}}

\smallskip
\begin{center} \textbf{\S\ 50.\:\:Quotient Spaces} \end{center}

\setcounter{theorem}{0}

\begin{reminder}[distance from a point to a non-empty set]
\index{concepts}{distance function}\label{distance}%
Let $\bigl( \,X, d \,\bigr)$ be a metric space. Let $A$ be a non-empty subset
of $X$. The \underline{distance} from a point $x \in X$ to $A$ is defined as
\[ \mathrm{dist} (x, A) := \inf \,\{ \,d(x, a) \geq 0 : a \in A \,\}. \]
(See e.g.\ \cite[p.\ 253]{Eng}.)

\noindent The distance function $x \mapsto \mathrm{dist} (x, A)$ $(x \in X)$ is
continuous, and
\[ \{ \,x \in X : \mathrm{dist} (x, A) = 0 \,\} = \overline{A}. \]
(See \cite[4.1.10 \& 4.1.11 p.\ 254]{Eng}.)

\noindent In particular, if $A$ is closed, then for $x \in X$, we have
$\mathrm{dist} (x, A) = 0$ if and only if $x \in A$.
\end{reminder}

\begin{reminder}[quotient space]\label{quotspace}%
\index{concepts}{quotient!space}\index{concepts}{quotient!norm}%
\index{concepts}{norm!quotient}%
Let $\bigl( \,A, | \cdot | \,\bigr)$ be a normed space and
let $B$ be a \underline{closed} subspace of $A$. The
\underline{quotient space} $A/B$ is defined as
\[ A/B := \{ a+B \subset A : a \in A \}. \]
The canonical projection (here denoted by an underscore)
\begin{alignat*}{2}
\_ \,: A\ & & \to     &\ A/B \\
       a\ & & \mapsto &\ a+B =: \underline{a}
\end{alignat*}
is a linear map from $A$ onto $A/B$. Putting
\[ | \,\underline{a}\, | := \inf \,\{ \,|\,c\,| : c \in \underline{a} \,\}
\qquad ( \,\underline{a} \in A/B \,) \]
then defines a norm on A/B. It is called the
\underline{quotient norm} on $A/B$.
\end{reminder}

\begin{proof}
For $\underline{a}, \underline{b} \in A/B$ and $c, d \in B$ arbitrary, one has
\[ | \,\underline{a} + \underline{b} \,| \leq | \,a + b + c + d \,| \leq | \,a + c \,| + | \,b + d \,|. \]
Taking infima over $c \in B$ and $d \in B$, we find that
\[ | \,\underline{a} + \underline{b} \,| \leq | \,\underline{a} \,| + | \,\underline{b} \,|, \]
showing that $a \mapsto | \,\underline{a} \,|$ indeed is a seminorm.
To see that this seminorm actually is a norm, we first note that $| \,\underline{a} \,|$
is the distance from $0$ to $\underline{a}$ and that $\underline{a} = a + B$
is closed as $B$ is closed by assumption. Therefore $| \,\underline{a} \,| = 0$
if and only if $0 \in \underline{a}$, that is, if and only if $\underline{a} = 0$.
\pagebreak
\end{proof}

\begin{theorem}\label{quotspaces}%
Let $A$ be a normed space and let $B$
be a \underline{closed} subspace of $A$. If $A$ is a Banach space,
then $A/B$ is a Banach space. If both $A/B$ and $B$ are Banach
spaces, then $A$ is a Banach space.
\end{theorem}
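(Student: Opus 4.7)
\medskip
\textbf{Plan of proof.}

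For the first implication, the natural approach is to exploit the fact that a normed space is complete if and only if every absolutely convergent series converges. Given a series $\sum_n \underline{a_n}$ in $A/B$ with $\sum_n |\,\underline{a_n}\,| < \infty$, I would lift each class to a representative $a_n \in A$ with $|\,a_n\,| \leq |\,\underline{a_n}\,| + 2^{-n}$, using the definition of the quotient norm as an infimum. Then $\sum_n a_n$ converges absolutely in $A$, hence converges to some $a \in A$ by completeness of $A$. Applying the canonical projection, which is contractive and hence continuous, yields $\sum_n \underline{a_n} = \underline{a}$ in $A/B$.

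For the second implication, I would start with a Cauchy sequence $(a_n)$ in $A$. The canonical projection being contractive, $(\underline{a_n})$ is Cauchy in $A/B$, and therefore converges to some $\underline{a} \in A/B$ by assumption. The plan is now to lift this convergence back to $A$ by a correction in $B$. Fix a representative $a \in A$ of $\underline{a}$. Using the definition of the quotient norm as an infimum, for each $n$ there exists $b_n \in B$ with
\[ |\,a_n + b_n - a\,| \leq |\,\underline{a_n} - \underline{a}\,| + 2^{-n}, \]
so that $a_n + b_n \to a$ in $A$.

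The key observation is then that $(b_n) = \bigl( (a_n + b_n) - a_n \bigr)$ is the difference of a convergent sequence and a Cauchy sequence, hence Cauchy in $A$; since each $b_n$ lies in $B$ and $B$ carries the induced norm, $(b_n)$ is Cauchy in $B$, and by the assumed completeness of $B$ converges to some $b \in B$. Consequently $a_n = (a_n + b_n) - b_n$ converges in $A$ to $a - b$, proving that $A$ is complete. The only subtlety to watch is the standard check that the lift $b_n$ with the prescribed estimate exists, which follows immediately from the infimum definition of $|\,\underline{a_n} - \underline{a}\,|$; no other step presents a serious obstacle.
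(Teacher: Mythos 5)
Your proof is correct. The second half is essentially identical to the paper's argument: project the Cauchy sequence, converge in $A/B$, lift with a correction $b_n \in B$ chosen via the infimum definition of the quotient norm, observe that $(b_n)$ is Cauchy as the difference of a convergent and a Cauchy sequence, and reassemble. The first half takes a mildly different route: you invoke the criterion that a normed space is complete if and only if every absolutely convergent series converges, and lift term by term with the $2^{-n}$ slack. The paper instead works directly with a Cauchy sequence in $A/B$, extracts a subsequence whose consecutive gaps are bounded by $2^{-(n+1)}$, and lifts it to a Cauchy sequence in $A$ by an inductive choice of representatives; this avoids appealing to the series criterion (which would itself need the same subsequence argument to prove) at the cost of the explicit subsequence bookkeeping. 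Both are standard and complete; your version is slightly slicker if the series criterion is taken as known, while the paper's is self-contained at the level of Cauchy sequences.
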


\begin{proof}
Assume first that $A$ is complete. Let
$(\underline{c_k})_{k \geq 0}$ be a Cauchy sequence in $A/B$. We can then
find a subsequence $(\underline{d_n})_{n \geq 0}$ of
$(\underline{c_k})_{k \geq 0}$ such that
\[ | \,\underline{d_{n+1}} - \underline{d_n} \,| < 2^{-(n+1)} \]
for all $n \geq 0$. We construct a sequence $(a_n)_{n \geq 0}$ in $A$ such
that
\[ \underline{a_n} = \underline{d_n} \]
and
\[ | \,a_{n+1} -a_n \,| < 2^{-(n+1)} \]
for all $n \geq 0$. This is done as follows. Let $a\0 \in \underline{d\0}$.
Suppose that $a_n$ has been constructed and let
$d_{n+1} \in \underline{d_{n+1}}$. We obtain
\[ \inf \,\{ \,| \,d_{n+1} -a_n +b \,| : b \in B \,\}
= | \,\underline{d_{n+1}} - \underline{a_n} \,|
= | \,\underline{d_{n+1}} - \underline{d_n} \,| < 2^{-(n+1)} \]
so that there exists $b_{n+1} \in B$ such that
\[ | \,d_{n+1} -a_n +b_{n+1} \,| < 2^{-(n+1)}.  \]
We then put $a_{n+1} := d_{n+1} + b_{n+1} \in \underline{d_{n+1}}$.
This achieves the construction of $(a_n)_{n \geq 0}$ as required.
But then $(a_n)_{n \geq 0}$ is a Cauchy sequence in $A$ and converges
in $A$ by assumption. By continuity of the projection,
$(\underline{d_n})_{n \geq 0}$ converges in $A/B$, so that also
$(\underline{c_k})_{k \geq 0}$ converges in $A/B$.

Assume now that both $A/B$ and $B$ are complete. Let $(a_n)_{n \geq 0}$
be a Cauchy sequence in $A$. Then $(\underline{a_n})_{n \geq 0}$
is a Cauchy sequence in $A/B$ because the projection is contractive. By
assumption, there exists \linebreak
$a \in A$ such that $(\underline{a_n})_{n \geq 0}$
converges to $\underline{a}$. For $n \geq 0$, we can find $b_n \in B$ with
\[ |\,(a-a_n)+b_n\,|
< | \,\underline{a}-\underline{a_n} \,| + \frac{1}{n}. \]
But then
\begin{align*}
| \,b_m-b_n \,| & = \bigl|\,[ \,(a-a_m) + b_m \,]
                  - [ \,(a-a_n) + b_n \,] + (a_m-a_n) \,\bigr| \\
              & \leq |\,\underline{a}-\underline{a_m}\,| + \frac{1}{m} +
                     |\,\underline{a}-\underline{a_n}\,| + \frac{1}{n} +
                     |\,a_m-a_n\,|,
\end{align*}
from which we see that $(b_n)_{n \geq 0}$ is a Cauchy sequence in
$B$ and consequently convergent to some element $b$ in $B$. Finally we have
\[ |\,(a+b)-a_n\,| \leq |\,(a-a_n)+b_n\,| + |\,b-b_n\,| \]
which implies that $(a_n)_{n \geq 0}$ converges to $a + b$. \pagebreak
\end{proof}

\clearpage


\setcounter{section}{51}

\phantomsection

\addtocontents{toc}{\protect\vspace{-0.65em}}

\addcontentsline{toc}{chapter}{\texorpdfstring{\ \ \ \S\ 51.\quad Reminder on Topology}%
{\textsection\ 51. Reminder on Topology}}

\fancyhead[LE]{\SMALL{APPENDIX}}
\fancyhead[RO]{\SMALL{\S\ 51. \ REMINDER ON TOPOLOGY}}

\bigskip
\begin{center} \textbf{\S\ 51.\:\:Reminder on Topology} \end{center}
\medskip

\setcounter{theorem}{0}

We gather here the topological ingredients of the Commutative
Gel'fand-Na\u{\i}mark Theorem.
\hypertarget{remtop}{}%

\begin{definition}[the weak topology]\label{weaktopdef}%
\index{concepts}{topology!weak}\index{concepts}{topology!initial}%
Let $X$ be a set. Let $\{ Y_i \}_{i \in I}$ be a family of topological spaces,
and let $\{ f_i \}_{i \in I}$ be a family of functions $f_i : X \to Y_i$ $(i \in I)$.
Among the topologies on $X$ for which all the functions $f_i$ $(i \in I)$
are continuous, there exists a coarsest topology. It is the topology
generated by the base consisting of all the sets of the form
$\bigcap_{t \in T} {f_t}^{-1} (O_t)$, where $T$ is a finite subset of $I$ and
$O_t$ is an open subset of $Y_t$ for all $t \in T$. This topology is called
the \underline{weak topology induced by $\{ f_i \}_{i \in I}$}. It is also called
the initial topology induced by $\{ f_i \}_{i \in I}$.
\end{definition}

\begin{proof}
The proof is easy. Otherwise, see e.g.\ \cite[Prop.\ 1.4.8 p.\ 31]{Eng}.
\end{proof}

\medskip
The philosophy is, that the weaker a topology is, the more quasi-compact
subsets it has.

\begin{theorem}[convergence]\label{netconv}%
Let $X$ be a topological space carrying the weak topology induced by a
family $\{ f_i \}_{i \in I}$ of functions $f_i : X \to Y_i$ $(i \in I)$. A net
$(x_\lambda)_{\lambda \in \Lambda}$ in $X$ then converges to a point
$x \in X$ if and only if for all $i \in I$ the net
$\bigl( f_i(x_\lambda) \bigr)_{\lambda \in \Lambda}$
converges to $f_i(x)$ in $Y_i$.
\end{theorem}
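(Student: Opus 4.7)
The plan is to prove both directions directly from the definition of the weak topology given in \ref{weaktopdef}, using the characterization of convergence of nets in terms of eventual membership in every neighbourhood of the limit point.

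For the forward implication, I would simply note that by the definition of the weak topology, each $f_i$ is continuous. Therefore, if $x_\lambda \to x$ in $X$, then by the standard fact that continuous functions preserve net convergence, $f_i(x_\lambda) \to f_i(x)$ in $Y_i$ for every $i \in I$. This direction is essentially automatic.

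The backward implication is the substantive one. Assume that $f_i(x_\lambda) \to f_i(x)$ in $Y_i$ for every $i \in I$. Let $U$ be an arbitrary open neighbourhood of $x$ in $X$. By the description of the base for the weak topology in \ref{weaktopdef}, there exist a finite subset $T \subset I$ and open sets $O_t \subset Y_t$ for $t \in T$ such that
\[ x \in \bigcap_{t \in T} {f_t}^{-1}(O_t) \subset U. \]
In particular $f_t(x) \in O_t$ for each $t \in T$. For each $t \in T$, the assumed convergence $f_t(x_\lambda) \to f_t(x)$ yields an index $\lambda_t \in \Lambda$ such that $f_t(x_\lambda) \in O_t$ for all $\lambda \geq \lambda_t$.

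The key step, and the only place the proof uses anything beyond definitions, is that $\Lambda$ is a directed set: since $T$ is finite, we can choose $\lambda_0 \in \Lambda$ with $\lambda_0 \geq \lambda_t$ for every $t \in T$. Then for every $\lambda \geq \lambda_0$ and every $t \in T$ we have $f_t(x_\lambda) \in O_t$, that is, $x_\lambda \in {f_t}^{-1}(O_t)$. Hence $x_\lambda \in \bigcap_{t \in T} {f_t}^{-1}(O_t) \subset U$ for all $\lambda \geq \lambda_0$, which shows $x_\lambda \to x$. The finiteness of $T$ (built into the definition of the base) is precisely what makes the use of directedness work, and this is the one point where one must be careful; otherwise the argument is routine.
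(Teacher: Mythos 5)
Your proof is correct and follows essentially the same route as the paper's: the forward direction by continuity of each $f_i$, and the backward direction by passing to a basic neighbourhood $\bigcap_{t \in T} {f_t}^{-1}(O_t)$ and using directedness of $\Lambda$ together with the finiteness of $T$ to find a common index. Nothing is missing.
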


\begin{proof}
If $x_\lambda \to x$, then $f_i(x_\lambda) \to f_i(x)$ by continuity of $f_i$ $(i \in I)$.
Assume now that $f_i(x_\lambda) \to f_i(x)$ $(i \in I)$. Every neighbourhood of $x$ contains
a base set of the form $\cap_{t \in T} {f_t}^{-1} (O_t)$, containing $x$, where $T$ is a
finite subset of $I$, and $O_t$ is an open subset of $Y_t$ $(t \in T)$. For $t \in T$, there
exists $\mu_t \in \Lambda$ such that $f(x_\lambda) \in O_t$ for all $\lambda \in \Lambda$
with $\lambda \geq \mu_t$. Since $\Lambda$ is directed up, there exists
$\mu \in \Lambda$ with $\mu \geq \mu_t$ for all $t$ in the finite set $T$. Then
$x_\lambda \in \cap_{t \in T} {f_t}^{-1} (O_t)$ for all $\lambda \in \Lambda$ with
$\lambda \geq \mu$.
\end{proof}

\begin{corollary}[the universal property]%
\index{concepts}{universal!property!of weak topology}%
Let $X$ be a topological space carrying the weak topology induced by a
family $\{ f_i \}_{i \in I}$ of functions. The following property is called the
\underline{universal property} of the weak topology induced by $\{ f_i \}_{i \in I}$.
A function $g$ from a topological space to $X$ is continuous if and only
if the compositions $f_i \circ g$ are continuous for all $i \in I$.
\end{corollary}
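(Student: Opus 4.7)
The plan is to prove both directions separately, with the forward direction being essentially a triviality and the backward direction being the substantive content.

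For the forward direction, I would note that the definition of the weak topology makes each $f_i$ continuous from $X$ to $Y_i$. Hence if $g$ is continuous, then $f_i \circ g$ is a composition of continuous functions and therefore continuous.

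For the backward direction, I see two natural routes. The first is a direct check on a base of the topology: since the sets of the form $\bigcap_{t \in T} f_t^{-1}(O_t)$ (with $T \subset I$ finite and $O_t$ open in $Y_t$) form a base for the weak topology on $X$, and since continuity only needs to be tested on a base, it suffices to verify that $g^{-1}$ of every such set is open in the domain. But
\[ g^{-1}\Bigl(\bigcap_{t \in T} f_t^{-1}(O_t)\Bigr)
= \bigcap_{t \in T} (f_t \circ g)^{-1}(O_t), \]
and each factor is open by the hypothesis that $f_t \circ g$ is continuous; the finite intersection is therefore open as well.

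Alternatively, and perhaps more elegantly in view of what has just been established, I would use Theorem \ref{netconv}: to show $g$ is continuous, it suffices to show that $g$ preserves net-convergence. Given a net $x_\lambda \to x$ in the domain, the hypothesis that each $f_i \circ g$ is continuous gives $f_i\bigl(g(x_\lambda)\bigr) \to f_i\bigl(g(x)\bigr)$ for every $i \in I$, and then \ref{netconv} immediately yields $g(x_\lambda) \to g(x)$ in $X$. Neither direction presents a genuine obstacle; the only subtlety is to remember that continuity need only be checked on a subbase (or base), or equivalently that one is free to use nets rather than sequences in an arbitrary topological space.
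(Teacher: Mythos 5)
Your proof is correct, and your second route for the backward direction (applying Theorem \ref{netconv} to the net $\bigl(g(x_\lambda)\bigr)$) is exactly the argument the paper uses; the direct verification on the base $\bigcap_{t \in T} f_t^{-1}(O_t)$ is an equally valid alternative. Nothing is missing.
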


\begin{proof}
This follows from the preceding theorem \ref{netconv}. \pagebreak
\end{proof}

\begin{definition}[the weak* topology]%
\label{weak*top}\index{concepts}{topology!weak*}%
\index{concepts}{universal!property!of weak* topology}%
\index{concepts}{evaluation}%
Let $A$ be a real or complex vector space, and let $A^*$ be its dual space.
The \underline{evaluations} $\wht{a}$ at elements $a \in A$ are defined as
\begin{alignat*}{2}
\wht{a} : A^* &           \to & & \ \mathds{R} \text{ or } \mathds{C} \\
              \tau \,& \mapsto & & \ \wht{a}(\tau) := \tau (a).
\end{alignat*}
The \underline{weak* topology} on $A^*$ is the weak topology induced by
the family $\{ \,\wht{a} \,\}_{a \in A}$ of evaluations at elements of $A$.
In particular, the evaluations $\wht{a}$ $(a \in A)$ are continuous in the weak*
topology, and the weak* topology is the coarsest topology on $A^*$ such that
the evaluations $\wht{a}$ $(a \in A)$ are continuous.
The \underline{universal property} of the weak* topology says that a function
$g$ from a topological space to $A^*$ is continuous with respect to the weak*
topology on $A^*$, if and only if the compositions $\wht{a} \circ g$ $(a \in A)$
are continuous.
\end{definition}

\begin{proposition}[pointwise convergence]\label{weak*point}%
Let $A$ be a real or complex vector space, and let $A^*$ be its dual space.
The weak* topology on $A^*$ is just the topology of pointwise convergence
on $A$.
\end{proposition}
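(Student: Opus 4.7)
The plan is a direct application of the convergence characterization for weak topologies, namely Theorem \ref{netconv}. By definition \ref{weak*top}, the weak* topology on $A^*$ is the weak topology induced by the family $\{\wht{a}\}_{a \in A}$ of evaluations. Meanwhile, the topology of pointwise convergence on $A$ is, by its very description, the topology in which a net $(\tau_\lambda)$ converges to $\tau \in A^*$ precisely when $\tau_\lambda(a) \to \tau(a)$ for every $a \in A$.

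My strategy would thus be to invoke Theorem \ref{netconv} applied to $X := A^*$ with the family $\{\wht{a}\}_{a \in A}$: a net $(\tau_\lambda)$ converges to $\tau$ in the weak* topology if and only if $\wht{a}(\tau_\lambda) \to \wht{a}(\tau)$ in $\mathds{R}$ or $\mathds{C}$ for each $a \in A$. Since $\wht{a}(\tau_\lambda) = \tau_\lambda(a)$ and $\wht{a}(\tau) = \tau(a)$ by the definition of the evaluations, this is exactly the statement that $\tau_\lambda \to \tau$ pointwise on $A$. Hence the weak* topology and the topology of pointwise convergence on $A$ have the same convergent nets with the same limits.

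Finally, I would appeal to the general fact that a topology on a set is determined by its collection of convergent nets (together with their limits): if two topologies have the same convergent nets to the same limits, they coincide. This is the step that is the real substance here, though it is a standard result from general topology (the closed sets in any topology are exactly those sets $C$ such that whenever a net in $C$ converges to a point $x$, then $x \in C$). The main --- and only --- conceptual point is recognizing that net convergence determines the topology; everything else is an unpacking of definitions. No obstacle of any real depth arises.
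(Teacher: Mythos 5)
Your proposal is correct and follows essentially the same route as the paper: the paper's proof likewise consists of a single appeal to Theorem \ref{netconv} to identify weak* net convergence with pointwise convergence on $A$. Your additional remark that a topology is determined by its convergent nets is a legitimate (and standard) way to make the final identification explicit; the paper leaves this step implicit.
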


\begin{proof}
According to \ref{netconv}, a net $(\tau_\lambda)_{\lambda \in \Lambda}$
in $A^*$ converges to a functional $\tau \in A^*$ in the weak* topology if
and only if $\wht{a}(\tau_\lambda) \to \wht{a}(\tau)$ for all $a \in A$, which
is the same as $\tau_\lambda(a) \to \tau(a)$ for all $a \in A$.
\end{proof}

\begin{proposition}%
Let $X$ be a topological space carrying the weak topology induced
by a family $\{ f_i \}_{i \in I}$ of functions $f_i: X \to Y_i$ $(i \in I)$.
If the set $\{ \,f_i : i \in I \,\}$ separates the points of $X$ \ref{sepp}, and
if all the spaces $Y_i$ $(i \in I)$ are Hausdorff spaces, then $X$ is
a Hausdorff space. 
\end{proposition}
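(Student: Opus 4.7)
The plan is to prove the Hausdorff property directly from the definitions, by exhibiting, for any two distinct points $x, y \in X$, disjoint open neighbourhoods of $x$ and $y$. The key observation is that continuity of each $f_i$ with respect to the weak topology is already available by construction (cf.\ \ref{weaktopdef}), so the preimage under any $f_i$ of an open set in $Y_i$ is automatically open in $X$.

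First I would fix two distinct points $x, y \in X$. Since $\{\,f_i : i \in I\,\}$ separates the points of $X$, definition \ref{sepp} supplies some index $i \in I$ with $f_i(x) \neq f_i(y)$. Next, since $Y_i$ is Hausdorff, I can choose disjoint open subsets $U, V$ of $Y_i$ with $f_i(x) \in U$ and $f_i(y) \in V$. Then $f_i^{-1}(U)$ and $f_i^{-1}(V)$ are open in $X$, contain $x$ and $y$ respectively, and are disjoint because $U \cap V = \varnothing$. This exhibits disjoint open neighbourhoods separating $x$ from $y$, proving that $X$ is Hausdorff.

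There is really no substantial obstacle here: the argument is a one-shot application of the defining properties of the weak topology, of the separation hypothesis, and of the Hausdorff property of the target spaces. The only thing to be careful about is to invoke continuity of the $f_i$ in the weak topology (which is part of its definition) rather than arguing through the base of the topology; using the base would work equally well but would be needlessly clumsy.
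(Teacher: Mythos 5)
Your proof is correct and is precisely the standard argument the paper has in mind when it leaves the proof to the reader: separate the images in some $Y_i$ by disjoint open sets and pull them back along the (automatically continuous) $f_i$. Nothing to add.
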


\begin{proof}
The proof is very easy and left to the reader. Otherwise, see
\cite[Prop.\ 1.5.3 pp.\ 23 f.]{PedT}.
\end{proof}

\begin{observation}%
Let $A$ be a real or complex vector space, and let $A^*$ be its dual space.
The functions $\wht{a}$ $(a \in A)$ separate the points of $A^*$.
\end{observation}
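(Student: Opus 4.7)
The plan is to observe that this statement is essentially a direct unfolding of the relevant definitions, so no substantial work is required. The key definitional facts are: (i) two linear functionals $\tau_1, \tau_2 \in A^*$ are equal exactly when they agree on every element of $A$, and (ii) for each $a \in A$, the evaluation $\wht{a}$ acts by $\wht{a}(\tau) = \tau(a)$.

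First, I would take two distinct functionals $\tau_1, \tau_2 \in A^*$ with $\tau_1 \neq \tau_2$. By the definition of equality of functions on $A$, there must exist some $a \in A$ at which they disagree, i.e., $\tau_1(a) \neq \tau_2(a)$. Second, I would rewrite this in terms of evaluations: $\wht{a}(\tau_1) = \tau_1(a) \neq \tau_2(a) = \wht{a}(\tau_2)$. This exhibits an element of the family $\{\wht{a}\}_{a \in A}$ that separates $\tau_1$ from $\tau_2$, which is exactly the content of Definition \ref{sepp} applied to the functions on the set $A^*$.

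There is no real obstacle here; the statement is a tautology once the notations from Definition \ref{weak*top} and Definition \ref{sepp} are unpacked. The whole proof can reasonably be presented in two short sentences, and its role in the paper is purely to record the fact so that subsequent Hausdorffness statements for $A^*$ under the weak* topology (and its subspaces such as $\Delta(A)$, $\statespace(A)$, $\quasistates(A)$) can be invoked cleanly via the preceding proposition.
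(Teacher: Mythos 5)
Your proposal is correct and coincides with the paper's own argument: take $\sigma \neq \tau$ in $A^*$, find $a \in A$ with $\sigma(a) \neq \tau(a)$, and note that this reads $\wht{a}(\sigma) \neq \wht{a}(\tau)$. Nothing further is needed.
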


\begin{proof}
Let $\sigma, \tau \in A^*$ with $\sigma \neq \tau$. There then exists
$a \in A$ with $\sigma(a) \neq \tau(a)$, or, in other words,
$\wht{a}(\sigma) \neq \wht{a}(\tau)$.
\end{proof}

\begin{corollary}%
The weak* topology is a Hausdorff topology. \pagebreak
\end{corollary}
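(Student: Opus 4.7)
The plan is to invoke the two preceding results in immediate succession, as the corollary is essentially their juxtaposition. Specifically, the weak* topology on $A^*$ is defined as the weak topology induced by the family $\{ \wht{a} \}_{a \in A}$ of evaluations, so the proposition about Hausdorffness of weak topologies applies directly once we verify its two hypotheses.

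First I would note that the evaluations $\wht{a}$ take values in $\mathds{R}$ or $\mathds{C}$, which are Hausdorff spaces. Second, I would appeal to the observation just established, which says that the family of evaluations $\{ \wht{a} \}_{a \in A}$ separates the points of $A^*$: given $\sigma \neq \tau$ in $A^*$, there exists $a \in A$ with $\sigma(a) \neq \tau(a)$, i.e.\ $\wht{a}(\sigma) \neq \wht{a}(\tau)$. With both hypotheses verified, the proposition yields that $A^*$ equipped with the weak* topology is a Hausdorff space.

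There is no real obstacle here; the statement is a one-line consequence of the preceding two items, and the only thing to do is to fit the definitions together. The proof will consist of a single sentence citing \ref{weak*top} for the definition of the weak* topology, the preceding observation for the separation property of the evaluations, and the preceding proposition for the conclusion.
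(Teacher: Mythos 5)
Your proposal is correct and is exactly the argument the paper intends: the corollary follows by combining the proposition that a weak topology induced by a point-separating family of maps into Hausdorff spaces is Hausdorff with the observation that the evaluations $\wht{a}$ $(a \in A)$ separate the points of $A^*$. The paper leaves this as an immediate consequence without a written proof, so your one-line justification matches it precisely.
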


The importance of the weak* topology stems for a large part from the
following compactness result.

\begin{theorem}[Alaoglu's Theorem]\index{concepts}{Alaoglu's Theorem}%
\label{Alaoglu}\index{concepts}{Theorem!Alaoglu}%
Let $A$ be a (possibly real) \linebreak normed space, and let $A'$ be
its dual normed space. The closed unit ball in $A'$ then is a compact
Hausdorff space in (the relative topology induced by) the weak* topology.
\end{theorem}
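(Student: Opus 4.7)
The plan is to prove Alaoglu's Theorem by embedding the closed unit ball $B := \{ \tau \in A' : \| \tau \| \leq 1 \}$ of $A'$ into a suitable compact product space, and then identifying $B$ with a closed subset of that product. The Hausdorff property of $B$ (in the weak* topology) comes for free from the corollary just proved in the excerpt, so the real work lies in establishing compactness.

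First I would note that for every $\tau \in B$ and every $a \in A$, one has $|\tau(a)| \leq \|a\|$, so the evaluation $\wht{a}$ maps $B$ into the closed disc $D_a := \{ z \in \mathds{C} : |z| \leq \|a\| \}$ (or the closed interval $[-\|a\|, \|a\|]$ in the real case). Each $D_a$ is compact, so by Tychonoff's Theorem the product
\[ X := \prod_{a \in A} D_a \]
is compact in the product topology. I would then define
\[ \Phi : B \to X, \qquad \tau \mapsto \bigl( \tau(a) \bigr)_{a \in A}, \]
and verify that $\Phi$ is a homeomorphism from $B$ (with the weak* topology) onto its image $\Phi(B) \subset X$ (with the relative product topology). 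Injectivity is clear, and since both topologies are, by construction, just the topology of pointwise convergence on $A$ (cf.\ \ref{weak*point} and the fact that the product topology is also the topology of pointwise evaluation at coordinates), the map $\Phi$ is a homeomorphism onto its range by the universal property of the weak topology.

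The heart of the argument, and the step I expect to be the main obstacle, is to show that $\Phi(B)$ is closed in $X$. For this I would characterise $\Phi(B)$ as the set of those families $(z_a)_{a \in A} \in X$ satisfying the linearity relations
\[ z_{\lambda a + \mu b} = \lambda z_a + \mu z_b \qquad \text{for all} \quad a, b \in A, \ \lambda, \mu \in \mathds{C}. \]
(The norm bound $\|\tau\| \leq 1$ is already built into the product, since for each $a$ the $a$-th coordinate lives in $D_a$.) Each such relation is a closed condition in the product topology, because for fixed $a, b, \lambda, \mu$ the map $(z_c)_{c \in A} \mapsto z_{\lambda a + \mu b} - \lambda z_a - \mu z_b$ is continuous (it involves only three coordinate projections), and its zero set is closed. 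Intersecting over all quadruples $(a, b, \lambda, \mu)$ yields a closed subset of $X$, which is exactly $\Phi(B)$.

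Once $\Phi(B)$ is known to be closed in the compact space $X$, it is itself compact, and hence $B$ is compact, being homeomorphic to $\Phi(B)$ via $\Phi$. Combined with the Hausdorff property inherited from the weak* topology on $A'$, this completes the proof.
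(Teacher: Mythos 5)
Your proof is correct, but it takes a genuinely different route from the one in the text. You use the classical product-space argument: embed the closed unit ball $B$ of $A'$ into the product $X = \prod_{a \in A} D_a$ of closed discs, invoke Tychonoff's Theorem for the compactness of $X$, check that the embedding is a homeomorphism onto its image (both topologies being pointwise convergence on $A$), and then verify that the image is closed in $X$ by cutting it out with the linearity relations $z_{\lambda a + \mu b} = \lambda z_a + \mu z_b$ — each of which is indeed a closed condition, since it involves only finitely many coordinate projections. The text instead argues with universal nets: it takes a universal net $(\tau_\lambda)$ in $B$, observes that each image net $\bigl(\tau_\lambda(a)\bigr)$ is a universal net in a compact subset of the scalar field and hence converges to some scalar $\tau(a)$ with $|\,\tau(a)\,| \leq |\,a\,|$, checks that the resulting $\tau$ is linear and lies in $B$, and concludes convergence of $(\tau_\lambda)$ to $\tau$ in the weak* topology from \ref{weak*point}. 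The two arguments are essentially dual packagings of the same compactness principle: your version makes the ambient compact space explicit and shifts the work to the closedness of the image, while the text's version avoids constructing any auxiliary space at the cost of invoking the machinery of universal nets (existence of universal subnets, and the characterisation of compactness by convergence of universal nets). Your approach is the more widely taught one and arguably more self-contained modulo Tychonoff; the text's is shorter given its cited toolkit from Willard.
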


\begin{proof}
Let $(\tau_\lambda)_{\lambda \in \Lambda}$ be a universal net in the
closed unit ball of $A'$. We have to prove that this net converges in the
closed unit ball of $A'$. (See e.g.\ \cite[Theorem 17.4 p.\ 118]{Willa}.)
For every $a \in A$, we have that the image net
$\bigl( \tau_\lambda(a) \bigr)_{\lambda \in \Lambda}$ is a universal
net \cite[Theorem 11.11 p.\ 76]{Willa}, contained in a compact subset
of the scalar field, as $| \,\tau_\lambda (a) \,| \leq | \,a \,|$
for all $\lambda \in \Lambda$. Hence this image net
$(\tau_\lambda(a))_{\lambda \in \Lambda}$ converges to
some scalar $\tau (a)$ with $| \,\tau (a) \,| \leq | \,a \,|$.
The resulting function $\tau : a \mapsto \tau (a)$ $(a \in A)$ a linear
functional on $A$ because the weak* topology is just the topology
of pointwise convergence, cf.\ \ref{weak*point}. For the same reason,
the net $(\tau_\lambda)_{\lambda \in \Lambda}$ converges to $\tau$
in the weak* topology. The functional $\tau$ is seen to be in the
closed unit ball of $A'$, which finishes the proof.
\end{proof}

\begin{proposition}%
Let $f$ be a continuous function from a quasi-compact
space $\Omega$ to a Hausdorff space. Then $f$ is a closed function
(in the sense that $f$ maps closed sets to closed sets). If $f$ is injective,
then $f$ is an imbedding. If $f$ is bijective, then $f$ is a homeomorphism.
\end{proposition}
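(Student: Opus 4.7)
The plan is to extract the statement from three standard facts: continuous images of quasi-compact sets are quasi-compact, closed subsets of quasi-compact spaces are quasi-compact, and quasi-compact subsets of Hausdorff spaces are closed. These are the building blocks, and I would assume them as well-known (they appear in any general topology reference, e.g.\ Engelking or Willard).

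First I would prove the closedness of $f$. Take a closed subset $C$ of $\Omega$. Since $\Omega$ is quasi-compact, $C$ is quasi-compact as a closed subset of a quasi-compact space. Then $f(C)$ is quasi-compact as the continuous image of a quasi-compact set. Since the codomain is Hausdorff, $f(C)$ is closed there. This shows that $f$ maps closed sets to closed sets.

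Next I would derive the second statement. Suppose $f$ is injective. Let $Y$ denote the codomain, and consider the range $f(\Omega)$ equipped with the subspace topology inherited from $Y$. The corestriction $g : \Omega \to f(\Omega)$ is a continuous bijection. To see that $g$ is a homeomorphism, it suffices to show that $g$ is closed (as a map into $f(\Omega)$), which in turn ensures that the set-theoretic inverse $g^{-1}$ is continuous. But if $C \subset \Omega$ is closed, then $f(C)$ is closed in $Y$ by the first step, so $g(C) = f(C) = f(C) \cap f(\Omega)$ is closed in $f(\Omega)$. Hence $g$ is a homeomorphism, which is precisely the statement that $f$ is an imbedding.

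Finally, the third statement is immediate: if $f$ is bijective, then $f(\Omega)$ equals the entire codomain, so the imbedding $g$ of the previous paragraph coincides with $f$ itself, and $f$ is a homeomorphism. There is no substantial obstacle here; the proof is essentially bookkeeping once the three elementary facts about quasi-compactness are in hand, and the only point requiring a moment's care is making sure that the closedness of $f(C)$ in $Y$ transfers correctly to closedness in the subspace topology on $f(\Omega)$.
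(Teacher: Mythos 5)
Your proof is correct and follows essentially the same route as the paper: closed subsets of quasi-compact spaces are quasi-compact, continuous images of quasi-compact sets are quasi-compact, and quasi-compact subsets of Hausdorff spaces are closed, after which the imbedding and homeomorphism statements follow by observing that the corestriction to the range is a closed continuous bijection. The only cosmetic difference is that the paper phrases the last step via the equivalence of closedness and openness for an injective map onto its range, whereas you argue directly that a closed continuous bijection has continuous inverse; these are the same observation.
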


\begin{proof}
A closed subset $C$ of the quasi-compact space $\Omega$ is
quasi-compact, and so is its continuous image $f(C)$. The set
$f(C)$ then is closed, as a quasi-compact subset of a Hausdorff
space is closed. The statement follows now from the fact that an
injective function, when considered as a function onto its range,
is closed if and only if it is open.
\end{proof}

\medskip
For emphasis, we restate:

\begin{theorem}\label{homeomorph}%
A continuous bijection from a quasi-compact space to a Hausdorff space
is a homeomorphism.
\end{theorem}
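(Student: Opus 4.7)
The plan is to prove this by showing that the inverse map is continuous, which (for a bijection) is equivalent to showing that the original map is closed. In fact, the theorem is an immediate corollary of the preceding proposition, so the real content is to sketch the proof of that proposition in the bijective case.

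First, let $f : \Omega \to Y$ be a continuous bijection, with $\Omega$ quasi-compact and $Y$ Hausdorff. I would reduce ``$f$ is a homeomorphism'' to ``$f$ is closed'': since $f$ is a bijection, its inverse $f^{-1} : Y \to \Omega$ is well-defined, and $f^{-1}$ being continuous is equivalent to requiring that the preimage under $f^{-1}$ of every closed set in $\Omega$ is closed in $Y$, i.e.\ that $f$ sends closed sets to closed sets.

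So the task is to show that if $C \subset \Omega$ is closed, then $f(C) \subset Y$ is closed. I would chain together three standard facts from general topology. First, a closed subset of a quasi-compact space is quasi-compact (any open cover of $C$ extends by $\Omega \setminus C$ to an open cover of $\Omega$, whence a finite subcover). Second, the continuous image of a quasi-compact space is quasi-compact (pull back an open cover via $f$). Applied to $f|_C$, this shows $f(C)$ is quasi-compact. Third, a quasi-compact subset of a Hausdorff space is closed; this is where the Hausdorff hypothesis is indispensable, and the proof uses Hausdorffness to separate a point outside $f(C)$ from each point of $f(C)$ by disjoint open sets, then extracts a finite subcover of $f(C)$ to produce an open neighbourhood of the outside point disjoint from $f(C)$.

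Concatenating these three facts yields that $f(C)$ is closed, so $f$ is a closed map, hence a homeomorphism. The main (and only) subtlety is the third step, where Hausdorffness is essential; without it, quasi-compact subsets need not be closed, and the conclusion fails. Everything else is routine manipulation of covers. Since the preceding proposition in the excerpt already states and proves exactly this chain of reasoning, the present theorem is obtained simply by specialising to the bijective case and observing that a continuous closed bijection is a homeomorphism.
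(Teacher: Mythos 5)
Your proposal is correct and follows exactly the paper's route: theorem \ref{homeomorph} is stated there only as a restatement of the preceding proposition, whose proof is precisely your chain (closed subset of quasi-compact is quasi-compact, continuous image of quasi-compact is quasi-compact, quasi-compact subset of a Hausdorff space is closed, hence $f$ is closed and the bijection is a homeomorphism). No gaps.
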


\begin{corollary}%
The topology of a compact Hausdorff space cannot be strengthened
without losing the quasi-compactness property, and it cannot be
weakened without losing the Hausdorff property. \pagebreak
\end{corollary}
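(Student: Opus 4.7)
The plan is to apply the preceding Theorem 51.7 (a continuous bijection from a quasi-compact space to a Hausdorff space is a homeomorphism) twice, in each case to the identity map on the underlying set, but considered as a map between the set equipped with two different topologies. The key observation throughout is that if $\tau_1 \subset \tau_2$ are two topologies on the same set $X$, then the identity map $\mathrm{id}_X : (X, \tau_2) \to (X, \tau_1)$ is continuous, because preimages of $\tau_1$-open sets are $\tau_2$-open.

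Let $(X, \tau)$ denote a compact Hausdorff space. For the first assertion, I would argue by contradiction: suppose $\tau'$ is a topology on $X$ with $\tau \subsetneq \tau'$, and such that $(X, \tau')$ is still quasi-compact. Then $\mathrm{id}_X : (X, \tau') \to (X, \tau)$ is a continuous bijection from the quasi-compact space $(X, \tau')$ to the Hausdorff space $(X, \tau)$, hence a homeomorphism by Theorem 51.7. But a homeomorphism between two topologies on the same underlying set (implemented by the identity map) forces $\tau = \tau'$, contradicting $\tau \subsetneq \tau'$.

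For the second assertion, suppose symmetrically that $\tau''$ is a topology on $X$ with $\tau'' \subsetneq \tau$, and such that $(X, \tau'')$ is still Hausdorff. Then $\mathrm{id}_X : (X, \tau) \to (X, \tau'')$ is a continuous bijection from the quasi-compact space $(X, \tau)$ to the Hausdorff space $(X, \tau'')$, hence once more a homeomorphism by Theorem 51.7, forcing $\tau = \tau''$, a contradiction.

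There is no serious obstacle here, as the corollary is essentially a direct reformulation of Theorem 51.7. The only small subtlety worth flagging explicitly is the elementary fact that a homeomorphism realised by the identity map on a set must equate the two topologies being compared; this is what converts Theorem 51.7 into a rigidity statement about the topology $\tau$ of a compact Hausdorff space among all topologies on the same underlying set.
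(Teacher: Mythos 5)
Your proof is correct and is exactly the intended argument: the paper states this corollary without proof as an immediate consequence of the preceding theorem, and your two applications of that theorem to the identity map (in the appropriate directions) fill in precisely the routine details.
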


\clearpage


\setcounter{section}{52}

\phantomsection

\addtocontents{toc}{\protect\vspace{-0.65em}}

\addcontentsline{toc}{chapter}%
{\texorpdfstring{\ \ \ \S\ 52.\quad Complements to Integration Theory}%
{\textsection\ 52. Complements to Integration Theory}}

\fancyhead[LE]{\SMALL{APPENDIX}}
\fancyhead[RO]{\SMALL{S\ 52. \ COMPLEMENTS TO INTEGRATION THEORY}}

\medskip
\begin{center} \textbf{\S\ 52.\:\:Complements to Integration Theory} \end{center}
\setcounter{theorem}{0}

We gather here some less standard facts of integration theory,
especially if they are not contained in the book \cite{FW} by
Filter \& Weber, which we use as a reference. A more classical
type of reference is Bauer \cite{Bau}, but the terminologies differ
slightly. However the material in this text goes through for both
sets of terminology, or roughly so at least. Some terminology
will thus be left undefined. There is general agreement on what
is a probability measure defined on a $\sigma$-algebra on a
non-empty set, which is what we will be using most of the time:
\hypertarget{appint}{}%

\begin{definition}[bounded complex measure]\label{bdedmeas}%
\index{concepts}{measure!bounded complex}%
\index{concepts}{bounded complex measure}%
A \underline{bounded complex} \underline{measure} on a set
$\Omega \neq \varnothing$ shall be defined as a complex linear
com\-bination of probability measures defined on a common
$\sigma$-algebra on $\Omega$. We stress that we shall deal
exclusively with bounded complex measures on non-empty sets.
(Except for implicit use of counting measure.)
\end{definition}

\begin{definition}[the Borel $\sigma$-algebra]\label{Boreldef}%
\index{concepts}{Borel!0sigma@$\sigma$-algebra}%
Let $\Omega \neq \varnothing$ be a Hausdorff space.
The \underline{Borel $\sigma$-algebra} of $\Omega$ is defined as the
$\sigma$-algebra generated by the open subsets of $\Omega$ (or
equivalently by the closed subsets of $\Omega$). The members of the
Borel $\sigma$-algebra of $\Omega$ are called \underline{Borel sets}
of $\Omega$.
\end{definition}

\begin{definition}[inner regular Borel probability measure]%
\index{concepts}{measure!inner regular}\index{concepts}{inner regular}%
\index{concepts}{measure!Borel}\label{inregBormeas}%
Let $\Omega$ be a Hausdorff space $\neq \varnothing$.
A \underline{Borel probability measure} on $\Omega$ is a
probability measure defined on the Borel $\sigma$-algebra of $\Omega$.
Such a Borel probability measure $\mu$ on $\Omega$ is called
\underline{inner regular}, if for every Borel set $\Delta \subset \Omega$, one has
\[ \mu(\Delta) = \sup \,\{ \,\mu(K) \geq 0 : K \text{ is a compact subset of } \Delta \,\}. \]
\end{definition}

\begin{proposition}[outer regularity]\label{outerreg}%
Let $\Omega \neq \varnothing$ be a Hausdorff space.
Every inner regular Borel probability measure $\mu$ on $\Omega$ also
is \underline{outer regular}, in the sense that for every Borel set
$\Delta \subset \Omega$, one has
\[ \mu(\Delta) = \inf \,\{ \,\mu(O) \geq 0 :
O \text{ is an open subset of } \Omega \text{ containing } \Delta \,\}. \]
\end{proposition}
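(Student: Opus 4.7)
The plan is to derive outer regularity from inner regularity by passing to the complement, exploiting the fact that $\mu$ is a probability measure (so $\mu(\Omega) = 1 < \infty$) and that compact subsets of a Hausdorff space are closed, hence Borel.

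First, I would fix a Borel set $\Delta \subset \Omega$ and note that the inequality $\mu(\Delta) \leq \inf \{ \mu(O) : O \text{ open}, \Delta \subset O \}$ is immediate by monotonicity of $\mu$, since every open $O$ containing $\Delta$ satisfies $\mu(\Delta) \leq \mu(O)$.

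For the reverse inequality, I would apply inner regularity to the Borel set $\Omega \setminus \Delta$, obtaining
\[ \mu(\Omega \setminus \Delta) = \sup \{ \,\mu(K) : K \text{ compact}, K \subset \Omega \setminus \Delta \,\}. \]
Given any compact $K \subset \Omega \setminus \Delta$, the set $K$ is closed in $\Omega$ because $\Omega$ is Hausdorff, so $O := \Omega \setminus K$ is an open Borel set containing $\Delta$. Using $\mu(\Omega) = 1$, we have $\mu(O) = 1 - \mu(K)$ and $\mu(\Delta) = 1 - \mu(\Omega \setminus \Delta)$. Passing to the infimum over such $O$ (and correspondingly the supremum over such $K$) yields
\[ \inf \{ \,\mu(O) : O \text{ open}, \Delta \subset O \,\} \leq 1 - \sup \{ \,\mu(K) : K \text{ compact}, K \subset \Omega \setminus \Delta \,\} = \mu(\Delta), \]
completing the proof.

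There is no real obstacle here; the only subtlety worth flagging is the use of the Hausdorff hypothesis to guarantee that compact sets are closed (and therefore Borel), so that $\Omega \setminus K$ is a legitimate open Borel set whose measure is $1 - \mu(K)$. The argument relies crucially on the finiteness of $\mu$; it would fail for unbounded measures, which is consistent with the book's standing restriction to bounded measures.
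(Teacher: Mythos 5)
Your proof is correct and follows exactly the route the paper intends: its one-line proof reads ``Apply inner regularity to the complement of $\Delta$,'' and your argument is just the careful spelling-out of that step, including the points (compact sets closed by the Hausdorff hypothesis, finiteness of the probability measure) that make it work.
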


\begin{proof}
Apply inner regularity to the complement of $\Delta$.
\end{proof}

\medskip
We won't use the preceding fact. The reader is warned that in \ref{outerreg},
the assumption that $\mu$ be a probability measure cannot be dropped
without replacement. Finally, we mention that that we call ``inner regular'',
is called ``regular'' in \cite{FW}. \pagebreak

\begin{definition}[measurability]\label{measurability}%
\index{concepts}{measurable}%
Let $\mathcal{E}$, $\mathcal{E}'$ be two $\sigma$-algebras on two
non-empty sets $\Omega,$ $\Omega'$ respectively. A function
$f : \Omega \to \Omega'$ is called \linebreak
\underline{$\mathcal{E}$-\,$\mathcal{E}'$ measurable} if
$f^{-1}(\Delta') \in \mathcal{E}$ for all $\Delta' \in \mathcal{E}'$.
A complex-valued function on $\Omega$ is called
\underline{$\mathcal{E}$-measurable}, if it is $\mathcal{E}$-$\mathcal{E}'$
measurable with $\mathcal{E}'$ the Borel $\sigma$-algebra of $\mathds{C}$.
A complex-valued function on $\Omega$ is $\mathcal{E}$-measurable
if and only if its real and imaginary parts are,
cf.\ \cite[Remark 2 p.\ 134 \& Equation (22.2) p.\ 133]{Bau}.
A real-valued function $f$ on $\Omega$ is \linebreak
$\mathcal{E}$-measurable if and only if $\{ \,f < \alpha \,\} \in \mathcal{E}$
for all $\alpha \in \mathds{R}$, cf.\ \cite[Theorem 9.2 p.\ 50 f.]{Bau}.
\end{definition}

\begin{definition}[Borel function]%
\label{continuous}\index{concepts}{Borel!function}%
Let $\Omega \neq \varnothing$ be a Hausdorff space.
A complex-valued function on $\Omega$, which is $\mathcal{E}$-measurable
with $\mathcal{E}$ the Borel $\sigma$-algebra on $\Omega$, is called a
\underline{Borel function} on $\Omega$. For example, every continuous
complex-valued function on $\Omega$ is a Borel function on $\Omega$.
\end{definition}

We shall use the Riesz Representation Theorem in the following form.
The proof follows easily from other versions of the theorem.

\begin{theorem}[the Riesz Representation Theorem]%
\label{Riesz}\index{concepts}{Theorem!Riesz Representation}%
\index{concepts}{Riesz Representation Thm.}%
Let $\Omega \neq \varnothing$ be a locally compact Hausdorff space.
Let $\ell$ be a linear functional on $\cont_\mathrm{c} (\Omega)$,
which is positive in the sense that $\ell(f) \geq 0$ for every
$f \in \cont_\mathrm{c} (\Omega)$ with $f \geq 0$ pointwise on $\Omega$.
Assume that $\ell$ is bounded with norm $1$ on the pre-C*-algebra
$\cont_\mathrm{c}( \Omega)$. There then exists a unique inner
regular Borel probability measure $\mu$ on $\Omega$ such that
\[ \ell (f) = \int f \,\diff \mu\quad \text{for every}
\quad f \in \cont_\mathrm{c} (\Omega). \]
The measure of a compact subset $K$ of $\Omega$ is given by
\[ \mu (K) = \inf \,\{ \,\ell (f) \geq 0 : f \in \cont_\mathrm{c} (\Omega),
1_K \leq f \leq 1_\Omega \,\}. \]
\end{theorem}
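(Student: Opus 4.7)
The plan is to reduce the statement to a standard form of the Riesz Representation Theorem, which asserts that every positive linear functional on $\cont_\mathrm{c}(\Omega)$ is represented by a unique inner regular Borel measure $\mu$ on $\Omega$ (not necessarily finite). Granting such a representing measure $\mu$ for $\ell$, the two remaining tasks are to verify that the hypothesis $\| \,\ell \,\| = 1$ on the pre-C*-algebra $\cont_\mathrm{c}(\Omega)$ forces $\mu$ to be a probability measure, and that the claimed infimum formula on compact sets holds. Uniqueness is then inherited directly from the standard theorem.

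For the first task, the goal is the identity $\mu(\Omega) = \| \,\ell \,\| = 1$. The inequality $\| \,\ell \,\| \leq \mu(\Omega)$ is immediate: for any $f \in \cont_\mathrm{c}(\Omega)$ with $| \,f \,|_{\,\infty} \leq 1$, a small rotation argument (writing $\ell(f) = \mathrm{e}^{\iu \theta} | \,\ell(f) \,|$ and using that $\ell$ is Hermitian, as a consequence of positivity) gives $| \,\ell(f) \,| = \ell \bigl( \mathrm{Re}(\mathrm{e}^{-\iu \theta} f) \bigr) \leq \ell(| \,f \,|) = \int | \,f \,| \,\diff \mu \leq \mu(\Omega)$. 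The reverse inequality uses inner regularity of $\mu$: for any compact $K \subset \Omega$, local compactness together with Urysohn's Lemma \ref{Urysohn} (applied in the one-point compactification) yields $f \in \cont_\mathrm{c}(\Omega)$ with $1_K \leq f \leq 1_\Omega$, whence $\mu(K) \leq \int f \,\diff \mu = \ell(f) \leq \| \,\ell \,\|$. Taking the supremum over compact $K \subset \Omega$ and invoking inner regularity of $\mu$ applied to $\Omega$ itself then yields $\mu(\Omega) \leq \| \,\ell \,\|$.

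For the infimum formula on compact sets, the Urysohn-type argument above combines with outer regularity of $\mu$, which is automatic for bounded inner regular Borel measures by \ref{outerreg}. Given compact $K \subset \Omega$, every $f \in \cont_\mathrm{c}(\Omega)$ with $1_K \leq f \leq 1_\Omega$ satisfies $\ell(f) = \int f \,\diff \mu \geq \mu(K)$, so the infimum is at least $\mu(K)$. Conversely, given $\varepsilon > 0$, outer regularity supplies an open $U \supset K$ with $\mu(U) < \mu(K) + \varepsilon$; shrinking $U$ via local compactness to make it relatively compact if necessary, Urysohn then delivers $f \in \cont_\mathrm{c}(\Omega)$ with $1_K \leq f \leq 1_U$, and hence $\ell(f) \leq \mu(U) < \mu(K) + \varepsilon$.

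The principal obstacle here is essentially organizational: selecting the appropriate standard form of Riesz from which to deduce this version. The only genuinely subtle analytic point is that $1_\Omega \notin \cont_\mathrm{c}(\Omega)$ unless $\Omega$ is compact, so the identification $\mu(\Omega) = \| \,\ell \,\|$ cannot be read off by evaluating $\ell$ at a constant function, but must instead be extracted via compact subsets and inner regularity. No new analytic content beyond Urysohn approximations and the standard regularity properties of bounded measures is required.
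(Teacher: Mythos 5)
Your proposal is correct, and it essentially carries out the deduction that the paper itself leaves to the literature: the text offers no proof of \ref{Riesz}, saying only that it ``follows easily from other versions of the theorem'' and citing \cite{FW} and \cite{Bau}, and your reduction from the standard (possibly unbounded) inner regular Borel measure form of the Riesz theorem is exactly that deduction made explicit. The two supplementary verifications you supply --- that $\| \,\ell \,\| = 1$ forces $\mu(\Omega) = 1$ via Urysohn functions \ref{Urysohn} and inner regularity, and the infimum formula on compacta via outer regularity \ref{outerreg} --- are the right ones, and you correctly handle the one delicate point, namely that the Urysohn function must be given compact support by first passing to a relatively compact open neighbourhood of $K$.
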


For a proof, see for example \cite{FW}: Theorem 5.20 p.\ 178 \& Theorem 5.21 p.\ 181
\& Corollary 5.12 (a) p.\ 170 \& Theorem 5.7 (b) p.\ 166.
Or else, see \cite{Bau}: Theorem 29.3 p.\ 180 \& Formula (28.13) p.\ 176.

\begin{definition}[image measure]\label{imagebegin}%
\index{concepts}{image!measures}%
\index{concepts}{measure!image}%
Let $(\Omega, \mathcal{E}, \mu)$ be a probability space. Let
$\mathcal{E}'$ be a $\sigma$-algebra on a set $\Omega' \neq \varnothing$.
Let $f : \Omega \to \Omega'$ be an $\mathcal{E}\text{-}\mathcal{E}'$
measurable function. The \underline{image measure} $f(\mu)$
is defined as
\begin{alignat*}{2}
 f(\mu) : \mathcal{E}' & \to           &\ & [ \,0, 1 \,] \\
                     \Delta' & \mapsto &\ & \mu\bigl( f^{-1}(\Delta') \bigr).
\end{alignat*}
That is, $f(\mu) = \mu \circ f^{-1}$. \pagebreak
\end{definition}

If $f$ has the meaning of a random variable on $(\Omega, \mathcal{E}, \mu)$,
then $f(\mu)$ is the distribution of the random variable $f$.

\begin{theorem}\label{imageend}%
Let $(\Omega, \mathcal{E}, \mu)$ be a probability space. Let $\mathcal{E}'$
be a \linebreak $\sigma$-algebra on a set $\Omega' \neq \varnothing$.
Let $f : \Omega \to \Omega'$ be an $\mathcal{E}$-\,$\mathcal{E}'$
measurable function. For an $\mathcal{E}'$-measurable
complex-valued function $g$ on $\Omega'$, we have
\[
g \in {\mathscr{L} \,}^{1} \bigl(f(\mu)\bigr) \ \Leftrightarrow
\ g \circ f \in {\mathscr{L} \,}^{1}(\mu) \ \Rightarrow
\ \int g\ \diff\mspace{2mu}f (\mu) = \int (g \circ f) \,\diff \mu.
\]
\end{theorem}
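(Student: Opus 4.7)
The plan is to follow the standard algebraic induction used to establish all such ``change of variables'' formulas: verify the identity first on indicator functions, then on step functions by linearity, then on non-negative measurable functions by monotone convergence, and finally on complex-valued integrable functions by splitting into real and imaginary, positive and negative parts. The integrability equivalence will fall out of the non-negative case applied to $|g|$.

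Concretely, I would first note that for $\Delta' \in \mathcal{E}'$, the very definition of $f(\mu)$ gives
\[
\int 1_{\Delta'} \,\diff f(\mu) = f(\mu)(\Delta') = \mu\bigl(f^{-1}(\Delta')\bigr)
= \int 1_{f^{-1}(\Delta')} \,\diff \mu = \int (1_{\Delta'} \circ f) \,\diff \mu,
\]
since $1_{\Delta'} \circ f = 1_{f^{-1}(\Delta')}$. By linearity of the integral, the identity then extends to every non-negative $\mathcal{E}'$-step function $h$ on $\Omega'$, and both sides are finite since $\mu$ and $f(\mu)$ are probability measures. Next, given a non-negative $\mathcal{E}'$-measurable function $g$ on $\Omega'$, I would pick an increasing sequence $(h_n)$ of non-negative $\mathcal{E}'$-step functions converging pointwise to $g$ on $\Omega'$. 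Then $(h_n \circ f)$ is an increasing sequence of non-negative $\mathcal{E}$-step functions on $\Omega$ converging pointwise to $g \circ f$, and two applications of the Monotone Convergence Theorem yield
\[
\int g \,\diff f(\mu) = \lim_n \int h_n \,\diff f(\mu)
= \lim_n \int (h_n \circ f) \,\diff \mu = \int (g \circ f) \,\diff \mu
\]
as equalities in $[\,0,\infty\,]$. This already proves the biconditional about $\mathscr{L}^1$: apply the identity to the non-negative measurable function $|g|$, and observe that $|g \circ f| = |g| \circ f$, so $g \in \mathscr{L}^1\bigl(f(\mu)\bigr)$ if and only if $g \circ f \in \mathscr{L}^1(\mu)$.

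Finally, assuming this common integrability, I would decompose a complex-valued $g \in \mathscr{L}^1\bigl(f(\mu)\bigr)$ into $g = (u_+ - u_-) + \iu(v_+ - v_-)$ with four non-negative integrable parts, apply the already-established non-negative identity to each of the four summands, and recombine by linearity. There is no serious obstacle here; the only thing to watch out for is that one cannot invoke linearity on the complex case before knowing all four summands have finite integral against $f(\mu)$ (equivalently, their compositions with $f$ have finite integral against $\mu$), which is exactly what the preceding biconditional guarantees.
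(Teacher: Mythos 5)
Your argument is correct and follows exactly the route the paper sketches: establish the identity on indicator functions from the definition of $f(\mu)$, pass to step functions by linearity, to non-negative measurable functions by monotone convergence, deduce the $\mathscr{L}^1$-equivalence from the case of $|g|$, and finish by decomposing into real and imaginary, positive and negative parts. You have simply written out in full the steps the paper leaves as a two-line sketch.
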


\begin{proof}
Prove this first for $\mathcal{E}'$-step functions.
Approximate a non-negative $\mathcal{E}'$-measurable function $g$
by an increasing sequence of $\mathcal{E}'$-step functions and apply
Lebesgue's Monotone Convergence Theorem.
\end{proof}

\begin{definition}[the Baire $\sigma$-algebra]%
\index{concepts}{Baire!0sigma@$\sigma$-algebra}%
\index{concepts}{Baire!function}\label{Bairedef}%
Let $\Omega \neq \varnothing$ be a locally compact Hausdorff
space. The \underline{Baire $\sigma$-algebra} of $\Omega$
is defined as the \linebreak $\sigma$-algebra generated
by the sets $\{ \,f < \alpha \,\}$ with $f$ a continuous
real-valued function on $\Omega$ vanishing at infinity,
and $\alpha \in \mathds{R}$. This is the smallest
$\sigma$-algebra $\mathcal{E}$ on $\Omega$ such that
the functions in $\cont\0(\Omega)$ are $\mathcal{E}$-measurable.
The members of the Baire $\sigma$-algebra of $\Omega$
are called \underline{Baire sets} of $\Omega$. The complex-valued
functions on $\Omega$ which are $\mathcal{E}$-measurable
with $\mathcal{E}$ the Baire $\sigma$-algebra of $\Omega$,
are called \underline{Baire functions} on $\Omega$.
\end{definition}

What we here call the Baire $\sigma$-algebra,
is in fact called the \underline{restricted}
Baire $\sigma$-algebra by some authors.

\begin{proposition}\label{metric}%
Let $\Omega \neq \varnothing$ be a locally compact Hausdorff
space. The Baire $\sigma$-algebra of $\Omega$ is contained
in the Borel $\sigma$-algebra of $\Omega$. If $\Omega$ is a
compact metric space, then the Baire $\sigma$-algebra of
$\Omega$ coincides with the Borel $\sigma$-algebra of $\Omega$.
\end{proposition}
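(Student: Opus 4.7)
The plan is to unwind both definitions to the level of generating sets, and then verify the two inclusions separately.

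For the first claim, I would argue as follows. By \ref{continuous}, every continuous complex-valued function on $\Omega$ is a Borel function. In particular, each real-valued $f \in \cont\0(\Omega)$ is measurable with respect to the Borel $\sigma$-algebra, so every set of the form $\{f < \alpha\}$ with $\alpha \in \mathds{R}$ is Borel. Since by definition \ref{Bairedef} the Baire $\sigma$-algebra of $\Omega$ is the $\sigma$-algebra generated by precisely these sets, it must be contained in the Borel $\sigma$-algebra. This is the routine direction and contains no obstacle.

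For the second claim, assume $\Omega$ is a compact metric space, so in particular $\cont\0(\Omega) = \cont(\Omega)$ by \ref{Cnaught}. It suffices to show that every closed subset $C$ of $\Omega$ is Baire, because the Borel $\sigma$-algebra is generated by the closed sets. Given a non-empty closed $C \subset \Omega$, consider the distance function $f : x \mapsto \mathrm{dist}(x, C)$ provided by \ref{distance}. Then $f$ is continuous and real-valued on the compact space $\Omega$, so $f \in \cont(\Omega) = \cont\0(\Omega)$; moreover \ref{distance} tells us that $C = \{x \in \Omega : f(x) = 0\}$, and since $f \geq 0$ this equals $\{f \leq 0\}$. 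The key step is now to express this as a countable operation on Baire generators, namely
\[
C \,=\, \{f \leq 0\} \,=\, \bigcap_{n \geq 1} \bigl\{ f < \tfrac{1}{n} \bigr\},
\]
which exhibits $C$ as a countable intersection of Baire sets, hence itself Baire. The empty closed set is trivially Baire. Thus every closed set lies in the Baire $\sigma$-algebra, and so the Borel $\sigma$-algebra, being generated by the closed sets, is contained in the Baire $\sigma$-algebra. Together with the first part, this gives equality.

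The only mildly delicate point—and the closest thing to an obstacle—is checking that the distance function really does furnish a legitimate Baire generator: one must verify that $f$ belongs to $\cont\0(\Omega)$, not merely $\cont(\Omega)$, which is exactly why the compactness of $\Omega$ is invoked via \ref{Cnaught}. Without compactness the distance function need not vanish at infinity, so the argument would fail in the general locally compact setting (and indeed the conclusion fails there too). Everything else is formal manipulation of $\sigma$-algebras.
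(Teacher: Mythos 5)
Your proof is correct and follows essentially the same route as the paper's: the first inclusion via the fact that continuous functions are Borel, and the converse via the distance function to a closed set, using compactness to place it in $\cont\0(\Omega) = \cont(\Omega)$. Your explicit decomposition $C = \bigcap_{n \geq 1}\{f < \tfrac{1}{n}\}$ merely spells out why $\{f = 0\}$ is a Baire set once $f$ is known to be a Baire function, which the paper leaves implicit.
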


\begin{proof}
The first statement holds because the Baire $\sigma$-algebra
is the smallest $\sigma$-algebra with respect to which the
functions in $\cont\0(\Omega)$ are measurable, and because
every function in $\cont(\Omega)$ is a Borel function,
cf. \ref{continuous}. Let now $\Omega$ be a compact metric
space. For the converse inclusion, it suffices to show that
every closed subset of $\Omega$ is a Baire set. Any
closed subset of $\Omega$ is of the form $\{ \,f = 0 \,\}$ for
some $f \in \cont(\Omega)$ by considering the distance
function from this set, cf.\ \ref{distance}. Since $\Omega$
is compact, we have $\cont(\Omega) = \cont\0(\Omega)$,
so the above function $f$ is a Baire function, which makes
that $\{ \,f = 0 \,\}$ is a Baire set. \pagebreak
\end{proof}

\medskip
The next result is well-known.

\begin{theorem}\label{CcdenseinLp}%
Let $\mu$ be an inner regular Borel probability measure on a locally
compact Hausdorff space $\Omega \neq \varnothing$. Then for every
function $f \in {\mathscr{L} \,}^{p} \,(\mu)$ with $1 \leq p < \infty$,
there exists a sequence $\{\,f_n\,\}$ in $\cont_\mathrm{c}(\Omega)$
which converges to $f$ in ${\mathscr{L} \,}^{p} \,(\mu)$
and $\mu$-almost everywhere.
\end{theorem}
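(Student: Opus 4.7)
The plan is to establish convergence in $\mathscr{L}^{p}(\mu)$-norm first, and then extract an a.e.-convergent subsequence by a standard Borel--Cantelli argument. I would reduce the norm-density claim in the usual three steps: split $f$ into real and imaginary, then positive and negative, parts; approximate a non-negative $f \in \mathscr{L}^{p}(\mu)$ from below by an increasing sequence of non-negative Borel step functions and invoke Lebesgue's Monotone (or Dominated) Convergence Theorem; and finally approximate the characteristic function of an arbitrary Borel set $\Delta \subset \Omega$ (which, since $\mu$ is a probability measure, has $\mu(\Delta) \leq 1 < \infty$) in $\mathscr{L}^{p}(\mu)$-norm by functions in $\cont_{\mathrm{c}}(\Omega)$.

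For this last step, which is the crux, I would argue as follows. Given $\varepsilon > 0$, inner regularity \ref{inregBormeas} yields a compact set $K \subset \Delta$ with $\mu(\Delta \setminus K) < \varepsilon$. Outer regularity \ref{outerreg} (applied to $\Delta$) then yields an open set $U \supset \Delta$ with $\mu(U \setminus \Delta) < \varepsilon$, whence $\mu(U \setminus K) < 2\varepsilon$. In the locally compact Hausdorff setting one can further choose an open set $V$ with $K \subset V \subset \overline{V} \subset U$ and $\overline{V}$ compact (using local compactness around each point of $K$ and a finite cover). Urysohn's Lemma \ref{Urysohn}, applied inside the compact Hausdorff (hence normal) space $\overline{V}$ to the disjoint closed sets $K$ and $\overline{V} \setminus V$, then produces $f \in \cont_{\mathrm{c}}(\Omega)$ with $0 \leq f \leq 1$, $f = 1$ on $K$, and $\mathrm{supp}(f) \subset \overline{V} \subset U$. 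Since $|1_{\Delta} - f|^{p} \leq 1_{U \setminus K}$ pointwise, the $\mathscr{L}^{p}(\mu)$-distance from $1_{\Delta}$ to $f$ is at most $(2\varepsilon)^{1/p}$.

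Combining the three reductions yields a sequence $(f_{n})$ in $\cont_{\mathrm{c}}(\Omega)$ with $f_{n} \to f$ in $\mathscr{L}^{p}(\mu)$. To upgrade to $\mu$-a.e.\ convergence, I would pass to a subsequence, still denoted $(f_{n})$, such that $\| \, f_{n+1} - f_{n} \, \|_{\mu,p} \leq 2^{-n}$. The function $g := \sum_{n \geq 1} | \, f_{n+1} - f_{n} \, |$ then satisfies $\| \, g \, \|_{\mu, p} \leq 1$ by the triangle inequality applied to partial sums and Fatou's Lemma, so $g < \infty$ $\mu$-a.e. On the set where $g$ is finite, $(f_{n})$ is a pointwise Cauchy sequence in $\mathds{C}$ and therefore converges. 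Its pointwise limit must coincide $\mu$-a.e.\ with $f$, because $\mathscr{L}^{p}$-convergence implies a.e.\ convergence along any a.e.-convergent subsequence.

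The main obstacle is the outer approximation by open sets with compact closure, since $\Omega$ is only assumed locally compact rather than $\sigma$-compact: one must be careful that outer regularity furnishes an open $U \supset \Delta$ for which a further shrinking $V$ with $\overline{V}$ compact is available. This is handled by first replacing $U$ by its intersection with a fixed open neighbourhood of the compact set $K$ having compact closure, obtained by a finite cover argument from local compactness.
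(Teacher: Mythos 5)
Your argument is correct, and there is nothing in the paper to compare it against: the text does not prove this theorem but simply refers to \cite[Theorem 6.15 p.\ 204]{FW}, so what you have written supplies a self-contained proof of a fact the book outsources. The route you take — reduce to characteristic functions of Borel sets via simple functions and dominated convergence, sandwich $K \subset \Delta \subset U$ using inner regularity together with the outer regularity that \ref{outerreg} provides (legitimately, since $\mu$ is a probability measure), pinch a Urysohn function between $K$ and $U$ inside an open $V$ with compact closure, and then extract a fast $\mathscr{L}^{p}$-Cauchy subsequence to get $\mu$-a.e.\ convergence — is the standard one and each step checks out. Three small points would need to be written out in a final version. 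First, the Urysohn function obtained on the compact (hence normal) space $\overline{V}$ must be extended by zero to all of $\Omega$, and its continuity at points of $\overline{V}\setminus V$ should be justified, e.g.\ by the pasting lemma for the closed cover $\{\,\overline{V},\ \Omega\setminus V\,\}$, on whose overlap $\overline{V}\setminus V$ both pieces vanish. Second, the identification of the $\mu$-a.e.\ limit $h$ of the fast subsequence with $f$ is cleanest via Fatou's Lemma, $\| \,f-h\, \|_{\,\text{\small{$\mu$}},p} \leq \liminf_n \| \,f-f_n\, \|_{\,\text{\small{$\mu$}},p} = 0$, which avoids the slightly circular flavour of invoking ``a.e.\ convergence along a.e.-convergent subsequences''. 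Third, in the integration framework the paper adopts (cf.\ \ref{pless}), a function in ${\mathscr{L}\,}^{p}(\mu)$ is a priori only $\mu$-a.e.\ equal to a Borel function, so your first reduction should begin by replacing $f$ with a Borel representative; this costs nothing but should be said.
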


For a proof, see e.g.\ \cite[Theorem 6.15 p.\ 204]{FW}.

\begin{corollary}\label{Baire}%
Let $\mu$ be an inner regular Borel probability \linebreak measure
on a locally compact Hausdorff space $\Omega \neq \varnothing$.
Then a function in ${\mathscr{L} \,}^{\infty}(\mu)$ is $\mu$-a.e.\ equal
to a bounded Baire function on $\Omega$.
\end{corollary}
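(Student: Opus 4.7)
The plan is to combine the density theorem \ref{CcdenseinLp} with the fact that functions in $\cont_\mathrm{c}(\Omega) \subset \cont\0(\Omega)$ are Baire functions by the very definition \ref{Bairedef} of the Baire $\sigma$-algebra, and that pointwise limits of Baire functions are Baire functions (since the Baire $\sigma$-algebra is a $\sigma$-algebra, so measurability is preserved under pointwise limits of sequences).

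More concretely, let $f \in {\mathscr{L} \,}^{\infty}(\mu)$. Then $f \in {\mathscr{L} \,}^{1}(\mu)$ as well, since $\mu$ is a probability measure. First I would apply Theorem \ref{CcdenseinLp} with $p = 1$ to produce a sequence $(f_n)$ in $\cont_\mathrm{c}(\Omega)$ converging to $f$ both in $\mathscr{L}^{1}(\mu)$ and $\mu$-almost everywhere. Each $f_n$ is a Baire function on $\Omega$, because $\cont_\mathrm{c}(\Omega) \subset \cont\0(\Omega)$ and the functions in $\cont\0(\Omega)$ are Baire-measurable by construction of the Baire $\sigma$-algebra.

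Next I would assemble a Baire function $g$ that equals $f$ almost everywhere. Let $E$ be the set of points $x \in \Omega$ at which $(f_n(x))$ converges in $\mathds{C}$; this $E$ is a Baire set (being the set where a sequence of Baire-measurable functions is Cauchy, which is expressed by countable Boolean combinations of Baire sets). Define $g(x) := \lim_{n \to \infty} f_n(x)$ for $x \in E$ and $g(x) := 0$ for $x \notin E$. Then $g$ is a Baire function on $\Omega$ which coincides with $f$ $\mu$-almost everywhere, since $\mu$-a.e.\ convergence of $(f_n)$ to $f$ means that $\mu(\Omega \setminus E) = 0$ and $g = f$ on the $\mu$-conull set where the convergence actually happens.

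The remaining, slightly delicate step is that the $g$ produced this way need not be bounded pointwise, even though $f$ is bounded in the $\mathscr{L}^{\infty}$ sense. This is the only real obstacle, and it is handled by a truncation: let $M := \| \,f \,\|_{\,\mu,\infty}$, so that the Baire set $\{ \,| \,g \,| \leq M \,\}$ is $\mu$-conull. Define
\[ \tld{g} (x) := g(x) \,\cdot \,1_{\{ \,| \,g \,| \leq M \,\}} (x) \qquad (x \in \Omega). \]
Then $\tld{g}$ is a bounded Baire function on $\Omega$, and $\tld{g} = g = f$ $\mu$-almost everywhere, which finishes the proof.
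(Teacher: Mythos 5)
Your proof is correct and follows essentially the same route as the paper's: both apply \ref{CcdenseinLp} with $p = 1$ to get a $\mu$-a.e.\ convergent sequence in $\cont_\mathrm{c}(\Omega)$ of Baire functions and then truncate to secure boundedness. The only cosmetic difference is that the paper truncates the approximants first and takes a $\limsup$ (working with real and imaginary parts), whereas you form the limit on the convergence set and truncate afterwards; both handle the boundedness issue equally well.
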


\begin{proof}
Let $f \in {\mathscr{L} \,}^{\infty}(\mu)$. As $\mu$ is a probability measure,
we have ${\mathscr{L} \,}^{\infty}(\mu) \subset {\mathscr{L} \,}^{1}(\mu)$, so
we apply the preceding theorem \ref{CcdenseinLp} with $p = 1$.
Let $\{\,f_n\,\}$ be a sequence as described. The truncated function
\[ \limsup_{n \to \infty} \ \Bigl\{ \,\bigl[ \,\mathrm{Re}\,(f_n)
\wedge \| \,\mathrm{Re}\,(f) \,\|_{\,\text{\small{$\mu$}},\infty} \,1_{\,\Omega} \,\bigr]
\,\vee \,\bigl[ \,- \| \,\mathrm{Re}\,(f) \,\|_{\,\text{\small{$\mu$}},\infty} \,1_{\,\Omega} \,\bigr]
\,\Bigr\} \]
is a bounded Baire function which is $\mu$-a.e.\ equal to $\mathrm{Re}\,(f)$.
\end{proof}

\medskip
We also have the similar result \ref{general} below. For the proof,
we need the following theorem, which is well-known.

\begin{theorem}\label{pless}%
Let $(\Omega, \mathcal{E}, \mu)$ be a probability space.
Then a function in ${\mathscr{L} \,}^{p} \,(\mu)$ with $1 \leq p < \infty$
is $\mu$-a.e.\ equal to a complex-valued $\mathcal{E}$-measurable
function on $\Omega$.
\end{theorem}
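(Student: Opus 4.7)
The statement only has content if $\mathscr{L}^p(\mu)$ is defined abstractly (e.g.\ via completion of the space of $\mathcal{E}$-step functions), so that its elements are not a priori $\mathcal{E}$-measurable functions. Under any such definition, step functions are dense in $\mathscr{L}^p(\mu)$, and my plan is to exploit this together with the standard subsequence trick: an $\mathscr{L}^p$-convergent sequence has a pointwise a.e.\ convergent subsequence, and its a.e.\ limit is $\mathcal{E}$-measurable.

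First I would reduce to real-valued (indeed non-negative) functions by decomposing $f \in \mathscr{L}^p(\mu)$ into its real and imaginary parts and then into positive and negative parts. Next, using the density of $\mathcal{E}$-step functions in $\mathscr{L}^p(\mu)$ (which is built into the completion-style definition), I would choose a sequence $(f_n)$ of $\mathcal{E}$-step functions with $\|f - f_n\|_{\mu,p} \to 0$. By passing to a subsequence I may assume
\[
\|f_{n+1} - f_n\|_{\mu,p} \leq 2^{-n} \quad \text{for all } n \geq 1.
\]

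The key step is then to show this subsequence converges $\mu$-a.e.\ to $f$. Applying Chebyshev's inequality to $|f_{n+1} - f_n|^p$ gives
\[
\mu\bigl(\{\,|f_{n+1} - f_n| \geq 2^{-n/(2p)}\,\}\bigr) \leq 2^{n/2}\,\|f_{n+1} - f_n\|_{\mu,p}^{\,p} \leq 2^{-n/2}.
\]
Since these measures are summable, the Borel--Cantelli lemma (for the step-function measure, which is classical) implies that $\mu$-a.e.\ the series $\sum_n (f_{n+1} - f_n)$ converges absolutely. Hence the pointwise limit $g(x) := \lim_n f_n(x)$ exists on the complement of some $\mathcal{E}$-measurable $\mu$-null set $N$. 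Setting $g := 0$ on $N$ yields an $\mathcal{E}$-measurable complex-valued function, since $g$ is the pointwise limit of the $\mathcal{E}$-measurable step functions $f_n \cdot 1_{\Omega \setminus N}$ (and pointwise limits of $\mathcal{E}$-measurable functions are $\mathcal{E}$-measurable, cf.\ \ref{closure}).

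It remains to identify $g$ with $f$ in $\mathscr{L}^p(\mu)$. By Fatou's lemma applied to $|f_m - f_n|^p$ with $n$ fixed and $m \to \infty$ through the subsequence, one gets $\|g - f_n\|_{\mu,p} \to 0$, so in $\mathscr{L}^p(\mu)$ both $f$ and $g$ are the limit of $(f_n)$, and thus $f = g$ $\mu$-a.e. The main obstacle is a bookkeeping one: making the Chebyshev/Borel--Cantelli argument rigorous in whatever framework \cite{FW} uses, since one must verify that the relevant sets $\{|f_{n+1}-f_n| \geq \varepsilon\}$ are $\mathcal{E}$-measurable (immediate, as the $f_n$ are step functions) and that Fatou's lemma is available in that framework (it is, since step functions and their monotone limits are the basic building blocks).
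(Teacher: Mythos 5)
The paper does not actually prove this theorem: it only refers the reader to \cite[Theorem 3.2.13 p.\ 480]{CFW}. Your proposal therefore supplies a self-contained argument where the text has none, and the argument is correct. The chain of steps — density of $\mathcal{E}$-step functions in ${\mathscr{L}\,}^{p}(\mu)$, passage to a subsequence with $\| \,f_{n+1}-f_n \,\|_{\mu,p} \leq 2^{-n}$, the Chebyshev estimate
\[
\mu\bigl(\{\,| \,f_{n+1}-f_n \,| \geq 2^{-n/(2p)}\,\}\bigr)
\leq 2^{\,n/2}\,{\| \,f_{n+1}-f_n \,\|_{\mu,p}}^{\,p} \leq 2^{-n/2},
\]
Borel--Cantelli applied to these $\mathcal{E}$-measurable sets (they are $\mathcal{E}$-measurable precisely because the $f_n$ are $\mathcal{E}$-step functions), the observation that the exceptional set $N$ is itself an $\mathcal{E}$-measurable $\mu$-null set so that $g := \lim_n f_n \cdot 1_{\Omega \setminus N}$ is $\mathcal{E}$-measurable as a pointwise limit of $\mathcal{E}$-measurable functions, and finally Fatou to conclude $\| \,f-g \,\|_{\mu,p} = 0$ and hence $f = g$ $\mu$-a.e.\ — is the standard ``rapidly convergent subsequence'' technique and is sound. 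The one load-bearing input is the density of the $\mathcal{E}$-step functions, which is exactly what makes the statement non-vacuous and which must be checked against the particular construction of ${\mathscr{L}\,}^{p}(\mu)$ in \cite{FW}/\cite{CFW}; there it is part of the definition, so your argument goes through. Two minor remarks: the reduction to non-negative real functions is unnecessary, since the subsequence argument applies verbatim to complex-valued step functions; and the final appeal to Fatou can be replaced by simply noting that $(f_n)$ converges to $f$ in ${\mathscr{L}\,}^{p}(\mu)$ and $\mu$-a.e.\ to $g$, together with the uniqueness of ${\mathscr{L}\,}^{p}$-limits up to null sets.
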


For a proof, see e.g. \cite[Theorem 3.2.13 p.\ 480]{CFW}.

\begin{corollary}\label{general}%
Let $(\Omega, \mathcal{E}, \mu)$ be a probability space.
Then every function in ${\mathscr{L} \,}^{\infty} \,(\mu)$ is
$\mu$-a.e.\ equal to a bounded $\mathcal{E}$-measurable
complex-valued function on $\Omega$.
\end{corollary}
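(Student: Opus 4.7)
The plan is to deduce this from Theorem \ref{pless} together with a truncation argument, in parallel to the proof of Corollary \ref{Baire} just given for the Baire case.

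First I would observe that since $\mu$ is a probability measure, one has the inclusion $\mathscr{L}^{\infty}(\mu) \subset \mathscr{L}^{1}(\mu)$: any essentially bounded function is $\mu$-integrable because $\mu(\Omega) = 1 < \infty$. Thus, given $f \in \mathscr{L}^{\infty}(\mu)$, I can apply Theorem \ref{pless} with $p = 1$ to obtain a complex-valued $\mathcal{E}$-measurable function $g$ on $\Omega$ with $f = g$ $\mu$-almost everywhere. The function $g$ is $\mathcal{E}$-measurable but need not be bounded everywhere, only essentially bounded (and only $\mu$-a.e.\ equal to $f$), so a truncation step is required.

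Next I would set $M := \|f\|_{\,\mu,\infty} < \infty$ and define
\[
h := \bigl[\,(\mathrm{Re}\,g \wedge M \,1_\Omega) \vee (-M \,1_\Omega)\,\bigr]
+ \iu \,\bigl[\,(\mathrm{Im}\,g \wedge M \,1_\Omega) \vee (-M \,1_\Omega)\,\bigr].
\]
By construction $h$ is bounded (with $|h| \leq \sqrt{2}\,M$ pointwise), and $h$ is $\mathcal{E}$-measurable because the real and imaginary parts of an $\mathcal{E}$-measurable function are $\mathcal{E}$-measurable, and because $\mathcal{E}$-measurability is preserved by the lattice operations with constant functions (cf.\ the pointwise characterisation of $\mathcal{E}$-measurability in \ref{measurability}).

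Finally I would verify that $h = f$ $\mu$-a.e. On the $\mu$-conull set where $f = g$, one has $|\mathrm{Re}\,g| \leq |g| = |f| \leq M$ and $|\mathrm{Im}\,g| \leq M$ $\mu$-a.e., so the truncations do nothing there and $h = g = f$ on a $\mu$-conull set. I do not foresee any genuine obstacle; the only point requiring care is that the truncation must be carried out on the real and imaginary parts separately (since the complex field is not ordered), which is why the formula for $h$ is written as above rather than as a single truncation of $|g|$.
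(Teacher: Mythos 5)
Your proposal is correct and follows essentially the same route as the paper: apply Theorem \ref{pless} with $p=1$ (using $\mathscr{L}^{\infty}(\mu)\subset\mathscr{L}^{1}(\mu)$ for a probability measure) and then truncate the real and imaginary parts with the lattice operations $\vee$, $\wedge$ against constant multiples of $1_\Omega$. The only cosmetic difference is that the paper truncates at $\|\mathrm{Re}\,(f)\|_{\,\mu,\infty}$ and writes out only the real part, whereas you truncate both parts at $\|f\|_{\,\mu,\infty}$; both verifications of the $\mu$-a.e.\ equality are the same.
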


\begin{proof}
Let $f \in {\mathscr{L} \,}^{\infty}(\mu)$. As $\mu$ is a probability measure,
we have ${\mathscr{L} \,}^{\infty}(\mu) \subset {\mathscr{L} \,}^{1}(\mu)$, so
we apply the preceding theorem \ref{pless} with $p = 1$.  There exists a
real-valued $\mathcal{E}$-measurable function $g$ on $\Omega$, which is
$\mu$-a.e.\ equal to $\mathrm{Re} \,(f)$. The truncated function
\[ \bigl[ \,g \wedge \| \,\mathrm{Re}\,(f) \,\|_{\,\text{\small{$\mu$}},\infty} \,1_{\,\Omega} \,\bigr]
\,\vee \,\bigl[ \,- \| \,\mathrm{Re}\,(f) \,\|_{\,\text{\small{$\mu$}},\infty} \,1_{\,\Omega} \,\bigr] \]
is a bounded $\mathcal{E}$-measurable function,
which is $\mu$-a.e.\ equal to $\mathrm{Re} \,(f)$.\pagebreak
\end{proof}

\clearpage

%


\setcounter{section}{53}

\phantomsection

\addtocontents{toc}{\protect\vspace{-0.65em}}

\addcontentsline{toc}{chapter}%
{\texorpdfstring{\ \ \ \S\ 53.\quad Extension of Baire Functions}%
{\textsection\ 53. Extension of Baire Functions}}

\fancyhead[LE]{\SMALL{APPENDIX}}
\fancyhead[RO]{\SMALL{\S\ 53. \ EXTENSION OF BAIRE FUNCTIONS}}

\medskip
\begin{center} \textbf{\S\ 53.\:\:Extension of Baire Functions} \end{center}
\setcounter{theorem}{0}

Our sole aim in this paragraph is to provide the reader with theorem
\ref{bBaireext} below.
\hypertarget{extBaire}{}%

In this paragraph, let $\Omega \neq \varnothing$ be a locally compact
Hausdorff space, and let $C \neq \varnothing$ be a \underline{closed}
subset of $\Omega$. Please note that then $C$ is a locally compact
Hausdorff space $\neq \varnothing$ in its own right, and it thus makes
sense to consider $\cont\0(C)$ as well as Baire functions on $C$.

It is important to realise that we suppose $C$ to be closed in $\Omega$,
but not necessarily a Baire set of $\Omega$.

\begin{proposition}\label{coptinfty}%
Assume that $\Omega$ is not compact. Let $K$ be the one-point
compactification of $\Omega$, and let $\infty \in K \setminus \Omega$
be the corresponding point at infinity. Then either $C$ is compact, or
else the one-point compactification of $C$ is $C \cup \{ \infty \}$,
considered as a subspace of $K$.
\end{proposition}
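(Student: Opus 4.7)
The plan is to verify, in the case where $C$ is not compact, that $C \cup \{\infty\}$ equipped with the subspace topology inherited from $K$ satisfies the three defining requirements of a one-point compactification of $C$: it is a compact Hausdorff space, it contains $C$ as a topological subspace, and $\infty$ is not an isolated point.

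First I would observe that $C \cup \{\infty\}$ is closed in $K$: its complement in $K$ equals $\Omega \setminus C$, which is open in $\Omega$ because $C$ is closed in $\Omega$ by hypothesis, and hence open in $K$ because $\Omega$ itself is open in $K$ (as $K$ is the one-point compactification of $\Omega$, recall \ref{onepcomp}). Being a closed subset of the compact Hausdorff space $K$, the set $C \cup \{\infty\}$ is compact Hausdorff in its subspace topology. Moreover, since $\Omega$ already carries the subspace topology inherited from $K$, the topology on $C$ inherited from $K$ agrees with the topology on $C$ as a subspace of $\Omega$, so $C$ sits inside $C \cup \{\infty\}$ as a topological subspace.

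The remaining, and only nontrivial, point is to show that $\infty$ is not isolated in $C \cup \{\infty\}$, equivalently that $C$ is dense in $C \cup \{\infty\}$. By \ref{onepcomp}, the open neighbourhoods of $\infty$ in $K$ are precisely the sets $K \setminus L$ with $L$ a compact subset of $\Omega$, so the open neighbourhoods of $\infty$ in $C \cup \{\infty\}$ are precisely the sets of the form $(C \setminus L) \cup \{\infty\}$ with $L$ compact in $\Omega$. Such a set reduces to $\{\infty\}$ precisely when $C \subset L$; but if $C \subset L$ for some compact $L \subset \Omega$, then $C$, being a closed subset of $\Omega$ contained in the compact set $L$, would itself be compact, contrary to the case assumption. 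Hence every open neighbourhood of $\infty$ in $C \cup \{\infty\}$ meets $C$, which completes the verification. (The alternative case, in which $C$ is compact, requires no further argument, and the dichotomy in the statement is exhausted.)
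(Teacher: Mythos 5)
Your proof is correct and follows essentially the same route as the paper's: both arguments come down to showing that no point of $\Omega \setminus C$ adheres to $C$ (you phrase this as $C \cup \{\infty\}$ being closed in $K$, the paper as $\overline{C} \setminus C \subset \{\infty\}$) and that $\infty$ does adhere to $C$, the latter in both cases via the observation that a closed subset of $\Omega$ contained in a compact set would itself be compact. Your organisation — verifying the defining properties of the subspace $C \cup \{\infty\}$ directly rather than computing the closure $\overline{C}$ — is if anything slightly more streamlined, but the mathematical content is identical.
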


\begin{proof}
Consider the neighbourhood filters
of the points at infinity, using that a subset of $C$
is compact if and only if it is of the intersection
of $C$ with a compact subset of $\Omega$, as $C$
is closed in $\Omega$. Cf.\ reminder \ref{onepcomp}.%
\end{proof}

\medskip
We remind of the following classical result, in view of the ensuing
proposition.

\begin{reminder}[the Tietze Extension Theorem]\label{Tietze}%
\index{concepts}{Theorem!Tietze Extension}%
\index{concepts}{Tietze Extension Theorem}%
If $K$ is a normal Hausdorff space, and if $C$ is a closed subset
of $K$, then every continuous real-valued function on $C$ can be
extended to a continuous real-valued function on $K$.
\end{reminder}

For a proof, see for example \cite[Theorem 2.1.8 p.\ 69]{Eng}.

\begin{proposition}\label{restrC0}%
The set $\cont\0(C)$ consists of the restrictions to $C$ of the functions
in $\cont\0(\Omega)$.
\end{proposition}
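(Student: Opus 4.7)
The plan is to prove the two inclusions separately. The easy direction is that restrictions of functions in $\cont\0(\Omega)$ lie in $\cont\0(C)$. If $f \in \cont\0(\Omega)$, then $f|_C$ is certainly continuous on $C$, and for $\varepsilon > 0$ the set $\{x \in C : |f(x)| \geq \varepsilon\}$ equals $\{x \in \Omega : |f(x)| \geq \varepsilon\} \cap C$, a closed subset of a compact set (by the characterisation in \ref{Cnaught}), hence compact; so $f|_C \in \cont\0(C)$.

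The substantive direction is the extension problem: given $g \in \cont\0(C)$, produce $f \in \cont\0(\Omega)$ with $f|_C = g$. I would split this into three cases according to whether $\Omega$ and $C$ are compact. If $\Omega$ is compact, then $\cont\0(\Omega) = \cont(\Omega)$ and $\cont\0(C) = \cont(C)$, so the Tietze Extension Theorem \ref{Tietze} applied to the normal space $\Omega$ (separately to real and imaginary parts of $g$) immediately gives an extension. If $\Omega$ is not compact but $C$ is compact, I would let $K$ be the one-point compactification of $\Omega$ and apply Tietze in the compact Hausdorff (hence normal) space $K$ to extend $g$ to some $\tilde{g} \in \cont(K)$. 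Since $C$ and $\{\infty\}$ are disjoint closed subsets of $K$, Urysohn's Lemma \ref{Urysohn} provides a function $\varphi \in \cont(K)$ taking values in $[0,1]$, with $\varphi = 1$ on $C$ and $\varphi(\infty) = 0$; then $f := (\varphi \tilde{g})|_\Omega$ lies in $\cont\0(\Omega)$ and restricts to $g$ on $C$.

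The main case is when neither $\Omega$ nor $C$ is compact; here the key ingredient is proposition \ref{coptinfty}, which identifies the one-point compactification of $C$ with $C \cup \{\infty\}$ as a subspace of $K$. Under this identification, extending $g$ by the value $0$ at $\infty$ produces a continuous function $\tilde{g}$ on $C \cup \{\infty\}$, by the characterisation in \ref{motCnaught}. Since $\Omega \setminus C$ is open in $\Omega$ and hence open in $K$, the set $C \cup \{\infty\}$ is closed in $K$, so Tietze's theorem \ref{Tietze} applied in the normal space $K$ delivers a continuous extension $h \in \cont(K)$; because $h(\infty) = \tilde{g}(\infty) = 0$, the restriction $h|_\Omega$ lies in $\cont\0(\Omega)$ by the identification in \ref{Cnaught} and restricts to $g$ on $C$.

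The only mildly delicate step is the last one, where one must be careful to arrange that the Tietze extension vanishes at the point at infinity; this is handled automatically by extending $g$ to $C \cup \{\infty\}$ first and then extending that, rather than extending $g$ from $C$ directly. No separate argument for the complex-valued case is needed beyond treating real and imaginary parts separately in each application of Tietze.
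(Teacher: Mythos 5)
Your proof is correct and follows essentially the same route as the paper's: the same case split on compactness of $\Omega$ and $C$, with Tietze for the compact case, Tietze plus a Urysohn cutoff when only $C$ is compact, and proposition \ref{coptinfty} together with an extension to $C \cup \{\infty\}$ in the main case. Your handling of the restriction direction via the level sets $\{x \in C : |f(x)| \geq \varepsilon\}$ is a slightly cleaner, unified version of what the paper does case by case, and you make explicit the step (extending $g$ by $0$ at $\infty$ before applying Tietze) that the paper leaves implicit.
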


\begin{proof}
Assume first that $\Omega$ is compact. Then the closed subset $C$
of $\Omega$ is compact as well. Hence the restriction of a function of
$\cont\0(\Omega)$ to $C$ will be in $\cont(C) = \cont\0(C)$. Conversely,
every function in $\cont(C)$ can be extended to a function in
$\cont(\Omega) = \cont\0(\Omega)$, by the preceding Tietze Extension
Theorem \ref{Tietze}.\pagebreak

So we can assume that $\Omega$ is not compact. Let $K$ denote the
one-point compactification of $\Omega$, and let $\infty \in K \setminus \Omega$
denote the corresponding point at infinity.

Assume now that $C$ is compact. Then the restrictions of functions
in $\cont\0(\Omega)$ to $C$ will be in $\cont(C) = \cont\0(C)$ again.
Conversely, a function in $\cont(C)$ can be extended to a function in
$\cont(K)$, by the Tietze Extension Theorem \ref{Tietze}. The latter
function will automatically be bounded as $K$ is compact, and thus
need only be multiplied by a function in $\cont(K)$ which is equal to
$1$ on $C$ and vanishes at $\infty$, to yield an extension in
$\cont\0(\Omega)$ of the original function in $\cont(C)$. (Existence
of such a function to multiply with follows from Urysohn's Lemma
\ref{Urysohn}.)

Assume next that $C$ is not compact. Then the one-point compactification
of $C$ is $C \cup \{ \infty \}$ considered as a subspace of $K$, by
proposition \ref{coptinfty} above. Thus the restriction of a function in
$\cont\0(\Omega)$ to $C$ will be in $\cont\0(C)$. The converse part
follows by applying the Tietze Extension Theorem to the compact, 
hence closed, subset $C \cup \{ \infty \}$ of $K$.
\end{proof}

\begin{proposition}\label{subBaire}%
A subset $X$ of $C$ is a Baire set of $C$ if and only if it is of the form
$X = C \cap Y$ with $Y$ some Baire set of $\Omega$.
\end{proposition}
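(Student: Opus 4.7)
The plan is to prove both inclusions by working with the generators of the Baire $\sigma$-algebra, which are sets of the form $\{f < \alpha\}$ for $f \in \cont\0(\cdot)$ and $\alpha \in \mathds{R}$, cf.\ \ref{Bairedef}.

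First I would record an easy preliminary observation: if $f \in \cont\0(\Omega)$, then its restriction $f|_C$ lies in $\cont\0(C)$. This is essentially because for every $\varepsilon > 0$, there is a compact $K \subset \Omega$ with $|f| < \varepsilon$ outside $K$, and then $K \cap C$ is a compact subset of $C$ outside which $|f|_C| < \varepsilon$. (Alternatively one can invoke proposition \ref{restrC0} directly.)

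For the ``if'' direction, I would let $\mathcal{F} := \{\, Y \subset \Omega : C \cap Y \text{ is a Baire set of } C\,\}$. Since $X \mapsto C \cap X$ preserves complements, countable unions, and countable intersections, $\mathcal{F}$ is a $\sigma$-algebra on $\Omega$. By the preliminary observation, for every $f \in \cont\0(\Omega)$ and $\alpha \in \mathds{R}$,
\[ C \cap \{\, f < \alpha \,\} = \{\, f|_C < \alpha \,\} \]
is a Baire set of $C$, so $\{\, f < \alpha \,\} \in \mathcal{F}$. Hence $\mathcal{F}$ contains the generators of the Baire $\sigma$-algebra of $\Omega$, and thus contains the whole Baire $\sigma$-algebra of $\Omega$, yielding the desired inclusion.

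For the ``only if'' direction, I would consider the trace $\sigma$-algebra $\mathcal{G} := \{\, C \cap Y : Y \text{ is a Baire set of } \Omega \,\}$ on $C$, which is a $\sigma$-algebra by a standard and elementary argument. It suffices to check that $\mathcal{G}$ contains the generators of the Baire $\sigma$-algebra of $C$. So let $g \in \cont\0(C)$ and $\alpha \in \mathds{R}$. By proposition \ref{restrC0}, there exists $f \in \cont\0(\Omega)$ with $g = f|_C$, so that
\[ \{\, g < \alpha \,\} = C \cap \{\, f < \alpha \,\} \in \mathcal{G}, \]
and the proof is complete. The only non-routine input is proposition \ref{restrC0} (which in turn rests on the Tietze Extension Theorem applied in the one-point compactification), so there is no real obstacle remaining once that has been established.
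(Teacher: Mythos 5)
Your proof is correct and follows essentially the same route as the paper's: both reduce the statement to the generators $\{f<\alpha\}$ and rest entirely on proposition \ref{restrC0}, the paper simply compressing your two ``good sets'' arguments into the single assertion that the trace $\sigma$-algebra is the $\sigma$-algebra generated by the traces of the generators.
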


\begin{proof}
By definition, the Baire $\sigma$-algebra of $\Omega$ is generated by
\[ S := \Bigl\{ \,\{ \,f < \alpha \,\} \subset \Omega :
f \in \cont\0(\Omega)\sa, \ \alpha \in \mathds{R} \,\Bigr\}, \]
and the Baire $\sigma$-algebra of $C$ is generated by
\[ T := \Bigl\{ \,\{ \,g < \alpha \,\} \subset C :
g \in \cont\0(C)\sa, \ \alpha \in \mathds{R} \,\Bigr\}. \]
Now we have
\[ T = \{ \,X \subset C : X = C \cap Y \text{ for some } Y \in S \,\} \]
because $\cont\0(C)$ consists of the restrictions to $C$ of the functions
in $\cont\0(\Omega)$, cf.\ the preceding proposition \ref{restrC0}. The set
\[ \mathcal{E} :=
\{ \,X \subset C : X = C \cap Y \text{ for some Baire set } Y \text{ of } \Omega \,\}, \]
clearly is the $\sigma$-algebra on $C$ generated by $T$ (as $S$ generates
the Baire $\sigma$-algebra of $\Omega$.) It follows that $\mathcal{E}$ is the
Baire $\sigma$-algebra of $C$, as the latter is generated by $T$.
\end{proof}

\begin{theorem}\label{bBaireext}%
Every bounded Baire function on $C$ can be extended to a
bounded Baire function on $\Omega$.%
\pagebreak
\end{theorem}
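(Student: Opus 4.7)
My plan is a standard measure-theoretic extension argument in three steps, using Proposition \ref{subBaire} to lift Baire measurability from $C$ to $\Omega$.

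First I would handle indicators. For a Baire set $X$ of $C$, Proposition \ref{subBaire} provides a Baire set $Y$ of $\Omega$ with $X = C \cap Y$, so $1_Y$ is a bounded Baire function on $\Omega$ whose restriction to $C$ equals $1_X$.

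Next, by linearity, any Baire step function $s = \sum _{i=1}^n c_i \,1_{X_i}$ on $C$ (with $X_i$ Baire sets of $C$) extends to a Baire step function $\tilde{s}$ on $\Omega$. To control the sup-norm of the extension, I would use the following truncation device: for a bounded Baire function $g$ on $\Omega$ and $M \geq 0$, set $g^{\,(M)}(x) := g(x)$ if $| \,g(x) \,| \leq M$, and $g^{\,(M)}(x) := M \,g(x) \,/ \,| \,g(x) \,|$ otherwise. Then $g^{\,(M)}$ is again a bounded Baire function on $\Omega$, with $| \,g^{\,(M)} \,| _{\,\infty} \leq M$, agreeing with $g$ on the set $\{ \,| \,g \,| \leq M \,\}$. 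Applied with $g := \tilde{s}$ and $M := | \,s \,| _{\,\infty}$, this produces a Baire step function on $\Omega$ extending $s$ whose sup-norm is at most $| \,s \,| _{\,\infty}$.

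Finally, let $f$ be an arbitrary bounded Baire function on $C$. Approximating the real and imaginary parts separately by the standard dyadic slicing construction, I obtain a sequence $(s_n)$ of Baire step functions on $C$ with $s_0 := 0$ and $| \,s_{n+1} - s_n \,| _{\,\infty} \leq 2^{\,-n}$ for every $n \geq 0$, such that $s_n \to f$ uniformly on $C$. By the second step, each difference $s_{n+1} - s_n$ extends to a Baire step function $t_n$ on $\Omega$ with $| \,t_n \,| _{\,\infty} \leq 2^{\,-n}$. The series $\sum _{n=0}^{\infty} t_n$ then converges uniformly on $\Omega$ to a bounded Baire function $\tilde{f}$ on $\Omega$ (a uniform limit of Baire functions being Baire). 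On $C$ the partial sums telescope to $s_N$, which converges uniformly to $f$, so $\tilde{f}|_C = f$.

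The main obstacle is the final step: producing the extensions $t_n$ in such a way that the series $\sum _n t_n$ converges to a \emph{bounded} Baire function on $\Omega$, rather than diverging or growing off $C$. This is precisely what the truncation device of the second step secures, by guaranteeing that each extension respects the supremum norm of its restriction to $C$.
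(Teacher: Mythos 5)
Your proof is correct and follows essentially the same route as the paper's: both reduce to Baire step functions, extend these via Proposition \ref{subBaire}, use a truncation to keep the extensions under control, and then pass to a limit. The only variation is cosmetic — you telescope a uniformly convergent series of norm-controlled extensions, whereas the paper takes the $\limsup$ of the truncations $\bigl( \,| \,f \,|_{\,\infty\,} 1_{\,\Omega} \,\bigr) \wedge g_n$ of extensions of an increasing sequence of non-negative step functions.
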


\begin{proof}
It suffices to show this for non-negative functions.
So let $f$ be a non-negative bounded Baire function on $C$.
There then exists an increasing sequence $(f_n)$ of non-negative
Baire-step functions on $C$ converging uniformly to $f$.
By the preceding proposition \ref{subBaire}, the Baire-step functions
$f_n$ can be extended to non-negative Baire-step functions $g_n$
on $\Omega$. The truncated function
\[ \limsup_{n \to \infty} \ \bigl( \,| \,f \,|_{\,\infty\,} 1_{\,\Omega} \,\bigr) \wedge g_n \]
then is a non-negative bounded Baire function on $\Omega$ extending $f$.
\end{proof}

\clearpage

%


\backmatter


\phantomsection

\addcontentsline{toc}{part}{Back Matter}

\phantomsection

\fancyhead[LE]{\SMALL{BIBLIOGRAPHY}}
\fancyhead[RO]{\SMALL{BIBLIOGRAPHY}}

\clearpage


\fancyhead[LE]{\SMALL{LIST OF SYMBOLS}}
\fancyhead[RO]{\SMALL{LIST OF SYMBOLS}}

\Printindex{symbols}{List of Symbols}

\fancyhead[LE]{\SMALL{INDEX}}
\fancyhead[RO]{\SMALL{INDEX}}

\Printindex{concepts}{Index}

\clearpage




\end{document}